\theoremstyle{plain}
\newtheorem{theorem}{Theorem}[subsection]
\newtheorem{lemma}[theorem]{Lemma}
\newtheorem{corollary}[theorem]{Corollary}
\newtheorem{proposition}[theorem]{Proposition}
\theoremstyle{definition}
\newtheorem{definition}[theorem]{Definition}
\newtheorem{example}[theorem]{Example}
\newtheorem{note}[theorem]{Note}
\newtheorem{notation}[theorem]{Notation}
\def\epsilon{\varepsilon}
\def\At{\mathsf{At}}
\renewcommand{\rho}{\varrho}
\newcommand{\takeout}[1]{\empty}
\newcommand{\SmallCat}{\mathsf{Cat}}
\newcommand{\Set}{\mathsf{Set}}
\newcommand{\BA}{\mathsf{BA}}
\newcommand{\DL}{\mathsf{DL}}
\newcommand{\Poset}{\mathsf{Poset}}
\newcommand{\Cat}{\mathcal{C}}
\newcommand{\DCat}{\mathcal{D}}
\newcommand{\xto}[1]{\xrightarrow{#1}}
\newcommand{\hookto}{\hookrightarrow}
\newcommand{\epito}{\twoheadrightarrow}
\newcommand{\up}{\uparrow}
\newcommand{\down}{\downarrow}
\newcommand{\monoto}{\rightarrowtail}
\newcommand{\R}{\mathcal{R}}
\newcommand{\Pred}{\mathsf{Pred}}
\newcommand{\Up}{\mathsf{Up}}
\newcommand{\Nat}{\mathbb{N}}
\newcommand{\two}{\mathbb{2}}
\newcommand{\three}{\mathbb{3}}
\newcommand{\ang}[1]{\langle #1 \rangle}
\newcommand{\sem}[1]{\llbracket #1 \rrbracket}
\newcommand{\Land}{\bigwedge}
\newcommand{\Lor}{\bigvee}
\newcommand{\To}{\Rightarrow}
\newcommand{\oT}{\Leftarrow}
\newtheorem{motivation}[theorem]{Motivation}
\newcommand{\xTo}[2][]{\ext@arrow 0359\Rightarrowfill@{#1}{#2}}
\newcommand{\DeMc}[1]{\mathbb{DM}(#1)}
\newcommand{\Ch}[1]{\mathtt{C}_{#1}}
\newcommand{\jslCh}[1]{\mathbb{C}_{#1}}
\newcommand{\dmCh}[1]{\mathbb{C}_{#1}^\neg}
\newcommand{\Diagj}{\mathtt{Diag}_j}
\newcommand{\Diagm}{\mathtt{Diag}_m}
\newcommand{\eqnOR}{(\mathrm{Rev}\sigma)}
\newcommand{\eqnEx}{(\mathrm{Ex}\sigma^2)}
\newcommand{\eqnCEx}{(\mathrm{Cx}\sigma^2)}
\newcommand{\eqnInv}{(\mathrm{Inv}\sigma)}
\newcommand{\ctos}[1]{\mathrm{c2s}_{#1}}
\newcommand{\stoc}[1]{\mathrm{s2c}_{#1}}
\newcommand{\trep}[1]{\mathrm{trep}_{#1}}
\newcommand{\siff}{\Leftrightarrow}
\newcommand{\upsp}{\upfilledspoon}
\newcommand{\downsp}{\downfilledspoon}
\newcommand{\Upa}{\Uparrow}
\newcommand{\Dna}{\Downarrow}
\newcommand{\Tight}[1]{\mathsf{Ti}(#1)}
\newcommand{\jslTight}[1]{\mathsf{Ti}[#1]}
\newcommand{\biTight}[1]{\mathsf{Ti}[#1]}
\newcommand{\Nleq}{\mathtt{Nleq}}
\newcommand{\BId}[1]{\mathsf{BId}(#1)}
\newcommand{\jslBId}[1]{\mathbb{BId}(#1)}
\newcommand{\tenp}{\otimes}
\newcommand{\syncp}{\owedge}
\newcommand{\ttenp}{\otimes_t}
\newcommand{\BiMor}[1]{\mathsf{BiMor}(#1)}
\newcommand{\rG}{\mathcal{G}}
\newcommand{\rH}{\mathcal{H}}
\newcommand{\rI}{\mathcal{I}}
\newcommand{\rJ}{\mathcal{J}}
\newcommand{\BC}[1]{\mathcal{K}(#1)}
\newcommand{\UBC}[1]{\mathcal{K}_u{(#1)}}
\newcommand{\URC}[1]{\mathcal{K}_r(#1)}
\newcommand{\UIC}[1]{\mathcal{K}_i(#1)}
\newcommand{\rR}{\mathcal{R}}
\newcommand{\rS}{\mathcal{S}}
\newcommand{\rT}{\mathcal{T}}
\newcommand{\rM}{\mathcal{M}}
\newcommand{\rTS}{\mathcal{TS}}
\newcommand{\rE}{\mathcal{E}}
\newcommand{\rF}{\mathcal{F}}
\newcommand{\aA}{\mathbb{A}}
\newcommand{\aB}{\mathbb{B}}
\newcommand{\aC}{\mathbb{C}}
\newcommand{\aD}{\mathbb{D}}
\newcommand{\aE}{\mathbb{E}}
\newcommand{\aF}{\mathbb{F}}
\newcommand{\aQ}{\mathbb{Q}}
\newcommand{\aR}{\mathbb{R}}
\newcommand{\aS}{\mathbb{S}}
\newcommand{\aT}{\mathbb{T}}
\newcommand{\latL}{\mathcal{L}}
\newcommand{\latOp}[1]{\mathcal{O}(#1)}
\newcommand{\latCl}[1]{\mathcal{C}(#1)}
\newcommand{\pP}{\mathtt{P}}
\newcommand{\pQ}{\mathtt{Q}}
\newcommand{\pTwoC}{\mathtt{2}}
\newcommand{\pTwoA}{\mathtt{2}_a}
\newcommand{\pCovHom}[2]{\mathtt{Cover}(#1,#2)}
\newcommand{\prCong}[2]{\mathcal{PC}_{#1}^{#2}}
\newcommand{\genCong}[2]{\mathcal{GC}_{#1}(#2)}
\newcommand{\mirrCong}[2]{\mathcal{MC}_{#1}^{#2}}
\newcommand{\genSubset}[2]{\mathrm{GS}_{#1}(#2)}
\newcommand{\genElem}[2]{\mathrm{GS}_{#1}(\{ #2 \})}
\newcommand{\mirrSub}[2]{{\mathbb{MS}}_{#1}^{#2}}
\newcommand{\mirrSubset}[2]{\mathrm{MS}_{#1}(#2)}
\newcommand{\setCong}[1]{Con(#1)}
\newcommand{\setSub}[1]{Sub(#1)}
\newcommand{\latCong}[1]{\mathcal{CON}(#1)}
\newcommand{\latSub}[1]{\mathcal{SUB}(#1)}
\newcommand{\jslM}[1]{\mathbb{M}_{#1}}
\newcommand{\jslElem}[1]{\mathbb{Elem}(#1)}
\newcommand{\jslIdeal}[1]{\mathbb{Ideal}(#1)}
\newcommand{\jslTrelem}[1]{\mathbb{Trelem}(#1)}
\newcommand{\jslTrideal}[1]{\mathbb{Trideal}(#1)}
\newcommand{\cl}{\mathrm{\bf cl}}
\newcommand{\inte}{\mathrm{\bf in}}
\newcommand{\JSL}{{\mathsf{JSL}}}
\newcommand{\SAJ}{\mathsf{SAJ}}
\newcommand{\SAM}{\mathsf{SAM}}
\newcommand{\SAI}{\mathsf{SAI}}
\newcommand{\UGJ}{\mathsf{UG}_j}
\newcommand{\UGM}{\mathsf{UG}_m}
\newcommand{\UG}{\mathsf{UG}}
\newcommand{\BiCliq}{\mathsf{Dep}}
\newcommand{\Cover}{\mathsf{Cover}}
\newcommand{\Dep}{\mathsf{Dep}}
\newcommand{\Rel}{\mathsf{Rel}}
\newcommand{\Id}{\mathsf{Id}}
\newcommand{\Pow}{\mathcal{P}}
\newcommand{\FPow}{\Pow_f}
\newcommand{\JFPow}{\mathbb{P}_f}
\newcommand{\JPow}{\mathbb{P}}
\newcommand{\BPow}{\mathbb{P}_b}
\newcommand{\DPow}{\mathbb{P}_d}
\newcommand{\Ji}{\mathsf{Ji}}
\newcommand{\SetToPos}{F_\leq}
\newcommand{\PosToJSL}{F_\lor}
\newcommand{\JSLToDL}{F_\land}
\newcommand{\DLToBA}{F_\neg}
\newcommand{\BAToDL}{U_\neg}
\newcommand{\DLToJSL}{U_\land}
\newcommand{\JSLToPos}{U_\lor}
\newcommand{\PosToSet}{U_\leq}
\newcommand{\Pirr}{\mathtt{Pirr}}
\newcommand{\Open}{\mathtt{Open}}
\newcommand{\GJOpen}{\Open_{\mathrm{j}}}
\newcommand{\GJPirr}{\Pirr_{\mathrm{j}}}
\newcommand{\GMOpen}{\Open_{\mathrm{m}}}
\newcommand{\GMPirr}{\Pirr_{\mathrm{m}}}
\newcommand{\GOpen}{\Open_{\mathrm{g}}}
\newcommand{\GPirr}{\Pirr_{\mathrm{g}}}
\newcommand{\pOp}{\mathsf{op}}
\newcommand{\ideal}[2]{\mathrm{idl}_{#1}\ang{#2}}
\newcommand{\elem}[2]{\mathrm{el}_{#1}\ang{#2}}
\newcommand{\idealInv}[2]{\mathrm{idl}^{\bf-1}_{#1}\ang{#2}}
\newcommand{\elemInv}[2]{\mathrm{el}^{\bf-1}_{#1}\ang{#2}}
\newcommand{\trideal}[2]{\mathrm{idl}^3_{#1}\ang{#2}}
\newcommand{\trelem}[2]{\mathrm{el}^3_{#1}\ang{#2}}
\newcommand{\swap}{\mathrm{swap}}
\newcommand{\rot}{\mathrm{rot}}
\def\endbox{\hfill$\blacksquare$}
\newcommand{\rep}{\mathrm{rep}}
\newcommand{\red}{\mathrm{red}}
\newcommand{\jrep}{\mathrm{{\bf j}rep}}
\newcommand{\jred}{\mathrm{{\bf j}red}}
\newcommand{\mrep}{\mathrm{{\bf m}rep}}
\newcommand{\mred}{\mathrm{{\bf m}red}}
\newcommand{\grep}{\mathrm{{\bf g}rep}}
\newcommand{\gred}{\mathrm{{\bf g}red}}
\newcommand{\OD}{\mathtt{OD}}
\begin{document}

\title{Representing Semilattices as Relations}
\author{Robert S.\ R. Myers  \\[1ex] \href{me.robmyers@gmail.com}{me.robmyers@gmail.com} }
\maketitle

\CompileMatrices
% \abstract{
% }

\section{Note for readers}

\begin{itemize}
  \item[--]
  The first few sections provide background concerning order-theory and semilattices.
  \item[--] 
  The new category $\Dep$ is introduced and studied in Sections \ref{sec:the_cat_dep} and \ref{sec:tensors}.
  % \item[--]
  \begin{itemize}
    \item 
    Its objects $\rG$ are the relations between finite sets.
    It morphisms $\rG \to \rH$ are those relations $\rR$ factoring via relational composition through $\rG$ (resp.\ $\rH$) on the left (resp.\ right).
    \item
    In subsection \ref{subsec:dep_equiv_jsl} we prove it is categorically equivalent to $\JSL_f$.
  \end{itemize}
  \item[--]
  Section \ref{sec:tensors} describes the tensor product and tight tensor product of finite join-semilattices, also in $\Dep$.
  \item[--]
  Section \ref{sec:graphs_and_de_morgan} extends the main result from binary relations to symmetric relations (undirected graphs) and from finite join-semilattices to finite De Morgan algebras.
  \item[--]
  Finally, the \hyperref[appendix:appendix]{Appendix} describes and proves a number of relevant categorical dualities and free constructions.
\end{itemize}

% TODO mention sections 8 (undirected graphs and De Morgan algebras)

\section{Conventions and background}

\subsection{Conventions regarding relations and functions}

It is worth clarifying the definition of functions and relations because:
\begin{enumerate}[(a)]
\item
algorithms requires specific representations.
\item
it avoids `clutter' e.g.\ we don't want to distinguish between a functional relation and a function.
\end{enumerate}

After these basic definitions we introduce notation to avoid a cumbersome presentation.

\begin{definition}[Relations and functions]
\label{def:rel_and_func}
\item
\begin{enumerate}
\item
The \emph{cartesian product} of two sets $X$ and $Y$ is defined $X \times Y := \{ (x,y) : x \in X, y \in Y\}$.

\item
A \emph{relation} is a triple $(\rR,X,Y)$ where $\rR \subseteq X \times Y$ is any subset. Then $X$ is called the \emph{domain} of $\rR$ (also denoted $\rR_s$) whereas $Y$ is called the \emph{codomain} of $\rR$ (also denoted $\rR_t$).

\item
\begin{enumerate}
\item
The \emph{identity relation} on a set $X$ is defined $(\Delta_X,X,X)$ where $\Delta_X := \{(x,x) \in X \times X : x \in X\}$.

\item
The \emph{converse} of a relation $(\rR,X,Y)$ is the relation $(\breve{\rR},Y,X)$ where $\breve{\rR} := \{ (y,x) : (x,y) \in \rR \}$.

\item
The \emph{complement} of a relation $(\rR,X,Y)$ is the relation $(\overline{\rR},X,Y)$ where $\overline{\rR} := (X \times Y) \backslash \rR$.
\end{enumerate}

\item
For any relation $(\rR, X, Y)$, subset $S \subseteq X$ and domain element $d \in X$, define:
\[
\rR[S] := \{ y \in Y : \exists x \in S. (x,y) \in \rR \}
\qquad
\rR[d] := \rR[\{d\}]
\]
i.e.\ the image of a subset of the domain, and the image of a domain element.

\item
Given any two compatible relations $(\rR,X,Y)$ and $(\rS,Y,Z)$ then their \emph{composite relation} is defined:
\[
(\rR,X,Y) ; (\rS,Y,Z) := (\{(x,z) \in X \times Z, \exists y \in Y.( (x,y) \in \rR \text{ and } (y,z) \in \rS ) \},\,X,\,Z)
\]

\item
A relation $(\rR,X,Y)$ is \emph{functional} if $\forall x \in X. \exists \text{ unique } y \in Y. (x,y) \in \rR$.
Then a \emph{function} is another word for a functional relation, so the identity relation $(\Delta_X,X,X)$ is also $X$'s \emph{identity function}, written $id_X$.

\item
For any $X$, its \emph{powerset} $\Pow X$ is the set containing precisely the subsets $S \subseteq X$ including the empty set $\emptyset$. If $X$ is finite let $|X| \in \Nat := \{0,1,2,\dots\}$ denote its number of elements.

\item
For each set $X$, subset $S \subseteq X$ and element $z \in X$, we define:
\[
\overline{S} := X\backslash S = \{ x \in X : x \notin S\}
\qquad
\overline{z} := \overline{\{z\}} = X \backslash \{z\}
\qquad
\neg_X := (\{ (S,\overline{S}) : S \in \Pow X\},\Pow X,\Pow X)
\]
i.e.\ the relative-complement of subsets or elements, and the involutive relative-complement function.

\takeout{
\item
Given any relation $(\rR,X,Y)$ then:
\begin{tabular}{ll}
its \emph{image function} is defined: 
&
$(\{(S,\rR[S]) : S \in \Pow X \} ,\Pow X,\Pow Y)$
\\
its \emph{preimage function} is defined:
&
$(\{(S,\breve{\rR}[S]) : S \in \Pow Y \},\Pow Y,\Pow X)$
\end{tabular}
}

\item
The notions of injective, surjective and bijective functions are as usual. These concepts \emph{only apply to functions} so e.g.\ if we say a relation is injective we mean that it is an injective function. For any function $f : X \to Y$  its \emph{preimage function} $(f^{-1},\Pow Y,\Pow X)$ has action $f^{-1}(S) = \{ x \in X : f(x) \in S \}$.

\item
Given a relation $(\rR,X_1,X_2)$ and subsets $Y_i \subseteq X_i$ for $i = 1,2$ then its \emph{restriction} $(\rR,X_1,X_2) |_{Y_1 \times Y_2}$ is the relation $(\rR \cap Y_1 \times Y_2, Y_1,Y_2)$.

% \item
% A relation $\rR \subseteq \rR_s \times \rR_t$ is \emph{strict} if $\breve{\rR}[\rR_t] = \rR_s$ and $\rR[\rR_s] = \rR_t$ -- there are no isolated elements. Then $\wh{\rR} := \rR |_{\breve{\rR}[\rR_t] \times \rR[\rR_s]}$ is the \emph{strict relation induced by $\rR$} i.e.\ the largest strict relation contained inside $\rR$.

\endbox

\end{enumerate}
\end{definition}

\smallskip
We now list standard notational conventions which we shall henceforth adopt.

\begin{notation}[Relations and functions]
\label{nota:rels_and_funcs}
\item
\begin{enumerate}
\item
For any relation $(\rR,X,Y)$ let $\rR_s := X$ ((s)ource = domain) and $\rR_t := Y$ ((t)arget = codomain). We may denote a relation by the symbol $\rR$ as long as $\rR_s$ and $\rR_t$ have been specified, either by defining them directly, or via words such as `the relation $\rR \subseteq X \times Y$' where it is understood that $\rR_s = X$ and $\rR_t = Y$.

\item
To indicate a relation $(\rR,X,Y)$ or $\rR \subseteq X \times Y$ we may also write $\rR : X \to Y$. This is more usual for functions, but is perfectly acceptable for relations too.

\item
$\rR(x,y)$ indicates that $(x,y) \in \rR$. Sometimes it is more natural to write $x \rR y$, for example if $\rR$ is a partial order.  The converse relation of $\rR$ may be written as $\breve{\rR}$ or $\rR\spbreve$, observing that $\breve{\rR}_s = \rR_t$ and $\breve{\rR}_t = \rR_s$.

\item
We denote general relations by upper-case calligraphic symbols e.g.\ $\rR$, $\rS$ and $\rT$, and general functions by lower-case standard type symbols e.g.\ $f$, $g$, $h$. Then we may write $f(x) = y$ to mean $f(x,y)$ where the latter $y$ is necessarily unique. Since we understand functions to be functional relations there will be some overlap in symbols, but usually only if a certain relation turns out (or is restricted) to be functional. 

\item
By the above remarks, given relations $\rR \subseteq X \times Y$ and $\rS \subseteq Y \times Z$ we may write their composite relation as $\rR ; \rS \subseteq X \times Z$. Following the usual convention, the composite of two functions $f : X \to Y$ and $g : Y \to Z$ is written as $g \circ f : X \to Z$ i.e.\ the other way around to relational composition.

\item
We write the restriction of a relation as $\rR \cap Y_1 \times Y_2$ or alternatively as $\rR |_{Y_1 \times Y_2}$. \endbox

\end{enumerate}
\end{notation}

\subsection{Order Theory}

We shall need various basic concepts from order theory i.e.\ posets, join-semilattices, lattices, bounded lattices, De Morgan algebras (which needn't be distributive), distributive lattices, boolean lattices and algebras, join and meet-irreducible elements,  join and meet-prime elements, and also closure and interior operators on an arbitrary poset. We also prove a number of (mostly) standard results e.g.\ every finite join-semilattice is a finite lattice in a unique way, a finite lattice is distributive iff every join-irreducible element is join-prime, and we also describe the canonical order-isomorphism between join and meet-irreducibles of a finite distributive lattice.

% TODO
% add De Morgan algebras (not distributive)

\begin{definition}[Basic order theory]
\label{def:std_order_theory}
\item
\begin{enumerate}
\item
A \emph{poset} is a pair $\pP = (P,\leq_\pP)$ where $P$ is a set and the relation $\leq_\pP \; \subseteq P \times P$ is reflexive, transitive and anti-symmetric. An \emph{order relation} is a relation $\rR \subseteq X \times X$ with these three properties. A poset $(P,\leq_\pP)$ is \emph{finite} if $P$ is a finite set.  

\item
Given a poset $\pP$, then its \emph{opposite poset} is defined $\pP^{\pOp} := (P,\leq_{\pP^{\pOp}})$ where $\leq_{\pP^{\pOp}} \; := \; \breve{\leq_\pP}$ is the converse relation, and is more usually written as $\geq_\pP$. A \emph{monotone} (or \emph{monotonic}) function from $\pP$ to $\pQ$ is a function $f : P \to Q$ such that $p_1 \leq_\pP p_2$ implies $f(p_1) \leq_\pQ f(p_2)$ for all elements $p_1,p_2 \in P$. We may indicate monotone morphisms by writing $f : (P,\leq_\pP) \to (Q,\leq_\pQ)$. We also have the \emph{opposite} monotone morphism $f^{\pOp} : \pP^{\pOp} \to \pQ^{\pOp}$ which acts in the same way i.e.\ $f^{\pOp}(p) = f(p)$ for all elements $p \in P$, see Note \ref{note:tale_of_two_orders} below.

\item
We'll use the other standard symbols and their converses i.e.\ $<_\pP$ means strictly less than (with converse $>_\pP$), $\nleq_\pP$ means not less than or equal to (with converse $\ngeq_\pP$), $\nless_\pP$ means not strictly less than (with converse $\ngtr_\pP$). We also have the irreflexive and symmetric \emph{incomparibility relation} $\parallel_\pP \; := \; \nleq_\pP \, \cap \, \ngeq_\pP$ which also equals $\parallel_{\pP^{\pOp}}$. Finally, $\pP$'s \emph{covering relation} $\prec_\pP \, \subseteq P \times P$ is defined: 
\[
p_1 \prec_\pP p_2
:\iff p_1 <_\pP p_2 \text{ and } \neg\exists p \in P.(p_1 <_\pP p <_\pP p_2).
\]

\item
A \emph{chain} is a non-empty totally ordered poset $\pP$ i.e.\ such that $\parallel_\pP \, = \, \emptyset$. An \emph{antichain} is a non-empty poset $\pP$ where distinct elements are incomparable i.e.\ $\parallel_\pP \, = P \times P \backslash \Delta_P$. A poset which is either empty or an antichain is called a \emph{discrete poset}. Let us denote the \emph{2-chain} by $\pTwoC := (\{0,1\},\Delta_2 \cup \{(0,1)\})$ and the \emph{2-antichain} by $\pTwoA := (\{0,1\},\Delta_2)$.

We say that $\pP = (P,\leq_\pP)$ is a \emph{subposet} of $\pQ = (Q,\leq_\pQ)$ if $P \subseteq Q$ and $\leq_\pP \, = \, \leq_\pQ \, \cap \, P \times P$. Then a subposet must inherit the order, so that $\pTwoA$ is not a subposet of $\pTwoC$. If a chain $(P,\leq_\pP)$ is finite then its \emph{length} is defined $|P| - 1$ e.g.\ the two element chain $\pTwoC$ has length $1$. Then the \emph{length} $l(\pP) \in \Nat \cup \{\omega \}$ of an arbitrary poset $\pP$ is defined as the supremum of the lengths of all finite chains arising as subposets of $\pP$. That is, if the length of such chains is bounded then it is the maximum length of any chain, otherwise it is $\omega$.

\item
A subset $S \subseteq P$ of a poset $\pP$ is \emph{up-closed} (or \emph{upwards-closed}) if whenever $p \in S$ and $p \leq_\pP p'$ then $p' \in S$. A subset $S \subseteq P$ of a poset $\pP$ is \emph{down-closed} (or \emph{downwards-closed}) if it is up-closed in $\pP^{\pOp}$. Equivalently, the up-closed (resp.\ down-closed) sets of $\pP$ are precisely those sets of the form $f^{-1}(\{1\})$ (resp.\ $f^{-1}(\{0\})$) for monotone functions $f : \pP \to \pTwoC$. 

\item
Given a poset $\pP = (P,\leq_\pP)$ then its \emph{join-irreducible} elements $J(\pP) \subseteq P$ and \emph{meet-irreducible} elements $M(\pP) \subseteq P$ are defined as follows:
\[
\begin{tabular}{c}
$J(\pP) := \{ p \in P : \exists! q \in P. \; q \prec_\pP p \}$
\\[1ex]
$M(\pP) := \{ p \in P : \exists! q \in P. \; p \prec_\pP q  \}$
\end{tabular}
\]
where $\exists!$ should be read as `there exists a unique'. If $\pP$ has a minimum element $\bot_\pP \in P$ then its \emph{atoms} are defined $At(\pP) := \{ p \in P : \bot_\pP \prec_\pP p \}$ i.e.\ those elements covering $\bot_\pP$. On the other hand, if $\pP$ has a maximum element $\top_\pP \in P$ then its \emph{coatoms} are defined $CoAt(P) := \{ p \in P : p \prec_\pP \top_\pP \}$ i.e.\ those elements covered by $\top_\pP$. These are order-dual concepts i.e.\ $M(\pP) = J(\pP^{\pOp})$ holds generally, and $CoAt(\pP) = At(\pP^{\pOp})$ holds whenever $\pP$ has a top element. 

See Lemma \ref{lem:std_order_theory}.5 below for the way we usually think about join/meet-irreducibles.

\item
A \emph{join-semilattice} $\aQ = (Q,\lor_\aQ,\bot_\aQ)$ is a commutative and idempotent monoid i.e.\ $Q$ is the carrier set, $\lor_\aQ : Q \times Q \to Q$  is the associative binary operation and $\bot_\aQ \in Q$ is the unit. Equivalently, a join-semilattice is a poset $(Q,\leq_\aQ,\lor_\aQ,\bot_\aQ)$ where all finite suprema exist, the empty supremum being $\bot_\aQ$ and the supremum of $\{q_1,q_2\}$ being $q_1 \lor_\aQ q_2$. In particular, one can define $x \leq_\aQ y :\iff x \lor_\aQ y = y$. We usually describe $\bot_\aQ$ as the \emph{bottom element} or \emph{empty join}, $\lor_\aQ$ as the \emph{binary join}, and the suprema of finitely many elements as \emph{joins}. The latter are often denoted via the symbol $\Lor_\aQ$ which inductively generalises $\bot_\aQ$ and $\lor_\aQ$.

\item
A \emph{lattice} $\latL = (L,\lor_\latL,\land_\latL)$ is a poset $(L,\leq_\latL)$ with  binary joins $\lor_\latL$ and binary meets $\land_\latL$. A \emph{bounded lattice} $\latL = (L,\lor_\latL,\bot_\latL,\land_\latL,\top_\latL)$ is a poset $(L,\leq_\latL)$ with all finite joins (suprema) and all finite meets (infima). Notice that the join-structure is always written before the meet-structure. We may also speak of a \emph{lattice with bottom} or \emph{lattice with top}. Finite lattices always have a bottom and top, although they may not preserved by the morphisms under consideration.

Bounded lattices may be equationally axiomatised by specifying two commutative idempotent monoids $(\lor,\bot)$ and $(\land,\top)$ (hence join-semilattices), as well as the absorption laws. The latter laws ensure that their respective order relations are the converse of one another.

\item
Given a bounded lattice $\latL$ and elements $x,y \in L$ then \emph{$y$ is a complement of $x$} if:
\[
x \land_\latL y = \bot_{\latL}
\qquad\text{and}\qquad
x \lor_\latL y = \top_{\latL}.
\]
There exist lattices where an element may have no complement (the midpoint of a $3$-chain) or many of them (add a bottom and top to a $3$-antichain). However, an element of a distributive lattice may have at most one complement by Lemma \ref{lem:std_order_theory}.9 below.

\item
A \emph{distributive lattice} $\aD = (D,\lor_\aD,\land_\aD)$ is a lattice where the two distributive laws hold i.e.\
\[
\begin{tabular}{c}
$x \land (y \lor z) \approx (x \land y) \lor (x \land z)$
\\[1ex]
$x \lor (y \land z) \approx (x \lor y) \land (x \lor z)$
\end{tabular}
\]
In practice we'll mostly deal with \emph{bounded distributive lattices} i.e.\ bounded lattices which are distributive. A \emph{boolean lattice} $\aB$ is a complemented bounded distributive lattice i.e.\ for every element $x \in B$ there exists $y \in B$ such that $x \land_\aB y = \bot_\aB$ and $x \lor_\aB y = \top_\aB$. Elements of distributive lattices may have at most one complement, see Lemma \ref{lem:std_order_theory}.9 below. A \emph{boolean algebra} $\aA = (A,\lor_\aA,\bot_\aA,\land_\aA,\top_\aA,\neg_\aA)$ is a boolean lattice $(A,\lor_\aA,\bot_\aA,\land_\aA,\top_\aA)$ endowed with its unique complement operation $\neg_\aA : A \to A$. Equivalently they may be equationally axiomatised via the equational axiomatisation of lattices, the distributive laws and the two complement laws i.e.\ $x \land \neg x \approx \bot$ and $x \lor \neg x \approx \top$. In particular, the two De Morgan laws follow from the latter equational axiomatisation of boolean algebras, which we also verify syntactically in Lemma \ref{lem:std_order_theory}.9 below.

\item
Regarding morphisms, recall that we've already defined the notion of `monotonic function' between posets in item (2) above.

\begin{enumerate}
\item
Given join-semilattices $\aQ_i = (Q_i,\lor_{\aQ_i},\bot_{\aQ_i})$ for $i = 1,2$ then a \emph{join-semilattice morphism} $f : \aQ_1 \to \aQ_2$ is a monotonic function $f : (Q_1,\leq_{\aQ_1}) \to (Q_2,\leq_{\aQ_2})$ which also preserves the bottom and binary join. Equivalently they are the monoid morphisms (since join-preservation implies monotonicity), and if both join-semilattices are finite they are precisely those functions $f : Q_1 \to Q_2$ preserving \emph{all} joins.

\item
Given lattices $\latL_i = (L_i,\lor_{\latL_i},\land_{\latL_i})$ for $i = 1,2$ then a \emph{lattice morphism} $f : \latL_1 \to \latL_2$ is a monotonic function $f : (L_1,\leq_{\latL_1}) \to (L_2,\leq_{\latL_i})$ which also preserves the binary join and binary meet. A \emph{bounded lattice morphism} between bounded lattices is a lattice morphism which additionally preserves the top and bottom element.

\item
Finally, a \emph{boolean algebra morphism} $f : \aA_1 \to \aA_2$ is a bounded lattice morphism between their underlying boolean lattices $(A_i,\lor_{\aA_i},\bot_{\aA_i},\land_{\aA_i},\top_{\aA_i})$ for $i = 1,2$. Such morphisms automatically preserve negation via the uniqueness of complements in boolean algebras, see Lemma \ref{lem:std_order_theory}.9 below. One can define boolean algebra morphisms in a number of equivalent ways e.g.\ via preservation of bottom, meet and complement.

\end{enumerate}

\item
The opposite poset construction restricts to lattices, bounded lattices, boolean algebras, and also finite join-semilattices. Furthermore it preserves distributivity and the existence of complements.

\begin{enumerate}
\item
Each bounded lattice $\latL$ has an \emph{opposite bounded lattice}:
\[
(L,\lor_\latL,\bot_\latL,\land_\latL,\top_\latL)^{\pOp} := (L,\land_\latL,\top_\latL,\lor_\latL,\bot_\latL)
\]
We also have the \emph{opposite lattice}. Observe that $\latL^{\pOp}$'s underlying poset is the opposite of $\latL$'s underlying poset.

\item
If a possibly bounded lattice $\latL$ is distributive then so is $\latL^{\pOp}$, since the two distributive laws arise from one another by swapping meets and joins. Likewise, if a bounded lattice $\latL$ is a boolean lattice then so is $\latL^{\pOp}$ by inspecting the two complementation laws.

\item
Each boolean algebra $\aA$ has an \emph{opposite boolean algebra} i.e.\ 
\[
(A,\lor_\aA,\bot_\aA,\land_\aA,\top_\aA,\neg_\aA)^{\pOp} 
:= (A,\land_\aA,\top_\aA,\lor_\aA,\bot_\aA,\neg_\aA)
\]
That negation is well-defined follows because the two standard equational laws expressing complementation are order-dual statements. $\aA^{\pOp}$'s underlying poset is the opposite of $\aA$'s underlying poset.

\item
Each finite join-semilattice $\aQ$ is a lattice in a unique fashion i.e.\
\[
\top_\aQ := \Lor_\aQ Q
\qquad
q_1 \land_\aQ q_2 := \Lor_\aQ \{ q \in Q : q \leq_\aQ q_1, q \leq_\aQ q_2 \}
\]
Thus in the finite case we also have the \emph{opposite join-semilattice} defined  $\aQ^{\pOp} := (Q,\land_\aQ,\top_\aQ)$, noting that $\aQ^{\pOp}$'s underlying poset is the opposite of $\aQ$'s underlying poset. The existence of meets fails for infinite join-semilattices e.g.\
\begin{enumerate}[(i)]
\item
Concerning boundedness, given any infinite set $X$ then the join-semilattice $(\FPow X,\cup,\emptyset)$ of finite subsets has no top element.

\item
Below we depict the `usual' example of a join-semilattice which fails to have binary meets.
\[
\xymatrix@=5pt{
& \top
\\
\alpha \ar@{-}[ur] && \beta \ar@{-}[ul]
\\
\vdots \ar@{-}[u] \ar@{-}[urr] && \vdots \ar@{-}[u] \ar@{-}[ull]
\\
x_1 \ar@{-}[u] && y_1 \ar@{-}[u]
\\
x_0 \ar@{-}[u] && y_0 \ar@{-}[u]
\\
& \bot \ar@{-}[ul] \ar@{-}[ur]
}
\]
That is, $\aQ$ has the order relation:
\[
\leq_\aQ \;
= 
\{\bot\} \times Q \,\cup\, \{ (x_i,x_j) : 0 \leq i \leq j \} \,\cup\, \{ (y_i,y_j) : 0 \leq i \leq j \} \,\cup\, (X \cup Y) \times \{\alpha,\beta\} \,\cup\, Q \times \{\top\}
\]
where $X = \{ x_i : i \in \Nat \}$ and $Y = \{ y_i : i \in \Nat \}$. It has all finite joins, whereas $\alpha \land \beta$ doesn't exist.

\end{enumerate}

\end{enumerate}

Recall that given any monotone function $f : \pP \to \pQ$ we also have the opposite monotone function $f^{\pOp} : \pP^{\pOp} \to \pQ^{\pOp}$ which acts in exactly the same way as $f$. Then this restricts to bounded lattice morphisms and also boolean algebra morphisms. However it does \emph{not} restrict to join-semilattice morphisms between finite join-semilattices, since the latter needn't preserve the top or the binary meet.

\item
A \emph{preorder} is a relation $\rR \subseteq X \times X$ which is reflexive and transitive. There is an associated equivalence relation $\rE \subseteq X \times X$ defined $\rE(x,y) :\iff \rR(x,y) \land \rR(y,x)$. Then the \emph{poset induced by the preorder $\rR$} consists of the $\rE$-equivalence-classes equipped with the well-defined order relation $\sem{x}_\rE \leq_{\rR\backslash\rE} \sem{y}_\rE :\iff \rR(x,y)$.

\item
A monotonic function $f : \pP \to \pQ$ is an \emph{order-embedding} if:
\[
p_1 \leq_\pP p_2
\iff
f(p_1) \leq_\pQ f(p_2)
\qquad
\text{for every $p_1,\,p_2 \in P$.}
\]
Every order-embedding defines an injective function, although there are injective monotone maps which are not order-embeddings e.g.\ from the $2$-antichain to the $2$-chain. However, every injective join-semilattice morphism $f : \aQ \to \aR$ defines an order-embedding $f : (Q,\leq_\aQ) \to (R,\leq_\aR)$, as we now show:
\[
f(q_1) \leq_\aR f(q_2)
\iff
f(q_1) \lor_\aR f(q_2) = f(q_2)
\iff
f(q_1 \lor_\aQ q_2) = f(q_2)
\iff
q_1 \lor_\aQ q_2 = q_2
\iff
q_1 \leq_\aQ q_2.
\]

\item
We say that a join-semilattice $\aQ$ is a \emph{join-semilattice retract} of a join-semilattice $\aR$ if there exist join-semilattice morphisms:
\[
\xymatrix@=15pt{
\aQ \ar@/^5pt/@{>->}[rr]^e  && \aR \ar@/^5pt/@{->>}[ll]^r
}
\qquad\text{such that $r \circ e = id_\aQ$}.
\]
Note that $e$ is necessarily injective and hence an order-embedding, whereas $r$ is necessarily surjective.

% TODO: explicitly say when distributive lattice is bounded
% however, unnecessary in case of distributive join-semilattice ?

\item
We list various specific algebras and some associated terminology, where $Z$ is any set.

\begin{enumerate}
\item
$\JPow Z := (\Pow Z,\cup,\emptyset)$ is called a \emph{powerset join-semilattice}, observing that $(\JPow Z)^{\pOp} = (\Pow Z,\cap,Z)$. Furthermore $\JFPow Z := (\FPow Z,\cup,\emptyset)$ is called the \emph{free join-semilattice on $Z$}, observing that it needn't have a top if $Z$ is infinite. Usually $Z$ will be finite, in which case these two concepts coincide.
\item
$\DPow Z := (\Pow Z,\cup,\emptyset,\cap,Z)$ is called the \emph{powerset bounded distributive-lattice}, and $\BPow Z := (\Pow Z,\cup,\emptyset,\cap,Z,\neg_Z)$ is called the \emph{powerset boolean-algebra}.

\item
If $\aQ$ is a finite join-semilattice then it possesses a unique bounded lattice structure by (12).d above. Then we say that:
\begin{enumerate}
\item
\emph{$\aQ$ is a boolean join-semilattice} if its associated bounded lattice is boolean, 
\item
\emph{$\aQ$ is a distributive join-semilattice} if its associated lattice is distributive, noting that it will also be bounded by finiteness.
\end{enumerate}

\item
$\jslM{Z} := ( \{\emptyset,Z\} \, \cup \, \{ \overline{z} : z \in Z \}, \cup, \emptyset)$ may be viewed as the join-semilattice version of an antichain. That is, its order structure amounts to viewing $Z$ as the $\subseteq$-antichain $\{ \overline{z} : z \in Z \}$, and then adding a bottom $\emptyset$ and top $Z$ to obtain a lattice.

\endbox
\end{enumerate}

\end{enumerate}
\end{definition}

\begin{note}[Order opposite $(-)^{\pOp}$ versus categorical opposite $(-)^{op}$]
\label{note:tale_of_two_orders}
\item
The opposite poset/monotone map construction $(-)^{\pOp}$ is distinct from the standard categorical notation $(-)^{op}$. Actually, they do align by viewing posets as thin skeletal categories (so that the opposite poset is the opposite category), and moreover the functors between these categories are the monotone maps (so that the opposite functor corresponds to the opposite monotone map). However, we'll also use $f^{op} : Y \to X$ to indicate a morphism in the opposite of other categories, so we distinguish the notations to avoid confusion. \endbox
\end{note}

We now list some basic facts concerning these definitions.

\begin{lemma}[Standard order-theoretic results]
\label{lem:std_order_theory}
\item
\begin{enumerate}
\item
If a poset $\pP$ has all finite suprema then it is the underlying poset of a unique join-semilattice $\aQ$.
\item
If a poset $\pP$ has all finite suprema and infima then it is the underlying poset of a unique bounded lattice $\latL$.
\item
Viewed as their underlying posets, the finite lattices, the finite bounded lattices, and the finite join-semilattices coincide.

\item
If a poset $\pP$ has finite length then $\leq_\pP$ is the reflexive transitive closure of $\prec_P$.

% use half of boundedness ?

\item
Let $\latL$ be a locally finite lattice i.e.\ every interval is finite.
\begin{enumerate}
\item
If $\latL$ has a bottom element then $J(\latL)$ contains precisely those elements which are not a finite join of other elements. Equivalently:
\begin{quote} 
$x \in J(\latL)$ iff (i) $x \neq \bot_\latL$, and (ii) $\forall x_1,x_2 \in L$. $x = x_1 \lor_\latL x_2$ implies $\exists i. x = x_i$.
\end{quote}

\item
If $\latL$ has a top element then $M(\latL)$ contains precisely those elements which are not a finite meet of other elements. Equivalently:
\begin{quote} 
$x \in M(\latL)$ iff (i) $x \neq \top_\latL$, and (ii) $\forall x_1,x_2 \in L$. $x = x_1 \land_\latL x_2$ implies $\exists i. x = x_i$.
\end{quote}

\item
$\latL$ has both a bottom and top element iff it is a finite lattice (hence bounded), in which case both of the above statements hold.
\end{enumerate}

\item
Each finite lattice $\latL$ is join-generated by $J(\latL)$ and every join-generating set $S \subseteq Q$ contains $J(\latL)$. Order-dually, $\latL$ is meet-generated by $M(\latL)$ and every meet-generating set contains $M(\latL)$.

\item
Given any finite join-semilattice $\aQ$ and elements $q_1,\, q_2 \in Q$, the following statements hold:
\[
\begin{tabular}{ll}
$q_1 \leq_\aQ q_2$ & $\iff \forall j \in J(\aQ).[ j \leq_\aQ q_1 \implies j \leq_\aQ q_2  ]$
\\[1ex]
& $\iff \forall m \in M(\aQ).[ q_2 \leq_\aQ m \implies q_1 \leq_\aQ m  ]$
\end{tabular}
\]

\item
The atoms of any boolean lattice $\latL$ are precisely $J(\latL)$, the co-atoms are precisely $M(\latL)$.

\item
Let $\aD$ be a bounded distributive lattice.
\begin{enumerate}
\item
Each element $d \in D$ can have at most one complement. 
\item
If every element $d \in D$ has a complement then it is a boolean lattice.
\end{enumerate}
Concerning the second point: syntactically speaking, the equational axiomatisation of boolean algebras described in Definition \ref{def:std_order_theory}.9 above is correct. That is, the De Morgan laws are deducible from it.

\item
For any finite lattice $\latL$ the following statements are equivalent.
\begin{enumerate}
\item
$\latL$ is distributive.
\item
For all $j \in J(\latL)$ and $x_1,\, x_2 \in L$ we have:
 \[
j \leq_\latL x_1 \lor_\latL x_2 
\iff
j \leq_\latL x_1 \text{ or } j \leq_\latL x_2
\]
noting that one may equivalently restrict to $x_1, x_2 \in L \backslash \{\bot_\latL,\top_\latL\}$.
\item
For all $j \in J(\latL)$ and subsets $X \subseteq L$:
\[
j \leq_\latL \Lor_\latL X
\iff
\exists x \in X. j \leq_\latL x
\]
\item
For all $j \in J(\latL)$ and subsets $X \subseteq J(\latL)$:
\[
j \leq_\latL \Lor_\latL X
\iff
\exists x \in X. j \leq_\latL x
\]
\end{enumerate}

\takeout{
If $\aD$ is a finite distributive lattice, then for any $j \in J(\aD)$ and any $d_1,d_2 \in D$ we have:
\[
j \leq_\aD d_1 \lor_\aD d_2 
\iff
j \leq_\aD d_1 \text{ or } j \leq_\aD d_2
\]
More generally, for all subsets $X \subseteq D$ we have $j \leq_\aD \Lor_\aD X$ iff there exists $d \in X$ such that $j \leq_\aD d$. The latter has an equivalent formulation where one restricts $X \subseteq J(\aD) \subseteq D$.
}

\item
A lattice $\latL$ is distributive iff $M_3$ and $N_5$ do not arise as sublattices, recalling that:
\[
\begin{tabular}{ccc}
$\vcenter{\vbox{\xymatrix@=10pt{
& \bullet
\\
\bullet \ar@{-}[ur] & \bullet \ar@{-}[u] & \bullet \ar@{-}[ul]
\\
& \bullet \ar@{-}[u] \ar@{-}[ul] \ar@{-}[ur]
}}}$
&&
$\vcenter{\vbox{\xymatrix@=10pt{
& \bullet
\\
\bullet \ar@{-}[ur] &&
\\
\bullet \ar@{-}[u] && \bullet \ar@{-}@/_5pt/[uul]
\\
& \bullet \ar@{-}[ul] \ar@{-}[ur]
}}}$
\\
$M_3$ && $N_5$
\end{tabular}
\]

\item
If $\aD$ is a finite distributive lattice and $h : D \to 2$ is any function, the following statements are equivalent:
\begin{enumerate}
\item
$h$ defines a bounded distributive lattice morphism of type $\aD \to \two$.
\item
$h^{-1}(\{1\}) = \; \up_\aD j$ for some $j \in J(\aD)$.
\item
$h^{-1}(\{0\}) = \; \down_\aD m$ for some $m \in M(\aD)$.
\end{enumerate}

\item
For every finite distributive lattice $\aD$ we have the bijective monotone and order-reflecting function:
\[
\begin{tabular}{lll}
$\tau_\aD : (J(\aD),\leq_\aD) \to (M(\aD),\leq_\aD)$
&
$\tau_\aD(j)$ & $:= \Lor_\aD \overline{\up_\aD j}$
\\
$\tau_\aD^{\bf-1} : (M(\aD),\leq_\aD) \to (J(\aD),\leq_\aD)$
&
$\tau_\aD^{\bf-1}(m)$ &  $:= \Land_\aD \overline{\down_\aD m}$
\end{tabular}
\]

\item
For every finite distributive lattice $\aD$ we have:
\[
\nleq_\aD |_{J(\aD) \times M(\aD)} = (\leq_{\aD^{\pOp}} |_{J(\aD) \times J(\aD)}) ; \tau_\aD
\]
where $\tau_\aD : J(\aD) \to M(\aD)$ is the canonical bijection from the previous statement. This is equivalent to either of the statements below:
\[
\forall j_1, j_2 \in J(\aD).( j_1 \nleq_\aD \tau_\aD(j_2) \iff j_2 \leq_\aD j_1)
\qquad
\forall j \in J(\aD), m \in M(\aD).( j \nleq_\aD m \iff \tau_\aD^{\bf-1}(m) \leq_\aD j).
\]

\item
Given any finite join-semilattice $\aQ$ then the following statements are equivalent:
\begin{enumerate}
\item
$\aQ$ is a distributive join-semilattice.
\item
$\aQ$ is a join-semilattice retract of a finite boolean join-semilattice.
\item
$\aQ$ is a join-semilattice retract of a finite distributive join-semilattice.
\end{enumerate}

\end{enumerate}
\end{lemma}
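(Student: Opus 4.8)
The plan is to establish the fifteen parts in dependency order, so that each may legitimately invoke its predecessors while only the definitions of Definition~\ref{def:std_order_theory} are taken as given; this is important because parts like (6), (7) and (10) are themselves part of the claim and so must be proved, not assumed. \textbf{Foundations (1)--(4).} Parts (1) and (2) are immediate from the universal property of suprema and infima: setting $\bot_\aQ := \Lor \emptyset$ and $q_1 \lor_\aQ q_2 := \Lor\{q_1,q_2\}$ (and dually $\top$, $\land$) yields commutative idempotent monoids, and uniqueness holds because any compatible operation must compute the relevant supremum. Part (3) reduces to the observation that in a \emph{finite} poset the existence of all binary joins already forces all meets via $q_1 \land q_2 := \Lor\{q : q \leq q_1,\, q \leq q_2\}$, together with $\top := \Lor Q$ and $\bot := \Lor\emptyset$; hence the three classes of finite structures share the same underlying posets. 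Part (4) is a short induction on the length of an interval: if $p <_\pP q$ but not $p \prec_\pP q$, pick an intermediate point and recurse, finite length guaranteeing termination, while the reverse inclusion (reflexive transitive closure of $\prec_\pP$ contained in $\leq_\pP$) is automatic.

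\textbf{Irreducibles (5)--(7).} For (5)(a) I would show that the unique-lower-cover definition of $J(\latL)$ coincides with ``not a join of strictly smaller elements'': an element with a unique lower cover cannot be a proper binary join, and conversely local finiteness below $x$ expresses a reducible $x$ as a join of its lower covers. Part (5)(b) is order-dual, and (5)(c) follows because a locally finite lattice with both bounds has $[\bot,\top] = L$ finite. Part (6), join-generation by $J(\latL)$, is then a well-founded induction on the finite lattice, writing each reducible element as $x_1 \lor_\latL x_2$ with $x_i <_\latL x$ and combining; minimality of $J(\latL)$ among join-generating sets uses (5)(a), since a join-irreducible lying in the join-closure of $S$ must itself belong to $S$. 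Part (7) is then a corollary: $q_1 = \Lor\{\, j \in J(\aQ) : j \leq_\aQ q_1 \,\}$ by (6), so the join-irreducibles below $q_1$ determine it, and the meet-irreducible clause is dual using the lattice structure from (3).

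\textbf{Distributivity and complements (8)--(11).} I would prove (9)(a) first by the standard computation $y = y \land (x \lor y') = (y\land x)\lor(y\land y') = y \land y'$ and symmetrically, giving uniqueness; the De Morgan laws then follow by verifying that $\neg x \land \neg y$ is \emph{a} complement of $x \lor y$ and invoking uniqueness, while (9)(b) is essentially the definition of boolean lattice. Part (8) uses distributivity: a join-irreducible $j$ of a boolean lattice satisfies $j = (j\land x)\lor(j\land \neg x)$ for every $x$, forcing $j$ to be an atom. The core is part (10): (a)$\Rightarrow$(b) expands $j = j\land(x_1\lor x_2) = (j\land x_1)\lor(j\land x_2)$ and applies (5)(a); (b)$\Rightarrow$(c)$\Rightarrow$(d) are an induction on $|X|$ and a restriction; and (d)$\Rightarrow$(a), the substantive direction, reconstructs distributivity by expressing all elements as joins of join-irreducibles via (6) and comparing which irreducibles lie below each side of the distributive law. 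Part (11) is Birkhoff's criterion: the easy direction checks that $M_3$ and $N_5$ are themselves non-distributive, whereas the converse---extracting a copy of $M_3$ or $N_5$ from any failure of distributivity---is the classical hard step and the \textbf{main obstacle} of the whole lemma.

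\textbf{Duality data (12)--(15).} With (6), (7) and (10) available, part (12) identifies functions $\aD \to \two$ preserving $\bot,\top,\lor,\land$ with prime filters, which in the finite distributive case are exactly the principal up-sets $\up_\aD j$ at join-irreducibles (dually the down-sets $\down_\aD m$). Part (13) defines $\tau_\aD(j)$ as the largest element not above $j$ and $\tau_\aD^{\bf-1}(m)$ as the least element not below $m$, using the join-primeness of (10) to see these are meet- resp.\ join-irreducible and mutually inverse, monotone and order-reflecting. Part (14) is then a direct reading of this characterisation, namely $j \nleq_\aD \tau_\aD(j')$ exactly when $j' \leq_\aD j$. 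Finally part (15) combines (6), (7) and (10): (a)$\Rightarrow$(b) realises a finite distributive $\aQ$ as a join-semilattice retract of the boolean $\JPow(J(\aQ))$ via downsets of join-irreducibles and their joins; (b)$\Rightarrow$(c) is trivial since boolean join-semilattices are distributive; and (c)$\Rightarrow$(a) shows distributivity is inherited by retracts using the join-prime form of (10). Apart from the converse half of (11), and the bookkeeping in (10)(d)$\Rightarrow$(a) and the $\tau_\aD$ construction of (13), every step is routine once the parts are tackled in this order.
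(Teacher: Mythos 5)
Your proposal is correct (in the finite setting -- see the caveats below) and follows the paper's own global strategy: the same dependency order (foundations, irreducibles, distributivity, duality data) and, for most parts, the same arguments. The genuinely different local choices are worth recording. For (6) you run a well-founded induction where the paper factors the canonical surjection $\JPow J(\aQ) \epito \aQ$; same substance. For (10), the substantive direction (d)$\Rightarrow$(a): the paper embeds $\latL$ into $(\Pow J(\latL),\cup,\emptyset,\cap,J(\latL))$ via $x \mapsto J(\latL) \,\cap \down_\latL x$ and invokes that sublattices of distributive lattices are distributive, whereas you verify the distributive laws directly by comparing which join-irreducibles lie below the two sides; both are valid -- the paper's embedding is reused later (in (15) and in the appendix dualities), yours is more elementary. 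For (13) the paper gets well-definedness and bijectivity of $\tau_\aD$ as a corollary of (12), while you construct $\tau_\aD(j)$ directly as the largest element not above $j$, using join-primeness; both work, yours being more self-contained. For (15), (c)$\Rightarrow$(a): the paper closes $e[Q]$ under binary meets inside $\aR$ to obtain a distributive sublattice mapping onto $\aQ$ by a meet-preserving surjection; your route -- distributivity is inherited by join-semilattice retracts -- does complete: given $j \in J(\aQ)$ with $j \leq_\aQ x \lor_\aQ y$, split $e(j)$ over the join-prime irreducibles of $\aR$ according to whether they lie below $e(x)$ or $e(y)$, push the two resulting joins forward along $r$ (each lands below $j$ and below $x$ resp.\ $y$), and conclude by join-irreducibility of $j$. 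This is genuinely different from the paper's argument and arguably cleaner.

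Two caveats. First, for (11) you prove only the easy direction and label the converse ``the main obstacle''; as written your proposal leaves this part open. Note the paper does not prove it either -- it cites Gr\"atzer -- so a citation, not a proof attempt, is the intended resolution. Second, your argument for (8) (``forcing $j$ to be an atom'') tacitly uses that a join-irreducible is not a proper join of two elements; since $J$ is defined via unique lower covers, this equivalence is only available through (5)(a), i.e.\ under local finiteness. The paper's own proof of (8) has the same reliance: its deduction that $(\neg_\aA a \land_\aA j) \lor_\aA a = j$ forces $j = \neg_\aA a \land_\aA j$ ``because $j$ covers precisely one element'' also needs every proper subelement of $j$ to lie below a lower cover. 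Indeed the statement fails for general infinite boolean lattices: in $\aB \times \two$ with $\aB$ atomless, every element $(x,1)$ with $x \neq \bot_\aB$ has unique lower cover $(x,0)$ yet is not an atom. So both your argument and the paper's should be read as restricted to the (locally) finite case, which is all the rest of the paper ever uses.
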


\begin{proof}
\item
\begin{enumerate}
\item
Given $\pP = (P,\leq_\pP)$ with all finite suprema we have the join-semilattice $\aQ := (P,\lor_\pP,\Lor_\pP \emptyset)$ where $\leq_\aQ \; = \; \leq_\pP$. Regarding uniqueness, given $\aQ = (Q,\lor_\aQ,\bot_\aQ)$ where $\leq_\aQ \; = \; \leq_\pP$, the ordering uniquely determines the finite joins.

\item
Same argument as previous item, noting that the ordering also uniquely determines the finite meets.

\item
Any finite lattice $\latL$ is bounded, so we can take its underlying join-semilattice structure $\aQ$ i.e.\ the binary join and the bottom element, which determines the ordering. Conversely any finite join-semilattice defines a lattice with the same ordering, because we have all finite joins, hence all joins, hence all meets too.

\item
If $\pP$ has finite length then any $p \leq_\pP p'$ is witnessed by a finite chain of covers $p = p_1 \prec_\pP \dots \prec_\pP p_n = p'$, so that $\leq_\pP$ is the reflexive transitive closure of $\prec_\pP$.

\item
Let $\latL$ be a locally finite lattice.
\begin{enumerate}[(a)]
\item
Let $j \in J(\latL)$ so that it covers precisely one element $x \in L$. If $j = \Lor_{i \in I} x_i$ for some (finite) set $I$ then there must exist $i \in I$ such that $x <_\latL x_i$, and since $x_i \leq_\latL j$ we deduce that $x_i = j$. Conversely, suppose we have $j \in L$ which is not the finite join of any other elements. Since $\latL$ is locally finite and has a bottom element $\bot_\latL$, we have the finite set $[\bot_\latL,j)$ whose join $x := \Lor_\latL [\bot_\latL,j)$ cannot be $j$, so that $x \prec_\latL j$. Given any $y \prec_\latL j$ then since $y <_\latL j$ we have $y \leq_\latL x$ so that $y = x$. We also have the inductive description i.e.\ instead of finite joins we consider empty joins and binary joins.

\item
Follows from (a) by order-duality, noting that $M(\latL) = J(\latL^{\pOp})$ in every lattice -- see Definition \ref{def:std_order_theory}.6.

\item
Every finite lattice (automically bounded) is locally finite, and every bounded locally finite lattice $\latL$ is finite since $L = [\bot_\latL,\top_\latL]$ is finite. Then both (a) and (b) apply.

\end{enumerate}

\item
Concerning irreducibles, instead of finite lattices $\latL$ we may consider finite join-semilattices $\aQ$. Recall the finite join-semilattice $\JPow J(\aQ) = (\Pow J(\aQ),\cup,\emptyset)$. There is a surjective join-semilattice morphism $f : (\Pow J(\aQ),\cup,\emptyset) \epito \aQ$ defined $f(S) := \Lor_\aQ S$, as we now verify. It is clearly a well-defined function, and also a morphism because:
\[
\begin{tabular}{c}
$f(\bot_{\JPow J(\aQ)}) = f(\emptyset) = \Lor_\aQ \emptyset = \bot_\aQ$
\\[1ex]
$f(S_1 \lor_{\JPow J(\aQ)} S_2) = f(S_1 \cup S_2) = \Lor_\aQ S_1 \cup S_2 = (\Lor_\aQ S_1) \lor_\aQ (\Lor_\aQ S_2) = f(S_1) \lor_\aQ f(S_2)$
\end{tabular}
\]
using generalised associativity. By standard universal algebra we obtain a sub join-semilattice $f[\Pow J(\aQ)] \subseteq \aQ$. Concerning surjectivity, suppose for a contradiction that we have some $q \in Q$ such that $q \notin f[\Pow J(\aQ)]$. Since the latter is the closure of $J(\aQ)$ under $\aQ$-joins, we deduce by (5).(a) that $q \in J(\aQ)$ which is a contradiction. Thus $J(\aQ)$ join-generates $\aQ$.

To see that every join-generating subset contains $J(\aQ)$, assume that $\aQ$ is join-generated by $S \subseteq Q$ i.e.\ $\ang{S}_\aQ = \aQ$. For a contradiction assume there exists $j \in J(\aQ)$ such that $j \notin S$. By (5).(a) we know that $j$ is not the join of other elements, yielding the contradiction $j \notin Q$. The statements involving meet-irreducibles follow by order-duality i.e.\ apply the above statements to $\aQ^{\pOp}$.

\item
The first equivalence follows because each $q \in Q$ is the join of those join-irreducibles below it by (6). The second equivalence follows by order-duality.

\item
Let $\aA$ be any possibly infinite boolean algebra. We have $At(\aA) \subseteq J(\aA)$ using the definitions. Take any join-irreducible $j \in J(\aA)$ and for a contradiction assume $j \notin At(\aA)$, hence $\bot <_\aA a \prec_\aA j$ for a unique $a \in A$. Observe that:
\[
(\neg_\aA a \land_\aA j) \lor_\aA a
= (\neg_\aA a \lor_\aA a) \land_\aA (j \lor_\aA a)
= \top_\aA \land_\aA j
= j
\]
using a distributive law, a complement law and a unit law. Because $j$ covers precisely one element we deduce that $j = \neg_\aA a \land_\aA j$ and hence $a \leq_\aA j \leq_\aA \neg_\aA a$, so that $a = a \land_\aA \neg_\aA a = \bot_\aA$, this being a contradiction.

\item
Let $\aD = (D,\lor_\aD,\bot_\aD,\land_\aD,\top_\aD)$ be a bounded distributive lattice.

\begin{enumerate}
\item
Given any $d \in D$ suppose we have two complements $d_1$, $d_2 \in D$, then:
\[
d_1 
= d_1 \land_\aD \top_\aD 
= d_1 \land_\aD (d \lor_\aD d_2)
= (d_1 \land_\aD d) \lor_\aD (d_1 \land_\aD d_2)
= \bot_\aD \lor_\aD (d_1 \land_\aD d_2)
= d_1 \land_\aD d_2
\]
so that $d_1 \leq_\aD d_2$, and by the symmetric argument $d_1 = d_2$.

\item
We'll show how to equationally deduce $\neg(x\land y) \approx \neg x \lor \neg y$. Firstly, $(x \land y) \land (\neg x \lor \neg y) \approx \bot$ is deduced  using a distributive law and a complement law twice followed by idempotence, whereas $(x \land y) \lor (\neg x \lor \neg y) \approx \top$ is deduced using the other distributive law and the other complement law twice followed by idempotence. One can then instantiate a general procedure i.e.\ the syntactic version of the uniqueness of complements in boolean lattices, having already specified their existence via the complement laws. Given $x \land y \approx \bot$ and $x \lor y  \approx \top$, one (i) applies $\neg x \lor (-)$ to deduce $\neg x \land y \approx \neg x$, (ii) applies $\neg x \land (-)$ to deduce $\neg x \lor y \approx \neg x$ and therefore $y \approx y \land (\neg x \lor y) \approx \neg x \land y$ using an absorption law. Combining (i) and (ii) yields $y \approx \neg x \land y \approx \neg x$. Then applying this general procedure to the earlier equalities we deduce $\neg(x\land y) \approx \neg x \lor \neg y$.
\end{enumerate}

\item

\begin{enumerate}[(i)]
\item
First suppose that (a) holds i.e.\ $\latL = \aD$ is a finite distributive lattice. We'll show that (b), (c) and (d) all hold and are equivalent.

Concerning (b), suppose $j \in J(\aD)$, $d_1,d_2 \in D$ are such that $j \leq_\aD d_1 \lor_\aD d_2$. By distributivity $j = j \land_\aD (d_1 \lor_\aD d_2) = (j \land_\aD d_1) \lor_\aD (j \land_\aD d_2)$, so by join-irreducibility $\exists i.\, j = j \land_\aD d_i$ and hence $j \leq_\aD d_i$, as required.  The more general formulation (c) involving any subset $X \subseteq D$ follows by induction:
\begin{enumerate}
\item
If $X = \emptyset$ recall that $j \neq_\aD \bot_\aD$ by definition.
\item
If $X = \{d\} \cup Y$ where $|Y| < |X|$ then we have $j \leq_\aD d \lor_\aD \Lor_\aD Y$, so that either $j \leq_\aD d$ and we are done, or $j \leq_\aD \Lor_\aD Y$ and we may apply induction.
\end{enumerate}

Finally, (c) is equivalent to the more specific condition (d) i.e.\  we restrict to subsets $X \subseteq J(\aD) \subseteq D$. This follows because:
\[
\Lor_\aD X = \Lor_\aD J(\aD) \; \cap \down_\aD X
\qquad
\text{for any subset $X \subseteq D$}
\]
i.e.\ every element in a finite join-semilattice arises as the join of those join-irreducibles below it.

\item
The proof of the previous item shows that (b), (c) and (d) are all equivalent, even without knowing that the finite lattice $\latL$ is distributive. Then it suffices to show that (b) implies (a). We'll achieve this by embedding $\latL$ into a set-theoretic bounded distributive lattice, recalling that sublattices of bounded distributive lattices are distributive. So define:
\[
e : \latL \to (\Pow J(\latL),\cup,\emptyset,\cap,J(\latL))
\text{\quad with action \quad}
e(x) :=  J(\latL) \; \cap \down_\latL X.
\]
This is a well-defined function and also injective because elements of a finite (join-semi)lattice arise as the join of those join-irreducibles beneath them. We  have $e(\bot_\latL) = \emptyset$ and $e(\top_\latL) = J(\latL)$, and also $e(x_1 \land_\latL x_2) = e(x_1) \cap e(x_2)$ by virtue of the defining property of meets. Finally we use (b) to show preservation of joins:
\[
e(x_1 \lor_\latL x_2)
= \{ j \in J(\latL) : j \leq_\latL x_1 \lor_\latL x_2 \}
\stackrel{!}{=} \{ j \in J(\latL) : \exists i. j \leq_\latL x_i \}
= e(x_1) \cup e(x_2).
\]

\end{enumerate}

\item
See \cite[Chapter II, Theorem 1]{GratzerGeneralLattice1998}.

\item
$\mathrm{(a)} \implies \mathrm{(b)}$.

Assume (a) and let $X := h^{-1}(\{1\})$. Then $X$ is upwards-closed (since $h$ is monotone) and closed under meets (since $h$ preserves meets), so that  $X = \; \up_\aD \Land_\aD X$ by finiteness. Moreover $d := \Land_\aD X \in J(\aD)$ because if $d = d_1 \lor_\aD d_2$ then $h(d_1) \lor_\two h(d_2) = h(d_1 \lor_\aD d_2) = h(d) = 1$, so that $h(d_i) = 1$ for some $i$, hence $d_i \leq_\aD d_1 \lor d_2 = d \leq_\aD d_i$.

\smallskip
$\mathrm{(a)} \implies \mathrm{(c)}$.

Given $h : \aD \to \two$ then we also have the bounded distributive lattice morphism $h^{\pOp} : \aD^{\pOp} \to \two^{\pOp}$ with the very same action. Thus we also have $g := \swap \circ h^{\pOp} : \aD^{\pOp} \to \two$. Applying the previous argument we deduce that $g^{-1}(\{1\}) = \; \up_{\aD^{\pOp}} j = \; \down_\aD j$ for some $j \in J(\aD^{\pOp}) = M(\aD)$. Finally observe that $h^{-1}(\{0\}) = g^{-1}(\{1\})$.

\smallskip
$\mathrm{(b)} \implies \mathrm{(a)}$.

We have a function $h : D \to 2$ such that $h(d) = 1$ iff $j \leq_\aD d$ where $j \in J(\aD)$. Firstly $h(\bot_\aD) = 0$ because $j$ is join-irreducible, and also $h(\top_\aD) = 1$ by virtue of being the top element. Moreover $h(d_1 \land_\aD d_2) = 1$ iff $j \leq_\aD d_1 \land_\aD d_2$ iff $\forall i.j \leq_\aD d_i$ iff $\forall i. h(d_i) = 1$. Finally $h(d_1 \lor d_2) = 1$ iff $h(d_1) = 1$ or $h(d_2) = 1$ by Lemma \ref{lem:std_order_theory}.10. 

\smallskip
$\mathrm{(c)} \implies \mathrm{(a)}$.

We have a function $h : D \to 2$ such that $h(d) = 0$ iff $d \leq_\aD m$. Therefore $g := \swap \circ h : D \to 2$ is such that $g(d) = 1$ iff $m \leq_{\aD^{\pOp}} d$ where $m \in J(\aD^{\pOp})$. By the previous statement we deduce that $g$ has type $\aD^{\pOp} \to \two$, so that $h = \swap \circ g^{\pOp}$ has type $\aD \to \two$.

\item
The functions are well-defined, bijective and the inverse of each other by the previous statement, since distinct join/meet-irreducibles have distinct principal up/downsets. Finally,
\[
j_1 \leq_\aD j_2
\iff \up_\aD j_2 \; \subseteq \; \up_\aD j_1
\iff \overline{\up_\aD j_1} \; \subseteq \; \overline{\up_\aD j_2}
\iff \Lor_\aD \overline{\up_\aD j_1} \leq_\aD \Lor_\aD \overline{\up_\aD j_2}
\iff \tau_\aD(j_1) \leq_\aD \tau_\aD(j_2).
\]

\item
The following calculation:
\[
\begin{tabular}{lll}
$\nleq_\aD |_{J(\aD) \times M(\aD)} \, ;\, \tau_\aD^{\bf-1}(j_1,j_2)$
&
$\iff \exists m \in M(\aD).( j_1 \nleq_\aD m \text{ and } j_2 = \tau_\aD^{\bf-1}(m) )$
\\&
$\iff \exists m \in M(\aD).( j_1 \nleq_\aD m \text{ and } \tau_\aD(j_2) = m)$
\\&
$\iff j_1 \nleq_\aD \tau_\aD(j_2)$
\\&
$\iff \neg(j_1 \leq_\aD \Lor_\aD \overline{\up_\aD j_2})$
\\&
$\iff \neg(j_1 \leq_\aD \Lor_\aD \{ d \in D : j_2 \nleq_\aD d \})$
\\&
$\iff \neg\exists d \in D.(j_1 \leq_\aD d \text{ and } j_2 \nleq_\aD d )$
& $j_1$ is join-prime
\\&
$\iff \forall d \in D.( j_1 \leq_\aD d \To j_2 \leq_\aD d)$
\\&
$\iff j_2 \leq_\aD j_1$
& by Lemma \ref{lem:std_order_theory}.7
\\&
$\iff \leq_{\aD^{\pOp}} |_{J(\aD) \times J(\aD)} (j_1,j_2)$
\end{tabular}
\]
proves that:
\[
\nleq_\aD |_{J(\aD) \times M(\aD)} \, ; \, \tau_\aD^{\bf-1}
= \; \leq_{\aD^{\pOp}} |_{J(\aD) \times J(\aD)}
\]
so post-composing the bijection yields:
\[
\nleq_\aD |_{J(\aD) \times M(\aD)}
= \; \leq_{\aD^{\pOp}} |_{J(\aD) \times J(\aD)} \, ; \, \tau_{\aD}.
\]

\item
Let $\aQ$ be a finite join-semilattice. 

\begin{enumerate}
\item
If $\aQ$ is distributive then we have the retract:
\[
\begin{tabular}{ll}
$e : \aQ \monoto \JPow J(\aQ)$
& $r : \JPow J(\aQ) \epito \aQ$
\\
$e(a) := \{ j \in J(\aQ) : j \leq_\aQ q \}$
& $r(S) := \Lor_\aQ S$
\\[0.5ex]
\end{tabular}
\]
That $e$ is well-defined follows because join-irreducibles in finite distributive lattices are join-prime (see (10) above), and $r$ is well-defined by freeness of $\JPow J(\aQ)$ (or is easily directly verified). That $r \circ e = id_\aQ$ follows because elements of a finite join-semilattice are the join of those join-irreducibles beneath them.

% verify this
\item
To finish the proof, it suffices to establish that:
\begin{quote}
if $\aQ$ is a join-semilattice retract of a finite distributive join-semilattice $\aR$ then it is distributive.
\end{quote}
By assumption $r \circ e = id_\aQ$ for some injective morphism $e : \aQ \to \aR$ and surjective morphism $r : \aR \to \aQ$. If $S \subseteq R$ is the closure of $e[Q]$ under binary $\aR$-meets, then $S$ is also closed under binary $\aR$-joins because $e[Q]$ is closed under them and $\aR$ is distributive. Now, since $\bot_\aR = e(\bot_\aQ) \in e[Q] \subseteq S$, it follows that $S$ defines a sub join-semilattice $\iota : \aS \hookto \aR$ which is also a sublattice of $(R,\lor_\aR,\land_\aR)$ and hence distributive. Since the join-semilattice morphism $r' := r \circ \iota : \aS \to \aQ$ is surjective because $e[Q] \subseteq S$, it suffices to establish that $r'$ preserves binary meets.

\smallskip
Given any $s_1,\,s_2 \in S$ then by construction there exist subsets $X_i \subseteq Q$ such that $s_i = \Land_\aR e[X_i]$ for $i = 1,2$. Now, for any subset $X \subseteq Q$ and element $x \in X$ we have:
\[
\Land_\aQ X = r \circ e(\Land_\aQ X)
\leq_\aQ r(\Land_\aR e[X])
\leq_\aQ r \circ e(x) = x
\]
using monotonicity, and since $x$ is arbitrary it follows that $r(\Land_\aR e[X]) = \Land_\aQ X$. Finally,
\[
\begin{tabular}{lll}
$r'(s_1 \land_\aS s_2)$
&
$= r(s_1 \land_\aR s_2)$
& $r'$ restricts $r$, $\land_\aS$ restricts $\land_\aR$
\\&
$= r(\Land_\aR e[X_1] \land_\aR \Land_\aR e[X_2])$
& 
\\&
$= r(\Land_\aR e[X_1 \cup X_2])$
& associativity
\\&
$= \Land_\aQ (X_1 \cup X_2)$
& see above
\\&
$= \Land_\aQ X_1 \land_\aQ \Land_\aQ X_2$
& associativity
\\&
$= r(\Land_\aQ e[X_1]) \land_\aQ r(\Land_\aQ e[X_2])$
& see above
\\&
$= r'(s_1) \land_\aQ r'(s_2)$
& 
\end{tabular}
\]

\end{enumerate}
\end{enumerate}
\end{proof}

We shall also need the concepts of a closure operator and interior operator on a poset.

\begin{definition}
\label{def:cl_inte}
Let $\pP = (P,\leq_\pP)$ be any poset.
\begin{enumerate}
\item
A \emph{closure operator on $\pP$} is a function $\cl : P \to P$ such that:
\begin{enumerate}
\item
$x \leq_\pP \cl(x)$,
\item
$x \leq_\pP y$ implies $\cl(x) \leq_\pP \cl(y)$,
\item
$\cl \circ \cl = \cl$.
\end{enumerate}
for all $x,y \in P$. That is, a closure operator is a monotone endomorphism $\pP \to \pP$ which is \emph{extensive} (property 1) and idempotent (property 3). Its fixpoints are those $P$ where $\cl[P] \subseteq P$ and are called the \emph{closed elements}.

\item
An \emph{interior operator on $\pP$} is a function $\inte : P \to P$ such that:
\begin{enumerate}
\item
$\inte(x) \leq_\pP x$,
\item
$x \leq_\pP y$ implies $\inte(x) \leq_\pP \inte(y)$,
\item
$\inte \circ \inte = \inte$.
\end{enumerate}
Only the first property is different: the \emph{co-extensive} property. Fixpoints of $\inte$ are those where $\inte[P] \subseteq P$ and are called the \emph{open} elements.
\end{enumerate}
\end{definition}

Observe that a closure operator on $\pP$ is really the same thing as an interior operator on $\pP^{\pOp}$.

\begin{lemma}[Open and closed elements as substructures]
\label{lem:clo_co_basic}
\item
Take any closure operator $\cl : \pP \to \pP$ and interior operator $\inte : \pP \to \pP$.
\begin{enumerate}
\item
$\cl[P]$ is closed under all meets that exist in $\pP$. In particular, it contains $\top_\pP$ if the latter exists.
\item
If $\pP$ has a lattice structure $\latL$ then $\cl[P]$ forms a sub-join-semilattice of $\latL^{\pOp}$. Moreover, if $\pP$ is finite and has a join-semilattice structure $\aQ$ then $\cl[P]$ is the carrier of a sub join-semilattice of $\aQ^{\pOp}$.
\item
$\inte[P]$ is closed under all joins that exist in $\pP$. In particular, it contains $\bot_\pP$ if the latter exists.
\item
If $\pP$ has a lattice structure $\latL$ then $\inte[P]$ forms a sub-join-semilattice of $\latL$. Moreover, if $\pP$ is finite and has a join-semilattice structure $\aQ$ then $\inte[P]$ is the carrier of a sub join-semilattice of $\aQ$.
\end{enumerate}
\end{lemma}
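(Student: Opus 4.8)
The plan is to derive the entire lemma from statement (1), obtaining (3) by order-duality and reading off (2) and (4) as specialisations. The first thing I would record is that, since $\cl$ is idempotent, its image $\cl[P]$ coincides with its set of fixpoints: every $\cl(x)$ satisfies $\cl(\cl(x)) = \cl(x)$, and conversely each fixpoint lies in the image. Thus the closed elements are exactly $\{ p \in P : \cl(p) = p \}$, and it suffices to check that this fixpoint set is closed under existing meets.

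For statement (1), suppose $S \subseteq \cl[P]$ is such that $m := \bigwedge S$ exists in $\pP$; I would show $\cl(m) = m$. Extensivity gives $m \leq_\pP \cl(m)$, so it only remains to see that $\cl(m)$ is a lower bound of $S$. For each $s \in S$ we have $m \leq_\pP s$, hence $\cl(m) \leq_\pP \cl(s) = s$ by monotonicity together with $s$ being a fixpoint; therefore $\cl(m)$ is a lower bound of $S$ and so $\cl(m) \leq_\pP m$ because $m$ is the \emph{greatest} lower bound. The empty meet is the special case $S = \emptyset$: then $m = \top_\pP$ whenever the latter exists, and extensivity alone forces $\cl(\top_\pP) = \top_\pP$. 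This settles (1). Statement (3) then follows immediately by order-duality, since an interior operator on $\pP$ is precisely a closure operator on $\pP^{\pOp}$ (Definition \ref{def:cl_inte}), and the meets of $\pP^{\pOp}$ are the joins of $\pP$ with $\top_{\pP^{\pOp}} = \bot_\pP$.

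For statement (2), a lattice $\latL$ has all binary meets, so by (1) the set $\cl[P]$ is closed under $\land_\latL$, i.e.\ under the binary join of $\latL^{\pOp}$, which is the sub-join-semilattice claim. In the finite case $\aQ$ additionally has a top $\top_\aQ = \Lor_\aQ Q$ (Lemma \ref{lem:std_order_theory}.3), which by (1) lies in $\cl[P]$ and is exactly the bottom $\bot_{\aQ^{\pOp}}$; hence $\cl[P]$ contains the empty join of $\aQ^{\pOp}$ and is the carrier of a genuine sub join-semilattice of $\aQ^{\pOp}$. Statement (4) is the order-dual of (2), obtained by applying (2) to the interior operator regarded as a closure operator on $\pP^{\pOp}$ and reading the conclusion back in $\pP$.

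I do not anticipate a real obstacle: the argument in (1) is a short lower-bound computation. The only points needing care are the bookkeeping of the two opposite-order conventions, so that meets/joins and top/bottom are matched correctly across $\pP$ and $\pP^{\pOp}$, and the treatment of the empty meet, which is precisely what supplies the bottom element required by the finite sub-join-semilattice statements.
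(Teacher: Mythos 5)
Your proof is correct and follows essentially the same route as the paper: show $\cl(z)$ is a lower bound of the closed set $S$ via monotonicity plus the fixpoint property, conclude $\cl(z) \leq_\pP z$, combine with extensivity, and obtain (2)--(4) by specialisation and order-duality. The extra care you take with the empty meet and the identification of the image with the fixpoint set is implicit in the paper's argument but harmless to make explicit.
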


\begin{proof}
Regarding the first statement, suppose that $x_i \in \cl[\pP]$ for all $i \in I$ and that $z = \Land_{i \in I} x_i$ exists in $\pP$. Then $\cl(z) \leq_\pP \cl(x_i)$ for all $i \in I$ by monotonicity of $\cl$. Thus $\cl(z) \leq_\pP \Land_{i\in I} x_i = z$, so by the extensivity of $\cl$ we deduce that $\cl(z) = z$. The second statement follows immediately from the first. The final statements are order-duals of the first two.
\end{proof}

\begin{note}[Closure/interior operators needn't preserve meets/joins]
\item
Although the meet of closed sets is closed whenever it exists, $\cl$  \emph{needn't preserve meets} i.e.\ we may have $\cl(x \land y) \neq \cl(x) \land \cl(y)$. For example, let $X = \{x,y_1,y_2,z\}$ have four distinct elements, take the two binary relations $\rR_i = \{(x,y_i),(y_i,z)\} \subseteq X \times X$ for $i = 1,2$, and let $\cl$ construct the transitive closure on the respective inclusion-ordered lattice of binary relations. Then although $\cl(\rR_1 \cap \rR_2) = \emptyset$ and $\cl(\rR_1) \cap \cl(\rR_2) = \{(x,z)\}$ are both closed under transitivity, they are not equal. Of course, this also means that interior operators needn't preserve joins. \endbox
\end{note}

\begin{lemma}[Properties of adjoint monotone morphisms]
\label{lem:adjoint_cl_in}
\item
Given two monotone functions $f : \pP \to \pQ$ and $g : \pQ \to \pP$ such that:
\[
f(p) \leq_\pQ q
\iff
p \leq_\pP g(q)
\qquad
\text{for all $p \in P$ and $q \in Q$}
\]
then the following statements hold:
\begin{enumerate}
\item
$g \circ f : \pP \to \pP$ is a closure operator and $f \circ g : \pQ \to \pQ$ is an interior operator.

\item
For any subset $X \subseteq P$ such that $\Lor_\pP X$ exists in $\pP$, we have:
\[
f(\Lor_\pP X) = \Lor_\pQ f[X]
\]
so in particular the latter join exists in $\pQ$.

\item
For any subset $Y \subseteq Q$ such that $\Land_\pQ Y$ exists in $\pQ$, we have:
\[
g(\Land_\pQ Y) = \Land_\pP g[Y]
\]
so in particular the latter meet exists in $\pP$.

\end{enumerate}
\end{lemma}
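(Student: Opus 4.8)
The plan is to extract the unit and counit of the adjunction first and then derive everything from them. Setting $q := f(p)$ in the defining equivalence and using reflexivity $f(p) \leq_\pQ f(p)$ yields the \emph{unit} inequality $p \leq_\pP g(f(p))$ for all $p \in P$; dually, setting $p := g(q)$ and using $g(q) \leq_\pP g(q)$ yields the \emph{counit} inequality $f(g(q)) \leq_\pQ q$ for all $q \in Q$. These two inequalities, together with monotonicity and antisymmetry, drive the whole argument.

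For statement (1), I first note that $g \circ f$ and $f \circ g$ are monotone as composites of monotone maps. Extensivity of $g \circ f$ is exactly the unit, and co-extensivity of $f \circ g$ is exactly the counit, so only idempotence remains. For this I would establish the triangle identities $f \circ g \circ f = f$ and $g \circ f \circ g = g$: applying the counit at $q = f(p)$ gives $f(g(f(p))) \leq_\pQ f(p)$, while applying the monotone $f$ to the unit gives $f(p) \leq_\pQ f(g(f(p)))$, so antisymmetry in $\pQ$ forces equality; the second identity is the order-dual. Idempotence of $g \circ f$ then follows by applying $g$ to $f \circ g \circ f = f$, and idempotence of $f \circ g$ by applying $f$ to $g \circ f \circ g = g$.

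For statement (2), I would verify directly that $f(\Lor_\pP X)$ satisfies the universal property of $\Lor_\pQ f[X]$ whenever $\Lor_\pP X$ exists. It is an upper bound, since $x \leq_\pP \Lor_\pP X$ and monotonicity give $f(x) \leq_\pQ f(\Lor_\pP X)$ for each $x \in X$. For minimality, take any upper bound $q$ of $f[X]$; then $f(x) \leq_\pQ q$ for all $x$, so by the adjunction $x \leq_\pP g(q)$ for all $x$, whence $g(q)$ bounds $X$ and $\Lor_\pP X \leq_\pP g(q)$; applying the adjunction once more gives $f(\Lor_\pP X) \leq_\pQ q$. Hence $f(\Lor_\pP X)$ is the least upper bound of $f[X]$, which in particular exists in $\pQ$.

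Statement (3) is the order-dual of (2). Read in the opposite posets $\pP^{\pOp}$ and $\pQ^{\pOp}$, the defining equivalence becomes $g(q) \leq_{\pP^{\pOp}} p \iff q \leq_{\pQ^{\pOp}} f(p)$, which exhibits $g$ as the left adjoint of a Galois connection $g \dashv f$ between the opposite posets; the join-preservation of (2) then translates into exactly the meet-preservation $g(\Land_\pQ Y) = \Land_\pP g[Y]$. I expect no genuine obstacle here, as every step is forced by the adjunction together with antisymmetry; the only point needing a little care is the derivation of the triangle identities, since idempotence in (1) depends on them.
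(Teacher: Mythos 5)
Your proposal is correct and follows essentially the same route as the paper: extensivity and co-extensivity come straight from the unit and counit of the adjunction, join-preservation is verified against the universal property using the adjunction twice, and (3) is obtained by order-duality. The only cosmetic difference is that you derive the triangle identities $f\circ g\circ f = f$ and $g\circ f\circ g = g$ before concluding idempotence, whereas the paper proves the two idempotence inequalities directly; both are valid and equivalent.
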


\begin{proof}
\item
\begin{enumerate}
\item
Defining $\cl := g \circ f$ then it is certainly a monotone morphism $\pP \to \pP$. Regarding extensivity, $p \leq_\pP \cl(p)$ iff $p \leq_\pP g(f(p))$ iff $f(p) \leq_\pQ f(p)$ and hence always holds. Similarly $f \circ g$ is co-extensive because $f \circ g(q) \leq_\aQ q$ iff $g(q) \leq_\aQ g(q)$. Regarding the idempotence of $\cl$, applying monotonicity to extensivity yields $\cl(p) \leq_\pP \cl \circ \cl(p)$ and the converse follows by the co-extensivity of $f \circ g$ and the monotonicity of $g$:
\[
g \circ f \circ g \circ f(p)
= g(f \circ g(f(p)))
\leq_\pP g (f(p)).
\]
Thus $\cl$ is a well-defined closure operator. Regarding the interior operator $\inte := f \circ g$, since $q \leq_{\pQ^{\pOp}} f(p)$ iff $g(q) \leq_{\pP^{\pOp}} p$ we can apply the above argument to deduce that $f^{\pOp} \circ g^{\pOp} : \pQ^{\pOp} \to \pQ^{\pOp}$ is a closure operator on $\pQ^{\pOp}$, hence $\inte$ is an interior operator on $\pQ$.

\item
Given any $X \subseteq P$ such that $\Lor_\pP X$ exists in $\pP$ we are going to show that $\Lor_\pQ f[X]$ exists in $\pQ$, and in fact equals $q_X := f(\Lor_\pP X)$. For every $x \in X$ we have $f(x) \leq_\pQ q_X$ by monotonicity i.e.\ it is an upper-bound for $f[X] \subseteq Q$. Given any other $q_0 \in Q$ such that $\forall x \in X. f(x) \leq_\pQ q_0$, then by adjointness we have $x \leq_\pP g(q_0)$ and hence $\Lor_\pP X \leq_\pP g(q_0)$. Applying monotonicity and the co-extensitivity of $f \circ g$ proved in (1), we deduce that:
\[
q_X = f(\Lor_\pP X) \leq_\pQ f(g(q_0)) \leq_\pQ q_0.
\]

\item
This follows from (2) via order-duality i.e.\ $g(q) \leq_{\pP^{\pOp}} p \iff q \leq_{\aQ^{\pOp}} f(p)$ for every $(p,q) \in P \times Q$.

\end{enumerate}
\end{proof}

\begin{note}
This instantiates a well-known categorical result i.e.\ given an adjunction $G \vdash F : \Cat \to \DCat$ between categories then  $G \circ F$ is the functorial component of a \emph{monad} (closure operator) and $F \circ G$ is the functorial component of a \emph{comonad} (interior operator). \endbox
\end{note}

\section{Finite join-semilattices and their self-duality}

\begin{definition}[The category of finite join-semilattices]
$\JSL_f$ is the category whose objects are the finite join-semilattices $\aQ = (Q,\lor_\aQ,\bot_\aQ)$ and whose morphisms $f : \aQ \to \aR$ are the join-semilattice morphisms between them, see Definition \ref{def:std_order_theory}.10. Composition is the usual functional composition, and the identity morphism $id_\aQ$ is the identity function $\Delta_Q$. \endbox
\end{definition}

Thus $\JSL_f$ consists of all finite join-semilattices with its usual algebra homomorphisms. Viewing the finite join-semilattices as the finite commutative and idempotent monoids, the latter are precisely the monoid morphisms. Alternatively  they may be described as those functions preserving \emph{all} joins i.e.\ such that:
\[
f(\Lor_\aQ S) = \Lor_\aR f[S]
\qquad
\text{(for all $S \subseteq Q$ )}
\]
due to the finiteness of the join-semilattices involved. We are going to describe the self-duality of $\JSL_f$, which is of fundamental importance to our approach. It restricts two distinct dualities:

\begin{enumerate}
\item
The self-duality of complete join-semilattices i.e.\ complete lattices equipped with those functions which preserve all joins. This variety (with infinitary signature) consists of the Eilenberg-Moore algebras for the powerset functor $\Pow : \Set \to \Set$ \cite{CatsMaclane71}.

\item
The Stone-type duality between the variety of join-semilattices with bottom (with finitary signature) and the Stone topological join-semilattices \cite{StoneSpaces}.
\end{enumerate}

But it also follows directly from the adjoint functor theorem restricted to posets i.e.\ thin skeletal categories. Each finite join-semilattice has a bottom element and binary joins, hence all joins by finiteness, hence all meets by completeness. Each $\JSL_f$-morphism $f : \aQ \to \aR$ preserves all joins (= colimits), thus by the adjoint functor theorem it has a unique left adjoint i.e.\ a function  $f_* : R \to Q$ such that:
\[
f(q) \leq_\aR r
\;\iff\;
q \leq_\aQ f_*(r)
\qquad
(\text{for all $q \in Q$, $r \in R$})
\]
which preserves all meets i.e.\ sends meets in $\aR$ to meets in $\aQ$. It follows that:
\[
f_*(r) = \Lor_\aQ f^{-1}(\down_\aR r) = \Lor_\aQ \{ q \in Q : f(q) \leq_\aR r \}
\qquad
\text{(for all $r \in R$)}
\]
and defines a $\JSL_f$-morphism of type $\aR^{\pOp} \to \aQ^{\pOp}$.

\begin{theorem}
\label{thm:jsl_self_dual}
$\JSL_f$ is self-dual via the functor $\OD_j : \JSL_f^{op} \to \JSL_f$ defined:
\[
\OD_j\aQ := \aQ^{\pOp}
\qquad
\dfrac
{f: \aQ \to \aR}
{\OD_j f^{op} := \lambda r \in R.\Lor_\aQ f^{-1}(\down_\aR r) : \aR^{\pOp} \to \aQ^{\pOp}}
\]
with natural isomorphism $rep : Id_{\JSL_f} \To \OD_j \circ \OD_j^{op}$ whose components are the identity morphisms $rep_\aQ := id_\aQ$.
\end{theorem}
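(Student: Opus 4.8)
The plan is to verify in turn that $\OD_j$ is well-defined on objects and on morphisms, that it is a contravariant functor, and finally that $rep$ is a natural isomorphism; the genuinely new content is concentrated in the last step, while the rest is either bookkeeping or supplied by earlier results. On objects, $\aQ^{\pOp} = (Q,\land_\aQ,\top_\aQ)$ is a finite join-semilattice by Definition \ref{def:std_order_theory}.12.d, and since its underlying poset is the opposite of that of $\aQ$ we immediately obtain $(\aQ^{\pOp})^{\pOp} = \aQ$; this is precisely what licenses the choice $rep_\aQ := id_\aQ$. On morphisms, the discussion preceding the theorem already establishes that $f$ preserves all joins, hence admits the adjoint $f_*$ characterised by $f(q) \leq_\aR r \iff q \leq_\aQ f_*(r)$ with closed form $f_*(r) = \Lor_\aQ f^{-1}(\down_\aR r)$; that $f_*$ preserves all meets, and is therefore a $\JSL_f$-morphism $\aR^{\pOp} \to \aQ^{\pOp}$, is exactly Lemma \ref{lem:adjoint_cl_in}.3 applied to this adjunction.

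For functoriality I would argue purely from uniqueness of adjoints. Preservation of identities is immediate, since $id_\aQ(q) \leq_\aQ r \iff q \leq_\aQ id_\aQ(r)$ forces its adjoint to be $id_\aQ$ again, i.e.\ $\OD_j((id_\aQ)^{op}) = id_{\aQ^{\pOp}}$. For composition, given $f : \aQ \to \aR$ and $h : \aR \to \aS$ in $\JSL_f$, the chain
\[
(h \circ f)(q) \leq_\aS s
\iff f(q) \leq_\aR h_*(s)
\iff q \leq_\aQ f_*(h_*(s))
\]
shows that $f_* \circ h_*$ satisfies the defining biconditional of $(h \circ f)_*$, whence $(h \circ f)_* = f_* \circ h_*$ by uniqueness of adjoints. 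Since composition in $\JSL_f^{op}$ sends $f^{op},h^{op}$ to $f^{op} \circ h^{op} = (h \circ f)^{op}$, this is exactly the statement that $\OD_j$ preserves composition.

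The heart of the argument, and the step I expect to be the main obstacle, is naturality. Because $rep_\aQ$ and $rep_\aR$ are identities and $(\aQ^{\pOp})^{\pOp} = \aQ$, the naturality square for $f : \aQ \to \aR$ collapses to the single equation $(\OD_j \circ \OD_j^{op})(f) = f$, which on unwinding the definitions is the claim $(f_*)_* = f$. Here $(f_*)_*$ is the adjoint of the $\JSL_f$-morphism $f_* : \aR^{\pOp} \to \aQ^{\pOp}$, so it has type $\aQ \to \aR$; rewriting the two opposite orders back into $\leq_\aQ$ and $\leq_\aR$, its defining biconditional becomes $q \leq_\aQ f_*(r) \iff (f_*)_*(q) \leq_\aR r$ for all $q \in Q$, $r \in R$. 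Comparing this with the original adjointness $f(q) \leq_\aR r \iff q \leq_\aQ f_*(r)$ gives $(f_*)_*(q) \leq_\aR r \iff f(q) \leq_\aR r$ for every $r$; instantiating $r = f(q)$ and $r = (f_*)_*(q)$ and invoking antisymmetry yields $(f_*)_*(q) = f(q)$, as required.

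Finally, each component $rep_\aQ = id_\aQ$ is an isomorphism, so once naturality is in hand $rep$ is automatically a natural isomorphism and the pair $(\OD_j, rep)$ witnesses the self-duality. The only place demanding real care is keeping the orders $\leq_\aQ$, $\leq_\aR$ and their opposites straight while computing the double adjoint in the naturality step; everything else either is routine or has been quoted from the material above.
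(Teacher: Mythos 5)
Your proof is correct, and its skeleton coincides with the paper's: well-definedness on objects via Definition \ref{def:std_order_theory}.12.d, well-definedness of $\OD_j f^{op} = f_*$ via the adjoint relationship, functoriality, and naturality reduced to $(f_*)_* = f$. The difference lies in how the individual steps are discharged. Where you invoke Lemma \ref{lem:adjoint_cl_in}.3 to get meet-preservation of $f_*$ and then argue functoriality and $(f_*)_* = f$ purely from uniqueness of adjoints, the paper elects to verify everything by explicit computation: it checks the biconditional $f(q) \leq_\aR r \iff q \leq_\aQ f_*(r)$ directly from the closed form, computes $f_*(\bot_{\aR^{\pOp}})$ and $f_*(r_1 \lor_{\aR^{\pOp}} r_2)$ by hand, and unwinds $(g\circ f)_*$ and $(f_*)_*$ elementwise (while remarking that the latter ``already follows from the uniqueness of adjoints''). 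Your route is shorter and less error-prone on the order-reversal bookkeeping, since each equality is forced by a biconditional rather than computed; the paper's route has the advantage of exhibiting the concrete formulas (e.g.\ that $(g\circ f)^{-1}(\down_\aS s) = f^{-1}(\down_\aR \Lor_\aR g^{-1}(\down_\aS s))$ because preimages of downsets under morphisms are down-closed and join-closed), which are reused later. One small point worth tightening: your application of Lemma \ref{lem:adjoint_cl_in}.3 presupposes that the explicit formula $f_*(r) = \Lor_\aQ f^{-1}(\down_\aR r)$ actually satisfies the adjoint biconditional and is monotone; the forward implication is by construction, but the backward one genuinely uses that $f$ preserves the (finite) join defining $f_*(r)$ — you gesture at this by citing the pre-theorem discussion, but a one-line check as in the paper would make the argument self-contained.
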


\begin{proof}
We first verify that $\OD_j$ is a well-defined functor. Recall that each finite join-semilattice $\aQ = (Q,\lor_\aQ,\bot_\aQ)$ defines a finite lattice $\latL = (Q,\lor_\aQ,\bot_\aQ,\land_\aQ,\top_\aQ)$ in a unique fashion, so that $\OD_j\aQ = \aQ^{\pOp} = (Q,\land_\aQ,\top_\aQ)$ is a well-defined finite join-semilattice. We have already explained why $\OD_j f^{op} = f_*$ is a well-defined $\JSL_f$-morphism of type $\aR^{\pOp} \to \aQ^{\pOp}$, but let us directly verify this anyway. It is certainly a well-defined function because all joins exist in $\aQ$, so let us verify the `adjointness' i.e.\ $f(q) \leq_\aR r$ iff $q \leq_\aQ f_*(r)$, for any $q \in Q$ and $r \in R$.
\begin{enumerate}
\item
$(\To)$ by definition of $f_*$.
\item
$(\oT)$ because if $q$ is the join of all $q_i \in Q$ such that $f(q_i) \leq_\aR r$ then $f(q) = f(\Lor_i q_i) = \Lor_i f(q_i) \leq_\aR r$.
\end{enumerate}
We now use this to verify preservation of the bottom element and binary join:
\[
f_*(\bot_{\aR^{\pOp}}) 
= f(\top_\aR) 
= \Lor_\aQ f^{-1}(\down_\aR \top_\aR) 
= \Lor_\aQ f^{-1} (R)
= \Lor_\aQ Q
= \top_\aQ
= \bot_{\aQ^{\pOp}}
\]
\[
\begin{tabular}{lll}
$f_*(r_1 \lor_{\aR^{\pOp}} r_2)$
&
$= f_*(r_1 \land_\aR r_2)$
\\&
$= \Lor_\aQ f^{-1}(\down_\aR r_1 \land_\aQ r_2)$
\\&
$= \Lor_\aQ f^{-1}(\down_\aR r_1 \, \cap \down_\aR r_2)$
& property of binary meets in posets
\\&
$= \Lor_\aQ f^{-1}(\down_\aR r_1) \cap f^{-1}(\down_\aR r_2)$
& $f^{-1}$ preserves intersections
\\&
$= \Lor_\aQ \{ q \in Q : f(q) \leq_\aR  r_1, r_2 \}$
\\&
$= \Lor_\aQ \{ q \in Q : q \leq_\aQ  f_*(r_1), f_*(r_2) \}$
& applying adjoint relationship
\\&
$= f_*(r_1) \land_\aQ f_*(r_2)$
& induced meet in join-semilattice
\\&
$= f_*(r_1) \lor_{\aQ^{\pOp}} f_*(r_2)$
\end{tabular}
\]
Thus $\OD_j$'s action is well-defined. Regarding the preservation of identity morphisms:
\[
\OD_j id_\aQ^{op} 
= (id_\aQ)_*
= \lambda q \in Q.\Lor_\aQ \Delta_Q^{-1}(\down_\aQ q)
= \lambda q \in Q.\Lor_\aQ \down_\aQ q
= \lambda q \in Q.q
= id_{\OD_j \aQ}
\]
Regarding composition of morphisms, given $f : \aQ \to \aR$ and $g : \aR \to \aS$ then:
\[
\begin{tabular}{lll}
$\OD_j (f^{op} \circ g^{op})$
&
$= \OD_j (g \circ f)^{op}$
\\&
$=(g \circ f)_*$
\\&
$= \lambda s \in S. \Lor_\aQ (g \circ f)^{-1}(\down_\aS s)$
\\&
$= \lambda s.\Lor_\aQ f^{-1} \circ g^{-1} (\down_\aS s)$
& functoriality of preimage
\\&
$= \lambda s.\Lor_\aQ f^{-1} (\down_\aR \Lor_\aR g^{-1}(\down_\aS s) )$
& $g^{-1}(\down_\aS s)$ down-closed, join-closed in $\aR$
\\&
$= \lambda s. f_*(\down_\aR \Lor_\aR g^{-1}(\down_\aS s))$
\\&
$= f_* \circ g_*$
\\&
$= \OD_j f^{op} \circ \OD_j g^{op}$
\end{tabular}
\]
Finally let us verify that $rep : \Id_{\JSL_f} \To \OD_j \circ \OD_j^{op}$ where $rep_\aQ = id_\aQ$ is a natural isomorphism i.e.\ for all morphisms $f : \aQ \to \aR$ we must show the following diagram commutes:
\[
\xymatrix@=15pt{
\aQ \ar[rr]^{rep_\aQ} \ar[d]_f && \aQ \ar[d]^{\OD_j \OD_j^{op} f}
\\
\aR \ar[rr]_{rep_\aR} && \aR
}
\]
or equivalently that $(f_*)_* = f$. This already follows from the uniqueness of adjoints, but we'll verify it anyway:
\[
\begin{tabular}{lll}
$(f_*)_*$
&
$= \lambda q \in Q. \Lor_{\aR^{\pOp}} (f_*)^{-1}(\down_{\aQ^{\pOp}} q)$
\\&
$= \lambda q \in Q. \Land_\aR \{ r \in R : q \leq_\aQ  f_*(r) \}$
\\&
$= \lambda q \in Q. \Land_\aR \{ r \in R : f(q) \leq_\aR r \}$
& by adjoint relationship
\\ &
$= \lambda q \in Q. f(q)$
\\ &
$= f$
\end{tabular}
\]
\end{proof}

Let us make a few basic observations concerning these adjoint morphisms.

% possibly add retract statements e.g.
% 1. adjoints of retracts are retracts
% 2. generalise (2) via assuming two retracts (or only one?)
% 3. is canonical embedding a retract.. YES

\begin{lemma}
\label{lem:adj_obs}
\item
\begin{enumerate}
\item
For all $\JSL_f$-morphisms $f : \aQ \to \aR$ we have the adjoint relationship:
\[
f(q) \leq_\aR r
\iff q \leq_\aQ f_*(r)
\qquad
\text{(for every $q \in Q$ and $r \in R$)}
\]

\item
The adjoint of an isomorphism between finite join-semilattices acts like its inverse. That is, if $f : \aQ \to \aR$ is a $\JSL_f$-isomorphism then $f_* = (f^{\bf-1})^{\pOp}$, where it is permissible to take the order-dual monotone mapping because we are dealing with bounded lattice isomorphisms. Moreover:
\[
(f^{\bf-1})_* = (f_*)^{\bf-1} = f^{\pOp}
\]

\item
The image function is the adjoint of the preimage function. That is, for any function $f : X \to Y$ between finite sets, the adjoint of $\Pow f : (\Pow X,\cup,\emptyset) \to (\Pow Y,\cup,\emptyset)$ is the preimage $f^{-1} : (\Pow Y,\cap,Y) \to (\Pow X,\cap,X)$.

\item
Each $\JSL_f(\aQ,\aR)$ admits a join-semilattice structure, the ordering $\leq_{(\aQ,\aR)}$ being the pointwise-ordering. The mapping $f \mapsto f_*$ defines a $\JSL_f$-isomorphism from $(\JSL_f(\aQ,\aR),\leq_{(\aQ,\aR)})$ to $(\JSL_f(\aR^{\pOp},\aQ^{\pOp}),\leq_{(\aR^{\pOp},\aQ^{\pOp})})$.

\item
A $\JSL_f$-morphism is a section iff its adjoint is a retract.

\end{enumerate}
\end{lemma}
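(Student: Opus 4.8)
The plan is to handle items (1)--(3) uniformly through the uniqueness of adjoints, item (4) as an order-isomorphism of hom-posets, and item (5) as a formal consequence of contravariant functoriality. Item (1) requires no new work: the equivalence $f(q) \leq_\aR r \iff q \leq_\aQ f_*(r)$ was already verified while checking that $\OD_j f^{op} = f_*$ is a well-defined morphism in the proof of Theorem~\ref{thm:jsl_self_dual}, so I would merely record it. Its role in what follows is as a \emph{characterisation}: $f_*(r)$ is the unique element satisfying ``$q \leq_\aQ f_*(r) \iff f(q) \leq_\aR r$ for all $q$'', so to identify $f_*$ with an explicit candidate it suffices to verify that the candidate obeys this relationship.

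For item (2), let $f : \aQ \to \aR$ be a $\JSL_f$-isomorphism; being a bijective join-semilattice morphism it is an order-isomorphism (Definition~\ref{def:std_order_theory}.14), so $f(q) \leq_\aR r \iff q \leq_\aQ f^{\bf-1}(r)$. Hence $f^{\bf-1}(r)$ witnesses the defining property of $f_*(r)$, and uniqueness gives $f_* = (f^{\bf-1})^{\pOp}$ (the order-dual map has the same underlying action). The ``moreover'' then follows: $f_*$ is an isomorphism whose inverse acts like $f$, so $(f_*)^{\bf-1} = f^{\pOp}$, while applying the first part to $f^{\bf-1}$ gives $(f^{\bf-1})_* = ((f^{\bf-1})^{\bf-1})^{\pOp} = f^{\pOp}$. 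Item (3) is the same move applied to the standard equivalence $f[S] \seq T \iff S \seq f^{-1}(T)$: reading it as the adjoint relationship for $\Pow f : \JPow X \to \JPow Y$ against the candidate $f^{-1} : (\Pow Y,\cap,Y) \to (\Pow X,\cap,X)$ identifies $(\Pow f)_*$ with $f^{-1}$, which does land in the order-dual semilattices since it preserves intersections.

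For item (4), I would first note that $\JSL_f(\aQ,\aR)$ under the pointwise order is a finite join-semilattice: the constant map $q \mapsto \bot_\aR$ is its bottom and the pointwise join of two morphisms is again a morphism (by commutativity and associativity of $\lor_\aR$). Next I would check that $f \mapsto f_*$ is monotone: if $f(q) \leq_\aR g(q)$ for all $q$ then $\{q : g(q) \leq_\aR r\} \seq \{q : f(q) \leq_\aR r\}$, so $g_*(r) \leq_\aQ f_*(r)$, i.e.\ $f_* \leq_{(\aR^{\pOp},\aQ^{\pOp})} g_*$. Since $(f_*)_* = f$ (Theorem~\ref{thm:jsl_self_dual}) the map is a bijection, and the same monotonicity argument in the dual situation shows its inverse is monotone as well; a monotone bijection with monotone inverse is an order-isomorphism, and between finite join-semilattices any order-isomorphism is a $\JSL_f$-isomorphism because the order determines all joins.

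Finally, item (5) is formal. The proof of Theorem~\ref{thm:jsl_self_dual} supplies $(g \circ f)_* = f_* \circ g_*$ and $(id_\aQ)_* = id_{\aQ^{\pOp}}$, and item (4) (or $(f_*)_* = f$) shows $(-)_*$ is a bijection on morphisms. Thus $g \circ f = id_\aQ$ gives $f_* \circ g_* = id_{\aQ^{\pOp}}$, exhibiting $f_*$ as a retract; conversely $f_* \circ h = id_{\aQ^{\pOp}}$ with $h = g_*$ (available by bijectivity) yields $(g \circ f)_* = (id_\aQ)_*$ and hence $g \circ f = id_\aQ$ by injectivity, so $f$ is a section. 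I expect the only genuinely fiddly step to be the bookkeeping in item (4)---keeping the pointwise order oriented correctly under the reversal $(-)^{\pOp}$ and confirming that an order-isomorphism upgrades to a $\JSL_f$-isomorphism---whereas the remaining items fall out immediately once uniqueness of adjoints and contravariant functoriality are in hand.
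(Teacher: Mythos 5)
Your proposal is correct and follows essentially the same route as the paper: item (1) by reference to the proof of Theorem~\ref{thm:jsl_self_dual}, item (4) by the identical pointwise-monotonicity and order-reflection argument (with the bijection supplied by $(f_*)_* = f$), and item (5) via contravariant functoriality and injectivity of $(-)_*$. The only cosmetic difference is that in items (2) and (3) you identify the adjoint through the uniqueness of adjoints, whereas the paper computes $f_*(r) = \Lor_\aQ f^{-1}(\down_\aR r)$ explicitly; both arguments reduce to the same adjoint relationship from item (1).
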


\begin{proof}
\item
\begin{enumerate}
\item
See the proof of Theorem \ref{thm:jsl_self_dual}.

\item
Given a $\JSL_f$-isomorphism $f : \aQ \to \aR$ then the adjoint $f_* : \aR^{\pOp} \to \aQ^{\pOp}$ has action:
\[
f_*(r) 
= \Lor_\aQ f^{-1}(\down_\aR r)
= \Lor_\aQ \down_\aQ f^{\bf-1}(r)
= f^{\bf-1}(r)
\]
using the fact that $f^{\bf-1} : \aR \to \aQ$ is a monotone bijection. Then it has the same typing and action as $(f^{\bf-1})^{\pOp}$, so these are the same $\JSL_f$-morphisms. Consequently:
\[
(f_*)^{\bf-1} 
= ((f^{\bf-1})^{\pOp})^{\bf-1}
= ((f^{\bf-1})^{\bf-1})^{\pOp}
= f^{\pOp}
\qquad
(f^{\bf-1})_*
= ((f^{\bf-1})^{\bf-1})^{\pOp}
= f^{\pOp}
\]
where in the left derivation we have used the general fact that inverses commute with $(-)^{\pOp}$.

\item
We calculate:
\[
\begin{tabular}{ll}
$(\Pow f)_* (S)$
&
$= \Lor_{(\Pow X,\cup,\emptyset)} (\Pow f)^{-1}(\down_{(\Pow Y,\cup,\emptyset)} S)$
\\&
$= \bigcup (\Pow f)^{-1}(\{ K \subseteq Y : Z \subseteq S\})$
\\&
$= \bigcup \{ K \subseteq X : f[X] \subseteq S \}$
\\&
$= f^{-1}(S)$
\end{tabular}
\]
since the $f$-preimage of $S$ is the largest subset whose image under $f$ lies inside $S$.

\item
The bottom element is $\lambda r \in R. \bot_\aR$, and the pointwise-join of morphisms is again a $\JSL_f$-morphism:
\[
\begin{tabular}{ll}
$f_1 \lor_{(\aQ,\aR)} f_2 (\bot_\aQ)$ 
&
$= f_1(\bot_\aQ) \lor_\aR f_2(\bot_\aR) 
= \bot_\aR \lor_\aR \bot_\aR 
= \bot_\aR$
\\\\
$f_1 \lor_{(\aQ,\aR)} f_2(q_1 \lor_\aQ q_2)$
&
$= f_1(q_1 \lor_\aQ q_2) \lor_\aR f_2(q_1 \lor_\aQ q_2)$
\\ &
$= f_1(q_1) \lor_\aR f_1(q_2) \lor_\aR f_2(q_1) \lor_\aR f_2(q_2)$
\\ &
$= f_1(q_1) \lor_\aR f_2(q_1) \lor_\aR f_1(q_2) \lor_\aR f_2(q_2)$
\\ &
$= f_1 \lor_{(\aQ,\aR)} f_2(q_1) \, \lor_\aR \, f_1 \lor_{(\aQ,\aR)} f_2(q_2)$
\end{tabular}
\]
The mapping $f \mapsto f_*$ is bijective by the self-duality theorem. Given $f \leq_{(\aQ,\aR)} g$ we first show that $f_* \leq_{(\aR^{\pOp},\aQ^{\pOp})} g_*$. Given any $r \in R$, then $g_*(r)$ is the $\aQ$-join of all elements $a \in Q$ such that $g(a) \leq_\aR r$, and since $f(a) \leq_\aQ g(a) \leq_\aR r$ we deduce that $f_*(r)$ is the $\aQ$-join of a larger set. Therefore $g_*(r) \leq_\aQ f_*(r)$ and thus $f_*(r) \leq_{\aQ^{\pOp}} g_*(r)$, and since $r$ was arbitrary we have $f_* \leq_{(\aR^{\pOp},\aQ^{\pOp})} g_*$. This proves monotonicity. Order-reflection follows by applying the adjunction in the opposite direction i.e.\ by the same argument $f_* \leq_{(\aR^{\pOp},\aQ^{\pOp})} g_*$ implies that $f = (f_*)_* \leq_{(\aQ,\aR)} (g_*)_* = g$ using the naturality of $rep$.

\item
Recall that an algebra morphism $s : \aQ \to \aR$ is a section (resp.\ $r : \aR \to \aQ$ is a retract) iff there exists an algebra morphism $r : \aR \to \aS$ (resp.\ $s : \aS \to \aR$) such that $r \circ s = id_\aQ$. Then since $s_* \circ r_* = (r \circ s)_* = (id_{\aQ})_* = id_{\aQ^{\pOp}}$ the statement is clear.

\end{enumerate}
\end{proof}

% -- possibly remove note ---
\takeout{
\begin{note}
Regarding the second point, a certain natural generalisation of it fails. That is, given $f : \aQ \to \aR$ and a domain/codomain restriction $g : \aS \to \aT$  which is a join-semilattice isomorphism, then it needn't be the case that $g^{\bf-1}$ restricts $f_*$. A counterexample is obtained by considering the example given in the proof of Lemma  \ref{lem:jsl_mor_inj_fail}.2 below. That is, it restricts to an isomorphism of a four element boolean lattice but its adjoint does not restrict to the inverse of this isomorphism.
\end{note}
}

The fourth point above and its proof lead naturally to the following standard definition.

\begin{definition}
\label{def:hom_functor_jsl}
To any two finite join-semilattices $\aQ$ and $\aR$ we associate the finite join-semilattice: \[
\begin{tabular}{c}
$\JSL_f[\aQ,\aR] = (\JSL_f(\aQ,\aR),\lor_{(\aQ,\aR)},\bot_{(\aQ,\aR)})$
\\[0.5ex]
where $f_1 \lor_{(\aQ,\aR)} f_2 := \lambda q \in Q. f_1(q) \lor_\aR f_2(q)$ and also $\bot_{(\aQ,\aR)} := \lambda q \in Q. \bot_\aR$.
\end{tabular}
\]
That is, the join is the pointwise-join and the bottom is the constantly bottom map. \endbox
\end{definition}

\begin{lemma}
\label{lem:jsl_mor_iso}
\item
\begin{enumerate}
\item
The self-duality of join-semilattices restricts to a join-semilattice isomorphism:
\[
\JSL_f[\aQ,\aR] \xto{(-)_*} \JSL_f[\aR^{\pOp},\aQ^{\pOp}]
\]
for each $\aQ$, $\aR \in \JSL_f$.
\item
Any $\JSL_f$-morphism $\theta : \aR \to \aS$ induces a join-semilattice morphism:
\[
\JSL_f[\aQ,\aR] \xto{\theta \circ (-)} \JSL_f[\aQ,\aS]
\qquad\mathrm{with\;action}\qquad
g \mapsto \theta \circ g
\]
\end{enumerate}
\end{lemma}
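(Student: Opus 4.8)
The plan is to handle the two parts separately, reducing each to facts already established. For part (1), I would observe that this is essentially a restatement of Lemma \ref{lem:adj_obs}.4 in the cleaner hom-object notation introduced in Definition \ref{def:hom_functor_jsl}. That lemma already shows that $f \mapsto f_*$ is a bijection (via the self-duality Theorem \ref{thm:jsl_self_dual}) which is both monotone and order-reflecting with respect to the pointwise orderings $\leq_{(\aQ,\aR)}$ and $\leq_{(\aR^{\pOp},\aQ^{\pOp})}$, hence a $\JSL_f$-isomorphism between the corresponding hom-semilattices. The only point worth spelling out is that the poset $(\JSL_f(\aQ,\aR),\leq_{(\aQ,\aR)})$ equipped with its pointwise join is precisely the finite join-semilattice $\JSL_f[\aQ,\aR]$, and dually for the target; the underlying reason the order-isomorphism of Lemma \ref{lem:adj_obs}.4 upgrades to a join-semilattice isomorphism is that for finite join-semilattices the algebraic structure is recoverable from the order (Lemma \ref{lem:std_order_theory}.1), so any bijective order-preserving, order-reflecting map automatically preserves all joins.

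For part (2), I would first check that $\theta \circ (-)$ lands in the right place: the composite $\theta \circ g$ of the $\JSL_f$-morphisms $g : \aQ \to \aR$ and $\theta : \aR \to \aS$ is again a $\JSL_f$-morphism of type $\aQ \to \aS$, so $\theta \circ g \in \JSL_f(\aQ,\aS)$ and the map is well-defined on underlying sets. It then remains to verify that $\theta \circ (-)$ preserves the bottom element and binary join of the hom-semilattices, and both are immediate pointwise calculations that use only the fact that $\theta$ preserves $\bot$ and $\lor$. For the bottom, $\theta \circ \bot_{(\aQ,\aR)} = \lambda q \in Q.\,\theta(\bot_\aR) = \lambda q \in Q.\,\bot_\aS = \bot_{(\aQ,\aS)}$; for the join, evaluating $\theta \circ (g_1 \lor_{(\aQ,\aR)} g_2)$ at an arbitrary $q \in Q$ yields $\theta(g_1(q) \lor_\aR g_2(q)) = \theta(g_1(q)) \lor_\aS \theta(g_2(q))$, which is exactly the value of $(\theta \circ g_1) \lor_{(\aQ,\aS)} (\theta \circ g_2)$ at $q$.

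Neither part presents a genuine obstacle: part (1) is a translation of Lemma \ref{lem:adj_obs}.4 into the hom-object notation, and part (2) is a routine pointwise verification relying solely on $\theta$ being a morphism. If anything needs care it is the bookkeeping in part (1), ensuring the identification of $(\JSL_f(\aQ,\aR),\leq_{(\aQ,\aR)})$ with $\JSL_f[\aQ,\aR]$ is made explicit so that the cited isomorphism applies verbatim; but this identification is exactly what Definition \ref{def:hom_functor_jsl} records, so the argument is essentially a matter of assembling existing pieces.
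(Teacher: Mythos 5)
Your proposal is correct and follows essentially the same route as the paper: part (1) is discharged by citing Lemma \ref{lem:adj_obs}.4 (which already asserts the $\JSL_f$-isomorphism, with bijectivity, monotonicity and order-reflection established in its proof), and part (2) is the same pointwise verification that post-composition with $\theta$ preserves the bottom and binary join. Your added remark that a bijective order-isomorphism of finite join-semilattices automatically preserves joins is a harmless and correct piece of bookkeeping that the paper leaves implicit.
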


\begin{proof}
The first statement follows from the statement and proof of Lemma \ref{lem:adj_obs}.4 above. Regarding the second statement, we certainly have a well-defined function and:
\[
\begin{tabular}{llll}
$\theta \circ (-)(\bot_{\JSL_f[\aQ,\aR]})$
&
$= \theta \circ (\lambda q \in Q.\bot_\aR)$
&\quad
$\theta \circ (-)(g \lor_{\JSL_f[\aQ,\aR]} f)$
& $= \theta \circ (\lambda q \in Q. g(q) \lor_\aR f(q))$
\\&
$= \lambda q \in Q.\theta(\bot_\aR)$
&&
$= \lambda q \in Q. (\theta \circ g(q) \lor_\aS \theta \circ f(q))$
\\&
$= \lambda q \in Q. \bot_\aS$
&&
$= \theta \circ g \lor_{\JSL_f[\aQ,\aS]} \theta \circ f$
\\&
$= \bot_{\JSL_f[\aQ,\aS]}$
&&
$= \theta \circ (-)(g) \lor_{\JSL_f[\aQ,\aS]} \theta \circ (-)(f)$
\end{tabular}
\]
\end{proof}

Recall that the elements of a possibly infinite join-semilattice $\aQ$ biject with the join-semilattice morphisms of type $\two \to \aQ$ i.e.\ consider the action on the latter on $\top_\two = 1$. Restricting to the finite level, the self-duality yields a correspondence between elements of $\aQ$ and the ideals $\ideal{\aQ^{op}}{q}$ i.e.\ the morphisms of type $\aQ^{\pOp} \to \two$.

\begin{definition}[Elements and ideals as morphisms]
\label{def:elem_ideal_mor}
\item
Let $\aQ$ be any finite join-semilattice.
\begin{enumerate}
\item
Each element $q \in Q$ has an associated join-semilattice morphism:
\[
\elem{\aQ}{q} :=  \; \lambda b \in \{0,1\}. (b = 1) \; ? \; q : \bot_\aQ \; :  \two \to \aQ
\]
and we define the join-semilattice $\jslElem{\aQ} := \JSL_f[\two,\aQ]$.

\item
Each element $q_0 \in Q$ has an associated join-semilattice morphism:
\[
\ideal{\aQ}{q_0} := \; \lambda q \in Q. (q \leq_\aQ q_0) \; ? \; 0 : 1 \; : \aQ \to \two
\]
and we define the join-semilattice $\jslIdeal{\aQ} := \JSL_f[\aQ,\two]$. \endbox

\end{enumerate}
\end{definition}

\begin{lemma}
\label{lem:jsl_elem_iso}
For each finite join-semilattice $\aQ$ the following statements hold.
\begin{enumerate}
\item
We have the join-semilattice isomorphism:
\[
\begin{tabular}{c}
$\elem{\aQ}{-} : \aQ \to \jslElem{\aQ} = \JSL_f[\two,\aQ]$
\\
$\elem{\aQ}{q} := \lambda b \in 2. b \; ? \; q : \bot_\aQ$
\qquad
$\elemInv{\aQ}{h : \two \to \aQ} := h(1)$
\end{tabular}
\]

\item
We have the join-semilattice isomorphism:
\[
\begin{tabular}{c}
$\ideal{\aQ}{-} : \aQ^{\pOp} \to \jslIdeal{\aQ} = \JSL_f[\aQ,\two]$
\\
$\ideal{\aQ}{q_0} = \lambda q \in Q. (q \leq_\aQ q_0) \; ? \; 0 : 1$
\qquad
$\idealInv{\aQ}{h : \aQ \to \two} := \Lor_\aQ h^{-1}(\{0\})$
\end{tabular}
\]

\item
Regarding the adjoints of these special morphisms, 
\[
\begin{tabular}{lll}
$(\elem{\aQ}{q})_*$ & $=$ & $\aQ^{\pOp} \xto{\ideal{\aQ^{\pOp}}{q}} \two \xto{\swap^{\bf-1}} \two^{\pOp}$
\\
$(\ideal{\aQ}{q})_*$ & $=$ & $\two^{\pOp} \xto{\swap} \two \xto{\elem{\aQ^{\pOp}}{q}} \aQ^{\pOp}$
\end{tabular}
\]

\takeout{
\item
$\aQ \cong \jslElem{\aQ} \cong \jslIdeal{\aQ^{\pOp}}$ via the following composition of join-semilattice isomorphisms:
\[
\begin{tabular}{c}
$\aQ \xto{\elem{\aQ}{-}}
\jslElem{\aQ} = \JSL_f[\two,\aQ] \xto{(-)_*} 
\JSL_f[\aQ^{\pOp},\two^{\pOp}] \xto{\swap \circ (-)} 
\JSL_f[\aQ^{\pOp},\two] 
= \jslIdeal{\aQ^{\pOp}}$
\\[0.5ex]
with action $q_0 \mapsto \ideal{\aQ^{\pOp}}{q_0} = \lambda q \in Q.((q_0 \leq_{\aQ} q) \; ? \; 0 : 1)$
\end{tabular}
\]
where $\elem{\aQ}{-}$ is from Lemma \ref{lem:jsl_elem_iso}, $(-)_*$ is from Corollary \ref{lem:jsl_mor_iso}, and $\swap \circ (-)$ is post-composition with the unique join-semilattice isomorphism of type $\two^{\pOp} \to \two$, see Lemma \ref{lem:jsl_mor_iso}.
}

\end{enumerate}
\end{lemma}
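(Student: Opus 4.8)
The plan is to treat parts (1) and (2) as verifications that the displayed maps are bijective $\JSL_f$-morphisms, and part (3) as a direct unfolding of the adjoint formula from Theorem \ref{thm:jsl_self_dual}. Throughout (1) and (2) I would use that an injective join-semilattice morphism is an order-embedding (Definition \ref{def:std_order_theory}), so that a bijective $\JSL_f$-morphism is automatically an isomorphism; hence it suffices to check in each case that the stated map is a morphism, that it is bijective, and that its inverse is the one claimed.

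For (1), first I would check that $\elem{\aQ}{q}$ is a $\JSL_f$-morphism $\two \to \aQ$: it preserves $\bot$ since $\elem{\aQ}{q}(0) = \bot_\aQ$, and it respects the only nontrivial joins of the $2$-chain, namely $0 \lor 1 = 1 = 1 \lor 1$, because $\bot_\aQ \lor_\aQ q = q = q \lor_\aQ q$. Conversely any morphism $h : \two \to \aQ$ has $h(0) = \bot_\aQ$ and is therefore determined by $h(1)$, giving the two-sided inverse $h \mapsto h(1)$ and hence bijectivity. Finally, since the join in $\jslElem{\aQ} = \JSL_f[\two,\aQ]$ is pointwise, $\elem{\aQ}{q_1} \lor \elem{\aQ}{q_2}$ sends $1 \mapsto q_1 \lor_\aQ q_2$ and $0 \mapsto \bot_\aQ$, i.e.\ it equals $\elem{\aQ}{q_1 \lor_\aQ q_2}$, while the constantly-bottom map is $\elem{\aQ}{\bot_\aQ}$; so $\elem{\aQ}{-}$ is a morphism and thus an isomorphism.

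For (2), the crucial observation is that for any morphism $h : \aQ \to \two$ the set $h^{-1}(\{0\})$ is down-closed (monotonicity), join-closed and contains $\bot_\aQ$ (preservation of joins and $\bot$); by finiteness its join $q_0 := \Lor_\aQ h^{-1}(\{0\})$ lies in $h^{-1}(\{0\})$, whence $h^{-1}(\{0\}) = \down_\aQ q_0$ and $h = \ideal{\aQ}{q_0}$. This simultaneously gives surjectivity of $\ideal{\aQ}{-}$ with inverse $h \mapsto \Lor_\aQ h^{-1}(\{0\})$ and injectivity, since $\idealInv{\aQ}{\ideal{\aQ}{q_0}} = \Lor_\aQ \down_\aQ q_0 = q_0$. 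For the order-dual algebraic structure, the bottom of $\aQ^{\pOp}$ is $\top_\aQ$ and $\ideal{\aQ}{\top_\aQ}$ is the constantly-$0$ map $\bot_{(\aQ,\two)}$, whereas a join in $\aQ^{\pOp}$ is a meet in $\aQ$, and $q \leq_\aQ q_1 \land_\aQ q_2 \iff (q \leq_\aQ q_1 \text{ and } q \leq_\aQ q_2)$ shows $\ideal{\aQ}{q_1 \land_\aQ q_2} = \ideal{\aQ}{q_1} \lor_{(\aQ,\two)} \ideal{\aQ}{q_2}$ pointwise; hence $\ideal{\aQ}{-} : \aQ^{\pOp} \to \jslIdeal{\aQ}$ is an isomorphism.

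For (3), I would unfold the formula $f_*(r) = \Lor f^{-1}(\down r)$ from Theorem \ref{thm:jsl_self_dual} in both cases. For $f = \elem{\aQ}{q} : \two \to \aQ$ the adjoint has type $\aQ^{\pOp} \to \two^{\pOp}$ and sends $r \in Q$ to $\Lor_\two \elem{\aQ}{q}^{-1}(\down_\aQ r)$, which is $1$ when $q \leq_\aQ r$ (preimage $\{0,1\}$) and $0$ otherwise (preimage $\{0\}$); this agrees pointwise with $\swap^{\bf-1} \circ \ideal{\aQ^{\pOp}}{q}$. Symmetrically, for $f = \ideal{\aQ}{q} : \aQ \to \two$ the adjoint of type $\two^{\pOp} \to \aQ^{\pOp}$ sends $0 \mapsto \Lor_\aQ \down_\aQ q = q$ and $1 \mapsto \Lor_\aQ Q = \top_\aQ$, matching $\elem{\aQ^{\pOp}}{q} \circ \swap$. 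The only real obstacle here is bookkeeping: one must keep the order-dual codomain straight, take $\Lor_\two$ and $\down_\two$ in $\two$ rather than $\two^{\pOp}$, and use that $\swap$ exchanges $0$ and $1$ by carrying the bottom $1$ of $\two^{\pOp}$ to the bottom $0$ of $\two$. Keeping these conventions consistent is the one place an error could slip in.
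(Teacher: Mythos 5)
Your proposal is correct; part (1) matches the paper's argument essentially verbatim, but parts (2) and (3) take a genuinely different route. The paper proves (2) by exhibiting $\ideal{\aQ}{q_0}$ as the composite $\swap \circ (\elem{\aQ^{\pOp}}{q_0})_*$ and computing that adjoint's action explicitly, so that (3) then falls out of the same computation for free (together with the observation that $\swap$ and $\swap^{\bf-1}$ are self-adjoint). You instead characterise morphisms $h : \aQ \to \two$ intrinsically via their zero-sets, showing $h^{-1}(\{0\}) = \down_\aQ \Lor_\aQ h^{-1}(\{0\})$ by finiteness and join-closure, and then verify (3) by unfolding $f_*(r) = \Lor_\aQ f^{-1}(\down r)$ directly in both cases. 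Your route is more elementary and self-contained; the paper's buys (3) at no extra cost and sets up the $\swap$-bookkeeping reused later (e.g.\ in Lemma \ref{lem:special_jsl_morphisms}). The one small omission in your (2): you show that every $\JSL_f$-morphism $\aQ \to \two$ equals some $\ideal{\aQ}{q_0}$, but never explicitly check that each $\ideal{\aQ}{q_0}$ is itself a $\JSL_f$-morphism $\aQ \to \two$, so that $\ideal{\aQ}{-}$ actually lands in $\jslIdeal{\aQ}$. This is the one-line observation that $q_1 \lor_\aQ q_2 \leq_\aQ q_0$ iff both $q_i \leq_\aQ q_0$ (dual to the meet computation you do carry out for the morphism property of $\ideal{\aQ}{-}$ itself); with that line added the proof is complete.
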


\begin{proof}
\item
\begin{enumerate}
\item
A join-semilattice morphism $f : \two \to \aQ$ must map $0 = \bot_\two$ to $\bot_\aQ$ and may send $1$ to any element of $\aQ$. Thus $\elem{\aQ}{-}$ is a well-defined bijective function, and preserves joins because:
\[
\begin{tabular}{l}
$\elem{\aQ}{\bot_\aQ} 
= \lambda b \in 2.b \;?\; \bot_\aQ : \bot_\aQ 
= \lambda b. \bot_\aQ 
= \bot_{\jslElem{\aQ}}$
\\[0.5ex]
$\elem{\aQ}{q_1 \lor_\aQ q_2}
= \lambda b \in 2. b \;?\; q_1 \lor_\aQ q_2 : \bot_\aQ \lor \bot_\aQ 
= \elem{\aQ}{q_1} \lor_{\jslElem{\aQ}} \elem{\aQ}{q_2}$
\end{tabular}
\]
The correctness of its inverse is clear.

\item
Each $\ideal{\aQ}{q_0} : \aQ \to \two$ is a well-defined join-semilattice morphism because it is the composite:
\[
\ideal{\aQ}{q_0} = \; \aQ \xto{(\elem{\aQ^{\pOp}}{q_0})_*} \two^{\pOp} \xto{\swap} \two
\]
where $\swap$ is the unique join-semilattice morphism of type $\two^{\pOp} \to \two$ (it flips the bit). To see this, let us describe the action of $(\elem{\aQ^{\pOp}}{q_0})_*$.
\[
\begin{tabular}{lll}
$q$
& $\mapsto$ &
$\Lor_\two \{ b \in \{0,1\} : \elem{\aQ^{\pOp}}{q_0}(b) \leq_{\aQ^{\pOp}} q \}$
\\& $=$ &
$\Lor_\two \{ 1 : q \leq_\aQ \elem{\aQ^{\pOp}}{q_0}(1)  \}$
\\& $=$ &
$\Lor_\two \{ 1 : q \leq_\aQ q_0  \}$
\\& $=$ &
$\begin{cases} 
1 & \text{if $q \leq_\aQ q_0$}
\\
0 & \text{otherwise}
\end{cases}$
\end{tabular}
\]
Applying $\swap$ yields the desired action. It also follows from (1) that $\ideal{\aQ}{-}$ is a bijection, and regarding preservation of $\aQ^{\pOp}$-joins:
\[
\ideal{\aQ}{\top_\aQ} 
= \lambda q \in Q. (q \leq_\aQ \top_\aQ) \;?\; 0 : 1 
= \lambda q \in Q. 0 = \bot_{\jslIdeal{\aQ}}
\]
\[
\begin{tabular}{lll}
$\ideal{\aQ}{q_1 \land_\aQ q_2}$
&
$= \lambda q \in Q. (q \leq_\aQ q_1 \land_\aQ q_2) \;?\; 0 : 1$
\\&
$= \lambda q \in Q. (q \nleq_\aQ q_1 \text{ or } q \nleq_\aQ q_2) \;?\; 1 : 0$
\\&
$= \ideal{\aQ}{q_1} \lor_{\jslIdeal{\aQ}} \ideal{\aQ}{q_2}$
\end{tabular}
\]

\item
Follows from the proof of the previous statement, noting that $\swap : \two^{\pOp} \to \two$ is self-adjoint, $\swap^{\bf-1} : \two \to \two^{\pOp}$ is self-adjoint, and they are the same underlying functions (although distinct $\JSL_f$-morphisms).

\takeout{
\item
We need only verify the action of the composite isomorphism:
\[
\begin{tabular}{ll}
$\swap \circ (\elem{\aQ}{q_0})_*$
&
$= \swap \circ (\lambda b \in 2.b \;?\; q_0 : \bot_\aQ)_* $
\\&
$= \swap \circ (\lambda q \in Q. \Lor_\two \{ b \in 2: (b \;?\; q_0 : \bot_\aQ) \leq_\aQ q \})$
\\&
$= \swap \circ (\lambda q \in Q. q_0 \leq_\aQ q \;?\; 1 : 0 )$
\\&
$= \lambda q \in Q. (q_0 \leq_\aQ q) \;?\; 0 : 1$
\\&
$= \lambda q \in Q. (q \leq_{\aQ^{\pOp}} q_0) \;?\; 0 : 1$
\\&
$= \ideal{\aQ^{\pOp}}{q_0}$
\end{tabular}
\]
}

\end{enumerate}

\end{proof}

We shall spend the rest of this section discussing embeddings and quotients of finite join-semilattices. Later on we shall again consider the structural properties of $\JSL_f$ e.g.\ we define the tensor product and prove its universality using the category $\Dep$.

\begin{lemma}
\label{lem:jsl_mono_epi_iso}
Let $f : \aQ \to \aR$ be any $\JSL_f$-morphism.
\begin{enumerate}
\item
$f$ is a monomorphism iff it is injective.
\item
$f$ is an epimorphism iff it is surjective.
\item
$f$ is injective iff $f_*$ is surjective, and equivalently $f$ is surjective iff $f_*$ is injective.
\item
$f$ is an isomorphism iff it is monic and epic iff it is bijective.
\end{enumerate}
\end{lemma}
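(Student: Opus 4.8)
The plan is to reduce everything to injectivity/surjectivity of the underlying function and of its adjoint. Parts (1) and (2) identify the categorical notions of mono/epi with injective/surjective; part (3) is the purely order-theoretic heart, extracted from the Galois connection $f \dashv f_*$; and part (4) then assembles from (1)--(3) together with the observation that a bijective morphism is automatically invertible inside $\JSL_f$. The only implication that is not formal nonsense is the forward direction of (2): epimorphisms in an algebraic category need not be surjective, so I would obtain it from the self-duality $\OD_j$ rather than from any general concrete-category principle. This is the step I expect to be the main obstacle, and it is precisely where Theorem \ref{thm:jsl_self_dual} is indispensable.

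I would prove (3) first. By Lemma \ref{lem:adj_obs}.1 we have $f(q) \leq_\aR r \iff q \leq_\aQ f_*(r)$, so $f \dashv f_*$ is a monotone Galois connection and the triangle identities $f \circ f_* \circ f = f$ and $f_* \circ f \circ f_* = f_*$ hold (equivalently, $f_* \circ f$ is a closure operator on $\aQ$ and $f \circ f_*$ an interior operator on $\aR$ by Lemma \ref{lem:adjoint_cl_in}). From $f \circ f_* \circ f = f$ one reads off that $f$ is injective iff $f_* \circ f = id_\aQ$, and that $f$ is surjective iff $f \circ f_* = id_\aR$. I then invoke $(f_*)_* = f$ from Theorem \ref{thm:jsl_self_dual} and apply these two equivalences to $f_*$ in place of $f$: this yields $f_*$ surjective iff $f_* \circ f = id_\aQ$ iff $f$ injective, and $f_*$ injective iff $f \circ f_* = id_\aR$ iff $f$ surjective, which is exactly (3).

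For (1), injectivity implies monic because the underlying-set functor is faithful; for the converse I use that elements are represented by the morphisms $\elem{\aQ}{-} : \two \to \aQ$ of Lemma \ref{lem:jsl_elem_iso}.1. If $f$ is monic and $f(q_1) = f(q_2)$, then $f \circ \elem{\aQ}{q_1} = f \circ \elem{\aQ}{q_2}$ (both equal $\elem{\aR}{f(q_1)}$), so cancelling $f$ gives $\elem{\aQ}{q_1} = \elem{\aQ}{q_2}$ and hence $q_1 = q_2$. For (2), surjective implies epic again by faithfulness, while the forward direction goes through duality: $\OD_j$ is a contravariant self-equivalence, so it exchanges epimorphisms and monomorphisms, whence $f$ is epic iff $f_* = \OD_j f^{op}$ is monic; by (1) this says $f_*$ is injective, and by (3) that is exactly $f$ surjective.

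Finally (4). Every isomorphism is both monic and epic in any category, and by (1)--(2) ``monic and epic'' is equivalent to ``injective and surjective'', i.e.\ bijective. It remains to check that a bijective morphism is an isomorphism: an injective $\JSL_f$-morphism is an order-embedding (Definition \ref{def:std_order_theory}.14), so a bijective one is an order-isomorphism, and since all joins are determined by the order its set-theoretic inverse is again join- and bottom-preserving, hence a $\JSL_f$-morphism inverting $f$. This closes the chain iso $\iff$ bijective $\iff$ (monic and epic).
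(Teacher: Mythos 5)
Your proof is correct, and all the key ingredients match the paper's: the element morphisms $\elem{\aQ}{-} : \two \to \aQ$ for the converse of (1), the self-duality $\OD_j$ for the hard direction of (2), and the adjunction $f \dashv f_*$ for the computational core. What differs is the dependency order. The paper proves (2) directly: assuming $f$ epic it passes via Theorem \ref{thm:jsl_self_dual} to showing that $f_*$ is surjective whenever $f$ is injective, by the explicit computation $f_*(f(q)) = \Lor_\aQ \{ q' \in Q : f(q') \leq_\aR f(q) \} = q$; statement (3) is then harvested afterwards from (1), (2) and $(f_*)_* = f$. You invert this: (3) comes first, with that same computation repackaged as the triangle identity $f \circ f_* \circ f = f$ (which does indeed follow from Lemma \ref{lem:adjoint_cl_in}) plus left/right cancellation of injective/surjective functions, and (2) is then deduced from (1), (3) and the fact that the contravariant equivalence $\OD_j$ swaps monos and epis. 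Your ordering has the small advantage of making the order-theoretic content of (3) self-contained and independent of the categorical statements; the paper's is more hands-on but lands in the same place. Your justification that a bijective morphism is an isomorphism (via order-embeddings and the fact that joins are order-determined) is also sound, just more explicit than the paper's appeal to the general fact for varieties of algebras.
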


\begin{proof}
\item
\begin{enumerate}
\item
That $f$ is monic means precisely that $f \circ \alpha = f \circ \beta$ implies $\alpha = \beta$ for any $\JSL_f$-morphisms $\alpha,\beta : \aS \to \aQ$. Given that $f$ is injective then $f$ is monic because $f(\alpha(q)) = f(\beta(q))$ implies $\alpha(q) = \beta(q)$. Conversely if $f$ is monic and $f(q_1) = f(q_2)$ then $f \circ \elem{\aQ}{q_1} = f \circ \elem{\aQ}{q_2}$ and hence $\elem{\aQ}{q_1} = \elem{\aQ}{q_2}$, so that $q_1 = q_2$.
\item
That $f$ is epic means precisely that $\alpha \circ f = \beta \circ f$ implies $\alpha = \beta$ for any $\JSL_f$-morphisms $\alpha,\beta : \aR \to \aS$. Given that $f$ is surjective then $f$ is epic because $\alpha(r) = \alpha(f(q)) = \beta(f(q)) = \beta(r)$ by choosing a suitable $q$. Conversely assume $f$ is epic, so that $f$ is the dual of an injective function by using Theorem \ref{thm:jsl_self_dual} and the previous statement. Then it suffices to show that $f_* : \aR^{\pOp} \to \aQ^{\pOp}$ is surjective whenever $f : \aQ \monoto \aR$ is injective. So given any $q \in Q$ we need to find some $r \in R$ such that $f_*(r) = q$, and the obvious choice is $r := f(q)$.
\[
f_*(f(q))
= \Lor_\aQ f^{-1}(\down_\aR f(q))
= \Lor_\aQ \{ q' \in Q : f(q') \leq_\aR f(q) \}
\]
Certainly $q$ is one of the summands. Conversely if $f(q') \leq_\aR f(q)$ then $f(q' \lor_\aQ q) = f(q') \lor_\aR f(q) = f(q)$, so by injectivity $q \lor_\aQ q' = q$ and thus $q' \leq_\aQ q$. Therefore $f_*(f(q)) = q$ and we are finished.

\item
$f$ is injective iff $f$ is $\JSL_f$-monic by the first statement, iff $f_*$ is $\JSL_f$-epic by the duality of Theorem \ref{thm:jsl_self_dual}, iff $f_*$ is surjective by the second statement. Since $f = (f_*)_*$ by the naturality of $rep$ we obtain the other statement.

\item
That $f$ is an isomorphism means that there exists a $\JSL_f$-morphism $g : \aR \to \aQ$ such that $g \circ f = id_\aQ$ and $f \circ g = id_\aR$. Then if $f$ is an isomorphism it is split-monic hence monic hence injective, and split-epic hence epic hence surjective. Thus $f$ is bijective. Conversely suppose $f$ is injective and surjective, hence bijective. Then its functional inverse is a well-defined $\JSL_f$-morphism, a well-known fact that holds in any variety of algebras.

\end{enumerate}
\end{proof}

\begin{note}
Although a bijective homomorphism defines an algebra isomorphism in any variety of algebras, this fails in the ordered setting. For example, a bijective monotone function from a discretely ordered two element set to a $2$-chain does not have a monotone inverse. Moreover, algebra homomorphisms can be epic and yet not surjective, as is the case in the variety of distributive lattices. For example, each of two embeddings of a $3$-chain into a $4$-element boolean algebra are not surjective. However they are both epic using the fact that complements in distributive lattices are unique whenever they exist, see Lemma \ref{lem:std_order_theory}.9. \endbox
\end{note}

We have more to say regarding injective and surjective join-semilattice morphisms. Let us start with some negative results and their duals.

\begin{lemma}
\label{lem:jsl_mor_inj_fail}
Let $f : \aQ \to \aR$ be any join-semilattice morphism between finite join-semilattices. 
\begin{enumerate}
\item
If $f$'s restriction to $J(\aQ) \subseteq Q$ is injective then $f$ need not be injective.
\item
If $f$'s restriction to $M(\aQ) \subseteq Q$ is injective then $f$ need not be injective.
\item
Moreover even if both these restrictions are injective then $f$ needn't be.

\end{enumerate}
\end{lemma}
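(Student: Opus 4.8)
Since all three assertions are negative, the plan is to construct explicit $\JSL_f$-morphisms witnessing each failure. The conceptual source of these counterexamples is that every element factors as $q = \Lor_\aQ \{ j \in J(\aQ) : j \leq_\aQ q \}$ (Lemma \ref{lem:std_order_theory}.6), so that $f(q) = \Lor_\aR f[\{ j \in J(\aQ) : j \leq_\aQ q\}]$; hence even when $f$ separates the individual join-irreducibles, their images in $\aR$ may satisfy order- and join-relations that are absent in $\aQ$, and this is exactly what lets two distinct elements acquire the same image. The meet-irreducible side is order-dual.

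For part (1) I would take $\aQ = \three$, the three-element chain $0 <_\three 1 <_\three 2$, and $\aR = \two$, with $f(0) = f(1) = 0$ and $f(2) = 1$. Any monotone, bottom-preserving map between chains automatically preserves binary joins (the join of two chain elements is the larger one), so $f$ is a $\JSL_f$-morphism. Here $J(\three) = \{1,2\}$ and $M(\three) = \{0,1\}$; the restriction of $f$ to $J(\three)$ sends $1 \mapsto 0$ and $2 \mapsto 1$ and is injective, whereas $f$ collapses $0$ and $1$. For part (2) I would keep $\aQ = \three$, $\aR = \two$ but use the order-dual collapse $f(0) = 0$, $f(1) = f(2) = 1$: now $f$ restricted to $M(\three) = \{0,1\}$ is injective while $f$ identifies $1$ and $2$. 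Observe that the part-(1) map fails to be injective on $M(\three)$ and the part-(2) map fails on $J(\three)$, so each hypothesis is genuinely insufficient on its own.

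For part (3) both conditions must hold simultaneously, and a chain is of no use once it has three or more elements: there monotonicity together with injectivity on $J$ and on $M$ forces $f$ to be strictly increasing, hence injective (consecutive elements always lie together in $J$ or in $M$). The remedy is to use a lattice of width $\geq 2$ in which the join- and meet-irreducibles coincide. I would take $\aQ = \JPow\{1,2\} = (\Pow\{1,2\},\cup,\emptyset)$, the four-element boolean join-semilattice, whose atoms equal its coatoms, so that $J(\aQ) = M(\aQ) = \{\{1\},\{2\}\}$, and set $\aR = \three$ with
\[
f(\emptyset) = 0, \qquad f(\{1\}) = 1, \qquad f(\{2\}) = 2, \qquad f(\{1,2\}) = 2 .
\]
A direct check on the join-generating pairs (the only nontrivial case being $f(\{1\} \cup \{2\}) = 2 = 1 \lor_\three 2$) shows that $f$ preserves $\bot$ and binary joins, so it is a $\JSL_f$-morphism. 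Since $f$ sends the two elements of $J(\aQ) = M(\aQ)$ to the distinct values $1$ and $2$, both restrictions are injective; yet $f(\{2\}) = 2 = f(\{1,2\})$, so $f$ is not injective. Note that this single example in fact already witnesses all three parts, because $J(\aQ) = M(\aQ)$.

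The work here is not computational; the one genuine point is the structural observation underlying part (3): because a chain forces injectivity as soon as both irreducible-sets are separated, a nondegenerate witness must live on a lattice where $J(\aQ)$ and $M(\aQ)$ coincide (the boolean square, or any diamond $M_n$), after which one only has to arrange for the single non-irreducible element --- here the top $\{1,2\}$ --- to be absorbed by the image-side join of the irreducibles. Verifying that each chosen $f$ preserves joins is routine.
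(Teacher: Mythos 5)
Your counterexamples are all correct: the two chain maps $\three \to \two$ do witness parts (1) and (2) (monotone bottom-preserving maps between chains preserve joins, and the computations of $J(\three)=\{1,2\}$, $M(\three)=\{0,1\}$ are right), and the map $\JPow\{1,2\} \to \three$ with $\{1\}\mapsto 1$, $\{2\}\mapsto 2$, $\{1,2\}\mapsto 2$ is a genuine $\JSL_f$-morphism that is injective on $J(\aQ)=M(\aQ)=\{\{1\},\{2\}\}$ yet collapses $\{2\}$ and $\{1,2\}$. The approach is the same as the paper's --- exhibit explicit counterexamples --- but your witnesses are different and smaller: the paper uses $\JPow\{x_1,x_2,x_3\} \epito \jslM{3}$ for (1) and a five-element lattice mapping onto the four-element boolean lattice for (2) and (3), whereas your single four-element example already settles all three parts at once, which is a tidy economy.

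One side remark in your discussion is wrong, though it does not affect the proof. You assert that a nondegenerate witness for part (3) ``must live on a lattice where $J(\aQ)$ and $M(\aQ)$ coincide.'' That does not follow from the (correct) observation that chains cannot work: the paper's own example for (3) is a five-element lattice $\bot < m_2,m_3 < m_1 < \top$ with $J(\aQ)=\{m_2,m_3,\top_\aQ\}$ and $M(\aQ)=\{m_1,m_2,m_3\}$, which are distinct sets. Having $J(\aQ)=M(\aQ)$ is a convenient sufficient device, not a necessary one; I would soften that sentence to a remark about why your particular choice is convenient rather than a claim of necessity.
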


\begin{proof}
\item
\begin{enumerate}
\item
As a counter-example, first recall that the join-semilattice $\jslM{3}$ is obtained by adding a new top and bottom element to the discrete poset with elements $X = \{x_1,x_2,x_3\}$. Then we have the join-semilattice morphism $f : \JPow X \to \jslM{3}$ where  $f(\{x_i\}) = x_i$ for each of three join-irreducibles (atoms). It is clearly injective on the join-irreducibles, yet maps every meet-irreducible (coatom) to $\top_{\jslM{3}}$.

\item
We illustrate a counter-example below.
\[
\xymatrix@=10pt{
& \bullet \ar@/^5pt/@{-->}[drrrr] &  
\\
& m_1 \ar@{-}[u] \ar@{-->}[rrrr] &  &&   & \bullet
\\
m_2 \ar@{-}[ur] \ar@/^5pt/@{-->}[rrrr] && m_3 \ar@{-}[ul] \ar@/_5pt/@{-->}[rrrr] && \bullet  \ar@{-}[ur] && \bullet  \ar@{-}[ul]
\\
& \bullet \ar@{-}[ul] \ar@{-}[ur] \ar@{-->}[rrrr] &  &&   & \bullet \ar@{-}[ul]  \ar@{-}[ur]
}
\]
It is easily seen to be a well-defined join-semilattice morphism $f : \aQ \to \aR$ i.e.\ we are essentially extending the identity function on $\two^2$ with an identification. Then it is injective on the meet-irreducibles $\{m_1,m_2,m_3\}$ but it is not an injective function.

\item
The third statement follows from the second example above, noting that $J(\aQ) = \{m_2,m_3,\top_{\aQ}\}$.

\end{enumerate}
\end{proof}

We now dualise the above observations item-by-item. Recall that the ideal associated to an element $q_i \in Q$ of a join-semilattice $\aQ$ is the join-semilattice morphism $\ideal{\aQ}{q_0} : \aQ \to \two$ defined $\lambda q \in Q.(q \leq_\aQ q_0) \;?\; 0 : 1$. Then one says $f$ \emph{separates} a collection of ideals $\{ \ideal{\aQ}{q_i} : i \in I \}$ if whenever $q_i \neq q_j$ then $\ideal{\aQ}{q_i} \circ f \neq \ideal{\aQ}{q_j} \circ f$. 

\begin{lemma}
Let $f : \aQ \to \aR$ be any join-semilattice morphism between finite join-semilattices. 
\begin{enumerate}
\item
If $f$ separates the ideals $\{ \ideal{\aQ}{m} : m \in M(\aQ) \}$ then $f$ needn't be surjective.
\item
If $f$ separates the ideals $\{ \ideal{\aQ}{j} : j \in J(\aQ) \}$ then $f$ needn't be surjective.
\item
If $f$ separates the ideals $\{ \ideal{\aQ}{q} : q \in J(\aQ) \cup M(\aQ) \}$ then $f$ needn't be surjective.

\end{enumerate}
\end{lemma}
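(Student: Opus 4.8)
The plan is to obtain each item as the $\OD_j$-dual of the correspondingly numbered item of Lemma~\ref{lem:jsl_mor_inj_fail}: if $f : \aQ \to \aR$ is one of the three non-injective morphisms constructed there, then its adjoint $f_* : \aR^{\pOp} \to \aQ^{\pOp}$ will be a \emph{non-surjective} morphism that \emph{separates} the required ideals. The organizing device is a single translation principle: for any $\JSL_f$-morphism $h : \aA \to \aB$ and any subset $S \subseteq A$ of its domain, $h$ restricted to $S$ is injective if and only if $h_*$ separates the family $\{\ideal{\aA^{\pOp}}{s} : s \in S\}$ of ideals of its codomain $\aA^{\pOp}$. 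Note that separation is here witnessed by right-composition $\ideal{\aA^{\pOp}}{s}\circ h_*$, so the relevant ideals are those of the codomain of the adjoint.

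First I would prove this principle. For $s_1,s_2 \in S$ one has $h(s_1)\neq h(s_2)$ iff $h\circ\elem{\aA}{s_1}\neq h\circ\elem{\aA}{s_2}$ as morphisms $\two\to\aB$, since such a morphism is determined by the image of $1$. Because $(-)_*$ is a bijection on hom-sets (Theorem~\ref{thm:jsl_self_dual}, Lemma~\ref{lem:jsl_mor_iso}.1), this is equivalent to $(\elem{\aA}{s_1})_*\circ h_* \neq (\elem{\aA}{s_2})_*\circ h_*$. Now Lemma~\ref{lem:jsl_elem_iso}.3 identifies $(\elem{\aA}{s})_* = \swap^{\bf-1}\circ\ideal{\aA^{\pOp}}{s}$, and cancelling the isomorphism $\swap^{\bf-1}$ leaves $\ideal{\aA^{\pOp}}{s_1}\circ h_* \neq \ideal{\aA^{\pOp}}{s_2}\circ h_*$, which is exactly the separation condition. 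This step is purely the elementwise/ideal adjoint dictionary and involves no real computation.

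Next I would combine the principle with Lemma~\ref{lem:jsl_mono_epi_iso}.3 (so that $f$ non-injective is equivalent to $f_*$ non-surjective) and the order-duality $J(\aA)=M(\aA^{\pOp})$, $M(\aA)=J(\aA^{\pOp})$ of Definition~\ref{def:std_order_theory}.6. Taking $h=f$ for each of the three morphisms of Lemma~\ref{lem:jsl_mor_inj_fail} and setting $g:=f_*$: for item~1, $f$ being injective on $J(\aQ)=M(\aQ^{\pOp})$ makes $g$ separate the ideals of its codomain indexed by its meet-irreducibles while $g$ fails to be surjective; item~2 is identical using $M(\aQ)=J(\aQ^{\pOp})$ to land on the join-irreducibles; and item~3 takes the adjoint of the single morphism that already witnesses both restrictions. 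The one place demanding care — and the only (mild) obstacle — is the index bookkeeping: one must keep straight that injectivity on the domain's join- (resp. meet-) irreducibles dualizes to separation of the codomain's meet- (resp. join-) irreducible ideals, and that the separation is realized by right-composition with $g=f_*$. As a sanity check one may instead dualize the explicit diagrams of Lemma~\ref{lem:jsl_mor_inj_fail} directly, e.g. the adjoint of $f:\JPow\{x_1,x_2,x_3\}\to\jslM{3}$ furnishes the witness for item~1.
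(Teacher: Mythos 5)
Your proposal is correct and is exactly the paper's intended argument: the paper states this lemma with no proof beyond the preceding remark that it dualises Lemma \ref{lem:jsl_mor_inj_fail} item-by-item, and your translation principle (elements and ideals exchanged under $(-)_*$ via Lemma \ref{lem:jsl_elem_iso}.3, injectivity and surjectivity exchanged via Lemma \ref{lem:jsl_mono_epi_iso}.3, $J/M$ swapped under $(-)^{\pOp}$) is precisely the content of that dualisation, applied to the three explicit counterexamples. The only point worth flagging is that, as printed, the statement indexes the ideals by the domain $\aQ$, which makes $\ideal{\aQ}{q} \circ f$ ill-typed for $f : \aQ \to \aR$; your reading --- ideals of the codomain, indexed by its meet- resp.\ join-irreducibles --- is the one forced by the duality and is surely what was meant.
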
 

Now for some positive results and their dual statements. This time we shall start with the surjective morphisms.

\begin{lemma}
\label{lem:jsl_surj_char}
A morphism $f : \aQ \to \aR$ of finite join-semilattices is surjective iff $J(\aR) \subseteq f[J(\aQ)]$.
\end{lemma}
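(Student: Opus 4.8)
The plan is to lean on two results already established in Lemma \ref{lem:std_order_theory}: that every finite join-semilattice is join-generated by its join-irreducibles (Lemma \ref{lem:std_order_theory}.6), so each element is the join of the join-irreducibles lying beneath it; and the inductive characterisation of join-irreducibility (Lemma \ref{lem:std_order_theory}.5(a)), namely that if $j \in J(\aR)$ satisfies $j = \Lor_\aR S$ for a finite set $S$ then $j = s$ for some $s \in S$. Throughout I would use that a $\JSL_f$-morphism preserves all finite joins.

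For the ($\Leftarrow$) direction I would assume $J(\aR) \seq f[J(\aQ)]$ and fix an arbitrary $r \in R$. Writing $r = \Lor_\aR \{ j \in J(\aR) : j \leq_\aR r \}$, each such $j$ has the form $f(j')$ for some $j' \in J(\aQ)$ by hypothesis; choosing one such $j'$ per $j$ and letting $q \in Q$ be their $\aQ$-join, join-preservation gives $f(q) = \Lor_\aR \{ j : j \leq_\aR r \} = r$, so $r \in \im{f}$. This direction is essentially immediate.

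For the ($\Rightarrow$) direction I would assume $f$ surjective and fix $j \in J(\aR)$. Picking $q \in Q$ with $f(q) = j$ and expanding $q = \Lor_\aQ \{ j' \in J(\aQ) : j' \leq_\aQ q \}$, join-preservation yields $j = \Lor_\aR \{ f(j') : j' \in J(\aQ),\, j' \leq_\aQ q \}$, a finite join. Since $j \neq \bot_\aR$ this index set is non-empty, and since $j$ is join-irreducible the inductive characterisation forces $j = f(j')$ for some $j' \in J(\aQ)$; hence $j \in f[J(\aQ)]$, giving $J(\aR) \seq f[J(\aQ)]$.

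The only step demanding care is the last one: it is crucial to invoke the form of join-irreducibility stating that an irreducible equal to a \emph{finite} join must coincide with one of the summands, rather than just the binary-join definition. This is exactly Lemma \ref{lem:std_order_theory}.5(a), whose proof already records this inductive consequence, so no genuine obstacle remains.
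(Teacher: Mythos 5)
Your proof is correct and follows essentially the same route as the paper's: both directions rest on the fact that $J(\aQ)$ join-generates $\aQ$ and that every join-generating subset of $\aR$ must contain $J(\aR)$. The only difference is presentational — the paper cites this generation fact (Lemma \ref{lem:std_order_theory}.6) together with the universal-algebra principle that surjectivity is equivalent to the image of a generating set generating the codomain, whereas you inline the same argument element-wise via join-preservation and Lemma \ref{lem:std_order_theory}.5(a).
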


\begin{proof}
Generally speaking, an algebra homomorphism is surjective iff the image of any subset generating the domain generates the codomain. Assume that $f$ is surjective. By Lemma \ref{lem:std_order_theory}.10 we know (i) $J(\aQ)$ generates $\aQ$, and moreover (ii) $J(\aR)$ is contained in any subset generating $\aR$, thus in particular $J(\aR) \subseteq f[J(\aQ)]$. Conversely the latter inclusion implies $f$ is surjective via (i).
\end{proof}

Dualising yields the following characterisation of embeddings.

%
% better description of below?
%

\begin{lemma}
\label{lem:jsl_embed_irr_char}
A morphism $f : \aQ \to \aR$ of finite join-semilattices is injective iff:
\[
\forall m_q \in M(\aQ). \exists m_r \in M(\aR). \forall j_q \in J(\aQ).
(f(j_q) \leq_\aR m_r \iff j_q \leq_\aQ m_q)
\]
\end{lemma}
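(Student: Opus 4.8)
The plan is to obtain this as the exact order-dual of the surjectivity criterion in Lemma \ref{lem:jsl_surj_char}, transported across the self-duality of Theorem \ref{thm:jsl_self_dual}. The engine is Lemma \ref{lem:jsl_mono_epi_iso}.3: $f$ is injective precisely when its adjoint $f_* : \aR^{\pOp} \to \aQ^{\pOp}$ is surjective. So I would begin by applying Lemma \ref{lem:jsl_surj_char} to the morphism $f_*$, which states that $f_*$ is surjective iff $J(\aQ^{\pOp}) \seq f_*[J(\aR^{\pOp})]$.

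The next step is purely a matter of rewriting the irreducibles of the opposite semilattices. Since $M(\pP) = J(\pP^{\pOp})$ holds generally (Definition \ref{def:std_order_theory}.6), we have $J(\aQ^{\pOp}) = M(\aQ)$ and $J(\aR^{\pOp}) = M(\aR)$. Hence $f$ is injective iff $M(\aQ) \seq f_*[M(\aR)]$, i.e.\ iff for every $m_q \in M(\aQ)$ there exists $m_r \in M(\aR)$ with $f_*(m_r) = m_q$.

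It then remains to unfold the equation $f_*(m_r) = m_q$ into the stated first-order condition. Two elements of a finite join-semilattice are equal iff exactly the same join-irreducibles lie beneath them (apply Lemma \ref{lem:std_order_theory}.7 in both directions), so $f_*(m_r) = m_q$ holds iff $\forall j_q \in J(\aQ).(j_q \leq_\aQ f_*(m_r) \iff j_q \leq_\aQ m_q)$. Finally, the adjoint relationship of Lemma \ref{lem:adj_obs}.1 rewrites $j_q \leq_\aQ f_*(m_r)$ as $f(j_q) \leq_\aR m_r$, yielding exactly the displayed equivalence.

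I do not anticipate a genuine obstacle here: once the reduction to surjectivity of $f_*$ is in place, every subsequent step is a cited rewriting. The one point demanding care is the bookkeeping of variance — tracking that $f_*$ goes $\aR^{\pOp} \to \aQ^{\pOp}$ so that the roles of domain/codomain and of $J$ versus $M$ swap correctly when Lemma \ref{lem:jsl_surj_char} is invoked, and ensuring the adjointness is applied in the direction $f(q) \leq_\aR r \iff q \leq_\aQ f_*(r)$ rather than its converse.
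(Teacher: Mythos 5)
Your proposal is correct and follows essentially the same route as the paper: reduce injectivity of $f$ to surjectivity of $f_*$ via Lemma \ref{lem:jsl_mono_epi_iso}.3, apply Lemma \ref{lem:jsl_surj_char} to get $M(\aQ) \subseteq f_*[M(\aR)]$, and then unfold $f_*(m_r) = m_q$ through the adjoint relationship and the fact that elements are determined by the join-irreducibles beneath them. The variance bookkeeping you flag is handled exactly as you describe in the paper's own proof.
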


\begin{proof}
$f : \aQ \to \aR$ is injective iff $f_* : \aR^{\pOp} \to \aQ^{\pOp}$ is surjective by Lemma \ref{lem:jsl_mono_epi_iso}.3, or equivalently:
\[
M(\aQ) = J(\aQ^{\pOp}) \subseteq f_*[J(\aR^{\pOp})] = f_*[M(\aR)]
\]
by Lemma \ref{lem:jsl_surj_char}. Then we observe that:
\[
\begin{tabular}{lll}
$f_*[M(\aR)]$
&
$= \{ f_*(m_r) : m_r \in M(\aR) \}$
\\&
$= \{ \Lor_\aQ \{q \in Q : q \leq_\aQ f_*(m_r) \} : m_r \in M(\aR) \}$
\\&
$= \{ \Lor_\aQ \{j_q \in J(\aQ) : j_q \leq_\aQ f_*(m_r) \} : m_r \in M(\aR) \}$
\\&
$= \{ \Lor_\aQ \{j_q \in J(\aQ) : f(j_q) \leq_\aR m_r \} : m_r \in M(\aR) \}$
& by adjoint relationship
\end{tabular}
\]
\end{proof}

We also have the following related well-known facts.

\begin{lemma}
\label{lem:canon_surj_inj_mor_tight}
Let $\aQ$ be any finite join-semilattice.
\begin{enumerate}
\item
Given any surjective join-semilattice morphism $\sigma : \JPow Z \epito \aQ$ then $|J(\aQ)| \leq |Z|$.
\item
Given any injective join-semilattice morphism $e : \aQ \monoto \JPow Z$ we have $|M(\aQ)| \leq |Z|$.
\takeout{
\item
$e_\aQ[Q] \subseteq \Pow M(\aQ)$ generates the set-theoretic boolean algebra of all subsets of $M(\aQ)$.
}
\end{enumerate}
\end{lemma}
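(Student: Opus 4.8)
The plan is to derive both inequalities from the surjectivity criterion of Lemma \ref{lem:jsl_surj_char}, once I have computed the irreducibles of a powerset. I may assume $Z$ is finite, the case of infinite $Z$ being vacuous since $\aQ$ is finite and hence $J(\aQ)$ and $M(\aQ)$ are finite while $|Z|$ is not. Then $\JPow Z = (\Pow Z,\cup,\emptyset)$ is a finite boolean join-semilattice, so by Lemma \ref{lem:std_order_theory}.8 its join-irreducibles are exactly its atoms, namely the singletons; thus $J(\JPow Z) = \{\{z\} : z \in Z\}$ and $|J(\JPow Z)| = |Z|$. Dually, its meet-irreducibles are its coatoms $\{\overline{z} : z \in Z\}$, again of cardinality $|Z|$.

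For the first item I would simply apply Lemma \ref{lem:jsl_surj_char} to the surjection $\sigma$, obtaining $J(\aQ) \subseteq \sigma[J(\JPow Z)]$, whence $|J(\aQ)| \leq |\sigma[J(\JPow Z)]| \leq |J(\JPow Z)| = |Z|$, as required. This is essentially immediate.

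For the second item I would pass to adjoints. By Lemma \ref{lem:jsl_mono_epi_iso}.3 the morphism $e$ is injective iff its adjoint $e_* : (\JPow Z)^{\pOp} \to \aQ^{\pOp}$ is surjective. Now the relative-complement map $\neg_Z$ is a $\JSL_f$-isomorphism $\JPow Z \xto{\cong} (\JPow Z)^{\pOp} = (\Pow Z,\cap,Z)$, since by the De Morgan laws it sends $\emptyset$ to $Z$ and unions to intersections (and is an involution, hence bijective, hence an isomorphism by Lemma \ref{lem:jsl_mono_epi_iso}.4). Precomposing $e_*$ with $\neg_Z$ produces a surjective morphism $\JPow Z \epito \aQ^{\pOp}$, to which the first item applies, giving $|J(\aQ^{\pOp})| \leq |Z|$. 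Since $J(\aQ^{\pOp}) = M(\aQ)$ by Definition \ref{def:std_order_theory}.6, I conclude $|M(\aQ)| \leq |Z|$.

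No step here presents a genuine obstacle: the result is a corollary of Lemma \ref{lem:jsl_surj_char} together with the self-duality of $\JSL_f$. The only point demanding care is the bookkeeping in the second item, where one must correctly invoke the injective/surjective adjoint correspondence and recognise the order-opposite of a powerset as again (isomorphic to) a powerset. If one prefers to avoid the isomorphism $\neg_Z$ altogether, an equivalent route is to apply Lemma \ref{lem:jsl_surj_char} to $e_*$ directly, using that $J((\JPow Z)^{\pOp}) = M(\JPow Z) = \{\overline{z} : z \in Z\}$ has cardinality $|Z|$.
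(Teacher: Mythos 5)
Your proof is correct and follows essentially the same route as the paper: item (1) is an immediate application of Lemma \ref{lem:jsl_surj_char}, and item (2) dualises via Lemma \ref{lem:jsl_mono_epi_iso}.3 and the isomorphism $\neg_Z : \JPow Z \to (\JPow Z)^{\pOp}$ to reduce to item (1). The extra detail you supply (identifying $J(\JPow Z)$ with the singletons, and the alternative of applying the surjectivity criterion to $e_*$ directly) is sound but not needed beyond what the paper records.
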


\begin{proof}
The first statement holds because by Lemma \ref{lem:jsl_surj_char} we know that $\sigma[J(\JPow Z)] \supseteq J(\aQ)$ and therefore $|J(\aQ)| \leq |J(\JPow Z)| = |Z|$. The second statement follows from the first by the self-duality of $\JSL_f$ and the fact that surjections dualise injections via Lemma \ref{lem:jsl_mono_epi_iso}. That is, given $e$ we obtain the surjective morphism $e_* : (\JPow Z)^{\pOp} \epito \aQ^{\pOp}$ and thus also $e_* \circ (\neg_Z)^{\bf-1} : \JPow Z \epito \aQ^{\pOp}$, so that $|M(\aQ)| = |J(\aQ^{\pOp})| \leq |Z|$.
\takeout{
  Regarding the third statement, given the canonical embedding $e_\aQ : \aQ \monoto \JPow M(\aQ)$ we may close the codomain under set-theoretic boolean operations yielding another embedding $e'_\aQ : \aQ \monoto \aR \cong \JPow At(\aR)$. If the boolean semilattice $\aR$ fails to contain all subsets of $M(\aQ)$ then $At(\aR) < |M(\aQ)|$ via the duality between finite boolean algebras and finite sets, which contradicts the second statement.
  }
\end{proof}

\subsection{Congruence lattices of finite join-semilattices}

\begin{definition}[Congruence and subalgebra lattices]
\label{def:jsl_cong_sub_lattices}
\item
Let $\aQ$ be a finite join-semilattice.
\begin{enumerate}
\item
A \emph{congruence of $\aQ$} (also called a \emph{$\aQ$-congruence}) is an equivalence relation $\rR \subseteq Q \times Q$ closed under the rule:
\[
\dfrac{\rR(q_1,q_2) \quad \rR(q_3,q_4)}{\rR(q_1 \lor_\aQ q_3, q_2 \lor_\aQ q_4)}
\qquad
(\lor cg)
\qquad\
\text{for every $q_1,q_2,q_3,q_4 \in Q$.}
\]
Letting $\setCong{\aQ}$ be the collection of all $\aQ$-congruences and ordering by inclusion yields:
\[
\latCong{\aQ} 
:= (\setCong{\aQ},\lor_{\latCong{\aQ}},\Delta_Q,\cap,Q \times Q)
\qquad
\text{i.e.\ the \emph{bounded lattice of $\aQ$-congruences}.}
\]
For general universal algebraic reasons, it is a sub bounded lattice of the  lattice of all equivalence relations on $Q$. In particular, the binary join $\lor_{\latCong{\aQ}}$ constructs the transitive closure of the binary union. Given any relation $\rS \subseteq Q \times Q$, let:
\[
\genCong{\aQ}{\rS}
:= \bigcap \{ \rR \in \setCong{\aQ} : \rS \subseteq \rR \}
\qquad
\text{be the \emph{$\aQ$-congruence generated by $S$}.}
\]
In the case where $\rS = \{(q_1,q_2)\}$ is a singleton we instead write $\prCong{\aQ}{q_1,q_2}$ (which equals $\prCong{\aQ}{q_2,q_1}$), these being the \emph{principal $\aQ$-congruences}. By universal algebra, the principal congruences where $q_1 \neq q_2$ are precisely the join-irreducible elements of $\latCong{\aQ}$. On the other hand, we also have the \emph{meet-irreducible $\aQ$-congruences}:
\[
\mirrCong{\aQ}{q} := 
(\down_\aQ q)\times(\down_\aQ q) \,\cup\, (\overline{\down_\aQ q})\times(\overline{\down_\aQ q})
\quad \subseteq Q \times Q
\qquad
\text{for each $q \in Q \backslash \{\top_\aQ\}$.}
\]
We also permit $\mirrCong{\aQ}{\top_\aQ}$ under the above definition, observing that it equals $\top_{\latCong{\aQ}}$ and thus is the \emph{maximum $\aQ$-congruence} or alternatively the \emph{trivial $\aQ$-congruence}.

\item
The $\aQ$-subalgebras also define a finite inclusion-ordered bounded lattice:
\[
\latSub{\aQ} := (\setSub{\aQ},\lor_{\latSub{\aQ}},\{\bot_\aQ\},\cap,Q)
\]
where $\setSub{\aQ} := \{ S : (S,\lor_\aS,\bot_\aQ) \subseteq \aQ \} \subseteq \Pow Q$. Notice that we collect the underlying sets of $\aQ$'s subalgebras, rather than the subalgebras themselves. The binary join $\lor_{\latSub{\aQ}}$ constructs all possibly-empty finite joins of the binary union i.e.\ the elements of the $\aQ$-subalgebra generated by the binary union.

Recall the usual notation for generated subalgebras i.e.\ $\ang{X}_\aQ \subseteq \aQ$ is the sub join-semilattice generated by $X \subseteq Q$. Let us denote the carrier of this algebra by $\genSubset{\aQ}{X}$, so it is the closure of $X \subseteq Q$ under all possibly-empty $\aQ$-joins. In the case where $X = \{q\}$ is a singleton we have:
\[
\genElem{\aQ}{q} = \{ \bot_\aQ, q\}.
\]
Excluding the $0$-generated subalgebra with carrier $\genElem{\aQ}{\bot_\aQ} = \{\bot_\aQ\} = \bot_{\latSub{\aQ}}$, it follows by universal algebra that these $1$-generated subalgebras  are precisely the join-irreducible elements of $\latSub{\aQ}$. In fact they are clearly atoms so that $\latSub{\aQ}$ is atomistic: every element is a join of atoms. Finally, for each $q_1,  q_2 \in Q$ we have the $\aQ$-subalgebra:
\[
\mirrSub{\aQ}{q_1,q_2} \subseteq \aQ
\qquad
\text{with carrier}
\qquad
\mirrSubset{\aQ}{q_1,q_2} := \{ q \in Q : q \leq_\aQ q_1 \siff q \leq_\aQ q_2 \} \subseteq Q.
\]
Observing that $\mirrSubset{\aQ}{q,q} = Q = \top_{\latSub{\aQ}}$, then the \emph{meet-irreducible $\aQ$-subalgebras} are those where $q_1 \neq q_2$. \endbox
\end{enumerate}
\end{definition}

\bigskip

The above definitions will soon be clarified. Let us start by describing the bounded lattice isomorphism between $\aQ^{\pOp}$-subalgebras and $\aQ$-congruences, after which we provide a Corollary describing the connection in terms of $\aQ$-quotients. Then in Lemma \ref{lem:jsl_cong_sub_act_on_gen} we'll describe the irreducible $\aQ$-congruences and $\aQ$-subalgebras, and the action of the bounded lattice isomorphisms upon them.

\begin{theorem}[Representing congruences as subalgebras and conversely]
\label{thm:cong_sub_dual_iso}
\item
For each finite join-semilattice $\aQ$ we have the bounded lattice isomorphism:
\[
\begin{tabular}{llll}
$\ctos{\aQ} : (\latCong{\aQ})^{\pOp} \to \latSub{\aQ^{\pOp}}$
&&
$\ctos{\aQ}(\rR)$ & $:= \{ \Lor_\aQ \sem{q}_\rR : q \in Q \}$
\\[1ex]&&
$\stoc{\aQ}(S)(q_1,q_2)$ & $:\iff \forall s \in S.(q_1 \leq_\aQ s \siff q_2 \leq_\aQ s)$ 
\end{tabular}
\]
where $\stoc{\aQ} = \ctos{\aQ}^{\bf-1}$.
\end{theorem}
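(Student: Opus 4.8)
The plan is to present $\ctos{\aQ}$ as arising from a closure operator, so that its well-definedness reduces to Lemma \ref{lem:clo_co_basic}.2. First I would record the basic fact that every congruence class is closed under binary join: if $\rR(a,q)$ and $\rR(b,q)$, then rule $(\lor cg)$ gives $\rR(a \lor_\aQ b, q \lor_\aQ q)$, i.e.\ $\rR(a \lor_\aQ b, q)$. Each class $\sem{q}_\rR$ is therefore a non-empty finite join-closed set, so it has a greatest element $\Lor_\aQ \sem{q}_\rR$. Hence $\cl_\rR(q) := \Lor_\aQ \sem{q}_\rR$ is well-typed, and I would verify it is a closure operator on $(Q,\leq_\aQ)$: extensivity holds since $q \in \sem{q}_\rR$; idempotence holds since $\cl_\rR(q)$ shares the class of $q$; and monotonicity for $q_1 \leq_\aQ q_2$ follows by applying $(\lor cg)$ to $\rR(\cl_\rR(q_1),q_1)$ and $\rR(\cl_\rR(q_2),q_2)$. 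Since $\ctos{\aQ}(\rR)$ is exactly the fixpoint set $\cl_\rR[Q]$, Lemma \ref{lem:clo_co_basic}.2 shows it is the carrier of a sub join-semilattice of $\aQ^{\pOp}$.

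For $\stoc{\aQ}$ I would check directly that the relation it defines is an equivalence relation (clear) closed under $(\lor cg)$, the latter being immediate from $q_1 \lor_\aQ q_3 \leq_\aQ s \siff (q_1 \leq_\aQ s$ and $q_3 \leq_\aQ s)$. To see $\stoc{\aQ} \circ \ctos{\aQ} = \mathrm{id}$, write $S := \ctos{\aQ}(\rR)$ for the set of closed elements and use the standard closure identity $q \leq_\aQ s \siff \cl_\rR(q) \leq_\aQ s$ for closed $s$. Testing the defining condition of $\stoc{\aQ}(S)$ against the two closed elements $\cl_\rR(q_1)$ and $\cl_\rR(q_2)$ then yields $\stoc{\aQ}(S)(q_1,q_2) \siff \cl_\rR(q_1) = \cl_\rR(q_2)$, and the right-hand side is precisely $\rR(q_1,q_2)$, since equal class-maxima means equal classes.

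The step I expect to be the main obstacle is the identity $\ctos{\aQ} \circ \stoc{\aQ} = \mathrm{id}$, i.e.\ that the class-maxima of $\rR := \stoc{\aQ}(S)$ coincide with $S$. The inclusion $S \subseteq \ctos{\aQ}(\rR)$ is easy: for $s \in S$, testing the defining condition against $s$ itself shows every $\rR$-relative of $s$ lies below $s$, so $s$ is its own class-maximum. For the converse I would introduce, for each $q$, the element $c_S := \Land_\aQ \{ s \in S : q \leq_\aQ s\}$, which lies in $S$ because $S$ is closed under $\land_\aQ$ and contains $\top_\aQ$ (the empty meet). A short calculation shows $\rR(q,c_S)$ by comparing $q$ and $c_S$ against an arbitrary $s \in S$, and testing membership of the class against $s = c_S$ forces every class element below $c_S$; together these identify $c_S = \cl_\rR(q)$, placing every closed element in $S$.

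Finally, having two mutually inverse bijections, it remains to check they constitute an order isomorphism for the stated orders --- reverse inclusion of congruences in $(\latCong{\aQ})^{\pOp}$, inclusion of carriers in $\latSub{\aQ^{\pOp}}$ --- after which the bounded lattice isomorphism is automatic, since an order isomorphism between finite lattices preserves all finite joins, all finite meets, and the top and bottom. Monotonicity of $\stoc{\aQ}$ is clear, as enlarging $S$ only strengthens its defining condition and hence shrinks the resulting congruence. For $\ctos{\aQ}$ I would argue via the closure operators: $\rR_1 \supseteq \rR_2$ forces $\cl_{\rR_1}(q) \geq_\aQ \cl_{\rR_2}(q)$ pointwise, so any $\cl_{\rR_1}$-fixpoint $c$ satisfies $c \leq_\aQ \cl_{\rR_2}(c) \leq_\aQ \cl_{\rR_1}(c) = c$ and is therefore also $\cl_{\rR_2}$-closed, giving $\ctos{\aQ}(\rR_1) \subseteq \ctos{\aQ}(\rR_2)$ as required.
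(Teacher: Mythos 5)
Your proof is correct, and it takes a genuinely different route from the paper's. The paper frames $\ctos{\aQ}(\rR)$ as the image of the injective adjoint $(\sem{\cdot}_\rR)_* : (\aQ\backslash\rR)^{\pOp} \monoto \aQ^{\pOp}$ of the canonical quotient map, invokes universal algebra (congruences as kernels of surjections, the homomorphism theorem) to get injectivity, and obtains surjectivity by computing $\ker{\iota_*}$ for an arbitrary subalgebra $\iota : \aS \hookto \aQ^{\pOp}$; order-reflection is then proved by a separate contradiction argument. You instead work directly with the closure operator $\cl_\rR(q) = \Lor_\aQ \sem{q}_\rR$ --- which is in fact the composite $(\sem{\cdot}_\rR)_* \circ \sem{\cdot}_\rR$ that the paper's adjunction would produce via Lemma \ref{lem:adjoint_cl_in}.1, though you never need to say so --- and reduce well-definedness to Lemma \ref{lem:clo_co_basic}.2, verify both composites $\stoc{\aQ} \circ \ctos{\aQ}$ and $\ctos{\aQ} \circ \stoc{\aQ}$ are identities by hand, and get the bounded-lattice isomorphism from mutual monotonicity of two inverse bijections. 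The key computational ingredients coincide (class maxima; the element $\Land_\aQ\{s \in S : q \leq_\aQ s\}$ appears in the paper as $\iota_*(q)$), but your version is more elementary and self-contained, avoiding the self-duality of $\JSL_f$ and the kernel machinery at the cost of somewhat more hands-on verification; the paper's version buys a structural explanation (the subalgebra \emph{is} the dual of the quotient) that it then reuses in the Corollary on the subalgebra/quotient correspondence. One presentational nit: when you assert that a fixpoint set of a closure operator on $\aQ$ is a subalgebra of $\aQ^{\pOp}$, it is worth noting explicitly that $\top_\aQ = \cl_\rR(\top_\aQ)$ is closed, so the empty join $\bot_{\aQ^{\pOp}}$ is present --- this is contained in Lemma \ref{lem:clo_co_basic}.1--2 but deserves a word.
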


\begin{proof}
By universal algebra, the $\aQ$-congruences $\rR \in \setCong{\aQ}$ are precisely the kernels $\ker{f}$ of all surjective join-semilattice morphisms $f : \aQ \epito \aR$ where $\aR \in \JSL_f$. Let us recall that:
\[
\ker{f} := \{ (q_1,q_2) \in Q \times Q : f(q_1) = f(q_2) \}.
\]
Certainly each such kernel is a $\aQ$-congruence. Conversely we have the canonical surjective function $\sem{\cdot}_\rR : Q \epito Q \backslash \rR$ because $\rR$ is an equivalence relation, and this actually defines a  join-semilattice morphism $\aQ \epito \aQ\backslash\rR = (Q \backslash\rR,\lor_{\aQ\backslash\rR},\sem{\bot_\aQ}_\rR)$ where of course $\sem{q_1}_\rR \lor_{\aQ\backslash\rR} \sem{q_2}_\rR := \sem{q_1 \lor_\aQ q_2}_\rR$. Importantly, we note that every $\rR$-equivalence class is non-empty and closed under binary $\aQ$-joins. Then by finiteness $\Lor_\aQ \sem{q}_\rR \in \sem{q}_\rR$ i.e.\ each $\rR$-equivalence class always contains a maximum element.

\smallskip
Given any $\aQ$-congruence $\rR$, take the adjoint of its associated canonical surjective morphism:
\[
\dfrac{\sem{\cdot}_\aQ : \aQ \epito \aQ \backslash \rR }
{(\sem{\cdot}_\aQ)_* : (\aQ \backslash \rR)^{\pOp} \monoto \aQ^{\pOp} }
\]
The latter is necessarily injective by Lemma \ref{lem:jsl_mono_epi_iso}, so define $\aS_\rR := (\sem{\cdot}_\aQ)_*[\aQ \backslash \rR] \subseteq \aQ^{\pOp}$ to be the image of this embedding. To understand its elements, consider the following  calculation:
\[
\begin{tabular}{lll}
$(\sem{\cdot}_\aQ)_*(\sem{q}_\rR)$
&
$= \Lor_\aQ \{ q' \in Q : \sem{q'}_\rR \leq_{\aQ\backslash\rR} \sem{q}_\rR \}$
& by definition
\\&
$= \Lor_\aQ \{ q' \in Q : \sem{q'}_\rR \lor_{\aQ\backslash\rR} \sem{q}_\rR = \sem{q}_\rR \}$
\\&
$= \Lor_\aQ \{ q' \in Q : \sem{q' \lor_\aQ q}_\rR = \sem{q}_\rR \}$
\\&
$= \Lor_\aQ \{ q' \in Q: \sem{q' \lor_\aQ \Lor_\aQ \sem{q}_\rR}_\rR = \sem{\Lor_\aQ \sem{q}_\rR}_\rR  \}$
& by well-definedness
\\&
$= \Lor_\aQ \{ q' \in Q: q' \leq_\aQ \Lor_\aQ \sem{q}_\rR  \}$
& by maximality
\\&
$= \Lor_\aQ \sem{q}_\rR$.
\end{tabular}
\]
Thus $S_\rR$ is obtained by taking the maximum element from each $\rR$-equivalence class. It follows that $\ctos{\aQ}$ is a well-defined function. For injectivity it suffices to show that $\sem{\cdot}_\rR : \aQ \epito \aQ\backslash\rR$ and the (surjective) adjoint of $\iota : \aS_\rR \hookto \aQ$ have the same kernel. We have $\iota \circ \beta = (\sem{\cdot}_\rR)_*$ for some isomorphism $\beta$, and thus $\beta_* \circ \iota_* = \sem{\cdot}_\rR$ where $\beta_* = (\beta^{\bf-1})^{\pOp}$ is also an isomorphism. It follows that $\ker{\iota_*} = \rR$, as required. Concerning surjectivity, take any sub join-semilattice $\iota : \aS \hookto \aQ^{\pOp}$ and define $\rR_\aS := \ker \iota_*$. We are going to show that $\ctos{\aR}(\rR_\aS) = S$. First observe,
\[
\iota_*(q) 
= \Lor_\aS \{ s \in S : s \leq_{\aQ^{\pOp}} q \}
= \Land_\aQ \{ s \in S : q \leq_\aQ s \}
\]
so if we assume $\iota_*(q_1) = \iota_*(q_2)$ for any fixed $q_1, q_2 \in Q$, then if $q_1 \leq_\aQ s \in S$ we have $q_2 \leq_\aQ \iota_*(q_2) = \iota_*(q_1) \leq_\aQ s$, and symmetrically $q_2 \leq_\aQ s$ implies $q_1 \leq_\aQ s$. It follows that:
\[
\rR_\aS(q_1,q_2) \iff \forall s \in S.(q_1 \leq_\aQ s \siff q_2 \leq_\aQ s)
\]
also because the latter condition implies that $\iota_*(q_1)$ and $\iota_*(q_2)$ have the same summands. Since  $\iota_*(s) = s$ for each $s \in S$, it follows that $\iota_*(q) = \iota_*(\iota_*(q))$ and hence every $\rR_\aS$-equivalence class contains some $s \in S$. Furthermore they may contain at most one element of $S$ via anti-symmetry. Then it follows that:
\[
\ctos{\aR}(\rR_\aS)
= \{ \Lor_\aQ \sem{q}_{\rR_\aS} : q \in Q \}
= S
\]
because if $s \in \sem{q}_{\rR_\aS}$ then it is necessarily the maximum element relative to $\leq_\aQ$. Then we have proved that $\ctos{\aQ}$ is bijective and have also described its inverse $\stoc{\aQ}$ as desired. To establish that they are bounded lattice isomorphisms we'll show that $\stoc{\aQ}$ preserves and reflects the given orderings.
\begin{enumerate}
\item
Assuming $\aS_1 \subseteq \aS_2 \subseteq \aQ^{\pOp}$ we need to show that $\stoc{\aQ}(S_2) \subseteq \stoc{\aQ}(S_1)$. Since $S_1 \subseteq S_2$ this follows immediately by restricting the universal quantification from $S_2$ to $S_1$.

\item
Now suppose that $\stoc{\aQ}(S_2) \subseteq \stoc{\aQ}(S_1)$ i.e.\ for every $(q_1,q_2) \in Q \times Q$ we know:
\[
\forall s \in S_2.(q_1 \leq_\aQ s \siff q_2 \leq_\aQ s)
\quad\text{implies}\quad
\forall s \in S_1.(q_1 \leq_\aQ s \siff q_2 \leq_\aQ s).
\]
For a contradiction assume $S_1 \nsubseteq S_2$ so we have some $s_1 \in S_1 \cap \overline{S_2}$, and define $s_2 := \Land_\aQ \{ s \in S_2 : s_1 \leq_\aQ s \}$ observing that $s_2 \in S_2$ (because $\aS_2 \subseteq \aQ^{\pOp}$) and also $s_1 \leq_\aQ s_2$. Setting $(q_1,q_2) := (s_1,s_2)$, one can see that the premise of the above correspondence holds. Instantiating the deduced conclusion with $s := s_1$ we obtain $s_1 \leq_\aQ s_1 \iff s_2 \leq_\aQ s_1$, and hence derive the contradiction $s_1 = s_2 \in S_2$.
\end{enumerate}
\end{proof}

\begin{corollary}[Subalgebra/quotient correspondence]
\item
For any $\aS \subseteq \aQ^{\pOp}$ and any $\aQ$-congruence $\rR$ there are associated isomorphisms:
\[
\begin{tabular}{lllll}
\emph{1.} & $\alpha : \aS^{\pOp} \to \aQ\backslash\rT$
&
$\alpha(s) := \sem{s}_\rT$
&
$\alpha^{\bf-1}(\sem{q}_\rT) := \Land_\aQ \{ s \in S : q \leq_\aQ s \}$
&
$\rT := \stoc{\aQ}(S) \in \setCong{\aQ}$.
\\[1ex]
\emph{2.} & $\beta : (\aQ\backslash\rR)^{\pOp} \to \aR$
&
$\beta(\sem{q}_\rR) := \Lor_\aQ \sem{q}_\rR$
&
$\beta^{\bf-1}(r) := \sem{r}_\rR$
&
$\aR \subseteq \aQ^{\pOp}$ has carrier $\ctos{\aQ}(\rR)$.
\end{tabular}
\]
\end{corollary}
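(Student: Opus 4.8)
The plan is that both isomorphisms are already implicit in the proof of Theorem~\ref{thm:cong_sub_dual_iso}: statement~2 unpacks the construction of $\ctos{\aQ}$ and statement~1 unpacks that of its inverse $\stoc{\aQ}$, in each case recording the ``first isomorphism theorem'' content of that bijection. So the only real work is to observe that the maps assembled there are bijective $\JSL_f$-morphisms, hence isomorphisms by Lemma~\ref{lem:jsl_mono_epi_iso}.4, and to check the two explicit inverse formulas. I would treat statement~2 first, as the more direct of the two.

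For statement~2 I would begin with the canonical surjection $\sem{\cdot}_\rR : \aQ \epito \aQ \backslash \rR$ and pass to its adjoint $(\sem{\cdot}_\rR)_* : (\aQ \backslash \rR)^{\pOp} \monoto \aQ^{\pOp}$, which is injective by Lemma~\ref{lem:jsl_mono_epi_iso}.3. The proof of Theorem~\ref{thm:cong_sub_dual_iso} already computes its action as $(\sem{\cdot}_\rR)_*(\sem{q}_\rR) = \Lor_\aQ \sem{q}_\rR$ and identifies its image as the subalgebra $\aR \subseteq \aQ^{\pOp}$ with carrier $\ctos{\aQ}(\rR)$. Corestricting an injective morphism to its image gives a bijective $\JSL_f$-morphism $\beta : (\aQ \backslash \rR)^{\pOp} \to \aR$, hence an isomorphism with the stated action. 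For $\beta^{\bf-1}$ I would use that $\Lor_\aQ \sem{q}_\rR$ is the maximum element of its own $\rR$-class (established in the same proof), so $\sem{\,\Lor_\aQ \sem{q}_\rR\,}_\rR = \sem{q}_\rR$; applying $\beta$ then yields $\beta^{\bf-1}(r) = \sem{r}_\rR$ for each $r \in \ctos{\aQ}(\rR)$.

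For statement~1 I would dualise: take the inclusion $\iota : \aS \hookto \aQ^{\pOp}$ and its adjoint $\iota_* : \aQ \epito \aS^{\pOp}$, which is surjective by Lemma~\ref{lem:jsl_mono_epi_iso}.3. The proof of Theorem~\ref{thm:cong_sub_dual_iso} shows $\iota_*(q) = \Land_\aQ \{ s \in S : q \leq_\aQ s \}$, that $\iota_*$ fixes $S$ pointwise, and that $\ker \iota_* = \stoc{\aQ}(S) = \rT$. The first isomorphism theorem for the variety of join-semilattices then factors $\iota_*$ through $\aQ \backslash \rT$, inducing an isomorphism $\aQ \backslash \rT \to \aS^{\pOp}$ sending $\sem{q}_\rT \mapsto \iota_*(q)$; I take $\alpha$ to be its inverse. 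The formula $\alpha^{\bf-1}(\sem{q}_\rT) = \iota_*(q) = \Land_\aQ \{ s \in S : q \leq_\aQ s \}$ is then immediate, while $\iota_*(s) = s$ for $s \in S$ gives $\alpha(s) = \sem{s}_\rT$.

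The content of the corollary having already been done inside Theorem~\ref{thm:cong_sub_dual_iso}, I do not expect a serious obstacle; the one place demanding care is the $(-)^{\pOp}$ bookkeeping, so that the typings come out exactly as $\aS^{\pOp} \to \aQ \backslash \rT$ and $(\aQ \backslash \rR)^{\pOp} \to \aR$ and one does not conflate the subalgebra $\aR \subseteq \aQ^{\pOp}$ with the quotient $\aQ \backslash \rR$. The only genuine verification is that the two displayed inverse formulas really invert their maps, which in both cases reduces to the two facts imported verbatim from the theorem's proof: the maximality of $\Lor_\aQ \sem{q}_\rR$ within its class, and that the adjoint $\iota_*$ fixes the carrier of $\aS$ pointwise.
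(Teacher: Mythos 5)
Your proposal is correct and follows essentially the same route as the paper: part 1 via the homomorphism theorem applied to $\iota_*$ with $\ker\iota_* = \stoc{\aQ}(S)$, and part 2 by corestricting the embedding $(\sem{\cdot}_\rR)_*$ to its image $\ctos{\aQ}(\rR)$, with both inverse formulas read off from the facts already established in the proof of Theorem~\ref{thm:cong_sub_dual_iso} (that $\iota_*$ fixes $S$ pointwise and that $\Lor_\aQ\sem{q}_\rR$ is the maximum of its class).
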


\begin{proof}
Fixing any $\aS \subseteq \aQ^{\pOp}$ and $\rR \in \setCong{\aQ}$ let us verify the claimed isomorphisms $\alpha$ and $\beta$.

\begin{enumerate}
\item
The carrier of the subalgebra $\aS$ yields the $\aQ$-congruence $\rT := \stoc{\aQ}(S)$, and the inclusion join-semilattice morphism $\iota : \aS \hookto \aQ^{\pOp}$ yields the $\aQ$-congruence $\ker{\iota_*}$. It follows directly from the proof of Theorem \ref{thm:cong_sub_dual_iso} that these two kernels coincide. Consequently there exists a unique $\JSL_f$-isomorphism such that:
\[
\begin{tabular}{ll}
$\vcenter{\vbox{\xymatrix@=15pt{
\aQ \ar@{>>}[rr]^{\sem{\cdot}_\rT} \ar@{>>}[drr]_{\iota_*} && \aQ\backslash\rT
\\
&& \aS^{\pOp} \ar[u]_{\alpha}^{\cong}
}}}$
&
using the appropriate homomorphism theorem from universal algebra.
\end{tabular}
\]
By definition $\iota_*(q) = \Lor_\aS \{ s \in S : s \leq_{\aQ^{\pOp}} q \} =   \Land_\aQ \{ s \in S : q \leq_\aQ s \}$ so that $\iota_*(s) = s$ for any $s \in S$. Thus  $\alpha(s) = \alpha(\iota_*(s)) = \sem{s}_\rT$ as expected, and finally $\alpha^{\bf-1}(\sem{q}_\rT) = \iota_*(q) = \Land_\aQ \{ s \in S : q \leq_\aQ s \}$.

\item
The $\aQ$-congruence $\rR$ yields the subalgebra $\iota : \aR \hookto \aQ^{\pOp}$ with carrier $R := \ctos{\aQ}(\rR)$. By the proof of Theorem \ref{thm:cong_sub_dual_iso} we know that the latter is precisely the image of the embedding  $(\sem{\cdot}_\rR)_* : (\aQ\backslash\rR)^{\pOp} \monoto \aQ^{\pOp}$, so the action of the latter defines a $\JSL_f$-isomorphism $\beta$ as follows:
\[
\xymatrix@=15pt{
\aR \ar@{>->}[rr]^\iota && \aQ^{\pOp}
\\
(\aQ\backslash\rR)^{\pOp}  \ar[u]^{\beta}_{\cong} \ar@{>->}[urr]_{(\sem{\cdot}_\rR)_*}
}
\]
Then $\beta(\sem{q}_\rR) = \iota(\beta(\sem{q}_\rR)) = (\sem{\cdot}_\rR)_*(q) = \Lor_\aQ \sem{q}_\rR$, where the final equality was established in the proof of the Theorem. Finally, since we always know that $\Lor_\aQ \sem{q}_\rR$ actually lies inside $\sem{q}_\rR$, it follows that $\beta^{\bf-1}(r) = \sem{r}_\rR$. 

\end{enumerate}
\end{proof}

\begin{lemma}
\label{lem:jsl_cong_sub_act_on_gen}
Concerning the isomorphism $\ctos{\aQ} : (\latCong{\aQ})^{\pOp} \to \latSub{\aQ^{\pOp}}$ from Theorem \ref{thm:cong_sub_dual_iso}.
\begin{enumerate}
\item
For any $q_1, q_2 \in Q$, the isomorphism $\ctos{\aQ}$ acts as follows:
\[
\begin{tabular}{llll}
$\prCong{\aQ}{q_1,q_2}$
& ${\mapsto}$ 
& $\mirrSubset{\aQ^{\pOp}}{q_1,q_2}$
& $= \{ q \in Q : q_1 \leq_\aQ q \siff q_2 \leq_\aQ q \}$
\\[1ex]
$\prCong{\aQ}{q_1,\bot_\aQ}$
& ${\mapsto}$ 
& $\mirrSubset{\aQ^{\pOp}}{q_1,\bot_\aQ}$
& $= \{ q \in Q : q_1 \leq_\aQ q \}$
\\[1ex]
$\prCong{\aQ}{q_1,\top_\aQ}$
& ${\mapsto}$ 
& $\mirrSubset{\aQ^{\pOp}}{q_1,\top_\aQ}$
& $= \{\top_\aQ \} \cup \{ q \in Q : q_1 \nleq_\aQ q \}$
\\[1ex]
$\mirrCong{\aQ}{q_1}$
& ${\mapsto}$ 
& $\genElem{\aQ^{\pOp}}{q_1}$
& $= \{ \top_\aQ, q_1 \}$
\end{tabular}
\]
Finally, for any  relation $\rS \subseteq Q \times Q$ we have:
\[
\begin{tabular}{lll}
$\ctos{\aQ}(\genCong{\aQ}{\rS}) $
&
$= \{ q \in Q : \forall (q_1,q_2) \in \rS.( q_1 \leq_\aQ q \siff q_2 \leq_\aQ q ) \}$
\end{tabular}
\]

\item
For any $q_1, q_2 \in Q$, the isomorphism $\stoc{\aQ^{\pOp}}$ acts as follows:
\[
\begin{tabular}{lllll}
$\mirrSubset{\aQ}{q_1,q_2}$
& ${\mapsto}$ 
& $\prCong{\aQ^{\pOp}}{q_1,q_2}(q'_1,q'_2)$
& $\iff$ & $\forall q \in Q.( (q \leq_\aQ q_1 \siff q \leq_\aQ q_2) \To (q \leq_\aQ q'_1 \siff q \leq_\aQ q'_2) )$
\\[1ex]
$\mirrSubset{\aQ}{q_1,\bot_\aQ}$
& ${\mapsto}$ 
& $\prCong{\aQ^{\pOp}}{q_1,\bot_\aQ}(q'_1,q'_2)$
& $\iff$ & $\forall q \in Q.( q \nleq_\aQ q_1 \To (q \leq_\aQ q'_1 \siff q \leq_\aQ q'_2))$
\\[1ex]
$\mirrSubset{\aQ}{q_1,\top_\aQ}$
& ${\mapsto}$ 
& $\prCong{\aQ^{\pOp}}{q_1,\top_\aQ}(q'_1,q'_2)$
& $\iff$ & $\forall q \in Q.( q \leq_\aQ q_1 \To (q \leq_\aQ q'_1 \siff q \leq_\aQ q'_2))$
\\[1ex]
$\genElem{\aQ}{q_1}$
& ${\mapsto}$ 
& $\mirrCong{\aQ^{\pOp}}{q_1}(q'_1,q'_2)$
& $\iff$ & $(q_1 \leq_\aQ q'_1 \siff q_1 \leq_\aQ q'_2)$
\end{tabular}
\]
Finally, for any subset $X \subseteq Q$ we have:
\[
\begin{tabular}{lll}
$\stoc{\aQ^{\pOp}}(\genSubset{\aQ}{X})$
&
$= \{ (q'_1,q'_2) \in Q \times Q : \forall x \in X.( x \leq_\aQ q'_1 \siff x \leq_\aQ q'_2 ) \}$
\\[1ex]&
$= \bigcup \{ \BC{(\up_\aQ \Lor_\aQ A) \cap \overline{\up_\aQ X\backslash A}} : A \subseteq X \}$
\end{tabular}
\]
recalling that $\BC{Z} := Z \times Z$.

\item
Concerning irreducible elements,
\[
\begin{tabular}{lllll}
$J(\latCong{\aQ})$ & $= \{ \prCong{\aQ}{q_1,q_2} : q_1 \neq q_2 \in Q \}$
&&
$M(\latCong{\aQ})$ & $= \{ \mirrCong{\aQ}{q} : q \in Q \backslash \{\top_\aQ\} \}$
\\[1ex]
$J(\latSub{\aQ})$ & $= \{ \genSubset{\aQ}{q} : q \in Q \backslash \{\bot_\aQ\} \}$
&&
$M(\latSub{\aQ})$ & $= \{ \mirrSubset{\aQ}{q_1,q_2} : q_1 \neq q_2 \in Q \}$
\end{tabular}
\]
and consequently:
\[
|J(\latCong{\aQ})| = |M(\latSub{\aQ})| = \frac{1}{2} \cdot |Q| \cdot (|Q| - 1)
\qquad
|M(\latCong{\aQ})| = |J(\latSub{\aQ})| = |Q| - 1
\]

\end{enumerate}
\end{lemma}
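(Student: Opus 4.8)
The plan is to derive all three parts from the explicit isomorphism $\ctos{\aQ}:(\latCong{\aQ})^{\pOp}\to\latSub{\aQ^{\pOp}}$ of Theorem~\ref{thm:cong_sub_dual_iso}, its inverse $\stoc{\aQ}$, and the single structural fact isolated in that proof: every $\rR$-class is closed under binary $\aQ$-join and so contains a maximum $\Lor_\aQ\sem{q}_\rR$, whence $\ctos{\aQ}(\rR)$ is precisely the set of these class-maxima. Throughout I use that $\ctos{\aQ}$, read as a map $\latCong{\aQ}\to\latSub{\aQ^{\pOp}}$, is an order-reversing lattice anti-isomorphism.

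For Part 1 I would prove the general formula first and read the four rows off as instances. Write $T:=\{q\in Q:\forall(a,b)\in\rS.\,(a\leq_\aQ q\iff b\leq_\aQ q)\}$. For $\ctos{\aQ}(\genCong{\aQ}{\rS})\subseteq T$: any $s\in\ctos{\aQ}(\genCong{\aQ}{\rS})$ is the maximum of its class, and for each $(a,b)\in\rS\subseteq\genCong{\aQ}{\rS}$ the rule $(\lor cg)$ applied to $(a,b)$ and $(s,s)$ puts $(a\lor_\aQ s,\,b\lor_\aQ s)$ in the congruence; if $a\leq_\aQ s$ then $a\lor_\aQ s=s$, so $b\lor_\aQ s$ lies in $s$'s class and maximality forces $b\lor_\aQ s=s$, i.e.\ $b\leq_\aQ s$ (symmetrically the converse), giving $s\in T$. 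For $T\subseteq\ctos{\aQ}(\genCong{\aQ}{\rS})$: I check $T$ is the carrier of a subalgebra of $\aQ^{\pOp}$ (it contains $\top_\aQ$ and is closed under $\aQ$-meets, both immediate from the defining biconditionals), observe $\rS\subseteq\stoc{\aQ}(T)$ directly from the definition of $T$, conclude $\genCong{\aQ}{\rS}\subseteq\stoc{\aQ}(T)$ as $\stoc{\aQ}(T)$ is a congruence, and apply the order-reversing $\ctos{\aQ}$ to get $T=\ctos{\aQ}(\stoc{\aQ}(T))\subseteq\ctos{\aQ}(\genCong{\aQ}{\rS})$. The three principal rows are the instances $\rS=\{(q_1,q_2)\},\{(q_1,\bot_\aQ)\},\{(q_1,\top_\aQ)\}$, simplifying via $\bot_\aQ\leq_\aQ q$ always and $\top_\aQ\leq_\aQ q\iff q=\top_\aQ$. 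The $\mirrCong{\aQ}{q_1}$ row I do directly: its classes $\down_\aQ q_1$ and $\overline{\down_\aQ q_1}$ have maxima $q_1$ and $\top_\aQ$, so $\ctos{\aQ}(\mirrCong{\aQ}{q_1})=\{q_1,\top_\aQ\}=\genElem{\aQ^{\pOp}}{q_1}$.

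Part 2 I obtain from Part 1 by instantiating at $\aQ^{\pOp}$ and applying the inverse $\stoc{\aQ^{\pOp}}$, using $(\aQ^{\pOp})^{\pOp}=\aQ$ with $\bot_{\aQ^{\pOp}}=\top_\aQ$ and $\top_{\aQ^{\pOp}}=\bot_\aQ$ (this last swap is exactly what interchanges the $\bot$- and $\top$-rows). For the general formula I argue directly: every element of $\genSubset{\aQ}{X}$ is a finite join $\Lor_\aQ X_0$ with $X_0\subseteq X$, and $\Lor_\aQ X_0\leq_\aQ q'\iff\forall x\in X_0.\,x\leq_\aQ q'$, so the condition "$s\leq_\aQ q'_1\iff s\leq_\aQ q'_2$ for all $s\in\genSubset{\aQ}{X}$" collapses to the same biconditional quantified over $x\in X$; equivalently it is $\bigcap_{x\in X}\mirrCong{\aQ^{\pOp}}{x}$ via row (iv) and the fact that $\stoc{\aQ^{\pOp}}$ turns $\genSubset{\aQ}{X}=\bigvee_x\genElem{\aQ}{x}$ into an intersection. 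The second displayed form is the partition of $Q$ into $X$-signature classes: $q'$ lies in the class indexed by $A\subseteq X$ exactly when $\{x\in X:x\leq_\aQ q'\}=A$, i.e.\ $q'\in(\up_\aQ\Lor_\aQ A)\cap\overline{\up_\aQ X\backslash A}$, and the congruence is the union of class$\times$class.

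For Part 3 the identities for $J(\latCong{\aQ})$ and $J(\latSub{\aQ})$ are recorded in Definition~\ref{def:jsl_cong_sub_lattices}. Since $\ctos{\aQ}$ is a lattice anti-isomorphism it exchanges join- and meet-irreducibles, so Part 1 gives $M(\latSub{\aQ^{\pOp}})=\{\mirrSubset{\aQ^{\pOp}}{q_1,q_2}:q_1\ne q_2\}$ and, via $\genElem{\aQ^{\pOp}}{q}=\ctos{\aQ}(\mirrCong{\aQ}{q})$ together with $J(\latSub{\aQ^{\pOp}})=\{\genElem{\aQ^{\pOp}}{q}:q\ne\top_\aQ\}$, gives $M(\latCong{\aQ})=\{\mirrCong{\aQ}{q}:q\ne\top_\aQ\}$; substituting $\aQ^{\pOp}$ for $\aQ$ in the first yields the stated $M(\latSub{\aQ})$. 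The two "$|Q|-1$" counts are immediate, as $q\mapsto\genElem{\aQ}{q}=\{\bot_\aQ,q\}$ and $q\mapsto\mirrCong{\aQ}{q}$ are visibly injective away from $\bot_\aQ$, resp.\ $\top_\aQ$. The main obstacle is the two "$\tfrac12|Q|(|Q|-1)$" counts, which require that distinct two-element subsets give distinct principal congruences, equivalently (taking the complement of $\mirrSubset{\aQ^{\pOp}}{q_1,q_2}$) that $\{q_1,q_2\}\mapsto\up_\aQ q_1\,\triangle\,\up_\aQ q_2=:D$ is injective. I recover the pair from $D$: if $q_1,q_2$ are incomparable the minimal elements of $D$ are exactly $q_1,q_2$; if $q_1<_\aQ q_2$ then $D=\up_\aQ q_1\backslash\up_\aQ q_2$ has a single minimal element $q_1$ and $\up_\aQ q_2=\up_\aQ q_1\backslash D$ recovers $q_2$; the two cases are told apart by whether $D$ has two or one minimal elements, and $D\ne\emptyset$ since $q_1\ne q_2$. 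Injectivity then delivers $|J(\latCong{\aQ})|=|M(\latSub{\aQ})|=\binom{|Q|}{2}$, completing the cardinalities.
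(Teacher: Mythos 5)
Your Parts 1 and 2 are correct, and Part 1 takes a genuinely different route from the paper's. The paper proves the principal case first, by establishing $\rR(q_1,q_2) \iff \ctos{\aQ}(\rR) \subseteq \mirrSubset{\aQ^{\pOp}}{q_1,q_2}$ for every congruence $\rR$ and invoking the characterising property of $\prCong{\aQ}{q_1,q_2}$, and only then obtains the general formula by writing $\genCong{\aQ}{\rS}$ as a join of principal congruences and pushing it through the anti-isomorphism. You instead prove the general formula directly --- the forward inclusion by a clean maximality argument with the rule $(\lor cg)$, the reverse by the sandwich $\genCong{\aQ}{\rS} \subseteq \stoc{\aQ}(T)$ followed by order-reversal --- and read the four rows off as instances. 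This is shorter, and the meet-closedness of $T$ falls out of the biconditional form rather than needing the paper's case analysis. Your derivation of Part 2 by substituting $\aQ^{\pOp}$ matches the paper's, and your ``signature class'' reading of the second displayed formula is cleaner than the paper's distributivity computation.

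Part 3 contains a genuine gap, one you share with the paper's own proof. Citing Definition \ref{def:jsl_cong_sub_lattices} only buys one inclusion: since the principal congruences join-generate $\latCong{\aQ}$, every join-irreducible is principal (Lemma \ref{lem:std_order_theory}.6). The converse --- that every $\prCong{\aQ}{q_1,q_2}$ with $q_1 \neq q_2$ is join-irreducible --- is not a general fact of universal algebra and already fails for the three-element chain $\aQ = (\{0,1,2\},\max,0)$: closing $(0,2)$ under $(\lor cg)$ against $(1,1)$ forces $1 \sim 2$, so $\prCong{\aQ}{0,2} = Q \times Q = \prCong{\aQ}{0,1} \lor \prCong{\aQ}{1,2}$ is join-reducible; dually $\mirrSubset{\aQ^{\pOp}}{0,2} = \{2\}$ is the bottom of $\latSub{\aQ^{\pOp}}$, not a meet-irreducible. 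Hence $|J(\latCong{\aQ})| = 2 \neq 3 = \tfrac{1}{2}\cdot 3 \cdot 2$, and the displayed equalities for $J(\latCong{\aQ})$ and $M(\latSub{\aQ})$ together with the count $\tfrac{1}{2}|Q|(|Q|-1)$ are false as stated. Your symmetric-difference argument is correct as far as it goes, but it establishes that distinct unordered pairs yield distinct \emph{principal} congruences, i.e.\ that there are exactly $\binom{|Q|}{2}$ of them; that is a different quantity from $|J(\latCong{\aQ})|$, and no injectivity argument can close the gap. The remaining claims of Part 3 are fine: the one-generated subalgebras $\{\bot_\aQ,q\}$ really are atoms, hence join-irreducible, so $J(\latSub{\aQ})$, $M(\latCong{\aQ})$ and the count $|Q|-1$ all stand.
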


\begin{proof}
\item
\begin{enumerate}
\item
Fixing any $q_1, q_2 \in Q$, we are going to establish that $\ctos{\aQ}(\prCong{\aQ}{q_1,q_2}) = \mirrSubset{\aQ^{\pOp}}{q_1,q_2}$. Observe that:
\[
\mirrSubset{\aQ^{\pOp}}{q_1,q_2} = \{ q \in Q : q_1 \leq_\aQ q \siff q_2 \leq_\aQ q \}
\]
using Definition \ref{def:jsl_cong_sub_lattices} and the fact that the ordering is reversed. Let us first verify that $S := \mirrSubset{\aQ^{\pOp}}{q_1,q_2}$ defines a sub join-semilattice of $\aQ^{\pOp}$. Certainly $\bot_{\aQ^{\pOp}} =  \top_\aQ \in S$, so given any $s_1, s_2 \in S$ we need to show that $s_1 \land_\aQ s_2 \in S$. To this end, define the predicates $\phi(s) := (q_1 \leq_\aQ s \;\land\; q_2 \leq_\aQ s)$ and $\psi(s) := (q_1 \nleq_\aQ s \;\land\; q_2 \nleq_\aQ s)$, and proceed case-by-case:
\begin{enumerate}
\item
if $\phi(s_1) \land \phi(s_2)$ then $\phi(s_1 \land_\aQ s_2)$,

\item
if $\phi(s_1) \land \psi(s_2)$ then $\psi(s_1 \land_\aQ s_2)$ else we obtain at least one of the contradictions $q_1, q_2 \leq_\aQ s_2$,

\item
finally if $\psi(s_1) \land \psi(s_2)$ then $\psi(s_1 \land_\aQ s_2)$ lest we obtain contradictions $q_i \leq_\aQ s_j$.

\end{enumerate}

and we are done. Now, we are going to establish that:
\[
\rR(q_1,q_2) \iff \ctos{\aQ}(\rR) \subseteq \mirrSubset{\aQ^{\pOp}}{q_1,q_2}
\qquad
\text{for any $\aQ$-congruence $\rR$}.
\]
This suffices because the principal $\aQ$-congruence generated by $(q_1,q_2)$ is characterised by the property that $\prCong{\aQ}{q_1,q_2} \subseteq \rR \iff \rR(q_1,q_2)$, so via the order-isomorphism we'd deduce that $\ctos{\aQ}(\prCong{\aQ}{q_1,q_2}) = \mirrSubset{\aQ^{\pOp}}{q_1,q_2}$. Using the definition of $\ctos{\aQ}$ and $\mirrSubset{\aQ^{\pOp}}{q_1,q_2}$, the desired equivalence can be rewritten as follows:
\[
\rR(q_1,q_2) 
\quad\stackrel{?}{\iff}\quad
\forall q \in Q.( q_1 \leq_\aQ \Lor_\aQ \sem{q}_\rR \iff q_2 \leq_\aQ \Lor_\aQ \sem{q}_\rR  )
\]
and we also recall that $\Lor_\aQ \sem{q}_\rR \in \sem{q}_\rR$ for every $\aQ$-congruence $\rR$ and every element $q \in Q$.
\begin{enumerate}
\item
$(\To)$ Assume $\rR(q_1,q_2)$. Recalling the join-semilattice morphism $(\sem{\cdot}_\rR)_* : (\aQ \backslash \rR)^{\pOp} \to \aQ^{\pOp}$ from Theorem \ref{thm:cong_sub_dual_iso}, its monotonicity informs us that:
\[
\mathrm{(\star)}
\qquad
\forall q_a, q_b \in Q.( \sem{q_a}_\rR \leq_{\aQ\backslash\rR} \sem{q_b}_\rR \;\To\; \Lor_\aQ \sem{q_a}_\rR \leq_\aQ \Lor_\aQ \sem{q_b}_\rR  )
\]
also using the description of its action from the proof of Theorem \ref{thm:cong_sub_dual_iso}. If we assume that $q_1 \leq_\aQ \Lor_\aQ \sem{q}_\rR$ for any fixed $q \in Q$, then we have $\sem{q_1}_\rR \leq_{\aQ\backslash\rR} \sem{\Lor_\aQ \sem{q}_\rR}_\rR = \sem{q}_\rR$ via the monotonicity of $\sem{\cdot}_\rR$, and consequently  $\Lor_\aQ \sem{q_1}_\rR \leq_\aQ \Lor_\aQ \sem{q}_\rR$ by applying $\mathrm{(\star)}$. Thus we have:
\[
\begin{tabular}{lll}
$q_2$
&
$\leq_\aQ \Lor_\aQ \sem{q_2}_\rR$
\\&
$= \Lor_\aQ \sem{q_1}_\rR $
& since $\rR(q_1,q_2)$
\\& 
$\leq_\aQ \Lor_\aQ \sem{q}_\rR$
& by above
\end{tabular}
\]
Via the symmetric argument when assuming $q_2 \leq_\aQ \Lor_\aQ \sem{q}_\rR$, we are done.

\item
$(\oT)$ Conversely, assume that $q_1 \leq_\aQ \Lor_\aQ \sem{q}_\rR \iff q_2 \leq_\aQ \Lor_\aQ \sem{q}_\rR$ for every $q \in Q$. Then the two particular cases where $q := q_1$ and $q := q_2$ yield:
\[
q_1 \leq_\aQ \Lor_\aQ \sem{q_2}_\rR
\qquad\text{and}\qquad
q_2 \leq_\aQ \Lor_\aQ \sem{q_1}_\rR.
\]
By the monotonicity of $\sem{\cdot}_\rR$ we deduce $\sem{q_1}_\rR = \sem{q_2}_\rR$, so that $\rR(q_1,q_2)$ as required.

\end{enumerate}

Having proved the first claim of (1), the next two claims follow because they are instantiations of the first where $q_2 := \bot_\aQ$ and $q_2 := \top_\aQ$, respectively. Concerning the third claim, we point out that $\mirrSubset{\aQ^{\pOp}}{q_1,\top_\aQ}$ necessarily contains $\top_\aQ$ by well-definedness, and whenever $q \neq \top_\aQ$ then $q_2 := \top_\aQ \nleq_\aQ q$. Regarding the fourth claim, let us verify that:
\[
\ctos{\aQ}(\mirrCong{\aQ}{q}) \stackrel{?}{=} \genSubset{\aQ^{\pOp}}{q} = \{ \top_\aQ, q \}
\qquad
\text{for every $q \in Q$.}
\]
Indeed, since $\rR := \mirrCong{\aQ}{q} = \BC{\down_\aQ q} \cup \BC{\overline{\down_\aQ q}}$ we deduce that:
\begin{enumerate}
\item
If $q = \top_\aQ$ then $Q\backslash\rR = \{ \sem{\top}_\rR \}$ and hence by definition $\ctos{\aQ}(\rR) = \{ \Lor_\aQ \sem{\top_\aQ}_\rR \} = \{ \top_\aQ \}$ as required.
\item
If $q \neq \top_\aQ$ then $Q\backslash\rR = \{ \sem{q}_\rR, \sem{\top}_\rR \}$ where $\sem{q}_\rR = \down_\aQ q$ and $\sem{\top_\aQ}_\rR$, so that $\ctos{\aQ}(\rR) = \{ q, \top_\aR \}$.
\end{enumerate}

As for the fifth and final claim, it follows directly from the first:
\[
\begin{tabular}{lll}
$\ctos{\aQ}(\genCong{\aQ}{\rS})$
&
$= \ctos{\aQ}(\Lor_{\latCong{\aQ}} \{ \prCong{\aQ}{q_1,q_2} : \rS(q_1,q_2)\} )$
\\[1ex]&
$= \bigcap \{ \ctos{\aQ}(\prCong{\aQ}{q_1,q_2}) : \rS(q_1,q_2) \}$
& apply order-isomorphism
\\[1ex]&
$= \bigcap \{ \{ q \in Q : q_1 \leq_\aQ q \siff q_2 \leq_\aQ q  \} : \rS(q_1,q_2) \}$
& by first claim
\\[1ex]&
$= \{ q \in Q : \forall (q_1,q_2) \in \rS.( q_1 \leq_\aQ q \siff q_2 \leq_\aQ q) \}$
\end{tabular}
\]
Here we have used the fact that every $\aQ$-congruence $\rR$ is the $\latCong{\aQ}$-join of the principal $\aQ$-congruences it contains. This follows because whenever $\rR(q_1,q_2)$ we necessarily have $\prCong{\aQ}{q_1,q_2} \subseteq \rR$ by definition of principal congruences.

\item
The second statement mirrors the first, and is mostly directly deducible from it by virtue of the isomorphisms at hand. However, additional information is provided by describing e.g.\ the principal $\aQ^{\pOp}$-congruences explicitly. On the other hand, all of these descriptions can be readily verified by directly unwinding the definitions. The final claim follows because:
\[
\begin{tabular}{lll}
$\stoc{\aQ^{\pOp}}(\genSubset{\aQ}{X})$
&
$= \stoc{\aQ^{\pOp}}(\Lor_{\latSub{\aQ}} \{ \genElem{\aQ}{x} : x \in X  \})$
& 
\\[1ex]&
$= \bigcap \{ \ctos{\aQ^{\pOp}}(\genElem{\aQ}{x}) : x \in X \}$
\\[1ex]&
$= \bigcap \{ \mirrCong{\aQ^{\pOp}}{x} : x \in X \}$
& by first claim
\\[1ex]&
$= \bigcap \{ \BC{\up_\aQ x} \,\cup\, \BC{\overline{\up_\aQ x}} : x \in X  \}$
\\&
$= \bigcup \{ \bigcap_{x \in A} \BC{\up_\aQ x} \cap \bigcap_{x \in X\backslash A} \BC{\overline{\up_\aQ x}} : A \subseteq X \}$
& by set-theoretic distributivity
\\&
$= \bigcup \{ \BC{\up_\aQ \Lor_\aQ A} \,\cup\, \overline{\BC{\up_\aQ X\backslash A}} : A \subseteq X \}$
& see below
\end{tabular}
\]
Regarding the final equality, observe that:
\[
\BC{I} \cap \BC{J} = I \times I \,\cap\, J \times J = (I \cap J) \times (I \cap J) = \BC{I \cap J}
\]
and also the general equalities:
\[
\up_\aQ x_1 \,\cap\, \up_\aQ x_2 = \; \up_\aQ (x_1 \lor_\aQ x_2)
\qquad
\overline{\up_\aQ x_1} \cap \overline{\up_\aQ x_2} = \overline{\up_\aQ x_1 \,\cup\, \up_\aQ x_2} = \overline{\up_\aQ \{x_1,x_2\}}.
\]

\item
The description of $J(\latCong{\aQ})$ follows by universal algebra i.e.\ is a general statement concerning the lattice of congruences of a finite algebra. Likewise, the description of $J(\latSub{\aQ})$ follows for the subalgebra lattice of any (possibly infinite) algebra. Then the descriptions of the meet-irreducibles follow via (1) and (2), seeing as $\ctos{\aQ} : (\latCong{\aQ})^{\pOp} \to \latSub{\aQ^{\pOp}}$ is a bounded lattice isomorphism, and hence induces bijections between join/meet-irreducibles.

\end{enumerate}
\end{proof}

\section{The category $\Dep$}
\label{sec:the_cat_dep}

\subsection{Introducing $\Dep$ and its self-duality}
\label{subsec:intro_dep}

We describe a category and its self-duality. We'll prove it is equivalent to $\JSL_f$ in the next section. It is based on the work of Moshier and Jipsen \cite{ContextJipsen2012} (see \href{http://math.chapman.edu/~jipsen/summerschool/Jipsen%202012%20Categories%20of%20algebraic%20contexts%20equivalent%20to%20idempotent%20semirings%20and%20domain%20semirings.pdf}{here}). We reuse their notation $(-)^\up$ and $(-)^\down$, and our category $\Dep$ is a variation of Moshier's category \textbf{Ctxt} restricted to finite relations.

\begin{definition}[The category $\Dep$]
\label{def:category_bicliq}
Its objects are the relations between finite sets $\rG \subseteq \rG_s \times \rG_t$. Its morphisms $\rR : \rG \to \rH$ are those relations $\rR \subseteq \rG_s \times \rH_t$ such that the $\Rel$-diagram:
\[
\xymatrix@=15pt{
\rG_t \ar@{..>}[rr]^{\rR_r\spbreve} && \rH_t
\\
\rG_s \ar[urr]^{\rR} \ar[u]^{\rG} \ar@{..>}[rr]_{\rR_l} && \rH_s \ar[u]_{\rH}
}
\]
commutes for some relations $\rR_l$ and $\rR_r$\footnote{We use the converse relation $\rR_r\spbreve$ to make the self-duality of this category `nicer' later on.}. Equivalently, a morphism $\rR : \rG \to \rH$ is a relation $\rR \subseteq \rG_s \times \rH_t$ which factors through $\rG$ (on the left) and $\rH$ (on the right).

The identity morphism $id_\rG : \rG \to \rG$ is the relation $\rG \subseteq \rG_s \times \rG_t$. Given $\rR : \rG \to \rH$ and $\rS : \rH \to \rI$, their composite $\rR \fatsemi \rS \subseteq \rG_s \times \rI_t$ is defined by the following commuting $\Rel$-diagram:
\[
\xymatrix@=15pt{
\rG_t \ar@{..>}[rr]^{\rR_r\spbreve} && \rH_t \ar@{..>}[rr]^{\rS_r\spbreve} && \rI_t
\\
\rG_s \ar[urr]^{\rR} \ar[u]^{\rG} \ar@{..>}[rr]_{\rR_l} && \rH_s \ar[u]_{\rH} \ar[urr]^{\rS} \ar@{..>}[rr]_{\rS_l} && \rI_s \ar[u]_{\rI} 
}
\]
e.g. $\rR \fatsemi \rS$ is the relational composite $\rR_l ; \rS$. \endbox
\end{definition}

\begin{example}[$\Dep$-morphisms]
  \label{ex:dep_morphisms}
  \item
  \begin{enumerate}
    \item \emph{$\Dep$-morphisms are closed under converse and union}.

    Given $\rR : \rG \to \rH$ then $\breve{\rR} : \breve{\rH} \to \breve{\rG}$ by taking the converse of the commutative square, which actually swaps the witnessing relations. We have $\emptyset : \rG \to \rH$ via empty witnessing relations. Given $\rR, \rS: \rG \to \rH$ then $\rR \cup \rS: \rG \to \rH$ by (i) unioning the respective witnessing relations, (ii) the bilinearity of relational composition w.r.t.\ union.

    \item \emph{Bipartite graph isomorphisms $\beta : G_1 \to G_2$ induce $\Dep$-isomorphisms.}
    
    Suppose we have a bipartite graph isomorphism $\beta : G_1 \to G_2$ where each $G_i = (V_i, \rE_i)$, so $\rE_1(x,y) \iff \rE_2(\beta(x),\beta(y))$. Given any bipartition $(X, Y)$ of $G_1$ we obtain a bipartition $(\beta[X], \beta[Y])$ of $G_2$. Changing notation provides the $\Dep$-morphism below left:
    \[
      \begin{tabular}{ccc}
        $\xymatrix@=15pt{
          Y \ar@{->}[rr]^{\beta |_{Y \times \beta[Y]}}  && \beta[Y]
          \\
          X \ar[u]^-{\rG_1} \ar@{..>}[urr]^-\rR \ar@{->}[rr]_{\beta |_{X \times \beta[X]} } && \beta[X] \ar[u]_-{\rG_2}
        }$
        &&
        $\xymatrix@=15pt{
          \beta[Y] \ar@{->}[rr]^{\breve{\beta} |_{\beta[Y] \times Y}}  && Y
          \\
          \beta[X] \ar[u]^-{\rG_2} \ar@{..>}[urr]^-\rS \ar@{->}[rr]_{\breve{\beta} |_{\beta[X] \times X} } && X \ar[u]_-{\rG_1}
        }$
      \end{tabular}
    \]
    where each $\rG_i := \rE_i |_{X \times Y}$. The bijective inverse $\beta^{-1} = \breve{\beta}$ provides witnessing relations in the opposite direction
    i.e.\ the $\Dep$-morphism $\rS : \rG_2 \to \rG_1$ above right. These morphisms are mutually inverse: $\rG_1$ is $\Dep$-isomorphic to $\rG_2$.

    \item \emph{The canonical quotient poset of a preorder defines a $\Dep$-isomorphism.}
    
    Let $\rG \subseteq X \times X$ be a transitive and reflexive relation. There is a canonical way to construct a poset $\pP = (X/{\rE}, \leq_{\pP})$ via the equivalence relation $\rE(x_1, x_2) :\iff \rG(x_1, x_2) \land \rG(x_2, x_1)$, where $\sem{x_1}_\rE \leq_{\pP} \sem{x_2}_\rE :\iff \rG(x_1, x_2)$.
    
    Consider the $\Rel$-diagram:
    \[
      \xymatrix@=15pt{
        X \ar@{->}[rr]^-{\nin}
        && \{ \overline{\breve{\rG}[x]} : x \in X \} \ar@{=}[r]
        &
        \{ \overline{\bigcup \down_{\pP} \sem{x}_\rE} : x \in X \} \ar[rrr]^-{(\lambda \sem{x}_\rE. \overline{\breve{\rG}[x]})\spbreve}
        &&& X / \rE
        \\
        X \ar[u]^-{\rG} \ar@{->}[rr]_-{\lambda x.\rG[x]}
        && \{ \rG[x] : x \in X \} \ar@{=}[r] \ar[u]_-{\nsubseteq}
        &
        \{ \bigcup \up_{\pP} \sem{x}_\rE : x \in X \}
        \ar@{->}[rrr]_-{(\lambda \sem{x}_\rE. \rG[x])\spbreve}
        &&& X / \rE \ar[u]_{\leq_{\pP}}
      }
    \]
    Note that $\rG[x]$ is the `upwards closure' i.e.\ the union of the upwards closure $\up_{\pP} \sem{x}_{\rE}$, whereas $\breve{\rG}[x]$ is the `downwards closure' in a similar manner. The left square commutes for completely general reasons, defining the $\Dep$-morphism:
    \[
      \rR(x_1, \overline{\breve{\rG}[x_2]})
      :\iff \exists x \in X.[\rG(x_1, x) \text{ and } \rG(x, x_2) ]
      \iff \rG(x_1, x_2).
    \]
    The right square involves bijections via (i) identifying elements of $\pP$ with principal up/downsets, (ii) the disjointness of equivalence classes. It also commutes:
    \[
      \begin{tabular}{lll}
        $\bigcup \up_{\pP} \sem{x_1}_\rE \nsubseteq \overline{\bigcup \down_{\pP} \sem{x_2}_\rE}$
        &
        $\iff \bigcup \up_{\pP} \sem{x_1}_\rE \;\cap\; \bigcup \down_{\pP} \sem{x_2}_\rE \neq \emptyset$
        \\&
        $\iff \exists x \in X. \sem{x_1}_\rE \leq_\pP \sem{x}_\rE \leq_\pP \sem{x_2}_\rE$
        \\&
        $\iff \sem{x_1}_\rE \leq_{\pP} \sem{x_2}_\rE$.
      \end{tabular}
    \]
    In fact, $\rR : \rG \to \;\nsubseteq$ is an instance of the natural isomorphism $\red_\rG$ from Theorem \ref{thm:bicliq_jirr_equivalent} further below, and the right square defines a $\Dep$-isomorphism by Example 2 above. Thus $\rG \cong \;\leq_\pP$, although whenever $|X| > |X / \rE|$ this isomorphism \emph{cannot arise from a bipartite graph isomorphism}.

    \item \emph{Monotonicity can be characterised by $\Dep$-morphisms.}
    
    Given finite posets $\pP$ and $\pQ$, a function $f: P \to Q$ is monotonic iff the following $\Rel$-diagram commutes:
    \[
    \xymatrix@=15pt{
    P \ar[r]^f & Q \ar[r]^{\leq_{\pQ}} & Q 
    \\
    P \ar[u]^{\leq_\pP} \ar[rr]_f && Q \ar[u]_{\leq_\pQ}
    }
    \]
    as the reader may verify. Actually, $f$ is monotonic iff $f ; \leq_\pQ  \,:\, \leq_\pP \,\to\, \leq_\pQ$ is a $\Dep$-morphism.
    % Indeed, given that $f ; \leq_\pQ  \,:\, \leq_\pP \,\to\, \leq_\pQ$ is a $\Dep$-morphism we'll prove that $f$ is monotonic in Example \ref{ex:dep_char_monotonocity} below.
    % In particular given $f; \leq_\pP$ is a $\Dep$-morphism one can use the equation $(f ; \leq_\pP)^\up = (f ; \leq_\pP)^\up \c$

    \item \emph{Biclique edge-coverings amount to $\Dep$-monos}.

    Generally speaking, $\Dep$-morphisms represent two edge-coverings of a bipartitioned graph. A \emph{single edge-covering} amounts to a $\Dep$-mono of a special kind:
      \[
        \xymatrix@=15pt{
          \rG_t \ar[rr]^{\Delta_{\rG_t}}  && \rG_t
          \\
          \rG_s \ar[u]^-{\rG} \ar[urr]^-\rG \ar@{->}[rr]_{\rG_l} && \rH_s \ar[u]_-{\rH}
        }
      \]
      i.e.\ morphisms $\rG: \rG \to \rH$ where additionally $\rG_t = \rH_t$. Later we'll see that any mono $\rR : \rG \to \rI$ induces such a $\rG : \rG \to \rH$ where $|\rH_s| \leq |\rI_s|$ and $|\rH_t| \leq |\rI_t|$. 
      % Actually, this follows from Theorem \ref{thm:extension_cover_correspondence} much later on.
      
    \item \emph{Biclique edge-coverings amount to $\Dep$-epis}.

    Analogous to the previous example, a single edge-covering can be represented as a $\Dep$-epi $\rG : \rH \to \rG$ where $\rG_s = \rH_s$. This will follow from self-duality i.e.\ epis are precisely the converses of monos.
    
    \endbox

  \end{enumerate}
\end{example}

\begin{lemma}
$\Dep$ is a well-defined category
\end{lemma}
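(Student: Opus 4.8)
The plan is to verify the four category axioms: that each $id_\rG$ is a legitimate $\Dep$-morphism, that the composite $\rR \fatsemi \rS$ is well-defined (independent of the chosen witnessing relations) and is again a $\Dep$-morphism, and that the unit and associativity laws hold. Throughout I would work entirely inside $\Rel$, using only that relational composition $;$ is associative with units $\Delta_X$, and that $(\rR ; \rS)\spbreve = \rS\spbreve ; \rR\spbreve$. For readability I abbreviate the two halves of the morphism condition: a relation $\rR \subseteq \rG_s \times \rH_t$ is a morphism $\rG \to \rH$ exactly when $\rR = \rR_l ; \rH$ (it factors through $\rH$ on the right) and $\rR = \rG ; \rR_r\spbreve$ (it factors through $\rG$ on the left) for some witnesses $\rR_l, \rR_r$. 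The identity is immediate: taking $\rR_l := \Delta_{\rG_s}$ and $\rR_r\spbreve := \Delta_{\rG_t}$ gives $\rG = \Delta_{\rG_s} ; \rG = \rG ; \Delta_{\rG_t}$, so $id_\rG = \rG$ is indeed a morphism $\rG \to \rG$.

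The key step, which I expect to carry all the weight, is the well-definedness of composition. Given $\rR : \rG \to \rH$ with witnesses $\rR_l, \rR_r$ and $\rS : \rH \to \rI$ with witnesses $\rS_l, \rS_r$, the definition sets $\rR \fatsemi \rS := \rR_l ; \rS$. The crucial observation is the chain of equalities
\[
\rR_l ; \rS = \rR_l ; (\rH ; \rS_r\spbreve) = (\rR_l ; \rH) ; \rS_r\spbreve = \rR ; \rS_r\spbreve,
\]
using $\rS = \rH ; \rS_r\spbreve$ on the left and $\rR = \rR_l ; \rH$ on the right. The outer expression $\rR ; \rS_r\spbreve$ makes no reference to $\rR_l$, so the composite does not depend on the choice of left-witness for $\rR$; symmetrically $\rR_l ; \rS$ makes no reference to $\rS_r$, so it does not depend on the right-witness for $\rS$ (independence from the remaining witnesses $\rR_r, \rS_l$ is automatic, as they never appear in $\rR_l ; \rS$). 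These two expressions simultaneously show the composite is a morphism $\rG \to \rI$: from $\rR \fatsemi \rS = \rG ; (\rR_r\spbreve ; \rS_r\spbreve)$ it factors through $\rG$ on the left, with right-witness $\rS_r ; \rR_r$; and from $\rR \fatsemi \rS = (\rR_l ; \rS_l) ; \rI$ it factors through $\rI$ on the right, with left-witness $\rR_l ; \rS_l$.

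Once witness-independence is secured, the unit and associativity laws are routine bookkeeping in $\Rel$. For the left unit I would use the left-witness $\Delta_{\rG_s}$ of $id_\rG$ to get $id_\rG \fatsemi \rR = \Delta_{\rG_s} ; \rR = \rR$; for the right unit I would use the alternative expression and the right-witness $\Delta_{\rH_t}$ of $id_\rH$ to get $\rR \fatsemi id_\rH = \rR ; \Delta_{\rH_t} = \rR$. For associativity of $\rR : \rG \to \rH$, $\rS : \rH \to \rI$, $\rT : \rI \to \rJ$, I would evaluate both bracketings against the left-witness $\rR_l ; \rS_l$ of $\rR \fatsemi \rS$ found above, obtaining $(\rR \fatsemi \rS) \fatsemi \rT = (\rR_l ; \rS_l) ; \rT$ and $\rR \fatsemi (\rS \fatsemi \rT) = \rR_l ; (\rS_l ; \rT)$, which coincide by associativity of relational composition. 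Thus the only genuine subtlety is the witness-independence argument of the second paragraph; after that, every remaining axiom is a one-line manipulation of $;$.
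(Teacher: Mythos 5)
Your proof is correct and follows essentially the same route as the paper's: the identity via the witnesses $\Delta_{\rG_s}$ and $\Delta_{\rG_t}$, witness-independence of composition via the chain $\rR_l ; \rS = \rR ; \rS_r\spbreve$, closure under composition via the composite witnesses $(\rR_l ; \rS_l,\; \rS_r ; \rR_r)$, and the unit and associativity laws reduced to those of relational composition. You have merely spelled out the details that the paper leaves implicit.
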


\begin{proof}
$id_\rG := \rG : \rG \to \rG$ is well-defined via witnesses $\Delta_{\rG_s} ; \rG = \rG = \rG ; \Delta_{\rG_t}\spbreve$. Each composite $\rR \fatsemi \rS$ is well-defined via the composite witnesses, see the diagram in Definition \ref{def:category_bicliq}. Composites are independent of the witnesses of their components since $\rR \fatsemi \rS = \rR_l ; \rS = \rR ; \rS_r\spbreve$. Composition is associative because the respective composition of $\Rel$-diagrams is associative. Finally, given $\rR : \rG \to \rH$ then $id_{\rG} \fatsemi \rR = \Delta_{\rG_s} ; \rR = \rR$ and similarly $\rR \fatsemi id_{\rH} = \rR ; \Delta_{\rH_t}\spbreve = \rH$.
\end{proof}

\smallskip
We'll introduce further notation and auxiliary results. In particular, we'll prove that each $\Dep$-morphism $\rR$ has canonical inclusion-maximum witnesses.

\begin{definition}[$(-)^\up$ and $(-)^\down$]
\label{def:up_down}
Given any relation $\rR \subseteq \rR_s \times \rR_t$ between finite sets we define two functions:
\[
\begin{tabular}{lcl}
$\rR^\up : \Pow \rR_s \to \Pow \rR_t$
&&
$\rR^\down : \Pow \rR_t \to \Pow \rR_s$
\\
$\rR^\up (X) := \rR[X]$
&&
$\rR^\down (Y) := \{ x \in \rR_s : \rR[x] \subseteq Y \}$.
\end{tabular}
\]
Then $\rR^\up$ is called the \emph{$\rR$-image function} whereas $\rR^\down$ is called the \emph{$\rR$-preimage function}. They induce a closure operator and an interior operator (co-closure operator) as follows:
\[
\begin{tabular}{c}
$\cl_\rR := \rR^\down \circ \rR^\up : (\Pow \rR_s,\subseteq) \to (\Pow \rR_s,\subseteq)$
\\
$\inte_\rR := \rR^\up \circ \rR^\down : (\Pow \rR_t,\subseteq) \to (\Pow \rR_t,\subseteq)$
\end{tabular}
\]
See Definition \ref{def:cl_inte} for background.
\endbox
\end{definition}

\smallskip

\begin{note}
$\rR^\down$ is called the \emph{$\rR$-preimage function} because it generalises the usual preimage function of a function. That is, given any function (= functional relation) $f : X \to Y$ then $f^\down(B) := \{ x \in X : f[x] \subseteq B \} = \{ x \in X : f(x) \in B \}$. 
\end{note}

\smallskip

\begin{note}
The operators $(-)^\up$ and $(-)^\down$ faithfully represent relational composition as functional composition.
\begin{enumerate}
\item
$(-)^\up$ defines an equivalence functor (in fact, isomorphism) from the category of finite sets and relations $\Rel_f$ to the full subcategory of  $\JSL_f$  with objects $\JPow X = (\Pow X,\cup,\emptyset)$ where $X$ is a finite set.
\item
$(-)^\down$ defines an equivalence functor (in fact, isomorphism) from $\Rel_f^{op}$ to the full subcategory of $\JSL_f$ with objects $(\JPow X)^{\pOp} = (\Pow X,\cap,X)$ where $X$ is a finite set. \endbox
\end{enumerate}
\end{note}

\smallskip

\begin{lemma}[Relating $(-)^\up$ and $(-)^\down$]
\label{lem:up_down_basic}
\item
Let $\rG$, $\rH$ be relations between finite sets, $\rR \subseteq \rG_s \times \rH_t$, $\rS \subseteq \rH_s \times \rI_t$ any relations and $X$ any finite set.
\begin{enumerate}
\item
We have the adjoint relationship:
\[
(\up\dashv\down)
\qquad
\rR^\up(X) \subseteq Y
\iff
X \subseteq \rR^\down(Y)
\qquad
\qquad
\text{for all subsets $X \subseteq \rG_s$, $Y \subseteq \rH_t$}
\]
hence they actually define adjoint $\JSL_f$-morphisms:
\[
\begin{tabular}{llll}
$\rR^\up$ & $:(\Pow \rG_s,\cup,\emptyset)$ & $\to$ & $(\Pow \rH_t,\cup,\emptyset)$
\\
$\rR^\down$ & $:(\Pow \rH_t,\cap,\rH_t)$ & $\to$ & $(\Pow \rG_s,\cap,\rG_s)$
\end{tabular}
\]

\item
$\cl_\rR$ is a well-defined closure operator and $\inte_\rR$ is a well-defined interior operator.

\item
The following labelled equalities hold:
\[
\begin{tabular}{ccccc}
$(\up\Delta)$ & $\Delta_X^\up = id_{\Pow X}$
&&
$\Delta_X^\down = id_{\Pow X}$ & $(\down\Delta)$
\\[1ex]
$(\up\circ)$ & $(\rR;\rS)^\up = \rS^\up \circ \rR^\up$
&&
$(\rR;\rS)^\down = \rR^\down \circ \rS^\down$ & $(\down\circ)$
\\[1ex]
$(\up\down\up)$ & $\rR^\up \circ \rR^\down \circ \rR^\up = \rR^\up$
&&
$\rR^\down \circ \rR^\up \circ \rR^\down = \rR^\down$ & $(\down\up\down)$
\\[1ex]
$(\neg\up\neg)$ & $\neg_{\rG_t} \circ \rR^\up \circ \neg_{\rG_s} = \breve{\rR}^\down$
&&
$\neg_{\rG_s} \circ \rR^\down \circ \neg_{\rG_t} = \breve{\rR}^\up$ & $(\neg\down\neg)$
\end{tabular}
\]
The rules $(\neg\up\neg)$ and $(\neg\down\neg)$ are referred to as `De Morgan dualities'.

\item
We have two sets of four equivalent statements:
\[
\begin{tabular}{c|c|c}
equivalent statements &\qquad& equivalent statements
\\ \hline
$\rR^\up = \rR^\up \circ \cl_\rG$
&&
$\rR^\up = \inte_\rH \circ \rR^\up$.
\\
$\rR^\down = \cl_\rG \circ \rR^\down$
&&
$\rR^\down = \rR^\down \circ \inte_\rH$.
\\
$\breve{\rR}^\down = \breve{\rR}^\down \circ \inte_{\breve{\rG}}$
&&
$\breve{\rR}^\down = \cl_{\breve{\rH}} \circ \breve{\rR}^\down$
\\
$\breve{\rR}^\up = \inte_{\breve{\rG}} \circ \breve{\rR}^\up$
&&
$\breve{\rR}^\up  = \breve{\rR}^\up \circ \cl_{\breve{\rH}}$
\end{tabular}
\]

\end{enumerate}
\end{lemma}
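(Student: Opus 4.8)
The plan is to treat the two columns as two independent four-way equivalences, and within each column to establish the pattern $(1)\iff(2)$ by a pure adjunction argument and $(1)\iff(3)$, $(2)\iff(4)$ by conjugation with the complement maps $\neg$. Throughout I abbreviate $f := \rR^\up$ and $f_* := \rR^\down$, which form an adjunction $f \dashv f_*$ by the relationship $(\up\dashv\down)$ of part (1), and I recall from part (2) that $\cl_\rG$ is a closure operator and $\inte_\rH$ an interior operator.

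For the left column the heart is the general fact: if $f \dashv f_*$ and $c$ is a closure operator on the common domain of $f$ and codomain of $f_*$, then $f = f\circ c$ iff $f_* = c\circ f_*$. I would prove this directly from the unit $X \subseteq f_*(f(X))$ and counit $f(f_*(Y)) \subseteq Y$ of the adjunction, together with the extensivity $X \subseteq c(X)$ and idempotence of $c$. For the forward direction, extensivity gives $f_*(Y) \subseteq c(f_*(Y))$, while the reverse inclusion $c(f_*(Y)) \subseteq f_*(Y)$ transposes under the adjunction to $f(c(f_*(Y))) \subseteq Y$, which becomes the counit $f(f_*(Y)) \subseteq Y$ once $f = f\circ c$ is invoked. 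The backward direction applies the monotone $c$ to the unit and uses $f_* = c\circ f_*$ to get $c(X) \subseteq f_*(f(X))$, i.e.\ $f(c(X)) \subseteq f(X)$. Taking $c := \cl_\rG$ yields $(1)\iff(2)$.

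Next I would record the two conjugation identities
\[
\neg_{\rG_s}\circ \cl_\rG\circ \neg_{\rG_s} = \inte_{\breve\rG}, \qquad \neg_{\rH_t}\circ \inte_\rH \circ \neg_{\rH_t} = \cl_{\breve\rH},
\]
each obtained by inserting $\neg\circ\neg = id$ into $\cl_\rG = \rG^\down\circ\rG^\up$ (resp.\ $\inte_\rH = \rH^\up\circ\rH^\down$) and applying the De Morgan rules $(\neg\up\neg)$ and $(\neg\down\neg)$ of part (3); the second identity is in fact the first applied to $\breve\rH$, since $\breve{\breve\rH} = \rH$ and $\neg$ is involutive. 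Because $\neg$ is a bijection, conjugating an equality of functions by $\neg$ on each side preserves and reflects it. Conjugating $(1)$ by $\neg_{\rH_t}\circ(-)\circ\neg_{\rG_s}$ turns its left side into $\breve\rR^\down$ via $(\neg\up\neg)$ and its right side into $\breve\rR^\down\circ\inte_{\breve\rG}$ via the first identity, which is precisely $(3)$; conjugating $(2)$ by $\neg_{\rG_s}\circ(-)\circ\neg_{\rH_t}$ turns it into $(4)$ using $(\neg\down\neg)$. This closes the left column.

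The right column is entirely parallel: $(1)\iff(2)$ follows from the dual of the adjunction lemma, now using that $\inte_\rH$ is an interior operator (co-extensive and idempotent), so that the same transposition argument with every inclusion reversed gives $f = k\circ f \iff f_* = f_*\circ k$ for $k := \inte_\rH$; then $(1)\iff(3)$ and $(2)\iff(4)$ follow by the same conjugation, now feeding the second identity $\neg_{\rH_t}\circ\inte_\rH\circ\neg_{\rH_t} = \cl_{\breve\rH}$. I expect the main obstacle to be getting the transposition bookkeeping in the adjunction lemma exactly right — in particular verifying that $f = f\circ c$ is invoked at precisely the step where $f(f_*(Y))$ appears — since everything downstream is a mechanical consequence of that lemma and the De Morgan rules of part (3).
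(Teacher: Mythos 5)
Your argument for part (4) is correct, and its core is the same as the paper's: the only non-mechanical step is transposing $\rR^\up \circ \cl_\rG \leq \rR^\up$ into $\cl_\rG \circ \rR^\down \leq \rR^\down$ across the adjunction $(\up\dashv\down)$ (using extensivity of $\cl_\rG$ and $\cl_\rR$ for the easy inclusions), and the remaining equivalences within a column follow by conjugating with the involutions $\neg_{\rG_s}$, $\neg_{\rH_t}$ via $(\neg\up\neg)$ and $(\neg\down\neg)$; your auxiliary identities $\neg_{\rG_s} \circ \cl_\rG \circ \neg_{\rG_s} = \inte_{\breve{\rG}}$ and $\neg_{\rH_t} \circ \inte_\rH \circ \neg_{\rH_t} = \cl_{\breve{\rH}}$ are exactly the ones the paper records later in Lemma~\ref{lem:cl_inte_of_pirr}.1. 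The one structural difference is the right-hand column. You re-prove it from scratch with a dual transposition lemma for interior operators ($f = k\circ f \iff f_* = f_*\circ k$), whereas the paper observes that the right column \emph{is} the left column instantiated at $\breve{\rR}$, regarded as a relation from $\breve{\rH}$ to $\breve{\rG}$: its statements $1$--$4$ are the left statements $4$--$1$ for $\breve{\rR}$, so the entire column comes for free from the substitution $\rR \mapsto \breve{\rR}$ and no dual lemma is needed. Both routes are valid; the paper's substitution is more economical and is a trick that recurs throughout the development, so it is worth internalising.

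One caveat on coverage: the lemma has four parts and your proposal proves only part (4), explicitly taking (1)--(3) as inputs. Those parts are routine --- (1) is the one-line chain $\rR[X] \subseteq Y \iff \forall x \in X.\ \rR[x] \subseteq Y \iff X \subseteq \rR^\down(Y)$, (2) then follows from the general statement about adjoint monotone maps (Lemma~\ref{lem:adjoint_cl_in}), and (3) is direct unwinding of definitions --- but since your argument for (4) quotes all three, a complete write-up must include them.
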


\begin{proof}
\item
\begin{enumerate}
\item
$\rR^\up(X) \subseteq Y \iff \rR[X] \subseteq Y \iff \forall x \in X. \rR[x] \subseteq Y \iff X \subseteq \rR^\down(Y)$ establishes the adjunction. Thus $\rR^\up$ preserves all colimits = joins in $(\Pow \rG_s,\subseteq)$ = unions, and also $\rR^\down$ preserves all limits = meets in $(\Pow \rH_t,\subseteq)$ = intersections.

\item
First observe that $\rR^\up$ defines a monotone function on $(\Pow \rR_s,\subseteq)$, and $\rR^\down$ defines a monotone function on $(\Pow \rR_t,\subseteq)$. Then this follows from (1) via Lemma \ref{lem:adjoint_cl_in}.

\item
Regarding the topmost rules:
\[
\Delta_X^\up 
= \lambda A \subseteq X.\Delta_X[A] 
= id_{\Pow X}
= \lambda A \subseteq X.\{ x \in X : \{x\} \subseteq A \}
= \Delta_X^\down
\]
Next we prove $(\up\circ)$ and $(\up\down\up)$:
\[
\begin{tabular}{c}
$(\rR ; \rS)^\up(X) = (\rR ; \rS)[X] = \rS[\rR[X]] = \rS^\up \circ \rR^\up (X)$
\\[1ex]
$\rR^\up \circ \rR^\down \circ \rR^\up(X)
= \rR[\{ x \in X : \rR[x] \subseteq \rR[X] \}]
= \rR[X]
= \rR^\up(X)$
\end{tabular}
\]
and now $(\down\circ)$ and $(\down\up\down)$:
\[
\begin{tabular}{lll}
$(\rR ; \rS)^\down(Z)$
& $= \{ x \in X : \rR;\rS[x] \subseteq Z \}$
\\&
$= \{ x \in X : \rS[\rR[x]] \subseteq Z \} $
\\&
$= \{ x \in X : \rR[x] \subseteq \rS^\down(Z) \}$
& by adjoint relationship
\\&
$= \rR^\down(\rS^\down(Z))$
\\
\\
$\rR^\down \circ \rR^\up \circ \rR^\down(Y)$
&
$= \rR^\down(\rR[\{x \in X : \rR[x] \subseteq Y \}])$
\\&
$= \{ x \in X : \rR[x] \subseteq \{ y \in Y : \exists x \in X.\, y \in \rR[x] \subseteq Y \} \}$
\\&
$= \{ x \in X : \rR[x] \subseteq Y  \}$
\\&
$= \rR^\down(Y)$
\end{tabular}
\]
Finally we prove the `De Morgan dualities'. Firstly, $(\neg\up\neg)$  holds because:
\[
\begin{tabular}{lll}
$\neg_{\rG_t} \circ \rR^\up \circ \neg_{\rG_s}(X)$
&
$= \overline{\rR[\overline{X}]}$
\\&
$= \{ h_t \in \rH_t : \neg\exists g_s \in \overline{X}.\rR(g_s,h_t) \}$
\\&
$= \{ h_t \in \rH_t : \neg\exists g_s \in \overline{X}.\breve{\rR}(h_t,g_s) \}$
\\&
$= \{ h_t \in \rH_t : \breve{\rR}[h_t] \subseteq X \}$
\\&
$= \breve{\rR}^\down(X)$
\end{tabular}
\]
and $(\neg\down\neg)$ follows by setting $\rR := \breve{\rR}$ and cancelling involutions.

\item
If the left-hand set of four statements are equivalent, then so are the right-hand set of four statements. This follows by substituting $\rR \mapsto \breve{\rR}$. Also, in the left-hand statements, the last two follow from the first two by applying De Morgan duality. Then it suffices to prove that the first two statements on the left are equivalent.

The pointwise-inclusion-orderings $\rR^\down \leq \cl_\rG \circ \rR^\down$ and $\rR^\up \leq \rR^\up \circ \cl_\rG$ always hold because $\cl_\rG$ is extensive and both $\rR^\down$ and $\rR^\up$ are monotone. We must prove that:
\[
\rG^\down \circ \rG^\up \circ \rR^\down \leq \rR^\down 
\quad\iff\quad
\rR^\up \circ \rG^\down \circ \rG^\up  \leq \rR^\up
\]
\begin{enumerate}
\item
$(\To)$ Applying the adjoint relationship yields $\rR^\up \circ \rG^\down \circ \rG^\up \circ \rR^\down \leq id_{\Pow \rH_t}$, so precomposing with the monotone function $\rR^\up$ yields $\rR^\up \circ \rG^\down \circ \rG^\up \circ \rR^\down \circ \rR^\up \leq \rR^\up$. Finally observe that:
\[
\rR^\up \circ \rG^\down \circ \rG^\up
\leq
\rR^\up \circ \rG^\down \circ \rG^\up \circ \rR^\down \circ \rR^\up
\leq \rR^\up
\]
because $\rR^\down \circ \rR^\up$ is a closure operator by (2) and hence extensive, and $\rR^\up \circ \rG^\down \circ \rG^\up$ is monotone.

\item
$(\oT)$ Applying the adjoint relationship yields $\cl_\rG \leq \rR^\down \circ \rR^\up$, so precomposing with $\rR^\down$ we obtain:
\[
\rG^\down \circ \rG^\up \circ \rR^\down 
\leq 
\rR^\down \circ \rR^\up \circ \rR^\down 
= \rR^\down
\]
where the final equality is by $(\down\up\down)$.
\end{enumerate}

\end{enumerate}
\end{proof}

We are now ready to formalise the canonical maximum witnesses of $\Dep$-morphisms. We'll also prove an important functional characterisation of $\Dep$-morphisms.

\begin{definition}
\label{def:bicliq_mor_components}
The \emph{component relations} of a $\Dep$-morphism $\rR : \rG \to \rH$ are defined:
\[
\begin{tabular}{ll}
$\rR_-$ & $:= \{ (g_s,h_s) \in \rG_s \times \rH_s : h_s \in \rH^\down(\rR[g_s]) \}$
\\
$\rR_+$ & $:= \{ (h_t,g_t) \in \rH_t \times \rG_t : g_t \in \breve{\rG}^\down(\breve{\rR}[h_t]) \}$
\end{tabular}
\]
\endbox
\end{definition}

\begin{example}[Component relations of $id_{\leq_\pP}$]
Given a finite poset $\pP$ we compute the component relations of the  identity-morphism $id_{\leq_\pP} = \; \leq_\pP$. Firstly:
\[
\begin{tabular}{c}
$\leq_\pP^\down(\leq_\pP[p_s])
= \; \leq_\pP^\down(\up_\pP p_s)
= \{ p \in P : \; \up_\pP p \subseteq \; \up_\pP p_s)
= \; \up_\pP p_s$
\\[1ex]
$\breve{\leq_\pP}^\down(\breve{\leq_\pP}[p_t])
= \; \leq_{\pP^{\pOp}}^\down(\up_{\pP^{\pOp}} p_t)
\stackrel{*}{=} \; \up_{\pP^{\pOp}} p_t
= \; \down_\pP p_t$
\end{tabular}
\]
where (*) follows from the 1st line. Consequently $(\leq_\pP)_- =  \; \leq_\pP$ and $(\leq_\pP)_+ = \; \leq_{\pP^{\pOp}}$. These are witnesses  because:
\[
\leq_\pP \; ; \; \leq_\pP \; = \; \leq_\pP \; = \; \leq_\pP ; \leq_{\pP^{\pOp}}\spbreve
\]
by reflexivity and transitivity. Concerning maximality, if $\rR_l \subseteq P \times P$ satisfies  $\rR_l ;  \leq_\pP \; = \; \leq_\pP$ then $\rR_l  \subseteq \; \leq_\pP$ because if $\rR_l(p_1,p_2)$ then $p_1 \leq_\pP p_2$ by reflexivity of order-relations. Similarly if $\leq_\pP \; = \; \leq_\pP ; \rR_r\spbreve$ then $\rR_r \subseteq \; \leq_{\pP^{\pOp}}$. \endbox
\end{example}

\smallskip

\begin{lemma}[Morphism characterisation and maximum witnesses]
  \label{lem:bicliq_mor_char_max_witness}
  \item
  \begin{enumerate}
  \item
  A relation $\rR \subseteq \rG_s \times \rH_t$ defines a $\Dep$-morphism $\rG \to \rH$ iff
  \[
  \rR^\up \circ \cl_\rG = \rR^\up = \inte_\rH \circ \rR^\up,
  \]
  or equivalently $\rR^\up \circ \cl_\rG = \inte_\rH \circ \rR^\up$.
  
  \item
  Each $\Dep$-morphism $\rR : \rG \to \rH$ has the maximum witness $(\rR_-,\rR_+)$ i.e.\ 
  \begin{enumerate}
  \item
  $\rR_- ; \rH = \rR = \rG ; \rR_+\spbreve$, and 
  \item
  for any $(\rR_l,\rR_r)$ such that $\rR_l ; \rH = \rR = \rG ; \rR_r\spbreve$ we have both $\rR_l \subseteq \rR_-$ and $\rR_r \subseteq \rR_+$.
  \end{enumerate}
  \end{enumerate}
  \end{lemma}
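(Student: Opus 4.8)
The plan is to translate both of the factorization conditions defining a $\Dep$-morphism into the language of the image operator $(-)^\up$, and to reduce the entire lemma to a single \emph{right-factorization} claim that I can then apply twice: once to $\rR$ relative to the target relation $\rH$, and once to the converse $\breve{\rR}$ relative to $\breve{\rG}$. Concretely, I would first isolate the following core fact: for any relation $\rR \subseteq \rG_s \times \rH_t$, there exists $\rR_l \subseteq \rG_s \times \rH_s$ with $\rR_l ; \rH = \rR$ if and only if $\rR^\up = \inte_\rH \circ \rR^\up$, and in that case $\rR_-$ is the largest such witness with $\rR_- ; \rH = \rR$. This single statement carries all the analytic content of the lemma; the rest is converse/De Morgan bookkeeping via Lemma \ref{lem:up_down_basic}.

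For the core fact, the forward direction is immediate: if $\rR = \rR_l ; \rH$ then $\rR^\up = \rH^\up \circ \rR_l^\up$ by $(\up\circ)$, so by $(\up\down\up)$ we get $\inte_\rH \circ \rR^\up = \rH^\up \circ \rH^\down \circ \rH^\up \circ \rR_l^\up = \rH^\up \circ \rR_l^\up = \rR^\up$. The backward direction is the crux of the whole argument, since it must manufacture an honest witnessing relation from an operator identity. Here I would exhibit $\rR_-$ explicitly and compute its composite image-by-image: since $\rR_-[g_s] = \rH^\down(\rR[g_s])$ by definition, we have $(\rR_- ; \rH)[g_s] = \rH^\up(\rH^\down(\rR[g_s])) = \inte_\rH(\rR[g_s])$, which by hypothesis equals $\rR[g_s]$; as a relation is determined by its images of singletons and $g_s \in \rG_s$ was arbitrary, this gives $\rR_- ; \rH = \rR$. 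Maximality is then a one-line containment check: if $\rR_l ; \rH = \rR$ and $\rR_l(g_s,h_s)$, then $\rH[h_s] \subseteq \rH[\rR_l[g_s]] = \rR[g_s]$, which is exactly $\rR_-(g_s,h_s)$.

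Granting the core fact, Part~1 assembles quickly. A relation is a $\Dep$-morphism precisely when it admits \emph{both} factorizations (the square commutes iff $\rR_l ; \rH = \rR$ and $\rG ; \rR_r\spbreve = \rR$ hold separately), so I only need to match each factorization with one operator equation. Right factorization through $\rH$ is $\rR^\up = \inte_\rH \circ \rR^\up$ by the core fact. For left factorization through $\rG$, I pass to converses: $\rR = \rG ; \rR_r\spbreve$ iff $\breve{\rR} = \rR_r ; \breve{\rG}$, i.e.\ iff $\breve{\rR}$ factors through $\breve{\rG}$ on the right, which by the core fact reads $\breve{\rR}^\up = \inte_{\breve{\rG}} \circ \breve{\rR}^\up$; the left-column equivalences of Lemma \ref{lem:up_down_basic}.4 identify this with $\rR^\up = \rR^\up \circ \cl_\rG$. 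Conjoining the two equations yields $\rR^\up \circ \cl_\rG = \rR^\up = \inte_\rH \circ \rR^\up$. The stated reformulation then follows from the sandwich $\inte_\rH \circ \rR^\up \le \rR^\up \le \rR^\up \circ \cl_\rG$, valid because $\inte_\rH$ is co-extensive, $\cl_\rG$ is extensive, and $\rR^\up$ is monotone: forcing the outer two terms to agree collapses all three.

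Finally, Part~2 reuses exactly the same two instances of the core fact, now with $\rR$ assumed to be a $\Dep$-morphism. The equality $\rR_- ; \rH = \rR$ and the maximality of $\rR_-$ among left-witnesses are precisely the core fact applied to $(\rR,\rH)$. For the $\rR_+$ half I apply the core fact to $\breve{\rR} : \breve{\rH} \to \breve{\rG}$, which is a $\Dep$-morphism by Example \ref{ex:dep_morphisms}(1), after observing that $\rR_+ = (\breve{\rR})_-$ (both are cut out by the condition $\breve{\rG}[g_t] \subseteq \breve{\rR}[h_t]$). This delivers $\rR_+ ; \breve{\rG} = \breve{\rR}$ together with maximality of $\rR_+$ among left-witnesses of $\breve{\rR}$; taking converses gives $\rG ; \rR_+\spbreve = \rR$, and since $(-)\spbreve$ interchanges the left/right witnesses of $\rR$ and $\breve{\rR}$ (again Example \ref{ex:dep_morphisms}(1)), any right-witness $\rR_r$ of $\rR$ is a left-witness of $\breve{\rR}$ and hence satisfies $\rR_r \subseteq (\breve{\rR})_- = \rR_+$. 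I expect the sole genuine obstacle to be the backward direction of the core fact, where the explicit relation $\rR_-$ must be recovered from the operator equation; every remaining step is routine manipulation justified by Lemma \ref{lem:up_down_basic}.
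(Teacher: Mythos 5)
Your proof is correct and follows essentially the same route as the paper's: the key computations (deriving the operator identities from a witness via $(\up\circ)$ and $(\up\down\up)$, recovering $\rR_-;\rH=\rR$ image-by-image from $\rR^\up=\inte_\rH\circ\rR^\up$, the one-line maximality check, and the sandwich argument for the single-equation reformulation) all coincide with the paper's. The only difference is organizational — you isolate a one-sided ``core fact'' and obtain the $\rR_+$ half by passing to converses, where the paper runs the two parallel element-wise computations explicitly and links them through Lemma~\ref{lem:up_down_basic}.4 — which is a tidy but not substantively different packaging.
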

  
  \begin{proof}
  \item
  \begin{enumerate}[i.]
  \item
  Let us prove half of the first statement. Assuming that $\rR : \rG \to \rH$ is a $\Dep$-morphism then we have some witnessing relations $(\rR_l,\rR_r)$ such that $\rR_l ; \rH = \rR = \rG ; \rR_r\spbreve$. Consequently:
  \[
  \begin{tabular}{lll}
  $\rR^\up \circ \cl_\rG$
  &
  $= (\rG ; \rR_+\spbreve)^\up \circ \rG^\down \circ \rG^\up$
  & by assumption and definition
  \\&
  $= (\rR_+\spbreve)^\up \circ \rG^\up \circ \rG^\down \circ \rG^\up$
  & by Lemma \ref{lem:up_down_basic}.$(\up\circ)$
  \\&
  $= (\rR_+\spbreve)^\up \circ \rG^\up$
  & by Lemma \ref{lem:up_down_basic}.$(\up\down\up)$
  \\&
  $= (\rG ; \rR_+\spbreve)^\up$
  & by Lemma \ref{lem:up_down_basic}.$(\up\circ)$
  \\&
  $= \rR^\up$
  & by assumption
  \\
  \\
  $\inte_\rH \circ \rR^\up$
  &
  $= \rH^\up \circ \rH^\down \circ (\rR_- ; \rH)^\up$
  & by assumption and definition
  \\&
  $= \rH^\up \circ \rH^\down \circ \rH^\up \circ \rR_-$
  & by Lemma \ref{lem:up_down_basic}.$(\up\circ)$
  \\&
  $= \rH^\up \circ \rR_-^\up$
  & by Lemma \ref{lem:up_down_basic}.$(\up\down\up)$
  \\&
  $= (\rR_- ; \rH)^\up$
  & by Lemma \ref{lem:up_down_basic}.$(\up\circ)$
  \\&
  $= \rR^\up$
  & by assumption
  \end{tabular}
  \]
  
  \item
  Before proving the other half of the first statement, let us first prove the second statement i.e.\ we again assume $\rR : \rG \to \rH$ is a $\Dep$-morphism and now know that $\rR^\up \circ \cl_\rG = \rR^\up = \inte_\rH \circ \rR^\up$. We first show that the `associated component relations' $(\rR_-,\rR_+)$ witness the fact that $\rR$ is a morphism.
  \[
  \begin{tabular}{lll}
  $\rR_- ; \rH(g_s,h_t)$
  & 
  $\iff \exists h_s \in \rH_s.[ h_s \in \rH^\down(\rR[g_s]) \text{ and } \rH(h_s,h_t) ]$
  & by definition of $\rR_-$
  \\&
  $\iff h_t \in \rH^\up \circ \rH^\down \circ \rR^\up(\{g_s\})$
  & by definition of $\rH^\up$
  \\&
  $\iff h_t \in \rR^\up(\{g_s\})$
  & since $\rR^\up = \inte_\rH \circ \rR^\up$
  \\&
  $\iff \rR(g_s,h_t)$
  \\
  \\
  $\rG ; \rR_+\spbreve(g_s,h_t)$
  &
  $\iff \exists g_t \in \rG_t.[ \rG(g_s,g_t) \text{ and } g_t \in \breve{\rG}^\down(\breve{\rR}[h_t])  ]$
  & by definition of $\rR_+$
  \\&
  $\iff \exists g_t \in \rG_t.[ \breve{\rG}(g_t,g_s) \text{ and } g_t \in \breve{\rG}^\down(\breve{\rR}[h_t])  ]$
  \\&
  $\iff g_s \in \breve{\rG}^\up \circ \breve{\rG}^\down \circ \breve{\rR}^\up(\{h_t\})$
  & by definition of $\breve{\rG}^\up$
  \\&
  $\iff g_s \in \breve{\rR}^\up(\{h_t\})$
  & since $\breve{\rR}^\up = \inte_{\breve{\rG}} \circ \breve{\rR}^\up$
  \\&
  $\iff \rR(g_s,h_t)$
  \end{tabular}
  \]
  The penultimate equivalence follows because we know that $\rR^\up \circ \cl_\rG = \rR^\up$ and may apply Lemma \ref{lem:up_down_basic}.4. To show that $(\rR_-,\rR_+)$ is maximum, take any other witnesses i.e.\ $\rR_l ; \rH = \rR = \rG ; \rR_r\spbreve$. Then:
  \[
  \begin{tabular}{lll}
  $\rR_l(g_s,h_s)$
  &
  $\implies \forall h_t \in \rH_t.[ \rH(h_s,h_t) \To \rR(g_s,h_t) ]$
  &
  since $\rR_l ; \rH = \rR$
  \\&
  $\iff \rH[h_s] \subseteq \rR[g_s]$
  \\&
  $\iff h_s \in \rH^\down(\rR[g_s])$
  & by definition of $\rH^\down$
  \\&
  $\iff \rR_-(g_s,h_s)$
  & by definition of $\rR_-$
  \\
  \\
  $\rR_r(h_t,g_t)$
  &
  $\iff \rR_r\spbreve(g_t,h_t)$
  \\&
  $\implies \forall g_s \in \rG_s.[ \rG(g_s,g_t) \To \rR(g_s,h_t) ]$
  & since $\rR = \rG ; \rR_r\spbreve$
  \\&
  $\iff \breve{\rG}[g_t] \subseteq \breve{\rR}[h_t]$
  \\&
  $\iff g_t \in \breve{\rG}^\down (\breve{\rR}[h_t])$
  & by definition of $\breve{\rG}^\down$
  \\&
  $\iff \rR_+(h_t,g_t)$
  & by definition of $\rR_+$
  \end{tabular}
  \]
  
  \item
  Let us prove the remaining part of the first statement:
  \begin{quote}
  given a relation $\rR \subseteq \rG_s \times \rH_t$ such that $\rR^\up \circ \cl_\rG = \rR^\up = \inte_\rH \circ \rR^\up$ we must establish that $\rR$ defines a $\Dep$-morphism of type $\rG \to \rH$.
  \end{quote}

   Even though we don't yet know that $\rR$ is a $\Dep$-morphism, we can apply Definition \ref{def:bicliq_mor_components} to obtain the two relations $(\rR_-,\rR_+)$. Then we can reuse the first proof in (ii) above to deduce that $\rR = \rR_- ; \rH$. Furthermore we can also reuse the proof that $\rR = \rG ; \rR_+\spbreve$ because the assumption $\rR^\up \circ \cl_\rG = \rR^\up$ implies that $\breve{\rR}^\up = \inte_{\breve{\rG}} \circ \breve{\rR}^\up$ by Lemma \ref{lem:up_down_basic}.4.
  
  \item
  Finally, the first statement can be weakened to $\rR^\up \circ \cl_\rG = \inte_\rH \circ \rR^\up$ because this already implies both composites are equal to $\rR^\up$. Indeed, since $\cl_\rG$ is a closure operator and $\inte_\rG$ is an interior operator,
  \[
  \inte_\rH \circ \rR^\up \subseteq \rR^\up \subseteq \rR^\up \circ \cl_\rG
  \]
  so we can replace the inclusions by equalities.
\end{enumerate}
\end{proof}

Here is yet another useful result.

\begin{lemma}[Computing composites]
\label{lem:bicliq_func_comp}
For any $\Dep$-morphisms $\rR : \rG \to \rH$ and $\rS : \rH \to \rI$,
\[
\begin{tabular}{cc}
$(\up\fatsemi)$  & $(\rR \fatsemi \rS)^\up = \rS^\up \circ \rH^\down \circ \rR^\up$
\\
$(\down\fatsemi)$ & $(\rR \fatsemi \rS)^\down = \rR^\down \circ \rH^\up \circ \rS^\down$.
\end{tabular}
\]
Finally, $
\rR_- ; \rS 
= \rR_- ; \rS_- ; \rI 
= \rR \fatsemi \rS 
= \rG ; \rR_+\spbreve ; \rS_+\spbreve
= \rR ; \rS_+\spbreve 
$.
\end{lemma}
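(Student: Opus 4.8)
The plan is to leverage the maximum witnesses supplied by Lemma \ref{lem:bicliq_mor_char_max_witness}. Since $(\rR_-,\rR_+)$ witnesses $\rR$ and $(\rS_-,\rS_+)$ witnesses $\rS$, the defining formula for composition (Definition \ref{def:category_bicliq}, together with the independence-of-witnesses fact verified in the well-definedness lemma) lets me instantiate $\rR \fatsemi \rS = \rR_l ; \rS$ at the particular left witness $\rR_l := \rR_-$, and $\rR \fatsemi \rS = \rR ; \rS_r\spbreve$ at the particular right witness $\rS_r := \rS_+$. This immediately yields the two outer equalities $\rR \fatsemi \rS = \rR_- ; \rS$ and $\rR \fatsemi \rS = \rR ; \rS_+\spbreve$ of the final chain. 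The remaining two equalities are then pure rewriting via the witness identities of Lemma \ref{lem:bicliq_mor_char_max_witness}.2.(a): substituting $\rS = \rS_- ; \rI$ into $\rR_- ; \rS$ gives $\rR_- ; \rS_- ; \rI$, and substituting $\rR = \rG ; \rR_+\spbreve$ into $\rR ; \rS_+\spbreve$ gives $\rG ; \rR_+\spbreve ; \rS_+\spbreve$. Hence all five expressions coincide with $\rR \fatsemi \rS$; I would establish this chain first, since the identities $\rR \fatsemi \rS = \rR_- ; \rS = \rR ; \rS_+\spbreve$ feed directly into the rest.

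For $(\up\fatsemi)$ I would start from $\rR \fatsemi \rS = \rR_- ; \rS$ and apply the functoriality rule $(\up\circ)$ of Lemma \ref{lem:up_down_basic} to obtain $(\rR \fatsemi \rS)^\up = \rS^\up \circ \rR_-^\up$. The task is to match this against $\rS^\up \circ \rH^\down \circ \rR^\up$, and the key move is to rewrite the target rather than the source. From the witness identity $\rR = \rR_- ; \rH$ and again $(\up\circ)$ one has $\rR^\up = \rH^\up \circ \rR_-^\up$, whence $\rH^\down \circ \rR^\up = \rH^\down \circ \rH^\up \circ \rR_-^\up = \cl_\rH \circ \rR_-^\up$ by the definition of $\cl_\rH$ in Definition \ref{def:up_down}. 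Finally, because $\rS : \rH \to \rI$ is a $\Dep$-morphism, the characterisation $\rS^\up \circ \cl_\rH = \rS^\up$ of Lemma \ref{lem:bicliq_mor_char_max_witness}.1 absorbs the closure operator, giving $\rS^\up \circ \rH^\down \circ \rR^\up = \rS^\up \circ \cl_\rH \circ \rR_-^\up = \rS^\up \circ \rR_-^\up = (\rR \fatsemi \rS)^\up$.

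The identity $(\down\fatsemi)$ I would prove by the mirror-image calculation, still starting from $\rR \fatsemi \rS = \rR_- ; \rS$ but applying the dual rule $(\down\circ)$ to get $(\rR \fatsemi \rS)^\down = \rR_-^\down \circ \rS^\down$. To reach $\rR^\down \circ \rH^\up \circ \rS^\down$ I would expand $\rR^\down = \rR_-^\down \circ \rH^\down$ (from $\rR = \rR_- ; \rH$ and $(\down\circ)$), so that $\rR^\down \circ \rH^\up \circ \rS^\down = \rR_-^\down \circ \cl_\rH \circ \rS^\down$; the morphism condition for $\rS$ in its equivalent form $\cl_\rH \circ \rS^\down = \rS^\down$, supplied by Lemma \ref{lem:up_down_basic}.4, then collapses this to $\rR_-^\down \circ \rS^\down$, matching the left-hand side. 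The one thing to watch — and the single place where a careless argument could misfire — is that each absorption step genuinely uses that the \emph{right-hand} morphism $\rS$ is a $\Dep$-morphism, so that $\cl_\rH$ may be discarded only next to $\rS^\up$ (resp. $\rS^\down$). The naive guess $\rR_-^\up = \rH^\down \circ \rR^\up$ is in fact false in general, since $\rH^\down$ need not preserve unions, and it is precisely the post-composition with $\rS^\up$ (equivalently the $\down$-form of $\rS$'s morphism condition) that rescues the identity. That is the main subtlety; everything else is bookkeeping with the rules of Lemma \ref{lem:up_down_basic}.
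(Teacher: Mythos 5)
Your proposal is correct, and for $(\up\fatsemi)$ and the final chain of equalities it follows essentially the same route as the paper: write $\rR \fatsemi \rS = \rR_- ; \rS$, apply $(\up\circ)$, and absorb $\cl_\rH$ using $\rS^\up \circ \cl_\rH = \rS^\up$. The only divergence is $(\down\fatsemi)$, which the paper dispatches in one line by taking adjoints of the $\JSL_f$-morphism equality $(\up\fatsemi)$ (flipping the order of composition and the $\up$/$\down$ arrows), whereas you run the mirror-image computation directly via $(\down\circ)$ and $\cl_\rH \circ \rS^\down = \rS^\down$; both are valid, and your explicit warning that $\rH^\down \circ \rR^\up \neq \rR_-^\up$ in general (since $\rH^\down$ preserves intersections rather than unions) correctly pinpoints where a careless symmetrisation would fail.
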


\begin{proof}
Recalling that $\rR$ has canonical witnesses $(\rR_-, \rR_+\spbreve)$, let us prove $(\up\fatsemi)$.
\[
\begin{tabular}{lll}
$(\rR \fatsemi \rS)^\up$
&
$= (\rR_- ; \rS)^\up$
& by definition
\\&
$= \rS^\up \circ \rR_-^\up$
& by $(\up\circ)$
\\&
$= \rS^\up \circ \rH^\down \circ \rH^\up \circ \rR_-^\up$
& by Lemma \ref{lem:bicliq_mor_char_max_witness}
\\&
$= \rS^\up \circ \rH^\down \circ (\rR_- ; \rH)^\up$
& by $(\up\circ)$
\\&
$= \rS^\up \circ \rH^\down \circ \rR^\up$
& since $\rR = \rR_- ; \rH$
\end{tabular}
\]
We infer $(\down\fatsemi)$ because $(\up\fatsemi)$ is an equality of $\JSL_f$-morphisms, so we can take adjoints, flipping the composition and also the direction of the arrows $\up$ and $\down$. The final claim follows by the definition of $\Dep$-composition.
\end{proof}

\begin{definition}
\label{def:bicliq_self_duality}
The \emph{self-duality} $(-)\spcheck : \Dep^{op} \to \Dep$ takes the converse of both objects and morphisms, and moreover flips the component relations.
\[
\rG\spcheck := \breve{\rG}
\qquad
\dfrac{\rR : \rG \to \rH}{(\rR^{op})\spcheck := \breve{\rR} : \breve{\rH} \to \breve{\rG}}
\qquad
(\rR\spcheck)_- = \rR_+
\qquad
(\rR\spcheck)_+ = \rR_-
\]
\endbox
\end{definition}

\begin{theorem}[Self-duality of $\Dep$]
\label{thm:bicliq_self_dual}
$(-)\spcheck : \Dep^{op} \to \Dep$ is a well-defined equivalence functor with respective natural isomorphism:
\[
\alpha : \Id_{\Dep} \To (-)\spcheck \circ ((-)\spcheck)^{op}
\qquad
\alpha_\rG := id_\rG = \rG
\]
\end{theorem}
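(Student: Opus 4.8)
The plan is to verify the functor axioms for $(-)\spcheck$ one at a time and then observe that the double dual $(-)\spcheck \circ ((-)\spcheck)^{op}$ is \emph{literally} the identity on $\Dep$, so that the asserted $\alpha$ (with every component an identity morphism) is automatically a natural isomorphism. Thus almost all the content lies in well-definedness and functoriality, and the naturality of $\alpha$ will be a triviality.

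\emph{Well-definedness.} On objects there is nothing to do: $\breve{\rG} \subseteq \rG_t \times \rG_s$ is again a relation between finite sets, and $\breve{\breve{\rG}} = \rG$. For a morphism $\rR : \rG \to \rH$ I would start from the witness equations $\rR_- ; \rH = \rR = \rG ; (\rR_+)\spbreve$ of Lemma~\ref{lem:bicliq_mor_char_max_witness} and take converses; since converse reverses relational composition this gives $\breve{\rR} = \breve{\rH} ; (\rR_-)\spbreve = \rR_+ ; \breve{\rG}$, exhibiting $\breve{\rR}$ as a $\Dep$-morphism $\breve{\rH} \to \breve{\rG}$ with witnesses $\rR_+$ (on the left) and $\rR_-$ (on the right) --- this is the ``converse of the commuting square'' already noted in Example~\ref{ex:dep_morphisms}.1. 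To justify the component-swap clause of Definition~\ref{def:bicliq_self_duality} I would then substitute $\rG \mapsto \breve{\rH}$, $\rH \mapsto \breve{\rG}$, $\rR \mapsto \breve{\rR}$ directly into the defining formulas of Definition~\ref{def:bicliq_mor_components}; after cancelling the double converses this reads off $(\breve{\rR})_- = \rR_+$ and $(\breve{\rR})_+ = \rR_-$ on the nose, matching the witnesses just found.

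\emph{Functoriality.} Identity preservation is immediate, since $id_\rG = \rG$ and hence $(\rG)\spcheck = \breve{\rG} = id_{\breve{\rG}}$. For composites I would invoke Lemma~\ref{lem:bicliq_func_comp}, which expresses $\rR \fatsemi \rS = \rR ; (\rS_+)\spbreve$; taking the converse yields $\breve{\rR \fatsemi \rS} = \rS_+ ; \breve{\rR}$, and since $(\breve{\rS})_- = \rS_+$ by the previous step, the right-hand side is exactly the $\Dep$-composite $\breve{\rS} \fatsemi \breve{\rR}$ (again by Lemma~\ref{lem:bicliq_func_comp}). This is precisely the contravariant law $(\rR \fatsemi \rS)\spcheck = \rS\spcheck \fatsemi \rR\spcheck$ demanded of a functor $\Dep^{op} \to \Dep$.

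\emph{Equivalence.} Because converse is an involution on relations and applying the component-swap twice is the identity, tracing an arbitrary $\rR : \rG \to \rH$ through $((-)\spcheck)^{op}$ and then $(-)\spcheck$ returns $\breve{\breve{\rR}} = \rR$, with object part $\breve{\breve{\rG}} = \rG$; hence $(-)\spcheck \circ ((-)\spcheck)^{op} = \Id_{\Dep}$ strictly. The naturality square for $\alpha$ at $\rR$ then collapses to $id_\rG \fatsemi \rR = \rR = \rR \fatsemi id_\rH$, and every component $\alpha_\rG = id_\rG$ is invertible, so $\alpha$ is a natural isomorphism. The symmetric computation gives $((-)\spcheck)^{op} \circ (-)\spcheck = \Id_{\Dep^{op}}$, so $(-)\spcheck$ is an equivalence (indeed an isomorphism of categories). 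I expect the single point needing genuine care to be the $(-)^{op}$/converse bookkeeping in the well-definedness step --- confirming that the substitution into Definition~\ref{def:bicliq_mor_components} really produces the swapped components --- while everything else is a direct unwinding of the cited lemmas.
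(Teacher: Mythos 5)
Your proposal is correct, and its overall skeleton (well-definedness, functoriality, trivial naturality since the double dual is literally the identity) matches the paper's, but the three substantive steps are carried out by a genuinely different and more elementary route. Where the paper verifies that $\breve{\rR}$ is a morphism via the functional characterisation $\breve{\rR}^\up \circ \cl_{\breve{\rH}} = \breve{\rR}^\up = \inte_{\breve{\rG}} \circ \breve{\rR}^\up$ (Lemma~\ref{lem:bicliq_mor_char_max_witness}.1 plus Lemma~\ref{lem:up_down_basic}.4), you simply take converses of the witness equations $\rR_- ; \rH = \rR = \rG ; \rR_+\spbreve$, which is exactly the observation already recorded in Example~\ref{ex:dep_morphisms}.1 and is all that Definition~\ref{def:category_bicliq} requires. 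Your component-swap argument is also different: the paper derives $(\breve{\rR})_- = \rR_+$ by combining maximality of associated components with a symmetric inclusion argument, whereas you read it off directly by substituting $\rG \mapsto \breve{\rH}$, $\rH \mapsto \breve{\rG}$, $\rR \mapsto \breve{\rR}$ into Definition~\ref{def:bicliq_mor_components} and cancelling double converses --- a check I confirmed goes through on the nose, and which is arguably cleaner. Finally, for composition the paper runs a chain of De Morgan dualities $(\neg\up\neg)$, $(\neg\down\neg)$ through $(\up\fatsemi)$ and $(\down\fatsemi)$, while you take the converse of $\rR \fatsemi \rS = \rR ; \rS_+\spbreve$ and identify $\rS_+ ; \breve{\rR}$ with $(\breve{\rS})_- ; \breve{\rR} = \breve{\rS} \fatsemi \breve{\rR}$ via Lemma~\ref{lem:bicliq_func_comp}; note this makes your composition step depend on the component swap, so the order in which you establish the pieces (swap first, composition second) matters, and you have it right. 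What the paper's route buys is consistency with the $(-)^\up/(-)^\down$ calculus used throughout the rest of Section~\ref{sec:the_cat_dep}; what yours buys is a shorter, purely relational argument that never leaves the level of witness equations.
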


\begin{proof}
$(-)\spcheck$'s action on objects is certainly well-defined. Regarding its action on morphisms, given a morphism $\rR : \rG \to \rH$ we have a relation $\rR \subseteq \rG_s \times \rH_t$ so that $\breve{\rR} \subseteq \rH_t \times \rG_s = \breve{\rH}_s \times \breve{\rG}_t$ has the correct type $\breve{\rH} \to \breve{\rG}$. To establish that $\breve{\rR}$ is a well-defined morphism we must show that:
\[
\breve{\rR}^\up \circ \cl_{\breve{\rH}} = \breve{\rR}^\up = \inte_{\breve{\rG}} \circ \breve{\rR}^\up
\]
by Lemma \ref{lem:bicliq_mor_char_max_witness}. But by the same Lemma we already know that $\rR^\up \circ \cl_\rG = \rR^\up = \inte_\rH \circ \rR^\up$, so by Lemma \ref{lem:up_down_basic}.4 we deduce that the above equivalent statements hold. Preservation of identity morphisms follows because: 
\[
(id_\rG)\spcheck 
= (\rG : \rG \to \rG)\spcheck = \breve{\rG} : \breve{\rG} \to \breve{\rG} = id_{\breve{\rG}} = id_{\rG\spcheck}
\]
Next we show preservation of composition, i.e.\ given compatible $\Dep$-morphisms $\rR : \rG \to \rH$ and $\rS : \rH \to \rI$ we must show that $((\rR \fatsemi \rS)^{op})\spcheck = \rS\spcheck \fatsemi \rR\spcheck$. We first point out the typing $(\rR \fatsemi \rS)\spbreve : \breve{\rI} \to \breve{\rG}$ so that $(\rR \fatsemi \rS)\spbreve \subseteq \breve{\rI}_s \times \breve{\rG}_t = \rI_t \times \rG_s$. Then we calculate as follows:
\[
\begin{tabular}{lll}
$((\rR\fatsemi\rS)\spbreve)^\up$
&
$= \neg_{\rG_s} \circ (\rR\fatsemi\rS)^\down \circ \neg_{\rI_t}$
& by $(\neg\down\neg)$
\\&
$= \neg_{\rG_s} \circ (\rR^\down \circ \rH^\up \circ \rS^\down) \circ \neg_{\rI_t}$
& by $(\down\fatsemi)$
\\&
$= (\neg_{\rG_s} \circ \rR^\down \circ \neg_{\rH_t}) \circ (\neg_{\rH_t} \circ \rH^\up \circ \neg_{\rH_s}) \circ (\neg_{\rH_s} \circ \rS^\down \circ \neg_{\rI_t})$
\\&
$= \breve{\rR}^\up \circ \breve{\rH}^\down \circ \breve{\rS}^\up$
& by $(\neg\down\neg)$ and $(\neg\up\neg)$
\\&
$= (\breve{\rS} \fatsemi \breve{\rR})^\up$
& by $(\up\fatsemi)$
\\&
$= (\rS\spcheck \fatsemi \rR\spcheck)^\up$
\end{tabular}
\]
where we have implicitly used the fact that $\breve{\rR}$ and $\breve{\rS}$ are well-defined morphisms. Then $(-)\spcheck$ is a well-defined functor. Each component $\alpha_\rG = id_\rG$ is certainly an isomorphism. Naturality comes down to the equality $\alpha_\rG \fatsemi \breve{\rR}\spbreve = \rR \fatsemi \alpha_\rH$ for each morphism $\rR : \rG \to \rH$, which follows because $\Dep$ is a well-defined category and relational converse is involutive.

\smallskip
Finally we establish that $(\breve{\rR}_-,\breve{\rR}_+) = (\rR_+,\rR_-)$. Since $(\rR_-,\rR_+)$ is a witness for $\rR$ we have $\rR_- ; \rH = \rR = \rG ; \rR_+\spbreve$.  Applying relational converse yields $\rR_+ ; \breve{\rG} = \breve{\rR} = \breve{\rH} ; \rR_-\spbreve$ so that $\breve{\rR}$ has the witness $(\rR_+,\rR_-)$. By maximality of $\breve{\rR}$'s associated components we deduce that $\rR_+ \subseteq \breve{\rR}_-$ and $\rR_- \subseteq \breve{\rR}_+$. The reverse inclusions follow by the symmetric argument i.e.\ by starting with $\breve{\rR}$'s associated components. 
\end{proof}

\subsection{$\Dep$ is categorically equivalent to $\JSL_f$}
\label{subsec:dep_equiv_jsl}

Each $\rG$ has an associated interior operator $\inte_\rG$ by Definition \ref{def:up_down}. Let $O(\rG) \subseteq \Pow \rG_t$ be its fixpoints, which we will also refer to as the \emph{$\rG$-open sets}.

\smallskip

\begin{definition}[Equivalence functors between $\Dep$ and $\JSL_f$]
\label{def:open_pirr}
\item
\begin{enumerate}
\item
$\Open : \Dep \to \JSL_f$ is defined:
\[
\Open \rG := (O(\rG),\cup,\emptyset)
\qquad
\dfrac
{\rR : \rG \to \rH}
{\Open \rR := \lambda Y. \rR_+\spbreve[Y] : \Open(\rG) \to \Open(\rH) }
\]
recalling $\rR_+$ from Definition \ref{def:bicliq_mor_components}. Equivalently $\Open\rR := \lambda Y \in O(\rG). \rR^\up \circ \rG^\down(Y)$ (see below).

\item
$\Pirr : \JSL_f \to \Dep$ is defined:
\[
\begin{tabular}{c}
$\Pirr \aQ := \; \nleq_{\aQ}\; \subseteq J(\aQ) \times M(\aQ)
\qquad
\dfrac{f : \aQ \to \aR}
{\Pirr f := \{ (j,m) \in J(\aQ) \times M(\aR) : f(j) \nleq_\aR m \} : \Pirr\aQ \to \Pirr\aR}$
\end{tabular}
\]
with component relations:
\[
\begin{tabular}{ll}
$(\Pirr f)_-$ & $:= \{ (j_1,j_2) \in J(\aQ) \times J(\aR) : j_2 \leq_\aR f(j_1) \}$
\\
$(\Pirr f)_+$ & $:= \{ (m_1,m_2) \in M(\aR) \times M(\aQ) : f_*(m_1) \leq_\aQ m_2 \}$.
\end{tabular}
\]
It constructs the \emph{poset of irreducibles} introduced by Markowsky \cite{MarkowskyLat1975}.
\endbox
\end{enumerate}
\end{definition}

\bigskip
The two definitions of $\Open\rR$ above are consistent.

\begin{lemma}
\label{lem:open_r_two_defs}
For any $\Dep$-morphism $\rR : \rG \to \rH$ and $Y \in O(\rG)$ we have $\rR_+\spbreve[Y] = \rR^\up \circ \rG^\down(Y)$.
\end{lemma}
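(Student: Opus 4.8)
The plan is to reduce the claimed equality to the maximum-witness factorisation of $\rR$ together with functoriality of the image operator $(-)^\up$, so that the hypothesis $Y \in O(\rG)$ does all of the remaining work. The attractive feature of this route is that it never needs the explicit set-builder formula for $\rR_+$ from Definition~\ref{def:bicliq_mor_components}; it uses only that $\rR_+$ is a \emph{witness}, which is exactly what the maximum-witness lemma supplies.

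First I would invoke Lemma~\ref{lem:bicliq_mor_char_max_witness}, which tells us that the component relations $(\rR_-,\rR_+)$ witness the morphism $\rR$, and in particular that $\rR = \rG ; \rR_+\spbreve$ as relations in $\rG_s \times \rH_t$. Applying the image operator and the rule $(\up\circ)$ from Lemma~\ref{lem:up_down_basic}.3, which turns a relational composite into the \emph{reversed} functional composite of the corresponding image maps, this factorisation becomes
\[
\rR^\up = (\rG ; \rR_+\spbreve)^\up = (\rR_+\spbreve)^\up \circ \rG^\up.
\]

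Next I would evaluate both sides at the subset $\rG^\down(Y) \subseteq \rG_s$, obtaining
\[
\rR^\up \circ \rG^\down(Y) = (\rR_+\spbreve)^\up\bigl(\rG^\up(\rG^\down(Y))\bigr) = (\rR_+\spbreve)^\up(\inte_\rG(Y)),
\]
recognising $\rG^\up \circ \rG^\down$ as the interior operator $\inte_\rG$ of Definition~\ref{def:up_down}. This is precisely where the openness hypothesis is used: since $Y \in O(\rG)$ is by definition a fixpoint of $\inte_\rG$, we have $\inte_\rG(Y) = Y$, and hence
\[
\rR^\up \circ \rG^\down(Y) = (\rR_+\spbreve)^\up(Y) = \rR_+\spbreve[Y],
\]
the final equality being merely the definition of $(-)^\up$ applied to the relation $\rR_+\spbreve$. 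This chain is the desired identity.

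I do not expect a serious obstacle here; the argument is essentially definitional once the right factorisation is in hand. The only points requiring care are bookkeeping the direction of composition in $(\up\circ)$ (the composite $\rG ; \rR_+\spbreve$ yields $(\rR_+\spbreve)^\up \circ \rG^\up$, with the factors reversed), and confirming that it is genuinely $\rR_+$ rather than $\rR_-$ that furnishes the right-hand witness $\rR = \rG ; \rR_+\spbreve$, so that the image operator lands on $\rR_+\spbreve$ as required. The typing checks out throughout: $\rG^\up : \Pow \rG_s \to \Pow \rG_t$ and $(\rR_+\spbreve)^\up : \Pow \rG_t \to \Pow \rH_t$ compose correctly to give $\rR^\up : \Pow \rG_s \to \Pow \rH_t$, and $\rG^\down(Y) \subseteq \rG_s$ since $Y \subseteq \rG_t$.
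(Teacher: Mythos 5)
Your proof is correct and is essentially the paper's own argument run in the opposite direction: the paper starts from $\rR_+\spbreve[Y]$, inserts $Y = \rG^\up\circ\rG^\down(Y)$ by openness, and collapses via $(\up\circ)$ and the maximum-witness identity $\rR = \rG;\rR_+\spbreve$, which are exactly the three ingredients you use. No gaps; the direction-of-composition and $\rR_+$-versus-$\rR_-$ points you flag are handled correctly.
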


\begin{proof}
\[
\begin{tabular}{lll}
$\Open \rR(Y)$
&
$=\rR_+\spbreve[Y]$
\\&
$= (\rR_+\spbreve)^\up \circ \rG^\up \circ \rG^\down(Y) $
& $Y$ is $\rG$-open
\\&
$= (\rG;\rR_+\spbreve)^\up \circ \rG^\down(Y) $
& by $(\up\circ)$
\\&
$=  \rR^\up \circ \rG^\down(Y)$
& by Lemma \ref{lem:bicliq_mor_char_max_witness}.2 
\end{tabular}
\]
\end{proof}

\begin{example}
\item
\begin{enumerate}
\item
$\Open\Delta_X = \JPow X = (\Pow X,\cup,\emptyset)$ is a boolean semilattice. Recalling that every relation $\rR \subseteq X \times Y$ defines a $\Dep$-morphism $\rR : \Delta_X \to \Delta_Y$, then $\Open\rR : \JPow X \to \JPow Y$ has action:
\[
\Open\rR(A) := \rR^\up \circ (\Delta_X)^\down(A) = \rR^\up(A).
\]
% Alternatively observe $\rR_+\spbreve = \rR$ whenever $\rR : \Delta_X \to \Delta_Y$.

\item
By (1), $\Open$ sends identity relations to boolean join-semilattices. This generalizes to bijections i.e.\ bijective functional relations. But there exist non-functional relations with this property too:
\[
\begin{tabular}{c|c}
$\rG \subseteq X \times Y$ & $\Open\rG$
\\ \hline
$\vcenter{\vbox{\xymatrix@=15pt{
y_1 & y_2 & y_3
\\
x_1 \ar[u]\ar[ur] & x_2 \ar[u]\ar[ul] & x_3 \ar[u]
\\
y_1 &  & y_2
\\
x_1 \ar[u] & x_2 \ar[ul]\ar[ur] & x_3 \ar[u]
}}}$
&
$\vcenter{\vbox{\xymatrix@=5pt{
& \{y_1,y_2,y_3\}
\\
\{y_1,y_2\}  \ar@{-}[ur] && \{y_3\}  \ar@{-}[ul]
\\
& \emptyset \ar@{-}[ul]\ar@{-}[ur]
\\
& \{y_1,y_2\}
\\
\{y_1\}  \ar@{-}[ur] && \{y_2\}  \ar@{-}[ul]
\\
& \emptyset \ar@{-}[ul]\ar@{-}[ur]
}}}$
\end{tabular}
\]

\item
Recall that each boolean semilattice $\aQ = (Q,\lor_\aQ,\bot_\aQ)$ has a unique bijective complementation operation:
\[
\neg_\aQ : Q \to Q
\qquad
\neg_\aQ (q) := \Lor_\aQ \overline{\up_\aQ q}.
\]
It turns out that $\Pirr\aQ = \;\nleq_\aQ \; \subseteq J(\aQ) \times M(\aQ)$ is precisely the domain/codomain restriction $\neg_\aQ : At(\aQ) \to CoAt(\aQ)$. This restriction is also bijective: an atom $a$ is not less than or equal to a coatom $c$ iff $c = \neg_\aQ a$.

\end{enumerate}
\end{example}

Before proving the well-definedness of $\Open$ and $\Pirr$ we provide a number of helpful results.

\begin{definition}[The finite lattice of $\rG$-open sets and its isomorphic lattice of $\rG$-closed sets]
\label{def:bip_cl_inte_lattice}
\item
Let $\rG$ be a relation between finite sets.
\begin{enumerate}
\item
Define two sets of subsets:
\[
\begin{tabular}{lll}
$O(\rG)$ & $:= O(\inte_\rG) \subseteq \Pow \rG_t$
& the \emph{$\rG$-open sets}.
\\
$C(\rG)$ & $:= C(\cl_\rG) \subseteq \Pow \rG_s$
& the \emph{$\rG$-closed sets}.
\end{tabular}
\]

\item
There are two inclusion-ordered bounded lattice structures on these sets:
\[
\begin{tabular}{lll}
$\latOp{\rG}$ 
& 
$:= (O(\rG),\cup,\emptyset,\land_{\latOp{\rG}},\inte_\rG(\rG_t))$
&
where $Y_1 \land_{\latOp{\rG}} Y_2 := \inte_\rG(Y_1 \cap Y_2)$,
\\
$\latCl{\rG}$
&
$:= (C(\rG),\lor_{\latCl{\rG}},\cl_\rG(\emptyset),\cap,\rG_s)$
&
  where $X_1 \lor_{\latCl{\rG}} X_2 := \cl_\rG(X_1 \cup X_2)$,
  \end{tabular}
  \]
  recalling that $\inte_\rG(\rG_t) = \rG[\rG_s]$ and $\cl_\rG(\emptyset) = \rG^\down(\emptyset)$.
  
\item
There is a lattice isomorphism $\theta_\rG : \latCl{\rG} \to \latOp{\rG}$:
  \[
  \theta_\rG(X) := \rG^\up(X) = \rG[X]
  \qquad
  \theta_\rG^{\bf-1}(Y) := \rG^\down(Y)
  \]
\item
There is a self-inverse lattice isomorphism $\kappa_\rG : (\latCl{\rG})^{\pOp} \to \latOp{\breve{\rG}}$ with action $\kappa_\rG(X) := \overline{X}$.
\endbox
\end{enumerate}
\end{definition}

\bigskip

\begin{lemma}[The bounded lattices of $\rG$-open/closed sets and their irreducibles]
\label{lem:lat_op_cl}
\item
\begin{enumerate}
\item
The bounded lattices $\latOp{\rG}$ and $\latCl{\rG}$ are well-defined.
\item
Each $\theta_\rG : \latCl{\rG} \to \latOp{\rG}$ and $\kappa_\rG : (\latCl{\rG})^{\pOp} \to \latOp{\breve{\rG}}$ are well-defined bounded lattice isomorphisms.

\item
The $\rG$-open sets and $\rG$-closed sets can be described as follows:
\[
\begin{tabular}{lll}
$O(\rG)$ & $= \{ \rG[S] : S \subseteq \rG_s \}$
& i.e.\ the closure of $\{ \rG[g_s] : g_s \in \rG_s \}$ under unions.
\\
$C(\rG)$ & $= \{ \rG^\down(S) : S \subseteq \rG_t \}$
& i.e.\ the closure of $\{ \rG^\down(\overline{g_t}) : g_t \in \rG_t \}$ under intersections.
\end{tabular}
\]
Finally, we have the following inclusions:
\[
J(\latOp{\rG}) \subseteq \{ \rG[g_s] : g_s \in \rG_s \}
\qquad
M(\latOp{\rG}) \subseteq \{ \inte_\rG(\overline{g_t}) : g_t \in \rG_t \}.
\]
\end{enumerate}
\end{lemma}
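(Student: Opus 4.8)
The plan is to derive all three statements from the general theory of closure and interior operators, together with the image/preimage identities of Lemma \ref{lem:up_down_basic}. For statement (1) I would instantiate the generic fact that the fixpoints of an interior operator on a finite lattice again form a lattice, with joins inherited from the ambient lattice and meets computed by applying the operator to the ambient meet. Concretely, applying Lemma \ref{lem:clo_co_basic} to the finite lattice $(\Pow \rG_t,\cup,\emptyset,\cap,\rG_t)$ and the interior operator $\inte_\rG$ shows that $O(\rG)$ is closed under unions and contains $\emptyset$; the meet must then be corrected to $Y_1 \land_{\latOp{\rG}} Y_2 = \inte_\rG(Y_1\cap Y_2)$ since an intersection of open sets need not be open, and the top is the largest open set $\inte_\rG(\rG_t)$. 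Dually, $\cl_\rG$ on $(\Pow \rG_s,\cup,\emptyset,\cap,\rG_s)$ yields $\latCl{\rG}$ with inherited meets $\cap$, joins $\cl_\rG(X_1\cup X_2)$, bottom $\cl_\rG(\emptyset)$ and top $\rG_s$. This establishes (1).

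For statement (2) the crucial observations are the absorption identities $(\up\down\up)$ and $(\down\up\down)$ of Lemma \ref{lem:up_down_basic}.3, which give $\inte_\rG\circ\rG^\up = \rG^\up$ and $\cl_\rG\circ\rG^\down = \rG^\down$. The first shows $\theta_\rG = \rG^\up(-)$ always lands in $O(\rG)$, the second shows $\theta_\rG^{\bf-1} = \rG^\down(-)$ always lands in $C(\rG)$; and $\rG^\down\circ\rG^\up = \cl_\rG$ restricts to the identity on closed sets while $\rG^\up\circ\rG^\down = \inte_\rG$ restricts to the identity on open sets, so the two maps are mutually inverse. Since both $\rG^\up$ and $\rG^\down$ are monotone, $\theta_\rG$ is an order-isomorphism between finite lattices and hence automatically preserves all joins, meets, bottom and top, i.e.\ is a bounded lattice isomorphism. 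For $\kappa_\rG$ I would use the De Morgan dualities $(\neg\up\neg)$ and $(\neg\down\neg)$ to compute $\inte_{\breve\rG} = \neg_{\rG_s}\circ\cl_\rG\circ\neg_{\rG_s}$; this exhibits complementation as a bijection $C(\rG)\to O(\breve\rG)$, and since complementation reverses inclusion it is an order-isomorphism $(\latCl{\rG})^{\pOp}\to\latOp{\breve\rG}$, with inverse again given by complementation (hence ``self-inverse'').

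For statement (3) the descriptions of $O(\rG)$ and $C(\rG)$ follow from the same absorption identities: every $\rG[S] = \rG^\up(S)$ is open, and conversely any open $Y$ equals $\inte_\rG(Y) = \rG[\rG^\down(Y)]$, giving $O(\rG) = \{\rG[S]:S\subseteq\rG_s\}$; dually $(\down\up\down)$ yields $C(\rG) = \{\rG^\down(S):S\subseteq\rG_t\}$. Writing $\rG[S] = \bigcup_{s\in S}\rG[s]$ and $\rG^\down(S) = \bigcap_{g_t\notin S}\rG^\down(\overline{g_t})$ (using that $\rG^\down$ preserves intersections, Lemma \ref{lem:up_down_basic}.1) exhibits these as the union-closure and intersection-closure of the singleton families. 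Thus $\{\rG[g_s]\}$ join-generates $\latOp{\rG}$ (joins being unions), and Lemma \ref{lem:std_order_theory}.6 forces $J(\latOp{\rG})\subseteq\{\rG[g_s]:g_s\in\rG_s\}$.

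The main obstacle is the meet-irreducible containment $M(\latOp{\rG})\subseteq\{\inte_\rG(\overline{g_t}):g_t\in\rG_t\}$, precisely because meets in $\latOp{\rG}$ are not plain intersections. I would show $\{\inte_\rG(\overline{g_t})\}$ is meet-generating by proving that each open $Y$ is the $\latOp{\rG}$-meet $\inte_\rG(\bigcap_{g_t\notin Y}\inte_\rG(\overline{g_t}))$ of these generators: the inclusion $\subseteq Y$ uses co-extensivity ($\inte_\rG(\overline{g_t})\subseteq\overline{g_t}$ together with $\bigcap_{g_t\notin Y}\overline{g_t}=Y$), while $\supseteq Y$ uses that $Y$ open and $Y\subseteq\overline{g_t}$ give $Y\subseteq\inte_\rG(\overline{g_t})$ by monotonicity. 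The meet-dual half of Lemma \ref{lem:std_order_theory}.6 then delivers the containment. (Alternatively one could transport the join-irreducible result along $\kappa_\rG$ and $\theta_\rG$, but this direct computation with the interior operator is cleaner.)
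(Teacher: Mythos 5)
Your proposal is correct, and parts (1), (2) and the bulk of (3) follow essentially the same route as the paper: the generic interior/closure-operator facts for well-definedness, the absorption identities $(\up\down\up)$ and $(\down\up\down)$ for $\theta_\rG$, De Morgan duality for $\kappa_\rG$, and join-generation plus Lemma \ref{lem:std_order_theory}.6 for $J(\latOp{\rG})$. The one place you genuinely diverge is the meet-irreducible containment $M(\latOp{\rG}) \subseteq \{ \inte_\rG(\overline{g_t}) : g_t \in \rG_t \}$: you prove directly that each open $Y$ is the $\latOp{\rG}$-meet $\inte_\rG\bigl(\bigcap_{g_t \nin Y} \inte_\rG(\overline{g_t})\bigr)$, using co-extensivity for one inclusion and monotonicity of $\inte_\rG$ for the other, and then invoke the meet-dual half of Lemma \ref{lem:std_order_theory}.6. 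The paper instead takes the alternative you parenthetically mention: it transports the already-established inclusion $J(\latOp{\breve{\rG}}) \subseteq \{ \breve{\rG}[g_t] \}$ along the composite bounded lattice isomorphism $\latOp{\breve{\rG}} \xto{\kappa_\rG^{\bf-1}} (\latCl{\rG})^{\pOp} \xto{\theta_\rG^{\pOp}} (\latOp{\rG})^{\pOp}$, which restricts to a bijection $J(\latOp{\breve{\rG}}) \to M(\latOp{\rG})$, and computes that $\breve{\rG}[g_t]$ is sent to $\inte_\rG(\overline{g_t})$. Your version is more self-contained and avoids any further use of part (2), at the cost of handling the non-pointwise meet explicitly; the paper's version gets the containment for free from the isomorphisms but requires the small action computation $\theta_\rG \circ \kappa_\rG^{\bf-1}(\breve{\rG}[g_t]) = \inte_\rG(\overline{g_t})$. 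Both are sound.
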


\begin{proof}
\item
\begin{enumerate}
\item
$\latOp{\rG}$ is the `standard' bounded inclusion-ordered lattice one obtains from an interior operator defined on the underlying poset of a bounded lattice. In detail,  $\inte_\rG$ is defined on $(\Pow\rG_t,\subseteq)$ and the latter has all joins = unions, hence the $\rG$-open sets $O(\rG)$ are closed under all possibly-empty unions (see Lemma \ref{lem:clo_co_basic}.3). Since $(O(\rG),\cup,\emptyset)$ is a finite join-semilattice it is also a bounded lattice in a unique way. The induced top is $\inte_\rG(\rG_t) = \rG^\up \circ \rG^\down(\rG_t) = \rG^\up(\rG_s) = \rG[\rG_s]$. The induced meet is:
\[
Y_1 \land Y_2 
:= \bigcup \{ Y \in O(\rG) : Y \subseteq Y_1 \cap Y_2 \}
= \inte_\rG(Y_1 \cap Y_2)
\]
where $\subseteq$ follows because $Y = \inte_\rG(Y) \subseteq \inte_\rG(Y_1 \cap Y_2)$ by monotonicity, and $\supseteq$ follows by co-extensivity. The argument concerning $\latCl{\rG}$ is analogous, noting that $\bot_{\latCl{\rG}} =  \cl_\rG(\emptyset) = \rG^\down \circ \rG^\up(\emptyset) = \rG^\down(\emptyset)$.

\item
We first show that both $\theta_\rG : \latCl{\rG} \to \latOp{\rG}$ and $\theta_\rG^{\bf-1} : \latOp{\rG} \to \latCl{\rG}$ are well-defined functions.
\[
\begin{tabular}{cc}
\begin{tabular}{lll}
$\theta_\rG(X)$
&
$= \theta_\rG(\cl_\rG(X))$
& $X$ is $\rG$-closed
\\&
$= \rG^\up \circ \rG^\down \circ \rG^\up(X)$
& by definition
\\&
$= \inte_\rG(\rG^\up(X))$
& by definition
\end{tabular}
&
\begin{tabular}{lll}
$\theta_\rG^{\bf-1}(Y)$
&
$= \theta_\rG^{\bf-1}(\inte_\rG(Y))$
& $Y$ is $\rG$-open
\\&
$= \rG^\down \circ \rG^\up \circ \rG^\down(Y)$
& by definition
\\&
$= \cl_\rG(\rG^\down(Y))$
& by definition
\end{tabular}
\end{tabular}
\]
Then for any $\rG$-open set $Y$ and $\rG$-closed set $X$ we have:
\[
\theta_\rG \circ \theta_\rG^{\bf-1}(Y) 
= \rG^\up \circ \rG^\down(Y) 
= \inte_\rG(Y) = Y
\qquad
\theta_\rG^{\bf-1} \circ \theta_\rG(X)
= \rG^\down \circ \rG^\up(X)
= \cl_\rG(X) = X
\]
so they are mutually inverse bijections. Finally they inherit monotonicity from $\rG^\up$ and $\rG^\down$, so they are order-isomorphisms and hence also bounded lattice isomorphisms.

Next we show that $\kappa_\rG : (\latCl{\rG})^{\pOp} \to \latOp{\breve{\rG}}$ is a well-defined bounded lattice isomorphism. Since relative complement is involutive and flips the inclusion-ordering, we need only show it is a well-defined surjective function.
\[
\begin{tabular}{lll}
$\kappa_\rG(X)$
&
$= \neg_{\rG_s}(X)$
\\&
$= \neg_{\rG_s} \circ \rG^\down \circ \rG^\up(X)$
& since $X$ is $\rG$-closed
\\&
$= \breve{\rG}^\up \circ \breve{\rG}^\down \circ \neg_{\rG_s} (X)$
& by De Morgan duality
\\&
$= \inte_{\breve{\rG}}(\overline{X})$
& by definition
\\
\\
$\kappa_\rG^{\bf-1}(Y)$
&
$= \neg_{\rG_s}(Y)$
\\&
$= \neg_{\rG_s} \circ \breve{\rG}^\up \circ \breve{\rG}^\down(Y)$
& since $Y$ is $\breve{\rG}$-open
\\&
$= \rG^\down \circ \rG^\up \circ \neg_{\rG_s} (Y)$
& by De Morgan duality
\\&
$= \cl_{\rG}(\overline{Y})$
& by definition
\end{tabular}
\]
The first equality establishes well-definedness and the second surjectivity i.e.\ each $\breve{\rG}$-open set $Y$ is the relative complement of the $\rG$-closed set $\cl_\rG(\overline{Y})$.

\item
We establish $O(\rG) = \{ \rG[S] : S \subseteq \rG_s \}$. Given any $Y \in O(\rG)$ then since $\theta_\rG : \latCl{\rG} \to \latOp{\rG}$ is bijective we deduce $Y = \rG[X]$ for some $X \in C(\rG) \subseteq \Pow\rG_s$. Conversely, given any subset $S \subseteq \rG_s$,
\[
\rG[S] 
= \rG^\up(S) 
\stackrel{(\up\down\up)}{=} \rG^\up \circ \rG^\down \circ \rG^\up(S) 
= \inte_\rG(\rG[S])
\]
Every $\rG[S]$ is clearly a possibly-empty union of the sets $\rG[s]$. Next we show that $C(\rG) = \{ \rG^\up(S) : S \subseteq \rG_t \}$. Given any $X \in C(\rG)$ then since $\theta_\rG^{\bf-1}$ is bijective we deduce $X = \rG^\down(Y)$ for some $Y \in O(\rG) \subseteq \Pow\rG_t$. Conversely, given any $S \subseteq \rG_t$ then:
\[
\rG^\down(S) 
\stackrel{!}{=} \rG^\down \circ \rG^\up \circ \rG^\down(S) 
= \cl_\rG(\rG^\down(S))
\]
where the marked equality follows by $(\down\up\down)$. Recalling that $\rG^\down$ preserves all possibly-empty intersections (it is right adjoint to $\rG^\up$), it follows that every $\rG^\down(S)$ is a possibly-empty intersections of the special sets $\rG^\down(\overline{g_t})$.

The inclusion $J(\latOp{\rG}) \subseteq \{ \rG[g_s] : g_s \in \rG_s \}$ follows because the latter sets join-generate $\latOp{\rG}$, and thus must contain the join-irreducibles by Lemma \ref{lem:std_order_theory}.6. Concerning the inclusion $M(\latOp{\rG}) \subseteq \{ \inte_\rG(\overline{g_t}) : g_t \in \rG_t \}$, the preceding inclusion informs us that every $J \in J(O(\breve{\rG}))$ takes the form $\breve{\rG}[g_t]$. Then the composite bounded lattice isomorphism:
\[
\begin{tabular}{c}
$\latOp{\breve{\rG}}
\xto{\kappa_\rG^{\bf-1}} (\latCl{\rG})^{\pOp}
\xto{\theta_\rG^{\pOp}} (\latOp{\rG})^{\pOp}$
\end{tabular}
\]
necessarily restricts to a bijection $J(\latOp{\breve{\rG}}) \to M(\latOp{\rG})$, with action:
\[
\begin{tabular}{lll}
$\theta_\rG^{\pOp} \circ \kappa_\rG^{\bf-1}(\breve{\rG}[g_t])$
&
$= \theta_\rG(\kappa_\rG^{\bf-1}(\breve{\rG}[g_t]))$
&
\\&
$= \rG^\up \circ \neg_{\rG_s}(\breve{\rG}[g_t])$
&
\\&
$= \rG^\up \circ \rG^\down (\overline{g_t})$
& by De Morgan duality
\\&
$= \inte_\rG(\overline{g_t})$
\end{tabular}
\]
and we are finished.
\end{enumerate}
\end{proof}

We have associated two isomorphic finite bounded lattices $\latOp{\rG}$ and $\latCl{\rG}$ to each relation $\rG$. Next we describe two pairs of adjoint morphisms between the underlying join-semilattices of these bounded lattices, parametric in any $\Dep$-morphism.

\begin{lemma}
\label{lem:composite_adjoints}
Let $\rR : \rG \to \rH$ be any $\Dep$-morphism.
\begin{enumerate}
\item
For all subsets $X_1 \subseteq \rG_s$ and $\rH$-closed subsets $X_2 \subseteq \rH_s$:
\[
\rH^\down \circ \rR^\up(X_1) \subseteq X_2
\iff
X_1 \subseteq \rR^\down \circ \rH^\up(X_2).
\]
\item
Restricting the domain and codomain yields the adjoint $\JSL_f$-morphisms:
\[
\begin{tabular}{l}
$\rR^\down \circ \rH^\up : (C(\rH),\cap,\rH_s) \to (C(\rG),\cap,\rG_s)$
\\[1ex]
$\rH^\down \circ \rR^\up : (C(\rG),\lor_{\latCl{\rG}},\cl_\rG(\emptyset)) \to (C(\rH),\lor_{\latCl{\rH}},\cl_\rH(\emptyset))$
\end{tabular}
\]
\item
For all subsets $Y_1 \subseteq \rH_t$ and $\rG$-open subsets $Y_2 \subseteq  \rG_t$:
\[
\rR^\up \circ \rG^\down (Y_2) \subseteq Y_1
\iff
Y_2 \subseteq \rG^\up \circ \rR^\down (Y_1)
\]
\item
Restricting the domain and codomain yields the adjoint $\JSL_f$-morphisms:
\[
\begin{tabular}{l}
$\rR^\up \circ \rG^\down : (O(\rG),\cup,\emptyset) \to (O(\rH),\cup,\emptyset)$
\\[1ex]
$\rG^\up \circ \rR^\down : (O(\rH),\land_{\latOp{\rH}},\inte_\rH(\rH_t)) \to (O(\rG),\land_{\latOp{\rG}},\inte_\rG(\rG_t))$
\end{tabular}
\]
\end{enumerate}
\end{lemma}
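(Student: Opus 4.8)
The plan is to establish statements (1) and (3) as genuine Galois adjunctions between the relevant powersets, and then to read off (2) and (4) mechanically from the theory of adjoint monotone maps already developed in Lemma \ref{lem:adjoint_cl_in}. The two ingredients I would keep ready throughout are the basic adjunction $(\up\dashv\down)$ of Lemma \ref{lem:up_down_basic}.1 and the morphism characterisation of Lemma \ref{lem:bicliq_mor_char_max_witness}.1, which (together with Lemma \ref{lem:up_down_basic}.4) tells us that $\rR^\up = \inte_\rH \circ \rR^\up$, so $\rR^\up$ always lands in $\rH$-open sets, and that $\rR^\down = \cl_\rG \circ \rR^\down$, so $\rR^\down$ always lands in $\rG$-closed sets.

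For (1) I would argue both directions by a short monotonicity manoeuvre. For $(\To)$: from $\rH^\down \circ \rR^\up(X_1) \subseteq X_2$ apply the monotone $\rH^\up$, use that $\rR^\up(X_1)$ is $\rH$-open so $\rH^\up \circ \rH^\down \circ \rR^\up(X_1) = \rR^\up(X_1)$, then discharge via the $\rR$-adjunction to get $X_1 \subseteq \rR^\down \circ \rH^\up(X_2)$. For $(\oT)$: the hypothesis is $\rR^\up(X_1) \subseteq \rH^\up(X_2)$ by the $\rR$-adjunction; applying the monotone $\rH^\down$ and using that $X_2$ is $\rH$-closed (so $\rH^\down \circ \rH^\up(X_2) = \cl_\rH(X_2) = X_2$) yields $\rH^\down \circ \rR^\up(X_1) \subseteq X_2$. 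Statement (3) is the order-dual mirror: for $(\To)$ use the $\rR$-adjunction then apply $\rG^\up$ with $\inte_\rG(Y_2) = Y_2$ for $\rG$-open $Y_2$; for $(\oT)$ apply the monotone $\rG^\down$, note $\rR^\down(Y_1)$ is $\rG$-closed (morphism property) so $\cl_\rG(\rR^\down(Y_1)) = \rR^\down(Y_1)$, and finish with the $\rR$-adjunction.

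For (2) I would first record well-definedness of the restricted maps: $\rH^\down(-)$ is always $\rH$-closed by the identity $\rH^\down \circ \rH^\up \circ \rH^\down = \rH^\down$ of Lemma \ref{lem:up_down_basic}.3, and $\rR^\down(-)$ is always $\rG$-closed by the morphism property, so $\rH^\down \circ \rR^\up$ maps $C(\rG) \to C(\rH)$ and $\rR^\down \circ \rH^\up$ maps $C(\rH) \to C(\rG)$. Statement (1) then exhibits these as a lower/upper adjoint pair between $(C(\rG),\subseteq)$ and $(C(\rH),\subseteq)$, so Lemma \ref{lem:adjoint_cl_in}.2--.3 immediately give that the lower adjoint $\rH^\down \circ \rR^\up$ preserves the joins computed in the closed-set lattice (namely $\lor_{\latCl{\rG}}$, bottom $\cl_\rG(\emptyset)$) and the upper adjoint $\rR^\down \circ \rH^\up$ preserves the meets (namely $\cap$, top $\rH_s$), which are precisely the two claimed $\JSL_f$-morphisms. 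Statement (4) follows from (3) by the identical recipe: $\rR^\up(-)$ is always $\rH$-open (morphism property) and $\rG^\up(-)$ is always $\rG$-open by $\rG^\up \circ \rG^\down \circ \rG^\up = \rG^\up$, so both maps are well-defined on the open-set lattices, and Lemma \ref{lem:adjoint_cl_in} upgrades the adjunction of (3) to join-preservation of $\rR^\up \circ \rG^\down$ (unions in $O$) and meet-preservation of $\rG^\up \circ \rR^\down$ (the $\land_{\latOp{}}$ meet, with top $\inte(\cdot)$).

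The step needing the most care is matching each composite to the correct lattice operation: joins of closed sets are not plain unions but $\cl_\rG$ of the union, and meets of open sets are $\inte_\rH$ of the intersection. The resolution is that Lemma \ref{lem:adjoint_cl_in}.2--.3 speaks of the suprema and infima that actually exist in the ambient poset, and these are by definition exactly $\lor_{\latCl{}}$ and $\land_{\latOp{}}$, so no mismatch arises once the codomain-landing facts are in place. I would also be careful to cite the correct half of Lemma \ref{lem:up_down_basic}.4, since the statement ``$\rR^\down = \cl_\rG \circ \rR^\down$'' is precisely a reformulation of $\rR$ being a $\Dep$-morphism and is exactly what guarantees that the upper adjoints land in closed (resp.\ open) sets.
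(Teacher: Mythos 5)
Your proposal is correct and follows essentially the same route as the paper: statement (1) via the $(\up\dashv\down)$ adjunction together with the morphism property $\rR^\up = \inte_\rH \circ \rR^\up$ and the closedness of $X_2$, and then (2) and (4) by checking that the composites land in the closed/open-set lattices and invoking Lemma \ref{lem:adjoint_cl_in} for join/meet preservation. The only cosmetic difference is that you prove (3) directly by the mirror-image computation (using $\rR^\down = \cl_\rG \circ \rR^\down$ and openness of $Y_2$), whereas the paper obtains (3) and (4) from (1) and (2) by passing to the dual morphism $\rR\spcheck : \breve{\rH} \to \breve{\rG}$ and applying De Morgan duality; both amount to the same calculation.
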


\begin{proof}
\item
\begin{enumerate}

\item
Regarding the first statement:
\[
\begin{tabular}{lll}
$X_1 \subseteq \rR^\down \circ \rH^\up(X_2)$
&
$\iff \rR^\up(X_1) \subseteq \rH^\up(X_2)$
& by $(\up\dashv\down)$
\\&
$\iff \rH^\up \circ \rH^\down \circ \rR^\up(X_1) \subseteq \rH^\up(X_2)$
& by Lemma \ref{lem:bicliq_mor_char_max_witness}.1
\\&
$\iff \rH^\down \circ \rR^\up(X_1) \subseteq \rH^\down \circ \rH^\up(X_2)$
& by $(\up\dashv\down)$
\\&
$\iff \rH^\down \circ \rR^\up(X_1) \subseteq X_2$
& since $X_2$ is closed
\end{tabular}
\]

\item
As for the second statement, $\rR^\down \circ \rH^\up$ sends all subsets (in particular $\rH$-closed subsets) to $\rG$-closed subsets because (i) $\rR^\down = \cl_\rG \circ \rR^\down$ by Lemma \ref{lem:bicliq_mor_char_max_witness}.1 and Lemma \ref{lem:up_down_basic}.4, and (ii) every subset of the form $\rG^\down(S)$ is $\rG$-closed by Lemma \ref{lem:lat_op_cl}.3. Furthermore $\rH^\down \circ \rR^\up$ sends all subsets (in particular $\rG$-closed subsets) to $\rH$-closed subsets because $\rH^\down = \cl_\rH \circ \rH^\down$ by $(\down\up\down)$. Thus when restricted to closed subsets in both their domain and codomain they define an adjunction. Consequently, the right adjoint $\rR^\down \circ \rH^\up$ preserves all meets (= intersections) and the left adjoint preserves all joins (which needn't be unions).

\item
(3) and (4) follow from (1) and (2) by applying duality. Take the dual morphism $\rR\spcheck = \breve{\rR} : \breve{\rH} \to \breve{\rG}$ and apply the first statement, yielding:
\[
X_1 \subseteq \breve{\rR}^\down \circ \breve{\rG}^\up(X_2) 
\iff
\breve{\rG}^\down \circ \breve{\rR}^\up(X_1) \subseteq X_2
\]
for all subsets $X_1 \subseteq \breve{\rH}_s = \rH_t$ and all $\breve{\rG}$-closed subsets $X_2 \subseteq \breve{\rG}_s = \rG_t$. Since relative complement defines a lattice isomorphism $\kappa_{\breve{\rG}} : (\latCl{\breve{\rG}})^{\pOp} \to \latOp{\rG}$, we can substitute both $X_1 := \overline{Y_1}$ and $X_2 := \overline{Y_2}$ and rewrite so that:
\[
\neg_{\rH_t} \circ \breve{\rR}^\down \circ \breve{\rG}^\up \circ \neg_{\rG_t}(Y_2) \subseteq Y_1 
\iff
Y_2 \subseteq \neg_{\rG_t} \circ \breve{\rG}^\down \circ \breve{\rR}^\up \circ \neg_{\rH_t}(Y_1)
\]
for all subsets $Y_1 \subseteq \rH_t$ and all $\rG$-open subsets $Y_2 \subseteq \rG_t$. Applying De Morgan duality yields the desired equivalence. Regarding the fourth statement, this follows via the restriction of $\breve{\rR}$ via the second statement.
\end{enumerate}
\end{proof}
  
The De Morgan dualities in Lemma \ref{lem:up_down_basic}.3 extend to the closure and interior operators associated to $\rG$. Furthermore they satisfy the usual characterisations as intersections/unions of closed/open sets, and in particular $\cl_{\Pirr\aQ}$ is the `usual' closure structure associated to a finite lattice with underlying join-semilattice $\aQ$.

\begin{lemma}
\label{lem:cl_inte_of_pirr}
\item
\begin{enumerate}
\item
For each relation $\rG$:
\[
\begin{tabular}{llll}
$\inte_\rG = \neg_{\rG_t} \circ \cl_{\breve{\rG}} \circ \neg_{\rG_t}$
&\qquad&
$\inte_\rG(Z) = \bigcup \{ Y \in O(\rG) : Y \subseteq Z \}$
& for any subset $Z \subseteq \rG_t$.
\\
$\cl_\rG = \neg_{\rG_s} \circ \inte_{\breve{\rG}} \circ \neg_{\rG_s}$
&\qquad&
$\cl_\rG(Z) = \bigcap \{ X \in C(\rG) : Z \subseteq X \}$
& for any subset $Z \subseteq \rG_s$.
\end{tabular}
\]
Moreover,
\[
Y \subseteq \inte_\rG(\overline{g_t})
\iff
g_t \nin Y
\]
for every $\rG$-open $Y \subseteq \rG_t$ and every $g_t \in \rG_t$.
 
%lem:inte_as_union

\item
For each finite join-semilattice $\aQ$:
\[
\begin{tabular}{ll}
$\cl_{\Pirr\aQ} (S) = \{ j \in J(\aQ) : j \leq_\aQ \Lor_\aQ S \}$
&
$\cl_{(\Pirr\aQ)\spbreve} (S) = \{ m \in M(\aQ) : \Land_\aQ S \leq_\aQ m \}$
\\
$\inte_{\Pirr\aQ} (S) = \{ m \in M(\aQ) : \Land_\aQ \overline{S} \nleq_\aQ m \}$
&
$\inte_{(\Pirr\aQ)\spbreve} (S) = \{ j \in J(\aQ) : j \nleq_\aQ \Lor_\aQ \overline{S} \}$
\end{tabular}
\]
recalling that $\Pirr\aQ := \; \nleq_\aQ \; \subseteq J(\aQ) \times M(\aQ)$.

\end{enumerate}
\end{lemma}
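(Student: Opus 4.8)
The plan is to handle the two parts in order, in each case deriving the interior formulas from the closure formulas so that nearly everything reduces to the De Morgan dualities of Lemma \ref{lem:up_down_basic}.3 together with the meet-irreducible order-test of Lemma \ref{lem:std_order_theory}.7.

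For Part (1), the two operator identities are immediate from the rules $(\neg\up\neg)$ and $(\neg\down\neg)$. First I would substitute $\rR := \breve{\rG}$ into those rules to obtain $\neg_{\rG_t} \circ \breve{\rG}^\down = \rG^\up \circ \neg_{\rG_s}$ and $\breve{\rG}^\up \circ \neg_{\rG_t} = \neg_{\rG_s} \circ \rG^\down$ (using that $\neg$ is involutive), and then simply compose:
\[
\neg_{\rG_t} \circ \cl_{\breve{\rG}} \circ \neg_{\rG_t}
= (\neg_{\rG_t} \circ \breve{\rG}^\down)(\breve{\rG}^\up \circ \neg_{\rG_t})
= (\rG^\up \circ \neg_{\rG_s})(\neg_{\rG_s} \circ \rG^\down)
= \rG^\up \circ \rG^\down = \inte_\rG.
\]
The companion identity $\cl_\rG = \neg_{\rG_s} \circ \inte_{\breve{\rG}} \circ \neg_{\rG_s}$ is then the same statement with $\rG$ replaced by $\breve{\rG}$, reusing $\breve{\breve{\rG}} = \rG$.

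The union/intersection descriptions are the standard facts about interior and closure operators, argued exactly as the induced meet was computed in Lemma \ref{lem:lat_op_cl}.1. For the interior: $\inte_\rG(Z)$ is $\rG$-open by idempotence and satisfies $\inte_\rG(Z) \subseteq Z$ by co-extensivity, so it is one of the sets in $\{Y \in O(\rG) : Y \subseteq Z\}$; conversely any $\rG$-open $Y \subseteq Z$ obeys $Y = \inte_\rG(Y) \subseteq \inte_\rG(Z)$ by monotonicity, giving equality. The closure statement is its order-dual. The ``Moreover'' then drops out: if $g_t \notin Y$ then $Y \subseteq \overline{g_t}$, so openness and monotonicity give $Y = \inte_\rG(Y) \subseteq \inte_\rG(\overline{g_t})$; conversely co-extensivity yields $\inte_\rG(\overline{g_t}) \subseteq \overline{g_t}$, whence $Y \subseteq \overline{g_t}$, i.e.\ $g_t \notin Y$.

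For Part (2) I would compute $\cl_{\Pirr\aQ}$ directly and obtain the rest by duality. Unfolding $\cl_\rG = \rG^\down \circ \rG^\up$ with $\rG = \; \nleq_\aQ$, an element $j \in J(\aQ)$ lies in $\cl_{\Pirr\aQ}(S)$ iff $\rG[j] \subseteq \rG[S]$, i.e.\ iff for every $m \in M(\aQ)$, $j \nleq_\aQ m$ implies $j' \nleq_\aQ m$ for some $j' \in S$. Taking contrapositives and using that $m$ dominates all of $S$ precisely when $\Lor_\aQ S \leq_\aQ m$, this becomes: for all $m \in M(\aQ)$, $\Lor_\aQ S \leq_\aQ m \Rightarrow j \leq_\aQ m$, which by Lemma \ref{lem:std_order_theory}.7 is exactly $j \leq_\aQ \Lor_\aQ S$. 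For the second closure formula I would observe that $(\Pirr\aQ)\spbreve = \Pirr(\aQ^{\pOp})$, since the converse of $\nleq_\aQ$ on $J(\aQ) \times M(\aQ)$ is $\nleq_{\aQ^{\pOp}}$ on $M(\aQ) \times J(\aQ) = J(\aQ^{\pOp}) \times M(\aQ^{\pOp})$; applying the first formula to $\aQ^{\pOp}$ and translating $\Lor_{\aQ^{\pOp}} = \Land_\aQ$ and $\leq_{\aQ^{\pOp}} = \; \geq_\aQ$ yields $\cl_{(\Pirr\aQ)\spbreve}(S) = \{m \in M(\aQ) : \Land_\aQ S \leq_\aQ m\}$. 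Finally the two interior formulas follow by feeding these into the Part (1) identity $\inte_\rG = \neg_{\rG_t} \circ \cl_{\breve{\rG}} \circ \neg_{\rG_t}$: one gets $\inte_{\Pirr\aQ}(S) = \overline{\cl_{(\Pirr\aQ)\spbreve}(\overline{S})}$ and $\inte_{(\Pirr\aQ)\spbreve}(S) = \overline{\cl_{\Pirr\aQ}(\overline{S})}$, and complementing the closure sets produces exactly the stated $\nleq_\aQ$ descriptions.

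The only step beyond bookkeeping is the identification in Part (2) of the unfolded closure condition with $j \leq_\aQ \Lor_\aQ S$: the crux is recognising that the contrapositive of ``$\rG[j] \subseteq \rG[S]$'' is literally the meet-irreducible test of Lemma \ref{lem:std_order_theory}.7. Everything else is an application of the De Morgan dualities and the self-duality $\aQ \mapsto \aQ^{\pOp}$, so I expect no genuine obstacle beyond keeping the $J/M$ roles and the $\up/\down$ decorations straight.
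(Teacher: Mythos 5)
Your proposal is correct and follows essentially the same route as the paper: the operator identities via the De Morgan rules $(\neg\up\neg)$/$(\neg\down\neg)$, the direct unfolding of $\cl_{\Pirr\aQ}$ culminating in the meet-irreducible test of Lemma \ref{lem:std_order_theory}.7, the identification $(\Pirr\aQ)\spbreve = \Pirr(\aQ^{\pOp})$, and the interior formulas by complementation. The only (immaterial) divergence is that you justify $\inte_\rG(Z) = \bigcup\{Y \in O(\rG) : Y \subseteq Z\}$ abstractly as the largest open subset, whereas the paper computes it concretely via the generating open sets $\rG[g_s]$; both arguments are routine and valid.
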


\begin{proof}
\item
\begin{enumerate}
\item
The two left-hand statements follow by applying De Morgan duality i.e.\ $(\neg\up\neg)$ and $(\neg\down\neg)$. Next:
\[
\begin{tabular}{lll}
$\bigcup \{ Y \in O(\rG) : Y \subseteq Z \}$
&
$= \bigcup \{ \rG[g_s] : g_s \in \rG_s,\; \rG[g_s] \subseteq Z \}$
& restrict to join-irreducibles
\\&
$= \{ g_t \in \rG_t : \exists g_s \in \rG_s. g_t \in \rG[g_s] \subseteq Z \}$
\\&
$= \rG^\up \circ \rG^\down(Z)$
\\&
$= \inte_\rG(Z)$
\end{tabular}
\]
The description of $\cl_\rG(Z)$ follows by (i) the De Morgan duality above, and (ii) because $\breve{\rG}$-open sets are the relative complements of $\rG$-closed sets by Lemma \ref{lem:lat_op_cl}.2. The final equivalence follows because $Y \subseteq \overline{g_t}$ iff $g_t \nin Y$, and moreover $\inte_\rG$ is co-extensive, monotone and idempotent.

\item
Regarding the second statement, for all subsets $S \subseteq J(\aQ)$ we have:
\[
\begin{tabular}{lll}
$\cl_{\Pirr\aQ} (S)$
& $= \; \nleq_\aQ^\down \circ \nleq_\aQ^\up (S)$
\\&
$= \; \nleq_\aQ^\down (\{ m \in M(\aQ) : \exists j' \in S. j' \nleq_\aQ m \}$
\\&
$= \{ j \in J(\aQ) : \;\nleq_\aQ[j] \subseteq \{ m \in M(\aQ) : \exists j' \in S. j' \nleq_\aQ m \} \}$
\\&
$= \{ j \in J(\aQ) : \forall m \in M(\aQ).( j \nleq_\aQ m \To \exists j' \in S. j' \nleq_\aQ m ) \}$
\\&
$= \{ j \in J(\aQ) : \forall m \in M(\aQ).( \neg\exists j' \in S. j' \nleq_\aQ m \To j \leq_\aQ m  ) \}$
\\&
$= \{ j \in J(\aQ) : \forall m \in M(\aQ).( (\forall j' \in S. j' \leq_\aQ m) \To j \leq_\aQ m ) \}$
\\&
$= \{ j \in J(\aQ) : \forall m \in M(\aQ).( \Lor_\aQ S \leq_\aQ m \To j \leq_\aQ m \}$
\\&
$= \{ j \in J(\aQ) : j \leq_\aQ \Lor_\aQ S \} $
\end{tabular}
\]
Next observe $(\Pirr\aQ)\spbreve = \Pirr\aQ^{\pOp}$, so that:
\[
\cl_{(\Pirr\aQ)\spbreve}(S) 
= \{ m \in J(\aQ^{\pOp}) : m \leq_{\aQ^{\pOp}} \Lor_{\aQ^{\pOp}} S \}
= \{ m \in M(\aQ) : \Land_\aQ S \leq_\aQ m \}
\]
The descriptions of the interior operators follow by the De Morgan duality exhibited in the first statement.

\end{enumerate}
\end{proof}

We now have enough structure to prove the well-definedness of $\Open$ and $\Pirr$.

\begin{lemma}
\label{lem:open_well_defined}
$\Open : \Dep \to \JSL_f$ is a well-defined faithful functor.
\end{lemma}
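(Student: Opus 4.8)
The plan is to verify in turn that $\Open$ is well-defined on objects, well-defined on morphisms, functorial, and finally faithful. For objects, the assertion that $(O(\rG),\cup,\emptyset)$ is a finite join-semilattice is precisely the content of Lemma \ref{lem:lat_op_cl}.1, which establishes that $\latOp{\rG}$ is a well-defined bounded lattice and hence in particular has the required join-semilattice structure. For the action on morphisms I would invoke the consistency Lemma \ref{lem:open_r_two_defs}: for $Y \in O(\rG)$ we have $\Open\rR(Y) = \rR_+\spbreve[Y] = \rR^\up \circ \rG^\down(Y)$, so $\Open\rR$ is just the restriction of $\rR^\up \circ \rG^\down$ to the open sets. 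Lemma \ref{lem:composite_adjoints}.4 states exactly that this restriction is a well-defined $\JSL_f$-morphism $(O(\rG),\cup,\emptyset) \to (O(\rH),\cup,\emptyset)$, so both the typing (landing in $O(\rH)$) and preservation of $\emptyset$ and $\cup$ come for free.

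For functoriality, preservation of identities is immediate: since $id_\rG = \rG$, for $Y \in O(\rG)$ we get $\Open(id_\rG)(Y) = \rG^\up \circ \rG^\down(Y) = \inte_\rG(Y) = Y$, using that $Y$ is $\rG$-open, whence $\Open(id_\rG) = id_{\Open\rG}$. For composition, given $\rR : \rG \to \rH$, $\rS : \rH \to \rI$ and $Y \in O(\rG)$, I would compute $\Open(\rR \fatsemi \rS)(Y) = (\rR \fatsemi \rS)^\up \circ \rG^\down(Y)$ and apply the composite formula $(\up\fatsemi)$ of Lemma \ref{lem:bicliq_func_comp} to rewrite it as $\rS^\up \circ \rH^\down \circ \rR^\up \circ \rG^\down(Y)$. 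On the other side, since $\rR^\up \circ \rG^\down(Y) = \Open\rR(Y) \in O(\rH)$, the second description of $\Open\rS$ yields $\Open\rS(\Open\rR(Y)) = \rS^\up \circ \rH^\down(\rR^\up \circ \rG^\down(Y))$, and the two expressions coincide.

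The substantive step is faithfulness, and this is where I expect the only real work. The idea is to recover $\rR$ from $\Open\rR$ by evaluating at the distinguished open sets $\rG[g_s] = \rG^\up(\{g_s\})$, each of which is $\rG$-open by $(\up\down\up)$ (equivalently by Lemma \ref{lem:lat_op_cl}.3). Using $\rG^\down(\rG[g_s]) = \cl_\rG(\{g_s\})$ together with the morphism characterisation $\rR^\up \circ \cl_\rG = \rR^\up$ of Lemma \ref{lem:bicliq_mor_char_max_witness}.1, I would show
\[
\Open\rR(\rG[g_s]) = \rR^\up \circ \rG^\down(\rG[g_s]) = \rR^\up(\cl_\rG(\{g_s\})) = \rR^\up(\{g_s\}) = \rR[g_s].
\]
Consequently $\Open\rR = \Open\rS$ forces $\rR[g_s] = \rS[g_s]$ for every $g_s \in \rG_s$, i.e.\ $\rR = \rS$ as relations in $\Pow(\rG_s \times \rH_t)$. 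The only obstacle, such as it is, lies in selecting the right family of test open sets and recognising that $\rR^\up$ collapses $\cl_\rG(\{g_s\})$ back to $\rR[g_s]$; once this evaluation is in hand, faithfulness follows at once.
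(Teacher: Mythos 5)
Your proposal is correct and follows essentially the same route as the paper: objects via the lattice of open sets, morphisms via Lemma \ref{lem:composite_adjoints}.4, identities and composition via $(\up\fatsemi)$, and faithfulness by evaluating at the open sets $\rG[g_s]$ to recover $\rR[g_s]$. The only (cosmetic) difference is that the paper runs the faithfulness computation through the description $\Open\rR(Y)=\rR_+\spbreve[Y]$ and the witness identity $\rR=\rG;\rR_+\spbreve$, whereas you use the equivalent description $\rR^\up\circ\rG^\down$ together with $\rR^\up\circ\cl_\rG=\rR^\up$; these coincide by Lemma \ref{lem:open_r_two_defs}.
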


\begin{proof}
$\Open \rG = (O(\rG),\cup,\emptyset)$ is a finite join-semilattice. Regarding its action on morphisms, recall that $\Open\rR(Y) := \rR^\up \circ \rG^\down(Y)$ for every $\rG$-open $Y$. Then by Lemma \ref{lem:composite_adjoints}.4 this is a well-defined $\JSL_f$-morphism of the desired type. $\Open$ preserves identity morphisms because:
\[
\Open id_\rG 
= \Open (\rG : \rG \to \rG)
= \lambda Y.\rG^\up \circ \rG^\down(Y)
\stackrel{!}{=} \lambda Y.Y
= id_{\Open\rG}
\]
since $Y$ is $\rG$-open. Next we show preservation of composition i.e.\ for any $\rR : \rG \to \rH$ and $\rS : \rH \to \rI$:
\[
\Open \rS \circ \Open \rR
= \Open(\rR \fatsemi \rS).
\]
We have not reversed the sense of the morphisms, the difference in ordering is due to the different order in which one writes functional and $\Dep$-composition, the latter in keeping with relational composition. It follows via:
\[
\begin{tabular}{lll}
$\Open(\rR\fatsemi\rS)(Y)$
&
$= (\rR\fatsemi\rS)^\up \circ \rG^\down(Y)$
& 
\\&
$= \rS^\up \circ \rH^\down \circ \rR^\up \circ \rG^\down(Y)$
& by $(\up\fatsemi)$
\\&
$= \rS^\up \circ \rH^\down(\Open \rR(Y))$
\\&
$= \Open\rS(\Open \rR(Y))$
\end{tabular}
\]
We now establish that $\Open$ is faithful. Given $\rR,\, \rS : \rG \to \rH$ such that $\Open\rR = \Open\rS$ then for all $\rG$-open sets $Y$ we have $\rR_+\spbreve[Y] = \rS_+\spbreve[Y]$. Then for each $g_s \in \rG_s$ we have:
\[
\begin{tabular}{lll}
$\rR[g_s]$
&
$= (\rG ; \rR_+\spbreve)[g_s]$
& by Lemma \ref{lem:bicliq_mor_char_max_witness}.2 
\\&
$= \rR_+\spbreve[\rG[g_s]]$
\\&
$= \rS_+\spbreve[\rG[g_s]]$
& since $\rG[g_s]$ is $\rG$-open
\\&
$= (\rG;\rS_+\spbreve)[g_s]$
& 
\\&
$= \rS[g_s]$
& by Lemma \ref{lem:bicliq_mor_char_max_witness}.2 
\end{tabular}
\]
so that $\rR = \rS$ as required.
\end{proof}

\begin{lemma}
\label{lem:pirr_well_defined}
$\Pirr : \JSL_f \to \Dep$ is a well-defined functor.
\end{lemma}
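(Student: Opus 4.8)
The plan is to check the three requirements for a functor in turn: that $\Pirr$ is well-defined on objects, that each $\Pirr f$ is a genuine $\Dep$-morphism, and that identities and composites are preserved. The object part is immediate, since $J(\aQ)$ and $M(\aQ)$ are subsets of the finite set $Q$, so $\Pirr\aQ = \;\nleq_\aQ$ is a relation between finite sets and hence a $\Dep$-object. The substance lies in showing that $\Pirr f$ is a morphism, and I would do this by verifying that the two stated relations $(\Pirr f)_-$ and $(\Pirr f)_+$ are witnesses in the sense of Lemma~\ref{lem:bicliq_mor_char_max_witness}, i.e.\ that $(\Pirr f)_- ; \Pirr\aR = \Pirr f = \Pirr\aQ ; (\Pirr f)_+\spbreve$.

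First I would expand $(\Pirr f)_- ; \Pirr\aR$: a pair $(j_1,m) \in J(\aQ) \times M(\aR)$ lies in it iff there is some $j_2 \in J(\aR)$ with $j_2 \leq_\aR f(j_1)$ and $j_2 \nleq_\aR m$, and the goal is to identify this with the condition $f(j_1) \nleq_\aR m$ defining $\Pirr f$. The forward direction is immediate from transitivity (if $j_2 \leq_\aR f(j_1) \leq_\aR m$ then $j_2 \leq_\aR m$). The backward direction is the crux: assuming $f(j_1) \nleq_\aR m$, I must produce a join-irreducible $j_2 \leq_\aR f(j_1)$ escaping $m$, and this follows from Lemma~\ref{lem:std_order_theory}.7, since if every join-irreducible below $f(j_1)$ were below $m$ then $f(j_1) \leq_\aR m$.

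The second witness equation I would reduce, via the adjoint relationship $f(j_1) \leq_\aR m_1 \iff j_1 \leq_\aQ f_*(m_1)$ of Lemma~\ref{lem:adj_obs}.1, to the order-dual statement: a pair $(j_1,m_1)$ lies in $\Pirr\aQ ; (\Pirr f)_+\spbreve$ iff there is some $m_2 \in M(\aQ)$ with $f_*(m_1) \leq_\aQ m_2$ and $j_1 \nleq_\aQ m_2$, and this should coincide with $j_1 \nleq_\aQ f_*(m_1)$. Again the forward implication is transitivity, and the backward one is the meet-irreducible half of Lemma~\ref{lem:std_order_theory}.7 (every element is the meet of the meet-irreducibles above it). I would also remark that these two relations are exactly the canonical maximum witnesses of Definition~\ref{def:bicliq_mor_components}: unwinding $\rR^\down$ in that definition reproduces precisely the conditions $j_2 \leq_\aR f(j_1)$ and $f_*(m_1) \leq_\aQ m_2$ by the same two applications of Lemma~\ref{lem:std_order_theory}.7, so the notation fixed in Definition~\ref{def:open_pirr} is consistent.

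Finally, functoriality. Identity preservation is a one-liner: $\Pirr(id_\aQ) = \{(j,m) : j \nleq_\aQ m\} = \;\nleq_\aQ \;= id_{\Pirr\aQ}$. For composition, given $f : \aQ \to \aR$ and $g : \aR \to \aS$, I would use the formula $\Pirr f \fatsemi \Pirr g = (\Pirr f)_- ; \Pirr g$ for $\Dep$-composition (Lemma~\ref{lem:bicliq_func_comp}), so that the claim $\Pirr(g \circ f) = \Pirr f \fatsemi \Pirr g$ unfolds to the equivalence $\bigl(\exists j_2 \in J(\aR).\; j_2 \leq_\aR f(j_1) \text{ and } g(j_2) \nleq_\aS m\bigr) \iff g(f(j_1)) \nleq_\aS m$. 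The forward implication uses monotonicity of $g$; the backward implication uses that $g$ preserves joins, so $g(f(j_1)) = \Lor_\aS \{ g(j_2) : j_2 \in J(\aR),\, j_2 \leq_\aR f(j_1) \}$, whence if all these $g(j_2) \leq_\aS m$ then $g(f(j_1)) \leq_\aS m$. The main obstacle throughout is precisely these backward directions: each rests on the fact that join-irreducibles join-generate (dually, meet-irreducibles meet-generate), as captured by Lemma~\ref{lem:std_order_theory}.7, together with join-preservation of morphisms and the adjointness of $f_*$. Once that tool is in hand, every step is a routine unwinding of the definitions of $\nleq$ and the irreducibles.
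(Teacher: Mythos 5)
Your proof is correct, and it reaches the same conclusion by a slightly different route for the central step. Where the paper establishes that $\Pirr f$ is a $\Dep$-morphism by verifying the functional characterisation $(\Pirr f)^\up \circ \cl_{\Pirr\aQ} = (\Pirr f)^\up = \inte_{\Pirr\aR} \circ (\Pirr f)^\up$ (via Lemma~\ref{lem:bicliq_mor_char_max_witness}.1 together with the explicit descriptions of $\cl_{\Pirr\aQ}$ and $\inte_{\Pirr\aR}$ from Lemma~\ref{lem:cl_inte_of_pirr}.2), you instead exhibit the two witnessing relations and check $(\Pirr f)_- ; \Pirr\aR = \Pirr f = \Pirr\aQ ; (\Pirr f)_+\spbreve$ directly, which appeals only to the original Definition~\ref{def:category_bicliq}. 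This is more economical: it proves well-definedness and (after your remark unwinding Definition~\ref{def:bicliq_mor_components}) identifies the canonical components in one pass, whereas the paper does these as two separate computations. The trade-off is that the paper's route exercises the closure/interior toolbox that it reuses throughout, while yours isolates the single order-theoretic fact doing all the work — that join-irreducibles join-generate and meet-irreducibles meet-generate (Lemma~\ref{lem:std_order_theory}.7), combined with adjointness of $f_*$. Your composition argument via $\Pirr f \fatsemi \Pirr g = (\Pirr f)_- ; \Pirr g$ is the join-irreducible mirror of the paper's computation via $\Pirr f ; (\Pirr g)_+\spbreve$; both are licensed by Lemma~\ref{lem:bicliq_func_comp} and both are correct.
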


\begin{proof}
$\Pirr$ is clearly well-defined on objects. Take any join-semilattice morphism $f : \aQ \to \aR$ and recall that:
\[
\Pirr f := \{ (j,m) \in J(\aQ) \times M(\aR) : f(j) \nleq_\aR m \}.
\]
We must show that it is a $\Dep$-morphism of type $\Pirr\aQ \to \Pirr\aR$. By Lemma \ref{lem:cl_inte_of_pirr}.2 we know that:
\[
\cl_{\Pirr\aQ} (S) = \{ j \in J(\aQ) : j \leq_\aQ \Lor_\aQ S \}
\qquad\text{and also}\qquad
\inte_{\Pirr\aR} (S) = \{ m \in M(\aR) : \Land_\aR \overline{S} \nleq_\aR m \}.
\]
Then we calculate:
\[
\begin{tabular}{lll}
$(\Pirr f)^\up \circ \cl_{\Pirr\aQ}(S)$
&
$= (\Pirr f)^\up (\{ j \in J(\aQ) : j \leq_\aQ \Lor_\aQ S \})$
& see above
\\&
$= \{ m \in M(\aR) : \exists j \in J(\aQ).[ j \leq_\aQ \Lor_\aQ S \text{ and } f(j) \nleq_\aR m  ] \}$
\\&
$= \{ m  : \neg\forall j.[ j \leq_\aQ \Lor_\aQ S \To f(j) \leq_\aR m  ] \} $
\\&
$= \{ m : \neg\forall j.[ j \leq_\aQ \Lor_\aQ S \To j \leq_\aR f_*(m)  ] \}$
& take adjoint
\\&
$= \{ m : \Lor_\aQ S \nleq_\aR f_*(m) \}$
\\&
$= \{ m : f(\Lor_\aQ S) \nleq_\aR m \}$
& take adjoint
\\&
$= \{ m : \Lor_\aQ f[S] \nleq_\aR m \}$
& $f$ preseves joins
\\&
$= \{ m  : \exists j \in S. f(j) \nleq_\aR m \}$
\\&
$= (\Pirr f)^\up(S)$
\\
\\
$\inte_{\Pirr\aR} \circ (\Pirr f)^\up (S)$
&
$= \inte_{\Pirr\aR}(\{ m_1 \in M(\aR) : \exists j \in S. f(j) \nleq_\aR m_1 \})$
\\&
$= \{ m_2 \in M(\aR) : \Land_\aR \{ m_1 \in M(\aR) : \neg\exists j \in S. f(j) \nleq_\aR m_1 \} \nleq_\aR m_2 \}$
\\&
$= \{ m_2 \in M(\aR) : \Land_\aR \{ m_1 \in M(\aR) : \forall j \in S. f(j) \leq_\aR m_1 \} \nleq_\aR m_2  \}$
\\&
$= \{ m_2 \in M(\aR) : \Land_\aR \{ m_1 \in M(\aR) : \Lor_\aR f[S] \leq_\aR m_1 \} \nleq_\aR m_2  \}$
\\&
$= \{ m_2 \in M(\aR) : \Lor_\aR f[S]  \nleq_\aR m_2  \}$
\\&
$= \{ m_2 \in M(\aR) : \exists j \in S. f(j)  \nleq_\aR m_2  \}$
\\&
$= (\Pirr f)^\up(S)$
\end{tabular}
\]
Then $\Pirr f : \Pirr \aQ \to  \Pirr\aR$ is a well-defined $\Dep$-morphism. Concerning preservation of identity morphisms:
\[
\Pirr id_\aQ 
= \{ (j,m) \in J(\aQ) \times M(\aQ) : j \nleq_\aQ m \} 
= \Pirr\aQ : \Pirr\aQ \to \Pirr\aQ
= id_{\Pirr\aQ}.
\]
Next, given any join-semilattice morphisms $f : \aQ \to \aR$ and $g : \aR \to \aS$ we must show that $\Pirr( g \circ f) = \Pirr f \fatsemi \Pirr g$. First we verify the definitions of $\Pirr f$'s component relations.
\[
\begin{tabular}{lll}
$(\Pirr f)_- (j_1,j_2)$
&
$\iff j_2 \in (\Pirr\aR)^\down(\Pirr f[j_1])$
\\&
$\iff \Pirr\aR[j_2] \subseteq \{ m_1 \in M(\aR) : f(j_1) \nleq_\aR m_1 \}$
\\&
$\iff \{ m \in M(\aR) : f(j_1) \leq_\aR m \} \subseteq \{ m \in M(\aR) : j_2 \leq_\aR m \}$
\\&
$\iff \forall m \in M(\aR).( f(j_1) \leq_\aR m \To j_2 \leq_\aR m  )$
\\&
$\iff j_2 \leq_\aR f(j_1)$
\\
\\
$(\Pirr f)_+ (m_1,m_2)$
&
$\iff  m_2 \in (\Pirr\aQ\spbreve)^\down((\Pirr f)\spbreve[m_1]) $
\\&
$\iff \Pirr\aQ\spbreve[m_2] \subseteq (\Pirr f)\spbreve[m_1]$
\\&
$\iff \{ j \in J(\aQ) : j \nleq_\aQ m_2 \} \subseteq \{ j \in J(\aQ) : f(j) \nleq_\aR m_1 \}  $
\\&
$\iff \{ j \in J(\aQ) : f(j) \leq_\aR m_1 \} \subseteq \{ j \in J(\aQ) : j \leq_\aQ m_2 \}   $
\\&
$\iff \forall j \in J(\aQ).( f(j) \leq_\aR m_1 \To j \leq_\aQ m_2)$
\\&
$\iff \forall j \in J(\aQ).( j \leq_\aQ f_*(m_1) \To j \leq_\aQ m_2 )$
\\&
$\iff f_*(m_1) \leq_\aQ m_2 $
\end{tabular}
\]
Then finally:
\[
\begin{tabular}{lll}
$\Pirr f \fatsemi \Pirr g$
&
$= \Pirr f ; (\Pirr g)_+\spbreve$
\\&
$= \{ (j_q,m_s) \in J(\aQ) \times M(\aS) : \exists m_r \in M(\aR).( f(j_q) \nleq_\aR m_r \,\land\, g_*(m_s) \leq_\aR m_r  ) \}$
\\&
$= \{ (j_q,m_s) \in J(\aQ) \times M(\aS) : \neg\forall m_r \in M(\aR).( g_*(m_s) \leq_\aR m_r \To  f(j_q) \leq_\aR m_r ) \}$
\\&
$= \{ (j_q,m_s) \in J(\aQ) \times M(\aS) : f(j_q) \nleq_\aR g_*(m_s) \}$
\\&
$= \{ (j_q,m_s) \in J(\aQ) \times M(\aS) : g \circ f(g_q) \nleq_\aS m_s \}$
\\&
$= \Pirr (g \circ f)$
\end{tabular}
\]
\end{proof}

% -------------------
% EQUIVALENCE THEOREM
% -------------------

We are now finally ready to prove the categorical equivalence of $\JSL_f$ and $\Dep$. We do this very explicitly i.e.\ the two natural isomorphisms and their inverses are provided, as well as the associated component relations of the $\Dep$-isomorphisms.

\bigskip

\begin{theorem}[$\Dep$ is equivalent to $\JSL_f$]
\label{thm:bicliq_jirr_equivalent}
The functors $\Open : \Dep \to \JSL_f$ and $\Pirr : \JSL_f \to \Dep$ define an equivalence of categories, with respective natural isomorphisms:
\[
\begin{tabular}{llll}
$rep : \Id_{\JSL_f} \To \Open \circ \Pirr$
&&
$rep_\aQ$ & $:= \lambda q \in Q.\{ m \in M(\aQ) : q \nleq_\aQ m \}$
\\
&& $rep_\aQ^{\bf-1}$ & $:= \lambda Y. \Land_\aQ M(\aQ) \setminus Y$
\\[1ex]
$red : \Id_{\Dep} \To \Pirr \circ \Open$
&&
$red_\rG$ & $:= \{ (g_s,Y) \in \rG_s \times M(\Open\rG) : \rG[g_s] \nsubseteq Y \}$ 
\\
&&
$red_\rG^{\bf-1}$ & $:= \breve{\in} \; \subseteq J(\Open\rG) \times \rG_t$
\end{tabular}
\]
where $\red_\rG$ and its inverse have associated component relations:
\[
\begin{tabular}{ll}
$(red_\rG)_- := \{ (g_s,X) \in \rG_s \times J(\Open\rG) : X \subseteq \rG[g_s] \}$
&
$(red_\rG)_+ := \breve{\nin} \; \subseteq  M(\Open\rG) \times \rG_t$
\\
$(red_\rG^{\bf-1})_- := \{ (X,g_s) \in J(\Open\rG)\times\rG_s : \rG[g_s] \subseteq X \}$
&
$(red_\rG^{\bf-1})_+ := \{ (g_t,Y) \in \rG_t \times M(\Open\rG) : \inte_\rG(\overline{g_t}) \subseteq Y  \}$
\end{tabular}
\]
\end{theorem}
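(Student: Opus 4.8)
The plan is to verify directly that $rep$ and $red$ are natural isomorphisms; since $\Open$ and $\Pirr$ are already known to be functors by Lemmas \ref{lem:open_well_defined} and \ref{lem:pirr_well_defined}, producing natural isomorphisms $\Id_{\JSL_f} \cong \Open\circ\Pirr$ and $\Id_{\Dep} \cong \Pirr\circ\Open$ is exactly what an equivalence requires. I would begin with $rep_\aQ$. Writing $\Pirr\aQ = \;\nleq_\aQ$, the elements of $\Open\Pirr\aQ$ are the $\nleq_\aQ$-open subsets of $M(\aQ)$. Using Lemma \ref{lem:cl_inte_of_pirr}.2, which gives $\inte_{\Pirr\aQ}(S) = \{ m \in M(\aQ) : \Land_\aQ \overline{S} \nleq_\aQ m\}$, I would check that $rep_\aQ(q) = \{m : q \nleq_\aQ m\}$ is $\nleq_\aQ$-open and that $q = \Land_\aQ\overline{rep_\aQ(q)}$. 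Preservation of $\bot$ and of binary joins is immediate from $\bot_\aQ \leq_\aQ m$ and from $q_1\lor_\aQ q_2 \leq_\aQ m \iff (q_1 \leq_\aQ m$ and $q_2\leq_\aQ m)$; injectivity follows from Lemma \ref{lem:std_order_theory}.7 (an element is determined by the meet-irreducibles above it), and surjectivity together with the stated $rep_\aQ^{\bf-1}(Y) = \Land_\aQ(M(\aQ)\setminus Y)$ follows because openness gives $Y = \inte_{\Pirr\aQ}(Y) = rep_\aQ(\Land_\aQ\overline{Y})$. By Lemma \ref{lem:jsl_mono_epi_iso}.4 a bijective morphism is an isomorphism.

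For naturality of $rep$ I would expand both legs of the square attached to $f:\aQ\to\aR$, using $\Open(\Pirr f)(Y) = (\Pirr f)_+\spbreve[Y]$ with the component formula $(\Pirr f)_+ = \{(m_1,m_2): f_*(m_1)\leq_\aQ m_2\}$. The required identity reduces to $f(q)\nleq_\aR m_1 \iff \exists m_2\in M(\aQ).(q\nleq_\aQ m_2$ and $f_*(m_1)\leq_\aQ m_2)$. The backward direction is transitivity; the forward direction uses the adjoint relationship $f(q)\leq_\aR m_1 \iff q \leq_\aQ f_*(m_1)$ from Theorem \ref{thm:jsl_self_dual} together with meet-generation (Lemma \ref{lem:std_order_theory}.6) to produce a separating $m_2\in M(\aQ)$ lying above $f_*(m_1)$ but not above $q$.

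The heart of the argument is $red_\rG$, where the descriptions of $J(\Open\rG)$ and $M(\Open\rG)$ from Lemma \ref{lem:lat_op_cl}.3 are indispensable: every join-irreducible open set has the form $\rG[g_s]$, and the open sets are join-generated by these. I would first confirm that the stated relations $(red_\rG)_-,(red_\rG)_+$ and those of $red_\rG^{\bf-1}$ are genuine witnesses, i.e. $(red_\rG)_- ; \nleq_{\Open\rG} = red_\rG = \rG ; (red_\rG)_+\spbreve$; via join-generation each such equivalence reduces to an elementary inclusion, after which I identify them with the canonical maximal components of Definition \ref{def:bicliq_mor_components} using Lemma \ref{lem:bicliq_mor_char_max_witness}.2. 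Then, with the composite formula of Lemma \ref{lem:bicliq_func_comp}, I would compute $red_\rG \fatsemi red_\rG^{\bf-1} = (red_\rG)_- ; \breve{\in} = \rG$ (because $g_t\in\rG[g_s]$ iff $g_t$ lies in some join-irreducible $X\subseteq\rG[g_s]$) and $red_\rG^{\bf-1} \fatsemi red_\rG = \;\nleq_{\Open\rG}$ (because each join-irreducible $X\in J(\Open\rG)$ is itself some $\rG[g_s]$, so $X\nsubseteq Y$ is witnessed by that $g_s$), establishing that $red_\rG$ is a $\Dep$-isomorphism with the claimed inverse.

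Finally, naturality of $red$ for $\rR:\rG\to\rH$ comes down to $\rR[g_s]\nsubseteq Y \iff \exists X\in J(\Open\rG).(X\subseteq\rG[g_s]$ and $\Open\rR(X)\nsubseteq Y)$, which I would obtain from the key identity $\Open\rR(\rG[g_s]) = \rR^\up\circ\rG^\down(\rG[g_s]) = \rR[g_s]$ (using $\rR^\up\circ\cl_\rG = \rR^\up$ from Lemma \ref{lem:bicliq_mor_char_max_witness}.1) together with the fact that $\Open\rR$ preserves the union decomposing $\rG[g_s]$ into its join-irreducible subsets. I expect the main obstacle to be the bookkeeping around the two lattice structures on open sets: the meet in $\latOp{\rG}$ is $\inte_\rG$ of intersection rather than plain intersection, so the verification that the claimed component relations are the \emph{maximal} witnesses, and all the generation arguments, must consistently use the lattice operations of $\Open\rG$ (and Lemma \ref{lem:lat_op_cl}.3) rather than naive set operations on the carriers.
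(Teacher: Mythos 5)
Your proposal is correct, and the $rep$ half coincides with the paper's argument essentially step for step (well-definedness via $\inte_{\Pirr\aQ}$, bijectivity via meet-generation, and the same reduction of naturality to the adjoint relationship $f(q)\leq_\aR m \iff q\leq_\aQ f_*(m)$). Where you diverge is in the $red$ half. The paper establishes that $red_\rG$ and $\breve{\in}$ are $\Dep$-morphisms by verifying the functional characterisation $red_\rG^\up\circ\cl_\rG = red_\rG^\up = \inte_{\Pirr\Open\rG}\circ red_\rG^\up$ of Lemma \ref{lem:bicliq_mor_char_max_witness}.1, computes the two composites at the level of $(-)^\up$ using $(\up\fatsemi)$, and then disposes of naturality of $red$ by applying the faithful functor $\Open$ and observing $\Open\, red_\rG = rep_{\Open\rG}$, so that the square becomes an instance of $rep$'s naturality. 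You instead exhibit the claimed component relations as explicit witnesses, compute the composites relationally (legitimate, since $\Dep$-composition is independent of the chosen witnesses), and verify naturality of $red$ by a direct relational calculation reducing to $\Open\rR(\rG[g_s])=\rR[g_s]$ plus join-generation by $J(\Open\rG)$. Your route is more elementary and keeps everything at the level of relations; the paper's route is shorter because the $\Open$-faithfulness trick lets the $rep$ computation do double duty. Both are sound. One small point to tighten: showing your relations are \emph{witnesses} only yields that they are contained in the canonical components of Definition \ref{def:bicliq_mor_components}; to justify the theorem's claim that they \emph{are} the associated components you must additionally compute $(\Pirr\Open\rG)^\down(red_\rG[g_s])$ etc.\ directly (as the paper does), or argue maximality separately --- the inclusion-of-witnesses direction of Lemma \ref{lem:bicliq_mor_char_max_witness}.2 alone does not suffice. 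This is routine given Lemma \ref{lem:lat_op_cl}.3 and Lemma \ref{lem:cl_inte_of_pirr}.1, and you correctly flag that the meet in $\latOp{\rG}$ is $\inte_\rG$ of the intersection, which is exactly where care is needed.
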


\begin{proof}
\item
\begin{enumerate}
\item

We verify that $rep$ is a natural isomorphism. Each $rep_\aQ$ is a well-defined function because:
\[
\begin{tabular}{lll}
$\inte_{\Pirr\aQ}(rep_\aQ(q))$
&
$= \inte_{\Pirr\aQ} (\{ m \in M(\aQ) : q \nleq_\aQ m \})$
\\&
$= \{ m' \in M(\aQ) : \Land_\aQ \{ m : q \leq_\aQ m \} \nleq_\aQ m' \}$
& by Lemma \ref{lem:cl_inte_of_pirr}
\\&
$= \{ m' \in M(\aQ) : q \nleq_\aQ m' \}$
\\&
$= rep_\aQ(q)$
\end{tabular}
\]
It is well-known that $rep_\aQ$ defines a $\JSL_f$-morphism, usually described as an embedding into $(\Pow M(\aQ),\cup,\emptyset)$. We verify this explicitly:  $rep_\aQ(\bot_\aQ)= \{ m \in M(\aQ) : \bot_\aQ \nleq_\aQ m \} = \emptyset = \bot_{\Open(\Pirr \aQ)}$, and:
\[
rep_\aQ(q_1 \lor_\aQ q_2) 
= \{ m \in M(\aQ) : q_1 \lor_\aQ q_2 \nleq_\aQ m \}
= \{ m \in M(\aQ) : q_1 \nleq_\aQ m \text{ or } q_2 \nleq_\aQ m \}
= rep_\aQ(q_1) \cup rep_\aQ(q_2).
\]
Next we show that $rep_\aQ$ is bijective. It is injective because distinct elements $q_1 \neq_\aQ q_2$ necessarily have distinct sets of meet irreducibles above them, seeing as they are the respective meet of them, so their complements relative to $M(\aQ)$ are also distinct. For surjectivity, we first observe that for any subset $Y \subseteq M(\aQ)$ we have:
\[
\begin{tabular}{lll}
$\Land_\aQ \overline{Y}$
&
$= \Lor_\aQ \{q \in Q : \forall m \in \overline{Y}.q \leq_\aQ m \}$
& meet in terms of join
\\&
$= \Lor_\aQ \{ j \in J(\aQ) : \forall m \in \overline{Y}. j \leq_\aQ m \}$
& restrict to join-irreducibles
\\&
$= \Lor_\aQ \{ j \in J(\aQ) : \forall m \in M(\aQ) (m \in \overline{Y} \To j \leq_\aQ m) \}$
& recall $\overline{Y} = M(\aQ)\backslash Y$
\\&
$= \Lor_\aQ \{ j \in J(\aQ) : \forall m \in M(\aQ) (j \nleq_\aQ m \To m \in Y) \}$
\\&
$= \Lor_\aQ (\Pirr\aQ)^\down(Y)$.
\end{tabular}
\]
Then for any $\Pirr\aQ$-open $Y \subseteq M(\aQ)$ we now show that $rep_\aQ(\Land_\aQ M(\aQ)\backslash Y) = Y$.
\[
\begin{tabular}{lll}
$rep_\aQ(\Land_\aQ \overline{Y})$
&
$= rep_\aQ(\Lor_\aQ \nleq_\aQ^\down(Y))$
& see above
\\&
$= \{ m \in M(\aQ) : \Lor_\aQ \nleq_\aQ^\down(Y) \nleq_\aQ m \}$
\\&
$= \{ m \in M(\aQ) : \exists j \in \; \nleq_\aQ^\down(Y). j \nleq_\aQ m \}$
\\&
$= \{ m \in M(\aQ) : \exists j \in J(\aQ).(\nleq_\aQ[j] \subseteq Y \text{ and } j \nleq_\aQ m  ) \} $
\\&
$= \; \nleq_\aQ^\up \circ \nleq_\aQ^\down(Y)$
\\&
$= Y$.
& since $Y$ is $\Pirr\aQ$-open
\end{tabular}
\]
Thus we have shown that each $rep_\aQ : \aQ \to \Open(\Pirr \aQ)$ is a $\JSL_f$-isomorphism, and furthermore the inverse is necessarily $rep_\aQ^{\bf-1}(Y) = \Land_\aQ M(\aQ)\backslash Y$ by the above argument. Then it only remains to prove naturality i.e.\
\[
\xymatrix@=15pt{
\aQ \ar[rr]^-{rep_\aQ} \ar[d]_f && (O(\nleq_\aQ |_{J(\aQ) \times M(\aQ)} ),\cup,\emptyset) \ar[d]^{\Open(\Pirr f)}
\\
\aR \ar[rr]_-{rep_\aR} && (O(\nleq_\aR |_{J(\aR) \times M(\aR)} ),\cup,\emptyset)
}
\]
for all $\JSL_f$-morphisms $f : \aQ \to \aR$. Unwinding the definitions,  $\rep_\aR \circ f(q) = \{ m \in M(\aR) : f(q) \nleq_\aR m \}$ and:
\[
\begin{tabular}{lll}
$\Open(\Pirr f) \circ \rep_\aQ(q)$
&
$= \Open(\Pirr f)(\{m_1 \in M(\aQ) : q \nleq_\aQ m_1\})$
\\&
$= (\Pirr f)_+[\{m_1 \in M(\aQ) : q \nleq_\aQ m_1\}]$
\\&
$= \{ m \in M(\aR) : \exists m_1 \in M(\aQ).(q \nleq_\aQ m_1 \text{ and } f_*(m) \leq_\aQ m_1) \}$
\\&
$= \{ m : \neg\forall m_1 \in M(\aQ).(f_*(m) \leq_\aQ m_1 \To q \leq_\aQ m_1 ) \}$
\\&
$= \{ m : q \nleq_\aQ f_*(m) \}$
\\&
$= \{ m : f(q) \nleq_\aR m \}$
& via adjoints
\\&
$= rep_\aQ(q)$
\end{tabular}
\]

\item
We verify that $red$ is a natural isomorphism. Let $\aQ := \Open\rG$. We start by showing that each $red_\rG = \{ (g_s,Y) \in \rG_s \times M(\aQ) : \rG[g_s] \nsubseteq Y \}$ is a well-defined $\Dep$-morphism of type $\rG \to \Pirr\aQ$.
\[
\begin{tabular}{lll}
$red_\rG^\up \circ \cl_\rG(S)$
&
$= \{ Y \in M(\aQ) : \exists g_s \in \cl_\rG(S). \rG[g_s] \nsubseteq Y \}$
\\&
$= \{ Y \in M(\aQ) : \exists g_s \in \cl_\rG(S). g_s \nin \rG^\down(Y) \}$
& by definition of $\rG^\down$
\\&
$= \{ Y \in M(\aQ) : \cl_\rG(S) \nsubseteq \rG^\down(Y) \}$
\\&
$= \{ Y \in M(\aQ) : \rG^\up \circ \rG^\down \circ \rG^\up(S) \nsubseteq Y \}$
& by $(\up\dashv\down)$
\\&
$= \{ Y \in M(\aQ) : \rG^\up(S) \nsubseteq Y \}$
& by $(\up\down\up)$
\\&
$= \{ Y \in M(\aQ) : \exists s \in S.\rG[s] \nsubseteq Y \}$
\\&
$= red_\rG^\up(S)$
\end{tabular}
\]
\[
\begin{tabular}{lll}
$\inte_{\Pirr\aQ} \circ red_\rG^\up(S)$
&
$= \inte_{\Pirr\aQ}(\{ Y \in M(\aQ) : \exists s \in S. \rG[s] \nsubseteq Y \})$
\\&
$= \inte_{\Pirr\aQ}(\{ Y \in M(\aQ) : \rG[S] \nsubseteq Y \})$
\\&
$= \{ Y' \in M(\aQ) : \Land_\aQ \{ Y \in M(\aQ) : \rG[S] \leq_\aQ Y \} \nsubseteq Y'  \}$
& by Lemma \ref{lem:cl_inte_of_pirr}
\\&
$= \{ Y' \in M(\aQ) : \rG[S] \nsubseteq Y' \}$
& since $\rG[S] \in Q$
\\&
$= \red_\rG^\up(S)$
\end{tabular}
\]
To show that $red_\rG$ is an isomorphism, we first show that its proposed inverse $\red_\rG^{\bf-1} := \breve{\in} \subseteq J(\aQ) \times \rG_t$ is a well-defined $\Dep$-morphism of type $\Pirr\aQ \to \rG$.
\[
\begin{tabular}{lll}
$\breve{\in}^\up \circ \cl_{\Pirr\aQ}(S)$
&
$= \{ g_t \in \rG_t : \exists Y \in \cl_{\Pirr\aQ}(S). g_t \in Y \}$
\\&
$= \{ g_t \in \rG_t : \exists Y \in J(\aQ).[ Y \leq_\aQ \Lor_\aQ S \text{ and } g_t \in Y   ] \}$
& by Lemma \ref{lem:cl_inte_of_pirr}
\\&
$= \{ g_t \in \rG_t : \exists Y \in J(\aQ). g_t \in Y \subseteq \bigcup S \}$
\\&
$= \{ g_t \in \rG_t : g_t \in \bigcup S  \}$
& using Lemma \ref{lem:lat_op_cl}.3
\\&
$= \breve{\in}^\up(S)$
\end{tabular}
\]
Furthermore $\inte_\rG \circ \breve{\in}^\up(S) = \inte_\rG(\bigcup S) = \bigcup S = \breve{\in}^\up(S)$ because $S \subseteq J(\aQ)$ is a collection of $\rG$-open sets. Now we show they that these two morphisms are the inverse of one another.
\[
\begin{tabular}{lll}
$(\red_\rG \fatsemi \red_\rG^{\bf-1})^\up(S)$
&
$= (\red_\rG^{\bf-1})^\up \circ (\Pirr\aQ)^\down \circ (\red_\rG)^\up(S)$
& by $(\up\fatsemi)$
\\&
$= \breve{\in}^\up(\{ X \in J(\aQ) :\; \nleq_\aQ[X] \subseteq \{ Y \in M(\aQ) : \rG[S] \nsubseteq Y \})$
\\&
$= \breve{\in}\,[\{ X : \forall Y \in M(\aQ).( \rG[S] \subseteq Y \To X \subseteq Y  ) \}]$
\\&
$= \breve{\in}\,[\{ X : X \subseteq \rG[S] \}]$
\\&
$= \bigcup \{ X  : X \subseteq \rG[S] \}$
\\&
$= \Lor_{\aQ} \{ X \in J(\aQ) : X \leq_\aQ \rG[S] \}$
\\&
$= \rG[S]$
\\&
$= id_\rG^\up(S)$
\end{tabular}
\]
\[
\begin{tabular}{lll}
$(red_\rG^{\bf-1} \fatsemi red_\rG)^\up(S)$
&
$= red_\rG^\up \circ \rG^\down \circ (red_\rG^{\bf-1})^\up(S)$
& by $(\up\fatsemi)$
\\&
$= red_\rG[\rG^\down(\bigcup S)]$
\\&
$= \{ Y \in M(\aQ) : \exists g_s \in \rG^\down(\bigcup S). \rG[g_s] \nsubseteq Y \}$
\\&
$= \{ Y  : \rG^\down(\bigcup S) \nsubseteq \rG^\down(Y) \}$
\\&
$= \{ Y  : \rG^\up \circ \rG^\down(\bigcup S) \nsubseteq Y \}$
& by $(\up\dashv\down)$
\\&
$= \{ Y  : \bigcup S \nsubseteq Y \}$
& since $\bigcup S$ is $\rG$-open
\\&
$= \{ Y  : \exists X \in S. X \nsubseteq Y \}$
\\&
$= \{ Y  : \exists X \in S. X \nleq_\aQ Y \}$
\\&
$= \;\nleq_\aQ[S]$
\\&
$= (id_{\Pirr\aQ})^\up(S)$
\end{tabular}
\]

Thus each $\rep_\rG$ is indeed a $\Dep$-isomorphism with inverse $rep_\rG^{\bf-1}$. Let us also verify one of $rep_\rG$'s associated components:
\[
\begin{tabular}{lll}
$(red_\rG)_+$
&
$= \{ (Y,g_t) \in M(\Open\rG) \times \rG_t : g_t \in \breve{\rG}^\down(\red_\rG\spbreve[Y]) \}$
& by definition
\\&
$= \{ (Y,g_t) : \breve{\rG}[g_t] \subseteq \{ g_s \in \rG_s : \rG[g_s] \nsubseteq Y \} \}$
\\&
$= \{ (Y,g_t) : \forall g_s \in \rG_s.( g_t \in \rG[g_s] \To \rG[g_s] \nsubseteq Y  )\}$
\\&
$= \{ (Y,g_t) : \forall g_s \in \rG_s.( \rG[g_s] \subseteq Y \To g_t \nin \rG[g_s]   \}$
\\&
$= \{ (Y,g_t) : \forall g_s \in \rG_s.( \rG[g_s] \subseteq Y \To \rG[g_s] \subseteq \inte_\rG(\overline{g_t}) \}$
& by Lemma \ref{lem:cl_inte_of_pirr}.1
\\&
$= \{ (Y,g_t) :  Y \subseteq \inte_\rG(\overline{g_t}) \}$
& by Lemma \ref{lem:lat_op_cl}.3
\\&
$= \{ (Y,g_t) : g_t \notin Y  \}$
& by Lemma \ref{lem:cl_inte_of_pirr}.1
\\&
$= \breve{\notin} \; \subseteq M(\Open\rG) \times \rG_t$
\end{tabular}
\]
It remains to verify naturality i.e.\ the diagram below on the left commutes for all $\Dep$-morphisms $\rR : \rG \to \rH$.
\[
\xymatrix@=15pt{
\rG \ar[rr]^-{red_\rG} \ar[d]_{\rR} && \Pirr(\Open\rG) \ar[d]^{\Pirr \circ \Open\rR}
\\
\rH \ar[rr]_-{red_\rH} && \Pirr(\Open\rH) 
}
\qquad
\xymatrix@=15pt{
\Open\rG \ar[rr]^-{\Open red_\rG} \ar[d]_{\Open\rR} && \Pirr(\Open\rG) \ar[d]^{\Open(\Pirr \circ \Open\rR)}
\\
\Open\rH \ar[rr]_-{\Open red_\rH} && \Open(\Pirr(\Open\rH))
}
\]
Applying $\Open$ to this diagram yields the diagram above on the right, and since $\Open$ is faithful it suffices to show the latter commutes. In fact it is an instance of $\rep$'s naturality because $\Open red_\rG = \rep_{\Open \rG}$ for any relation $\rG$, as we now show.
\[
\begin{tabular}{lll}
$\Open red_\rG$
&
$= \lambda Y \in O(\rG).(red_\rG)_+\spbreve[Y]$
\\&
$= \lambda Y. (\breve{\nin})\spbreve[Y]$
& by above calculation
\\&
$= \lambda Y. \nin[Y]$
\\&
$= \lambda Y. \{ M \in M(\Open\rG) : \exists g_t \in Y. g_t \nin M  \}$
\\&
$= \lambda Y. \{ M  : Y \nsubseteq M \}$
\\&
$= \lambda Y. \{ M  : Y \nleq_{\Open\rG} M \}$
\\&
$= \rep_{\Open\rG}$
\end{tabular}
\]

\item
Having proved the main result, we finally mechanically verify the associated component relations of $red_\rG$ and its inverse $red_\rG^{\bf-1}$, starting with $(red_\rG)_- \subseteq \rG_s \times J(\aQ)$.
\[
\begin{tabular}{lll}
$(red_\rG)_-(g_s,X)$
&
$\iff \rG_s \times J(\aQ) : X \in\; \nleq_\aQ^\down(red_\rG[g_s])$
\\&
$\iff \;\nleq_\aQ[X] \subseteq \{ Y \in M(\aQ) : \rG[g_s] \nsubseteq Y \}$
\\&
$\iff \forall Y \in M(\aQ).(X \nsubseteq Y \To \rG[g_s] \nsubseteq Y)$
\\&
$\iff \forall Y \in M(\aQ).(\rG[g_s] \subseteq Y \To X \subseteq Y)$
\\&
$\iff X \subseteq \rG[g_s]$
\end{tabular}
\]
In (2) we established that $(red_\rG)_+ = \breve{\nin} \; \subseteq M(\aQ) \times \rG_t$, so next consider $(red_\rG^{\bf-1})_- \subseteq J(\aQ) \times \rG_s$:
\[
\begin{tabular}{lll}
$(red_\rG^{\bf-1})_- (X,g_s)$
&
$\iff g_s \in \rG^\down(red_\rG^{\bf-1}[X])$
\\&
$\iff \rG[g_s] \subseteq \breve{\in}[X]$
\\&
$\iff \rG[g_s] \subseteq X $
\end{tabular}
\]
and finally $(red_\rG^{\bf-1})_+ \subseteq \rG_t \times M(\aQ)$:
\[
\begin{tabular}{lll}
$(red_\rG^{\bf-1})_+(g_t,Y)$
&
$\iff Y \in (\nleq_\aQ\spbreve)^\down((red_\rG^{\bf-1})\spbreve[g_t])$
\\&
$\iff \;\nleq_\aQ\spbreve[Y] \subseteq \{ X \in J(\aQ) : g_t \in X \}$
\\&
$\iff \forall X \in J(\aQ).(X \nsubseteq Y \To g_t \in X )$
\\&
$\iff \forall X \in J(\aQ).(g_t \nin X \To X \subseteq Y)$
\\&
$\iff \forall X \in J(\aQ).(X \subseteq \inte_\rG(\overline{g_t}) \To X \subseteq Y)$
& by Lemma \ref{lem:cl_inte_of_pirr}.1
\\&
$\iff \inte_\rG(\overline{g_t}) \subseteq Y$
\end{tabular}
\]

\end{enumerate}
\end{proof}

Then we have proved the claimed equivalence. It will be helpful to further clarify the fullness of $\Open$.

\begin{lemma}[Explicit fullness of $\Open$]
\label{lem:open_explicit_fullness}
\item
Given any $\JSL_f$-morphism $f : \Open\rG \to \Open\rH$ then $f = \Open \rR$ where the $\BiCliq$-morphism:
\[
\rR : \rG \to \rH
\qquad\text{is defined}\qquad
\rR(g_s,h_t) :\iff h_t \in f(\rG[g_s]).
\]
\end{lemma}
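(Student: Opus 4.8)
The plan is to leverage the fact that fullness of $\Open$ is already guaranteed abstractly: by Theorem~\ref{thm:bicliq_jirr_equivalent} the functor $\Open$ is one half of an equivalence of categories, hence fully faithful, so some $\Dep$-morphism lies over $f$, and by the faithfulness established in Lemma~\ref{lem:open_well_defined} it is unique. The content of the lemma is therefore the \emph{explicit formula}, and it suffices to verify that the stated relation $\rR \subseteq \rG_s \times \rH_t$, given by $\rR(g_s,h_t) :\iff h_t \in f(\rG[g_s])$, is indeed a $\Dep$-morphism $\rG \to \rH$ with $\Open\rR = f$.

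Everything rests on the identity
\[
\rR^\up(S) = f(\rG[S]) \qquad \text{for every } S \subseteq \rG_s.
\]
For a singleton this is immediate from the definition, since $\rR[g_s] = \{\, h_t : h_t \in f(\rG[g_s]) \,\} = f(\rG[g_s])$. To pass to an arbitrary $S$ I would write $\rR^\up(S) = \rR[S] = \bigcup_{g_s \in S} \rR[g_s] = \bigcup_{g_s \in S} f(\rG[g_s])$. The crucial observations are that each $\rG[g_s]$ is $\rG$-open and that $O(\rG)$ is closed under unions, so $\rG[S] = \bigcup_{g_s\in S}\rG[g_s]$ is precisely the $\Open\rG$-join of the family $\{\rG[g_s] : g_s \in S\}$ (Lemma~\ref{lem:lat_op_cl}); since $f$ preserves joins and $O(\rH)$ is likewise union-closed, $\bigcup_{g_s\in S} f(\rG[g_s]) = f(\rG[S])$, which closes the identity.

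Granting this identity, I would establish that $\rR$ is a $\Dep$-morphism through the characterisation $\rR^\up\circ\cl_\rG = \rR^\up = \inte_\rH\circ\rR^\up$ of Lemma~\ref{lem:bicliq_mor_char_max_witness}.1. For the first equality, $\rR^\up(\cl_\rG(S)) = f(\rG^\up(\cl_\rG(S))) = f(\rG^\up\circ\rG^\down\circ\rG^\up(S)) = f(\rG^\up(S)) = \rR^\up(S)$, where the middle step is $(\up\down\up)$ of Lemma~\ref{lem:up_down_basic}.3. For the second, $\rR^\up(S) = f(\rG[S])$ lies in $O(\rH)$ because $f$ takes values in $\Open\rH$, so it is $\rH$-open and fixed by $\inte_\rH$; hence $\inte_\rH\circ\rR^\up = \rR^\up$.

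Finally, to check $\Open\rR = f$ I would evaluate on an arbitrary $\rG$-open $Y$, using $\Open\rR(Y) = \rR^\up\circ\rG^\down(Y)$ (Lemma~\ref{lem:open_r_two_defs}): then $\rR^\up(\rG^\down(Y)) = f(\rG^\up\circ\rG^\down(Y)) = f(\inte_\rG(Y)) = f(Y)$, the last step because $Y$ is $\rG$-open. As the two morphisms agree on every element of $O(\rG)$, they coincide. The only step requiring care is the join-preservation argument behind the core identity; the remaining manipulations are routine applications of the $(-)^\up$/$(-)^\down$ calculus.
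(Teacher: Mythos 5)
Your proof is correct, but it takes a genuinely different and more direct route than the paper's. The paper constructs $\rR$ as a composite $\rS_l \,;\, \Pirr f \,;\, \rT_r$ of three $\Dep$-morphisms built from the reduction isomorphisms and $\Pirr f$, then unwinds the relational composite to recover the formula $\rR(g_s,h_t) \iff h_t \in f(\rG[g_s])$ (the delicate step there is an equivalence justified by monotonicity of $f$ together with join-/meet-irreducibility), and finally checks that $\Open\rR$ agrees with $f$ on the generators $\rG[g_s]$. You instead take the formula as given and verify everything from the single identity $\rR^\up = f \circ \rG^\up$, which follows exactly as you say: $\rR[g_s] = f(\rG[g_s])$ by definition, joins in both $\Open\rG$ and $\Open\rH$ are unions, and $f$ preserves all joins. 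From that identity the morphism characterisation $\rR^\up\circ\cl_\rG = \rR^\up = \inte_\rH\circ\rR^\up$ of Lemma~\ref{lem:bicliq_mor_char_max_witness}.1 drops out via $(\up\down\up)$ and the fact that $f$ lands in $O(\rH)$, and $\Open\rR(Y) = f(\inte_\rG(Y)) = f(Y)$ on every open $Y$ rather than only on generators. Your argument is shorter, self-contained, and avoids the composite-of-naturality-squares machinery entirely; what the paper's version buys in exchange is a conceptual explanation of where the formula comes from, namely as the image of $\Pirr f$ under the equivalence. The opening remark about abstract fullness and uniqueness is harmless but not needed for your argument. No gaps.
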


\begin{proof}
Consider:
\[
\xymatrix@=15pt{
\rG_t \ar[rr]^-{\rS_r} && M(\Open\rG) \ar[rr]^{(\Pirr f)_+\spbreve} && M(\Open\rH) \ar[rr]^-{\rT_r} && \rH_t
\\
\rG_s \ar[u]^\rG \ar[rr]_-{\rS_l} && J(\Open\rG) \ar[urr]^{\Pirr f} \ar[u]^{\Pirr\Open\rG} \ar[rr]_{(\Pirr f)_-} && J(\Open\rH) \ar[u]_{\Pirr\Open\rH} \ar[rr]_-{\rT_l} && \rH_s \ar[u]_\rH
\\
}
\]
where the respective relations are defined:
\[
\begin{tabular}{llllll}
$\rS_l(g_s,X)$ & $ :\iff X \subseteq \rG[g_s]$
&&
$\rT_l(X,h_s)$ & $ :\iff \rH[h_s] \subseteq X$
\\[1ex]
$\rS_r(Y,g_t)$ & $ :\iff g_t \nin Y$
&&
$\rT_r(Y,h_t)$ & $ :\iff \inte_\rH(\overline{h_t}) \subseteq Y$.
\end{tabular}
\]
The left and right squares commute because:
\[
\begin{tabular}{c}
$\rS_l ; \Pirr\Open\rG = \{ (g_s,Y) \in \rG_s \times M(\Open\rG) : \rG[g_s] \nsubseteq Y \} = \rG ; \rS_r$
\\[1ex]
$\rT_l ; \rH = \{ (X,h_t) \in J(\Open\rH) \times \rH_t : h_t \in X \} = \Pirr\Open\rH ; \rT_r$,
\end{tabular}
\]
as the reader may verify. Composing together these three $\BiCliq$-morphisms yields $\rR := \rS_l ; \Pirr f; \rT_r : \rG \to \rH$, where: 
\[
\begin{tabular}{lll}
$\rR(g_s,h_t)$
\\ \quad
$\iff \exists X \in J(\Open\rG), Y \in M(\Open\rH).( X \subseteq \rG[g_s] \text{ and } \Pirr f(X,Y) \text{ and } \inte_\rH(\overline{h_t}) \subseteq Y )$
\\ \quad
$\iff \exists X, Y.( X \subseteq \rG[g_s] \text{ and } f(X) \nsubseteq Y \text{ and } \inte_\rH(\overline{h_t}) \subseteq Y)$
\\ \quad
$\iff f(\rG[g_s]) \nsubseteq \inte_\rH(\overline{h_t})$
& see below
\\ \quad
$\iff h_t \in f(\rG[g_s])$
& by Lemma \ref{lem:cl_inte_of_pirr}.1.
\\ \quad
\end{tabular}
\]
Regarding the marked equivalence, $\To$ holds because $f$ is monotonic and hence preserves inclusions, and the converse follows because $\rG[x]$ is a union of join-irreducibles and $\inte_\rH(\overline{h_t})$ is a meet of meet-irreducibles. Then $\rR$ is a well-defined $\BiCliq$-morphism and:
\[
\begin{tabular}{lll}
$\Open\rR(\rG[g_s])$
&
$= \rR^\up \circ \rG^\down(\rG[g_s])$
\\&
$= \rR[g_s]$
& since $\rR^\up = \rR^\up \circ \cl_\rG$
\\&
$= f(\rG[g_s])$.
\end{tabular}
\]
Thus $f = \Open \rR$ because their action on join-irreducibles is the same.
\end{proof}

% --newish--

\smallskip
We finish off this subsection by using the above equivalence theorem to characterise all morphisms between finite boolean and distributive join-semilattices.
\smallskip

\begin{theorem}[Characterisation of $\JSL_f$-morphisms between boolean and distributive join-semilattices]
\label{thm:char_jsl_mor_bool_distr}
\item
\begin{enumerate}
\item
Each finite boolean join-semilattice $\aQ$ is isomorphic to $\JPow Z = \Open\Delta_Z$ for some finite set $Z$.
\item
The $\JSL_f$-morphisms $\JPow Z_1 \to \JPow Z_2$ are precisely the functions $\rR^\up$ where  $\rR \subseteq Z_1 \times Z_2$ is an arbitrary relation.
\item
Each finite distributive join-semilattice $\aQ$ is iso to $(Up(\pP),\cup,\emptyset) = \Open \leq_\pP$ for some finite poset $\pP = (P,\leq_\pP)$.
\item
The $\BiCliq$-morphisms $\rR : \,\leq_\pP\, \to \,\leq_\pQ$ are precisely those relations $\rR \subseteq P \times Q$ such that:
\[
\begin{tabular}{ccc}
$\forall p \in P. \; \rR[p] \in Up(\pQ)$
&and&
$\forall q \in Q.\; \breve{\rR}[q] \in Dn(\pP)$.
\end{tabular}
\]
Moreover, the $\JSL_f$-morphisms $\Open\leq_\pP \to \Open\leq_\pQ$ are precisely the functions $\rR^\up |_{Up(\pP) \times Up(\pQ)}$ where $\rR \subseteq P \times Q$ satisfies the above two conditions.

\end{enumerate}
\end{theorem}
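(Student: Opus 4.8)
The plan is to deduce all four parts from the equivalence $\Open : \Dep \to \JSL_f$, $\Pirr : \JSL_f \to \Dep$ of Theorem \ref{thm:bicliq_jirr_equivalent}, together with the functional characterisation of $\Dep$-morphisms in Lemma \ref{lem:bicliq_mor_char_max_witness}.1. The one computation I record up front is the shape of the operators attached to an order relation. For a finite poset $\pP$ one has $\leq_\pP^\up(X) = \up_\pP X$ and $\leq_\pP^\down(Y) = \{ p \in P : \up_\pP p \subseteq Y \}$; since $\up_\pP p \subseteq \up_\pP X \iff p \in \up_\pP X$, this gives $\cl_{\leq_\pP} = \up_\pP(-)$, so the $\leq_\pP$-closed sets are the up-sets and, dually via Lemma \ref{lem:lat_op_cl}.3, $O(\leq_\pP) = Up(\pP)$ with $\inte_{\leq_\pP}(Z)$ the largest up-closed subset of $Z$. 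Finally, if $Y$ is up-closed then $\leq_\pP^\down(Y) = Y$. These facts drive everything.

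For part (1), if $\aQ$ is boolean then $J(\aQ) = At(\aQ)$ and $M(\aQ) = CoAt(\aQ)$ by Lemma \ref{lem:std_order_theory}.8, and $\Pirr\aQ = \,\nleq_\aQ$ is exactly the graph of the complementation bijection $\neg_\aQ : At(\aQ) \to CoAt(\aQ)$ as observed in the examples following Lemma \ref{lem:open_r_two_defs}. The graph of a bijection is $\Dep$-isomorphic to an identity relation by Example \ref{ex:dep_morphisms}.2, so $\aQ \cong \Open\Pirr\aQ \cong \Open\Delta_{At(\aQ)} = \JPow At(\aQ)$. For part (3), when $\aQ$ is distributive Lemma \ref{lem:std_order_theory}.16 gives $\Pirr\aQ = \,\nleq_\aQ|_{J(\aQ) \times M(\aQ)} = (\leq_{\aQ^{\pOp}}|_{J(\aQ) \times J(\aQ)}) ; \tau_\aQ$; post-composing with the bijection $\tau_\aQ$ is again a $\Dep$-iso (Example \ref{ex:dep_morphisms}.2), so $\Pirr\aQ \cong \,\leq_\pP$ where $\pP := (J(\aQ),\leq_{\aQ^{\pOp}})$, whence $\aQ \cong \Open\leq_\pP = (Up(\pP),\cup,\emptyset)$. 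Part (2) is then the discrete-poset instance of part (4): $\JPow Z = \Open\Delta_Z$, every relation $\rR \subseteq Z_1 \times Z_2$ is a $\Dep$-morphism $\Delta_{Z_1} \to \Delta_{Z_2}$, and $\Open\rR = \rR^\up$ since $\Delta_{Z_1}^\down = id$; fullness and faithfulness of $\Open$ give the claimed bijection with relations.

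The substance is part (4). By Lemma \ref{lem:bicliq_mor_char_max_witness}.1, a relation $\rR \subseteq P \times Q$ is a $\Dep$-morphism $\leq_\pP \to \leq_\pQ$ iff $\rR^\up \circ \cl_{\leq_\pP} = \rR^\up = \inte_{\leq_\pQ} \circ \rR^\up$, and I would treat the two equalities separately. Since $\inte_{\leq_\pQ}$ is co-extensive, the right equality holds iff every $\rR^\up(X)$ is up-closed; writing $\rR^\up(X) = \bigcup_{p\in X}\rR[p]$ and using that unions of up-sets are up-sets, this reduces to singletons, i.e.\ to $\rR[p] \in Up(\pQ)$ for all $p$. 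Since $\cl_{\leq_\pP} = \up_\pP(-)$ is extensive and $\rR^\up$ is monotone, the left equality holds iff $\rR^\up(\up_\pP X) \subseteq \rR^\up(X)$ for all $X$; union-preservation again reduces this to singletons, giving $\bigcup_{p \leq_\pP p'}\rR[p'] \subseteq \rR[p]$, that is $p \leq_\pP p' \Rightarrow \rR[p'] \subseteq \rR[p]$, which is precisely the condition $\breve{\rR}[q] \in Dn(\pP)$ for all $q$. This yields the stated characterisation.

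For the \emph{Moreover} clause, the equivalence (fullness is Lemma \ref{lem:open_explicit_fullness}, faithfulness Lemma \ref{lem:open_well_defined}) shows every $\JSL_f$-morphism $\Open\leq_\pP \to \Open\leq_\pQ$ is $\Open\rR$ for a unique $\Dep$-morphism $\rR : \leq_\pP \to \leq_\pQ$, which then satisfies the two conditions, and conversely each such $\rR$ yields $\Open\rR$. It remains to identify $\Open\rR$ with $\rR^\up|_{Up(\pP)\times Up(\pQ)}$: for up-closed $Y$ one has $\Open\rR(Y) = \rR^\up\circ\leq_\pP^\down(Y) = \rR^\up(Y)$ using $\leq_\pP^\down(Y)=Y$, and the codomain restriction is legitimate because $\rR^\up(Y)$ is itself up-closed. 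I expect no deep obstacle, only bookkeeping: pinning down $\cl_{\leq_\pP}$ and $O(\leq_\pQ)$ correctly, and — most error-prone — getting the two monotonicity directions the right way round (images of points go up, preimages of points go down), together with justifying, via union-preservation, that each operator identity reduces to its action on singletons.
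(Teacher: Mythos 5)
Your proposal is correct and follows essentially the same route as the paper: parts (1) and (3) via the factorisation of $\Pirr\aQ$ through the canonical bijection $\tau_\aQ$, part (2) as the discrete case, and part (4) by reducing the two operator identities of Lemma \ref{lem:bicliq_mor_char_max_witness}.1 to their action on singletons after computing $\cl_{\leq_\pP} = \; \up_\pP(-)$ and $O(\leq_\pP) = Up(\pP)$. The only cosmetic differences are that you use the $\inte_{\leq_\pQ}$ form of the second condition where the paper passes to the dual closure condition via Lemma \ref{lem:up_down_basic}.4, and you invoke Example \ref{ex:dep_morphisms}.2 for the bipartite-graph-induced $\Dep$-isomorphisms where the paper verifies bijectivity of $\Open\rR$ by hand (a shortcut the paper's own subsequent Note endorses); also note your citation ``Lemma \ref{lem:std_order_theory}.16'' should be to the item stating $\nleq_\aD|_{J(\aD)\times M(\aD)} = (\leq_{\aD^{\pOp}}|_{J(\aD)\times J(\aD)});\tau_\aD$.
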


\begin{proof}
\item
\begin{enumerate}
\item
Recall that a finite join-semilattice $\aQ$ is said to be \emph{boolean} if its associated bounded lattice is. Then by Lemma \ref{lem:std_order_theory}.13 and the fact that $(J(\aQ),M(\aQ)) = (At(\aQ),CoAt(\aQ))$ by Lemma \ref{lem:std_order_theory}.8, we have:
\[
\Pirr\aQ
= \; \nleq_\aQ |_{At(\aQ) \times CoAt(\aQ)} 
\stackrel{!}{=} (\leq_{\aQ^{\pOp}} |_{At(\aQ) \times At(\aQ)}) ; \tau_\aQ
\]
where $\tau_\aQ : At(\aQ) \to CoAt(\aQ)$ is the canonical bijection. Since atoms are incomparable we see that $\Pirr\aQ \subseteq At(\aQ) \times CoAt(\aQ)$ is a functional composite of bijections and hence a bijection itself. It follows that every $X \subseteq CoAt(\aQ)$ is $\Pirr\aQ$-open, and we may use the canonical $\JSL_f$-isomorphism:
\[
\aQ \xto{rep_\aQ} \Open\Pirr\aQ = \JPow CoAt(\aQ) = \Open \Delta_{CoAt(\aQ)}.
\]

\item
Every relation $\rR \subseteq Z_1 \times Z_2$ between finite sets defines a $\BiCliq$-morphism of type $\rR : \Delta_{Z_1} \to \Delta_{Z_2}$. Then since $\Open\Delta_Z = \JPow Z$ and by the equivalence theorem, the $\JSL_f$-morphisms of type $\JPow Z_1 \to \JPow Z_2$ are precisely the functions:
\[
\Open\rR
= \lambda S \in \Pow Z_1. \rR^\up \circ \Delta_{Z_1}^\down(S)
= \lambda S \subseteq Z_1. \rR[S]
= \rR^\up,
\]
where $\rR \subseteq Z_1 \times Z_2$ is arbitrary.

\item
A finite join-semilattice $\aQ$ is \emph{distributive} if its associated lattice is. By Lemma \ref{lem:std_order_theory}.13,
\[
\Pirr\aQ
= \; \nleq_\aQ |_{J(\aQ) \times M(\aQ)} 
\stackrel{!}{=} (\leq_{\aQ^{\pOp}} |_{J(\aQ) \times J(\aQ)}) ; \tau_\aQ
\]
where $\tau_\aQ : J(\aQ) \to M(\aQ)$ is the canonical order-isomorphism. For brevity, let $\pP := (J(\aQ),\leq_\aQ \cap\, {J(\aQ) \times J(\aQ)} )$ so we have the bipartite graph isomorphism:
\[
\xymatrix@=15pt{
J(\aQ) \ar[rr]^-{\tau_\aQ}_-\cong && M(\aQ)
\\
J(\aQ) \ar[u]^-{\leq_{\pP^{\pOp}}} \ar[rr]_-{\Delta_{J(\aQ)}}^-{\cong} && J(\aQ) \ar[u]_-{\Pirr\aQ}
}
\]
This witnesses a $\BiCliq$-morphism $\rR := \Pirr\aQ : \; \leq_{\pP^{\pOp}} \to \Pirr\aQ$ and we now show that $\Open\rR$ is a $\JSL_f$-isomorphism. First observe that:
\begin{enumerate}
\item
The $\leq_{\pP^{\pOp}}$-open sets $Y \subseteq J(\aQ)$ are precisely the down-closed subsets $Dn(\pP)$.
\item
Given any $Y \in Dn(\pP)$,
\[
\leq_{\pP^{\pOp}}^\down(Y)
= \{ j \in J(Q) : \; \leq_{\pP^{\pOp}}[j] \subseteq Y \}
= \{ j \in J(Q) : \; \down_{\pP} j \subseteq Y \}
= Y,
\]
and similarly $\leq_{\pP^{\pOp}}^\up(Y) \; = \; \down_\pP Y = Y$.
\end{enumerate}
Then the join-semilattice morphism $\Open\rR$ has action:
\[
\Open\rR(Y) 
= \Pirr\aQ^\up \circ \leq_{\pP^{\pOp}}^\down(Y) 
= \Pirr\aQ[Y]
= \tau_\aQ[\leq_{\pP^{\pOp}}[Y]]
= \tau_\aQ[Y]
\]
It is injective because $\tau_\aQ$ is, and surjective because $\Pirr\aQ[X] = \tau_\aQ[\leq_{\pP^{\pOp}}[X]]$ for every $X \subseteq J(\aQ)$, so that every $\Pirr\aQ$-open set is the image of some $\leq_{\pP^{\pOp}}$-open set. Then we have the composite isomorphism:
\[
\Open \leq_{\pP^{\pOp}} \xto{\Open\rR} \Open\Pirr\aQ \xto{rep_\aQ^{\bf-1}} \aQ.
\]

\item
First observe that for every finite poset $\pP = (P,\leq_\pP)$ we have:
\[
\Open\pP = (Up(\pP),\cup,\emptyset)
\]
since the $\pP$-open sets are precisely the images $\leq_\pP[X]$ where $X \subseteq P$. By Lemma \ref{lem:bicliq_mor_char_max_witness}.1 and Lemma \ref{lem:up_down_basic}.4, the $\BiCliq$-morphisms $\rR : \; \leq_\pP \; \to \; \leq_\pQ$ are precisely those relations $\rR \subseteq P \times Q$ such that:
\[
 \rR^\up = \rR^\up \circ \cl_{\leq_\pP}
 \qquad\text{and}\qquad
 \breve{\rR}^\up = \breve{\rR}^\up \circ \cl_{\leq_\pQ\spbreve}.
\]
Regarding these closure operators, we have:
\[
\begin{tabular}{lll}
$\cl_{\leq_\pP}$
&
$= \; \leq_\pP^\down \circ \leq_\pP^\up$
\\&
$= \lambda S \subseteq P. \leq_\pP^\down(\up_\pP S)$
\\&
$= \lambda S \subseteq P. \{ p \in P : \; \up_\pP p \; \subseteq \; \up_\pP S \; \}$
\\&
$= \lambda S \subseteq P. \up_\pP S$
\end{tabular}
\]
and thus $\cl_{\leq_\pP}$ constructs the up-closure in $\pP$, so that $\cl_{\leq_\pQ\spbreve} = \cl_{\leq_{\pQ^{\pOp}}}$ constructs the down-closure in $\pQ$. Now, by monotonicity and the fact that downwards-closure preserves unions, the above equalities may equivalently be written:
\[
\forall p \in P. \; \rR[\up_\pP p] \subseteq \rR[p]
 \qquad\text{and}\qquad
\forall q \in Q. \; \breve{\rR}[\down_\pQ q] \subseteq \breve{\rR}[q].
\]
The left-hand equality says that whenever $p \leq_\pP p'$ and $\rR(p',q)$ then $\rR(p,q)$, or equivalently that $\breve{\rR}[q]$ is down-closed in $\pP$ for every $q \in Q$. As for the right-hand equality, it equivalently asserts that $\rR[p]$ is up-closed in $\pQ$ for every $p \in P$.

Finally let us apply the categorical equivalence, so that the $\JSL_f$-morphisms of type $\Open\leq_\pP \to \Open\leq_\pQ$ are precisely those of the form $\Open\rR$ where $\rR$ is restricted as above. Concerning its action,
\[
\Open\rR (Y) 
= \rR^\up \circ \leq_\pP^\down(Y)
= \rR^\up(Y)
\]
because $\{ p \in P : \; \up_\pP p \; \subseteq Y \} = Y$ whenever $Y \in Up(\pP)$. In conclusion, $\Open\rR = \rR^\up \cap Up(\leq_\pP) \times Up(\leq_\pQ)$ where it is both necessary and sufficient that the relation $\rR \subseteq P \times Q$ satisfies the claimed conditions.

\end{enumerate}
\end{proof}

\begin{note}
\item
\begin{enumerate}
\item
Concerning Lemma \ref{thm:char_jsl_mor_bool_distr}.1, the proof can be contrasted with another method i.e.\ use the duality between finite boolean algebras and finite sets, and view the representing boolean algebra isomorphism as a $\JSL_f$-isomorphism, see Theorem \ref{thm:bool_fin_duality} in the Appendix.

\item
Concerning Lemma \ref{thm:char_jsl_mor_bool_distr}.3, an alternative proof would use Birkhoff's duality between finite bounded distributive lattices and finite posets, viewing the representing bounded distributive lattice isomorphism as a $\JSL_f$-isomorphism -- see Theorem \ref{thm:birkhoff_duality} in the Appendix.

\item
Regarding the bipartite graph isomorphism in Lemma \ref{thm:char_jsl_mor_bool_distr}.3, such isomorphisms always induce $\BiCliq$-isos -- see Example \ref{ex:dep_morphisms}.2 \endbox

\end{enumerate}
\end{note}

\subsection{The equivalence $\JSL_f \cong \BiCliq$ without using irreducibles}

In this short subsection we describe a functor $\Nleq$ which is naturally isomorphic to $\Pirr : \JSL_f \to \BiCliq$. On objects, $\Nleq \aQ = \; \nleq_\aQ \; \subseteq Q \times Q$ is the full unrestricted relation i.e.\ makes no mention of join/meet-irreducibles. We'll describe the equivalence between $\JSL_f$ and $\BiCliq$ in terms of $\Nleq$ and $\Open$ i.e.\ explicitly describe their respective natural isomorphisms.

\begin{lemma}
\label{lem:pirr_to_nleq_iso}
Let $\aQ$ be any finite join-semilattice.
\begin{enumerate}
\item
$\cl_{\nleq_\aQ}(S) = \{ q \in Q : q \leq_\aQ \Lor_\aQ S \}$.

\item
We have the $\BiCliq$-isomorphism $\rE_\aQ : \Pirr\aQ \to \; \nleq_\aQ$ defined:
\[
\begin{tabular}{lllll}
$\rE_\aQ$ & $:= \{ (j,q) \in J(\aQ) \times Q : j \nleq_\aQ q \}$
&&
$\rE_\aQ^{\bf-1}$ & $:= \{ (q,m) \in Q \times M(\aQ) : q \nleq_\aQ m \}$
\\[0.5ex]
$(\rE_\aQ)_-$ & $:= \{ (j,q) \in J(\aQ) \times Q : q \leq_\aQ j \}$
&&
$(\rE_\aQ^{\bf-1})_-$ & $:= \{ (q,j) \in Q \times J(\aQ) : j \leq_\aQ q \}$
\\
$(\rE_\aQ)_+$ & $:= \{ (q,m) \in Q \times M(\aQ) : q \leq_\aQ m \}$
&&
$(\rE_\aQ^{\bf-1})_+$ & $:= \{ (m,q) \in M(\aQ) \times Q : m \leq_\aQ q \}$
\end{tabular}
\]
\end{enumerate}
\end{lemma}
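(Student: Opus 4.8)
The plan is to dispatch Part 1 by a direct computation paralleling Lemma \ref{lem:cl_inte_of_pirr}.2, and to prove Part 2 by pinning down the component relations from Definition \ref{def:bicliq_mor_components}, verifying the witness equations (which simultaneously shows $\rE_\aQ$ is a $\Dep$-morphism), and then checking the two $\Dep$-composites against the identities. Throughout, the only non-formal ingredient is Lemma \ref{lem:std_order_theory}.7.

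For Part 1, I would unfold $\cl_{\nleq_\aQ} = \; \nleq_\aQ^\down \circ \nleq_\aQ^\up$ over the \emph{full} relation $\nleq_\aQ \subseteq Q \times Q$, exactly as in Lemma \ref{lem:cl_inte_of_pirr}.2 but without restricting to irreducibles. Here $\nleq_\aQ^\up(S) = \{ q' : \exists s \in S.\, s \nleq_\aQ q' \}$, and applying $\nleq_\aQ^\down$ produces $\{ q : \forall q'.\, (q \nleq_\aQ q' \To \exists s \in S.\, s \nleq_\aQ q') \}$. Taking contrapositives rewrites the inner implication as $(\forall s \in S.\, s \leq_\aQ q') \To q \leq_\aQ q'$, i.e. $\Lor_\aQ S \leq_\aQ q' \To q \leq_\aQ q'$; quantifying over all $q'$ (instantiate $q' := \Lor_\aQ S$ one way, use transitivity the other) collapses this to $q \leq_\aQ \Lor_\aQ S$, giving the claimed closure.

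For Part 2, I would first compute the components of $\rE_\aQ$ by applying Definition \ref{def:bicliq_mor_components} to the raw relation (legitimate before knowing it is a morphism). Then $(\rE_\aQ)_- = \{(j,q) : q \in \nleq_\aQ^\down(\rE_\aQ[j])\}$ unfolds to $\{(j,q) : \forall q'.\,(q \nleq_\aQ q' \To j \nleq_\aQ q')\}$, and the contrapositive with reflexivity/transitivity collapses it to $\{(j,q) : q \leq_\aQ j\}$; dually $(\rE_\aQ)_+$ unfolds via $\breve{\Pirr\aQ}^\down$ to the condition $\forall j \in J(\aQ).\,(j \leq_\aQ q \To j \leq_\aQ m)$, which by Lemma \ref{lem:std_order_theory}.7 is exactly $q \leq_\aQ m$. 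I would then verify $(\rE_\aQ)_- ; \nleq_\aQ = \rE_\aQ = \Pirr\aQ ; (\rE_\aQ)_+\spbreve$; both are short relational calculations, and by Lemma \ref{lem:bicliq_mor_char_max_witness}.2 this shows $\rE_\aQ$ is a $\Dep$-morphism with the displayed (maximal) components. The inverse $\rE_\aQ^{\bf-1} : \; \nleq_\aQ \to \Pirr\aQ$ I would obtain by the order-dual argument, using $(\Pirr\aQ)\spbreve = \Pirr\aQ^{\pOp}$ and $(\nleq_\aQ)\spbreve = \; \nleq_{\aQ^{\pOp}}$: concretely $\rE_\aQ^{\bf-1} = (\rE_{\aQ^{\pOp}})\spcheck$, so its well-definedness and its components (which the self-duality functor of Theorem \ref{thm:bicliq_self_dual} swaps, matching the claimed formulas since $M(\aQ^{\pOp}) = J(\aQ)$) follow from the $\rE_{\aQ^{\pOp}}$ computation; alternatively one repeats the unfolding directly.

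Finally, to see the two are mutually inverse I would invoke Lemma \ref{lem:bicliq_func_comp}, writing $\rE_\aQ \fatsemi \rE_\aQ^{\bf-1} = (\rE_\aQ)_- ; \rE_\aQ^{\bf-1}$ and $\rE_\aQ^{\bf-1} \fatsemi \rE_\aQ = (\rE_\aQ^{\bf-1})_- ; \rE_\aQ$. The first composite is $\{(j,m) : \exists q.\, q \leq_\aQ j \wedge q \nleq_\aQ m\}$, which equals $\nleq_\aQ |_{J(\aQ) \times M(\aQ)} = id_{\Pirr\aQ}$, and the second is $\{(q,q') : \exists j \in J(\aQ).\, j \leq_\aQ q \wedge j \nleq_\aQ q'\}$, which equals $\nleq_\aQ = id_{\nleq_\aQ}$. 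I expect the main obstacle to be nothing computational but rather the several $(\oT)$ directions that require \emph{producing} a separating meet- or join-irreducible (e.g. from $j \nleq_\aQ q'$ obtaining $m \in M(\aQ)$ with $q' \leq_\aQ m$ and $j \nleq_\aQ m$, or from $q \nleq_\aQ q'$ obtaining $j \in J(\aQ)$ with $j \leq_\aQ q$ and $j \nleq_\aQ q'$): each is an instance of Lemma \ref{lem:std_order_theory}.7 and hence where finiteness of $\aQ$ is genuinely used, while the remaining steps are bookkeeping of types and relational composition.
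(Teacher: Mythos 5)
Your proposal is correct and follows essentially the same route as the paper: a direct unfolding of $\cl_{\nleq_\aQ}$ for Part 1, then for Part 2 identifying the maximal witnesses, verifying the witness equations, and computing both $\Dep$-composites against the identities via Lemma \ref{lem:std_order_theory}.7. The only (immaterial) variations are that you extract the components directly from Definition \ref{def:bicliq_mor_components} and obtain $\rE_\aQ^{\bf-1}$ as $(\rE_{\aQ^{\pOp}})\spcheck$ where the paper argues closedness of the given witnesses and treats the inverse "similarly", and that you compute the composites as $\rR_- ; \rS$ rather than $\rR ; \rS_+\spbreve$ — both forms being licensed by Lemma \ref{lem:bicliq_func_comp}.
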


\begin{proof}
\item
\begin{enumerate}
\item
This follows by a simple calculation:
\[
\begin{tabular}{lll}
$\cl_{\nleq_\aQ} (S)$
& $= \; \nleq_\aQ^\down \circ \nleq_\aQ^\up (S)$
\\&
$= \; \nleq_\aQ^\down (\{ q \in Q : \exists s \in S. s \nleq_\aQ q \}$
\\&
$= \{ q \in Q : \;\nleq_\aQ[q] \subseteq \{ q' \in Q : \exists s \in S. s \nleq_\aQ q' \} \}$
\\&
$= \{ q \in Q : \; \{ q' \in Q : \forall s \in S. s \leq_\aQ q' \} \subseteq \; \leq_\aQ[q] \}$
\\&
$= \{ q \in Q : \; \forall q' \in Q. ( \Lor_\aQ S \leq_\aQ q' \To q \leq_\aQ q' ) \}$
\\&
$= \{ q \in Q : q \leq_\aQ \Lor_\aQ S \}$
\end{tabular}
\]

\item
$\rE_\aQ$ is a well-defined $\BiCliq$-morphism because $(\rE_\aQ)_- ; \; \nleq_\aQ \; =  \rE_\aQ = \Pirr\aQ ; (\rE_\aQ)_+\spbreve$ as is easily verified. These are $\rE_\aQ$'s associated component relations because each $(\rE_\aQ)_-[j]$ is closed via $\cl_{\nleq_\aQ}$ (see (1)), and each $(\rE_\aQ)_+[q]$ is closed via $\cl_{(\Pirr\aQ)\spbreve} = \cl_{\Pirr(\aQ^{\pOp})}$ (see Lemma \ref{lem:cl_inte_of_pirr}.2). Similarly $\rI_{\aQ}^{\bf-1}$ is a well-defined $\BiCliq$-morphism and its associated components are correct, observing that they are \emph{not} the converses of $\rE_\aQ$'s components. Finally:
\[
\begin{tabular}{lll}
$\rE_\aQ \fatsemi \rE_\aQ^{\bf-1}$
&
$= \rE_\aQ ; (\rE_\aQ^{\bf-1})_+\spbreve$
\\&
$= \{ (j,m) \in J(\aQ) \times M(\aQ) : \exists q \in Q. (j \nleq q \,\land\, m \leq_\aQ q)  \}$
\\&
$= \{ (j,m) \in J(\aQ) \times M(\aQ) : \neg\forall q \in Q. (m \leq_\aQ q \To j \leq q )  \}$
\\&
$= \{ (j,m) \in J(\aQ) \times M(\aQ) : j \nleq_\aQ m \}$
\\&
$= id_{\Pirr \aQ}$  
\\
\\
$\rE_\aQ^{\bf-1} \fatsemi \rE_\aQ$
&
$= \rE_\aQ^{\bf-1} ; (\rE_\aQ)_+\spbreve$
\\&
$= \{ (q_1,q_2) \in Q \times Q : \exists m \in M(\aQ).( q_1 \nleq_\aQ m \,\land\, q_2 \leq_\aQ m  ) \}$
\\&
$= \{ (q_1,q_2) \in Q \times Q : \neg\forall m \in M(\aQ).( q_2 \leq_\aQ m \To  q_1 \leq_\aQ m  ) \}$
\\&
$= \{ (q_1,q_2) \in Q \times Q : q_1 \nleq_\aQ q_2 \}$
\\&
$= id_{\nleq_\aQ}$
\end{tabular}
\]
using the definition of $\BiCliq$-composition.

\end{enumerate}
\end{proof}

\begin{definition}[The equivalence functor $\Nleq : \JSL_f \to \BiCliq$]
\label{def:nleq_functor_nat_iso}
\item
The functor $\Nleq : \JSL_f \to \BiCliq$ is defined:
\[
\Nleq \aQ := \; \nleq_\aQ \; \subseteq Q \times Q
\qquad
\dfrac{f : \aQ \to \aR}
{\Nleq f := \{ (q,r) \in Q \times R : f(q) \nleq_\aR r \} : \; \nleq_\aQ \; \to \; \nleq_\aR }
\]
We also have the natural isomorphism $\rE : \Pirr \to \Nleq$ whose components $\rE_\aQ$  are described in Lemma \ref{lem:pirr_to_nleq_iso}.2. \endbox
\end{definition}

\begin{lemma}
\label{lem:nleq_e_well_def}
\item
\begin{enumerate}
\item
$\Nleq : \JSL_f \to \BiCliq$ is a well-defined functor.
\item
$\rE : \Pirr \to \Nleq$ is a well-defined natural isomorphism. 
\end{enumerate}
\end{lemma}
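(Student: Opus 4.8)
The plan is to avoid re-deriving functoriality from scratch and instead transport it along the family of $\BiCliq$-isomorphisms $\rE_\aQ : \Pirr\aQ \to \; \nleq_\aQ$ from Lemma \ref{lem:pirr_to_nleq_iso}.2, exploiting that $\Pirr$ is already a functor (Lemma \ref{lem:pirr_well_defined}). The single fact that drives everything is the relational identity
\[
\Nleq f = \rE_\aQ^{\bf-1} \fatsemi \Pirr f \fatsemi \rE_\aR
\qquad\text{for every } f : \aQ \to \aR.
\]
Once this is established purely as an equality of relations, both parts of the lemma follow formally: the right-hand side is a composite of $\BiCliq$-morphisms, so $\Nleq f$ is a well-defined $\BiCliq$-morphism of type $\nleq_\aQ \to \; \nleq_\aR$; preservation of identities and composition is immediate from $\rE_\aQ \fatsemi \rE_\aQ^{\bf-1} = id_{\nleq_\aQ}$ together with the functoriality of $\Pirr$; and pre-composing with $\rE_\aQ$ yields the naturality square $\rE_\aQ \fatsemi \Nleq f = \Pirr f \fatsemi \rE_\aR$, with each $\rE_\aQ$ already known to be an isomorphism.

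To prove the identity I would compute the conjugate in two relational-composition steps, using the canonical witnesses recorded in Lemma \ref{lem:pirr_to_nleq_iso}.2 together with the composition rule $\rR \fatsemi \rS = \rR_- ; \rS = \rR ; \rS_+\spbreve$ from Lemma \ref{lem:bicliq_func_comp}. First, using the left witness $(\rE_\aQ^{\bf-1})_- = \{ (q,j) : j \leq_\aQ q \}$, one gets
\[
\rE_\aQ^{\bf-1} \fatsemi \Pirr f = \{ (q,m) \in Q \times M(\aR) : \exists j \in J(\aQ).\,( j \leq_\aQ q \;\land\; f(j) \nleq_\aR m ) \} = \{ (q,m) : f(q) \nleq_\aR m \},
\]
where the second equality uses that $f$ preserves joins and that $q$ is the join of the join-irreducibles beneath it, so $f(q) \nleq_\aR m$ holds iff some $f(j) \nleq_\aR m$ with $j \leq_\aQ q$. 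Second, composing with $\rE_\aR$ via its right witness $(\rE_\aR)_+\spbreve = \{ (m,r) : r \leq_\aR m \}$ gives
\[
\bigl(\rE_\aQ^{\bf-1} \fatsemi \Pirr f\bigr) \fatsemi \rE_\aR = \{ (q,r) \in Q \times R : \exists m \in M(\aR).\,( f(q) \nleq_\aR m \;\land\; r \leq_\aR m ) \} = \Nleq f.
\]

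The key step — and the only one that is not pure bookkeeping — is this last equivalence $\exists m \in M(\aR).\,(f(q) \nleq_\aR m \land r \leq_\aR m) \iff f(q) \nleq_\aR r$, which is exactly the meet-irreducible characterisation of the order from Lemma \ref{lem:std_order_theory}.7 applied in $\aR$, since the displayed existential is the negation of $\forall m \in M(\aR).(r \leq_\aR m \To f(q) \leq_\aR m)$. I expect no genuine obstacle beyond keeping the witness and composition conventions straight; as a sanity check one could alternatively establish Part 1 directly through the morphism characterisation of Lemma \ref{lem:bicliq_mor_char_max_witness}.1, computing $\inte_{\nleq_\aR}$ from $\cl_{\nleq_\aR}$ (Lemma \ref{lem:pirr_to_nleq_iso}.1) by De Morgan duality, which reproduces the calculation of Lemma \ref{lem:pirr_well_defined} verbatim with $Q,R$ in place of $J(\aQ),M(\aR)$.
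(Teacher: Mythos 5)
Your proposal is correct and follows essentially the same route as the paper: both hinge on establishing the conjugation identity $\Nleq f = \rE_\aQ^{\bf-1} \fatsemi \Pirr f \fatsemi \rE_\aR$ and then deriving well-definedness, functoriality, and naturality formally from the functoriality of $\Pirr$ and the fact that each $\rE_\aQ$ is a $\BiCliq$-isomorphism. The only difference is cosmetic: you unwind the composite left-to-right via $(\rE_\aQ^{\bf-1})_-$ and join-irreducibles of $\aQ$, whereas the paper groups it the other way and passes through the adjoint $f_*$ and meet-irreducibles of $\aQ$ — both reductions are valid and land on the same relation.
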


\begin{proof}
Given any join-semilattice morphism $f : \aQ \to \aR$, let us show that:
\[
\Nleq f = \rE_\aQ^{\bf-1} \fatsemi \Pirr f \fatsemi \rE_\aR
\]
Before doing so, we first compute:
\[
\begin{tabular}{lll}
$(\rE_\aR)_+ ; (\Pirr f)_+$
&
$= \{ (r,m_q) \in R \times M(\aQ) : \exists m_r \in M(\aR).( r \leq_\aR m_r \,\land\, f_*(m_r) \leq_\aR m_q )  \}$
\\&
$= \{ (r,m_q) \in R \times M(\aQ) : f_*(r) \leq_\aR m_q )  \}$
\end{tabular}
\]
Regarding the final equality, $\subseteq$ follows because $f_* : \aR^{\pOp} \to \aQ^{\pOp}$  also defines a monotonic function from $(R,\leq_\aR)$ to $(Q,\leq_\aQ)$, and $\supseteq$ follows because $M(\aR) = J(\aR^{\pOp})$ so that $r$ arises as a $\lor_{\aR^{\pOp}}$-join of join-irreducibles $m_r$.
\[
\begin{tabular}{lll}
$\rE_\aQ^{\bf-1} \fatsemi \Pirr f \fatsemi \rE_\aR$
&
$= (\rE_\aQ^{\bf-1} \fatsemi \Pirr f) ; (\rE_\aR)_+\spbreve$
\\&
$= \rE_\aQ^{\bf-1} ; (\Pirr f)_+\spbreve ; (\rE_\aR)_+\spbreve$
\\&
$= \rE_\aQ^{\bf-1} ; ((\rE_\aR)_+ ; (\Pirr f)_+)\spbreve$
\\&
$= \rE_\aQ^{\bf-1} ; ( \{ (r,m_q) \in R \times M(\aQ) : f_*(r) \leq_\aR m_q )  \})\spbreve$
& see above
\\&
$= \{ (q,r) \in Q \times R: \exists m \in M(\aQ).( q \nleq_\aQ m \,\land\, f_*(r) \leq_\aR m  ) \}$
\\&
$= \{ (q,r) \in Q \times R: \neg\forall m \in M(\aQ).( f_*(r) \leq_\aR m \To q \leq_\aQ m  ) \}$
\\&
$= \{ (q,r) \in Q \times R : q \nleq_\aQ f_*(r) \}$
\\&
$= \{ (q,r) \in Q \times R : f(q) \nleq_\aR r \}$
& adjoint relationship
\\&
$= \Nleq f$
\end{tabular}
\]
Thus the action of $\Nleq$ is well-defined. In fact for completely general reasons it inherits functorality from $\Pirr$. Firstly we have $\Nleq id_\aQ = \rE_\aQ^{\bf-1} \fatsemi \Pirr id_\aQ \fatsemi \rE_\aQ= \rE_\aQ^{\bf-1} \fatsemi \rE_\aQ = id_{\Nleq \aQ}$, and secondly:
\[
\begin{tabular}{lll}
$\Nleq(g \circ f)$
&
$= \rE_\aQ^{\bf-1} \fatsemi \Pirr (g \circ f) \fatsemi \rE_\aS$
\\&
$= \rE_\aQ^{\bf-1} \fatsemi \Pirr f \fatsemi \Pirr g \fatsemi \rE_\aS$
\\&
$= (\rE_\aQ^{\bf-1} \fatsemi \Pirr f \fatsemi \rE_\aR) \fatsemi (\rE_\aR^{\bf-1} \fatsemi \Pirr g \fatsemi \rE_\aR)$
\\&
$= \Nleq f \fatsemi \Nleq g$
\end{tabular}
\]
Finally, the fact that each $\rE_\aQ$ is a $\BiCliq$-isomorphism and $\Nleq f = \rE_\aQ^{\bf-1} \fatsemi \Pirr f \fatsemi \rE_\aR$ immediately implies that $\rE : \Pirr \to \Nleq$ defines a natural isomorphism.
\end{proof}

\begin{theorem}[Equivalence between $\JSL_f$ and $\BiCliq$ involving $\Nleq$]
\label{thm:jsl_bicliq_equiv_without_irr}
\item
The functors $\Nleq : \JSL_f \to \BiCliq$ and $\Open$ define an equivalence of categories with associated natural isomorphisms:
\[
\begin{tabular}{lll}
$\alpha :  \Id_{\JSL_f} \To \Open \circ \Nleq$
&
$\alpha_\aQ := \lambda q \in Q. \overline{\up_\aQ q}$
&
$\alpha_\aQ^{\bf-1} := \lambda Y. \Land_\aQ \overline{Y}$
\\[0.5ex]
$\beta : \Id_{\BiCliq} \To \Nleq \circ \Open$
&
$\beta_\rG := \{ (g_s,Y) \in \rG_s \times O(\rG) : \rG[g_s] \nsubseteq Y \}$
&
$\beta_\rG^{\bf-1} := \; \breve{\in} \; \subseteq O(\rG) \times \rG_t$
\end{tabular}
\]
where $\beta_\rG$ and its inverse have associated components:
\[
\begin{tabular}{llll}
$(\beta_\rG)_-$ & $:= \{ (g_s,X) \in \rG_s \times O(\rG) : X \subseteq \rG[g_s] \}$
&
$(\beta_\rG)_+$ & $:=  \; \breve{\nin} \; \subseteq O(\rG) \times \rG_t$
\\
$(\beta_\rG^{\bf-1})_-$ & $:= \{ (X,g_s) \in O(\rG) \times \rG_s : \rG[g_s] \subseteq X \}$
&
$(\beta_\rG^{\bf-1})_+$ & $:= \{ (g_t,Y) \in \rG_t \times O(\rG) : \inte_\rG(\overline{g_t}) \subseteq Y \}$
\end{tabular}
\]
\end{theorem}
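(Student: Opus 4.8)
The plan is to avoid rebuilding the equivalence from scratch and instead \emph{transport} the established equivalence $(\Open,\Pirr)$ of Theorem \ref{thm:bicliq_jirr_equivalent} along the natural isomorphism $\rE : \Pirr \To \Nleq$ of Lemma \ref{lem:nleq_e_well_def}. Concretely, I would define the two candidate transformations componentwise by
\[
\alpha_\aQ := \Open(\rE_\aQ) \circ rep_\aQ
\qquad
\beta_\rG := red_\rG \fatsemi \rE_{\Open\rG},
\]
so that $\alpha$ is the vertical composite of $rep : \Id_{\JSL_f} \To \Open\circ\Pirr$ with the whiskering $\Open\circ\rE : \Open\circ\Pirr \To \Open\circ\Nleq$, and $\beta$ is the vertical composite of $red : \Id_{\BiCliq} \To \Pirr\circ\Open$ with the whiskering $\rE\circ\Open : \Pirr\circ\Open \To \Nleq\circ\Open$. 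Since $\rE$ is a natural isomorphism and functors preserve both isomorphisms and naturality, both whiskerings are natural isomorphisms; being vertical composites of natural isomorphisms, $\alpha$ and $\beta$ are themselves natural isomorphisms, with inverses assembled from $rep^{\bf-1}$, $red^{\bf-1}$ and $\rE^{\bf-1}$. This delivers the equivalence and the naturality of $\alpha,\beta$ for free, so no further naturality square need be checked.

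It then remains only to confirm that these transported transformations agree with the explicit formulas in the statement. For $\alpha$, I would unwind $\Open(\rE_\aQ)(Y) = (\rE_\aQ)_+\spbreve[Y]$ using $(\rE_\aQ)_+ = \{(q,m): q \leq_\aQ m\}$ from Lemma \ref{lem:pirr_to_nleq_iso}, applied to $rep_\aQ(q) = \{m \in M(\aQ): q \nleq_\aQ m\}$; the resulting set is $\{q' \in Q : \exists m \in M(\aQ).\, q \nleq_\aQ m \text{ and } q' \leq_\aQ m\}$, which equals $\overline{\up_\aQ q}$ precisely by Lemma \ref{lem:std_order_theory}.7 (one element lies below all meet-irreducibles above another iff it lies below it). The identity $\overline{\up_\aQ q} = \;\nleq_\aQ[q]$ simultaneously shows that $\alpha_\aQ(q)$ is $\nleq_\aQ$-open, and $\alpha_\aQ^{\bf-1}(Y) = \Land_\aQ \overline{Y}$ is verified by the two round-trips, using $\Land_\aQ \up_\aQ q = q$ together with the description of $\nleq_\aQ$-open sets.

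For $\beta$, I would compute the $\BiCliq$-composite $red_\rG \fatsemi \rE_{\Open\rG} = (red_\rG)_- ; \rE_{\Open\rG}$ via component relations, using $(red_\rG)_- = \{(g_s,X): X \subseteq \rG[g_s]\}$ and $\rE_{\Open\rG} = \{(X,Y): X \nsubseteq Y\}$. This yields $\beta_\rG(g_s,Y) \iff \exists X \in J(\Open\rG).\, X \subseteq \rG[g_s] \text{ and } X \nsubseteq Y$, which collapses to $\rG[g_s] \nsubseteq Y$ because every $\rG$-open set — in particular $\rG[g_s]$ — is the union of the join-irreducibles beneath it, by Lemma \ref{lem:lat_op_cl}.3. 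The inverse $\beta_\rG^{\bf-1} = \;\breve{\in}$ and the four claimed component relations are then obtained either by the same transport, as $\beta_\rG^{\bf-1} = \rE_{\Open\rG}^{\bf-1} \fatsemi red_\rG^{\bf-1}$, or directly as the maximum witnesses guaranteed by Lemma \ref{lem:bicliq_mor_char_max_witness}; the only recurring ingredient is again the join-irreducible decomposition of open sets.

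The main obstacle is bookkeeping rather than conceptual: one must keep straight that the index sets have changed from $J(\Open\rG)$ and $M(\Open\rG)$ (appearing in $red_\rG$ and $\Pirr$) to all of $O(\rG)$ (appearing in $\beta_\rG$ and $\Nleq$), so that the maximum witnesses expand accordingly, and one must handle correctly the order-reversal and converse conventions of $\BiCliq$-composition when translating $\alpha$, $\beta$ and their inverses into the stated relational formulas. Each such identification reduces to Lemma \ref{lem:std_order_theory}.7 or Lemma \ref{lem:lat_op_cl}.3.
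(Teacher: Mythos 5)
Your proposal is correct and follows essentially the same route as the paper: the paper also defines $\alpha := \Open\rE_- \circ rep$ and $\beta := red \fatsemi \rE_{\Open-}$, obtains naturality for free from the composite of natural isomorphisms, and then verifies the explicit formulas by unwinding the component relations (the only cosmetic difference is that the paper computes $\beta_\rG$ as $red_\rG ; (\rE_{\Open\rG})_+\spbreve$ whereas you use $(red_\rG)_- ; \rE_{\Open\rG}$, both sanctioned by Lemma \ref{lem:bicliq_func_comp}). Your reductions to Lemma \ref{lem:std_order_theory}.7 and Lemma \ref{lem:lat_op_cl}.3 match the paper's calculations.
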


\begin{proof}
We'll combine Theorem \ref{thm:bicliq_jirr_equivalent} with the natural isomorphism $\rE : \Pirr \To \Nleq$. That is, we define:
\[
\begin{tabular}{ll}
$\alpha :=$ & $ \Id_{\JSL_f} \xto{rep} \Open \circ \Pirr \xto{\Open \rE_-} \Open \circ \Nleq$
\\[0.5ex]
$\beta :=$ & $\Id_{\BiCliq} \xto{red} \Pirr \circ \Open \xto{\rE_{\Open-}} \Nleq \circ \Open$
\end{tabular}
\]
Since they are built from natural isomorphisms and functors, they are themselves natural isomorphisms i.e.\ we have have an equivalence of categories. Let us now verify their action:
\[
\begin{tabular}{lll}
$\alpha_\aQ (q)$
&
$= \Open \rE_\aQ \circ rep_\aQ (q)$
\\&
$= \Open \rE_\aQ(\{ m \in M(\aQ) : q \nleq_\aQ m \})$
\\&
$= (\rE_\aQ)_+\spbreve[\{ m \in M(\aQ) : q \nleq_\aQ m \}]$
\\&
$= \{ q' \in Q : \exists m \in M(\aQ).( q' \leq_\aQ m \,\land\, q \nleq_\aQ m ) \}$
\\&
$= \{ q' \in Q : \neg\forall m \in M(\aQ).( q' \leq_\aQ m \To q \leq_\aQ m) \}$
\\&
$= \{ q' \in Q : q \nleq_\aQ q' \}$
\\&
$= \overline{\up_\aQ q}$
\end{tabular}
\]
Then since we know $\alpha_\aQ$ is an isomorphism it follows that $\alpha_\aQ^{\bf-1}$ is the inverse. Recalling that $\leq_{\Open\rG}$ is the inclusion relation on the $\rG$-open sets $O(\rG)$, then:
\[
\begin{tabular}{lll}
$\beta_\rG$
&
$= \red_\rG \fatsemi \rE_{\Open\rG}$
\\&
$= \red_\rG ; (\rE_{\Open\rG})_+\spbreve$
\\&
$= \{ (g_s,Y) \in \rG_s \times O(\rG) : \exists M \in M(\Open\rG).( \rG[g_s] \nleq_{\Open\rG} M \,\land\, Y \leq_{\Open\rG} M  ) \}$
\\&
$= \{ (g_s,Y) \in \rG_s \times O(\rG) : \neg\forall M \in M(\Open\rG).( Y \leq_{\Open\rG} M \To  \rG[g_s] \leq_{\Open\rG} M) \}$
\\&
$= \{ (g_s,Y) \in \rG_s \times O(\rG) : \rG[g_s] \nsubseteq Y \}$
\\
\\
$\beta_\rG^{\bf-1}$
&
$= \rE_{\Open\rG}^{\bf-1} \fatsemi red_\rG^{\bf-1}$
\\&
$= \rE_{\Open\rG}^{\bf-1} ; (red_\rG^{\bf-1})_+\spbreve$
\\&
$= \{ (Y,g_t) \in O(\rG) \times \rG_t : \exists M \in M(\Open\rG).( Y \nleq_{\Open\rG} M \,\land\, \inte_\rG(\overline{g_t}) \leq_{\Open\rG} Y)  \}$
\\&
$= \{ (Y,g_t) : \neg\forall M \in M(\Open\rG).(\inte_\rG(\overline{g_t}) \leq_{\Open\rG} Y \To Y \leq_{\Open\rG} M)  \}$
\\&
$= \{ (Y,g_t)  : Y \nsubseteq \inte_\rG(\overline{g_t}) \}$
\\&
$= \{ (Y_,g_t)  : g_t \in Y\}$
& by Lemma \ref{lem:cl_inte_of_pirr}.1
\\&
$= \breve{\in} \; \subseteq O(\rG) \times \rG_t$
\end{tabular}
\]
The descriptions of $\beta_\rG$ and $\beta_\rG^{\bf-1}$'s associated components follows via similar simple computations.
\end{proof}

\subsection{$\BiCliq$ as a canonical construction}

In this subsection we provide an alternative description of $\BiCliq$.

\begin{quote}
We introduce the category $\Cover$ which is essentially the arrow category of $\Rel_f$. Its hom-sets admit a natural closure structure, so that $\BiCliq$ is the restriction of $\Cover$ to \emph{closed morphisms}.
\end{quote}

This closure structure is really just a more detailed explanation of the maximum $\rR$-witnesses $(\rR_-,\rR_+)$, revealing that their construction is functorial in nature. Given what we already know, it is not hard to prove. However it is useful because it allows us to work with morphisms `modulo closure' in a precise sense. 

\begin{motivation}
Of particular importance is the following basic fact. Given a finite set of $\BiCliq$-endomorphisms $\{ (\rR_-^a,\rR_+^a) : \rG \to \rG : a \in \Sigma \}$ then:
\[
(\rR_-^{a_1},\rR_+^{a_1}) \fatsemi \cdots \fatsemi (\rR_-^{a_n},\rR_+^{a_n})
\text{\quad is the closure of \quad}
(\rR_-^{a_1} ; \cdots ; \rR_-^{a_n},  \; \rR_+^{a_n} ; \cdots ; \rR_+^{a_1})
\]
That is, we may use the usual relational composition in each component and then close once. The restriction to endomorphisms is unnecessary. The reason we emphasise it stems from our interest in nondeterministic acceptance of regular languages. Later on, the endomorphisms $(\rR_-^a,\rR_+^a) : \rG \to \rG$ will be viewed as the $a$-transitions of two classical nondeterministic automata, one with states $\rG_s$ (the `lower one') and the other with states $\rG_t$ (the `upper one'). These paired nondeterministic automata naturally accept a single regular language, using only the definition of $\BiCliq$-composition. Then using the above fact, this language is precisely the language accepted by the lower nfa, or equivalently the reverse of the language accepted by the upper nfa. That is, our `categorical' notion of language acceptance corresponds to the classical notion of nondeterministic acceptance. \endbox
\end{motivation}

\begin{definition}[The category $\Cover$] 
The objects of $\Cover$ are the relations between finite sets i.e.\ the objects of $\BiCliq$. A morphism $(\rR_l,\rR_r) : \rG \to \rH$ is a pair of relations $\rR_l \subseteq \rG_s \times \rH_s$ and $\rR_r \subseteq \rH_t \times \rG_t$ such that:
\[
\rR_l ; \rH = \rG ; \rR_r\spbreve
\]
Then $id_\rG := (\Delta_{\rG_s},\Delta_{\rG_t})$ and composition is defined $(\rR_l,\rR_r);(\rS_l,\rS_r) := (\rR_l;\rS_l,\rS_r;\rR_r)$. \endbox
\end{definition}

\begin{definition}[$\BiCliq$-morphism associated to a $\Cover$-morphism]
A $\Cover$-morphism $(\rR_l,\rR_r) : \rG \to \rH$ has an \emph{associated $\BiCliq$-morphism} $\rR : \rG \to \rH$, namely $\rR_l ; \rH =: \rR := \rG ; \rR_r\spbreve$. \endbox
\end{definition}

\begin{note}[$\Cover$ is isomorphic to the arrow category of $\Rel_f$]
\label{note:category_cover}
Arguably the most natural category whose objects are relations between finite sets is the \emph{arrow category} $Arr(\Rel_f)$ of the category of finite sets and relations $\Rel_f$. It is isomorphic to $\Cover$ by (i) reversing the type of $\rR_+$, and (ii) changing composition appropriately (use pairwise relational composition). In fact, $Arr(\Rel_f)$ corresponds to the comma category $\Id_{\Rel_f} \down \Id_{\Rel_f}$, whereas $\Cover$ corresponds to $\Id_{\Rel_f} \down (-)\spbreve : \Rel_f^{op} \to \Rel_f$. Small comma categories always arise as natural pullbacks in $\SmallCat$, the category of small categories. \endbox
\end{note}

$\Cover$ is a well-defined category by Note \ref{note:category_cover} above. Given $(\rR_l,\rR_r) : \rG \to \rH$ s.t.\ $\rR_l ; \rH = \rG ; \rR_r\spbreve$, taking the relational converse yields $\rR_r ; \breve{\rG} = \breve{\rH} ; \rR_l\spbreve$ i.e.\ a $\Cover$-morphism $(\rR_r,\rR_l) : \breve{\rH} \to \breve{\rG}$. This defines a self-duality and acts in the same way as $\BiCliq$'s self-duality (Definition \ref{def:bicliq_self_duality}); we denote it by the same symbol.

\begin{lemma}[Self-duality of $\Cover$]
$\Cover$ is well-defined and self-dual via
$(-)\spcheck : \Cover^{op} \to \Cover$,
\[
\rG\spcheck := \breve{\rG}
\qquad
\dfrac{(\rR_l,\rR_r) : \rG \to \rH}{(\rR_l,\rR_r)\spcheck := (\rR_r,\rR_l) : \breve{\rH} \to \breve{\rG}}
\]
with witnessing natural isomorphism $\alpha : \Id_{\Cover} \To (-)\spcheck \circ ((-)\spcheck)^{op}$ with action $\alpha_\rG := id_\rG = (\Delta_{\rG_s},\Delta_{\rG_t})$.
\end{lemma}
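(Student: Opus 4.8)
The plan is to mirror the proof of Theorem~\ref{thm:bicliq_self_dual}, but the argument is lighter since $\Cover$ records the witnessing pair $(\rR_l,\rR_r)$ explicitly, so I need neither the maximum-witness construction nor the De Morgan dualities of Lemma~\ref{lem:up_down_basic}. The only properties of $\Rel_f$ I would invoke are that relational converse is a strict involution and a contravariant homomorphism for relational composition: $\breve{\breve{\rR}} = \rR$, $\Delta_X\spbreve = \Delta_X$, and $(\rR;\rS)\spbreve = \rS\spbreve;\rR\spbreve$.

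First I would dispatch well-definedness of $\Cover$. By Note~\ref{note:category_cover} this is immediate from the isomorphism with $Arr(\Rel_f)$, but I would also record the direct check. The identity $id_\rG = (\Delta_{\rG_s},\Delta_{\rG_t})$ is a $\Cover$-morphism because $\Delta_{\rG_s};\rG = \rG = \rG;\Delta_{\rG_t}\spbreve$, and for composable $(\rR_l,\rR_r):\rG\to\rH$ and $(\rS_l,\rS_r):\rH\to\rI$ the proposed composite $(\rR_l;\rS_l,\,\rS_r;\rR_r)$ is again a morphism via $(\rR_l;\rS_l);\rI = \rR_l;(\rH;\rS_r\spbreve) = (\rG;\rR_r\spbreve);\rS_r\spbreve = \rG;(\rS_r;\rR_r)\spbreve$. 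Associativity and the unit laws are then inherited componentwise from relational composition.

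Next I would verify that $(-)\spcheck$ is a well-defined contravariant functor. Well-definedness on morphisms is exactly the computation already displayed before the statement: converting the defining equation $\rR_l;\rH = \rG;\rR_r\spbreve$ yields $\rR_r;\breve{\rG} = \breve{\rH};\rR_l\spbreve$, which is precisely the defining equation for $(\rR_r,\rR_l):\breve{\rH}\to\breve{\rG}$. For the functor laws I would compute $(id_\rG)\spcheck = (\Delta_{\rG_t},\Delta_{\rG_s}) = id_{\breve{\rG}}$, and for contravariance $((\rR_l,\rR_r);(\rS_l,\rS_r))\spcheck = (\rR_l;\rS_l,\,\rS_r;\rR_r)\spcheck = (\rS_r;\rR_r,\,\rR_l;\rS_l) = (\rS_r,\rS_l);(\rR_r,\rR_l) = (\rS_l,\rS_r)\spcheck;(\rR_l,\rR_r)\spcheck$; here the reversal of order in the right-hand component of $\Cover$-composition is exactly what makes the two sides agree.

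Finally, since converse is a strict involution, the composite $(-)\spcheck \circ ((-)\spcheck)^{op}$ is literally the identity endofunctor of $\Cover$: on objects $\breve{\breve{\rG}} = \rG$, and on a morphism $((\rR_l,\rR_r)\spcheck)\spcheck = (\rR_r,\rR_l)\spcheck = (\rR_l,\rR_r)$. Hence $\alpha_\rG := id_\rG$ is an isomorphism and naturality collapses to the triviality $id_\rH;(\rR_l,\rR_r) = (\rR_l,\rR_r);id_\rG$, so $\alpha$ is a natural isomorphism and $(-)\spcheck$ is a strictly involutive self-duality. I expect no genuine obstacle; the only place to be careful is the variance/$op$ bookkeeping together with the reversed order in the right-hand component of $\Cover$-composition, which is precisely the feature that aligns $(-)\spcheck$ with relational-composition converse.
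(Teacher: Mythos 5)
Your proposal is correct and follows essentially the same route as the paper, which disposes of well-definedness via the isomorphism with $Arr(\Rel_f)$ (Note~\ref{note:category_cover}) and obtains the self-duality by taking the relational converse of the witnessing equation $\rR_l ; \rH = \rG ; \rR_r\spbreve$; the paper leaves the functor laws and the (trivial) naturality unstated, and your componentwise verifications fill in exactly those details correctly.
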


$\Cover$'s hom-sets admit a natural ordering i.e.\ pairwise inclusion. We now define a natural closure operator uniformly on each such poset.

\begin{definition}[The poset $\Cover(\rG, \rH)$]
For each pair of relations $(\rG,\rH)$ we define the finite poset:
\[
\pCovHom{\rG}{\rH} := (\Cover(\rG,\rH),\leq_{(\rG,\rH}))
\qquad
\text{where}
\qquad
(\rR_1,\rR_2) \leq_{(\rG,\rH)} (\rS_1,\rS_2) :\iff \rR_1 \subseteq \rS_1
\text{ and }
\rR_2 \subseteq \rS_2.
\]
This poset of morphisms admits a natural closure operator $\cl_{\rG,\rH} : \pCovHom{\rG}{\rH} \to \pCovHom{\rG}{\rH}$ defined:
\[
\begin{tabular}{c}
$\cl_{\rG,\rH}(\rR_l,\rR_r) := (\rR_l^\bullet,\,\rR_r^\bullet)$ where:
\\[1ex]
$\rR_l^\bullet := \{ (g_s,h_s) \in \rG_s \times \rH_s : h_s \in \cl_\rH(\rR_l[g_s]) \}
\qquad
\rR_r^\bullet := \{ (h_t,g_t) \in \rH_t \times \rG_t : g_t \in \cl_{\breve{\rG}}(\rR_r[h_t]) \}$
\end{tabular}
\]
using the closure operators $\cl_\rH = \rH^\down \circ \rH^\up$ and $\cl_{\breve{\rG}} = \breve{\rG}^\down \circ \breve{\rG}^\up$ from Definition \ref{def:up_down}. \endbox
\end{definition}

\smallskip
Each finite poset $\pCovHom{\rG}{\rH}$ is actually a finite lattice: the bottom is $(\emptyset,\emptyset) : \rG \to \rH$ and the join is pairwise binary union (the meet structure is induced). We now prove that these closure operators are well-defined and construct the associated components $(\rR_-,\rR_+)$. Furthermore they naturally interact with the self-duality and compositional structure.

\begin{lemma}
\label{lem:cl_hom}
\item
\begin{enumerate}
\item
For any $\Cover$-morphism $(\rR_l,\rR_r) : \rG \to \rH$ we have:
\[
(\rR_l,\rR_r) \leq_{(\rG,\rH)} (\rR_l^\bullet,\rR_r^\bullet)
\qquad
\rR_l^\bullet ; \rH = \rR_l ; \rH
\qquad
\rG ; \rR_r\spbreve = \rG ; (\rR_r^\bullet)\spbreve
\]
so that $\cl_{\rG,\rH}(\rR_l,\rR_r) : \rG \to \rH$ is also a $\Cover$-morphism.

\item
$\cl_{\rG,\rH}$ is a well-defined closure operator on the finite poset $\pCovHom{\rG}{\rH}$.

\item
% perhaps only keep first statement

The closure of a $\Cover$-morphism $(\rR_l,\rR_r) : \rG \to \rH$ can be described in the following three ways.
\begin{enumerate}[i.]
\item
The components $(\rR_-,\rR_+)$ of its associated $\BiCliq$-morphism $\rR$.
\item
The pairwise union of all $\Cover$-morphisms $(\rS_l,\rS_r) : \rG \to \rH$ such that $\rS_l ; \rH = \rR_l ; \rH$.
\item
The pairwise union of all $\Cover$-morphisms $(\rS_l,\rS_r) : \rG \to \rH$ such that $\rG ; \rS_r\spbreve = \rG ; \rR_l\spbreve$.
\end{enumerate}

\item
The closure operators $\cl_{\rG,\rH}$ commute with $\Cover$'s self-duality i.e.\
\[
\cl_{\rH\spcheck,\rG\spcheck}((\rR_l,\rR_r)\spcheck)
=
(\cl_{\rG,\rH}(\rR_l,\rR_r))\spcheck
\]

\item
The closure operators $\cl_{\rG,\rH}$ are well-behaved w.r.t.\ $\Cover$-composition i.e.\
\[
\begin{tabular}{c}
$\dfrac{
\cl_{\rG,\rH}(\rR_l,\rR_r) = \cl_{\rG,\rH}(\rS_l,\rS_r)
}{
\cl_{\rG,\rI}((\rR_l,\rR_r) ; (\rT_l,\rT_r)) 
= \cl_{\rG,\rI}((\rS_l,\rS_r);(\rT_l,\rT_r))
}$
\\[3ex]
$\dfrac{
\cl_{\rH,\rI}(\rS_l,\rS_r) = \cl_{\rH,\rI}(\rT_l,\rT_r)
}{
\cl_{\rG,\rI}((\rR_l,\rR_r) ; (\rS_l,\rS_r)) = \cl_{\rG,\rI}((\rR_l,\rR_r) ; (\rT_l,\rT_r))
}$
\end{tabular}
\]
for all appropriately typed morphisms $(\rR_l,\rR_r)$, $(\rS_l,\rS_r)$ and $(\rT_l,\rT_r)$.

\end{enumerate}
\end{lemma}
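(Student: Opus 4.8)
The plan is to isolate one computation — part (3)(i), that the componentwise closure $(\rR_l^\bullet,\rR_r^\bullet)$ reproduces the canonical maximum witnesses $(\rR_-,\rR_+)$ of the associated $\BiCliq$-morphism $\rR$ — and then harvest the other four parts from it together with the maximum-witness lemma (Lemma \ref{lem:bicliq_mor_char_max_witness}.2). First I would record the trivial but crucial observation that $\rR_l^\bullet[g_s]=\cl_\rH(\rR_l[g_s])$ and $\rR_r^\bullet[h_t]=\cl_{\breve{\rG}}(\rR_r[h_t])$, so the $\bullet$-operation is just $\cl_\rH$ (resp.\ $\cl_{\breve{\rG}}$) applied image-by-image; since a relation between finite sets is determined by its singleton images (equivalently by $(-)^\up$), every identity below reduces to checking it on each $\rR_l[g_s]$ and $\rR_r[h_t]$. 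For (3)(i) I would compute $\rR[g_s]=(\rR_l;\rH)[g_s]=\rH^\up(\rR_l[g_s])$, whence $\rH^\down(\rR[g_s])=\cl_\rH(\rR_l[g_s])=\rR_l^\bullet[g_s]$, giving $\rR_-=\rR_l^\bullet$; dually, using $\breve{\rR}=\rR_r;\breve{\rG}$ so that $\breve{\rR}[h_t]=\breve{\rG}^\up(\rR_r[h_t])$, one gets $\rR_+=\rR_r^\bullet$. Thus $\cl_{\rG,\rH}(\rR_l,\rR_r)=(\rR_-,\rR_+)$.

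Part (1) then falls out: by Lemma \ref{lem:bicliq_mor_char_max_witness}.2(a) the pair $(\rR_-,\rR_+)$ is a witness of $\rR$, so $\rR_l^\bullet;\rH=\rR_-;\rH=\rR=\rR_l;\rH$ and dually $\rG;(\rR_r^\bullet)\spbreve=\rG;\rR_r\spbreve$, making $\cl_{\rG,\rH}(\rR_l,\rR_r)$ a $\Cover$-morphism; and by Lemma \ref{lem:bicliq_mor_char_max_witness}.2(b) every witness satisfies $\rR_l\subseteq\rR_-$, $\rR_r\subseteq\rR_+$, which is exactly extensivity $(\rR_l,\rR_r)\leq(\rR_l^\bullet,\rR_r^\bullet)$. (Alternatively (1) admits a direct check: $(\rR_l^\bullet;\rH)[g_s]=\rH^\up\circ\cl_\rH(\rR_l[g_s])=\rH^\up(\rR_l[g_s])$ by $(\up\down\up)$, and extensivity is immediate from extensivity of $\cl_\rH,\cl_{\breve{\rG}}$.) For (2), extensivity is (1), while monotonicity and idempotence are inherited image-by-image from the corresponding properties of the closure operators $\cl_\rH$ and $\cl_{\breve{\rG}}$ established in Lemma \ref{lem:up_down_basic}.2.

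For (3)(ii)--(iii) I would observe that the condition $\rS_l;\rH=\rR_l;\rH$ (resp.\ $\rG;\rS_r\spbreve=\rG;\rR_r\spbreve$) says precisely that $(\rS_l,\rS_r)$ is a witness of the same $\BiCliq$-morphism $\rR$; the maximum-witness lemma gives $\rS_l\subseteq\rR_-$, $\rS_r\subseteq\rR_+$ for all such witnesses and that $(\rR_-,\rR_+)$ is itself one of them, so their pairwise union is exactly $(\rR_-,\rR_+)=\cl_{\rG,\rH}(\rR_l,\rR_r)$ by (3)(i). (I would flag that the condition printed in (iii) reads $\rG;\rS_r\spbreve=\rG;\rR_l\spbreve$, which is a typo for $\rR_r\spbreve$ on type grounds.) Part (4) is a direct symmetric check: unravelling the self-duality, $(\rR_l,\rR_r)\spcheck=(\rR_r,\rR_l):\breve{\rH}\to\breve{\rG}$, and the left-component bullet for a morphism into $\breve{\rG}$ uses $\cl_{\breve{\rG}}$ while the right-component bullet for a morphism out of $\breve{\rH}$ uses $\cl_{\breve{\breve{\rH}}}=\cl_\rH$; hence $\cl_{\breve{\rH},\breve{\rG}}(\rR_r,\rR_l)=(\rR_r^\bullet,\rR_l^\bullet)=(\rR_l^\bullet,\rR_r^\bullet)\spcheck=(\cl_{\rG,\rH}(\rR_l,\rR_r))\spcheck$.

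Finally, for (5) the strategy is to show the closure factors through the associated $\BiCliq$-morphism and that $\Cover$-composition descends to $\BiCliq$-composition. By (3)(i), $\cl_{\rG,\rH}(\rR_l,\rR_r)=(\rR_-,\rR_+)$ depends only on $\rR$; conversely equal closures force $\rR=\rR_-;\rH=\rS_-;\rH=\rS$ (using that the maximum witness is a witness). Next I would note that the associated $\BiCliq$-morphism of the $\Cover$-composite $(\rR_l;\rT_l,\,\rT_r;\rR_r)$ is $(\rR_l;\rT_l);\rI=\rR_l;(\rT_l;\rI)=\rR_l;\rT=\rR\fatsemi\rT$, the last step using the witness-independence of $\BiCliq$-composition, i.e.\ $\rR_l;\rT=\rR;\rT_r\spbreve$. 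Hence the first rule follows from $\cl_{\rG,\rH}(\rR_l,\rR_r)=\cl_{\rG,\rH}(\rS_l,\rS_r)\Rightarrow\rR=\rS\Rightarrow\rR\fatsemi\rT=\rS\fatsemi\rT$, and the second from $\cl_{\rH,\rI}(\rS_l,\rS_r)=\cl_{\rH,\rI}(\rT_l,\rT_r)\Rightarrow\rS=\rT\Rightarrow\rR\fatsemi\rS=\rR\fatsemi\rT$, reading the conclusions off via (3)(i) applied to the composites. The main obstacle is conceptual rather than computational: getting (3)(i) exactly right (the correct converses and the respective roles of $\cl_\rH$ versus $\cl_{\breve{\rG}}$) and, in (5), pinning down that equality of closures is \emph{equivalent} to equality of associated $\BiCliq$-morphisms — the point where the maximum-witness lemma is indispensable.
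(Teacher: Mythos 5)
Your proposal is correct, and its central computation --- the identification $(\rR_l^\bullet,\rR_r^\bullet)=(\rR_-,\rR_+)$ in (3)(i) --- is exactly the computation the paper performs in its part (3). Where you genuinely diverge is in the dependency structure and in part (5). The paper proves (1) first, by a direct $(\up\down\up)$ calculation, deduces (2), then proves (3)(i) and invokes the maximum-witness lemma only for (3)(ii)--(iii); for (5) it checks $(\rR_l;\rT_l)^\bullet=(\rS_l;\rT_l)^\bullet$ by an explicit pointwise calculation (using $\rT^\up\circ\cl_\rH=\rT^\up$ for the associated morphism $\rT$) and obtains the second rule by dualising through (4). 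You instead make (3)(i) the hub: (1) falls out of Lemma \ref{lem:bicliq_mor_char_max_witness}.2 (and your alternative direct check is precisely the paper's proof of (1)), while (5) is reduced to two conceptual facts --- equality of closures is equivalent to equality of associated $\BiCliq$-morphisms, and the associated $\BiCliq$-morphism of a $\Cover$-composite is the $\fatsemi$-composite of the associated morphisms, i.e.\ witness-independence of $\Dep$-composition. This buys a symmetric treatment of both rules in (5) with no appeal to duality, and it makes explicit the point the paper only exploits in the subsequent quotient-category theorem, namely that $\cl$ is the fibrewise passage from $\Cover$ to $\BiCliq$; the paper's route, by contrast, stays self-contained at the level of $(-)^\up/(-)^\down$ identities. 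Your flag of the typo in (3)(iii) is also correct ($\rG;\rR_l\spbreve$ should read $\rG;\rR_r\spbreve$ on type grounds), and in fact the paper's own proof of that part reproduces the same typo.
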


\begin{proof}
\item
\begin{enumerate}

\item
The left statement follows because $\cl_\rH$ and $\cl_{\breve{\rG}}$ are extensive. The central and right statement follow because for all $g_s \in \rG_s$ and $h_t \in \rH_t$,
\[
\begin{tabular}{ll}
\begin{tabular}{lll}
$\rR_l^\bullet ; \rH[g_s]$
& $= \rH[\rR_l^\bullet[g_s]]$
\\&
$= \rH[\cl_\rH(\rR_l[g_s])]$
\\&
$= \rH^\up \circ \rH^\down \circ \rH^\up(\rR_l[g_s])$
\\&
$= \rH^\up(\rR_l[g_s])$
& by $(\up\down\up)$
\\&
$= \rR_l ; \rH[g_s]$
\end{tabular}
&
\begin{tabular}{lll}
$\rR_r^\bullet ; \breve{\rG}[h_t]$
& $= \breve{\rG}[\rR_r^\bullet[h_t]]$
\\&
$= \breve{\rG}^\up(\cl_{\breve{\rG}}(\rR_r[h_t])) $
\\&
$= \breve{\rG}^\up \circ \breve{\rG}^\down \circ \breve{\rG}^\up(\rR_r[h_t])$
\\&
$= \breve{\rG}^\up(\rR_r[h_t])$
& by $(\up\down\up)$
\\&
$= \rR_r ; \breve{\rG}[h_t]$
\end{tabular}
\end{tabular}
\]
Then $\cl_{\rG,\rH}(\rR_l,\rR_r)$ is a well-defined $\Cover$-morphism using the fact that $(\rR_l,\rR_r)$ is.

\item
That $\cl_{\rG,\rH}$ is a well-defined function follows from the previous statement. That it is monotonic, extensive and idempotent follows because $\cl_\rH$ and $\cl_{\breve{\rG}}$ possess these properties.

\item
Given a $\Cover$-morphism $(\rR_l,\rR_r)$ let $\rR$ be its associated $\BiCliq$-morphism and $(\rR_-,\rR_+)$ the latter's associated component relations. Then:
\[
\begin{tabular}{ll}
\begin{tabular}{lll}
$\rR_-[g_s]$
& $= \rH^\down(\rR[g_s])$
& by definition
\\&
$= \rH^\down \circ (\rR_l;\rH)^\up(\{g_s\})$
& by assumption
\\&
$= \rH^\down \circ \rH^\up \circ \rR_l^\up(\{g_s\})$
& by $(\up\circ)$
\\&
$= \cl_\rH(\rR_l[g_s])$
& by definition
\\&
$= \rR_l^\bullet[g_s]$
& by definition
\end{tabular}
&
\begin{tabular}{lll}
$\rR_+[h_t]$
& $= \breve{\rG}^\down(\breve{\rR}[h_t])$
& by definition
\\&
$= \breve{\rG}^\down \circ (\rR_r;\breve{\rG})^\up(\{h_t\})$
& by assumption
\\&
$= \breve{\rG}^\down \circ \breve{\rG}^\up \circ \rR_r^\up(\{h_t\})$
& by $(\up\circ)$
\\&
$= \cl_{\breve{\rG}}(\rR_r[h_t])$
& by definition
\\&
$= \rR_r^\bullet[h_t]$
& by definition
\end{tabular}
\end{tabular}
\]
for all $g_s \in \rG_s$ and $h_t \in \rH_t$. This proves the first statement. By Lemma \ref{lem:bicliq_mor_char_max_witness}.2 we know that $(\rR_-,\rR_+)$ is the union of all $\Cover$-morphisms $(\rS_l,\rS_r) : \rG \to \rH$ such that $\rS_l ; \rH = \rR = \rG ; \rS_r\spbreve$. Then the second and third statement follow by $\rR_l ; \rH = \rR$ and $\rR = \rG ; \rR_l\spbreve$ respectively.

\item
Follows from the definitions:
\[
\cl_{\rH\spcheck,\rG\spcheck}((\rR_l,\rR_r)\spcheck) 
= \cl_{\breve{\rH},\breve{\rG}}(\rR_r,\rR_l) 
= (\rR_r^\bullet,\rR_l^\bullet) 
= (\rR_l^\bullet,\rR_r^\bullet)\spcheck
= (\cl_{\rG,\rH}(\rR_l,\rR_r))\spcheck
\]

\item
% need earlier lemma: (R_-,R_+) determined uniquely by each component

To prove the first rule, assume we have $\Cover$-morphisms $(\rR_l,\rR_r),\,(\rS_l,\rS_r) : \rG \to \rH$ with the same closure, and also a $\Cover$-morphism $(\rT_l,\rT_r) : \rH \to \rI$. We need only show that $(\rR_l ; \rT_l)^\bullet = (\rS_l ; \rT_l)^\bullet$ because the other component relation is uniquely determined. Then we calculate:
\[
\begin{tabular}{lll}
$(\rR_l ; \rT_l)^\bullet [g_s]$
&
$= \cl_\rI(\rR_l ; \rT_l[g_s])$
& by definition
\\&
$= \rI^\down \circ \rI^\up \circ \rT_l^\up \circ \rR_l^\up(\{g_s\})$
& by definition
\\&
$= \rI^\down \circ \rT^\up (\rR_l[g_s])$
& since $\rT = \rT_l ; \rI$
\\&
$= \rI^\down \circ \rT^\up \circ \cl_\rH(\rR_l[g_s])$
& since $\rT$ a $\BiCliq$-morphim
\\&
$= \rI^\down \circ \rT^\up \circ \cl_\rH(\rS_l[g_s])$
& since $\rR_l^\bullet = \rS_l^\bullet$
\\&
$= (\rS_l ; \rT_l)^\bullet [g_s]$
& reasoning in reverse
\end{tabular}
\]
for all $g_s \in \rG_s$. The second rule follows by dualising (using (4)), applying the first rule, and dualising again.

\end{enumerate}
\end{proof}

Each closure operator $\cl_{\rG,\rH}$ induces an equivalence relation on its respective hom-set i.e.\ the kernel:
\[
\ker{\cl_{\rG,\rH}} \subseteq \Cover(\rG,\rH) \times \Cover(\rG,\rH)
\]
which relates those morphisms with the same closure. Then by Lemma \ref{lem:cl_hom}.5 these relations are collectively compatible with $\Cover$-composition and thus induce a `quotient category'. We denote the composition of morphisms in this category by `$\fatsemi$' i.e.\ the same symbol we use to denote $\BiCliq$-composition. This is warranted because these two categories are isomorphic.

\begin{definition}[The category $\Cover / \cl$]
It has the same objects as $\Cover$, whereas its hom-sets are:
\[
\Cover/\cl\;(\rG,\rH) := \Cover(\rG,\rH)/\ker{\cl_{\rG,\rH}}
\]
i.e.\ the equivalence classes of $\Cover$-morphisms relative to $\ker{\cl_{\rG,\rH}}$. Let us denote the associated surjective canonical maps by $\sem{\cdot}_{\rG,\rH} : \Cover(\rG,\rH) \epito \Cover/\cl (\rG,\rH)$. Then identity morphisms and composition are defined:
\[
id_\rG := \sem{id_\rG}_{\rG,\rG} = \sem{(\Delta_{\rG_s},\Delta_{\rG_t})}_{\rG,\rG}
\qquad
\sem{(\rR_l,\rR_r)}_{\rG,\rH} \fatsemi \sem{(\rS_l,\rS_r)}_{\rH,\rI}
:= \sem{(\rR_1,\rR_l) ; (\rS_l,\rS_r)}_{\rG,\rI}
\]
We also define two identity-on-objects functors:
\[
\begin{tabular}{llc}
$\sem{\cdot} : \Cover \to \Cover/\cl$
&
$\sem{\rG} := \rG$
&
$\dfrac{(\rR_l,\rR_r) : \rG \to \rH}{\sem{(\rR_l,\rR_r)} := \sem{(\rR_l,\rR_r)}_{\rG,\rH}  : \rG \to \rH}$
\\[2.5ex]
$I : \Cover/\cl \to \BiCliq$
&
$I\rG := \rG$
&
$\dfrac{\sem{(\rR_l,\rR_r)}_{\rG,\rH} : \rG \to \rH}{\rR := \rR_l ; \rH = \rG ; \rR_r\spbreve : \rG \to \rH}$
\end{tabular}
\]
and also the composite functor $\cl := I \circ \sem{\cdot} : \Cover \to \BiCliq$. Recalling that $\BiCliq$-morphisms $\rR$ may be identified with their associated components $(\rR_-,\rR_+)$, we may abuse notation by equivalently defining:
\[
\cl (\rR_l,\rR_r) := \cl_{\rG,\rH}(\rR_l,\rR_r) = (\rR_-,\rR_+)
\]
\endbox
\end{definition}

\begin{theorem}[$\BiCliq$ as a quotient category of $\Cover$]
\item
\begin{enumerate}
\item
$\Cover/\cl$ is a well-defined category and $\sem{\cdot} : \Cover \to \Cover/\cl$ is a well-defined functor.
\item
$I : \Cover/\cl \to \BiCliq$ is a well-defined isomorphism of categories.
\item
$\cl : \Cover \to \BiCliq$ is a well-defined functor and preserves the  ordering on morphisms i.e.\
\[
(\rR_l,\rR_r) \leq_{(\rG,\rH)} (\rS_l,\rS_r)
\implies \rR \subseteq \rS
\qquad
\text{(or equivalently $(\rR_-,\rR_+) \leq_{\rG,\rH} (\rS_-,\rS_+)$)}
\]
\end{enumerate}
\end{theorem}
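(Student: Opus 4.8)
The plan is to deduce all three parts from Lemma \ref{lem:cl_hom}, treating the theorem as bookkeeping once that lemma is in hand. The substantive content — that closing a morphism does not alter its associated $\BiCliq$-morphism, and that closure interacts correctly with $\Cover$-composition — is exactly Lemma \ref{lem:cl_hom}.1 and Lemma \ref{lem:cl_hom}.5, which I may assume. The witness-independent formula $\rR \fatsemi \rS = \rR_l ; \rS$ for $\BiCliq$-composition (Lemma \ref{lem:bicliq_func_comp} and Definition \ref{def:category_bicliq}) will be the glue.

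For part (1), each $\ker \cl_{\rG,\rH}$ is a genuine equivalence relation since $\cl_{\rG,\rH}$ is a function (Lemma \ref{lem:cl_hom}.2). The only nontrivial point is that $\fatsemi$ on $\Cover/\cl$ is independent of representatives. This follows from the two rules of Lemma \ref{lem:cl_hom}.5: if $\cl_{\rG,\rH}(\rR_l,\rR_r) = \cl_{\rG,\rH}(\rR_l',\rR_r')$ and $\cl_{\rH,\rI}(\rS_l,\rS_r) = \cl_{\rH,\rI}(\rS_l',\rS_r')$, the first rule gives $\cl((\rR_l,\rR_r);(\rS_l,\rS_r)) = \cl((\rR_l',\rR_r');(\rS_l,\rS_r))$ and the second gives $\cl((\rR_l',\rR_r');(\rS_l,\rS_r)) = \cl((\rR_l',\rR_r');(\rS_l',\rS_r'))$, so substitution in either factor is legal. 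Associativity and the unit laws are inherited from $\Cover$ by pushing identities and associated composites through $\sem{\cdot}$, and $\sem{\cdot}$ is a functor directly by the definition of composition in $\Cover/\cl$.

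For part (2), I would verify that $I$ is well-defined, bijective on hom-sets, and functorial. Well-definedness: if two $\Cover$-morphisms share a closure then $\rR_l ; \rH = \rR_l^\bullet ; \rH = \rS_l^\bullet ; \rH = \rS_l ; \rH$ by Lemma \ref{lem:cl_hom}.1, so the associated $\BiCliq$-morphism depends only on the class. Injectivity: by Lemma \ref{lem:cl_hom}.3.i the closure of any $\Cover$-morphism with associated $\BiCliq$-morphism $\rR$ is the pair $(\rR_-,\rR_+)$, which is determined by $\rR$ alone, so equal $\BiCliq$-morphisms force equal classes. Surjectivity: the maximum witness $(\rR_-,\rR_+)$ of any $\BiCliq$-morphism $\rR$ is a $\Cover$-morphism (Lemma \ref{lem:bicliq_mor_char_max_witness}.2) with $I\sem{(\rR_-,\rR_+)} = \rR$. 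Finally $I$ is functorial: it sends $(\Delta_{\rG_s},\Delta_{\rG_t})$ to $\Delta_{\rG_s} ; \rG = \rG = id_\rG$, and it preserves composition because $I$ of the quotient-composite $\sem{(\rR_l ; \rS_l,\, \rS_r ; \rR_r)}$ equals $(\rR_l ; \rS_l) ; \rI = \rR_l ; (\rS_l ; \rI) = \rR_l ; \rS = \rR \fatsemi \rS$, the last step being the witness-independent formula of Lemma \ref{lem:bicliq_func_comp}. An identity-on-objects functor bijective on morphisms is an isomorphism of categories.

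For part (3), $\cl = I \circ \sem{\cdot}$ is a functor as a composite of functors. Order-preservation is immediate: $\rR_l \subseteq \rS_l$ gives $\rR = \rR_l ; \rH \subseteq \rS_l ; \rH = \rS$ by monotonicity of relational composition, and the equivalent statement $(\rR_-,\rR_+) \leq_{\rG,\rH} (\rS_-,\rS_+)$ holds because $\cl_{\rG,\rH}$ is monotone (Lemma \ref{lem:cl_hom}.2) and maps these two morphisms to $(\rR_-,\rR_+)$ and $(\rS_-,\rS_+)$ (Lemma \ref{lem:cl_hom}.3.i). The main obstacle is simply keeping the two composition conventions straight — pairwise relational composition of $\Cover$-representatives versus $\BiCliq$-composition $\fatsemi$ — when checking that $I$ respects composites; once the identity $\rR \fatsemi \rS = \rR_l ; \rS$ is invoked the verification collapses, and all genuine work is already absorbed into Lemma \ref{lem:cl_hom}.
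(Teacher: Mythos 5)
Your proposal is correct and follows essentially the same route as the paper: part (1) from Lemma \ref{lem:cl_hom}.5, part (2) by checking $I$ is identity-on-objects, well-defined and bijective on hom-sets via the maximum witnesses $(\rR_-,\rR_+)$ and functorial via the witness-independent composite $\rR \fatsemi \rS = \rR_l ; \rS$, and part (3) by monotonicity of relational composition. You merely spell out some steps the paper leaves implicit (e.g.\ the two-step substitution argument for representative-independence of $\fatsemi$, where the paper just cites Lemma \ref{lem:cl_hom}.5 and MacLane), so no further comment is needed.
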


\begin{proof}
\item
\begin{enumerate}
\item
Follows via Lemma \ref{lem:cl_hom}.5, also see section II.8 on `Quotient functors' in MacClane's book.

\item
$I$'s action on objects and morphisms is well-defined, noting that elements of the same equivalence class induce the same $\BiCliq$-morphism by definition. Concerning preservation of identity morphisms:
\[
\begin{tabular}{lll}
$I id_\rG$
&
$= I \sem{(\Delta_{\rG_s},\Delta_{\rG_t})}_{\rG,\rG}$
\\&
$= I \sem{(\rG_-,\rG_+)}_{\rG,\rG}$
& $(\Delta_{\rG_s},\Delta_{\rG_t})$ a $\rG$-witness, Lemma \ref{lem:cl_hom}.3
\\&
$= \rG_- ; \rG$
\\&
$= \rG$
\end{tabular}
\]
and regarding preservation of composition:
\[
\begin{tabular}{lll}
$I (\sem{(\rR_l,\rR_r)}_{\rG,\rH} \fatsemi \sem{(\rS_l,\rS_r)}_{\rH,\rI})$
&
$= I \sem{(\rR_l,\rR_r) ; (\rS_l,\rS_r)}_{\rG,\rI}$
& by definition
\\&
$= \rR \fatsemi \rS$
& by Corollary \ref{cor:bicliq_func_comp}
\end{tabular}
\]
Next, $I$ is faithful because distinct equivalence classes induce distinct $\BiCliq$-morphisms. It is full by passing from $\rR : \rG \to \rH$ to $\sem{(\rR_-,\rR_+)}_{\rG,\rH}$. Finally it acts like the identity on objects, so we have an isomorphism of categories.

\item
Consequently the composite functor $\cl := I \circ \sem{\cdot}$ is well-defined. Then it preserves the natural ordering on morphisms: given $\rR_l \subseteq \rS_l$ then $\rR = \rR_l ; \rH \subseteq \rS_l ; \rH = \rS$ because relational composition is monotonic separately in each argument.
\end{enumerate}
\end{proof}

We now deduce an important property, viewing $\BiCliq$-morphisms as components $(\rR_-,\rR_+)$.

\begin{corollary}
For any $n \geq 0$ and any chain of $\BiCliq$-morphisms $((\rR_-^i,\rR_+^i) : \rG_i \to \rG_{i+1})_{1 \leq i \leq n}$,
\[
(\rR_-^1,\rR_+^1) \fatsemi \cdots \fatsemi (\rR_-^n,\rR_+^n)
= \cl_{(\rG_1,\rG_{n+1})}((\rR^l_-,\rR^1_+);\cdots;(\rR^n_-,\rR^n_+))
\]
By the usual convention, the case $n = 0$ is the fact that $(\rG_-,\rG_+) = \cl_{(\rG,\rG)}(\Delta_{\rG_s},\Delta_{\rG_t})$.
\end{corollary}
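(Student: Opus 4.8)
The plan is to derive this corollary as a direct consequence of the isomorphism $I : \Cover/\cl \to \BiCliq$ established in the preceding theorem, together with the functoriality of the canonical quotient map $\sem{\cdot} : \Cover \to \Cover/\cl$. The essential observation is that $\BiCliq$-composition of the components is, by the definition of $\cl = I \circ \sem{\cdot}$, the same as composing in the quotient category and then applying $I$; and composing in the quotient category is computed by representatives using ordinary pairwise relational composition in $\Cover$. So I would phrase the whole argument as: $\BiCliq$-composition equals $\Cover$-composition followed by a single closure.

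First I would recall that each $\BiCliq$-morphism $(\rR_-^i,\rR_+^i) : \rG_i \to \rG_{i+1}$ is, by definition of the component relations and Lemma \ref{lem:cl_hom}.3, the closure $\cl_{\rG_i,\rG_{i+1}}(\rR_-^i,\rR_+^i)$ of itself viewed as a $\Cover$-morphism (closure operators are idempotent, so $(\rR_-^i,\rR_+^i)$ is a fixpoint). Hence under the isomorphism $I$, the morphism $(\rR_-^i,\rR_+^i)$ corresponds to the class $\sem{(\rR_-^i,\rR_+^i)}_{\rG_i,\rG_{i+1}}$ in $\Cover/\cl$. Since $I$ preserves composition and acts as the identity on objects, I compute
\[
(\rR_-^1,\rR_+^1) \fatsemi \cdots \fatsemi (\rR_-^n,\rR_+^n)
= I\bigl( \sem{(\rR_-^1,\rR_+^1)}_{\rG_1,\rG_2} \fatsemi \cdots \fatsemi \sem{(\rR_-^n,\rR_+^n)}_{\rG_n,\rG_{n+1}} \bigr).
\]
Then by the definition of composition in $\Cover/\cl$ (compose representatives in $\Cover$, take the class), the bracketed term equals $\sem{(\rR_-^1,\rR_+^1) ; \cdots ; (\rR_-^n,\rR_+^n)}_{\rG_1,\rG_{n+1}}$, where $;$ denotes $\Cover$-composition, i.e.\ pairwise relational composition $(\rR_-^1;\cdots;\rR_-^n, \ \rR_+^n;\cdots;\rR_+^1)$. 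Applying $I$ to this class yields precisely $\cl_{(\rG_1,\rG_{n+1})}((\rR_-^1,\rR_+^1);\cdots;(\rR_-^n,\rR_+^n))$, since $I$ of a class is the associated $\BiCliq$-morphism, which is the closure of any representative.

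For the base case $n=0$, the empty composite is by convention the identity $\BiCliq$-morphism $(\rG_-,\rG_+) = id_\rG$, and the empty $\Cover$-composite is $(\Delta_{\rG_s},\Delta_{\rG_t}) = id_\rG$ in $\Cover$; the claimed equality $(\rG_-,\rG_+) = \cl_{(\rG,\rG)}(\Delta_{\rG_s},\Delta_{\rG_t})$ is then exactly the statement that $(\Delta_{\rG_s},\Delta_{\rG_t})$ witnesses $\rG$ as a $\Cover$-morphism with associated $\BiCliq$-morphism $\rG = id_\rG$, whose maximum witness is $(\rG_-,\rG_+)$ by Lemma \ref{lem:bicliq_mor_char_max_witness}.2 (cf.\ the identity-preservation computation in the proof of the isomorphism theorem). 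I do not expect a genuine obstacle here, as the corollary is essentially a repackaging of functoriality; the only point requiring care is the bookkeeping of the order reversal in the $(-)_+$ components versus the $(-)_-$ components, which is dictated by the definition of $\Cover$-composition $(\rR_l,\rR_r);(\rS_l,\rS_r) := (\rR_l;\rS_l,\rS_r;\rR_r)$ and must be stated consistently with the displayed right-hand side.
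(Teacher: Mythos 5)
Your proof is correct and follows essentially the same route as the paper, which disposes of the corollary in one line as ``the action of $\cl : \Cover \to \BiCliq$ on composite morphisms''; your expansion via the quotient category $\Cover/\cl$ and the isomorphism $I$ is just an unpacking of the definition $\cl = I \circ \sem{\cdot}$ together with the observation that each $(\rR_-^i,\rR_+^i)$ is a closure fixpoint. The handling of the $n=0$ case and the order reversal in the positive components is also consistent with the paper.
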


\begin{proof}
This is simply the action of $\cl : \Cover \to \BiCliq$ on composite morphisms.
\end{proof}

\subsection{Dedekind-MacNeille completions}

\begin{definition}[Dedekind-MacNeille completion of finite posets]
\label{def:dedekind_macneille_completion}
\item
Given any finite poset $\pP$ then:
\begin{enumerate}
\item
its \emph{Dedekind-MacNeille completion} is the finite join-semilattice $\DeMc{\pP} := \Open\nleq_\pP = (O(\nleq_\pP,\cup,\emptyset)$. 
\item
its associated \emph{canonical order-embedding} is defined:
\[
e_\pP : \pP \to (O(\nleq_\pP),\subseteq)
\qquad\text{where}\qquad 
e_\pP(p) := \, \nleq_\pP[p] = \overline{\up_\pP p}.
\]
noting that $\nleq_\pP[p] = \overline{\leq_\pP}[p] = \overline{\leq_\pP[p]} = \overline{\up_\pP p}$.
\endbox
\end{enumerate}
\end{definition}

\smallskip

\begin{theorem}[[Dedekind-MacNeille embedding for finite posets]
\label{thm:dedekind_macneille_construction}
\item
$e_\pP : \pP \to U\DeMc{\pP}$ is a well-defined order-embedding, and preserves all meets and joins which exist in $\pP$.
\end{theorem}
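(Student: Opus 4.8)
The plan is to verify the three claims separately: well-definedness (each $e_\pP(p)$ really lands in $O(\nleq_\pP)$), the order-embedding property, and preservation of existing joins and meets. Throughout I would exploit two concrete facts about the relation $\rG := \nleq_\pP$: first, $\nleq_\pP[p] = \overline{\up_\pP p}$, as already recorded in Definition \ref{def:dedekind_macneille_completion}; and second, by Lemma \ref{lem:lat_op_cl}.3 the $\nleq_\pP$-open sets are exactly the images $\nleq_\pP[S] = \bigcup_{s \in S}\overline{\up_\pP s}$, whose complements $\overline{\nleq_\pP[S]} = \{ p : \forall s \in S.\, s \leq_\pP p \}$ are precisely the sets of upper bounds of $S$. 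This translation between open sets and upper/lower bounds is what turns the abstract interior operator into the familiar cut structure, and it is the tool I lean on in every part.

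Well-definedness is then immediate: $e_\pP(p) = \nleq_\pP[\{p\}]$ is the image of a singleton, hence $\nleq_\pP$-open by Lemma \ref{lem:lat_op_cl}.3. For the order-embedding property I would reduce $e_\pP(p_1) \subseteq e_\pP(p_2)$ to $\overline{\up_\pP p_1} \subseteq \overline{\up_\pP p_2}$, i.e.\ to $\up_\pP p_2 \subseteq \up_\pP p_1$, and then invoke the elementary poset equivalence $p_1 \leq_\pP p_2 \iff \up_\pP p_2 \subseteq \up_\pP p_1$. Since order-embeddings are automatically injective (Definition \ref{def:std_order_theory}.14), this simultaneously settles injectivity.

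For join preservation, recall that joins in $\DeMc{\pP} = \Open\nleq_\pP$ are unions, since the fixpoints of $\inte_{\nleq_\pP}$ are closed under unions by Lemma \ref{lem:clo_co_basic}.3. So when $s = \Lor_\pP X$ exists I would compute $\bigcup_{x\in X} e_\pP(x) = \overline{\bigcap_{x\in X} \up_\pP x}$ and observe that $\bigcap_{x} \up_\pP x$ is exactly the set of upper bounds of $X$, which by the defining property of the least upper bound equals $\up_\pP s$. Taking complements yields $\bigcup_x e_\pP(x) = \overline{\up_\pP s} = e_\pP(s)$, which is the required join in $\DeMc{\pP}$; the empty case recovers $\emptyset = e_\pP(\bot_\pP)$.

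Meet preservation is the main obstacle, because meets in $\DeMc{\pP}$ are \emph{not} intersections: the meet of a family $\{ Y_i \}$ is $\inte_{\nleq_\pP}(\bigcap_i Y_i)$, the largest $\nleq_\pP$-open set contained in the intersection (Definition \ref{def:bip_cl_inte_lattice} together with the description of $\inte$ as the largest open subset in Lemma \ref{lem:cl_inte_of_pirr}.1). Writing $m = \Land_\pP X$ and $Z = \bigcap_{x\in X} e_\pP(x)$, I would prove $e_\pP(m) = \inte_{\nleq_\pP}(Z)$ by two inclusions. Since $e_\pP(m)$ is open, the inclusion $e_\pP(m) \subseteq \inte_{\nleq_\pP}(Z)$ reduces to $e_\pP(m) \subseteq Z$: if $m \nleq_\pP p$ while $x \leq_\pP p$ for some $x \in X$, then $m \leq_\pP x \leq_\pP p$ gives a contradiction. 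For the reverse inclusion I would take an arbitrary $\nleq_\pP$-open $Y \subseteq Z$, write $Y = \nleq_\pP[S]$ via Lemma \ref{lem:lat_op_cl}.3, and deduce from $\overline{Z} \subseteq \overline{Y}$ that every $s \in S$ is a lower bound of $X$, hence $s \leq_\pP m$; then for any $p \in Y$ there is $s \in S$ with $s \nleq_\pP p$, and $s \leq_\pP m$ forces $m \nleq_\pP p$, i.e.\ $p \in e_\pP(m)$. Thus every open subset of $Z$ is contained in $e_\pP(m)$, giving $\inte_{\nleq_\pP}(Z) \subseteq e_\pP(m)$ and hence equality. The delicate point is precisely this second inclusion, where the interior operator must be controlled by reasoning about lower bounds rather than by any naive set-theoretic manipulation.
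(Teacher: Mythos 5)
Your proof is correct. The well-definedness, order-embedding, and join-preservation arguments coincide with the paper's: in each case both proofs reduce to the identity $e_\pP(p) = \overline{\up_\pP p}$ and the elementary equivalences $p_1 \leq_\pP p_2 \iff \up_\pP p_2 \subseteq \up_\pP p_1$ and $\bigcap_{x \in X} \up_\pP x = \up_\pP \Lor_\pP X$. Where you genuinely diverge is meet preservation, which is indeed the only nontrivial step. The paper sidesteps the interior operator entirely: it invokes the isomorphism $\theta_{\nleq_\pP} : \latCl{\nleq_\pP} \to \latOp{\nleq_\pP}$ of Definition \ref{def:bip_cl_inte_lattice}.3, computes that the embedding image $\nleq_\pP[p]$ corresponds to the closed set $\down_\pP p$, observes that meets of closed sets are plain intersections and that $\bigcap_{p \in X} \down_\pP p = \down_\pP \Land_\pP X$, and transports back. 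You instead stay in the open-set lattice and prove $e_\pP(\Land_\pP X) = \inte_{\nleq_\pP}\bigl(\bigcap_{x \in X} e_\pP(x)\bigr)$ by two inclusions, using Lemma \ref{lem:lat_op_cl}.3 to write an arbitrary open subset of the intersection as $\nleq_\pP[S]$ and then reasoning that every element of $S$ is a lower bound of $X$. Both arguments are sound; the paper's is shorter because the closed-set picture makes the meet computation trivial, while yours is more self-contained in that it never leaves $\Open\nleq_\pP$ and makes explicit exactly how the interior operator is tamed — at the cost of the somewhat delicate second inclusion you correctly flag.
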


\begin{proof}
$e_\pP$ is a well-defined function because $\DeMc{\pP}$ has carrier $O(\nleq_\pP) = \{ \nleq_\pP[X] : X \subseteq P \}$. Then:
\[
p_1 \leq_\pP p_2
\iff \up_\pP p_2 \subseteq \, \up_\pP p_1
\iff \overline{\up_\pP p_1} \subseteq \overline{\up_\pP p_2}
\iff e_\pP(p_1) \leq_{\Open\nleq_\pP} e_\pP(p_2).
\]
so that $e_\pP$ is an order-embedding. Next, given that $\Lor_\pP X$ exists we'll show that $e_\pP$ preserves this join:
\[
\Lor_{\Open\nleq_\pP} e[X]
= \bigcup_{p \in P} \overline{\up_\pP p} 
= \overline{\bigcap_{p \in P} \up_\pP p }
= \overline{\up_\pP \Lor_\pP X}
= e_\pP(\Lor_\pP X).
\]
Finally suppose that $\Land_\pP X$ exists. Recalling Definition \ref{def:bip_cl_inte_lattice}.3, the join-semilattice of open sets $\Open\nleq_\pP$ is isomorphic to the join-semilattice of closed sets $(C(\nleq_\pP),\lor,P)$ whose meet is intersection. This isomorphism acts on the embedding image as follows:
\[
\nleq_\pP[p]
\quad\mapsto\quad \nleq_\pP^\down(\nleq_\pP[p]) 
= \{ p' \in P : \,\nleq_\pP[p'] \subseteq \,\nleq_\pP[p] \}
\stackrel{!}{=} \{ p' \in P : p' \leq_\pP p \}
= \; \down_\pP p
\]
where in the marked equality we recall that $e_\pP$ is an order-embedding. Then since:
\[
\Land_{(C(\nleq_\pP),\lor,P)} \{ \; \down_\pP p : p \in X \}
= \bigcap_{p \in X} \down_\pP p
= \;\down_\pP \Land_\pP X
\]
applying the inverse join-semilattice isomorphism we deduce that $e_\pP$ preserves the meet $\Land_\pP X$.
\end{proof}

% TODO
% clarify

\subsection{Canonical embeddings and quotients}

Every finite join-semilattice $\aQ$ arises canonically as a quotient of $\JPow J(\aQ)$. It also embeds into $\JPow M(\aQ)$. In particular, we have the join-preserving morphisms:
\[
\begin{tabular}{lll}
$e_\aQ : \aQ \monoto \JPow M(\aQ)$
&& $\sigma_\aQ : \JPow J(\aQ) \epito \aQ$
\\
$e_\aQ(q) := \{ m \in J(\aQ) : q \nleq_\aQ m \}$
&& $\sigma_\aQ(S) := \Lor_\aQ S$.
\end{tabular}
\]
In this subsection we:
\begin{enumerate}
\item
Explain that these two constructions are adjoint.
\item
Prove a `tight extension lemma' involving them.
\item
Show how canonical embeddings/quotients can be defined parametric in a relation $\rG$,  generalising $e_\aQ$ and $\sigma_\aQ$.
\end{enumerate}

\smallskip

\begin{lemma}
[Adjoint relationships involving $e_\aQ$ and $\sigma_\aQ$]
\label{lem:sigma_e_q_adjoints}
For every finite join-semilattice $\aQ$ we have the commuting diagram:
\[
\xymatrix@=15pt{
\JPow J(\aQ) \ar[d]_{(\neg_{J(\aQ)})^{\bf-1}} \ar@{->>}[dr]^{\sigma_\aQ}  \ar[rr]^{\Pirr\aQ^\up} && \JPow M(\aQ)
\\
(\JPow J(\aQ))^{\pOp} \ar@{->>}[dr]_{(e_{\aQ^{\pOp}})_*}  & \aQ \ar[d]^-{id_\aQ} \ar@{>->}[ur]^{e_\aQ} & (\JPow M(\aQ))^{\pOp} \ar[u]_-{\neg_{M(\aQ)}} 
\\
 & \aQ \ar@{>->}[ur]_{(\sigma_{\aQ^{\pOp}})_*} &
}
\]
Equivalently, we have the three equalities:
\[
\begin{tabular}{llllll}
$\mathrm{(a)}$ & $(\Pirr\aQ)^\up = e_\aQ \circ \sigma_\aQ$ &
$\mathrm{(b)}$ & $\sigma_\aQ = (e_{\aQ^{\pOp}})_* \circ (\neg_{J(\aQ)})^{\bf-1}  $ &
$\mathrm{(c)}$ & $e_\aQ = \neg_{M(\aQ)} \circ (\sigma_{\aQ^{\pOp}})_*$
\end{tabular}
\]
\end{lemma}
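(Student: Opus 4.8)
The plan is to verify the three labelled equalities (a), (b), (c) directly, since these are precisely the commutativities asserted by the diagram (the two triangles together with the outer square determine each arrow). The only tools needed are the explicit adjoint formula $f_*(r) = \Lor_\aQ \{ q : f(q) \leq_\aR r \}$ recorded in Theorem \ref{thm:jsl_self_dual}, the adjoint relationship of Lemma \ref{lem:adj_obs}.1, and the order-dual bookkeeping $J(\aQ^{\pOp}) = M(\aQ)$, $M(\aQ^{\pOp}) = J(\aQ)$, under which joins in $\aQ^{\pOp}$ become meets in $\aQ$. I would also use repeatedly that $\Lor_\aQ S \leq_\aQ m \iff \forall j \in S.\, j \leq_\aQ m$.

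For (a) I would compute both sides. Since $(\Pirr\aQ)^\up$ is the image function of $\nleq_\aQ \subseteq J(\aQ) \times M(\aQ)$, for $S \subseteq J(\aQ)$ we get $(\Pirr\aQ)^\up(S) = \{ m \in M(\aQ) : \exists j \in S.\, j \nleq_\aQ m \}$. On the other side $e_\aQ \circ \sigma_\aQ(S) = e_\aQ(\Lor_\aQ S) = \{ m \in M(\aQ) : \Lor_\aQ S \nleq_\aQ m \}$, and these two sets coincide by the displayed characterisation of $\Lor_\aQ S \leq_\aQ m$. This step is an entirely routine unwinding of definitions.

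The real content is in (b) and (c), each of which reduces to one adjoint computation. Applying the adjoint formula to $e_{\aQ^{\pOp}} : \aQ^{\pOp} \to \JPow J(\aQ)$ (using $M(\aQ^{\pOp}) = J(\aQ)$), one has $e_{\aQ^{\pOp}}(q) = \{ j \in J(\aQ) : j \nleq_\aQ q \}$, whence $e_{\aQ^{\pOp}}(q) \subseteq Y \iff \Lor_\aQ \overline{Y} \leq_\aQ q$ with $\overline{Y} = J(\aQ)\setminus Y$; taking the $\aQ^{\pOp}$-join over such $q$ gives $(e_{\aQ^{\pOp}})_*(Y) = \Lor_\aQ \overline{Y}$. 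Since $(\neg_{J(\aQ)})^{\bf-1}$ acts as $S \mapsto \overline{S}$, precomposition yields $(e_{\aQ^{\pOp}})_* \circ (\neg_{J(\aQ)})^{\bf-1}(S) = \Lor_\aQ S = \sigma_\aQ(S)$, which is (b). Symmetrically, the adjoint formula applied to $\sigma_{\aQ^{\pOp}} : \JPow M(\aQ) \to \aQ^{\pOp}$ gives $(\sigma_{\aQ^{\pOp}})_*(q) = \{ m \in M(\aQ) : q \leq_\aQ m \}$, and postcomposing with $\neg_{M(\aQ)}$ (again complementation) produces $\{ m : q \nleq_\aQ m \} = e_\aQ(q)$, which is (c).

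As a cross-check, and to avoid duplicating work, I would note that (c) is the order-dual of (b): instantiating (b) at $\aQ^{\pOp}$ and taking adjoints transports it into (c), using $(g \circ f)_* = f_* \circ g_*$, the identity $(f_*)_* = f$ from the naturality of $rep$, and Lemma \ref{lem:adj_obs}.2 to handle the negation isomorphisms. The sole obstacle here is organisational rather than mathematical: keeping the types of the various $\neg$ and $(-)_*$ arrows straight and not conflating $\Lor$ with $\Land$ under the passage to $\aQ^{\pOp}$. Once the adjoint of each canonical map is pinned down, every equality falls out by substitution.
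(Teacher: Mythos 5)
Your proposal is correct and follows essentially the same route as the paper: (a) by unwinding both sides against the characterisation of $\Lor_\aQ S \leq_\aQ m$ (the paper checks this only on singletons, which suffices by join-preservation), and (b) by computing $(e_{\aQ^{\pOp}})_*$ explicitly via the adjoint formula. The only cosmetic difference is that you prove (c) by a direct symmetric computation and mention the duality transport as a cross-check, whereas the paper obtains (c) solely by dualising (b); both are fine.
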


\begin{proof}
\item
\begin{enumerate}[(a)]
\item
Recall $\Pirr\aQ = \; \nleq_\aQ \; \subseteq J(\aQ) \times M(\aQ)$ and observe $e_\aQ \circ \sigma_\aQ(\{j\}) = \{ m \in M(\aQ) : j \nleq_\aQ m \} = \;\nleq_\aQ[j]$ for all $j \in J(\aQ)$.

\item
First observe that $e_{\aQ^{\pOp}} : \aQ^{\pOp} \monoto \JPow M(\aQ^{\pOp}) = \JPow J(\aQ)$ has action:
\[
e_{\aQ^{\pOp}}(q) 
:= \{ m \in M(\aQ^{\pOp}) : q \nleq_{\aQ^{\pOp}} m \}
= \{ j \in J(\aQ) : j \nleq_\aQ m \}
\]
Then for any subset $X \subseteq J(\aQ)$ we calculate:
\[
\begin{tabular}{lll}
$(e_{\aQ^{\pOp}})_* \circ (\neg_{J(\aQ)})^{\bf-1}(X)$
&
$=  (e_{\aQ^{\pOp}})_*(\overline{X})$
\\&
$= \Lor_{\aQ^{\pOp}} \{ q \in Q : e_{\aQ^{\pOp}}(q) \leq_{\JPow J(\aQ)} \overline{X} \}$
\\&
$= \Land_\aQ \{ q \in Q : \{ j \in J(\aQ) : j \nleq_\aQ q \} \subseteq \overline{X} \}$
\\&
$= \Land_\aQ \{ q \in Q : X \subseteq \{ j \in J(\aQ) : j \leq_\aQ q \} \}$
\\&
$= \Land_\aQ \{ q \in Q : \Lor_\aQ X \leq_\aQ q \}$
\\&
$= \Lor_\aQ X$
\\&
$= \sigma_\aQ(X)$
\end{tabular}
\]
as required.

\item
The third equality follows from the second i.e.\ (i) reassign $\aQ \mapsto \aQ^{\pOp}$, (ii) take the adjoints of both sides recalling that $(\neg_{J(\aQ^{\pOp})})^{\bf-1}$ is self-adjoint, and (iii) post-compose by the isomorphism $\neg_{J(\aQ^{\pOp})} = \neg_{M(\aQ)}$.

\end{enumerate}
\end{proof}

\begin{lemma}[Tight extension lemma]
\label{lem:tight_mor_extn}
\item
\begin{enumerate}
\item
Each join-semilattice morphism $f : \JPow Z \to \aQ$ has a canonical compatible morphism:
\[
\xymatrix@=15pt{
\JPow Z \ar[rr]^{\rJ{f}^\up} \ar[drr]_{f} && \JPow J(\aQ) \ar@{->>}[d]^{\sigma_\aQ}
\\
&& \aQ
}
\]
where $\rJ{f} := \{ (z,j) \in Z \times J(\aQ) : j \leq_\aQ f(\{z\}) \}$.

\item
Each join-semilattice morphism $f : \aQ \to \JPow Z$ has a canonical extension:
\[
\xymatrix@=15pt{
\JPow M(\aQ) \ar[rr]^{\rM{f}^\up} && \JPow Z
\\
\aQ \ar@{>->}[u]^{e_\aQ} \ar[urr]_{f}
}
\]
where $\rM{f} := \{ (m,z) \in M(\aQ) \times Z :  f_*(\overline{z}) \leq_\aQ m \}$.
\end{enumerate}

\end{lemma}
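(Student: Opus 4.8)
The plan is to treat both parts uniformly. Each statement asserts that a triangle of $\JSL_f$-morphisms commutes, and in each case the relation $\rJ{f}$ (resp.\ $\rM{f}$) gives rise to the image function $\rJ{f}^\up$ (resp.\ $\rM{f}^\up$), which is automatically a $\JSL_f$-morphism of the stated type by Lemma \ref{lem:up_down_basic}.1. Since $\sigma_\aQ$ and $e_\aQ$ are $\JSL_f$-morphisms too, both composites $\sigma_\aQ \circ \rJ{f}^\up$ and $\rM{f}^\up \circ e_\aQ$ are morphisms, so only the claimed equalities remain. I would verify these by evaluating on a generating set in part (1) and pointwise in part (2).

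For part (1), I would exploit that $\JPow Z$ is join-generated by its singletons, which are exactly $J(\JPow Z)$ since $\JPow Z$ is boolean (Lemma \ref{lem:std_order_theory}.8). Two $\JSL_f$-morphisms out of $\JPow Z$ that agree on all singletons agree everywhere, so it suffices to evaluate at each $\{z\}$. Unwinding the definitions,
\[
\sigma_\aQ(\rJ{f}^\up(\{z\})) = \Lor_\aQ \{ j \in J(\aQ) : j \leq_\aQ f(\{z\}) \},
\]
and this join is precisely $f(\{z\})$ because every element of a finite join-semilattice is the join of the join-irreducibles beneath it (Lemma \ref{lem:std_order_theory}.6). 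Hence $\sigma_\aQ \circ \rJ{f}^\up$ and $f$ coincide on singletons, and therefore everywhere.

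For part (2), I would compute $\rM{f}^\up(e_\aQ(q))$ directly for an arbitrary $q \in Q$ and match it against $f(q)$. The key reformulation is that membership in a value of $f$ is governed by the adjoint $f_*$: since $\overline{z} = Z \setminus \{z\}$ satisfies $f(q) \subseteq \overline{z} \iff z \notin f(q)$, the adjoint relationship of Lemma \ref{lem:adj_obs}.1 gives
\[
z \in f(q) \iff f(q) \nsubseteq \overline{z} \iff q \nleq_\aQ f_*(\overline{z}).
\]
On the other hand, writing $p := f_*(\overline{z})$, one has $z \in \rM{f}^\up(e_\aQ(q))$ iff there exists $m \in M(\aQ)$ with $p \leq_\aQ m$ and $q \nleq_\aQ m$. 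Thus the whole equation $\rM{f}^\up \circ e_\aQ = f$ reduces to the equivalence $q \nleq_\aQ p \iff \exists m \in M(\aQ).(p \leq_\aQ m \text{ and } q \nleq_\aQ m)$ for all $q, p \in Q$.

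The main obstacle — indeed the only nontrivial step — is this last equivalence. Its forward direction is immediate transitivity: if such an $m$ existed while $q \leq_\aQ p \leq_\aQ m$, we would contradict $q \nleq_\aQ m$. The backward direction is exactly the meet-irreducible characterisation of the order in Lemma \ref{lem:std_order_theory}.7, namely that $q \leq_\aQ p$ holds iff every meet-irreducible above $p$ lies above $q$; so $q \nleq_\aQ p$ furnishes a separating $m \in M(\aQ)$. I would also remark that part (2) can alternatively be derived from part (1) via the self-duality of $\JSL_f$ together with the relations in Lemma \ref{lem:sigma_e_q_adjoints}, but the direct pointwise argument is shorter and avoids tracking complements through the duality.
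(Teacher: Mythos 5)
Your proof is correct. Part (1) is essentially the paper's argument: evaluate on singletons and use that every element is the join of the join-irreducibles beneath it, concluding by freeness of $\JPow Z$. Part (2) is where you genuinely diverge. The paper derives (2) from (1) by self-duality: it sets $g := f_* \circ (\neg_Z)^{\bf-1} : \JPow Z \to \aQ^{\pOp}$, applies (1) to $g$, takes adjoints of the resulting equation, and then unwinds using $e_\aQ = \neg_{M(\aQ)} \circ (\sigma_{\aQ^{\pOp}})_*$ from Lemma \ref{lem:sigma_e_q_adjoints} together with the De Morgan duality $(\neg\down\neg)$, finally identifying $\rJ{g}\spbreve = \rM{f}$. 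You instead compute $\rM{f}^\up(e_\aQ(q))$ pointwise and reduce the whole equation, via the adjoint relationship $z \in f(q) \iff q \nleq_\aQ f_*(\overline{z})$, to the separation property $q \nleq_\aQ p \iff \exists m \in M(\aQ).(p \leq_\aQ m \text{ and } q \nleq_\aQ m)$, which is exactly the negation of Lemma \ref{lem:std_order_theory}.7. Both are valid; the paper's route exhibits (2) as the formal dual of (1) and explains why $\rM{f}$ has the shape it does, at the cost of tracking complement isomorphisms and adjoints, while yours is shorter, self-contained, and makes the role of meet-irreducible separation explicit. One cosmetic slip: you have swapped the labels ``forward'' and ``backward'' for the two implications of your key equivalence (the transitivity argument proves $\exists m \Rightarrow q \nleq_\aQ p$, and Lemma \ref{lem:std_order_theory}.7 supplies $q \nleq_\aQ p \Rightarrow \exists m$), but both directions are correctly established, so this does not affect the proof.
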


\begin{proof}
\item
\begin{enumerate}
\item
Recalling that $\sigma_\aQ(S) := \Lor_\aQ S$, we have:
\[
\sigma_\aQ \circ \rJ{f}^\up(\{z\})
= \sigma_\aQ (\rJ{f}[z])
= \Lor_\aQ \{ j \in J(\aQ) : j \leq_\aQ f(\{z\}) \}
= f(\{z\})
\]
for each $z \in Z$, because every element is the join of those join-irreducibles beneath it. Thus commutativity follows by the freeness of $\JPow Z$.

\item
The second statement follows from the first via duality. That is, given $f$ then we  define $g := f_* \circ (\neg_Z)^{\bf-1} : \JPow Z \to \aQ^{\pOp}$ where the self-adjoint (and self-inverse as a function) isomorphism $(\neg_Z)^{\bf-1} = ((\neg_Z)^{\bf-1})_* : \JPow Z \to (\JPow Z)^{\pOp}$ takes the relative complement. Applying the first statement yields $
\sigma_{\aQ^{\pOp}} \circ \rJ{g}^\up = g = f_* \circ (\neg_Z)^{\bf-1}$ where $\rJ{g} \subseteq Z \times M(\aQ)$. Equivalently $(\neg_Z)^{\bf-1} \circ f = (\rJ{g}^\up)_* \circ (\sigma_{\aQ^{\pOp}})_*$ by taking adjoints, so post-composing with $\neg_Z$ yields:
\[
\begin{tabular}{lll}
$f$ 
&
$= \neg_Z \circ (\rJ{g}^\up)_* \circ (\sigma_{\aQ^{\pOp}})_*$
\\&
$= \neg_Z \circ \rJ{g}^\down \circ (\sigma_{\aQ^{\pOp}})_*$
& by $(\up\dashv\down)$
\\&
$= (\rJ{g}\spbreve)^\up \circ \neg_{M(\aQ)} \circ (\sigma_{\aQ^{\pOp}})_*$
& by De Morgan duality
\\&
$= (\rJ{g}\spbreve)^\up \circ e_\aQ$
& by Lemma \ref{lem:sigma_e_q_adjoints}.(b)
\end{tabular}
\]
Finally we have $\rJ{g}\spbreve = \rM{f}$ because: 
\[
\rJ{g}\spbreve[m]
= \{ z \in Z : m \leq_{\aQ^{\pOp}} g(\{z\}) \}
= \{ z \in Z : f_* (\overline{z}) \leq_\aQ m \}
= \rM{f}[m]
\]
for all $m \in M(\aQ)$.
\end{enumerate}
\end{proof}

We now define `similar' join-semilattice morphisms for any bipartite graph $\rG$.

\begin{definition}[Canonical embedding and quotient arising from a bipartite graph]
\item
\label{def:bip_canon_quo_incl}
For each bipartite graph $\rG$ take the unique (surjection,inclusion) factorisation of the $\JSL_f$-morphism $\rG^\up : \JPow \rG_s \to \JPow \rG_t$:
\[
\xymatrix@=15pt{
\JPow \rG_s \ar@{->>}[dr]_{\sigma_\rG} \ar[rr]^-{\rG^\up} && \JPow \rG_t
\\
& \Open\rG \ar@{>->}[ur]_{\iota_\rG} &
}
\qquad
\begin{tabular}{c}
\\\\
where necessarily $\sigma_\rG (X) := \rG[X]$ and $\iota_\rG(X) := X$
\end{tabular}
\]
recalling that $\Open\rG = (O(\rG),\cup,\emptyset)$ consists precisely of the sets $\rG[X]$ where $X \subseteq \rG_t$ by Lemma \ref{lem:lat_op_cl}.3. \endbox
\end{definition}

We shall see that $e_\aQ$ and $\iota_{\Pirr\aQ}$ are the `same maps', but we also have the maps $\iota_\rG$ for arbitrary $\rG$. We use the symbol `$e$' because $e_\aQ$ is an embedding which is never an inclusion, whereas $\iota_\rG$ is an inclusion so we use the symbol `$\iota$'. Likewise the surjective join-semilattice morphisms $\sigma_\aQ$ and $\sigma_{\Pirr\aQ}$ are essentially the same concepts. This will clarify the sense in which $e_\aQ$ and $\sigma_\aQ$ are `canonical' morphisms.

\begin{note}
\label{note:canon_embed_quo_alt}
One could also view $\rG$ as the join-semilattice morphism $\rG^\down : (\Pow \rG_t,\cap,\rG_t) \to (\Pow \rG_s,\cap,\rG_s)$ and take the unique (surjection,inclusion) factorisaton. The induced factor is then $(C(\rG),\cap,\rG_s)$ recalling that $C(\rG)$ consists of all sets $\rG^\down(Y)$ where $Y \subseteq \rG_t$ by Lemma \ref{lem:lat_op_cl}.3. All our subsequent results can be rephrased in terms of these factorisations via the bounded lattice isomorphisms from Lemma \ref{lem:lat_op_cl}.2:
\[
\begin{tabular}{llll}
$\theta_\rG : (C(\rG),\lor_{\latCl{\rG}},\rG^\down(\emptyset),\cap,\rG_s) \to (O(\rG),\cup,\emptyset,\land_{\latOp{\rG}},\rG[\rG_s])$
& where & $\theta_\rG(X) := \rG[X]$ 
&
$\theta_\rG^{\bf-1}(Y) := \rG^\down(Y)$ 
\\[0.5ex]
$\kappa_\rG : (C(\rG),\cap,\rG_s,\lor_{\latCl{\rG}},\rG^\down(\emptyset)) \to (O(\breve{\rG}),\cup,\emptyset,\land_{\latOp{\breve{\rG}}},\breve{\rG}[\rG_t])$
& where & $\kappa_\rG(X) := \overline{X}$
&
$\kappa_\rG^{\bf-1}(Y) := \overline{Y}$
\end{tabular}
\]
However, the very same isomorphisms allow us to suppress the closure semilattices. \endbox
\end{note}

\smallskip
Just as the morphisms $e_\aQ$ and $\sigma_\aQ$ collectively satisfy an adjoint relationship, so too do the morphisms $\iota_\rG$ and $\sigma_\rG$. In order to describe it, we first need explicit notation for a certain composite  isomorphism.

\begin{definition}
[Isomorphism representing the order-dual of $\Open\rG$]
\label{def:open_dual_iso}
\item
For each bipartite graph $\rG \subseteq \rG_s \times \rG_t$ we have the join-semilattice isomorphism:
\[
\begin{tabular}{lll}
$\partial_\rG : (\Open\rG)^{\pOp} \xto{(\theta_\rG^{\bf-1})^{\pOp}} 
(C(\rG),\cap,\rG_s) \xto{\kappa_\rG}
\Open\breve{\rG}$
&
with action $\partial_\rG(X) = \overline{\rG^\down(X)} = \breve{\rG}[\overline{X}]$
\\[0.3ex]
$\partial_\rG^{\bf-1} : \Open\breve{\rG} \xto{\kappa_{\rG}^{\bf-1}} 
(C(\rG),\cap,\rG_s) \xto{\theta_\rG^{\pOp}}
(\Open\rG)^{\pOp}$
&
with action $\partial_\rG^{\bf-1}(X) = \rG[\overline{X}]$ 
\end{tabular}
\]
where well-definedness follows by restricting Lemma \ref{lem:lat_op_cl}.2 i.e.\ the bounded lattice isomorphisms also described in Note \ref{note:canon_embed_quo_alt} directly above, and also De Morgan duality. \endbox
\end{definition}

The above isomorphisms are collectively closed under adjoints, and also collectively relate the components of the canonical natural isomorphism $rep : \Id_{\JSL_f} \To \Open \circ \Pirr$.

\begin{lemma}[Basic properties of the isomorphisms $\partial_\rG$]
\label{lem:open_dual_iso_adjoints}
\item
\begin{enumerate}
\item
For every bipartite graph $\rG$ we have:
\[
(\partial_\rG)_* = \partial_{\breve{\rG}}
\qquad\text{and}\qquad
(\partial_\rG^{\bf-1})_* = \partial_{\breve{\rG}}^{\bf-1}
\]

\item
For every finite join-semilattice $\aQ$ we have:
\[
\xymatrix@=15pt{
(\Open \circ \Pirr \aQ^{\pOp})^{\pOp} \ar[rr]^-{(rep_{\aQ^{\pOp}})_*} && \aQ
\\
\Open \circ \Pirr \aQ \ar[urr]_-{rep_\aQ^{\bf-1}} \ar[u]^-{\partial_{\Pirr\aQ^{\pOp}}^{\bf-1}} &
}
\qquad
\qquad
\xymatrix@=15pt{
(\Open \circ \Pirr \aQ^{\pOp})^{\pOp} \ar[rr]^-{\partial_{\Pirr\aQ^{\pOp}}} && \Open \circ \Pirr \aQ
\\
\aQ \ar[urr]_-{rep_\aQ} \ar[u]^-{rep_{\aQ^{\pOp}}^{\pOp}} &
}
\]
\end{enumerate}
\end{lemma}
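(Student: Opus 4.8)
The plan is to derive everything from a single structural fact --- that adjoints of $\JSL_f$-isomorphisms act as order-dualised inverses (Lemma \ref{lem:adj_obs}.2) --- together with the explicit set-theoretic actions of $\partial_\rG$, $\partial_\rG^{\bf-1}$ and $rep$ already recorded above. The recurring point is that neither $(-)^{\pOp}$ nor $(-)_*$ alters an underlying set-function (the former by definition, the latter once Lemma \ref{lem:adj_obs}.2 is invoked), so each claimed equality reduces to comparing actions.

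For statement (1) I would first note that $\partial_\rG : (\Open\rG)^{\pOp} \to \Open\breve{\rG}$ is a $\JSL_f$-isomorphism, so Lemma \ref{lem:adj_obs}.2 yields $(\partial_\rG)_* = (\partial_\rG^{\bf-1})^{\pOp}$ and $(\partial_\rG^{\bf-1})_* = (\partial_\rG)^{\pOp}$, all of the correct type since $\breve{\breve{\rG}} = \rG$. It then suffices to compare underlying actions. Since $\partial_\rG^{\bf-1}(X) = \rG[\overline{X}]$, while substituting $\rG \mapsto \breve{\rG}$ into $\partial_\rG(X) = \breve{\rG}[\overline{X}]$ gives $\partial_{\breve{\rG}}(X) = \breve{\breve{\rG}}[\overline{X}] = \rG[\overline{X}]$, both $(\partial_\rG)_*$ and $\partial_{\breve{\rG}}$ send $X$ to $\rG[\overline{X}]$ and hence coincide. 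The second equality is symmetric: $(\partial_\rG)^{\pOp}$ and $\partial_{\breve{\rG}}^{\bf-1}$ both send $X$ to $\breve{\rG}[\overline{X}]$.

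For the first triangle of (2), namely $rep_\aQ^{\bf-1} = (rep_{\aQ^{\pOp}})_* \circ \partial_{\Pirr\aQ^{\pOp}}^{\bf-1}$, I would record the identifications $M(\aQ^{\pOp}) = J(\aQ)$, $\Land_{\aQ^{\pOp}} = \Lor_\aQ$ and $\breve{\Pirr\aQ^{\pOp}} = (\Pirr\aQ^{\pOp})\spbreve = \Pirr\aQ$ (the last observed in the proof of Lemma \ref{lem:cl_inte_of_pirr}), which make both sides arrows $\Open\Pirr\aQ \to \aQ$. Evaluating on a $\Pirr\aQ$-open set $Y \subseteq M(\aQ)$: the map $\partial_{\Pirr\aQ^{\pOp}}^{\bf-1}$ sends $Y$ to $\Pirr\aQ^{\pOp}[\overline{Y}] = \{ j \in J(\aQ) : \exists m \in M(\aQ)\setminus Y.\, j \nleq_\aQ m \}$, and by Lemma \ref{lem:adj_obs}.2 the adjoint $(rep_{\aQ^{\pOp}})_* = (rep_{\aQ^{\pOp}}^{\bf-1})^{\pOp}$ has action $Z \mapsto \Lor_\aQ (J(\aQ)\setminus Z)$. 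Since $J(\aQ) \setminus \Pirr\aQ^{\pOp}[\overline{Y}] = (\Pirr\aQ)^\down(Y)$, the composite sends $Y$ to $\Lor_\aQ (\Pirr\aQ)^\down(Y)$, which is exactly $\Land_\aQ \overline{Y} = rep_\aQ^{\bf-1}(Y)$ by the meet-as-join computation already carried out in the proof of Theorem \ref{thm:bicliq_jirr_equivalent}.1.

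The second triangle then needs no further calculation: inverting the first identity gives $rep_\aQ = \partial_{\Pirr\aQ^{\pOp}} \circ ((rep_{\aQ^{\pOp}})_*)^{\bf-1}$, and Lemma \ref{lem:adj_obs}.2 supplies $((rep_{\aQ^{\pOp}})_*)^{\bf-1} = rep_{\aQ^{\pOp}}^{\pOp}$, which is the desired $rep_\aQ = \partial_{\Pirr\aQ^{\pOp}} \circ rep_{\aQ^{\pOp}}^{\pOp}$. The main obstacle is not any individual computation but the disciplined bookkeeping of order-duals: one must use $M(\aQ^{\pOp}) = J(\aQ)$, $(\Pirr\aQ)\spbreve = \Pirr\aQ^{\pOp}$, and the invariance of underlying functions under $(-)^{\pOp}$ and $(-)_*$ consistently, so that the entire lemma collapses onto the one nontrivial identity $\Land_\aQ \overline{Y} = \Lor_\aQ (\Pirr\aQ)^\down(Y)$ already available from the equivalence theorem.
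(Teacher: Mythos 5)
Your proof is correct and follows essentially the same strategy as the paper: part (1) via Lemma \ref{lem:adj_obs}.2 plus a comparison of the explicit actions, and part (2) by directly verifying one triangle and obtaining the other by inverting, again through Lemma \ref{lem:adj_obs}.2. The only difference is that you compute the left triangle first (on open sets $Y$, reducing to $\Land_\aQ \overline{Y} = \Lor_\aQ (\Pirr\aQ)^\down(Y)$) whereas the paper computes the right triangle first (on elements $q$); both computations are valid and the bookkeeping of $M(\aQ^{\pOp}) = J(\aQ)$ and $(\Pirr\aQ^{\pOp})\spbreve = \Pirr\aQ$ is handled correctly.
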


\begin{proof}
\item
\begin{enumerate}
\item
By Lemma \ref{lem:adj_obs}.2 $(\partial_{\rG})_* = (\partial_{\rG}^{\bf-1})^{\pOp} : (\Open\breve{\rG})^{\pOp} \to \Open\rG$, which has the same type as $\partial_{\breve{\rG}}$ and acts in the same way. Similarly we have $(\partial_\rG^{\bf-1})_* = \partial_\rG^{\pOp}$ which has the same type as $\partial_{\breve{\rG}}^{\bf-1}$ and acts in the same way.

\item
We first verify the triangle on the right. Its typing is correct, so consider its action:
\[
\begin{tabular}{lll}
$\partial_{\Pirr\aQ^{\pOp}} \circ \rep_{\aQ^{\pOp}}^{\pOp}(q)$
&
$= \partial_{\Pirr\aQ^{\pOp}}(\rep_{\aQ^{\pOp}}(q))$
\\&
$= \partial_{(\Pirr\aQ)\spbreve}(\{ j \in J(\aQ) : j \nleq_\aQ q \})$
& via definition of $\rep_{\aQ^{\pOp}}$
\\&
$= \Pirr\aQ[\{ j \in J(\aQ) : j \leq_\aQ q \}]$
& by definition of $\partial_{(\Pirr\aQ)\spbreve}$
\\&
$= \{ m \in M(\aQ): \exists j \in J(\aQ).[j \leq_\aQ q \text{ and }  j \nleq_\aQ m \}$
\\&
$= \{ m \in M(\aQ) : \neg\forall j \in J(\aQ).[j \leq_\aQ q \implies j \leq_\aQ m  ] \}$
\\&
$= \{ m \in M(\aQ) : q \nleq_\aQ m \}$
\\&
$= rep_\aQ(q)$
\end{tabular}
\]
Thus $rep_\aQ = \partial_{\Pirr\aQ^{\pOp}} \circ rep_{\aQ^{\pOp}}^{\pOp}$ so by the standard law of composite inverses: 
\[
rep_\aQ^{\bf-1}
= (rep_{\aQ^{\pOp}}^{\pOp})^{\bf-1} \circ \partial_{\Pirr\aQ^{\pOp}}^{\bf-1}
= (rep_{\aQ^{\pOp}})_* \circ \partial_{\Pirr\aQ^{\pOp}}^{\bf-1}
\]
also using Lemma \ref{lem:adj_obs}.2.
\end{enumerate}
\end{proof}

Recalling that $\Pirr f_* = (\Pirr f)\spcheck = (\Pirr f)\spbreve$ for any $\JSL_f$-morphism $f$, the correspondence between adjoints in the other direction is captured precisely by the isomorphisms $\partial_\rG$. To see this, first recall that:
\[
\OD_j : \JSL_f^{op} \to \JSL_f
\qquad\text{and}\qquad
(-)\spcheck : \BiCliq^{op} \to \BiCliq
\]
are the self-duality functors on their respective categories.

% IMPORTANT
% =========
% referred to from nfa-summary

\begin{theorem}[$\partial$ defines a natural isomorphism]
\label{thm:partial_nat_iso}
\item
The isomorphisms $\partial_\rG$ collectively define a natural isomorphism:
\[
\partial : \OD_j \circ \Open^{op} \To \Open \circ (-)\spcheck
\]
Consequently, for each $\BiCliq$-morphism $\rR : \rG \to \rH$ we have:
\[
(\Open\rR)_* = \partial_\rG^{\bf-1} \circ \Open\breve{\rR} \circ \partial_\rH
\]
\end{theorem}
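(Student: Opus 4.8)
The plan is to reduce the naturality claim to the single displayed equation and verify it by a direct computation on $\rG$-open sets. Unwinding the two functors, for a $\Dep$-morphism $\rR : \rG \to \rH$ the functor $\OD_j \circ \Open^{op}$ sends $\rR^{op}$ to the adjoint $(\Open\rR)_* : (\Open\rH)^{\pOp} \to (\Open\rG)^{\pOp}$, while $\Open \circ (-)\spcheck$ sends $\rR^{op}$ to $\Open\breve{\rR} : \Open\breve{\rH} \to \Open\breve{\rG}$. The naturality square therefore commutes exactly when $\Open\breve{\rR} \circ \partial_\rH = \partial_\rG \circ (\Open\rR)_*$. Since each $\partial_\rG$ is already known to be a $\JSL_f$-isomorphism (Definition \ref{def:open_dual_iso}), this is equivalent to the displayed consequence $(\Open\rR)_* = \partial_\rG^{\bf-1} \circ \Open\breve{\rR} \circ \partial_\rH$, so it suffices to establish this equality of $\JSL_f$-morphisms $(\Open\rH)^{\pOp} \to (\Open\rG)^{\pOp}$ pointwise.

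First I would pin down a usable closed form for the adjoint. By Lemma \ref{lem:composite_adjoints}.3 the morphism $\Open\rR = \rR^\up \circ \rG^\down$ (restricted to open sets) is left adjoint to $\rG^\up \circ \rR^\down$, so by uniqueness of adjoints in $\JSL_f$ we get $(\Open\rR)_*(Z) = \rG^\up \circ \rR^\down(Z)$ for every $Z \in O(\rH)$. The goal then becomes the pointwise identity $\rG^\up \circ \rR^\down(Z) = \partial_\rG^{\bf-1}\big(\Open\breve{\rR}(\partial_\rH(Z))\big)$ for all $\rH$-open $Z$.

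The computation proceeds from the inside out, using $\partial_\rH(Z) = \overline{\rH^\down(Z)}$ and $\partial_\rG^{\bf-1}(W) = \rG^\up(\overline{W})$ from Definition \ref{def:open_dual_iso}, the formula $\Open\breve{\rR}(W) = \breve{\rR}^\up \circ \breve{\rH}^\down(W)$ from Lemma \ref{lem:open_r_two_defs}, and the De Morgan dualities of Lemma \ref{lem:up_down_basic}.3. There are two key simplifications. First, applying $\breve{\rH}^\down = \neg_{\rH_t} \circ \rH^\up \circ \neg_{\rH_s}$ to $\partial_\rH(Z) = \overline{\rH^\down(Z)}$ cancels a pair of complements and yields $\breve{\rH}^\down(\partial_\rH(Z)) = \overline{\inte_\rH(Z)} = \overline{Z}$, the last step using that $Z$ is $\rH$-open. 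Second, the De Morgan duality for $\breve{\rR}$, namely $\neg_{\rG_s} \circ \breve{\rR}^\up \circ \neg_{\rH_t} = \rR^\down$, rewrites $\overline{\breve{\rR}^\up(\overline{Z})}$ as $\rR^\down(Z)$. Chaining these gives $\Open\breve{\rR}(\partial_\rH(Z)) = \breve{\rR}^\up(\overline{Z})$ and then $\partial_\rG^{\bf-1}(\breve{\rR}^\up(\overline{Z})) = \rG^\up(\overline{\breve{\rR}^\up(\overline{Z})}) = \rG^\up(\rR^\down(Z))$, which is precisely $(\Open\rR)_*(Z)$.

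The only genuine obstacle is bookkeeping: each occurrence of $\neg$, $(-)^\up$ and $(-)^\down$ must be typed over the correct one of $\rG_s, \rG_t, \rH_s, \rH_t$, and since relational converse swaps source and target, the De Morgan rule for $\breve{\rR}$ must be instantiated with the index sets $\rH_t$ and $\rG_s$ rather than those of $\rR$. Once these typings are tracked carefully, both simplifications are routine and no structure beyond the cited lemmas is required; the equality of morphisms then delivers naturality of $\partial$, and hence the stated consequence.
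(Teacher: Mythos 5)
Your proposal is correct and follows essentially the same route as the paper: reduce naturality to the displayed equation (legitimate since each $\partial_\rG$ is already known to be an isomorphism), compute the composite $\partial_\rG^{\bf-1} \circ \Open\breve{\rR} \circ \partial_\rH$ down to $\rG^\up \circ \rR^\down$ using the De Morgan dualities, and identify that with $(\Open\rR)_*$ via Lemma \ref{lem:composite_adjoints}. The only cosmetic difference is that you simplify $\Open\breve{\rR}$ through the formula $\breve{\rR}^\up \circ \breve{\rH}^\down$ and a complement cancellation, whereas the paper routes the same computation through the component relation $\breve{\rR}_+\spbreve$; these coincide by Lemma \ref{lem:open_r_two_defs}.
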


\begin{proof}
We already know that each $\partial_\rG : (\Open\rG)^{\pOp} \to \Open\breve{\rG}$ is well-defined $\JSL_f$-isomorphism. Observe that the functors $\OD_j \circ \Open^{op}$ and $\Open \circ (-)\spcheck$ both have type $\BiCliq^{op} \to \JSL_f$. Then we need to verify that the following diagram commutes:
\[
\xymatrix@=15pt{
\OD_j \circ \Open^{op} \rH = (\Open\rH)^{\pOp} \ar[rr]^-{\partial_\rH} \ar[d]_-{(\Open\rR)_*} && \Open \circ (-)\spcheck(\rH) = \Open\breve{\rH} \ar[d]^-{\Open\breve{\rR}}
\\
\OD_j \circ \Open^{op}\rG = (\Open\rG)^{\pOp} \ar[rr]_-{\partial_\rG} &&  \Open \circ (-)\spcheck(\rG) = \Open\breve{\rG}
}
\]
or equivalently that $(\Open\rR)_* = \partial_\rG^{\bf-1} \circ \Open\breve{\rR} \circ \partial_\rH$, where the latter has action:
\[
\begin{tabular}{lll}
$\partial_\rG^{\bf-1} \circ \Open\breve{\rR} \circ \partial_\rH(Y)$
&
$= \partial_\rG^{\bf-1} \circ \Open\breve{\rR}(\breve{\rH}[\overline{Y}])$
& by definition
\\&
$= \partial_\rG^{\bf-1} \circ \breve{\rR}_+\spbreve[\breve{\rH}[\overline{Y}]]$
& by definition
\\&
$= \partial_\rG^{\bf-1} \circ \breve{\rH}; \breve{\rR}_+\spbreve[\overline{Y}]$
\\&
$= \partial_\rG^{\bf-1} \circ \breve{\rR}[\overline{Y}]$
& 
\\&
$= \rG[\overline{\breve{\rR}[\overline{Y}]}$
& by definition
\\&
$= \rG^\up \circ \neg_{\rG_s} \circ \breve{\rR}^\up \circ \neg_{\rH_t}(Y)$
\\&
$= \rG^\up \circ \rR^\down(Y)$
& by De Morgan duality
\end{tabular}
\]
Finally observe that this morphism was already described in Lemma \ref{lem:composite_adjoints}.4, where it was shown to have adjoint $\lambda Y \in O(\rG).\rR^\up \circ \rG^\down(Y) = \Open\rR$, so we are done.
\end{proof}

\begin{lemma}[Adjoint relationships involving $\iota_\rG$ and $\sigma_\rG$]
\label{lem:sigma_iota_g_adjoints}
\item
For every bipartite graph $\rG$ we have the commuting diagram:
\[
\xymatrix@=15pt{
\JPow \rG_s \ar[d]_{(\neg_{\rG_s})^{\bf-1}} \ar@{->>}[dr]^{\sigma_\rG}  \ar[rr]^{\rG^\up} && \JPow \rG_t
\\
(\JPow \rG_s)^{\pOp} \ar@{->>}[d]_{(\iota_{\breve{\rG}})_*}  & \Open\rG \ar[dd]^-{id_{\Open\rG}} \ar@{>->}[ur]^{\iota_\rG} & (\JPow \rG_t)^{\pOp} \ar[u]_-{\neg_{\rG_t}} 
\\
(\Open\breve{\rG})^{\pOp} \ar[dr]_{\partial_{\breve{\rG}}} & & (\Open\breve{\rG})^{\pOp} \ar[u]_{(\sigma_{\breve{\rG}})_*}
\\
& \Open\rG \ar@{>->}[ur]_{\partial_{\breve{\rG}}^{\bf-1}}
}
\]
Equivalently, we have the three equalities:
\[
\begin{tabular}{llllll}
$\mathrm{(a)}$ & $\rG^\up = \iota_\rG \circ \sigma_\rG$ &
$\mathrm{(b)}$ & $\sigma_\rG = \partial_{\breve{\rG}} \circ (\iota_{\breve{\rG}})_* \circ (\neg_{\rG_s})^{\bf-1}  $ &
$\mathrm{(c)}$ & $\iota_\rG = \neg_{\rG_t} \circ (\sigma_{\breve{\rG}})_* \circ \partial_{\breve{\rG}}^{\bf-1}$
\end{tabular}
\]
\end{lemma}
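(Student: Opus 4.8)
The plan is to prove the three displayed equalities (a), (b), (c), observing that commutativity of the whole diagram is equivalent to the conjunction of these. I would start with equality (a), which is essentially a restatement of Definition \ref{def:bip_canon_quo_incl}: the morphism $\rG^\up : \JPow\rG_s \to \JPow\rG_t$ was \emph{defined} to factor as $\iota_\rG \circ \sigma_\rG$ through its unique (surjection, inclusion) factorisation, so (a) holds by construction. This is the trivial anchor.

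For (b), the strategy is to mirror the proof of Lemma \ref{lem:sigma_e_q_adjoints}.(b), which established the analogous fact for $\sigma_\aQ$ and $e_\aQ$. The natural route is to take adjoints of equality (a). Applying $(-)_*$ to $\rG^\up = \iota_\rG \circ \sigma_\rG$ and using that $(-)_*$ reverses composition (Lemma \ref{lem:adj_obs}) yields $(\rG^\up)_* = (\sigma_\rG)_* \circ (\iota_\rG)_*$. The key computational inputs are: the adjoint of $\rG^\up$ is $\rG^\down$ by Lemma \ref{lem:up_down_basic}.1 (the adjunction $(\up\dashv\down)$); the relationship $(\partial_\rG)_* = \partial_{\breve\rG}$ from Lemma \ref{lem:open_dual_iso_adjoints}.1; and the De Morgan dualities $(\neg\up\neg)$, $(\neg\down\neg)$ from Lemma \ref{lem:up_down_basic}.3 to move complement operations $\neg_{\rG_s}$, $\neg_{\rG_t}$ past the image/preimage functions. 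I would rearrange $(\rG^\up)_* = (\sigma_\rG)_* \circ (\iota_\rG)_*$ to isolate $\sigma_\rG$, substituting the defining action $\sigma_\rG(X) = \rG[X]$ and $\iota_\rG(X) = X$, and verify that the composite $\partial_{\breve\rG} \circ (\iota_{\breve\rG})_* \circ (\neg_{\rG_s})^{\bf-1}$ agrees with it pointwise. The cleanest verification is probably a direct pointwise computation: unwind $(\iota_{\breve\rG})_*$ via its defining join $\Lor\{\dots : \iota_{\breve\rG}(\cdot)\leq\cdot\}$, apply the explicit action of $\partial_{\breve\rG}$ from Definition \ref{def:open_dual_iso}, and confirm the result equals $\rG[X]$.

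Equality (c) should follow from (b) by the same duality bootstrap used in Lemma \ref{lem:sigma_e_q_adjoints}.(c): reassign $\rG \mapsto \breve\rG$ throughout (b), take adjoints of both sides, use self-adjointness and self-inverseness of the complement isomorphisms $(\neg_{\rG_t})^{\bf-1}$ (Lemma \ref{lem:adj_obs}.3 gives $(\Pow f)_* = f^{-1}$, and complements are involutive), exploit $(\partial_{\rG})_* = \partial_{\breve\rG}$ and $(\partial_\rG^{\bf-1})_* = \partial_{\breve\rG}^{\bf-1}$ again, and then post-compose by the appropriate complement isomorphism to solve for $\iota_\rG$. The bookkeeping of which $\partial$ carries which subscript (note the diagram uses $\breve\rG$ in the lower triangle) is where one must be careful.

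The main obstacle I anticipate is not any single deep idea but the bracket-management of the complement operators and the $\partial_\rG / \partial_{\breve\rG}$ subscripts: every adjoint flips $\rG \leftrightarrow \breve\rG$ and the $\partial$ isomorphisms are precisely the bridge between $(\Open\rG)^{\pOp}$ and $\Open\breve\rG$, so a sign or subscript error propagates silently. I would therefore do the pointwise verification of (b) very explicitly rather than purely diagrammatically, treating the diagrammatic commutativity as a consistency check afterward. The structural content is entirely supplied by the earlier lemmas (the adjunction $(\up\dashv\down)$, the De Morgan dualities, $(\partial_\rG)_* = \partial_{\breve\rG}$, and the defining factorisation), so the proof is a careful assembly rather than a genuinely new argument.
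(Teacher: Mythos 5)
Your proposal is correct and follows essentially the same route as the paper: (a) by the defining factorisation, (b) by the direct pointwise computation unwinding $(\iota_{\breve{\rG}})_*$ into the interior operator $\inte_{\breve{\rG}}$ and then applying $\partial_{\breve{\rG}}$, De Morgan duality and $(\up\down\up)$, and (c) by substituting $\rG \mapsto \breve{\rG}$ into (b), taking adjoints, and using $(\partial_{\breve{\rG}})_* = \partial_\rG$ before pre/post-composing with the relevant isomorphisms. The brief detour through "take adjoints of (a)" is unnecessary but harmless, since you correctly settle on the pointwise verification as the actual argument.
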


\begin{proof}
\item
\begin{enumerate}[(a)]
\item
This is the unique (surjection,inclusion) factorisation described in Definition \ref{def:bip_canon_quo_incl}.

\item
For any subset $X \subseteq \rG_s$ we have:
\[
\begin{tabular}{lll}
$\partial_{\breve{\rG}} \circ (\iota_{\breve{\rG}})_* \circ (\neg_{\rG_s})^{\bf-1}(X)$
&
$= \partial_{\breve{\rG}} \circ (\iota_{\breve{\rG}})_* (\overline{X})$
\\&
$= \partial_{\breve{\rG}}(\bigcup \{ Y \in O(\breve{\rG}) : \iota_{\breve{\rG}}(Y) \leq_{\JPow \rG_s} \overline{X} \} )$
\\&
$= \partial_{\breve{\rG}}(\bigcup \{ Y \in O(\breve{\rG}) : Y \subseteq \overline{X} \} )$
\\&
$= \partial_{\breve{\rG}}(\inte_{\breve{\rG}}(\overline{X}))$
& by Lemma \ref{lem:cl_inte_of_pirr}.1
\\&
$=  \rG^\up \circ \neg_{\rG_s} \circ \breve{\rG}^\up \circ \breve{\rG}^\down \circ \neg_{\rG_s}(X)$
& action of $\partial_{\breve{\rG}}$ and $\inte_{\breve{\rG}}$
\\&
$= \rG^\up \circ \rG^\down \circ \rG^\up(X)$
& by De Morgan duality
\\&
$= \rG^\up(X)$
& by $(\up\down\up)$
\\&
$= \sigma_\rG(X)$
\end{tabular}
\]

\item
Instantiate the previous statement by assigning $\rG \mapsto \breve{\rG}$ and take the adjoints of both sides to obtain:
\[
(\sigma_{\breve{\rG}})_* 
= (\neg_{\rG_t})^{\bf-1} \circ \iota_{\rG} \circ (\partial_{\breve{\rG}})_*
= (\neg_{\rG_t})^{\bf-1} \circ \iota_{\rG} \circ \partial_{\rG}
\]
using Lemma \ref{lem:open_dual_iso_adjoints}.1. The statement follows by post-composing with $\neg_{\rG_t}$ and pre-composing with $\partial_{\rG}^{\bf-1}$.

\end{enumerate}
\end{proof}

Finally we explain the relationship between $\iota_\rG$ and $e_\aQ$, and also $\sigma_\rG$ and $\sigma_\aQ$.

\begin{lemma}[The relationship between $\iota_\rG$ and $e_\aQ$]
\label{lem:relate_canon_embed}
For every finite join-semilattice $\aQ$ and bipartite graph $\rG$,
\[
\xymatrix@=15pt{
\Open\Pirr\aQ \ar@{>->}[rr]^{\iota_{\Pirr \aQ}} && \JPow M(\aQ)
\\
\aQ \ar[u]^{\rep_\aQ} \ar@{>->}[urr]_{e_\aQ}
}
\qquad\qquad
\xymatrix@=15pt{
\JPow M(\Open\rG) \ar[rr]^{\rM{\iota_\rG}^\up} && \JPow \rG_t
\\
\Open\rG \ar@{>->}[u]^{e_{\Open\rG}} \ar@{>->}[urr]_{\iota_\rG}
}
\]
where:
\begin{enumerate}
\item
$rep_\aQ(q) := \{ m \in M(\aQ) : q \nleq_\aQ m \}$ is a component of the natural isomorphism $rep : \Id_{\JSL_f} \To \Open \circ \Pirr$,
\item
the morphism $\rM{\iota_\rG}^\up$ is the canonical extension of $\iota_\rG$ as defined in Lemma \ref{lem:tight_mor_extn}.2, so that:
\[
\rM{\iota_\rG}(X,g_t) 
:\iff (\iota_\rG)_*(\overline{g_t}) \leq_{\Open \rG} X
\iff \inte_\rG(\overline{g_t}) \subseteq X
\]
\end{enumerate}
\end{lemma}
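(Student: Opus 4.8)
The plan is to dispatch the two triangles separately: the left-hand one is immediate from unwinding definitions, while the right-hand one is a direct instance of the tight extension lemma (Lemma~\ref{lem:tight_mor_extn}.2).

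For the left-hand triangle I would simply compare the actions of the two maps. By Theorem~\ref{thm:bicliq_jirr_equivalent} the component $rep_\aQ : \aQ \to \Open\Pirr\aQ$ is a well-defined $\JSL_f$-morphism with $rep_\aQ(q) = \{ m \in M(\aQ) : q \nleq_\aQ m \}$, and in particular each such set is $\Pirr\aQ$-open, so it genuinely lands in $O(\Pirr\aQ) \subseteq \Pow M(\aQ)$ (recall $(\Pirr\aQ)_t = M(\aQ)$). Since $\iota_{\Pirr\aQ}$ from Definition~\ref{def:bip_canon_quo_incl} is the inclusion $\iota_{\Pirr\aQ}(X) = X$, the composite $\iota_{\Pirr\aQ} \circ rep_\aQ$ has exactly the action $q \mapsto \{ m \in M(\aQ) : q \nleq_\aQ m \}$, which is precisely $e_\aQ$. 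Thus the triangle commutes on the nose, the only content being the bookkeeping that the declared codomains $\Open\Pirr\aQ$ and $\JPow M(\aQ)$ agree after applying the inclusion.

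For the right-hand triangle I would instantiate Lemma~\ref{lem:tight_mor_extn}.2 with $f := \iota_\rG : \Open\rG \to \JPow \rG_t$ (so $\aQ := \Open\rG$ and $Z := \rG_t$). That lemma immediately yields $\iota_\rG = \rM{\iota_\rG}^\up \circ e_{\Open\rG}$, where $\rM{\iota_\rG} = \{ (X,g_t) \in M(\Open\rG) \times \rG_t : (\iota_\rG)_*(\overline{g_t}) \leq_{\Open\rG} X \}$. The one genuine computation is to verify the claimed simplification of this witnessing relation, namely $(\iota_\rG)_*(\overline{g_t}) = \inte_\rG(\overline{g_t})$. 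Using the defining formula for the adjoint, $(\iota_\rG)_*(Y) = \Lor_{\Open\rG} \{ Z \in O(\rG) : \iota_\rG(Z) \subseteq Y \} = \bigcup \{ Z \in O(\rG) : Z \subseteq Y \}$, and by the union description of the interior operator in Lemma~\ref{lem:cl_inte_of_pirr}.1 this is exactly $\inte_\rG(Y)$. Setting $Y := \overline{g_t}$ and using that $\leq_{\Open\rG}$ is inclusion gives $\rM{\iota_\rG}(X,g_t) \iff \inte_\rG(\overline{g_t}) \subseteq X$, as asserted.

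The proof presents essentially no obstacle: both triangles reduce to facts already in hand, and the only calculation — identifying $(\iota_\rG)_*$ with $\inte_\rG$ on $\rG_t$ — is a one-line consequence of the adjoint formula together with Lemma~\ref{lem:cl_inte_of_pirr}.1. The most error-prone aspect is purely notational: keeping straight which copy of $M(\aQ)$ versus $M(\Open\rG)$ indexes each powerset, and remembering that $\iota_\rG$ is an honest inclusion whereas $e_{\Open\rG}$ is an embedding that is \emph{not} an inclusion, so that the extension $\rM{\iota_\rG}^\up$ is genuinely needed to reconcile the two.
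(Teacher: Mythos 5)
Your proof is correct and follows the same route as the paper: the left triangle by observing that $\iota_{\Pirr\aQ}$ is an inclusion so that $\iota_{\Pirr\aQ} \circ rep_\aQ$ and $e_\aQ$ have identical actions, and the right triangle by instantiating Lemma~\ref{lem:tight_mor_extn}.2 with $f := \iota_\rG$ and identifying $(\iota_\rG)_*$ with $\inte_\rG$ via Lemma~\ref{lem:cl_inte_of_pirr}.1. Your write-up is in fact somewhat more explicit than the paper's about why $(\iota_\rG)_*(Y) = \bigcup\{Z \in O(\rG) : Z \subseteq Y\}$, which is a welcome addition but not a different argument.
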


\begin{proof}
First observe that the inclusion $\iota_{\Pirr\aQ} : \Open\Pirr\aQ \hookto \JPow M(\aQ)$ is well-typed because $(\Pirr\aQ)_t = M(\aQ)$. Then the left triangle clearly commutes because the embedding $e_\aQ$ and the isomorphism $rep_\aQ$ act in the same way. The second triangle commutes via the tight extension Lemma \ref{lem:tight_mor_extn}.2. The final $\iff$ holds because $(\iota_\rG)_*(\overline{g_t}) = \inte_\rG(\overline{g_t})$ by Lemma \ref{lem:cl_inte_of_pirr}.1.
\end{proof}

\begin{lemma}
[The relationship between $\sigma_\rG$ and $\sigma_\aQ$]
For every finite join-semilattice $\aQ$ and bipartite graph $\rG$,
\[
\xymatrix@=15pt{
\Open\Pirr\aQ \ar[rr]^{rep_\aQ^{\bf-1}} && \aQ
\\
\JPow J(\aQ) \ar@{->>}[urr]_{\sigma_\aQ} \ar@{->>}[u]^{\sigma_{\Pirr\aQ}}
}
\qquad\qquad
\xymatrix@=15pt{
\JPow \rG_s \ar[rr]^{\rJ{\sigma_\rG}^\up} \ar@{->>}[rrd]_{\sigma_\rG} && \JPow J(\Open\rG) \ar@{->>}[d]^{\sigma_{\Open\rG}}
\\
&& \Open\rG
}
\]
where:
\begin{enumerate}
\item
$rep_\aQ^{\bf-1}(S) := \Land_\aQ M(\aQ) / S$ is a component of the natural isomorphism $rep^{\bf-1} : \Open \circ \Pirr \To \Id_{\JSL_f}$,
\item
the morphism $\rJ{\sigma_\rG}^\up$ is the canonical morphism compatible with $\sigma_\rG$ as defined in Lemma \ref{lem:tight_mor_extn}.1, so that:
\[
\rJ{e_\rG}(g_s,Y) 
:\iff Y \leq_{\Open \rG} \sigma_\rG(\{g_s\})
\iff Y \subseteq \rG[g_s]
\]
\end{enumerate}
\end{lemma}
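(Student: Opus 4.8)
The two triangles are of rather different character, and I would treat them separately. The right-hand triangle is, up to unwinding notation, a direct instance of the tight extension Lemma~\ref{lem:tight_mor_extn}.1, so I would dispatch it first. Applying that lemma to the $\JSL_f$-morphism $f := \sigma_\rG : \JPow \rG_s \to \Open\rG$ --- that is, taking $Z := \rG_s$ and the target semilattice to be $\Open\rG$ itself --- yields the canonical compatible morphism $\rJ{\sigma_\rG}^\up : \JPow \rG_s \to \JPow J(\Open\rG)$ satisfying $\sigma_{\Open\rG} \circ \rJ{\sigma_\rG}^\up = \sigma_\rG$, which is exactly the asserted commutativity. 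The stated description of the witnessing relation then follows by substitution: Lemma~\ref{lem:tight_mor_extn}.1 gives $\rJ{\sigma_\rG} = \{ (g_s,Y) \in \rG_s \times J(\Open\rG) : Y \leq_{\Open\rG} \sigma_\rG(\{g_s\}) \}$, and since $\sigma_\rG(\{g_s\}) = \rG[g_s]$ by Definition~\ref{def:bip_canon_quo_incl} and $\leq_{\Open\rG}$ is set inclusion, this is $\{ (g_s,Y) : Y \subseteq \rG[g_s] \}$. (I read the $\rJ{e_\rG}$ appearing in the clarifying text as $\rJ{\sigma_\rG}$; the two name the same object.)

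For the left-hand triangle I would argue by a short direct computation, exploiting that $rep_\aQ$ is an isomorphism. It suffices to prove the equivalent equation $\sigma_{\Pirr\aQ} = rep_\aQ \circ \sigma_\aQ$, since composing on the left with $rep_\aQ^{\bf-1}$ then gives the claim $rep_\aQ^{\bf-1} \circ \sigma_{\Pirr\aQ} = \sigma_\aQ$. Fix $X \subseteq J(\aQ) = (\Pirr\aQ)_s$. On one side, $\sigma_{\Pirr\aQ}(X) = \Pirr\aQ[X] = \{ m \in M(\aQ) : \exists j \in X.\ j \nleq_\aQ m \}$ by Definition~\ref{def:bip_canon_quo_incl} together with $\Pirr\aQ = \;\nleq_\aQ$. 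On the other side, $rep_\aQ \circ \sigma_\aQ(X) = rep_\aQ(\Lor_\aQ X) = \{ m \in M(\aQ) : \Lor_\aQ X \nleq_\aQ m \}$. These sets coincide because the universal property of the join gives $\Lor_\aQ X \leq_\aQ m \iff \forall j \in X.\ j \leq_\aQ m$, and negating both sides yields $\Lor_\aQ X \nleq_\aQ m \iff \exists j \in X.\ j \nleq_\aQ m$.

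Neither step presents a genuine obstacle; the mathematical content is already carried by Lemma~\ref{lem:tight_mor_extn} and by the explicit descriptions of $rep_\aQ$ and $\sigma_{\Pirr\aQ}$ established earlier, so the lemma is essentially a matter of transport. The only points requiring care are the identifications $(\Pirr\aQ)_s = J(\aQ)$ and $(\Pirr\aQ)_t = M(\aQ)$ together with the action $\sigma_{\Pirr\aQ}(X) = \Pirr\aQ[X]$, and the above-mentioned notational slip between $\rJ{\sigma_\rG}$ and $\rJ{e_\rG}$. As an alternative to inverting $rep_\aQ$ in the left triangle, one could compute directly with $rep_\aQ^{\bf-1}(Y) = \Land_\aQ \overline{Y}$, reducing the claim to the fact that each $q$ is the meet of the meet-irreducibles above it (the order-dual of Lemma~\ref{lem:std_order_theory}.6); I would prefer the inversion route, since it requires only the defining property of joins.
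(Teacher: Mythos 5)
Your proof is correct. The right-hand triangle is handled exactly as in the paper: both you and the paper simply instantiate the tight extension Lemma~\ref{lem:tight_mor_extn}.1 at $f := \sigma_\rG$, and your identification of the typo $\rJ{e_\rG}$ for $\rJ{\sigma_\rG}$ is right. For the left-hand triangle you diverge: the paper remarks that the triangle ``is easily shown by considering the action'' but then deliberately gives a formal derivation chaining together Lemma~\ref{lem:sigma_e_q_adjoints}.(b), Lemma~\ref{lem:relate_canon_embed}, Lemma~\ref{lem:sigma_iota_g_adjoints}.(b) and Lemma~\ref{lem:open_dual_iso_adjoints}.2, so as to exhibit the statement as a formal consequence of the adjointness apparatus built up in that subsection. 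You instead carry out the direct computation the paper alludes to but skips: proving $rep_\aQ \circ \sigma_\aQ = \sigma_{\Pirr\aQ}$ pointwise on $X \subseteq J(\aQ)$ via the equivalence $\Lor_\aQ X \leq_\aQ m \iff \forall j \in X.\ j \leq_\aQ m$, then inverting $rep_\aQ$. Your route is shorter and needs only the defining property of joins plus the explicit actions of $rep_\aQ$ and $\sigma_{\Pirr\aQ}$; the paper's route buys a demonstration that the whole diagram of canonical embeddings and quotients is internally coherent without ever computing on elements. Both are complete proofs.
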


\begin{proof}
Although the first triangle is easily shown by considering the action, it can also be formally derived using the above results:
\[
\begin{tabular}{lll}
$\sigma_\aQ$
&
$= (e_{\aQ^{\pOp}})_* \circ \neg_{J(\aQ)}^{\bf-1}$
& by Lemma \ref{lem:sigma_e_q_adjoints}.(b)
\\&
$= (rep_{\aQ^{\pOp}})_* \circ (\iota_{\Pirr\aQ^{\pOp}})_* \circ \neg_{J(\aQ)}^{\bf-1}$
& by Lemma \ref{lem:relate_canon_embed}
\\&
$= (rep_{\aQ^{\pOp}})_* \circ (\iota_{(\Pirr\aQ)\spbreve})_* \circ \neg_{J(\aQ)}^{\bf-1}$
\\&
$= (rep_{\aQ^{\pOp}})_* \circ \partial_{(\Pirr\aQ)\spbreve}^{\bf-1} \circ \sigma_{\Pirr\aQ} \circ \neg_{J(\aQ)} \circ \neg_{J(\aQ)}^{\bf-1}$
& by Lemma \ref{lem:sigma_iota_g_adjoints}.(b)
\\&
$= (rep_{\aQ^{\pOp}})_* \circ \partial_{(\Pirr\aQ)\spbreve}^{\bf-1} \circ \sigma_{\Pirr\aQ}$
\\&
$= rep_\aQ^{\bf-1} \circ \sigma_{\Pirr\aQ}$
& by Lemma \ref{lem:open_dual_iso_adjoints}.2
\end{tabular}
\]
Finally, the right triangle follows via the tight extension Lemma \ref{lem:tight_mor_extn}.1.
\end{proof}

\subsection{Monos, Epis and Isos}
\label{subsec:monos_epis_isos}

We begin with characterisations of $\BiCliq$'s monomorphisms and epimorphisms.

\begin{lemma}
\label{lem:bicliq_mono_epi_char}
Let $\rR : \rG \to \rH$ be any $\BiCliq$-morphism.
\begin{enumerate}
\item
The following statements are equivalent.
\begin{enumerate}
\item $\rR$ is monic.
\item $\Open\rR$ is injective.
\item Any of the four equivalent statements holds:
\[
\begin{tabular}{lcl}
$\cl_\rR = \cl_\rG$ && $\rG^\up \circ \cl_\rR = \rG^\up$
\\
$\cl_\rR \leq \cl_\rG$ && $\rG^\up \circ \cl_\rR \leq \rG^\up$.
\end{tabular}
\]
\end{enumerate}
\item The following statements are equivalent.
\begin{enumerate}
\item $\rR$ is epic.
\item $\Open\rR$ is surjective.
\item Any of the four equivalent statements holds:
\[
\begin{tabular}{lcl}
$\inte_\rR = \inte_\rH$
&& $\rH^\down \circ \inte_\rR = \rH^\down$
\\
$\inte_\rH \leq \inte_\rR$
&& $\rH^\down \leq \rH^\down \circ \inte_\rR$.
\end{tabular}
\]
They could be re-written in terms of $\cl_{\breve{\rR}}$, $\cl_{\breve{\rH}}$ and $(-)^\up$ using De Morgan duality.
\end{enumerate}

\end{enumerate}
\end{lemma}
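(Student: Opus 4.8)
The plan is to establish the monomorphism characterisation (1) directly and then read off the epimorphism characterisation (2) by dualising, exactly as the closing remark of the statement suggests. For (1), the equivalence $\mathrm{(a)}\iff\mathrm{(b)}$ is immediate: $\Open$ is one half of an equivalence of categories (Theorem~\ref{thm:bicliq_jirr_equivalent}), hence preserves and reflects monomorphisms, so $\rR$ is monic iff $\Open\rR$ is monic; and a $\JSL_f$-morphism is monic iff injective by Lemma~\ref{lem:jsl_mono_epi_iso}.1. All the content therefore sits in $\mathrm{(b)}\iff\mathrm{(c)}$ together with the internal equivalence of the four conditions in (c).

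Before that I would record the structural fact that $\cl_\rG\leq\cl_\rR$ holds for \emph{every} $\Dep$-morphism $\rR:\rG\to\rH$. Indeed, the morphism characterisation $\rR^\up\circ\cl_\rG=\rR^\up$ (Lemma~\ref{lem:bicliq_mor_char_max_witness}.1) gives, after precomposing with $\rR^\down$, the identity $\cl_\rR\circ\cl_\rG=\cl_\rR$; extensivity of $\cl_\rR$ then yields $\cl_\rG(X)\subseteq\cl_\rR(\cl_\rG(X))=\cl_\rR(X)$. Consequently every $\rR$-closed set is $\rG$-closed, i.e. $C(\rR)\subseteq C(\rG)$, and both reverse inequalities in (c) are automatic ($\cl_\rR\leq\cl_\rG$ upgrades to equality, and $\rG^\up\leq\rG^\up\circ\cl_\rR$ holds by extensivity and monotonicity). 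The equivalence of $\cl_\rR=\cl_\rG$ with $\rG^\up\circ\cl_\rR=\rG^\up$ is then a two-line manipulation: one direction uses $\rG^\up\circ\cl_\rG=\rG^\up$ (rule $(\up\down\up)$), and the converse follows by applying $\rG^\down$ to obtain $\cl_\rG\circ\cl_\rR=\cl_\rG$ and then invoking extensivity of $\cl_\rG$. This settles the four-way equivalence inside (c).

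For $\mathrm{(b)}\iff\mathrm{(c)}$ I would first observe that, since $\theta_\rG^{-1}:O(\rG)\to C(\rG)$, $Y\mapsto\rG^\down(Y)$, is a bijection (Lemma~\ref{lem:lat_op_cl}.2) and $\Open\rR(Y)=\rR^\up(\rG^\down(Y))$ (Lemma~\ref{lem:open_r_two_defs}), injectivity of $\Open\rR$ is exactly injectivity of $\rR^\up$ restricted to the $\rG$-closed sets $C(\rG)$. For $\mathrm{(c)}\Rightarrow\mathrm{(b)}$: if $\cl_\rR=\cl_\rG$ then on $C(\rG)=C(\rR)$ the composite $\rR^\down\circ\rR^\up=\cl_\rR$ is the identity, so $\rR^\up$ has a left inverse there and is injective. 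For $\mathrm{(b)}\Rightarrow\mathrm{(c)}$: given a $\rG$-closed $X$, the set $\cl_\rR(X)$ is $\rR$-closed, hence $\rG$-closed by the inclusion $C(\rR)\subseteq C(\rG)$ above, and $\rR^\up(\cl_\rR(X))=\rR^\up(X)$ by $(\up\down\up)$; injectivity of $\rR^\up$ on $C(\rG)$ forces $X=\cl_\rR(X)$, so $X$ is $\rR$-closed. Thus $C(\rG)\subseteq C(\rR)$, which combined with the automatic reverse inclusion gives $\cl_\rR=\cl_\rG$.

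Part (2) I would obtain purely formally. For $\mathrm{(a)}\iff\mathrm{(b)}$ one argues exactly as in (1), using that $\Open$ reflects and preserves epis and that $\JSL_f$-epis are the surjections (Lemma~\ref{lem:jsl_mono_epi_iso}.2). For the conditions in (c), the self-duality functor (Theorem~\ref{thm:bicliq_self_dual}) sends $\rR:\rG\to\rH$ to $\breve\rR:\breve\rH\to\breve\rG$ and interchanges epis with monos, so $\rR$ is epic iff $\breve\rR$ is monic; applying part (1) to $\breve\rR$ yields $\cl_{\breve\rR}=\cl_{\breve\rH}$ and its companions (with $\rG$ replaced by $\breve\rH$). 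The De Morgan dualities $(\neg\up\neg)$, $(\neg\down\neg)$ of Lemma~\ref{lem:up_down_basic}.3 give $\cl_{\breve\rR}=\neg\circ\inte_\rR\circ\neg$, $\cl_{\breve\rH}=\neg\circ\inte_\rH\circ\neg$ and $\breve\rH^\up=\neg\circ\rH^\down\circ\neg$; conjugating by the involution $\neg$ turns $\cl_{\breve\rR}=\cl_{\breve\rH}$ into $\inte_\rR=\inte_\rH$ and $\breve\rH^\up\circ\cl_{\breve\rR}=\breve\rH^\up$ into $\rH^\down\circ\inte_\rR=\rH^\down$, while the order-reversal of $\neg$ flips $\cl_{\breve\rR}\leq\cl_{\breve\rH}$ into $\inte_\rH\leq\inte_\rR$, giving precisely the four listed conditions. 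The main obstacle is the computational heart of $\mathrm{(b)}\iff\mathrm{(c)}$: one must pin down the always-true direction $\cl_\rG\leq\cl_\rR$ and then handle the open/closed bookkeeping carefully enough that injectivity of $\Open\rR$ transports cleanly to a statement about closed sets. Once that is in place the four-way equivalence and the entire dualisation for (2) are routine.
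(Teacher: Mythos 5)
Your proposal is correct, and most of it ($\mathrm{(a)}\iff\mathrm{(b)}$ via the equivalence functor, the automatic inequalities $\cl_\rG\leq\cl_\rR$ and $\rG^\up\leq\rG^\up\circ\cl_\rR$, the four-way equivalence inside (c), and the derivation of part (2) by self-duality plus De Morgan conjugation) coincides with the paper's argument. Where you genuinely diverge is the computational heart $\mathrm{(b)}\iff\mathrm{(c)}$ of part (1). The paper stays on the open-set side: it takes $Y_1,Y_2\in O(\rG)$ with equal images under $\rR^\up\circ\rG^\down$ and runs a chain of applications of $\rR^\down$, $(\down\up\down)$ and $\rG^\up$ for one direction, and for the converse manipulates the identities $\rR^\up=\rR^\up\circ\cl_\rG$ and $(\up\down\up)$ to extract $\rG^\up\circ\cl_\rR=\rG^\up$ directly from injectivity. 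You instead transport everything to closed sets via the bijection $\theta_\rG^{-1}:O(\rG)\to C(\rG)$, reduce injectivity of $\Open\rR$ to injectivity of $\rR^\up$ on $C(\rG)$, and then argue with the containment $C(\rR)\subseteq C(\rG)$: one direction is the observation that $\cl_\rR$ restricted to $C(\rR)=C(\rG)$ is the identity (so $\rR^\down$ is a left inverse), the other that injectivity forces every $\rG$-closed set to be $\rR$-closed. Your route is more structural and makes the role of the lattice of closed sets visible; the paper's is more mechanical but never leaves the operator calculus it has already set up. One small elision in your version: the final step from $C(\rG)=C(\rR)$ to $\cl_\rG=\cl_\rR$ uses the standard fact that a closure operator on a finite lattice is determined by its fixed points (equivalently, you could note $\cl_\rR(X)\subseteq\cl_\rR(\cl_\rG(X))=\cl_\rG(X)$ once $\cl_\rG(X)$ is known to be $\rR$-closed); it is worth one sentence but is not a gap.
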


\begin{proof}
\item
\begin{enumerate}
\item
\begin{itemize}
\item
$(a \iff b)$: Follows because $\Open$ is an equivalence functor by Theorem \ref{thm:bicliq_jirr_equivalent} and $\JSL_f$-monos are precisely the injective morphisms by Lemma \ref{lem:jsl_mono_epi_iso}.1.

\item
$(c \iff c)$: We always have $\cl_\rG \leq \cl_\rR$ because:
\[
\cl_\rR
= \rR^\down \circ \rR^\up 
= \rR^\down \circ \rR^\up \circ \cl_\rG
= \cl_\rR \circ \cl_\rG
\geq \cl_\rG
\]
using Lemma \ref{lem:bicliq_mor_char_max_witness} and that $\cl_\rR$ is extensive. We always have $\rG^\up \leq \rG^\up \circ \cl_\rR$ because $\cl_\rR$ is extensive and $\rG^\up$ is monotonic. Finally $\cl_\rR \leq \cl_\rG$ iff $\rG^\up \circ \cl_\rR \leq \rG^\up$ via the usual adjoint relationship.

\item
$(b \iff c)$: First assume (c). By Lemma \ref{lem:open_r_two_defs} it suffices to show that $\rR^\up \circ \rG^\down$ is injective on the restricted domain $O(\rG) \subseteq \Pow \rG_t$. Given any $Y_1$, $Y_2 \in O(\rG)$ then:
\[
\begin{tabular}{lllll}
& $\rR^\up \circ \rG^\down(Y_1)$ & $=$ & $\rR^\up \circ \rG^\down(Y_2)$
& by assumption
\\
$\implies$ & $\rR^\down \circ \rR^\up \circ \rG^\down(Y_1)$ & $=$ & $\rR^\down \circ \rR^\up \circ \rG^\down(Y_2)$
& apply function
\\
$\iff$ & $\cl_\rG \circ \rG^\down(Y_1)$ & $=$ & $\cl_\rG \circ \rG^\down(Y_2)$
& by assumption
\\
$\iff$ & $\rG^\down(Y_1)$ & $=$ & $\rG^\down(Y_2)$
& by $(\down\up\down)$
\\
$\implies$ & $\inte_\rG(Y_1)$ & $=$ & $\inte_\rG(Y_2)$
& apply function
\\
$\iff$ & $Y_1$ & $=$ & $Y_2$
& by openness
\end{tabular}
\]
so that $\Open\rR$ is injective. Conversely, assuming that $\Open\rR$ is injective it suffices to establish that $\rG^\up \circ \rR^\down \circ \rR^\up = \rG^\up$. By Lemma \ref{lem:open_r_two_defs} we know that $\rR^\up \circ \rG^\down$ restricts to an injection on $O(\rG) \subseteq \Pow\rG_t$. We have the equalities:
\[
\rR^\up 
= \rR^\up \circ \rR^\down \circ \rR^\up
= \rR^\up \circ \rG^\down \circ \rG^\up \circ \rR^\down \circ \rR^\up
\qquad
\rR^\up
= \rR^\up \circ \rG^\up \circ \rG^\down
\]
using $(\up\down\up)$ and also $\rR^\up \circ \cl_\rG = \rR^\up$ because $\rR$ is a $\BiCliq$-morphism. Putting them together yields:
\[
\rR^\up \circ \rG^\down (\rG^\up \circ \rR^\down \circ \rR^\up(X))
= \rR^\up \circ \rG^\down (\rG^\up(X))
\]
for any $X \subseteq \rG_s$. Then by injectivity and the fact that the $\rG$-image of any subset is $\rG$-open (Lemma \ref{lem:lat_op_cl}.3), we deduce that $\rG^\up \circ \rR^\down \circ \rR^\up = \rG^\up$.
\end{itemize}

\item
Since the $\JSL_f$-epis are precisely the surjective morphisms we have $(a \iff b)$. That $(c \iff c)$ follows from the previous argument and De Morgan duality. Furthermore $(b \iff c)$ follows from the previous statement and duality. For example, $\rR : \rG \to \rH$ is epic iff its dual $\breve{\rR} : \breve{\rH} \to \breve{\rG}$ is monic iff $\cl_{\breve{\rR}} = \cl_{\breve{\rH}}$ iff $\inte_\rR = \inte_\rH$ by De Morgan duality i.e.\ Lemma \ref{lem:cl_inte_of_pirr}.2.
\end{enumerate}
\end{proof}

The following result should be compared to Lemma \ref{lem:adj_obs}.2.

\begin{lemma}[Adjoints and inverses commute]
\label{lem:bicliq_iso_adj_inverse_commutes}
If $\rR : \rG \to \rH$ is a $\BiCliq$-isomorphism then $(\rR^{\bf-1})\spcheck = (\rR\spcheck)^{\bf-1}$.
\end{lemma}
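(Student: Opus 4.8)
The self-duality $(-)\spcheck : \Dep^{op} \to \Dep$ is a (contravariant) equivalence functor by Theorem \ref{thm:bicliq_self_dual}, and the claim is simply the standard fact that such a functor inverts isomorphisms. First I would fix the types: since $\rR : \rG \to \rH$ is a $\Dep$-isomorphism, its inverse satisfies $\rR^{\bf-1} : \rH \to \rG$, so $(\rR^{\bf-1})\spcheck = \breve{(\rR^{\bf-1})} : \breve{\rG} \to \breve{\rH}$; on the other hand $\rR\spcheck = \breve{\rR} : \breve{\rH} \to \breve{\rG}$, whence $(\rR\spcheck)^{\bf-1} : \breve{\rG} \to \breve{\rH}$. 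Thus both sides are $\Dep$-morphisms of type $\breve{\rG} \to \breve{\rH}$, and it remains only to check they agree.

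The plan is to apply $(-)\spcheck$ to the two defining equations of the inverse, namely $\rR \fatsemi \rR^{\bf-1} = id_\rG$ and $\rR^{\bf-1} \fatsemi \rR = id_\rH$. Using that $(-)\spcheck$ preserves identities ($(id_\rG)\spcheck = id_{\breve{\rG}}$) and reverses composition ($(\rR \fatsemi \rS)\spcheck = \rS\spcheck \fatsemi \rR\spcheck$), both established in the proof of Theorem \ref{thm:bicliq_self_dual}, the first equation gives
\[
(\rR^{\bf-1})\spcheck \fatsemi \rR\spcheck = (\rR \fatsemi \rR^{\bf-1})\spcheck = (id_\rG)\spcheck = id_{\breve{\rG}},
\]
and the second gives
\[
\rR\spcheck \fatsemi (\rR^{\bf-1})\spcheck = (\rR^{\bf-1} \fatsemi \rR)\spcheck = (id_\rH)\spcheck = id_{\breve{\rH}}.
\]
Hence $(\rR^{\bf-1})\spcheck$ is a two-sided inverse of $\rR\spcheck$ in $\Dep$, and by uniqueness of inverses in any category it must equal $(\rR\spcheck)^{\bf-1}$, as required.

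The argument is essentially routine; the only point demanding care is the contravariant reversal of the composition order when pushing $(-)\spcheck$ through $\fatsemi$, which is exactly what places $(\rR^{\bf-1})\spcheck$ on the correct side of $\rR\spcheck$. As an optional sanity check one could instead verify the equality of the underlying relations directly, computing $(\rR\spcheck)^{\bf-1}$ via the component-flipping rule $(\rR\spcheck)_- = \rR_+$ and $(\rR\spcheck)_+ = \rR_-$ from Definition \ref{def:bicliq_self_duality} and comparing with $\breve{(\rR^{\bf-1})}$, but this reduces to the same content and is strictly more laborious than the functorial argument above.
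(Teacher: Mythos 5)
Your proof is correct and follows essentially the same route as the paper's: both apply the contravariant functoriality of $(-)\spcheck$ to the two equations $\rR \fatsemi \rR^{\bf-1} = id_\rG$ and $\rR^{\bf-1} \fatsemi \rR = id_\rH$ and conclude by uniqueness of inverses. Your version merely spells out the typing and the reversal of composition a little more explicitly.
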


\begin{proof}
The functoriality of $(-)\spcheck : \BiCliq^{op} \to \BiCliq$ informs us that $\rR\spcheck \fatsemi (\rR^{\bf-1})\spcheck = (\rR^{\bf-1} \fatsemi \rR)\spcheck = id_\rH\spcheck = id_{\rH\spcheck}$ and similarly $(\rR^{\bf-1})\spcheck \fatsemi \rR\spcheck = (\rR \fatsemi \rR^{\bf-1})\spcheck = id_\rG\spcheck = id_{\rG\spcheck}$.
\end{proof}

\smallskip

\begin{lemma}[$\BiCliq$-isomorphisms via components]
\label{lem:bicliq_iso_two_mor}
\item
Given $\BiCliq$-morphisms $\rR : \rG \to \rH$ and $\rS : \rH \to \rG$ then t.f.a.e.\
\begin{enumerate}
\item
$\rR$ is a $\BiCliq$-isomorphism with inverse $\rS$.
\item
We know either $\rR_- ; \rS = \rG$ or $\rG = \rR ; \rS_+\spbreve$.
We also know either $\rS_- ; \rR = \rH$ or  $\rH = \rS ; \rR_+\spbreve$.
\end{enumerate}
\end{lemma}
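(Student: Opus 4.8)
The plan is to reduce the whole statement to the definition of inverse in a category together with the composite formula from Lemma \ref{lem:bicliq_func_comp}. By definition, $\rR$ is a $\BiCliq$-isomorphism with inverse $\rS$ precisely when the two composites are identities, i.e.\ $\rR \fatsemi \rS = id_\rG = \rG$ and $\rS \fatsemi \rR = id_\rH = \rH$, recalling from Definition \ref{def:category_bicliq} that identity morphisms in $\BiCliq$ are the objects themselves. So statement (1) unfolds into exactly these two equations, and the entire task is to recognise that each of them is captured by the corresponding `either/or' clause of statement (2).

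The key input is Lemma \ref{lem:bicliq_func_comp}, which supplies the chain
\[
\rR \fatsemi \rS = \rR_- ; \rS = \rR ; \rS_+\spbreve
\]
where the codomain $\rI$ of that lemma is here $\rG$, since $\rS : \rH \to \rG$. Thus the two equations `$\rR_- ; \rS = \rG$' and `$\rG = \rR ; \rS_+\spbreve$' are not genuine alternatives at all: both left-hand sides already equal $\rR \fatsemi \rS$, so each is literally the single assertion $\rR \fatsemi \rS = id_\rG$. Applying the same lemma to $\rS \fatsemi \rR : \rH \to \rH$ yields
\[
\rS \fatsemi \rR = \rS_- ; \rR = \rS ; \rR_+\spbreve,
\]
so the second clause of (2), in either of its forms, is exactly $\rS \fatsemi \rR = id_\rH$.

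Putting the two clauses together, statement (2) says precisely that $\rR \fatsemi \rS = id_\rG$ and $\rS \fatsemi \rR = id_\rH$, which is statement (1); hence the equivalence is immediate once the composite formula is in hand. I expect no real obstacle here: the only point worth spelling out is why the disjunctions are harmless, namely that Lemma \ref{lem:bicliq_func_comp} identifies the two relational expressions appearing in each clause with one and the same $\BiCliq$-composite, so one is free to verify whichever of the two is more convenient in practice. One could additionally remark that a morphism possessing both a left and a right inverse has them coincide, but this is not needed, since (2) directly furnishes a genuine two-sided inverse.
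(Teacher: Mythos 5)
Your proof is correct and follows essentially the same route as the paper: the paper's own argument draws the two commuting witness diagrams and observes that every composite path from $\rG_s$ to $\rG_t$ equals $\rR \fatsemi \rS$, which is precisely the content of the final claim of Lemma \ref{lem:bicliq_func_comp} that you invoke. Your observation that each ``either/or'' clause collapses to the single assertion $\rR \fatsemi \rS = id_\rG$ (resp.\ $\rS \fatsemi \rR = id_\rH$) is exactly the point the paper is making.
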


\begin{proof}
Their equivalence follows by considering the diagrams:
\[
\begin{tabular}{lll}
$\vcenter{\vbox{\xymatrix@=20pt{
\rG_t \ar[rr]^{\rR_+\spbreve} && \rH_t \ar[rr]^{\rS_+\spbreve} && \rG_t
\\
\rG_s \ar[u]^{\rG} \ar[rr]_{\rR_-} \ar[urr]^{\rR} && \rH_s \ar[u]^{\rH} \ar[rr]_{\rS_-} \ar[urr]^{\rS} && \rG_s \ar[u]_{\rG}
}}}$
&&
$\vcenter{\vbox{\xymatrix@=20pt{
\rH_t \ar[rr]^{\rS_+\spbreve} && \rG_t \ar[rr]^{\rR_+\spbreve} && \rH_t
\\
\rH_s \ar[u]^{\rH} \ar[rr]_{\rS_-} \ar[urr]^{\rS} && \rG_s \ar[u]^{\rG} \ar[rr]_{\rR_-} \ar[urr]^{\rR} && \rH_s \ar[u]_{\rH}
}}}$
\end{tabular}
\]
which commute because $\rR$ and $\rS$ are $\BiCliq$-morphisms. In the left diagram, any composite from $\rG_s$ to $\rG_t$ equals $\rR \fatsemi \rS$. Thus the latter equals $id_\rG = \rG$ iff $\rR_- ; \rS = \rG$ or alternatively $\rR ; \rS_+\spbreve$. Likewise in the right diagram.
\end{proof}

In the previous result, we assumed the candidate inverse was already known to be a $\Dep$-morphism. We then relied upon knowing some of the component relations. The following result avoids the component relations, and makes no assumptions concerning the candidate inverse.

\begin{lemma}[$\BiCliq$-isomorphisms via functional compositions]
\label{lem:bicliq_iso_from_rel}
\item
Given any $\BiCliq$-morphism $\rR : \rG \to \rH$ and relation $\rS \subseteq \rH_s \times \rG_t$, the following statements are equivalent.
\begin{enumerate}
\item
$\rR$ is a $\BiCliq$-isomorphism with inverse $\rS$.
\item
The following four equations hold:
\[
\begin{tabular}{lllll}
(a) & $\rR^\down \circ \rH^\up = \rG^\down \circ \rS^\up$ &
(c) & $\rR^\up \circ \rG^\down = \rH^\up \circ \rS^\down$ &
\\
(b) &  $\rS^\down \circ \rG^\up = \rH^\down \circ \rR^\up$ &
(d) & $\rS^\up \circ \rH^\down = \rG^\up \circ \rR^\down$.
\end{tabular}
\]
\end{enumerate}
\end{lemma}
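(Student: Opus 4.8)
The statement characterises when a $\Dep$-morphism $\rR : \rG \to \rH$ is an isomorphism with a prescribed inverse $\rS$, purely in terms of the functional operators $(-)^\up$ and $(-)^\down$. The natural strategy is to prove $(1) \Rightarrow (2)$ and $(2) \Rightarrow (1)$ separately, exploiting the equivalence $\Open : \Dep \to \JSL_f$ together with the self-duality $(-)\spcheck$ and the various composite formulae from Lemma \ref{lem:bicliq_func_comp} and Lemma \ref{lem:composite_adjoints}. The key observation is that the four equations (a)--(d) are exactly the statements that four specific restricted $\JSL_f$-morphisms coincide, and that these express the inverse relationship both for $\Open\rR$ and for its adjoint $(\Open\rR)_*$.

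\textbf{Direction $(1) \Rightarrow (2)$.} First I would assume $\rR$ is an isomorphism with inverse $\rS$, so $\rR \fatsemi \rS = id_\rG = \rG$ and $\rS \fatsemi \rR = id_\rH = \rH$. Using $(\up\fatsemi)$ from Lemma \ref{lem:bicliq_func_comp}, the equation $\rR \fatsemi \rS = \rG$ gives $\rS^\up \circ \rH^\down \circ \rR^\up = \rG^\up$, and $\rS \fatsemi \rR = \rH$ gives $\rR^\up \circ \rG^\down \circ \rS^\up = \rH^\up$. To extract equations (a)--(d) I would restrict to the relevant open/closed lattices and take adjoints. For instance, Lemma \ref{lem:composite_adjoints}.2 tells me that $\rR^\down \circ \rH^\up$ and $\rS^\down \circ \rG^\up$ are the appropriate morphisms between closed-set lattices; since $\Open\rR$ and $\Open\rS$ are mutually inverse $\JSL_f$-isomorphisms (by functoriality of $\Open$ and Lemma \ref{lem:jsl_mono_epi_iso}.4), their adjoints are mutually inverse too, and this forces equalities (a) and (b). Equations (c) and (d) follow by applying the same reasoning to the dual morphisms $\breve{\rR}$, $\breve{\rS}$ via Theorem \ref{thm:bicliq_self_dual}, or equivalently by De Morgan duality using Lemma \ref{lem:up_down_basic}.3. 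The main care here is matching each equation to the correct restriction of domain/codomain so that the abstract inverse relationship translates into the stated operator identity.

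\textbf{Direction $(2) \Rightarrow (1)$.} This is the harder and more interesting direction, because I cannot assume $\rS$ is even a $\Dep$-morphism. The plan is to first show that equations (a)--(d) force $\rS$ to be a $\Dep$-morphism $\rH \to \rG$, via the characterisation in Lemma \ref{lem:bicliq_mor_char_max_witness}.1, namely that $\rS^\up \circ \cl_\rH = \rS^\up = \inte_\rG \circ \rS^\up$. Here equations (c) and (d) should be exactly what is needed: rewriting $\cl_\rH = \rH^\down \circ \rH^\up$ and $\inte_\rG = \rG^\up \circ \rG^\down$ and substituting the identities (c), (d) together with $(\up\down\up)$/$(\down\up\down)$ from Lemma \ref{lem:up_down_basic}.3 should collapse the composites to $\rS^\up$. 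Once $\rS$ is known to be a morphism, I would compute $(\rR \fatsemi \rS)^\up = \rS^\up \circ \rH^\down \circ \rR^\up$ using $(\up\fatsemi)$ and show it equals $\rG^\up = (id_\rG)^\up$ by applying equation (b) (or (a)) plus $(\up\down\up)$; faithfulness of $\Open$ (Lemma \ref{lem:open_well_defined}) then upgrades equality of $(-)^\up$ to equality of the morphisms themselves, giving $\rR \fatsemi \rS = id_\rG$. The symmetric computation with $\rS \fatsemi \rR$ uses the remaining equations.

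\textbf{Expected obstacle.} The crux will be bookkeeping: verifying that equations (a)--(d) are precisely the four identities needed, without redundancy, and confirming that each algebraic manipulation stays within the regime where $(\up\down\up)$ and the morphism conditions $\rR^\up \circ \cl_\rG = \rR^\up$ apply. I expect the verification that $\rS$ is a genuine $\Dep$-morphism (rather than merely a relation satisfying the operator equations) to be the delicate step, since it is where the hypothesis that $\rR$ is already a morphism must be combined with (c) and (d) to bootstrap the morphism property of $\rS$. Once that is secured, the remaining computations are routine applications of $(\up\fatsemi)$, $(\up\down\up)$, and the faithfulness of $\Open$.
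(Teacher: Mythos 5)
Your overall architecture matches the paper's for the harder direction $(2)\Rightarrow(1)$: first use the operator characterisation of Lemma \ref{lem:bicliq_mor_char_max_witness}.1 to show $\rS$ is a genuine $\Dep$-morphism, then compute $(\rR\fatsemi\rS)^\up$ and $(\rS\fatsemi\rR)^\up$ via $(\up\fatsemi)$. For $(1)\Rightarrow(2)$ you take a genuinely different route: the paper argues directly with the component relations, using that $\cl_\rH=\cl_\rS$ because the inverse $\rS$ is monic (Lemma \ref{lem:bicliq_mono_epi_char}), whereas you pass through the closed/open-set lattices of Lemma \ref{lem:composite_adjoints} and the fact that adjoints of mutually inverse isomorphisms are mutually inverse. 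Your route works, but it carries an extra obligation you only gesture at: the adjoint correspondence gives (a) and (b) only on \emph{closed} subsets, and you must then invoke $\rR^\up\circ\cl_\rG=\rR^\up$, $\rS^\up\circ\cl_\rH=\rS^\up$ and $(\up\down\up)$ to see that both sides of each equation factor through the relevant closure, so that equality on closed sets propagates to all subsets. The paper's component computation avoids this extension step at the cost of being less conceptual.

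Two details of your sketch for $(2)\Rightarrow(1)$ would not survive being written out as stated. First, to get $\rS^\up\circ\cl_\rH=\inte_\rG\circ\rS^\up$ you need equations (d) \emph{and} (a) (substitute (d) into $\rS^\up\circ\rH^\down\circ\rH^\up$ to get $\rG^\up\circ\rR^\down\circ\rH^\up$, then (a) to get $\rG^\up\circ\rG^\down\circ\rS^\up$); the pair ``(c) and (d)'' you name does not chain, since (c) involves $\rR^\up\circ\rG^\down$ rather than $\rR^\down\circ\rH^\up$. Second, a single equation plus $(\up\down\up)$ does not reduce $(\rR\fatsemi\rS)^\up$ to $\rG^\up$: equation (d) yields $\rG^\up\circ\cl_\rR$ and equation (b) yields $\inte_\rS\circ\rG^\up$, and neither collapses on its own because $\cl_\rR$ need not equal $\cl_\rG$ a priori. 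You need both expressions and the sandwich $\inte_\rS\circ\rG^\up\subseteq\rG^\up\subseteq\rG^\up\circ\cl_\rR$ (co-extensivity, extensivity, monotonicity) to conclude all three agree, exactly as the paper does. Neither point is a wrong idea --- both are repaired by choosing the correct pair of equations --- but as written the sketch underestimates where the four hypotheses are actually consumed.
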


\begin{proof}
\item
\begin{itemize}
\item
$(1 \To 2)$: Assume that $\rR$ is an isomorphism with inverse $\rS : \rH \to \rG$, so that $\rS$ is also a $\BiCliq$-isomorphism. We first show that (a) holds.
\[
\begin{tabular}{lll}
$\rR^\down \circ \rH^\up$
&
$= (\rR_- ; \rH)^\down \circ \rH^\up$
& associated component
\\&
$= \rR_-^\down \circ \rH^\down \circ \rH^\up$
& by $(;\,\down)$
\\&
$= \rR_-^\down \circ \cl_\rH$
& by definition
\\&
$= \rR_-^\down \circ \cl_\rS$
& $\rS$ is monic, and Lemma \ref{lem:bicliq_mono_epi_char}.1
\\&
$= \rR_-^\down \circ \rS^\down \circ \rS^\up$
& by definition
\\&
$= (\rR_- ; \rS)^\down \circ \rS^\up$
& by $(;\,\down)$
\\&
$= \rG^\down \circ \rS^\up$
& by Lemma \ref{lem:bicliq_iso_two_mor}
\end{tabular}
\]
Since $\rS$ is also a $\BiCliq$-isomorphism we obtain (a) for it, which is actually (b). Finally $\rR\spcheck$ and $\rS\spcheck$ are also $\BiCliq$-isomorphisms so we obtain (a) for each of them, and applying De Morgan duality  yields (c) and (d) respectively.

\item
$(2 \To 1)$: The calculation:
\[
\begin{tabular}{lll}
$\rS^\up \circ \cl_\rH$
&
$= \rS^\up \circ \rH^\down \circ \rH^\up$
\\&
$= \rG^\up \circ \rR^\down \circ \rH^\up$
& by (d)
\\&
$= \rG^\up \circ \rG^\down \circ \rS^\up$
& by (a)
\\&
$= \inte_\rG \circ \rS^\up$
\end{tabular}
\]
and Lemma \ref{lem:bicliq_mor_char_max_witness}.1 imply that $\rS$ defines a $\BiCliq$-morphism $\rH \to \rG$. Furthermore, consider:
\[
\begin{tabular}{lll}
\begin{tabular}{lll}
$(\rR \fatsemi \rS)^\up$
&
$= \rS^\up \circ \rH^\down \circ \rR^\up$
& by $(\fatsemi\up)$
\\&
$= \rG^\up \circ \rR^\up \circ \rR^\down$
& by (d)
\\&
$= \rG^\up \circ \cl_\rR$
\end{tabular}
&&
\begin{tabular}{lll}
$(\rR \fatsemi \rS)^\up$
&
$= \rS^\up \circ \rH^\down \circ \rR^\up$
& by $(\fatsemi\up)$
\\&
$= \rS^\up \circ \rS^\down \circ \rG^\up$
& by (b)
\\&
$= \inte_\rS \circ \rG^\up$
\end{tabular}
\end{tabular}
\]
Then since $\inte_\rS \circ \rG^\up \subseteq \rG^\up \subseteq \rG^\up \circ \cl_\rR$ by co-extensivity, extensitivity and monotonicity, we deduce that $(\rR \fatsemi \rS)^\up = \rG^\up$ and hence $\rR \fatsemi \rS = id_\rG$. Finally consider:
\[
\begin{tabular}{lll}
\begin{tabular}{lll}
$(\rS \fatsemi \rR)^\up$
&
$= \rR^\up \circ \rG^\down \circ \rS^\up$
& by $(\fatsemi\up)$
\\&
$= \rH^\up \circ \rS^\down \circ \rS^\up$
& by (c)
\\&
$= \rH^\up \circ \cl_\rS$
\end{tabular}
&&
\begin{tabular}{lll}
$(\rS \fatsemi \rR)^\up$
&
$= \rR^\up \circ \rG^\down \circ \rS^\up$
& by $(\fatsemi\up)$
\\&
$= \rR^\up \circ \rR^\down \circ \rH^\up$
& by (a)
\\&
$= \inte_\rR \circ \rH^\up$
\end{tabular}
\end{tabular}
\]
which by analogous reasoning implies that $\rS \fatsemi \rR = id_\rH$.

\end{itemize}
\end{proof}

A useful class of isomorphisms arises from bijections.

\begin{definition}[Bipartite $\Dep$-isomorphisms]
  \label{def:bipartite_iso}
A \emph{bipartite $\Dep$-isomorphism} $\rR : \rG \to \rH$ is a $\Dep$-morphism witnessed by bijections i.e.\ $\rG; f = \rR = \rH ; \breve{g}$ for bijective functions $f : \rG_s \to \rH_s$ and $g : \rH_t \to \rG_t$. \endbox
\end{definition}

\smallskip

\begin{lemma}
  \label{lem:bipartite_isos_are_isos}
  Every bipartite $\Dep$-isomorphism is a $\Dep$-isomorphism.
\end{lemma}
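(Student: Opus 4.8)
The plan is to exhibit an explicit two-sided inverse and verify the two triangle identities, exploiting the fact (established in the proof that $\Dep$ is a category) that $\Dep$-composition $\rR \fatsemi \rS$ equals the relational composite $\rR_l ; \rS$ for \emph{any} left-witness $\rR_l$ of $\rR$, so no passage to the maximum witnesses $(\rR_-,\rR_+)$ is needed. Write the bijective witnesses of $\rR : \rG \to \rH$ as $f : \rG_s \to \rH_s$ and $g : \rH_t \to \rG_t$, so that $f$ and $g$ witness $\rR$ in the sense of Definition \ref{def:category_bicliq}, namely $f ; \rH = \rR = \rG ; \breve{g}$. Recall that for a bijective function the converse coincides with the functional inverse, so $f ; \breve{f} = \Delta_{\rG_s}$, $\breve{f} ; f = \Delta_{\rH_s}$, and likewise $g ; \breve{g} = \Delta_{\rH_t}$, $\breve{g} ; g = \Delta_{\rG_t}$. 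The candidate inverse is then $\rS := \breve{f} ; \rG : \rH \to \rG$.

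First I would check that $\rS$ is genuinely a $\Dep$-morphism, by showing it is witnessed by the bijections $\breve{f}$ on the left and $\breve{g}$ on the right, i.e.\ that $\breve{f} ; \rG = \rS = \rH ; g$. The left equality is just the definition of $\rS$; the right equality $\breve{f} ; \rG = \rH ; g$ follows from $f ; \rH = \rG ; \breve{g}$ by precomposing with $\breve{f}$ (cancelling $\breve{f} ; f = \Delta_{\rH_s}$) and then postcomposing with $g$ (cancelling $\breve{g} ; g = \Delta_{\rG_t}$). This is the only step carrying any bookkeeping, and I expect it to be the main (minor) obstacle: one must keep straight that the right witness of $\rR : \rG \to \rH$ is $g$ while the right witness of the inverse $\rH \to \rG$ is its converse $\breve{g}$, along with the two distinct directions of converse/inverse cancellation.

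With $\rS$ confirmed to be a morphism, the triangle identities are immediate from witness-independence of composition. Using $f$ as a left-witness of $\rR$ gives $\rR \fatsemi \rS = f ; \rS = f ; \breve{f} ; \rG = \Delta_{\rG_s} ; \rG = \rG = id_\rG$, and using $\breve{f}$ as a left-witness of $\rS$ gives $\rS \fatsemi \rR = \breve{f} ; \rR = \breve{f} ; f ; \rH = \Delta_{\rH_s} ; \rH = \rH = id_\rH$. Hence $\rR$ is a $\Dep$-isomorphism with inverse $\rS$, as required. As an alternative framing, once $\rS$ is known to be a morphism one could instead invoke Lemma \ref{lem:bicliq_iso_two_mor} and verify the equations $\rR_- ; \rS = \rG$ and $\rS_- ; \rR = \rH$, but the direct computation above is shorter and avoids referring to the maximum components at all.
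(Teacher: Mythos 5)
Your proof is correct and takes essentially the same route as the paper, which simply asserts that the inverse is witnessed by $(\lambda^{-1},\rho^{-1})$; you supply the verification the paper omits, namely that $\breve{f};\rG = \rH;g$ (so the candidate inverse really is a $\Dep$-morphism) and that both composites reduce to identities via witness-independence of $\fatsemi$. Note also that you have silently, and correctly, read the witnessing equations of Definition \ref{def:bipartite_iso} as $f;\rH = \rR = \rG;\breve{g}$, which is the only typing consistent with Definition \ref{def:category_bicliq}.
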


\begin{proof}
  If $\rR : \rG \to \rH$ has bijective witnesses $(\lambda, \rho)$, it has an inverse $\rR^{-1} : \rH \to \rG$ via witnesses $(\lambda^{-1}, \rho^{-1})$.
\end{proof}

\begin{note}[$\Dep$-objects as bipartite graphs]
  \item
  \begin{enumerate}
    \item 
    Any relation $\rG \subseteq \rG_s \times \rG_t$ can be viewed as an undirected bipartite graph with vertices $\rG_s + \rG_t$ (disjoint union), edges $\rE(e_1(g_s), e_2(g_t)) :\iff \rG(g_s, g_t)$ (and no others) and bipartition $(e_1[\rG_s], e_2[\rG_t])$. The latter is a pair and is sometimes called an \emph{ordered bipartition}. The dual $\Dep$-object $\breve{\rG} \subseteq \rG_t \times \rG_s$ yields the same bipartite graph modulo-isomorphism, yet  has a distinct ordered bipartition $(e_2[\rG_t], e_1[\rG_s])$ unless $\rG_s = \rG_t = \emptyset$.
    \item
    A bipartite $\Dep$-isomorphism $\rR : \rG \to \rH$ has bijective witnesses $(f, g)$. They induce a bipartite graph isomorphism $f + \breve{g}$ between the underlying undirected bipartite graphs. Since the isomorphisms must respect the bipartitions, not every bipartite graph isomorphism arises in this way. \endbox
  \end{enumerate}
\end{note}

\begin{example}[A bipartite $\Dep$-isomorphism]
The relation $\rG \subseteq X \times Y$ below on the left,
\[
\xymatrix@=15pt{
y_1 & y_2
\\
x_1 \ar[u] & x_2 \ar[u] \ar[ul]
}
\qquad\qquad
\xymatrix@=15pt{
y_1 & y_2
&& 0 \ar@{..>}[ll]^{\rR_+} \ar@/_10pt/@{..>}[lll]_{\rR_+}
& 1 \ar@/_10pt/@{..>}[lll]_{\rR_+} 
\\
x_1 \ar[u] \ar@{..>}@/_10pt/[rrr]_{\rR_-} &
x_2 \ar[u] \ar[ul] \ar@{..>}[rr]^{\rR_-} \ar@{..>}@/_10pt/[rrr]_{\rR_-}
&&
1 \ar[u]
& 2 \ar[u] \ar[ul]
}
\]
is bipartite $\Dep$-isomorphic to $\Pirr\three$ where $\three$ is the $3$-chain. We've depicted the associated components of the $\Dep$-isomorphism, which are not functional. \endbox
\end{example}

\begin{example}[A non-bipartite $\Dep$-isomorphism]
  If $\rG = \{ x_1, x_2 \} \times \{ y \}$, the canonical $\BiCliq$-isomorphism $red_\rG : \rG \to \Pirr\Open\rG$ has associated components:
  \[
  \xymatrix@=15pt{
  & y \ar@{<..}[rrr] & && \emptyset
  \\
  x_1 \ar@/_10pt/@{..>}[rrrr] \ar[ur] && x_2 \ar@{..>}[rr] \ar[ul] && \{y\} \ar[u]
  }
  \]
  It cannot be a bipartite $\Dep$-iso because $2 = |\rG_s| > |J(\Open\rG)| = 1$. \endbox
\end{example}

Nevertheless we have the following clarifying result.

\begin{lemma}[Bipartite graph isomorphism by restriction]
\label{lem:bip_restrict_to_reduced}
Every $\rG$ has a domain/codomain restriction which is bipartite $\Dep$-isomorphic to $\Pirr\Open\rG$.
\end{lemma}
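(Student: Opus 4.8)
The plan is to build the required restriction by selecting one representative domain element for each join-irreducible $\rG$-open set and one representative codomain element for each meet-irreducible $\rG$-open set. Write $\aQ := \Open\rG$. By Lemma \ref{lem:lat_op_cl}.3 every $J \in J(\aQ)$ has the form $\rG[g_s]$ for some $g_s \in \rG_s$, and every $M \in M(\aQ)$ has the form $\inte_\rG(\overline{g_t})$ for some $g_t \in \rG_t$. First I would choose, for each $J \in J(\aQ)$, an element $s_J \in \rG_s$ with $\rG[s_J] = J$, and for each $M \in M(\aQ)$ an element $t_M \in \rG_t$ with $\inte_\rG(\overline{t_M}) = M$. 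Setting $S := \{ s_J : J \in J(\aQ)\}$ and $T := \{ t_M : M \in M(\aQ) \}$, the assignments $s_J \mapsto \rG[s_J]$ and $t_M \mapsto \inte_\rG(\overline{t_M})$ are bijections $S \to J(\aQ)$ and $T \to M(\aQ)$ (injectivity being immediate, since each representative is indexed by its own image). The candidate restriction is then $\rG' := \rG |_{S \times T}$, which is a domain/codomain restriction of $\rG$ by construction.

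Next I would exhibit the bipartite $\Dep$-isomorphism $\rR : \rG' \to \Pirr\aQ$ explicitly. Take the bijective witnesses $f : S \to J(\aQ)$ with $f(s_J) := \rG[s_J] = J$, playing the role of $\rR_l$, and $g : M(\aQ) \to T$ with $g(M) := t_M$, playing the role of $\rR_r$, so that $\breve{g} : T \to M(\aQ)$ sends $t_M \mapsto \inte_\rG(\overline{t_M}) = M$. To confirm that $(f,g)$ witnesses a $\Dep$-morphism it suffices, by Definition \ref{def:bipartite_iso} together with the morphism condition $\rR_l ; \Pirr\aQ = \rR = \rG' ; \rR_r\spbreve$, to verify the single relational equation $f ; \Pirr\aQ = \rG' ; \breve{g}$, both sides being relations contained in $S \times M(\aQ)$. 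Unwinding the definitions, $(s_J, M) \in f ; \Pirr\aQ$ holds iff $J \nleq_\aQ M$, i.e.\ iff $\rG[s_J] \not\subseteq M$, whereas $(s_J, M) \in \rG' ; \breve{g}$ holds iff $\rG(s_J, t_M)$, i.e.\ iff $t_M \in \rG[s_J]$.

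The crux is therefore the equivalence $\rG[s_J] \not\subseteq \inte_\rG(\overline{t_M}) \iff t_M \in \rG[s_J]$. This is exactly the negation of the final displayed equivalence of Lemma \ref{lem:cl_inte_of_pirr}.1, applied to the $\rG$-open set $Y := \rG[s_J]$ and the point $g_t := t_M$ (recall $\rG[s_J]$ is $\rG$-open). Hence $f ; \Pirr\aQ = \rG' ; \breve{g}$, so $\rR := f ; \Pirr\aQ$ is a $\Dep$-morphism with bijective witnesses $(f,g)$, i.e.\ a bipartite $\Dep$-isomorphism. Lemma \ref{lem:bipartite_isos_are_isos} then upgrades it to a genuine $\Dep$-isomorphism $\rG' \cong \Pirr\Open\rG$, which completes the argument. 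I do not anticipate a serious obstacle here: once the representatives are fixed everything reduces to the open-set/point duality already recorded in Lemma \ref{lem:cl_inte_of_pirr}.1, and the only mild care required is checking that the two chosen maps are bijections and that the two composites land in the same hom-set $S \times M(\aQ)$.
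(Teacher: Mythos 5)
Your proof is correct and follows essentially the same route as the paper: choose one representative in $\rG_s$ for each join-irreducible open set and one in $\rG_t$ for each meet-irreducible, restrict $\rG$ to these representatives, and check that the two induced bijections witness the equality $f ; \Pirr\Open\rG = \rG' ; \breve{g}$. The only cosmetic difference is that you invoke the packaged equivalence $Y \nsubseteq \inte_\rG(\overline{g_t}) \iff g_t \in Y$ from Lemma \ref{lem:cl_inte_of_pirr}.1, where the paper unwinds the same fact via the adjunction and $(\down\up\down)$.
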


\begin{proof}
  By Lemma \ref{lem:lat_op_cl}.3 we know that:
  \[
  J(\Open\rG) \subseteq \{ \rG[g_s] : g_s \in \rG_s \}
  \qquad
  M(\Open\rG) \subseteq \{ \inte_\rG(\overline{g_t}) : g_t \in \rG_t \}.
  \]
  Then for each $X \in J(\Open\rG)$ choose $j_X \in \rG_s$ such that $\rG[j_X] = X$, and for each $Y \in M(\Open\rG)$ choose $m_Y \in \rG_t$ such that $\inte_\rG(\overline{m_Y}) = Y$. These chosen elements are necessarily distinct e.g.\ if $X \neq Y$ then $j_X \neq j_Y$, and induce both a restriction $\rG'$ of $\rG$ and a pair of relations $(\rR_l,\rR_r)$ as follows:
  \[
  J := \{ j_X : X \in J(\Open\rG) \}
  \qquad
  M := \{ m_Y : Y \in M(\Open\rG) \}
  \qquad
  \rG' := \rG \;\cap \; J \times M
  \]
  \[
  \begin{tabular}{l}
  $\rR_l := \{ (j_X,X) : X \in J(\Open\rG) \} \subseteq \rG'_s \times J(\Open\rG)$
  \\[0.5ex]
  $\rR_r := \{ (Y,m_Y) : Y \in M(\Open\rG) \} \subseteq M(\Open\rG) \times \rG'_t$
  \end{tabular}
  \]
  We now establish that $(\rR_l,\rR_r) : \rG' \to \Pirr\Open\rG$ is a bipartite $\Dep$-isomorphism. Clearly $\rR_l$ and $\rR_r$ are bijective functions. Further recall that $\Pirr\Open\rG := \; \nsubseteq \; \subseteq J(\Open\rG) \times M(\Open\rG)$ i.e.\ the domain/codomain restriction of the binary relation $\nsubseteq$ on $\Pow \rG_t$. Then: 
  \[
  \begin{tabular}{lll}
  $\rR_l ; \; \nsubseteq (j_X,Y)$
  &
  $\iff \rG[j_X] \;\nsubseteq\; \inte_\rG(\overline{m_Y})$
  & $\rR_l$ functional
  \\&
  $\iff j_X \nin \rG^\down \circ \rG^\up \circ \rG^\down(\overline{m_Y})$
  & by usual adjunction
  \\&
  $\iff j_X \nin \rG^\down(\overline{m_Y})$
  & by $(\down\up\down)$
  \\&
  $\iff \rG(j_X,m_Y)$
  & by definition of $(-)^\down$
  \\&
  $\iff \rG'(j_X,m_Y)$
  & by restriction
  \\
  \\
  $\rG' ; \rR_r\spbreve (j_X,Y)$
  &
  $\iff \exists g_t \in \rG'_t.( \rG(j_X,g_t) \land \rR_r(Y,g_t) )$
  \\&
  $\iff \rG(j_X,m_Y)$
  & $\rR_r$ functional
  \\&
  $\iff \rG'(j_X,m_Y)$
  & by restriction
  \end{tabular}
  \]
  Thus $\rR_l ; \; \nsubseteq \; = \rG' = \rG' ; \rR_+\spbreve$ as required.
\end{proof}

Next we describe certain degenerate yet useful isomorphisms.

\begin{lemma}[Isomorphisms via join/meet generators]
\label{lem:degen_isos}
Given any finite join-semilattice $\aS$ and any subsets  $J(\aS) \subseteq X \subseteq Q$ and $M(\aS) \subseteq Y \subseteq Q$, consider the domain/codomain restriction $\rG := \; \nleq_\aS |_{X \times Y}$. Then $\rR$ and $\rS$ are mutually inverse $\Dep$-isomorphisms:
\[
\begin{tabular}{llllll}
$\rR : \rG \to \Pirr\aS$ 
&& $\rR := \, \nleq_\aS |_{X \times M(\aS)}$
& $\rR_- (x, j) :\iff j \leq_\aS x$
& $\rR_+ (m, y) :\iff m \leq_\aS y$
\\
$\rS : \Pirr\aS \to \rG$  
&& $\rS := \, \nleq_\aS |_{J(\aS) \times Y}$
& $\rS_-(j, x) :\iff x \leq_\aS j$
% & $\rS_- := \rR_-\spbreve$
& $\rS_+ (y, m) :\iff y \leq_\aS m$.
% & $\rS_+ := \rR_+\spbreve$.
\\ 
\end{tabular}
\]
\end{lemma}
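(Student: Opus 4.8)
The plan is to verify that $\rR$ and $\rS$ are genuine $\Dep$-morphisms carrying exactly the stated component relations, and then feed the two composite identities $\rR_- ; \rS = \rG$ and $\rS_- ; \rR = \Pirr\aS$ into Lemma \ref{lem:bicliq_iso_two_mor} to conclude that $\rR$ is a $\Dep$-isomorphism with inverse $\rS$. Throughout, the only order-theoretic input is Lemma \ref{lem:std_order_theory}.7, which says that the order of $\aS$ is determined by either the join-irreducibles below or the meet-irreducibles above; the hypotheses $J(\aS) \subseteq X$ and $M(\aS) \subseteq Y$ will be used precisely to let the relevant quantifiers range over $X$ and $Y$ while still capturing all irreducibles.

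First I would record the typings: with $\rG_s = X$, $\rG_t = Y$, $(\Pirr\aS)_s = J(\aS)$ and $(\Pirr\aS)_t = M(\aS)$, the relation $\rR \subseteq X \times M(\aS)$ indeed has type $\rG \to \Pirr\aS$ and $\rS \subseteq J(\aS) \times Y$ has type $\Pirr\aS \to \rG$. Next I would show each claimed pair is a witness in the sense of Definition \ref{def:category_bicliq}, i.e.\ verify
\[
\rR_- ; \Pirr\aS = \rR = \rG ; \rR_+\spbreve
\qquad
\rS_- ; \rG = \rS = \Pirr\aS ; \rS_+\spbreve .
\]
Each of these four relational equalities is an $\exists$-elimination: for instance $(\rR_- ; \Pirr\aS)(x,m) \iff \exists j \in J(\aS).(j \leq_\aS x \wedge j \nleq_\aS m)$, whose equivalence with $x \nleq_\aS m$ is the backward use of Lemma \ref{lem:std_order_theory}.7 (if $x \nleq_\aS m$ then not every join-irreducible below $x$ lies below $m$). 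This already makes $\rR$ and $\rS$ $\Dep$-morphisms.

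To identify the claimed relations as the \emph{maximal} witnesses (the component relations named in Lemma \ref{lem:bicliq_iso_two_mor}), I would compute the canonical components from Definition \ref{def:bicliq_mor_components} and match. Two of the four computations use Lemma \ref{lem:std_order_theory}.7 with the quantifiers already ranging over all irreducibles, while the other two are where the inclusions bite: e.g.\ $\rR_+[m] = \breve{\rG}^\down(\breve{\rR}[m]) = \{ y \in Y : \forall x \in X.(x \leq_\aS m \To x \leq_\aS y) \}$, and this collapses to $\{ y : m \leq_\aS y \}$ only because $J(\aS) \subseteq X$ lets me invoke the join-irreducible form of Lemma \ref{lem:std_order_theory}.7; dually $\rS_-[j] = \rG^\down(\rS[j])$ collapses to $\{ x : x \leq_\aS j \}$ using $M(\aS) \subseteq Y$ and the meet-irreducible form. (Closedness of the $\Pirr\aS$-rows can alternatively be read off Lemma \ref{lem:cl_inte_of_pirr}.2, since $\cl_{\Pirr\aS}\{j : j \leq_\aS x\} = \{j : j \leq_\aS \Lor_\aS\{j' : j' \leq_\aS x\}\} = \{j : j \leq_\aS x\}$.)

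Finally I would verify the two hypotheses of Lemma \ref{lem:bicliq_iso_two_mor}.2, namely $\rR_- ; \rS = \rG$ and $\rS_- ; \rR = \Pirr\aS$; unwinding, these read $\exists j.(j \leq_\aS x \wedge j \nleq_\aS y) \iff x \nleq_\aS y$ and $\exists x \in X.(x \leq_\aS j \wedge x \nleq_\aS m) \iff j \nleq_\aS m$, each again one application of Lemma \ref{lem:std_order_theory}.7 (the backward direction of the second uses the witness $x := j$, legal since $J(\aS) \subseteq X$). Lemma \ref{lem:bicliq_iso_two_mor} then delivers the mutually inverse isomorphisms. The main obstacle is purely the bookkeeping of the third paragraph: confirming that the deliberately small relations $\rR_\pm, \rS_\pm$ really are the maximal witnesses. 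This is exactly the point at which the containments $J(\aS) \subseteq X$ and $M(\aS) \subseteq Y$ are indispensable, since dropping them would enlarge the canonical components beyond the stated formulas and break both the witness identities and the final composites.
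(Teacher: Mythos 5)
Your proposal is correct and follows essentially the same route as the paper: verify the witness identities via Lemma \ref{lem:std_order_theory}.7 together with the inclusions $J(\aS)\subseteq X$ and $M(\aS)\subseteq Y$, confirm the stated relations are the associated components, and then check the two composites equal the respective identity morphisms (the paper computes $\rR;\rS_+\spbreve$ and $\rS;\rR_+\spbreve$ where you compute $\rR_-;\rS$ and $\rS_-;\rR$, which agree by Lemma \ref{lem:bicliq_func_comp}). You even spell out the maximality of the components, which the paper leaves to the reader, and your identification of exactly where each inclusion is needed is accurate.
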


\begin{proof}
We first verify that the following diagram commutes:
\[
\xymatrix@=15pt{
Y \ar[rr]^{\rR_+\spbreve} && M(\aS) \ar[rr]^{\rS_+\spbreve} && Y
\\
X \ar[rr]_{\rR_-} \ar[u]^{\nleq_\aS} && J(\aS) \ar[u]_{\nleq_\aS} \ar[rr]_{\rS_-} && X \ar[u]_{\nleq_\aS}
}
\]
That is, $\rR_- ; \nleq_\aS = \rR$ and $\nleq_\aS ; \rR_+\spbreve = \rR$ follow by Lemma \ref{lem:std_order_theory}.7 and $M(\aS) \subseteq Y$. Similarly, $\rS_- ; \nleq_\aS \; = \rS$ and $\nleq_\aS ; \rS_+\spbreve = \rS$ follow by Lemma \ref{lem:std_order_theory}.7 and $J(\aS) \subseteq X$. Thus both $\rR$ and $\rS$ are well-defined $\BiCliq$-morphisms and the reader can verify that $(\rR_-,\rR_+)$ and $(\rS_-,\rS_+)$ are the associated components. Finally we verify they are mutually inverse:
\[
\begin{tabular}{lll}
$\rR \fatsemi \rS(x,y)$
& 
$\iff \rR ; \rS_+\spbreve(x,y)$
& by Lemma \ref{lem:bicliq_func_comp}
\\&
$\iff \exists m \in M(\aS).( x \nleq_\aS m \text{ and } y \leq_\aS m )$
\\&
$\iff \neg \forall m \in M(\aS).( y \leq_\aS m \To x \leq_\aS m )$
\\&
$\iff x \nleq_\aS y$
\\
\\
$\rS \fatsemi \rR(j,m)$
&
$\iff \rS ; \rR_+\spbreve(j,m)$
& by Lemma \ref{lem:bicliq_func_comp}
\\&
$\iff \exists y \in Y.( j \nleq_\aS y \text{ and } y \leq_\aS m)$
\\&
$\iff \neg\forall y \in Y.( y \leq_\aS m \To j \leq_\aS y)$
\\&
$\iff \neg(j \leq_\aS m)$
& since $m \in Y$
\\&
$\iff j \nleq_\aS m$.
\end{tabular}
\]
\end{proof}

\begin{example}
For each finite join-semilattice $\aS$ we have $\nleq_\aS \; \cong \; \nleq_\aS |_{J(\aS), M(\aS)} = \Pirr\aS$ inside $\Dep$.
\endbox
\end{example}

The previous Lemma permits us to extend $\Pirr\aS$'s domain/codomain whilst remaining isomorphic. Similarly we may extend $\Pirr f$ up to isomorphism.

\begin{lemma}
Let $f : \aS \to \aT$ be a $\JSL_f$-morphism and fix join/meet-generating subsets $(X_S, Y_S)$ and $(X_T, Y_T)$.
Then using isomorphisms from Lemma \ref{lem:degen_isos}:
\[
  \nleq_{\aS} \; : \; \nleq_{\aS} |_{X_S \times Y_S} \to \Pirr\aS
  \qquad
  \nleq_{\aT} \; : \; \Pirr\aT \to \; \nleq_\aT |_{X_T \times Y_T}
\]
 we have the following commuting $\Dep$-diagram where $\rR (s, t) :\iff f(s) \nleq_\aS t$:
\[
  \xymatrix@=15pt{
    \nleq_\aS |_{X_T \times Y_T} \ar@{<-}[rrr]^-{\nleq_{\aS} |_{J(\aS) \times Y_T}} &&& \Pirr\aT
    \\
    \nleq_\aS |_{X_S \times Y_S} \ar[u]^{\rR} \ar[rrr]_-{\nleq_{\aS} |_{X_S \times M(\aS)}} &&& \Pirr\aS \ar[u]^{\Pirr f}
  }  
\]
\end{lemma}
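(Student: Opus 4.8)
The plan is to verify that the $\Dep$-composite of the three known morphisms along the bottom, right, and (reversed) top edges equals the left-hand relation $\rR$; since $\rR$ then arises as a composite of $\Dep$-morphisms, its well-definedness as a morphism $\nleq_\aS|_{X_S \times Y_S} \to \nleq_\aT|_{X_T \times Y_T}$ comes for free and needs no separate check. (Note that in the displayed diagram the three occurrences of $\aS$ in the top row and in the defining clause for $\rR$ should read $\aT$; thus $\rR \subseteq X_S \times Y_T$ with $\rR(s,t) :\iff f(s) \nleq_\aT t$.) Write $\iota_S := \nleq_\aS|_{X_S \times M(\aS)}$ for the bottom isomorphism and $\pi_T := \nleq_\aT|_{J(\aT) \times Y_T}$ for the top one, both supplied by Lemma \ref{lem:degen_isos}, whose relevant component relations I would recall: $(\iota_S)_-(x,j) \iff j \leq_\aS x$ and $(\pi_T)_+(y,m) \iff y \leq_\aT m$.

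First I would compute the triple composite in a single pass. By Lemma \ref{lem:bicliq_func_comp} one may write $\Pirr f \fatsemi \pi_T = \Pirr f \mathbin{;} (\pi_T)_+\spbreve$ and then $\iota_S \fatsemi (\Pirr f \fatsemi \pi_T) = (\iota_S)_- \mathbin{;} (\Pirr f \fatsemi \pi_T)$, so by associativity of $\fatsemi$ the whole composite is the ordinary relational composite $(\iota_S)_- \mathbin{;} \Pirr f \mathbin{;} (\pi_T)_+\spbreve$, using $\Pirr f = \{(j,m) : f(j) \nleq_\aT m\}$ from Definition \ref{def:open_pirr}. (This edge-by-edge justification is what legitimises a single relational composite here, rather than appealing to the closure Corollary.) Spelling out membership, a pair $(x,y) \in X_S \times Y_T$ lies in this composite iff there exist $j \in J(\aS)$ and $m \in M(\aT)$ with $j \leq_\aS x$, $f(j) \nleq_\aT m$ and $y \leq_\aT m$.

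Next I would collapse the two existentials in two steps. Fixing $j$, the inner condition $\exists m \in M(\aT).(f(j) \nleq_\aT m \text{ and } y \leq_\aT m)$ is precisely the negation of $\forall m \in M(\aT).(y \leq_\aT m \To f(j) \leq_\aT m)$, which by Lemma \ref{lem:std_order_theory}.7 is exactly $f(j) \nleq_\aT y$. Hence the composite relates $(x,y)$ iff $\exists j \in J(\aS).(j \leq_\aS x \text{ and } f(j) \nleq_\aT y)$. Finally, since $x = \Lor_\aS \{ j \in J(\aS) : j \leq_\aS x\}$ by Lemma \ref{lem:std_order_theory}.6 and $f$ preserves joins, we get $f(x) = \Lor_\aT f[\{j \in J(\aS) : j \leq_\aS x\}]$, so $f(x) \leq_\aT y \iff \forall j \leq_\aS x.\, f(j) \leq_\aT y$; negating yields $f(x) \nleq_\aT y \iff \exists j \leq_\aS x.\, f(j) \nleq_\aT y$. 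Thus the composite is exactly $\{(x,y) \in X_S \times Y_T : f(x) \nleq_\aT y\} = \rR$, which is the claimed commutativity.

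There is no genuine obstacle here: the argument is a direct calculation. The only points demanding care are the bookkeeping of which component relations enter the composite (so that $(\iota_S)_- \mathbin{;} \Pirr f \mathbin{;} (\pi_T)_+\spbreve$ is indeed the correct value of the $\fatsemi$-composite) and the two uses of Lemma \ref{lem:std_order_theory}.7 together with join-preservation, which eliminate the meet-irreducible quantifier and then pass from the join-irreducibles below $x$ to $x$ itself.
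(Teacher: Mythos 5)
Your proposal is correct and follows essentially the same route as the paper: both reduce the $\fatsemi$-composite to the relational composite $(\iota_S)_- \mathbin{;} \Pirr f \mathbin{;} (\pi_T)_+\spbreve$ of the component relations from Lemma \ref{lem:degen_isos} and then identify the resulting double existential over $J(\aS) \times M(\aT)$ with the condition $f(x) \nleq_\aT y$. The only difference is presentational — you eliminate the two quantifiers by two clean applications of Lemma \ref{lem:std_order_theory}.7 and join-preservation, where the paper argues the two implications separately via small contradiction arguments — and your reading of the typos ($\aS$ for $\aT$ in the top row and in the definition of $\rR$) matches the intended statement.
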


% TODO
% 1. Extract relational composition below into independent Lemma.
% 2. Is it used elsewhere too?

\begin{proof}
By Lemma \ref{lem:degen_isos} we know $(\nleq_\aS |_{X_S \times M(\aS)})_- = \; \geq_\aS |_{X_S \times J(\aS)}$ and also  $(\nleq_\aT |_{J(\aT) \times Y_T})_+ = \; (\geq_\aS |_{M(\aT) \times Y_T})\spbreve$, so composing the compatible arrows in $\Dep$ yields:
\[
  \geq_\aS |_{X_S \times J(\aS)} ; \; \Pirr f ; \; \geq_\aS |_{M(\aT) \times Y_T} (x_s, y_t)
  \iff
  \exists j \in J(\aS). \exists m \in M(\aS). ( j \leq_\aS x_s \,\land\, f(j) \nleq_\aT m \,\land\, y_t \leq_\aT m)
  \qquad
  (\star)
\]
Assuming ($\star$) we'll show $\rR(x_s, y_t)$. If $f(x_s) \leq_\aS m$ then $f(j) \leq_\aS f(x_s)$ by monotonicity, yielding contradition $f(j) \leq_\aS m$, so we know $f(x_s) \nleq_\aS m$. Thus $f(x_s) \nleq_\aS y_t$ for otherwise we obtain the contradiction $f(x_s) \leq_\aS y_t \leq_\aS m$.

Conversely, if $f(x_s) \nleq_\aT y_t$ then $x_s \neq_\aS \bot_\aS$ for otherwise $f(\bot_\aS) = \bot_\aS \nleq_\aS y_t$ is a contradiction. So some $j \in J(\aS)$ satisfies $j \leq_\aS x_s$. If every such join-irreducible satisfied $f(j) \leq_\aT y_t$, then since $f$ preserves joins we'd infer the contradiction $f(x_s) \leq_\aT y_t$. Thus $f(j) \nleq_\aT y_t$ for some $j \leq_\aS x_s$. Finally since $f(j) \nleq_\aT y_t$ we know $y_t \neq_\aS \top_\aT$ hence $\exists m \in M(\aT)$ with $y_t \leq_\aT m$. If $f(j) \leq_\aT m$ for every such meet-irreducible, we'd infer the contradiction $f(j) \leq_\aT m$ by the definition of meets.
\end{proof}

\section{Tensors and tight tensors}
\label{sec:tensors}

\subsection{Hom-functors, irreducible morphisms and the tensor product}

We now investigate the join-semilattice of morphisms $\JSL_f[\aQ,\aR]$. These are the morphisms $\JSL_f(\aQ,\aR)$ equipped with the pointwise-join and the constantly $\bot_\aR$ map. It is extended to a functor in the standard way. We describe its meet-irreducible elements, and in some cases its join-irreducible elements. This is achieved by considering certain special morphisms. We then define the tensor product of finite join-semilattices as a composite functor, whose action on objects is $\aQ \tenp \aR := (\JSL_f[\aQ,\aR^{\pOp}])^{\pOp}$. Bimorphisms are introduced and some basic properties of the tensor product are proved.

\smallskip
However, we leave the proof of the \emph{universality} of the tensor product until the next subsection. We do this because one can prove it in an elegant way using $\BiCliq$. There is a pre-existing inductively defined notion of `bi-ideal' which has been used to define the tensor product of finite join-semilattices \cite{GratzerTensorSemilattices2005}. A bi-ideal over a pair of finite join-semilattices $(\aQ,\aR)$ is precisely the same thing as the relative complement of a $\BiCliq$-morphism of type $\nleq_\aQ \subseteq Q \times Q \to \; \ngeq_\aR \; \subseteq R \times R$.

\begin{definition}[Internal hom-functor]
\label{def:jsl_internal_hom}
\item
For any pair of finite join-semilattices $(\aQ,\aR)$ recall by Definition \ref{def:hom_functor_jsl} that $\JSL_f[\aQ,\aR]$ is the join-semilattice of join-semilattice morphisms $\JSL_f(\aQ,\aR)$. This  extends to a functor $\JSL_f[-,-] : \JSL_f^{op} \times \JSL_f \to \JSL_f$ as follows:
\[
\dfrac{f : \aQ_2 \to \aQ_1 \quad g : \aR_1 \to \aR_2}
{\JSL_f[f^{op},g] := \lambda h. g \circ h \circ f    : \JSL_f[\aQ_1,\aR_1] \to \JSL_f[\aQ_2,\aR_2] }
\]
We refer to this functor as the \emph{internal hom-functor}. \endbox
\end{definition}

\begin{lemma}
$\JSL_f[-,-] : \JSL_f^{op} \times \JSL_f \to \JSL_f$ is a well-defined functor.
\end{lemma}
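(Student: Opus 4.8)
The plan is to verify the two functor axioms---preservation of identities and preservation of composition---after first confirming that the stated action is well-defined, i.e.\ that $\JSL_f[f^{\op},g]$ actually lands in $\JSL_f[\aQ_2,\aR_2]$ and is itself a $\JSL_f$-morphism. Since the paper already defines the object-part $\JSL_f[\aQ,\aR]$ and establishes in Definition \ref{def:hom_functor_jsl} that it is a finite join-semilattice under pointwise join, the work here is entirely about the morphism-part $\JSL_f[f^{\op},g] = \lambda h.\, g \circ h \circ f$.

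First I would check well-definedness on objects of the product category: given $h : \aQ_1 \to \aR_1$, the composite $g \circ h \circ f : \aQ_2 \to \aR_2$ is a $\JSL_f$-morphism because $\JSL_f$-morphisms are closed under functional composition (composition of join-preserving maps preserves joins), so the assignment does map $\JSL_f[\aQ_1,\aR_1]$ into $\JSL_f[\aQ_2,\aR_2]$ as a function. Next I would verify that this function is itself a $\JSL_f$-morphism, i.e.\ that it preserves the pointwise bottom and pointwise binary join of $\JSL_f[\aQ_1,\aR_1]$. For the bottom, $g \circ (\lambda q_1.\,\bot_{\aR_1}) \circ f = \lambda q_2.\, g(\bot_{\aR_1}) = \lambda q_2.\,\bot_{\aR_2}$ since $g$ preserves $\bot$; for the binary join, one computes $g \circ (h_1 \lor_{(\aQ_1,\aR_1)} h_2) \circ f$ pointwise and uses that $g$ preserves binary joins to distribute it over the pointwise join, exactly the kind of routine calculation already carried out in the proof of Lemma \ref{lem:jsl_mor_iso}.2 for the special case $\theta \circ (-)$.

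Having established well-definedness, the functor axioms follow by direct substitution. Preservation of identities amounts to $\JSL_f[(id_{\aQ})^{\op}, id_{\aR}] = \lambda h.\, id_\aR \circ h \circ id_\aQ = \lambda h.\, h = id_{\JSL_f[\aQ,\aR]}$. Preservation of composition requires care with the contravariance in the first argument: given $f_2 : \aQ_3 \to \aQ_2$, $f_1 : \aQ_2 \to \aQ_1$ and $g_1 : \aR_1 \to \aR_2$, $g_2 : \aR_2 \to \aR_3$, the composite in $\JSL_f^{\op} \times \JSL_f$ of $(f_1^{\op}, g_1)$ followed by $(f_2^{\op}, g_2)$ has first component $(f_1 \circ f_2)^{\op}$, and then
\[
\JSL_f[(f_1 \circ f_2)^{\op}, g_2 \circ g_1] = \lambda h.\,(g_2 \circ g_1) \circ h \circ (f_1 \circ f_2) = \lambda h.\, g_2 \circ (g_1 \circ h \circ f_1) \circ f_2,
\]
which by associativity of functional composition equals $\JSL_f[f_2^{\op}, g_2] \circ \JSL_f[f_1^{\op}, g_1]$, as required.

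I do not expect any genuine obstacle here: this is a standard two-variable hom-functor, and the only place demanding mild attention is bookkeeping the op-direction so that the order of the $f$-composites comes out correctly (the contravariant slot reverses, the covariant slot does not). The verification that $\lambda h.\, g \circ h \circ f$ preserves joins is the most substantive computational point, but it is immediate from $g$ preserving joins, so I would state it briefly rather than belabour it.
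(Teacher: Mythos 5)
Your proposal is correct and matches the paper's argument in its substantive part: both reduce the problem to checking that $\lambda h.\, g \circ h \circ f$ preserves the pointwise bottom and binary join of $\JSL_f[\aQ_1,\aR_1]$, which follows from $g$ being a join-semilattice morphism. The only difference is that the paper dispenses with the identity and composition axioms by appealing to the already-well-defined $\Set$-valued hom-functor $\JSL_f(-,-)$ with the same underlying action, whereas you verify them by direct substitution; both routes are fine and yours costs only a few extra lines of bookkeeping for the contravariant slot.
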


\begin{proof}
It suffices to show its action is well-defined, since for general categorical reasons we have the well-defined functor $\JSL_f(-,-) : \JSL_f^{op} \times \JSL_f \to \Set$ with the same underlying action as $\JSL_f[-,-]$. Each $\JSL_f[\aQ,\aR]$ is a well-defined finite join-semilattice by Lemma \ref{def:hom_functor_jsl}. Given $f : \aQ_2 \to \aQ_1$ and $g : \aR_1 \to \aR_2$ it remains to show that the action of $\JSL_f[f^{op},g]$ preserves the pointwise join-structure on $\JSL_f(\aQ_1,\aR_1)$.
\[
g \circ \bot_{\JSL_f[\aQ,\aR]} \circ f
= \lambda q_2 \in Q_2. g(\bot_{\JSL_f[\aQ,\aR]}(f(q_2))
= \lambda q_2 \in Q_2. \bot_{\aR_2}
= \bot_{\JSL_f[\aQ_2,\aR_2]}
\]
\[
\begin{tabular}{lll}
$g \circ (h_1 \lor_{\JSL_f[\aQ,\aR]} h_2) \circ f$
&
$= \lambda q_2 \in Q_2. g(h_1 \lor_{\JSL_f[\aQ_1,\aR_1]} h_2(f(q_2))$
\\&
$= \lambda q_2 \in Q_2. g( h_1(f(q_2)) \lor_{\aR_1} h_2(f(q_2) )$
\\&
$= \lambda q_2 \in Q_2. g( h_1(f(q_2))) \lor_{\aR_2} g(h_2(f(q_2))$
\\&
$= (g \circ h_1 \circ f) \lor_{\JSL_f[\aQ_2,\aR_2]} (g \circ h_2 \circ f)$
\end{tabular}
\]
using the fact that $g$ is a join-semilattice morphism.
\end{proof}

\begin{lemma}
\label{lem:hom_meet_bound}
Although $\land_{\JSL_f[\aQ,\aR]}$ needn't be pointwise, the ordering $\leq_{\JSL_f[\aQ,\aR]}$ is. Morever:
\[
\Land_{\JSL_f[\aQ,\aR]} \{ f_i : i \in I\} \, (q) \, \leq_\aR \, \Land_\aR \{ f_i(q) : i \in I \}
\]
for any morphisms $(f_i : \aQ \to \aR)_{i \in I}$ and $q \in Q$.
\end{lemma}

\begin{proof}
We have the injective join-semilattice morphism:
\[
e : \JSL_f[\aQ,\aR] \monoto \aR^Q
\qquad
e(f) := (f(q))_{q \in Q}
\]
because the join in $\aS := \JSL_f[\aQ,\aR]$ is constructed pointwise. Then $\leq_\aS$ is the pointwise-ordering because injective join-semilattice morphisms are order-embeddings. Furthermore since $e$ is monotonic we have $e(\Land_\aS \{ f_i : i \in I\}) 
\leq_{\aR^Q} \Land_{\aR^Q} e(f_i)$ which is precisely the claim above. Finally, Example \ref{ex:hom_meet_not_pw} below provides a specific example where the meet is not constructed pointwise.
\end{proof}

\begin{example}
\label{ex:hom_meet_not_pw}
The meet in $\JSL_f [\aQ , \aR]$ needn't be pointwise.
Let $\aQ = \aR = M_3$ and consider the two endomorphisms:
\[
\begin{tabular}{lll}
$\vcenter{\vbox{\xymatrix@=10pt{
& \top 
\\
x_1 \ar@{-}[ur]  & x_2 \ar@{-}[u] & x_3 \ar@{-}[ul]
\\
& \bot \ar@{-}[ul] \ar@{-}[u] \ar@{-}[ur]
}}}$
&
$f_1(q) := \begin{cases} \bot & \text{if $q = x_1$} \\ q & \text{otherwise} \end{cases}$
&
$f_3(q) := \begin{cases} \bot & \text{if $q = x_3$} \\ q & \text{otherwise} \end{cases}$
\end{tabular}
\]
Let $g := f_1 \land_{\JSL_f[\aQ,\aQ]} f_3$ denote their meet. By Lemma \ref{lem:hom_meet_bound} we have $g(x_1) = g(x_3) = \bot_\aQ$, and hence $g(\top_\aQ) = g(x_1 \lor_\aQ x_3) = g(x_1) \lor_\aQ g(x_3) = \bot_\aQ$. Thus $g$ is the constantly bottom map, so that $g(x_2) <_\aQ x_2 = f_1(x_2) \land_\aQ f_2(x_2)$. \endbox
\end{example}

\smallskip
For any pair of finite join-semilattices $(\aQ,\aR)$ we are going to define two types of special morphisms, where the first meet-generate $\JSL_f[\aQ,\aR]$ generally, and second  join-generate this join-semilattice as long as $\aQ$ or $\aR$ is distributive. A subset of the former will later provide the join-irreducible elements of the \emph{tensor product}, whereas a subset of the latter will induce the join-irreducible elements of the \emph{tight tensor product}. First recall the following basic constructions from Definition \ref{def:elem_ideal_mor} and Lemma \ref{lem:jsl_elem_iso}.

\begin{enumerate}
\item
We have the join-semilattice $\two = (\{0,1\},\lor_\two,0)$ where $\lor_\two$ is the boolean function OR, or equivalently $max(b_1,b_2)$. There is a unique join-semilattice isomorphism of type $\swap : \two^{\pOp} \to \two$. It flips the bit, so that $\swap(b) = 1 - b$.
\item
We have the join-semilattice isomorphisms:
\[
\begin{tabular}{rl}
$\elem{\aQ}{-} : \aQ \to \jslElem{\aQ} = \JSL_f[\two,\aQ]$
&
$\elem{\aQ}{q} := \lambda b \in \{0,1\}. b \;?\; q : \bot_\aQ$
\\[1ex]
$\ideal{\aQ}{-} : \aQ^{\pOp} \to \jslIdeal{\aQ} = \JSL_f[\aQ,\two]$
&
$\ideal{\aQ}{q_0} := \lambda q \in Q. (q \leq_\aQ q_0) \;?\; 0 : 1$
\end{tabular}
\]
\end{enumerate}

\noindent
For what follows, it will be helpful to define similar structures involving a three element chain.

\begin{definition}[Elements and ideals relative to the three-chain]
\label{def:trelem_trideal}
\item
\begin{enumerate}
\item
Define the join-semilattice $\three = (\{0,1,2\},\lor_\three,0)$ where $x_1 \lor_\three x_2 := max(x_1,x_2)$.  There is a unique join-semilattice isomorphism with typing $\rot : \three^{\pOp} \to \three$. It rotates around $1$, so that $\rot(x) = 2 - x$. It is self-adjoint, as is its inverse $\rot^{\bf-1} : \three \to \three^{\pOp}$.
\item
For each finite join-semilattice $\aQ$ define the finite join-semilattice:
\[
\jslTrelem{\aQ} 
= (\{ f \in \JSL_f(\three,\aQ) : f(\top_\three) = \top_\aQ \},\lor_{\jslTrelem{\aQ}},\bot_{\jslTrelem{\aQ}})
\]
where $\lor_{\jslTrelem{\aQ}}$ constructs the pointwise-join in $\aQ$, and $\bot_{\jslTrelem{\aQ}} = \lambda x. (x = 2) \; ? \; \top_\aQ : \bot_\aQ$. That is, $\jslTrelem{\aQ}$ inherits the join structure from $\JSL_f[\three,\aQ]$ but has a different bottom element. There is an associated join-semilattice isomorphism:
\[
\trelem{\aQ}{-} : \aQ \to \jslTrelem{\aQ}
\qquad
\trelem{\aQ}{q} :=  \lambda x \in \{0,1,2\}.
\begin{cases}
\bot_\aQ & \text{if $x = 0$}
\\
q & \text{if $x = 1$}
\\
\top_\aQ & \text{if $x = 2$}
\end{cases}
\]
\item
For each finite join-semilattice $\aQ$, define the finite join-semilattice:
\[
\jslTrideal{\aQ} 
:= (\{ f \in \JSL_f(\aQ,\three) : f_*(\bot_\three) = \bot_\aQ \},\lor_{\jslTrideal{\aQ}},\bot_{\jslTrideal{\aQ}})
\]
where $\lor_{\jslTrideal{\aQ}}$ is the pointwise-join inside $\three$, and $\bot_{\jslTrideal{\aQ}} = \lambda q \in Q.(q = \bot_\aQ)\;?\;0 : 1$. Thus $\jslTrideal{\aQ}$ inherits the join-structure from $\JSL_f[\aQ,\three]$ yet has a different bottom element.  There is an associated join-semilattice isomorphism:
\[
\trideal{\aQ}{-} : \aQ^{\pOp} \to \jslTrideal{\aQ}
\qquad
\trideal{\aQ}{q_0} := \lambda q \in Q.
\begin{cases}
0 & \text{if $q = \bot_\aQ$}
\\
1 & \text{if $\bot_\aQ <_\aQ q \leq_\aQ q_0$}
\\
2 & \text{if $q \nleq_\aQ q_0$}
\end{cases}
\]
which provides a concrete description of this join-semilattice. \endbox
\end{enumerate}

\end{definition}

\begin{lemma}
\label{lem:trelem_trideal_well_def}
\item
\begin{enumerate}
\item
$\trelem{\aQ}{-} : \aQ \to \jslTrelem{\aQ}$ is a well-defined join-semilattice isomorphism.
\item
$\trideal{\aQ}{-} : \aQ^{\pOp} \to \jslTrideal{\aQ}$ is a well-defined join-semilattice isomorphism.
\item
For each $q \in Q$ we have:
\[
(\trelem{\aQ}{q})_* = \rot^{\bf-1} \circ \trideal{\aQ^{\pOp}}{q}
\qquad
(\trideal{\aQ}{q})_* = \trelem{\aQ^{\pOp}}{q} \circ \rot
\]
\end{enumerate}
\end{lemma}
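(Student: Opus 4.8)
The plan is to treat all three items as explicit finite computations, after first recording that $\jslTrelem{\aQ}$ and $\jslTrideal{\aQ}$ really are join-semilattices: each is a subset of a hom-semilattice ($\JSL_f[\three,\aQ]$ resp.\ $\JSL_f[\aQ,\three]$) that is closed under the inherited pointwise binary join and possesses a least element (the displayed $\bot$), hence carries a join-semilattice structure whose join is pointwise. The pointwise join of two morphisms is again a morphism by Definition \ref{def:hom_functor_jsl}, and closure under join is immediate since the constraints ``$f(\top_\three)=\top_\aQ$'' and ``$f_*(\bot_\three)=\bot_\aQ$'' are preserved by pointwise joins.

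For item (1) I would first classify $\JSL_f(\three,\aQ)$. Since $\three$ is the chain $0<1<2$ with join $=\max$, a morphism $f$ is determined by the pair $(f(1),f(2))$ subject only to $f(0)=\bot_\aQ$ and $f(1)\leq_\aQ f(2)$ (the single nontrivial join instance being $f(1\lor 2)=f(1)\lor f(2)$). Imposing the defining constraint $f(\top_\three)=f(2)=\top_\aQ$ leaves $f(1)$ free, so $q\mapsto \trelem{\aQ}{q}$ is a bijection $Q\to \jslTrelem{\aQ}$ with inverse $f\mapsto f(1)$. It then remains to check preservation of join and bottom, which is a pointwise computation: at the argument $1$ the join is $q_1\lor_\aQ q_2$ and at $0,2$ the values are fixed, giving $\trelem{\aQ}{q_1}\lor \trelem{\aQ}{q_2}=\trelem{\aQ}{q_1\lor_\aQ q_2}$, while $\trelem{\aQ}{\bot_\aQ}=\bot_{\jslTrelem{\aQ}}$ by inspection.

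For item (2) I would carry out the verification directly. First, $\trideal{\aQ}{q_0}$ is monotone and join-preserving: monotonicity is immediate from the three defining cases, and $f(q_1\lor q_2)=\max(f(q_1),f(q_2))$ follows by splitting on the value of the right-hand side (if both $q_i\leq_\aQ q_0$ then $q_1\lor q_2\leq_\aQ q_0$; if some $q_i\nleq_\aQ q_0$ then the join is not below $q_0$ either). Since $f^{-1}(0)=\{\bot_\aQ\}$ we get $f_*(\bot_\three)=\Lor_\aQ\{\bot_\aQ\}=\bot_\aQ$, so $\trideal{\aQ}{q_0}\in\jslTrideal{\aQ}$. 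Conversely any $f\in\jslTrideal{\aQ}$ has $f^{-1}(0)=\{\bot_\aQ\}$ (this set is join-closed and its join is $f_*(\bot_\three)=\bot_\aQ$), while $f^{-1}(\{0,1\})$ is down-closed and join-closed, hence a principal ideal $\down_\aQ q_0$ by finiteness; then $f=\trideal{\aQ}{q_0}$. This makes $\trideal{\aQ}{-}$ a bijection, and a short case check gives $\trideal{\aQ}{q_1\land_\aQ q_2}=\trideal{\aQ}{q_1}\lor \trideal{\aQ}{q_2}$ (pointwise) and $\trideal{\aQ}{\top_\aQ}=\bot_{\jslTrideal{\aQ}}$, i.e.\ preservation of the $\aQ^{\pOp}$-join and bottom. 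Alternatively one gets (2) from (1) by transporting along $f\mapsto f_*\circ\rot^{\bf-1}$, which restricts to an isomorphism $\jslTrideal{\aQ}\cong\jslTrelem{\aQ^{\pOp}}$ via Lemma \ref{lem:jsl_mor_iso}.1.

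For item (3), with both maps now known to be morphisms, I would compute the adjoints directly from $f_*(r)=\Lor_\aQ f^{-1}(\down_\aR r)$. For $f=\trelem{\aQ}{q}$ one finds that $f_*\colon\aQ^{\pOp}\to\three^{\pOp}$ sends $a$ to $2$ if $a=\top_\aQ$, to $1$ if $q\leq_\aQ a\neq\top_\aQ$, and to $0$ if $q\nleq_\aQ a$; unwinding $\rot^{\bf-1}\circ\trideal{\aQ^{\pOp}}{q}$ (recalling $\bot_{\aQ^{\pOp}}=\top_\aQ$, $\leq_{\aQ^{\pOp}}=\geq_\aQ$, and $\rot(x)=2-x$) yields the same function, giving the first equality. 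The second is the symmetric computation: $(\trideal{\aQ}{q})_*\colon\three^{\pOp}\to\aQ^{\pOp}$ sends $0\mapsto\bot_\aQ$, $1\mapsto q$, $2\mapsto\top_\aQ$, which is exactly $\trelem{\aQ^{\pOp}}{q}\circ\rot$. The main obstacle throughout is purely bookkeeping: keeping straight the three superimposed order-reversals ($\aQ^{\pOp}$, $\three^{\pOp}$, and the direction of $\rot$) when matching the two descriptions in (3), and, for the surjectivity halves of (1) and (2), invoking finiteness correctly (every sub-join-semilattice of a finite join-semilattice has a maximum, so the relevant ideals are principal). Everything else is routine case analysis.
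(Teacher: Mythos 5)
Your proposal is correct, and for item (1) it coincides with the paper's argument: classify the top-preserving morphisms $\three \to \aQ$ by the freely chosen value at $1$, then check bottom and joins pointwise. For items (2) and (3), however, you take a genuinely different, more computational route. The paper proves (2) by transport along the adjoint correspondence: using that $\rot$ is self-adjoint, it shows the elements of $\jslTrideal{\aQ}$ are exactly the composites $\rot \circ (\trelem{\aQ^{\pOp}}{q_0})_*$, computes that composite once to obtain the displayed three-case description, and then precomposes with the isomorphism from (1); the identity $\trideal{\aQ}{q} = \rot \circ (\trelem{\aQ^{\pOp}}{q})_*$ established along the way makes (3) an immediate corollary (take adjoints of both sides, using $(f_*)_* = f$ and self-adjointness of $\rot$, then relabel $\aQ \mapsto \aQ^{\pOp}$ for the first equality). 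You instead verify (2) directly — showing each $\trideal{\aQ}{q_0}$ is a join-preserving map with $f_*(\bot_\three) = \bot_\aQ$, and conversely that for any $f \in \jslTrideal{\aQ}$ the join-closed down-set $f^{-1}(\{0,1\})$ is a principal ideal $\down_\aQ q_0$ by finiteness, forcing $f = \trideal{\aQ}{q_0}$ — and then you compute both adjoints in (3) from scratch via $f_*(r) = \Lor_\aQ f^{-1}(\down_\aR r)$; I checked these case analyses and they are right. Your route buys self-containedness and elementary case analysis, avoiding the bookkeeping around the hom-set isomorphism $(-)_*$ of Lemma \ref{lem:jsl_mor_iso} and the self-adjointness of $\rot$, at the cost of redoing in (3) work the paper gets for free from (2); the paper's route is more economical but stacks three order-reversals that must be tracked carefully. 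Note finally that the ``alternative'' you sketch at the end of item (2) — transporting along $f \mapsto f_* \circ \rot^{\bf-1}$ — is precisely the paper's proof, so your proposal in fact contains both arguments.
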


\begin{proof}
\item
\begin{enumerate}
\item
$\jslTrelem{\aQ}$ is a well-defined join-semilattice because the top-preserving morphisms $f : \three \to \aQ$ are closed under pointwise joins, and there is a least such morphism $\bot_{\jslTrelem{\aQ}} = \trelem{\aQ}{\bot_\aQ} = \lambda x.(x = 2)\;?\; \top_\aQ : \bot_\aQ$. Since the only parameter is the value of $f(1)$ and this may be freely chosen, it follows that $\trelem{\aQ}{-}$ is a bijection. We've already observed that the bottom is preserved, and clearly $\trelem{\aQ}{q_1 \lor_\aQ q_2}$ is the pointwise binary join of $(\trelem{\aQ}{q_i})_{i = 1,2}$, and thus also their binary join inside $\jslTrelem{\aQ}$.

\item
We first show that $\jslTrideal{\aQ}$ is a well-defined join-semilattice. Fixing any morphism $f : \aQ \to \three$, then $f_*(\bot_\three)  = \bot_\aQ$ iff $f_* \circ \rot^{\bf-1} : \three \to \aQ^{\pOp}$ preserves the top element. Using the bijective correspondence between adjoints and the fact that $\rot$ is self-adjoint, it follows that the elements of $\jslTrideal{\aQ}$ are precisely those of the form $\rot \circ (\trelem{\aQ^{\pOp}}{q_0})_*$ where $q_0 \in Q$. Since $\jslTrelem{\aQ^{\pOp}}$ is closed under pointwise binary joins, so are their adjoints by Lemma \ref{lem:jsl_mor_iso}, as is their post-composition with the fixed morphism $\rot$ by applying the functor $\JSL_f[\rot^{op},-]$. We have a bottom element because $\jslTrelem{\aQ^{\pOp}}$ has one. Regarding its  description, we first compute $\rot \circ (\trelem{\aQ^{\pOp}}{q_0})_*$ in general.
\[
\begin{tabular}{llll}
$(\trelem{\aQ^{\pOp}}{q_0})_*$
&
$= \lambda q \in Q. \Lor_\three \{ x \in \{0,1,2\} : \trelem{\aQ^{\pOp}}{q_0}(x) \leq_{\aQ^{\pOp}} q \}$
\\&
$= \lambda q \in Q. \Lor_\three \{ x \in \{1,2\} : q \leq_\aQ \trelem{\aQ^{\pOp}}{q_0}(x)  \}$
\\[1ex]&
$= \lambda q \in Q.
\begin{cases}
$2$ & \text{if $q \leq_\aQ \trelem{\aQ^{\pOp}}{q_0}(2) = \top_{\aQ^{\pOp}} = \bot_\aQ$}
\\
$1$ & \text{if $\bot_\aQ <_\aQ q \leq_\aQ \trelem{\aQ^{\pOp}}{q_0}(1) = q_0$}
\\
$0$ & \text{if $q \nleq_\aQ q_0$}
\end{cases}$
\end{tabular}
\]
and hence:
\[
\rot \circ (\trelem{\aQ^{\pOp}}{q_0})_* =
\begin{cases}
$0$ & \text{if $q = \bot_\aQ$}
\\
$1$ & \text{if $\bot_\aQ <_\aQ q \leq_\aQ q_0$}
\\
$2$ & \text{if $q \nleq_\aQ q_0$}
\end{cases}
\]
Thus we have the bottom element $\rot \circ \trelem{\aQ^{\pOp}}{\bot_{\aQ^{\pOp}}}
= \lambda q \in Q. (q = \bot_\aQ) \;?\; 0 : 1$, and have also described a join-semilattice isomorphism:
\begin{quote}
with typing $\jslTrelem{\aQ^{\pOp}} \to \jslTrideal{\aQ}$ and action $\trelem{\aQ^{\pOp}}{q_0} \; \mapsto \; \trideal{\aQ}{q_0}$.
\end{quote}
Then precomposing with the isomorphism $\trelem{\aQ^{\pOp}}{-} : \aQ^{\pOp} \to \jslTrelem{\aQ^{\pOp}}$ from (1) yields the desired join-semilattice isomorphism.

\item
Follows by the previous statement, where it is proved that $\trideal{\aQ}{q} = \rot \circ (\trelem{\aQ^{\pOp}}{q})_*$.

\end{enumerate}
\end{proof}

We now define various special morphisms as compositions of element morphisms and ideal morphisms.

\begin{definition}[Special morphisms]
\label{def:spec_hom_morphisms}
\item
To any pair $(\aQ,\aR)$ of finite join-semilattices and elements $(q_0,r_0) \in Q \times R$, we associate two $\JSL_f$-morphisms:
\[
\begin{tabular}{lll}
$\up_{\aQ,\aR}^{q_0,r_0} \; := \quad
\aQ \xto{\ideal{\aQ}{q_0}} \two \xto{\elem{\aR}{r_0}} \aR$
&
$\up_{\aQ,\aR}^{q_0,r_0}(q) := \begin{cases}
\bot_\aR & \text{if $q \leq_\aQ q_0$}
\\
r_0 & \text{if $q \nleq_\aQ q_0$}
\end{cases}$
\\ \\
$\down_{\aQ,\aR}^{q_0,r_0} \; := \quad
\aQ \xto{\trideal{\aQ}{q_0}} \three \xto{\trelem{\aR}{r_0}} \aR$
&
$\down_{\aQ,\aR}^{q_0,r_0}(q) := \begin{cases}
\bot_\aR & \text{if $q = \bot_\aQ$}
\\
r_0 & \text{if $\bot_\aQ <_\aQ q \leq_\aQ q_0$}
\\
\top_\aR & \text{if $q \nleq_\aQ q_0$}
\end{cases}$
\end{tabular}
\]
\endbox
\end{definition}

\begin{note}[Intuition regarding special morphisms]
\item
\begin{enumerate}
\item
We often think of the special morphisms $\up_{\aQ,\aR}^{q,r} : \aQ \to \aR$ as `approximations from below' i.e.\ we imagine constructing arbitrary morphisms $\aQ \to \aR$ as pointwise joins of these special morphisms. If $(q,r) \in M(\aQ) \times J(\aR)$ then these morphisms are join-irreducible in $\JSL_f[\aQ,\aR]$. In the special case where $\aQ$ or $\aR$ is distributive every join-irreducible morphism takes this form. However this fails in general, and the restriction to those morphisms they join-generate (i.e.\ pointwise-join-generate) yields the previously studied concept of `tight morphism', and also a subfunctor of $\JSL_f[-,-]$ satisfying a universal property relative to tight morphisms. We shall investigate this carefully later on.

\item
The special morphisms $\down_{\aQ,\aR}^{q,r} : \aQ \to \aR$ may be thought of as `approximations from above'. They are used extensively over the next two subsections. As we shall see, they are precisely the meet-irreducible morphisms in $\JSL_f[\aQ,\aR]$, so in particular every morphism $\aQ \to \aR$ arises as a meet (which is rarely  pointwise) of these special morphisms. For the moment, observe that $\down_{\aQ,\aR}^{q,r} : \aQ \to \aR$ is the largest element of $\JSL_f[\aQ,\aR]$ extending:
\[
\elem{\aR}{r} \circ \ideal{[\bot_\aQ,q]}{\bot_\aQ} : [\bot_\aQ,q] \to \aR
\]
where $[\bot_\aQ,q] \subseteq \aQ$ is the interval sub join-semilattice. We should also mention that the equality:
\[
\down_{\aQ,\aR}^{q,r} \; = \; \up_{\aQ,\aR}^{\bot_\aQ,r} \lor_{\JSL_f[\aQ,\aR]} \up_{\aQ,\aR}^{q,\top_\aR}
\]
holds generally. However, this relationship will not be used or proved until the section concerning tight morphisms, although one could already deduce it from Lemma \ref{lem:special_jsl_morphisms}.6 below. \endbox
\end{enumerate}
\end{note}

\begin{lemma}[Properties of special morphisms]
\label{lem:special_jsl_morphisms}
\item
Fix any finite join-semilattices $\aQ$, $\aR$ and elements $(q_0,r_0)\,,(q_1,r_1) \in Q \times R$.
\begin{enumerate}

\item
We have the following symmetric equalities involving $\up_{\aQ,\aR}^{q_0,r_0}$ and $\down_{\aQ,\aR}^{q_0,r_0}$.
\[
\begin{tabular}{rll}
$(\up_{\aQ,\aR}^{q_0,r_0})_* = \; \up_{\aR^{\pOp},\aQ^{\pOp}}^{r_0,q_0}$
&&
$(\down_{\aQ,\aR}^{q_0,r_0})_* = \; \down_{\aR^{\pOp},\aQ^{\pOp}}^{r_0,q_0}$
\\[2ex]
$\up_{\aQ,\aR}^{q_0 \land_\aQ q_1, r_0}
\; = \;
\up_{\aQ,\aR}^{q_0, r_0} \lor_{\JSL_f[\aQ,\aR]} \up_{\aQ,\aR}^{q_1, r_0}$
&&
$\down_{\aQ,\aR}^{q_0 \land_\aQ q_1, r_0}
\; = \;
\down_{\aQ,\aR}^{q_0, r_0} \lor_{\JSL_f[\aQ,\aR]} \down_{\aQ,\aR}^{q_1, r_0}$
\\[1.5ex]
$\up_{\aQ,\aR}^{q_0, r_0 \lor_\aR r_1}
\; = \;
\up_{\aQ,\aR}^{q_0, r_0} \lor_{\JSL_f[\aQ,\aR]} \up_{\aQ,\aR}^{q_0, r_1}$
&&
$\down_{\aQ,\aR}^{q_0, r_0 \lor_\aR r_1}
\; = \;
\down_{\aQ,\aR}^{q_0, r_0} \lor_{\JSL_f[\aQ,\aR]} \down_{\aQ,\aR}^{q_0, r_1}$
\end{tabular}
\]
\[
\begin{tabular}{c}
$ \up_{\aQ,\aR}^{\top_\aQ,\bot_\aR} \;
= \; \bot_{\JSL_f[\aQ,\aR]} \;
= \; \down_{\aQ,\aR}^{\top_\aQ,\bot_\aR}$
\\[1ex]
$\up_{\aQ,\aR}^{\bot_\aQ,\top_\aR} \;
= \; \top_{\JSL_f[\aQ,\aR]} \;
= \; \down_{\aQ,\aR}^{\bot_\aQ,\top_\aR}$
\end{tabular}
\]

\item
Given $q_0 \neq \top_\aQ$ and $r_1 \neq \bot_\aR$, then:
\[
\up_{\aQ,\aR}^{q_0,r_0} \; \leq_{\JSL_f[\aQ,\aR]} \; \up_{\aQ,\aR}^{q_1,r_1}
\quad\iff\quad q_1 \leq_\aQ q_0 \text{ and } r_0 \leq_\aR r_1
\]
Moreover $\up_{\aQ,\aR}^{\top_\aQ,r_0} \;
= \; \bot_{\JSL_f[\aQ,\aR]} \; = \; \up_{\aQ,\aR}^{q_0,\bot_\aR} $ explains the remaining cases.

\item
We also have the following equalities involving $\down_{\aQ,\aR}^{q,r}$ only,
\[
\begin{tabular}{c}
$\down_{\aQ,\aR}^{\bot_\aQ, r_0}\;
= \; \top_{\JSL_f[\aQ,\aR]} \;
= \; \down_{\aQ,\aR}^{q_0,\top_\aR} $
\\[1ex]
$\down_{\aQ,\aR}^{q_0 \lor_\aQ q_1, r_0}
\; = \;
\down_{\aQ,\aR}^{q_0, r_0} \land_{\JSL_f[\aQ,\aR]} \down_{\aQ,\aR}^{q_1, r_0}$
\qquad
$\down_{\aQ,\aR}^{q_0, r_0 \land_\aR r_1}
\; = \;
\down_{\aQ,\aR}^{q_0, r_0} \land_{\JSL_f[\aQ,\aR]} \down_{\aQ,\aR}^{q_0, r_1}$
\end{tabular}
\]

\item
Given $\bot_\aQ \neq q_0$ and $r_1 \neq \top_\aR$, then:
\[
\down_{\aQ,\aR}^{q_0,r_0} \; \leq_{\JSL_f[\aQ,\aR]} \; \down_{\aQ,\aR}^{q_1,r_1}
\quad\iff\quad
\text{$q_1 \leq_\aQ q_0$ and $r_0 \leq_\aR r_1$}
\]
Moreover the remaining cases are explained by the previous statement.

\item
If $q_0 \leq_\aQ q_1$ then the meet $\down_{\aQ,\aR}^{q_0,r_0} \land_{\JSL_f[\aQ,\aR]} \down_{\aQ,\aR}^{q_1,r_1}$ is constructed pointwise in $\aR$. In fact,
\[
\begin{tabular}{ll}
$\down_{\aQ,\aR}^{q_0,r_0} \land_{\JSL_f[\aQ,\aR]} \down_{\aQ,\aR}^{q_1,r_1}$
&
$= \lambda q \in Q.( \down_{\aQ,\aR}^{q_0,r_0}(q) \; \land_\aR \down_{\aQ,\aR}^{q_1,r_1}(q))$
\\[1ex]&
$= \Lor_{\JSL_f[\aQ,\aR]} \{ \; \up_{\aQ,\aR}^{\bot_\aQ,r_0 \land_\aR r_1}, \;  \up_{\aQ,\aR}^{q_0,r_1}, \; \up_{\aQ,\aR}^{q_1,\top}  \}$
\end{tabular}
\]

% new addition
% (used later when considering tightness)
\item
Regarding the relationship between the two different types of special morphisms:
\[
\up_{\aQ,\aR}^{q_0,r_0} \; \leq_{\JSL_f[\aQ,\aR]} \; \down_{\aQ,\aR}^{q_1,r_1}
\quad\iff\quad
q_1 \leq_\aQ q_0 \text{ or } r_0 \leq_\aR r_1
\]

\end{enumerate}
\end{lemma}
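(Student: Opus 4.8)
The plan is to reduce the claimed equivalence to a pointwise inequality and then perform a short case analysis. Since the ordering $\leq_{\JSL_f[\aQ,\aR]}$ is the pointwise ordering by Lemma \ref{lem:hom_meet_bound}, the inequality $\up_{\aQ,\aR}^{q_0,r_0} \leq_{\JSL_f[\aQ,\aR]} \down_{\aQ,\aR}^{q_1,r_1}$ holds if and only if $\up_{\aQ,\aR}^{q_0,r_0}(q) \leq_\aR \down_{\aQ,\aR}^{q_1,r_1}(q)$ for every $q \in Q$. So I would unfold the two definitions from Definition \ref{def:spec_hom_morphisms} and check this inequality according to where $q$ sits relative to $\bot_\aQ$, $q_0$ and $q_1$.

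Next I would isolate the single constraining case. When $q = \bot_\aQ$ both sides equal $\bot_\aR$; when $q \nleq_\aQ q_1$ the right-hand side is $\top_\aR$, so the inequality is automatic; and when $\bot_\aQ <_\aQ q \leq_\aQ q_1$ but additionally $q \leq_\aQ q_0$, the left-hand side is $\bot_\aR$, again automatic. The only case imposing a genuine constraint is $\bot_\aQ <_\aQ q \leq_\aQ q_1$ together with $q \nleq_\aQ q_0$: there the left-hand side is $r_0$ and the right-hand side is $r_1$, so the pointwise inequality at $q$ reduces exactly to $r_0 \leq_\aR r_1$. Hence the global inequality holds precisely when $r_0 \leq_\aR r_1$ holds whenever such a witness $q$ exists.

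I would then characterise when such a witness exists, showing that there is some $q$ with $\bot_\aQ <_\aQ q \leq_\aQ q_1$ and $q \nleq_\aQ q_0$ exactly when $q_1 \nleq_\aQ q_0$. For the forward direction, if $q_1 \nleq_\aQ q_0$ then $q_1 \neq \bot_\aQ$ (as $\bot_\aQ \leq_\aQ q_0$ always holds), so $q := q_1$ is itself a witness; conversely any witness satisfies $q \leq_\aQ q_1$, so $q_1 \leq_\aQ q_0$ would force $q \leq_\aQ q_0$, contradicting $q \nleq_\aQ q_0$. Combining, the inequality holds iff ($q_1 \leq_\aQ q_0$, i.e.\ no witness exists) or ($r_0 \leq_\aR r_1$), which by the tautology $A \lor (\lnot A \land B) \Leftrightarrow A \lor B$ is exactly the stated condition $q_1 \leq_\aQ q_0$ or $r_0 \leq_\aR r_1$.

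There is no substantive obstacle here: the argument is an elementary pointwise verification. The only place warranting care is the boundary handling of $\bot_\aQ$, namely treating the empty-interval case $q_1 = \bot_\aQ$ correctly and invoking the fact $\bot_\aQ \leq_\aQ q_0$ when equating ``no witness'' with $q_1 \leq_\aQ q_0$; everything else is routine bookkeeping.
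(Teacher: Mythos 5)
Your argument is correct, but it only addresses item~6 of the lemma; items 1--5 (the adjoint identities, the join/meet decompositions of the special morphisms, the two order characterisations, and the pointwise-meet formula) are not touched, so as a proof of the stated lemma the proposal is incomplete. You should say so explicitly or supply the remaining parts.

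For item~6 itself, your route is sound and genuinely a little different from the paper's. The paper negates the inequality and tests it only on join-irreducibles: $\up_{\aQ,\aR}^{q_0,r_0} \nleq \down_{\aQ,\aR}^{q_1,r_1}$ iff there is $j_q \in J(\aQ)$ with $j_q \nleq_\aQ q_0$ and $r_0 \nleq_\aR \down_{\aQ,\aR}^{q_1,r_1}(j_q)$, then separates the existential using the fact that $q_1 \leq_\aQ q_0$ is detected by join-irreducibles (Lemma~\ref{lem:std_order_theory}.7). You instead verify the pointwise inequality over \emph{all} $q \in Q$, isolate the one constraining case $\bot_\aQ <_\aQ q \leq_\aQ q_1$, $q \nleq_\aQ q_0$, and characterise the existence of such a witness directly by taking $q := q_1$ (forward) and transitivity (converse). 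This avoids the join-irreducible machinery entirely and handles the degenerate boundary $q_1 = \bot_\aQ$ cleanly via $\bot_\aQ \leq_\aQ q_0$; the paper's version is slightly slicker in how the conjunction $q_1 \nleq_\aQ q_0 \,\land\, r_0 \nleq_\aR r_1$ falls out of the negated form. Both are valid; your case analysis and the final propositional rearrangement $A \lor (\lnot A \land B) \Leftrightarrow A \lor B$ are correct.
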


\begin{proof}
\item
\begin{enumerate}
\item
The top two equalities follow via very similar calculations:
\[
\begin{tabular}{lll}
$(\down_{\aQ,\aR}^{q_0,r_0})_*$
&
$= (\trelem{\aR}{r_0} \circ \trideal{\aQ}{q_0})_*$
\\&
$= (\trideal{\aQ}{q_0})_* \circ (\trelem{\aR}{r_0})_*$
\\&
$= (\trelem{\aQ^{\pOp}}{q_0} \circ \rot) \circ (\rot^{\bf-1} \circ \trideal{\aR^{\pOp}}{r_0})$
& by Lemma \ref{lem:trelem_trideal_well_def}
\\&
$= \trelem{\aQ^{\pOp}}{q_0} \circ \trideal{\aR^{\pOp}}{r_0}$
\\&
$= \; \down_{\aR^{\pOp},\aQ^{\pOp}}^{r_0,q_0}$
\\
\\
$(\up_{\aQ,\aR}^{q_0,r_0})_*$
&
$= (\elem{\aR}{r_0} \circ \ideal{\aQ}{q_0})_*$
\\&
$= (\ideal{\aQ}{q_0})_* \circ (\elem{\aR}{r_0})_*$
\\&
$= (\elem{\aQ^{\pOp}}{q_0} \circ \swap) \circ (\swap^{\bf-1} \circ \ideal{\aR^{\pOp}}{r_0})$
& by Lemma \ref{lem:jsl_elem_iso}
\\&
$= \elem{\aQ^{\pOp}}{q_0} \circ \ideal{\aR^{\pOp}}{r_0}$
\\&
$= \; \up_{\aR^{\pOp},\aQ^{\pOp}}^{r_0,q_0}$
\end{tabular}
\]
The other equalties also follow by considering the join-semilattice isomorphisms:
\[
\begin{tabular}{ll}
$\elem{\aQ}{-} : \aQ \to \JSL_f[\two,\aQ]$
&
$\ideal{\aQ}{-} : \aQ^{\pOp} \to \JSL_f[\aQ,\two]$
\\
$\trelem{\aQ}{-} : \aQ \to \JSL_f[\two,\aQ]$
&
$\trideal{\aQ}{-} : \aQ^{\pOp} \to \JSL_f[\aQ,\two]$
\end{tabular}
\]
which are necessarily also bounded lattice isomorphisms. Combined with the fact that $\JSL_f$-composition is bilinear we obtain all the other equalities e.g.\
\[
\begin{tabular}{lll}
$\down_{\aQ,\aR}^{q_0 \land_\aQ q_1, r_0}$
&
$= \trelem{\aR}{r_0} \circ \trideal{\aQ}{q_0 \land_\aQ q_1}$
\\&
$= \trelem{\aR}{r_0} \circ (\trideal{\aQ}{q_0} \lor_{\JSL_f[\aQ,\two]} \trideal{\aQ}{q_1})$
\\&
$= (\trelem{\aR}{r_0} \circ \trideal{\aQ}{q_0}) \lor_{\JSL_f[\aQ,\aR]} (\trelem{\aR}{r_0} \circ \trideal{\aQ}{q_1})$
\\&
$= \down_{\aQ,\aR}^{q_0,r_0} \lor_{\JSL_f[\aQ,\aR]} \down_{\aQ,\aR}^{q_0,r_1}$
\end{tabular}
\]
and the final line of equalities follows by preservation of top elements.

\item
We have $\up_{\aQ,\aR}^{q_0,r_0} \; \leq_{\JSL_f[\aQ,\aR]} \; \up_{\aQ,\aR}^{q_1,r_1}$ iff the following two statements hold:
\begin{enumerate}
\item
for all $q \nleq_\aQ q_0$ we have $q \nleq_\aQ q_1$ (since by assumption $r_0 \neq \bot_\aR$),
\item
for all $q \nleq_\aQ q_0$ we have $r_0 \leq_\aR r_1$.
\end{enumerate}
Then (a) is equivalent to $q_1 \leq_\aQ q_0$ and, since we assume $q_0 \neq \top_\aQ$, (b) implies and thus is equivalent to $r_0 \leq_\aR r_1$.

\item
It suffices to prove the left-hand equalities, since the others follow via $(\down_{\aQ,\aR}^{q,r})_* = \; \down_{\aR^{\pOp},\aQ^{\pOp}}^{r,q}$ proved in (1), recalling that taking adjoints defines a join-semilattice isomorphism $\JSL_f[\aQ,\aR] \cong \JSL_f[\aR^{\pOp},\aQ^{\pOp}]$.

\smallskip
Regarding the first equality,
\[
\down_{\aQ,\aR}^{\bot_\aQ,r_0}
= \trelem{\aR}{r_0} \circ \trideal{\aQ}{\bot_\aQ}
= \trelem{\aR}{r_0} \circ (\lambda q \in Q. (q = \bot_\aQ) \;?\; 0 : 2)
= \top_{\JSL_f[\aQ,\aR]}
\]
using the explicit description of $\trideal{\aQ}{\bot_\aQ}$ and the fact that $\trelem{\aR}{r_0}$ preserves $\top_\three$. Finally, we certainly have:
\[
\begin{tabular}{lll}
$\down_{\aQ,\aR}^{q_0 \lor_\aQ q_1, r_0}$
&
$= \trelem{\aR}{r_0} \circ \trideal{\aQ}{q_0 \lor_\aQ q_1}$
\\&
$= \trelem{\aR}{r_0} \circ (\trideal{\aQ}{q_0} \land_{\JSL_f[\aQ,\two]} \trideal{\aQ}{q_1})$
\\&
$\leq_{\JSL_f[\aQ,\aR]} (\trelem{\aR}{r_0} \circ \trideal{\aQ}{q_0}) \land_{\JSL_f[\aQ,\aR]} (\trelem{\aR}{r_0} \circ \trideal{\aQ}{q_1}) $
\\&
$= \; \down_{\aQ,\aR}^{q_0,r_0} \land_{\JSL_f[\aQ,\aR]} \down_{\aQ,\aR}^{q_1,r_0}$
\end{tabular}
\]
using monotonicity in the right parameter. To prove equality, suppose for a contradiction that:
\[
\down_{\aQ,\aR}^{q_0 \lor_\aQ q_1, r_0}
< h
\leq 
\; \down_{\aQ,\aR}^{q_0,r_0} \land_{\JSL_f[\aQ,\aR]} \down_{\aQ,\aR}^{q_1,r_0}
\]
for some morphism $h : \aQ \to \aR$. Then there must exist $q \leq_\aQ q_0 \lor_\aQ q_1$ such that $r_0 <_\aR h(q)$, so by monotonicity and join-preservation $r_0 <_\aR h(q_1) \lor_\aR h(q_2)$, which contradicts the fact that:
\[
h(q_i) 
\leq_\aR (\down_{\aQ,\aR}^{q_0,r_0} \land_{\JSL_f[\aQ,\aR]} \down_{\aQ,\aR}^{q_1,r_0})(q_i)
\leq_\aR \; \down_{\aQ,\aR}^{q_0,r_0}(q_i) \land_\aR \down_{\aQ,\aR}^{q_1,r_0}(q_i)
\leq_\aR r_0
\qquad
\text{(for $i = 0,1$)}
\]

\item
Letting $f_i := \down_{\aQ,\aR}^{q_i,r_i}$ for $i = 0,1$, then $f_0 \leq_{\JSL_f[\aQ,\aR]} f_1$ iff the following two statements hold:
\[
(a)\quad\forall q \nleq_\aQ q_0. f_1(q) = \top_\aR
\qquad
(b)\quad\forall \bot <_\aQ q \leq_\aQ q_0. \; r_0 \leq_\aR f_1(q)
\]
Since by assumption $r_1 \neq \top_\aR$, (a) is equivalent to $\forall q \in Q.(q \nleq_\aQ q_0 \To q \nleq_\aQ q_1)$ or equivalently $q_1 \leq_\aQ q_0$. Then in the presence of (a), statement (b) is equivalent to:
\[
\forall \bot_\aQ <_\aQ q \leq_\aQ q_0. \; r_0 \leq_\aQ r_1
\]
Since $q_0 \neq \bot_\aQ$ by assumption, the latter is equivalent to $r_0 \leq_\aQ r_1$ and we are done.

\item
Consider the morphism:
\[
h := \Lor_{\JSL_f[\aQ,\aR]} \{ \up_{\aQ,\aR}^{\bot_\aQ,r_0 \land_\aR r_1}, \;  \up_{\aQ,\aR}^{q_0,r_1}, \; \up_{\aQ,\aR}^{q_1,\top}  \} : \aQ \to \aR
\]
and also define the function $g : Q \to R$ as:
\[
g(x) := \; \down_{\aQ,\aR}^{q_0,r_0}(x) \, \land_\aR \, \down_{\aQ,\aR}^{q_1,r_1}(x)
\qquad
\text{for each $x \in Q$}.
\]
By Lemma \ref{lem:hom_meet_bound} it suffices to establish that $g = h$. Recalling our assumption that $q_0 \leq_\aQ q_1$, we can never have $x \leq_\aQ q_0$ and $x \nleq_\aQ q_1$. Then $g$ has action:
\[
g(x) =
\begin{cases}
\top_\aR & \text{if $x \nleq_\aQ q_1$ (hence also $x \nleq_\aQ q_0$) }
\\
r_1 & \text{if $\bot_\aQ <_\aQ x \leq_\aQ q_1$ and $x \nleq_\aQ q_0$}
\\
r_0 \land_\aR r_1
& \text{if $\bot_\aQ <_\aQ x \leq_\aQ q_0$ (hence also $\bot_\aQ <_\aQ x \leq_\aQ q_1$) }
\\
\bot_\aR & \text{if $x = \bot_\aQ$}
\end{cases}
\]
One can directly verify that $g(x) = h(x)$ for each of the four disjoint cases.

% --new--
\item
We calculate:
\[
\begin{tabular}{lll}
$\up_{\aQ,\aR}^{q_0,r_0} \; \nleq_{\JSL_f[\aQ,\aR]} \; \down_{\aQ,\aR}^{q_1,r_1}$
&
$\iff \exists j_q \in J(\aQ).[ j_q \nleq_\aQ q_0 \text{ and } r_0 \nleq_\aR \; \down_{\aQ,\aR}^{q_1,r_1}(j_q) ]$
\\&
$\iff \exists j_q \in J(\aQ).[ j_q \nleq_\aQ q_0 \text{ and } j_q \leq_\aQ q_1 \text{ and } r_0 \nleq_\aR r_1 ]$
\\&
$\iff \exists j_q \in J(\aQ).[ j_q \nleq_\aQ q_0 \text{ and } j_q \leq_\aQ q_1 ] \text{ and } r_0 \nleq_\aR r_1$
\\&
$\iff \neg\forall j_q \in J(\aQ).[ j_q \leq_\aQ q_1 \To j_q \leq_\aQ q_0 ]  \text{ and } r_0 \nleq_\aR r_1$
\\&
$\iff q_1 \nleq_\aQ q_0 \text{ and } r_0 \nleq_\aR r_1$
\end{tabular}
\]

\end{enumerate}
\end{proof}

We now describe the meet-irreducible elements of $\JSL_f[\aQ,\aR]$, and also some of its join-irreducibles. In the special case that either $\aQ$ or $\aR$ is distributive then the latter will be precisely the join-irreducible morphisms.

\begin{lemma}[Meet and join-irreducible homomorphisms]
\label{lem:hom_meet_join_irr}
\item
Let $\aQ$ and $\aR$ be finite join-semilattices.
\begin{enumerate}
\item
$\down_{\aQ,\aR}^{j,m}$ is meet-irreducible in $\JSL_f[\aQ,\aR]$ whenever $j \in J(\aQ)$ and $m \in M(\aR)$, in fact:
\[
M(\JSL_f[\aQ,\aR]) = \{ \down_{\aQ,\aR}^{j,m} : j \in J(\aQ), \, m \in M(\aR) \}
\]
so that $|M(\JSL_f[\aQ,\aR])| = |J(\aQ)| \cdot |M(\aR)|$.

\item
Concerning the morphisms $\up_{\aQ,\aR}^{m,j} : \aQ \to \aR$ where $m \in M(\aQ)$ and $j \in J(\aR)$.
\begin{enumerate}
\item
They are always join-irreducible in $\JSL_f[\aQ,\aR]$.

\item
If $\aQ$ is distributive then:
\[
J(\JSL_f[\aQ,\aR]) = \{ \up_{\aQ,\aR}^{m,j} : m \in M(\aQ), \, j \in J(\aR) \}
\quad
(\star)
\]
so that $|J(\JSL_f[\aQ,\aR])| = |J(\aQ)| \cdot |J(\aR)|$.
\item
If $\aR$ is distributive then $(\star)$ again holds, so that $|J(\JSL_f[\aQ,\aR])| = |M(\aQ)| \cdot |M(\aR)|$.
\item
If neither $\aQ$ nor $\aR$ are distributive then these morphisms needn't  join-generate $\JSL_f[\aQ,\aR]$. We may have $|J(\JSL_f[\aQ,\aQ])| > |J(\aQ)|^2$ where $|J(\aQ)| = |M(\aQ)|$.

\end{enumerate}

\end{enumerate}
\end{lemma}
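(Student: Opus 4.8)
The plan is to handle the meet-irreducibles (part 1) and the join-irreducibles (part 2) by dual but not quite symmetric arguments; the asymmetry is forced by the fact that joins in $\JSL_f[\aQ,\aR]$ are computed pointwise while meets are not (Lemma \ref{lem:hom_meet_bound}, Example \ref{ex:hom_meet_not_pw}). For part 1, I would first show that $\{\down_{\aQ,\aR}^{j,m} : j \in J(\aQ),\, m \in M(\aR)\}$ meet-generates $\JSL_f[\aQ,\aR]$, via $f = \Land_{\JSL_f[\aQ,\aR]}\{\down_{\aQ,\aR}^{j,m} : f(j) \le_\aR m\}$. Here $f \le \down_{\aQ,\aR}^{j,m}$ whenever $f(j)\le_\aR m$ is pointwise and immediate, while the reverse $g \le f$ (for $g$ the meet) is checked on join-irreducibles of the domain using Lemma \ref{lem:std_order_theory}.7 and $\down_{\aQ,\aR}^{j,m}(j)=m$. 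By the order-dual of Lemma \ref{lem:std_order_theory}.6 this already gives $M(\JSL_f[\aQ,\aR]) \subseteq \{\down_{\aQ,\aR}^{j,m}\}$. To see each such morphism is genuinely meet-irreducible I would exhibit its \emph{unique upper cover}: let $m^+$ be the unique element covering $m$ (it exists since $m \in M(\aR)$) and define $h^+$ to agree with $\down_{\aQ,\aR}^{j,m}$ except that $h^+(j):=m^+$; one checks $h^+$ is a morphism with $h^+ > \down_{\aQ,\aR}^{j,m}$. Any $h > \down_{\aQ,\aR}^{j,m}$ can differ from it only on $\{q : \bot_\aQ <_\aQ q \le_\aQ j\}$ (off this set the value is forced to $\bot_\aR$ or $\top_\aR$), so picking $q_0 \le_\aQ j$ with $h(q_0) >_\aR m$ and using monotonicity gives $h(j) \ge_\aR h(q_0) >_\aR m$, hence $h(j) \ge_\aR m^+$ and $h \ge h^+$. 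Thus $\{h : h > \down_{\aQ,\aR}^{j,m}\}$ has least element $h^+$, so $\down_{\aQ,\aR}^{j,m} \in M(\JSL_f[\aQ,\aR])$ by Definition \ref{def:std_order_theory}.6; distinctness and the count $|J(\aQ)|\cdot|M(\aR)|$ follow from the ordering in Lemma \ref{lem:special_jsl_morphisms}.4.

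For part 2(a) the dual unique-lower-cover argument breaks down, since the support $\{q : q \nleq_\aQ m\}$ of $\up_{\aQ,\aR}^{m,j}$ generally has no least element; this is the main obstacle. I would circumvent it with a projection trick. Writing $j^-$ for the unique element covered by $j \in J(\aR)$, post-composition by $\ideal{\aR}{j^-}$ is join-preserving and sends $\up_{\aQ,\aR}^{m,j}$ to $\ideal{\aQ}{m}$. Given a decomposition $\up_{\aQ,\aR}^{m,j} = h_1 \lor h_2$ (each $h_i$ then $\down_\aR j$-valued), applying $\ideal{\aR}{j^-}\circ(-)$ yields $\ideal{\aQ}{m} = g_1 \lor g_2$ in $\JSL_f[\aQ,\two]$; since $\ideal{\aQ}{-} : \aQ^{\pOp}\to\JSL_f[\aQ,\two]$ is an isomorphism (Lemma \ref{lem:jsl_elem_iso}.2) and $m \in M(\aQ)=J(\aQ^{\pOp})$, $\ideal{\aQ}{m}$ is join-irreducible, so some $g_i = \ideal{\aQ}{m}$. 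Reading this back, $h_i(q) \nleq_\aR j^-$ exactly when $q \nleq_\aQ m$, and combined with $h_i(q) \le_\aR j$ and the fact that $j^-$ is the greatest element strictly below $j$, this forces $h_i = \up_{\aQ,\aR}^{m,j}$, establishing join-irreducibility.

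For parts 2(b) and 2(c) I would show the $\up_{\aQ,\aR}^{m,j}$ also join-generate. When $\aQ$ is distributive I use the explicit identity $f = \Lor_{\JSL_f[\aQ,\aR]}\{\up_{\aQ,\aR}^{\tau_\aQ(j_q),\,j'} : j_q \in J(\aQ),\, j' \in J(\aR),\, j' \le_\aR f(j_q)\}$: evaluating pointwise, using $j_q \le_\aQ q \iff q \nleq_\aQ \tau_\aQ(j_q)$ (valid by distributivity, Lemma \ref{lem:std_order_theory}.13 and .15) together with $f(q)=\Lor_\aR\{f(j_q) : j_q \le_\aQ q\}$ and join-preservation recovers $f(q)$. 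With part 2(a) and the dual of Lemma \ref{lem:std_order_theory}.6 this yields $J(\JSL_f[\aQ,\aR]) = \{\up_{\aQ,\aR}^{m,j}\}$ of cardinality $|M(\aQ)|\cdot|J(\aR)| = |J(\aQ)|\cdot|J(\aR)|$, the last equality by $|M(\aQ)|=|J(\aQ)|$ for distributive $\aQ$ (Lemma \ref{lem:std_order_theory}.14). Part (c) then follows by transporting (b) across the isomorphism $(-)_* : \JSL_f[\aQ,\aR] \to \JSL_f[\aR^{\pOp},\aQ^{\pOp}]$ (Lemma \ref{lem:jsl_mor_iso}.1), using that $\aR^{\pOp}$ is distributive (Lemma \ref{lem:std_order_theory}.12) and $(\up_{\aR^{\pOp},\aQ^{\pOp}}^{m',j'})_* = \up_{\aQ,\aR}^{j',m'}$ (Lemma \ref{lem:special_jsl_morphisms}.1), giving the count $|M(\aQ)|\cdot|M(\aR)|$.

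Finally, for 2(d) I would take $\aQ = M_3$, so $|J(M_3)| = |M(M_3)| = 3$, and observe that $id_{M_3}$ is not a join of morphisms $\up_{M_3,M_3}^{m,j}$: any such $\up_{M_3,M_3}^{m,j}$ whose value at an atom $x_i$ is nonzero (hence equal to $j$) takes a value incomparable to some other atom there, so no nonzero $\up_{M_3,M_3}^{m,j}$ lies below $id_{M_3}$. Hence the nine distinct morphisms $\{\up_{M_3,M_3}^{m,j}\}$ fail to join-generate $\JSL_f[M_3,M_3]$, and since they are all join-irreducible by 2(a) while $J(\JSL_f[M_3,M_3])$ must join-generate, we conclude $|J(\JSL_f[M_3,M_3])| > 9 = |J(M_3)|^2$.
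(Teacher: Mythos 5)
Your proof is correct, and while its skeleton matches the paper's --- meet-generation by the $\down_{\aQ,\aR}^{j,m}$ via the pointwise meet bound of Lemma \ref{lem:hom_meet_bound}, the same join-generating formula $f = \Lor \{ \up_{\aQ,\aR}^{\tau_\aQ(j_q),j'} : j' \leq_\aR f(j_q) \}$ in the distributive case, transport of 2(b) to 2(c) along $(-)_*$, and $M_3$ for 2(d) --- two of the key sub-arguments take a genuinely different route. For meet-irreducibility of $\down_{\aQ,\aR}^{j,m}$ the paper forms the meet of the two ``neighbouring'' special morphisms $\down_{\aQ,\aR}^{q_j,m}$ and $\down_{\aQ,\aR}^{j,r_m}$ and invokes the pointwise-meet computation of Lemma \ref{lem:special_jsl_morphisms}.5; you instead exhibit the unique upper cover $h^+$ directly by raising the value at $j$ from $m$ to $m^+$. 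This is cleaner, but the one point worth spelling out is that $h^+$ preserves binary joins only because $j$ is join-irreducible, so that no two elements strictly below $j$ can join to $j$ (Lemma \ref{lem:std_order_theory}.5); with that said, the cover argument goes through. For 2(a) the paper argues directly with the unique cover of $m$ in $\aQ$, whereas you project along $\ideal{\aR}{j^-} \circ (-)$ onto $\JSL_f[\aQ,\two] \cong \aQ^{\pOp}$ and reduce join-irreducibility of $\up_{\aQ,\aR}^{m,j}$ to that of $m$ in $\aQ^{\pOp}$; this buys a reduction to a known irreducible at the cost of the small bookkeeping step that $h_i(q) \leq_\aR j$ together with $h_i(q) \nleq_\aR j^-$ forces $h_i(q) = j$, which is fine since $j^-$ is the greatest element strictly below $j$. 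In 2(d) you stop at $|J(\JSL_f[M_3,M_3])| > 9$ where the paper computes the value $27$ exactly, but the weaker bound is all the statement requires.
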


\begin{proof}
\item
\begin{enumerate}
\item
Let $\aS := \JSL_f[\aQ,\aR]$. We first show that every morphism $f : \aQ \to \aR$ arises as an $\aS$-meet of the morphisms $\down_{\aQ,\aR}^{j,m} : \aQ \to \aR$ where $(j,m) \in J(\aQ) \times M(\aR)$. Indeed, consider:
\[
g := \Land_\aS \{ \; \down_{\aQ,\aR}^{j,m}  \; : (j,m) \in J(\aQ) \times M(\aR), \, f(j) \leq_\aR m \}
\]
First of all, $f \leq_\aS g$ because $f \leq_\aS \; \down_{\aQ,\aR}^{j,m}$ for each summand $\down_{\aQ,\aR}^{j,m}$ above. To see this, observe that if $\bot_\aQ <_\aQ q \leq_\aQ j$ then $f(q) \leq_\aR f(j) \leq_\aR m$ using the monotonicity of $f$. Now, by Lemma \ref{lem:hom_meet_bound} we know that:
\[
g(q) \leq_\aR \Land_\aR \{ \; \down_{\aQ,\aR}^{j,m}(q) \; : (j,m) \in J(\aQ) \times M(\aQ) , \, f(j) \leq_\aR m \} 
\qquad
\text{for each $q \in Q$}
\]
and consequently for every $j_0 \in J(\aQ)$ we have:
\[
g(j_0) 
\leq_\aR \Land_\aR \{ \; \down_{\aQ,\aR}^{j_0,m}(j_0) \; : f(j_0) \leq_\aR m \in M(\aR) \} 
= \Land_\aR \{ m \in M(\aR) : f(j_0) \leq_\aR m \} 
= f(j_0)
\]
and it follows that $g \leq_\aR f$. Thus every morphism $\aQ \to \aR$ arises as the $\aS$-meet of these special morphisms, and hence every meet-irreducible in $\aS$ is one of these morphisms. Then to show that every $\down_{\aQ,\aR}^{j,m}$ is meet-irreducible, it suffices to establish that they are not meets of other such special morphisms. To this end, first observe that $\up_{\aQ,\aR}^{j_1,m_1} \; \leq \; \up_{\aQ,\aR}^{j_2,m_2}$ if and only if $j_2 \leq_\aQ j_1$ and $m_1 \leq_\aR m_2$ by Lemma \ref{lem:special_jsl_morphisms}.4. Now, fix any $f  := \; \down_{\aQ,\aR}^{j,m}$ where $(j,m) \in J(\aQ) \times M(\aR)$ and consider the morphisms:
\[
\begin{tabular}{lll}
$g_1 := \; \down_{\aQ,\aR}^{q_j,m}$
&
where $q_j $ & $:= \Lor_\aQ \{ j' \in J(\aQ) : j' <_\aQ j \}$,
\\[1ex]
$g_2 := \; \down_{\aQ,\aR}^{j,r_m}$
&
where $r_m$ & $:= \Land_\aR \{ m' \in M(\aR) : m <_\aR m' \}$.
\end{tabular}
\]
Then we have $q_j <_\aQ j$ by join-irreducibility and $m <_\aR r_m$ by meet-irreducibility. Using Lemma \ref{lem:special_jsl_morphisms}.4:
\begin{enumerate}
\item
$f <_\aS g_1,\,g_2$ and hence $f \leq_\aS g_1 \land_\aS  g_2$.
\item
Whenever $f <_\aS \; \down_{\aQ,\aR}^{j_i,m_i}$ for some $(j_i,m_i) \in J(\aQ) \times M(\aR)$ then either ($j_i <_\aQ j$ and $m \leq_\aR m_i$) or ($j_i \leq_\aQ j$ and $m <_\aR m_i$), and consequently $g_1 \land_\aS g_2 \leq_\aS \; \down_{\aQ,\aR}^{j_i,m_i}$.
\end{enumerate}

 It follows that to establish the meet-irreducibility of $f$ we can show that $f \neq g_1 \land_\aS g_2$. Since $q_j <_\aQ j$ we may apply Lemma \ref{lem:special_jsl_morphisms}.5 to deduce that $g_1 \land_\aS g_2$ is constructed pointwise, hence:
\[
(g_1 \land_\aS g_2)(j)
= g_1(j) \land_\aR g_2(j)
= \top_\aR \land_\aR r_m
= r_m >_{\aR} r = f(j)
\]
as required. Finally, these maps are pairwise distinct so $|M(\aS)| = |J(\aQ)| \cdot |M(\aR)|$.

\item
Again let $\aS := \JSL_f[\aQ,\aR]$ and now consider the special morphisms $\up_{\aQ,\aR}^{m,j} : \aQ \to \aR$ where $m \in M(\aQ)$ and $j \in J(\aR)$.
\begin{enumerate}
\item
To see that they are join-irreducible, suppose that $\up_{\aQ,\aR}^{m,j} \; = f \lor_{\JSL_f[\aQ,\aR]} g$. Since $m$ is meet-irreducible it has a unique cover $m \prec_\aQ x$, and since $j = f(x) \lor_\aR g(x)$ is join-irreducible we may assume w.l.o.g.\ that $f(x) = j$. Seeing as $f \leq_{\JSL_f[\aQ,\aR]} \; \up_{\aQ,\aR}^{m,j}$ it follows that for any $q \leq_\aQ m$ we have $f(q) = \bot_\aR$, and for any $q \nleq_\aQ m$ we have $f(q) \leq_\aQ j$. Now, fix any $q \nleq_\aQ m$ and observe that $m <_\aQ q \lor_\aQ m$ because equality yields a contradiction. Thus $x \leq_\aQ q \lor_\aQ m$ and hence:
\[
j 
= f(x) \leq_\aQ f(q \lor_\aQ m) 
= f(q) \lor_\aR f(m) 
= f(q) \lor_\aR \bot_\aR
= f(q)
\]
using the monotonicity of $f$, preservation of joins and also $f(m) = \bot_\aR$. Hence $f = \; \up_{\aQ,\aR}^{m,j}$ and we are done.

\item
Assuming that $\aQ$ is distributive, let us show that the $\up_{\aQ,\aR}^{m,j}$ join-generate $\aS$. Given any join-semilattice morphism $f : \aQ \to \aR$ define the morphism:
\[
g := \Lor_\aS \{ \up_{\aQ,\aR}^{m_j, j_0}  \; : j \in J(\aQ), \, j_0 \in J(\aR), \, j_0 \leq_\aR f(j) \}
\]
where $m_j := \Lor_\aQ \{ q \in Q : j \nleq_\aQ q \} \in M(\aQ)$ is the meet-irreducible corresponding to $j$ under the canonical bijection from Lemma \ref{lem:std_order_theory}.13. To establish $g \leq_\aS f$ we'll show that every summand $\up_{\aQ,\aR}^{m_j,j_0} \; \leq_\aS f$ i.e.\ whenever $q \nleq_\aQ m_j$ we must show that $j_0 \leq_\aQ f(q)$. By construction $j_0 \leq_\aQ f(j)$ and the canonical bijection informs us that $j = \Land_\aQ \{ q \in Q : q \nleq_\aQ m_j \}$, hence $j \leq_\aQ q$ and thus $j_0 \leq_\aQ f(j) \leq_\aQ f(q)$ using the monotonicity of $f$. To establish the converse $f \leq_\aS g$ it suffices to show that:
\[
f(j_q) 
\leq_\aR \Lor_\aR \{ \up_{\aQ,\aR}^{m_{j_q},j_0}(j_q) : j_0 \in J(\aR), \, j_0 \leq_\aR f(j_q) \}
= \Lor_\aR \{ j_0 \in J(\aR) : j_0 \leq_\aR f(j_q) \}
\]
which follows because $f(j_q)$ is the $\aR$-join of those join-irreducibles beneath it. Then the $\up_{\aQ,\aR}^{m,j}$ are precisely the join-irreducibles in $\aS$. Since they are pairwise distinct the number of join-irreducibles is exactly $|M(\aQ)| \cdot |J(\aR)| = |J(\aQ)| \cdot |J(\aR)|$, recalling that $|J(\aQ)| = |M(\aQ)|$ in a distributive lattice via the canonical bijection.

\item
Now instead assume that $\aR$ is distributive. By Lemma \ref{lem:jsl_mor_iso} we know that $\JSL_f[\aQ,\aR] \cong \JSL_f[\aR^{\pOp},\aQ^{\pOp}]$ where the action of this join-semilattice isomorphism takes the adjoint. Since distributive lattices are closed under taking the order-dual, we may apply the previous statement. This then translates back to the desired statement via Lemma \ref{lem:special_jsl_morphisms}.4. We finally deduce that:
\[
|J(\JSL_f[\aQ,\aR])| 
= |J(\JSL_f[\aR^{\pOp},\aQ^{\pOp}])|
= |J(\aR^{\pOp})| \cdot |J(\aQ^{\pOp})|
= |M(\aR)| \cdot |M(\aQ)|
\]

\item
Let $\aQ = \aR$ be $M_3$ with three atoms $x_1,\, x_2, \, x_3$. Then the identity morphism $id_\aQ : \aQ \to \aQ$ does not arise as a join of the special morphisms $\up_{\aQ,\aQ}^{m,j}$. To see this, observe that the latter sends $m$ to $\bot_\aQ$, and the other two atoms to $j$. Thus none of them are pointwise below $id_\aQ$, so it cannot arise as a join of them. In fact, none of the six isomorphisms of $\aQ$ are join-generated by these special morphisms. By (a) every $\up_{\aQ,\aQ}^{j,m}$ is join-irreducible, in fact they are atoms: if $f : \aQ \to \aQ$ sends more than one atom to $\bot_\aQ$ then it sends everything to $\bot_\aQ$. The remaining join-irreducibles are also atoms: send one atom to $\bot_\aQ$ and the others to distinct atoms. Thus $|J(\JSL_f[\aQ,\aQ])| = 3 \cdot (3^2) = 27 > 3 \cdot 3 = |J(\aQ)|^2$, where $|J(\aQ)| = |M(\aQ)|$ by symmetry.

\end{enumerate}

\end{enumerate}
\end{proof}

In the rest of this subsection we define the tensor product functor, the associated notion of bimorphism and prove some basic properties. The tensor product $\aQ \tenp \aR$ is defined as a composite functor built from one copy of $\JSL_f[-,-] : \JSL_f^{op} \times \JSL_f \to \JSL_f$ and two copies of the self-duality functor $\OD_j : \JSL_f^{op} \to \JSL_f$. The associated bimorphisms are actually mappings $(q,r) \mapsto \; \down_{\aQ,\aR^{\pOp}}^{q,r}$, so these special morphisms play a prominent role. In particular, the above Lemmas concerning irreducible homomorphisms directly provide descriptions of irreducible elements inside $\aQ \tenp \aR$. In the next subsection we'll describe the tensor product in a different way i.e.\ in terms of so-called bi-ideals, a concept that arises naturally from $\BiCliq$.

\begin{definition}[Tensor product of finite join-semilattices]
\label{def:tenp}
\item
The \emph{tensor product} functor $\tenp : \JSL_f \times \JSL_f \to \JSL_f$ is the composite functor:
\[
\JSL_f \times \JSL_f \xto{(\JSL_f[-,\OD_j(-)])^{op}} \JSL_f^{op} \xto{\OD_j} \JSL_f
\]
It also has canonically associated functions for each pair $(\aQ,\aR)$,
\[
\begin{tabular}{c}
$\beta_{\aQ,\aR} : Q \times R \to \JSL_f(\aQ,\aR^{\pOp})$
\\
$\quad\text{where}\quad
\beta_{\aQ,\aR}(q_0,r_0) 
:= \; \down_{\aQ,\aR^{\pOp}}^{q_0,r_0} \; 
= \trelem{\aR^{\pOp}}{r_0} \circ \trideal{\aQ}{q_0}
= \lambda q \in Q.
\begin{cases} 
\top_\aR & \text{if $q = \bot_\aQ$}
\\
r_0 & \text{if $\bot_\aQ <_\aQ q \leq_\aQ q_0$}
\\
\bot_\aR & \text{if $q \nleq_\aQ q_0$}
\end{cases}$
\end{tabular}
\]
observing that the $\top_\aR$ and $\bot_\aR$ are `switched' because we work with $\aR^{\pOp}$. \endbox
\end{definition}

\begin{note}[The tensor product in more detail]
\label{note:tenp_detail}
Regarding its action on objects,
\[
\begin{tabular}{c}
$\aQ \tenp \aR 
= (\JSL_f[\aQ,\aR^{\pOp}])^{\pOp}
= (\JSL_f(\aQ,\aR^{\pOp}),\lor_{\aQ \tenp \aR},\bot_{\aQ \tenp \aR})$
\end{tabular}
\]
where $\lor_{\aQ \otimes \aR}$ is defined as the binary meet in $\JSL_f[\aQ,\aR^{\pOp}]$, and 
\[
\bot_{\aQ \tenp \aR} = \top_{\JSL_f[\aQ,\aR^{\pOp}]}  =  \lambda q \in Q. (q = \bot_\aQ) : \top_\aR : \bot_\aR.
\]
Observe $\bot_{\aQ \tenp \aR} = \beta_{\aQ,\aR}(\bot_\aQ,r) = \beta_{\aQ,\aR}(q,\bot_\aR)$ for any $q \in Q$, $r \in R$ by Lemma \ref{lem:special_jsl_morphisms}.3, this being the bilinearity condition for bottom elements. \endbox

% too complicated
\takeout{
 Furthermore $f \otimes g = (\JSL_f[f^{op},g_*])_*$, whose computation can be broken down as follows:
\begin{enumerate}
\item
We start with homomorphisms  $f : \aQ_1 \to \aQ_2$ and $g : \aR_1 \to \aR_2$.
\item
Then $\JSL_f[f^{op},g_*] : \JSL_f[\aQ_2,\aR_2^{\pOp}] \to \JSL_f[\aQ_1,\aR_1^{\pOp}]$ acts as follows:
\[
\aQ_2 \xto{h} \aR_2^{\pOp} 
\quad \mapsto \quad
\aQ_1 \xto{f} \aQ_2 \xto{h} \aR_2^{\pOp} \xto{g_*} \aR_1^{\pOp}
\]
\item
$f \otimes g$ is the adjoint of the above morphism, so has typing $(\JSL_f[\aQ_1,\aR_1^{\pOp}])^{\pOp} \to (\JSL_f[\aQ_2,\aR_2^{\pOp}])^{\pOp}$ with action:
\[
h : \aQ_1 \to \aR_1^{\pOp}
\quad\mapsto\quad
\Lor_{\JSL_f[\aQ_2,\aR_2^{\pOp}]} \{ \theta : \aQ_2 \to \aR_2^{\pOp} : g_* \circ \theta \circ f \leq_{\JSL_f[\aQ_1,\aR_1^{\pOp}]} h  \}
\]
That is, we take the pointwise-join inside $\aR^{\pOp}$ (i.e.\ pointwise-meet inside $\aR$) of all those homomorphisms $\theta : \aQ_2 \to \aR_2^{\pOp}$ such that $g_* \circ \theta \circ f$ is pointwise-below $h$ inside $\aR_1^{\pOp}$ (i.e.\ pointwise-above inside $\aR_1$).
\end{enumerate}
Our later usage of bi-ideals will make tensor products easier to deal with.
}
%end takeout
\end{note}

\bigskip

Since $-\tenp-$ is defined as a composite of well-defined functors, we have:

\smallskip

\begin{lemma}
$\tenp : \JSL_f \times \JSL_f \to \JSL_f$ is a well-defined functor.
\end{lemma}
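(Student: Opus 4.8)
The plan is to unwind the definition of $\tenp$ as the displayed composite and observe that well-definedness is entirely inherited from the component functors, which have already been established. Concretely, $\tenp$ is defined in Definition \ref{def:tenp} as
\[
\JSL_f \times \JSL_f \xto{(\JSL_f[-,\OD_j(-)])^{op}} \JSL_f^{op} \xto{\OD_j} \JSL_f,
\]
so I would first remark that the composite of functors is automatically a functor; hence it suffices to check that each of the three displayed arrows is a well-defined functor.

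The first arrow is built from two already-verified pieces. The self-duality $\OD_j : \JSL_f^{op} \to \JSL_f$ is a well-defined functor by Theorem \ref{thm:jsl_self_dual}, and the internal hom $\JSL_f[-,-] : \JSL_f^{op} \times \JSL_f \to \JSL_f$ is well-defined by the Lemma immediately following Definition \ref{def:jsl_internal_hom}. First I would point out that $\OD_j(-)$ acts on the second coordinate of $\JSL_f \times \JSL_f$, so that $\JSL_f[-,\OD_j(-)]$ is the composite
\[
\JSL_f^{op} \times \JSL_f \xto{\Id \times \OD_j} \JSL_f^{op} \times \JSL_f \xto{\JSL_f[-,-]} \JSL_f,
\]
precomposed appropriately; applying the contravariant $(-)^{op}$ on the outside (together with the identification $(\JSL_f \times \JSL_f)^{op} \cong \JSL_f^{op} \times \JSL_f^{op}$ on objects, then using $\OD_j$ to land in the right variance) yields a functor $\JSL_f \times \JSL_f \to \JSL_f^{op}$. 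Each of these assembling operations — product of functors, opposite of a functor, and functor composition — preserves well-definedness, so the first arrow is a well-defined functor.

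Post-composing with $\OD_j : \JSL_f^{op} \to \JSL_f$, again well-defined by Theorem \ref{thm:jsl_self_dual}, gives the claimed functor $\tenp : \JSL_f \times \JSL_f \to \JSL_f$, which proves the statement. The only genuine subtlety — and the step I expect to require the most care — is bookkeeping the variances: one must confirm that the two copies of $\OD_j$ and the single $(-)^{op}$ compose so that the overall domain is the covariant product $\JSL_f \times \JSL_f$ rather than some opposite, and that the stated object-action $\aQ \tenp \aR = (\JSL_f[\aQ,\aR^{\pOp}])^{\pOp}$ of Note \ref{note:tenp_detail} indeed results. Since these are formal consequences of the variance of the already-established functors and involve no new computation, the proof is essentially a one-line appeal to closure of functoriality under composition, opposite, and products.
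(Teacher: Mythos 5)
Your proposal is correct and is essentially the paper's own argument: the paper dispenses with the lemma by the single remark that $\tenp$ is a composite of well-defined functors (namely $\OD_j$ from Theorem \ref{thm:jsl_self_dual} and $\JSL_f[-,-]$ from Definition \ref{def:jsl_internal_hom}), which is exactly the appeal you make. Your extra care with the variance bookkeeping is a harmless elaboration of the same one-line proof.
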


\smallskip

Each function $\beta_{\aQ,\aR}$ is `well-defined' in the sense that it defines a bilinear mapping i.e.\ a \emph{bimorphism}.

\bigskip

\begin{definition}[Bimorphisms]
\label{def:bimorphism}
For any triple of finite join-semilattices $(\aQ,\aR,\aS)$, a \emph{bimorphism} (or \emph{bilinear mapping}) from $(\aQ,\aR)$ to $\aS$ is a function $\beta : Q \times R \to S$ such that:
\begin{enumerate}
\item
$\beta(\bot_\aQ,r) = \beta(q,\bot_\aR) = \bot_\aS$ for any $q \in Q$ and $r \in R$.
\item
$\beta(q_1 \lor_\aQ q_2,r) = \beta(q_1,r) \lor_\aS \beta(q_2,r)$ for any $q_1\,,q_2 \in Q$ and $r \in R$.
\item
$\beta(q,r_1 \lor_\aR r_2) = \beta(q,r_1) \lor_\aS \beta(q,r_2)$ for any $q \in Q$ and $r_1\,,r_2 \in R$.
\end{enumerate}
Each $\JSL_f$-morphism $f : \aQ \otimes \aR \to \aS$ \emph{induces} the bimorphism $\beta_f : Q \times R \to S$ from $(\aQ,\aR)$ to $\aS$ with action:
\[
\beta_f (q,r) := f(\beta_{\aQ \times \aR}(q,r))
\]
Finally, let $\BiMor{\aQ,\aR,\aS}$ be the set of all bimorphisms from  $(\aQ,\aR)$ to $\aS$. \endbox
\end{definition}

\bigskip

\begin{lemma}[Basic properties of $\aQ \tenp \aR$]
\label{lem:tenp_basic}
\item
Let $(\aQ,\aR)$ be finite join-semilattices, and recall that $\beta_{\aQ,\aR}(q,r) = \;\down_{\aQ,\aR^{\pOp}}^{q,r}$.
\begin{enumerate}
\item
Each function $\beta_{\aQ,\aR} : Q \times R \to \JSL_f(\aQ,\aR^{\pOp})$ is a well-defined bilinear mapping from $(\aQ,\aR)$ to $\aQ \tenp \aR$.

\item
Concerning irreducibles.

\begin{enumerate}
\item
$J(\aQ \tenp \aR) = \{ \beta_{\aQ,\aR}(j_q,j_r) : j_q \in J(\aQ),\, j_r \in J(\aR)  \}$ hence $|J(\aQ \tenp \aR)| = |J(\aQ)| \cdot |J(\aR)|$.
\item
If $\aQ$ or $\aR$ are distributive then:
\[
M(\aQ \tenp \aR) = \{ \up_{\aQ,\aR^{\pOp}}^{m_q,m_r} : m_q \in M(\aQ), \, m_r \in M(\aR) \}
\]
hence $|M(\aQ \tenp \aR)| = |M(\aQ)| \cdot |M(\aR)|$.
\end{enumerate}

Thus by (a) the images $\beta_{\aQ,\aR}[J(\aQ)\times J(\aR)] \subseteq \beta_{\aQ,\aR}[Q \times R]$ both join-generate $\aQ \tenp \aR$.

\item
$\beta_{\aQ,\aR}$ almost defines an order-embedding of $(Q \times R,\leq_{\aQ \times \aR})$ into $\aQ \tenp \aR$. That is, for any $(q_1,r_1),\,(q_2,r_2) \in Q \times R$ such that $q_1 \neq \bot_\aQ$ and $r_1 \neq \bot_\aR$,
\[
\beta_{\aQ,\aR}(q_1,r_1) \leq_{\aQ \tenp \aR} \beta_{\aQ,\aR}(q_2,r_2)
\quad\iff\quad
(q_1,r_1) \leq_{\aQ \times \aR} (q_2,r_2)
\]
Also, the implication $\oT$ holds without restriction i.e.\ $\beta_{\aQ, \aR}$ defines a monotone map from $\aQ \times \aR$ to $\aQ \tenp \aR$.

\item
For any join-semilattice morphism $f : \aQ \tenp \aR \to \aS$,  $\beta_f$ is a well-defined bilinear mapping from $(\aQ,\aR)$ to $\aS$.

\end{enumerate}
\end{lemma}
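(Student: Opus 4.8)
The plan is to verify directly the three defining conditions of a bimorphism from Definition \ref{def:bimorphism}, exploiting the fact that $\beta_f = f \circ \beta_{\aQ,\aR}$ is a composite of the canonical bilinear mapping $\beta_{\aQ,\aR}$ with the $\JSL_f$-morphism $f$. The two ingredients I would invoke are: (i) part (1) of this same Lemma, which tells us that $\beta_{\aQ,\aR} : Q \times R \to \JSL_f(\aQ,\aR^{\pOp})$ is already a bilinear mapping into $\aQ \tenp \aR$; and (ii) that $f$, being a $\JSL_f$-morphism, preserves the bottom element and binary joins of $\aQ \tenp \aR$. The whole argument is the observation that postcomposing a bilinear mapping with a join-preserving morphism yields a bilinear mapping, since the only operations appearing in the three conditions are $\bot$ and binary $\lor$.

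Concretely, for the bottom condition I would compute $\beta_f(\bot_\aQ,r) = f(\beta_{\aQ,\aR}(\bot_\aQ,r)) = f(\bot_{\aQ\tenp\aR}) = \bot_\aS$, where the middle equality is condition (1) of bilinearity applied to $\beta_{\aQ,\aR}$ and the last equality is $\bot$-preservation of $f$; the computation $\beta_f(q,\bot_\aR) = \bot_\aS$ is symmetric. For the two join conditions I would push the join through $\beta_{\aQ,\aR}$ and then through $f$, e.g.
\[
\beta_f(q_1 \lor_\aQ q_2, r)
= f\!\left( \beta_{\aQ,\aR}(q_1,r) \lor_{\aQ\tenp\aR} \beta_{\aQ,\aR}(q_2,r) \right)
= f(\beta_{\aQ,\aR}(q_1,r)) \lor_\aS f(\beta_{\aQ,\aR}(q_2,r))
= \beta_f(q_1,r) \lor_\aS \beta_f(q_2,r),
\]
using condition (2) for $\beta_{\aQ,\aR}$ followed by join-preservation of $f$; the verification of condition (3) in the second argument is identical, using condition (3) for $\beta_{\aQ,\aR}$.

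There is essentially no obstacle here: the statement is a purely formal consequence of composing a bilinear map with a morphism, and all the real content sits in part (1). The only point requiring a moment's care is notational, namely that the join $\lor_{\aQ\tenp\aR}$ preserved by $f$ is by definition (see Note \ref{note:tenp_detail}) the binary \emph{meet} of $\JSL_f[\aQ,\aR^{\pOp}]$; but this presentation of the join in $\aQ \tenp \aR$ is irrelevant to the argument, since we use only that $f$ is a $\JSL_f$-morphism out of the join-semilattice $\aQ \tenp \aR$ and hence preserves its bottom and binary join. Thus $\beta_f$ satisfies all three conditions and is a well-defined bilinear mapping from $(\aQ,\aR)$ to $\aS$.
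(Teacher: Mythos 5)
Your argument for part (4) is correct and coincides exactly with the paper's own proof of that part: the paper likewise observes that bilinearity of $\beta_f = f \circ \beta_{\aQ,\aR}$ follows immediately from part (1) together with the fact that the $\JSL_f$-morphism $f$ preserves $\bot_{\aQ\tenp\aR}$ and $\lor_{\aQ\tenp\aR}$, and it carries out precisely the three computations you describe. Your closing remark that the presentation of $\lor_{\aQ\tenp\aR}$ as the meet of $\JSL_f[\aQ,\aR^{\pOp}]$ is irrelevant to this step is also right.

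The problem is that the statement is the whole of Lemma \ref{lem:tenp_basic}, which has four parts, and your proposal addresses only part (4). Worse, the one ingredient you lean on, namely part (1), is itself among the claims to be proved, so taken as a proof of the lemma your argument is circular. The substantive content sits in the parts you have not touched: part (1) requires knowing that each $\down_{\aQ,\aR^{\pOp}}^{q,r}$ is a well-defined morphism $\aQ \to \aR^{\pOp}$ and translating the identities of Lemma \ref{lem:special_jsl_morphisms}.3 into the bilinearity conditions (here the identification of $\lor_{\aQ\tenp\aR}$ with $\land_{\JSL_f[\aQ,\aR^{\pOp}]}$ genuinely matters); part (2) identifies $J(\aQ\tenp\aR)$ and, under a distributivity hypothesis, $M(\aQ\tenp\aR)$ by specialising the description of meet- and join-irreducible homomorphisms in Lemma \ref{lem:hom_meet_join_irr}; and part (3), the near-order-embedding property, rests on Lemma \ref{lem:special_jsl_morphisms}.4 together with the degenerate cases $q_1 = \bot_\aQ$ or $r_1 = \bot_\aR$. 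None of these reduce to the formal "compose a bimorphism with a morphism" observation that handles part (4).
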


\begin{proof}
\item
\begin{enumerate}
\item
$\beta_{\aQ,\aR}$ is a well-defined function because each $\down_{\aQ,\aR^{\pOp}}^{q_0,r_0}$ is a well-defined join-semilattice morphism of type $\aQ \to \aR^{\pOp}$ by Lemma \ref{lem:special_jsl_morphisms}.1. Concerning bilinearity, we already observed that $\beta(\bot_\aQ,r) = \beta(q,\bot_\aR) = \bot_{\aQ \otimes \aR}$ in Note \ref{note:tenp_detail}. The other conditions follow directly from Lemma \ref{lem:special_jsl_morphisms}.3, since $\lor_{\aQ \tenp \aR} \, = \, \land_{\JSL_f[\aQ,\aR]}$ and $\land_{\aR^{\pOp}} \; = \; \lor_\aR$.

\item
These statements follow directly from Lemma \ref{lem:hom_meet_join_irr} i.e.\ our description of join-irreducibles and meet-irreducibles in $\JSL_f[\aQ,\aR]$. 

\begin{enumerate}
\item
This is the first statement, since $J(\aQ \tenp \aR) = M(\JSL_f[\aQ,\aR^{\pOp}])$ and also  $M(\aR^{\pOp}) = J(\aR)$.
\item
We are using the second statement, since $M(\aQ \tenp \aR) = J(\JSL_f[\aQ,\aR^{\pOp}])$. If $\aQ$ is distributive their cardinality is $|J(\aQ)| \cdot |J(\aR^{\pOp})| = |M(\aQ)| \cdot |M(\aR)|$ recalling that $|J(\aQ)| = |M(\aQ)|$. On the other hand, if $\aR$ is distributive their cardinality is $|M(\aQ)| \cdot |M(\aR^{\pOp})| = |M(\aQ)| \cdot |M(\aR)|$ since $|J(\aR)| = |M(\aR)|$.
\end{enumerate}

\item
Unwinding the definitions, we have:
\[
\beta_{\aQ,\aR}(q_1,r_1) \leq_{\aQ \tenp \aR} \beta_{\aQ,\aR}(q_2,r_2)
\quad\iff\quad
\down_{\aQ,\aR^{\pOp}}^{q_2,r_2} \; \leq_{\JSL_f[\aQ,\aR^{\pOp}]} \; \down_{\aQ,\aR^{\pOp}}^{q_1,r_1}
\]
Then our assumptions that $\bot_\aQ \neq q_1$ and $r \neq \top_{\aR^{\pOp}} = \bot_\aR$ are precisely those from Lemma \ref{lem:special_jsl_morphisms}.4. Thus the above holds iff $q_1 \leq_\aQ q_2$ and $r_2 \leq_{\aR^{\pOp}} r_1$ (or equivalently $r_1 \leq_\aR r_2$).

Finally if $q_1 = \bot_\aQ$ or $r_1 = \bot_\aR$ then by bilinearity $\beta_{\aQ,\aR}(q_1,r_1) = \bot_{\aQ \tenp \aR} \leq_{\aQ \tenp \aR} \beta_{\aQ,\aR}(q_2,r_2)$. Hence the original implication $\oT$ holds without restriction.

\item
Given any join-semilattice morphism $f : \aQ \tenp \aR \to \aS$ we must verify that $\beta_f := \lambda (q,r) \in Q \times R. f(\beta_{\aQ,\aR}(q,r))$ is bilinear. This follows immediately via (1) i.e.\ that $\beta_{\aQ,\aR}$ is bilinear. 
\[
\begin{tabular}{c}
$f(\beta_{\aQ,\aR}(\bot_\aQ,r))
= f(\bot_{\aQ \tenp \aR})
= \bot_\aS
\qquad
f(\beta_{\aQ,\aR}(q,\bot_\aR))
= f(\bot_{\aQ \tenp \aR})
= \bot_\aS$
\\
$f(\beta_{\aQ,\aR}(q_1 \lor_\aQ q_2,r))
= f(\beta_{\aQ,\aR}(q_1,r) \lor_{\aQ \tenp \aR} \beta_{\aQ,\aR}(q_2,r))
= f(\beta_{\aQ,\aR}(q_1,r)) \lor_\aS f(\beta_{\aQ,\aR}(q_2,r))
$
\end{tabular}
\]
where preservation of joins in the right parameter follows symmetrically.

\end{enumerate}
\end{proof}

\subsubsection{Universality of the tensor product via $\BiCliq$ and bi-ideals}

In order to prove the universality of the tensor product, we'll describe the latter in terms of the category $\BiCliq$. This amounts to (and explains) the `bi-ideals' approach of Fraser \cite{Fraser1978} and more recently of Gr\"{a}tzer and Wehrung \cite{GratzerTensorSemilattices2005}. We proceed as follows.
\begin{enumerate}
\item
One has the inclusion-ordered join-semilattice of $\BiCliq$-morphisms:
\[
\BiCliq[\rG,\rH] := (\BiCliq(\rG,\rH),\cup,\emptyset : \rG \to \rH)
\]
and moreover $\JSL_f[\aQ,\aR]$ is isomorphic to $\BiCliq[\nleq_\aQ, \nleq_\aR]$. Here we have chosen to use $\nleq_\aQ$ rather than its restriction $\Pirr\aQ$, recalling that this is permissible via the natural isomorphism $\rE : \Pirr \To \Nleq$ -- see Lemmas \ref{lem:pirr_to_nleq_iso} and \ref{lem:nleq_e_well_def}. More importantly, it makes the connection with bi-ideals clearer.

\item
Recall that $\aQ \tenp \aR = (\JSL_f[\aQ,\aR^{\pOp}])^{\pOp}$. Let us express this in terms of $\BiCliq$.
\begin{enumerate}
\item
Start with the sub join-semilattice $\BiCliq[\nleq_\aQ,\nleq_{\aR^{\pOp}}] \subseteq \JPow (Q \times R)$.
\item
To obtain the opposite join-semilattice we take the pointwise relative complements inside $Q \times R$ and order by inclusion.
\item
Such relative complements correspond to taking the complement relation, and are necessarily closed under intersections. This inclusion-ordered join-semilattice is denoted by:
\[
\jslBId{\aQ,\aR} := (\BId{\aQ,\aR},\lor_{\jslBId{\aQ,\aR}},\bot_{\jslBId{\aQ,\aR}})
\]
\end{enumerate}

By construction we have $\aQ \tenp \aR \cong \jslBId{\aQ,\aR}$ i.e.\ another description of the tensor product.

\item
Importantly we have two different descriptions of $\BId{\aQ,\aR}$.

\begin{enumerate}
\item
The first comes directly from the equivalence between $\JSL_f$ and $\BiCliq$. That is, the  elements of $\BId{\aQ,\aR}$ are precisely the relations $\rR(q,r) \iff r \leq_\aR f(q)$ for some join-semilattice morphism $f : \aQ \to \aR^{\pOp}$.

\item
The second is the pre-existing notion of \emph{bi-ideal} \cite{Fraser1978, GratzerTensorSemilattices2005}: a subset $\rR \subseteq Q \times R$ lies in $\BId{\aQ,\aR}$ iff it is closed under the following rules:
\begin{enumerate}
\item
$\rR(\bot_\aQ,r)$ for all $r \in R$, and $\rR(q,\bot_\aR)$ for all $q \in Q$.
\item
$\rR$ is downwards-closed inside $\aQ \times \aR$.
\item
$\rR$ is closed under `lateral joins' i.e.\ 
\[
\rR(q,r_1) \, \land \, \rR(q,r_2) \implies \rR(q,r_1 \lor_\aR r_2)
\qquad
\rR(q_1,r) \, \land \, \rR(q_2,r) \implies \rR(q_1 \lor_\aQ q_2,r)
\]
\end{enumerate}

\end{enumerate}

\item
Having established the latter correspondence, we'll prove that the tensor product of finite join-semilattices is universal \cite{Fraser1978}.

\end{enumerate}

\smallskip
So let us begin by describing the join-semilattice structure of $\BiCliq$'s hom-sets, and also its top element.

\begin{lemma}
\label{lem:bicliq_mor_jsl_struct}
Let $\rG$ and $\rH$ be relations between finite sets.
\begin{enumerate}
\item
We have the $\BiCliq$-morphism $\emptyset : \rG \to \rH$ with associated components:
\[
\begin{tabular}{lll}
$\emptyset_-$ & $:= \BC{\rG_s,\cl_\rH(\emptyset)} \subseteq \rG_s \times \rH_s$
\\
$\emptyset_+$ & $:= \BC{\rH_t,\cl_{\breve{\rG}}(\emptyset)} \subseteq \rH_t \times \rG_t$
\end{tabular}
\]
i.e.\ we connect everything to the respective isolated elements.

\item
Given $\BiCliq$-morphisms $\rR,\,\rS : \rG \to \rH$, their union defines a $\BiCliq$-morphism $\rR \cup \rS : \rG \to \rH$, and:
\[
\begin{tabular}{lll}
$(\rR \cup \rS)_-$ & $:= \{ (g_s,h_s) \in \rG_s \times \rH_s : h_s \in \cl_\rH((\rR_- \cup \rS_-)[g_s]) \}$
\\
$(\rR \cup \rS)_+$ & $:= \{ (h_t,g_t) \in \rH_t \times \rG_t : g_t \in \cl_{\breve{\rG}}((\rR_+ \cup \rS_+)[h_t]) \}$
\end{tabular}
\]
are its associated component relations.

\item
We have the $\BiCliq$-morphism $\top_{\BiCliq[\aQ,\aR]} = \BC{\breve{\rG}[\rG_t]\,,\, \rH[\rH_s]} : \rG \to \rH$ with associated components:
\[
\begin{tabular}{lll}
$(\top_{\BiCliq[\aQ,\aR]})_-$ & $:= \emptyset_- \cup \BC{\breve{\rG}[\rG_t],\,\breve{\rH}[\rH_t]}$ & $\subseteq \rG_s \times \rH_s$
\\
$(\top_{\BiCliq[\aQ,\aR]})_+$ & $:= \emptyset_+ \cup \BC{\rH[\rH_s],\,\rG[\rG_s]}$ & $\subseteq \rH_t \times \rG_t$
\end{tabular}
\]

\end{enumerate}
\end{lemma}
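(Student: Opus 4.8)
The plan is to prove Lemma \ref{lem:bicliq_mor_jsl_struct} in three parts, leaning heavily on the machinery already established: the maximum witness characterisation (Lemma \ref{lem:bicliq_mor_char_max_witness}), the algebra of $(-)^\up$ and $(-)^\down$ (Lemma \ref{lem:up_down_basic}), and the explicit descriptions of open/closed sets (Lemma \ref{lem:lat_op_cl}).

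For statement (1), I would first recall from Example \ref{ex:dep_morphisms}.1 that $\emptyset : \rG \to \rH$ is a genuine $\Dep$-morphism (empty witnessing relations suffice). To identify its components, I would simply apply Definition \ref{def:bicliq_mor_components}. Since $\emptyset[g_s] = \emptyset$ for every $g_s \in \rG_s$, we get $\emptyset_- = \{(g_s,h_s) : h_s \in \rH^\down(\emptyset)\}$, and because $\rH^\down(\emptyset) = \cl_\rH(\emptyset)$ (indeed $\cl_\rH(\emptyset) = \rH^\down \circ \rH^\up(\emptyset) = \rH^\down(\emptyset)$ since $\rH^\up(\emptyset) = \emptyset$), this is exactly $\BC{\rG_s, \cl_\rH(\emptyset)}$. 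The computation for $\emptyset_+$ is the order-dual via the self-duality $(-)\spcheck$, or directly: $\breve\emptyset[h_t] = \emptyset$ so $\emptyset_+ = \BC{\rH_t, \breve\rG^\down(\emptyset)} = \BC{\rH_t, \cl_{\breve\rG}(\emptyset)}$.

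For statement (2), the key input is Example \ref{ex:dep_morphisms}.1 again, which gives that $\Dep$-morphisms are closed under union (unioning witnessing relations, using bilinearity of relational composition). So $\rR \cup \rS : \rG \to \rH$ is well-defined. To compute its components, I would note that $(\rR \cup \rS)^\up = \rR^\up \cup \rS^\up$ pointwise (image of a union is union of images), so $(\rR \cup \rS)[g_s] = \rR[g_s] \cup \rS[g_s]$. Rather than expand $\rR[g_s]$ directly, the cleaner route is to observe that $\rR = \rR_- ; \rH$ and $\rS = \rS_- ; \rH$ by Lemma \ref{lem:bicliq_mor_char_max_witness}.2, hence $\rR \cup \rS = (\rR_- \cup \rS_-) ; \rH$; then the component $(\rR\cup\rS)_-$ is, by Definition \ref{def:bicliq_mor_components}, $\{(g_s,h_s) : h_s \in \rH^\down((\rR\cup\rS)[g_s])\} = \{(g_s,h_s) : h_s \in \rH^\down \circ \rH^\up((\rR_- \cup \rS_-)[g_s])\} = \{(g_s,h_s) : h_s \in \cl_\rH((\rR_- \cup \rS_-)[g_s])\}$, which is the claimed formula. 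The $(\rR \cup \rS)_+$ case follows by the identical argument applied to $\breve\rR, \breve\rS$ via self-duality.

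Statement (3) is where the main obstacle lies, since I must first verify that the top element of $\BiCliq[\rG,\rH]$ is indeed the full biclique-style relation $\BC{\breve\rG[\rG_t], \rH[\rH_s]}$. The natural approach is to characterise morphisms via $\rR^\up \circ \cl_\rG = \rR^\up = \inte_\rH \circ \rR^\up$ (Lemma \ref{lem:bicliq_mor_char_max_witness}.1): the largest relation $\rR \subseteq \rG_s \times \rH_t$ satisfying this must have $\rR[g_s]$ equal to the largest $\rH$-open set, which is $\inte_\rH(\rH_t) = \rH[\rH_s]$, precisely when $g_s$ is not an isolated point of $\rG$ (i.e.\ $g_s \in \breve\rG[\rG_t]$, meaning $\rG[g_s] \neq \emptyset$); isolated points must map to $\emptyset$ by the closure condition. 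This pins down the top relation as $\BC{\breve\rG[\rG_t], \rH[\rH_s]}$. For the components, I would again apply Definition \ref{def:bicliq_mor_components} and split by whether $g_s$ is isolated: isolated points contribute the $\emptyset_-$ part, while non-isolated points $g_s \in \breve\rG[\rG_t]$ give $h_s \in \rH^\down(\rH[\rH_s]) = \cl_\rH(\rH_s) = \breve\rH[\rH_t]$ (the non-isolated points of $\rH$ on the source side), yielding the $\BC{\breve\rG[\rG_t], \breve\rH[\rH_t]}$ summand; taking the union recovers the stated formula, and $(\top)_+$ follows by self-duality. The care needed in the isolated-versus-non-isolated case split, and in correctly matching $\cl_\rH(\rH_s)$ with $\breve\rH[\rH_t]$ via Lemma \ref{lem:lat_op_cl}.3, is the delicate point.
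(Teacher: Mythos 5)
Your overall strategy coincides with the paper's on parts (1) and (2): the paper likewise takes the obvious witnesses $(\emptyset,\emptyset)$ and $(\rR_-\cup\rS_-,\,\rR_+\cup\rS_+)$ and closes them point-image-wise, which is exactly your computation $\rH^\down\circ\rH^\up((\rR_-\cup\rS_-)[g_s])=\cl_\rH((\rR_-\cup\rS_-)[g_s])$. In (3) you identify the top element via the maximality consequences of Lemma \ref{lem:bicliq_mor_char_max_witness}.1 rather than exhibiting explicit witnesses as the paper does; that is a legitimate alternative, but note it only shows every morphism is \emph{contained} in $\BC{\breve{\rG}[\rG_t],\rH[\rH_s]}$ — you should still check that this candidate itself satisfies $\rR^\up\circ\cl_\rG=\rR^\up=\inte_\rH\circ\rR^\up$ (it does, since its images are always $\emptyset$ or $\rH[\rH_s]$, both $\rH$-open, and $\cl_\rG(X)$ contains a non-isolated point iff $X$ does).

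The genuine error is the chain $\rH^\down(\rH[\rH_s])=\cl_\rH(\rH_s)=\breve{\rH}[\rH_t]$. The first equality is fine, but $\cl_\rH(\rH_s)=\rH_s$ (closure operators are extensive), whereas $\breve{\rH}[\rH_t]$ excludes the isolated source points $\cl_\rH(\emptyset)$; the two sets differ whenever $\rH$ has such a point. So for non-isolated $g_s$ the correct value is $(\top_{\BiCliq[\rG,\rH]})_-[g_s]=\rH_s$, and the pairs in $\breve{\rG}[\rG_t]\times\cl_\rH(\emptyset)$ belong to the component relation but lie in neither of the two pieces your case split produces ($\cl_\rG(\emptyset)\times\cl_\rH(\emptyset)$ from the isolated rows and $\breve{\rG}[\rG_t]\times\breve{\rH}[\rH_t]$ from the non-isolated ones). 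Hence "taking the union" of your pieces does \emph{not} recover the stated formula; the formula is nonetheless correct because it contains all of $\emptyset_-=\rG_s\times\cl_\rH(\emptyset)$, which absorbs exactly the missing rectangle. The repair is to keep $\rH_s$ for the non-isolated rows and then observe
\[
(\cl_\rG(\emptyset)\times\cl_\rH(\emptyset))\,\cup\,(\breve{\rG}[\rG_t]\times\rH_s)
\;=\;\emptyset_-\,\cup\,(\breve{\rG}[\rG_t]\times\breve{\rH}[\rH_t]),
\]
which is precisely how the paper's own calculation concludes.
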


\begin{proof}
\item
\begin{enumerate}
\item
$\emptyset : \rG \to \rH$ is a $\BiCliq$-morphism via the witnesses $\emptyset ; \rH = \emptyset = \emptyset = \rG ; \emptyset$. Closing these witnesses, the negative component $\emptyset_-$ sends every $g_s$ to $\cl_\rH(\emptyset)$, whereas the positive component sends every $h_t$ to $\cl_{\breve{\rH}}(\emptyset)$.

\item
We have $\rR_- ; \rH = \rR = \rG ; \rR_+\spbreve$ and $\rS_- ; \rH = \rS = \rG ; \rS_+\spbreve$. Then since (i) relational composition preserves unions separately in each component, (ii) relational converse preserves unions, we deduce that $\rR \cup \rS$ is a well-defined $\BiCliq$-morphism via the witnesses $(\rR_- \cup \rS_-,\rR_+ \cup \rS_+)$. Closing these witnesses point-image-wise yields the associated components.

\item
$\top_{\BiCliq[\rG,\rH]} := \breve{\rG}[\rG_t] \times \rH[\rH_s]$ defines a $\BiCliq$-morphism of type $\rG \to \rH$ via the witnesses:
\[
\xymatrix@=20pt{
\rG_t \ar[rr]^{\BC{\rG_t,\rH[\rH_s]}} && \rH_t
\\
\rG_s \ar[u]^{\rG} \ar[rr]_{\BC{\breve{\rG}[\rG_t], \rH_s}} && \rH_s \ar[u]_{\rH}
}
\]
so let us compute the negative component:
\[
\begin{tabular}{lll}
$(\top_{\BiCliq[\rG,\rH]})_-(g_s,h_s)$
&
$\iff h_s \in \rH^\down(\top_{\BiCliq[\rG,\rH]}[g_s])$
\\&
$\iff \rH[h_s] \subseteq (\breve{\rG}[\rG_t] \times \rH[\rH_s])[g_s]$
\\&
$\iff (g_s \in \cl_\rG(\emptyset) \text{ and } \rH[h_s] \subseteq \emptyset) \text{ or } (g_s \nin \cl_\rG(\emptyset) \text{ and } \rH[h_s] \subseteq \rH[\rH_s])$
\\&
$\iff (g_s,h_s) \in \cl_\rG(\emptyset) \times \cl_\rH(\emptyset) \,\cup\, \breve{\rG}[\rG_t] \times \rH_s$
\\&
$\iff (g_s,h_s) \in \emptyset_- \cup \BC{\breve{\rG}[\rG_t],\,\breve{\rH}[\rH_t]}$
\end{tabular}
\]
As for the positive component, recall that it is the negative component of the dual morphism. Since relational converse preserves inclusions it also preserves the largest morphism, thus:
\[
(\top_{\BiCliq[\rG,\rH]})_+
= (\top_{\BiCliq[\breve{\rH},\breve{\rG}]})_-
= (\emptyset : \breve{\rH} \to \breve{\rG})_- \cup \BC{\rH[\rH_s],\,\rG[\rG_s]}
= \emptyset_+ \cup \BC{\rH[\rH_s],\,\rG[\rG_s]}
\]

\end{enumerate}
\end{proof}

Now we repackage the preceding Lemma as a Definition.

\begin{definition}[Join-semilattice structure on $\BiCliq$'s hom-sets]
\label{def:bicliq_hom_functor_jsl}
\item
For each bipartite graph $\rG$ and $\rH$ define the finite join-semilattice:
\[
\BiCliq[\rG,\rH] := (\BiCliq(\rG,\rH),\cup,\emptyset : \rG \to \rH ) 
\]
which is well-defined by Lemma \ref{lem:bicliq_mor_jsl_struct}. Observe that it is ordered by inclusion. It extends to a functor $\BiCliq[-,-] : \BiCliq^{op} \times \BiCliq \to \JSL_f$, whose action on morphisms is as follows:
\[
\dfrac{\rR : \rG \to \rH \quad \rS : \rG' \to \rH'}
{\BiCliq[\rR^{op},\rS] := \lambda \rT. \rR \fatsemi \rT \fatsemi \rS : \BiCliq[\rH,\rG'] \to \BiCliq[\rG,\rH']}
\]
\endbox
\end{definition}

\begin{lemma}
\label{lem:bicliq_hom_functor}
$\BiCliq[-,-] : \BiCliq^{op} \times \BiCliq \to \BiCliq$ is a well-defined functor. In particular:
\[
\begin{tabular}{ll}
$(\emptyset : \rG \to \rG_1) \fatsemi \rR = \emptyset : \rG \to \rG_2$
&
$\rR \fatsemi (\emptyset : \rG_2 \to \rH) = \emptyset : \rG_1 \to \rH$
\\[1ex]
$(\rR_1 \cup \rR_2) \fatsemi \rR = (\rR_1 \fatsemi \rR) \cup (\rR_2 \fatsemi \rR)$
&
$\rR \fatsemi (\rS_1 \cup \rS_2) = (\rR \fatsemi \rS_1) \cup (\rR \fatsemi \rS_2)$
\end{tabular}
\]
for any $\BiCliq$-morphisms $\rR : \rG_1 \to \rG_2$, $(\rR_i : \rG \to \rG_1)_{i = 1,2}$ and $(\rS_i : \rG_2 \to \rH)_{i = 1,2}$.
\end{lemma}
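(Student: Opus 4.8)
The plan is to prove that $\BiCliq[-,-]$ is a well-defined functor by verifying its action on objects and morphisms, and then establishing functoriality, with the bulk of the work being the four displayed bilinearity-type equations. First I would confirm the object part: each $\BiCliq[\rG,\rH]$ is a well-defined finite join-semilattice, which is exactly Lemma \ref{lem:bicliq_mor_jsl_struct} (the empty morphism is the bottom and the union of two $\BiCliq$-morphisms is a $\BiCliq$-morphism). For the action on morphisms, given $\rR : \rG \to \rH$ and $\rS : \rG' \to \rH'$, the map $\lambda \rT. \rR \fatsemi \rT \fatsemi \rS$ sends a $\BiCliq$-morphism $\rT : \rH \to \rG'$ to a $\BiCliq$-morphism $\rG \to \rH'$ by the closure of $\BiCliq$ under composition, so it is at least a well-defined \emph{function}; I must then check it is a \emph{$\JSL_f$-morphism}, i.e.\ preserves $\emptyset$ and binary unions.

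The key observation that makes everything routine is that $\fatsemi$ is bilinear with respect to $\cup$ and absorbing with respect to $\emptyset$ — these are precisely the four displayed identities, and the two claims to verify ($\BiCliq[\rR^{op},\rS]$ preserves bottom and join) are immediate consequences of them. So the real content is establishing those four equations. For the bottom identities, I would use the description $\rR \fatsemi \rS = \rR_l ; \rS = \rR ; \rS_r\spbreve$ from the definition of $\BiCliq$-composition (and Lemma \ref{lem:bicliq_func_comp}), together with the fact that relational composition with the empty relation yields the empty relation in each argument; combined with the fact that $\emptyset : \rG \to \rG_1$ is genuinely a $\BiCliq$-morphism (Lemma \ref{lem:bicliq_mor_jsl_struct}.1), this forces $(\emptyset) \fatsemi \rR = \emptyset$ and $\rR \fatsemi (\emptyset) = \emptyset$ as $\BiCliq$-morphisms. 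For the distributivity identities, I would again pass to ordinary relational composition: $\BiCliq$-composition is computed by choosing witnesses and composing relationally (then implicitly closing, but the associated $\BiCliq$-morphism is determined by the relational composite via $\rR_l ; \rS$), and relational composition distributes over $\cup$ separately in each argument — this was already invoked in Example \ref{ex:dep_morphisms}.1 (``bilinearity of relational composition w.r.t.\ union'') and in the proof of Lemma \ref{lem:bicliq_mor_jsl_struct}.2.

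Once the four equations are in hand, functoriality itself follows formally. Preservation of identities reads $\BiCliq[id_\rG^{op}, id_\rH] = \lambda \rT. id_\rG \fatsemi \rT \fatsemi id_\rH = \lambda \rT. \rT = id_{\BiCliq[\rG,\rH]}$, using that $id$ is a two-sided unit for $\fatsemi$ in the category $\BiCliq$. Preservation of composition is the standard bifunctor computation: for composable pairs $(\rR, \rR')$ in $\BiCliq^{op}$ and $(\rS, \rS')$ in $\BiCliq$, both $\BiCliq[(\rR' \fatsemi \rR)^{op}, \rS \fatsemi \rS']$ and $\BiCliq[\rR'^{op}, \rS'] \circ \BiCliq[\rR^{op}, \rS]$ send $\rT$ to $\rR' \fatsemi \rR \fatsemi \rT \fatsemi \rS \fatsemi \rS'$, using associativity of $\fatsemi$; this requires no new ideas, only that $\BiCliq$ is a category.

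The main obstacle I anticipate is a subtle one of bookkeeping rather than mathematics: one must be careful that the displayed distributivity identities are equalities of \emph{$\BiCliq$-morphisms} (equivalently, of their underlying relations $\rG_s \times \rH_t$), not merely of the relational composites of \emph{chosen} witnesses. Because $\BiCliq$-composition is defined through witnesses but is witness-independent (the composite equals $\rR_l ; \rS = \rR ; \rS_r\spbreve$ regardless of the chosen witnesses, as established in the proof that $\BiCliq$ is a category), I would take care to phrase the distributivity argument at the level of the morphism $\rR \fatsemi (-)$ acting on underlying relations, where $\rR \fatsemi \rT = \rR_- ; \rT$ and $\rR_- ; (\rS_1 \cup \rS_2) = (\rR_- ; \rS_1) \cup (\rR_- ; \rS_2)$ by ordinary bilinearity of relational composition, and symmetrically for the left argument using the $\rR ; \rS_r\spbreve$ form together with the fact that converse preserves unions. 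This keeps the closure operation implicit and avoids any spurious need to commute $\cl$ past $\cup$.
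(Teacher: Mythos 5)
Your proposal is correct and follows essentially the same route as the paper: both reduce everything to the join-semilattice structure of Lemma \ref{lem:bicliq_mor_jsl_struct} and to the computation of composites via Lemma \ref{lem:bicliq_func_comp}, with the four displayed identities coming from bilinearity of relational composition with respect to union. The only cosmetic differences are that the paper verifies union-distributivity through the image-function identity $(\rR \fatsemi \rT)^\up = \rT^\up \circ (\rG')^\down \circ \rR^\up$ and obtains the left-hand case by self-duality, whereas you work directly with $\rR \fatsemi \rT = \rR_- ; \rT = \rR ; \rT_+\spbreve$ — both are valid and rest on the same witness-independence you correctly flag.
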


\begin{proof}
Each $\BiCliq[\rG,\rH]$ is a well-defined join-semilattice by Lemma \ref{lem:bicliq_mor_jsl_struct}. Take any $\BiCliq$-morphisms $\rR : \rG \to \rH$ and $\rS : \rG' \to \rH'$. Then functorality follows if $\BiCliq[\rR^{op},\rS] = \lambda \rT. \rR \fatsemi \rT \fatsemi \rS$ is a well-defined join-semilattice morphism. The bottom element is preserved:
\[
\rR \fatsemi (\emptyset \fatsemi \rS)
= \rR \fatsemi (\emptyset ; \rS_+\spbreve)
= \rR \fatsemi \emptyset
= \rR ; \emptyset_+\spbreve
= (\rR_- ; \rH) ; \BC{\cl_{\breve{\rH}}(\emptyset),\rH'_t}
= \rR_- ; \emptyset
= \emptyset
\]
recalling that $\cl_{\breve{\rH}}(\emptyset)$ is the set of isolated elements in $\rH_t$. Next, $\rR \fatsemi (\rT_1 \cup \rT_2) = (\rR \fatsemi \rT_1) \cup (\rR \fatsemi \rT_2)$ because:
\[
(\rR \fatsemi (\rT_1 \cup \rT_2))^\up(X)
= (\rT_1 \cup \rT_2)^\up \circ (\rG')^\down \circ \rR^\up(X)
= \bigcup_{i = 1,2} \rT_i^\up \circ (\rG')^\down \circ \rR^\up(X)
= \bigcup_{i = 1,2} (\rR \fatsemi \rT_i)^\up (X)
\]
Since the self-duality of $\BiCliq$ is relational converse (which preserves unions), we immediately deduce that $(\rT_1 \cup \rT_2) \fatsemi \rS = (\rT_1 \fatsemi \rS) \cup (\rT_2 \fatsemi \rS)$. Thus $\BiCliq[\rR^{op},\rS]$ preserves binary unions, as desired.
\end{proof}

\begin{note}[Isomorphisms between join-semilattices of morphisms]
\item

Using the equivalence functors $\Pirr$ and $\Open$ and also their respective natural isomorphisms $rep_\aQ$ and $red_\rG$, one can describe explicit join-semilattice isomorphisms:
\[
\JSL_f[\aQ,\aR] \cong \BiCliq[\Pirr\aQ,\Pirr\aR]
\quad\text{and}\quad
\BiCliq[\rG,\rH] \cong \JSL_f[\Open\rG,\Open\rH]
\]
By Theorem \ref{thm:jsl_bicliq_equiv_without_irr} we also know that $\Nleq$ and $\Open$ define an equivalence of categories, yielding the join-semilattice isomorphisms $\JSL_f[\aQ,\aR] \cong \BiCliq[\Nleq\aQ,\Nleq\aR]$ described directly below. \endbox
\end{note}

\smallskip

\begin{lemma}
\label{lem:jsl_bicliq_hom_spec_iso}
For each pair of finite join-semilattices $(\aQ,\aR)$ we have the join-semilattice isomorphism:
\[
\begin{tabular}{c}
$\rho_{\aQ,\aR} : \JSL_f[\aQ,\aR] \to \BiCliq[\nleq_\aQ , \, \nleq_\aR ]$
\\[1ex]
$\rho_{\aQ,\aR} (f) := \{ (q,r) \in Q \times R : f(q) \nleq_\aR r \}$
\qquad
$\rho_{\aQ,\aR}^{\bf-1}(\rR) := \lambda q \in Q. \Land_\aR \overline{\rR}\,[q]$
\end{tabular}
\]
where $\nleq_\aQ \; \subseteq Q \times Q$ and $\nleq_\aR \; \subseteq R \times R$. In particular, $f \leq_{\JSL_f[\aQ,\aR]} g \iff \rho_{\aQ,\aR}(f) \subseteq \rho_{\aQ,\aR}(g)$.
\end{lemma}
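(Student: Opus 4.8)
The plan is to prove the isomorphism $\rho_{\aQ,\aR} : \JSL_f[\aQ,\aR] \to \BiCliq[\nleq_\aQ,\nleq_\aR]$ by recognising it as essentially the functor $\Nleq$ composed with the hom-functor machinery already established. First I would observe that $\Nleq\aQ = \nleq_\aQ$ and $\Nleq\aR = \nleq_\aR$ by Definition \ref{def:nleq_functor_nat_iso}, and that the action of $\rho_{\aQ,\aR}$ on a morphism $f$ is precisely $\Nleq f = \{(q,r) \in Q \times R : f(q) \nleq_\aR r\}$. So the content of the statement is that the map $f \mapsto \Nleq f$ is a \emph{join-semilattice isomorphism} between the hom-semilattice $\JSL_f[\aQ,\aR]$ and the hom-semilattice $\BiCliq[\nleq_\aQ,\nleq_\aR]$, not merely a bijection of underlying sets.

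The key steps, in order, would be: (1) Check that each $\rho_{\aQ,\aR}(f)$ is genuinely a $\BiCliq$-morphism $\nleq_\aQ \to \nleq_\aR$; this follows from Lemma \ref{lem:nleq_e_well_def}, since $\Nleq$ is a well-defined functor whose values on morphisms are $\BiCliq$-morphisms of exactly this form. (2) Verify $\rho_{\aQ,\aR}$ is a bijection by exhibiting the proposed inverse $\rho_{\aQ,\aR}^{\bf-1}(\rR) = \lambda q. \Land_\aR \overline{\rR}[q]$. Here I would compute $\rho_{\aQ,\aR}^{\bf-1}(\rho_{\aQ,\aR}(f))(q) = \Land_\aR \{r : f(q) \leq_\aR r\} = f(q)$, using that $f(q)$ is the meet of the meet-irreducibles (indeed all elements) above it; and conversely confirm that starting from an arbitrary $\BiCliq$-morphism $\rR$ and applying $\rho_{\aQ,\aR} \circ \rho_{\aQ,\aR}^{\bf-1}$ recovers $\rR$, which is where the closure properties of $\rR$ as a $\BiCliq$-morphism (Lemma \ref{lem:bicliq_mor_char_max_witness}) are needed. (3) Establish that $\rho_{\aQ,\aR}$ preserves the join-semilattice structure, i.e.\ sends $\bot_{\JSL_f[\aQ,\aR]}$ to $\emptyset$ and pointwise joins to unions. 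Preservation of $\bot$ is immediate since $\bot_\aR \nleq_\aR r$ never holds. Preservation of binary join follows from the computation $(f \lor g)(q) = f(q) \lor_\aR g(q)$ together with the fact that $f(q) \lor_\aR g(q) \nleq_\aR r \iff f(q) \nleq_\aR r$ or $g(q) \nleq_\aR r$, which is exactly the characterisation of the order via meet-irreducibles but phrased through the full relation $\nleq_\aR$. (4) Conclude the final ``in particular'' clause: since $\rho_{\aQ,\aR}$ is a join-semilattice isomorphism and $\leq_{\BiCliq[\nleq_\aQ,\nleq_\aR]}$ is inclusion, order is reflected, giving $f \leq g \iff \rho_{\aQ,\aR}(f) \subseteq \rho_{\aQ,\aR}(g)$; this is automatic for any injective join-semilattice morphism via the argument in Definition \ref{def:std_order_theory}.14.

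The cleanest route, which I would prefer, is to avoid re-deriving everything from scratch and instead invoke the already-proved equivalence. Concretely, Theorem \ref{thm:jsl_bicliq_equiv_without_irr} gives an equivalence between $\JSL_f$ and $\BiCliq$ via $\Nleq$ and $\Open$, and any equivalence functor induces \emph{bijections} on hom-sets; the extra work here is only to check these hom-set bijections are join-semilattice morphisms for the specific join structures $\lor_{(\aQ,\aR)}$ (pointwise) and $\cup$ (union). I would phrase the preservation argument directly as in step (3), since it is short. The verification that $\rho_{\aQ,\aR}^{\bf-1}$ is inverse can also be shortened by appealing to Lemma \ref{lem:nleq_e_well_def} together with the naturality of $\rE : \Pirr \To \Nleq$: since $\Nleq f = \rE_\aQ^{\bf-1} \fatsemi \Pirr f \fatsemi \rE_\aR$ and $\Pirr$ is full and faithful (Theorem \ref{thm:bicliq_jirr_equivalent}), the assignment $f \mapsto \Nleq f$ is automatically injective and surjective onto $\BiCliq[\nleq_\aQ,\nleq_\aR]$.

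The main obstacle I anticipate is the verification of the explicit inverse formula $\rho_{\aQ,\aR}^{\bf-1}(\rR) = \lambda q.\Land_\aR \overline{\rR}[q]$, specifically confirming that this defines an honest $\JSL_f$-morphism $\aQ \to \aR$ for every $\BiCliq$-morphism $\rR$ (not just every relation). One must use the fact that a $\BiCliq$-morphism $\rR : \nleq_\aQ \to \nleq_\aR$ satisfies $\rR^\up \circ \cl_{\nleq_\aQ} = \rR^\up = \inte_{\nleq_\aR} \circ \rR^\up$ by Lemma \ref{lem:bicliq_mor_char_max_witness}, and translate these closure/interior conditions — via Lemma \ref{lem:pirr_to_nleq_iso}.1, which computes $\cl_{\nleq_\aQ}(S) = \{q : q \leq_\aQ \Lor_\aQ S\}$ — into the statements that $\overline{\rR}[q]$ is an $\aR$-ideal-like set whose meet behaves well under joins in $q$. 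This bookkeeping, relating the relational closure conditions to join-preservation of the meet-valued function, is the one genuinely technical point; everything else is routine unwinding of definitions.
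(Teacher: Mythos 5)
Your proposal is correct and follows essentially the same route as the paper: both obtain bijectivity of $\rho_{\aQ,\aR}$ by restricting the full and faithful equivalence functor $\Nleq$ from Theorem \ref{thm:jsl_bicliq_equiv_without_irr} to the hom-set, with $\rho_{\aQ,\aR}^{\bf-1}$ read off as the functional inverse. The only (immaterial) difference is that the paper finishes by checking the bijection is an order-embedding and lets the order determine the joins, whereas you check preservation of $\bot$ and binary joins directly and recover the order-reflection from injectivity; the ``technical point'' you flag about $\rho_{\aQ,\aR}^{\bf-1}(\rR)$ being a $\JSL_f$-morphism indeed dissolves via fullness, exactly as you note.
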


\begin{proof}
Recall Theorem \ref{thm:jsl_bicliq_equiv_without_irr} i.e.\ the categorical equivalence via functors $\Nleq : \JSL_f \to \BiCliq$ and $\Open$, where $\Nleq \aQ := \; \nleq_\aQ \; \subseteq Q \times Q$ and $\Nleq f := \{ (q,r) \in Q \times R : f(q) \nleq_\aR r \}$. Fixing $(\aQ,\aR)$ then $\Nleq$ restricts to the bijective function $\rho_{\aQ,\aR}(f) = \Nleq f$, and clearly $\rho_{\aQ,\aR}^{\bf-1}$ is its functional inverse. Then it suffices to establish that this bijection $\rho_{\aQ,\aR}$ defines an order-embedding i.e.\
\[
f \leq_{\JSL_f[\aQ,\aR]} g
\iff
\Nleq f \subseteq \Nleq g
\]
Regarding $(\To)$, by assumption $f(q) \leq_\aR g(q)$ for all $q \in Q$. Then $\Nleq f(q,r)$ means that $f(q) \nleq_\aR r$ hence $g(q) \nleq_\aR r$ (else contradiction), so that $\Nleq g(q,r)$. Concerning $(\oT)$, suppose that $\Nleq f \subseteq \Nleq q$. Then:
\[
f(q) = \Land_\aR \overline{\Nleq f[q]} \leq_\aR \Land_\aR \overline{\Nleq g[q]} = g(q)
\]
because $\overline{\Nleq g[q]} \subseteq \overline{\Nleq f[q]}$ i.e.\ we have fewer summands.
\end{proof}

This permits an alternative description of the tensor product $\aQ \tenp \aR$. The name `bi-ideal' already exists in the literature, and the following definition will be shown to coincide with the pre-existing notion.

\begin{definition}[Bi-ideals over a pair of finite join-semilattices]
\label{def:bi_ideal}
\item
Given a pair of finite join-semilattices $(\aQ,\aR)$, define:
\[
\BId{\aQ,\aR}
:= \{ \overline{\rR} \subseteq Q \times R : \rR \in \BiCliq(\nleq_\aQ,\ngeq_\aR) \}
\]
and call them the \emph{bi-ideals over $(\aQ,\aR)$}. Note that $\ngeq_\aR \; = \; \nleq_{\aR^{\pOp}}$, so we are taking relative complements of elements of the join-semilattice $\BiCliq[\nleq_\aQ,\nleq_{\aR^{\pOp}}]$. In words, a \emph{bi-ideal over $(\aQ,\aR)$} is the complement relation of a $\BiCliq$-morphism of type $\nleq_\aQ \; \to \; \ngeq_\aR$. Ordering them by inclusion uniquely determines a join-semilattice:
\[
\jslBId{\aQ,\aR} := (\BId{\aQ,\aR},\lor_{\jslBId{\aQ,\aR}},\bot_{\jslBId{\aQ,\aR}})
\]
where $\land_{\jslBId{\aQ,\aR}} \, = \, \cap$ and $\top_{\jslBId{\aQ,\aR}} = Q \times R$. Then the join constructs the intersection of all bi-ideals containing the summands as subsets, and we also have the explicit description $\bot_{\jslBId{\aQ,\aR}} = \{ \bot_\aQ \} \times R \, \cup \, Q \times \{ \bot_\aR \}$. \endbox
\end{definition}

\begin{note}
$\jslBId{\aQ,\aR}$'s meets are intersections because $\BiCliq(\nleq_\aQ,\ngeq_\aR)$ is closed under arbitrary unions by Lemma \ref{lem:bicliq_mor_jsl_struct}. Concerning the bottom element, it is necessarily the relative complement of:
\[
\begin{tabular}{lll}
$\top_{\BiCliq[\nleq_\aQ,\ngeq_\aR]}$
&
$= \Nleq \top_{\JSL_f[\aQ,\aR^{\pOp}]}$
& by Lemma \ref{lem:jsl_bicliq_hom_spec_iso}
\\&
$= \{ (q,r) \in Q \times R : \top_{\JSL_f[\aQ,\aR^{\pOp}]}(q) \nleq_{\aR^{\pOp}} r \}$
\\&
$= \{ (q,r) \in Q \times R : r \nleq_\aR \top_{\JSL_f[\aQ,\aR^{\pOp}]}(q) \}$
\end{tabular}
\]
Concerning the condition $r \nleq_\aR \top_{\JSL_f[\aQ,\aR^{\pOp}]}(q)$,
\begin{quote}
If $q = \bot_\aR$ then $r \nleq_\aR \bot_{\aR^{\pOp}} = \top_\aR$ which never holds.
\qquad
If $q \neq \bot_\aR$ then $r \nleq_\aR \top_{\aR^{\pOp}} = \bot_\aR$ which holds iff $r \neq \bot_\aR$.
\end{quote}
Thus $\bot_{\jslBId{\aQ,\aR}} = \{ \bot_\aQ \} \times R \, \cup\, Q \times \{ \bot_\aR \}$, as previously stated. \endbox
\end{note}

We may reinterpret the tensor product $\aQ \tenp \aR$ as the collection of bi-ideals over $(\aQ,\aR)$ ordered by inclusion.

\begin{lemma}[Tensor product as join-semilattice of bi-ideals]
\label{lem:tenp_as_bi_ideals}
\item
We have the join-semilattice isomorphism:
\[
\begin{tabular}{c}
$bid_{\aQ,\aR} : \aQ \tenp \aR \to \jslBId{\aQ,\aR}$
\\[1ex]
\begin{tabular}{ll}
$bid_{\aQ,\aR}(h : \aQ \to \aR^{\pOp})$ & $:= \{ (q,r) \in Q \times R : r \leq_\aR h(q) \}$
\\[0.5ex]
$bid_{\aQ,\aR}^{\bf-1}(\rR \subseteq Q \times R)$ & $:= \lambda q. \Lor_\aR \rR[q]$
\end{tabular}
\end{tabular}
\]
\end{lemma}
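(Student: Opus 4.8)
The plan is to exploit the chain of identifications already assembled in the excerpt, rather than verify the bijection and morphism properties from scratch. We have $\aQ \tenp \aR = (\JSL_f[\aQ,\aR^{\pOp}])^{\pOp}$ by Definition \ref{def:tenp}, and Lemma \ref{lem:jsl_bicliq_hom_spec_iso} gives the isomorphism $\rho_{\aQ,\aR^{\pOp}} : \JSL_f[\aQ,\aR^{\pOp}] \to \BiCliq[\nleq_\aQ, \nleq_{\aR^{\pOp}}] = \BiCliq[\nleq_\aQ,\ngeq_\aR]$. Taking order-duals and then relative complements (which reverses inclusion, converting the meet-semilattice-as-$\pOp$ into the inclusion-ordered $\jslBId{\aQ,\aR}$) is exactly the recipe recorded in Definition \ref{def:bi_ideal}. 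So first I would assemble $bid_{\aQ,\aR}$ as the composite
\[
\aQ \tenp \aR = (\JSL_f[\aQ,\aR^{\pOp}])^{\pOp} \xto{(\rho_{\aQ,\aR^{\pOp}})^{\pOp}} (\BiCliq[\nleq_\aQ,\ngeq_\aR])^{\pOp} \xto{\neg} \jslBId{\aQ,\aR},
\]
where the last map takes relative complements inside $Q \times R$. Each factor is a $\JSL_f$-isomorphism (the middle one by Lemma \ref{lem:jsl_bicliq_hom_spec_iso}, the relative-complement map because $\jslBId{\aQ,\aR}$ was defined precisely so that complementation converts the opposite join-semilattice structure into the inclusion-ordered one), so the composite is a $\JSL_f$-isomorphism.

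Next I would check that this composite has the claimed action. Unwinding $\rho_{\aQ,\aR^{\pOp}}(h) = \{(q,r) : h(q) \nleq_{\aR^{\pOp}} r\} = \{(q,r) : r \nleq_\aR h(q)\}$, and taking the relative complement, yields $\{(q,r) : r \leq_\aR h(q)\}$, which is exactly the stated formula for $bid_{\aQ,\aR}(h)$. This confirms that the abstractly-assembled isomorphism coincides with the concrete one in the statement. For the inverse, I would verify directly that $bid_{\aQ,\aR}^{\bf-1}(\rR) = \lambda q.\Lor_\aR \rR[q]$ is a two-sided inverse: since $\rho_{\aQ,\aR^{\pOp}}^{\bf-1}(\rS) = \lambda q.\Land_{\aR^{\pOp}} \overline{\rS}[q] = \lambda q.\Lor_\aR \overline{\rS}[q]$, precomposing with complementation (so $\rS = \overline{\rR}$) gives $\lambda q. \Lor_\aR \rR[q]$, matching the claim. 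One should also note the well-typedness: $\rR[q]$ is a down-closed, laterally-join-closed subset of $\aR$ (these being the bi-ideal closure conditions), so $\Lor_\aR \rR[q]$ recovers a genuine morphism into $\aR^{\pOp}$.

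Since every piece is an established isomorphism, the only genuine verification is the action-matching computation, which is the short calculation above. The main obstacle is bookkeeping around the two layers of duality: one must track carefully that the $(-)^{\pOp}$ on $\JSL_f[\aQ,\aR^{\pOp}]$ and the $\nleq_{\aR^{\pOp}} = \ngeq_\aR$ identification interact correctly, and that the relative-complement map is genuinely the isomorphism $(\BiCliq[\nleq_\aQ,\ngeq_\aR])^{\pOp} \to \jslBId{\aQ,\aR}$ rather than something off by a dualization. A clean alternative, avoiding the abstract composite entirely, is to verify directly that $bid_{\aQ,\aR}$ is a well-defined order-isomorphism: well-definedness of the image as a bi-ideal follows because $\{(q,r) : r \leq_\aR h(q)\}$ is the complement of $\rho_{\aQ,\aR^{\pOp}}(h) \in \BiCliq(\nleq_\aQ,\ngeq_\aR)$; bijectivity and the explicit inverse follow from the displayed computation; and since $\lor_{\aQ \tenp \aR} = \land_{\JSL_f[\aQ,\aR^{\pOp}]}$ while $\jslBId{\aQ,\aR}$ has $\land = \cap$, the order-reversal built into both constructions cancels, so $bid_{\aQ,\aR}$ is monotone and order-reflecting, hence a $\JSL_f$-isomorphism by Lemma \ref{lem:jsl_mono_epi_iso}.4.
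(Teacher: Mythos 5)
Your proof is correct and is essentially the paper's own argument: the paper likewise observes $bid_{\aQ,\aR} = \neg_{Q \times R} \circ \rho_{\aQ,\aR^{\pOp}}$, deduces bijectivity, and checks that the two order-reversals (the $(-)^{\pOp}$ in $\aQ \tenp \aR$ and the complementation) cancel so that $bid_{\aQ,\aR}$ is an order-isomorphism, with the inverse read off from $\rho_{\aQ,\aR^{\pOp}}^{\bf-1}$. Your "clean alternative" at the end is literally the paper's proof, and your action-matching computations are the same ones the paper leaves implicit.
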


\begin{proof}
Recalling that $\aQ \tenp \aR := (\JSL_f[\aQ,\aR^{\pOp}])^{\pOp}$, first observe:
\[
bid_{\aQ,\aR} = \neg_{Q \times R} \circ \rho_{\aQ,\aR^{\pOp}}
\]
where $\rho_{\aQ,\aR^{\pOp}} : \JSL_f[\aQ,\aR^{\pOp}] \to \BiCliq[\nleq_\aQ,\nleq_{\aR^{\pOp}}]$ is the isomorphism from Lemma \ref{lem:jsl_bicliq_hom_spec_iso}.  Thus $bid_{\aQ,\aR}$ is bijective, and:
\[
\begin{tabular}{lll}
$h_1 \leq_{\aQ \tenp \aR} h_2$
&
$\iff h_2 \leq_{\JSL_f[\aQ,\aR^{\pOp}]} h_1$
\\&
$\iff \rho_{\aQ, \aR}(h_2) \subseteq \rho_{\aQ,\aR}(h_1)$
& by Lemma \ref{lem:jsl_bicliq_hom_spec_iso}
\\[1ex]&
$\iff \overline{\rho_{\aQ, \aR}(h_1)} \subseteq \overline{\rho_{\aQ,\aR}(h_2)}$
\end{tabular}
\]
so it is an order-isomorphism. The description of its inverse is immediate.
\end{proof}

Let us now prove that bi-ideals correspond to the classical concept.

\begin{lemma}[Inductive description of bi-ideals]
\label{lem:bi_ideal_classical}
\item
A relation $\rR \subseteq Q \times R$ defines a bi-ideal over $(\aQ,\aR)$ iff the following three statements hold:
\begin{enumerate}[(a)]
\item
$\bot_{\jslBId{\aQ,\aR}} \subseteq \rR$.
\item
$\rR$ is down-closed inside $\aQ \times \aR$ i.e.\
\[
\dfrac{\rR(q_1,r_1) \quad q_2 \leq_\aQ q_1 \quad r_2 \leq_\aR r_1}
{\rR(q_2,r_2)}
\]

\item
$\rR$ is closed under `lateral joins' i.e.\
\[
\dfrac{\rR(q,r_1) \quad \rR(q,r_2)}{\rR(q,r_1 \lor_\aR r_2)}
\qquad
\dfrac{\rR(q_1,r) \quad \rR(q_2,r)}{\rR(q_1 \lor_\aQ q_2,r)}
\]
\end{enumerate}
\end{lemma}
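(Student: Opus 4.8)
The plan is to characterize bi-ideals via the isomorphism $\rho_{\aQ,\aR^{\pOp}}$ of Lemma \ref{lem:jsl_bicliq_hom_spec_iso} and then translate the defining closure conditions of a $\BiCliq$-morphism into the three inductive rules (a)--(c). Recall that by Definition \ref{def:bi_ideal}, $\rR \subseteq Q \times R$ is a bi-ideal iff $\overline{\rR} \in \BiCliq(\nleq_\aQ, \ngeq_\aR)$, where $\ngeq_\aR \; = \; \nleq_{\aR^{\pOp}}$. By Lemma \ref{lem:jsl_bicliq_hom_spec_iso} the $\BiCliq$-morphisms $\nleq_\aQ \to \nleq_{\aR^{\pOp}}$ are precisely the relations $\overline{\rR}(q,r) \iff h(q) \nleq_{\aR^{\pOp}} r \iff r \nleq_\aR h(q)$ for some $h : \aQ \to \aR^{\pOp}$; equivalently, using Lemma \ref{lem:tenp_as_bi_ideals}, $\rR(q,r) \iff r \leq_\aR h(q)$ for a join-semilattice morphism $h : \aQ \to \aR^{\pOp}$. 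So the concrete task is to show that a relation $\rR$ has the form $\{(q,r) : r \leq_\aR h(q)\}$ for some $\JSL_f$-morphism $h : \aQ \to \aR^{\pOp}$ if and only if it satisfies (a), (b) and (c).

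For the forward direction, I would fix such an $h$ and verify each rule by unwinding $r \leq_\aR h(q)$. Condition (a) holds because $h(\bot_\aQ) = \top_\aR$ (the bottom of $\aR^{\pOp}$ maps to it via bottom-preservation, i.e.\ $\bot_{\aR^{\pOp}} = \top_\aR$), so $\rR(\bot_\aQ, r)$ for all $r$, and $\bot_\aR \leq_\aR h(q)$ always, giving $\rR(q,\bot_\aR)$; comparing with the explicit $\bot_{\jslBId{\aQ,\aR}} = \{\bot_\aQ\}\times R \,\cup\, Q \times \{\bot_\aR\}$ from the Note following Definition \ref{def:bi_ideal} yields (a). Condition (b) follows from monotonicity of $h$ as a map $(\aQ,\leq_\aQ) \to (\aR,\leq_{\aR^{\pOp}})$: if $r_1 \leq_\aR h(q_1)$, $q_2 \leq_\aQ q_1$, $r_2 \leq_\aR r_1$, then $h(q_1) \leq_{\aR^{\pOp}} h(q_2)$ means $h(q_2) \leq_\aR h(q_1)$\,---\,here I must be careful with the direction of the order on $\aR^{\pOp}$, which is the key bookkeeping point. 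Condition (c) follows from the two forms of join-preservation: laterally in $r$ from $h(q) \in \aR^{\pOp}$ being an upper bound in $\aR$ of $r_1, r_2$, and laterally in $q$ from $h(q_1 \lor_\aQ q_2) = h(q_1) \land_\aR h(q_2)$ (since joins in $\aR^{\pOp}$ are meets in $\aR$), so that $r \leq_\aR h(q_1)$ and $r \leq_\aR h(q_2)$ give $r \leq_\aR h(q_1 \lor_\aQ q_2)$.

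For the converse, given $\rR$ satisfying (a)--(c), I would define $h(q) := \Lor_\aR \rR[q]$, which is exactly the inverse map $bid_{\aQ,\aR}^{\bf-1}$ of Lemma \ref{lem:tenp_as_bi_ideals}, and show it is a well-defined $\JSL_f$-morphism $\aQ \to \aR^{\pOp}$ with $\rR = \{(q,r) : r \leq_\aR h(q)\}$. Bottom-preservation and binary-join-preservation (into $\aR^{\pOp}$, i.e.\ meet-preservation in $\aR$) come from rules (a) and the lateral-join rule (c) together with down-closure (b); the crucial recovery step is showing $r \in \rR[q] \iff r \leq_\aR h(q)$, where $\Leftarrow$ uses that $\rR[q]$ is closed under binary $\aR$-joins (lateral rule in $r$) and down-closed (b), so the finite join $\Lor_\aR \rR[q]$ again lies in $\rR[q]$, whence $r \leq_\aR \Lor_\aR \rR[q] \in \rR[q]$ forces $r \in \rR[q]$ by down-closure.

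The main obstacle I expect is the careful handling of the order-reversal between $\aR$ and $\aR^{\pOp}$ throughout, since meets become joins and $\top$ and $\bot$ swap; it is easy to get a condition backwards when deciding, for instance, whether lateral join-closure in the $q$-argument corresponds to meet- or join-preservation of $h$. Rather than redoing the equivalence of $\JSL_f$ and $\BiCliq$ from scratch, I would lean on Lemmas \ref{lem:jsl_bicliq_hom_spec_iso} and \ref{lem:tenp_as_bi_ideals} to reduce everything to the single concrete statement that $\rR[q]$ is a (possibly empty) $\aR$-ideal whose join gives $h$, so that the argument becomes a direct finite-join computation rather than an appeal to the categorical machinery.
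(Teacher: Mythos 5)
Your proposal follows the paper's own proof essentially step for step: both directions reduce to Lemma \ref{lem:tenp_as_bi_ideals}, the forward direction unwinds $r \leq_\aR h(q)$ against the three rules, the converse defines $h(q) := \Lor_\aR \rR[q]$, and the key recovery step in both your sketch and the paper is that $\Lor_\aR \rR[q] \in \rR[q]$ by iterating the lateral-join rule over the finite set $\rR[q]$. The one slip is in your check of (b): from $q_2 \leq_\aQ q_1$ monotonicity of $h : \aQ \to \aR^{\pOp}$ gives $h(q_2) \leq_{\aR^{\pOp}} h(q_1)$, i.e.\ $h(q_1) \leq_\aR h(q_2)$, so the correct chain is $r_2 \leq_\aR r_1 \leq_\aR h(q_1) \leq_\aR h(q_2)$ --- you wrote the $\aR^{\pOp}$-inequality the other way round, though you flagged exactly this bookkeeping as the delicate point and the intended conclusion is the one just stated.
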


\begin{proof}
\item
\begin{enumerate}
\item
We first show that every bi-ideal $\rR \in \BId{\aQ,\aR}$ satisfies the above three statements. (a) is immediate by well-definedness of the inclusion-ordered join-semilattice $\jslBId{\aQ,\aR} = (\BId{\aQ,\aR},\lor_{\jslBId{\aQ,\aR}},\bot_{\jslBId{\aQ,\aR}})$. Next, by Lemma \ref{lem:tenp_as_bi_ideals} there exists a join-semilattice morphism $h : \aQ \to \aR^{\pOp}$ such that $\rR = \{ (q,r) \in Q \times R : r \leq_\aR h(q) \}$. Thus (b) holds because if $(q_2,r_2) \leq_{\aQ \times \aR} (q_1,r_1)$ then:
\[
r_2 \leq_\aR r_1 \leq_\aR h(q_1) \leq_\aR h(q_2)
\]
noting that $q_2 \leq_\aQ q_1$ implies $h(q_2) \leq_{\aR^{\pOp}} h(q_1)$. Finally, (c) also follows easily. That is:
\[
\rR(q,r_1)\,\land\rR(q,r_2)
\implies
r_1,\,r_2 \leq_\aR f(q)
\implies
r_1 \lor_\aR r_2 \leq_\aR h(q)
\implies
\rR(q,r_1 \lor_\aR r_2)
\]
\[
\rR(q_1,r)\,\land\,\rR(q_2,r)
\implies
r \leq_\aQ f(q_1),\,f(q_2)
\implies
r \leq_\aQ f(q_1 \lor_\aQ q_2)
\implies
\rR(r,q_1 \lor_\aQ q_2) 
\]

\item
Conversely take any relation $\rR \subseteq Q \times R$ satisfying the three statements above. By Lemma \ref{lem:tenp_as_bi_ideals} it suffices to construct a join-semilattice morphism $h : \aQ \to \aR^{\pOp}$ such that $\rR(q,r) \iff r \leq_\aR h(q)$, so define:
\[
h : Q \to R
\qquad
h(q) := \Lor_\aR \rR[q]
\]
Then using (a) we have $h(\bot_\aQ) = \Lor_\aR R = \top_\aR = \bot_{\aR^{\pOp}}$, so it remains to prove preservation of joins i.e.\
\[
x := \Lor_\aR \rR[q_1 \lor_\aQ q_2] \stackrel{?}{=} (\Lor_\aR \rR[q_1]) \land_\aR (\Lor_\aR \rR[q_2]) =: y
\]
for any fixed $q_1$, $q_2 \in Q$. First observe that:
\[
\rR[q_1 \lor_\aQ q_2] 
= \{ r \in R : \rR(q_1 \lor_\aQ q_2, r) \}
= \{ r \in R : \rR(q_1,r) \,\land\,\rR(q_2,r) \}
= \rR[q_1] \cap \rR[q_2]
\]
because $\rR(q_1 \lor_\aQ q_2,r) \iff  \rR(q_1,r) \,\land\,\rR(q_2,r)$ follows by downwards-closure (b), and closure under lateral joins (c). Then since $\rR[q_1] \cap \rR[q_2] \subseteq \rR[q_i]$ for $i = 1,2$ we deduce that $x \leq_\aR y$.

\smallskip
In order to prove $y \leq_\aR x$, observe that $\rR[q] \in \Lor_\aR \rR[q]$ for every $q \in Q$. This follows because $\rR[q] = \{ r \in R : \rR(q,r) \}$ is finite, so we can apply closure under lateral joins in the second component iteratively to deduce $\rR(q,\Lor_\aR \rR[q])$. Thus $\rR(q_i,\Lor_\aR \rR[q_i])$ and hence $\rR(q_i,\Lor_\aR \rR[q_1] \, \land_\aR \, \Lor_\aR \rR[q_2])$ for $i = 1,2$ by downwards-closure. Applying closure under lateral joins in the first component yields:
\[
\rR(q_1 \lor_\aQ q_2,\Lor_\aR \rR[q_1] \, \land_\aR \, \Lor_\aR \rR[q_2])
\]
and hence $\Lor_\aR \rR[q_1] \, \land_\aR \, \Lor_\aR \rR[q_2] \leq_\aR \Lor_\aR \rR[q_1 \lor_\aR q_2]$ as required.
\end{enumerate}
\end{proof}

\begin{corollary}
\label{cor:bi_ideal_jsl_join}
For any collection of bi-ideals $S \subseteq \BId{\aQ,\aR}$ we have:
\[
\Lor_{\jslBId{\aQ,\aR}} S = \bot_{\jslBId{\aQ,\aR}} \,\cup\, \bigcup_{n \geq 0} S_n
\]
where $S_0 := \bigcup S$ and, for each $n \geq 0$, $S_{n+1}$ is the downwards-closure in $\aQ \times \aR$ of all lateral-joins of $S_n$.
\end{corollary}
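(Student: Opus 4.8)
The plan is to prove this by identifying the right-hand side as the least bi-ideal containing $\bigcup S$, since the join $\Lor_{\jslBId{\aQ,\aR}} S$ is by definition (Definition \ref{def:bi_ideal}) the intersection of all bi-ideals containing each summand, equivalently the smallest bi-ideal containing $\bigcup S$. So it suffices to show that the set $T := \bot_{\jslBId{\aQ,\aR}} \cup \bigcup_{n \geq 0} S_n$ is (i) a bi-ideal, (ii) contains $\bigcup S$, and (iii) is contained in every bi-ideal containing $\bigcup S$. Throughout I would use the inductive characterisation of bi-ideals from Lemma \ref{lem:bi_ideal_classical}, i.e.\ closure under conditions (a), (b) and (c).

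First I would establish (ii) and (iii), which are the easy bookkeeping steps. Containment of $\bigcup S = S_0$ is immediate from the definition of $T$. For (iii), suppose $\rR$ is any bi-ideal with $\bigcup S \subseteq \rR$; then $\bot_{\jslBId{\aQ,\aR}} \subseteq \rR$ by Lemma \ref{lem:bi_ideal_classical}(a), and I would show $S_n \subseteq \rR$ by induction on $n$: $S_0 = \bigcup S \subseteq \rR$ is the base case, and since $S_{n+1}$ is obtained from $S_n$ by forming lateral joins and then down-closing, and $\rR$ is itself closed under both operations (Lemma \ref{lem:bi_ideal_classical}(b),(c)), the inductive step follows. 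Hence $T \subseteq \rR$, giving minimality.

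The main work is (i): verifying that $T$ is a bi-ideal via the three conditions of Lemma \ref{lem:bi_ideal_classical}. Condition (a) is immediate since $\bot_{\jslBId{\aQ,\aR}} \subseteq T$ by construction. The subtle points are down-closure (b) and lateral-join closure (c), and the key obstacle is that a single application of ``down-close then lateral-join'' need not land inside one fixed $S_n$ — this is exactly why the construction is iterated over all $n$ and why the $S_{n+1}$ are nested. I would argue that the chain is increasing, $S_0 \subseteq S_1 \subseteq \cdots$ (because down-closure is extensive once we note reflexivity of $\leq$, and taking lateral joins of a set includes the degenerate joins recovering the original elements), so that $\bigcup_n S_n$ is a directed union. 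For lateral-join closure, given $(q,r_1),(q,r_2) \in T$, both lie in some common $S_n$ by directedness, so their lateral join lies in $S_{n+1} \subseteq T$; the argument for the first component is symmetric. The delicate case is verifying that elements landing in $\bot_{\jslBId{\aQ,\aR}}$ but not in any $S_n$ interact correctly — here I would use that $\bot_{\jslBId{\aQ,\aR}} = \{\bot_\aQ\} \times R \cup Q \times \{\bot_\aR\}$ is itself already down-closed and closed under lateral joins, and that lateral joins or down-closure involving a bottom-row/column element either stay in $\bot_{\jslBId{\aQ,\aR}}$ or can be absorbed.

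For down-closure (b), I expect the cleanest route is to build the down-closure step directly into the definition of $S_{n+1}$, as the statement does, so that $\bigcup_n S_n$ is down-closed automatically provided I confirm $S_{n+1} \supseteq S_n$; the bottom part $\bot_{\jslBId{\aQ,\aR}}$ is separately down-closed, and the union of two down-closed sets is down-closed. The main obstacle I anticipate is precisely this interleaving argument — showing that the two closure operations (down-closure and lateral join) applied in alternation stabilise to a genuine bi-ideal, which rests on the directedness/monotonicity of the chain $(S_n)_n$. Once monotonicity is pinned down, everything else is routine unwinding of Lemma \ref{lem:bi_ideal_classical}.
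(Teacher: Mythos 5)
Your proposal is correct and follows essentially the same route as the paper: both identify the right-hand side as a bi-ideal containing $\bigcup S$ via the inductive characterisation of Lemma \ref{lem:bi_ideal_classical}, use the monotone chain $S_n \subseteq S_{n+1}$ (directedness) to get closure under lateral joins and down-closure, and obtain the reverse inclusion by showing every $S_n$ lies in any bi-ideal containing $\bigcup S$. The details you flag as delicate (extensivity of the chain via idempotent lateral joins, and the mixed cases involving $\bot_{\jslBId{\aQ,\aR}}$) do work out exactly as you sketch, and in fact make explicit two points the paper's proof leaves implicit.
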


\begin{proof}
Denote the left-hand-side by $X$ and the right-hand-side by $Y \subseteq Q \times R$. Then since $\bot_{\jslBId{\aQ,\aR}} \subseteq X$ and $\forall \rR \in S. \rR \subseteq X$ (i.e.\ $\bigcup S \subseteq X$), it follows by Theorem \ref{lem:bi_ideal_classical} that every $S_n \subseteq X$, so that $\bigcup S \subseteq Y \subseteq X$. Then it remains to show that $Y$ is a bi-ideal, and we'll again use Theorem \ref{lem:bi_ideal_classical}. Certainly $\bot_{\jslBId{\aQ,\aR}} \subseteq Y$, and the union of  down-closed sets is down-closed. Since each $S_n \subseteq S_{n+1}$ we also deduce closure under lateral joins e.g.\ given $(q,r_1) \in S_m$ and $(q,r_2) \in S_n$ then $(q,r_i) \in S_{max(m,n)}$ for $i = 1,2$, hence $(q,r_1 \lor_\aR r_2) \in S_{max(m,n)} \subseteq Y$.
\end{proof}

\begin{theorem}[Universal property of tensor product]
\label{thm:tenp_universal}
\item
For each bimorphism $\beta$ from $(\aQ,\aR)$ to $\aS$, there is a $\JSL_f$-morphism $f : \aQ \otimes \aR \to \aS$ with action:
\[
\begin{tabular}{lll}
$f(\aQ \xto{h} \aR^{\pOp})$
&
$:=$
&
$\Lor_\aS \{ \beta(q,r) : (q,r) \in bid_{\aQ,\aR}(h) \}$
\\[0.5ex] &=&
$\Lor_\aS \{ \beta(q,r) : r \leq_\aR h(q)\}$
\\[0.5ex] &=&
$\Lor_\aS \{ \beta(q,r) : \beta_{\aQ,\aR}(q,r) \leq_{\aQ \tenp \aR} h \}$
\\[0.5ex] &=&
$\Lor_\aS \{ \beta(j_q,j_r) : \beta_{\aQ,\aR}(j_q,j_r) \leq_{\aQ \tenp \aR} h, \, (j_q,j_r) \in J(\aQ) \times J(\aR) \}$.
\end{tabular}
\]
It is the unique join-semilattice morphism $f$ with typing $\aQ \tenp \aR \to \aS$ such that $\beta_f = \beta$.
\end{theorem}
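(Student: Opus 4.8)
The plan is to factor $f$ through the bi-ideal description of the tensor product. By Lemma~\ref{lem:tenp_as_bi_ideals} the map $bid_{\aQ,\aR} : \aQ \tenp \aR \to \jslBId{\aQ,\aR}$ is a join-semilattice isomorphism sending $h$ to $\{(q,r) : r \leq_\aR h(q)\}$, so it suffices to exhibit a join-semilattice morphism $g : \jslBId{\aQ,\aR} \to \aS$ given by $g(\rR) := \Lor_\aS \{ \beta(q,r) : (q,r) \in \rR \}$ and to set $f := g \circ bid_{\aQ,\aR}$. First I would record the chain of equivalences $\beta_{\aQ,\aR}(q,r) \leq_{\aQ\tenp\aR} h \iff r \leq_\aR h(q) \iff (q,r) \in bid_{\aQ,\aR}(h)$, valid for all $(q,r)$: the right equivalence is the definition of $bid_{\aQ,\aR}$, and the left one follows because $bid_{\aQ,\aR}(\beta_{\aQ,\aR}(q,r))$ is the smallest bi-ideal containing $(q,r)$, as one computes directly from the piecewise description of $\down_{\aQ,\aR^{\pOp}}^{q,r}$. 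This identifies the first three displayed formulas for $f$. For the fourth, I would expand $\beta(q,r) = \Lor_\aS\{\beta(j_q,j_r) : j_q \leq_\aQ q,\, j_r \leq_\aR r\}$ using bilinearity together with the fact that each element is the join of the join-irreducibles beneath it (Lemma~\ref{lem:std_order_theory}.6); since $h : \aQ \to \aR^{\pOp}$ is monotone, each such pair again satisfies $j_r \leq_\aR h(j_q)$, so restricting the index set to $J(\aQ) \times J(\aR)$ leaves the join unchanged.

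The crux is proving that $g$ preserves joins. Preservation of the bottom is routine: $bid_{\aQ,\aR}(\bot_{\aQ\tenp\aR}) = \bot_{\jslBId{\aQ,\aR}} = \{\bot_\aQ\}\times R \cup Q \times\{\bot_\aR\}$, on which $\beta$ is constantly $\bot_\aS$ by the first bimorphism axiom (Definition~\ref{def:bimorphism}), so $g$ sends it to $\bot_\aS$. For binary joins the subtlety is that $\lor_{\jslBId{\aQ,\aR}}$ is \emph{not} union but the bi-ideal generated by the union. The inequality $g(\rR_1) \lor_\aS g(\rR_2) \leq_\aS g(\rR_1 \lor_{\jslBId{\aQ,\aR}} \rR_2)$ is clear, as $g$ is a join over its index set and hence monotone. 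For the reverse I would invoke the inductive description of the generated bi-ideal from Corollary~\ref{cor:bi_ideal_jsl_join}, namely $\rR_1 \lor_{\jslBId{\aQ,\aR}} \rR_2 = \bot_{\jslBId{\aQ,\aR}} \cup \bigcup_n S_n$, and argue by induction on $n$ that $\beta(q,r) \leq_\aS g(\rR_1) \lor_\aS g(\rR_2)$ for every $(q,r)$ in the union. The base case $(q,r) \in \rR_1 \cup \rR_2$ is immediate; the inductive step treats lateral joins via the second and third bimorphism axioms (e.g.\ $\beta(q,r_1 \lor_\aR r_2) = \beta(q,r_1)\lor_\aS \beta(q,r_2)$) and treats downward-closure via the monotonicity of $\beta$, which itself follows from bilinearity (from $q_2 \leq_\aQ q_1$ one gets $\beta(q_1,r) = \beta(q_1 \lor_\aQ q_2, r) = \beta(q_1,r)\lor_\aS\beta(q_2,r) \geq_\aS \beta(q_2,r)$, and symmetrically in the second argument). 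Taking the join over all such $(q,r)$ yields $g(\rR_1 \lor_{\jslBId{\aQ,\aR}} \rR_2) \leq_\aS g(\rR_1)\lor_\aS g(\rR_2)$. This induction is the main obstacle, since it is where the three defining properties of a bimorphism are genuinely consumed and where the non-trivial join in $\jslBId{\aQ,\aR}$ must be controlled.

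It then remains to verify the factorization $\beta_f = \beta$ and uniqueness. For factorization, $\beta_f(q,r) = f(\beta_{\aQ,\aR}(q,r)) = \Lor_\aS\{\beta(q',r') : \beta_{\aQ,\aR}(q',r') \leq_{\aQ\tenp\aR} \beta_{\aQ,\aR}(q,r)\}$; by the near-order-embedding property of $\beta_{\aQ,\aR}$ (Lemma~\ref{lem:tenp_basic}.3) together with the monotonicity of $\beta$, every indexing pair $(q',r')$ satisfies $\beta(q',r') \leq_\aS \beta(q,r)$ while $(q,r)$ occurs among them whenever $q \neq \bot_\aQ$ and $r \neq \bot_\aR$, so the join equals $\beta(q,r)$; the degenerate cases $q = \bot_\aQ$ or $r = \bot_\aR$ are handled by Lemma~\ref{lem:special_jsl_morphisms}.3 and the first bimorphism axiom, both sides being $\bot_\aS$. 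Finally, for uniqueness, any $\JSL_f$-morphism $f'$ with $\beta_{f'} = \beta$ agrees with $f$ on the image $\beta_{\aQ,\aR}[Q \times R]$, which join-generates $\aQ \tenp \aR$ by Lemma~\ref{lem:tenp_basic}.2; since two join-semilattice morphisms agreeing on a join-generating set coincide, $f' = f$.
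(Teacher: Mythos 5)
Your proposal is correct and follows essentially the same route as the paper: it reduces everything to the bi-ideal picture via Lemma~\ref{lem:tenp_as_bi_ideals}, controls the generated bi-ideal $\Lor_{\jslBId{\aQ,\aR}}$ using Corollary~\ref{cor:bi_ideal_jsl_join} together with the three bimorphism axioms, and settles factorisation and uniqueness via Lemma~\ref{lem:tenp_basic}.2--3. The only differences are presentational — you factor explicitly through a map $g$ on bi-ideals, verify binary joins plus bottom rather than arbitrary joins, and replace the paper's brief ``mirroring'' remark with an explicit induction over the stages $S_n$ — all of which are sound.
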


% Lemma \ref{lem:tenp_basic}.2.

\begin{proof}
We first explain why the four descriptions of $f$'s action are equivalent. The first equality follows by Lemma \ref{lem:tenp_as_bi_ideals} i.e.\ the definition of $bid_{\aQ,\aR}$. The second equality follows by the calculation:
\[
\begin{tabular}{lll}
$\beta_{\aQ,\aR}(q,r) \leq_{\aQ \tenp \aR} h$
&
$\iff \; \down_{\aQ,\aR^{\pOp}}^{q,r} \;\; \leq_{(\JSL_f[\aQ,\aR^{\pOp}])^{\pOp}} h$
\\&
$\iff  h \leq_{\JSL_f[\aQ,\aR^{\pOp}]} \; \down_{\aQ,\aR^{\pOp}}^{q,r}$
\\&
$\iff \forall q' \in Q.\; h(q') \leq_{\aR^{\pOp}} \; \down_{\aQ,\aR^{\pOp}}^{q,r}(q') $
\\&
$\iff \forall q' \in Q. \; \down_{\aQ,\aR^{\pOp}}^{q,r}(q')  \leq_\aR h(q') $
\\&
$\iff \forall q' \leq_\aQ q. \; r \leq_\aR h(q')$
& since $\forall q' \in Q.\top_{\aR^{\pOp}} = \bot_\aR \leq_\aR h(q')$
\\&
$\iff  r \leq_\aR h(q)$
& since $h : \aQ \to \aR^{\pOp}$ monotonic
\end{tabular}
\]
The third equality follows via the bilinearity of both $\beta$ and $\beta_{\aQ,\aR}$, see Lemma \ref{lem:tenp_basic}.

\smallskip
Using the first description of $f$, we'll show that it preserves all joins. Take any collection of morphisms $H = \{h_i : \aQ \to \aR^{\pOp}: i \in I \}$ and let $S := \{ bid_{\aQ,\aR}(h_i) : i \in I \}$. Then we have:
\[
\begin{tabular}{lll}
$f(\Lor_{\aQ \tenp \aR} H)$
&
$= \Lor_\aS \{ \beta(q,r) : (q,r) \in bid_{\aQ,\aR}(\Lor_{\aQ \tenp \aR} H) \}$
\\&
$= \Lor_\aS \{ \beta(q,r) : (q,r) \in \Lor_{\jslBId{\aQ,\aR}} S \}$
& by Lemma \ref{lem:tenp_as_bi_ideals}
\\&
$= \Lor_\aS \{ \beta(q,r) : (q,r) \in \bot_{\jslBId{\aQ,\aR}} \cup \bigcup_{n \geq 0} S_n \}$
& by Corollary \ref{cor:bi_ideal_jsl_join}
\\&
$= \Lor_\aS \{ \beta(q,r) : (q,r) \in \bigcup_{n \geq 0} S_n \}$
& via bilinearity of $\beta$
\\&
$= \Lor_\aS \{ \beta(q,r) : (q,r) \in S_0 \}$
& see below
\\&
$= \Lor_\aS \{ \beta(q,r) : \exists i \in I. (q,r) \in bid_{\aQ,\aR}(h_i)  \}$
& since $S_0 = \bigcup S$
\\&
$= \Lor_\aS \{ f(h_i) : i \in I \}$
& by associativity
\\&
$= \Lor_\aS f[H]$.
\end{tabular}
\]
The marked equality follows because if $x_n := \Lor_\aS \{ \beta(q,r) : (q,r) \in S_n \}$ then we have $x_n \leq_\aS x_0$ for every $n \geq 0$. That is, adding lateral joins and taking the down-closure can be `mirrored' inside $x_0$ using the bilinearity of $\beta$, since we may add the appropriate summands without altering the value of $x_0$.

\smallskip
Finally, using the third description of $f$ we'll show that $f(\beta_{\aQ,\aR}(q_0,r_0)) = \beta(q_0,r_0)$:
\[
\begin{tabular}{lll}
$f(\beta_{\aQ,\aR}(q_0,r_0))$
&
$= \Lor_\aS \{ \beta(q,r) : \beta_{\aQ,\aR}(q,r) \leq_{\aQ \tenp \aR} \beta_{\aQ,\aR}(q_0,r_0) \}$
\\&
$= \Lor_\aS \{ \beta(q,r) : \beta_{\aQ,\aR}(q,r) \leq_{\aQ \tenp \aR} \beta_{\aQ,\aR}(q_0,r_0), \, q \neq \bot_\aQ, \, r \neq \bot_\aR \}$
& bilinearity of $\beta$
\\&
$= \Lor_\aS \{ \beta(q,r) : (q,r) \leq_{\aQ \times \aR} (q_0,r_0),  \, q \neq \bot_\aQ, \, r \neq \bot_\aR \}$
& by Lemma \ref{lem:tenp_basic}.3
\\&
$= \beta(q_0,r_0)$
& bilinearity of $\beta$.
\end{tabular}
\]
Note that since $\beta_{\aQ,\aR}[Q \times R]$ join-generates $\aQ \tenp \aR$ by Lemma \ref{lem:tenp_basic}.2, there can only be one join-semilattice morphism extending $\beta$ in this way.
\end{proof}

\begin{lemma}
\label{lem:tenp_more_basic}
\item
\begin{enumerate}
\item
We have the following natural isomorphisms.
\begin{enumerate}
\item
$i_\aQ :\aQ \to \two \tenp \aQ$ i.e.\ `the unit' arises by applying $(-)^\pOp$ to the element-morphism:
\[
\aQ^{\pOp} \xto{\elem{\aQ^{\pOp}}{-}} \JSL_f[\two,\aQ^{\pOp}] = (\two \tenp \aQ)^{\pOp}
\]

\item
$\pi_{\aQ,\aR,\aS} : (\aQ \tenp \aR) \tenp \aS \to \aQ \tenp (\aR \tenp \aS)$  i.e.\ `associativity' arises by  applying $(-)^{\pOp}$ to the universality of the tensor product:
\[
((\aQ \tenp \aR) \tenp \aS)^{\pOp}
= \JSL_f[\aQ \tenp \aR,\aS^{\pOp}]
\to
\JSL_f[\aQ,\JSL_f[\aR,\aS^{\pOp}]]
= (\aQ \tenp (\aR \tenp \aS))^{\pOp}
\]

\item
$\tau_{\aQ,\aR} : \aQ \tenp \aR \to \aR \tenp \aQ$ i.e.\ `commutativity' arises by applying $(-)^{\pOp}$ to the duality isomorphism between  internal-homs:
\[
(\aQ \tenp \aR)^{\pOp} =  \JSL_f[\aQ,\aR^{\pOp}] \to \JSL_f[\aQ^{\pOp},\aR] = (\aR \tenp \aQ)^{\pOp}
\qquad\qquad
\aQ \xto{h} \aR^{\pOp} \; \mapsto \; \aR \xto{h_*} \aQ^{\pOp}
\]

\item
$d_{\aQ,\aR,\aS} : (\aQ \times \aR) \tenp \aS \to (\aQ \tenp \aS) \times (\aR \tenp \aS)$ i.e.\ `distributivity' arises by applying  $(-)^{\pOp}$ to the universality of the (co)product:
\[
((\aQ \times \aR) \tenp \aS)^{\pOp}
= \JSL_f[\aQ \times \aR,\aS^{\pOp}]
\to \JSL_f[\aQ,\aS^{\pOp}] \times \JSL_f[\aR,\aS^{\pOp}]
= ((\aQ \tenp \aS) \times (\aR \tenp \aS))^{\pOp}
\]
\end{enumerate}

\item
Given $|Q|$, $|R| \geq 2$, then:
\begin{enumerate}
\item
 $\aQ$ and $\aR$ are boolean join-semilattices iff $\aQ \tenp \aR$ is a boolean join-semilattice,
\item
$\aQ$ and $\aR$ are distributive join-semilattices iff $\aQ \tenp \aR$ is a distributive join-semilattice.
\end{enumerate}
\end{enumerate}
\end{lemma}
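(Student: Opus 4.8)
The plan is to reduce the whole lemma to the bifunctoriality of $\tenp$ together with one structural fact that is already essentially recorded in Lemma \ref{lem:tenp_basic}: the poset of join-irreducibles of $\aQ \tenp \aR$ is the product poset $J(\aQ) \times J(\aR)$. Part 1 I would handle first, since the degenerate isomorphisms it provides are used in part 2.

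For part 1, I would obtain each natural isomorphism by applying the self-duality $(-)^{\pOp}$ to a known isomorphism of internal-homs; naturality is then automatic, being a composite of natural transformations. The unit $i_\aQ$ comes from $\elem{\aQ^{\pOp}}{-} : \aQ^{\pOp} \xrightarrow{\cong} \JSL_f[\two,\aQ^{\pOp}]$ of Lemma \ref{lem:jsl_elem_iso}; associativity and distributivity come from the tensor--hom adjunction implicit in Theorem \ref{thm:tenp_universal} (bimorphisms out of $\aQ\tenp\aR$ curry to morphisms into an internal hom, and homs out of a product split because $\JSL_f$ has biproducts); commutativity comes from the adjoint isomorphism $\JSL_f[\aQ,\aR^{\pOp}] \cong \JSL_f[\aR,\aQ^{\pOp}]$, $h \mapsto h_*$, of Lemma \ref{lem:jsl_mor_iso}. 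Each verification is routine.

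For part 2 I would prove the distributive case (b) first, by retracts. By Lemma \ref{lem:std_order_theory} a finite join-semilattice is distributive iff it is a retract of a finite boolean join-semilattice, and retracts of distributive ones are distributive. Forward: write $\aQ,\aR$ as retracts of $\JPow Z_1,\JPow Z_2$; functoriality of $\tenp$ makes $\aQ\tenp\aR$ a retract of $\JPow Z_1 \tenp \JPow Z_2$, and the auxiliary isomorphism $\JPow Z_1 \tenp \JPow Z_2 \cong \JPow(Z_1 \times Z_2)$ (proved via universality: bimorphisms out of a pair of free join-semilattices are exactly functions $Z_1 \times Z_2 \to S$, so by Yoneda the tensor is free on $Z_1 \times Z_2$) shows this ambient object is boolean, hence distributive, hence so is $\aQ\tenp\aR$. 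Converse: since $|R|\geq 2$ the two-chain $\two$ is a retract of $\aR$ (embed the subalgebra $\{\bot_\aR,\top_\aR\}$, retract via the ideal $\ideal{\aR}{\bot_\aR}$), so $\aQ \cong \aQ \tenp \two$ (part 1) is a retract of $\aQ \tenp \aR$, and symmetrically for $\aR$; thus distributivity of the tensor descends to each factor. For the boolean case (a) I would combine (b) with the join-irreducible poset: by Lemma \ref{lem:tenp_basic} the map $(j_q,j_r)\mapsto \beta_{\aQ,\aR}(j_q,j_r)$ is an order-isomorphism $J(\aQ)\times J(\aR) \cong J(\aQ\tenp\aR)$ (the enumeration gives the bijection, and the order-embedding clause applies because join-irreducibles are non-bottom). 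A finite distributive join-semilattice is boolean iff its poset of join-irreducibles is an antichain (by Theorem \ref{thm:char_jsl_mor_bool_distr} it is isomorphic to $\Open \leq_\pP$ for $\pP$ its poset of join-irreducibles, which is a powerset precisely when $\pP$ is discrete), and a product of two non-empty posets is an antichain iff both factors are (non-emptiness of $J(\aQ),J(\aR)$ follows from $|Q|,|R|\geq 2$). Hence $\aQ\tenp\aR$ is boolean iff it is distributive and $J(\aQ)\times J(\aR)$ is an antichain, iff $\aQ,\aR$ are distributive (by (b)) and $J(\aQ),J(\aR)$ are antichains, iff $\aQ$ and $\aR$ are both boolean.

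The main obstacle is conceptual rather than computational: booleanness, unlike distributivity, is not inherited by retracts, so the clean retract argument of (b) cannot simply be rerun for (a). This is precisely why I route the boolean case through the join-irreducible poset, where once distributivity is secured by (b), booleanness becomes the purely order-theoretic condition that $J(\aQ\tenp\aR)$ be an antichain, which factors cleanly through $J(\aQ)\times J(\aR)$. The one genuinely new computation is the auxiliary isomorphism $\JPow Z_1 \tenp \JPow Z_2 \cong \JPow(Z_1\times Z_2)$, and I expect that to be the place requiring the most care; it is safest established via the universal property rather than by an explicit elementwise description.
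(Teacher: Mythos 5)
Your proposal is correct, but for part 2 it takes a genuinely different route from the paper. The paper works directly with the join-irreducibles $\down_{\aQ^{\pOp},\aR}^{j,j'}$ of $\aQ\tenp\aR$: for (a) it reduces the forward direction to powers of $\two$ via iterated distributivity and, for the converse, exhibits a non-atomic join-irreducible of the tensor from a non-atomic $j_1\in J(\aQ)$; for (b) it tries to transfer join-primeness of $j,j'$ to $\beta_{\aQ,\aR}(j,j')$ via a displayed biconditional, and for part 1 it records no proof at all. You instead prove (b) by the retract characterisation of distributivity (Lemma \ref{lem:std_order_theory}.15), functoriality of $\tenp$, and the auxiliary isomorphism $\JPow Z_1\tenp\JPow Z_2\cong\JPow(Z_1\times Z_2)$, and then deduce (a) from (b) plus the order-isomorphism $J(\aQ\tenp\aR)\cong J(\aQ)\times J(\aR)$ (Lemma \ref{lem:tenp_basic}.2--3) and the fact that a product of non-empty posets is an antichain iff both factors are. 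Your route costs the extra isomorphism $\JPow Z_1\tenp\JPow Z_2\cong\JPow(Z_1\times Z_2)$, which you correctly propose to extract from Theorem \ref{thm:tenp_universal} and freeness, but it buys real robustness: the biconditional in the paper's proof of (b) fails as stated (take $\aQ=\aR=\JPow\{1,2\}$, $j=j'=\{1\}$ and the family $(\{1\},\{2\}),(\{2\},\{1\})$; the right-hand side holds while $\{(1,1)\}\nsubseteq\{(1,2),(2,1)\}$), and even its valid half produces witnesses $i$ and $i'$ that need not coincide, so the join-primeness transfer needs repair, whereas your retract argument and the antichain criterion for booleanness close both directions cleanly. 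Your observation that booleanness is not retract-stable, forcing the detour through $J(\aQ)\times J(\aR)$, is exactly the right structural point, and your use of $\ideal{\aR}{\bot_\aR}$ to realise $\two$ as a retract of $\aR$ correctly isolates where the hypothesis $|Q|,|R|\geq 2$ enters.
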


\begin{proof}
\item
\begin{enumerate}
\item
ok
\item
\begin{enumerate}
\item
If $\aQ$ and $\aR$ are boolean then iteratively apply $d_{\aQ,\aR,\aS}$. Conversely, if (w.l.o.g.) $\aQ$ is not boolean then there exists $j_1 \in J(\aQ)$ which is not an atom, and by assumption some $j_2 \in J(\aR)$. Then since $\bot_\aQ <_\aQ x <_\aQ j_1$, 
\[
\bot_{\aQ \tenp \aR} \; <_{\aQ \tenp \aR} \; \down_{\aQ^{\pOp},\aR}^{x,j_2} \; <_{\aQ \tenp \aR} \; \down_{\aQ^{\pOp},\aR}^{j_1,j_2}
\]
thus the latter join-irreducible element is not an atom.

\item
If $\aQ$ and $\aR$ are distributive then their join-irreducibles are join-prime, and since:
\[
\down_{\aQ^{\pOp},\aR}^{j,j'} \; \leq_{\aQ \tenp \aR} 
\Lor_{\aQ \tenp \aR} \{ \down_{\aQ^{\pOp},\aR}^{j_i,j'_i} \; : i \in I  \}
\iff 
j \leq_\aQ \Lor_\aQ \{ j_i : i \in I \}
\text{ and }
j' \leq_\aR \Lor_\aR \{ j'_i : i \in I \}
\]
it follows that every join-irreducible in $\aQ \tenp \aR$ is join-prime, hence the latter is distributive. Conversely if (w.l.o.g.) $\aQ$ is not distributive then there exists $j \in J(\aQ)$ which is not join-prime. By fixing $j' \in J(\aR)$ one can show that the join-irreducible $\down_{\aQ^{\pOp},\aR}^{j,j'}$ is not join-prime, hence  $\aQ \tenp \aR$ is not distributive.
\end{enumerate}
\end{enumerate}
\end{proof}

\begin{example}[Morphisms obtained via bilinearity]
\item
\begin{enumerate}
\item
Evaluation map $evl : \JSL_f[\aQ,\aR] \tenp \aQ \to \aR$.
\item
Internal composition $cmp : \JSL_f[\aR,\aS] \tenp \JSL_f[\aQ,\aR] \to \JSL_f[\aQ,\aS]$.
\item
Approximation from above $tig : \aR \tenp \aQ^{\pOp} \to \JSL_f[\aQ,\aR]$. \endbox
\end{enumerate}
\end{example}

\smallskip

\begin{note}[Addendum]
  \item
  \begin{enumerate}
  \item
  Theorem \ref{thm:tenp_universal} actually defines a natural isomorphism:
  \[
  \JSL_f[\aQ \tenp \aR,\aS] \cong \JSL_f[\aQ,\JSL_f[\aR,\aS]].
  \]
  
  \item
  Fraser also has a characterisation of:
  \[
  \beta_{\aQ,\aR}(q,r) \leq_{\aQ \tenp \aR} \Lor_{\aQ \tenp \aR} 
  \{ \beta_{\aQ,\aR}(q_i,r_i) : i \in I \}
  \]
  i.e.\ it holds iff there exists a lattice term $\phi$ in variables $I$ such that:
  \[
  q \leq_\aQ \sem{\phi[i \mapsto q_i]}_\aQ
  \qquad\text{and}\qquad
  r \leq_\aR \sem{\phi^d[i \mapsto r_i]}_\aR
  \]
  where $\phi^d$ is obtained from $\phi$ by swapping the joins/meets. \endbox
  \end{enumerate}
\end{note}

\subsection{Tight morphisms and tight tensors}

In this subsection we define \emph{tight join-semilattice morphisms}. We describe their join/meet-irreducibles and define:
\begin{enumerate}
\item
the \emph{tight hom-functor} $\jslTight{-,-}$ which is a subfunctor of $\JSL_f[-,-]$.
\item
the \emph{tight tensor product} $\ttenp : \JSL_f \times \JSL_f \to \JSL_f$.
\end{enumerate}

In the next subsection we'll describe the synchronous product functor $\syncp : \BiCliq \times \BiCliq \to \BiCliq$, which may also be viewed as the Kronecker product of binary matrices over the boolean semiring. We shall then prove that the tight tensor product and the synchronous product are essentially the same concepts. In the final subsection we'll consider the notions of `tightness' inside $\BiCliq$ and prove the universal property of the synchronous product, and hence also of the tight tensor product.

\begin{definition}[Tight join-semilattice morphisms]
\label{def:tight_jsl_mor}
\item
A $\JSL_f$-morphism $f : \aQ \to \aR$ is \emph{tight} if it factors through some $\JPow Z \in \JSL_f$ i.e.\
\[
f \quad = \quad
\aQ \xto{\alpha} \JPow Z \xto{\beta} \aR
\]
for $\JSL_f$-morphisms $\alpha$, $\beta$. Equivalently, $f$ factors through some boolean join-semilattice inside $\JSL_f$. \endbox
\end{definition}

\smallskip
Then every morphism from or to $\JPow Z \cong \two^Z \cong \two^{|Z|}$ is tight, as are the special morphisms:
\[
\up_{\aQ,\aR}^{q,r} \quad = \quad \aQ \xto{\ideal{\aQ}{q}} \two \xto{\elem{\aR}{r}} \aR.
\]
Furthermore each special morphism $\down_{\aQ,\aR}^{q,r}$ is also tight, see Corollary \ref{cor:tight_special_morphisms} below. Before characterising tight morphisms, the following Lemma provides plenty of non-examples.

\begin{lemma}
\label{lem:jsl_iso_not_tight}
A $\JSL_f$-isomorphism $f : \aQ \to \aR$ is tight iff both $\aQ$ and $\aR$ are distributive.
\end{lemma}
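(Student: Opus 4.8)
The plan is to reduce both directions to the characterization of distributivity via retracts, namely Lemma \ref{lem:std_order_theory}.17, which states that a finite join-semilattice is distributive iff it is a join-semilattice retract of a finite boolean join-semilattice (equivalently of a finite distributive one). Recall also (Definition \ref{def:std_order_theory}.16) that $\aQ$ is a retract of $\aR$ precisely when there are $\JSL_f$-morphisms $e : \aQ \to \aR$ and $r : \aR \to \aQ$ with $r \circ e = id_\aQ$. Both implications will then be short diagram chases.

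For the forward direction, suppose $f : \aQ \to \aR$ is a tight isomorphism, so $f = \beta \circ \alpha$ for some $\alpha : \aQ \to \JPow Z$ and $\beta : \JPow Z \to \aR$. The key observation is that invertibility of $f$ lets me exhibit \emph{both} $\aQ$ and $\aR$ as retracts of the boolean join-semilattice $\JPow Z$. Indeed, taking $r := f^{\bf-1} \circ \beta : \JPow Z \to \aQ$ gives $r \circ \alpha = f^{\bf-1} \circ \beta \circ \alpha = f^{\bf-1} \circ f = id_\aQ$, so $\aQ$ is a retract of $\JPow Z$; symmetrically, the morphism $\alpha \circ f^{\bf-1} : \aR \to \JPow Z$ is a section of $\beta$, since $\beta \circ (\alpha \circ f^{\bf-1}) = f \circ f^{\bf-1} = id_\aR$, so $\aR$ is a retract of $\JPow Z$. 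Applying Lemma \ref{lem:std_order_theory}.17 to each then yields that $\aQ$ and $\aR$ are both distributive.

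For the reverse direction, suppose $\aQ$ and $\aR$ are distributive. By Lemma \ref{lem:std_order_theory}.17 (concretely via the canonical retract $e_\aQ : \aQ \monoto \JPow J(\aQ)$ and $\sigma_\aQ : \JPow J(\aQ) \epito \aQ$ appearing in its proof) there is a boolean join-semilattice $\JPow Z$ together with morphisms $e : \aQ \to \JPow Z$ and $r : \JPow Z \to \aQ$ satisfying $r \circ e = id_\aQ$. Then I simply rewrite $f = f \circ id_\aQ = (f \circ r) \circ e$, which factors $f$ through $\JPow Z$ via $e : \aQ \to \JPow Z$ and $f \circ r : \JPow Z \to \aR$; hence $f$ is tight by definition. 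Note that this half uses only distributivity of the domain $\aQ$, which is consistent because $\aR \cong \aQ$ already forces $\aR$ to be distributive as well.

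I do not anticipate a genuine obstacle here: the whole argument is elementary once the retract characterization of distributivity is available, and the only point requiring care is correctly identifying the sections and retractions so that the relevant composites collapse to identity morphisms.
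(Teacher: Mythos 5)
Your proof is correct and takes essentially the same approach as the paper: both directions reduce to the retract characterization of distributivity (this is Lemma \ref{lem:std_order_theory}.15 in the paper's numbering, not item 17), and your converse is word-for-word the paper's argument $f = (f \circ r)\circ e$. The only cosmetic difference is in the forward direction, where the paper shows only that $\aQ$ is a retract of $\JPow Z$ and then transfers distributivity to $\aR$ via the fact that a $\JSL_f$-isomorphism is a lattice isomorphism, whereas you exhibit $\aR$ directly as a retract of $\JPow Z$ via the section $\alpha \circ f^{\bf-1}$; both are fine.
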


\begin{proof}
Given a tight $\JSL_f$-isomorphism $f : \aQ \to \aR$ then:
\[
id_\aQ 
= f^{\bf-1} \circ f
\quad = \quad
\aQ \xto{\alpha} \JPow Z \xto{\beta} \aR \xto{f^{\bf-1}} \aQ
\]
for some morphisms $\alpha$, $\beta$. Thus $\aQ$ is a join-semilattice retract of a boolean join-semilattice, so by Lemma \ref{lem:std_order_theory}.15 we deduce that $\aQ$ is distributive. Hence $\aR$ is also distributive, since $\JSL_f$-isomorphisms are also lattice isomorphisms. Conversely, suppose that $\aQ$ and $\aR$ are distributive. Then again by Lemma \ref{lem:std_order_theory}.15 we know that $\aQ$ is a join-semilattice retract of some $\JPow Z$, so that $f = f \circ id_\aQ = f \circ r \circ e$ is tight.
\end{proof}

We also briefly observe that tight morphisms are closed under tensor products.

\begin{lemma}
If $(f_i : \aQ_i \to \aR_i)_{i = 1,2}$ are tight then $f_1 \tenp f_2 : \aQ_1 \tenp \aQ_2 \to \aR_1 \tenp \aR_2$ is tight.
\end{lemma}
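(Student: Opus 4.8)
The plan is to exploit the functoriality of the tensor product together with the characterisation of tight morphisms as those factoring through a boolean join-semilattice (Definition \ref{def:tight_jsl_mor}). By tightness, each $f_i$ factors as
\[
f_i \quad = \quad \aQ_i \xto{\alpha_i} \JPow Z_i \xto{\beta_i} \aR_i
\]
for some finite sets $Z_i$ and $\JSL_f$-morphisms $\alpha_i$, $\beta_i$. Since $\tenp : \JSL_f \times \JSL_f \to \JSL_f$ is a functor, it respects composition componentwise, so
\[
f_1 \tenp f_2 \; = \; (\beta_1 \circ \alpha_1) \tenp (\beta_2 \circ \alpha_2) \; = \; (\beta_1 \tenp \beta_2) \circ (\alpha_1 \tenp \alpha_2),
\]
which exhibits $f_1 \tenp f_2$ as factoring through the object $\JPow Z_1 \tenp \JPow Z_2$. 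Thus it suffices to prove that $\JPow Z_1 \tenp \JPow Z_2$ is a boolean join-semilattice, since any morphism factoring through such an object is tight.

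To establish that, I would first reduce to the case $|Z_i| \geq 1$ by a harmless enlargement of the witnessing sets: if $f_i$ factors through $\JPow Z_i$ via $(\alpha_i,\beta_i)$, it also factors through $\JPow(Z_i \cup \{*\})$ using the inclusion $\JPow Z_i \hookto \JPow(Z_i \cup \{*\})$ together with the extended morphism $\beta_i' := \lambda S.\,\beta_i(S \cap Z_i)$, which one checks preserves $\emptyset$ and $\cup$. Hence we may assume $|\Pow Z_i| \geq 2$ for $i = 1,2$. Each $\JPow Z_i = (\Pow Z_i,\cup,\emptyset)$ is the join-semilattice reduct of the powerset boolean algebra $\BPow Z_i$, hence is a boolean join-semilattice. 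Now Lemma \ref{lem:tenp_more_basic}.2(a) applies (its hypothesis $|Q|,|R| \geq 2$ being exactly what the enlargement secures), yielding that $\JPow Z_1 \tenp \JPow Z_2$ is a boolean join-semilattice. By Theorem \ref{thm:char_jsl_mor_bool_distr}.1 this is isomorphic to $\JPow W$ for some finite set $W$, so the factorisation above realises $f_1 \tenp f_2$ through a free join-semilattice, and $f_1 \tenp f_2$ is tight.

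The computation is essentially routine, so the only genuine care needed lies in the bookkeeping around the size restriction in Lemma \ref{lem:tenp_more_basic}.2: rather than the enlargement trick, one could instead treat the degenerate cases directly, noting that if some $Z_i = \emptyset$ then $\JPow Z_i = \one$ and $\one \tenp \aR \cong \one = \JPow \emptyset$ (a one-element, hence trivially boolean, join-semilattice), via the universal property in Theorem \ref{thm:tenp_universal} since the only bimorphism out of $(\one,\aR)$ is the constant-bottom one. Either route disposes of the edge cases; I expect the enlargement trick to be the cleaner writeup, as it avoids a separate degenerate-case argument and lets Lemma \ref{lem:tenp_more_basic}.2(a) be invoked uniformly.
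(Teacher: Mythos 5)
Your proof is correct and follows essentially the same route as the paper, whose entire proof is the single remark that the tensor product of boolean join-semilattices is boolean (Lemma \ref{lem:tenp_more_basic}.2), applied to the tensored factorisations exactly as you do. Your additional care with the cardinality hypothesis $|Q|,|R| \geq 2$ of that lemma --- via the enlargement trick or the degenerate-case analysis, both of which check out --- addresses a point the paper's one-line proof silently skips.
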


\begin{proof}
Follows because the tensor product of boolean join-semilattices is boolean, see Lemma \ref{lem:tenp_more_basic}.2.
\end{proof}

\begin{note}
Let us recall some basic terminology, used in the proof of Lemma \ref{lem:tight_mor_char} directly below. For any finite set $Z$ we have the join-semilattice:
\[
\two^Z = (\Set(Z,2),\lor_{\two^Z},\bot_{\two^Z})
\]
whose elements are all functions $Z \to 2 = \{0,1\}$, whose join is the pointwise join inside $\two$, and whose bottom element is necessarily $\lambda z \in Z. 0$. Every finite boolean join-semilattice is isomorphic to such an algebra, since $\JPow Z = (\Pow Z,\cup,\emptyset) \cong \two^Z$ via the mapping:
\[
S \subseteq Z \; \mapsto \; \lambda z \in Z.(z \in S) \;?\; 1 : 0
\]
i.e.\ a subset is sent to its indicator function. \endbox
\end{note}

\begin{lemma}[Characterisation of tight morphisms]
\label{lem:tight_mor_char}
\item
For any $\JSL_f$-morphism $f : \aQ \to \aR$, the following statements are equivalent.
\begin{enumerate}
\item
$f$ is tight.
\item
$f$ factors through some distributive join-semilattice inside $\JSL_f$.
\item
$f$ is a $\JSL_f[\aQ,\aR]$-join of morphisms $\up_{\aQ,\aR}^{q,r} : \aQ \to \aR$ where $q \in Q$ and $r \in R$.
\item
$f$ is a $\JSL_f[\aQ,\aR]$-join of morphisms $\up_{\aQ,\aR}^{m,j} : \aQ \to \aR$ where $m \in M(\aQ)$ and $j \in J(\aR)$.

\end{enumerate}
\end{lemma}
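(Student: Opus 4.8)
The plan is to prove the four-way equivalence by establishing the cycle $(1)\Rightarrow(2)\Rightarrow(3)\Rightarrow(4)\Rightarrow(1)$, leaning heavily on the structural results already available. The implication $(1)\Rightarrow(2)$ is essentially trivial, since every boolean join-semilattice $\JPow Z$ is in particular distributive; thus any factorization through $\JPow Z$ is a factorization through a distributive join-semilattice. The reverse direction $(2)\Rightarrow(1)$ will come for free once the cycle closes, but it also follows directly from Lemma \ref{lem:std_order_theory}.15: a distributive join-semilattice is a join-semilattice retract of a boolean one, so a factorization $\aQ \xto{\alpha} \aD \xto{\beta} \aR$ through distributive $\aD$ can be expanded to $\aQ \xto{e\circ\alpha} \JPow Z \xto{\beta\circ r} \aR$ using the retract $r\circ e = id_\aD$.

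For $(2)\Rightarrow(3)$, I would proceed as follows. Suppose $f = \aQ \xto{\alpha} \aD \xto{\beta} \aR$ with $\aD$ distributive. The key observation is that the special morphisms $\up_{\aQ,\aR}^{q,r}$ are closed under the relevant compositions and joins. Concretely, since $\JSL_f$-composition is bilinear with respect to the pointwise join (used repeatedly in Lemma \ref{lem:special_jsl_morphisms}), and since $\up_{\aQ,\aR}^{q,r}$ factors as $\elem{\aR}{r}\circ\ideal{\aQ}{q}$, I would show that precomposing or postcomposing a join of special morphisms with a fixed morphism again yields a join of special morphisms. The cleanest route is to first treat the distributive case directly: when the middle object is distributive, Lemma \ref{lem:hom_meet_join_irr}.2(b)--(c) tells us that $\JSL_f[\aD,\aR]$ and $\JSL_f[\aQ,\aD]$ are join-generated by morphisms of the form $\up^{m,j}$, and hence every morphism out of or into $\aD$ is a join of special morphisms. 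Composing two such joins and invoking bilinearity of composition over the pointwise join expresses $f = \beta\circ\alpha$ as a join of composites $\up_{\aD,\aR}^{m',j'}\circ\up_{\aQ,\aD}^{m,j}$; each such composite is readily checked to be itself a special morphism $\up_{\aQ,\aR}^{q,r}$ (or the bottom morphism), since it factors through $\two$ via $\elem{\aR}{j'}\circ\ideal{\aD}{\cdots}\circ\elem{\aD}{j}\circ\ideal{\aQ}{m}$, and any morphism factoring through $\two$ is a single $\up$.

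The implication $(3)\Rightarrow(4)$ is a refinement step: I would take a join expression $f = \Lor \up_{\aQ,\aR}^{q_k,r_k}$ over arbitrary parameters and refine each summand into a join of summands with $q\in M(\aQ)$ and $r\in J(\aR)$. The tool is Lemma \ref{lem:special_jsl_morphisms}.1, whose symmetric equalities $\up_{\aQ,\aR}^{q_0\land_\aQ q_1,r_0} = \up_{\aQ,\aR}^{q_0,r_0}\lor\up_{\aQ,\aR}^{q_1,r_0}$ and $\up_{\aQ,\aR}^{q_0,r_0\lor_\aR r_1} = \up_{\aQ,\aR}^{q_0,r_0}\lor\up_{\aQ,\aR}^{q_0,r_1}$ let me decompose along meets in the first coordinate and joins in the second. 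Since every $q\in Q$ is a meet of meet-irreducibles (Lemma \ref{lem:std_order_theory}.6) and every $r\in R$ is a join of join-irreducibles, repeatedly applying these equalities rewrites each $\up_{\aQ,\aR}^{q,r}$ as a join over $M(\aQ)\times J(\aR)$; degenerate cases where $q = \top_\aQ$ or $r = \bot_\aR$ collapse to $\bot_{\JSL_f[\aQ,\aR]}$ by Lemma \ref{lem:special_jsl_morphisms}.2. Finally, $(4)\Rightarrow(1)$ closes the cycle: a join of morphisms each factoring through $\two$ assembles into a single morphism factoring through a power of $\two$, i.e.\ through a boolean join-semilattice. Explicitly, if $f = \Lor_{k} \up_{\aQ,\aR}^{m_k,j_k}$ indexed by a finite set $K$, then $f$ factors as $\aQ \xto{\langle\ideal{\aQ}{m_k}\rangle_k} \two^K \xto{\beta} \aR$ where $\beta$ sends the $k$-th generator to $j_k$, and $\two^K\cong\JPow K$ is boolean.

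The main obstacle is the $(2)\Rightarrow(3)$ step, specifically the bookkeeping showing that a composite of two special morphisms is again special (or bottom). The conceptual content is that $\up$-morphisms are exactly those factoring through $\two$, so composites of things factoring through $\two$ still factor through $\two$; but making this precise requires care about where the intermediate element lands and verifying via the explicit piecewise descriptions in Definition \ref{def:spec_hom_morphisms} that the composite again has the required two-valued shape. The distributivity of the middle object is essential here, because it is precisely what guarantees (via Lemma \ref{lem:hom_meet_join_irr}) that the morphisms into and out of it are join-generated by special morphisms in the first place; without it the join-generation fails, as Lemma \ref{lem:hom_meet_join_irr}.2(d) shows with the $M_3$ counterexample.
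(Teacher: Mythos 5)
Your proof is correct, and the skeleton largely matches the paper's: the equivalence $(1)\Leftrightarrow(2)$ via the retract of Lemma \ref{lem:std_order_theory}.15, and the refinement $(3)\Leftrightarrow(4)$ via the decomposition equalities of Lemma \ref{lem:special_jsl_morphisms}, are exactly what the paper does. Where you diverge is in connecting $\{1,2\}$ to $\{3,4\}$. The paper proves $(1)\Leftrightarrow(3)$ directly: it writes a tight morphism as $\aQ\to\two^Z\to\aR$, uses the biproduct structure of $\JSL_f$ to split $\alpha$ and $\beta$ into families $(\alpha_z:\aQ\to\two)$ and $(\beta_z:\two\to\aR)$, and observes that each composite $\beta_z\circ\alpha_z$ is a single $\up_{\aQ,\aR}^{q_z,r_z}$ by the classification of morphisms into and out of $\two$ (Lemma \ref{lem:jsl_elem_iso}). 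Your $(4)\Rightarrow(1)$ is the reverse of that same assembly, but your $(2)\Rightarrow(3)$ goes through heavier machinery: Lemma \ref{lem:hom_meet_join_irr}.2(b)--(c) to join-generate $\JSL_f[\aQ,\aD]$ and $\JSL_f[\aD,\aR]$ by special morphisms when $\aD$ is distributive, plus bilinearity of composition over the pointwise join, plus closure of the $\up$-morphisms under composition (your composite $\up_{\aD,\aR}^{m',j'}\circ\up_{\aQ,\aD}^{m,j}$ is indeed $\up_{\aQ,\aR}^{(\up_{\aQ,\aD}^{m,j})_*(m'),\,j'}$, which the paper later records as Lemma \ref{lem:compose_spec_gen_mor}.1). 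Both routes are sound and non-circular, since Lemma \ref{lem:hom_meet_join_irr} is proved independently of tightness. What your route buys is a direct explanation of why a distributive middle object already forces the join decomposition, without first retracting back to the boolean case; what the paper's route buys is economy, since it needs only the biproduct $\two^Z\cong\JPow Z$ and the elementary classification of morphisms touching $\two$, not the analysis of irreducibles in hom-semilattices.
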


\begin{proof}
\item
\begin{enumerate}
\item
$(1 \iff 2)$:

Certainly (1) implies (2). Conversely, suppose $f =\; \aQ \xto{\alpha} \aD \xto{\beta} \aR$ for some finite join-semilattice $\aD$ which is distributive. By Lemma \ref{lem:std_order_theory}.15 we know every finite distributive lattice arises as the join-semilattice retract of a finite boolean join-semilattice, so that:
\[
id_\aD \quad = \quad \aD \xto{s} \JPow Z \xto{r} \aD
\]
and thus $f = (\beta \circ r) \circ (s \circ \alpha)$ implies (1).

\item
$(1 \iff 3)$:

$f : \aQ \to \aR$ is tight iff we have morphisms $\alpha$, $\beta$ such that:
\[
f 
\quad = \quad
\aQ \xto{\alpha} \two^Z \xto{\beta} \aR
\]
for some finite set $Z$. Since the coproduct and product coincide in $\JSL_f$ (in fact also in $\JSL$), we equivalently have morphisms $(\alpha_z : \aQ \to \two)_{z \in Z}$ and $(\beta_z : \two \to \aR)_{z \in Z}$ such that:
\[
f = [\beta_z]_{z \in Z} \circ \ang{\alpha_z}_{z \in Z}
\qquad
\text{where}\quad
\begin{tabular}{ll}
$\ang{\alpha_z}_{z \in Z}(q)$ & $:= \lambda z \in Z. \alpha_z(q)$
\\
$[\beta_z]_{z \in Z}(\delta : Z \to 2)$ & $:= \Lor_\aR \{ \beta_z(\delta(z)) : z\in Z \}$
\end{tabular}
\]
so that:
\[
\begin{tabular}{lll}
$f(q)$
& 
$= \Lor_\aR \{ \beta_z \circ \alpha_z(q) : z \in Z \}$
\\&
$= \Lor_\aR \{ \elem{\aR}{r_z} \circ \ideal{\aR}{q_z}(q) : z \in Z \}$
& see Definition \ref{def:elem_ideal_mor}
\\&
$= \Lor_\aR \{ \up_{\aQ,\aR}^{q_z,r_z}(q) : z \in Z \}$
& see Definition \ref{def:spec_hom_morphisms}
\end{tabular}
\]
Thus tight morphisms are precisely the joins of the special morphisms $\up_{\aQ,\aR}^{q,r}$.

\item
$(3 \iff 4)$:

It suffices to show that every special morphism $\up_{\aQ,\aR}^{q,r}$ arises as a possibly-empty join of the special morphisms $\up_{\aQ,\aR}^{m,j}$ where $m \in M(\aQ)$ and $j \in J(\aR)$. Recall the equalities from Lemma \ref{lem:special_jsl_morphisms}.1. First of all:
\[
\up_{\aQ,\aR}^{\top_\aQ,r} \; = \bot_{\JSL_f[\aQ,\aR]} = \; \up_{\aQ,\aR}^{q,\bot_\aR}
\]
so that these morphisms arise as the empty-join. Finally if $q \neq \top_\aQ$ then it arises as a non-empty meet of meet-irreducibles, and if $r \neq \bot_\aR$ then it arises as a non-empty join of join-irreducibles, so by Lemma \ref{lem:special_jsl_morphisms}.1,
\[
\begin{tabular}{lll}
$\up_{\aQ,\aR}^{q,r}$
&
$= \up_{\aQ,\aR}^{\Land_\aQ \{m \in M(\aQ) : q \leq_\aR m\},r}$
\\&
$= \Lor_{\JSL_f[\aQ,\aR]} \{ \up_{\aQ,\aR}^{m,r} \; : m \in M(\aQ), \, q \leq_\aQ m \}$
\\&
$= \Lor_{\JSL_f[\aQ,\aR]} \{ \up_{\aQ,\aR}^{m,\Lor_\aR \{ j \in J(\aQ) : j \leq_\aQ r \} } \; : m \in M(\aQ), \, q \leq_\aQ m \}$
\\&
$= \Lor_{\JSL_f[\aQ,\aR]} \{ \up_{\aQ,\aR}^{m,j} \; : m \in M(\aQ), \, j \in J(\aR), \, q \leq_\aQ m , \, j \leq_\aR r \}$
\end{tabular}
\]
\end{enumerate}
\end{proof}

\begin{corollary}
\label{cor:tight_special_morphisms}
The special morphisms $\up_{\aQ,\aR}^{q_0,r_0}, \, \down_{\aQ,\aR}^{q_0,r_0} : \aQ \to \aR$ are always tight. In fact,
\[
\down_{\aQ,\aR}^{q_0,r_0} 
\; = \; \up_{\aQ,\aR}^{\bot_\aQ,r_0} \; \lor_{\JSL_f[\aQ,\aR]} \; \up_{\aQ,\aR}^{q_0,\top_\aR}
\]
\end{corollary}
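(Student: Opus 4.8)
The plan is to reduce the entire corollary to the single displayed equality, since both tightness claims follow almost immediately once that equality is in hand. The key observation is Lemma \ref{lem:tight_mor_char}.3, which says that a $\JSL_f$-morphism is tight precisely when it is a $\JSL_f[\aQ,\aR]$-join of morphisms of the form $\up_{\aQ,\aR}^{q,r}$. Thus $\up_{\aQ,\aR}^{q_0,r_0}$ is trivially tight, being the one-element join of itself; alternatively one notes directly that it factors as $\aQ \xto{\ideal{\aQ}{q_0}} \two \xto{\elem{\aR}{r_0}} \aR$ through $\two \cong \JPow\{\ast\}$, which is boolean, so tightness holds by Definition \ref{def:tight_jsl_mor}. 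For $\down_{\aQ,\aR}^{q_0,r_0}$, once I exhibit it as a join of two $\up$-morphisms, Lemma \ref{lem:tight_mor_char}.3 delivers its tightness for free.

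So the real content is to verify $\down_{\aQ,\aR}^{q_0,r_0} = \up_{\aQ,\aR}^{\bot_\aQ,r_0} \lor_{\JSL_f[\aQ,\aR]} \up_{\aQ,\aR}^{q_0,\top_\aR}$. Since $\lor_{\JSL_f[\aQ,\aR]}$ is the pointwise join by Definition \ref{def:hom_functor_jsl}, I would evaluate the right-hand side at an arbitrary $q \in Q$ and compare with the three-case definition of $\down_{\aQ,\aR}^{q_0,r_0}$ from Definition \ref{def:spec_hom_morphisms}. Recalling the action of $\up$, namely $\up_{\aQ,\aR}^{\bot_\aQ,r_0}(q) = \bot_\aR$ if $q = \bot_\aQ$ and $r_0$ otherwise, while $\up_{\aQ,\aR}^{q_0,\top_\aR}(q) = \bot_\aR$ if $q \leq_\aQ q_0$ and $\top_\aR$ if $q \nleq_\aQ q_0$, the computation splits into exactly the three regions appearing in the definition of $\down$.

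The case analysis runs as follows. If $q = \bot_\aQ$, both summands return $\bot_\aR$ (using $\bot_\aQ \leq_\aQ q_0$), so the join is $\bot_\aR$. If $\bot_\aQ <_\aQ q \leq_\aQ q_0$, the first summand gives $r_0$ and the second gives $\bot_\aR$, so the join is $r_0 \lor_\aR \bot_\aR = r_0$. If $q \nleq_\aQ q_0$, then necessarily $q \neq \bot_\aQ$ (the one point to flag, since $\bot_\aQ \leq_\aQ q_0$ always holds), so the first summand gives $r_0$ and the second gives $\top_\aR$, whence the join is $r_0 \lor_\aR \top_\aR = \top_\aR$. In every case the value agrees with $\down_{\aQ,\aR}^{q_0,r_0}(q)$, establishing the equality. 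I do not anticipate any genuine obstacle here: the argument is a routine pointwise verification, and the only subtlety worth stating explicitly is that $q \nleq_\aQ q_0$ forces $q \neq \bot_\aQ$, which is what makes the first summand contribute $r_0$ rather than $\bot_\aR$ in the third region. Concluding, $\down_{\aQ,\aR}^{q_0,r_0}$ is a join of the two tight morphisms $\up_{\aQ,\aR}^{\bot_\aQ,r_0}$ and $\up_{\aQ,\aR}^{q_0,\top_\aR}$, hence tight by Lemma \ref{lem:tight_mor_char}.3.
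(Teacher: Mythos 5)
Your proof is correct and follows essentially the same route as the paper: the same pointwise three-case verification of the displayed equality, with tightness drawn from Lemma \ref{lem:tight_mor_char}. The only cosmetic difference is that the paper first justifies tightness of $\down_{\aQ,\aR}^{q_0,r_0}$ via its factorisation through the distributive lattice $\three$ and then verifies the equality, whereas you verify the equality first and read off tightness from the ``join of $\up$-morphisms'' characterisation — both are instances of the same lemma.
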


\begin{proof}
Each $\up_{\aQ,\aR}^{q_0,r_0}$ is certainly tight, since by definition it factors through $\two$. Each $\down_{\aQ,\aR}^{q_0,r_0}$ is tight because by definition it factorises through the distributive lattice $\three$, so we may apply Lemma \ref{lem:tight_mor_char}. In particular, viewing $\three$ as a join-semilattice retract of $\two^{\{1,2\}}$ leads to the above equality, which we now verify directly.
\begin{enumerate}
\item
$\up_{\aQ,\aR}^{q_0,\top_\aR}(q)$ equals $\bot_\aR$ whenever $q \leq_\aQ q_0$, otherwise it is $\top_\aR$.
\item
$\up_{\aQ,\aR}^{\bot_\aQ,r_0}(q)$ equals $\bot_\aR$ if $q = \bot_\aQ$, otherwise it is $r_0$.
\end{enumerate}
Thus their join is precisely the morphism $\down_{\aQ,\aR}^{q_0,r_0} : \aQ \to \aR$.
\end{proof}

To understand why the tight morphisms are a particularly natural subclass of the $\JSL_f$-morphisms, first observe that they determine a subfunctor of $\JSL_f[-,-]$.

\bigskip

\begin{definition}[Tight hom-functor]
\item
Given finite join-semilattices $\aQ$, $\aR$, first let $\Tight{\aQ,\aR} \subseteq \JSL_f(\aQ,\aR)$ be the subset of tight morphisms. Then we define the finite join-semilattice:
\[
\jslTight{\aQ,\aR} 
:= (\Tight{\aQ,\aR},\lor_{\jslTight{\aQ,\aR}},\bot_{\jslTight{\aQ,\aR}})
\subseteq \JSL_f[\aQ,\aR]
\]
whose join is necessarily the pointwise-join and whose bottom is necessarily $\bot_{\JSL_f[\aQ,\aR]} = \lambda q \in Q.\bot_\aR$. This extends to a functor $\jslTight{-,-} : \JSL_f^{op} \times \JSL_f \to \JSL_f$ as follows:
\[
\dfrac{f : \aQ_2 \to \aQ_1 \quad g : \aR_1 \to \aR_2}
{\jslTight{f^{op},g} := \lambda h. g \circ h \circ f    : \jslTight{\aQ_1,\aR_1} \to \jslTight{\aQ_2,\aR_2} }
\]
this being precisely the same way that $\JSL_f[-,-]$ acts, see Definition \ref{def:jsl_internal_hom}. \endbox
\end{definition}

\bigskip

\begin{lemma}
$\jslTight{-,-} : \JSL_f^{op} \times \JSL_f \to \JSL_f$ is a well-defined functor.
\end{lemma}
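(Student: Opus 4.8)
The plan is to present $\jslTight{-,-}$ as the evident substructure of the internal hom-functor $\JSL_f[-,-]$, whose well-definedness has already been established. Concretely, there are exactly two things that do not come for free: (i) that for each pair $(\aQ,\aR)$ the tight morphisms $\Tight{\aQ,\aR}$ form a sub join-semilattice of $\JSL_f[\aQ,\aR]$, so that $\jslTight{\aQ,\aR}$ is a well-defined object of $\JSL_f$; and (ii) that the action $\jslTight{f^{op},g} = \lambda h.\, g \circ h \circ f$ restricts, i.e.\ sends tight morphisms to tight morphisms. Granting these, preservation of the join-semilattice structure by the action, as well as preservation of identities and composites, are inherited verbatim from the corresponding proof for $\JSL_f[-,-]$, since $\jslTight{f^{op},g}$ is literally the restriction of $\JSL_f[f^{op},g]$ to a subalgebra of its domain whose image lands in the analogous subalgebra of its codomain.

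For (i), the bottom map $\bot_{\JSL_f[\aQ,\aR]} = \lambda q.\bot_\aR$ is tight since it factors through $\two \cong \JPow\{\star\}$ (equivalently it equals $\up_{\aQ,\aR}^{\top_\aQ,\bot_\aR}$ and so is the empty join covered by Lemma~\ref{lem:tight_mor_char}). For closure under the pointwise binary join I would invoke the characterisation of Lemma~\ref{lem:tight_mor_char}: a morphism is tight precisely when it is a $\JSL_f[\aQ,\aR]$-join of special morphisms $\up_{\aQ,\aR}^{q,r}$. If $f_1$ and $f_2$ are each such a join, then $f_1 \lor_{\JSL_f[\aQ,\aR]} f_2$ is again a join of morphisms of this form by associativity of the join, hence tight. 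Thus $\Tight{\aQ,\aR}$ contains $\bot_{\JSL_f[\aQ,\aR]}$ and is closed under $\lor_{\JSL_f[\aQ,\aR]}$, so it is the carrier of a sub join-semilattice.

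For (ii), I would use the factorisation definition directly (Definition~\ref{def:tight_jsl_mor}): if $h : \aQ_1 \to \aR_1$ is tight, write $h = \beta \circ \alpha$ with $\alpha : \aQ_1 \to \JPow Z$ and $\beta : \JPow Z \to \aR_1$. Then for $f : \aQ_2 \to \aQ_1$ and $g : \aR_1 \to \aR_2$ we have
\[
g \circ h \circ f = (g \circ \beta) \circ (\alpha \circ f),
\]
a factorisation through the same boolean join-semilattice $\JPow Z$, so $g \circ h \circ f$ is tight. Hence $\jslTight{f^{op},g}$ is well-typed as a map $\jslTight{\aQ_1,\aR_1} \to \jslTight{\aQ_2,\aR_2}$. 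The only genuine (and minor) obstacle is closure under joins in (i); once Lemma~\ref{lem:tight_mor_char} is applied this is immediate, and the remainder of the verification is pure inheritance from the functoriality of $\JSL_f[-,-]$.
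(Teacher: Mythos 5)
Your proposal is correct and follows essentially the same route as the paper: establish that $\Tight{\aQ,\aR}$ is a sub join-semilattice via the join-of-special-morphisms characterisation in Lemma~\ref{lem:tight_mor_char} (with the bottom map being the empty join), and observe that tightness is preserved under pre/post-composition because the factorisation through $\JPow Z$ persists, so everything else is inherited from the functoriality of $\JSL_f[-,-]$.
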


\begin{proof}
This follows from the well-definedness of $\JSL_f[-,-]$ and the following two observations.
\begin{enumerate}
\item
Each $\jslTight{\aQ,\aR}$ is well-defined sub join-semilattice via Lemma \ref{lem:tight_mor_char}.3 noting also that  $\bot_{\JSL_f[\aQ,\aR]} = \; \up_{\aQ,\aR}^{\top_\aQ,r}$.
\item
Tight morphisms are closed under pre/post-composition by arbitrary $\JSL_f$-morphisms, since the factorisation through a boolean join-semilattice is preserved.
\end{enumerate}
\end{proof}

Then we immediately have the following important fact:

\begin{corollary}
\label{cor:jsl_equ_tight_distributive}
Whenever $\aQ$ or $\aR$ are distributive then:
\[
\jslTight{\aQ,\aR} = \JSL_f[\aQ,\aR]
\]
\end{corollary}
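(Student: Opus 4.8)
The plan is to combine the characterisation of tight morphisms in Lemma \ref{lem:tight_mor_char} with the explicit description of the join-irreducibles of $\JSL_f[\aQ,\aR]$ given in Lemma \ref{lem:hom_meet_join_irr}. Since $\jslTight{\aQ,\aR}$ is by construction the sub join-semilattice of $\JSL_f[\aQ,\aR]$ carried by $\Tight{\aQ,\aR} \subseteq \JSL_f(\aQ,\aR)$, sharing the same pointwise join and the same bottom element, the asserted equality of join-semilattices reduces to the single set-theoretic claim $\Tight{\aQ,\aR} = \JSL_f(\aQ,\aR)$. The inclusion $\Tight{\aQ,\aR} \subseteq \JSL_f(\aQ,\aR)$ is immediate, so only the reverse inclusion needs proof: every $\JSL_f$-morphism $f : \aQ \to \aR$ is tight.

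First I would recall, via Lemma \ref{lem:std_order_theory}.6, that every element of the finite join-semilattice $\JSL_f[\aQ,\aR]$ is the $\JSL_f[\aQ,\aR]$-join of the join-irreducibles beneath it; in particular any $f \in \JSL_f(\aQ,\aR)$ is such a join. Next I would invoke Lemma \ref{lem:hom_meet_join_irr}.2: under the hypothesis that $\aQ$ is distributive, part (b) gives
\[
J(\JSL_f[\aQ,\aR]) = \{ \up_{\aQ,\aR}^{m,j} : m \in M(\aQ), \, j \in J(\aR) \},
\]
and under the hypothesis that $\aR$ is distributive, part (c) yields exactly the same set of join-irreducibles. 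So in either distributive case the join-irreducibles of $\JSL_f[\aQ,\aR]$ are precisely the special morphisms $\up_{\aQ,\aR}^{m,j}$ with $m \in M(\aQ)$ and $j \in J(\aR)$.

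Combining these two observations, any $f : \aQ \to \aR$ is a $\JSL_f[\aQ,\aR]$-join of morphisms of the form $\up_{\aQ,\aR}^{m,j}$ with $m \in M(\aQ)$ and $j \in J(\aR)$. This is exactly condition (4) of Lemma \ref{lem:tight_mor_char}, which is equivalent to $f$ being tight. Hence every morphism is tight, giving $\Tight{\aQ,\aR} = \JSL_f(\aQ,\aR)$ and therefore $\jslTight{\aQ,\aR} = \JSL_f[\aQ,\aR]$.

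There is no real obstacle here: the statement is essentially a corollary in name and in substance, the entire content having been front-loaded into the characterisation of tightness and the irreducible-morphism count. The only point requiring a moment's care is the bookkeeping that the equality of the two sets of join-irreducibles is what licenses passing from ``$f$ is some join of join-irreducibles'' to ``$f$ is a join of the $\up_{\aQ,\aR}^{m,j}$'', and that Lemma \ref{lem:tight_mor_char}.4 is stated precisely for joins indexed by $m \in M(\aQ)$ and $j \in J(\aR)$, so the hypotheses match without needing the weaker form (3) involving arbitrary $(q,r)$.
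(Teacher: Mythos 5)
Your proof is correct, but it takes a different path through the material than the paper does. The paper's own proof is a one-liner: since $f = f \circ id_\aQ$ (resp.\ $f = id_\aR \circ f$) trivially factors through $\aQ$ (resp.\ $\aR$), and one of these is distributive by hypothesis, clause (2) of Lemma \ref{lem:tight_mor_char} immediately gives tightness. You instead route the argument through clause (4): you import the description of $J(\JSL_f[\aQ,\aR])$ from Lemma \ref{lem:hom_meet_join_irr}.2, observe via Lemma \ref{lem:std_order_theory}.6 that every morphism is a join of join-irreducibles, and conclude that every morphism is a join of the $\up_{\aQ,\aR}^{m,j}$. Both arguments are sound and non-circular (the join-generation claim in Lemma \ref{lem:hom_meet_join_irr}.2 is proved there by a direct construction, independently of this corollary). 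The trade-off is that the paper's route needs nothing beyond the trivial factorisation, whereas yours invokes the heavier irreducible-morphism analysis; in exchange, your version makes explicit \emph{which} special morphisms witness the tightness of a given $f$, which is information the one-line argument does not surface. Your bookkeeping — reducing the equality of join-semilattices to equality of carriers, and checking that the indexing in Lemma \ref{lem:tight_mor_char}.4 matches the irreducibles exactly — is accurate.
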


\begin{proof}
Every $\JSL_f$-morphism $\aQ \to \aR$ such that either $\aQ$ or $\aR$ are distributive is tight by Lemma \ref{lem:tight_mor_char}.
\end{proof}

That tight morphisms are closed under composition with arbitrary morphisms is now further clarifed.

\begin{lemma}[Composing special morphisms with arbitrary morphisms]
\label{lem:compose_spec_gen_mor}
\item
Take any $\JSL_f$-morphisms $f : \aQ \to \aR$ and $g : \aR \to \aS$ and fix any elements $(q,r,s)  \in Q \times R \times S$.

\begin{enumerate}
\item
We have the equalities:
\[
\begin{tabular}{lll}
$\up_{\aR,\aS}^{r,s}  \circ \; f = \; \up_{\aQ,\aS}^{f_*(r),s}$
&&
$g \; \circ \up_{\aQ,\aR}^{q,r} \; = \; \up_{\aQ,\aS}^{q,g(r)}$
\end{tabular}
\]

\item
If additionally $f^{-1}(\{\bot_\aR\}) = \{\bot_\aQ\}$ and $g(\top_\aR) = \top_\aS$ then we have the equalities:
\[
\begin{tabular}{lll}
$\down_{\aR,\aS}^{r,s}  \circ \; f = \; \down_{\aQ,\aS}^{f_*(r),s}$
&&
$g \; \circ \down_{\aQ,\aR}^{q,r} \; = \; \down_{\aQ,\aS}^{q,g(r)}$
\end{tabular}
\]

\end{enumerate}
\end{lemma}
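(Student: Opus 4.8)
The plan is to verify each of the four equalities by direct computation of the relevant composites, checking agreement on every element $q \in Q$. Each special morphism is defined by a case-split on the position of its argument (whether it is $\bot$, below the parameter, or not below the parameter), so the proofs amount to tracking how these cases transform under pre- and post-composition. The key observation driving everything is that the special morphisms factor through $\two$ or $\three$ via element- and ideal-morphisms, so I would lean on the adjoint relationship $f(q) \leq_\aR r \iff q \leq_\aQ f_*(r)$ from Lemma \ref{lem:adj_obs}.1 to rewrite the conditions defining the composites.

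For statement (1), consider $\up_{\aR,\aS}^{r,s} \circ f$. Evaluating at $q \in Q$, the value is $s$ precisely when $f(q) \nleq_\aR r$, which by the adjoint relationship is equivalent to $q \nleq_\aQ f_*(r)$; otherwise the value is $\bot_\aS$. This is exactly the defining case-split of $\up_{\aQ,\aS}^{f_*(r),s}$, giving the left equality. For the right equality $g \circ \up_{\aQ,\aR}^{q,r}$, I evaluate at an arbitrary $q' \in Q$: the inner morphism yields $r$ when $q' \nleq_\aQ q$ and $\bot_\aR$ otherwise, so applying $g$ (which preserves $\bot$) gives $g(r)$ when $q' \nleq_\aQ q$ and $\bot_\aS$ otherwise, matching $\up_{\aQ,\aS}^{q,g(r)}$. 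Both of these are essentially unconditional, using only that $f_*$ is the adjoint and that $g$ preserves the bottom element.

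For statement (2) the analysis is the same in spirit but the $\down$-morphisms have a three-way case-split, and this is where the two side conditions $f^{-1}(\{\bot_\aR\}) = \{\bot_\aQ\}$ and $g(\top_\aR) = \top_\aS$ become essential. For $\down_{\aR,\aS}^{r,s} \circ f$ at $q \in Q$, the critical boundary case is $q = \bot_\aQ$: the output should be $\bot_\aS$, which requires $f(\bot_\aQ) = \bot_\aR$ (automatic) but also that no nonzero $q$ maps to $\bot_\aR$, so that the middle case $\bot_\aR <_\aR f(q) \leq_\aR r$ versus the $\bot$ case is correctly discriminated --- this is exactly the injectivity-at-bottom hypothesis $f^{-1}(\{\bot_\aR\}) = \{\bot_\aQ\}$. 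The condition $\bot_\aQ <_\aQ q \leq_\aQ f_*(r)$ then reproduces the middle case of $\down_{\aQ,\aS}^{f_*(r),s}$ via the adjoint relationship. For $g \circ \down_{\aQ,\aR}^{q,r}$, the boundary case is $q' \nleq_\aQ q$, where the inner value is $\top_\aR$ and the output should be $\top_\aS$; this needs precisely $g(\top_\aR) = \top_\aS$.

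**The main obstacle** I anticipate is not any single deep step but rather the careful bookkeeping of the boundary cases in statement (2), making sure each side condition is used exactly where needed and that no case is silently collapsed. In particular I would want to confirm that the hypothesis $f^{-1}(\{\bot_\aR\}) = \{\bot_\aQ\}$ correctly forces the equivalence $q = \bot_\aQ \iff f(q) = \bot_\aR$ so that the three cases of $\down_{\aR,\aS}^{r,s} \circ f$ partition $Q$ in exactly the way $\down_{\aQ,\aS}^{f_*(r),s}$ demands; a failure of this hypothesis would merge the bottom case into the middle one and break the equality. The adjoint rewriting $f(q) \nleq_\aR r \iff q \nleq_\aQ f_*(r)$ should be invoked uniformly to handle the top case in each composite.
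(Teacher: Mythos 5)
Your proposal is correct, but it takes a more elementary route than the paper at two points. For the right-hand equality in (1), the paper does not compute directly: it writes $g \circ \up_{\aQ,\aR}^{q,r} = ((\up_{\aQ,\aR}^{q,r})_* \circ g_*)_*$ and reduces to the left-hand equality via the self-duality, using $(\up_{\aQ,\aR}^{q,r})_* = \up_{\aR^{\pOp},\aQ^{\pOp}}^{r,q}$ from Lemma \ref{lem:special_jsl_morphisms}.1; your direct evaluation (inner morphism gives $r$ or $\bot_\aR$, then apply $g$, which preserves $\bot$) is equally valid and arguably shorter. For (2), the paper avoids the three-way case analysis entirely by invoking the decomposition $\down_{\aQ,\aR}^{q,r} = \up_{\aQ,\aR}^{\bot_\aQ,r} \lor \up_{\aQ,\aR}^{q,\top_\aR}$ from Corollary \ref{cor:tight_special_morphisms}, distributing the composition over the join, applying part (1) to each summand, and then using the hypotheses only to identify the resulting parameters: $f^{-1}(\{\bot_\aR\}) = \{\bot_\aQ\}$ gives $f_*(\bot_\aR) = \bot_\aQ$, and $g(\top_\aR) = \top_\aS$ closes the other side. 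Your direct case-split is sound --- the boundary analysis you flag as the main obstacle is handled correctly, since the hypothesis does force $q = \bot_\aQ \iff f(q) = \bot_\aR$ and hence the three cases partition $Q$ as required --- but the paper's route buys reuse of already-established structure and makes the role of the two side conditions visible as single equations rather than as case discriminators.
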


\begin{proof}
\item
\begin{enumerate}
\item
To see that the left equality holds, consider the action:
\[
\up_{\aR,\aS}^{r,s}  \circ \; f (q) 
= \begin{cases} 
\bot_{\aS} & \text{if $f(q) \leq_\aR r$}
\\
s & \text{otherwise}
\end{cases}
\qquad
\text{for each $q \in Q$}
\]
and recall that $f(q) \leq_\aR r \iff q \leq_\aQ f_*(r)$. Regarding the right equality:
\[
\begin{tabular}{lll}
$g \; \circ \up_{\aQ,\aR}^{q,r}$
&
$= ((\up_{\aQ,\aR}^{q,r})_* \circ g_*)_*$
\\&
$= (\up_{\aR^{\pOp},\aQ^{\pOp}}^{r,q} \circ g_*)_* $
& by Lemma \ref{lem:special_jsl_morphisms}.1
\\&
$= (\up_{\aS^{\pOp},\aQ^{\pOp}}^{(g_*)_*(r),q})_*$
& by left equality
\\&
$= (\up_{\aS^{\pOp},\aQ^{\pOp}}^{g(r),q})_*$
\\&
$= \; \up_{\aQ,\aS}^{q,g(r)}$
& by Lemma \ref{lem:special_jsl_morphisms}.1
\end{tabular}
\]

\item
Concerning the left equality,
\[
\down_{\aR,\aS}^{r,s}  \circ \; f
= (\up_{\aR,\aS}^{\bot_\aR,s} \;\lor\; \up_{\aR,\aS}^{r,\top_\aS}) \circ f
= (\up_{\aR,\aS}^{\bot_\aR,s} \circ \; f) \lor (\up_{\aR,\aS}^{r,\top_\aS} \circ \; f)
= \; \up_{\aR,\aS}^{f_*(\bot_\aR),s} \lor \up_{\aR,\aS}^{f_*(r),\top_\aS}
\]
Now, since $f_*(\bot_\aR) = \Lor_\aQ \{ q \in Q : f(q) \leq_\aR \bot_\aR \} = \Lor_\aQ \{\bot_\aQ\} = \bot_\aQ$ by assumption, the above equals $\down_{\aR,\aS}^{f_*(r),s}$ as desired. Finally, a similar argument yields the right equality -- this time using $g(\top_\aQ) = \top_\aR$.

\end{enumerate}
\end{proof}

The irreducible tight morphisms are easier to describe than the irreducible morphisms (Lemma \ref{lem:hom_meet_join_irr}).

\begin{lemma}[Irreducible tight morphisms]
\label{lem:irr_tight_morphisms}
For all finite join-semilattices $\aQ$, $\aR$,
\[
J(\jslTight{\aQ,\aR}) = \{ \up_{\aQ,\aR}^{m,j} \; : m \in M(\aQ), \, j \in J(\aR) \}
\qquad\qquad
M(\jslTight{\aQ,\aR}) = \{ \down_{\aQ,\aR}^{j,m} \; : j \in J(\aQ), \, m \in M(\aR) \}
\]
and hence $|J(\jslTight{\aQ,\aR})| = |M(\aQ)| \cdot |J(\aR)|$ and $|M(\jslTight{\aQ,\aR})| = |J(\aQ)| \cdot |M(\aR)|$.
\end{lemma}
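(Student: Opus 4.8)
The plan is to treat the join-irreducible and meet-irreducible descriptions separately, exploiting that $\jslTight{\aQ,\aR}$ is a \emph{sub-join-semilattice} of $\JSL_f[\aQ,\aR]$: it carries the same pointwise joins and the same bottom $\lambda q.\bot_\aR$, so its order is the restriction of the pointwise order and its joins agree with those of the ambient hom-semilattice. Throughout I record that $m \in M(\aQ)$ forces $m \neq \top_\aQ$ and $j \in J(\aR)$ forces $j \neq \bot_\aR$ (dually on the meet side), so that the order characterisations of Lemma \ref{lem:special_jsl_morphisms}.2 and Lemma \ref{lem:special_jsl_morphisms}.4 apply and supply the injectivity needed for the cardinality counts.

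For the join-irreducibles I would first invoke Lemma \ref{lem:tight_mor_char}.4: the morphisms $\up_{\aQ,\aR}^{m,j}$ with $m \in M(\aQ)$, $j \in J(\aR)$ join-generate $\jslTight{\aQ,\aR}$. By Lemma \ref{lem:std_order_theory}.6 every join-generating set contains the join-irreducibles, giving $J(\jslTight{\aQ,\aR}) \subseteq \{\up_{\aQ,\aR}^{m,j}\}$. For the reverse inclusion I would note that each such $\up_{\aQ,\aR}^{m,j}$ is already join-irreducible in the ambient $\JSL_f[\aQ,\aR]$ by Lemma \ref{lem:hom_meet_join_irr}.2(a); since the joins and the bottom of $\jslTight{\aQ,\aR}$ coincide with those of $\JSL_f[\aQ,\aR]$, any decomposition $\up_{\aQ,\aR}^{m,j} = a \lor b$ inside $\jslTight{\aQ,\aR}$ is also one inside $\JSL_f[\aQ,\aR]$, so irreducibility transfers downward verbatim. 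Distinctness and the count $|M(\aQ)|\cdot|J(\aR)|$ then follow from Lemma \ref{lem:special_jsl_morphisms}.2.

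The meet-irreducible side is the delicate one, because meets in $\jslTight{\aQ,\aR}$ need not agree with meets in $\JSL_f[\aQ,\aR]$, so meet-irreducibility does \emph{not} transfer down automatically. I would proceed in two steps. \emph{Meet-generation:} each $\down_{\aQ,\aR}^{j,m}$ is tight by Corollary \ref{cor:tight_special_morphisms}, and for any tight $f$ I claim $f = \Land_{\jslTight{\aQ,\aR}}\{\down_{\aQ,\aR}^{j,m} : j \in J(\aQ),\, m \in M(\aR),\, f \leq \down_{\aQ,\aR}^{j,m}\}$. Indeed $f$ is a lower bound of this family, hence lies below the $\jslTight$-meet; conversely that $\jslTight$-meet is a lower bound of the family inside the ambient hom, so it lies below the ambient meet, which equals $f$ by the proof of Lemma \ref{lem:hom_meet_join_irr}.1 (every morphism is the ambient meet of the $\down_{\aQ,\aR}^{j,m}$ above it, the condition ``$f \le \down_{\aQ,\aR}^{j,m}$'' being exactly ``$f(j) \le_\aR m$''). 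Thus $\{\down_{\aQ,\aR}^{j,m}\}$ meet-generates $\jslTight{\aQ,\aR}$ and, by the meet-generation clause of Lemma \ref{lem:std_order_theory}.6, contains $M(\jslTight{\aQ,\aR})$. \emph{Irreducibility:} to show each $\down_{\aQ,\aR}^{j,m}$ with $j \in J(\aQ)$, $m \in M(\aR)$ is genuinely meet-irreducible in $\jslTight{\aQ,\aR}$, I would re-run the covering argument from Lemma \ref{lem:hom_meet_join_irr}.1 with $g_1 := \down_{\aQ,\aR}^{q_j,m}$ and $g_2 := \down_{\aQ,\aR}^{j,r_m}$, where $q_j := \Lor_\aQ\{j' \in J(\aQ) : j' <_\aQ j\}$ and $r_m := \Land_\aR\{m' \in M(\aR) : m <_\aR m'\}$. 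The crucial point is that $g_1 \land g_2$, computed in the ambient hom, is by Lemma \ref{lem:special_jsl_morphisms}.5 a join of $\up$-morphisms and hence \emph{tight}; but a tight ambient-meet is automatically the $\jslTight$-meet, being a tight lower bound that dominates every tight lower bound. So $g_1 \land_{\jslTight{\aQ,\aR}} g_2 = g_1 \land_{\JSL_f[\aQ,\aR]} g_2$, its value at $j$ is $r_m >_\aR m = \down_{\aQ,\aR}^{j,m}(j)$, and every $\down_{\aQ,\aR}^{j',m'}$ strictly above $\down_{\aQ,\aR}^{j,m}$ dominates it via Lemma \ref{lem:special_jsl_morphisms}.4; hence $\down_{\aQ,\aR}^{j,m}$ is strictly below the meet of the $\down$-morphisms strictly above it, which together with meet-generation certifies meet-irreducibility. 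Distinctness and the count $|J(\aQ)|\cdot|M(\aR)|$ again come from Lemma \ref{lem:special_jsl_morphisms}.4.

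The main obstacle is exactly this discrepancy between the two meet operations: one must never silently compute a meet in $\JSL_f[\aQ,\aR]$ and read it off in $\jslTight{\aQ,\aR}$. The entire meet-irreducible argument hinges on the single observation that the particular meets we need — the generating meets and the cover-meet $g_1 \land g_2$ — happen to be tight (being expressible as joins of $\up$-morphisms), so that they are computed identically in both lattices. Once this coincidence is isolated, the ambient computations of Lemma \ref{lem:hom_meet_join_irr}.1 transfer without change.
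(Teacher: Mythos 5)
Your proposal is correct and follows essentially the same route as the paper: join-irreducibility of the $\up_{\aQ,\aR}^{m,j}$ transfers down from $\JSL_f[\aQ,\aR]$ since $\jslTight{\aQ,\aR}$ is a sub-join-semilattice, and the meet-irreducible argument reuses the covering argument of Lemma \ref{lem:hom_meet_join_irr}.1 after observing that the decisive meet $g_1 \land g_2$ is pointwise and, by Lemma \ref{lem:special_jsl_morphisms}.5, a join of $\up$-morphisms, hence tight, so the ambient and tight meets coincide there. Your explicit isolation of the point that meets in $\jslTight{\aQ,\aR}$ need not agree with ambient meets is exactly the subtlety the paper's proof flags, just spelled out in more detail.
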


\begin{proof}
\item
\begin{enumerate}
\item
Regarding join-irreducibles, Lemma \ref{lem:tight_mor_char}.4 informs us that every join-irreducible tight morphism takes the form $\up_{\aQ,\aR}^{m,j}$ where $m \in M(\aQ)$ and $j \in J(\aR)$. Finally by Lemma \ref{lem:hom_meet_join_irr}.2 we know that each such morphism is join-irreducible in $\JSL_f[\aQ,\aR]$, and hence also in the sub join-semilattice $\jslTight{\aQ,\aR}$.

\item
Concerning meet-irreducibles, recall that every $\down_{\aQ,\aR}^{q,r}$ lies in $\jslTight{\aQ,\aR}$ by Corollary \ref{cor:tight_special_morphisms}. It turns out we can completely reuse the proof of Lemma \ref{lem:hom_meet_join_irr}.1. That is, every tight morphism $f : \aQ \to \aR$ arises as the meet:
\[
\Land_{\jslTight{\aQ,\aR}} \{ \down_{\aQ,\aR}^{j,m} : j \in J(\aQ), \, m \in M(\aR), \, f(j) \leq_\aR m \}
\]
because the proof only used (i) the pointwise-ordering (again $\jslTight{\aQ,\aR}$ order-embeds into $\aR^Q$), (ii) the fact that $f$ is a join-semilattice morphism, and (iii) the usual properties of join/meet-irreducibles in $\aQ$ and $\aR$. The proof that these special morphisms do not arise as meets of other such morphisms uses only (i) their relative pointwise ordering, and (ii) the fact that if $q_0 \leq_\aQ q_1$ then the $\JSL_f[\aQ,\aR]$-meet of $\down_{\aQ,\aR}^{q_0,r_0}$ and $\down_{\aQ,\aR}^{q_1,r_1}$ is constructed pointwise. The latter point continues to hold in our setting i.e.\ their $\jslTight{\aQ,\aR}$-meet is constructed pointwise. To see this, observe that Lemma \ref{lem:special_jsl_morphisms}.5 actually shows that the pointwise meet is:
\[
\Lor_{\JSL_f[\aQ,\aR]} \{ \up_{\aQ,\aR}^{\bot_\aQ,r_0 \land_\aR r_1}, \, \up_{\aQ,\aR}^{q_0,r_0}, \, \up_{\aQ,\aR}^{q_1,\top_\aR} \}
\]
this being a tight morphism.
\end{enumerate}
\end{proof}

\begin{definition}[Tight tensor product]
\label{def:ttenp}
\item
The \emph{tight tensor product} functor $- \ttenp - : \JSL_f \times \JSL_f \to \JSL_f$ is defined as the composite functor:
\[
\JSL_f \times \JSL_f \xto{\OD_j^{op} \times \Id_{\JSL_f}} \JSL_f^{op} \times \JSL_f \xto{\jslTight{-,-}} \JSL_f
\]
There are associated canonical functions:
\[
\begin{tabular}{c}
$\beta_{\aQ,\aR}^t : Q \times R \to \Tight{\aQ^{\pOp},\aR}$
\\
where $\beta_{\aQ,\aR}^t(q_0,r_0) := \; \up_{\aQ^{\pOp},\aR}^{q_0,r_0} \; = \elem{\aR}{r_0} \circ \ideal{\aQ^{\pOp}}{q_0} = 
\begin{cases}
\bot_\aR & \text{if $q_0 \leq_\aQ q$}
\\
r_0 & \text{if $q_0 \nleq_\aQ q$}.
\end{cases}$
\end{tabular}
\]
\endbox
\end{definition}

Since $\aQ \ttenp \aR = \jslTight{\aQ^{\pOp},\aR}$, observe that Lemma \ref{lem:irr_tight_morphisms} immediately implies the following important statement.

\begin{lemma}[Irreducibles in tight tensor products]
\label{lem:tight_tensor_irr}
\item
For all finite join-semilattices $\aQ$, $\aR$,
\[
J(\aQ \ttenp \aR) = \{ \up_{\aQ^{\pOp},\aR}^{j_1,j_2} \; : j_1 \in J(\aQ), \, j_2 \in J(\aR) \}
\qquad\qquad
M(\aQ \ttenp \aR) = \{ \down_{\aQ^{\pOp},\aR}^{m_1,m_2} \; : m_1 \in M(\aQ), \, m_2 \in M(\aR) \}.
\]
Therefore $|J(\aQ \ttenp \aR)| = |J(\aQ)| \cdot |J(\aR)|$ and $|M(\aQ \ttenp \aR)| = |M(\aQ)| \cdot |M(\aR)|$.
\end{lemma}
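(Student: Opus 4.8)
The plan is to deduce the statement as an immediate consequence of Lemma \ref{lem:irr_tight_morphisms}, exploiting the definitional identity $\aQ \ttenp \aR = \jslTight{\aQ^{\pOp},\aR}$ recorded just above. First I would confirm this object-level identity by unwinding Definition \ref{def:ttenp}: the functor $\ttenp$ is the composite of $\OD_j^{op} \times \Id_{\JSL_f}$ with $\jslTight{-,-}$, and since $\OD_j$ acts on objects by $\OD_j \aQ = \aQ^{\pOp}$, the (contravariant) first slot of the tight hom-functor receives $\aQ^{\pOp}$ while the second slot receives $\aR$ unchanged. Hence $J(\aQ \ttenp \aR) = J(\jslTight{\aQ^{\pOp},\aR})$ and likewise for the meet-irreducibles.

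Next I would substitute $\aQ \mapsto \aQ^{\pOp}$ into Lemma \ref{lem:irr_tight_morphisms}, obtaining
\[
J(\jslTight{\aQ^{\pOp},\aR}) = \{ \up_{\aQ^{\pOp},\aR}^{m,j} : m \in M(\aQ^{\pOp}),\, j \in J(\aR) \}, \qquad M(\jslTight{\aQ^{\pOp},\aR}) = \{ \down_{\aQ^{\pOp},\aR}^{j,m} : j \in J(\aQ^{\pOp}),\, m \in M(\aR) \}.
\]
The bridge to the claimed form is the order-duality identities $M(\aQ^{\pOp}) = J(\aQ)$ and $J(\aQ^{\pOp}) = M(\aQ)$, which follow from the general fact $M(\pP) = J(\pP^{\pOp})$ of Definition \ref{def:std_order_theory}.6 applied with $\pP = \aQ$ and with $\pP = \aQ^{\pOp}$ respectively. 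Relabelling the index sets accordingly ($m \in M(\aQ^{\pOp})$ becomes $j_1 \in J(\aQ)$, and $j \in J(\aQ^{\pOp})$ becomes $m_1 \in M(\aQ)$) turns the two displayed sets into precisely the sets asserted for $J(\aQ \ttenp \aR)$ and $M(\aQ \ttenp \aR)$.

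Finally the cardinality counts are read off the same way: from $|J(\jslTight{\aQ,\aR})| = |M(\aQ)| \cdot |J(\aR)|$ and $|M(\jslTight{\aQ,\aR})| = |J(\aQ)| \cdot |M(\aR)|$ I obtain $|J(\aQ \ttenp \aR)| = |M(\aQ^{\pOp})| \cdot |J(\aR)| = |J(\aQ)| \cdot |J(\aR)|$ and $|M(\aQ \ttenp \aR)| = |J(\aQ^{\pOp})| \cdot |M(\aR)| = |M(\aQ)| \cdot |M(\aR)|$, again invoking the duality identities. I do not anticipate any genuine obstacle, since the entire content is already packaged in Lemma \ref{lem:irr_tight_morphisms}; the only point demanding care is the bookkeeping of order-duals — keeping straight that passing to $\aQ^{\pOp}$ interchanges the roles of $J$ and $M$ in the first argument while leaving $\aR$ alone, and verifying that the witnessing special morphisms emerging from the substitution are exactly $\up_{\aQ^{\pOp},\aR}^{j_1,j_2}$ and $\down_{\aQ^{\pOp},\aR}^{m_1,m_2}$ as named, rather than their adjoints.
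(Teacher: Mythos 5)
Your proposal is correct and follows exactly the paper's route: the paper likewise presents this lemma as an immediate consequence of Lemma \ref{lem:irr_tight_morphisms} via the identity $\aQ \ttenp \aR = \jslTight{\aQ^{\pOp},\aR}$ together with the relabellings $M(\aQ^{\pOp}) = J(\aQ)$ and $J(\aQ^{\pOp}) = M(\aQ)$. Your extra care about which special morphisms emerge after the substitution is sound but not needed beyond what you wrote.
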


\smallskip
We also have the following basic result.

\begin{lemma}
\label{lem:ttenp_pres_monos}
$\ttenp : \JSL_f \times \JSL_f \to \JSL_f$ preserves embeddings: $f \ttenp g$ is injective whenever both $f$ and $g$ are.
\end{lemma}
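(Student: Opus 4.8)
The plan is to reduce the claim to two elementary cancellation properties by unwinding the definition of $\ttenp$ as a composite functor. First I would read off the underlying set-map of $f \ttenp g$. By Definition \ref{def:ttenp} we have $\ttenp = \jslTight{-,-} \circ (\OD_j^{op} \times \Id_{\JSL_f})$, so on objects $\aQ \ttenp \aR = \jslTight{\aQ^{\pOp},\aR}$, and for morphisms $f : \aQ_1 \to \aR_1$, $g : \aQ_2 \to \aR_2$ the first factor sends $f$ to its adjoint $f_* : \aR_1^{\pOp} \to \aQ_1^{\pOp}$ (Theorem \ref{thm:jsl_self_dual}, since $\OD_j f^{op} = f_*$), which enters the contravariant slot of the internal hom-functor. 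Recalling that $\jslTight{\phi^{op},\psi}$ acts by $h \mapsto \psi \circ h \circ \phi$, I conclude that $f \ttenp g$ carries a tight morphism $h : \aQ_1^{\pOp} \to \aQ_2$ to $g \circ h \circ f_* : \aR_1^{\pOp} \to \aR_2$, which is again tight since tightness is preserved under pre/post-composition by arbitrary $\JSL_f$-morphisms.

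Next I would translate the hypotheses through Lemma \ref{lem:jsl_mono_epi_iso}. Since $g$ is injective it is monic (Lemma \ref{lem:jsl_mono_epi_iso}.1), and since $f$ is injective its adjoint $f_*$ is surjective (Lemma \ref{lem:jsl_mono_epi_iso}.3), hence epic (Lemma \ref{lem:jsl_mono_epi_iso}.2). To prove $f \ttenp g$ injective on carriers, suppose $g \circ h_1 \circ f_* = g \circ h_2 \circ f_*$ for tight $h_1, h_2 : \aQ_1^{\pOp} \to \aQ_2$. Left-cancelling the monic $g$ yields $h_1 \circ f_* = h_2 \circ f_*$, and right-cancelling the epic $f_*$ yields $h_1 = h_2$. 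Thus the underlying function of $f \ttenp g$ is injective, which by Lemma \ref{lem:jsl_mono_epi_iso}.1 means $f \ttenp g$ is an embedding.

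I expect no genuine difficulty in the argument itself: its entire content is the two cancellations. The only step requiring care is the first one, namely correctly identifying the underlying map of $f \ttenp g$ — in particular tracking the contravariance of the first tensor slot so that it is the adjoint $f_*$ (and not $f$) that appears as pre-composition, and confirming that $f$ being injective is precisely what makes this $f_*$ \emph{epic} rather than monic. Once the action $h \mapsto g \circ h \circ f_*$ is pinned down, the conclusion is immediate; note moreover that the same argument applies verbatim with $\JSL_f[-,-]$ in place of $\jslTight{-,-}$, since it uses tightness only through its closure under composition.
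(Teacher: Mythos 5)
Your proposal is correct and follows essentially the same route as the paper's proof: identify the action $h \mapsto g \circ h \circ f_*$, note that $g$ injective makes it monic and $f$ injective makes $f_*$ epic (Lemma \ref{lem:jsl_mono_epi_iso}), and cancel on both sides. The paper's own argument is just a terser version of exactly this; your added care about the contravariant slot and the final remark that tightness is only used via closure under composition are both accurate.
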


\begin{proof}
Given morphisms $f : \aQ_1 \to \aQ_2$ and $g : \aR_1 \to \aR_2$ then:
\[
f \ttenp g : \jslTight{\aQ_1^{\pOp},\aR_1} \to \jslTight{\aQ_2^{\pOp},\aR_2}
\qquad
f \ttenp g(h) := g \circ h \circ f_*.
\]
Recall that $g$ is injective iff it is $\JSL_f$-monic, and $f$ is injective iff $f_*$ is $\JSL_f$-epic. Thus if $f \ttenp g(h_1) = f \ttenp g(h_2)$ we immediately deduce that $h_1 = h_2$.
\end{proof}

% univeral maps relative to sync prod (?)
\takeout{
\[
\begin{tabular}{c}
$\beta_{\aQ \otimes \aR} : Q \times R \to O(\Pirr\aQ \syncp \Pirr\aR)$
\\[0.5ex]
where $\beta_{\aQ \otimes \aR}(q,r) := \{ (m_q,m_r) \in M(\aQ) \times M(\aR) : q \nleq_\aQ m_q \text{ and } r \nleq_\aR m_r \}$
\end{tabular}
\]
}

\subsubsection{Tight morphisms: some more examples}

\begin{lemma}[Constant morphisms are tight]
\label{lem:const_mor_are_tight}
\item
For each pair of finite join-semilattices $(\aQ,\aR)$ and element $r_0 \in R$, the constant morphism:
\[
\lambda q \in Q. 
\begin{cases}
\bot_\aR & \text{if $q = \bot_\aR$}
\\
r_0 & \text{otherwise}
\end{cases} : \aQ \to \aR
\]
is a tight morphism.
\end{lemma}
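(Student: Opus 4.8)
The plan is to recognise the displayed constant morphism as one of the special morphisms already shown to be tight, so that no genuinely new work is needed. First I would check that the map is in fact a well-defined $\JSL_f$-morphism: bottom-preservation is immediate from the definition, and join-preservation follows by a short case analysis on whether the arguments equal $\bot_\aQ$. The only case requiring a word is when exactly one argument is $\bot_\aQ$, say $q_1 = \bot_\aQ$ and $q_2 \neq \bot_\aQ$; then $q_1 \lor_\aQ q_2 = q_2 \neq \bot_\aQ$ so the map sends it to $r_0$, matching $\bot_\aR \lor_\aR r_0 = r_0$. When both arguments are non-$\bot_\aQ$ their join is again non-$\bot_\aQ$, giving $r_0 = r_0 \lor_\aR r_0$.

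The key observation is that this constant morphism coincides with the special morphism $\up_{\aQ,\aR}^{\bot_\aQ,r_0}$. Indeed, by Definition \ref{def:spec_hom_morphisms} we have $\up_{\aQ,\aR}^{\bot_\aQ,r_0}(q) = \bot_\aR$ when $q \leq_\aQ \bot_\aQ$ (equivalently $q = \bot_\aQ$) and $r_0$ when $q \nleq_\aQ \bot_\aQ$, which is exactly the given map. Equivalently it coincides with $\down_{\aQ,\aR}^{\top_\aQ,r_0}$: setting $q_0 = \top_\aQ$ in the definition of $\down_{\aQ,\aR}^{q_0,r_0}$ collapses the third clause, since $q \nleq_\aQ \top_\aQ$ never holds, leaving precisely the two-case constant map that sends $\bot_\aQ$ to $\bot_\aR$ and everything else to $r_0$.

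The result then follows immediately. Since $\up_{\aQ,\aR}^{\bot_\aQ,r_0} = \elem{\aR}{r_0} \circ \ideal{\aQ}{\bot_\aQ}$ factors through $\two$ by its very definition, it is tight directly from Definition \ref{def:tight_jsl_mor}; alternatively one may simply invoke Corollary \ref{cor:tight_special_morphisms}, which records that all special morphisms $\up_{\aQ,\aR}^{q_0,r_0}$ and $\down_{\aQ,\aR}^{q_0,r_0}$ are tight. I expect no real obstacle: the entire content is matching the case splits in the definitions of $\up_{\aQ,\aR}^{q_0,r_0}$ and $\down_{\aQ,\aR}^{q_0,r_0}$ against the displayed constant map, together with the elementary fact that $q \leq_\aQ \bot_\aQ$ is equivalent to $q = \bot_\aQ$.
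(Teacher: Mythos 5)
Your proof is correct and takes essentially the same route as the paper, which simply observes that the displayed map is the special morphism $\up_{\aQ,\aR}^{\bot_\aQ,r_0}$ (hence tight by Corollary \ref{cor:tight_special_morphisms}); your extra verifications of well-definedness and the alternative identification with $\down_{\aQ,\aR}^{\top_\aQ,r_0}$ are accurate but not needed.
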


\begin{proof}
This is simply the special morphism $\up_{\aQ,\aR}^{\bot_\aQ,r_0}$.
\end{proof}

Recall that for every finite distributive join-semilattice $\aQ$ we have the canonical order-isomorphism $\tau_\aQ : J(\aQ) \to M(\aQ)$ between join/meet-irreducibles, see Lemma \ref{lem:std_order_theory}.13. It extends naturally to a (tight) endomorphism of $\aQ$.

\smallskip

\begin{lemma}[Special endomorphisms of distributive join-semilattices]
\label{lem:spec_finite_distrib_endo_mor}
\item
If $\aQ$ is a finite distributive join-semilattice,
\[
\Lor_{\jslTight{\aQ,\aQ}} \, \{ \up_{\aQ,\aQ}^{q,q} : q \in Q \} \;\; |_{J(\aQ) \times M(\aQ)} \;=\; \tau_\aQ
\qquad\qquad
\Land_{\jslTight{\aQ,\aQ}} \, \{ \down_{\aQ,\aQ}^{q,q} : q \in Q \} = id_\aQ
\]
recalling that $\jslTight{\aQ,\aQ} = \JSL_f[\aQ,\aQ]$ because $\aQ$ is distributive (Corollary \ref{cor:jsl_equ_tight_distributive}).
\end{lemma}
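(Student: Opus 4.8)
The plan is to treat the two equalities separately, since the first only involves the pointwise join in $\jslTight{\aQ,\aQ}$ (which coincides with $\JSL_f[\aQ,\aQ]$ by Corollary \ref{cor:jsl_equ_tight_distributive} since $\aQ$ is distributive), whereas the second involves the meet in $\JSL_f[\aQ,\aQ]$, which is \emph{not} pointwise in general (Example \ref{ex:hom_meet_not_pw}). Both come down to short direct computations; distributivity is really only used to guarantee that $\tau_\aQ$ is defined and lands in $M(\aQ)$ (Lemma \ref{lem:std_order_theory}.13) and to identify $\jslTight{\aQ,\aQ}$ with $\JSL_f[\aQ,\aQ]$.

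For the first equality I would write $g := \Lor_{\jslTight{\aQ,\aQ}} \{ \up_{\aQ,\aQ}^{q,q} : q \in Q \}$ and use that the join in $\JSL_f[\aQ,\aQ]$ is computed pointwise. Recalling from Definition \ref{def:spec_hom_morphisms} that $\up_{\aQ,\aQ}^{q,q}(x) = \bot_\aQ$ when $x \leq_\aQ q$ and $\up_{\aQ,\aQ}^{q,q}(x) = q$ when $x \nleq_\aQ q$, I get $g(x) = \Lor_\aQ \{ q \in Q : x \nleq_\aQ q \} = \Lor_\aQ \overline{\up_\aQ x}$ for every $x \in Q$. Specialising to a join-irreducible $j \in J(\aQ)$ yields $g(j) = \Lor_\aQ \overline{\up_\aQ j}$, which is exactly $\tau_\aQ(j)$ by the defining formula in Lemma \ref{lem:std_order_theory}.13. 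Since that same Lemma guarantees $\tau_\aQ(j) \in M(\aQ)$, the restriction of the functional relation $g \subseteq Q \times Q$ to $J(\aQ) \times M(\aQ)$ retains precisely the pairs $(j, g(j)) = (j, \tau_\aQ(j))$, i.e.\ equals $\tau_\aQ$, as required.

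For the second equality I set $h := \Land_{\jslTight{\aQ,\aQ}} \{ \down_{\aQ,\aQ}^{q,q} : q \in Q \}$ and prove $h = id_\aQ$ by two inequalities. The inequality $id_\aQ \leq_{\JSL_f[\aQ,\aQ]} h$ follows because $id_\aQ \leq_{\JSL_f[\aQ,\aQ]} \down_{\aQ,\aQ}^{q,q}$ for each $q$: checking the three clauses of $\down_{\aQ,\aQ}^{q,q}$ shows $x \leq_\aQ \down_{\aQ,\aQ}^{q,q}(x)$ in each case (using $x \leq_\aQ q$ in the middle clause and $x \leq_\aQ \top_\aQ$ in the last). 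For the reverse, I invoke Lemma \ref{lem:hom_meet_bound}, which bounds the genuine meet below by the pointwise meet: $h(x) \leq_\aQ \Land_\aQ \{ \down_{\aQ,\aQ}^{q,q}(x) : q \in Q \}$. I then compute this pointwise meet for fixed $x \neq \bot_\aQ$: here $\down_{\aQ,\aQ}^{q,q}(x) = q$ whenever $x \leq_\aQ q$ and $\down_{\aQ,\aQ}^{q,q}(x) = \top_\aQ$ otherwise, so the pointwise meet is $\Land_\aQ \up_\aQ x = x$ (the $\top_\aQ$ terms are absorbed, and $x$ is the minimum of $\up_\aQ x$); the case $x = \bot_\aQ$ is immediate. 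Hence $h(x) \leq_\aQ x = id_\aQ(x)$, giving $h \leq_{\JSL_f[\aQ,\aQ]} id_\aQ$ and therefore $h = id_\aQ$.

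The only real obstacle is the non-pointwise nature of the meet in $\JSL_f[\aQ,\aQ]$, but Lemma \ref{lem:hom_meet_bound} supplies exactly the one-sided pointwise bound needed, and the opposite bound $id_\aQ \leq h$ is elementary; squeezing between them closes the argument. I would also remark in passing that the second equality in fact holds for an arbitrary finite join-semilattice, the distributivity hypothesis being needed only for the first equality so that $\tau_\aQ$ exists.
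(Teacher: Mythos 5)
Your proof is correct and follows essentially the same route as the paper's: the first equality via the pointwise join formula $\Lor_\aQ \overline{\up_\aQ x}$ matched against the definition of $\tau_\aQ$ in Lemma \ref{lem:std_order_theory}.13, and the second by sandwiching the meet between $id_\aQ$ and the pointwise bound from Lemma \ref{lem:hom_meet_bound}. Your closing remark that the second equality needs no distributivity is also accurate.
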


\begin{proof}
For any join-irreducible $j \in J(\aQ)$ we have:
\[
\begin{tabular}{lll}
$(\Lor_{\jslTight{\aQ,\aQ}} \, \{ \up_{\aQ,\aQ}^{q,q} : q \in Q \})(j)$
&
$= \Lor_\aQ \{ \up_{\aQ,\aQ}^{q,q}(j) : q \in Q \}$
& join is pointwise
\\&
$= \Lor_\aQ \{ q \in Q : j \nleq_\aQ q \}$
& by definition of $\up_{\aQ,\aQ}^{q,q}$
\\&
$= \tau_\aQ(j)$
& see Lemma \ref{lem:std_order_theory}.13
\end{tabular}
\]
Regarding the second equality, first observe that $id_\aQ \leq \; \down_{\aQ,\aQ}^{q,q}$ for every $q \in Q$. The converse follows because:
\[
\Land_{\jslTight{\aQ,\aQ}} \{ \down_{\aQ,\aQ}^{q,q} \; : q \in Q \} (q') \leq_\aQ \Land_\aQ \{ \down_{\aQ,\aQ}^{q,q}(q') : q \in Q\}
= \Land_\aQ \{ q \in Q : q' \leq_\aQ q \}
= q'
\]
for each $q' \in Q$, where the first inequality follows by Lemma \ref{lem:hom_meet_bound}.
\end{proof}

\begin{lemma}[Comparing tight morphisms to arbitrary morphisms]
\label{lem:spec_mor_q_q}
\item
Take any finite join-semilattice $\aQ$ and any pair $(m,j) \in M(\aQ) \times J(\aQ)$.
\item
\begin{enumerate}
\item
The following statements hold:
\[
\exists q \in Q\backslash\{\top_\aQ\}.(\up_{\aQ,\aQ}^{q,q} \;\leq\; \up_{\aQ,\aQ}^{m,j})
\iff
m \leq_\aQ j
\qquad\qquad
\exists q \in Q\backslash\{\bot_\aQ\}.(\down_{\aQ,\aQ}^{q,q} \;\leq\; \down_{\aQ,\aQ}^{j,m})
\iff
j\leq_\aQ m
\]

\item
For any $\JSL_f$-morphism $f : \aQ \to \aQ$ we have:
\[
f \;\leq \; \down_{\aQ,\aQ}^{j,m} \;
\iff
f(j) \leq_\aQ m
\iff
\overline{\Pirr f}(j,m)
\]
and for any tight $\JSL_f$-morphism $g : \aQ \to \aQ$ we have:
\[
\up_{\aQ,\aQ}^{m,j} \; \leq g
\iff
(\nu_{\aQ,\aQ^{\pOp}}(g))_* \leq \; \down_{\aQ,\aQ}^{m,j} \;
\iff
(\nu_{\aQ,\aQ^{\pOp}}(g))_*(j) \leq_\aQ m
\iff
\overline{\Pirr \nu_{\aQ,\aQ^{\pOp}}(g)}(m,j)
\]

% FROM HERE
% Pirr f_* = (Pirr f)\spbreve ?

\end{enumerate}
\end{lemma}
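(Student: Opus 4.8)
The plan is to treat the four assertions largely independently, building the two statements of (2) on top of the order-comparison facts for special morphisms (Lemma \ref{lem:special_jsl_morphisms}) and the pointwise nature of the hom-order (Lemma \ref{lem:hom_meet_bound}).

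For part (1), I would apply Lemma \ref{lem:special_jsl_morphisms}.2 with $\aR=\aQ$, $q_0=r_0=q$, $q_1=m$, $r_1=j$. Its hypotheses $q_0=q\neq\top_\aQ$ and $r_1=j\neq\bot_\aQ$ hold, the latter because $j\in J(\aQ)$ is never $\bot_\aQ$, so $\up_{\aQ,\aQ}^{q,q}\leq_{\JSL_f[\aQ,\aQ]}\up_{\aQ,\aQ}^{m,j}$ iff $m\leq_\aQ q$ and $q\leq_\aQ j$. Hence the existential reduces to $\exists q\neq\top_\aQ.\,(m\leq_\aQ q\leq_\aQ j)$, which is equivalent to $m\leq_\aQ j$: forward by transitivity, backward by taking the witness $q:=m$, which is legitimate since $m\in M(\aQ)$ forces $m\neq\top_\aQ$ and $m\leq_\aQ m\leq_\aQ j$. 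The second claim is the exact order-dual, using Lemma \ref{lem:special_jsl_morphisms}.4 (whose hypotheses $q\neq\bot_\aQ$ and $m\neq\top_\aQ$ hold) and the witness $q:=j$, valid because $j\neq\bot_\aQ$.

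For the first statement of (2), I would use that $\leq_{\JSL_f[\aQ,\aQ]}$ is pointwise (Lemma \ref{lem:hom_meet_bound}), so $f\leq\down_{\aQ,\aQ}^{j,m}$ iff $f(q)\leq_\aQ\down_{\aQ,\aQ}^{j,m}(q)$ for all $q$. By the explicit action of $\down_{\aQ,\aQ}^{j,m}$ in Definition \ref{def:spec_hom_morphisms}, the only nontrivial constraints arise for $\bot_\aQ<_\aQ q\leq_\aQ j$, where they read $f(q)\leq_\aQ m$; by monotonicity of $f$ these hold for all such $q$ iff they hold at the maximum $q=j$, i.e.\ iff $f(j)\leq_\aQ m$. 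The second equivalence is immediate from the definition of $\Pirr f=\{(j,m)\in J(\aQ)\times M(\aQ):f(j)\nleq_\aQ m\}$ (Definition \ref{def:open_pirr}): its complement within $J(\aQ)\times M(\aQ)$ satisfies $\overline{\Pirr f}(j,m)\iff f(j)\leq_\aQ m$.

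The second statement of (2) is the main obstacle, since it involves $\nu_{\aQ,\aQ^{\pOp}}$, which is introduced outside this excerpt; the plan is to reduce it to the first statement and part (1) via the self-duality of the hom-semilattices. Writing $h:=(\nu_{\aQ,\aQ^{\pOp}}(g))_*:\aQ\to\aQ$, the rightmost equivalence follows cleanly from the adjoint relationship $h(j)\leq_\aQ m\iff j\leq_\aQ h_*(m)$ (Lemma \ref{lem:adj_obs}.1) together with $h_*=\nu_{\aQ,\aQ^{\pOp}}(g)$, the identity $\Pirr(h)=(\Pirr h_*)\spbreve$ recorded before Theorem \ref{thm:partial_nat_iso}, and the definition of $\Pirr$, yielding $h(j)\leq_\aQ m\iff\overline{\Pirr\nu_{\aQ,\aQ^{\pOp}}(g)}(m,j)$. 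The leftmost equivalence $\up_{\aQ,\aQ}^{m,j}\leq g\iff h\leq\down_{\aQ,\aQ}^{m,j}$ is to be obtained by transporting $\up_{\aQ,\aQ}^{m,j}\leq g$ along the adjoint order-isomorphism $(-)_*:\JSL_f[\aQ,\aQ]\to\JSL_f[\aQ^{\pOp},\aQ^{\pOp}]$ of Lemma \ref{lem:jsl_mor_iso}.1, converting the $\up$-morphism into a $\down$-morphism with swapped superscripts via Lemma \ref{lem:special_jsl_morphisms}.1, and invoking the defining relationship between a tight $g$ and $\nu_{\aQ,\aQ^{\pOp}}(g)$.

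The genuinely delicate point is the middle equivalence $h\leq\down_{\aQ,\aQ}^{m,j}\iff h(j)\leq_\aQ m$. Running the same pointwise computation as in the first statement on $\down_{\aQ,\aQ}^{m,j}$ (threshold $m$, value $j$) produces $h\leq\down_{\aQ,\aQ}^{m,j}\iff h(m)\leq_\aQ j$, not $h(j)\leq_\aQ m$. Reconciling these therefore forces the symmetry $h(m)\leq_\aQ j\iff h(j)\leq_\aQ m$ for $m\in M(\aQ)$, $j\in J(\aQ)$, and I expect this to be exactly the property that $\nu_{\aQ,\aQ^{\pOp}}$ is designed to guarantee (that $(\nu_{\aQ,\aQ^{\pOp}}(g))_*$ is self-transpose on irreducibles, reflecting the symmetry of the underlying tight/synchronous construction). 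Establishing that symmetry from the definition of $\nu$ is the step I anticipate carrying the real content; once it is in hand, the chain closes by stitching together the three equivalences above.
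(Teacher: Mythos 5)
Your handling of part (1), of the first claim of part (2), and of the rightmost equivalence in the second claim of (2) is correct and essentially the paper's argument: Lemma \ref{lem:special_jsl_morphisms}.2/.4 with witnesses $q:=m$ (resp.\ $q:=j$), the reduction of $f\leq\down_{\aQ,\aQ}^{j,m}$ to $f(j)\leq_\aQ m$ by monotonicity, and the adjoint relationship for $h:=(\nu_{\aQ,\aQ^{\pOp}}(g))_*$ together with $\Pirr(h_*)=(\Pirr h)\spbreve$.

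The gap is your plan for the middle equivalence. The symmetry you propose to establish, $h(m)\leq_\aQ j\iff h(j)\leq_\aQ m$, is false in general, so that step cannot be carried out by any amount of work on the definition of $\nu$. A minimal counterexample: take $\aQ=\two$, $g:=\bot_{\JSL_f[\two,\two]}$ (the constant-bottom map, which is tight); then $\nu_{\two,\two^{\pOp}}(g)=id$ and $h=(\nu_{\two,\two^{\pOp}}(g))_*=id_\two$, and with $(m,j)=(0,1)$ one has $h(m)=0\leq 1=j$ but $h(j)=1\nleq 0=m$. What you have actually detected is a transposition typo in the statement: the middle term should be $\down_{\aQ,\aQ}^{j,m}$, not $\down_{\aQ,\aQ}^{m,j}$. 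This is forced by the paper's own computation: the composite order-reversing isomorphism $\alpha:=(-)_*\circ\nu_{\aQ,\aQ^{\pOp}}\colon(\jslTight{\aQ,\aQ})^{\pOp}\to\jslTight{\aQ,\aQ}$ satisfies $\alpha(\up_{\aQ,\aQ}^{m,j})=(\down_{\aQ^{\pOp},\aQ^{\pOp}}^{m,j})_*=\down_{\aQ,\aQ}^{j,m}$ by Theorem \ref{thm:tight_self_dual} and Lemma \ref{lem:special_jsl_morphisms}.1, whence $\up_{\aQ,\aQ}^{m,j}\leq g\iff h\leq\down_{\aQ,\aQ}^{j,m}$; with the corrected superscripts the middle equivalence is precisely an instance of the first claim of (2) applied to $h$, and no symmetry is needed. (The displayed chain inside the paper's proof repeats the transposition, but its own computation of $\alpha$ makes the intended reading unambiguous.) Relatedly, your sketch of the leftmost equivalence -- transport along $(-)_*$ and ``invoke the defining relationship between $g$ and $\nu$'' -- is the right idea but is not yet a proof: until you compute which $\down$-morphism $\alpha$ sends $\up_{\aQ,\aQ}^{m,j}$ to, you cannot close the chain, and that computation is exactly what resolves the difficulty you flagged.
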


\begin{proof}
\item
\begin{enumerate}
\item
Consider the left-hand equality and assume its left-hand side. Since $q \neq \top_\aQ$ by assumption and also $j \neq \bot_\aQ$ by join-irreducibility, we may apply Lemma \ref{lem:special_jsl_morphisms}.2. Thus $\up_{\aQ,\aQ}^{q,q} \;\leq\; \up_{\aQ,\aQ}^{m,j}$ if and only if $m \leq_\aQ q$ and $q \leq_\aQ j$, which certainly implies $m \leq_\aQ j$. Conversely, if $m \leq_\aQ j$ then again by Lemma \ref{lem:special_jsl_morphisms}.2 we have $\up_{\aQ,\aQ}^{m,m} \;\leq\; \up_{\aQ,\aQ}^{m,j}$, where the former is applicable because $m \neq_\aQ \top_\aQ$ by meet-irreducibility. The right-hand equality follows by a symmetric argument, using Lemma \ref{lem:special_jsl_morphisms}.4. For the second part of the argument one finds that $j \leq_\aQ m$ implies $\down_{\aQ,\aQ}^{j,j} \;\leq\; \down_{\aQ,\aQ}^{j,m}$.

\item
We calculate:
\[
\begin{tabular}{lll}
$f \; \leq \; \down_{\aQ,\aQ}^{j,m}$
&
$\iff \forall j' \in J(Q).(j' \leq_\aQ j \To f(j') \leq_\aQ m)$
& 
\\&
$\iff f(j) \leq_\aQ m$
& using monotonicity of $f$
\\&
$\iff \overline{\Pirr f}(j,m)$
& by definition of $\Pirr$
\end{tabular}
\]
recalling that the pointwise ordering is determined by the restriction to join-irreducibles. Regarding the final claim, take any tight morphism $g : \aQ \to \aQ$ and consider the composite isomorphism:
\[
\alpha :=
\qquad
(\jslTight{\aQ,\aQ})^{\pOp}
\xto{\nu_{\aQ,\aQ^{\pOp}}}
\jslTight{\aQ^{\pOp},\aQ^{\pOp}}
\xto{(-)_*}
\jslTight{\aQ,\aQ}
\]
using $\nu_{\aQ,\aQ^{\pOp}}$ from Theorem \ref{thm:tight_self_dual}. The latter informs us that $\alpha(\down_{\aQ,\aQ}^{j,m}) = (\nu_{\aQ,\aQ^{\pOp}}(\down_{\aQ,\aQ}^{j,m}))_* =\; (\up_{\aQ^{\pOp},\aQ^{\pOp}}^{j,m})_* \;=\; \up_{\aQ,\aQ}^{m,j}$, hence:
\[
\up_{\aQ,\aQ}^{m,j} \; \leq g
\iff
(\nu_{\aQ,\aQ^{\pOp}}(g))_* \leq \; \down_{\aQ,\aQ}^{m,j} \;
\iff
(\nu_{\aQ,\aQ^{\pOp}}(g))_*(j) \leq_\aQ m
\]
using the order-isomorphism and the previous claim. The final equivalence follows by definition of $\Pirr$, recalling that $\Pirr(h_*) = (\Pirr h)\spbreve$ holds generally.
\end{enumerate}
\end{proof}

\subsection{Tight tensors are essentially synchronous products}

In order to better understand the tight tensor product, we'll describe essentially the same functor inside $\BiCliq$. This turns out to be the synchronous product of binary relations, and corresponds to the Kronecker product of binary matrices over the boolean semiring \cite{WattsBooleanRankKronecker2001}.

\begin{definition}[Synchronous product functor]
\label{def:sync_product}
The \emph{synchronous product functor} $\syncp : \BiCliq \times \BiCliq \to \BiCliq$ is defined on objects as follows:
\[
\rG \syncp \rH \subseteq (\rG_s \times \rH_s) \times (\rG_t \times \rH_t)
\qquad
\rG \syncp \rH ((g_s, h_s), (g_t, h_t)) :\iff \rG(g_s, g_t) \,\land\, \rH(h_s, h_t).
\]
Its action on morphisms is the same i.e.\ given $\BiCliq$-morphisms $\rR : \rG \to \rH$ and $\rR : \rG' \to \rH'$ then:
\[
\rR \syncp \rR' : \rG \syncp \rG' \to \rH \syncp \rH'
\]
views the parameters $\rR \subseteq \rG_s \times \rH_t$ and $\rR' \subseteq \rG'_s \times \rH'_t$ as binary relations and constructs the relation $\rR \syncp \rR' \subseteq (\rG_s \times \rG'_s) \times (\rH'_t \times \rH_t)$ as above. Similarly, the associated component morphisms are:
\[
\begin{tabular}{llll}
$(\rR \syncp \rR')_-$ 
& $:= \rR_- \syncp \rR'_-$
& $\subseteq (\rG_s \times \rG'_s) \times (\rH_s \times \rH'_s)$
\\[1ex]
$(\rR \syncp \rR')_+$
& $:= \rR_+ \syncp \rR'_+$
& $\subseteq (\rH_t \times \rH'_t) \times (\rG_t \times \rG'_t)$
\end{tabular}
\]
\endbox
\end{definition}

\begin{note}[Kronecker product of binary matrices]
\item
Given an $m \times n$ binary matrix $M$, and also an $m' \times n'$ binary matrix $N$, then their Kronecker product (over the boolean semiring) is obtained by replacing each $1$ in $M$ by a copy of $N$, and each $0$ in $M$ by the $m' \times n'$ zero-matrix. More formally, it is the $(m \times m') \times (n \times n')$ binary matrix $M \syncp N$ where the indices are ordered lexicographically, and:
\[
(M \syncp N)_{(i,i'),(j,j')} := M_{i,j} \land_\two N_{i',j'}.
\]
Then the Kronecker product of binary matrices is the synchronous product of their corresponding indicator relations, endowed with the lexicographic ordering. \endbox
\end{note}

Before proving that this functor is well-defined we will prove a number of basic properties e.g.\ synchronous products preserve bicliques (Cartesian-products), and also $\rR \syncp \rS$ is reduced iff both $\rR$ and $\rS$ are reduced, as long as none of the domains/codomains of $\rR$ and $\rS$ are empty.

\begin{lemma}
\label{lem:sync_functor_basic}
Let $\rR \subseteq X \times Y$ and $\rR' \subseteq X' \times Y'$ be any relations between finite sets.
\begin{enumerate}
\item
Given any biclique $A \times A' \subseteq X \times X'$, then:
\[
(\rR \syncp \rR')^\up (A \times A') 
= \rR \syncp \rR' [ A \times A'] 
= \rR[A] \times \rR'[A']
\]
and we have the special case $\rR \syncp \rR' [(x,x')] = \rR[x] \times \rR'[x']$.

\item
Given any biclique $B \times B' \subseteq Y \times Y'$ then:
\[
(\rR \syncp \rR')^\down (B \times B') = \rR^\down(B) \times (\rR')^\down(B')
\]

\item
$(\rR \syncp \rR')\spbreve = \rR\spbreve \syncp (\rR')\spbreve$

\item
$\rR \syncp \rR'$ is strict iff both $\rR$ and $\rR'$ are strict.

\end{enumerate}
\end{lemma}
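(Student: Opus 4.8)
The plan is to deduce strictness directly from the single-element image computed in part~(1). Recall that a relation is \emph{strict} exactly when it is nonempty, so the statement to prove reduces to showing that $\rR \syncp \rR' \neq \emptyset$ iff $\rR \neq \emptyset$ and $\rR' \neq \emptyset$.

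First I would record the elementary fact that a relation $\rR \subseteq X \times Y$ is nonempty iff one of its rows is nonempty, i.e.\ iff $\exists x \in X.\, \rR[x] \neq \emptyset$; the same holds for $\rR'$ and for $\rR \syncp \rR'$, whose domain is $X \times X'$. Next, the special case $\rR \syncp \rR'[(x,x')] = \rR[x] \times \rR'[x']$ from part~(1) rewrites the nonemptiness of a row of $\rR \syncp \rR'$ as the nonemptiness of a Cartesian product, and a product is nonempty iff both of its factors are.

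Putting these together, $\rR \syncp \rR'$ is nonempty iff there is a pair $(x,x') \in X \times X'$ with $\rR[x] \neq \emptyset$ and $\rR'[x'] \neq \emptyset$; since these two requirements constrain independent coordinates, this is equivalent to the conjunction of $\exists x.\, \rR[x] \neq \emptyset$ and $\exists x'.\, \rR'[x'] \neq \emptyset$, that is, to $\rR \neq \emptyset$ and $\rR' \neq \emptyset$. This is precisely the claim.

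The argument is routine rather than genuinely obstructed; the only point demanding care is the factoring of the single existential over the product domain $X \times X'$ into two independent existentials over $X$ and over $X'$, which is exactly where nonemptiness of $\rR[x] \times \rR'[x']$ is invoked. It is worth emphasising that, in contrast to the corresponding assertion for \emph{reduced} relations, no nonemptiness hypothesis on the carriers is needed here, since the nonemptiness of a relation is insensitive to isolated vertices.
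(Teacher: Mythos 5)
Your proposal addresses only item (4), taking (1) as given, and the substantive problem lies in its very first step. You assert that ``a relation is strict exactly when it is nonempty,'' and everything that follows proves the (true, but different) statement that $\rR \syncp \rR' \neq \emptyset$ iff $\rR \neq \emptyset$ and $\rR' \neq \emptyset$. That is not what strictness means here. Strictness of $\rR$ is the absence of isolated elements: $\rR[x] \neq \emptyset$ for every $x$ in the domain \emph{and} $\breve{\rR}[y] \neq \emptyset$ for every $y$ in the codomain, equivalently $\rR^\down(\emptyset) = \emptyset = \breve{\rR}^\down(\emptyset)$. You can read this off from how the paper uses the word in the proof of Lemma \ref{lem:syncp_reduced_char}: there $\rR \syncp \rS = \emptyset$ ``with non-empty domain/codomain'' is declared not strict --- a qualifier that would be pointless under your reading --- while $\emptyset \subseteq \emptyset \times \emptyset$ is treated as reduced, hence strict, even though it is empty. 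Your closing remark that ``the nonemptiness of a relation is insensitive to isolated vertices'' is exactly where the misreading surfaces: isolated vertices are the entire content of strictness, and it is strictness, not nonemptiness, that reducedness presupposes.

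The repair is close to what you wrote, but the product-nonemptiness observation must be applied pointwise and on both sides. By (1), $\rR \syncp \rR'[(x,x')] = \rR[x] \times \rR'[x']$, and a Cartesian product is nonempty iff both factors are; so the domain half of strictness for $\rR \syncp \rR'$ requires $\rR[x] \neq \emptyset$ and $\rR'[x'] \neq \emptyset$ for \emph{every} pair $(x,x') \in X \times X'$, not merely for some pair --- your factoring of a single existential over $X \times X'$ is replaced by a universal quantifier that splits over the two coordinates. By (3) together with (1) applied to the converses, $(\rR \syncp \rR')\spbreve[(y,y')] = \breve{\rR}[y] \times (\rR')\spbreve[y']$, giving the analogous codomain condition; this is precisely why the paper's one-line proof of (4) cites (3), which your argument never needs. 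Granting that the carriers are nondegenerate enough for these universally quantified pairs to exhaust $X$, $X'$, $Y$ and $Y'$, the two pointwise conditions together are equivalent to the strictness of $\rR$ and of $\rR'$, which is the intended content of (4). The equivalence you did prove is correct but strictly weaker and does not yield the lemma.
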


\begin{proof}
\item
\begin{enumerate}
\item
We calculate:
\[
\begin{tabular}{lll}
$\rR \syncp \rR'[A \times A']$
& $= \{ (y,y') \in Y \times Y' : \exists (x,x') \in A \times A'. \rR(x,y) \,\land\, \rR'(x',y') \}$
\\&
$= \{ (y,y') \in Y \times Y' : (\exists x \in A. \rR(x,y)) \land (\exists x' \in A'. \rR'(x',y')) \}$
\\&
$= \{ (y,y') \in Y \times Y' : y \in \rR[A] \,\land\, y \in \rR'[A'] \}$
\\&
$=  \rR[A] \times \rR'[A']$
\end{tabular}
\]

\item
We calculate:
\[
\begin{tabular}{lll}
$(\rR \syncp \rR')^\down(B \times B')$
&
$= \{ (x,x') \in X \times X' : \rR \syncp \rR' [(x,x')] \subseteq B \times B' \}$
\\&
$= \{ (x,x') \in X \times X' : \rR[x] \times \rR'[x] \subseteq B \times B' \}$
& by (1)
\\&
$= \{ (x,x') \in X \times X' : \rR[x] \subseteq B \,\land\, \rR'[x'] \subseteq B' \}$
\\&
$= \rR^\down(B) \times (\rR')^\down(B')$
\end{tabular}
\]

\item
Follows directly from the definitions.
\item
Follows by (1) and (3).

\end{enumerate}
\end{proof}

The following Lemma is rather basic, but we write it out in full.

\begin{lemma}
\label{lem:syncp_reduced_char}
Take any relations $\rR \subseteq X_1 \times Y_1$ and $\rS \subseteq X_2 \times Y_2$ such that each of the four sets $X_1,\,Y_1,\,X_2,\,Y_2$ defining the domains/codomains are non-empty. Then $\rR \syncp \rS$ is reduced iff both $\rR$ and $\rS$ are reduced.
\end{lemma}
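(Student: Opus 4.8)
The plan is to reduce the word "reduced" to a cardinality condition and then compute the join-irreducibles of $\Open(\rR\syncp\rS)$ directly. Recalling Lemma \ref{lem:lat_op_cl}.3, for any relation $\rG$ the rows $\rG[g_s]$ exhaust $J(\latOp{\rG})$ while the sets $\inte_\rG(\overline{g_t})$ exhaust $M(\latOp{\rG})$; hence $|J(\Open\rG)|\le|\rG_s|$ and $|M(\Open\rG)|\le|\rG_t|$, and $\rG$ is reduced precisely when both are equalities (equality in $|J(\Open\rG)|\le|\{\rG[g_s]\}|\le|\rG_s|$ forces both that the rows are distinct and that every row is join-irreducible, and dually for columns). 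So it suffices to prove the two product formulas $|J(\Open(\rR\syncp\rS))|=|J(\Open\rR)|\cdot|J(\Open\rS)|$ and $|M(\Open(\rR\syncp\rS))|=|M(\Open\rR)|\cdot|M(\Open\rS)|$, then feed them into an elementary counting step. Since $(\rR\syncp\rS)\spbreve=\rR\spbreve\syncp\rS\spbreve$ by Lemma \ref{lem:sync_functor_basic}.3, and $|M(\Open\rG)|=|J(\Open\breve{\rG})|$ via the isomorphism $\latOp{\breve{\rG}}\cong(\latOp{\rG})^{\pOp}$ of Lemma \ref{lem:lat_op_cl}.2, the meet-formula is the join-formula applied to $\rR\spbreve,\rS\spbreve$; so I would only prove the join-formula.

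The heart of the argument is the identity
\[
J(\Open(\rR\syncp\rS)) = \{\, J_1\times J_2 : J_1\in J(\Open\rR),\ J_2\in J(\Open\rS) \,\}.
\]
First I would record two facts about rectangles, both immediate from Lemma \ref{lem:sync_functor_basic}: every row is a rectangle, $(\rR\syncp\rS)[(x_1,x_2)]=\rR[x_1]\times\rS[x_2]$, and $\inte_{\rR\syncp\rS}(B\times B')=\inte_\rR(B)\times\inte_\rS(B')$; consequently a nonempty rectangle $B\times B'$ is $(\rR\syncp\rS)$-open iff $B\in O(\rR)$ and $B'\in O(\rS)$. For $\subseteq$, Lemma \ref{lem:lat_op_cl}.3 tells us every member of $J(\Open(\rR\syncp\rS))$ is a row $\rR[x_1]\times\rS[x_2]$; if $\rR[x_1]$ failed to be join-irreducible in $\Open\rR$ (either empty, or a union $J_1'\cup J_1''$ of strictly smaller open sets) then $\rR[x_1]\times\rS[x_2]$ would be empty or a union of two strictly smaller product-open sets, contradicting join-irreducibility, so both factors must be join-irreducible.

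For the reverse inclusion, fix $J_1\in J(\Open\rR)$ and $J_2\in J(\Open\rS)$, with unique maximal proper open subsets $J_1^-\subsetneq J_1$ and $J_2^-\subsetneq J_2$. I would show that $W:=(J_1^-\times J_2)\cup(J_1\times J_2^-)$ is the unique maximal proper open subset of the open rectangle $J_1\times J_2$, whence the latter is join-irreducible. $W$ is open and proper (a point $(y_1,y_2)$ with $y_1\in J_1\setminus J_1^-$ and $y_2\in J_2\setminus J_2^-$ lies outside it). Given any open $V$ with $V\subsetneq J_1\times J_2$ but $V\not\subseteq W$, pick $(y_1,y_2)\in V$ with $y_1\notin J_1^-$ and $y_2\notin J_2^-$; writing $V=(\rR\syncp\rS)[(\rR\syncp\rS)^\down(V)]$ yields a row $\rR[x_1]\times\rS[x_2]\subseteq V\subseteq J_1\times J_2$ containing $(y_1,y_2)$. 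Then $\rR[x_1]$ is open, lies in $J_1$, and is not contained in $J_1^-$; since in a finite lattice every open set strictly below the join-irreducible $J_1$ lies under $J_1^-$, this forces $\rR[x_1]=J_1$, and likewise $\rS[x_2]=J_2$, giving $V=J_1\times J_2$, a contradiction. Hence $V\subseteq W$. Distinct pairs give distinct products (all factors nonempty), so passing to cardinalities yields the join-formula.

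Finally I would close with the elementary step. Write $a=|J(\Open\rR)|\le|X_1|=A$ and $b=|J(\Open\rS)|\le|X_2|=B$, with $A,B\ge 1$ since $X_1,X_2$ are nonempty. The join-formula gives $|J(\Open(\rR\syncp\rS))|=ab$, while $|(\rR\syncp\rS)_s|=AB$. If $\rR,\rS$ are reduced then $a=A$ and $b=B$, so these agree and, together with the dual computation on $Y_1,Y_2$, $\rR\syncp\rS$ is reduced. Conversely $ab=AB$ with $a\le A$, $b\le B$ and $AB\ge 1$ forces $a=A$ and $b=B$; the same on the $Y$-side (using $Y_1,Y_2\ne\emptyset$) makes both $\rR$ and $\rS$ reduced. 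The main obstacle is the reverse inclusion in the join-irreducible identity: controlling an arbitrary product-open $V$ sitting below a rectangle, where the decisive leverage is that every open set contains a full row through each of its points, combined with the unique-lower-cover property of join-irreducibles. This is also exactly the point at which all four non-emptiness hypotheses are consumed.
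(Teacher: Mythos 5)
Your proof is correct, and it takes a genuinely different route from the paper's. The paper argues directly from the defining condition on rows: for the forward direction it decomposes an arbitrary covering of $\rR[x_1]\times\rS[x_2]$ into ``horizontal strips'' and uses reducedness of each factor twice to locate the pair $(x_1,x_2)$ in the index set; for the converse it constructs explicit witnesses of non-reducedness of $\rR\syncp\rS$ from a failure in either factor, which is where the non-emptiness hypotheses enter. You instead recast reducedness as the two cardinality equalities $|J(\Open\rG)|=|\rG_s|$ and $|M(\Open\rG)|=|\rG_t|$ and prove the identity $J(\Open(\rR\syncp\rS))=\{J_1\times J_2 : J_1\in J(\Open\rR),\, J_2\in J(\Open\rS)\}$ with \emph{no} reducedness hypothesis, after which both directions of the equivalence collapse into a single cancellation argument $ab=AB$, $a\le A$, $b\le B$, $A,B\ge 1$. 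What this buys: the join-irreducible product formula is of independent interest --- it extends the count $|J(\aQ\ttenp\aR)|=|J(\aQ)|\cdot|J(\aR)|$ of Lemma \ref{lem:tight_tensor_irr} to arbitrary (non-reduced) presentations $\rR\syncp\rS$, without circularity since you prove it directly rather than via Corollary \ref{cor:tight_tensor_syncp} --- and it makes completely transparent why all four non-emptiness assumptions are needed (they are exactly what licenses cancelling the factors). The cost is the one genuinely delicate step, identifying $(J_1^-\times J_2)\cup(J_1\times J_2^-)$ as the unique lower cover of $J_1\times J_2$; your treatment of that step (every open set is a union of the rows it contains, and any open subset of a join-irreducible not below its unique lower cover must equal it) is sound, as is the reduction of the meet-side formula to the join-side via $(\rR\syncp\rS)\spbreve=\breve{\rR}\syncp\breve{\rS}$ and $\latOp{\breve{\rG}}\cong(\latOp{\rG})^{\pOp}$.
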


\begin{proof}
In the first half of the proof we do not use the non-emptiness assumption. Suppose that $\rR \subseteq X_1 \times Y_1$ and $\rS \subseteq X_2 \times Y_2$ are reduced i.e.\ satisfy the two statements from Lemma \ref{lem:reduce_char}. If either of them is $\emptyset \subseteq \emptyset \times \emptyset$ then so is $\rR \syncp \rS$ and thus is reduced. Otherwise, given any $(x_1,x_2) \in X_1 \times X_2$ and any subset $A \subseteq X_1 \times X_2$ we must show that:
\[
\rR[x_1] \times \rS[x_2] = \rR \syncp \rS[A] = \bigcup_{(a_1,a_2) \in A} \rR[a_1] \times \rS[a_2]
\qquad
\text{implies that $(x_1,x_2) \in A$}
\]
Firstly, for each $y_2 \in \rS[x_2]$ let $C_{y_2} := \{ (a_1,a_2) \in A : y_2 \in \rS[a_2] \}$. The union of their respective bicliques must contain the subset $\rR[x_1] \times \{y_2\}$, so that $\rR[x_1] = \bigcup \{ \rR[a_1] : \exists a_2.(a_1,a_2) \in C_{y_2} \}$.  Then since $\rR$ is reduced there exists $(a_1,a_2)$ in $C_{y_2}$ such that $\rR[a_1] = \rR[x_1]$, so the induced biclique equals $\rR[x_1] \times Z_{y_2}$ for some $Z_{y_2} \subseteq \rS[x_2]$ containing $y_2$. Taking the union of these `horizontal strips' yields:
\[
\rR[x_1] \times \rS[x_2] 
= \bigcup_{y_2 \in \rS[x_2]} \rR[x_1] \times Z_{y_2}
\]
Then since $\rS$ is reduced there exists $y_2 \in \rS[x_2]$ such that $Z_{y_2} = \rS[x_2]$, so that $\rR[x_1] \times \rS[x_2] = \rR[a_1] \times \rS[a_2]$ for some $(a_1,a_2) \in C_{y_2} \subseteq A$. Finally since $\rR$ and $\rS$ are reduced we have $x_1 = a_1$ and $x_2 = a_2$. 

\smallskip
Regarding the converse, assuming that $\rR$ is not reduced we'll show that $\rR \syncp \rS$ is not reduced. First observe that if $X_1 = Y_2 = \emptyset$ and $X_2,\, Y_1 \neq \emptyset$ then both $\rR$ and $\rS$ fail to be strict (hence cannot be reduced by Lemma \ref{lem:reduce_char}), whereas $\rR \syncp \rS = \emptyset \subseteq \emptyset \times \emptyset$ is reduced. This explains our assumption that every set $X_1,\,Y_1,\,X_2,\,Y_2$ is non-empty. If $\rS = \emptyset \subseteq X_2 \times Y_2$ then $\rR \syncp \rS = \emptyset$ with non-empty domain/codomain, so it is not strict and thus is not reduced. Otherwise we fix some $(x_2,y_2) \in \rS$. Since $\rR$ is not reduced one of two statements in Lemma \ref{lem:reduce_char} fails.
\begin{enumerate}
\item
If the first statement fails we can find $x_1 \nin Z \subseteq X_1$ such that  $\rR[x_1] = \rR[Z]$, so that:
\[
\rR \syncp \rS[ Z \times \{x_2\}] 
= \rR[Z] \times \rS[x_2] 
= \rR[x_1] \times \rS[x_2] 
= \rR \syncp \rS[(x_1,x_2)]
\]
whereas $(x_1,x_2) \nin Z \times \{x_2\}$.
\item
If the second statement fails we can find $y_1 \nin Z \subseteq Y_1$ such that $\breve{\rR}[y_1] = \breve{\rR}[Z]$, so that:
\[
(\rR \syncp \rS)\spbreve[Z \times \{y_2\}]
= \breve{\rR} \syncp \breve{\rS}[Z \times \{y_2\}]
= \breve{\rR}[Z] \times \breve{\rS}[y_2]
= \breve{\rR}[y_1] \times \breve{\rS}[y_2]
= \breve{\rR} \syncp \breve{\rS}[(y_1,y_2)]
\]
whereas $(y_1,y_2) \nin Z \times \{y_2\}$.

\end{enumerate} 
Thus in either case we deduce that $\rR \syncp \rS$ is not reduced.
\end{proof}

\smallskip

\begin{lemma}
$\syncp : \BiCliq \times \BiCliq \to \BiCliq$ is a well-defined functor.
\end{lemma}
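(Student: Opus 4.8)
The plan is to verify the three functor axioms for $\syncp : \BiCliq \times \BiCliq \to \BiCliq$: that it sends objects to objects and morphisms to morphisms of the correct type, that it preserves identities, and that it preserves composition. The object-level well-definedness is trivial since $\rG \syncp \rH$ is plainly a relation between the finite sets $\rG_s \times \rH_s$ and $\rG_t \times \rH_t$. The real content is showing that $\rR \syncp \rR'$ is a genuine $\BiCliq$-morphism with the claimed component relations, and that $\syncp$ respects identities and composition.

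First I would establish that $\rR \syncp \rR' : \rG \syncp \rG' \to \rH \syncp \rH'$ is a well-defined $\BiCliq$-morphism. The cleanest route is via the characterisation of Lemma \ref{lem:bicliq_mor_char_max_witness}.1, i.e.\ to check $(\rR \syncp \rR')^\up \circ \cl_{\rG \syncp \rG'} = (\rR \syncp \rR')^\up = \inte_{\rH \syncp \rH'} \circ (\rR \syncp \rR')^\up$. However, I expect it is more transparent to produce explicit witnesses and invoke the definition directly. Since $\rR$ is a morphism with maximum witness $(\rR_-,\rR_+)$ we have $\rR_- ; \rH = \rR = \rG ; \rR_+\spbreve$, and likewise $\rR'_- ; \rH' = \rR' = \rG' ; (\rR'_+)\spbreve$. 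The key computational fact is that the synchronous product is multiplicative over relational composition on bicliques: using Lemma \ref{lem:sync_functor_basic}.1 (for point-images $\rR \syncp \rR'[(x,x')] = \rR[x] \times \rR'[x']$) one checks that $(\rR_- \syncp \rR'_-) ; (\rH \syncp \rH') = (\rR_- ; \rH) \syncp (\rR'_- ; \rH') = \rR \syncp \rR'$, and symmetrically $(\rG \syncp \rG') ; (\rR_+ \syncp \rR'_+)\spbreve = \rR \syncp \rR'$, where the converse identity uses Lemma \ref{lem:sync_functor_basic}.3. This shows $(\rR_- \syncp \rR'_-, \rR_+ \syncp \rR'_+)$ is a witness, hence $\rR \syncp \rR'$ is a morphism of the stated type. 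That these coincide with the \emph{maximum} witnesses $(\rR \syncp \rR')_-$ and $(\rR \syncp \rR')_+$ then follows by computing $(\rR \syncp \rR')_-$ from Definition \ref{def:bicliq_mor_components} using Lemma \ref{lem:sync_functor_basic}.1 and .2 to factor the image/preimage operations through the two coordinates.

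Next I would verify preservation of identities: $id_\rG \syncp id_{\rG'} = \rG \syncp \rG' = id_{\rG \syncp \rG'}$, which is immediate from the object-level definition since identity morphisms are the objects themselves. For preservation of composition, given $\rR : \rG \to \rH$, $\rS : \rH \to \rI$ and their primed counterparts, I must show $(\rR \fatsemi \rS) \syncp (\rR' \fatsemi \rS') = (\rR \syncp \rR') \fatsemi (\rS \syncp \rS')$. Here I would use Lemma \ref{lem:bicliq_func_comp}, which expresses $\BiCliq$-composition as $\rR \fatsemi \rS = \rR_- ; \rS$. Then $(\rR \fatsemi \rS) \syncp (\rR' \fatsemi \rS') = (\rR_- ; \rS) \syncp (\rR'_- ; \rS')$, and by the multiplicativity of $\syncp$ over relational composition (again Lemma \ref{lem:sync_functor_basic}.1) this equals $(\rR_- \syncp \rR'_-) ; (\rS \syncp \rS')$. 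Since $(\rR \syncp \rR')_- = \rR_- \syncp \rR'_-$ from the previous step, this is exactly $(\rR \syncp \rR') \fatsemi (\rS \syncp \rS')$.

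The main obstacle I anticipate is the multiplicativity lemma $(\rA ; \rB) \syncp (\rA' ; \rB') = (\rA \syncp \rA') ; (\rB \syncp \rB')$ for arbitrary relations, which underpins both the witness computation and the composition axiom. This is not quite among the stated parts of Lemma \ref{lem:sync_functor_basic} (which handles images of bicliques rather than composites of general relations), so I would need to prove it as a short preliminary: expand both sides pointwise and observe that a pair $((x,x'),(z,z'))$ lies in the left relation iff there is a common intermediate pair $(y,y')$ with $\rA(x,y) \land \rB(y,z) \land \rA'(x',y') \land \rB'(y',z')$, which is precisely the condition for the right relation by independent choice of $y$ and $y'$ in each coordinate. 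Everything else reduces to routine unwinding of the definitions via the already-established point-image formula.
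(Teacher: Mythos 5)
Your proof is correct, and the skeleton (exhibit $(\rR_-\syncp\rR'_-,\rR_+\syncp\rR'_+)$ as witnesses, confirm they are the maxima, check identities, check composition) matches the paper's. The genuine difference is in how you handle the composition axiom. The paper avoids any general multiplicativity statement: it first proves the one-sided identities $(\rR\fatsemi\rS)\syncp\rR' = (\rR\syncp\rR')\fatsemi(\rS\syncp id_{\rH'})$ and its mirror via the functional characterisation $(\up\fatsemi)$, then chains them together and separately computes $(\rS\syncp id_{\rH'})\fatsemi(id_\rI\syncp\rS') = \rS\syncp\rS'$. You instead isolate the multiplicativity law $(\rA;\rB)\syncp(\rA';\rB') = (\rA\syncp\rA');(\rB\syncp\rB')$ for arbitrary relations, prove it once by the independent-intermediate-witness argument, and then get both the witness verification and composition preservation as one-line consequences via $\rR\fatsemi\rS = \rR_-;\rS$ from Lemma \ref{lem:bicliq_func_comp}. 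You are right that this law is not literally among the parts of Lemma \ref{lem:sync_functor_basic} (which only treats images and preimages of bicliques), so flagging it as a preliminary to be proved is exactly the correct move; with it in hand your route is shorter and arguably more transparent than the paper's, at the modest cost of one extra general-purpose lemma. Both arguments are sound.
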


\begin{proof}
Certainly its action on objects is well-defined. Given $\BiCliq$-morphisms $\rR : \rG \to \rH$ and $\rR' : \rG' \to \rH'$ then $\rR \syncp \rR'$ is a well-defined $\BiCliq$-morphism of type $\rG \syncp \rG' \to \rH \syncp \rH'$ via the witnesses:
\[
\xymatrix@=20pt{
\rG_t \times \rG'_t \ar[rr]^{(\rR_+ \syncp (\rR')_+)\spbreve} && \rH_t \times \rH'_t
\\
\rG_s \times \rG'_s \ar[urr]^{\rR \syncp \rR'} \ar[u]^{\rG \syncp \rG'} \ar[rr]_{\rR_- \syncp \rR'_-} && \rH_s \ar[u]_{\rH \syncp \rH'} \times \rH'_s
}
\]
That is, consider the following basic calculations:
\[
\begin{tabular}{lll}
$\rH \syncp \rH'[ \rR_- \syncp \rR'_- [(g_s,g'_s)]]$
&
$= \rH \syncp \rH'[ \rR_-[g_s] \times \rR'_-[g'_s] ]$
& preserves biclique
\\&
$= \rH[\rR_-[g_s]] \times \rH'[\rR'_-[g'_s]]$
& preserves biclique
\\&
$= \rR[g_s] \times \rR'[g'_s]$
& components are witnesses
\\&
$= (\rR \syncp \rR')[(g_s,g'_s)]$
& preserves biclique
\\
\\
$(\rR_+ \syncp \rR'_+)\spbreve[ \rG \syncp \rG'[(g_s,g'_s)]]$
&
$= (\rR_+ \syncp \rR'_+)\spbreve[ \rG[g_s] \times \rG'[g'_s] ]$
& preserves biclique
\\&
$= \rR_+\spbreve \syncp (\rR'_+)\spbreve [ \rG[g_s] \times \rG'[g'_s] ]$
& by Lemma \ref{lem:sync_functor_basic}.3
\\&
$= \rR_+\spbreve[\rG[g_s]] \times (\rR'_+)\spbreve[\rG'[g'_s]]$
& preserves biclique
\\&
$= \rR[g_s] \times \rR'[g'_s]$
& components are witnesses
\\&
$= (\rR \syncp \rR')[g'_s]$
& preserves biclique
\end{tabular}
\]
Next we establish that these witnesses are the associated components.
\[
\begin{tabular}{lll}
$(\rR \syncp \rR')_-[(g_s,g'_s)]$
&
$=  \cl_{\rH \syncp \rH'}(\rR_- \syncp \rR'_-[(g_s,g'_s)])$
& close witness
\\&
$= (\rH \syncp \rH')^\down \circ (\rH \syncp \rH')^\up(\rR_-[g_s] \times \rR'_-[g'_s])$
& preserves biclique
\\&
$= (\rH \syncp \rH')^\down (\rH[\rR_-[g_s]] \times \rH'[\rR'_-[g'_s]])$
& preserves biclique
\\&
$= (\rH \syncp \rH')^\down (\rR[g_s] \times \rR'[g'_s])$
& components are witnesses
\\&
$= \rH^\down(\rR[g_s]) \times (\rH')^\down(\rR'[g'_s])$
& preserves biclique
\\&
$= \rR_-[g_s] \times \rR'_-[g'_s]$
& by definition
\\&
$= (\rR_- \syncp \rR'_-)[(g_s,g'_s)]$
& preserves biclique
\end{tabular}
\]
and the proof that $(\rR \syncp \rR')_+ = \rR_+ \syncp \rR'_+$ is similar. Regarding preservation of identity morphisms:
\[
id_\rG \syncp id_\rH
= (\rG : \rG \to \rG) \syncp (\rH : \rH \to \rH)
= \rG \syncp \rH : \rG \syncp \rH \to \rG \syncp \rH
= id_{\rG \syncp \rH}
\]
To prove preservation of $\BiCliq$-composition, we first establish that:
\[
\begin{tabular}{lll}
$\vcenter{\vbox{\xymatrix@=20pt{
\rG \syncp \rG' \ar[dr]_{\rR \syncp \rR'} \ar[rr]^{(\rR \fatsemi \rS) \syncp \rR'} && \rI \syncp \rH'
\\
& \rH \syncp \rH' \ar[ur]_{S \syncp id_{\rH'}} &
}}}$
&&
\begin{tabular}{l}
for all $\BiCliq$-morphisms
\\
$\rR : \rG \to \rH$, $\rS : \rH \to \rI$
\\
and $\rR' : \rG' \to \rH'$
\end{tabular}
\end{tabular}
\]
We prove this using the characterisation of $\BiCliq$-morphisms from Lemma \ref{lem:bicliq_mor_char_max_witness}, and also the functional description of $\BiCliq$-composition from Corollary \ref{cor:bicliq_func_comp}.
\[
\begin{tabular}{lll}
$(\rR \syncp \rR') \fatsemi (\rS \syncp id_{\rH'})[(g_s,g'_s)]$
&
$= (\rS \syncp id_{\rH'})^\up \circ (\rH \syncp \rH')^\down \circ (\rR \syncp \rR')^\up(\{(g_s,g'_s)\})$
& 
\\&
$= (\rS \syncp id_{\rH'})^\up \circ (\rH \syncp \rH')^\down (\rR[g_s] \times \rR'[g'_s])$
& preserves biclique
\\&
$= (\rS \syncp \rH')^\up ( \rH^\down(\rR[g_s]) \times (\rH')^\down(\rR'[g'_s]))$
& preserves biclique
\\&
$= \rS^\up \circ \rH^\down \circ \rR^\up(\{g_s\}) \times (\rH')^\up \circ (\rH')^\down(\rR'[g'_s]))$
& preserves biclique
\\&
$= \rS^\up \circ \rH^\down \circ \rR^\up(\{g_s\}) \times \rR'[g'_s]$
& $\inte_{\rH'} \circ \rR' = \rR'$
\\&
$= (\rR \fatsemi \rS)[g_s] \times \rR'[g'_s]$
\\&
$= ((\rR \fatsemi \rS) \syncp \rR')[(g_s,g'_s)]$
\end{tabular}
\]
Thus we also have the symmetric statement $\rR \syncp (\rR' \fatsemi \rS') = (\rR \syncp \rR') \fatsemi (id_{\rH} \syncp \rS')$. Then we calculate:
\[
\begin{tabular}{lll}
$(\rR \fatsemi \rS) \syncp (\rR' \fatsemi \rS')$
&
$= ((\rR \fatsemi \rS) \syncp \rR') \fatsemi (id_{\rH} \syncp \rS')$
& right preservation
\\&
$= ((\rR \syncp \rR') \fatsemi (\rS \syncp id_{\rH'})) \fatsemi (id_\rI \syncp \rS')$
& left preservation
\\&
$= (\rR \syncp \rR') \fatsemi ((\rS \syncp id_{\rH'}) \fatsemi (id_\rI \syncp \rS'))$
& associativity
\\&
$= (\rR \syncp \rR') \fatsemi (\rS \syncp \rS')$
& see below
\end{tabular}
\]
Regarding the final statement, we have:
\[
\begin{tabular}{lll}
$(\rS \syncp id_{\rH'}) \fatsemi (id_\rI \syncp \rS' )[(h_s,h'_s)]$
&
$= (id_\rI \syncp \rS')^\up \circ (\rI \syncp \rH')^\down \circ (\rS \syncp id_{\rH'})^\up)(\{(h_s,h'_s)\})$
\\&
$= (id_\rI \syncp \rS')^\up \circ (\rI \syncp \rH')^\down(\rS[h_s] \times \rH'[h'_s])$
\\&
$= (id_\rI \syncp \rS')^\up (\rI^\down[\rS[h_s]] \times (\rH')^\down(\rH'[h'_s]))$
\\&
$= (\inte_\rI \circ \rS^\up(\{h_s\})) \times ((\rS')^\up \circ \cl_{\rH'}(\{h'_s\}))$
\\&
$= \rS[h_s] \times \rS'[h'_s]$
\\&
$= (\rS \syncp \rS')[(h_s,h'_s)]$
\end{tabular}
\]
\end{proof}

We now prove the main result of this subsection. It is further clarified via its corollaries further below.

\begin{theorem}[The synchronous product is essentially the tight tensor product]
\label{thm:sync_is_tight_tensor}
\item
We have the natural isomorphism:
\[
\begin{tabular}{c}
$\rTS : \Pirr(- \ttenp -) \To (\Pirr - )\syncp (\Pirr -)$
\qquad
$\rTS_{\aQ,\aR} : \Pirr(\aQ \ttenp \aR) \to \Pirr \aQ \syncp \Pirr \aR$
\\[0.5ex]
\begin{tabular}{ll}
$\rTS_{\aQ,\aR}(\up_{\aQ^{\pOp},\aR}^{j_1,j_2},\,(m_1,m_2))$ &
$\iff (\Pirr \aQ) \syncp (\Pirr \aR)((j_1,j_2),(m_1,m_2))$
\\[0.5ex] &
$\iff j_1 \nleq_\aQ m_1 \text{ \emph{and} }  j_2 \nleq_\aR m_2$
\end{tabular}
\end{tabular}
\]
with associated components:
\[
\begin{tabular}{lll}
$(\rTS_{\aQ,\aR})_- (\up_{\aQ^{\pOp},\aR}^{j_1,j_2},(j_3,j_4))$ 
& $\iff  (\Pirr\aQ)_- \syncp (\Pirr\aR)_- ((j_1,j_2),(j_3,j_4))$
\\[0.5ex]
& $\iff  j_3 \leq_\aQ j_1 \text{ \emph{and} } j_4 \leq_\aR j_2$
\\[1ex]
$(\rTS_{\aQ,\aR})_+((m_1,m_2),\down_{\aQ^{\pOp},\aR}^{m_3,m_4}) $ 
& $\iff (\Pirr\aQ)_+ \syncp (\Pirr\aR)_+ ((m_1,m_2),(m_3,m_4))$
\\[0.5ex]&
$\iff m_1 \leq_\aQ m_3 \text{ \emph{and} } m_2 \leq_\aR m_4$
\end{tabular}
\]
Its inverse is described in Note \ref{note:ts_nat_inverse} directly below.
\end{theorem}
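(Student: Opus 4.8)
The plan is to exhibit each $\rTS_{\aQ,\aR}$ as a \emph{bipartite} $\Dep$-isomorphism (Definition \ref{def:bipartite_iso}) witnessed by the evident bijections on irreducibles, and then verify the component relations and naturality by direct computation. By Lemma \ref{lem:tight_tensor_irr} the join-irreducibles of $\aQ \ttenp \aR = \jslTight{\aQ^{\pOp},\aR}$ are exactly the $\up_{\aQ^{\pOp},\aR}^{j_1,j_2}$ with $(j_1,j_2)\in J(\aQ)\times J(\aR)$, and its meet-irreducibles the $\down_{\aQ^{\pOp},\aR}^{m_1,m_2}$ with $(m_1,m_2)\in M(\aQ)\times M(\aR)$; these furnish bijections $f:\up_{\aQ^{\pOp},\aR}^{j_1,j_2}\mapsto(j_1,j_2)$ and $g:(m_1,m_2)\mapsto\down_{\aQ^{\pOp},\aR}^{m_1,m_2}$ between the source/target sets of $\Pirr(\aQ\ttenp\aR)$ and $\Pirr\aQ\syncp\Pirr\aR$. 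The crux is one order-theoretic identity: since $\jslTight{\aQ^{\pOp},\aR}$ carries the inherited pointwise order, applying Lemma \ref{lem:special_jsl_morphisms}.6 with $\aQ\mapsto\aQ^{\pOp}$ gives
\[
\up_{\aQ^{\pOp},\aR}^{j_1,j_2}\;\leq_{\aQ\ttenp\aR}\;\down_{\aQ^{\pOp},\aR}^{m_1,m_2}
\iff m_1\leq_{\aQ^{\pOp}}j_1\ \text{or}\ j_2\leq_\aR m_2
\iff j_1\leq_\aQ m_1\ \text{or}\ j_2\leq_\aR m_2,
\]
so that $\up_{\aQ^{\pOp},\aR}^{j_1,j_2}\nleq_{\aQ\ttenp\aR}\down_{\aQ^{\pOp},\aR}^{m_1,m_2}$ iff $j_1\nleq_\aQ m_1$ and $j_2\nleq_\aR m_2$, which is exactly $(\Pirr\aQ\syncp\Pirr\aR)((j_1,j_2),(m_1,m_2))$ by Definition \ref{def:sync_product}. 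Hence $\Pirr(\aQ\ttenp\aR);\breve{g}=\rTS_{\aQ,\aR}=f;(\Pirr\aQ\syncp\Pirr\aR)$, so $\rTS_{\aQ,\aR}$ is a bipartite $\Dep$-isomorphism and thus a $\Dep$-isomorphism by Lemma \ref{lem:bipartite_isos_are_isos}.

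For the component relations I would compute $(\rTS_{\aQ,\aR})_-$ directly from Definition \ref{def:bicliq_mor_components}. Writing $\rH:=\Pirr\aQ\syncp\Pirr\aR$, Lemma \ref{lem:sync_functor_basic}.1 gives $\rH[(j_3,j_4)]=\{m_1:j_3\nleq_\aQ m_1\}\times\{m_2:j_4\nleq_\aR m_2\}$, while the image $\rTS_{\aQ,\aR}[\up_{\aQ^{\pOp},\aR}^{j_1,j_2}]$ is the biclique $\{m_1:j_1\nleq_\aQ m_1\}\times\{m_2:j_2\nleq_\aR m_2\}$. Both factors are non-empty (a join-irreducible lies below no $\bot$, hence below not every meet-irreducible), so the biclique inclusion $\rH[(j_3,j_4)]\subseteq\rTS_{\aQ,\aR}[\up^{j_1,j_2}]$ factors componentwise, and each factor inclusion is rephrased via Lemma \ref{lem:std_order_theory}.7 as $j_3\leq_\aQ j_1$, resp.\ $j_4\leq_\aR j_2$. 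This yields $(\rTS_{\aQ,\aR})_-(\up_{\aQ^{\pOp},\aR}^{j_1,j_2},(j_3,j_4))\iff j_3\leq_\aQ j_1\text{ and }j_4\leq_\aR j_2$, matching $(\Pirr\aQ)_-\syncp(\Pirr\aR)_-$ once we recall, via Definition \ref{def:open_pirr} applied to $id_\aQ$, that $(\Pirr\aQ)_-(j_1,j_3)\iff j_3\leq_\aQ j_1$. The positive component follows by the order-dual computation, or by transporting the negative one along the self-duality of $\Dep$ (Theorem \ref{thm:bicliq_self_dual}).

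Naturality is the most laborious part. For $f:\aQ\to\aQ'$ and $g:\aR\to\aR'$, I first compute $f\ttenp g$ on join-irreducibles: since $(f\ttenp g)(h)=g\circ h\circ f_*$, two applications of Lemma \ref{lem:compose_spec_gen_mor}.1 (with $(f_*)_*=f$ from Theorem \ref{thm:jsl_self_dual}) give $(f\ttenp g)(\up_{\aQ^{\pOp},\aR}^{j_1,j_2})=\up_{(\aQ')^{\pOp},\aR'}^{f(j_1),g(j_2)}$, whence the ordering formula above yields $\Pirr(f\ttenp g)(\up_{\aQ^{\pOp},\aR}^{j_1,j_2},\down_{(\aQ')^{\pOp},\aR'}^{m_1,m_2})\iff f(j_1)\nleq_{\aQ'}m_1\text{ and }g(j_2)\nleq_{\aR'}m_2$. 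I would then expand both $\Dep$-composites $\Pirr(f\ttenp g)\fatsemi\rTS_{\aQ',\aR'}$ and $\rTS_{\aQ,\aR}\fatsemi(\Pirr f\syncp\Pirr g)$ using $\rR\fatsemi\rS=\rR;\rS_+\spbreve$ (Lemma \ref{lem:bicliq_func_comp}) and the component formulas for $\Pirr f,\Pirr g$ from Definition \ref{def:open_pirr}. Each composite introduces an existential over an intermediate (meet-)irreducible that separates into a $\aQ'$-part and an $\aR'$-part, and each part collapses by the same contrapositive use of Lemma \ref{lem:std_order_theory}.7: for the first composite $\exists m_3.(m_1\leq_{\aQ'}m_3\wedge f(j_1)\nleq_{\aQ'}m_3)\iff f(j_1)\nleq_{\aQ'}m_1$, and for the second $\exists m.(f_*(m_1')\leq_\aQ m\wedge j_1\nleq_\aQ m)\iff j_1\nleq_\aQ f_*(m_1')\iff f(j_1)\nleq_{\aQ'}m_1'$ via the adjunction (Lemma \ref{lem:adj_obs}.1). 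Both composites reduce to the single relation $\{(\up_{\aQ^{\pOp},\aR}^{j_1,j_2},(m_1,m_2)):f(j_1)\nleq_{\aQ'}m_1,\ g(j_2)\nleq_{\aR'}m_2\}$, establishing naturality.

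I expect the main obstacle to be exactly this naturality square: because $f(j_1)$ and $g(j_2)$ need not be irreducible, $\Pirr(f\ttenp g)$ cannot be read off as a mere restriction of a bijection and must be computed through the general ordering formula, after which the bookkeeping lies in eliminating the intermediate existential quantifiers in the two $\Dep$-composites (four applications of the contrapositive form of Lemma \ref{lem:std_order_theory}.7, one of them combined with the $f\dashv f_*$ adjunction). The object-level isomorphism and the components, by contrast, each reduce cleanly to Lemma \ref{lem:special_jsl_morphisms}.6 and the biclique calculus of Lemma \ref{lem:sync_functor_basic}.1.
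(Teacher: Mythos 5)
Your proposal is correct and follows essentially the same route as the paper: the object-level isomorphism is the relabelling bijection on irreducibles justified by Lemma \ref{lem:special_jsl_morphisms}.6, the components are read off from the biclique structure, and naturality is verified by collapsing the intermediate existentials in both $\Dep$-composites via Lemma \ref{lem:std_order_theory}.7 and the adjunction, with Lemma \ref{lem:compose_spec_gen_mor}.1 identifying $(f\ttenp g)(\up_{\aQ^{\pOp},\aR}^{j_1,j_2})$ as $\up^{f(j_1),g(j_2)}$ (the paper merely defers this last identification to the end of its computation). The only cosmetic differences are that you package the relabelling as a bipartite $\Dep$-isomorphism and verify the component relations directly from Definition \ref{def:bicliq_mor_components} rather than inheriting them from the identity morphism.
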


\begin{proof}
Although the notation is somewhat cumbersome, the proof that each $\rTS_{\aQ,\aR}$ is a $\BiCliq$-isomorphism is relatively simple. Importantly, we shall show that $\Pirr(\aQ \ttenp \aR)$ is bipartite graph isomorphic to $\Pirr \aQ \syncp \Pirr \aR$. Then one can see that $\rTS_{\aQ,\aR}$ and its components are really just the $\BiCliq$-identity-morphism $id_{\Pirr \aQ \syncp \Pirr \aR} = \Pirr \aQ \syncp \Pirr \aR$ modulo relabelling, recalling that $(\Pirr \aQ \syncp \Pirr \aR)_- = (\Pirr \aQ)_- \syncp (\Pirr \aR)_-$ and similar for the positive component (see Definition \ref{def:sync_product}). First let:
\[
\rG := \Pirr(\aQ \ttenp \aR) = \Pirr\jslTight{\aQ^{\pOp},\aR}
\qquad\text{and}\qquad
\rH := \Pirr\aQ \syncp \Pirr\aR
\]
and recall that by Lemma \ref{lem:tight_tensor_irr}:
\[
\begin{tabular}{lll}
$\rG_s$ & $= J(\jslTight{\aQ^{\pOp},\aR})$ & $= \{ \up_{\aQ^{\pOp},\aR}^{j_1,j_2} \; : (j_1,j_2) \in J(\aQ) \times J(\aR) \}$
\\[0.5ex]
$\rG_t$ & $= M(\jslTight{\aQ^{\pOp},\aR})$ & $= \{ \down_{\aQ^{\pOp},\aR}^{m_1,m_2} \; : (m_1,m_2) \in M(\aQ) \times M(\aR) \}$
\end{tabular}
\]
\[
\rH_s = J(\aQ) \times J(\aR)
\qquad
\rH_t = M(\aQ) \times M(\aR)
\]
Clearly $|\rG_s| = |\rH_s|$ and $|\rG_t| = |\rH_t|$, and moreover:
\[
\begin{tabular}{lll}
$\rG(\up_{\aQ^{\pOp},\aR}^{j_1,j_2},\down_{\aQ^{\pOp},\aR}^{m_1,m_2})$
& $\iff$ &
$\up_{\aQ^{\pOp},\aR}^{j_1,j_2} \; \nleq_{\JSL_f[\aQ^{\pOp},\aR]} \; \down_{\aQ^{\pOp},\aR}^{m_1,m_2}$
\\[1ex]
$\rH((j_1,j_2),(m_1,m_2))$ & $\iff$ & ($j_1 \nleq_\aQ m_1$ and $j_2 \nleq_\aR m_2$)
\end{tabular}
\]
recalling that $\jslTight{\aQ^{\pOp},\aR}$ is a sub join-semilattice of $\JSL_f[\aQ^{\pOp},\aR]$ and hence inherits the pointwise ordering. There is an obvious candidate for a bipartite graph isomorphism i.e.\ send $\up_{\aQ^{\pOp},\aR}^{j_1,j_2}$ to $(j_1,j_2)$, and send $\down_{\aQ^{\pOp},\aR}^{m_1,m_2}$ to $(m_1,m_2)$. To verify its correctness, we need to show that for any fixed $(j_1,j_2) \in J(\aQ) \times J(\aR)$ and $(m_1,m_2) \in M(\aQ) \times M(\aR)$:
\[
\quad \up_{\aQ^{\pOp},\aR}^{j_1,j_2} \; \leq_{\JSL_f[\aQ^{\pOp},\aR]} \; \down_{\aQ^{\pOp},\aR}^{m_1,m_2}
\qquad\iff\qquad
(j_1 \leq_\aQ m_1 \text{ or } j_2 \leq_\aR m_2)
\]
which follows immediately by Lemma \ref{lem:special_jsl_morphisms}.6.

Having established this bipartite graph isomorphism between $\Pirr(\aQ \ttenp \aR)$ and $\Pirr\aQ \syncp \Pirr\aR$, it follows that each $\rTS_{\aQ,\aR}$ is a well-defined $\BiCliq$-isomorphism. That is, $\rTS_{\aQ,\aR}$ is constructed by starting with the $\BiCliq$-isomorphism $id_{\Pirr\aQ \syncp \Pirr\aR} = \Pirr\aQ \syncp \Pirr\aR$ and applying a bipartite graph isomorphism to its domain. The description of the associated components also follow from this.

\smallskip
It remains to show naturality i.e.\ given morphisms $(f_i : \aQ_i \to \aR_i)_{i = 1,2}$ we must show that:
\[
\xymatrix@=15pt{
\Pirr(\aQ_1 \ttenp \aQ_2) \ar[d]_-{\Pirr(f_1 \ttenp f_2)} \ar[rr]^-{\rTS_{\aQ_1,\aQ_2}} && (\Pirr\aQ_1) \syncp (\Pirr\aQ_2) \ar[d]^-{\Pirr f_1 \syncp \Pirr f_2}
\\
\Pirr(\aR_1 \ttenp \aR_2) \ar[rr]_-{\rTS_{\aR_1,\aR_2}} && (\Pirr\aR_1) \syncp (\Pirr\aR_2)
}
\]
Let us first calculate:
\[
\begin{tabular}{lll}
$\rTS_{\aQ_1,\aQ_2} \fatsemi \Pirr f_1 \syncp \Pirr f_2 $
&
$= \rTS_{\aQ_1,\aQ_2} ; (\Pirr f_1 \syncp \Pirr f_2)_+\spbreve$
\\&
$= \rTS_{\aQ_1,\aQ_2} ; ((\Pirr f_1)_+ \syncp (\Pirr f_2)_+)\spbreve$
\\&
$= \rTS_{\aQ_1,\aQ_2} ; (\Pirr f_1)_+\spbreve \syncp (\Pirr f_2)_+\spbreve$
\end{tabular}
\]
so that:
\[
\begin{tabular}{lll}
& $\rTS_{\aQ_1,\aQ_2} \fatsemi \Pirr f_1 \syncp \Pirr f_2(\up_{\aQ_1^{\pOp},\aQ_2}^{j_1,j_2},\,(m_1,m_2))$
\\ $\iff$ &
$\forall i = 1,2. \exists m_q^i \in M(\aQ_i) . (j_i \nleq_{\aQ_i} m_q^i \text{ and } (f_i)_*(m_i) \leq_{\aQ_i} m_q^i) $
\\ $\iff$ &
$\forall i = 1,2. \neg\forall m_q^i \in M(\aQ_i). ( (f_i)_*(m_i) \leq_{\aQ_i} m_q^i \To j_i \leq_{\aQ_i} m_q^i)$
\\ $\iff$ &
$\forall i =1,2.\neg(j_i \leq_{\aQ_i} (f_i)_*(m_i))$
\\ $\iff$ &
$f_1(j_1) \nleq_{\aR_1} m_1 \text{ and } f_2(j_2) \nleq_{\aR_2} m_2$
\end{tabular}
\]
We now compute the other composite $\BiCliq$-morphism:
\[
\Pirr(f_1 \ttenp f_2) \fatsemi \rTS_{\aR_1,\aR_2}
= \Pirr(f_1 \ttenp f_2) ; (\rTS_{\aR_1,\aR_2})_+\spbreve
\]
in three steps.
\begin{enumerate}
\item
The first relation $\Pirr(f_1 \ttenp f_2) \subseteq J(\aQ_1 \ttenp \aQ_2) \times M(\aR_1 \ttenp \aR_2)$ has definition:
\[
\begin{tabular}{lll}
$\Pirr(f_1 \ttenp f_2) (\up_{\aQ^{\pOp},\aQ_2}^{j_1,j_2},\,\down_{\aR_1^{\pOp},\aR_2}^{m'_1,m'_2})$
&
$\iff f_1 \ttenp f_2(\up_{\aQ_1^{\pOp},\aQ_2}^{j_1,j_2}) \nleq_{\aR_1 \ttenp \aR_2} \; \down_{\aR_1^{\pOp},\aR_2}^{m'_1,m'_2}$
\\&
$\iff f_2 \circ \up_{\aQ_1^{\pOp},\aQ_2}^{j_1,j_2} \circ (f_1)_*  \nleq_{\JSL_f[\aR_1^{\pOp},\aR_2]} \; \down_{\aR_1^{\pOp},\aR_2}^{m'_1,m'_2}$
\end{tabular}
\]
\item
The second relation $(\rTS_{\aR_1,\aR_2})_+\spbreve \subseteq M(\aR_1 \ttenp \aR_2) \times (M(\aR_1) \times M(\aR_2))$ has definition:
\[
(\rTS_{\aR_1,\aR_2})_+\spbreve \, (\down_{\aR_1^{\pOp},\aR_2}^{m'_1,m'_2},\, (m_1,m_2))
\iff m_1 \leq_{\aR_1} m'_1 \text{ and } m_2 \leq_{\aR_2} m'_2
\]
as per the statement of this theorem.

\item
Composing yields all pairs $(\up_{\aQ_1^{\pOp},\aQ_2}^{j_1,j_2},\,(m_1,m_2))$ s.t.\ $\exists (m'_1,m'_2) \in M(\aR_1) \times M(\aR_2)$ satisfying:
\begin{enumerate}
\item
$f_1 \circ \up_{\aQ_1^{\pOp},\aQ_2}^{j_1,j_2} \circ (f_1)_* \nleq_{\JSL_f[\aR_1^{\pOp},\aR_2]} \; \down_{\aR_1^{\pOp},\aR_2}^{m'_1,m'_2}$

\item
$m_1 \leq_{\aR_1} m'_1 \text{ and } m_2 \leq_{\aR_2} m'_2$
\end{enumerate}

By Lemma \ref{lem:special_jsl_morphisms}.4 the latter condition is equivalent to
\[
\down_{\aR_1^{\pOp},\aR_2}^{m_1,m_2} \; \leq_{\JSL_f[\aR_1^{\pOp},\aR_2]} \; \down_{\aR_1^{\pOp},\aR_2}^{m'_1,m'_2}
\]
where it is important that $\aR_1^{\pOp}$ reverses the ordering. Consequently:
\[
\begin{tabular}{lll}
& $\Pirr(f_1 \ttenp f_2) ; (\rTS_{\aR_1,\aR_2})_+\spbreve(\up_{\aQ^{\pOp},\aQ_2}^{j_1,j_2},(m_1,m_2))$
\\[1ex]
$\iff$ &
$\exists (m'_1,m'_2) \in M(\aR_1) \times M(\aR_2).(
\down_{\aR_1^{\pOp},\aR_2}^{m_1,m_2} \; \leq \; \down_{\aR_1^{\pOp},\aR_2}^{m'_1,m'_2}
\text{ and }
f_2 \circ \up_{\aQ_1^{\pOp},\aQ_2}^{j_1,j_2} \circ (f_1)_*  \nleq \; \down_{\aR_1^{\pOp},\aR_2}^{m'_1,m'_2}
)$
\\[1ex]
$\iff$ &
$\neg\forall m'_1 \in M(\aR_1), m'_2 \in M(\aR_2).(
\down_{\aR_1^{\pOp},\aR_2}^{m_1,m_2} \; \leq \; \down_{\aR_1^{\pOp},\aR_2}^{m'_1,m'_2}
\; \implies 
f_2 \circ \up_{\aQ_1^{\pOp},\aQ_2}^{j_1,j_2} \circ (f_1)_*  \leq \; \down_{\aR_1^{\pOp},\aR_2}^{m'_1,m'_2}
)$
\\[1ex]
$\iff$ &
$f_2 \circ \up_{\aQ_1^{\pOp},\aQ_2}^{j_1,j_2} \circ (f_1)_* \nleq_{\JSL_f[\aR_1^{\pOp},\aR_2]} \; \down_{\aR_1^{\pOp},\aR_2}^{m_1,m_2} $
\end{tabular}
\]
where the final step uses the fact that $M(\aR_1 \ttenp \aR_2)$ consists of the morphisms $\down_{\aR_1^{\pOp},\aR_2}^{m'_1,m'_2}$.
\end{enumerate}

Having described the two sides of the naturality square, we need to prove their equality. By the above descriptions it suffices to prove that:
\[
f_2 \circ \up_{\aQ_1^{\pOp},\aQ_2}^{j_1,j_2} \circ (f_1)_* \leq \; \down_{\aR_1^{\pOp},\aR_2}^{m_1,m_2}
\iff
(f_1(j_1) \leq_{\aR_1} m_1 \text{ or } f_2(j_2) \leq_{\aR_2} m_2)
\]
for any $(j_1,j_2) \in J(\aQ_1) \times J(\aQ_2)$ and $(m_1,m_2) \in M(\aR_1) \times M(\aR_2)$. By Lemma \ref{lem:compose_spec_gen_mor}.1 this amounts to:
\[
\up_{\aR_1^{\pOp},\aR_2}^{f_1(j_1),f_2(j_2)} \; \leq \; \down_{\aR_1^{\pOp},\aR_2}^{m_1,m_2}
\iff
(f_1(j_1) \leq_{\aR_1} m_1 \text{ or } f_2(j_2) \leq_{\aR_2} m_2)
\]
which follows immediately by Lemma \ref{lem:special_jsl_morphisms}.6.
\end{proof}

\begin{note}
\label{note:ts_nat_inverse}
The natural inverse $\rTS^{\bf-1} : (\Pirr -) \syncp (\Pirr -) \To \Pirr(- \ttenp -)$ and its associated components are defined:
\[
\begin{tabular}{lll}
$\rTS_{\aQ,\aR}^{\bf-1}((j_1,j_2),\down_{\aQ^{\pOp},\aR}^{m_1,m_2})$
& $\iff$ &
$j_1 \nleq_\aQ m_1$ and $j_2 \nleq_\aR m_2$
\\[1ex]
$(\rTS_{\aQ,\aR}^{\bf-1})_-((j_1,j_2),\up_{\aQ^{\pOp},\aR}^{j_3,j_4})$
& $\iff$ &
$j_3 \leq_\aQ j_1$ and $j_4 \leq_\aR j_2$
\\[1ex]
$(\rTS_{\aQ,\aR}^{\bf-1})_+(\down_{\aQ^{\pOp},\aR}^{m_1,m_2},(m_3,m_4))$
& $\iff$ &
$m_1 \leq_\aQ m_3$ and $m_2 \leq_\aR m_4$
\end{tabular}
\]
This follows from the proof above i.e.\ apply the bipartite graph isomorphism to the \emph{codomain} of $id_{\Pirr\aQ \syncp \Pirr\aR}$. \endbox
\end{note}

That the synchronous product and the tight tensor product are `equivalent concepts' is now further clarified i.e.\ we describe certain composite natural isomorphisms. Recall that by definition $\aQ \ttenp \aR = \jslTight{\aQ^{\pOp},\aR}$.

\begin{corollary}
\label{cor:tight_tensor_syncp}
\item
We have the composite natural isomorphisms:
\begin{enumerate}
\item
$\aQ \ttenp \aR \xto{\rep_{\aQ \ttenp \aR}} \Open\Pirr(\aQ \ttenp \aR) \xto{\Open \rTS_{\aQ,\aR}} \Open(\Pirr\aQ \syncp \Pirr\aR)$
\[
\begin{tabular}{rcll}
with action
& $f$
& $\mapsto$ & $\{ (m_q,m_r) \in M(\aQ) \times M(\aR) : f(m_q) \nleq_\aR m_r \}$
\\[0.5ex]
and inverse action
& $Y$ & $\mapsto$ & $\lambda q \in Q. \Lor_\aR \{ \Land_\aR \{ m_r \in M(\aR) :  (m_q,m_r) \nin Y \} : q \leq_\aQ m_q \in M(\aQ) \}$.
\end{tabular}
\]
Moreover the action on join/meet-irreducibles is as follows:
\[
\begin{tabular}{lll}
$\up_{\aQ^{\pOp},\aR}^{j_q,j_r}$
& $\mapsto$
& $\Pirr\aQ \syncp \Pirr\aR[(j_q,j_r)] = \Pirr\aQ[j_q] \times \Pirr\aR[j_r]$
\\[1ex]
$\down_{\aQ^{\pOp},\aR}^{m_q,m_r}$
& $\mapsto$
& $\inte_{\Pirr\aQ \syncp \Pirr\aR}(\overline{(m_q,m_r)})$
\end{tabular}
\]

\item
$\rG \syncp \rH \xto{\red_\rG \syncp \red_\rH} (\Pirr\Open\rG) \syncp (\Pirr\Open\rH) \xto{\rTS_{\Open\rG,\Open\rH}^{\bf-1}} \Pirr(\Open\rG \ttenp \Open\rH)$
\[
\begin{tabular}{c}
where we relate $(g_s,h_s)$ to $\down_{(\Open\rG)^{\pOp},\Open\rH}^{\inte_\rG(\overline{g_t}) , \inte_\rH(\overline{h_t})}$ iff we have $\rG(g_s,g_t)$ and $\rH(h_s,h_t)$.
\end{tabular}
\]
Regarding the relation directly above, recall that every meet-irreducible in $\Open\rG$ arises as $\inte_\rG(\overline{g_t})$ for some $g_t \in \rG_t$. However not all $g_t \in \rG_t$ need yield a meet-irreducible in this way, unless $\rG$ is reduced.
\end{enumerate}
\end{corollary}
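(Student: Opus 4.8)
The plan is to treat this as a formal consequence of Theorem \ref{thm:sync_is_tight_tensor} together with the natural isomorphisms $\rep$ and $\red$ from Theorem \ref{thm:bicliq_jirr_equivalent}, reserving the real effort for the explicit computation of the actions. First I would observe that in both (1) and (2) we are merely composing natural isomorphisms. In (1), $\rep : \Id_{\JSL_f} \To \Open \circ \Pirr$ is a natural isomorphism, and whiskering the natural isomorphism $\rTS$ of Theorem \ref{thm:sync_is_tight_tensor} by the functor $\Open$ yields a natural isomorphism $\Open\rTS : \Open\circ\Pirr\circ(-\ttenp-) \To \Open\circ((\Pirr-)\syncp(\Pirr-))$; their vertical composite is therefore a natural isomorphism of bifunctors. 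Dually, in (2) the functor $\syncp$ sends the pair of $\Dep$-isomorphisms $(\red_\rG,\red_\rH)$ to a $\Dep$-isomorphism, and post-composing with the $\Dep$-isomorphism $\rTS^{\bf-1}_{\Open\rG,\Open\rH}$ keeps it an isomorphism, natural in $(\rG,\rH)$. Thus well-definedness and naturality cost nothing beyond citing the relevant results.

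The bulk of the proof is the action computation in (1). I would start from $\rep_{\aQ\ttenp\aR}(f) = \{ F \in M(\aQ\ttenp\aR) : f \nleq_{\aQ\ttenp\aR} F\}$; by Lemma \ref{lem:tight_tensor_irr} every such $F$ is a morphism $\down_{\aQ^{\pOp},\aR}^{m_q,m_r}$ with $(m_q,m_r)\in M(\aQ)\times M(\aR)$. Recalling that $\aQ\ttenp\aR = \jslTight{\aQ^{\pOp},\aR}$ inherits the pointwise order, the same argument as Lemma \ref{lem:spec_mor_q_q}.2 (now with $J(\aQ^{\pOp}) = M(\aQ)$) gives $f \leq \down_{\aQ^{\pOp},\aR}^{m_q,m_r} \iff f(m_q)\leq_\aR m_r$, so $\rep_{\aQ\ttenp\aR}(f) = \{\down_{\aQ^{\pOp},\aR}^{m_q,m_r} : f(m_q)\nleq_\aR m_r\}$. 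Since the proof of Theorem \ref{thm:sync_is_tight_tensor} realises $\rTS_{\aQ,\aR}$ as a bipartite graph isomorphism relabelling $\down_{\aQ^{\pOp},\aR}^{m_q,m_r}\leftrightarrow (m_q,m_r)$ on the $t$-side, applying $\Open$ amounts to transporting this open set along the relabelling, yielding the stated $f \mapsto \{(m_q,m_r): f(m_q)\nleq_\aR m_r\}$. The inverse action I would read off from $\rep^{\bf-1}_{\aQ\ttenp\aR}(Y) = \Land\,(M(\aQ\ttenp\aR)\setminus Y)$: after relabelling, $f = \Land_{\jslTight{\aQ^{\pOp},\aR}}\{\down_{\aQ^{\pOp},\aR}^{m_q,m_r} : (m_q,m_r)\nin Y\}$, and evaluating exactly as in the proof of Lemma \ref{lem:hom_meet_join_irr}.1 gives $f(m_q) = \Land_\aR\{m_r : (m_q,m_r)\nin Y\}$ at each $m_q\in M(\aQ)$; writing a general $q$ as the $\aQ^{\pOp}$-join of those $m_q \geq_\aQ q$ and using join-preservation recovers the displayed formula.

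For the action on irreducibles in (1) I would compute directly with Lemma \ref{lem:special_jsl_morphisms}.6. For a join-irreducible $\up_{\aQ^{\pOp},\aR}^{j_q,j_r}$, that lemma gives $\up_{\aQ^{\pOp},\aR}^{j_q,j_r}\nleq \down_{\aQ^{\pOp},\aR}^{m_q,m_r} \iff j_q\nleq_\aQ m_q \text{ and } j_r\nleq_\aR m_r$, so after relabelling its image is $\Pirr\aQ[j_q]\times\Pirr\aR[j_r]$, which equals $(\Pirr\aQ\syncp\Pirr\aR)[(j_q,j_r)]$ by Lemma \ref{lem:sync_functor_basic}.1. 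For a meet-irreducible $\down_{\aQ^{\pOp},\aR}^{m_q,m_r}$, since $\rep_{\aQ\ttenp\aR}$ is a lattice isomorphism it maps it to a meet-irreducible open set, which by Lemma \ref{lem:lat_op_cl}.3 has the form $\inte(\overline{\down_{\aQ^{\pOp},\aR}^{m_q,m_r}})$; relabelling turns this into $\inte_{\Pirr\aQ\syncp\Pirr\aR}(\overline{(m_q,m_r)})$.

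Finally, (2) is the order-dual bookkeeping, and I would compute the composite $\Dep$-morphism via $\fatsemi$ (Lemma \ref{lem:bicliq_func_comp}). By the definitions of $\red$ and $\syncp$, the morphism $\red_\rG\syncp\red_\rH$ relates $(g_s,h_s)$ to $(Y,Z)\in M(\Open\rG)\times M(\Open\rH)$ iff $\rG[g_s]\nsubseteq Y$ and $\rH[h_s]\nsubseteq Z$; composing with $\rTS^{\bf-1}_{\Open\rG,\Open\rH}$, which by Note \ref{note:ts_nat_inverse} relabels $(Y,Z)\mapsto \down_{(\Open\rG)^{\pOp},\Open\rH}^{Y,Z}$ on the codomain, and then substituting $Y=\inte_\rG(\overline{g_t})$, $Z=\inte_\rH(\overline{h_t})$ reduces the condition to $\rG[g_s]\nsubseteq\inte_\rG(\overline{g_t})$ and $\rH[h_s]\nsubseteq\inte_\rH(\overline{h_t})$; applying the equivalence $Y'\subseteq\inte_\rG(\overline{g_t})\iff g_t\nin Y'$ for $\rG$-open $Y'$ (Lemma \ref{lem:cl_inte_of_pirr}.1) to the open set $\rG[g_s]$ turns this into $\rG(g_s,g_t)$ and $\rH(h_s,h_t)$, as claimed, and the reducedness remark is just the observation (Lemma \ref{lem:lat_op_cl}.3) that $g_t\mapsto\inte_\rG(\overline{g_t})$ surjects onto $M(\Open\rG)$ but need not be injective. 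The main obstacle throughout is purely notational: keeping the order-duals $\aQ^{\pOp}$, the $\nleq$-flips, and the $\rep$-versus-$\red$ directions consistent while identifying each relabelling with the image under a bipartite graph isomorphism; the genuine categorical content is exhausted by the composite-of-natural-isomorphisms observation in the first paragraph.
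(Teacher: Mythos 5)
Your proposal is correct and follows essentially the same route as the paper: naturality is dispatched by composing the natural isomorphisms $\rep$, $\red$ and $\rTS$, and the substance is the same explicit action computations via Lemma \ref{lem:tight_tensor_irr}, Lemma \ref{lem:special_jsl_morphisms}.6, Lemma \ref{lem:sync_functor_basic}.1, Note \ref{note:ts_nat_inverse} and Lemma \ref{lem:cl_inte_of_pirr}.1, with your ``relabel along the bipartite graph isomorphism underlying $\rTS$'' shortcut standing in for the paper's explicit $(\rTS_{\aQ,\aR})_+$-component calculations. One small imprecision: in the meet-irreducible case, Lemma \ref{lem:lat_op_cl}.3 is only an inclusion, so ``isomorphisms preserve meet-irreducibles'' tells you the image is $\inte_{\Pirr\aQ \syncp \Pirr\aR}(\overline{x})$ for \emph{some} $x$ but not which one; you should add the one-line identification $\rep_{\aS}(m) = \{m' \in M(\aS) : m \nleq_\aS m'\} = \inte_{\Pirr\aS}(\overline{m})$ (via Lemma \ref{lem:cl_inte_of_pirr}.2) for $\aS := \aQ \ttenp \aR$ and $m := \down_{\aQ^{\pOp},\aR}^{m_q,m_r}$ before transporting along the relabelling, which is exactly what the paper's longer explicit verification of $\alpha_{\aQ,\aR}^{\bf-1}(\inte_{\Pirr\aQ\syncp\Pirr\aR}(\overline{(m_q,m_r)}))$ accomplishes.
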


\begin{proof}
\item
\begin{enumerate}
\item
We begin by showing that:
\[
rep_{\aQ \ttenp \aR}(f : \aQ^{\pOp} \to \aR)
= \{ \down_{\aQ^{\pOp},\aR}^{m_1,m_2} \; : (m_1,m_2) \in M(\aQ) \times M(\aR), \, f(m_1) \nleq_\aR m_2 \}
\]
Recall by definition that $rep_{\aQ \ttenp \aR}(f)$ contains all those meet-irreducibles $m \in M(\aQ \ttenp \aR)$ such that $f \nleq_{\aQ \ttenp \aR} m$. Now, by Lemma \ref{lem:irr_tight_morphisms} we know these meet-irreducibles are precisely $(\down_{\aQ^{\pOp},\aR}^{m_1,m_2})_{(m_1,m_2) \in M(
\aQ) \times M(\aR)}$. Then it only remains to show that $f \nleq_{\aQ \ttenp \aR} \down_{\aQ^{\pOp},\aR}^{m_1,m_2}$ iff $f(m_1) \nleq_\aR m_2$. We calculate:
\[
\begin{tabular}{lll}
$f \leq_{\aQ \ttenp \aR} \; \down_{\aQ^{\pOp},\aR}^{m_1,m_2}$
&
$\iff \forall q \in Q.( q \leq_{\aQ^{\pOp}} m_1 \To f(q) \leq_\aR m_2)$
& since $q \leq_\aR \top_\aR$
\\&
$\iff  \forall q \in Q.( m_1 \leq_\aQ q \To f(q) \leq_\aR m_2)$
\\&
$\iff  f(m_1) \leq_\aR m_2$
& since $f : \aQ^{\pOp} \to \aR$ monotonic
\end{tabular}
\] 
In order to apply $\Open\rTS_{\aQ,\aR}$, first recall that $(\rTS_{\aQ,\aR})_+\spbreve \subseteq M(\aQ \ttenp \aR) \times (M(\aQ) \times M(\aR))$ has definition:
\[
(\rTS_{\aQ,\aR})_+\spbreve(\down_{\aQ^{\pOp},\aR}^{m_1,m_2},\,(m_q,m_r))
\iff
m_q \leq_\aQ m_1 \text{ and } m_r \leq_\aR m_2
\]
Then to finally understand why:
\[
\Open\rTS_{\aQ,\aR}[rep_{\aQ \ttenp \aR}(f)]
= \{ (m_q,m_r) \in M(\aQ) \times M(\aR) : f(m_q) \nleq_\aR m_r \}
\]
observe that $f(m_1) \nleq_\aR m_2$ and $m_q \leq_\aQ m_1$ and $m_r \leq_\aR m_2$ imply that $f(m_q) \nleq_\aR m_r$, by making use of the `order-reversing' monotonicity of $f : \aQ^{\pOp} \to \aR$.

\bigskip
The inverse action follows because every $q \in Q$ arises as a meet of those meet-irreducibles above it and $f : \aQ^{\pOp} \to \aR$ sends $\aQ$-meets to $\aR$-joins. The description of the action on join/meet-irreducibles is `the natural one' in the sense that (i) we know the join/meet-irreducibles of $\aQ \ttenp \aR = \jslTight{\aQ^{\pOp},\aR}$ via Lemma \ref{lem:tight_tensor_irr}, and (ii) we know the join/meet-irreducibles of $\Open(\Pirr\aQ \syncp \Pirr\aR)$ via Lemma \ref{lem:lat_op_cl}.3 and the fact that $\Pirr\aQ \syncp \Pirr\aR$ is reduced via Lemma \ref{lem:syncp_reduced_char}. Nevertheless, let us directly verify these claims.
\begin{enumerate}
\item
First the action on join-irreducibles:
\[
\begin{tabular}{lll}
$\alpha_{\aQ,\aR}(\up_{\aQ^{\pOp},\aR}^{j_q,j_r})$
&
$= \{ (m_q,m_r) \in M(\aQ) \times M(\aR) : \; \up_{\aQ^{\pOp},\aR}^{j_q,j_r}(m_q) \nleq_\aR m_r \}$
& by definition
\\&
$= \{ (m_q,m_r) \in M(\aQ) \times M(\aR) : m_q \nleq_{\aQ^{\pOp}} j_q , \; j_r \nleq_\aR m_r \}$
\\&
$= \{ (m_q,m_r) \in M(\aQ) \times M(\aR) : j_q \nleq_\aQ m_q , \; j_r \nleq_\aR m_r \}$
\\&
$= \Pirr\aQ[j_q] \times \Pirr\aR[j_r]$
\\&
$= (\Pirr\aQ \syncp \Pirr\aR)[(j_q,j_r)]$
& see Lemma \ref{lem:sync_functor_basic}.1
\end{tabular}
\]
\item
Finally we verify the action on meet-irreducibles in terms of the inverse action. For brevity let $\rG_\aQ = \Pirr\aQ$ and $\rG_\aR = \Pirr\aR$.
\[
\begin{tabular}{lll}
$\alpha_{\aQ,\aR}^{\bf-1}(\inte_{\rG_\aQ \syncp \rG_\aR}(\overline{(m_q,m_r)}))$
\\
$= \lambda q \in Q. \Lor_\aR \{ \Land_\aR \{ m'_r \in M(\aR) : (m'_q,m'_r) \nin \inte_{\rG_\aQ \syncp \rG_\aR}(\overline{(m_q,m_r)}) \} : q \leq_\aQ m'_q \in M(\aQ) \}$
\\
$= \lambda q \in Q. \Lor_\aR \{ \Land_\aR \{ m'_r \in M(\aR) : (m'_q,m'_r) \in \cl_{\breve{\rG}_\aQ \syncp \breve{\rG}_\aR}(\{(m_q,m_r)\}) \} : q \leq_\aQ m'_q \in M(\aQ) \}$
\\
$= \lambda q \in Q. \Lor_\aR \{ \Land_\aR \{ m'_r \in M(\aR) : (m'_q,m'_r) \in \cl_{\breve{\rG}_\aQ}(\{m_q\}) \times \cl_{\breve{\rG}_\aR}(\{m_r\}) \} : q \leq_\aQ m'_q \in M(\aQ) \}$
\\
$= \lambda q \in Q. \Lor_\aR \{ \Land_\aR \{ m'_r \in M(\aR) : m_q \leq_\aQ m'_q, \; m_r \leq_\aR m'_r \} : q \leq_\aQ m'_q \in M(\aQ) \}$
\\
$= \lambda q \in Q.
\begin{cases}
\bot_\aR & \text{if $q = \top_\aQ$}
\\
\top_\aR & \text{if $m_q \nleq_\aQ q$}
\\
m_r & \text{otherwise}
\end{cases}$
\\
$= \; \down_{\aQ^{\pOp},\aR}^{m_q,m_r}$
\end{tabular}
\]
Here we have used De Morgan duality applied to interior/closure operators, the fact that synchronous products preserve bicliques (see Lemma \ref{lem:sync_functor_basic}), and also that e.g.\ $\breve{\rG}_\aQ = (\Pirr\aQ)\spbreve = \Pirr\aQ^{\pOp}$ so that:
\[
m'_q \in \cl_{\breve{\rG}_\aQ}(\{ m_q \})
\iff
m'_q \in \cl_{\Pirr\aQ^{\pOp}}(\{ m_q \})
\iff 
m'_q \leq_{\aQ^{\pOp}} m_q
\iff 
m_q \leq_\aQ m'_q
\]
using Lemma \ref{lem:cl_inte_of_pirr}.2 in the second equivalence.
\end{enumerate}

\item
First recall the canonical isomorphism:
\[
red_\rG : \rG \to \Pirr\Open\rG
\qquad
red_\rG \subseteq \rG_s \times M(\Open\rG)
\qquad
red_\rG(g_s,Y) :\iff \rG[g_s] \nsubseteq Y
\]
and consequently:
\[
\begin{tabular}{c}
$red_\rG \syncp \red_\rH \subseteq (\rG_s \times \rH_s) \times (M(\Open\rG) \times M(\Open\rH))$
\\[0.5ex]
$red_\rG \syncp \red_\rH((g_s,h_s),(Y_1,Y_2)) \iff \rG[g_s] \nsubseteq Y_1 \text{ and } \rH[h_s] \nsubseteq Y_2$
\end{tabular}
\]
We already described the positive component of $\rTS^{\bf-1}$ in Note \ref{note:ts_nat_inverse} above,
\[
\begin{tabular}{c}
$(\rTS_{\Open\rG,\Open\rH}^{\bf-1})_+\spbreve \subseteq (M(\Open\rG) \times M(\Open\rH)) \times M(\Open\rG \ttenp \Open\rH)$
\\[0.5ex]
$(\rTS_{\Open\rG,\Open\rH}^{\bf-1})_+\spbreve((Y_1,Y_2),(\down_{(\Open\rG)^{\pOp},\Open\rH}^{Y_\rG,Y_\rH}))
\iff Y_\rG \subseteq Y_1 \text{ and } Y_\rH \subseteq Y_2$
\end{tabular}
\]
also recalling that $\leq_{\Open\rG}$ is inclusion of sets. Then:
\[
\begin{tabular}{lll}
& $(red_\rG \syncp red_\rH) \fatsemi \rTS_{\Open\rG,\Open\rH}^{\bf-1} \, ((g_s,h_s),\down_{(\Open\rG)^{\pOp},\Open\rH}^{Y_\rG, Y_\rH})$
\\[0.5ex] $\iff$ &
$(red_\rG \syncp red_\rH) ; (\rTS_{\Open\rG,\Open\rH}^{\bf-1})_+\spbreve ((g_s,h_s),\down_{(\Open\rG)^{\pOp},\Open\rH}^{Y_\rG, Y_\rH})$
\\[0.5ex] $\iff$ &
$\exists (Y_1,Y_2) \in M(\Open\rG) \times M(\Open\rH).(\rG[g_s] \nsubseteq Y_1 \text{ and } \rH[h_s] \nsubseteq Y_2 \text{ and } Y_\rG \subseteq Y_1 \text{ and } Y_\rH \subseteq Y_2 )$
\\[0.5ex] $\iff$ &
$\rG[g_s] \nsubseteq Y_\rG \text{ and } \rH[h_s] \nsubseteq Y_\rH$
\end{tabular}
\]
where the final equivalence follows easily using basic properties of sets. Finally, recall by Lemma \ref{lem:lat_op_cl}.3 that every meet-irreducible $Y_\rG \in M(\Open\rG)$ equals $\inte_\rG(\overline{g_t})$ for some $g_t \in \rG_t$, and similarly $Y_\rH = \inte_\rH(\overline{h_t})$ for some $h_t \in \rH_t$. Then since:
\[
\begin{tabular}{lll}
$\rG[g_s] \nsubseteq \inte_\rG(\overline{g_t})$
&
$\iff g_s \nin \rG^\down \circ \rG^\up \circ \rG^\down(\overline{g_t})$
& by adjointness
\\&
$\iff g_s \nin \rG^\down(\overline{g_t})$
& by $(\down\up\down)$
\\&
$\iff \rG[g_s] \nsubseteq \overline{g_t}$
& by adjointness
\\&
$\iff \rG(g_s,g_t)$
\end{tabular}
\]
we are finished.

\end{enumerate}
\end{proof}

We also have the following basic result.

\begin{corollary}
\label{cor:sync_prod_pres_monos}
The synchronous product functor preserves monos.
\end{corollary}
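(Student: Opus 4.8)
The plan is to reduce the statement to the fact that the tight tensor product preserves embeddings (Lemma \ref{lem:ttenp_pres_monos}), transporting the problem along the equivalence $\Open : \Dep \to \JSL_f$ and the natural isomorphism relating $\syncp$ to $\ttenp$. Since $\Open$ and $\Pirr$ form an equivalence of categories (Theorem \ref{thm:bicliq_jirr_equivalent}), a $\Dep$-morphism $\rS$ is monic if and only if $\Open\rS$ is injective; this is exactly Lemma \ref{lem:bicliq_mono_epi_char}.1. So it suffices to prove the corresponding injectivity statement in $\JSL_f$.

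First I would assemble a natural isomorphism of bifunctors
\[
\Phi : \Open(- \syncp -) \To \Open(-) \ttenp \Open(-).
\]
Its components come from Corollary \ref{cor:tight_tensor_syncp}.2, which provides a natural isomorphism $\rG \syncp \rH \cong \Pirr(\Open\rG \ttenp \Open\rH)$ in $\Dep$ (built from $red \syncp red$ followed by $\rTS^{\bf-1}$); applying the functor $\Open$ and then the natural isomorphism $rep^{\bf-1} : \Open \circ \Pirr \To \Id_{\JSL_f}$ from Theorem \ref{thm:bicliq_jirr_equivalent} yields $\Phi$. Naturality in both variables is inherited from the naturality of $red$, $\rTS$ and $rep$, together with the bifunctoriality of $\syncp$ and $\ttenp$ just established.

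Then, given monos $\rR : \rG \to \rH$ and $\rR' : \rG' \to \rH'$ in $\Dep$, the morphisms $\Open\rR$ and $\Open\rR'$ are injective by the first step, so $\Open\rR \ttenp \Open\rR'$ is injective by Lemma \ref{lem:ttenp_pres_monos}. The naturality square for $\Phi$ reads
\[
\Phi_{\rH,\rH'} \circ \Open(\rR \syncp \rR') = (\Open\rR \ttenp \Open\rR') \circ \Phi_{\rG,\rG'},
\]
whose right-hand side is an isomorphism followed by a monomorphism and hence monic. Therefore the left-hand side is monic, and since $\Phi_{\rH,\rH'}$ is an isomorphism it follows that $\Open(\rR \syncp \rR')$ is itself monic, i.e.\ injective. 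By the first step applied in reverse, $\rR \syncp \rR'$ is monic in $\Dep$, as required.

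The only real bookkeeping — and thus the main (though minor) obstacle — is checking that $\Phi$ is genuinely a natural transformation of \emph{bifunctors}, so that the displayed square commutes for the pair $(\rR, \rR')$ simultaneously; this rests on applying $\syncp$ and $\ttenp$ to the component naturality squares of $red$, $\rTS$ and $rep$. The elementary categorical facts used at the end (that a factor of a monic composite is monic, and that an isomorphism followed by a monomorphism is monic) are standard, and everything substantial has already been established in Lemma \ref{lem:ttenp_pres_monos} and Corollary \ref{cor:tight_tensor_syncp}.
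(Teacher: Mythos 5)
Your proposal is correct and follows essentially the same route as the paper: both arguments transport the question along the natural isomorphism of Corollary \ref{cor:tight_tensor_syncp}.2 relating $\syncp$ to $\ttenp$, invoke the fact that the equivalence functors $\Open$ and $\Pirr$ preserve and reflect monos, and reduce to Lemma \ref{lem:ttenp_pres_monos}. Your version merely spells out the conjugation by the equivalence (working in $\JSL_f$ via $\Phi$ and the naturality square) that the paper's two-line proof leaves implicit.
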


\begin{proof}
By Corollary \ref{cor:tight_tensor_syncp}.2 we have the natural isomorphism $- \syncp - \cong \Pirr(\Open - \ttenp \Open -)$. Equivalence functors preserve monos, and by Lemma \ref{lem:ttenp_pres_monos} the tight tensor product preserves them too.
\end{proof}

The following Theorem describes the equality $(\breve{\rG} \syncp \breve{\rH})\spbreve = \rG \syncp \rH$ inside $\JSL_f$, where it becomes a non-trivial natural isomorphism. Importantly, it also provides a method to compute meets inside tight tensor products.

\begin{theorem}[Tight tensor products are isomorphic to their De Morgan dual]
\label{thm:tight_self_dual}
\item
We have the natural isomorphism:
\[
\begin{tabular}{lll}
$(\aQ^{\pOp} \ttenp \aR^{\pOp})^{\pOp}  \xto{\nu_{\aQ,\aR}} \aQ \ttenp \aR$
& $\nu_{\aQ,\aR}(f) := \lambda q \in Q. \Lor_\aR \{ j \in J(\aR) : f_*(j) \nleq_\aQ q \}$
\\[1ex]
& $\nu_{\aQ,\aR}^{\bf-1}(g) := \lambda q \in Q. \Land_\aR \{ m \in M(\aR) : q \nleq_\aQ g_*(m) \}$
\end{tabular}
\]
In particular, $\nu_{\aQ,\aR}^{\bf-1} = \nu_{\aQ^{\pOp},\aR^{\pOp}}^{\pOp}$ and furthermore:
\[
\begin{tabular}{c}
$\nu_{\aQ,\aR}(\Land_{\jslTight{\aQ,\aR^{\pOp}}} \{ \down_{\aQ,\aR^{\pOp}}^{q_i,r_i} : i \in I \})
=  \Lor_{\jslTight{\aQ^{\pOp},\aR}} \{ \up_{\aQ^{\pOp},\aR}^{q_i,r_i} : i \in I \}$
\\[2ex]
$\nu_{\aQ,\aR}(\Lor_{\jslTight{\aQ,\aR^{\pOp}}} \{ \up_{\aQ,\aR^{\pOp}}^{q_i,r_i} : i \in I \})
=  \Land_{\jslTight{\aQ^{\pOp},\aR}} \{ \down_{\aQ^{\pOp},\aR}^{q_i,r_i} : i \in I \}$
\end{tabular}
\]
for any $(q_i,r_i) \in Q \times R$ and index set $I$.
\end{theorem}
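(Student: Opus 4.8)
The plan is to recognise $\nu_{\aQ,\aR}$ as the $\JSL_f$-incarnation of the elementary relational identity $(\breve{\rG} \syncp \breve{\rH})\spbreve = \rG \syncp \rH$, which holds on the nose because both sides relate $((g_s,h_s),(g_t,h_t))$ exactly when $\rG(g_s,g_t)$ and $\rH(h_s,h_t)$ (unwind Definition \ref{def:sync_product}). Concretely I would: (i) transport this identity across the categorical equivalence between $\BiCliq$ and $\JSL_f$ to produce a natural isomorphism $\mu_{\aQ,\aR} : (\aQ^{\pOp} \ttenp \aR^{\pOp})^{\pOp} \To \aQ \ttenp \aR$ whose bijectivity, join-preservation and naturality are all automatic; (ii) identify $\mu$ with the stated formula $\nu$; and (iii) read off the remaining identities.

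For (i) I would assemble a chain of natural isomorphisms. Corollary \ref{cor:tight_tensor_syncp}.1 supplies a natural isomorphism $\gamma_{\aQ,\aR} : \aQ \ttenp \aR \xrightarrow{\cong} \Open(\Pirr\aQ \syncp \Pirr\aR)$ with explicit action $f \mapsto \{(m_q,m_r) : f(m_q) \nleq_\aR m_r\}$. Instantiating at $(\aQ^{\pOp},\aR^{\pOp})$ and using $\Pirr\aQ^{\pOp} = (\Pirr\aQ)\spbreve$ (Lemma \ref{lem:cl_inte_of_pirr}.2) gives $\aQ^{\pOp} \ttenp \aR^{\pOp} \cong \Open(\breve{\Pirr\aQ} \syncp \breve{\Pirr\aR})$. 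Applying $(-)^{\pOp}$ — legitimate here since every $\JSL_f$-isomorphism is a bounded-lattice isomorphism — and then $\partial$ from Theorem \ref{thm:partial_nat_iso} yields $(\aQ^{\pOp} \ttenp \aR^{\pOp})^{\pOp} \cong (\Open(\breve{\Pirr\aQ} \syncp \breve{\Pirr\aR}))^{\pOp} \cong \Open((\breve{\Pirr\aQ} \syncp \breve{\Pirr\aR})\spbreve)$, whereupon the relational identity rewrites the argument as $\Pirr\aQ \syncp \Pirr\aR$. Post-composing with $\gamma_{\aQ,\aR}^{\bf-1}$ defines $\mu_{\aQ,\aR}$. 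Each factor is a natural isomorphism with matching variances ($\partial$ being precisely the bridge between the two self-dualities), so $\mu$ is a natural isomorphism; in particular a join-preserving bijection.

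For (ii) I would identify $\mu$ with $\nu$ by evaluating on generators. By Lemma \ref{lem:tight_tensor_irr} the join-irreducibles of the domain $(\aQ^{\pOp} \ttenp \aR^{\pOp})^{\pOp}$ are the morphisms $\down_{\aQ,\aR^{\pOp}}^{j_1,j_2}$ with $(j_1,j_2) \in J(\aQ) \times J(\aR)$, and these join-generate. A short direct computation shows $\nu_{\aQ,\aR}(\down_{\aQ,\aR^{\pOp}}^{q_0,r_0}) = \up_{\aQ^{\pOp},\aR}^{q_0,r_0}$ for all $q_0,r_0$: using $(\down_{\aQ,\aR^{\pOp}}^{q_0,r_0})_* = \down_{\aR,\aQ^{\pOp}}^{r_0,q_0}$ (Lemma \ref{lem:special_jsl_morphisms}.1) one finds $(\down_{\aQ,\aR^{\pOp}}^{q_0,r_0})_*(j)$ equals $q_0$ when $j \leq_\aR r_0$ and $\bot_\aQ$ otherwise, so $\nu_{\aQ,\aR}(\down_{\aQ,\aR^{\pOp}}^{q_0,r_0})(q) = \Lor_\aR\{ j \in J(\aR) : j \leq_\aR r_0, \, q_0 \nleq_\aQ q\}$, which is $r_0$ when $q_0 \nleq_\aQ q$ and $\bot_\aR$ otherwise — precisely $\up_{\aQ^{\pOp},\aR}^{q_0,r_0}$. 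Since $\mu$ is join-preserving, $\mu(f) = \Lor_{\aQ \ttenp \aR}\{ \up_{\aQ^{\pOp},\aR}^{j_1,j_2} : f \leq_{\jslTight{\aQ,\aR^{\pOp}}} \down_{\aQ,\aR^{\pOp}}^{j_1,j_2}\}$, and comparing this join with the explicit formula (again through the adjoint relationship and ``every element is the join of the join-irreducibles below it'') shows it coincides with $\nu_{\aQ,\aR}(f)$. Hence $\nu = \mu$ is a natural isomorphism.

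Finally (iii) is bookkeeping. The identity $\nu_{\aQ,\aR}^{\bf-1} = \nu_{\aQ^{\pOp},\aR^{\pOp}}^{\pOp}$ follows by inspection, since $\nu_{\aQ^{\pOp},\aR^{\pOp}}^{\pOp}(g)$ acts as $\lambda q.\Land_\aR\{ m \in M(\aR) : q \nleq_\aQ g_*(m)\}$ (using $J(\aR^{\pOp}) = M(\aR)$ and $\Lor_{\aR^{\pOp}} = \Land_\aR$), matching the stated inverse. The two displayed equalities then drop out of $\nu$ being a lattice isomorphism: the first applies join-preservation to $\nu(\down_{\aQ,\aR^{\pOp}}^{q_i,r_i}) = \up_{\aQ^{\pOp},\aR}^{q_i,r_i}$ (a meet of $\down$'s in $\jslTight{\aQ,\aR^{\pOp}}$ is a join in the opposite semilattice), and the second applies meet-preservation together with the companion computation $\nu(\up_{\aQ,\aR^{\pOp}}^{q_i,r_i}) = \down_{\aQ^{\pOp},\aR}^{q_i,r_i}$ (obtained identically, or via the inverse formula). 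I expect the main obstacle to be precisely that join-preservation of $\nu$ is invisible from its defining formula, because meets in the tight hom-semilattices $\jslTight{-,-}$ are not computed pointwise in general (Example \ref{ex:hom_meet_not_pw}); circumventing this is exactly what the detour through the synchronous product and the relational identity $(\breve{\rG}\syncp\breve{\rH})\spbreve = \rG\syncp\rH$ buys, as it delivers join-preservation for free.
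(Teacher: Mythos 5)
Your proposal is correct and follows essentially the same route as the paper: the paper defines $\nu_{\aQ,\aR}$ as exactly your composite $\alpha_{\aQ,\aR}^{\bf-1} \circ \partial_{(\Pirr\aQ \syncp \Pirr\aR)\spbreve} \circ \alpha_{\aQ^{\pOp},\aR^{\pOp}}^{\pOp}$ built from Corollary \ref{cor:tight_tensor_syncp}.1, the identity $\Pirr\aQ^{\pOp} \syncp \Pirr\aR^{\pOp} = (\Pirr\aQ \syncp \Pirr\aR)\spbreve$ and the natural isomorphism $\partial$, derives $\nu_{\aQ,\aR}^{\bf-1} = \nu_{\aQ^{\pOp},\aR^{\pOp}}^{\pOp}$ from the adjoint behaviour of $\partial$ (Lemmas \ref{lem:adj_obs}.2 and \ref{lem:open_dual_iso_adjoints}.1), and then pins down the action by evaluating on the join-irreducibles $\down_{\aQ,\aR^{\pOp}}^{j_q,j_r}$ before reading off the two displayed equalities exactly as you do. The one step you elide is the direct computation that the \emph{composite} sends $\down_{\aQ,\aR^{\pOp}}^{j_q,j_r}$ to $\alpha_{\aQ,\aR}^{\bf-1}$ of $\Pirr\aQ\syncp\Pirr\aR[(j_q,j_r)]$, i.e.\ to $\up_{\aQ^{\pOp},\aR}^{j_q,j_r}$: your formula-side verification that $\nu(\down^{q_0,r_0}) = \up^{q_0,r_0}$ does not by itself identify the composite with the stated formula, and the assertion $\mu(f) = \Lor\{\up^{j_1,j_2} : f \leq \down^{j_1,j_2}\}$ presupposes precisely this computation, which the paper carries out as the chain $\partial_{(\Pirr\aQ\syncp\Pirr\aR)\spbreve} \circ \alpha_{\aQ^{\pOp},\aR^{\pOp}}^{\pOp}(\down^{j_q,j_r}) = \alpha_{\aQ,\aR}(\up^{j_q,j_r})$; once that is in place everything else follows as you describe.
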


\begin{proof}
We define $\nu_{\aQ,\aR}$ as the following composite natural isomorphism:
\[
\xymatrix@=10pt{
(\aQ^{\pOp} \ttenp \aR^{\pOp})^{\pOp}
\ar[rrr]^-{\alpha_{\aQ^{\pOp},\aR^{\pOp}}^{\pOp}}_-{\cong} 
\ar[ddddrrrrr]_{\nu_{\aQ,\aR}}
&&& (\Open(\Pirr\aQ^{\pOp} \syncp \Pirr\aR^{\pOp}))^{\pOp} \ar@{=}[rr] 
&& (\Open(\Pirr\aQ \syncp \Pirr \aR)\spbreve)^{\pOp} \ar[dd]_{\cong}^{\partial_{(\Pirr\aQ \syncp \Pirr\aR)\spbreve}}
\\
\\ &&& && \Open(\Pirr\aQ \syncp \Pirr\aR) 
\ar[dd]_{\cong}^{\alpha_{\aQ,\aR}^{\bf-1}}
\\
\\ &&& && \aQ \ttenp \aR
}
\]
where:
\[
\begin{tabular}{rll}
$\alpha_{\aQ^{\pOp},\aR^{\pOp}}(h)$ 
& $:= \{ (j_q,j_r) \in J(\aQ) \times J(\aR) : j_r \nleq_\aR  h(j_q) \}$
& is from Corollary \ref{cor:tight_tensor_syncp}.1
\\[0.5ex]
$\alpha_{\aQ,\aR}^{\bf-1}(Y)$ 
& $:= \lambda q \in Q. \Lor_\aR \{ \Land_\aR \{ m_r \in M(\aR) : (m_q,m_r) \nin Y \} : q \leq_\aQ m_q \in M(\aQ)  \}$
& see above
\\[0.5ex]
$\partial_{(\Pirr\aQ \syncp \Pirr\aR)\spbreve}(X)$ 
& $:= \Pirr\aQ \syncp \Pirr\aR[\overline{X}]$
& is from Definition \ref{def:open_dual_iso}
\end{tabular}
\]
Then $\nu_{\aQ,\aR}$ is a well-defined natural isomorphism, and moreover:
\[
\begin{tabular}{lll}
$\nu_{\aQ^{\pOp},\aR^{\pOp}}^{\pOp}$
&
$= (\alpha_{\aQ^{\pOp},\aR^{\pOp}}^{\bf-1})^{\pOp}
\circ \partial_{(\Pirr\aQ^{\pOp} \syncp \Pirr\aR^{\pOp})\spbreve}^{\pOp} \circ \alpha_{\aQ,\aR}$
& apply $(-)^{\pOp}$
\\&
$= (\alpha_{\aQ^{\pOp},\aR^{\pOp}}^{\bf-1})^{\pOp}
\circ \partial_{\Pirr\aQ \syncp \Pirr\aR}^{\pOp} \circ \alpha_{\aQ,\aR}$
& 
\\&
$= (\alpha_{\aQ^{\pOp},\aR^{\pOp}}^{\bf-1})^{\pOp}
\circ \partial_{(\Pirr\aQ \syncp \Pirr\aR)\spbreve}^{\bf-1} \circ \alpha_{\aQ,\aR}$
& see below
\\&
$= (\alpha_{\aQ^{\pOp},\aR^{\pOp}}^{\pOp})^{\bf-1}
\circ \partial_{\Pirr\aQ^{\pOp} \syncp \Pirr\aR^{\pOp}}^{\bf-1} \circ \alpha_{\aQ,\aR}$
\\&
$= \nu_{\aQ,\aR}^{\bf-1}$
& by rule of composite inverses
\end{tabular}
\]
The marked equality follows because for any relation $\rG$ we have:
\[
\delta_\rG^{\pOp} = ((\delta_\rG)_*)^{\bf-1}
\quad\text{by Lemma \ref{lem:adj_obs}.2}
\qquad\text{and \qquad $(\delta_\rG)_* = \delta_{\breve{\rG}}$ by Lemma \ref{lem:open_dual_iso_adjoints}.1}.
\]
Using this fact, one can readily verify that the description of $\nu_{\aQ,\aR}^{\bf-1}$'s action follows from that of $\nu_{\aQ,\aR}$, so it remains to prove that the latter is correct. To this end, we first show that:
\[
\nu_{\aQ,\aR}(\down_{\aQ,\aR^{\pOp}}^{j_q,j_r}) 
\quad = \quad \up_{\aQ^{\pOp},\aR}^{j_q,j_r}
\qquad
\text{for each pair of join-irreducibles $(j_q,j_r) \in J(\aQ) \times J(\aR)$.}
\]
So let $h := \; \down_{\aQ,\aR^{\pOp}}^{j_q,j_r} : \aQ \to \aR^{\pOp}$, and also define $\rG := \Pirr\aQ \syncp \Pirr\aR$. To compute $\nu_{\aQ,\aR}(h)$ consider the action of the first two composites:
\[
\begin{tabular}{lll}
$\partial_{\breve{\rG}} \circ \alpha_{\aQ^{\pOp},\aR^{\pOp}}^{\pOp}(h)$
&
$= \partial_{\breve{\rG}}(\{ (j'_q,j'_r) : j'_r \nleq_\aR \; \down_{\aQ,\aR^{\pOp}}^{j_q,j_r}(j'_q) \})$
\\[0.5ex]&
$= \rG[ \{ (j'_q,j'_r) : j'_r \leq_\aR \; \down_{\aQ,\aR^{\pOp}}^{j_q,j_r}(j'_q) \} ]$
\\[0.5ex]&
$= \rG[\{ (j'_q,j'_r)  :  j'_q \leq_\aQ j_q \text{ and } j'_r \leq_\aR j_r \}]$
& $\top_{\aR^{\pOp}} = \bot_\aR \neq j_r$
\\&
$=  \{ (m_q,m_r) : \exists (j'_q,j'_r).[ j'_q \nleq_\aQ m_q \text{ and } j'_r \nleq_\aR m_r \text{ and } j'_q \leq_\aQ j_q \text{ and } j'_r \leq_\aR j_r    ] \}$
\\&
$= \{ (m_q,m_r) : j_q \nleq_\aQ m_q \text{ and } j_r \nleq_\aR m_r  \}$
\\&
$= \{ (m_q,m_r) : \; \up_{\aQ^{\pOp},\aR}^{j_q,j_r}(m_q) \nleq_\aR m_r \}$
\\&
$= \alpha_{\aQ,\aR}(\up_{\aQ^{\pOp},\aR}^{j_q,j_r})$
\end{tabular}
\]
where the final step follows from Corollary \ref{cor:tight_tensor_syncp}.1. Thus $\nu_{\aQ,\aR}(h) = \; \up_{\aQ^{\pOp},\aR}^{j_q,j_r}$, and we now use this to derive the action on an arbitrary tight morphism $f : \aQ \to \aR^{\pOp}$.
\[
\begin{tabular}{lll}
$\nu_{\aQ,\aR}(f)$
&
$= \nu_{\aQ,\aR}(\Land_{\jslTight{\aQ,\aR^{\pOp}}} \{ \down_{\aQ,\aR^{\pOp}}^{j_q,j_r} \; : f \leq \; \down_{\aQ,\aR^{\pOp}}^{j_q,j_r}  \})$
\\&
$= \nu_{\aQ,\aR}(\Land_{\jslTight{\aQ,\aR^{\pOp}}} \{ \down_{\aQ,\aR^{\pOp}}^{j_q,j_r} \; :  j_r \leq_\aR f(j_q)  \})$
\\&
$= \Lor_{\JSL_f[\aQ^{\pOp},\aR]} \{ \up_{\aQ^{\pOp},\aR}^{j_q,j_r} \; :  j_r \leq_\aR f(j_q) \}$
\\&
$= \lambda q \in Q. \Lor_\aR \{ j_r \in J(\aR) : \exists j_q \in J(\aQ).(j_q \nleq_\aQ q \text{ and } j_r \leq_\aR f(j_q) )  \}$
\\&
$= \lambda q \in Q. \Lor_\aR \{ j_r : \exists j_q.(j_q \nleq_\aQ q \text{ and } f(j_q) \leq_{\aR^{\pOp}} j_r )  \}$
\\&
$= \lambda q \in Q. \Lor_\aR \{ j_r : \exists j_q.(j_q \nleq_\aQ q \text{ and } j_q \leq_\aQ f_*(j_r) )  \}$
\\&
$= \lambda q \in Q. \Lor_\aR \{ j_r  \in J(\aR) : f_*(j_r) \nleq_\aQ q \}$
\end{tabular}
\]
Regarding the description of $\nu_{\aQ,\aR}$'s action on meets of special morphisms, given any $(q,r) \in Q \times R$ we have:
\[
\begin{tabular}{lll}
$\nu_{\aQ,\aR}(\down_{\aQ,\aR^{\pOp}}^{q,r})$
&
$= \lambda q' \in Q. \Lor_\aR \{ j \in J(\aR) : (\down_{\aQ,\aR^{\pOp}}^{q,r})_*(j) \nleq_\aQ q' \}$
& see above
\\&
$= \lambda q' \in Q. \Lor_\aR \{ j \in J(\aR) : \down_{\aR,\aQ^{\pOp}}^{r,q}(j) \nleq_\aQ q' \}$
& by Lemma \ref{lem:special_jsl_morphisms}.1
\\&
$= \lambda q' \in Q. \Lor_\aR \{ j \in J(\aR) : j \leq_\aR r \text{ and } q \leq_\aR q'  \}$
& since $\top_{\aQ^{\pOp}} = \bot_\aQ \leq_\aQ q'$
\\&
$= \lambda q' \in Q. \begin{cases} r & \text{if $q \leq_\aQ q'$} \\ \bot_\aR & \text{otherwise} \end{cases}$
\\&
$= \; \up_{\aQ^{\pOp},\aR}^{q,r}$
\end{tabular}
\]
and the first claim follows because meets are sent to joins. Applying $\nu_{\aQ,\aR}^{\bf-1} = \nu_{\aQ^{\pOp},\aR^{\pOp}}^{\pOp}$ yields:
\[
\Land_{\jslTight{\aQ,\aR^{\pOp}}} \{ \down_{\aQ,\aR^{\pOp}}^{q_i,r_i} \; : i \in I \}
= \nu_{\aQ^{\pOp},\aR^{\pOp}}(\Lor_{\jslTight{\aQ^{\pOp},\aR}} \{ \up_{\aQ^{\pOp},\aR}^{q_i,r_i} : i \in I \})
\]
and relabelling yields the final claim.
\end{proof}

We now use the canonical bijection $\Tight{\aQ^{\pOp},\aR} \cong \Tight{\aQ,\aR^{\pOp}}$ to explicitly describe the isomorphism between $\Pirr\aQ \syncp \Pirr\aR$'s open and closed sets. It has a simpler description in the presence of distributivity, as described in the Corollary following the Lemma.

\begin{lemma}
\label{lem:sync_of_pirrs_open_closed}
Given any finite join-semilattices $\aQ$, $\aR$ consider the relation $\rG := \Pirr\aQ \syncp \Pirr\aR$ . Then:
\[
\begin{tabular}{lllll}
$O(\rG)$ 
& $= \{ o_\rG(f) : f \in \jslTight{\aQ^{\pOp},\aR} \}$
& where 
& $o_\rG(f)$ & $:= \{ (m_q,m_r) \in M(\aQ) \times M(\aR) : f(m_q) \nleq_\aR m_r \}$,
\\[0.5ex]
$C(\rG)$
& $= \{  c_\rG(g) : g \in \jslTight{\aQ,\aR^{\pOp}} \}$
& where 
& $c_\rG(g)$ & $:= \{ (j_q,j_r) \in J(\aQ) \times J(\aR) : j_r \leq_\aR g(j_q)\}$,
\end{tabular}
\]
and the generic isomorphism $\theta_\rG : \latCl{\rG} \to \latOp{\rG}$ has action:
\[
\begin{tabular}{lll}
$\theta_\rG(c_\rG(g))$
& $= o_\rG(\nu_{\aQ,\aR}(g))$
& $= \{ (m_q,m_r) \in M(\aQ) \times M(\aR) : \; \up_{\aQ,\aR^{\pOp}}^{m_q,m_r}  \; \nleq_{\JSL_f[\aQ,\aR^{\pOp}]} g  \}$
\\[1ex]
$\theta_\rG^{\bf-1}(o_\rG(f))$
& $= c_\rG(\nu_{\aQ,\aR}^{\bf-1}(f))$
& $= \{ (j_q,j_r) \in J(\aQ) \times J(\aR) : \; \up_{\aQ^{\pOp},\aR}^{j_q,j_r} \; \leq_{\JSL_f[\aQ^{\pOp},\aR]} f \}$
\end{tabular}
\]
where $\nu_{\aQ,\aR}$ is the natural isomorphism from Theorem \ref{thm:tight_self_dual}.
\end{lemma}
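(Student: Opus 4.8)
The plan is to reduce everything to the machinery already assembled around the synchronous product, so that the two set-descriptions become re-statements of the isomorphisms $\alpha_{\aQ,\aR}$ of Corollary \ref{cor:tight_tensor_syncp} and the action statement becomes a re-packaging of the composite defining $\nu_{\aQ,\aR}$ in the proof of Theorem \ref{thm:tight_self_dual}. First I would fix notation: writing $\rG := \Pirr\aQ \syncp \Pirr\aR$ we have $\rG_s = J(\aQ) \times J(\aR)$ and $\rG_t = M(\aQ) \times M(\aR)$ with $\rG((j_q,j_r),(m_q,m_r)) \iff j_q \nleq_\aQ m_q$ and $j_r \nleq_\aR m_r$; and by Lemma \ref{lem:sync_functor_basic}.3 together with $(\Pirr\aQ)\spbreve = \Pirr\aQ^{\pOp}$ we get $\breve{\rG} = \Pirr\aQ^{\pOp} \syncp \Pirr\aR^{\pOp}$.

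For the description of $O(\rG)$, observe that $o_\rG$ is literally the action of the isomorphism $\alpha_{\aQ,\aR} : \aQ \ttenp \aR = \jslTight{\aQ^{\pOp},\aR} \to \Open\rG$ from Corollary \ref{cor:tight_tensor_syncp}.1. Since this is onto, $O(\rG) = \{ o_\rG(f) : f \in \jslTight{\aQ^{\pOp},\aR}\}$ immediately. For $C(\rG)$ I would pass through $\breve{\rG}$: by Lemma \ref{lem:lat_op_cl}.2 the isomorphism $\kappa_\rG$ shows $C(\rG) = \{ \overline{Y} : Y \in O(\breve{\rG}) \}$; applying the $O$-description already obtained to $\breve{\rG} = \Pirr\aQ^{\pOp} \syncp \Pirr\aR^{\pOp}$ (i.e. with $(\aQ^{\pOp},\aR^{\pOp})$ in place of $(\aQ,\aR)$) gives $O(\breve{\rG}) = \{ o_{\breve{\rG}}(g) : g \in \jslTight{\aQ,\aR^{\pOp}}\}$ where $o_{\breve{\rG}}(g) = \{(j_q,j_r) : j_r \nleq_\aR g(j_q)\}$, and then $\overline{o_{\breve{\rG}}(g)} = c_\rG(g)$ is exactly the complement.

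For the action of $\theta_\rG$ I would chain the generic identities. Since $\theta_\rG(X) = \rG[X]$ and $\partial_{\breve{\rG}}(X) = \rG[\overline{X}]$ (Definitions \ref{def:bip_cl_inte_lattice} and \ref{def:open_dual_iso}), we get $\theta_\rG(c_\rG(g)) = \rG[c_\rG(g)] = \rG[\overline{o_{\breve{\rG}}(g)}] = \partial_{\breve{\rG}}(o_{\breve{\rG}}(g))$. Now the proof of Theorem \ref{thm:tight_self_dual} factors $\nu_{\aQ,\aR} = \alpha_{\aQ,\aR}^{\bf-1} \circ \partial_{\breve{\rG}} \circ \alpha_{\aQ^{\pOp},\aR^{\pOp}}^{\pOp}$, and since $\alpha_{\aQ^{\pOp},\aR^{\pOp}} = o_{\breve{\rG}}$ and $\alpha_{\aQ,\aR} = o_\rG$, this reads $\partial_{\breve{\rG}}(o_{\breve{\rG}}(g)) = o_\rG(\nu_{\aQ,\aR}(g))$, yielding $\theta_\rG(c_\rG(g)) = o_\rG(\nu_{\aQ,\aR}(g))$. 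The $\theta_\rG^{\bf-1}$ statement then falls out by composing with $\theta_\rG^{\bf-1}$ on the left and $\nu_{\aQ,\aR}^{\bf-1}$ on the right.

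The remaining task, and the step I expect to be the main obstacle, is matching these set-descriptions to the $\up$-morphism conditions $\up_{\aQ,\aR^{\pOp}}^{m_q,m_r} \nleq g$ and $\up_{\aQ^{\pOp},\aR}^{j_q,j_r} \leq f$. For the open case I would unfold $\theta_\rG(c_\rG(g)) = \rG[c_\rG(g)]$ to $\{(m_q,m_r) : \exists (j_q,j_r),\ j_r \leq_\aR g(j_q),\ j_q \nleq_\aQ m_q,\ j_r \nleq_\aR m_r\}$ and compare with the expansion $\up_{\aQ,\aR^{\pOp}}^{m_q,m_r} \nleq g \iff \exists q \in Q,\ q \nleq_\aQ m_q \text{ and } g(q) \nleq_\aR m_r$. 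The $\oT$ direction is direct (take $q := j_q$); for $\To$ one extracts $j_q \leq_\aQ q$ with $j_q \nleq_\aQ m_q$ and $j_r \leq_\aR g(q)$ with $j_r \nleq_\aR m_r$ using that every element is the join of the join-irreducibles beneath it, and then $j_r \leq_\aR g(q) \leq_\aR g(j_q)$ by the order-reversing monotonicity of $g : \aQ \to \aR^{\pOp}$. The genuine care here is bookkeeping the $\aR^{\pOp}$-reversals; the closed case is entirely dual and I would obtain it either by the symmetric computation or by transporting the open case along $\nu_{\aQ,\aR}^{\bf-1} = \nu_{\aQ^{\pOp},\aR^{\pOp}}^{\pOp}$.
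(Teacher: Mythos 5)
Your proposal is correct, and for the two set-descriptions it coincides with the paper's proof: $O(\rG)$ is read off Corollary \ref{cor:tight_tensor_syncp}.1, and $C(\rG)$ is obtained by passing to $\breve{\rG} = \Pirr\aQ^{\pOp} \syncp \Pirr\aR^{\pOp}$ and complementing via $\kappa_\rG$.

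Where you genuinely diverge is in establishing $\theta_\rG(c_\rG(g)) = o_\rG(\nu_{\aQ,\aR}(g))$. The paper computes $\rG[c_\rG(g)]$ element-wise, massages the existential over pairs of join-irreducibles (this is its ``marked equality'', which needs a short contradiction argument in each direction), and only then recognises the result as $\{(m_q,m_r) : \nu_{\aQ,\aR}(g)(m_q) \nleq_\aR m_r\}$ using the explicit formula for $\nu_{\aQ,\aR}(g)$ from Theorem \ref{thm:tight_self_dual}. You instead observe that $\theta_\rG(c_\rG(g)) = \rG[\overline{o_{\breve{\rG}}(g)}] = \partial_{\breve{\rG}}(o_{\breve{\rG}}(g))$ and that the defining composite $\nu_{\aQ,\aR} = \alpha_{\aQ,\aR}^{\bf-1} \circ \partial_{\breve{\rG}} \circ \alpha_{\aQ^{\pOp},\aR^{\pOp}}^{\pOp}$, with $\alpha_{\aQ,\aR} = o_\rG$ and $\alpha_{\aQ^{\pOp},\aR^{\pOp}} = o_{\breve{\rG}}$, rearranges to exactly $\partial_{\breve{\rG}} \circ o_{\breve{\rG}} = o_\rG \circ \nu_{\aQ,\aR}$. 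This is cleaner: it eliminates the element-wise computation entirely for the first identity and isolates all concrete work in the final comparison with the condition $\up_{\aQ,\aR^{\pOp}}^{m_q,m_r} \nleq_{\JSL_f[\aQ,\aR^{\pOp}]} g$, which you handle correctly (the extraction of $j_q \leq_\aQ q$ with $j_q \nleq_\aQ m_q$ and $j_r \leq_\aR g(q) \leq_\aR g(j_q)$ is exactly the right use of join-generation and the order-reversal of $g : \aQ \to \aR^{\pOp}$; the paper reaches the same endpoint via the adjoint $g_*$ and the identity $(\up_{\aR,\aQ^{\pOp}}^{m_r,m_q})_* = \up_{\aQ,\aR^{\pOp}}^{m_q,m_r}$). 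What the paper's route buys is independence from the internal factorisation of $\nu_{\aQ,\aR}$ — it only uses the stated action of $\nu_{\aQ,\aR}$ — whereas yours reuses the proof of Theorem \ref{thm:tight_self_dual} rather than its statement; both are legitimate here since that factorisation is explicitly recorded there. Your derivation of the $\theta_\rG^{\bf-1}$ clause by substituting $g := \nu_{\aQ,\aR}^{\bf-1}(f)$ and inverting is also sound and marginally more direct than the paper's second pass through $\breve{\rG}$.
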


\begin{proof}
\item
\begin{enumerate}
\item
The description of $O(\rG)$ follows directly from Corollary \ref{cor:tight_tensor_syncp}.1. Using the bounded lattice isomorphism $\kappa_\rG$, the elements of $C(\rG)$ are precisely the relative complements of the $\breve{\rG}$-open sets. Now, since:
\[
\breve{\rG} 
= (\Pirr\aQ \syncp \Pirr\aR)\spbreve
= (\Pirr\aQ)\spbreve \syncp (\Pirr\aR)\spbreve
= \Pirr(\aQ^{\pOp}) \syncp \Pirr(\aR^{\pOp})
\]
we deduce that $O(\breve{\rG})$ consists precisely of those sets $\{ (j_q,j_r) \in M(\aQ^{\pOp}) \times M(\aR^{\pOp}) : g(j_q) \nleq_{\aR^{\pOp}} j_r \}$ where $g : \aQ \to \aR^{\pOp}$ is tight, so the $\rG$-closed sets are those of the form:
\[
\{(j_q,j_r) \in J(\aQ) \times J(\aR) : j_r \leq_\aR g(j_q) \}
\]
as required.

\item
We finally verify the description of the bounded lattice isomorphism $\theta_\rG = \lambda X \in C(\rG).\rG[X]$ and also its inverse. Recall that elements of $C(\rG)$ take the form $c_\rG(g)$ where $g : \aQ \to \aR^{\pOp}$, and also that:
\[
\nu_{\aQ,\aR}(g) = \lambda q \in Q.\Lor_\aR \{ j_r \in J(\aR) : g_*(j_r) \nleq_\aQ q \}
\]
see Theorem \ref{thm:tight_self_dual}. Then we calculate:
\[
\begin{tabular}{lll}
$\theta_\rG(c_\rG(g))$
&
$= \rG[\{ (j_q,j_r) : j_r \leq_\aR g(j_q) \}]$
\\&
$= \rG[\{ (j_q,j_r) : j_q \leq_\aQ g_*(j_r) \}]$
& take adjoint
\\&
$= \{ (m_q,m_r) : \exists (j_q,j_r).( j_q \nleq_\aQ m_q \text{ and } j_r \nleq_\aR m_r \text{ and } j_q \leq_\aQ g_*(j_r)   )\}$
\\&
$= \{ (m_q,m_r) : \exists j_r.( j_r \nleq_\aR m_r \text{ and } g_*(j_r) \nleq_\aQ m_q ) \}$
& see below
\\&
$= \{ (m_q,m_r) : \nu_{\aQ,\aR}(g)(m_q) \nleq_\aR m_r \}$
& see above
\\&
$= o_\rG(\nu_{\aQ,\aR}(g))$
& by definition
\end{tabular}
\]
Regarding the marked equality, $(\To)$ follows because if $g_*(j_r) \leq_\aQ m_q$ we derive the contradiction $j_q \leq_\aQ g_*(j_r) \leq_\aQ m_q$. Also, $(\oT)$ follows because if $\forall j_q \in J(\aQ).(j_q \nleq_\aQ m_q \To j_q \nleq_\aQ g_*(j_r))$ then by converting to contrapositives we derive the contradiction $g_*(j_r) \leq_\aQ m_q$. Concerning the alternative description of $\theta_\rG(c(g))$,
\[
\exists j_r.( j_r \nleq_\aR m_r \text{ and } g_*(j_r) \nleq_\aQ m_q )
\iff
\; \up_{\aR,\aQ^{\pOp}}^{m_r,m_q}  \; \nleq_{\JSL_f[\aR,\aQ^{\pOp}]} g_*
\iff
\; \up_{\aQ,\aR^{\pOp}}^{m_q,m_r}  \; \nleq_{\JSL_f[\aQ,\aR^{\pOp}]} g 
\]
where the final equivalence uses the generic order-isomorphism $\JSL_f[\aR,\aQ^{\pOp}] \cong \JSL_f[\aQ,\aR^{\pOp}]$ i.e.\ take the adjoint, recalling that $(\up_{\aR,\aQ^{\pOp}}^{m_r,m_q})_* = \; \up_{\aQ,\aR^{\pOp}}^{m_q,m_r}$ by Lemma \ref{lem:special_jsl_morphisms}.1.

\smallskip
It remains to describe the action of the inverse $\theta_\rG^{\bf-1} = \lambda Y \in O(\rG).\rG^\down(Y)$. First recall that by De Morgan duality $\rG^\down = \neg_{\rG_s} \circ \breve{\rG}^\up \circ \neg_{\rG_t}$. Then for any tight morphism $f : \aQ^{\pOp} \to \aR$ we calculate:
\[
\begin{tabular}{lll}
$\theta_\rG^{\bf-1}(o_\rG(f))$
&
$= \overline{\theta_{\breve{\rG}}(\overline{o_\rG(f)})}$
\\&
$= \overline{\theta_{\breve{\rG}}(\{ (m_q,m_r) : f(m_q) \leq_\aR m_r  \})}$
\\&
$= \overline{\theta_{\breve{\rG}}(\{ (m_q,m_r) : m_r \leq_{\aR^{\pOp}} f(m_q)  \})}$
\\&
$= \overline{\theta_{\breve{\rG}}(c_{\breve{\rG}}(f))}$
& see below
\\&
$= \overline{o_{\breve{\rG}}(\nu_{\aQ^{\pOp},\aR^{\pOp}}(f))}$
& see above
\\&
$= \{ (j_q,j_r) : \nu_{\aQ^{\pOp},\aR^{\pOp}}(f)(j_q) \leq_{\aR^{\pOp}} j_r  \}$
\\&
$= \{ (j_q,j_r) :  j_r \leq_\aR \nu_{\aQ^{\pOp},\aR^{\pOp}}^{\pOp}(f)(j_q)  \}$
& recalling $(-)^{\pOp}$ has same action
\\&
$= \{ (j_q,j_r) :  j_r \leq_\aR \nu_{\aQ,\aR}^{\bf-1}(f)(j_q)  \}$
& see Theorem \ref{thm:tight_self_dual}
\\&
$= c_\rG(\nu_{\aQ,\aR}^{\bf-1}(f))$
& by definition
\end{tabular}
\]
To understand the marked equality, use the fact that $\breve{\rG} = \Pirr(\aQ^{\pOp}) \syncp \Pirr(\aR^{\pOp})$. Regarding the alternative description, we'll make use of the previous alternative description:
\[
\begin{tabular}{lll}
$\theta_\rG^{\bf-1}(o_\rG(f))$
&
$= \overline{\theta_{\breve{\rG}}(c_{\breve{\rG}}(f))}$
& see above
\\&
$= \overline{\{ (j_q,j_r) \in M(\aQ^{\pOp}) \times M(\aR^{\pOp}) : \; \up_{\aQ^{\pOp},\aR}^{j_q,j_r} \nleq_{\JSL_f[\aQ^{\pOp},\aR]} f \}}$
& see above
\\&
$= \{ (j_q,j_r) \in J(\aQ) \times J(\aR) : \; \up_{\aQ^{\pOp},\aR}^{j_q,j_r} \leq_{\JSL_f[\aQ^{\pOp},\aR]} f \}$
\end{tabular}
\] 

\end{enumerate}
\end{proof}

\begin{corollary}
\label{cor:sync_of_pirrs_distrib}
In the notation of Lemma \ref{lem:sync_of_pirrs_open_closed} where $\rG := \Pirr\aQ \syncp \Pirr\aR$, the following statements hold.
\begin{enumerate}
\item
If $\aQ$ is distributive then:
\[
\begin{tabular}{lll}
$\theta_\rG(c_\rG(g))$
& $= \{ (m_q,m_r) \in M(\aQ) \times M(\aR) : g(\tau_\aQ^{\bf-1}(m_q)) \nleq_\aR m_r \}$
\\[0.5ex]
$\theta_\rG^{\bf-1}(o_\rG(f))$
& $= \{ (j_q,j_r) \in J(\aQ) \times J(\aR) : j_r \leq_\aR f(\tau_\aQ(j_q)) \}$
\end{tabular}
\]
recalling that $\tau_\aQ = \lambda j \in J(\aQ).(\Lor_\aQ \overline{\up_\aQ j} \; \in M(\aQ))$ is the canonical order-isomorphism from Lemma \ref{lem:std_order_theory}.13.

\item
If $\aR$ is distributive then:
\[
\begin{tabular}{lll}
$\theta_\rG(c_\rG(g))$
& $= \{ (m_q,m_r) \in M(\aQ) \times M(\aR) : g_*(\tau_\aR^{\bf-1}(m_r)) \nleq_\aQ m_q \}$
\\[0.5ex]
$\theta_\rG^{\bf-1}(o_\rG(f))$
& $= \{ (j_q,j_r) \in J(\aQ) \times J(\aR) : j_q \leq_\aQ f_*(\tau_\aR(j_r)) \}$
\end{tabular}
\]

\item
If both $\aQ$, $\aR$ are distributive and $f : \aQ \to \aR$ is any $\JSL_f$-morphism, we deduce the equivalence:
\[
\begin{tabular}{lll}
$j_r \leq_\aR f(\tau_\aQ^{\bf-1}(m_q)) \iff f_*(\tau_\aR(j_r)) \leq_\aQ m_q$
& for every $(m_q,j_r) \in M(\aQ) \times J(\aR)$.
\end{tabular}
\]

\end{enumerate}
\end{corollary}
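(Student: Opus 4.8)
The plan is to reduce the claim to an equivalence phrased entirely in terms of join-irreducibles, and then verify that reduced statement directly from the adjoint relationship together with a single order-theoretic identity for $\tau$. Since $\tau_\aQ : J(\aQ) \to M(\aQ)$ is a bijection (Lemma \ref{lem:std_order_theory}.13), every $m_q \in M(\aQ)$ is of the form $\tau_\aQ(j_q)$ for a unique $j_q := \tau_\aQ^{\bf-1}(m_q) \in J(\aQ)$, so the asserted equivalence is exactly
\[
j_r \leq_\aR f(j_q) \iff f_*(\tau_\aR(j_r)) \leq_\aQ \tau_\aQ(j_q),
\]
which I would prove for all $(j_q,j_r) \in J(\aQ) \times J(\aR)$ and all $\JSL_f$-morphisms $f : \aQ \to \aR$.

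The key auxiliary fact is that in a finite distributive lattice $\aD$, for each $j \in J(\aD)$ and \emph{arbitrary} element $x \in D$,
\[
x \leq_\aD \tau_\aD(j) \iff j \nleq_\aD x \qquad (\dagger)
\]
Recalling that $\tau_\aD(j) = \Lor_\aD \overline{\up_\aD j} = \Lor_\aD \{ y \in D : j \nleq_\aD y \}$, the backward implication is immediate, since $j \nleq_\aD x$ makes $x$ one of the joinands. The forward implication uses that $j$ is join-prime in the distributive lattice $\aD$ (Lemma \ref{lem:std_order_theory}.10): if $j \leq_\aD x \leq_\aD \tau_\aD(j)$, then $j \leq_\aD \Lor_\aD \{ y : j \nleq_\aD y \}$ forces $j \leq_\aD y$ for some $y$ with $j \nleq_\aD y$, a contradiction.

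With $(\dagger)$ available in both $\aQ$ and $\aR$, I would chain equivalences, interleaving the adjointness $f(q) \leq_\aR r \iff q \leq_\aQ f_*(r)$ (Lemma \ref{lem:adj_obs}.1):
\[
f_*(\tau_\aR(j_r)) \leq_\aQ \tau_\aQ(j_q) \iff j_q \nleq_\aQ f_*(\tau_\aR(j_r)) \iff f(j_q) \nleq_\aR \tau_\aR(j_r) \iff j_r \leq_\aR f(j_q).
\]
The first step is $(\dagger)$ in $\aQ$ (with $x = f_*(\tau_\aR(j_r))$ and $j = j_q$), the middle step is the negated adjoint relationship, and the last is $(\dagger)$ in $\aR$ (with $x = f(j_q)$ and $j = j_r$). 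Re-substituting $m_q = \tau_\aQ(j_q)$ recovers the claimed equivalence.

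Everything here is mechanical; the only step requiring genuine care is $(\dagger)$, specifically establishing it for an arbitrary element $x$ rather than merely for a join-irreducible, and keeping track of the fact that the two applications live in the two distinct lattices $\aQ$ and $\aR$, each of which must be distributive for $\tau$ and $(\dagger)$ to be available. An alternative route would be to equate the two descriptions of $\theta_\rG^{\bf-1}(o_\rG(f))$ furnished by parts (1) and (2) of this Corollary; that also yields the equivalence, but through a morphism of type $\aQ^{\pOp} \to \aR$ and hence an extra change of variable, so the direct computation above is the cleaner option.
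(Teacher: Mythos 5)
Your argument establishes only part (3) of the Corollary; parts (1) and (2) are not addressed at all, and this is a genuine gap. The Corollary's first two parts assert concrete descriptions of the isomorphism $\theta_\rG : \latCl{\rG} \to \latOp{\rG}$ for $\rG = \Pirr\aQ \syncp \Pirr\aR$ under a one-sided distributivity hypothesis, namely $\theta_\rG^{\bf-1}(o_\rG(f)) = \{ (j_q,j_r) : j_r \leq_\aR f(\tau_\aQ(j_q)) \}$ and its companions. Proving these requires starting from the general description $\theta_\rG^{\bf-1}(o_\rG(f)) = \{ (j_q,j_r) : \; \up_{\aQ^{\pOp},\aR}^{j_q,j_r} \leq f \}$ supplied by Lemma \ref{lem:sync_of_pirrs_open_closed}, unwinding the inequality of special morphisms to the condition $\forall m_q \in M(\aQ).\,( j_q \nleq_\aQ m_q \To j_r \leq_\aR f(m_q))$, and then collapsing the universal quantifier to the single instance $m_q = \tau_\aQ(j_q)$ via join-primeness and the monotonicity of $f : \aQ^{\pOp} \to \aR$; part (2) then follows by passing to the converse relation $\breve{\rG} = \Pirr(\aR) \syncp \Pirr(\aQ)$ suitably reindexed. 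None of this appears in your write-up, and your closing remark even presupposes parts (1) and (2) as available ingredients for an ``alternative route'' to (3) without having established them.

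That said, your proof of part (3) is correct and is genuinely different from, and more elementary than, the paper's. The paper obtains (3) by applying (1) and (2) to $\Pirr(\aQ^{\pOp}) \syncp \Pirr\aR$ and equating the two resulting descriptions of $\theta_\rG^{\bf-1}(o_\rG(f))$, which costs a change of variable through $\aQ^{\pOp}$. You instead isolate the identity $x \leq_\aD \tau_\aD(j) \iff j \nleq_\aD x$ for arbitrary $x \in D$ (a mild but genuinely needed strengthening of Lemma \ref{lem:std_order_theory}.14, which is stated only for pairs of join-irreducibles; your join-primeness argument for the forward direction is the right one) and then chain it with the adjoint relationship in both lattices. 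This bypasses the $\theta_\rG$ machinery entirely and makes visible exactly where each distributivity hypothesis is used. If you supply proofs of parts (1) and (2) along the lines sketched above, the combined argument would be complete; your $(\dagger)$ would in fact do useful work there too.
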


\begin{proof}
\item
\begin{enumerate}
\item
Regarding the description of $\theta_\rG^{\bf-1}$, by Lemma \ref{lem:sync_of_pirrs_open_closed} we know that:
\[
\begin{tabular}{lll}
$\theta_\rG^{\bf-1}(o_\rG(f))$
&
$= \{ (j_q,j_r) \in J(\aQ) \times J(\aR) : \;  \up_{\aQ^{\pOp},\aR}^{j_q,j_r} \; \leq f \}$
\\&
$= \{ (j_q,j_r) \in J(\aQ) \times J(\aR) : \forall q \in Q.[ j_q \nleq_\aQ q \To j_r \leq_\aR f(q)] \}$
\\&
$= \{ (j_q,j_r) \in J(\aQ) \times J(\aR) : \forall m_q \in M(\aQ).[ j_q \nleq_\aQ m_q \To j_r \leq_\aR f(m_q)] $
\end{tabular}
\]
where the final equality follows because morphisms $\aQ^{\pOp} \to \aR$ are determined by their action on $J(\aQ^{\pOp}) = M(\aQ)$. Let us prove that:
\[
\forall m_q \in M(\aQ).[ j_q \nleq_\aQ m_q \To j_r \leq_\aR f(m_q)]
\quad\iff\quad
j_q \leq_\aR f(\tau_\aQ(j_q))
\]
Then $(\To)$ follows because $j_q \nleq_\aQ \tau_\aQ(j_q) = \Lor_\aQ \overline{\up_\aQ j} \in M(\aQ)$, since the subset $\overline{\up_\aQ j} \subseteq Q$ is both down-closed and closed under joins (using join-primeness). Conversely, if $j_q \leq_\aQ m_q$ then certainly $m_q \leq_\aQ \tau_\aQ(j_q)$ and hence $j_q \leq_\aR f(\tau_\aQ(j_q)) \leq_\aR f(m_q)$ using monotonicity of $f : \aQ^{\pOp} \to \aR$.

To understand $\theta_\rG(c_\rG(g))$, recall that $\breve{\rG} = \Pirr(\aQ^{\pOp}) \syncp \Pirr(\aR^{\pOp})$ where $\aQ^{\pOp}$ is also distributive. Then:
\[
\begin{tabular}{lll}
$\theta_\rG(c_\rG(g))$
&
$= \overline{\theta_{\breve{\rG}}^{\bf-1}(o_{\breve{\rG}}(g))}$
& see proof of Lemma \ref{lem:sync_of_pirrs_open_closed}
\\&
$= \overline{\{ (m_q,m_r) \in M(\aQ) \times M(\aR) : m_r \leq_{\aR^{\pOp}} f(\tau_{\aQ^{\pOp}}(m_q)) \}}$
& using above
\\&
$= \{ (m_q,m_r) : f(\tau_\aQ^{\bf-1}(m_q)) \nleq_\aR m_r \}$
\end{tabular}
\]
where in the final equality we use the fact that $\tau_{\aQ^{\pOp}}$ acts like the inverse of $\tau_\aQ$.

\item
Let $\rH := \Pirr\aR \syncp \Pirr\aQ$. Taking the converse relation defines bounded lattice isomorphisms $\latOp{\rG} \cong \latOp{\rH}$ and $\latCl{\rG} \cong \latCl{\rH}$. Using the previous statement, we have:
\[
\begin{tabular}{lllll}
$c_\rG(g)$
& $\mapsto (c_\rG(g))\spbreve = c_\rH(g_*)$
& $\mapsto \{ (m_r,m_q) : g_*(\tau_\aR^{\bf-1}(m_r)) \nleq_{\aQ} m_q \}$
& $\mapsto \{ (m_q,m_r) : g_*(\tau_\aR^{\bf-1}(m_r)) \nleq_{\aQ} m_q \}$
\\[1ex]
$o_\rG(f)$
& $\mapsto (o_\rG(f))\spbreve = o_\rH(f_*)$
& $\mapsto \{ (j_r,j_q) : j_q \leq_\aQ f_*(\tau_\aR(j_r))  \}$
& $\mapsto \{ (j_q,j_r) : j_q \leq_\aQ f_*(\tau_\aR(j_r)) \}$
\end{tabular}
\]

\item
If both $\aQ$, $\aR$ are distributive then so is $\aQ^{\pOp}$. Let $\rG = \Pirr(\aQ^{\pOp}) \syncp \Pirr\aR$. Given any join-semilattice morphism $f : \aQ \to \aR$ then by (1) and (2) we have:
\[
\{(m_q,j_r) : j_r \leq_\aR f(\tau_{\aQ^{\pOp}}(m_q)) \}
= \theta_\rG^{\bf-1}(o_\rG(g))
= \{(m_q,j_r) : m_q \leq_{\aQ^{\pOp}} f_*(\tau_\aR (j_r)) \}
\]
and rewriting yields:
\[
j_r \leq_\aR f(\tau_{\aQ}^{\bf-1}(m_q))
\quad\iff\quad
f_*(\tau_\aR (j_r)) \leq_\aQ m_q
\]
The other bijection involving closed sets yields the same equivalence.

\end{enumerate}
\end{proof}

\begin{note}
Let us provide some basic examples of the equivalence:
\[
\forall m_q \in M(\aQ),\, j_r \in J(\aR). \;\;( j_r \leq_\aR f(\tau_\aQ^{\bf-1}(m_q)) \iff f_*(\tau_\aR(j_r)) \leq_\aQ m_q)
\]
which holds for any $\JSL_f$-morphism $f : \aQ \to \aR$ between distributive join-semilattices $\aQ$ and $\aR$. Take any relation $\rR \subseteq X \times Y$ between finite sets and let $f := \rR^\up : \JPow X \to \JPow Y$, so that $f_* = \rR^\down$. Noting that $M(\aQ) = \{ \overline{x} : x \in X \}$ and $J(\aR) = \{ \{y\} : y \in Y\}$, and moreover that the canonical order-isomorphisms $\tau_{\JPow X}$ and $\tau_{\JPow Y}$ take the relative complement, the equivalence becomes:
\[
\{y\} \subseteq \rR^\up(\{x\})
\quad\iff\quad
\rR^\down(\overline{y}) \subseteq \overline{x}.
\]
Applying De Morgan duality one sees this is the equivalence $y \in \rR[x] \iff x \in \overline{\rR^\down(\overline{y})} = \breve{\rR}[y]$. \endbox
\end{note}

\bigskip

We finish off with a clean set-theoretic description of the tensor product of finite distributive join-semilattices, which is also the tight tensor product. Let us agree that a `set-theoretic bounded distributive lattice' is one which arises as a sub bounded lattice of some $\DPow Z = (\Pow Z,\cup,\emptyset,\cap,Z)$, in which case $Z$ is uniquely determined.

\bigskip

\begin{theorem}[Representing the tensor and tight tensor product of distributive join-semilattices]
\item
Let $\aD_1$ and $\aD_2$ be finite distributive join-semilattices.
\begin{enumerate}
\item
Their tight tensor product and tensor product are isomorphic:
\[
\xymatrix@=10pt{
\JSL_f[\aD_1^{\pOp},\aD_2] \ar@{=}[r]
& \aD_1 \ttenp \aD_2
\ar[rr]^{\nu_{\aD_1,\aD_2}^{\bf-1}}_{\cong}
&&
\aD_1 \tenp \aD_2 \ar@{=}[r]
& (\JSL_f[\aD_1,\aD_2^{\pOp}])^{\pOp}
}
\]
using the isomorphism from Theorem \ref{thm:tight_self_dual}.

\item
If each $\aD_i$ defines a set-theoretic bounded distributive lattice over $Z_i$, then:
\[
\begin{tabular}{l}
$\trep{\aD_1,\aD_2} : \aD_1 \ttenp \aD_2 \to \ang{\{ j_1 \times j_2 : (j_1,j_2) \in J(\aD_1) \times J(\aD_2) \}}_{\JPow(Z_1 \times Z_2)}$
\\[1ex]
$\trep{\aD_1,\aD_2}(f) 
:= \bigcup \{ j_1 \times j_2 : (j_1,j_2) \in J(\aD_1) \times J(\aD_2), \; j_2 \subseteq f(\tau_{\aD_1}(j_1)) \}$
\end{tabular}
\]
defines a join-semilattice isomorphism. Regarding $\trep{\aD_1,\aD_2}$'s codomain $\aD$:
\begin{enumerate}
\item
it defines a set-theoretic bounded distributive lattice over $Z_1 \times Z_2$,
\item
its associated canonical bilinear mapping has action $(d_1,d_2) \mapsto d_1 \times d_2$,
\item
its irreducible elements are:
\[
J(\aD) = \{ j_1 \times j_2 : (j_1,j_2) \in J(\aD_1) \times J(\aD_2) \}
\qquad
M(\aD) = \{ \overline{\overline{m_1} \times \overline{m_2}} : (m_1,m_2) \in M(\aD_1) \times M(\aD_2) \}
\]
with associated canonical order-isomorphism $\tau_\aD(j_1 \times j_2) := \tau_{\aD_1}(j_1) \times Z_2 \; \cup \; Z_1 \times \tau_{\aD_2}(j_2)$.

\end{enumerate}

\end{enumerate}
\end{theorem}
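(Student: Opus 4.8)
The plan is to handle the two parts separately, reducing Part 1 to bookkeeping via the results already established and concentrating the real work in the injectivity of $\trep{\aD_1,\aD_2}$ in Part 2. For Part 1, since $\aD_1,\aD_2$ are distributive, Corollary \ref{cor:jsl_equ_tight_distributive} collapses tight hom-sets to full hom-sets, so $\aD_1 \ttenp \aD_2 = \jslTight{\aD_1^{\pOp},\aD_2} = \JSL_f[\aD_1^{\pOp},\aD_2]$ and likewise $\aD_1^{\pOp}\ttenp\aD_2^{\pOp} = \JSL_f[\aD_1,\aD_2^{\pOp}]$, whence $(\aD_1^{\pOp}\ttenp\aD_2^{\pOp})^{\pOp} = (\JSL_f[\aD_1,\aD_2^{\pOp}])^{\pOp} = \aD_1\tenp\aD_2$ by Definition \ref{def:tenp}. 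Theorem \ref{thm:tight_self_dual} then supplies the isomorphism $\nu_{\aD_1,\aD_2}^{\bf-1} : \aD_1\ttenp\aD_2 \to (\aD_1^{\pOp}\ttenp\aD_2^{\pOp})^{\pOp} = \aD_1\tenp\aD_2$, which is Part 1. For Part 2 I would first simplify the formula: viewing $f \in \aD_1\ttenp\aD_2$ as a morphism $f : \aD_1^{\pOp}\to\aD_2$ and using that every element of the distributive $\aD_2$ is the union of the join-irreducibles below it (Lemma \ref{lem:std_order_theory}.6), the defining union regroups to $\trep{\aD_1,\aD_2}(f) = \bigcup_{j_1 \in J(\aD_1)} j_1 \times f(\tau_{\aD_1}(j_1))$. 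From this form it is immediate that $\trep{\aD_1,\aD_2}$ preserves the pointwise join and the constantly-bottom map, hence is a $\JSL_f$-morphism into $\JPow(Z_1\times Z_2)$.

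Next I would establish the structure of the codomain and the easy surjectivity direction. Distributing products over unions shows $\aD = \langle\{j_1\times j_2\}\rangle_{\JPow(Z_1\times Z_2)}$ is closed under intersection, hence is a sublattice of $\DPow(Z_1\times Z_2)$ and so a set-theoretic bounded distributive lattice over $Z_1\times Z_2$, giving 2(a). A direct computation using the characterisation $j_1\nleq_{\aD_1}\tau_{\aD_1}(a)\iff a\leq_{\aD_1}j_1$ (Lemma \ref{lem:std_order_theory}.14) together with join-primeness in the distributive $\aD_1$ gives $\trep{\aD_1,\aD_2}(\up_{\aD_1^{\pOp},\aD_2}^{d_1,d_2}) = d_1\times d_2$ for all $(d_1,d_2)$; this is the canonical bilinear action of 2(b), and restricted to join-irreducibles it shows the $j_1\times j_2$ join-generate $\aD$. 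Since $\aD_1\ttenp\aD_2$ is join-generated by the $\up_{\aD_1^{\pOp},\aD_2}^{j_1,j_2}$ (Lemma \ref{lem:tight_tensor_irr}) and $\trep{\aD_1,\aD_2}$ preserves joins, its image is exactly $\aD$, so the map is surjective.

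The main obstacle is injectivity, which rests on the internal structure of set-theoretic distributive lattices. For $z\in Z_1$ let $\delta(z) := \bigcap\{d\in\aD_1 : z\in d\}$ be the least member of $\aD_1$ containing $z$; it lies in $\aD_1$ (closure under intersection) and is join-irreducible (if $\delta(z)=A\cup B$ then $z$ lies in one of them, forcing equality). Two further facts hold: every $d\in\aD_1$ satisfies $d = \bigcup_{z\in d}\delta(z)$, and every $j_1\in J(\aD_1)$ equals $\delta(z^*)$ for some $z^*\in j_1$ (by Lemma \ref{lem:std_order_theory}.5, $j_1$ is not a union of strictly smaller elements, so some $z^*\in j_1$ avoids them all). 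Writing $U := \trep{\aD_1,\aD_2}(f)$ and slicing, the section $U_z := \{z_2 : (z,z_2)\in U\} = \bigcup\{f(\tau_{\aD_1}(a)) : z\in a\}$; because $f\circ\tau_{\aD_1}$ is order-reversing in $a$, this union is attained at the least index $a=\delta(z)$, so $U_z = f(\tau_{\aD_1}(\delta(z)))$. Hence for each $j_1 = \delta(z^*)$ the value $f(\tau_{\aD_1}(j_1)) = U_{z^*}$ is recovered from $U$; as a morphism $\aD_1^{\pOp}\to\aD_2$ is determined by its restriction to the join-generators $M(\aD_1) = \tau_{\aD_1}[J(\aD_1)]$, this forces $f$ to be determined by $U$, giving injectivity and completing the isomorphism.

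Finally I would read off 2(c) by transport along the now-established isomorphism. We get $J(\aD) = \trep{\aD_1,\aD_2}[J(\aD_1\ttenp\aD_2)] = \{j_1\times j_2 : (j_1,j_2)\in J(\aD_1)\times J(\aD_2)\}$ from Lemma \ref{lem:tight_tensor_irr} and the computation $\trep{\aD_1,\aD_2}(\up_{\aD_1^{\pOp},\aD_2}^{j_1,j_2}) = j_1\times j_2$. For the canonical bijection I would evaluate $\tau_\aD(j_1\times j_2) = \Lor_\aD\overline{\up_\aD(j_1\times j_2)}$ directly from its definition (Lemma \ref{lem:std_order_theory}.13): since $\aD$ is join-generated by products and these are join-prime, this join is $\bigcup\{a\times b\in J(\aD) : j_1\nsubseteq a \text{ or } j_2\nsubseteq b\}$, which factors as $\tau_{\aD_1}(j_1)\times Z_2 \cup Z_1\times\tau_{\aD_2}(j_2)$ using $\Lor_{\aD_i}\overline{\up_{\aD_i}j_i} = \tau_{\aD_i}(j_i)$. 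Applying $\tau_\aD$ across $J(\aD)$ and the De Morgan identity $m_1\times Z_2\cup Z_1\times m_2 = \overline{\overline{m_1}\times\overline{m_2}}$ then yields the stated form of $M(\aD)$, closing the proof.
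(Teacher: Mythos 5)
Your proposal is correct. Part 1 is essentially the paper's own argument: collapse tight hom-sets to full hom-sets via distributivity and invoke Theorem \ref{thm:tight_self_dual}. For Part 2, however, you take a genuinely different route. The paper never verifies injectivity or surjectivity of $\trep{\aD_1,\aD_2}$ by hand; it manufactures the map as a composite of three isomorphisms already known from the categorical machinery, namely $\alpha_{\aD_1,\aD_2}$ from Corollary \ref{cor:tight_tensor_syncp} (tight tensors are synchronous products), $\beta = \Open(\rR_{\aD_1}\syncp\rR_{\aD_2})$ landing in downsets of $\pP_1\times\pP_2$, and a codomain restriction $\delta$ of $\Open(\rS_1\syncp\rS_2)$ whose injectivity comes for free from Corollary \ref{cor:sync_prod_pres_monos} (synchronous products preserve monos); the stated formula is then recovered by unwinding the composite. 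You instead regroup the formula as $\bigcup_{j_1} j_1\times f(\tau_{\aD_1}(j_1))$, get well-definedness and surjectivity from join-generation, and prove injectivity directly by introducing the point-generators $\delta(z)=\bigcap\{d\in\aD_1 : z\in d\}$ and slicing the image set to recover $f\circ\tau_{\aD_1}$ on $J(\aD_1)$. Your slicing argument is sound (the union over $\{a : z\in a\}$ is indeed attained at the minimum $\delta(z)$ because $f\circ\tau_{\aD_1}$ is order-reversing, and every join-irreducible of a set-theoretic distributive lattice is some $\delta(z^*)$), and your derivation of 2(c) by transporting irreducibles and computing $\tau_\aD$ from join-primeness matches the paper's conclusion. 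What the paper's route buys is uniformity with the surrounding theory and a proof of isomorphism with no element-chasing; what yours buys is a self-contained, elementary explanation of why the explicit formula is injective, at the cost of re-deriving a few facts (e.g.\ the concrete description of join-irreducibles of a set-theoretic lattice) that the paper obtains structurally. The only cosmetic omission is that in 2(a) you should note $\aD$ contains the top $Z_1\times Z_2$ (it does, since $\bigcup J(\aD_i)=Z_i$), which is required for it to be a \emph{bounded} sublattice of $\DPow(Z_1\times Z_2)$.
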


\begin{proof}
\item
\begin{enumerate}
\item
Recall one of the characterisations of tight morphisms from Lemma \ref{lem:tight_mor_char} i.e.\ they are those $\JSL_f$-morphisms which factor through a distributive join-semilattice. Then:
\[
\begin{tabular}{lll}
$\JSL_f[\aD_1^{\pOp},\aD_2]$
&
$= \jslTight{\aD_1^{\pOp},\aD_2}$
& see above
\\&
$= \aD_1 \ttenp \aD_2$
& by definition
\\&
$\cong (\aD_1^{\pOp} \ttenp \aD_2^{\pOp})^{\pOp}$
& via $\nu_{\aD_1,\aD_2}^{\bf-1}$ from Theorem \ref{thm:tight_self_dual}
\\&
$= (\jslTight{\aD_1,\aD_2^{\pOp}})^{\pOp}$
& by definition
\\&
$= (\JSL_f[\aD_1,\aD_2^{\pOp}])^{\pOp}$
& see above
\\&
$= \aD_1 \tenp \aD_2$
& by definition
\end{tabular}
\]

\item
For notational convenience, define the induced posets $\pP_i := (J(\aD_i),\leq_{\aD_i} |_{J(\aD_i) \times J(\aD_i)} )$ for $i = 1,2$. We are going to construct $\trep{\aD_1,\aD_2}$ as a composite isomorphism:
\[
\aD_1 \ttenp \aD_2
\xto{\alpha_{\aD_1, \aD_2}}
\Open(\Pirr\aD_1 \syncp \Pirr\aD_2)
\xto{\beta}
(Dn(\pP_1 \times \pP_2),\cup,\emptyset)
\xto{\delta}
\ang{ \{ j_1 \times j_2 : (j_1,j_2) \in P_1 \times P_2  \} }_{\JPow(Z_1 \times Z_2)}
\]
recalling that $Dn(-)$ constructs the collection of down-closed subsets of a poset. The first isomorphism instantiates the natural isomorphism from Corollary \ref{cor:tight_tensor_syncp}.1:
\[
\alpha_{\aD_1, \aD_2}(f) := \{ (m_1,m_2) \in M(\aD_1) \times M(\aD_2) : f(m_1) \nleq_{\aD_2} m_2 \}
\]
i.e.\ the fact that tight tensors correspond to synchronous products. Concerning $\beta$, recall that for any finite distributive join-semilattice $\aD$ we have the bipartite graph isomorphism shown below on the left.
\[
\begin{tabular}{lll}
$\vcenter{\vbox{\xymatrix@=15pt{
M(\aD) \ar[rr]^{\tau_\aD^{\bf-1}} && J(\aD)
\\
J(\aD) \ar[u]^{\Pirr\aD} \ar[rr]_{\Delta_{J(\aD)}} && J(\aD) \ar[u]_{\geq_{\aD}}
}}}$
&&
$\rR_\aD := \; \geq_\aD |_{J(\aD) \times J(\aD)} \; : \Pirr\aD \to \; \geq_\aD |_{J(\aD) \times J(\aD)}$
\end{tabular}
\]
Here we are using the canonical order-isomorphism $\tau_\aD$ between join/meet-irreducibles, see Lemma \ref{lem:std_order_theory}.14. This commutative square witnesses a $\BiCliq$-isomorphism $\rR_\aD$ shown above. To provide some clarification, $\Open\rR_\aD$ is the well-known isomorphism representing $\aD \cong \Open\Pirr\aD$ as down-closed sets of join-irreducibles. These $\BiCliq$-isomorphisms induce the $\JSL_f$-isomorphism:
\[
\beta := \Open (\rR_{\aD_1} \syncp \rR_{\aD_2})  : \Open(\Pirr\aD_1 \syncp \; \Pirr\aD_2) \to \Open(\geq_{\pP_1} \syncp \geq_{\pP_2})
\]
since functors preserve isomorphisms. To see that $\beta$'s codomain is correct,  observe that the synchronous product of two order relations is the order relation of the product of their respective posets, and also that the open sets of an order relation are precisely the up-closed subsets of its corresponding poset, hence:
\[
\Open(\geq_{\pP_1} \syncp \geq_{\pP_2}) = (Dn(\pP_1 \times \pP_2),\cup,\emptyset).
\]
To compute the action of $\beta \circ \alpha_{\aD_1, \aD_2}$, let $\rG := \Pirr\aD_1 \syncp \Pirr\aD_2$ and recall that:
\[
\begin{tabular}{lll}
$\beta(Y)
= \Open(\rR_{\aD_1} \syncp \rR_{\aD_2})(Y)
= (\rR_{\aD_1} \syncp \rR_{\aD_2})^\up \circ \rG^\down(Y)$
&&
see Definition \ref{def:open_pirr}.1.
\\[1ex]
$\alpha_{\aD_1,\aD_2}(f) = \{ (m_1,m_2) \in M(\aD_1) \times M(\aD_2) : f(m_1) \nleq_{\aD_2} m_2 \} = o_\rG(f)$
&&
using notation of Lemma \ref{lem:sync_of_pirrs_open_closed}.
\end{tabular}
\]
 Thus:
\[
\begin{tabular}{lll}
$\beta \circ \alpha_{\aD_1, \aD_2}(f)$
&
$= (\rR_{\aD_1} \syncp \rR_{\aD_2})^\up \circ \rG^\down(o_\rG(f))$
\\&
$= (\rR_{\aD_1} \syncp \rR_{\aD_2})^\up(\theta_\rG^{\bf-1}(o_\rG(f))$
& recall Definition \ref{def:bip_cl_inte_lattice}
\\&
$= (\rR_{\aD_1} \syncp \rR_{\aD_2})^\up(\{ (j_1,j_2) \in J(\aD_1) \times J(\aD_2) : j_2 \leq_{\aD_2} f(\tau_{\aD_1}(j_1))  \})$
& by Corollary \ref{cor:sync_of_pirrs_distrib}
\\&
$= \; \geq_{\pP_1 \times \pP_2} [\{ (j_1,j_2) : j_2 \leq_{\aD_2} f(\tau_{\aD_1}(j_1))  \}]$
& by definition
\\&
$= \{ (j_1,j_2)  : j_2 \leq_{\aD_2} f(\tau_{\aD_1}(j_1))\}$
& see below
\\&
$= \{ (j_1,j_2)  : j_2 \subseteq f(\tau_{\aD_1}(j_1))\}$
& $\leq_{\aD_2}$ is inclusion
\end{tabular}
\]
The marked equality follows because the set is already down-closed i.e.\ given $(j'_1,j'_2) \leq_{\pP_1 \times \pP_2} (j_1,j_2)$ then:
\[
j'_2 \leq_{\aD_1} j_2 \leq_{\aD_2} f(\tau_{\aD_1}(j_1)) \leq_{\aD_2} f(\tau_{\aD_1}(j'_1))
\qquad
\text{using the monotonicity of $\tau_{\aD_1}$ and $f : \aD_1^{\pOp} \to \aD_2$.}
\]
It remains to describe the isomorphism $\delta$. Firstly, for each $\aD_i$ consider the diagram below:
\[
\begin{tabular}{lll}
$\vcenter{\vbox{\xymatrix@=15pt{
J(\aD_i) \ar[rr]^{\breve{\in}} && Z_i
\\
J(\aD_i) \ar[u]^{\geq_{\pP_i}} \ar[rr]_{\breve{\in}} && Z_i \ar[u]_{\Delta_{Z_i}}
}}}$
&&
$\rS_i := \breve{\in} : \; \geq_{\pP_i} \; \to \Delta_{Z_i}$
\end{tabular}
\]
which commutes because $\aD_i$ is inclusion-ordered, and hence witnesses the $\BiCliq$-morphism $\rS_i$ above. Recall that $\rS_i$ is monic iff $\cl_{\rS_i} = \cl_{\geq_{\pP_i}}$ by Lemma \ref{lem:bicliq_mono_epi_char}. The closure operator induced by an order relation constructs the upwards closure in the respective poset, as one may verify. Then since:
\[
\begin{tabular}{lll}
$\cl_{\rS_i}(X)$
&
$= (\breve{\in})^\down \circ (\breve{\in})^\up (X)$
\\&
$= (\breve{\in})^\down (\bigcup X)$
\\&
$= \{ j \in J(\aD_i) : j \subseteq \bigcup X \}$
\\&
$= \; \down_{\pP_i} X$
& $j$ is join-prime, see Lemma \ref{lem:std_order_theory}.2
\end{tabular}
\]
we deduce that each $\rS_i$ is monic. Consider the induced join-semilattice morphism:
\[
\gamma := \Open(\rS_1 \syncp \rS_2) : 
(Dn(\pP_1 \times \pP_2),\cup,\emptyset)
\monoto \Open(\Delta_{Z_1} \syncp \Delta_{Z_2}) = \JPow (Z_1 \times Z_2)
\]
It is injective because the synchronous product functor preserves monos by Crollary \ref{cor:sync_prod_pres_monos}, and also $\Open$ is an equivalence functor. Then the isomorphism $\delta$ is defined by restricting $\gamma$'s codomain to its embedded image:
\[
\xymatrix@=15pt{
(Dn(\pP_1 \times \pP_2),\cup,\emptyset) \ar@{>->}[rrr]^-{\gamma} \ar[rrrd]_-{\delta}^-{\cong} &&& \JPow (Z_1 \times Z_2)
\\
&&& \ang{\{ j_1 \times j_2 : (j_1,j_2) \in \pP_1 \times \pP_2 \}}_{\JPow (Z_1 \times Z_2)} \ar@{>->}[u]
}
\]
To understand $\delta$ and its codomain, we describe $\gamma$'s action on any downset $X \in Dn(\pP_1 \times \pP_2)$:
\[
\begin{tabular}{lll}
$\gamma(X) $
&
$= (\rS_1 \syncp \rS_2)^\up \circ (\geq_{\pP_1 \times \pP_2})^\down(X)$
\\&
$= \rS_1 \syncp \rS_2 [X]$
& since $X$ is down-closed
\\&
$= \breve{\in}  \syncp \breve{\in} [X]$
\\&
$= \bigcup \{ j_1 \times j_2 : (j_1,j_2) \in X \}$
\end{tabular}
\]
Since $\delta$'s domain is union-generated by the principal downsets, its codomain is correct. Consequently, the composite isomorphism $\trep{\aD_1,\aD_2} := \delta \circ \beta \circ \alpha_{\aD_1, \aD_2}$ has the desired action:
\[
\begin{tabular}{lll}
$\trep{\aD_1,\aD_2}(f)$
&
$= \delta \circ \beta \circ \alpha_{\aD_1, \aD_2}(f)$
\\&
$= \delta(\{ (j_1,j_2) \in \pP_1 \times \pP_2 : j_2 \subseteq f(\tau_{\aD_1}(j_1)) \})$
\\&
$= \bigcup \{ j_1 \times j_2 \in \pP_1 \times \pP_2 : j_2 \subseteq f(\tau_{\aD_1}(j_1)) \}$
\end{tabular}
\]

\smallskip
Finally we verify the claimed properties of $\trep{\aD_1,\aD_2}$'s codomain distributive join-semilattice, denoted $\aD$.

\begin{enumerate}[(a)]
\item
To see that $\aD$ is set-theoretic, we'll show that $\gamma$ defines a distributive lattice morphism between the two set-theoretic distributive join-semilattices. The top element is preserved because $Z_1 \times Z_2$ equals the union over all $j_1 \times j_2$'s. It preserves binary meets iff it preserves meets of join-irreducibles (apply distributivity twice), and finally:
\[
\begin{tabular}{lll}
$\gamma(\down_{\pP_1 \times \pP_2} (j_1,j_2) \;\cap \down_{\pP_1 \times \pP_2} (j'_1,j'_2))$
&
$= \gamma(\down_{\pP_1 \times \pP_2} ((j_1 \cap j'_1),(j_2 \cap j'_2))$
\\&
$= (j_1 \cap j'_1) \times (j_2 \cap j'_2)$
\\&
$= (j_1 \times j_2) \cap (j'_1 \times j'_2)$
\end{tabular}
\]
since binary intersections and products commute.

\item
The canonical bilinear map associated to the tensor product $\aD_1 \tenp \aD_2$ is the function: 
\[
\beta_{\aD_1,\aD_2} : D_1 \times D_2 \to \JSL_f(\aD_1,\aD_2^{\pOp})
\qquad
\beta_{\aD_1,\aD_2}(d_1,d_2) := \; \down_{\aD_1,\aD_2^{\pOp}}^{d_1,d_2}
\]
recalling that $\aD_1 \tenp \aD_2 = (\JSL_f[\aD_1,\aD_2^{\pOp}])^{\pOp}$, see Definition \ref{def:tenp}. Since the tensor product of distributive join-semilattices is isomorphic to their tight tensor product by (1), we may equivalently view $\aD$ as the tensor product of $\aD_1$ and $\aD_2$. Then $\aD$'s associated canonical bilinear map arises by composing with the isomorphism as follows:
\[
\xymatrix@=5pt{
D_1 \times D_2 \ar[rr]^-{\beta_{\aD_1, \aD_2}}
&& \JSL_f(\aD_1,\aD_2^{\pOp}) \ar[rr]_-{\cong}^-{\nu_{\aD_1,\aD_2}}
&& \JSL_f(\aD_1^{\pOp},\aD_2) \ar[rr]_-{\cong}^-{\trep{\aD_1,\aD_2}}
&& D
\\
(d_1,d_2) & \mapsto & \down_{\aD_1,\aD_2^{\pOp}}^{d_1,d_2} & \mapsto & \up_{\aD_1^{\pOp},\aD_2}^{d_1,d_2} & \mapsto & \trep{\aD_1,\aD_2}(\up_{\aD_1^{\pOp},\aD_2}^{d_1,d_2})
}
\]
also using Theorem \ref{thm:tight_self_dual}. It only remains to simplify the latter:
\[
\begin{tabular}{lll}
$\trep{\aD_1,\aD_2}(\up_{\aD_1^{\pOp},\aD_2}^{d_1,d_2})$
&
$= \bigcup \{ j_1 \times j_2 : j_2 \subseteq  \; \up_{\aD_1^{\pOp},\aD_2}^{d_1,d_2}(\tau_{\aD_1}(j_1)) \}$
\\&
$= \bigcup \{ j_1 \times j_2 : d_1 \nsubseteq \tau_{\aD_1}(j_1) \text{ and } j_2 \subseteq d_2 \}$
\\&
$= \bigcup \{ j_1 \times j_2 : d_1 \nsubseteq \bigcup \{ d \in D_1 : j_1 \nsubseteq d \} \text{ and } j_2 \subseteq d_2 \}$ 
& by definition
\\&
$= \bigcup \{ j_1 \times j_2 : j_1 \subseteq d_1 \text{ and } j_2 \subseteq d_2 \}$ 
\\&
$= \bigcup \{ j_1 \times j_2 : j_1 \times j_2 \subseteq d_1 \times d_2 \}$ 
\\&
$= d_1 \times d_2$
\end{tabular}
\]

\item
By construction the sets $j_1 \times j_2$ union-generate $\aD$, so that every join-irreducible takes this form. Then since $|J(\aD_1 \ttenp \aD_2)| = |J(\aD_1)| \cdot |J(\aD_2)|$ by Lemma \ref{lem:tight_tensor_irr}.1, they are precisely $\aD$'s join-irreducibles. Since $\aD$ is distributive we have the canonical order-isomorphism between its join/meet-irreducibles, with action:
\[
\begin{tabular}{lll}
$\tau_\aD(j_1 \times j_2)$
&
$= \Lor_{\aD} \{ d \in D : j_1 \times j_2 \nleq_\aD d \}$
\\&
$= \bigcup \{ j'_1 \times j'_2 \in D : j_1 \times j_2 \nsubseteq j'_1 \times j'_2 \}$
\\&
$= \bigcup \{ j'_1 \times j'_2 \in D : j_1 \nsubseteq j'_1 \text{ or } j_2 \nsubseteq j'_2 \}$
\\&
$= \bigcup \{ j'_1 \times j'_2 \in D : j_1 \nsubseteq j'_1 \} \,\cup\, \bigcup \{ j'_1 \times j'_2 \in D : j_2 \nsubseteq j'_2 \}$
\\&
$= (\bigcup \{j'_1 : j_1 \nsubseteq j'_1)) \times Z_2 \,\cup\, Z_1 \times \bigcup \{ j'_2 : j_2 \nsubseteq j'_2 \}$ 
\\&
$= \tau_{\aD_1}(j_1) \times Z_2 \,\cup\, Z_1 \times \tau_{\aD_2}(j_2)$
\\&
$= \overline{\overline{\tau_{\aD_1}(j_1)} \times \overline{\tau_{\aD_1}(j_1)}}$
\end{tabular}
\]
Thus $M(\aD)$ contains precisely the elements $\overline{\overline{m_1} \times \overline{m_2}}$ where $(m_1,m_2) \in M(\aD_1) \times M(\aD_2)$.
\end{enumerate}
\end{enumerate}
\end{proof}

\subsection{Tightness inside $\BiCliq$ and the universality of the tight tensor product}

We now define the $\BiCliq$-correspondents of various concepts  above i.e.\ tight join-semilattice morphisms, the special join-irreducible join-semilattice morphisms $\up_{\aQ,\aR}^{m,j} : \aQ \to \aR$ which join-generate them, and also the tight hom-functor. It is also worth describing the correspondents of the special morphisms $\down_{\aQ,\aR}^{j,m} : \aQ \to \aR$, seeing as they are both the join-irreducibles of $\aQ \tenp \aR^{\pOp} = (\JSL_f[\aQ,\aR])^{\pOp}$ by Lemma \ref{lem:tenp_basic}, and also the meet-irreducibles of  $\aQ^{\pOp} \ttenp \aR = \jslTight{\aQ,\aR}$ by Lemma \ref{lem:tight_tensor_irr}. On the other hand, it is the correspondents of $\up_{\aQ,\aR}^{m,j}$ which take the leading role in this final subsection.

\smallskip

\begin{definition}[Tight $\BiCliq$-morphisms, basic bicliques and basic independents]
\label{def:tight_bicliq_mor}
\item
\begin{enumerate}
\item
A $\BiCliq$-morphism $\rR : \rG \to \rH$ is \emph{tight} if it factors through an identity relation i.e.\ $\rR = \rS \fatsemi \rT$ for some $\BiCliq$-morphisms $\rS : \rG \to \Delta_Z$, $\rT : \Delta_Z \to \rH$ and some finite set $Z$.

\item
Let $\Tight{\rG,\rH} \subseteq \BiCliq(\rG,\rH)$ be the subset of tight morphisms. Then we have the join-semilattice:
\[
\biTight{\rG,\rH} := (\Tight{\rG,\rH},\cup,\emptyset) \subseteq \BiCliq[\rG,\rH]
\]
This extends to a functor $\biTight{-,-} : \BiCliq^{op} \times \BiCliq \to \JSL_f$ whose action on morphisms is:
\[
\dfrac{\rR : \rG \to \rH \quad \rS : \rG' \to \rH'}
{\biTight{\rR^{op},\rS} := \lambda \rT. \rR \fatsemi \rT \fatsemi \rS : \biTight{\rH,\rG'} \to \biTight{\rG,\rH'}}
\]
this being precisely the way that $\BiCliq[-,-]$ acts, see Definition \ref{def:bicliq_hom_functor_jsl}.

\item
Given relations $\rG$, $\rH$ and elements $(g_t,h_s) \in \rG_t \times \rH_s$, there is an associated $\BiCliq$-morphism:
\[
\begin{tabular}{c}
$\upsp_{\rG,\rH}^{g_t,h_s} \; := \BC{\breve{\rG}[g_t],\rH[h_s]} : \rG \to \rH$
\\[1ex]
$(\upsp_{\rG,\rH}^{g_t,h_s})_- := \emptyset_- \cup \BC{\breve{\rG}[g_t],\cl_\rH(\{h_s\})}$
\qquad
$(\upsp_{\rG,\rH}^{g_t,h_s})_+ := \emptyset_+ \cup \BC{\rH[h_s],\cl_{\breve{\rG}}(\{g_t\})}$
\end{tabular}
\]
Call them \emph{basic bicliques}, where distinct pairs $(g_t,h_s) \neq (g'_t,h'_s)$ may induce the same morphism. 

\item
Given relations $\rG$, $\rH$ and elements $(g_s,h_t) \in \rG_s \times \rH_t$, there is an associated $\BiCliq$-morphism:
\[
\begin{tabular}{lll}
$\downsp_{\rG,\rH}^{g_s,h_t}$
& $:= \top_{\BiCliq[\rG,\rH]} \, \cap \, \overline{\cl_\rG(\{g_s\}) \times \cl_{\breve{\rH}}(\{h_t\})}$
\\[0.5ex]&
$= \inte_{\breve{\rG}}(\overline{g_s}) \times \rH[\rH_s] \;\cup\; \breve{\rG}[\rG_t] \times \inte_\rH(\overline{h_t})$
& $: \rG \to \rH$
\end{tabular}
\]
\[
\begin{tabular}{llll}
$(\downsp_{\rG,\rH}^{g_s,h_t})_-$
&
$:= (\top_{\BiCliq[\rG,\rH]})_- \,\cap\, \overline{\cl_\rG(\{g_s\}) \times \breve{\rH}[h_t]}$
&
$= \emptyset_- \, \cup \, \inte_{\breve{\rG}}(\overline{g_s}) \times \rH_t \, \cup \, \breve{\rG}[\rG_t] \times \rH^\down(\overline{h_t})$
\\[1ex]
$(\downsp_{\rG,\rH}^{g_s,h_t})_+$
&
$:= (\top_{\BiCliq[\rG,\rH]})_+ \,\cap\, \overline{\cl_{\breve{\rH}}(\{h_t\}) \times \rG[g_s]}$
&
$= \emptyset_+ \,\cup\, \inte_\rH(\overline{g_s}) \times \rG_s \,\cup\, \rH[\rH_s] \times \breve{\rG}^\down(\overline{g_s})$
\end{tabular}
\]
Call them \emph{basic independents}, where distinct pairs $(g_s,h_t) \neq (g'_s,h'_t)$ may induce the same morphism. \endbox

\end{enumerate}
\end{definition}

\begin{example}[Understanding the special $\BiCliq$-morphisms]
\label{ex:special_bicliq_mor}
\item
\begin{enumerate}
\item
Each basic biclique $\upsp_{\rG,\rH}^{g_t,h_s} \; = \BC{\breve{\rG}[g_t], \rH[h_s]} : \rG \to \rH$ is well-defined via the witnesses:
\[
\xymatrix@=20pt{
\rG_t \ar[rr]^{\BC{\{g_t\},\rH[h_s]}} && \rH_t
\\
\rG_s \ar[u]^\rG \ar[rr]_{\BC{\breve{\rG}[g_t],\{h_s\}}} && \rH_s \ar[u]_\rH
}
\qquad\qquad
\xymatrix@=8pt{
& {\bf g_t} \ar@/^5pt/@{..>}[rr] \ar@/^10pt/@{..>}[rrrr] &  & h_t^1 & \dots & h_t^n
\\
g_s^1 \ar@{..>}@/_10pt/[rrrr] \ar[ur] & \dots & g_s^m \ar@{..>}@/_5pt/[rr] \ar[ul] & & {\bf h_s} \ar[ul] \ar[ur]
}
\]
Indeed, $g_t$ may be viewed as the apex of a `cone' with base $\breve{\rG}[g_t]$, and $h_s$ may be viewed as the apex of an upside-down cone with base $\rH[h_s]$. Connecting each cone's apex to the other cone's base yields the diagram above on left, whose support is shown on the right. Closing these witnesses yields the components described earlier. As suggested by the notation, these morphisms correspond to the special morphisms $\up_{\aQ,\aR}^{m,j}$ under the equivalence functors $\Pirr$ and $\Open$, see Lemma \ref{lem:bicliq_tight_basic}.5 below. Importantly, a $\BiCliq$-morphism is tight iff it is a union of basic bicliques.

\item
Regarding the previous example, it seems natural to `flip' the two cones upside-down i.e.\ we take $g_s \in \rG_s$ as the apex of an upside-down cone with base $\rG[g_s]$, and $h_t \in \rH_t$ as the apex of a cone with base $\breve{\rH}[h_t]$.
\[
\xymatrix@=8pt{
& g_t^1 \ar@/^10pt/@{..>}[rrrr] & \dots & g_t^m \ar@/^5pt/@{..>}[rr]  &  & {\bf h_t} && h'_t
\\
g'_s \ar@{-->}[ur] && {\bf g_s} \ar@{..>}@/_10pt/[rrrr] \ar@/_5pt/@{..>}[rr] \ar[ul] \ar[ur] & & h_s^1 \ar[ur] & \dots & h_s^n \ar[ul] \ar@{-->}[ur]
}
\]
However, gluing apexes to bases need not yield witnessing relations, as indicated by either of the two dashed arrows shown above.

\item
It turns out that the `natural' choice for a $\BiCliq$-morphism $\rG \to \rH$ depending on elements $(g_s,h_t) \in \rG_s \times \rH_t$ is the basic independent morphism:
\[
\downsp_{\rG,\rH}^{g_s,h_t} \;
= \top_{\BiCliq[\rG,\rH]} \cap \overline{\cl_\rG(\{g_s\}) \times \cl_{\breve{\rH}}(\{h_t\})}
\]
Let us first explain its alternate description from the Definition:
\[
\begin{tabular}{lll}
$\downsp_{\rG,\rH}^{g_s,h_t}$
&
$= \top_{\BiCliq[\rG,\rH]} \cap \overline{\cl_\rG(\{g_s\}) \times \cl_{\breve{\rH}}(\{h_t\})}$
\\&
$= \breve{\rG}[\rG_t] \times \rH[\rH_s] \,\cap\, (\overline{\cl_\rG(\{g_s\})} \times \rH_t \,\cup\, \rG_s \times \overline{\cl_{\breve{\rH}}(\{h_t\})})$
\\&
$= \breve{\rG}[\rG_t] \times \rH[\rH_s] \,\cap\, (\inte_{\breve{\rG}}(\overline{g_s}) \times \rH_t \,\cup\, \rG_s \times \inte_{\rH}(\overline{h_t}))$
\\&
$= \inte_{\breve{\rG}}(\overline{g_s}) \times \rH[\rH_s] \,\cup\, \breve{\rG}[\rG_t] \times \inte_{\rH}(\overline{h_t})$
\end{tabular}
\]
Here we have used Lemma \ref{lem:bicliq_mor_jsl_struct}.3 i.e.\ that $\top_{\BiCliq[\rG,\rH]}$ relates  everything which is not isolated, and also De Morgan duality and the fact that binary intersections commute with binary products. That the basic independent $\downsp_{\rG,\rH}^{g_s,h_t}$ is a well-defined $\BiCliq$-morphism follows because it is a union of basic bicliques:
\[
\begin{tabular}{ll}
\begin{tabular}{ll}
$\inte_{\breve{\rG}}(\overline{g_s}) \times \rH[h_s]$
&
$= \breve{\rG}[\breve{\rG}^\down(\overline{g_s})] \times \rH[\rH_s]$
\\&
$= \bigcup \{ \breve{\rG}[g_t] \times \rH[h_s] : g_t \in \breve{\rG}^\down(\overline{g_s}),\, h_s \in \rH_s \}$
\\&
$= \bigcup \{ \upsp_{\rG,\rH}^{g_s,h_t} \; : g_t \nin \breve{\rG}[g_s] ,\, h_s \in \rH_s \}$
\\
\\
$\breve{\rG}[\rG_t] \times \inte_{\rH}(\overline{h_t})$
&
$= \breve{\rG}[\rG_t] \times \rH^\up(\rH^\down(\overline{h_t}))$
\\&
$= \bigcup \{ \breve{\rG}[g_t,h_s] : g_t \in \rG_t, \, h_s \in \rH^\down(\overline{h_t}) \}$
\\&
$= \bigcup \{ \upsp_{\rG,\rH}^{g_t,h_s} \; : g_t \in \rG_t, \, h_s \nin \breve{\rH}[h_t] \}$
\end{tabular}
\end{tabular}
\]
It is slightly tedious to verify the associated components and their alternative descriptions. They simplify in the case that $\rG$ and $\rH$ are strict, which is easily enforced. Of course, $\downsp_{\rG,\rH}^{g_s,h_t}$ corresponds to the special morphisms $\down_{\aQ,\aR}^{j,m}$ as we prove below. Furthermore, their description as a binary union of  $\breve{\rG}[\rG_t] \times \inte_{\rH}(\overline{h_t})$ and $\inte_{\breve{\rG}}(\overline{g_s}) \times \rH[\rH_s]$ corresponds to an equality we've already seen i.e.\
\[
\down_{\aQ,\aR}^{j,m}
\; = \; \up_{\aQ,\aR}^{\bot_\aQ,m} \; \lor_{\JSL_f[\aQ,\aR]} \; \up_{\aQ,\aR}^{j,\top_\aR}
\]
see Corollary \ref{cor:tight_special_morphisms}. As mentioned above, we will not have much use for the basic independents in this subsection. However we make one more observation. Recall that the morphisms $(\down_{\aQ,\aR}^{j,m})_{j \in J(\aQ),m \in M(\aR)}$ are precisely the meet-irreducibles of $\JSL_f[\aQ,\aR]$ by Lemma \ref{lem:hom_meet_join_irr}. Analogously, the basic independents $\downsp_{\rG,\rH}^{g_s,h_t}$ are precisely the meet-irreducibles of $\BiCliq[\rG,\rH]$, as long as both $\rG$ and $\rH$ are reduced. \endbox
\end{enumerate}
\end{example}

\smallskip

\begin{lemma}[Tightness inside $\BiCliq$]
\label{lem:bicliq_tight_basic}
\item
Let $\rG$, $\rH$ be any relations between finite sets.
\begin{enumerate}

\item
A $\BiCliq$-morphism $\rR$ is tight iff $\Open\rR$ is tight, a $\JSL_f$-morphism $f$ is tight iff $\Pirr f$ is tight.

\item
$\biTight{\rG,\rH}$ is a well-defined join-semilattice.

\item
$\biTight{-,-} : \BiCliq^{op} \times \BiCliq \to \JSL_f$ is a well-defined functor.

\item
Basic bicliques and basic independents are well-defined tight $\BiCliq$-morphisms.

\item
We have the equalities:
\[
\begin{tabular}{c}
$(\upsp_{\rG,\rH}^{g_t,h_s})\spcheck = \; \upsp_{\breve{\rH},\breve{\rG}}^{h_s,g_t}$
\qquad
$(\downsp_{\rG,\rH}^{g_s,h_t})\spcheck = \; \downsp_{\breve{\rH},\breve{\rG}}^{h_t,g_s}$
\\[1.5ex]
$\Open\upsp_{\rG,\rH}^{g_t,h_s} \; = \;\up_{\Open\rG,\Open\rH}^{\inte_\rG(\overline{g_t}), 
\rH[h_s]}$
\qquad
$\Open\downsp_{\rG,\rH}^{g_s,h_t} \; = \; \down_{\Open\rG,\Open\rH}^{\rG[g_s],\inte_\rH(\overline{h_t})}$
\\[1.5ex]
$\Pirr\up_{\aQ,\aR}^{q,r} \; = \; \bigcup \{ \upsp_{\Pirr\aQ,\Pirr\aR}^{m,j} \; : q \leq_\aQ m \in M(\aQ),\, J(\aR) \ni j \leq_\aR r \}$
\end{tabular}
\]
for any $(g_t,h_s) \in \rG_t \times \rH_s$, $(g_s,h_t) \in \rG_s \times \rH_t$ and any finite join-semilattices $\aQ$, $\aR$ with $(q,r) \in Q \times R$.

% would be nice to have an equality involving down_{Q,R}
% however, can already deduce action of Pirr via above equalities
\takeout{
\\[1ex]
$\Pirr\down_{\aQ,\aR}^{q,r} \; = \; \bigcup \{ \downsp_{\Pirr\aQ,\Pirr\aR}^{m,j} \; : q \leq_\aQ m \in M(\aQ),\, J(\aR) \ni j \leq_\aR r \}$
}

\item
A $\BiCliq$-morphism $\rR : \rG \to \rH$ is tight iff it is a union of basic bicliques of type $\rG \to \rH$.

\item
We have the equalities:
\[
\begin{tabular}{llll}
$\rR \, \fatsemi \upsp_{\rG,\rH}^{g_t,h_s}$
&
$= \BC{\breve{\rR}[g_t]\,,\,\rH[h_s]}$
&
$= \bigcup \{ \upsp_{\rF,\rH}^{f_t,h_s} : f_t \in \rR_+[g_t] \}$
&
$: \rF \to \rH$
\\[1ex]
$\upsp_{\rG,\rH}^{g_t,h_s} \fatsemi \, \rS$
&
$= \BC{\breve{\rG}[g_t]\,,\,\rS[h_s]} $
&
$= \bigcup \{ \upsp_{\rG,\rI}^{g_t,i_s} : i_t \in \rS_-[h_s]  \}$
&
$: \rG \to \rI$
\end{tabular}
\]
for any elements $(g_t,h_s) \in \rG_t \times \rH_s$, and any $\BiCliq$-morphisms $\rR : \rF \to \rG$, $\rS : \rH \to \rI$.

\item
For every $\BiCliq$-morphism $\rR : \rG \to \rH$ we have:
\[
\rR \subseteq \;\downsp_{\rG,\rH}^{g_s,h_t}
\iff \overline{\rR}(g_s,h_t).
\]

\end{enumerate}
\end{lemma}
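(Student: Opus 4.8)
The plan is to unfold the definition $\downsp_{\rG,\rH}^{g_s,h_t} = \top_{\BiCliq[\rG,\rH]} \cap \overline{\cl_\rG(\{g_s\}) \times \cl_{\breve{\rH}}(\{h_t\})}$ and to exploit the fact that every $\BiCliq$-morphism lies below the top element. Since $\top_{\BiCliq[\rG,\rH]}$ is the maximum of the inclusion-ordered join-semilattice $\BiCliq[\rG,\rH]$ (Lemma \ref{lem:bicliq_mor_jsl_struct}.3), any morphism $\rR : \rG \to \rH$ satisfies $\rR \subseteq \top_{\BiCliq[\rG,\rH]}$. Hence $\rR \subseteq \downsp_{\rG,\rH}^{g_s,h_t}$ is equivalent to $\rR \subseteq \overline{\cl_\rG(\{g_s\}) \times \cl_{\breve{\rH}}(\{h_t\})}$, i.e.\ to the disjointness condition that no $(x,y) \in \rR$ has both $x \in \cl_\rG(\{g_s\})$ and $y \in \cl_{\breve{\rH}}(\{h_t\})$. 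I would then show this disjointness is equivalent to $\overline{\rR}(g_s,h_t)$.

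The forward implication is immediate from extensivity of the closure operators. Arguing contrapositively, if $\rR(g_s,h_t)$ then $g_s \in \cl_\rG(\{g_s\})$ and $h_t \in \cl_{\breve{\rH}}(\{h_t\})$ place $(g_s,h_t)$ inside the excluded biclique, so $\rR$ is not disjoint from it and $\rR \not\subseteq \downsp_{\rG,\rH}^{g_s,h_t}$. Thus $\rR \subseteq \downsp_{\rG,\rH}^{g_s,h_t}$ forces $\overline{\rR}(g_s,h_t)$.

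The substance is the converse. Assuming $\overline{\rR}(g_s,h_t)$, I would suppose for contradiction that some $(x,y) \in \rR$ has $x \in \cl_\rG(\{g_s\})$ and $y \in \cl_{\breve{\rH}}(\{h_t\})$, and then ``slide'' $x$ down to $g_s$ and $y$ down to $h_t$ using the morphism laws. Because $\rR$ is a $\BiCliq$-morphism, Lemma \ref{lem:bicliq_mor_char_max_witness}.1 gives $\rR^\up \circ \cl_\rG = \rR^\up$, so $\rR[\cl_\rG(\{g_s\})] = \rR[g_s]$; with $x \in \cl_\rG(\{g_s\})$ and monotonicity of $\rR^\up$ this yields $\rR[x] \subseteq \rR[g_s]$, and from $y \in \rR[x]$ we get $\rR(g_s,y)$. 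For the second coordinate I would invoke the self-duality of $\Dep$ (Theorem \ref{thm:bicliq_self_dual}): $\breve{\rR} : \breve{\rH} \to \breve{\rG}$ is again a $\BiCliq$-morphism, so the same characterization applied to $\breve{\rR}$ gives $\breve{\rR}^\up \circ \cl_{\breve{\rH}} = \breve{\rR}^\up$, hence $\breve{\rR}[\cl_{\breve{\rH}}(\{h_t\})] = \breve{\rR}[h_t]$ and therefore $\breve{\rR}[y] \subseteq \breve{\rR}[h_t]$. Reading $\rR(g_s,y)$ as $g_s \in \breve{\rR}[y]$ then delivers $g_s \in \breve{\rR}[h_t]$, i.e.\ $\rR(g_s,h_t)$, contradicting $\overline{\rR}(g_s,h_t)$. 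So $\rR$ is disjoint from the excluded biclique and $\rR \subseteq \downsp_{\rG,\rH}^{g_s,h_t}$.

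The only mildly delicate point, and the step I would flag as the crux, is the second sliding: one must recognize that the required identity $\breve{\rR}^\up \circ \cl_{\breve{\rH}} = \breve{\rR}^\up$ is not a new computation but simply the morphism characterization transported along the self-duality. Once both sliding identities are available the contradiction is a two-line chase, and the initial reduction via $\rR \subseteq \top_{\BiCliq[\rG,\rH]}$ keeps the bookkeeping to a minimum.
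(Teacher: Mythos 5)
Your proposal addresses only part (8) of the lemma; the remaining seven items are not touched, so as a proof of the full statement it is incomplete. For part (8) itself, however, your argument is correct and takes a genuinely different route from the paper's. The paper transports the question into $\JSL_f$: it uses the fact that $\Open$ preserves and reflects the ordering on hom-sets, rewrites $\Open\downsp_{\rG,\rH}^{g_s,h_t}$ as the special morphism $\down_{\Open\rG,\Open\rH}^{\rG[g_s],\inte_\rH(\overline{h_t})}$ via part (5), applies Lemma \ref{lem:spec_mor_q_q}.2 to reduce the inequality to $\rR[g_s] \subseteq \inte_\rH(\overline{h_t})$, and finishes with Lemma \ref{lem:cl_inte_of_pirr}.1. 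You instead stay entirely on the relational side, reducing the containment to disjointness of $\rR$ from the biclique $\cl_\rG(\{g_s\}) \times \cl_{\breve{\rH}}(\{h_t\})$ and then ``sliding'' a hypothetical witness $(x,y)$ down to $(g_s,h_t)$ using $\rR^\up \circ \cl_\rG = \rR^\up$ and its dual $\breve{\rR}^\up \circ \cl_{\breve{\rH}} = \breve{\rR}^\up$ (which, as you note, is just Lemma \ref{lem:bicliq_mor_char_max_witness}.1 together with Lemma \ref{lem:up_down_basic}.4, or equivalently the characterization applied to $\rR\spcheck$). Your route is more self-contained --- it needs neither part (5) nor the order lemma for special $\JSL_f$-morphisms --- at the cost of a slightly longer elementary chase; the paper's route is shorter but leans on more prior machinery. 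One small point worth making explicit in your write-up: the reduction via $\rR \subseteq \top_{\BiCliq[\rG,\rH]}$ deserves a one-line justification (if $\rR(x,y)$ then $\rR = \rG ; \rR_+\spbreve$ forces $x \in \breve{\rG}[\rG_t]$ and $\rR = \rR_- ; \rH$ forces $y \in \rH[\rH_s]$), since Lemma \ref{lem:bicliq_mor_jsl_struct}.3 exhibits $\top_{\BiCliq[\rG,\rH]}$ as a morphism but its maximality is only implicit there.
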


\begin{proof}
\item
\begin{enumerate}
\item
If $\rR : \rG \to \rH$ is tight via $\rR = \rS \fatsemi \rT$ then $\Open\rR = \Open\rT \circ \Open \rS$ and hence $\Open\rR$ is tight, since $\Open\Delta_Z = \JPow Z$. On other other hand, if $f : \aQ \to \aR$ is tight via $f = h \circ g$ then $\Pirr f = \Pirr g \fatsemi \Pirr h$ is tight, since $\Pirr\JPow Z$ is bipartite graph isomorphic to $\Delta_Z$.

\item
This follows from the previous statement, since tight $\JSL_f$-morphisms are closed under joins (= pointwise joins), and $\Open$ induces a join-semilattice isomorphism $\BiCliq[\rG,\rH] \cong \JSL_f[\Open\rG,\Open\rH]$. However, we choose to provide an explicit proof.

$\emptyset : \rG \to \rH$ is tight because it equals the $\BiCliq$-composite $\rG \xto{\emptyset} \rH \xto{!} \emptyset = \Delta_\emptyset \xto{!} \rH$, using Lemma \ref{lem:bicliq_hom_functor}. It remains to show that tight morphisms are closed under binary unions. Take any tight $\BiCliq$-morphisms $\rR_i : \rG \to \rH$ so that $\rR_i = (\rS_i : \rG \to \Delta_{Z_i}) \fatsemi (\rT_i : \Delta_{Z_i} \to \rH)$ for $i = 1,2$, where may assume that $Z_1$, $Z_2$ are disjoint. Let $Z := Z_1 \cup Z_2$ and define the relations:
\[
\rS := \rS_1 \cup \rS_2 \subseteq \rG_s \times Z
\qquad
\rT := \rT_1 \cup \rT_2 \subseteq Z \times \rH_t
\]
Then $\rS$ is a $\BiCliq$-morphism of type $\rG \to \Delta_Z$ because every subset $\rS[X]$ is $\Delta_Z$-open, and:
\[
\rS[\cl_\rG(X)] 
= \rS_1[\cl_\rG(X)] \cup \rS_2[\cl_\rG(X)] 
= \rS_1[X] \cup \rS_2[X]
= \rS[X]
\]
because $\rS_1$, $\rS_2$ are $\BiCliq$-morphisms. It follows that $\rT : \Delta_Z \to \rH$ is a $\BiCliq$-morphism via duality. Observe that $\rS \fatsemi \rT = \rS ; \rT$ because $(\rS \fatsemi \rT)^\up = \rT^\up \circ \Delta_Z^\down \circ \rS^\up = \rT^\up \circ \rS^\up = (\rS ; \rT)^\up$. Then we also deduce that $\rR_i = \rS_i \fatsemi \rT_i = \rS_i ; \rT_i$. Therefore:
\[
\rS \fatsemi \rT
= \rS ; \rT
= (\rS_1 \cup \rS_2) ; (\rT_1 \cup \rT_2)
= (\rS_1 ; \rT_1) \cup (\rS_2 ; \rT_2)
= \rR_1 \cup \R_2
\]
where in the penultimate equality we have used the fact that (i) sequential composition preserves unions in each component, and (ii) $Z_1$, $Z_2$ are disjoint.

\item
This follows from the well-definedness of $\BiCliq[-,-]$ (see Lemma \ref{lem:bicliq_hom_functor}) and the following two observations.
\begin{enumerate}
\item
Each $\biTight{\rG,\rH} \subseteq \BiCliq[\rG,\rH]$ is well-defined sub join-semilattice by the previous statement.
\item
Tight $\BiCliq$-morphisms are closed under pre/post-composition by arbitrary $\BiCliq$-morphisms, since the factorisation through an identity relation is preserved.
\end{enumerate}

\item
That they are well-defined $\BiCliq$-morphisms is proved in Example \ref{ex:special_bicliq_mor} above. Since basic independents are unions of basic bicliques, it suffices to show the latter are tight. This follows because:
\[
\upsp_{\rG,\rH}^{g_t,h_s} \; 
= \; \upsp_{\rG,\Delta_{\{0\}}}^{g_t,0} \fatsemi\, \upsp_{\Delta_{\{0\}},\rH}^{0,h_s} 
\]

\item
\begin{enumerate}
\item
Regarding the topmost equalities,
\[
(\upsp_{\rG,\rH}^{g_t,h_s})\spcheck
= \BC{\breve{\rG}[g_t], \, \rH[h_s]}\spbreve
= \BC{\rH[h_s], \breve{\rG}[g_t]}
= \BC{\breve{\rH}\spbreve[h_s], \breve{\rG}[g_t]}
= \; \upsp_{\breve{\rH},\breve{\rG}}^{h_s,g_t}
\]
\[
\begin{tabular}{lll}
$(\downsp_{\rG,\rH}^{g_s,h_t})\spcheck$
&
$= (\inte_{\breve{\rG}}(\overline{g_s}) \times \rH[\rH_s] \,\cup\, \breve{\rG}[\rG_t] \times \inte_{\rH}(\overline{h_t}))\spbreve$
\\&
$= \inte_{\rH}(\overline{h_t}) \times \breve{\rG}[\rG_t] \, \cup \, \rH[\rH_s]\times\inte_{\breve{\rG}}(\overline{g_s})$
\\&
$= \inte_{\breve{\rH}\spbreve}(\overline{h_t}) \times \breve{\rG}[\breve{\rG}_s] \, \cup \, \breve{\rH}\spbreve[\breve{\rH}_t]\times\inte_{\breve{\rG}}(\overline{g_s})$
\\&
$= \; \downsp_{\breve{\rH},\breve{\rG}}^{h_t,g_s}$
\end{tabular}
\]

\item
As for the central equalities, consider the left one and let $f := \Open \upsp_{\rG,\rH}^{g_t,h_s} : \Open\rG \to \Open\rH$, recalling: 
\[
(\upsp_{\rG,\rH}^{g_t,h_s})_+\spbreve
= \emptyset_+\spbreve \cup \BC{\cl_{\breve{\rG}}(\{g_t\}),\rH[h_s]}
\]
Now recall that $\leq_{\Open\rG}$ is inclusion of sets, and a $\rG$-open set $Y$ satisfies $Y \nsubseteq \inte_\rG(\overline{g_t})$ iff $g_t \in Y$ by Lemma \ref{lem:cl_inte_of_pirr}.1. Furthermore $\emptyset_+\spbreve[Y] = \emptyset$ because $Y$ is $\rG$-open i.e.\ each element $g_t \in Y$ must be contained in some `neighourhood' $\rG[g_s]$. Consequently:
\[
f(Y) 
= (\upsp_{\rG,\rH}^{g_t,h_s})_+\spbreve[Y] =
\begin{cases}
\emptyset_+\spbreve[Y] = \emptyset = \bot_{\Open\rG} & \text{if $Y \leq_{\Open\rG} \inte_\rG(\overline{g_t})$}
\\
\rH[h_s] & \text{otherwise}
\end{cases}
\]
as required. Regarding the right equality, let $f := \Open \downsp_{\rG,\rH}^{g_s,h_t} : \Open\rG \to \Open\rH$ and recall:
\[
(\downsp_{\rG,\rH}^{g_s,h_t})_+\spbreve
= \emptyset_+\spbreve \,\cup\, \rG_t \times \inte_\rH(\overline{h_t}) \, \cup \, \breve{\rG}^\down(\overline{g_s}) \times \rH[\rH_s]
\]
Certainly $\emptyset_+\spbreve[Y] = \emptyset$ as before, and furthermore:
\[
\begin{tabular}{lll}
$Y \cap \breve{\rG}^\down(\overline{g_s}) \neq \emptyset$
&
$\iff Y \cap \overline{\rG[g_s]} \neq \emptyset$
& by De Morgan duality
\\&
$\iff Y \nsubseteq \rG[g_s]$
\end{tabular}
\]
Further recalling that $\leq_{\Open\rH}$ is inclusion, $\emptyset = \bot_{\Open\rG}$ and  $\rH[\rH_s] = \top_{\Open\rH}$, we obtain the desired action:
\[
f(Y) 
= (\downsp_{\rG,\rH}^{g_s,h_t})_+\spbreve[Y] =
\begin{cases}
\bot_{\Open\rH} & \text{if $Y = \bot_{\Open\rG}$}
\\
\inte_\rH(\overline{h_t}) & \text{if $\emptyset <_{\Open\rG} Y \leq_{\Open\rG} \rG[g_s]$}
\\
\top_{\Open\rH} & \text{if $Y \nleq_{\Open\rG} \rG[g_s]$}
\end{cases}
\]

\item
Concerning the final equality, let $\rR := \Pirr \up_{\aQ,\aR}^{q,r} \; = \{ (j,m) \in J(\aQ) \times M(\aR) : \; \up_{\aQ,\aR}^{q,r}(j) \nleq_\aR m \}$. Recall that:
\[
\up_{\aQ,\aR}^{q,r}
\; = \Lor_{\JSL_f[\aQ,\aR]} \{ \up_{\aQ,\aR}^{m,j} \; : q \leq_\aQ m \in M(\aQ), \, J(\aR) \ni j \leq_\aR r \}
\qquad
\text{by Lemma \ref{lem:special_jsl_morphisms}.1}
\]
and hence $\Pirr \up_{\aQ,\aR}^{q,r}$ is the union of the $\Pirr \up_{\aQ,\aR}^{m,j}$'s, using the generalisation of Lemma \ref{lem:jsl_bicliq_hom_spec_iso}. So fix any $m \in M(\aQ)$ and $j \in J(\aR)$, finally observing that:
\[
\begin{tabular}{lll}
$\Pirr \up_{\aQ,\aR}^{m,j}$
&
$= \{ (j',m') \in M(\aQ) \times J(\aR) : \; \up_{\aQ,\aR}^{m,j}(j') \nleq_\aR m' \}$
\\&
$= \{ (j',m') \in M(\aQ) \times J(\aR) : j' \nleq_\aQ m \text{ and } j \nleq_\aR m'  \}$
& by definition of $\up_{\aQ,\aR}^{m,j}$
\\&
$= (\Pirr \aQ)\spbreve \syncp \Pirr\aR [(m,j)]$
\\&
$= \BC{\Pirr\aQ\spbreve[m],\,\Pirr\aR[j]}$
\\&
$= \; \upsp_{\Pirr\aQ,\Pirr\aR}^{m,j}$
& see Definition \ref{def:tight_bicliq_mor}.3
\end{tabular}
\] 
\end{enumerate}

% todo
% add jsl isos for Pirr, Open on hom-sets earlier

\item
By Lemma \ref{lem:tight_mor_char}, a $\JSL_f$-morphism $f : \aQ \to \aR$ is tight iff it arises as a join of the special morphisms $\up_{\aQ,\aR}^{m,j}$ where $m \in M(\aQ)$ and $j \in J(\aR)$. In particular, if $\aQ = \Open\rG$ and $\aR = \Open\rH$ then by Lemma \ref{lem:lat_op_cl}.3, $m = \inte_\rG(\overline{g_t})$ for some $g_t \in \rG_t$ and $j = \rH[h_s]$ for some $h_s \in \rH_s$. By (1) a $\BiCliq$-morphism $\rR : \rG \to \rH$ is tight iff $\Open\rR$ is tight, and also $\Open$ induces a join-semilattice isomorphism $\BiCliq[\rG,\rH] \cong \JSL_f[\Open\rG,\Open\rH]$. Then by the left-central equality in (5), $\rR$ is tight iff it arises as a union of basic bicliques.

\item
We calculate:
\[
\begin{tabular}{lll}
$\rR \fatsemi \upsp_{\rG,\rH}^{g_t,h_s}$
&
$= \rR ; (\upsp_{\rG,\rH}^{g_t,h_s})_+\spbreve$
\\&
$= \rR ; (\emptyset_+\spbreve \,\cup\, \cl_{\breve{\rG}}(\{g_t\}) \times \rH[h_s])$
& see Definition \ref{def:tight_bicliq_mor}.3
\\&
$= \rR ; \cl_{\breve{\rG}}(\{g_t\}) \times \rH[h_s])$
& since $\breve{\rG}^\down(\emptyset)$ isolated
\\&
$= \breve{\rR}[\cl_{\breve{\rG}}(\{g_t\})] \times \rH[h_s]$
\\&
$= \breve{\rR}[g_t] \times \rH[h_s]$
& since $\breve{\rR}^\up \circ \cl_{\breve{\rG}} = \breve{\rR}^\up$
\\&
$= \breve{\rF}[\rR_+[g_t]] \times \rH[h_s]$
& since $\rR = \rF ; \rR_+\spbreve$
\\&
$= \bigcup \{ \breve{\rF}[f_t] \times \rH[h_s] : f_t \in \rR_+[g_t] \}$
\\&
$= \bigcup \{  \upsp_{\rF,\rH}^{f_t,h_s} : f_t \in \rR_+[g_t] \}$
\end{tabular}
\]
The other equality follows by duality and the top-left equality in (5), recalling that $(\rS\spcheck)_+ = \rS_-$.

% new
\item
We have:
\[
\begin{tabular}{lll}
& $\rR \subseteq \, \downsp_{\rG,\rH}^{g_s,g_t}$
\\ $\iff$ &
$\Open\rR \leq \Open \downsp_{\rG,\rH}^{g_s,g_t}$
& $\Open$ preserves ordering
\\[0.5ex] $\iff$ &
$\Open\rR \leq \,\down_{\Open\rG,\Open\rH}^{\rG[g_s],\inte_\rH(\overline{h_t})}$
& by (5)
\\[0.5ex] $\iff$ &
$\Open\rR(\rG[g_s]) \subseteq \inte_\rH(\overline{h_t})$
& by Lemma \ref{lem:spec_mor_q_q}.2
\\[0.5ex] $\iff$ &
$\rR[g_s] \subseteq \inte_\rH(\overline{h_t})$
& since $\rR = \rG ; \rR_+\spbreve$
\\[0.5ex] $\iff$ &
$h_t \nin \rR[g_s]$
& by Lemma \ref{lem:cl_inte_of_pirr}.1
\\[0.5ex] $\iff$ &
$\overline{\rR}(g_s,h_t)$.
\end{tabular}
\]

\end{enumerate}
\end{proof}

We may now use Theorem \ref{thm:tight_self_dual} to obtain a natural isomorphism $v : \OD_j \circ \biTight{-,-} \To \biTight{(-)\spcheck,-}$ between join-semilattices of tight $\BiCliq$-morphisms.

\begin{theorem}[Dual isomorphism between join-semilattices of tight $\BiCliq$-morphisms]
\item
We have the natural isomorphism:
\[
\begin{tabular}{llll}
$v_{\rG,\rH} : (\jslTight{\rG,\breve{\rH}})^{\pOp} \to \jslTight{\breve{\rG},\rH}$
&&
$v_{\rG,\rH} (\rR)$
& $:= \overline{\rR_+\spbreve} \, ; \, \rH$
\\[1ex] &&
$v_{\rG,\rH}^{\bf-1}(\rR)$
& $:= \overline{\rR_+\spbreve} \, ; \, \breve{\rH}$
\end{tabular}
\]
and in particular $v_{\rG,\rH}^{\bf-1} = v_{\breve{\rG},\breve{\rH}}^{\pOp}$. Furthermore:
\[
\begin{tabular}{lll}
$v_{\rG,\rH}(\upsp_{\rG,\breve{\rH}}^{g_t,h_t})$
&
$=  \;\downsp_{\breve{\rG},\rH}^{g_t,h_t}$
& for every $(g_t,h_t) \in \rG_t \times \rH_t$,
\\[1ex]
$v_{\rG,\rH}(\downsp_{\rG,\breve{\rH}}^{g_s,h_s})$
&
$=  \;\upsp_{\breve{\rG},\rH}^{g_s,h_s}$
& for every $(g_s,h_s) \in \rG_s \times \rH_s$.
\end{tabular}
\]
\end{theorem}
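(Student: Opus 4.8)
The plan is to leverage the previously established self-duality $\nu_{\aQ,\aR}$ of tight tensor products (Theorem \ref{thm:tight_self_dual}) together with the equivalence functors $\Open$ and $\Pirr$ to transfer the isomorphism $\nu$ across to $\BiCliq$. Concretely, under the equivalence $\JSL_f \cong \BiCliq$, tight join-semilattice morphisms correspond to tight $\BiCliq$-morphisms by Lemma \ref{lem:bicliq_tight_basic}.1, so the join-semilattice $\biTight{\rG,\rH}$ is isomorphic to $\jslTight{\Open\rG,\Open\rH}$. Applying $\Open$ (which sends $(-)\spcheck$ to $(-)^{\pOp}$ via the natural isomorphism $\partial$ from Theorem \ref{thm:partial_nat_iso}) should carry $\nu_{\Open\rG,\Open\rH}$ back to the desired $v_{\rG,\rH}$. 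I would \emph{define} $v_{\rG,\rH}$ directly by the stated formula $v_{\rG,\rH}(\rR) := \overline{\rR_+\spbreve}\,;\,\rH$ and then verify the claimed properties, since the formula is explicit and self-contained.

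\textbf{Key steps.} First I would check that $v_{\rG,\rH}$ is well-typed and lands in tight morphisms: given a tight $\rR : \rG \to \breve{\rH}$, I must show $\overline{\rR_+\spbreve}\,;\,\rH$ defines a tight $\BiCliq$-morphism $\breve{\rG} \to \rH$. Here I would use Lemma \ref{lem:bicliq_tight_basic}.6 (tightness = union of basic bicliques) together with the action of $v$ on the special morphisms. Second, and most efficiently, I would verify the two displayed equalities on basic bicliques and basic independents, namely $v_{\rG,\rH}(\upsp_{\rG,\breve{\rH}}^{g_t,h_t}) = \downsp_{\breve{\rG},\rH}^{g_t,h_t}$ and $v_{\rG,\rH}(\downsp_{\rG,\breve{\rH}}^{g_s,h_s}) = \upsp_{\breve{\rG},\rH}^{g_s,h_s}$; these are the $\BiCliq$-correspondents of the two equalities $\nu_{\aQ,\aR}(\down^{j,m}) = \up^{j,m}$ and $\nu_{\aQ,\aR}(\up^{q,r}) = \down^{q,r}$ established inside the proof of Theorem \ref{thm:tight_self_dual}. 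For these I would substitute the explicit component formulas from Definition \ref{def:tight_bicliq_mor}.3 (recalling $(\upsp_{\rG,\breve{\rH}}^{g_t,h_t})_+ = \emptyset_+ \cup \BC{\breve{\rH}[h_t], \cl_{\breve{\rG}}(\{g_t\})}$, etc.), compute the complement, and compose with $\rH$, then match against the basic-independent formula using De Morgan duality (Lemma \ref{lem:up_down_basic}.3) and the descriptions of $\cl_\rH$, $\inte_\rH$ from Lemma \ref{lem:cl_inte_of_pirr}.1. Third, since basic bicliques and basic independents generate the tight hom-join-semilattices and $v_{\rG,\rH}$ should be join-reversing (sending unions to intersections, matching $(-)^{\pOp}$), naturality and bijectivity would follow from the corresponding facts about $\nu$ via the equivalence, or be verified directly using Lemma \ref{lem:bicliq_tight_basic}.5 to relate $\Open v_{\rG,\rH}$ to $\nu_{\Open\rG,\Open\rH}$.

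\textbf{Main obstacle.} The hard part will be the bookkeeping in the two special-morphism equalities, specifically keeping the converses, complements, and the interior/closure operators straight when computing $\overline{(\upsp^{g_t,h_t})_+\spbreve}\,;\,\rH$. The complement $\overline{\rR_+\spbreve}$ interacts with relational composition in a way that is not simply componentwise, so I would need to carefully expand using the adjoint relationship $(\up\dashv\down)$ and De Morgan duality rather than naive set-complementation inside a composite. I expect the identity $v_{\rG,\rH}^{\bf-1} = v_{\breve{\rG},\breve{\rH}}^{\pOp}$ to drop out once I establish $(\partial_\rG)_* = \partial_{\breve{\rG}}$-style compatibility (Lemma \ref{lem:open_dual_iso_adjoints}.1) together with $\nu_{\aQ^{\pOp},\aR^{\pOp}}^{\pOp} = \nu_{\aQ,\aR}^{\bf-1}$ from Theorem \ref{thm:tight_self_dual}, so that the two formulas for $v$ and $v^{\bf-1}$ differing only by $\rH \leftrightarrow \breve{\rH}$ is exactly what dualising predicts. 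The cleanest route is probably to prove the special-morphism equalities first and then cite the equivalence-functor transport to obtain the global isomorphism and naturality, avoiding a second long direct verification.
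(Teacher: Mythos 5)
Your overall route is the paper's: define $v_{\rG,\rH}$ as the composite of the $\Open/\Pirr$ equivalence, the isomorphisms $\partial$, and $\nu_{\Open\rG,\Open\rH}$ from Theorem \ref{thm:tight_self_dual}, obtain naturality and bijectivity for free, and read off the special-morphism correspondences $\upsp \leftrightarrow \downsp$ from $\nu(\down^{q,r}) = \;\up^{q,r}$ via Lemma \ref{lem:bicliq_tight_basic}.5 and Lemma \ref{lem:compose_spec_gen_mor}. The derivation of $v_{\rG,\rH}^{\bf-1} = v_{\breve{\rG},\breve{\rH}}^{\pOp}$ from $\nu_{\aQ^{\pOp},\aR^{\pOp}}^{\pOp} = \nu_{\aQ,\aR}^{\bf-1}$ and $(\partial_\rG)_* = \partial_{\breve{\rG}}$ is also exactly what the paper does (it compares the two isomorphisms on the join-generators, which is legitimate precisely because both are already known to be isomorphisms of the stated type).

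There is, however, a gap in the step you hope to skip. You propose to verify the explicit formula $v_{\rG,\rH}(\rR) = \overline{\rR_+\spbreve}\,;\,\rH$ only on the special morphisms and then ``cite transport'' to get it everywhere, on the grounds that $v$ is ``join-reversing (sending unions to intersections)''. Two problems. First, the meets in $\jslTight{\rG,\breve{\rH}}$ are \emph{not} intersections: the meet of tight morphisms is the largest tight morphism contained in the intersection, which is in general strictly smaller (cf.\ the note after Lemma \ref{lem:clo_co_basic} and Example \ref{ex:hom_meet_not_pw}); $v$ sends unions to these induced meets, not to intersections. Second, and more seriously, agreement on generators does not transfer to all of $\jslTight{\rG,\breve{\rH}}$: the transported composite $\tilde v$ sends the meet $\Land_i \downsp_i$ to the union $\bigcup_i \upsp_i$, but the assignment $F(\rR) := \overline{\rR_+\spbreve}\,;\,\rH$ is only obviously antitone (since $\rR \mapsto \rR_+$ is monotone), so from $F(\downsp_i) = \upsp_i$ you get only $F(\Land_i \downsp_i) \supseteq \bigcup_i \upsp_i$; the reverse inclusion is exactly what needs proving. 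Consequently you cannot avoid computing $\tilde v(\rR)$ for an \emph{arbitrary} tight $\rR$ — this is the five-step calculation in the paper's proof, which uses the description of $\nu_{\aQ,\aR}(f)$ over an arbitrary join-generating set, the identity $(\Open\rR)_* = \partial_\rG^{\bf-1} \circ \Open\breve{\rR} \circ \partial_\rH$ from Theorem \ref{thm:partial_nat_iso}, and the definition of $\rR_+$ to arrive at $\rS(g_t,h_t) \iff \overline{\rR_+\spbreve}\,;\,\rH(g_t,h_t)$. Either do that computation, or replace it by a self-contained argument that $F$ itself is an antitone bijection with antitone inverse $\rR \mapsto \overline{\rR_+\spbreve}\,;\,\breve{\rH}$ (hence an order-anti-isomorphism, hence of the right join-semilattice type) — but that is a second computation of comparable length, not a citation.
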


\begin{proof}
We construct $v_{\rG,\rH}$ as a composition of natural isomorphisms:
\[
\xymatrix@=15pt{
(\jslTight{\rG,\breve{\rH}})^{\pOp} \ar@{-->}[rrr]^-{v_{\rG,\rH}}_-{\cong} \ar[d]_{\alpha_{\rG,\rH}}^{\cong} &&& \jslTight{\breve{\rG},\rH}
\\
(\jslTight{\Open\rG,\Open\breve{\rH}})^{\pOp} \ar[d]_{(\jslTight{id_{\Open\rG},\partial_\rH^{\bf-1}})^{\pOp}}^\cong
&&& \jslTight{\Open\breve{\rG},\Open\rH} \ar[u]_{(\alpha_{\breve{\rG},\breve{\rH}}^{\pOp})^{\bf-1}}^{\cong}
\\
(\jslTight{\Open\rG,(\Open\rH)^{\pOp} })^{\pOp} 
\ar[rrr]_-{\nu_{\Open\rG,\Open\rH}}^-{\cong} &&& \jslTight{(\Open\rG)^{\pOp},\Open\rH} \ar[u]_{\jslTight{\partial_\rG^{\bf-1},id_{\Open\rH}}}^{\cong}
}
\]
Let us start with an arbitrary tight $\BiCliq$-morphism $\rR : \rG \to \breve{\rH}$.
\begin{enumerate}
\item
$\alpha_{\rG,\rH}$ applies $\Open$. It is a natural isomorphism because:
 \[
 \BiCliq[\rG,\breve{\rH}] \cong \JSL_f[\Open\rG,\Open\breve{\rH}]
 \qquad
\text{restricts to the respective subfunctors of tight morphisms,}
\]
see Lemma \ref{lem:bicliq_tight_basic}.1. In particular $\alpha_{\rG,\rH}(\rR) = \Open\rR$.

\item
$(\jslTight{id_{\Open\rG},\partial_\rH^{\bf-1}})^{\pOp}$ post-composes with $\partial_\rH^{\bf-1}$, and thus:
\[
\beta(\Open\rR) = \qquad
\Open\rG \xto{\Open\rR} \Open\breve{\rH} \xto{\partial_\rH^{\bf-1}} (\Open\rH)^{\pOp}
\]
recalling that $\partial_\rH^{\bf-1}(X) = \rH[\overline{X}]$ -- see Definition \ref{def:open_dual_iso}.

\item
$\nu_{\Open\rG,\Open\rH}$ instantiates the natural isomorphism from Theorem \ref{thm:tight_self_dual}, so that:
\[
f := \qquad
\nu_{\Open\rG,\Open\rH}(\partial_\rH^{\bf-1} \circ \Open\rR)
= \lambda Y \in O(\rG). \bigcup \{ \rH[h_s] : h_s \in \rH_s, \; (\partial_\rH^{\bf-1} \circ \Open\rR)_*(\rH[h_s]) \nsubseteq Y \}
\]
Here we have used a slightly modified description of $\nu_{\aQ,\aR}(f : \aQ \to \aR^{\pOp})$ from the statement of the Theorem i.e.\ one can replace $j_r \in J(\aR)$ by any join-generating set. Then let us simplify the above comprehension:
\[
\begin{tabular}{lll}
$(\partial_\rH^{\bf-1} \circ \Open\rR)_*(\rH[h_s]) \nsubseteq Y$
&
$\iff (\Open\rR)_* \circ (\partial_\rH^{\bf-1})_*(\rH[h_s]) \nsubseteq Y$
\\&
$\iff (\Open\rR)_* \circ \partial_{\breve{\rH}}^{\bf-1}(\rH[h_s]) \nsubseteq Y$
& by Lemma \ref{lem:open_dual_iso_adjoints}.1
\\&
$\iff (\partial_\rG^{\bf-1} \circ \Open\breve{\rR} \circ \partial_{\breve{\rH}} ) \circ \partial_{\breve{\rH}}^{\bf-1}(\rH[h_s]) \nsubseteq Y$
& by Lemma \ref{lem:partial_def_nat_iso}
\\&
$\iff \partial_\rG^{\bf-1} \circ \Open\breve{\rR}(\rH[h_s]) \nsubseteq Y$
\\&
$\iff \rG^\up \circ \neg_{\rG_s} \circ (\breve{\rR})_+\spbreve[\rH[h_s]] \nsubseteq Y$
& by definition
\\&
$\iff \rG^\up \circ \neg_{\rG_s} \circ \rR_-\spbreve[\rH[h_s]] \nsubseteq Y$
& since $(\breve{\rR})_+ = \rR_-$ generally
\\&
$\iff \rG^\up \circ \neg_{\rG_s} \circ (\rR_- ; \breve{\rH})\spbreve[h_s] \nsubseteq Y$
\\&
$\iff \rG^\up \circ \neg_{\rG_s} \circ \breve{\rR}[h_s] \nsubseteq Y$
& recall $\rR : \rG \to \breve{\rH}$
\\&
$\iff \rG^\up \circ \rR^\down(\overline{h_s}) \nsubseteq Y$
& by De Morgan duality
\\&
$\iff \rR^\down(\overline{h_s}) \nsubseteq \rG^\down(Y)$
& standard adjoint
\end{tabular}
\]
Consequently, we have:
\[
f = \lambda Y \in O(\rG). \bigcup \{ \rH[h_s] : h_s \in \rH_s, \; \rR^\down(\overline{h_s}) \nsubseteq \rG^\down(Y) \}
\]

\item
$\jslTight{\partial_\rG^{\bf-1},id_{\Open\rH}}$ pre-composes with $\partial_\rG^{\bf-1}$, yielding the tight morphism:
\[
\begin{tabular}{lll}
$g$
& $:= \lambda Y \in O(\breve{\rG}). \bigcup \{ \rH[h_s] : h_s \in \rH_s, \, \rR^\down(\overline{h_s}) \nsubseteq \cl_\rG(\overline{Y}) \}$
\\[1ex]
& $= \lambda Y \in O(\breve{\rG}). \bigcup \{ \rH[h_s] : h_s \in \rH_s, \, \inte_{\breve{\rG}}(Y) \nsubseteq \breve{\rR}[h_s]  \}$
&  $: \Open\breve{\rG} \to \Open\rH$
\end{tabular}
\]
using De Morgan duality at the level of closure and interior operators.

\item
Finally we need to apply $(\alpha_{\breve{\rG},\breve{\rH}}^{\pOp})^{\bf-1}$, this being the inverse of the natural isomorphism $\jslTight{\breve{\rG},\rH} \to \jslTight{\Open\breve{\rG},\Open\rH}$ whose action applies $\Open$. Then we seek the necessarily unique tight $\BiCliq$-morphism $\rS := \breve{\rG} \to \rH$ such that $\Open\rS = g$, this being a relation $\rS \subseteq \rG_t \times \rH_t$. Observing that $g(\breve{\rG}[g_s]) = \rS_+\spbreve[\breve{\rG}[g_s]] = \breve{\rG};\rS_+\spbreve[g_s] = \rS[g_s]$, it follows that for every $g_t \in \rG_t$ we have:
\[
\begin{tabular}{lll}
$\rS[g_t]$
&
$= g(\breve{\rG}[g_t])$
\\&
$= \bigcup \{ \rH[h_s] : h_s \in \rH_s, \, \inte_{\breve{\rG}}(\breve{\rG}[g_t]) \nsubseteq \breve{\rR}[h_s] \}$
\\&
$= \bigcup \{ \rH[h_s] : h_s \in \rH_s, \, \breve{\rG}[g_t] \nsubseteq \breve{\rR}[h_s] \}$
& by $(\up\down\up)$
\\&
$= \bigcup \{ \rH[h_s] : h_s \in \rH_s, \, g_t \nin \breve{\rG}^\down \circ \breve{\rR}^\up (\{h_s\}) \}$
& via usual adjoint
\end{tabular}
\]
Recalling the definition of $\rR$'s associated positive component (see Definition \ref{def:bicliq_mor_components}), we deduce that:
\[
\begin{tabular}{lll}
$\rS(g_t,h_t)$
&
$\iff \exists h_s \in \rH_s.[ \rH(h_s,h_t) \text{ and } \overline{\rR_+}(h_s,g_t) ]$
\\[1ex] &
$\iff \exists h_s \in \rH_s.[ \overline{\rR_+\spbreve}(g_t,h_s) \text{ and } \rH(h_s,h_t)]$
\\[1ex] &
$\iff \overline{\rR_+\spbreve} ; \rH(g_t,h_t)$
\end{tabular}
\]
since converse commutes with complement.
\end{enumerate}

To prove that $v_{\rG,\rH}^{\bf-1} = v_{\breve{\rG},\breve{\rH}}^{\pOp}$, observe that the latter is a well-defined join-semilattice isomorphism of the correct type. It will be helpful to verify that $v_{\rG,\rH}$ sends basic bicliques to basic independents i.e.\
\[
v_{\rG,\rH}(\upsp_{\rG,\breve{\rH}}^{g_t,h_t})
 =  \;\downsp_{\breve{\rG},\rH}^{g_t,h_t}
\qquad
\text{for every $(g_t,h_t) \in \rG_t \times \rH_t$}.
\]
Then let us consider the composite action:
\[
\begin{tabular}{lcll}
$\upsp_{\rG,\breve{\rH}}^{g_t,h_t}$
&
$\stackrel{\alpha_{\rG,\rH}}{\mapsto}$
&
$\up_{\Open\rG,\Open\breve{\rH}}^{\inte_\rG(\overline{g_t}),\breve{\rH}[h_t]}$
& by Lemma \ref{lem:bicliq_tight_basic}.5
\\[1ex]&
$\stackrel{\jslTight{id_{\Open\rG},\partial_\rH^{\bf-1}})^{\pOp}}{\mapsto}$
&
$\up_{\Open\rG,(\Open\rH)^{\pOp}}^{\inte_\rG(\overline{g_t}),\partial_\rH^{\bf-1}(\breve{\rH}[h_t]}$
& by Lemma \ref{lem:compose_spec_gen_mor}.1
\\[1ex]& $=$ &
$\up_{\Open\rG,(\Open\rH)^{\pOp}}^{\inte_\rG(\overline{g_t}) , \inte_\rH(\overline{h_t})}$
& 
\\[1ex]&
$\stackrel{\nu_{\Open\rG,\Open\rH}}{\mapsto}$
&
$\down_{(\Open\rG)^{\pOp},\Open\rH}^{\inte_\rG(\overline{g_t}) , \inte_\rH(\overline{h_t})}$
& see Theorem \ref{thm:tight_self_dual}
\\[1ex]&
$\stackrel{\jslTight{\partial_\rG^{\bf-1},id_{\Open\rH}}}{\mapsto}$
&
$\down_{\Open\breve{\rG},\Open\rH}^{ (\partial_\rG^{\bf-1})_*(\inte_\rG(\overline{g_t})) , \inte_\rH(\overline{h_t})}$
& by Lemma \ref{lem:compose_spec_gen_mor}.1
\\[1ex]& $=$ &
$\down_{\Open\breve{\rG},\Open\rH}^{ \partial_{\breve{\rG}}^{\bf-1} (\inte_\rG(\overline{g_t})) , \inte_\rH(\overline{h_t})}$
& see Lemma \ref{lem:open_dual_iso_adjoints}.1
\\[1ex]& $=$ &
$\down_{\Open\breve{\rG},\Open\rH}^{ \breve{\rG}[g_t] , \inte_\rH(\overline{h_t})}$
& using $(\up\down\up)$
\\&
$\stackrel{\alpha_{\breve{\rG},\breve{\rH}}^{\bf-1}}{\mapsto}$
&
$\downsp_{\breve{\rG},\rH}^{g_t,h_t}$
& by Lemma \ref{lem:bicliq_tight_basic}.5
\end{tabular}
\]
recalling that $\partial_\rG^{\bf-1}(X) := \rG[\overline{X}]$. By essentially the same proof we also have $v_{\rG,\rH}(\downsp_{\rG,\breve{\rH}}^{g_s,h_s}) = \;\upsp_{\breve{\rG},\rH}^{g_s,h_s}$ for every $(g_s,h_s) \in \rG_s \times \rH_s$, noting that isomorphisms satisfy the extra conditions listed in Lemma \ref{lem:compose_spec_gen_mor}.2. It follows that $v_{\breve{\rG},\breve{\rH}}^{\pOp}$ has the same action as $v_{\rG,\rH}^{\bf-1}$, so they are the same morphisms.
\end{proof}

Recall that $(-)\spcheck : \BiCliq^{op} \to \BiCliq$ is the self-duality functor, whose action on both objects and morphisms takes the relational converse. We now prove the universal property of the synchronous product i.e.\
\begin{quote}
\emph{tight morphisms $\rG \syncp \rH \to \rI$ naturally biject with tight morphisms $\rG \to \rH\spcheck \syncp \rI$.}
\end{quote}

This bijection cannot hold without the tightness assumption. To see why, let $\rG = \Delta_{\{0\}}$ so that $\rG \syncp \rH \cong \rH$. Applying $\Open$, we see that morphisms of type $\rG \syncp \rH \to \rI$ biject (also as a join-semilattice) with $\JSL_f[\Open\rH,\Open\rI]$. Similarly since $\Open\Delta_{\{0\}} \cong \two$, the morphisms of type $\rG \to \breve{\rH} \syncp \rI$ biject with $\Open(\breve{\rH} \syncp \rI) \cong \jslTight{\Open\rH,\Open\rI}$ using Theorem \ref{thm:sync_is_tight_tensor}. Since join-semilattice morphisms needn't be tight by Lemma \ref{lem:jsl_iso_not_tight}, it follows that we cannot drop the tightness assumption. Moving on, the relevant natural isomorphism has a very natural action:
\begin{quote}
 given a relation $\rR \subseteq (\rG \syncp \rH)_s \times \rI_t = (\rG_s \times \rH_s) \times \rI_t$, re-tuple each element $((g_s,h_s),i_t) \; \mapsto \; (g_s,(h_s,i_t))$, yielding a relation $\rR' \subseteq \rG_s \times (\rH_s \times \rI_t) 
= \rG_s \times (\breve{\rH} \syncp \rI)_t$ of the desired type.
\end{quote}

The interpretation of this natural isomorphism inside $\JSL_f$ is:
\begin{quote}
\emph{
 tight morphisms $\aQ \ttenp \aR \to \aS$ naturally biject with tight morphisms $\aQ \to \jslTight{\aR,\aS}$,
 }
\end{quote}
 
 which will follow by combining Theorem \ref{thm:sync_is_tight_tensor} with the result we are just about to prove.

\bigskip

\begin{theorem}[The synchronous product is universal w.r.t.\ tight morphisms]
\label{thm:syncp_universal_tight}
\item
We have the natural isomorphism:
\[
\begin{tabular}{c}
$rtup : \biTight{ - \syncp -,-} \To \biTight{-,(-)\spcheck \syncp -}$
\qquad
$rtup_{\rG,\rH,\rI} : \biTight{\rG \syncp \rH,\rI} \to \biTight{\rG,\breve{\rH} \syncp \rI}$
\\[1ex]
$rtup_{\rG,\rH,\rI}(\rR) 
:= \{ (g_s,(h_s,i_t)) \in \rG_s \times (\rH_s \times \rI_t) :  \rR((g_s,h_s),i_t) \}$
\end{tabular}
\]
The associated components of its component's action are described in Note \ref{note:rtup_component_assoc} below, and its natural inverse re-tuples in the other direction.
\end{theorem}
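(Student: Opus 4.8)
The plan is to prove that $rtup_{\rG,\rH,\rI}$ is a well-defined $\JSL_f$-isomorphism for each triple, and then that these isomorphisms are natural in all three arguments. The guiding intuition is that the re-tupling map $((g_s,h_s),i_t) \mapsto (g_s,(h_s,i_t))$ is literally a bijection of the underlying sets $(\rG_s \times \rH_s) \times \rI_t \cong \rG_s \times (\rH_s \times \rI_t)$, so at the level of relations $rtup_{\rG,\rH,\rI}$ is visibly a bijection preserving unions (hence the empty relation and binary union), which immediately makes it a $\JSL_f$-morphism between the powerset-ordered hom join-semilattices once we know it restricts correctly. The genuine content is therefore twofold: (i) verifying that $rtup_{\rG,\rH,\rI}$ sends tight $\BiCliq$-morphisms $\rG \syncp \rH \to \rI$ to tight $\BiCliq$-morphisms $\rG \to \breve{\rH} \syncp \rI$ (and its inverse does the reverse), so that the map is well-typed as a map $\biTight{\rG \syncp \rH,\rI} \to \biTight{\rG,\breve{\rH}\syncp\rI}$; and (ii) checking that the $\Dep$-morphism conditions (the factorisation requirements) are preserved under re-tupling.

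First I would handle well-definedness at the level of $\Dep$-morphisms. Given $\rR : \rG \syncp \rH \to \rI$, I would use the characterisation from Lemma \ref{lem:bicliq_mor_char_max_witness}.1 that $\rR$ is a $\Dep$-morphism iff $\rR^\up \circ \cl_{\rG \syncp \rH} = \rR^\up = \inte_\rI \circ \rR^\up$. The key computational tool is Lemma \ref{lem:sync_functor_basic}, which tells us how $(-)^\up$ and $(-)^\down$ act on synchronous products of bicliques: $\cl_{\rG \syncp \rH}$ and $\inte_{\breve{\rH} \syncp \rI}$ factor coordinatewise on bicliques. I would express the closure/interior conditions for $rtup_{\rG,\rH,\rI}(\rR)$ in terms of those for $\rR$, pushing the re-tupling bijection through the image and preimage operators, and reduce the required identity to the one already known for $\rR$. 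Because tightness is preserved by re-tupling most cleanly when expressed via basic bicliques, I would prefer to reduce to that case: by Lemma \ref{lem:bicliq_tight_basic}.6 a $\Dep$-morphism is tight iff it is a union of basic bicliques, and since $rtup$ preserves unions, it suffices to show it sends a single basic biclique $\upsp_{\rG \syncp \rH,\rI}^{(g_t,h_t),i_s}$ to a basic biclique (or union thereof) of type $\rG \to \breve{\rH} \syncp \rI$. Using the biclique-preservation in Lemma \ref{lem:sync_functor_basic}.1, re-tupling a product-of-neighbourhoods is again a product-of-neighbourhoods, so this should follow by direct inspection of the defining sets.

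Next I would establish the isomorphism. Since $rtup_{\rG,\rH,\rI}$ is a bijection preserving $\emptyset$ and $\cup$, and its set-theoretic inverse is the opposite re-tupling (which by the same argument preserves tightness and the $\Dep$-conditions), Lemma \ref{lem:jsl_mono_epi_iso}.4 gives that it is a $\JSL_f$-isomorphism. For naturality, I would verify commutativity of the square obtained from morphisms $\rR : \rG' \to \rG$, $\rS : \rH' \to \rH$, $\rT : \rI \to \rI'$, where both functors act by the rule of Definition \ref{def:bicliq_hom_functor_jsl} (pre/post-composition in $\Dep$). The cleanest route is to unwind both composites using the functional description of $\Dep$-composition from Lemma \ref{lem:bicliq_func_comp} together with the coordinatewise behaviour of $\syncp$ on morphisms and components (Definition \ref{def:sync_product}), and observe that re-tupling commutes with each coordinate of the synchronous composites; the two sides then agree element-by-element after re-tupling.

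The main obstacle I anticipate is bookkeeping in step (i): carefully matching the closure operator $\cl_{\rG \syncp \rH}$ on the domain side against $\inte_{\breve{\rH} \syncp \rI}$ on the codomain side under re-tupling, since the $\rH$-coordinate switches from a ``source'' role to a ``converse-source'' (i.e.\ $\breve{\rH}$) role, so one must track which of $\cl_\rH$ versus $\cl_{\breve{\rH}}$ appears and invoke the De Morgan dualities of Lemma \ref{lem:up_down_basic}.3 to reconcile them. Reducing everything to basic bicliques via Lemma \ref{lem:bicliq_tight_basic}.6 and exploiting Theorem \ref{thm:sync_is_tight_tensor} to transport the statement into $\JSL_f$ (where it becomes the adjunction $\jslTight{\aQ \ttenp \aR,\aS} \cong \jslTight{\aQ,\jslTight{\aR,\aS}}$) would be my fallback if the direct relational computation becomes unwieldy, since the tensor-hom adjunction for tight morphisms can be assembled from the universality of the tensor product (Theorem \ref{thm:tenp_universal}) restricted along the tight subfunctor.
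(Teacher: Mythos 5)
Your proposal is correct and follows essentially the same route as the paper: reduce everything to basic bicliques via Lemma \ref{lem:bicliq_tight_basic}.6 together with the fact that $rtup$ preserves unions, observe that re-tupling sends the basic biclique $\upsp_{\rG\syncp\rH,\rI}^{(g_t,h_t),i_s} = (\breve{\rG}[g_t]\times\breve{\rH}[h_t])\times\rI[i_s]$ to the basic biclique $\upsp_{\rG,\breve{\rH}\syncp\rI}^{g_t,(h_t,i_s)}$, and conclude well-definedness, tightness and bijectivity at once. The only place you are slightly vaguer than the paper is naturality, where the cleanest execution is again to invoke bilinearity of $\fatsemi$ (Lemma \ref{lem:bicliq_hom_functor}) to reduce to a single basic biclique and then apply the composition formulas of Lemma \ref{lem:bicliq_tight_basic}.7, rather than a direct element-wise unwinding.
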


\begin{proof}
For the first part of the proof, fix any $\rG$, $\rH$, $\rI$, and also any tight $\BiCliq$-morphism $\rR : \rG \syncp \rH \to \rI$, and define the relation:
\[
\rS := rtup_{\rG,\rH,\rI}(\rR) = \{ (g_s,(h_s,i_t)) \in \rG_s \times (\rH_s \times \rI_t) : \rR((g_s,h_s),i_t) \}
\]
We begin by verifying that $\rS$ is a well-defined tight $\BiCliq$-morphism of type $\rG \to \breve{\rH} \syncp \rI$. Since $\rR$ is tight it is a union of basic bicliques by Lemma \ref{lem:bicliq_tight_basic}.
\begin{enumerate}
\item
First consider the base case where $\rR$ is a basic biclique:
\[
\rR = \; \upsp_{\rG\syncp\rH,\rI}^{(g_s,h_s),i_t} \; = (\breve{\rG}[g_t] \times \breve{\rH}[h_t]) \times \rI[i_s]  \subseteq (\rG_s \times \rH_s) \times \rI_t
\]
Then $\rS = \breve{\rG}[g_t] \times (\breve{\rH}[h_t] \times \rI[i_s]) = \; \upsp_{\rG,\breve{\rH}\syncp\rI}^{g_s,(h_s,i_t)}$ is a basic biclique and thus is well-defined and tight.

\item
Generally speaking, if $\rR$ is a union of basic bicliques then observe that $rtup_{\rG,\rH,\rI}$ preserves unions i.e.\ re-tupling preserves $\emptyset$ and also binary unions of relations. Then by the previous point $\rS$ is a union of basic bicliques, and  is therefore well-defined and tight.
\end{enumerate}

Then each $rtup_{\rG,\rH,\rI}$ is a well-defined function, and also preserves the join-semilattice structure: $\emptyset$ and binary union. Since re-tupling can be undone, we know that $rtup_{\rG,\rH,\rI}$ is an injective join-semilattice morphism. Furthermore the preceding argument implies surjectivity, since every basic biclique $\rS : \rG \to \breve{\rH} \syncp \rI$ arises from a basic biclique $\rR$. 

\smallskip
It remains to prove naturality i.e.\ that the following diagram commutes inside $\JSL_f$:
\[
\xymatrix@=15pt{
\biTight{ \rG \syncp \rH,\rI} \ar[d]_{\biTight{(\rR \syncp \rS)^{op},\rT}} \ar[rrr]^{rtup_{\rG,\rH,\rI}} &&& \biTight{\rG,\breve{\rH} \syncp \rI} \ar[d]^{\biTight{\rR^{op},\breve{\rS} \syncp \rT}}
\\
\biTight{ \rG' \syncp \rH',\rI'} \ar[rrr]_{rtup_{\rG',\rH',\rI'}} &&& \biTight{\rG',(\rH')\spbreve \syncp \rI'} 
}
\]
for any $\BiCliq$-morphisms $\rR : \rG' \to \rG$, $\rS : \rH' \to \rH$ and $\rT : \rI \to \rI'$. In other words, for every tight $\BiCliq$-morphism $\rR' : \rG \syncp \rH \to \rI$, we must establish that:
\[
\rR \fatsemi rtup_{\rG,\rH,\rI}(\rR') \fatsemi (\breve{\rS} \syncp \rT)
= rtup_{\rG',\rH',\rI'}((\rR \syncp \rS) \fatsemi \rR' \fatsemi \rT)
\]
Since $\fatsemi$ preserves unions of $\BiCliq$-morphisms separately in each component by Lemma \ref{lem:bicliq_hom_functor}, and each component of $rtup$ preserves such unions, it suffices to consider the special case where $\rR' = \; \upsp_{\rG \syncp \rH,\rI}^{(g_t,h_t),i_s}$ is a basic biclique. We finally use Lemma \ref{lem:bicliq_tight_basic}.7 to prove this.
\[
\begin{tabular}{lll}
& $\rR \fatsemi rtup_{\rG,\rH,\rI}(\rR') \fatsemi (\breve{\rS} \syncp \rT)$
\\[1ex]$=$ &
$\rR \fatsemi rtup_{\rG,\rH,\rI}(\upsp_{\rG \syncp \rH,\rI}^{(g_t,h_t),i_s} ) \fatsemi (\breve{\rS} \syncp \rT)$
& definition of $\rR'$
\\[1ex] $=$ &
$\rR \fatsemi \upsp_{\rG, \breve{\rH}\syncp\rI}^{g_t,(h_t,i_s)}  \fatsemi (\breve{\rS} \syncp \rT)$
& $rtup$ preserves basic bicliques
\\[1ex] $=$ &
$(\bigcup \{ \upsp_{\rG', \breve{\rH}\syncp\rI}^{g'_t,(h_t,i_s)} : \rR_+(g_t,g'_t)  \}) \fatsemi (\breve{\rS} \syncp \rT)$
& by Lemma \ref{lem:bicliq_tight_basic}.7
\\[1ex] $=$ &
$\bigcup \{ \upsp_{\rG', \breve{\rH}\syncp\rI}^{g'_t,(h_t,i_s)} \fatsemi (\breve{\rS} \syncp \rT) : \rR_+(g_t,g'_t)  \}$
& $\fatsemi$ preserves unions of morphisms
\\[1ex] $=$ &
$\bigcup \{ \upsp_{\rG', (\rH')\spbreve \syncp \rI'}^{g'_t,(h'_t,i'_s)} : \rR_+(g_t,g'_t), \, \rS_+(h_t,h'_t), \, \rT_-(i_s,i'_s) \}$
& by Lemma \ref{lem:bicliq_tight_basic}.7
\\[1ex] $=$ &
$rtup_{\rG',\rH',\rI'}(\bigcup \{ \upsp_{\rG' \syncp \rH',\rI'}^{(g'_t,h'_t),i'_s} : \rR_+(g_t,g'_t),\, \rS_+(h_t,h'_t), \, \rT_-(i_s,i'_s) \})$
& $rtup$ preserves basic bicliques, unions
\\[1ex] $=$ &
$rtup_{\rG',\rH',\rI'}((\rR \syncp \rS) \fatsemi \upsp_{\rG \syncp \rH,\rI}^{(g_t,h_t),i_s} \fatsemi \rT)$
& repeating above reasoning
\\[1ex] $=$ &
$rtup_{\rG',\rH',\rI'}((\rR \syncp \rS) \fatsemi \rR' \fatsemi \rT)$
& definition of $\rR'$
\end{tabular}
\]
\end{proof}

\begin{note}[Associated components of $rtup_{\rG,\rH,\rI}$'s action]
\label{note:rtup_component_assoc}
\item
The components of $rtup$ are join-semilattice isomorphisms $rtup_{\rG,\rH,\rI}$, sending tight $\BiCliq$-morphisms $\rR : \rG \syncp \rH \to \rI$ to tight $\BiCliq$-morphisms $\rG \to \breve{\rH} \syncp \rI$ by re-tupling each element of $\rR$. We now describe the associated components of the morphism $rtup_{\rG,\rH,\rI}(\rR)$.
\[
\begin{tabular}{ll}
$(rtup_{\rG,\rH,\rI}(\rR))_-(g_s,(h_t,i_s))$
&
$\iff \forall h_s.\forall i_t.[ \rH(h_s,h_t) \,\land\, \rI(i_s,i_t) \To \rR((g_s,h_s),i_t) ]$
\\
$(rtup_{\rG,\rH,\rI}(\rR))_+((h_s,i_t),g_t)$
&
$\iff \forall g_s.[\rG(g_s,g_t) \To \rR((g_s,h_s),i_t)]$
\end{tabular}
\]
Here we have simply used the definition of the associated components i.e.\ Definition \ref{def:bicliq_mor_components}. \endbox
\end{note}

\bigskip

% IMPORTANT

\begin{theorem}[The tight tensor product is universal w.r.t.\ tight morphisms]
\item
We have the natural isomorphism:
\[
\begin{tabular}{c}
$ut : \jslTight{- \ttenp -, -} \To \jslTight{-,\jslTight{-,-}}$
\qquad
$ut_{\aQ,\aR,\aS} : \jslTight{\aQ \ttenp \aR,\aS} \to \jslTight{\aQ,\jslTight{\aR,\aS}}$
\end{tabular}
\]
with action:
\[
\begin{tabular}{lll}
$ut_{\aQ,\aR,\aS}(\jslTight{\aQ^{\pOp},\aR} \xto{f} \aS)$
& $ := \lambda q \in Q. \lambda r \in R.f(\up_{\aQ^{\pOp},\aR}^{q,r})$
\\[1ex]
$ut_{\aQ,\aR,\aS}^{\bf-1}(\aQ \xto{h} \jslTight{\aR,\aS})$
& $ := \lambda g \in \jslTight{\aQ^{\pOp},\aR}. \Lor_\aS \{ h(q)(r) : \; \up_{\aQ^{\pOp},\aR}^{q,r} \; \leq g \}$.
\end{tabular}
\]
Finally, the action on join/meet-irreducibles is as follows:
\[
\up_{\aQ \ttenp \aR, \aS}^{\down_{\aQ^{\pOp},\aR}^{m_q,m_r}, j_s}
\quad \stackrel{ut_{\aQ,\aR,\aS}}{\mapsto} \quad
\up_{\aQ,\jslTight{\aR,\aS}}^{m_q, \up_{\aR,\aS}^{m_r,j_s} }
\qquad\qquad
\down_{\aQ \ttenp \aR, \aS}^{\up_{\aQ^{\pOp},\aR}^{j_q,j_r}, m_s}
\quad \stackrel{ut_{\aQ,\aR,\aS}}{\mapsto} \quad
\down_{\aQ,\jslTight{\aR,\aS}}^{j_q, \down_{\aR,\aS}^{j_r,m_s} }
\]
\end{theorem}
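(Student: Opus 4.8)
The plan is to obtain $ut$ by transporting the universal property of the synchronous product (Theorem \ref{thm:syncp_universal_tight}) across the equivalence between $\JSL_f$ and $\BiCliq$ given by $\Open$ and $\Pirr$ (Theorem \ref{thm:bicliq_jirr_equivalent}), using the identification of tight tensors with synchronous products (Theorem \ref{thm:sync_is_tight_tensor}). Concretely, I would assemble $ut_{\aQ,\aR,\aS}$ as a composite of natural isomorphisms. First, restricting the hom-isomorphism $\JSL_f[\aA,\aB] \cong \BiCliq[\Pirr\aA,\Pirr\aB]$ to tight morphisms (Lemma \ref{lem:bicliq_tight_basic}.1) together with $rep$ gives $\jslTight{\aQ \ttenp \aR,\aS} \cong \biTight{\Pirr(\aQ \ttenp \aR),\Pirr\aS}$. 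Next, $\rTS_{\aQ,\aR} : \Pirr(\aQ \ttenp \aR) \to \Pirr\aQ \syncp \Pirr\aR$ induces $\biTight{\Pirr(\aQ \ttenp \aR),\Pirr\aS} \cong \biTight{\Pirr\aQ \syncp \Pirr\aR,\Pirr\aS}$. Then $rtup_{\Pirr\aQ,\Pirr\aR,\Pirr\aS}$ gives $\biTight{\Pirr\aQ \syncp \Pirr\aR,\Pirr\aS} \cong \biTight{\Pirr\aQ,(\Pirr\aR)\spcheck \syncp \Pirr\aS}$. Finally, applying $\Open$ with $rep_\aQ$ on the first coordinate, and the chain $\Open((\Pirr\aR)\spcheck \syncp \Pirr\aS) = \Open(\Pirr\aR^{\pOp} \syncp \Pirr\aS) \cong \Open\Pirr(\aR^{\pOp} \ttenp \aS) \cong \aR^{\pOp} \ttenp \aS = \jslTight{\aR,\aS}$ on the second coordinate (using $(\Pirr\aR)\spcheck = \Pirr\aR^{\pOp}$, $\rTS_{\aR^{\pOp},\aS}$ and $rep_{\aR^{\pOp} \ttenp \aS}$), yields $\biTight{\Pirr\aQ,(\Pirr\aR)\spcheck \syncp \Pirr\aS} \cong \jslTight{\aQ,\jslTight{\aR,\aS}}$.

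Since $ut$ is built entirely from natural isomorphisms and functors, it is automatically a natural isomorphism; this disposes of naturality, bijectivity, and --- crucially --- the well-definedness and tightness of $ut_{\aQ,\aR,\aS}(f)$, all of which would be tedious to verify directly from the explicit formula. The remaining task is to confirm the advertised action. I would do this by checking it on the join-irreducible tight morphisms $\up_{\aQ \ttenp \aR,\aS}^{\down_{\aQ^{\pOp},\aR}^{m_q,m_r},j_s}$ (Lemma \ref{lem:irr_tight_morphisms}, Lemma \ref{lem:tight_tensor_irr}) and extending by join-preservation, noting that both the composite and the map $f \mapsto \lambda q.\lambda r.f(\up_{\aQ^{\pOp},\aR}^{q,r})$ preserve joins in $f$ (the latter because $\up_{\aQ^{\pOp},\aR}^{q,r}$ is join-preserving in each of $q$ and $r$ by Lemma \ref{lem:special_jsl_morphisms}.1 and $f$ preserves joins). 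Tracing a join-irreducible is transparent: under $\Pirr$ it becomes a basic biclique $\upsp$ (Lemma \ref{lem:bicliq_tight_basic}.5), $\rTS$ relabels it as a basic biclique over $\Pirr\aQ \syncp \Pirr\aR$, $rtup$ merely re-tuples it into a basic biclique over $\Pirr\aQ$ and $(\Pirr\aR)\spcheck \syncp \Pirr\aS$, and $\Open$ with $rep$ returns the special morphism $\up_{\aQ,\jslTight{\aR,\aS}}^{m_q,\up_{\aR,\aS}^{m_r,j_s}}$. This simultaneously establishes the stated action on join-irreducibles.

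For the meet-irreducible action $\down_{\aQ \ttenp \aR,\aS}^{\up_{\aQ^{\pOp},\aR}^{j_q,j_r},m_s} \mapsto \down_{\aQ,\jslTight{\aR,\aS}}^{j_q,\down_{\aR,\aS}^{j_r,m_s}}$, I would run the identical trace, now using that $\Pirr$ sends $\down$-special morphisms to basic independents $\downsp$ (the $\down$-analogue of Lemma \ref{lem:bicliq_tight_basic}.5, together with the fact that basic independents are unions of basic bicliques, Example \ref{ex:special_bicliq_mor}), whose re-tupling under $rtup$ is again immediate. The inverse formula $ut^{\bf-1}_{\aQ,\aR,\aS}(h) = \lambda g.\Lor_\aS\{h(q)(r) : \up_{\aQ^{\pOp},\aR}^{q,r} \le g\}$ then follows because the $\up_{\aQ^{\pOp},\aR}^{q,r}$ join-generate $\jslTight{\aQ^{\pOp},\aR} = \aQ \ttenp \aR$ (Lemma \ref{lem:tight_mor_char}), so $h = ut(f)$ is determined by its values $h(q)(r) = f(\up_{\aQ^{\pOp},\aR}^{q,r})$ on exactly these generators.

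The main obstacle will be the bookkeeping needed to pin down the clean action: keeping track of the contravariance of $\biTight{-,-}$ in its first argument, of the identity $(\Pirr\aR)\spcheck = \Pirr\aR^{\pOp}$, and of the precise component data that $\rTS$ and $rtup$ attach to basic bicliques and basic independents, so that the successive relabellings compose to the single re-association $(m_q,(m_r,j_s))$ underlying the irreducible formulas. No individual step is difficult, but making the chain of identifications line up --- ensuring the meet-irreducible side and the join-irreducible side emerge as the two faces of the one isomorphism $ut$ --- requires care.
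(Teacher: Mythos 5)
Your proposal is correct and follows essentially the same route as the paper: the paper also builds the isomorphism (in the form of $ut^{\bf-1}$, composed in the opposite direction) from $rep$/$\Pirr$/$\Open$, $\rTS$, and $rtup$, then pins down the action by tracing basic bicliques through the composite and extending by join-preservation. The only cosmetic difference is that the paper verifies the meet-irreducible formula by direct evaluation of the already-established explicit action (via Lemma \ref{lem:special_jsl_morphisms}) rather than by re-tracing basic independents through the composite.
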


\begin{proof}
We shall make use of the following natural isomorphisms.
\begin{enumerate}
\item
By Corollary \ref{cor:tight_tensor_syncp} of Theorem \ref{thm:sync_is_tight_tensor} we have the natural isomorphism:
\[
\begin{tabular}{c}
$\alpha_{\aQ,\aR} : \aQ \ttenp \aR \to \Open(\Pirr\aQ \syncp \Pirr\aR)$
\\[1ex]
\begin{tabular}{ll}
$\alpha_{\aQ,\aR}(f : \aQ^{\pOp} \to \aR)$
& $ := \{ (m_q,m_r) \in M(\aQ) \times M(\aR) : f(m_q) \nleq_\aR m_r \}$
\\[1ex]
$\alpha_{\aQ,\aR}^{\bf-1}(Y)$
& $ := \lambda q \in Q.\Lor_\aR \{ \Land_\aR \{ m_r \in M(\aR) : (m_q,m_r) \nin Y  \} :  q \leq_\aQ m_q \in M(\aQ) \} $
\end{tabular}
\end{tabular}
\]
witnessing the fact that the tight tensor product is essentially the synchronous product of binary relations.

\item
We have the inverse of the re-tupling natural isomorphism from Theorem \ref{thm:syncp_universal_tight}:
\[
\begin{tabular}{c}
$rtup_{\rG,\rH,\rI}^{\bf-1} : \jslTight{\rG,\breve{\rH} \syncp \rI} \to \jslTight{\rG \syncp \rH, \rI}$
\\[1ex]
$rtup_{\rG,\rH,\rI}^{\bf-1}(\rR) := \{ ((g_s,h_s),i_t) : \rR(g_s,(h_s,i_t)) \}$
\end{tabular}
\]
which re-tuples in the `other direction'. This is the universal property of synchronous products w.r.t.\ tight $\BiCliq$-morphisms, this being the $\BiCliq$-version of the natural isomorphism we are trying to describe.

\item
We'll also use auxiliary natural isomorphisms:
\[
\begin{tabular}{lllll}
$p_{\aQ,\rG} : \jslTight{\aQ,\Open\rG} \to \biTight{\Pirr\aQ,\rG}$
&&
$f : \aQ \to \Open\rG$ & $\mapsto$ & $\Pirr f \fatsemi red_\rG^{\bf-1} : \Pirr\aQ \to \rG$
\\[2ex]
$q_{\rG,\aS} : \biTight{\rG,\Pirr\aS} \to \jslTight{\Open\rG,\aS}$
&&
$\rR : \rG \to \Pirr\aS$ & $\mapsto$ & $rep_\aS^{\bf-1} \circ \Open\rR : \Open\rG \to \aS$
\end{tabular}
\]
which are correct because tightness is preserved by these equivalence functors, and tight morphisms are closed under composition with arbitrary morphisms. Further recall that:
\[
red_\rG^{\bf-1} := \breve{\in} = \{ (X,g_t) \in J(\Open\rG) \times \rG_t : g_t \in X \}
\qquad
rep_\aS^{\bf-1} := \lambda Y \in \Open\Pirr\aS. \Land_\aS M(\aS) \backslash Y
\]
\end{enumerate}

Then $ut_{\aQ,\aR,\aS}^{\bf-1}$ is defined as the composite natural isomorphism:
\[
\xymatrix@=15pt{
\jslTight{\aQ,\jslTight{\aR,\aS}} \ar@{=}[rr] \ar[dddd]_{ut_{\aQ,\aR,\aS}^{\bf-1}} && \jslTight{\aQ,\aR^{\pOp} \ttenp \aS} 
\ar[d]^{\jslTight{id_\aQ,\alpha_{\aR^{\pOp},\aS}}}
\\
&& \jslTight{\aQ,\Open((\Pirr\aR)\spbreve \syncp \Pirr\aS)} 
\ar[d]^-{p_{\aQ,(\Pirr\aR)\spbreve \syncp \Pirr\aS}}
\\
&& \jslTight{\Pirr\aQ,(\Pirr\aR)\spbreve \syncp \Pirr\aS} 
\ar[d]^-{rtup_{\Pirr\aQ,\Pirr\aR,\Pirr\aS}^{\bf-1}}
\\ && \jslTight{\Pirr\aQ \syncp \Pirr\aR,\Pirr\aS} 
\ar[d]^-{q_{\Pirr\aQ \syncp \Pirr\aR,\aS}}
\\
\jslTight{\aQ \ttenp \aR, \aS}   &&
\jslTight{\Open(\Pirr\aQ \syncp \Pirr\aR),\aS} \ar[ll]^-{\jslTight{\alpha_{\aQ,\aR},id_\aS}}
}
\]
which has action:
\[
h : \aQ \to \jslTight{\aR,\aS}
\quad\mapsto\quad
rep_\aS^{\bf-1} \circ \Open(rtup_{\Pirr\aQ,\Pirr\aR,\Pirr\aS}^{\bf-1}(\Pirr (\alpha_{\aR^{\pOp},\aS} \circ h)  \fatsemi red_{(\Pirr\aR)\spbreve \syncp \Pirr\aS}^{\bf-1} )) \circ \alpha_{\aQ,\aR}
\]
Now, since $h$ is tight it arises as the pointwise-join of special morphisms:
\[
\up_{\aQ,\jslTight{\aR,\aS}}^{m_q,\up_{\aR,\aS}^{m_r,j_s}}
\qquad
\text{where $m_q \in M(\aQ)$ and $\up_{\aR,\aS}^{m_r,j_s} \in J(\jslTight{\aR,\aS})$}.
\]
Then let us consider the action of the mapping $ut_{\aQ,\aR,\aS}^{\bf-1}$ upon these specific morphisms. For brevity it will be helpful to first set some basic notation:
\[
\rG_\aQ := \Pirr\aQ
\qquad \rG_\aR := \Pirr\aR
\qquad \rG_\aS := \Pirr\aS
\]
and we are going to split the computation of $ut_{\aQ,\aR,\aS}^{\bf-1}(\up_{\aQ,\jslTight{\aR,\aS}}^{m_q,\up_{\aR,\aS}^{m_r,j_s}})$ into parts.
\begin{enumerate}
\item
Let us begin with the following simplification:
\[
\begin{tabular}{lll}
$\Pirr(\alpha_{\aR^{\pOp},\aS} \circ \up_{\aQ,\jslTight{\aR,\aS}}^{m_q,\up_{\aR,\aS}^{m_r,j_s}})$
\\
$= \Pirr (\up_{\aQ, \Open(\breve{\rG}_\aR \syncp \rG_\aS)  }^{m_q,\alpha_{\aR^{\pOp},\aS}(\up_{\aR,\aS}^{m_r,j_s})})$
\\[1ex] \qquad
by Lemma \ref{lem:compose_spec_gen_mor}
\\[1ex]
$= \bigcup \{ \upsp_{\rG_\aQ,\Pirr\Open(\breve{\rG}_\aR \syncp \rG_\aS) }^{m'_q , \breve{\rG}_\aR[m'_r] \times \rG_\aS[j'_s] } : \; m_q \leq_\aQ m'_q, \; \breve{\rG}_\aR[m'_r] \times \rG_\aS[j'_s] \subseteq \alpha_{\aR^{\pOp},\aS}(\up_{\aR,\aS}^{m_r,j_s}) \}$
\\[1ex] \qquad
by Lemma \ref{lem:bicliq_tight_basic}.5
\\[1ex]
$= \bigcup \{ \upsp_{\rG_\aQ,\Pirr\Open(\breve{\rG}_\aR \syncp \rG_\aS) }^{m'_q , \breve{\rG}_\aR[m'_r] \times \rG_\aS[j'_s] } : \; m_q \leq_\aQ m'_q, \; \breve{\rG}_\aR[m'_r] \times \rG_\aS[j'_s] \subseteq \breve{\rG}_\aR[m_r] \times \rG_\aS[j_s]  \}$
\\[1ex] \qquad
by Corollary \ref{cor:tight_tensor_syncp}.1
\\[1ex]
$= \; \upsp_{\rG_\aQ,\Pirr\Open(\breve{\rG}_\aR \syncp \rG_\aS) }^{m_q , \breve{\rG}_\aR[m_r] \times \rG_\aS[j_s] }$
\\[1ex]\qquad
using the definition of basic bicliques
\end{tabular}
\]

\item
Continuing, we have:
\[
\begin{tabular}{lll}
$\Open(rtup_{\rG_\aQ,\rG_\aR,\rG_\aS}^{\bf-1}(\Pirr (\alpha_{\aR^{\pOp},\aS} \circ h)  \fatsemi red_{\breve{\rG}_\aR \syncp \rG_\aS}^{\bf-1} ))$
&
$= \Open(rtup_{\rG_\aQ,\rG_\aR,\rG_\aS}^{\bf-1}( \; \upsp_{\rG_\aQ,\breve{\rG}_\aR \syncp \rG_\aS }^{m_q , (m_r,j_s) } ))$
& see below
\\&
$= \Open(\upsp_{\rG_\aQ \syncp \rG_\aR , \rG_\aS }^{(m_q,m_r),j_s} )$
& using definitions
\\[1ex]&
$= \; \up_{\Open(\rG_\aQ \syncp \rG_\aR ),\Open\rG_\aS }^{ \inte_{\rG_\aQ \syncp \rG_\aR}(\overline{(m_q,m_r)}), \rG_\aS[j_s] }$
& by Lemma \ref{lem:bicliq_tight_basic}.5
\end{tabular}
\]
Concerning the marked equality, $\breve{\rG}_\aR \syncp \rG_\aS = \Pirr\aR^{\pOp} \syncp \Pirr\aS$ is reduced so that $red_{\breve{\rG}_\aR \syncp \rG_\aS}^{\bf-1}$ is the closure of a bipartite graph isomorphism. It turns out that post-composing with it bijectively relabels $\breve{\rG}_\aR[m_r] \times \rG_\aS[j_s]$ with $(m_r,j_s)$.

\item
Then it remains to simplify:
\[
rep_\aS^{\bf-1} \circ
\up_{\Open(\rG_\aQ \syncp \rG_\aR ),\Open\rG_\aS }^{ \inte_{\rG_\aQ \syncp \rG_\aR}(\overline{(m_q,m_r)}), \rG_\aS[j_s] }
\circ  \alpha_{\aQ,\aR}
\]
which equals:
\[
\up_{ \aQ \ttenp \aR , \aS }^{ (\alpha_{\aQ,\aR})_*(\inte_{\rG_\aQ \syncp \rG_\aR}(\overline{(m_q,m_r)})) , rep_\aS^{\bf-1}(\rG_\aS[j_s]) }
\qquad
\text{by Lemma \ref{lem:compose_spec_gen_mor}.}
\]
Now, the first parameter equals $\down_{\aQ^{\pOp},\aR}^{m_q,m_r}$ by Corollary \ref{cor:tight_tensor_syncp}.1 because the adjoint of an isomorphism acts as the inverse. Moreover the second parameter simplifies as follows:
\[
rep_\aS^{\bf-1}(\rG_\aS[j_s])
= \Land_\aS M(\aS) \backslash \rG_\aS[j_s]
= \Land_\aS \{ m \in M(\aS) : j_s \leq_\aS m \}
= j_s.
\]
\end{enumerate}

In conclusion, we have shown that:
\[
\up_{ \aQ \ttenp \aR , \aS }^{ \down_{\aQ^{\pOp},\aR}^{m_q,m_r} , j_s } 
\quad \stackrel{ut_{\aQ,\aR,\aS}}{\mapsto} \quad
\up_{\aQ,\jslTight{\aR,\aS}}^{m_q,\up_{\aR,\aS}^{m_r,j_s}}
\]
and have thus established its action on join-irreducibles as desired. Now, to verify its general action:
\[
ut_{\aQ,\aR,\aS}(f) \stackrel{?}{=} \lambda q \in Q.\lambda r \in R.f(\up_{\aQ^{\pOp},\aR}^{q,r})
\]
it suffices to establish this when $f$ is join-irreducible because (i) $ut_{\aQ,\aR,\aS}$ preserves joins $\Lor_i f_i$, and (ii) we can absorb the $\Lor_\aR$-joins into $\JSL_f[\jslTight{\aQ^{\pOp},\aR},\aS]$-joins of the $f_i$'s. Thus we need to prove that:
\[
\up_{\aQ,\jslTight{\aR,\aS}}^{m_q,\up_{\aR,\aS}^{m_r,j_s}}(q)(r)
\; = \;
\up_{ \aQ \ttenp \aR , \aS }^{ \down_{\aQ^{\pOp},\aR}^{m_q,m_r} , j_s } (\up_{\aQ^{\pOp},\aR}^{q,r})
\qquad
\text{for every $(q,r) \in Q \times R$}.
\]
Indeed:
\[
\up_{ \aQ \ttenp \aR , \aS }^{ \down_{\aQ^{\pOp},\aR}^{m_q,m_r} , j_s } (\up_{\aQ^{\pOp},\aR}^{q,r})
\; = 
\begin{cases}
j_s & \text{if $\up_{\aQ^{\pOp},\aR}^{q,r} \; \nleq \; \down_{\aQ^{\pOp},\aR}^{m_q,m_r}$, or equivalently ($q \nleq_\aQ m_q$ and $r \nleq_\aR m_r$) }
\\
\bot_\aS & \text{otherwise}
\end{cases}
\]
using Lemma \ref{lem:special_jsl_morphisms}.6, whereas:
\[
\up_{\aQ,\jslTight{\aR,\aS}}^{m_q,\up_{\aR,\aS}^{m_r,j_s}}(q)(r)
\; =
(\begin{cases}
\up_{\aR,\aS}^{m_r,j_s} & \text{if $q \nleq_\aQ m_q$}
\\
\bot_{\jslTight{\aR,\aS}} & \text{otherwise}
\end{cases})(r)
=
\begin{cases}
j_s & \text{if $q \nleq_\aQ m_q$ and $r \nleq_\aQ m_r$}
\\
\bot_\aS & \text{otherwise}
\end{cases}
\]
as required. Having verified the action of $ut_{\aQ,\aR,\aS}$, the action of the inverse $ut_{\aQ,\aR,\aS}^{\bf-1}$ follows using the fact that $f : \jslTight{\aQ^{\pOp},\aR} \to \aS$ preserves joins. Then it only remains to verify the action of $ut_{\aQ,\aR,\aS}$ on meet-irreducibles:
\[
\begin{tabular}{lll}
$ut_{\aQ,\aR,\aS}(\down_{\aQ \ttenp \aR, \aS}^{\up_{\aQ^{\pOp},\aR}^{j_q,j_r}, m_s})$
&
$= \lambda q.\lambda r. \down_{\aQ \ttenp \aR, \aS}^{\up_{\aQ^{\pOp},\aR}^{j_q,j_r}, m_s}(\up_{\aQ^{\pOp},\aR}^{q,r})$
\\&
$= \lambda q . \lambda r.
\begin{cases}
\bot_\aS & \text{if $\up_{\aQ^{\pOp},\aR}^{q,r} = \bot_{\aQ \ttenp \aR}$}
\\
m_s & \text{if $\bot_{\aQ \ttenp \aR} < \; \up_{\aQ^{\pOp},\aR}^{q,r} \;\leq\; \up_{\aQ^{\pOp},\aR}^{j_q,j_r}$}
\\
\top_\aS & \text{if $\up_{\aQ^{\pOp},\aR}^{q,r} \;\nleq\; \up_{\aQ^{\pOp},\aR}^{j_q,j_r}$}
\end{cases}$
\\[1ex]&
$= \lambda q . \lambda r.
\begin{cases}
\bot_\aS & \text{if $q = \bot_\aQ$ or $r = \bot_\aR$}
\\
m_s & \text{if $\bot_\aQ <_\aQ q \leq_\aQ j_q$ and $\bot_\aR < r \leq_\aR j_r$}
\\
\top_\aS & \text{if $q \nleq_\aQ j_q$ or $r \nleq_\aR j_r$}
\end{cases}$
& by Lemma \ref{lem:special_jsl_morphisms}.2
\end{tabular}
\]
whereas:
\[
\down_{\aQ,\jslTight{\aR,\aS}}^{j_q,\down_{\aR,\aS}^{j_r,m_s}}(q)(r)
\; =
(\begin{cases}
\bot_{\jslTight{\aR,\aS}} & \text{if $q = \bot_\aQ$}
\\
\down_{\aR,\aS}^{j_r,m_s} & \text{if $\bot_\aQ < q \leq_\aQ j_q$}
\\
\top_{\jslTight{\aR,\aS}} & \text{if $q \nleq_\aQ j_q$}
\end{cases})(r)
=
\begin{cases}
\bot_\aS & \text{if $q = \bot_\aQ$ or $r = \bot_\aR$}
\\
m_s & \text{if $\bot_\aQ < q \leq_\aQ j_q$ and $\bot_\aR < r \leq_\aR j_r$}
\\
\top_\aS & \text{if $q \nleq_\aQ j_q$ and $r \nleq_\aR j_r$}
\end{cases}
\]
and we are finally finished.
\end{proof}

% \section{Idea concerning unary languages}
% \begin{itemize}
%   \item The dual isomorphism makes $\jslLQ{L} = \jslLQ{L^r}$ a De Morgan algebra.
%   \item We know De Morgan algebras correspond to reduced undirected graphs.
%   \item Perhaps the `reachability' of the single endomorphism corresponds to having a Hamiltonian path?
% \end{itemize}

\section{Reduced undirected graphs and De Morgan algebras}
\label{sec:graphs_and_de_morgan}

So far, our main result amounts to a categorical equivalence:
\[
\begin{tabular}{c}
binary relations  \qquad $\approx$ \qquad finite join-semilattices
\\[1ex]
\begin{tabular}{c}
where isomorphism classes of \emph{reduced} relations
\\
correspond to isomorphism classes of finite lattices.
\end{tabular}
\end{tabular}
\]
We are going to extend this to a categorical equivalence:
\[
\begin{tabular}{c}
undirected graphs \qquad $\approx$ \qquad finite de morgan algebras
\\[1ex]
\begin{tabular}{c}
where isomorphism classes of \emph{reduced} undirected graphs
\\
correspond to isomorphism classes of finite de morgan algebras.
\end{tabular}
\end{tabular}
\]
Denoting the latter categories $\UG$ and $\SAI_f$ respectively, the variety $\SAI$ consists of join-semilattices equipped with a {\bf S}elf-{\bf A}djoint {\bf I}nvolutive morphism i.e.\ De Morgan algebras where distributivity is not assumed.

\smallskip
\noindent
Here's a brief summary of our approach:

\begin{itemize}
\renewcommand\labelitemi{--}
\item
$\UGJ$ is a category whose objects are pairs of relations $(\rG, \rE)$ between finite sets where $\rE$ is symmetric and satisfies $\rE = \rG ; \rH$ for some $\rH$. The category $\UGM$ has objects $(\rG, \rE)$ where instead $\rE = \rH ; \rG$ for some $\rH$.

\item
There are respective equivalent categories of algebras $\SAJ_f$ and $\SAM_f$ i.e.\ the two different ways of extending $\JSL_f$ with a single self-adjoint morphism.
\item
$\SAJ$ and $\SAM$ are varieties whose intersection is the variety of De Morgan algebras $\SAI$.
\item
The corresponding intersection of $\UGJ$ and $\UGM$ amounts to $\UG$ i.e.\ a category whose isomorphism classes are those undirected graphs $(V, \rE)$ where $\rE[v] = \rE[S]$ implies $v \in S$.
\end{itemize}

\subsection{Preliminary definitions}

\begin{definition}[Undirected graphs]
  \label{def:ugraphs_bipartite_ugraphs}
  \item
  \begin{enumerate}
  \item
  A relation $\rR \subseteq X \times X$ is \emph{symmetric} if $\rR = \breve{\rR}$ i.e.\ it equals its converse relation.
  \item
  An \emph{undirected graph} (or just \emph{graph}) is a pair $(V,\rE)$ where $V$ is a possibly-empty finite set and $\rE \subseteq V \times V$ is a symmetric relation. Then vertices may have self-loops i.e.\ $\rE(v,v)$ is permissable.
  
  \item
  An undirected-graph $(V,\rE)$ is \emph{irreflexive} if $\rE \cap \Delta_V = \emptyset$, and \emph{reflexive} if $\Delta_V \subseteq \rE$.

  \item
  A \emph{bipartite} undirected graph $(V,\rE)$ satisfies:
  \[
  \rE = \rE |_{U \times \overline{U}} \,\cup\, \rE |_{\overline{U} \times U}
  \qquad
  \text{for some subset $U \subseteq V$}.
  \]
  Then the set $\{ U, \overline{U} \}$ is called a \emph{bipartition for $\rE$} and the pair $(U,\overline{U})$ is called an \emph{ordered bipartition for $\rE$}. \endbox
  \end{enumerate}
\end{definition}

\smallskip

\begin{example}[Visualising undirected-graphs]
  Here are 3 examples, depicted classically and as a typed relation.
  \[
  \begin{tabular}{clclc}
  \txt{undirected graph} &&  \txt{symmetric relation $\rE \subseteq V \times V$} && \txt{binary relation}
  \\[-2ex]
  \\ \hline
  \\
  $\vcenter{\vbox{\xymatrix@=5pt{
  x \ar@{-}@(dl,ul) \ar@{-}[rr] && y && z \ar@{-}[ll] \ar@{-}@(dr,ur)
  }}}$
  &&
  \begin{tabular}{c}
  $\{ (x,x), (x,y), (y,x), (z,z) \}$
  \\
  where $V = \{x,y,z\}$
  \end{tabular}
  && 
  $\vcenter{\vbox{\xymatrix@=15pt{
  x & y & z
  \\
  x \ar[u] \ar[ur] & y \ar[ul] \ar[ur] & z \ar[u] \ar[ul]
  }}}$
  \\ \\
  $\vcenter{\vbox{\xymatrix@=10pt{
  & x
  \\
  y \ar@{-}[ur] \ar@{-}[rr] && z \ar@{-}[ul]
  }}}$
  &&
  \begin{tabular}{c}
  $\{ (u,v) \in V \times V : u \neq v \}$
  \\
  where $V = \{x,y,z\}$
  \end{tabular}
  &&
  $\vcenter{\vbox{\xymatrix@=15pt{
  z & y & x
  \\
  x \ar[u] \ar[ur] & y \ar[ul] \ar[ur] & z \ar[u] \ar[ul]
  }}}$
  \\\\
  $\vcenter{\vbox{\xymatrix@=5pt{
  & x_1 \ar@{-}[r] & x_2 \ar@{-}[dr] 
  \\
  x_0 \ar@{-}[ur] & & & x_3 \ar@{-}[dl]
  \\
  & x_5 \ar@{-}[ul] & x_4 \ar@{-}[l]
  }}}$
  &&
  \begin{tabular}{c}
  $\{ (x_i,x_j) \in V \times V : j = i \pm 1 \text{ (mod 6)} \}$
  \\
  where $V = \{ x_i : 0 \leq i < 6\}$
  \end{tabular}
  &&
  $\vcenter{\vbox{\xymatrix@=15pt{
  x_5 & x_1 & x_3
  \\
  x_0 \ar[u] \ar[ur] & x_2 \ar[ul] \ar[ur] & x_4 \ar[u] \ar[ul]
  }}}$
  $\vcenter{\vbox{\xymatrix@=15pt{
  x_0 & x_2 & x_4
  \\
  x_5 \ar[u] \ar[ur] & x_1 \ar[ul] \ar[ur] & x_3 \ar[u] \ar[ul]
  }}}$
  \end{tabular}
  \]
  The 2nd and 3rd examples are irreflexive and the latter is also bipartite as witnessed by the bipartition:
  \[
    \{\{x_0,x_2,x_4\},\{x_1,x_3,x_5\}\}.
  \]
  Finally, observe that every bipartite graph is irreflexive. \endbox
\end{example}

\begin{definition}[Basic graph-theoretic notions]
  \label{def:u_graphs_basic_notions}
  Let $(V,\rE)$ be a graph.
  \begin{enumerate}
  \item
  A graph $(S,\rE_0)$ is a \emph{subgraph of} $(V,\rE)$ if $S \subseteq V$ and $\rE_0 \subseteq \rE$, in which case we write $(S,\rE_0) \subseteq (V,\rE)$. Such a subgraph is \emph{induced} if $\rE_0 = \rE |_{S \times S}$.
  
  \item
  Given any finite sets $X$ and $Y$ we define specific sets constructed from them.
  % \footnote{Further below we will define $\BC{X,Y} := X \times Y$ and call them \emph{bicliques}.}
  \begin{enumerate}
  \item
  $\UBC{X,Y} := X \times Y \,\cup\, Y \times X$ is called an \emph{undirected biclique}.
  \item
  If $X \cap Y = \emptyset$ then $\UBC{X,Y}$ is called a \emph{irreflexive} undirected biclique.
  \item
  $\URC{X} := X \times X$ is called a \emph{reflexive undirected clique}.
  \item
  $\UIC{X} := (X \times X) \backslash \Delta_X$ is called an \emph{irreflexive undirected clique}.
  \end{enumerate}
  Irreflexive undirected bicliques and irreflexive undirected cliques are the standard notion of `biclique' and `clique' in an undirected graph without self-loops. Since we permit self-loops we have additional concepts.
  
  \item
  The equivalence classes $S \subseteq V$ of the reflexive transitive closure of $\rE$ are the \emph{connected components}. A graph is \emph{connected} if it has precisely one connected component, and thus cannot be empty.
   
  \item
  The \emph{neighbourhood function} $N_\rE : V \to \Pow V$ is defined $N_\rE(v) := \rE[v]$, and the \emph{degree function} $deg_\rE : V \to \Nat$ is defined $deg_\rE(v) := |N_\rE[v]|$. Then for each vertex $v \in V$, its \emph{neighbourhood} is $N_\rE(v)$ and its \emph{degree} is $deg_\rE(v)$.
  
  \item
  A graph's associated \emph{adjacency matrix} $Adj(V,\rE)$ is the function $f : V \times V \to 2$ where $2 := \{0,1\}$ and:
  \[
  f := \lambda (u,v) \in V \times V.\, \rE(u,v) \, ? \, 1 : 0.
  \]
  It is usually depicted as a $|V| \times |V|$ binary matrix whose rows and columns are indexed by $V$.
  
  \item
  There are also some important special types of graphs.
  
  \begin{enumerate}
  \item
  A \emph{path} is a connected irreflexive graph $(V,\rE)$ with $v_0 \neq v_1 \in V$ such that $deg_\rE = \lambda v \in V. v \in \{v_0,v_1\} \, ? \, 1 : 2$.
  
  \smallskip
  This is the usual notion of a path with at least two vertices and no repetitions. Its \emph{length} is the number of edges $|\rE| = |V|-1$. Finally, for each $n \geq 2$ we have the path $P_n := (\{0,1,\dots,n-1\},\rE)$ where $\rE(i,j) :\iff j = i \pm 1$, so that $P_n$ has length $n-1$.
  
  \item
  A \emph{cycle} is a connected irreflexive graph $(V,\rE)$ where every vertex has degree $2$.
  
  \smallskip
  This corresponds to the usual notion of a cycle with at least three distinct vertices. Its \emph{length}  is the number of edges $|\rE| = |V|$. A cycle is \emph{odd} if its length is, otherwise it is \emph{even}. For each $n \geq 3$ we have the cycle $C_n := (\{0,1,2,\dots,n-1\},\rE)$ where $\rE(i,j) :\iff j = i \pm 1 \; \text{(mod $n$)}$, so that $C_n$ has length $n$. \endbox
  \end{enumerate}
  
  \end{enumerate}
  \end{definition}

  Recall the standard notion of morphism between undirected graphs.
  
  \begin{definition}[Graph morphisms and isomorphisms]
  \item
  \begin{enumerate}
  \item
  Given graphs $(V_i,\rE_i)$ for $i = 1,2$, an \emph{undirected graph morphism} (or \emph{graph morphism}) $f : (V_1,\rE_1) \to (V_2,\rE_2)$ is a function $f : V_1 \to V_2$ such that $\rE_1 ; f \subseteq f ; \rE_2$, or equivalently: $\rE_1(v_1,v_2) \To \rE_2(f(v_1),f(v_2))$ for all $v_1,v_2 \in V$.
  
  \item
  A \emph{graph isomorphism} is a graph morphism $f : (V_1,\rE_1) \to (V_2,\rE_2)$ such that $f : V_1 \to V_2$ is bijective and satisfies $\rE_1 ; f = f ; \rE_2$. Equivalently, $f$ is bijective and $\rE_1(v_1,v_2) \iff \rE_2(f(v_1),f(v_2))$ for all $v_1,v_2 \in V$. \endbox
  
  \end{enumerate}
  \end{definition}

\smallskip
Now for our first non-standard concept.

\begin{definition}[Reduced undirected graph]
  $(V,\rE)$ is \emph{reduced} if its edge-relation is reduced i.e.\
  \[
  \forall v \in V,\ S \subseteq V. \;\; \rE[v] = \rE[S] \To v \in S.
  \]
  This coincides with the previous notion because $\rE$ is symmetric. \endbox
\end{definition}

\begin{example}[Reduced graphs]
  \item
  \begin{enumerate}[1.]
  \item
  The complete graph $K_V = (V,\overline{\Delta_V})$ is reduced iff $|V| \neq 1$. The case $|V| = 0$ is the empty graph and is reduced. If $|V| \geq 2$ then the neighbourhoods $\{ \overline{v} : v \in V \}$ are not  unions of others. The case $|V| = 1$ is an isolated point and is not reduced.
  
  \item
  The complete bipartite graph $K_{X,Y}$ is reduced iff $|X| = |Y| \leq 1$ i.e.\ if it is the empty graph or a single-edge. In all other cases we have two distinct vertices with the same neighbourhood.
  
  \item
  For each finite set $V$, the reflexive graph $(V, V \times V)$ is reduced iff $|V| \leq 1$. That is, the only reduced examples are the empty graph and a single self-loop.

  \item
  The $0$-regular graphs are disjoint unions of isolated vertices, and only the empty disjoint union is reduced i.e.\ the empty graph.
  \item
  The $1$-regular graphs are disjoint unions of self-loops and single-edges. They are all reduced, and  correspond to the finite boolean De Morgan algebras (see below).
  \item
  The $2$-regular graphs are disjoint unions of cycles $(C_n)_{n \geq 3}$ (see Definition \ref{def:u_graphs_basic_notions}) and also paths $(P_n)_{n \geq 2}$ with additional self-loops at each distinct end. There are precisely two connected $2$-regular graphs which are not reduced i.e.\
  \[
  \vcenter{\vbox{\xymatrix@=10pt{
  \bullet \ar@{-}[rr] && \bullet
  \\
  \bullet \ar@{-}[u] \ar@{-}[rr] && \bullet \ar@{-}[u]
  }}}
  \qquad\text{and}\qquad
  \vcenter{\vbox{\xymatrix@=10pt{
  \\
  \bullet \ar@{-}@(ul,ur) \ar@{-}[rr] && \bullet \ar@{-}@(ul,ur)
  }}}
  \]
  That is, $C_4$ is not reduced because diagonally opposite elements have the same neighbourhoods, and the two vertices of the other graph have the same neighbourhood. We have already seen the two smallest non-empty reduced $2$-regular graphs:
  \[
  \vcenter{\vbox{\xymatrix@=10pt{
  & \bullet
  \\
  \bullet \ar@{-}[ur] \ar@{-}[rr] && \bullet \ar@{-}[ul]
  }}}
  \qquad\qquad
  \vcenter{\vbox{\xymatrix@=5pt{
  \bullet \ar@{-}@(dl,ul) \ar@{-}[rr] && \bullet && \bullet \ar@{-}@(dr,ur) \ar@{-}[ll] 
  }}}
  \]
  Their corresponding De Morgan algebras have the same lattice structure ($\jslM{3}$) yet with different involutions, see Example \ref{ex:sai_equiv_ug_some_examples} below.
  
  \item
  Let us restrict to irreflexive graphs (forbidding self-loops) and fix $m \geq 2$. Then an $m$-regular irreflexive graph $G$ is reduced iff the complete bipartite graph $K_{2,m}$ is not an induced subgraph. Indeed, the neighbourhoods are $m$-element sets, so the graph can only fail to be reduced if two distinct vertices have the same neighbourhood.
  \begin{itemize}
  \item
  $K_{m,m}$ is an $m$-regular irreflexive graph which is not reduced, the case $m = 2$ yields $K_{2,2} \cong C_4$ as above.
  \item
  Since $K_{2,m}$ has a $4$-cycle, any $m$-regular graph with strictly greater \emph{girth} is reduced. For example, the girth of the $3$-regular Petersen graph is known to be $5$.
  % \item
  % Those bipartite $m$-regular irreflexive graphs arising from non-degenerate uniform incidence structures are reduced (?) e.g.\ the Heawood ui-graph which arises from the Fano plane.
  \end{itemize}
   
  \item
  Recall that $P_n$ denotes the path with $n \geq 0$ edges. A path $P_n$ is reduced iff $n = 1$ or $n \geq 3$. That is, the only non-reduced paths are the isolated point and the path with two edges and three vertices. In the latter case, the two endpoints have the same neighbourhood i.e.\ the central point. \endbox

  \end{enumerate}
  \end{example}

Here is another non-standard notion.

\begin{definition}[Self-dual bipartite graphs]
  \item
  A connected bipartite graph $(V,\rE)$ is \emph{self-dual} if there exists a graph isomorphism $\theta  :(V,\rE) \to (V,\rE)$ such that $\theta[U] = \overline{U}$, where $\{U,\overline{U}\}$ is its unique bipartition. Then an arbitrary bipartite graph is \emph{self-dual} if all of its connected components are. \endbox
\end{definition}

\begin{example}[Self-dual bipartite graphs]
  \item
  \begin{enumerate}
  \item
  Every even cycle $C_{2n}$ is a connected bipartite graph and is also self-dual via the graph isomorphism $\theta(i) := i + 1 \, \text{(mod $2n$)}$.
  
  \item
  Although every path $P_n$ is a connected bipartite graph, it is self-dual iff $n$ is even. Indeed, the only non-identity graph isomorphism $\theta : P_n \to P_n$ has action $\theta(i) = (n - 1) - i$, so that $\theta$ switches the parity iff $n - 1$ is odd iff $n$ is even.
  
  \item
  Any graph $(V,\emptyset)$ with no edges is a bipartite graph. If $V = \emptyset$ then it is self-dual because it has no connected components. However if $V \neq \emptyset$ then it is not self-dual: each connected component $(\{*\},\emptyset)$ has unique bipartition $\{\{*\},\emptyset\}$ and we cannot have $f[\emptyset] = \{*\}$.
  \item
  Consider the following connected bipartite graph:
  \[
  \xymatrix@=15pt{
  w & x & y & z
  \\
  a \ar@{-}[u] \ar@{-}[ur] & b \ar@{-}[ul] \ar@{-}[u] \ar@{-}[ur] & c \ar@{-}[u] \ar@{-}[ul] \ar@{-}[ur] & d \ar@{-}[ul] \ar@{-}[u]
  }
  \]
  It is self-dual and has two distinct witnessing graph isomorphisms. The first reflects along the centered horizontal axis, whereas the second additionally reflects along the centered vertical axis.
  
  \item
  Later on we'll see that the `self-dual bipartite graphs' corresponds to the symmetric finite lattices i.e.\ one which is isomorphic to its order-dual. For example, we derived the previous example from the lattice $\latL$:
  \[
  \xymatrix@=5pt{
  & \bullet
  \\
  \bullet \ar@{-}[ur] && \bullet \ar@{-}[ul]
  \\
  \bullet \ar@{-}[u] && \bullet \ar@{-}[u]
  \\
  & \bullet \ar@{-}[ul] \ar@{-}[ur] & 
  }
  \]
  which has two distinct automorphisms. \endbox
  
  \end{enumerate}
\end{example}

\smallskip
Later we'll also need \emph{polarities} which are defined in terms of the operators $(-)^\up$ and $(-)^\down$.

\begin{definition}[Polarities]
  \label{def:polarities}
  Each relation $\rR \subseteq \rR_s \times \rR_t$ between finite sets yields functions:
  \[
  \rR^\Upa := \neg_{\rR_t} \circ \overline{\rR}^\up : \Pow \rR_s \to \Pow \rR_t
  \qquad
  \rR^\Dna := \overline{\rR}^\down \circ \neg_{\rR_t} : \Pow \rR_t \to \Pow \rR_s
  \]
  recalling that $\overline{\rR} \subseteq \rR_s \times \rR_t$ is the complement relation, and $\neg_X : \Pow X \to \Pow X$ constructs the relative complement.  We refer to these functions as the \emph{polarities of $\rR$}. \endbox
  \end{definition}
  
  The polarity $(-)^\Upa$ is a `De Morgan dual' of $(-)^\up$ involving the \emph{complement} relation rather than the converse. The up/down arrows are not flipped because the complement of a relation does not alter its type. We now show that polarities correspond to the classical concept, and prove some basic related equalities.

  \begin{lemma}[Basic properties of polarities]
  \label{lem:polarities_basic}
  \item
  \label{lem:polar_basic}
  Let $\rR \subseteq \rR_s \times \rR_t$ be any relation between finite sets.
  \begin{enumerate}
  \item
  The mappings $(-)^\Upa$ and $(-)^\Dna$ correspond to the `standard polarities':
  \[
  \begin{tabular}{lll}
  $\rR^\Upa(X)$
  & $= \bigcap_{x \in X} \rR[x]$
  & $= \{ y \in \rR_t : \forall x \in X.\rR(x,y)\}$
  \\[0.5ex]
  $\rR^\Dna(Y)$
  & $= \bigcap_{y \in Y} \breve{\rR}[y]$
  & $= \{ x \in \rR_s : \forall y \in Y.\rR(x,y) \}$
  \end{tabular}
  \]
  \item
  The polarities define adjoint join-semilattice morphisms:
  \[
  \begin{tabular}{c}
  $\rR^\Upa = (\rR^\Dna)_* : \JPow \rR_s \to (\JPow \rR_t)^{\pOp}
  \qquad
  \rR^\Dna = (\rR^\Upa)_* : \JPow \rR_t \to (\JPow \rR_s)^{\pOp}$
  \\[0.5ex]
  $Y \subseteq \rR^\Upa(X) \iff \rR^\Upa (X) \leq_{(\JPow \rR_t)^{\pOp}} Y
  \iff X \leq_{\JPow \rR_s} \rR^\Dna (Y) \iff X \subseteq \rR^\Dna (Y)$
  \end{tabular}
  \]
  for all $X \subseteq \rR_s$, $Y \subseteq \rR_t$. Then both $\rR^\Upa$ and $\rR^\Dna$ send arbitrary unions to intersections.
  
  \item
  We have the equalities:
  \[
  \begin{tabular}{lll}
  $\rR^\Dna \circ \rR^\Upa$
  & $= \cl_{\overline{\rR}}$
  & $= \lambda X \subseteq \rR_s. \{ r_s \in \rR_s : \bigcap_{x \in X} \rR[x] \subseteq \rR[r_s] \}$
  \\[0.5ex]
  $\rR^\Upa \circ \rR^\Dna$
  & $= \cl_{\overline{\rR}\spbreve}$
  & $= \lambda Y \subseteq \rR_t. \{ r_t \in \rR_t : \bigcap_{y \in Y} \breve{\rR}[y] \subseteq \breve{\rR}[r_t] \}$.
  \end{tabular}
  \]
  \end{enumerate}
  \end{lemma}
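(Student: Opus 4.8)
The plan is to reduce all three statements to the already-established behaviour of $(-)^\up$, $(-)^\down$ and the relative complement $\neg$, unfolding Definition \ref{def:polarities} throughout. The whole argument is a sequence of De Morgan manipulations, so there is no deep obstacle; the only real work is keeping the order-duals $(\JPow{-})^{\pOp}$ straight and handling the second closure identity correctly.

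For statement (1) I would simply compute. Since $\rR^\Upa(X) = \neg_{\rR_t}(\overline{\rR}^\up(X)) = \overline{\overline{\rR}[X]}$ and $\overline{\rR}[X] = \{\, y : \exists x \in X.\, \neg\rR(x,y)\,\}$, taking the complement turns the existential into a universal, giving $\rR^\Upa(X) = \{\, y \in \rR_t : \forall x \in X.\, \rR(x,y)\,\} = \bigcap_{x\in X}\rR[x]$. Dually $\rR^\Dna(Y) = \overline{\rR}^\down(\overline{Y}) = \{\, x : \overline{\rR}[x] \subseteq \overline{Y}\,\}$, and $\overline{\rR}[x] \subseteq \overline{Y}$ is precisely $\forall y \in Y.\, \rR(x,y)$, whence $\rR^\Dna(Y) = \bigcap_{y\in Y}\breve{\rR}[y]$.

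For statement (2) I would first observe that each polarity is a composite of $\JSL_f$-morphisms: $\neg_{\rR_t}$ is the isomorphism $\JPow\rR_t \to (\JPow\rR_t)^{\pOp}$ sending $\cup$ to $\cap$, while $\overline{\rR}^\up$ and $\overline{\rR}^\down$ are the adjoint morphisms of Lemma \ref{lem:up_down_basic}.1, so $\rR^\Upa : \JPow\rR_s \to (\JPow\rR_t)^{\pOp}$ and $\rR^\Dna : \JPow\rR_t \to (\JPow\rR_s)^{\pOp}$ are well-defined and send unions to intersections. The four-fold biconditional then follows by a complement-chase $Y \subseteq \rR^\Upa(X) \iff \overline{\rR}^\up(X) \subseteq \overline{Y} \iff X \subseteq \overline{\rR}^\down(\overline{Y}) \iff X \subseteq \rR^\Dna(Y)$, whose middle step is exactly the adjunction $(\up\dashv\down)$ applied to $\overline{\rR}$. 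Rewriting the outer inclusions as the orders of $(\JPow\rR_t)^{\pOp}$ and $\JPow\rR_s$ yields the displayed chain, which says $\rR^\Upa$ and $\rR^\Dna$ are adjoint; by uniqueness of adjoints (Theorem \ref{thm:jsl_self_dual}) this forces $\rR^\Upa = (\rR^\Dna)_*$, and then $\rR^\Dna = ((\rR^\Dna)_*)_* = (\rR^\Upa)_*$ using $(f_*)_* = f$.

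For statement (3) the first equality is immediate from the fact that $\neg_{\rR_t}$ is an involution: $\rR^\Dna \circ \rR^\Upa = \overline{\rR}^\down \circ \neg_{\rR_t} \circ \neg_{\rR_t} \circ \overline{\rR}^\up = \overline{\rR}^\down \circ \overline{\rR}^\up = \cl_{\overline{\rR}}$ by the definition of the closure operator, and its explicit form drops out of (1) via $\cl_{\overline{\rR}}(X) = \rR^\Dna(\rR^\Upa(X)) = \{\, r_s : \rR^\Upa(X) \subseteq \rR[r_s]\,\} = \{\, r_s : \bigcap_{x\in X}\rR[x] \subseteq \rR[r_s]\,\}$. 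The second equality is where one must not attempt a naive cancellation, since $\rR^\Upa \circ \rR^\Dna$ has no middle $\neg\neg$ to collapse. Instead I would note that the two polarities of $\rR$ are exactly the two polarities of $\breve{\rR}$ with swapped roles, namely $\rR^\Upa = \breve{\rR}^\Dna$ and $\rR^\Dna = \breve{\rR}^\Upa$ (both immediate from the $\forall$-descriptions in (1)); hence $\rR^\Upa \circ \rR^\Dna = \breve{\rR}^\Dna \circ \breve{\rR}^\Upa = \cl_{\overline{\breve{\rR}}} = \cl_{\overline{\rR}\spbreve}$ is simply the first identity instantiated at $\breve{\rR}$, and its explicit description transports along the same substitution. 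This swap observation is the one slightly non-mechanical point, and once it is in hand the whole of (3) is free.
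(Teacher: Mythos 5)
Your proof is correct and follows essentially the same route as the paper: unfold Definition \ref{def:polarities}, reduce everything to $(\up\dashv\down)$ and the De Morgan dualities of Lemma \ref{lem:up_down_basic}, and collapse the double complement for the first closure identity. The only (cosmetic) difference is in the second identity of (3), where you instantiate the first identity at $\breve{\rR}$ via the swap $\rR^\Upa = \breve{\rR}^\Dna$, whereas the paper conjugates the middle $\overline{\rR}^\up \circ \overline{\rR}^\down$ directly by $(\neg\up\neg)$ and $(\neg\down\neg)$ --- both are one-line De Morgan manipulations and equally valid.
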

  
  \begin{proof}
  \item
  \begin{enumerate}
  \item
  We calculate:
  \[
  \rR^\Upa(X)
  = \neg_{\rR_t} \circ \overline{\rR}^\up(X)
  = \overline{\bigcup_{x \in x} \overline{\rR}[x]}
  =  \bigcap_{x \in X} \rR[x]
  \]
  since $\overline{\rR}[x] = \overline{\rR[x]}$. Furthermore:
  \[
  \begin{tabular}{lll}
  $\rR^\Dna(Y)$
  & $= \overline{\rR}^\down \circ \neg_{\rR_t}(Y)$
  \\&
  $= \neg_{\rR_s} \circ (\overline{\rR}\,\spbreve)^\up(Y)$
  & by DeMorgan duality
  \\&
  $= \breve{\rR}^\Upa(Y)$
  & by definition
  \\&
  $= \bigcap_{y \in Y} \breve{\rR}[y]$
  & by previous equality
  \end{tabular}
  \]
  
  \item
  The polarities actually define composite join-semilattice morphisms:
  \[
  \begin{tabular}{lll}
  $\rR^\Upa =\; \JPow \rG_s \xto{\overline{\rR}^\up} \JPow \rG_t \xto{\neg_{\rG_t}^{\pOp}} (\JPow \rG_t)^{\pOp}$
  &&
  $\rR^\Dna = \JPow \rG_t \xto{\neg_{\rG_t}^{\pOp}} \JPow \rG_t \xto{\overline{\rR}^\down} (\JPow \rG_s)^{\pOp}$
  \end{tabular}
  \]
  They are adjoint because ($\overline{\rR}^\up$,$\overline{\rR}^\down$) are adjoint by Lemma \ref{lem:up_down_basic}, and each $\neg_X^{\pOp} : \JPow X \to (\JPow X)^{\pOp}$ is self-adjoint.
  
  \item
  That $\rR^\Dna \circ \rR^\Upa = \cl_{\overline{\rR}}$ follows because $\neg_{\rR_t}$ is involutive, and the second description follows by (1). Finally:
  \[
  \begin{tabular}{lll}
  $\rR^\Upa \circ \rR^\Dna$
  &
  $= \neg_{\rR_t} \circ \overline{\rR}^\up \circ \overline{\rR}^\down \circ \neg_{\rR_t}$
  & by definition
  \\&
  $= (\overline{\rR}\,\spbreve)^\down \circ (\overline{\rR}\,\spbreve)^\up$
  & by DeMorgan duality
  \\&
  $= \cl_{\overline{\rR}\,\spbreve}$
  & by definition
  \end{tabular}
  \]
  and the second description again follows by (1).
  
  \end{enumerate}
  \end{proof}

\subsection{The Varieties $\SAJ$, $\SAM$ and $\SAI$}

We will soon define three varieties (equationally-defined classes of algebras) extending $\JSL$.

\begin{itemize}
\item[--]
the \emph{finite algebras of} $\SAJ$ amount to a finite join-semilattice $\aQ$ with a self-adjoint morphism $\aQ \to \aQ^{\pOp}$.
\item[--]
the \emph{finite algebras of} $\SAM$ amount to a finite join-semilattice $\aQ$ with a self-adjoint morphism $\aQ^{\pOp} \to \aQ$.
\item[--]
$\SAI = \SAJ \cap \SAM$ is essentially the variety of De Morgan algebras i.e.\ bounded lattices equipped with an involutive endofunction satisfying the De Morgan laws.
\end{itemize}

\begin{definition}[The three varieties $\SAJ$, $\SAM$ and $\SAI$]
\item
In each case we'll extend $\JSL$'s signature $\{ \bot : 0, \lor : 2 \}$ with a unary operation $\sigma$ satisfying the equation:
\[
\eqnOR
\qquad
\sigma(x \lor y) \preccurlyeq \sigma(x)
\]
where $\phi \preccurlyeq \psi$ is syntactic sugar for the equation $\phi \lor \psi \approx \psi$.\footnote{(Rev) stands for \emph{order-reversing} because it is equivalent to the rule
$
x \leq_\aQ y
\To 
\sigma(y) \leq_\aQ \sigma(x).
$}

\begin{enumerate}
\item
$\SAJ$ extends $\JSL$ with a single unary operation satisfying $\eqnOR$ and:
\[
\eqnEx
\qquad
x \preccurlyeq \sigma\sigma(x)
\]
where `Ex' stands for \emph{extensive}.

\item
$\SAM$ extends $\JSL$ with a single unary operation satisfying $\eqnOR$ and:
\[
\eqnCEx
\qquad
\sigma\sigma(x) \preccurlyeq x
\]
where `Cx' stands for \emph{co-extensive}.

\item
$\SAI$ extends $\JSL$ with a single unary operation satisfying $\eqnOR$ and:
\[
\eqnInv
\qquad
\sigma\sigma(x) \approx x
\]
where `Inv' stands for \emph{involutive}.
\end{enumerate}

We view them as categories in the usual sense: the objects are the algebras and a morphism $f : (\aQ_1,\sigma_1) \to (\aQ_2,\sigma_2)$ is function $f : Q_1 \to Q_2$ which preserves the three basic operations. Equivalently, $f$ defines a $\JSL$-morphism $\aQ_1 \to \aQ_2$ such that $f(\sigma_1(q)) = \sigma_2(f(q))$ for every $q \in Q$.  \endbox
\end{definition}

\bigskip

\noindent
These three varieties are related to one another as follows.

\begin{lemma}[Basic observations concerning $\SAJ$, $\SAM$ and $\SAI$]
\label{lem:sa_vars_basic_relationship}
\item
\begin{enumerate}
\item
$(\aQ,\sigma) \in \SAJ_f$ iff $(\aQ^{\pOp},\sigma) \in \SAM_f$.
\item
$\SAI = \SAJ \cap \SAM$.
\item
In $\SAJ$, $\SAM$ and $\SAI$ the equation $\sigma\sigma\sigma(x) \approx \sigma(x)$ holds.
\item
The equation $x \preccurlyeq \sigma(\bot)$ holds in $\SAJ$ but not $\SAM$. The equation $\sigma\sigma(\bot) \preccurlyeq x$ holds in $\SAM$ but not $\SAJ$.
\item
$(\aQ,\sigma) \in \SAJ$ iff $\sigma \circ \sigma$ defines a closure operator on $(Q,\leq_\aQ)$. Similarly, $(\aQ,\sigma) \in \SAM$ iff $\sigma \circ \sigma$ defines an interior operator on $(Q,\leq_\aQ)$.
\end{enumerate}
\end{lemma}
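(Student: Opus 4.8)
The plan is to reduce every part to elementary order-theoretic manipulations, exploiting the fact (recorded in the footnote to $\eqnOR$) that $\eqnOR$ is equivalent to the rule that $\sigma$ is \emph{order-reversing}, i.e.\ $x \leq_\aQ y \To \sigma(y) \leq_\aQ \sigma(x)$, and that $\phi \preccurlyeq \psi$ abbreviates $\phi \leq \psi$. I treat $\eqnOR$ as the common axiom shared by all three varieties. For (1), the key observation is that being order-reversing is invariant under reversing the order: since $\leq_{\aQ^{\pOp}}$ is the converse of $\leq_\aQ$, the rule $x \leq_\aQ y \To \sigma(y) \leq_\aQ \sigma(x)$ holds iff the same rule holds with $\leq_{\aQ^{\pOp}}$ in place of $\leq_\aQ$; hence $\eqnOR$ transfers between $(\aQ,\sigma)$ and $(\aQ^{\pOp},\sigma)$, noting that $\aQ^{\pOp}$ is again a finite join-semilattice by Definition \ref{def:std_order_theory}.12. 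It then remains to note that $\eqnEx$ read in $\aQ$, namely $x \leq_\aQ \sigma\sigma(x)$, is literally $\eqnCEx$ read in $\aQ^{\pOp}$, namely $\sigma\sigma(x) \leq_{\aQ^{\pOp}} x$, because $\leq_{\aQ^{\pOp}}$ reverses $\leq_\aQ$.

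For (2), I would observe that $\eqnEx$ and $\eqnCEx$ are precisely the two inequalities $x \leq \sigma\sigma(x)$ and $\sigma\sigma(x) \leq x$; by antisymmetry of the join-semilattice order their conjunction is exactly $\sigma\sigma(x) = x$, which is $\eqnInv$, while conversely $\eqnInv$ yields both. For (3), the equation $\sigma\sigma\sigma(x) \approx \sigma(x)$ follows in $\SAJ$ from two inequalities: instantiating $\eqnEx$ at $\sigma(x)$ gives $\sigma(x) \leq \sigma\sigma\sigma(x)$, while applying the order-reversing $\sigma$ to $\eqnEx$ gives $\sigma\sigma\sigma(x) \leq \sigma(x)$, and antisymmetry finishes it. The $\SAM$ case is the exact dual (use $\eqnCEx$), and in $\SAI$ it is immediate from $\eqnInv$.

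For (4), in $\SAJ$ I would chain $x \leq \sigma\sigma(x) \leq \sigma(\bot)$, where the first step is $\eqnEx$ and the second applies order-reversal of $\sigma$ to $\bot \leq \sigma(x)$; this gives $x \preccurlyeq \sigma(\bot)$. The dual $\sigma\sigma(\bot) \preccurlyeq x$ holds in $\SAM$ directly from $\eqnCEx$ at $\bot$ together with minimality of $\bot$. For the two failures I would exhibit explicit separating algebras on $\two$: the constant map $\lambda x.\bot$ lies in $\SAM \setminus \SAJ$ and violates $x \preccurlyeq \sigma(\bot) = \bot$, whereas $\lambda x.\top$ lies in $\SAJ \setminus \SAM$ and violates $\sigma\sigma(\bot) = \top \preccurlyeq x$ (each is checked against $\eqnOR$, $\eqnEx$, $\eqnCEx$ in one line).

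For (5), the forward directions are routine: under $\eqnOR$ the map $\sigma\sigma$ is monotone, being a composite of two order-reversing maps; $\eqnEx$ makes it extensive; and part (3) gives $\sigma^4 = \sigma^2$, i.e.\ idempotence — so $\sigma\sigma$ is a closure operator in the sense of Definition \ref{def:cl_inte}, and the $\SAM$ case is identical with $\eqnCEx$ and co-extensivity. The backward directions say that extensivity (resp.\ co-extensivity) of $\sigma\sigma$ returns $\eqnEx$ (resp.\ $\eqnCEx$), yielding membership. I expect the one genuine subtlety of the lemma to live here: the backward implication really does use that $\sigma$ is order-reversing, since $\sigma\sigma$ being a closure operator alone does \emph{not} force $\eqnOR$ (witness $\sigma = id$ on $\two$, where $\sigma\sigma = id$ is a closure operator yet $\eqnOR$ fails). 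So I would make explicit that $\eqnOR$ is the standing hypothesis — it is one of the defining axioms of each of $\SAJ$, $\SAM$, $\SAI$ — and only under it is the closure/interior condition equivalent to the single remaining inequality. As a cross-check, the $\SAM$ half can alternatively be deduced from the $\SAJ$ half via part (1) together with the remark following Definition \ref{def:cl_inte} that a closure operator on $\pP$ is the same thing as an interior operator on $\pP^{\pOp}$.
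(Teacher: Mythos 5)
Your proof is correct and follows essentially the same route as the paper's: each part reduces to the same elementary manipulations of $\eqnOR$, $\eqnEx$, $\eqnCEx$ and $\eqnInv$ (order-dualising for (1), antisymmetry for (2), the two-inequality argument for (3), the chain $x \leq \sigma\sigma(x) \leq \sigma(\bot)$ for (4)), and your separating algebras on $\two$ are concrete instances of the paper's counterexamples. Your explicit handling of the converse in (5) — observing that $\eqnOR$ must be kept as a standing hypothesis because $\sigma\sigma$ being a closure operator does not by itself force order-reversal — is in fact more careful than the paper, whose proof of (5) only verifies the forward direction.
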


\begin{proof}
\item
\begin{enumerate}
\item
$\eqnEx$ is the order-dual of $\eqnCEx$.
\item
$\eqnInv$ holds iff both $\eqnEx$ and $\eqnCEx$ hold.
\item
In $\SAJ$ we can apply $\eqnOR$ to $\eqnEx$ to deduce that $\sigma(\sigma \circ \sigma(x)) \leq \sigma(x)$, whereas $\sigma(x) \leq \sigma \circ \sigma(\sigma(x))$ arises from $\eqnEx$ and the substitution rule. We have the order-dual argument in $\SAM$, and in $\SAI$ we apply substitution to $\eqnInv$.
\item
In $\SAJ$ we have $x \preccurlyeq \sigma\sigma(x)$ by $\eqnEx$ and applying $\eqnOR$ to $\bot \preccurlyeq \sigma(x)$ yields $\sigma\sigma(x) \preccurlyeq \sigma(\bot)$, so that $x \preccurlyeq \sigma(\bot)$. This fails in $\SAM$ e.g.\ take any $\aQ \in \JSL$ with at least two elements and define $\sigma := \lambda q \in Q.\bot_\aQ$. Finally, in $\SAM$ applying $\eqnOR$ twice yields $\sigma\sigma(\bot) \preccurlyeq \sigma\sigma(x)$ and applying  $\eqnInv$ yields $\sigma\sigma(\bot) \preccurlyeq x$. This fails in $\SAJ$ e.g.\ take any join-semilattice $\aQ$ with a distinct bottom and top element and define $\sigma := \lambda q \in Q.\top_\aQ$.
\item
Given $(\aQ,\sigma) \in \SAJ$ then certainly $x \leq_\aQ \sigma\sigma(x)$ holds by $\eqnEx$ i.e.\ $\sigma \circ \sigma$ is extensive. Monotonicity follows by applying $\eqnOR$ twice (viewed as a rule), whereas idempotence follows using $\sigma\sigma\sigma(x) \approx \sigma(x)$ from (3). The proof for $\SAM$ is completely analogous.
\end{enumerate}
\end{proof}

\smallskip

\begin{example}[$\BA$ forms a full subcategory of $\SAI$]
\label{ex:ba_full_subcat_sai}
Observe that every possibly infinite boolean algebra $\aA$ arises as an $\SAI$-algebra $((A,\lor_\aA,\bot_\aA),\neg_\aA)$, and moreover the $\SAI$-morphisms between such algebras are precisely the boolean algebra morphisms. That is, $\BA$ forms a full subcategory of $\SAI$, and also of $\SAJ$ and $\SAM$. \endbox
\end{example}

\smallskip

\begin{example}[Characterising algebras built on a finite boolean join-semilattice]
  \label{ex:char_sa_jmi_fin_boolean}
  Let $Z$ be a finite set.
  \begin{enumerate}
  \item
  {\bf $(\JPow Z,\sigma) \in \SAI$ iff $\sigma = \neg_Z \circ \theta^\up$ for some involutive function $\theta : Z \to Z$}.
  
  \smallskip
  These algebras are well-defined i.e.\ $\eqnOR$ holds because $\theta^\up$ preserves the inclusion-ordering and $\neg_Z$ flips it, whereas $\eqnInv$ holds because:
  \[
  \begin{tabular}{lll}
  $\sigma \circ \sigma$
  &
  $= \neg_Z \circ \theta^\up \circ \neg_Z \circ \theta^\up$
  \\&
  $= \breve{\theta}^\down \circ \neg_Z \circ \neg_Z \circ \theta^\up$
  & by De Morgan duality
  \\&
  $= \theta^\down \circ \theta^\up$
  & $\theta = \theta^{\bf-1} = \breve{\theta}$ by involutivity
  \\&
  $= (\theta^{\bf-1})^\up \circ \theta^\up$
  & since $\theta$ bijective
  \\&
  $= id_{\Pow Z}$.
  \end{tabular}
  \]
  Conversely, take any $\SAI$-algebra $(\JPow Z,\sigma)$. By the characterisation in Lemma \ref{lem:interpret_infinite_sai} further below:
  \begin{quote}
   $\sigma$ defines a self-adjoint $\JSL$-isomorphism $\JPow Z \to (\JPow Z)^{\pOp}$.
  \end{quote}
   The $\JSL$-morphisms of type $\JPow Z \to \JPow Z$ are precisely the functions $\rR^\up$ where $\rR \subseteq Z \times Z$ is an arbitrary relation. Recalling the self-inverse $\JSL$-isomorphism $\neg_Z : \JPow Z \to (\JPow Z)^{\pOp}$, the $\JSL$-morphisms of type $\JPow Z \to (\JPow Z)^{\pOp}$ are precisely the functions $\neg_Z \circ \rR^\up$. Thus $\sigma = \neg_Z \circ \rR^\up$ where $\rR^\up$ is bijective because $\sigma$ and $\neg_Z$ are, so that $\rR$ is a bijective function (each singleton must be seen). Since $\neg_Z$ is self-adjoint,
  \[
  \sigma_*
  = (\neg_Z \circ \rR^\up)_*
  = \rR^\down \circ (\neg_Z)_*
  = \rR^\down \circ \neg_Z
  \qquad\text{so that}\qquad
  \rR^\down \circ \neg_Z = \neg_Z \circ \rR^\up.
  \]
  Thus $\rR^\up = \neg_Z \circ \rR^\down \circ \neg_Z = \breve{\rR}^\up$ by De Morgan duality, so $\rR = \breve{\rR}$. Then $\rR$ is a self-inverse bijection i.e.\ an involutive function $\theta : Z \to Z$.
  
  \smallskip

  \item
  {\bf $(\JPow Z,\sigma) \in \SAJ$ iff $\sigma = \neg_Z \circ \rR^\up$ for some symmetric relation $\rR \subseteq Z \times Z$}.
  
  \smallskip
  $\eqnOR$ is satisfied because $\rR^\up$ preserves the inclusion-ordering and $\neg_Z$ flips it. As for $\eqnEx$,
  \[
  \neg_Z \circ \rR^\up \circ \neg_Z \circ \rR^\up
  = \breve{\rR}^\down \circ \neg_Z \circ \neg_Z \circ \rR^\up
  = \rR^\down \circ \rR^\up
  = \cl_\rR
  \]
  using De Morgan duality and symmetry, which suffices because closure operators are extensive. Conversely, take any $\SAJ$-algebra $(\JPow Z,\sigma)$. By the characterization in Lemma \ref{lem:interpret_finite_saj_sam}.1 below:
  \begin{quote}
  $\sigma$ defines a self-adjoint $\JSL$-morphism $\JPow Z \to (\JPow Z)^{\pOp}$.
  \end{quote}
  Repeating the reasoning in the previous example, we know that $\sigma = \neg_Z \circ \rR^\up$ for some relation $\rR \subseteq Z \times Z$. Then by $\sigma$'s self-adjointness we deduce that $\rR^\down \circ \neg_Z = \neg_Z \circ \rR^\up$ and thus $\rR^\up = \breve{\rR}^\up$ by De Morgan duality, so that $\rR$ is symmetric as required.
  
  \smallskip

  \item
  {\bf $(\JPow Z,\sigma) \in \SAM$ iff $\sigma = \rR^\up \circ \neg_Z$ for some symmetric relation $\rR \subseteq Z \times Z$}.
  
  \smallskip
  This follows by the previous example and $(\JPow Z,\sigma) \in \SAM \iff ((\JPow Z)^{\pOp},\sigma) \in \SAJ$. In more detail, the latter $\SAJ$-algebras necessarily take the form:
  \[
  \sigma = \qquad (\JPow Z)^{\pOp} \xto{\neg_Z} \JPow Z \xto{\sigma_0} (\JPow Z)^{\pOp} \xto{\neg_Z} \JPow Z
  \]
  where $(\JPow Z,\sigma_0)$ is a $\SAJ$-algebra. Then we immediately deduce that:
  \[
  \sigma
  = \neg_Z \circ (\neg_Z \circ \rR^\up) \circ \neg_Z
  = \rR^\up \circ \neg_Z
  \]
  where $\rR$ is symmetric, and every symmetric relation is permissible.
  
  % ---
  \item
  We explain how the above $\SAI$-algebras correspond to undirected graphs i.e.\ $(\JPow Z,\sigma) \in \SAI$ where $\sigma = \neg_Z \circ \theta^\up$ for some involutive function $\theta : Z \to Z$. The `equivalent' undirected graph is the relation $\Pirr\sigma \subseteq J(\JPow Z) \times J(\JPow Z)$. It is symmetric because:
  \[
  \begin{tabular}{lll}
  $\Pirr\sigma(\{z_1\},\{z_2\})$
  &
  $\iff \sigma(\{z_1\}) \nleq_{(\JPow Z)^{\pOp}} \{z_2\}$
  \\&
  $\iff \{z_2\} \nsubseteq \neg_Z \circ \theta^\up(\{z_1\})$
  \\&
  $\iff z_2 \in \theta[z_1]$
  \\&
  $\iff z_2 = \theta(z_1)$
  \end{tabular}
  \]
  and $\theta$ is an involutive function i.e.\ a functional relation which is bijective and symmetric. Here are three examples of these undirected graphs:
  \bigskip
  \[
  \begin{tabular}{lllllll}
  $\vcenter{\vbox{\xymatrix@=10pt{
  \{z_1\} \ar@{-}@(ul,ur) & \{z_2\} \ar@{-}@(ul,ur) & \{z_3\} \ar@{-}@(ul,ur)
  }}}$
  &&&
  $\vcenter{\vbox{\xymatrix@=10pt{
  \{z_1\} \ar@{-}@(ul,ur) & \{z_2\} \ar@{-}[r] & \{z_3\} \ar@{-}[l]
  }}}$
  &&&
  $\vcenter{\vbox{\xymatrix@=10pt{
  \{z_1\} \ar@{-}[r] & \{z_2\} \ar@{-}[l] & \{z_3\} \ar@{-}[r] & \{z_4\} \ar@{-}[l]
  }}}$
  \end{tabular}
  \]
  That is, they are disjoint unions of self-loops and single-edge-paths. Note that $(\JPow Z,\neg_Z)$ correponds to the graph consisting of $|Z|$ self-loops i.e.\ $\Delta_{J(\JPow Z)}$.

  \item
  We'll characterise the \emph{distributive} $\SAJ_f$, $\SAM_f$ and $\SAI_f$-algebras in Theorem \ref{thm:char_distrib_saj_sam_sai} further below. The respective undirected graphs $(V, \rE)$ are precisely those satisfying $\rE = \theta ; \leq_\pP$ for some finite poset $\pP$ and involutive order-isomorphism $\theta : \pP \to \pP^\pOp$. \endbox

  \end{enumerate}
\end{example}

\subsection{Adjointness and self-adjointness}

Before reinterpreting the above finite algebras, let us first clarify the notion of adjoint morphism. Recall that adjoint $\JSL_f$-morphisms $\OD_j f^{op} := f_*$ arise via the action of the self-duality functor $\OD_j : \JSL_f^{op} \to \JSL_f$ from Theorem \ref{thm:jsl_self_dual}. We'll also consider the infinite case. The notion of \emph{self-adjoint morphism} in $\JSL_f$ and $\BiCliq$ will also be defined and compared.

\begin{definition}[Adjoints of $\JSL$-morphisms between possibly infinite algebras]
\label{def:jsl_adjoints_infinite_case}
\item
Given a $\JSL$-morphism $f : \aQ \to \aR$ where $\aQ$ and $\aR$ define bounded lattices (so that $\aQ^{\pOp}$ and $\aR^{\pOp}$ are well-defined), then we say that \emph{$f$ has an adjoint} if there exists a $\JSL$-morphism $g : \aR^{\pOp} \to \aQ^{\pOp}$ such that:
\[
f(q) \leq_\aR r \iff q \leq_\aQ g(r)
\qquad
\text{for all $q \in Q$ and $r \in R$}.
\]
We also say that $f$ \emph{has adjoint} $g$. \endbox
\end{definition}

\begin{lemma}
\label{lem:jsl_adjoints_infinite_case}
If $f : \aQ \to \aR$ is a $\JSL$-morphism between bounded lattices then t.f.a.e.\
\begin{enumerate}[(a)]
\item
$f$ has an adjoint.
\item
The function $f_* := \lambda r \in R. \Lor_\aQ \{ q \in Q : f(q) \leq_\aR r \} : R \to Q$ is well-defined.

\item
$f$ has the unique adjoint $f_* : \aR^{\pOp} \to \aQ^{\pOp}$.

\end{enumerate}
\end{lemma}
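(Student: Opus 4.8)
The plan is to prove the cycle of implications $(a) \Rightarrow (b) \Rightarrow (c) \Rightarrow (a)$, where $(c) \Rightarrow (a)$ is immediate since $(c)$ asserts in particular that $f$ has \emph{some} adjoint. The real content lies in $(a) \Rightarrow (b)$ and $(b) \Rightarrow (c)$, and the whole argument rests on translating the adjointness biconditional into a statement about joins, exactly as in the finite case (Theorem \ref{thm:jsl_self_dual}), while being careful that in the infinite setting the defining join in the formula for $f_*$ need not exist a priori.

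For $(a) \Rightarrow (b)$, I would assume $f$ has an adjoint $g : \aR^{\pOp} \to \aQ^{\pOp}$ and show that the putative supremum $\Lor_\aQ \{ q \in Q : f(q) \leq_\aR r \}$ exists for each $r \in R$ and in fact equals $g(r)$. The key step is to show $g(r)$ is the least upper bound of $S_r := \{ q \in Q : f(q) \leq_\aR r \}$. That $g(r)$ is an upper bound follows because for each $q \in S_r$ we have $f(q) \leq_\aR r$, hence $q \leq_\aQ g(r)$ by the adjointness biconditional. To see it is the least upper bound, suppose $q_0$ is any upper bound of $S_r$; I want $g(r) \leq_\aQ q_0$. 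The natural move is to observe $g(r) \in S_r$ itself: applying the biconditional to $q := g(r)$, we get $g(r) \leq_\aQ g(r)$ iff $f(g(r)) \leq_\aR r$, and since the left side is a tautology we obtain $f(g(r)) \leq_\aR r$, so $g(r) \in S_r$. Being an element of $S_r$ that is also an upper bound of $S_r$, $g(r)$ is automatically the maximum, hence the join exists and equals $g(r)$. This simultaneously establishes $(b)$ and that the well-defined function $f_*$ coincides with $g$.

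For $(b) \Rightarrow (c)$, I would assume $f_*$ is well-defined as a function $R \to Q$ and verify it is a $\JSL$-morphism $\aR^{\pOp} \to \aQ^{\pOp}$ satisfying the adjoint relationship, then invoke uniqueness. The adjointness biconditional $f(q) \leq_\aR r \iff q \leq_\aQ f_*(r)$ is verified in both directions exactly as in the proof of Theorem \ref{thm:jsl_self_dual}: the forward direction is the definition of $f_*$ as a supremum, and the backward direction uses that $f$ preserves the relevant join, i.e.\ $f(f_*(r)) = f(\Lor_\aQ S_r) = \Lor_\aR f[S_r] \leq_\aR r$. Preservation of $\bot_{\aR^{\pOp}} = \top_\aR$ and of binary joins in $\aR^{\pOp}$ (i.e.\ binary meets in $\aR$) then follows by the same calculations as in Theorem \ref{thm:jsl_self_dual}, now legitimated by the standing assumption that the defining joins exist. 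Uniqueness of the adjoint is the routine order-theoretic fact that an adjoint, if it exists, is determined by the biconditional: any two adjoints $g_1, g_2$ satisfy $g_1(r) \leq_\aQ g_2(r)$ and vice versa by a symmetric argument, hence $g_1 = g_2$ by antisymmetry.

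The main obstacle is purely the infinitary subtlety: in the finite case every such join automatically exists by completeness, so the equivalence is trivial, but here the existence of $f_*(r)$ as a genuine supremum is precisely what is at stake. The delicate point is the backward direction of the adjointness relationship in $(b) \Rightarrow (c)$, where I need $f$ to preserve the join $\Lor_\aQ S_r$; since $\JSL$-morphisms between bounded lattices are only assumed to preserve \emph{finite} joins, I should confirm that the required preservation holds for this particular (possibly infinite) join --- which it does, because $f_*(r) = \Lor_\aQ S_r$ being well-defined as an element of $\aQ$ together with $f(q) \leq_\aR r$ for all $q \in S_r$ forces $f(\Lor_\aQ S_r)$ to be the least upper bound of $f[S_r]$ bounded above by $r$, using that $f$ is monotone and $r$ dominates every $f(q)$. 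I would make this explicit rather than appealing to join-preservation in general, thereby sidestepping the distinction between finite and arbitrary joins.
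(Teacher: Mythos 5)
Your cycle of implications mirrors the paper's proof, and your $(a)\To(b)$ argument is correct and essentially identical to it: observing that $g(r)$ lies in $S_r := \{q \in Q : f(q) \leq_\aR r\}$ and is an upper bound of it, hence is its maximum, is exactly why the join exists and equals $g(r)$.

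The problem is the step you yourself single out as delicate in $(b)\To(c)$, namely $f(\Lor_\aQ S_r) \leq_\aR r$. Your justification --- that monotonicity of $f$ together with $r$ dominating every $f(q)$ ``forces $f(\Lor_\aQ S_r)$ to be the least upper bound of $f[S_r]$ bounded above by $r$'' --- is an assertion, not a proof. Monotonicity only makes $f(\Lor_\aQ S_r)$ \emph{an} upper bound of $f[S_r]$; nothing forces it to be the least one, nor to lie below $r$. In fact the step can fail: let $\aQ$ be the bounded lattice whose elements are the finite subsets of $\Nat$ together with $\Nat$ itself, ordered by inclusion, let $\aR = \two$, and let $f(S) = 0$ iff $S$ is finite. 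Then $f$ preserves $\bot$ and binary joins, and $f_*(0) = \Lor_\aQ \{S : S \text{ finite}\} = \Nat$ exists, so $(b)$ holds; but $f(f_*(0)) = f(\Nat) = 1 \nleq 0$, and $f$ has no adjoint at all (an adjoint would need $g(0)$ with $S \subseteq g(0) \iff S$ finite, which is impossible whether $g(0)$ is finite or equal to $\Nat$). So $(b)\To(a)$ genuinely fails for morphisms that only preserve finite joins. To be fair, the paper's own proof invokes ``join-preservation'' at exactly this point without comment, so you have inherited its gap rather than introduced one; but since you explicitly promised to ``make this explicit rather than appealing to join-preservation in general,'' you owe a real argument here, and the only honest repair is to strengthen the hypothesis to ``$f$ preserves all joins that exist in $\aQ$'' (as the adjoint functor theorem requires), under which your calculation $f(\Lor_\aQ S_r) = \Lor_\aR f[S_r] \leq_\aR r$ goes through verbatim.
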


\begin{proof}
\item
\begin{itemize}
\item
$(a \To b)$: Suppose $f$ has an adjoint i.e.\ we have a $\JSL$-morphism $g : \aR^{\pOp} \to \aQ^{\pOp}$ satisfying the adjoint relationship between the two orderings. Then for all $r \in R$ we have:
\[
g(r) 
= \Lor_\aR \{ q \in Q : q \leq_\aQ g(r) \}
= \Lor_\aR \{ q \in Q : f(q) \leq_\aR r \}
= f_*(r)
\]
using the adjoint relationship, so $f_* = g$ is a well-defined function.

\item
$(b \To a)$: Given $f_*$ is a well-defined function we first  establish  $\forall q \in Q. r \in R.\,(f(q) \leq_\aR r \iff q \leq_\aQ f_*(r))$. The implication $(\To)$ is immediate. Conversely if $q \leq_\aQ f_*(r)$ then by monotonicity and join-preservation:
\[
f(q) \leq_\aR f(f_*(r)) 
= f(\Lor_\aQ \{ q_0 \in Q : f(q_0) \leq_\aR r \})
= \Lor_\aQ \{ f(q_0) : q_0 \in Q, f(q_0) \leq_\aR r  \}
\leq_\aR r
\]
as required. To see that $f_*$ defines a $\JSL$-morphism of type $\aR^{\pOp} \to \aQ^{\pOp}$, observe $f_*(\top_\aR) = \Lor_\aQ Q = \top_\aQ$ and:
\[
\begin{tabular}{lll}
$f_*(r_1 \land_\aR r_2)$
&
$= \Lor_\aQ \{ q \in Q : f(q) \leq_\aR r_1 \land_\aR r_2 \}$
\\&
$= \Lor_\aQ \{ q \in Q : f(q) \leq_\aR r_1 \text{ and } f(q) \leq_\aR r_2 \}$
\\&
$= \Lor_\aQ \{ q \in Q : q \leq_\aQ f_*(r_1) \text{ and } q \leq_\aQ f_*(r_2) \}$
& by adjoint relationship
\\&
$= f_*(r_1) \land_\aQ f_*(r_2)$.
\end{tabular}
\]

\item
$(a \iff c)$: Inspecting the proof of $(a \To b)$ we see that if $f$ has adjoint $g$ then $g = f_*$ and hence is unique. The converse is immediate.
 \end{itemize}
\end{proof}

\smallskip

\begin{definition}[Self-adjoint morphisms in $\JSL_f$ and $\BiCliq$]
\item
A $\JSL_f$-morphism $f : \aQ \to \aR$ is \emph{self-adjoint} if $f = f_*$, so we must have $\aR = \aQ^{\pOp}$. Likewise, a $\BiCliq$-morphism $\rR : \rG \to \rH$ is \emph{self-adjoint} if $\rR\spcheck = \rR$, which means precisely that $\rR$ is a symmetric relation, and also $\rH = \breve{\rG}$. \endbox
\end{definition}

\smallskip
These two concepts are two sides of the same coin.
\smallskip

\begin{lemma}[Self-adjointness in $\BiCliq$ and $\JSL_f$]
\label{lem:self_adjointness_bicliq_vs_jsl}
\item
\begin{enumerate}
\item
A $\BiCliq$-morphism $\rR : \rG \to \rH$ is self-adjoint iff $\rH = \breve{\rG}$ and $\rR = \breve{\rR}$.

\item
Given any $\BiCliq$-morphism $\rR : \rG \to \breve{\rG}$ t.f.a.e.\
\begin{enumerate}[a.]
\item
$\rR : \rG \to \breve{\rG}$ is self-adjoint.
\item
$\rR_- = \rR_+$.
\item
$\partial_\rG^{\bf-1} \circ \Open\rR$ is a self-adjoint $\JSL_f$-morphism.
\item
$\Open\rR \circ \partial_{\breve{\rG}}$ is a self-adjoint $\JSL_f$-morphism.
\end{enumerate}

\item
Given any $\JSL_f$-morphism $f : \aQ \to \aQ^{\pOp}$ t.f.a.e.\
\begin{enumerate}
\item
$f$ is self-adjoint.
\item
$\Nleq f$ is a self-adjoint $\BiCliq$-morphism.
\item
$\Pirr f$ is a self-adjoint $\BiCliq$-morphism.
\item
$(\Pirr f)_- 
= (\Pirr f)_+
= \{ (j,m) \in J(\aQ) \times M(\aQ) : f(j) \leq_\aQ m \}$.
\end{enumerate}

\end{enumerate}
\end{lemma}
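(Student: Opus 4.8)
The plan is to prove the three equivalences in part (3) largely by transporting part (2) across the equivalence of categories, together with the two self-duality functors. First I would recall the basic toolkit: $\Nleq : \JSL_f \to \BiCliq$ and $\Open$ form an equivalence (Theorem \ref{thm:jsl_bicliq_equiv_without_irr}), $\Pirr$ and $\Open$ form an equivalence (Theorem \ref{thm:bicliq_jirr_equivalent}), and crucially both equivalence functors interact correctly with the respective self-dualities. For $\Pirr$ this is the content of Theorem \ref{thm:partial_nat_iso}, which gives $(\Open\rR)_* = \partial_\rG^{\bf-1} \circ \Open\breve{\rR} \circ \partial_\rH$, and dually one expects $\Pirr(f_*) = (\Pirr f)\spcheck = \breve{(\Pirr f)}$; the latter identity is in fact used repeatedly in the excerpt (e.g.\ in the proof of Theorem \ref{thm:tight_self_dual} and Lemma \ref{lem:nleq_e_well_def}), so I would cite it as known. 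The analogous statement $\Nleq(f_*) = (\Nleq f)\spcheck$ follows since $\rE : \Pirr \To \Nleq$ is a natural isomorphism commuting with the self-dualities.

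The main structural step is the equivalence $(a \iff b)$ and $(a \iff c)$ inside part (3). For $(a \iff c)$: $f$ is self-adjoint means $f = f_*$, hence $\Pirr f = \Pirr(f_*) = (\Pirr f)\spcheck$, which is precisely the definition of $\Pirr f$ being a self-adjoint $\BiCliq$-morphism. Conversely, since $\Pirr$ is faithful (indeed an equivalence), $\Pirr f = (\Pirr f)\spcheck = \Pirr(f_*)$ forces $f = f_*$. The argument for $(a \iff b)$ is identical with $\Nleq$ in place of $\Pirr$, using $\Nleq(f_*) = (\Nleq f)\spcheck$ and the faithfulness of $\Nleq$. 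For part (1), I would simply unwind Definition \ref{def:bicliq_self_duality}: $\rR\spcheck = \breve{\rR}$ has type $\breve{\rH} \to \breve{\rG}$, so $\rR\spcheck = \rR$ demands both the typing constraint $\breve{\rH} = \rG$, i.e.\ $\rH = \breve{\rG}$, and the relational equality $\breve{\rR} = \rR$.

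For part (2), the equivalences $(a) \iff (b) \iff (c) \iff (d)$ I would prove by chaining: $(a \iff b)$ is part (1) specialised to the codomain $\breve{\rG}$; $(b \iff c)$ and $(b \iff d)$ come from Theorem \ref{thm:partial_nat_iso}, since for $\rR : \rG \to \breve{\rG}$ the morphism $\partial_\rG^{\bf-1} \circ \Open\rR$ has type $\Open\rG \to (\Open\rG)^{\pOp}$ (recalling $\partial_{\breve{\rG}} : (\Open\breve{\rG})^{\pOp} \to \Open\rG$ inverts appropriately), and unwinding $(\partial_\rG^{\bf-1} \circ \Open\rR)_* = (\Open\rR)_* \circ \partial_{\breve{\rG}} = \partial_\rG^{\bf-1} \circ \Open\breve{\rR}$ shows self-adjointness of this morphism is equivalent to $\breve{\rR} = \rR$; the condition $(b)$ that $\rR_- = \rR_+$ follows from $\rR = \breve{\rR}$ via the identity $(\breve{\rR}_-, \breve{\rR}_+) = (\rR_+, \rR_-)$ established in Theorem \ref{thm:bicliq_self_dual}. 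Finally, the explicit formula in $3(d)$ for the components of $\Pirr f$ is read off directly from Definition \ref{def:open_pirr}, where $(\Pirr f)_- = \{(j_1,j_2) : j_2 \leq_\aR f(j_1)\}$ and $(\Pirr f)_+ = \{(m_1,m_2) : f_*(m_1) \leq_\aQ m_2\}$; under self-adjointness $f = f_*$ these collapse to the common relation $\{(j,m) : f(j) \leq_\aQ m\} = \overline{\Pirr f}$.

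The step I expect to be the main obstacle is verifying the typing and direction conventions in part (2), especially $(b \iff c)$ and $(b \iff d)$: one must be careful that $\partial_\rG^{\bf-1} \circ \Open\rR$ and $\Open\rR \circ \partial_{\breve{\rG}}$ really are endomorphisms of $\Open\rG$ (respectively of the right object up to order-opposite), and that the self-adjointness condition $g = g_*$ transports correctly through the natural isomorphisms $\partial$ using Lemma \ref{lem:open_dual_iso_adjoints} ($(\partial_\rG)_* = \partial_{\breve{\rG}}$). The bookkeeping of which side carries the $(-)^{\pOp}$ and which $\partial$ inverts which direction is where a sign error would most easily creep in; everything else is a direct application of faithfulness of the equivalence functors plus the already-established compatibility of $\Open$, $\Pirr$, $\Nleq$ with the self-dualities.
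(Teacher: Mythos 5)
Your overall strategy is sound, and for parts (1), (2)(a$\iff$c), (2)(a$\iff$d) and the formula in (3)(d) it essentially coincides with the paper's proof. For part (3) you take a genuinely different route: the paper proves (a$\iff$b) by a direct chain of order-theoretic equivalences (showing $\Nleq f$ is symmetric iff $q_2\leq_\aQ f(q_1)\iff q_1\leq_\aQ f(q_2)$, then invoking Lemma \ref{lem:jsl_adjoints_infinite_case}) and obtains (b$\iff$c) from the natural isomorphism $\rE:\Pirr\To\Nleq$, whereas you transport self-adjointness through the identity $\Pirr(f_*)=(\Pirr f)\spcheck$ (which the paper does assert, just before Theorem \ref{thm:partial_nat_iso}) together with faithfulness. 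Your version is more uniform and arguably cleaner; the caveats are that the analogous identity $\Nleq(f_*)=(\Nleq f)\spcheck$ is nowhere stated in the paper, so you should either verify it directly (one line from $r\leq_\aQ f(q)\iff q\leq_\aQ f_*(r)$) or deduce (3)(b) from (3)(c) via $\rE$ as the paper does, rather than appealing to ``$\rE$ commutes with the self-dualities''; and for (d)$\To$(c) you should say explicitly that $(\Pirr f)_-=(\Pirr f)_+$ yields self-adjointness of $\Pirr f$ via (2)(a$\iff$b).

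The one genuine gap is in (2)(a$\iff$b). Part (1) specialised to codomain $\breve{\rG}$ gives $a\iff(\rR=\breve{\rR})$, and you correctly deduce $(\rR=\breve{\rR})\To(\rR_-=\rR_+)$ from $(\breve{\rR}_-,\breve{\rR}_+)=(\rR_+,\rR_-)$ --- but that is only the implication $(a)\To(b)$. You never argue $(b)\To(a)$, so as written condition (b) is not tied into the chain of equivalences. The fix is one line and is what the paper does: the associated components are witnesses, so $\rR_-;\breve{\rG}=\rR=\rG;\rR_+\spbreve$; taking converses gives $\breve{\rR}=\rR_+;\breve{\rG}$, hence $\rR_-=\rR_+$ forces $\breve{\rR}=\rR_-;\breve{\rG}=\rR$. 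A last cosmetic point: in your computation for (2)(b$\iff$c) the intermediate term should read $(\Open\rR)_*\circ\partial_{\breve{\rG}}^{\bf-1}$ rather than $(\Open\rR)_*\circ\partial_{\breve{\rG}}$ (the latter does not typecheck), though your final expression $\partial_\rG^{\bf-1}\circ\Open\breve{\rR}$ is correct.
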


\begin{proof}
\item
\begin{enumerate}
\item
A $\BiCliq$-morphism $\rR : \rG \to \rH$ is self-adjoint if $\rR = \rR\spcheck$ recalling that $(-)\spcheck : \BiCliq^{op} \to \BiCliq$ is the self-duality functor. Since $\rR\spcheck = \breve{\rR} : \breve{\rH} \to \breve{\rG}$, this holds iff $\rR = \breve{\rR}$ and $\rG = \breve{\rH}$, in which case $\breve{\rG} = \rH$ follows.

\item
\begin{itemize}
\item
$(a \iff b)$: If $\rR$ is self-adjoint then recall that $\rR_- = (\rR\spcheck)_+$ holds generally. Conversely, recall the associated components always satisfy $\rR_- ; \breve{\rG} = \rR = \rG ; \rR_+\spbreve$, so that if $\rR_- = \rR_+$ we deduce $\rR = \breve{\rR}$.

\item
$(a \iff c)$: Given any $\BiCliq$-morphism $\rR : \rG \to \breve{\rG}$ which is self-adjoint i.e.\ $\rR = \breve{\rR}$, then:
\[
\begin{tabular}{lll}
$(\partial_\rG^{\bf-1} \circ \Open\rR)_*$
&
$= (\Open\rR)_* \circ (\partial_\rG^{\bf-1})_*$
\\&
$= (\Open\rR)_* \circ \partial_{\breve{\rG}}^{\bf-1}$
& by Lemma \ref{lem:open_dual_iso_adjoints}.1
\\&
$= \partial_{\rG}^{\bf-1} \circ \Open \breve{\rR} \circ \partial_{\breve{\rG}} \circ \partial_{\breve{\rG}}^{\bf-1}$
& by Lemma \ref{lem:partial_nat_iso}
\\&
$= \partial_{\rG}^{\bf-1} \circ \Open \breve{\rR}$
\\&
$= \partial_{\rG}^{\bf-1} \circ \Open\rR$
& since $\breve{\rR} = \rR$.
\end{tabular}
\]
Conversely, if $\partial_\rG^{\bf-1} \circ \Open\rR$ is self-adjoint we can reuse the above calculation to deduce that:
\[
\partial_\rG^{\bf-1} \circ \Open\rR = \partial_{\rG}^{\bf-1} \circ \Open \rR\spcheck.
\]
Cancelling the isomorphism yields $\Open\rR = \Open\rR\spcheck$ and hence $\rR = \rR\spcheck$ by faithfulness.

\item
$(a \iff d)$: Again suppose that $\rR = \breve{\rR}$ and calculate:
\[
\begin{tabular}{lll}
$(\Open\rR \circ \partial_{\breve{\rG}})_*$
&
$= (\partial_{\breve{\rG}})_* \circ (\Open\rR)_*$
\\&
$= \partial_{\rG} \circ (\Open\rR)_*$
& by Lemma \ref{lem:open_dual_iso_adjoints}.1
\\&
$=  \partial_{\rG} \circ \partial_\rG^{\bf-1} \circ \Open \breve{\rR} \circ \partial_{\breve{\rG}}$
& by Lemma \ref{lem:partial_nat_iso}
\\&
$= \Open \breve{\rR} \circ \partial_{\breve{\rG}}$
\\&
$= \Open \rR \circ \partial_{\breve{\rG}}$
& since $\breve{\rR} = \rR$
\end{tabular}
\]
Conversely we deduce as in the previous item that $\Open\rR = \Open\rR\spcheck$ so that $\rR = \rR\spcheck$ by faithfulness.

\end{itemize}

\item
\begin{itemize}
\item
$(a \iff b)$: Given any self-adjoint $\JSL_f$-morphism $f : \aQ \to \aQ^{\pOp}$, first observe the typing $\Nleq f : \Nleq\aQ \to \Nleq\aQ^{\pOp} = (\Nleq\aQ)\spcheck$. It is a symmetric relation because:
\[
\begin{tabular}{lll}
$\Nleq f(q_1,q_2)$
&
$\iff f(q_1) \nleq_{\aQ^{\pOp}} q_2$
& by definition
\\&
$\iff q_1 \nleq_{\aQ} f(q_2)$
& since $f = f_*$
\\&
$\iff f(q_2) \nleq_{\aQ^{\pOp}} q_1$
\\&
$\iff \Nleq f(q_2,q_1)$.
\end{tabular}
\]
Conversely, suppose that $\Nleq f = (\Nleq f)\spcheck$ so that:
\[
q_2 \leq_\aQ f_1(q_1)
\iff
f_1(q_1) \leq_{\aQ^{\pOp}} q_2
\iff
\overline{\Nleq f_1}(q_1,q_2)
\iff
\overline{\Nleq f_2}(q_2,q_1)
\iff
q_1 \leq_\aQ f_2(q_2).
\]
Then it follows that $f_* = f$ e.g.\ by Lemma \ref{lem:jsl_adjoints_infinite_case}.

\item
$(b \iff c)$: Follows because we have the natural isomorphism $\rE : \Pirr \To \Nleq$, see Lemma \ref{lem:pirr_to_nleq_iso}.2.

\item
$(c \iff d)$: Follows by the equivalence of (2).a and (2).b, observing that $(\Pirr f)_-(j,m) \iff m \leq_{\aQ^{\pOp}} f(j)$ holds generally, see Definition \ref{def:open_pirr}.

\end{itemize}
\end{enumerate}
\end{proof}

\begin{note}[Explicit description of the associated component of a self-adjoint morphism]
For any self-adjoint $\BiCliq$-morphism $\rE : \rG \to \breve{\rG}$ we have $\rE_- = \rE_+$ by the Lemma above. In Lemma \ref{lem:self_adjoint_component_description} further below we'll prove a more explicit description i.e.\ if $\rE : \rG \to \breve{\rG}$ is self-adjoint then $\rE_- = \rE_+ = \overline{\overline{\rE} ; \rG} \; \subseteq \; \rG_s \times \rG_t$. \endbox
\end{note}

\subsection{Interpreting the finite algebras of the three varieties}

We now reinterpret the finite algebras of $\SAJ$ and $\SAM$, and also all $\SAI$ algebras.

\smallskip

\begin{lemma}[Interpretation of finite $\SAJ$ and $\SAM$ algebras]
\label{lem:interpret_finite_saj_sam}
\item
Fix any finite join-semilattice $\aQ \in \JSL_f$ and endofunction $\sigma : Q \to Q$.
\begin{enumerate}
\item
$(\aQ,\sigma) \in \SAJ_f$ iff $\sigma$ defines a self-adjoint $\JSL_f$-morphism of type $\aQ \to \aQ^{\pOp}$, or equivalently:
\[
q_2 \leq_\aQ \sigma(q_1) \iff q_1 \leq_\aQ \sigma(q_2)
\qquad
\text{for all $q_1,q_2 \in Q$}.
\]

\item
$(\aQ,\sigma) \in \SAM_f$ iff $\sigma$ defines a self-adjoint $\JSL_f$-morphism of type $\aQ^{\pOp} \to \aQ$, or equivalently:
\[
\sigma(q_1) \leq_\aQ q_2 \iff \sigma(q_2) \leq_\aQ q_1
\qquad
\text{for all $q_1,q_2 \in Q$}.
\]
\end{enumerate}
\end{lemma}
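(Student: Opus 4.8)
The plan is to prove both statements by directly connecting the equational axioms $\eqnOR$, $\eqnEx$ (resp.\ $\eqnCEx$) to the characterisation of adjoints in $\JSL_f$. I would handle (1) in full and then obtain (2) by order-duality, since Lemma \ref{lem:sa_vars_basic_relationship}.1 tells us that $(\aQ,\sigma) \in \SAJ_f$ iff $(\aQ^{\pOp},\sigma) \in \SAM_f$, and a self-adjoint morphism $\aQ \to \aQ^{\pOp}$ becomes a self-adjoint morphism $\aQ^{\pOp} \to (\aQ^{\pOp})^{\pOp} = \aQ$ upon passing to the order-dual.

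For statement (1), I would first observe that the two claimed conditions are genuinely equivalent. The condition that $\sigma$ be a self-adjoint $\JSL_f$-morphism $\aQ \to \aQ^{\pOp}$ means $\sigma = \sigma_*$, which by Lemma \ref{lem:adj_obs}.1 unwinds to the adjoint relationship $\sigma(q_1) \leq_{\aQ^{\pOp}} q_2 \iff q_1 \leq_\aQ \sigma(q_2)$; rewriting $\leq_{\aQ^{\pOp}}$ as $\geq_\aQ$ gives exactly $q_2 \leq_\aQ \sigma(q_1) \iff q_1 \leq_\aQ \sigma(q_2)$, which is visibly symmetric in $q_1,q_2$, confirming the stated symmetric formulation.

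The substantive content is the biconditional between membership in $\SAJ_f$ and self-adjointness. For the forward direction, I would assume $(\aQ,\sigma) \in \SAJ_f$. By Lemma \ref{lem:sa_vars_basic_relationship}.5, $\sigma \circ \sigma$ is a closure operator, but more usefully $\eqnOR$ read as a rule says $\sigma$ is order-reversing, i.e.\ monotone of type $\aQ \to \aQ^{\pOp}$, and I would check it preserves $\bot_\aQ$ (sent to $\top_\aQ = \bot_{\aQ^{\pOp}}$, using $x \preccurlyeq \sigma(\bot)$ from Lemma \ref{lem:sa_vars_basic_relationship}.4) and binary joins (a join in $\aQ$ must be sent to a meet in $\aQ$, i.e.\ a join in $\aQ^{\pOp}$; the inequality $\sigma(x \lor y) \leq \sigma(x) \land \sigma(y)$ is immediate from $\eqnOR$, and the reverse needs the adjointness). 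The cleanest route is: establish the adjoint relationship $q_2 \leq_\aQ \sigma(q_1) \iff q_1 \leq_\aQ \sigma(q_2)$ directly from $\eqnOR$ and $\eqnEx$. Indeed, if $q_2 \leq_\aQ \sigma(q_1)$ then applying order-reversal $\eqnOR$ yields $\sigma\sigma(q_1) \leq_\aQ \sigma(q_2)$, and composing with $q_1 \leq_\aQ \sigma\sigma(q_1)$ from $\eqnEx$ gives $q_1 \leq_\aQ \sigma(q_2)$; the converse is symmetric. This symmetric relationship is precisely what Lemma \ref{lem:jsl_adjoints_infinite_case} (or directly the finite self-duality) needs to conclude $\sigma$ is a well-defined self-adjoint $\JSL_f$-morphism.

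For the converse, I would assume $\sigma$ is a self-adjoint $\JSL_f$-morphism, hence satisfies the symmetric adjoint relationship, and deduce $\eqnOR$ and $\eqnEx$. $\eqnEx$ is immediate by instantiating $q_2 := q_1$ and $q_1 := \sigma(q_1)$ in the relationship, giving $\sigma(q_1) \leq_\aQ \sigma(q_1) \iff q_1 \leq_\aQ \sigma\sigma(q_1)$, whose left side is reflexivity. For $\eqnOR$, monotonicity of $\sigma$ as a map $\aQ \to \aQ^{\pOp}$ is exactly order-reversal, which a $\JSL_f$-morphism automatically has. I expect the main obstacle to be purely bookkeeping: verifying that the self-adjoint relationship, which a priori only constrains the order, genuinely forces $\sigma$ to preserve the join-semilattice structure $(\bot,\lor)$ of $\aQ$ into the dual structure of $\aQ^{\pOp}$ — but this is guaranteed the moment we know $\sigma$ has an adjoint, since by Lemma \ref{lem:jsl_adjoints_infinite_case} (or the finite self-duality of Theorem \ref{thm:jsl_self_dual}) any function admitting an adjoint in this sense is automatically a $\JSL_f$-morphism. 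Thus the entire proof reduces to the short two-line derivation of the symmetric adjoint relationship from $\eqnOR$ plus $\eqnEx$ and back, and statement (2) then follows mechanically by the order-dual argument.
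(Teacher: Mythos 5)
Your proposal is correct and takes essentially the same route as the paper's proof: derive the symmetric adjoint relationship from $\eqnOR$ and $\eqnEx$ by the two-step composition $q_2 \leq_\aQ \sigma(q_1) \Rightarrow \sigma\sigma(q_1) \leq_\aQ \sigma(q_2) \Rightarrow q_1 \leq_\aQ \sigma(q_2)$, use the adjoint-pair machinery to conclude join-preservation, read the two axioms back off the relationship for the converse, and deduce (2) by order-duality via Lemma \ref{lem:sa_vars_basic_relationship}.1. The only nit is a citation: Lemma \ref{lem:jsl_adjoints_infinite_case} presupposes its input is already a $\JSL$-morphism, so the fact you actually need (a monotone function satisfying the adjoint relationship automatically preserves all existing joins) is Lemma \ref{lem:adjoint_cl_in}.2, which is precisely what the paper invokes.
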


\begin{proof}
\item
\begin{enumerate}
\item
Given a self-adjoint $\JSL_f$-morphism $\sigma : \aQ \to \aQ^{\pOp}$ then it certainly defines a monotone morphism $(Q,\leq_\aQ) \to (Q,\geq_\aQ)$, hence $\eqnOR$ holds. The self-adjoint relationship informs us that:
\[
q_2 \leq_\aQ \sigma(q_1)
\iff \sigma(q_1) \leq_{\aQ^{\pOp}} q_2
\iff q_1 \leq_\aQ \sigma(q_2).
\]
Consequently $\eqnEx$ holds, because for every $x \in Q$ we have $\sigma(x) \leq_\aQ \sigma(x) \iff x \leq_\aQ \sigma(\sigma(x))$. Conversely, suppose we have a function $\sigma : Q \to Q$ satisfying $\eqnOR$ and $\eqnEx$. Then $\forall q_1, q_2 \in Q$ we have:
\[
q_2 \leq_\aQ \sigma(q_1)
\implies
\sigma(\sigma(q_1)) \leq_\aQ \sigma(q_2)
\implies
q_1 \leq_\aQ \sigma(q_2)
\]
using the order-reversing monotonicity of $\sigma$ and also $q_1 \leq_\aQ \sigma(\sigma(q_1))$. Then by symmetry we have:
\[
\sigma(q_1) \leq_{\aQ^{\pOp}} q_2 \iff q_1 \leq_\aQ \sigma(q_2)
\qquad
\text{for all $q_1, q_2 \in Q$}.
\]
By Lemma \ref{lem:adjoint_cl_in}.2 and the fact that $\aQ$ has all finite joins, we deduce that $\sigma$ defines a $\JSL_f$-morphism of type $\aQ \to \aQ^{\pOp}$. Finally, the above equivalence informs us that $\sigma$ is self-adjoint i.e.\ $\sigma = \sigma_*$.

\item
We have $(\aQ,\sigma) \in \SAM_f$ if and only if $(\aQ^{\pOp},\sigma) \in \SAJ_f$ by Lemma \ref{lem:sa_vars_basic_relationship}, so apply (1).

\end{enumerate}
\end{proof}

\begin{note}[Interpretation of infinite $\SAJ$ and $\SAM$ algebras]
\item
The above interpretation does not extend to infinite algebras e.g.\ because $\aQ^{\pOp}$ needn't be a well-defined join-semilattice in the infinite case.
\begin{enumerate}
\item
Given any join-semilattice $\aQ \in \JSL$ then we have the $\SAM$-algebra $(\aQ,\sigma)$ where $\sigma(q) := \bot_\aQ$ for all $q \in Q$. Indeed, $\eqnOR$ holds trivially for this constant map, as does $\eqnCEx$ because $\sigma(\sigma(x)) = \bot_\aQ \leq_\aQ x$. Thus there are $\SAM$-algebras whose join-semilattice has no top and/or fails to have binary meets, see Definition \ref{def:std_order_theory}.12.d.

\item
Concerning $\SAJ$-algebras $(\aQ,\sigma)$, it so happens that $\aQ$ always has the top element $\sigma(\bot_\aQ)$ because:
\[
\bot_\aQ \leq_\aQ \sigma(q) \iff q \leq_\aQ \sigma(\bot_\aQ)
\]
via the adjoint relationship. However, $\aQ$ needn't have binary meets e.g.\ let $\aQ$ be the join-semilattice depicted in Definition \ref{def:std_order_theory}.12.d and define $\sigma(q) := \top$ for all $q \in Q$. \endbox

\end{enumerate}
\end{note}

\smallskip

\begin{lemma}[Interpretation of arbitrary $\SAI$-algebras]
\label{lem:interpret_infinite_sai}
\item
\begin{enumerate}
\item
If $(\aQ,\sigma) \in \SAI$ then $\aQ$ is a bounded lattice and $\sigma$ defines a self-adjoint $\JSL$-isomorphism $\aQ \to \aQ^{\pOp}$ (hence bounded lattice isomorphism) and also its inverse. Furthermore, $\aQ$ has the following meet structure:
\[
\top_\aQ = \sigma(\bot_\aQ)
\qquad
q_1 \land_\aQ q_2 = \sigma(\sigma(q_1) \lor_\aQ \sigma(q_2)).
\]

\item
Given any $\aQ \in \JSL$ and function $\sigma : Q \to Q$ then t.f.a.e.\

\begin{enumerate}
\item
$(\aQ,\sigma) \in \SAI$.
\item
$\sigma$ defines a self-adjoint $\JSL$-isomorphism $\aQ \to \aQ^{\pOp}$.
\item
$\sigma$ defines a self-adjoint $\JSL$-morphism of type $\aQ \to \aQ^{\pOp}$ and $\aQ^{\pOp} \to \aQ$.
\item
$\sigma$ is involutive and defines a $\JSL$-morphism of type $\aQ \to \aQ^{\pOp}$.
\end{enumerate}

Furthermore, in (b) and (d) one may replace $\aQ \to \aQ^{\pOp}$ with $\aQ^{\pOp} \to \aQ$.

\item
Every $\SAI$-morphism defines a bounded lattice morphism.

\end{enumerate}
\end{lemma}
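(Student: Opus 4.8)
The plan is to treat the three parts in order, since part 1 supplies the lattice structure and the meet formula on which parts 2 and 3 rest. For part 1, I would first extract from the two defining equations their purely order-theoretic content: $\eqnOR$ says exactly that $\sigma$ is order-reversing on $(Q,\leq_\aQ)$, while $\eqnInv$ says $\sigma$ is involutive, so together $\sigma$ is an order anti-automorphism of the poset $(Q,\leq_\aQ)$. The key observation is that an anti-automorphism carries every existing finite supremum to a finite infimum; since $\aQ \in \JSL$ already has all finite joins, I would conclude that $\aQ$ has all finite meets, given explicitly by the empty and binary cases of $\Land_\aQ X = \sigma(\Lor_\aQ \sigma[X])$, that is $\top_\aQ = \sigma(\bot_\aQ)$ and $q_1 \land_\aQ q_2 = \sigma(\sigma(q_1) \lor_\aQ \sigma(q_2))$. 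This makes $\aQ$ a bounded lattice, so $\aQ^{\pOp}$ is a well-defined join-semilattice, and $\sigma : \aQ \to \aQ^{\pOp}$ is an order-isomorphism, hence a bounded-lattice isomorphism equal to its own inverse. Self-adjointness I would verify directly against Lemma \ref{lem:jsl_adjoints_infinite_case}: computing $\sigma_*(r) = \Lor_\aQ \{ q : \sigma(q) \leq_{\aQ^{\pOp}} r \} = \Lor_\aQ \{ q : r \leq_\aQ \sigma(q) \}$ and using the anti-automorphism property to rewrite $\{ q : r \leq_\aQ \sigma(q) \} = \down_\aQ \sigma(r)$ gives $\sigma_*(r) = \sigma(r)$.

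For part 2, I would organise the four conditions around (a). The implication (a)$\Rightarrow$(b) is exactly part 1, and (a)$\Rightarrow$(d) is immediate since $\eqnInv$ is involutivity and part 1 supplies the morphism $\aQ \to \aQ^{\pOp}$. The converse (d)$\Rightarrow$(a) is routine: a $\JSL$-morphism $\aQ \to \aQ^{\pOp}$ is order-reversing, yielding $\eqnOR$, while involutivity is $\eqnInv$. For (b)$\Rightarrow$(a) I would exploit self-adjointness to recover involutivity: the relation $\sigma(q) \leq_{\aQ^{\pOp}} r \iff q \leq_\aQ \sigma(r)$ with $r := \sigma(p)$ reduces, via the anti-automorphism, to $(q \leq_\aQ p) \iff (q \leq_\aQ \sigma\sigma(p))$ for all $q$, forcing $p = \sigma\sigma(p)$ by antisymmetry. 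Finally I would handle (c) using the $\SAJ/\SAM$ dictionary: by Lemma \ref{lem:sa_vars_basic_relationship}, $(\aQ,\sigma) \in \SAI$ iff $(\aQ^{\pOp},\sigma) \in \SAI$, so (a)$\Rightarrow$(c) follows by applying part 1 to both $(\aQ,\sigma)$ and $(\aQ^{\pOp},\sigma)$; conversely (c)$\Rightarrow$(a) follows because self-adjointness of $\sigma : \aQ \to \aQ^{\pOp}$ makes $\sigma \circ \sigma$ a closure operator and self-adjointness of $\sigma : \aQ^{\pOp} \to \aQ$ makes it an interior operator on $(Q,\leq_\aQ)$ (Lemma \ref{lem:adjoint_cl_in}.1), and a map that is both extensive and co-extensive is the identity, giving $\eqnInv$ (while $\eqnOR$ is again order-reversal). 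The permitted replacement of $\aQ \to \aQ^{\pOp}$ by $\aQ^{\pOp} \to \aQ$ in (b) and (d) is once more the $(\aQ^{\pOp},\sigma) \in \SAI$ symmetry.

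Part 3 is then a short computation built on part 1. An $\SAI$-morphism $f : (\aQ_1,\sigma_1) \to (\aQ_2,\sigma_2)$ is by definition a $\JSL$-morphism with $f \circ \sigma_1 = \sigma_2 \circ f$, so it already preserves $\bot$ and $\lor$. Preservation of the top follows from $\top = \sigma(\bot)$, since $f(\top_{\aQ_1}) = f(\sigma_1(\bot_{\aQ_1})) = \sigma_2(f(\bot_{\aQ_1})) = \sigma_2(\bot_{\aQ_2}) = \top_{\aQ_2}$. Preservation of meets follows from the De Morgan formula of part 1: applying $f \circ \sigma_1 = \sigma_2 \circ f$ and join-preservation to $q_1 \land_{\aQ_1} q_2 = \sigma_1(\sigma_1(q_1) \lor_{\aQ_1} \sigma_1(q_2))$ yields $f(q_1 \land_{\aQ_1} q_2) = \sigma_2(\sigma_2(f(q_1)) \lor_{\aQ_2} \sigma_2(f(q_2))) = f(q_1) \land_{\aQ_2} f(q_2)$. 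Hence $f$ is a bounded-lattice morphism.

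The main obstacle I anticipate is the infinite case in parts 1 and 2: the finite interpretation of Lemma \ref{lem:interpret_finite_saj_sam} is unavailable (indeed its accompanying note records that it fails for infinite algebras), so one cannot simply invoke a finite self-duality, nor the finite identity $f_* = (f^{\bf-1})^{\pOp}$ of Lemma \ref{lem:adj_obs}.2. Everything must instead be derived directly from the order anti-automorphism together with the infinite adjoint characterisation of Lemma \ref{lem:jsl_adjoints_infinite_case}, and one must be careful to establish that $\aQ$ is a bounded lattice \emph{before} treating $\aQ^{\pOp}$ as a join-semilattice, so that the logical order of the argument is sound.
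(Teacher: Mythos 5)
Your proposal is correct and follows essentially the same route as the paper: part 1 via the order anti-isomorphism carrying joins to meets (the paper phrases this through the adjoint relation $(\star)$ and Lemma \ref{lem:adjoint_cl_in}.2, but the computation is the same), part 2 by the same reductions (your (c)$\Rightarrow$(a) via extensivity plus co-extensivity of $\sigma\circ\sigma$ is exactly the paper's argument), and part 3 by the same definability-of-$\top$-and-$\land$ computation. The only cosmetic differences are how a few sub-implications are routed (e.g.\ the paper gets (b)$\Rightarrow$(a) by computing $\sigma_*=\sigma^{\bf-1}$ directly, and (a)$\Rightarrow$(c) by noting the inverse of a self-adjoint isomorphism is self-adjoint), and your citation for the order-dual symmetry of $\SAI$ should really point at the corollary on order-dual algebras rather than Lemma \ref{lem:sa_vars_basic_relationship}.
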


\begin{proof}
\item
\begin{enumerate}
\item
Let $(\aQ,\sigma)$ be an $\SAI$-algebra. Given $q_2 \leq_\aQ \sigma(q_1)$ then applying $\sigma$ yields $q_1 = \sigma(\sigma(q_1)) \leq_\aQ \sigma(q_2)$, so that:
\[
(\star) \qquad
\sigma(q_1) \leq_{\aQ^{\pOp}} q_2
\iff
q_2 \leq_\aQ \sigma(q_1) \iff q_1 \leq_\aQ \sigma(q_2)
\qquad
\text{for all $q_1,q_2 \in Q$}.
\]
Since $\aQ$ has all finite joins, we may apply Lemma \ref{lem:adjoint_cl_in}.2 to deduce that $\sigma(\Lor_\aQ X) = \Land_\aQ \sigma[X]$ for every finite subset $X \subseteq_\omega Q$ i.e.\ these particular meets exist in $\aQ$. But since $\sigma$ is involutive it is bijective, hence $\aQ$ has all finite meets. It follows that $\aQ$ is a bounded lattice and $\sigma$ defines a join-semilattice morphism $\sigma : \aQ \to \aQ^{\pOp}$.

Now, since $\sigma$ is involutive it is bijective and thus a $\JSL$-isomorphism by universal algebra, and also a bounded lattice isomorphism because $\aQ$ and $\aQ^{\pOp}$ are bounded lattices. Again by involutiveness $\sigma^{\bf-1}(q) = \sigma(q)$. Observe that the join-semilattice isomorphism $\sigma : \aQ \to \aQ^{\pOp}$  between bounded lattices is self-adjoint by $(\star)$. Concerning $\aQ$'s meet structure, since $\sigma : \aQ \to \aQ^{\pOp}$ is a bounded lattice isomorphism we have $\top_\aQ = \sigma(\bot_\aQ)$, and finally:
\[
\sigma(\sigma(q_1) \lor_\aQ \sigma(q_2))
= \sigma(\sigma(q_1)) \land_\aQ \sigma(\sigma(q_2))
= q_1 \land_\aQ q_2.
\]

\item
\begin{itemize}
\item 
$(a \iff b)$: The implication $\To$ follows by (1). Conversely, $\eqnOR$ follows by taking the underlying monotone map of $\sigma : \aQ \to \aQ^{\pOp}$ whereas $\eqnInv$ holds because for every $q \in Q$ we have:
\[
\begin{tabular}{lll}
$\sigma(q)$
&
$= \sigma_*(q)$
& by self-adjointness
\\&
$= \Lor_\aQ \{ q' \in Q : \sigma(q') \leq_{\aQ^{\pOp}} q \}$
& by definition
\\&
$= \Lor_\aQ \{ q' \in Q : q \leq_\aQ \sigma(q') \}$
\\&
$= \Lor_\aQ \{ q' \in Q : q' \leq_\aQ \sigma^{\bf-1}(q) \}$
& apply isomorphism
\\&
$= \sigma^{\bf-1}(q)$.
\end{tabular}
\]

\item
$(a \iff c)$: Regarding $\To$, this follows because $a \To b$, and the inverse of a self-adjoint isomorphism is itself self-adjoint. Conversely, taking an underlying monotone map yields $\eqnOR$, and concerning involutiveness we can use the two adjoint relations to deduce that for every $q \in Q$:
\[
\sigma(q) \leq_\aQ \sigma(q) \iff q \leq_\aQ \sigma(\sigma(q))
\qquad\text{and}\qquad
\sigma(q) \leq_\aQ \sigma(q) \iff \sigma(\sigma(q)) \leq_\aQ q.
\]

\item
$(a \iff d)$: First of all, $\To$ follows because $a \To b$. The converse is immediate by taking the underlying monotone morphism.

\end{itemize}

\item
Follows because the top and binary meet are definable in terms of $\bot$, $\lor$ and $\sigma$ by (1), hence are preserved by algebra homomorphisms.

\end{enumerate}
\end{proof}

\begin{corollary}[$\SAI$ is the variety of De Morgan algebras]
\item
By identifying $(\aQ,\sigma) \in \SAI$ with the tuple $(Q,\lor_\aQ,\bot_\aQ,\land_\aQ,\top_\aQ,\sigma)$, the category $\SAI$ is precisely the variety of De Morgan algebras i.e.\ bounded lattices $\latL$ equipped with a unary operation $\sigma : L \to L$ satisfying the equations:
\[
\sigma\sigma(x) \approx x
\qquad
\sigma(x \lor y) \approx \sigma(x) \land \sigma(y)
\qquad
\sigma(x \land y) \approx \sigma(x) \lor \sigma(y).
\]
\end{corollary}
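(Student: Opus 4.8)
The plan is to show that the passage $(\aQ,\sigma) \mapsto (Q,\lor_\aQ,\bot_\aQ,\land_\aQ,\top_\aQ,\sigma)$ and its inverse (forgetting the derived meet and top) are mutually inverse bijections on objects which additionally match morphisms, so that the two categories coincide once we make the stated identification. Almost all of the real content has already been established in Lemma \ref{lem:interpret_infinite_sai}, so the task reduces to translating between the two equational presentations.

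First I would treat the object correspondence. Given $(\aQ,\sigma) \in \SAI$, Lemma \ref{lem:interpret_infinite_sai}.1 tells us that $\aQ$ is a bounded lattice, that $\sigma : \aQ \to \aQ^{\pOp}$ is an involutive self-adjoint $\JSL$-isomorphism (hence a bounded lattice isomorphism), and that $\top_\aQ = \sigma(\bot_\aQ)$ and $q_1 \land_\aQ q_2 = \sigma(\sigma(q_1) \lor_\aQ \sigma(q_2))$. The three De Morgan equations then follow: $\sigma\sigma(x) \approx x$ is exactly $\eqnInv$; the equation $\sigma(x \lor y) \approx \sigma(x) \land \sigma(y)$ holds because $\sigma$ preserves joins into $\aQ^{\pOp}$ and $\lor_{\aQ^{\pOp}} = \land_\aQ$; and $\sigma(x \land y) \approx \sigma(x) \lor \sigma(y)$ follows by applying $\sigma$ to the defining meet formula and cancelling using involutivity.

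Conversely, given any De Morgan algebra $(L,\lor,\bot,\land,\top,\sigma)$, I would retain only $(L,\lor,\bot,\sigma)$ and check it lies in $\SAI$. Here $\eqnInv$ is one of the given equations, while $\eqnOR$ follows from $\sigma(x \lor y) \approx \sigma(x) \land \sigma(y)$ together with the absorption identity $(\sigma(x) \land \sigma(y)) \lor \sigma(x) \approx \sigma(x)$, which unwinds to precisely $\sigma(x \lor y) \preccurlyeq \sigma(x)$. To see that the two passages are mutually inverse I would verify that the operations recovered from the resulting $\SAI$-structure agree with the original ones: the De Morgan laws make $\sigma$ order-reversing (if $x \leq y$ then $\sigma(y) = \sigma(x \lor y) = \sigma(x) \land \sigma(y) \leq \sigma(x)$), so $\sigma(\bot) = \top$, and $\sigma(\sigma(x) \lor \sigma(y)) = \sigma\sigma(x) \land \sigma\sigma(y) = x \land y$ by De Morgan and involutivity.

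Finally I would match the morphisms, which is the only point deserving slight care. An $\SAI$-morphism is by definition a $\JSL$-morphism commuting with $\sigma$; by Lemma \ref{lem:interpret_infinite_sai}.3 any such map is automatically a bounded lattice morphism, hence a De Morgan algebra morphism. In the other direction a De Morgan algebra morphism preserves $\bot$, $\lor$ and $\sigma$ and so is an $\SAI$-morphism. Since both identifications act as the identity on the underlying function, the hom-sets literally coincide. I do not expect a genuine obstacle: the one substantive observation is that meet and top are term-definable from $\bot$, $\lor$ and $\sigma$ (via the formulas in Lemma \ref{lem:interpret_infinite_sai}.1), and it is exactly this definability that guarantees both that forgetting $\land$ and $\top$ loses no information and that preservation by morphisms transfers in both directions.
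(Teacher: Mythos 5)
Your proposal is correct and follows essentially the same route as the paper: Lemma \ref{lem:interpret_infinite_sai}.1 handles the passage from $\SAI$ to De Morgan algebras, the rule $\eqnOR$ is recovered from $\sigma(x \lor y) \approx \sigma(x) \land \sigma(y)$ (your absorption identity is just the equational form of the paper's order-theoretic chain), and Lemma \ref{lem:interpret_infinite_sai}.3 settles the morphism correspondence. The extra check that the two passages are mutually inverse via term-definability of $\land$ and $\top$ is a welcome explicit touch that the paper leaves implicit, but it does not change the argument.
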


\begin{proof}
Given $(\aQ,\sigma) \in \SAI$ then the induced tuple $(Q,\lor_\aQ,\bot_\aQ,\land_\aQ,\top_\aQ,\sigma)$ is a de morgan algebra by $\eqnInv$ and Lemma \ref{lem:interpret_infinite_sai}.1. Conversely, any de morgan algebra defines an $\SAI$-algebra because $\eqnInv$ by assumption, and moreover $\sigma(x \lor y) \approx \sigma(x) \land \sigma(y)$ implies $\eqnOR$:
\[
x \preccurlyeq y \iff x \lor y \approx y \implies \sigma(x \lor y) \approx \sigma(y) \stackrel{!}{\implies} \sigma(x) \land \sigma(y) \approx \sigma(y) \iff \sigma(y) \preccurlyeq \sigma(x).
\]
The homomorphisms of De Morgan algebras are the bounded lattice morphisms which preserve the unary operation, and using Lemma \ref{lem:interpret_infinite_sai}.3 they are precisely the $\SAI$-morphisms.
\end{proof}

\begin{corollary}[Order-dual algebras]
\item
\begin{enumerate}
\item
Given $(\aQ,\sigma) \in \SAJ_f$ then $(\aQ^{\pOp},\sigma) \in \SAJ_f$ iff $(\aQ,\sigma) \in \SAI_f$. 
\item
Given $(\aQ,\sigma) \in \SAM_f$ then $(\aQ^{\pOp},\sigma) \in \SAM_f$ iff $(\aQ,\sigma) \in \SAI_f$. 
\item
Given $(\aQ,\sigma) \in \SAI$ then $\sigma$ defines an $\SAI$-isomorphism $(\aQ,\sigma) \to (\aQ^{\pOp},\sigma)$ and also its inverse.

\end{enumerate}
\end{corollary}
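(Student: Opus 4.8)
The plan is to prove all three parts by reducing to the characterizations of the three varieties already established, most importantly Lemma \ref{lem:interpret_finite_saj_sam} and Lemma \ref{lem:interpret_infinite_sai}. The unifying observation is that $(\aQ,\sigma) \in \SAJ_f$ means precisely that $\sigma$ is a self-adjoint $\JSL_f$-morphism $\aQ \to \aQ^{\pOp}$, that $(\aQ,\sigma) \in \SAM_f$ means $\sigma$ is a self-adjoint $\JSL_f$-morphism $\aQ^{\pOp} \to \aQ$, and that by Lemma \ref{lem:interpret_infinite_sai}.2 membership in $\SAI$ is equivalent to $\sigma$ being a self-adjoint $\JSL$-morphism of \emph{both} types simultaneously. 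So the content of (1) and (2) is essentially a dualization bookkeeping exercise about what happens to these self-adjoint typings under $(-)^{\pOp}$.

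For part (1), I would start with $(\aQ,\sigma) \in \SAJ_f$, so $\sigma$ is self-adjoint of type $\aQ \to \aQ^{\pOp}$. The claim is that $(\aQ^{\pOp},\sigma) \in \SAJ_f$ iff $(\aQ,\sigma) \in \SAI_f$. By Lemma \ref{lem:interpret_finite_saj_sam}.1 applied to $\aQ^{\pOp}$, the left condition says $\sigma$ is a self-adjoint $\JSL_f$-morphism of type $\aQ^{\pOp} \to (\aQ^{\pOp})^{\pOp} = \aQ$. Combining this with the original hypothesis, $\sigma$ is self-adjoint both as $\aQ \to \aQ^{\pOp}$ and as $\aQ^{\pOp} \to \aQ$, which is exactly condition (c) of Lemma \ref{lem:interpret_infinite_sai}.2, hence equivalent to $(\aQ,\sigma) \in \SAI$; since $\aQ$ is finite this is $(\aQ,\sigma) \in \SAI_f$. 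The converse direction is immediate: if $(\aQ,\sigma) \in \SAI_f$ then by Lemma \ref{lem:interpret_infinite_sai}.2(c) $\sigma$ has both typings, and restricting attention to the $\aQ^{\pOp} \to \aQ$ typing gives $(\aQ^{\pOp},\sigma) \in \SAJ_f$ via Lemma \ref{lem:interpret_finite_saj_sam}.1 again. Part (2) is the exact order-dual of this argument, and I would simply invoke Lemma \ref{lem:sa_vars_basic_relationship}.1, which says $(\aQ,\sigma) \in \SAM_f \iff (\aQ^{\pOp},\sigma) \in \SAJ_f$, to transport (1) across. Concretely, $(\aQ^{\pOp},\sigma) \in \SAM_f \iff (\aQ,\sigma) \in \SAJ_f$, and one chases the equivalences using that $\SAI$ is closed under $(-)^{\pOp}$ (which will also follow from part (3)).

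For part (3), given $(\aQ,\sigma) \in \SAI$, Lemma \ref{lem:interpret_infinite_sai}.1 already tells us that $\sigma$ is a self-adjoint $\JSL$-isomorphism $\aQ \to \aQ^{\pOp}$ which moreover equals its own inverse, and that $\aQ$ is a bounded lattice with $\top_\aQ = \sigma(\bot_\aQ)$ and $q_1 \land_\aQ q_2 = \sigma(\sigma(q_1) \lor_\aQ \sigma(q_2))$. I must check that $\sigma$, viewed as a function $Q \to Q$, is an $\SAI$-morphism $(\aQ,\sigma) \to (\aQ^{\pOp},\sigma)$, i.e.\ that it preserves $\bot$, $\lor$, and commutes with the respective $\sigma$ operations. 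That $\sigma : \aQ \to \aQ^{\pOp}$ is a $\JSL$-morphism is exactly the content of Lemma \ref{lem:interpret_infinite_sai}.1. The one genuinely new thing to verify is the commutation condition $\sigma(\sigma_\aQ(q)) = \sigma_{\aQ^{\pOp}}(\sigma(q))$ where both algebras carry the same underlying endofunction $\sigma$; since $\sigma$ is involutive by $\eqnInv$, both sides equal $\sigma(\sigma(q))$ and the equation is a tautology. Hence $\sigma$ is an $\SAI$-morphism, and because it is involutive it is its own inverse, giving the $\SAI$-isomorphism and closing the statement.

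The main obstacle, such as it is, lies not in any calculation but in keeping the typings straight: one must be careful that ``self-adjoint $\JSL_f$-morphism of type $\aQ \to \aQ^{\pOp}$'' and ``of type $\aQ^{\pOp} \to \aQ$'' are genuinely distinct data, and that Lemma \ref{lem:interpret_finite_saj_sam} pins $\SAJ_f$ to the former and $\SAM_f$ to the latter. The finiteness hypothesis in (1) and (2) is essential because the interpretation lemma for $\SAJ_f$/$\SAM_f$ requires $\aQ^{\pOp}$ to be a well-defined join-semilattice, which can fail in the infinite case (as the note after Lemma \ref{lem:interpret_finite_saj_sam} records); part (3), by contrast, holds for arbitrary $\SAI$-algebras because Lemma \ref{lem:interpret_infinite_sai} guarantees $\aQ$ is a bounded lattice outright, so $\aQ^{\pOp}$ always makes sense there.
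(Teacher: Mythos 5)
Your proof is correct and follows essentially the same route as the paper: both directions of (1) reduce to the characterisation of $\SAI$-algebras as self-adjoint morphisms of both types $\aQ \to \aQ^{\pOp}$ and $\aQ^{\pOp} \to \aQ$, (2) is transported via $(\aQ,\sigma) \in \SAM_f \iff (\aQ^{\pOp},\sigma) \in \SAJ_f$, and (3) is the observation that the commutation condition for $\sigma$ as a morphism is the tautology $\sigma \circ \sigma = \sigma \circ \sigma$. The one auxiliary fact you lean on in (2) — that $\SAI$ is closed under $(-)^{\pOp}$ — is legitimate and is exactly what the paper checks at the start of its own proof of (1) ($\eqnInv$ is self-dual and $\eqnOR$ survives passage to the opposite monotone map), so there is no circularity in deferring it.
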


\begin{proof}
\item
\begin{enumerate}
\item
Given $(\aQ,\sigma) \in \SAI_f$ then $(\aQ^{\pOp},\sigma) \in \SAI_f \subseteq \SAJ_f$ because $\eqnInv$ continues to hold, as does $\eqnOR$ by considering the opposite monotone morphism. Conversely, if $(\aQ^{\pOp},\sigma) \in \SAJ_f$ then $\sigma$ defines a self-adjoint morphism of type $\aQ \to \aQ^{\pOp}$ and $\aQ^{\pOp} \to \aQ$ and hence an $\SAI_f$-algebra by Lemma \ref{lem:interpret_infinite_sai}.2.

\item
Follows because $(\aQ,\sigma) \in \SAM_f$ iff $(\aQ^{\pOp},\sigma) \in \SAJ_f$.

\item
$\sigma$ defines a $\JSL_f$-isomorphism $\aQ \to \aQ^{\pOp}$ by Lemma \ref{lem:interpret_infinite_sai}, and also an $\SAI_f$-morphism because $\sigma \circ \sigma = \sigma \circ \sigma$.

\end{enumerate}
\end{proof}

\smallskip
We can also characterise the finite $\SAJ_f$ and $\SAM_f$-algebras in terms of the finite de morgan algebras.

\smallskip

\begin{lemma}[$\SAJ_f$ and $\SAM_f$-algebras as extensions of finite De Morgan algebras]
\label{lem:saj_sam_f_as_sai_extensions}
\item
\begin{enumerate}
\item
Given any $(\aQ,\sigma) \in \SAJ_f$ then we have $(\sigma[\aQ],\sigma |_{\sigma[Q] \times \sigma[Q]}) \in \SAI_f$, in fact:
\[
\sigma = \quad
\aQ \stackrel{\sigma|_{Q \times \sigma[Q]}}{\epito} \sigma[\aQ] \xto{\sigma |_{\sigma[Q] \times \sigma[Q]} } (\sigma[\aQ])^{\pOp} \stackrel{(\sigma|_{Q \times \sigma[Q]})_*}{\monoto} \aQ^{\pOp}.
\]

\item
Given any $(\aQ,\sigma) \in \SAM_f$ then $(\sigma[\aQ],\sigma |_{\sigma[Q] \times \sigma[Q]}) \in \SAI_f$ and moreover:
\[
\sigma = \quad
\aQ^{\pOp} \stackrel{\sigma|_{Q \times \sigma[Q]}}{\epito} \sigma[\aQ^{\pOp}] \xto{\sigma |_{\sigma[Q] \times \sigma[Q]} } (\sigma[\aQ^{\pOp}])^{\pOp} \stackrel{(\sigma|_{Q \times \sigma[Q]})_*}{\monoto} \aQ.
\]

\item
Consequently,
\begin{itemize}
\item[--]
$(\aQ,\sigma) \in \SAJ_f$ iff there exists $(\aR,\sigma_0) \in \SAI_f$ and a $\JSL_f$-morphism $\alpha : \aQ \to \aR$ such that $\sigma = \alpha_* \circ \sigma_0 \circ \alpha$.

\item[--]
$(\aQ,\sigma) \in \SAM_f$ iff there exists $(\aR,\sigma_0) \in \SAI_f$ and a $\JSL_f$-morphism $\beta : \aR \to \aQ$ such that $\sigma = \beta \circ \sigma_0 \circ \beta_*$.
\end{itemize}

\end{enumerate}
\end{lemma}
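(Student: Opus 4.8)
The plan is to reduce everything to the interpretation of finite $\SAJ$-algebras as self-adjoint morphisms (Lemma \ref{lem:interpret_finite_saj_sam}.1), together with the fact that the image of such a morphism carries a De Morgan structure. Fix $(\aQ,\sigma) \in \SAJ_f$, so by Lemma \ref{lem:interpret_finite_saj_sam}.1 $\sigma$ is a self-adjoint $\JSL_f$-morphism $\aQ \to \aQ^{\pOp}$, and by Lemma \ref{lem:sa_vars_basic_relationship}.5 and .3 the composite $\cl := \sigma\sigma$ is a closure operator on $(Q,\leq_\aQ)$ with $\sigma\sigma\sigma = \sigma$. First I would identify the image: using $\sigma\sigma\sigma=\sigma$ one checks that $\sigma[Q]$ is exactly the set of $\cl$-closed elements (if $q=\sigma(p)$ then $\cl(q)=\sigma\sigma\sigma(p)=q$, and if $\cl(q)=q$ then $q=\sigma(\sigma(q))\in\sigma[Q]$). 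By Lemma \ref{lem:clo_co_basic}.2 these closed elements form the carrier of a sub join-semilattice $\sigma[\aQ]$ of $\aQ^{\pOp}$, and by Lemma \ref{lem:clo_co_basic}.1 they are closed under the meets of $\aQ$.

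Then I would set $e := \sigma|_{Q\times\sigma[Q]} : \aQ \epito \sigma[\aQ]$ (the codomain-restriction of $\sigma$, a surjective $\JSL_f$-morphism), let $\iota : \sigma[\aQ] \monoto \aQ^{\pOp}$ be the inclusion, and $\sigma_0 := \sigma|_{\sigma[Q]\times\sigma[Q]}$. The crux is to show $(\sigma[\aQ],\sigma_0) \in \SAI_f$. Since every element of $\sigma[Q]$ is $\cl$-closed, $\sigma_0$ is involutive on $\sigma[Q]$, so by Lemma \ref{lem:interpret_infinite_sai}.2 it suffices to show $\sigma_0$ is a $\JSL_f$-morphism $\sigma[\aQ] \to (\sigma[\aQ])^{\pOp}$. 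For this I would compute that the adjoint of the inclusion equals the closure, namely $\iota_*(q) = \Land_\aQ\{s \in \sigma[Q] : q \leq_\aQ s\} = \cl(q)$, where one uses that the join of $\sigma[\aQ]$ is the $\aQ$-meet and that the meet of closed elements is closed (Lemma \ref{lem:clo_co_basic}.1). Since also $\sigma_0(e(q)) = \sigma\sigma(q) = \cl(q)$, this gives $\sigma_0 \circ e = \iota_*$; as $e$ is a surjective $\JSL_f$-morphism and $\iota_*$ is a $\JSL_f$-morphism, $\sigma_0$ is forced to preserve $\bot$ and $\lor$ and hence is a morphism. The displayed factorization is then immediate from self-adjointness: $\sigma = \sigma_* = (\iota\circ e)_* = e_* \circ \iota_* = e_* \circ \sigma_0 \circ e$, using composition-reversal of adjoints from Theorem \ref{thm:jsl_self_dual} and $e_* = (\sigma|_{Q\times\sigma[Q]})_*$.

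Part 2 I would obtain purely by order-duality: $(\aQ,\sigma) \in \SAM_f$ iff $(\aQ^{\pOp},\sigma) \in \SAJ_f$ (Lemma \ref{lem:sa_vars_basic_relationship}.1), so applying Part 1 to $(\aQ^{\pOp},\sigma)$ and reading the factorization back through $\aQ = (\aQ^{\pOp})^{\pOp}$ yields the claim. For Part 3, the forward directions are just Parts 1 and 2 with $\aR := \sigma[\aQ]$, $\sigma_0$ the De Morgan structure, and $\alpha := e$ (resp. $\beta := e_*$), since the displayed factorizations read exactly as $\sigma = \alpha_* \circ \sigma_0 \circ \alpha$ (resp. $\sigma = \beta \circ \sigma_0 \circ \beta_*$). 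For the converse, given $(\aR,\sigma_0) \in \SAI_f$ and $\alpha : \aQ \to \aR$, I would set $\sigma := \alpha_* \circ \sigma_0 \circ \alpha$; this is the composite $\aQ \xto{\alpha} \aR \xto{\sigma_0} \aR^{\pOp} \xto{\alpha_*} \aQ^{\pOp}$ of $\JSL_f$-morphisms, and it is self-adjoint because $\sigma_* = \alpha_* \circ (\sigma_0)_* \circ (\alpha_*)_* = \alpha_* \circ \sigma_0 \circ \alpha = \sigma$, using $(\sigma_0)_* = \sigma_0$ for the self-adjoint $\sigma_0$ and $(\alpha_*)_* = \alpha$ from Theorem \ref{thm:jsl_self_dual}. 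Hence $(\aQ,\sigma) \in \SAJ_f$ by Lemma \ref{lem:interpret_finite_saj_sam}.1, and the $\SAM_f$ statement follows dually.

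I expect the main obstacle to be the bookkeeping around the order-opposites rather than any deep argument: the image of $\sigma$ must be recognised as a sub join-semilattice of $\aQ^{\pOp}$ (not of $\aQ$), and the identity $\sigma_0 \circ e = \iota_*$, equivalently $\iota_* = \cl$, must be verified with the joins and meets computed in the correct order. Once that identity is in place, the morphism property of $\sigma_0$, membership in $\SAI_f$, and the full factorization all follow formally, so the only real risk lies in getting these dualities straight.
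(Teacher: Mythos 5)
Your proof is correct and follows essentially the same route as the paper: factor $\sigma$ through its image via the (surjection, inclusion) factorisation, observe that the restriction to $\sigma[Q]$ is an order-reversing involution making $(\sigma[\aQ],\sigma|_{\sigma[Q]\times\sigma[Q]})$ a finite De Morgan algebra, obtain Part 2 by order-duality from Part 1, and get Part 3 from Parts 1--2 together with the self-adjointness of $\alpha_* \circ \sigma_0 \circ \alpha$. The only organisational difference is in Part 1, where you derive the displayed factorisation formally from $\sigma = \sigma_* = (\iota \circ e)_* = e_* \circ \iota_*$ and the identity $\iota_* = \sigma\sigma = \sigma_0 \circ e$, whereas the paper verifies the same equality by an element-wise computation of the adjoint using $\sigma\sigma\sigma = \sigma$; both arguments are valid.
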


\begin{proof}
\item
\begin{enumerate}
\item
By Lemma \ref{lem:interpret_finite_saj_sam} we know $\sigma$ defines a join-semilattice morphism $\sigma : \aQ \to \aQ^{\pOp}$. By the (surjection,inclusion) factorisation we have the join-semilattice inclusion-morphism $\sigma[\aQ] \hookto \aQ^{\pOp}$ which restricts to a join-semilattice endomorphism $\sigma_{\sigma[Q] \times \sigma[Q]}$ of type $\sigma[\aQ] \to \sigma[\aQ]$. By restriction it satisfies $\eqnOR$, whereas for any $\sigma(x) \in \sigma[Q]$,
\[
\sigma_{\sigma[Q] \times \sigma[Q]} \circ \sigma_{\sigma[Q] \times \sigma[Q]}(\sigma(x)) 
= \sigma\sigma\sigma(x) = \sigma(x)
\]
by Lemma \ref{lem:sa_vars_basic_relationship}.3, so that $\eqnInv$ holds. Consequently $(\sigma[\aQ],\sigma |_{\sigma[Q] \times \sigma[Q]})$ is a finite de morgan algebra.

Next, the surjective join-semilattice morphism $\sigma|_{Q \times \sigma[Q]}$ arises from the other part of $\sigma$'s (surjection,inclusion) factorisation. Then the respective composite is a well-defined join-semilattice morphism of type $\aQ \to \aQ^{\pOp}$. Its action is that of $\sigma$ because:
\[
\begin{tabular}{lll}
$(\sigma |_{\sigma[Q] \times Q})_* \circ \sigma_{\sigma[Q] \times \sigma[Q]} \circ \sigma |_{\sigma[Q] \times Q}(q_0))$
&
$= (\sigma |_{\sigma[Q] \times Q})_*(\sigma\sigma(q_0))$
\\&
$= \Lor_\aQ \{  q \in Q : \sigma |_{\sigma[Q] \times Q}(q) \leq_{\aQ^{\pOp}} \sigma\sigma(q_0) \}$
\\&
$= \Lor_\aQ \{  q \in Q : \sigma\sigma(q_0) \leq_\aQ \sigma(q) \}$
\\&
$= \Lor_\aQ \{ q \in Q : q \leq_\aQ \sigma\sigma\sigma(q_0)  \}$
& by adjoint relationship
\\&
$= \sigma\sigma\sigma(q_0)$
\\&
$= \sigma(q_0)$
& by Lemma \ref{lem:sa_vars_basic_relationship}.3.
\end{tabular}
\]

\item
Follows because $(\aQ,\sigma) \in \SAM_f$ iff $(\aQ^{\pOp},\sigma) \in \SAI_f$.

\item
Take any $(\aR,\sigma_0) \in \SAI_f$ and any $\JSL_f$-morphism $\alpha : \aQ \to \aR$. Then $(\aQ,\alpha_* \circ \sigma_0 \circ \alpha) \in \SAJ_f$ because $\alpha_* \circ \sigma_0 \circ \alpha : \aQ \to \aQ^{\pOp}$ is a self-adjoint morphism because $\sigma_0 : \aR \to \aR^{\pOp}$ is. Conversely by (1) every $\SAJ_f$-algebra arises in this way, in fact we may assume $\alpha$ is surjective. The second item follows analogously.

\end{enumerate}
\end{proof}

\smallskip
It is worth mentioning a related result.
\smallskip

\begin{lemma}[Lifting $\JSL_f$-quotients and embeddings to $\SAJ_f$ and $\SAM_f$]
\item
Fix any finite de morgan algebra $(\aQ,\sigma)$.
\begin{enumerate}
\item
Each surjective $\JSL_f$-morphism $\psi : \aR \epito \aQ$ defines a $\SAJ$-morphism:
\[
\psi : (\aR, \psi_* \circ \sigma \circ \psi) \epito (\aQ,\sigma)
\]

\item
Each injective $\JSL_f$-morphism $e : \aQ \monoto \aR$ defines a $\SAM$-morphism:
\[
e : (\aQ,\sigma) \monoto (\aR,e \circ \sigma \circ e_*)
\]

\end{enumerate}
\end{lemma}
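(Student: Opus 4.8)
<br>

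The statement to prove has two parts, each asserting that a particular $\JSL_f$-morphism lifts to a morphism in $\SAJ$ (resp.\ $\SAM$) when we equip the domain and codomain with the indicated self-adjoint operations. My plan is to verify two things for each part: first that the constructed operation genuinely makes the relevant algebra live in the correct variety, and second that the given $\JSL_f$-morphism commutes with the two operations in the required sense. Throughout I will lean on Lemma \ref{lem:interpret_finite_saj_sam}, which reduces membership in $\SAJ_f$ (resp.\ $\SAM_f$) to $\sigma$ being a self-adjoint $\JSL_f$-morphism of type $\aR \to \aR^{\pOp}$ (resp.\ $\aR^{\pOp} \to \aR$), and on the basic calculus of adjoints from Lemma \ref{lem:adj_obs} and Lemma \ref{lem:jsl_mono_epi_iso}.

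For part (1), I first observe that since $(\aQ,\sigma) \in \SAI_f \subseteq \SAJ_f$, Lemma \ref{lem:interpret_finite_saj_sam}.1 tells us $\sigma : \aQ \to \aQ^{\pOp}$ is a self-adjoint $\JSL_f$-morphism, i.e.\ $\sigma = \sigma_*$. I must show that $\sigma' := \psi_* \circ \sigma \circ \psi$ defines a self-adjoint morphism $\aR \to \aR^{\pOp}$. The typing is immediate from composing $\psi : \aR \to \aQ$, $\sigma : \aQ \to \aQ^{\pOp}$, and $\psi_* : \aQ^{\pOp} \to \aR^{\pOp}$. Self-adjointness follows from the contravariant functoriality of $(-)_*$ established in Theorem \ref{thm:jsl_self_dual}: we compute $(\sigma')_* = (\psi_* \circ \sigma \circ \psi)_* = \psi_* \circ \sigma_* \circ (\psi_*)_* = \psi_* \circ \sigma \circ \psi = \sigma'$, using $\sigma_* = \sigma$ and $(\psi_*)_* = \psi$ (the naturality of $rep$). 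Hence $(\aR,\sigma') \in \SAJ_f$. It then remains to check that $\psi$ is an $\SAJ$-morphism, i.e.\ that $\psi \circ \sigma' = \sigma \circ \psi$. Expanding, $\psi \circ \psi_* \circ \sigma \circ \psi$ must equal $\sigma \circ \psi$; this is exactly where surjectivity of $\psi$ enters, since $\psi$ surjective means $\psi_*$ is injective (Lemma \ref{lem:jsl_mono_epi_iso}.3) and $\psi \circ \psi_* = id_\aQ$ because $\psi_*$ is a section of $\psi$ (this is the content of the retract/section duality in Lemma \ref{lem:adj_obs}.5 together with $\psi \circ \psi_* = id$ for a split epi whose splitting is its adjoint).

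Part (2) is entirely dual and I would derive it from part (1) rather than recompute. The cleanest route is to apply part (1) to the opposite data: an injective morphism $e : \aQ \monoto \aR$ has adjoint $e_* : \aR^{\pOp} \epito \aQ^{\pOp}$ which is surjective (Lemma \ref{lem:jsl_mono_epi_iso}.3), and $(\aQ^{\pOp},\sigma) \in \SAI_f$ since $\SAI_f$ is closed under order-duals. Applying part (1) to the surjection $e_* : \aR^{\pOp} \epito \aQ^{\pOp}$ yields that $e_*$ is a $\SAJ$-morphism $(\aR^{\pOp},(e_*)_* \circ \sigma \circ e_*) \to (\aQ^{\pOp},\sigma)$; translating through Lemma \ref{lem:sa_vars_basic_relationship}.1 (which swaps $\SAJ_f$ for $\SAM_f$ under $(-)^{\pOp}$) and using $(e_*)_* = e$ gives precisely that $e : (\aQ,\sigma) \monoto (\aR, e \circ \sigma \circ e_*)$ is a $\SAM$-morphism. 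Alternatively one can mirror the direct computation: $e \circ \sigma \circ e_*$ is self-adjoint of type $\aR^{\pOp} \to \aR$, and $e \circ \sigma = (e \circ \sigma \circ e_*) \circ e$ follows from $e_* \circ e = id_\aQ$.

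The main obstacle, such as it is, lies in the commutativity verifications $\psi \circ \psi_* = id_\aQ$ and $e_* \circ e = id_\aQ$ and being precise about why these hold. The subtlety is that for a general $\JSL_f$-morphism $\psi$ the composite $\psi \circ \psi_*$ is only the interior operator $\inte_\psi$ (Lemma \ref{lem:adjoint_cl_in}.1), equal to the identity exactly when $\psi$ is surjective; dually $\psi_* \circ \psi = \cl_\psi$. So the surjectivity hypothesis on $\psi$ (resp.\ injectivity on $e$) is genuinely used and must be invoked explicitly via Lemma \ref{lem:jsl_mono_epi_iso}. Everything else is a routine bookkeeping of adjoints, and I would present the two parts compactly, deriving part (2) from part (1) by the order-dual to avoid repetition.
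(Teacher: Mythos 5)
Your proof is correct and takes essentially the same route as the paper's: both parts reduce to the functional identities $\psi \circ \psi_* = id_\aQ$ (from surjectivity) and $e_* \circ e = id_\aQ$ (from injectivity, i.e.\ the order-embedding property), with membership of the twisted algebras in $\SAJ_f$ resp.\ $\SAM_f$ supplied by the self-adjointness of $\psi_* \circ \sigma \circ \psi$ and $e \circ \sigma \circ e_*$ (which is exactly the content of Lemma \ref{lem:saj_sam_f_as_sai_extensions}.3 that the paper cites). One small caution: your appeal to Lemma \ref{lem:adj_obs}.5 and to ``a split epi whose splitting is its adjoint'' is not a valid derivation of $\psi \circ \psi_* = id_\aQ$, since $\psi_*$ is a $\JSL_f$-morphism $\aQ^{\pOp} \to \aR^{\pOp}$ rather than $\aQ \to \aR$ and hence is not a section of $\psi$ in $\JSL_f$; the correct justification is the one you give in your final paragraph --- $\psi \circ \psi_*$ is the interior operator of Lemma \ref{lem:adjoint_cl_in}.1, which equals the identity precisely when $\psi$ is surjective, as the paper checks by the direct computation $\psi(\psi_*(q)) = \Lor_\aQ \{\psi(r) : \psi(r) \leq_\aQ q\} = q$.
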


\begin{proof}
\item
\begin{enumerate}

% clean up

\item
$(\aR, \psi_* \circ \sigma \circ \psi) \in \SAJ_f$ by Lemma \ref{lem:saj_sam_f_as_sai_extensions}.3. To establish that $\psi : (\aR,\sigma_\aR) \epito (\aQ,\sigma)$ is a well-defined $\SAJ$-morphism we must show that $\psi(\psi_* \circ \sigma \circ \psi(r)) = \sigma(\psi(r))$ for each $r \in R$. This follows because for every $q \in Q$,
\[
\begin{tabular}{lll}
$\psi(\psi_*(q))$
&
$= \psi(\Lor_\aR \{ r \in R : \psi(r) \leq_\aQ q \})$
& by definition
\\&
$= \Lor_\aQ \{ \psi(r) : r \in R, \, \psi(r) \leq_\aQ q \}$
& by join-preservation
\\&
$= q$
& by surjectivity.
\end{tabular}
\]

\item
$(\aR, e \circ \sigma \circ e_*) \in \SAM_f$ by Lemma \ref{lem:saj_sam_f_as_sai_extensions}.3. It remains to establish that $e : (\aQ,\sigma) \monoto (\aR,\sigma_\aR)$ is a well-defined $\SAM$-morphism. Then for every $q \in Q$ we must show that $e(\sigma(q)) = e \circ \sigma \circ e_*(e(q))$, which follows because:
\[
\begin{tabular}{lll}
$e_*(e(q))$
&
$= \Lor_\aQ \{ q' \in Q : e(q') \leq_\aR e(q) \}$
& by definition
\\&
$= \Lor_\aQ \{ q' \in Q : q' \leq_\aQ q \}$
& injective $\JSL$-morphisms are order embeddings
\\&
$= q$.
\end{tabular}
\]

\end{enumerate}
\end{proof}

\smallskip
We finish off with an explicit description of the free one-generated algebras. They are finite in each case, whereas the two-generated algebras are already infinite. In fact, a free De Morgan algebra on $X$ amounts to a free bounded lattice on $X + X$ equipped with a natural involution.
\smallskip

\begin{proposition}[Free one-generated $\SAJ$, $\SAM$ and $\SAI$-algebras]
\label{prop:free_one_gen_saj_sam_sai}
\item
\begin{enumerate}
\item
The free one-generated $\SAI$-algebra may be depicted as follows:
\[
\xymatrix@=5pt{
&& \sigma(\bot)
\\
&& x \lor \sigma(x) \ar@{..}[u] &&
\\
& x \ar@{..}[ur] \ar@{<->}[rr] & & \sigma(x)  \ar@{..}[ul] &
\\
&& \sigma(x \lor \sigma(x)) \ar@{..}[ul] \ar@{..}[ur] \ar@{<->}`[rru]`[uu][uu] &&
\\
&& \bot \ar@{..}[u] \ar@{<->}`[llu]`[uuuu][uuuu] &&
}
\]
More generally, given any set $X$ then the free $X$-generated $\SAI$-algebra arises as the free bounded lattice on generators $X + X$ with inductively defined unary operation:
\[
\begin{tabular}{c}
$\sigma_X(l(x)) := r(x)
\quad
\sigma_X(r(x)) := l(x)$
\\[1ex]
$\sigma_X(\phi \land \psi) := \sigma_X(\phi) \lor \sigma_X(\psi)
\quad
\sigma_X(\phi \lor \psi) := \sigma_X(\phi) \land \sigma_X(\psi)$.
\end{tabular}
\]

\item
The free one-generated $\SAJ$-algebra $(\aQ,\sigma)$ is depicted below:
\[
\begin{tabular}{ccc}
$\vcenter{\vbox{\xymatrix@=5pt{
& *+[F-]{\sigma(\bot)}
\\
& *+[F-]{\sigma\sigma(x \lor \sigma(x))} \ar@{..}[u] &&&&
\\
& \sigma(x) \lor \sigma\sigma(x) \ar@{..}[u] &&&&
\\
*+[F-]{\sigma\sigma(x)} \ar@{..}[ur] && x \lor \sigma(x) \ar@{..}[ul]  &&
\\
& x \lor \sigma(x \lor \sigma(x)) \ar@{..}[ul] \ar@{..}[ur]   && *+[F-]{\sigma(x)} \ar@{..}[ul] &&
\\
x \lor \sigma\sigma(\bot)  \ar@{..}[ur] && *+[F-]{\sigma(x \lor \sigma(x))} \ar@{..}[ul] \ar@{..}[ur]  &&&&
\\
x \ar@{..}[u] && *+[F-]{\sigma\sigma(\bot)} \ar@{..}[u] \ar@{..}[ull] &&&
\\
& \bot \ar@{..}[ul] \ar@{..}[ur] & &&&&
}}}$
&&
\begin{tabular}{lll}
$\bot$ & $\mapsto$ & $\sigma(\bot)$
\\
$x$ & $\mapsto$ & $\sigma(x)$
\\
$\sigma\sigma(\bot)$ & $\mapsto$ & $\sigma(\bot)$
\\
$x \lor \sigma\sigma(\bot)$ & $\mapsto$ & $\sigma(x)$
\\
$\sigma(x \lor \sigma(x))$ & $\mapsto$ & $\sigma(\sigma(x \lor \sigma(x) ))$
\\
$x \lor \sigma(x \lor \sigma(x)$ & $\mapsto$ & $\sigma(x)$
\\
$\sigma(x)$ & $\mapsto$ & $\sigma\sigma(x)$
\\
$\sigma\sigma(x)$ & $\mapsto$ & $\sigma(x)$
\\
$x \lor \sigma(x)$ & $\mapsto$ & $\sigma(x \lor \sigma(x))$
\\
$\sigma(x) \lor \sigma\sigma(x)$ & $\mapsto$ & $\sigma(x \lor \sigma(x))$
\\
$\sigma\sigma(x \lor \sigma(x))$ & $\mapsto$ & $\sigma(x \lor \sigma(x))$
\\
$\sigma(\bot)$ & $\mapsto$ & $\sigma\sigma(\bot)$.
\end{tabular}
\end{tabular}
\]
The boxed elements show that the image $\sigma[\aQ] \subseteq \aQ^{\pOp}$ is a free $\SAI$-algebra on the generator $\sigma(x)$.

\item
The free one-generated $\SAM$-algebra $(\aQ,\sigma)$ may be depicted as follows:
\[
\vcenter{\vbox{\xymatrix@=10pt{
&&& x \lor \sigma(\bot) \ar`r[rrd]`[ddddd][ddddd] &&
\\
&& *+[F-]{\sigma(\bot)} \ar@{..}[ur] && x \lor \sigma(x) \ar`r[d]`[dddl][dddl] \ar@{..}[ul] &
\\
&& *+[F-]{\sigma\sigma(x \lor \sigma(x))} \ar@{..}[u] \ar@{..}[urr] && x \ar[dll] \ar@{..}[u] &
\\
&& *+[F-]{\sigma(x)} \ar@{<->}[rr] \ar@{..}[u] && *+[F-]{\sigma\sigma(x)}  \ar@{..}[ull] \ar@{..}[u] &
\\
&&& *+[F-]{\sigma(x \lor \sigma(x))} \ar@{<->}`l[ll]`[uul][uul] \ar@{..}[ul] \ar@{..}[ur] &&
\\
&&& *+[F-]{\bot} \ar@{..}[u] \ar@{<->}`l[ulll]`[uuuul][uuuul] &
}}}
\]
The boxed elements show that the image $\sigma[\aQ] \subseteq \aQ$ is a free $\SAI$-algebra on the generator $\sigma(x)$.

\end{enumerate}
\end{proposition}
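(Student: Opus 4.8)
The plan is to construct each free one-generated algebra explicitly and verify the universal property directly, exploiting the interpretations of the three varieties already established. The key structural insight is Lemma~\ref{lem:saj_sam_f_as_sai_extensions}: a finite $\SAJ_f$-algebra is essentially a $\JSL_f$-morphism $\alpha : \aQ \to \aR$ into a finite De Morgan algebra $(\aR,\sigma_0)$ with $\sigma = \alpha_* \circ \sigma_0 \circ \alpha$, and dually for $\SAM_f$. So I would prove the $\SAI$ case first (it is self-contained and underlies the other two), then derive the $\SAJ$ and $\SAM$ cases by relating them to their $\SAI$-subalgebra images.

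For the $\SAI$ case, I would first verify the general claim: the free $X$-generated $\SAI$-algebra is the free bounded lattice $F$ on $X + X$ equipped with the inductively-defined involution $\sigma_X$ given by the stated clauses on generators $l(x),r(x)$ and by the De~Morgan clauses on $\land,\lor$. The verification has three parts. (i) $\sigma_X$ is well-defined on the free bounded lattice: since $F$ consists of equivalence classes of lattice terms, I must check $\sigma_X$ respects the bounded-lattice axioms, which follows because the De~Morgan clauses are exactly the order-dualizing rules and $l(x)\leftrightarrow r(x)$ is an involutive bijection on generators. (ii) $(F,\sigma_X) \in \SAI$: involutivity $\sigma_X\sigma_X = \mathrm{id}$ follows by structural induction (it holds on generators and is preserved by the clauses), and $\eqnOR$ follows from the De~Morgan law as in the proof of the corollary identifying $\SAI$ with De~Morgan algebras. (iii) The universal property: given any $\SAI$-algebra $(\aB,\tau)$ and a map $X \to B$, extend to $l(x)\mapsto b_x$, $r(x)\mapsto \tau(b_x)$, use freeness of the bounded lattice $F$ to get a unique bounded-lattice morphism $h : F \to \aB$, and check $h \circ \sigma_X = \tau \circ h$ by induction, using that $\tau$ is a bounded-lattice anti-endomorphism (Lemma~\ref{lem:interpret_infinite_sai}.1). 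Specializing to $|X|=1$ yields the displayed Hasse diagram; I would confirm the $12$-element structure by explicitly computing the free bounded lattice on two generators $x,\sigma(x)$ and applying the involution, noting that the relations $\sigma\sigma=\mathrm{id}$ collapse it to the pictured poset.

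For the $\SAJ$ case, the idea is that the free one-generated $\SAJ$-algebra is built from the free one-generated $\SAI$-algebra on a generator playing the role of $\sigma(x)$, precomposed with a free ``pre-$\sigma$'' layer. Concretely, by Lemma~\ref{lem:saj_sam_f_as_sai_extensions}.1 every $\SAJ_f$-algebra factors as $\aQ \twoheadrightarrow \sigma[\aQ] \to (\sigma[\aQ])^{\pOp} \hookrightarrow \aQ^{\pOp}$ with $(\sigma[\aQ],\sigma|) \in \SAI_f$; the boxed elements in the diagram are meant to exhibit $\sigma[\aQ]$ as the free $\SAI$-algebra on $\sigma(x)$. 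So I would verify (a) the displayed $12$-element algebra lies in $\SAJ_f$ by checking $\eqnOR$ and $\eqnEx$ against the tabulated action of $\sigma$ (equivalently, that the tabulated $\sigma$ is self-adjoint of type $\aQ \to \aQ^{\pOp}$, via Lemma~\ref{lem:interpret_finite_saj_sam}.1), (b) the boxed subset is the image $\sigma[\aQ]$ and is $\SAI$-isomorphic to the free one-generated $\SAI$-algebra, and (c) the universal property, by extending a map $x \mapsto b$ into any $\SAJ$-algebra $(\aB,\tau)$: the image $\sigma[\aB]$ carries an $\SAI$-structure, the $\SAI$-freeness gives a morphism out of $\sigma[\aQ]$, and one pulls this back along the quotient $\aQ \twoheadrightarrow \sigma[\aQ]$ to get the required unique $\SAJ$-morphism. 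The $\SAM$ case is then immediate by order-duality, since $(\aQ,\sigma) \in \SAM$ iff $(\aQ^{\pOp},\sigma) \in \SAJ$ (Lemma~\ref{lem:sa_vars_basic_relationship}.1), so the free $\SAM$-algebra is obtained by dualizing the free $\SAJ$-algebra.

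The main obstacle I anticipate is \emph{verifying that the displayed finite posets are correct and complete} --- i.e.\ that no join-irreducible or derived element has been omitted and that the tabulated action of $\sigma$ genuinely satisfies the defining equations at every element, not just on generators. For $\SAI$ this reduces to a finite but fiddly computation of the free bounded lattice on two generators together with the involution; for $\SAJ$ and $\SAM$ one must additionally confirm that the freely generated lattice does not collapse further or expand under $\eqnEx$/$\eqnCEx$ alone (without $\eqnInv$), which is precisely where these algebras differ from the $\SAI$ case and where a careless identification could produce the wrong poset. I would control this by computing with the self-adjointness characterization (Lemma~\ref{lem:interpret_finite_saj_sam}) rather than manipulating raw terms, since self-adjoint maps $\aQ \to \aQ^{\pOp}$ are pinned down by their values on join-irreducibles, making the finite check tractable and verifiable against the tabulated data.
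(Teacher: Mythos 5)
Your $\SAI$ argument matches the paper's (free bounded lattice on $X+X$ with the inductively defined involution, universal property checked by induction), and your instinct to control the $\SAJ$ computation via the self-adjointness characterisation is reasonable. But part (3) of your plan contains a genuine error: the free one-generated $\SAM$-algebra is \emph{not} the order-dual of the free one-generated $\SAJ$-algebra. The correspondence $(\aQ,\sigma)\in\SAM_f \iff (\aQ^{\pOp},\sigma)\in\SAJ_f$ from Lemma \ref{lem:sa_vars_basic_relationship}.1 is a bijection on objects only; it is not an isomorphism of categories, because morphisms in both varieties must preserve $\bot$ and $\lor$, and $(-)^{\pOp}$ does not carry join-preserving maps to join-preserving maps (the paper flags exactly this at the end of Definition \ref{def:std_order_theory}.12). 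Free objects therefore do not transfer along $(-)^{\pOp}$. Concretely, the free one-generated $\SAJ$-algebra has $12$ elements while the free one-generated $\SAM$-algebra has $9$: in $\SAM$ the identity $\sigma\sigma(\bot)\approx\bot$ is derivable (Lemma \ref{lem:sa_vars_basic_relationship}.4) but $x\preccurlyeq\sigma(\bot)$ is not, so $x\lor\sigma(\bot)$ is a genuinely new element and $\sigma(\bot)$ is no longer the top. Dualizing the $\SAJ$ diagram produces the wrong algebra; the $\SAM$ case must be computed on its own terms, which is what the paper does by rerunning the method of part (2) with $\sigma\sigma(\bot)\approx\bot$ in place of $x\preccurlyeq\sigma(\bot)$.

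A secondary weakness is the universal-property step for $\SAJ$. The $\SAI$-morphism $\sigma[\aQ]\to\tau[\aB]$ supplied by $\SAI$-freeness preserves the join of $\sigma[\aQ]$, which --- since $\sigma[\aQ]$ is a subalgebra of $\aQ^{\pOp}$ --- is the \emph{meet} of $\aQ$; and one cannot ``pull a morphism back along the quotient $\aQ\twoheadrightarrow\sigma[\aQ]$'': homomorphisms factor through quotients, they do not lift from a quotient to its source. To obtain the required $h:\aQ\to\aB$ you must still define $h$ on the six non-boxed elements as joins of $x$, $\bot$ and boxed elements, and then verify by a finite case analysis that the result preserves $\lor$ and $\sigma$ at every element --- which amounts to showing that each entry of the tabulated $\sigma$-action is derivable from $\eqnOR$ and $\eqnEx$, i.e.\ precisely the term-by-term verification the paper carries out. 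Your factorisation through Lemma \ref{lem:saj_sam_f_as_sai_extensions} relocates that computation but does not avoid it.
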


\begin{proof}
\item
\begin{enumerate}
\item
The depicted finite de morgan algebra is a well-defined bounded lattice because:
\[
\sigma(\bot) = \top
\qquad\qquad
\sigma(x \lor \sigma(x)) = x \land \sigma(x)
\]
i.e.\ we have the bounded lattice structure by Lemma \ref{lem:interpret_infinite_sai}. Then it is closed under the involution $\sigma$ and defines a finite de morgan algebra. Since no additional relations were assumed this is a free one-generated algebra.

\smallskip
Regarding the more general statement, take any set $X$ and let:
\[
\aQ_X := F(X + X)
\qquad
\text{be a free bounded lattice on generators $X + X$ i.e.\ two copies of $X$.}
\]
We may view its elements as equivalence classes of bounded lattice terms in variables $l(x), r(x)$ for $x \in X$. Then $\sigma_X$ is a well-defined involutive bounded lattice isomorphism $\aQ_X \to \aQ_X^{\pOp}$. This follows by the symmetry of the usual equational presentation of bounded lattices, and the fact that we may bijectively relabel variables, so that $\phi \approx \psi \iff \sigma_X(\phi) \approx \sigma_X(\psi)$. Then given $(\aQ,\sigma) \in \SAI$ and elements $el : X \to Q$, we have a unique bounded lattice morphism $\alpha : \aQ_X \to \aQ$ where:
\[
\alpha(l(x)) := el(x)
\qquad
\alpha(r(x)) := \sigma(el(x)),
\]
via the universal property of free bounded lattices since $\aQ$ is a bounded lattice. It remains to establish that $\alpha$ preserves the unary operation. First consider the base case:
\[
\alpha(\sigma_X(l(x))) = \alpha(r(x)) = \sigma(el(x)) = \sigma(\alpha(l(x)))
\qquad
\text{for each $x \in X$.}
\]
As for the inductive case, assuming that $\alpha(\sigma_X(\phi)) = \sigma(\alpha(\phi))$ holds for all $\phi \in \Phi$, then:
\[
\begin{tabular}{lll}
$\alpha(\sigma_X(\sigma_X(\phi)))$
&
$= \alpha(\phi)$
& $\sigma_X$ involutive
\\&
$= \sigma(\alpha(\sigma_X(\phi))$
& by induction, $\sigma$ involutive.
\end{tabular}
\]
\[
\begin{tabular}{lll}
$\alpha(\sigma_X(\Lor_{\aQ_X} \Phi))$
&
$= \alpha(\Land_{\aQ_X} \sigma_X[\Phi])$
& $\sigma_X : \aQ_X \to \aQ_X^{\pOp}$ a bounded lattice morphism
\\&
$= \Lor_\aQ \alpha \circ \sigma_X [\Phi])$
& $\alpha : \aQ_X \to \aQ$ a bounded lattice morphim
\\&
$= \Lor_\aQ \sigma(\alpha[\Phi])$
& by induction
\\&
$= \sigma(\alpha(\Lor_{\aQ_X} \Phi)$
& repeating reasoning in reverse.
\end{tabular}
\]

\item
(Sketch) Ignoring the terms we have a well-defined join-semilattice, which is actually distributive. One may verify that $\sigma$ satisfies the rules $\eqnOR$ and $\eqnEx$, so we have a well-defined finite $\SAJ$-algebra. Now view the elements as their respective term modulo the equational axioms of $\SAJ$. The join-structure is compatible using the fact that $\sigma(\bot)$ is the top element by Lemma \ref{lem:sa_vars_basic_relationship}.4. To see that the unary operation is compatible one verifies that the action is derivable from the equational laws. It suffices to verify a subset of them.
\begin{itemize}
\item
$\sigma(\bot) \approx \sigma(\bot)$ trivially.
\item
$\sigma(\sigma\sigma(\bot)) \approx \sigma(\bot)$ by Lemma \ref{lem:sa_vars_basic_relationship}.3.
\item
$\sigma(x \lor \sigma\sigma(\bot)) \approx \sigma(x)$. Indeed, since $x \preccurlyeq x \lor \sigma\sigma(\bot)$ we obtain $\sigma(x \lor \sigma\sigma(\bot)) \preccurlyeq \sigma(x)$ via $\eqnOR$. Conversely, $\sigma(x) \preccurlyeq \sigma(x \lor \sigma\sigma(\bot)) \iff x \lor \sigma\sigma(\bot) \preccurlyeq \sigma\sigma(x)$ via the adjoint relationship, and hence holds using $\eqnEx$ and by applying $\eqnOR$ twice.

\item
$\sigma(x \lor \sigma(x \lor \sigma(x))) \approx \sigma(x)$. Firstly since $x \preccurlyeq x \lor \sigma(x \lor \sigma(x))$, applying $\eqnOR$ yields half of the desired equality. Conversely, we need to establish that:
\[
\sigma(x)
\preccurlyeq
\sigma(x \lor \sigma(x \lor \sigma(x))).
\]
Applying the adjoint relationship this is equivalent to $x \lor \sigma(x \lor \sigma(x)) \preccurlyeq \sigma\sigma(x)$. Then $x \preccurlyeq \sigma\sigma(x)$ is $\eqnEx$ and finally $\sigma(x \lor \sigma(x)) \preccurlyeq \sigma\sigma(x)$ follows by applying $\eqnOR$ to $\sigma(x) \preccurlyeq x \lor \sigma(x)$.

\end{itemize}

\item
(Sketch) Follows by the method used in (2), noting that $\sigma\sigma(\bot) \cong \bot$ in $\SAM$ by Lemma \ref{lem:sa_vars_basic_relationship}.4. That the action of $\sigma$ is witnessed by various equational proofs is easier than in (2). One only needs to use $\sigma\sigma\sigma(x) \cong \sigma(x)$ and $\sigma(x \lor \sigma(\bot)) \preccurlyeq \sigma\sigma(\bot) \cong \bot$ via $\eqnOR$.

\end{enumerate}
\end{proof}

\begin{corollary}
If $|X| > 1$ then the free $\SAJ$, $\SAM$ and $\SAI$-algebra on $X$ are infinite.
\end{corollary}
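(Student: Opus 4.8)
The plan is to reduce everything to the free $\SAI$-algebra, whose underlying bounded lattice is already pinned down in Proposition \ref{prop:free_one_gen_saj_sam_sai}. First I would recall that, by the general statement in Proposition \ref{prop:free_one_gen_saj_sam_sai}.1, the free $\SAI$-algebra on a set $X$ has as its underlying bounded lattice the free bounded lattice $F(X + X)$ on two disjoint copies of $X$, with $\sigma$ the natural involution interchanging the copies. Hence the cardinality of the free $\SAI$-algebra on $X$ equals that of $F(X + X)$, and it suffices to show the latter is infinite when $|X| > 1$.

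Next I would invoke the classical fact (Whitman) that the free lattice on three generators is infinite. When $|X| > 1$ we have $|X + X| = 2|X| \geq 4 \geq 3$, so choosing any three of the $X + X$ generators, the sublattice of $F(X + X)$ they generate using joins and meets only is a free lattice on three generators, hence infinite. Consequently $F(X + X)$ is infinite, and therefore so is the free $\SAI$-algebra on $X$.

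It remains to transfer this conclusion to $\SAJ$ and $\SAM$. By Lemma \ref{lem:sa_vars_basic_relationship}.2 we have $\SAI = \SAJ \cap \SAM$, so every $\SAI$-algebra is in particular a $\SAJ$-algebra and a $\SAM$-algebra (the equation $\eqnInv$ entails both $\eqnEx$ and $\eqnCEx$). Writing $F_{\SAJ}(X)$ and $F_{\SAI}(X)$ for the respective free algebras, the universal property of $F_{\SAJ}(X)$ applied to the insertion of the generators $X$ into the $\SAJ$-algebra $F_{\SAI}(X)$ yields a $\SAJ$-morphism $F_{\SAJ}(X) \to F_{\SAI}(X)$, and this morphism is surjective since $F_{\SAI}(X)$ is generated by the image of $X$. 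A quotient of a finite algebra is finite, so the infinitude of $F_{\SAI}(X)$ forces $F_{\SAJ}(X)$ to be infinite; the identical argument with $\SAM$ in place of $\SAJ$ disposes of the remaining case.

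The only genuinely external ingredient is Whitman's theorem on the infinitude of the free lattice on three generators; everything else is bookkeeping with universal properties and the explicit description already secured in Proposition \ref{prop:free_one_gen_saj_sam_sai}. The main obstacle, such as it is, lies in making the passage from free lattices to free bounded lattices fully rigorous: one must observe that in $F(X + X)$ the generators freely generate a free (unbounded) lattice as a sublattice, so that the infinite free lattice on three generators genuinely embeds into $F(X + X)$ rather than collapsing against the adjoined $\top$ and $\bot$.
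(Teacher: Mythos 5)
Your proposal is correct and follows essentially the same route as the paper: reduce to the free $\SAI$-algebra via Proposition \ref{prop:free_one_gen_saj_sam_sai}.1, observe that the free bounded lattice on $X+X$ is infinite because it contains a free lattice on three generators, and then conclude for $\SAJ$ and $\SAM$ because their free algebras surject onto the free $\SAI$-algebra. The only cosmetic difference is that the paper exhibits an explicit infinite chain in the free bounded lattice on three generators, whereas you cite Whitman's theorem and note the embedding of the free lattice into the free bounded lattice; both are standard and equally valid.
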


\begin{proof}
By Proposition \ref{prop:free_one_gen_saj_sam_sai}.1, a free $\SAI$-algebra on $X$ is a free bounded lattice on $X + X$ generators equipped with an involution. It is well-known that the free bounded lattice on $3$ generators $\{x,y,z\}$ is infinite e.g.\ we have the strict $<$-chain where $\phi_0 := x$ and $\phi_{n+1} := x \lor (y \land (z \lor (x \land (y \lor (z \land \phi_n)))))$ for all $n \geq 0$. Then if $|X| > 1$ it follows that the free $\SAI$-algebra on $X$ is infinite. Finally, the free $X$-generated $\SAJ$ and $\SAM$-algebra have the free $X$-generated $\SAI$-algebra as a quotient, so they are themselves infinite.
\end{proof}

\subsection{The categories $\UGJ$, $\UGM$ and $\UG$}

% Concerning their objects,
% \begin{itemize}
% \item
% $(\rG,\rE) \in \UGJ$ iff $\rE$ is a symmetric relation, $\rG$ is a relation, and $\rE = \rG ; \rH$ for some $\rH$.

% \item
% $(\rG,\rE) \in \UGM$ iff $\rE$ is a symmetric relation, $\rG$ is a relation, and $\rE = \rH ; \rG$ for some $\rH$.

% \item
% The objects of $\UG$ are precisely the undirected graphs $(V,\rE)$.
% \end{itemize}

\begin{definition}[The three categories corresponding to the varieties]
\label{def:ulg_j_m}
\item
The compositional structure of the categories below is inherited from $\BiCliq$.

\begin{enumerate}
\item
$\UGJ$'s objects are pairs $(\rG,\rE)$ where:
\begin{enumerate}
\item
$\rG \subseteq \rG_s \times \rG_t$ is an arbitrary relation between finite sets,
\item
$\rE \subseteq \rG_s \times \rG_s$ is symmetric and defines a $\BiCliq$-morphism of type $\rG \to \breve{\rG}$.
\end{enumerate}
Its morphisms $\rR : (\rG,\rE_1) \to (\rH,\rE_2)$ are those $\BiCliq$-morphisms $\rR : \rG \to \rH$ such that:
\[
\rR^\up \circ \rE_1^\down = \rH^\up \circ (\rE_2 \fatsemi \breve{\rR})^\down.
\]

\item
$\UGM$'s objects are pairs $(\rG,\rE)$ where:
\begin{enumerate}
\item
$\rG \subseteq \rG_s \times \rG_t$ is an arbitrary relation between finite sets,
\item
$\rE \subseteq \rG_t \times \rG_t$ is symmetric and defines a $\BiCliq$-morphism of type $\breve{\rG} \to \rG$.
\end{enumerate}
Its morphisms $\rR : (\rG,\rE_1) \to (\rH,\rE_2)$ are those $\BiCliq$-morphisms $\rR : \rG \to \rH$ such that:
\[
\rE_2^\down \circ \rR^\up
= (\breve{\rR} \fatsemi \rE_1)^\down \circ \rG^\up.
\]

\item
$\UG$'s objects are the undirected graphs $(V,\rE)$ i.e.\ $V$ is a finite set and $\rE \subseteq V \times V$ is a symmetric relation. Its morphisms $\rR : (V_1,\rE_1) \to (V_2,\rE_2)$ are those $\BiCliq$-morphisms $\rR : \rE_1 \to \rE_2$ such that:
\[
\rR^\up \circ \rE_1^\down = \rE_2^\up \circ \breve{\rR}^\down
\qquad \text{or equivalently} \qquad
\rE_2^\down \circ \rR^\up = \breve{\rR}^\down \circ \rE_1^\up
\]
by De Morgan duality. \endbox
\end{enumerate}

\end{definition}

\smallskip

\begin{note}
  By Lemma \ref{lem:self_adjointness_bicliq_vs_jsl}.1, for any relation $\rE$, requiring it is \emph{symmetric} and defines a $\BiCliq$-morphism $\rG \to \breve{\rG}$ is equivalent to requiring it defines a \emph{self-adjoint} $\BiCliq$-morphism $\rG \to \breve{\rG}$. \endbox
\end{note}

\smallskip

\begin{note}[Concerning the additional constraints on $\BiCliq$-morphisms]
These constraints will be seen to capture the preservation of the unary operation at the algebraic level. We do not \emph{in general} know how to interpret these conditions in a more intuitive fashion. \endbox
\end{note}

\smallskip
Before proving well-definedness we describe $\UG$-morphisms via a single equation i.e.\ without the underlying assumption they are $\Dep$-morphisms.

\begin{lemma}[Characterisation of $\UG$-morphisms]
\item
Given undirected graphs $(V_i,\rE_i)_{i = 1,2}$, a relation $\rR \subseteq V_1 \times V_2$ defines a $\UG$-morphism $\rR : (V_1,\rE_1) \to (V_2,\rE_2)$ iff:
\[
\rR^\up = \rE_2^\up \circ \breve{\rR}^\down \circ \rE_1^\up.
\]
\end{lemma}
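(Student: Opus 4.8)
The plan is to prove the biconditional by unwinding the definition of a $\UG$-morphism and showing it is equivalent to the single equation $\rR^\up = \rE_2^\up \circ \breve{\rR}^\down \circ \rE_1^\up$. Recall that by Definition \ref{def:ulg_j_m}.3, a $\UG$-morphism $\rR : (V_1,\rE_1) \to (V_2,\rE_2)$ is a $\Dep$-morphism $\rR : \rE_1 \to \rE_2$ satisfying $\rR^\up \circ \rE_1^\down = \rE_2^\up \circ \breve{\rR}^\down$. So there are really two things bundled together in the definition: first, that $\rR$ is a $\Dep$-morphism of type $\rE_1 \to \rE_2$, and second, that the displayed commutativity condition holds. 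The single equation in the statement must therefore encode \emph{both} of these facts simultaneously.

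First I would prove the forward direction. Assuming $\rR$ is a $\UG$-morphism, I have $\rR^\up \circ \rE_1^\down = \rE_2^\up \circ \breve{\rR}^\down$ by definition. Precomposing both sides with $\rE_1^\up$ gives $\rR^\up \circ \rE_1^\down \circ \rE_1^\up = \rE_2^\up \circ \breve{\rR}^\down \circ \rE_1^\up$, and the left-hand side is $\rR^\up \circ \cl_{\rE_1}$. Since $\rR : \rE_1 \to \rE_2$ is a $\Dep$-morphism, Lemma \ref{lem:bicliq_mor_char_max_witness}.1 gives $\rR^\up \circ \cl_{\rE_1} = \rR^\up$, so the left-hand side collapses to $\rR^\up$, yielding exactly the claimed equation.

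The harder direction is the converse, where the main obstacle lies: from the single equation I must \emph{recover} that $\rR$ is a $\Dep$-morphism of type $\rE_1 \to \rE_2$, since this is not assumed. The plan is to verify the morphism characterisation $\rR^\up \circ \cl_{\rE_1} = \rR^\up = \inte_{\rE_2} \circ \rR^\up$ from Lemma \ref{lem:bicliq_mor_char_max_witness}.1. For the interior-operator half, I would compute $\inte_{\rE_2} \circ \rR^\up = \rE_2^\up \circ \rE_2^\down \circ \rR^\up$, substitute the hypothesis $\rR^\up = \rE_2^\up \circ \breve{\rR}^\down \circ \rE_1^\up$, and use $\rE_2^\up \circ \rE_2^\down \circ \rE_2^\up = \rE_2^\up$ (rule $(\up\down\up)$ of Lemma \ref{lem:up_down_basic}.3) to collapse back to $\rR^\up$; here the fact that $\rE_2$ is symmetric will let me manipulate $\breve{\rE_2}$-expressions into $\rE_2$-expressions. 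For the closure-operator half, I would compute $\rR^\up \circ \cl_{\rE_1} = \rE_2^\up \circ \breve{\rR}^\down \circ \rE_1^\up \circ \rE_1^\down \circ \rE_1^\up$ and again apply $(\up\down\up)$ to the trailing $\rE_1^\up \circ \rE_1^\down \circ \rE_1^\up = \rE_1^\up$, recovering $\rE_2^\up \circ \breve{\rR}^\down \circ \rE_1^\up = \rR^\up$ by the hypothesis.

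Once $\rR$ is established as a $\Dep$-morphism, the displayed commutativity condition $\rR^\up \circ \rE_1^\down = \rE_2^\up \circ \breve{\rR}^\down$ must be derived from the single equation; this is where I expect the most bookkeeping. Starting from $\rR^\up = \rE_2^\up \circ \breve{\rR}^\down \circ \rE_1^\up$, I would precompose with $\rE_1^\down$ and try to simplify $\breve{\rR}^\down \circ \rE_1^\up \circ \rE_1^\down$, using that $\rE_1^\up \circ \rE_1^\down = \inte_{\rE_1}$ together with the relationships between $\breve{\rR}^\down$ and the closure/interior operators of $\rE_1$ established via De Morgan duality (Lemma \ref{lem:up_down_basic}.4, applied since $\rR$ is now known to be a morphism). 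The symmetry $\rE_1 = \breve{\rE_1}$ and $\rE_2 = \breve{\rE_2}$ is what makes the two equivalent forms in Definition \ref{def:ulg_j_m}.3 interchangeable, and I would lean on it throughout to pass between $\up$ and $\down$ versions of the identities. The main risk is a direction mismatch in the adjoint manipulations, so I would double-check each application of $(\up\dashv\down)$ against the types $\rR^\up : \Pow V_1 \to \Pow V_2$ and $\breve{\rR}^\down : \Pow V_1 \to \Pow V_2$.
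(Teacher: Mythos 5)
Your proposal is correct and follows essentially the same route as the paper's own proof: precompose with $\rE_1^\up$ and use $\rR^\up \circ \cl_{\rE_1} = \rR^\up$ for the forward direction, then for the converse recover the $\Dep$-morphism characterisation via $(\up\down\up)$ applied twice and kill the trailing $\inte_{\rE_1}$ using Lemma \ref{lem:up_down_basic}.4 together with the symmetry of $\rE_1$. No gaps.
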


\begin{proof}
A $\UG$-morphism $\rR : (V_1,\rE_1) \to (V_2,\rE_2)$ is a $\Dep$-morphism hence $\rR^\up = \rR^\up \circ \cl_{\rE_1}$ by Lemma \ref{lem:bicliq_mor_char_max_witness}. Since $\rR^\up \circ \rE_1^\down = \rE_2^\up \circ \breve{\rR}^\down$, precomposing with $\rE_1^\up$ yields the desired equality. Conversely suppose $\rR \subseteq V_1 \times V_2$ satisfies $\rR^\up = \rE_2^\up \circ \breve{\rR}^\down \circ \rE_1^\up$. It follows that $\rR^\up \circ \cl_{\rE_1} = \rR^\up = \inte_{\rE_2} \circ \rR^\up$ by using $(\up\down\up)$ twice i.e.\ $\rR : \rE_1 \to \rE_2$ is a $\BiCliq$-morphism. Precomposing the assumed equality yields $\rR^\up \circ \rE_1^\up = \rE_2^\up \circ \breve{\rR}^\down \circ \inte_{\rE_1} = \rE_2^\up \circ \breve{\rR}^\down$ via Lemma \ref{lem:bicliq_mor_char_max_witness} and Lemma \ref{lem:up_down_basic}.4.
\end{proof}

% [special cases]

% Restricting to $\UGJ$-morphisms $\rR : (\rG,\rE_\rG) \to (\rE,\rE)$, the condition becomes $\rR^\up \circ \rE_\rG^\down = \rE^\up \circ \breve{\rR}^\down$.

% Restricting to $\UGM$-morphisms $\rR : (\rE,\rE) \to (\rH,\rE_2)$, the condition becomes $\rE_2^\down \circ \rR^\up = \breve{\rR}^\down \circ \rE^\up$.

\smallskip

\begin{lemma}
\label{lem:ugj_ugm_ug_well_def}
$\UGJ$, $\UGM$ and $\UG$ are well-defined categories.
\end{lemma}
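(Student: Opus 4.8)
The plan is to prove well-definedness of each of the three categories $\UGJ$, $\UGM$ and $\UG$ by verifying the category axioms: that identity morphisms exist (i.e.\ $id_\rG = \rG$ or $id_{(V,\rE)} = \rE$ satisfies the defining equation), that the defining equations are closed under $\BiCliq$-composition, and that associativity and unit laws are inherited from $\BiCliq$. Since the compositional structure is explicitly inherited from $\BiCliq$ (which is already known to be a well-defined category by the early Lemma), associativity and the unit laws come for free once we check that identities and composites actually lie in the subcollection cut out by the extra constraints. So the real content is two things for each category: (a) the identity-on-$\rG$ morphism satisfies the morphism equation, and (b) the morphism equation is preserved under composition.

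First I would handle $\UG$, since its characterisation via the single equation $\rR^\up = \rE_2^\up \circ \breve{\rR}^\down \circ \rE_1^\up$ (from the preceding Lemma) is cleanest to manipulate. For the identity $\rE : (V,\rE) \to (V,\rE)$, I would check $\rE^\up = \rE^\up \circ \breve{\rE}^\down \circ \rE^\up$, which follows immediately from symmetry $\breve{\rE} = \rE$ together with $(\down\up\down)$ (equivalently $(\up\down\up)$) from Lemma \ref{lem:up_down_basic}.3. For closure under composition, given $\UG$-morphisms $\rR : (V_1,\rE_1) \to (V_2,\rE_2)$ and $\rS : (V_2,\rE_2) \to (V_3,\rE_3)$, I would compute $(\rR \fatsemi \rS)^\up$ using $(\up\fatsemi)$ from Lemma \ref{lem:bicliq_func_comp} as $\rS^\up \circ \rE_2^\down \circ \rR^\up$, then substitute the defining equations for $\rR$ and $\rS$ and push through using De Morgan duality (to switch between the two equivalent forms of the $\UG$-morphism equation), $(\up\down\up)$, and the fact that $\breve{(\rR \fatsemi \rS)} = \rS\spcheck \fatsemi \rR\spcheck = \breve{\rS} \fatsemi \breve{\rR}$ from the self-duality functoriality (Theorem \ref{thm:bicliq_self_dual}). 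The target equation to reach is $(\rR \fatsemi \rS)^\up = \rE_3^\up \circ (\breve{\rR \fatsemi \rS})^\down \circ \rE_1^\up$.

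For $\UGJ$ and $\UGM$, the defining equations are stated directly (not yet reduced to a single clean equation), so I would first verify that the component $\rE$ genuinely defines a self-adjoint $\BiCliq$-morphism $\rG \to \breve{\rG}$ — this is exactly the content of the Note following Definition \ref{def:ulg_j_m} together with Lemma \ref{lem:self_adjointness_bicliq_vs_jsl}.1, so it is available. Then for the identity and composition checks I would again expand composites via $(\up\fatsemi)$ and $(\down\fatsemi)$, repeatedly using $(\up\down\up)$, $(\down\up\down)$, the De Morgan dualities $(\neg\up\neg)$/$(\neg\down\neg)$, symmetry of $\rE$, and the $\BiCliq$-morphism characterisation $\rR^\up \circ \cl_\rG = \rR^\up = \inte_\rH \circ \rR^\up$ from Lemma \ref{lem:bicliq_mor_char_max_witness}.1 to massage the composed constraint into the required form. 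Here I would also exploit that $\UGM$ is essentially the order-dual/converse version of $\UGJ$ (the $\rE = \rH ; \rG$ versus $\rE = \rG ; \rH$ asymmetry), so after proving $\UGJ$ is well-defined I expect the $\UGM$ case to follow by a systematic application of relational converse and De Morgan duality rather than a fresh argument.

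The main obstacle I anticipate is the composition step for $\UGJ$ and $\UGM$, specifically keeping track of which relation ($\rE_1$, $\rE_2$, or the morphism components $\rR$, $\rS$, $\breve{\rR}$, $\breve{\rS}$) the operators $(-)^\up$ and $(-)^\down$ are applied to, since the defining equations mix a morphism constraint with the auxiliary relation $\rE$ on the source or target side. The bookkeeping is delicate because $\rE_2$ appears in the middle of the composite and must be ``absorbed'' correctly using $\rR$'s own morphism equation; the interplay $\rR^\up \circ \rE_1^\down = \rH^\up \circ (\rE_2 \fatsemi \breve{\rR})^\down$ must be combined with $\rS$'s analogous identity without the intermediate relation $\rE_2$ leaking into the final expression. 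I expect the cleanest route is to translate everything into the functional-operator calculus (compositions of $(-)^\up$ and $(-)^\down$ as $\JSL_f$-morphisms) where the equalities become equalities of functions that can be manipulated associatively, rather than reasoning pointwise about relations.
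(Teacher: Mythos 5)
Your plan is sound, and its core — check that identities and composites satisfy the extra constraints, then inherit associativity and the unit laws from $\BiCliq$ — is exactly what the paper does for $\UGJ$ and $\UGM$, using the same toolkit: $(\up\fatsemi)$, $(\down\fatsemi)$, $(\up\down\up)$/$(\down\up\down)$, and the characterisation $\rR^\up \circ \cl_\rG = \rR^\up = \inte_\rH \circ \rR^\up$ from Lemma \ref{lem:bicliq_mor_char_max_witness}. Two differences are worth pointing out. First, for $\UG$ the paper does no computation at all: it observes that a $\BiCliq$-morphism $\rR : \rE_1 \to \rE_2$ is a $\UG$-morphism precisely when $\rR : (\rE_1,\rE_1) \to (\rE_2,\rE_2)$ is a $\UGJ$-morphism, since the $\UGJ$-constraint $\rR^\up \circ \rE_1^\down = \rH^\up \circ (\rE_2 \fatsemi \breve{\rR})^\down$ collapses to $\rR^\up \circ \rE_1^\down = \rE_2^\up \circ \breve{\rR}^\down$ when $\rH = \rE_2$; thus $\UG$ is isomorphic to a full subcategory of the already-verified $\UGJ$ and is well-defined for free. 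Your direct verification of $\UG$ via the single-equation characterisation also closes (the composite computation reduces both sides to $\rS^\up \circ \breve{\rR}^\down$ using $\rS^\up \circ \cl_{\rE_2} = \rS^\up$ and $\cl_{\rE_2} \circ \breve{\rR}^\down = \breve{\rR}^\down$), but it duplicates work you must do anyway for $\UGJ$. Second, a caution about deriving $\UGM$ from $\UGJ$ ``by duality'': the object-level correspondence $(\rG,\rE) \in \UGJ \iff (\breve{\rG},\rE) \in \UGM$ is immediate, but the morphism-level correspondence is only established later (Lemma \ref{lem:ugj_ugm_iso_correspondence}) and only for \emph{isomorphisms}, with a proof that genuinely uses monicity/epicity to trade $\cl_\rR$ for $\cl_\rG$. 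So ``by duality'' must be read as ``dualise each line of the $\UGJ$ composition calculation'' — which does work, and is effectively what the paper does, its two calculations being mirror images — and not as invoking a contravariant isomorphism $\UGJ^{op} \cong \UGM$, which is not available at this stage.
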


\begin{proof}
\item
\begin{enumerate}
\item
We'll show that the $\UGJ$-morphisms are closed under the compositional structure of $\BiCliq$. For each $(\rG,\rE) \in \UGJ$, the $\BiCliq$ identity-morphism $id_\rG = \rG : \rG \to \rG$ defines a $\UGJ$-morphism $(\rG,\rE_\rG) \to (\rG,\rE_\rG)$ because:
\[
\rG^\up \circ (\rE_\rG \fatsemi (id_\rG)\spcheck )^\down
= \rG^\up \circ (\rE_\rG \fatsemi id_{ \breve{\rG} })^\down
= \rG^\up \circ \rE_\rG^\down.
\]
Finally, given any composite $(\rG,\rE_\rG) \xto{\rR} (\rH,\rE_2) \xto{\rS} (\rI,\rE_\rI)$ we calculate:
\[
\begin{tabular}{lll}
$\rI^\up \circ (\rE_\rI \fatsemi (\rR \fatsemi \rS)\spcheck)^\down$
&
$= \rI^\up \circ (\rE_\rI \fatsemi \breve{\rS} \fatsemi \breve{\rR})^\down$
& by functorality
\\&
$= \rI^\up \circ (\rE_\rI \fatsemi \breve{\rS})^\down \circ \breve{\rH}^\up \circ \breve{\rR}^\down$
& by $(\down\fatsemi)$
\\&
$= \rS^\up \circ \rE_2^\down \circ \breve{\rH}^\up \circ \breve{\rR}^\down$
& by assumption
\\&
$= \rS^\up \circ (\rE_2 \fatsemi \rR)^\down$
& by $(\down\fatsemi)$
\\&
$= \rS^\up \circ \rH^\down \circ \rH^\up \circ (\rE_2 \fatsemi \rR)^\down$
& by Lemma \ref{lem:bicliq_mor_char_max_witness}
\\&
$= \rS^\up \circ \rH^\down \circ \rR^\up \circ \rE_\rG^\down$
& by assumption
\\&
$= (\rR \fatsemi \rS)^\up \circ \rE_\rG^\down$
& by $(\up\fatsemi)$.
\end{tabular}
\]

\item
Given any $\UGM$-object $(\rG,\rE_\rG)$, the $\BiCliq$ identity-morphism $id_\rG = \rG$ defines a $\UGM$-morphism $(\rG,\rE_\rG) \to (\rG,\rE_\rG)$:
\[
((id_\rG)\spcheck \fatsemi \rE_\rG)^\down \circ \rG^\up
= (id_{\breve{\rG}} \fatsemi \rE_\rG)^\down \circ \rG^\up
= \rE_\rG^\down \circ \rG^\up.
\]
Given any composite $(\rG,\rE_\rG) \xto{\rR} (\rH,\rE_2) \xto{\rS} (\rI,\rE_\rI)$ we calculate:
\[
\begin{tabular}{lll}
$((\rR \fatsemi \rS)\spcheck \fatsemi \rE_\rG)^\down \circ \rG^\up$
&
$= (\breve{\rS} \fatsemi \breve{\rR} \fatsemi \rE_\rG)^\down \circ \rG^\up$
& by functorality
\\&
$=  \breve{\rS}^\down \circ \breve{\rH}^\up \circ (\breve{\rR}\fatsemi \rE_\rG)^\down \circ \rG^\up$
& by $(\down\fatsemi)$
\\&
$= \breve{\rS}^\down \circ \breve{\rH}^\up \circ \rE_2^\down \circ \rR^\up$
& by assumption
\\&
$= (\breve{\rS} \fatsemi \rE_2)^\down \circ \rR^\up$
& by $(\down\fatsemi)$
\\&
$= (\breve{\rS} \fatsemi \rE_2)^\down \circ \rH^\up \circ \rH^\down \circ \rR^\up$
& using Lemma \ref{lem:up_down_basic}.4
\\&
$= \rE_\rI^\down \circ \rS^\up \circ \rH^\down \circ \rR^\up$
& by assumption
\\&
$= \rE_\rI^\down \circ (\rR \fatsemi \rS)^\up$.
& by $(\up\fatsemi)$
\end{tabular}
\]

\item
Each $\UG$-object $(V,\rE)$ induces a well-defined $\UG_j$-object $(\rE,\rE)$ because $id_\rE = \rE : \rE \to \rE = \breve{\rE}$. In fact, a $\BiCliq$-morphism $\rR : \rE_1 \to \rE_2$ defines a $\UG$-morphism iff $\rR : (\rE_1,\rE_1) \to (\rE_2,\rE_2)$ is a $\UGJ$-morphism, which follows because the constraint on $\UGJ$-morphisms:
\[
\rR^\up \circ \rE_\rG^\down = \rH^\up \circ (\rE_2 \fatsemi \breve{\rR})^\down
\qquad\text{becomes}\qquad
\rR^\up \circ \rE_1^\down = \rE_2^\up \circ (id_{\rE_2} \fatsemi \breve{\rR})^\down = \rE_2^\up \circ \breve{\rR}^\down.
\]
Thus $\UG$ is isomorphic to a full subcategory of the well-defined category $\UGJ$, and hence is itself a well-defined category.
% The equivalent formulation follows by De Morgan duality and $\rE_i\spbreve = \rE_i$ for $i = 1,2$.
\end{enumerate}
\end{proof}

\smallskip

\begin{lemma}[Basic observations concerning the $\UG$, $\UGJ$ and $\UGM$-objects]
\item
\begin{enumerate}
\item
The $\UG$-objects are precisely the undirected graphs.
\item
$(\rG,\rE) \in \UGJ$ iff $(\breve{\rG},\rE) \in \UGM$.
\item
The $\UGJ$-objects are precisely the pairs $(\rG,\rE)$ where $\rE$ defines a self-adjoint $\BiCliq$-morphism $\rG \to \breve{\rG}$.
\item
The $\UGM$-objects are precisely the pairs $(\rG,\rE)$ where $\rE$ defines a self-adjoint $\BiCliq$-morphism $\breve{\rG} \to \rG$.
\end{enumerate}
\end{lemma}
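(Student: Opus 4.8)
The final statement is a four-part lemma characterising the objects of $\UG$, $\UGJ$ and $\UGM$. The plan is to handle each item in turn, leaning heavily on the self-adjointness machinery already developed in Lemma \ref{lem:self_adjointness_bicliq_vs_jsl} and the basic relationship $\SAI = \SAJ \cap \SAM$ style dualities.

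For item (1), I would simply unwind the definition. A $\UG$-object is by Definition \ref{def:ulg_j_m}.3 a pair $(V,\rE)$ where $V$ is finite and $\rE \subseteq V \times V$ is symmetric; this is exactly the definition of an undirected graph from Definition \ref{def:ugraphs_bipartite_ugraphs}.2. So this item is essentially a definitional unfolding and should be immediate. For item (2), I would argue that $(\rG,\rE) \in \UGJ$ holds iff $\rE$ is symmetric and defines a $\BiCliq$-morphism $\rG \to \breve{\rG}$, whereas $(\breve{\rG},\rE) \in \UGM$ holds iff $\rE$ is symmetric and defines a $\BiCliq$-morphism $\breve{\breve{\rG}} \to \breve{\rG}$, i.e.\ $\rG \to \breve{\rG}$ since relational converse is involutive. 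The defining conditions coincide, so the equivalence follows. I would note that here $\rE \subseteq \rG_s \times \rG_s = (\breve{\rG})_t \times (\breve{\rG})_t$, so the typing matches the $\UGM$-object requirement that $\rE \subseteq \rG_t \times \rG_t$ with $\rG$ replaced by $\breve{\rG}$.

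Items (3) and (4) are where the self-adjointness characterisation does the real work. For (3), by Definition \ref{def:ulg_j_m}.1 a $\UGJ$-object is a pair $(\rG,\rE)$ where $\rE$ is symmetric and defines a $\BiCliq$-morphism of type $\rG \to \breve{\rG}$. By Lemma \ref{lem:self_adjointness_bicliq_vs_jsl}.1, a $\BiCliq$-morphism $\rR : \rG \to \rH$ is self-adjoint iff $\rH = \breve{\rG}$ and $\rR = \breve{\rR}$. Applied to $\rE$ typed as $\rG \to \breve{\rG}$, self-adjointness amounts to exactly $\rE = \breve{\rE}$ (symmetry) together with the correct typing, which is built into the $\UGJ$-object definition. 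So the two descriptions match. Item (4) follows identically, using the $\UGM$-object condition that $\rE$ is a symmetric $\BiCliq$-morphism $\breve{\rG} \to \rG$, and again invoking Lemma \ref{lem:self_adjointness_bicliq_vs_jsl}.1 with the roles of $\rG$ and $\breve{\rG}$ interchanged. I would also cross-reference the Note following Definition \ref{def:ulg_j_m}, which already observes that ``symmetric and a $\BiCliq$-morphism $\rG \to \breve{\rG}$'' is equivalent to ``self-adjoint $\BiCliq$-morphism $\rG \to \breve{\rG}$''; items (3) and (4) are essentially that Note repackaged.

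I do not anticipate any genuine obstacle here: all four items are repackagings of definitions plus the single structural fact Lemma \ref{lem:self_adjointness_bicliq_vs_jsl}.1 and the involutivity of converse. The only point requiring mild care is bookkeeping the types in item (2)---making sure that when I replace $\rG$ by $\breve{\rG}$ in the $\UGM$-object condition the relation $\rE \subseteq \rG_s \times \rG_s$ lands in the slot $(\breve{\rG})_t \times (\breve{\rG})_t$ correctly---and similarly tracking the source/target swap between (3) and (4). Since these are purely notational, the whole lemma should compress into a short argument whose substance is entirely carried by the earlier self-adjointness lemma.
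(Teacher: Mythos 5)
Your proof is correct and is exactly the argument the paper intends: the paper in fact supplies no explicit proof for this lemma, treating all four items as immediate from Definition \ref{def:ulg_j_m}, the involutivity of relational converse, and the Note invoking Lemma \ref{lem:self_adjointness_bicliq_vs_jsl}.1. Your type bookkeeping in item (2) and the reduction of items (3) and (4) to that self-adjointness characterisation is precisely what the omitted proof would say.
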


\smallskip
Next a simple yet important characterisation.
Recall the standard polarity $\rE^\Upa : \Pow V \to \Pow V$ from Definition \ref{def:polarities}. It has action $\rE^\Upa(X) = \bigcap_{x \in X} \rE[x]$ by Lemma \ref{lem:polarities_basic}.
\smallskip

\begin{lemma}[Characterisation of the $\UGJ$-objects and the $\UGM$-objects]
\label{lem:char_ug_jm}
\item
\begin{enumerate}
\item
Given any bipartite graph $\rG \subseteq V \times \rG_t$ and symmetric relation $\rE \subseteq V \times V$ t.f.a.e.\
\begin{enumerate}[a.]
\item
$(\rG,\rE) \in \UGJ$.
\item
$\rE = \rG ; \rH$ for some relation $\rH \subseteq \rG_t \times V$.
\item
$\forall (v_1,v_2) \in \rE$ there exists $g_t \in \rG_t$ such that $v_1 \in \breve{\rG}[g_t]$ and $\forall v \in \breve{\rG}[g_t].\rE(v,v_2)$.
\item
$\rE = \bigcup_{g_t \in \rG_t} \, \breve{\rG}[g_t] \times \rE^\Upa(\breve{\rG}[g_t])$.
\item
$\rE^\up = \inte_{\breve{\rG}} \circ \rE^\up$.

\end{enumerate}
\item
Given any bipartite graph $\rG \subseteq \rG_s \times V$ and symmetric relation $\rE \subseteq V \times V$ t.f.a.e.\
\begin{enumerate}[a.]
\item
$(\rG,\rE) \in \UGM$.
\item
$\rE = \rH ; \rG$ for some relation $\rH \subseteq V \times \rG_s$.
\item
$\forall (v_1,v_2) \in \rE$ there exists $g_s \in \rG_s$ such that $v_1 \in \rG[g_s]$ and $\forall v \in \rG[i].\rE(v,v_2)$.
\item
$\rE = \bigcup_{g_s \in \rG_s} \, \rG[g_s] \times \rE^\Upa(\rG[g_s])$.
\item
$\rE^\up = \inte_\rG \circ \rE^\up$.
\end{enumerate}
\end{enumerate}
\end{lemma}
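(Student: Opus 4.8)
The statement is a five-way equivalence, so the natural strategy is to prove a cycle of implications (or show each characterization is equivalent to the defining condition (a)) and then obtain the $\UGM$ half by order-duality. Since part (2) follows from part (1) by the relation $(\rG,\rE)\in\UGM\iff(\breve{\rG},\rE)\in\UGJ$ established in the preceding Lemma, I would concentrate entirely on (1) and then invoke this duality, substituting $\rG\mapsto\breve{\rG}$ and swapping the roles of $\rG_s$ and $\rG_t$, together with $\breve{\breve{\rG}}=\rG$. This reduces the work to roughly half.

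For part (1) itself, I would first unwind the definition of $\UGJ$. By definition (a) says $\rE$ is symmetric and defines a $\BiCliq$-morphism $\rG\to\breve{\rG}$; but by the Note citing Lemma \ref{lem:self_adjointness_bicliq_vs_jsl}.1, requiring $\rE$ symmetric and a morphism $\rG\to\breve{\rG}$ is equivalent to $\rE$ being a \emph{self-adjoint} $\BiCliq$-morphism of that type. So the cleanest starting point is $(a)\iff(e)$: a relation $\rE\subseteq V\times V$ with $\rE=\breve{\rE}$ defines a $\BiCliq$-morphism $\rG\to\breve{\rG}$ precisely when $\rE^\up=\inte_{\breve{\rG}}\circ\rE^\up$. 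This is exactly the morphism-characterization of Lemma \ref{lem:bicliq_mor_char_max_witness}.1 applied to $\rE:\rG\to\breve{\rG}$ (whose codomain relation is $\breve{\rG}$, giving the interior operator $\inte_{\breve{\rG}}$), once one checks that the symmetry of $\rE$ makes the two conditions $\rE^\up\circ\cl_\rG=\rE^\up$ and $\inte_{\breve{\rG}}\circ\rE^\up=\rE^\up$ collapse into the single displayed one via Lemma \ref{lem:up_down_basic}.4. I expect $(a)\iff(e)$ to be the most conceptual step, and the key place where self-adjointness is used.

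The remaining equivalences $(b),(c),(d)$ are concrete reformulations. For $(b)\iff(e)$: if $\rE=\rG;\rH$ then $\rE^\up=\rH^\up\circ\rG^\up$ by $(\up\circ)$, and since any image $\rG^\up(X)$ is $\breve{\rG}$-closed/$\rG$-open the identity $\inte_{\breve{\rG}}\circ\rE^\up=\rE^\up$ should follow from $(\up\down\up)$ after rewriting $\inte_{\breve{\rG}}=\breve{\rG}^\up\circ\breve{\rG}^\down$ via De Morgan duality; conversely, the maximum-witness relation $\rE_+$ from Definition \ref{def:bicliq_mor_components} supplies the factoring $\rH$. The chain $(c)\iff(d)$ is purely set-theoretic: condition (d) rewrites $\rE$ as a union of products $\breve{\rG}[g_t]\times\rE^\Upa(\breve{\rG}[g_t])$, and using Lemma \ref{lem:polarities_basic}.1 that $\rE^\Upa(\breve{\rG}[g_t])=\bigcap_{v\in\breve{\rG}[g_t]}\rE[v]=\{v_2:\forall v\in\breve{\rG}[g_t].\,\rE(v,v_2)\}$, one sees that $(v_1,v_2)\in\rE$ lies in the union exactly when some $g_t$ witnesses $v_1\in\breve{\rG}[g_t]$ with $\breve{\rG}[g_t]\subseteq\rE^\Dna(\{v_2\})$, which is precisely clause (c). Finally $(d)\iff(e)$ should follow by translating the explicit union into the operator identity: the right-hand side of (d) is the $\breve{\rG}$-interior applied rowwise, matching $\inte_{\breve{\rG}}\circ\rE^\up$.

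\textbf{Anticipated obstacle.} The main delicacy is bookkeeping the \emph{two different} closure/interior operators attached to $\rG$ versus $\breve{\rG}$, since $\rE:\rG\to\breve{\rG}$ has domain relation $\rG$ but codomain relation $\breve{\rG}$. Getting the De Morgan dualities of Lemma \ref{lem:up_down_basic}.3 to line up correctly — so that $\inte_{\breve{\rG}}$ rather than $\inte_\rG$ appears, and so that the symmetry $\rE=\breve{\rE}$ is deployed at exactly the right moment to identify $\rE^\up\circ\cl_\rG$ with $\inte_{\breve{\rG}}\circ\rE^\up$ — is where I expect to spend the most care; the rest is routine application of the cited image/preimage identities.
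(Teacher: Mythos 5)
Your plan is essentially the paper's proof: part (2) is obtained from part (1) via $(\rG,\rE)\in\UGM\iff(\breve{\rG},\rE)\in\UGJ$ together with the symmetry of $\rE$, and part (1) is settled by the morphism characterisation, the maximum witnesses, and the polarity description of $\rE^\Upa$. The only structural difference is cosmetic: the paper uses (b) as the hub (proving $a\iff b$, $b\iff c$, $c\iff d$, $b\iff e$), whereas you use (e); both implication graphs are connected, so either works. One step of your sketch needs a small repair, though: in $(b)\Rightarrow(e)$ you argue that $\rE=\rG;\rH$ gives $\rE^\up=\rH^\up\circ\rG^\up$ and that $(\up\down\up)$ then yields $\inte_{\breve{\rG}}\circ\rE^\up=\rE^\up$, but $\rH^\up\circ\rG^\up(X)=\rH[\rG[X]]$ is not visibly a $\breve{\rG}$-open set — the relevant interior operator lives on $\Pow V=\Pow\breve{\rG}_t$, not on $\Pow\rG_t$. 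You must first invoke $\rE=\breve{\rE}$ to rewrite $\rE=\rG;\rH$ as $\rE=\breve{\rH};\breve{\rG}$, so that $\rE[X]=\breve{\rG}[\breve{\rH}[X]]\in O(\breve{\rG})$ and $(\up\down\up)$ applied to $\breve{\rG}$ finishes the forward direction; this is exactly how the paper phrases its $(b\iff e)$ step (``(e) is equivalent to $\forall v.\,\rE[v]\in O(\breve{\rG})$, which is equivalent to $\rE=\rH;\breve{\rG}$''). You correctly anticipated that deploying the symmetry at the right moment is the delicate point — this is that moment.
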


\begin{proof}
The second collection of equivalent statements follows from the first because $(\rG,\rE) \in \UGM \iff (\breve{\rG},\rE) \in \UGJ$, and moreover $\rE = \breve{\rG};\rH \iff \rE = \breve{\rH};\rG$ since $\rE$ is symmetric. We verify the first collection of equivalences.
\begin{itemize}
\item
$(a \iff b)$: Given a $\BiCliq$-morphism $\rE : \rG \to \breve{\rG}$ then $\rE = \rG;\rE_+\spbreve$ so we may choose $\rH := \rE_+\spbreve$. Conversely, if $\rE = \rG ; \rH$ then since $\rE$ is symmetric we have witnesses $\breve{\rH};\breve{\rG} = \rE = \rG;\rH$, hence $\rE$ defines a $\BiCliq$-morphism $\rG \to \breve{\rG}$.

\item
$(b \iff c)$: Suppose that $\rE = \rG ; \rH$. Then given $(v_1,v_2) \in \rE$ there exists $g_t \in \rG_t$ such that $\rG(v_1,g_t)$ and $\rH(g_t,v_2)$. Thus $v_1 \in \breve{\rG}[g_t]$ and for any $v \in \breve{\rG}[g_t]$ we have $\rG(v,g_t) \, \land \, \rH(g_t,v_2)$ and hence $\rE(v,v_2)$.

For the other implication, suppose that (c) holds and define:
\[
\rH := \{ (g_t,v) \in \rG_t \times V : \forall u \in \breve{\rG}[g_t].\rE(u,v)  \}.
\]
Then whenever $\rG(v_1,g_t) \, \land \, \rH(g_t,v_2)$ we deduce $\rE(v_1,v_2)$ by instantiating $u := v_1$, so that $\rG ; \rH \subseteq \rE$. For the converse inclusion, if $\rE(v_1,v_2)$ then by assumption there exists $g_t \in \rG_t$ such that $\rG(v_1,g_t) \, \land \, \rH(g_t,v_2)$.

% \item
$(c \iff d)$: For any $\rG \subseteq V \times \rG_t$ we have $\bigcup_{g_t \in \rG_t} \breve{\rG}[g_t] \times \rE^\Upa(\breve{\rG}[g_t]) \subseteq \rE$, seeing as $\rE^\Upa(\breve{\rG}[g_t])$ consists precisely of those vertices $v \in V$ which are adjacent in $\rE$ to every $u \in \breve{\rG}[g_t]$. To see that $(c)$ is equivalent to the converse inclusion, observe that $\forall v \in \breve{\rG}[g_t].\rE(v,v_2)$ holds iff $v_2 \in \rE^\Upa(\breve{\rG}[g_t]) = \bigcap_{v \in \breve{\rG}[g_t]} \rE[v]$.

\item
$(b \iff e)$: Since open sets are closed under unions, (e) is equivalent to $\forall v \in V.\rE[v] \in O(\breve{\rG})$. But this in turn is equivalent to assuming $\rE = \rH ; \breve{\rG}$ for some relation $\rH$.

\end{itemize}
\end{proof}

Recall $\rE_- = \rE_+$ for any self-adjoint $\BiCliq$-morphism $\rE : \rG \to \breve{\rG}$.

\begin{lemma}[Associated component of self-adjoint $\BiCliq$-morphisms]
\label{lem:self_adjoint_component_description}
If $\rE \subseteq V \times V$ is a self-adjoint morphism $\rE : \rG \to \breve{\rG}$ then:
\[
\rE_- = \rE_+ = \overline{\overline{\rE} ; \rG} \; \subseteq \; V \times \rG_t
\]
and also $\rE_-\spbreve[g_t] = \rE^\Upa(\breve{\rG}[g_t])$ for every $g_t \in \rG_t$.
\end{lemma}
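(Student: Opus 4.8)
<br>

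The plan is to prove the two claimed equalities by computing the relevant component relation directly from Definition \ref{def:bicliq_mor_components} and the morphism characterisation results. I would begin by recalling that since $\rE : \rG \to \breve{\rG}$ is self-adjoint, Lemma \ref{lem:self_adjointness_bicliq_vs_jsl}.1 gives $\rE = \breve{\rE}$ and $\rE_- = \rE_+$, so it suffices to compute a single component, say $\rE_-$. By Definition \ref{def:bicliq_mor_components} applied to the morphism $\rE : \rG \to \breve{\rG}$ (noting the codomain relation is $\breve{\rG}$), we have
\[
\rE_- = \{ (v, g_t) \in V \times \rG_t : g_t \in \breve{\rG}^\down(\rE[v]) \},
\]
using that $(\breve{\rG})^\down$ is the preimage function for the codomain. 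The key computation is then to unwind $g_t \in \breve{\rG}^\down(\rE[v])$ via the definition of $(-)^\down$ from Definition \ref{def:up_down}, which reads $\breve{\rG}[g_t] \subseteq \rE[v]$, i.e.\ every $u$ with $\rG(u,g_t)$ satisfies $\rE(v,u)$.

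First I would show the set-theoretic equality $\rE_- = \overline{\overline{\rE};\rG}$. Unwinding the complement composite, $\overline{\rE};\rG(v,g_t)$ holds iff there exists $u$ with $\overline{\rE}(v,u)$ and $\rG(u,g_t)$, that is, iff there exists $u \in \breve{\rG}[g_t]$ with $\neg\rE(v,u)$. Negating, $(v,g_t) \in \overline{\overline{\rE};\rG}$ iff for all $u \in \breve{\rG}[g_t]$ we have $\rE(v,u)$, which is exactly the condition $\breve{\rG}[g_t] \subseteq \rE[v]$ characterising $\rE_-$. This is a routine De Morgan shuffle between the quantified condition and the complemented relational composite, so I would write it as a short chain of iff's carefully tracking the converse, using that $\rE$ is symmetric only where needed (here it is not strictly necessary for the first equality, but it ensures the two components coincide).

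For the second claim, $\rE_-\spbreve[g_t] = \rE^\Upa(\breve{\rG}[g_t])$, I would compute $\rE_-\spbreve[g_t] = \{ v : (v,g_t) \in \rE_- \} = \{ v : \breve{\rG}[g_t] \subseteq \rE[v] \}$ from the characterisation just obtained. Then by the description of the standard polarity in Lemma \ref{lem:polarities_basic}.1, namely $\rE^\Upa(X) = \{ v : \forall u \in X.\, \rE(u,v) \}$, I need to match $\{ v : \forall u \in \breve{\rG}[g_t].\, \rE(v,u) \}$ with $\rE^\Upa(\breve{\rG}[g_t])$. Here the symmetry of $\rE$ is essential: it lets me swap $\rE(v,u)$ and $\rE(u,v)$ so that membership in $\rE_-\spbreve[g_t]$ coincides exactly with membership in $\rE^\Upa(\breve{\rG}[g_t])$.

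I do not expect a serious obstacle here, since both equalities reduce to unwinding definitions of $(-)^\down$, the complement composite, and the polarity. The one point requiring care — and the closest thing to the ``main obstacle'' — is bookkeeping the converse and the direction of the edges in the two equivalent descriptions, making sure the quantifier $\forall u \in \breve{\rG}[g_t]$ attaches to the correct relational argument and that the symmetry of $\rE$ is invoked precisely where needed to identify the condition with the polarity $\rE^\Upa$. I would present the argument as two short displayed chains of equivalences, one for each claimed equality, rather than grinding through any further calculation.
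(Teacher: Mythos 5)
Your proof is correct, but it takes a genuinely different (and more elementary) route than the paper's. You compute $\rE_-$ directly from Definition \ref{def:bicliq_mor_components}, unwind $g_t \in \breve{\rG}^\down(\rE[v])$ to $\breve{\rG}[g_t] \subseteq \rE[v]$, and then match this against $\overline{\overline{\rE};\rG}$ by a quantifier negation and against $\rE^\Upa(\breve{\rG}[g_t])$ via symmetry of $\rE$. The paper instead works in the opposite direction: it invokes the maximality of the associated components (Lemma \ref{lem:bicliq_mor_char_max_witness}.2) together with the characterisation of $\UGJ$-objects (Lemma \ref{lem:char_ug_jm}.1), which exhibits $\rE = \rS;\breve{\rG}$ with $\rS(v,g_t) :\iff v \in \rE^\Upa(\breve{\rG}[g_t])$, deduces $\rE^\Upa(\breve{\rG}[g_t]) = \rE_-\spbreve[g_t]$ by a maximality argument in both directions, and only then recovers $\overline{\overline{\rE};\rG}$ by applying the De Morgan dual form of the polarity $\rE^\Upa = \neg_V \circ \overline{\rE}^\up$. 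Your version buys directness — it needs nothing beyond the definition of the component and a De Morgan shuffle, and it makes transparent that the identity $\rE_- = \overline{\overline{\rE};\rG}$ holds for any morphism into $\breve{\rG}$ with symmetry only entering for $\rE_- = \rE_+$ and the polarity rewriting; the paper's version buys a reuse of the structural machinery already set up for $\UGJ$ and makes the polarity description the primary object. Both are complete; your bookkeeping of the converse and the quantifier placement is accurate throughout.
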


\begin{proof}
By Lemma \ref{lem:self_adjointness_bicliq_vs_jsl}.2 it suffices to establish the equality $\rE_- = \overline{\overline{\rE} ; \rG}$; the other claim will follow on the way. First recall that for any $\BiCliq$-morphism $\rR : \rH_1 \to \rH_2$, its associated components $(\rR_-,\rR_+)$ are the maximum witnesses by Lemma \ref{lem:bicliq_mor_char_max_witness}.2. That is, whenever $\rR_l ; \rH_2 = \rR = \rH_1 ; \rR_r\spbreve$ then $(\rR_l,\rR_r)$ pairwise include into $(\rR_-,\rR_+)$. Applied to $\rR := \rE$ we deduce that $\rE_- = \rE_+$ is the largest relation $\rS \subseteq \rG_s \times \rG_t$ such that $\rE = \rS ; \breve{\rG}$. Since $(\rG,\rE) \in \UGJ$ via our assumption, it follows by Lemma \ref{lem:char_ug_jm}.1 that:
\[
\rE = \bigcup_{g_t \in \rG_t} \rE^\Upa(\breve{\rG}[g_t]) \times \breve{\rG}[g_t]
\quad\text{or equivalently}\quad
\text{$\rE = \rS ; \breve{\rG}$ where $\rS(v,g_t) :\iff v \in \rE^\Upa(\breve{\rG}[g_t])$}.
\]
Then by maximality we have $\rE^\Upa(\breve{\rG}[g_t]) \subseteq \rE_-\spbreve[g_t]$ for every $g_t \in \rG_t$, whereas the converse inclusions follow because if $v \nin \rE^\Upa(\breve{\rG}[g_t])$ then $\{v\} \times \breve{\rG}[g_t] \nsubseteq \rE$. Finally, we can rewrite these equalities by recalling the original definition of polarities i.e.\ as the `de morgan dual' $\rE^\Upa = \neg_V \circ \overline{\rE}^\up$.
\[
\begin{tabular}{lll}
$\rE_-(v,g_t)$
&
$\iff v \in \rE_-\spbreve[g_t]$
\\&
$\iff v \in \rE^\Upa(\breve{\rG}[g_t])$
& by above reasoning
\\&
$\iff v \in \neg_V \circ \overline{\rE}^\up(\breve{\rG}[g_t])$
& by definition of $(-)^\Upa$
\\&
$\iff v \in \neg_V (\breve{\rG} ; \overline{\rE}[g_t])$
& using $(; \, \up)$
\\&
$\iff v \in \overline{\breve{\rG} ; \overline{\rE}}[g_t]$
& property of complement relations
\\&
$\iff \overline{\breve{\rG} ; \overline{\rE}}(g_t,v)$
\\&
$\iff \overline{\overline{\rE} ; \rG}(v,g_t)$
& take converse, see below.
\end{tabular}
\]
For the final step recall that the complement and converse of arbitrary relations commute, and $\rE$ is symmetric.
\end{proof}

\subsection{$\UGJ$, $\UGM$ and $\UG$ -- some structural lemmas}

We now prove a number of useful lemmas. These results mirror certain properties of the finite algebras of $\SAJ$, $\SAM$ and $\SAI$. They will be easier to understand once the categorical equivalences have been proved.
\smallskip

\begin{lemma}[The diagonals of $\UGJ$ and $\UGM$ are equal and isomorphic to $\UG$]
\label{lem:ug_iso_to_diagonals}
\item
\begin{enumerate}
\item
$\UG$ is isomorphic to the full subcategory of $\UGJ$ with objects $(\rE,\rE)$ where $\breve{\rE} = \rE$. The witnessing identity-on-morphisms functor has action $\rE \mapsto (\rE,\rE)$.

\item
$\UG$ is isomorphic to the full subcategory of $\UGM$ with objects $(\rE,\rE)$ where $\breve{\rE} = \rE$. The witnessing identity-on-morphisms functor has action $\rE \mapsto (\rE,\rE)$.
\end{enumerate}
\end{lemma}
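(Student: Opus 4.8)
The plan is to establish both isomorphisms by exhibiting the claimed identity-on-morphisms functors and verifying they are full, faithful and injective-on-objects. I will focus on statement (1); statement (2) then follows by a dual argument using the order-reversing relationship $(\rG,\rE) \in \UGJ \iff (\breve{\rG},\rE) \in \UGM$ from Lemma \ref{lem:char_ug_jm}, or equivalently by restricting the analogous computation with the roles of the constraint equations swapped.

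For (1), first I would define the candidate functor $\Phi : \UG \to \UGJ$ on objects by $\Phi(V,\rE) := (\rE,\rE)$ and check this lands in $\UGJ$: the relation $\rE \subseteq V \times V$ is symmetric by hypothesis, and I must confirm it defines a self-adjoint $\BiCliq$-morphism $\rE \to \breve{\rE}$. Since $\breve{\rE} = \rE$, this is exactly the statement that $id_\rE = \rE : \rE \to \rE$ is a $\BiCliq$-morphism, which holds trivially, and its self-adjointness follows from $\rE$ being symmetric together with Lemma \ref{lem:self_adjointness_bicliq_vs_jsl}.1. On morphisms $\Phi$ is the identity: a $\UG$-morphism $\rR : (V_1,\rE_1) \to (V_2,\rE_2)$ is by definition a $\BiCliq$-morphism $\rR : \rE_1 \to \rE_2$ satisfying $\rR^\up \circ \rE_1^\down = \rE_2^\up \circ \breve{\rR}^\down$. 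I need to check this is precisely the condition for $\rR : (\rE_1,\rE_1) \to (\rE_2,\rE_2)$ to be a $\UGJ$-morphism. Writing out the $\UGJ$-constraint with $\rG := \rE_1$, $\rH := \rE_2$ and second components equal to $\rE_1,\rE_2$, it reads $\rR^\up \circ \rE_1^\down = \rE_2^\up \circ (\rE_2 \fatsemi \breve{\rR})^\down$; using $id_{\rE_2} = \rE_2$ as the $\BiCliq$-identity and the fact that $\rE_2 \fatsemi \breve{\rR} = \breve{\rR}$ for $\breve{\rR} : \rE_2 \to \cdots$, this collapses to $\rR^\up \circ \rE_1^\down = \rE_2^\up \circ \breve{\rR}^\down$, exactly the $\UG$-condition. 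This is essentially the computation already carried out in the proof of Lemma \ref{lem:ugj_ugm_ug_well_def}.3, so I would cite that.

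Having matched the morphism conditions, fullness and faithfulness are immediate since $\Phi$ is the identity on hom-sets and composition in both categories is inherited from $\BiCliq$; functoriality (preservation of identities and composites) is likewise inherited. To pin down that the image is exactly the full subcategory on objects of the form $(\rE,\rE)$ with $\breve{\rE} = \rE$, I would observe that $\Phi$ is injective on objects (distinct undirected graphs give distinct pairs) and that every $\UGJ$-object $(\rE,\rE)$ with symmetric $\rE$ arises as $\Phi(V,\rE)$ where $V = \rE_s = \rE_t$; fullness of $\Phi$ onto this subcategory is the morphism-condition equivalence just established. Thus $\Phi$ is an isomorphism onto the named full subcategory.

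I do not anticipate a genuine obstacle here, since the hard content — well-definedness of all three categories and the precise translation between the $\UG$ and $\UGJ$ morphism constraints — is already discharged in Lemma \ref{lem:ugj_ugm_ug_well_def}. The only point requiring a little care is the bookkeeping that the $\BiCliq$-identity on $\rE$ really is the relation $\rE$ itself (so that $\rE \fatsemi \breve{\rR} = \breve{\rR}$), which rests on $id_\rG = \rG$ in $\Dep$ and on $\breve{\rE} = \rE$; I would state this explicitly to justify collapsing the $\UGJ$-constraint. The main risk is purely notational, namely keeping the source/target typing of $\rE$, $\breve{\rE}$ and the witnessing relations straight throughout, rather than any conceptual difficulty.
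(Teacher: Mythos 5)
Your proposal is correct and follows essentially the same route as the paper: both reduce statement (1) to the observation, already made in the proof of Lemma \ref{lem:ugj_ugm_ug_well_def}.3, that the $\UGJ$-constraint on a morphism $(\rE_1,\rE_1) \to (\rE_2,\rE_2)$ collapses (via $id_{\rE_2} = \rE_2$) to the $\UG$-constraint, and then note that everything else is inherited from $\BiCliq$. For (2) the paper simply performs the analogous one-line collapse of the $\UGM$-constraint to the other equivalent form of the $\UG$-condition, which is the second of the two options you sketch.
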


\begin{proof}
\item
\begin{enumerate}
\item
We already observed this in the proof of Lemma \ref{lem:ugj_ugm_ug_well_def}.3.

\item
As above, also because the constraint on $\UGM$-morphisms:
\[
\rE_2^\down \circ \rR^\up
= (\breve{\rR} \fatsemi \rE_\rG)^\down \circ \rG^\up.
\qquad\text{becomes}\qquad
\rE_2^\down \circ \rR^\up
= (\breve{\rR} \fatsemi id_{\rE_1})^\down \circ \rE_1^\up
= \breve{\rR}^\down \circ \rE_1^\up
\]
which is one of the two equivalent constraints on $\UG$-morphisms.
\end{enumerate}
\end{proof}

\begin{lemma}[Reflection of $\BiCliq$-isomorphisms]
\label{lem:ug_reflect_iso}
\item
The three forgetful functors from $\UGJ$, $\UGM$ and $\UG$ to $\BiCliq$ reflect isomorphisms.
\begin{enumerate}
\item
If $\rR : (\rG,\rE_1) \to (\rH,\rE_2)$ is a $\UGJ$-morphism and a $\BiCliq$-isomorphism then its inverse is a $\UGJ$-morphism.
\item
If $\rR : (\rG,\rE_1) \to (\rH,\rE_2)$ is a $\UGM$-morphism and a $\BiCliq$-isomorphism then its inverse is a $\UGM$-morphism.
\item
If $\rR : (V_1,\rE_1) \to (V_2,\rE_2)$ is a $\UG$-morphism and a $\BiCliq$-isomorphism then its inverse is a $\UG$-morphism.

\end{enumerate}
\end{lemma}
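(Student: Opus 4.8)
The plan is to handle the three cases in a single thread, reducing everything to the $\UGJ$ case. First I would observe that the $\UG$ case is immediate from the $\UGJ$ case: by Lemma \ref{lem:ug_iso_to_diagonals}.1, $\UG$ is isomorphic to the \emph{full} subcategory of $\UGJ$ on the diagonal objects $(\rE,\rE)$ with $\breve{\rE}=\rE$. So a $\UG$-morphism $\rR:(V_1,\rE_1)\to(V_2,\rE_2)$ that is a $\BiCliq$-isomorphism corresponds to a $\UGJ$-morphism $(\rE_1,\rE_1)\to(\rE_2,\rE_2)$ that is a $\BiCliq$-isomorphism; once the $\UGJ$-statement is known, its inverse is a $\UGJ$-morphism, and since both objects are diagonal and the subcategory is full, the inverse lands in $\UG$. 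Likewise, the $\UGM$ case follows from $\UGJ$ via the self-duality $(-)\spcheck$ of $\BiCliq$: using De Morgan duality (the rules $(\neg\up\neg)$, $(\neg\down\neg)$ of Lemma \ref{lem:up_down_basic}.3) together with the object-level bijection $(\rG,\rE)\in\UGJ \iff (\breve{\rG},\rE)\in\UGM$, one checks that $\rR$ is a $\UGM$-morphism iff $\breve{\rR}$ is a $\UGJ$-morphism. Since $(-)\spcheck$ sends $\BiCliq$-isomorphisms to $\BiCliq$-isomorphisms with $(\breve{\rR})^{-1}=\breve{\rR^{-1}}$ by Lemma \ref{lem:bicliq_iso_adj_inverse_commutes}, applying the $\UGJ$-result to $\breve{\rR}$ and dualising back yields the $\UGM$-result for $\rR^{-1}$.

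The core is therefore the $\UGJ$ case. Writing $\rS:=\rR^{-1}:\rH\to\rG$, I must derive the defining identity for $\rS$, namely $\rS^\up\circ\rE_2^\down=\rG^\up\circ(\rE_1\fatsemi\breve{\rS})^\down$, from the corresponding identity for $\rR$. The first step is to eliminate the $\fatsemi$: by $(\down\fatsemi)$ of Lemma \ref{lem:bicliq_func_comp} the hypothesis becomes $\rR^\up\circ\rE_1^\down=\rH^\up\circ\rE_2^\down\circ\breve{\rH}^\up\circ\breve{\rR}^\down$, and the goal becomes $\rS^\up\circ\rE_2^\down=\rG^\up\circ\rE_1^\down\circ\breve{\rG}^\up\circ\breve{\rS}^\down$, where $\breve{\rS}=(\breve{\rR})^{-1}$ by Lemma \ref{lem:bicliq_iso_adj_inverse_commutes}. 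Next I would record a clean expression for $\rS^\up$: since $\rS:\rH\to\rG$ is a $\BiCliq$-morphism, Lemma \ref{lem:bicliq_mor_char_max_witness}.1 gives $\rS^\up=\inte_\rG\circ\rS^\up=\rG^\up\circ\rG^\down\circ\rS^\up$, and the isomorphism equation $\rG^\down\circ\rS^\up=\rR^\down\circ\rH^\up$ from Lemma \ref{lem:bicliq_iso_from_rel}(a) then yields $\rS^\up=\rG^\up\circ\rR^\down\circ\rH^\up$. Because all four maps $\rR^\up,\rR^\down,\rE_i^\up,\rE_i^\down$ are adjoint $\JSL_f$-morphisms with $(\rA^\up)_*=\rA^\down$ and $(\rA^\down)_*=\rA^\up$, I would then pass to adjoints throughout: the goal is equivalent to $\rE_2^\up\circ\rS^\down=\breve{\rS}^\up\circ\breve{\rG}^\down\circ\rE_1^\up\circ\rG^\down$, and the adjoint of the hypothesis reads $\rE_1^\up\circ\rR^\down=\breve{\rR}^\up\circ\breve{\rH}^\down\circ\rE_2^\up\circ\rH^\down$.

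The main obstacle will be bridging these two adjointed identities, which differ by the interchange $\rR\leftrightarrow\rS$, $\rG\leftrightarrow\rH$, $\rE_1\leftrightarrow\rE_2$ that is \emph{not} a priori available. The plan here is to feed the isomorphism equations of Lemma \ref{lem:bicliq_iso_from_rel} (in all four forms, for both $\rR:\rG\to\rH$ and its converse $\breve{\rR}:\breve{\rH}\to\breve{\rG}$, whose inverse is $\breve{\rS}$) into the hypothesis to transport each $\breve{}$-decorated factor across the isomorphism, repeatedly applying $(\up\down\up)$, $(\down\up\down)$ and the De Morgan dualities of Lemma \ref{lem:up_down_basic}.3 to rewrite converse-preimages as complemented interiors. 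The symmetry and self-adjointness of $\rE_1$ and $\rE_2$ (so that $\breve{\rE_i}=\rE_i$ and $(\rE_i)_-=(\rE_i)_+$, via Lemma \ref{lem:self_adjointness_bicliq_vs_jsl}) will be essential to collapse the $\breve{\rG}$- and $\breve{\rH}$-terms back onto the undecorated graphs. I expect the bookkeeping of which closure/interior operator cancels against which composite to be the delicate part; the safeguard is that every intermediate expression is an equality of genuine $\JSL_f$-morphisms, so at each stage I can legitimately take adjoints and use that $\rA^\up\dashv\rA^\down$ to replace a $\cl$ or $\inte$ by an identity on the relevant open/closed sets.
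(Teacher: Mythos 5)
Your overall architecture is sound and, where it overlaps with the paper, essentially matches it: the paper also disposes of the $\UG$ case by identifying $\UG$ with the full diagonal subcategory of $\UGJ$ (Lemma \ref{lem:ug_iso_to_diagonals}), and its $\UGJ$ argument is exactly the manipulation you outline. The one genuine divergence is your treatment of $\UGM$: the paper proves it by a second direct computation symmetric to the $\UGJ$ one, whereas you derive it from the $\UGJ$ case via the correspondence ``$\rR$ is a $\UGM$-morphism iff $\breve{\rR}$ is a $\UGJ$-morphism''. That correspondence is true for arbitrary morphisms (take adjoints of the $\UGM$ identity and then conjugate every factor by relative complements, using $(\neg\up\neg)$, $(\neg\down\neg)$ and $\breve{\rE_i}=\rE_i$), and your route buys you one computation instead of two; but be aware that the paper only records this duality later, for \emph{isomorphisms} (Lemma \ref{lem:ugj_ugm_iso_correspondence}), and that later proof itself cites the present lemma --- so you must verify the morphism-level equivalence from scratch rather than cite it, on pain of circularity. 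It is a short De Morgan computation, but it has to appear.

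For the $\UGJ$ core you have all the right ingredients and the ``delicate bookkeeping'' does close up; since you left it at the level of a plan, here is the two-move completion so you can check it against your expectations. Post-compose the hypothesis $\rR^\up\circ\rE_1^\down=\rH^\up\circ\rE_2^\down\circ\breve{\rH}^\up\circ\breve{\rR}^\down$ with $\breve{\rR}^\up\circ\breve{\rH}^\down$: on the right this produces $\rE_2^\down\circ\breve{\rH}^\up\circ\cl_{\breve{\rR}}\circ\breve{\rH}^\down$, which collapses to $\rE_2^\down$ \emph{only} once you know $\cl_{\breve{\rR}}=\cl_{\breve{\rH}}$, i.e.\ that $\rR\spcheck$ is monic --- this is Lemma \ref{lem:bicliq_mono_epi_char}.1, which the paper invokes explicitly and which your phrase ``replace a $\cl$ or $\inte$ by an identity on the relevant open/closed sets'' does not by itself supply; on the left, $\breve{\rR}^\up\circ\breve{\rH}^\down=\breve{\rG}^\up\circ\breve{\rS}^\down$ by the De Morgan dual of equation (a) of Lemma \ref{lem:bicliq_iso_from_rel}. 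This yields $\rR^\up\circ\rE_1^\down\circ\breve{\rG}^\up\circ\breve{\rS}^\down=\rH^\up\circ\rE_2^\down$. Now pre-compose with $\rG^\up\circ\rR^\down$: the right-hand side becomes $\rS^\up\circ\rE_2^\down$ by your formula $\rS^\up=\rG^\up\circ\rR^\down\circ\rH^\up$, and the left-hand side becomes $\rG^\up\circ\cl_\rR\circ\rE_1^\down\circ\breve{\rG}^\up\circ\breve{\rS}^\down=\rG^\up\circ\rE_1^\down\circ\breve{\rG}^\up\circ\breve{\rS}^\down=\rG^\up\circ(\rE_1\fatsemi\breve{\rS})^\down$, again using monicity of $\rR$ to replace $\cl_\rR$ by $\cl_\rG$ and then $(\up\down\up)$. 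So the only thing missing from your write-up, beyond executing the computation, is the explicit appeal to Lemma \ref{lem:bicliq_mono_epi_char} to license the two cancellations.
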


\begin{proof}
\item
\begin{enumerate}
\item
By assumption we have a $\BiCliq$-isomorphism $\rR : \rG \to \rH$ with inverse $\rS : \rH \to \rG$, and:
\[
\underbrace{\rR^\up \circ \rE_1^\down}_A
= \rH^\up \circ (\rE_2 \fatsemi \breve{\rR})^\down
= \underbrace{\rH^\up \circ \rE_2^\down \circ \breve{\rH}^\up \circ \breve{\rR}^\down}_B.
\]
Then we have:
\[
\begin{tabular}{c}
$(\rS_+\spbreve)^\up \circ \mathrm{A}
= (\rR ; \rS_+\spbreve)^\up \circ \rE_1^\down
= \rG^\up \circ \rE_1^\down$
\\[1ex]
$(\rS_+\spbreve)^\up \circ \mathrm{B}
= (\rH;\rS_+\spbreve)^\up \circ \rE_2^\down \circ \breve{\rH}^\up \circ \breve{\rR}^\down
= \rS^\up \circ \rE_2^\down \circ \breve{\rH}^\up \circ \breve{\rR}^\down$
\end{tabular}
\]
using Lemma \ref{lem:bicliq_iso_two_mor} and an associated component of $\rS$. Hence:
\[
\begin{tabular}{lll}
$\rG^\up \circ \rE_1^\down \circ \breve{\rR}^\up \circ \breve{\rH}^\down$
&
$= \rS^\up \circ \rE_2^\down \circ \breve{\rH}^\up \circ \cl_{\breve{\rR}} \circ \breve{\rH}^\down$
& see above
\\&
$= \rS^\up \circ \rE_2^\down \circ \breve{\rH}^\up \circ \cl_{\breve{\rH}} \circ \breve{\rH}^\down$
& since $\rR\spcheck$ monic, see Lemma \ref{lem:bicliq_mono_epi_char}
\\&
$= \rS^\up \circ \rE_2^\down \circ \breve{\rH}^\up \circ \breve{\rH}^\down$
& by $(\up\down\up)$
\\&
$= \rS^\up \circ \rE_2^\down \circ \inte_{\breve{\rH}}$
\\&
$= \rS^\up \circ \rE_2^\down$
& see Lemma \ref{lem:bicliq_mor_char_max_witness}.1 and Lemma \ref{lem:up_down_basic}.4.
\end{tabular}
\]
Furthermore since $\rR$ is an isomorphism we deduce that $\rR^\down \circ \rH^\up = \rG^\down \circ \rS^\up$ by Lemma \ref{lem:bicliq_iso_from_rel}, or equivalently $\breve{\rR}^\up \circ \breve{\rH}^\down = \breve{\rG}^\up \circ \breve{\rS}^\down$ by de morgan duality. Thus:
\[
\rS^\up \circ \rE_2^\down
= \rG^\up \circ \rE_1^\down \circ \breve{\rG}^\up \circ \breve{\rS}^\down
= \rG^\up \circ (\rE_1 \fatsemi \breve{\rS})^\down
\]
as required.

\item
We have a $\BiCliq$-isomorphism $\rR : \rG \to \rH$ with inverse $\rS : \rH \to \rG$, so:
\[
\underbrace{\rE_2^\down \circ \rR^\up}_A
= (\breve{\rR} \fatsemi \rE_1)^\down \circ \rG^\up
= \underbrace{\breve{\rR}^\down \circ \breve{\rG}^\up \circ \rE_1^\down \circ \rG^\up}_B.
\]
Then we have:
\[
\begin{tabular}{c}
$A \circ \rS_-^\up
= \rE_2^\down \circ (\rS_- ; \rR)^\up
= \rE_2^\down \circ \rH^\up$
\\[1ex]
$B \circ \rS_-^\up
= \breve{\rR}^\down \circ \breve{\rG}^\up \circ \rE_1^\down \circ (\rS_-;\rG)^\up
= \breve{\rR}^\down \circ \breve{\rG}^\up \circ \rE_1^\down \circ \rS^\up$
\end{tabular}
\]
using Lemma \ref{lem:bicliq_iso_two_mor} and an associated component of $\rS$. Hence:
\[
\begin{tabular}{lll}
$\breve{\rG}^\down \circ \breve{\rR}^\up \circ \rE_2^\down \circ \rH^\up$
&
$= \breve{\rG}^\down \circ \inte_{\breve{\rR}} \circ \breve{\rG}^\up \circ \rE_1^\down \circ \rS^\up$
& see above
\\&
$= \breve{\rG}^\down \circ \inte_{\breve{\rG}} \circ \breve{\rG}^\up \circ \rE_1^\down \circ \rS^\up$
& since $\rR\spcheck$ epic, see Lemma \ref{lem:bicliq_mono_epi_char}
\\&
$= \breve{\rG}^\down \circ \breve{\rG}^\up \circ \rE_1^\down \circ \rS^\up$
& by $(\up\down\up)$
\\&
$= \cl_{\breve{\rG}} \circ \rE_1^\down \circ \rS^\up$
\\&
$= \rE_1^\down \circ \rS^\up$
& see Lemma \ref{lem:bicliq_mor_char_max_witness}.1 and Lemma \ref{lem:up_down_basic}.4.
\end{tabular}
\]
Moreover since $\rG^\up \circ \rR^\down = \rS^\up \circ \rH^\down$ by Lemma \ref{lem:bicliq_iso_from_rel}, or equivalently $\breve{\rG}^\down \circ \breve{\rR}^\up =  \breve{\rS}^\down \circ \breve{\rH}^\up$ by De Morgan duality,
\[
\rE_1^\down \circ \rS^\up
= \breve{\rS}^\down \circ \breve{\rH}^\up \circ \rE_2^\down \circ \rH^\up
= (\breve{\rS} \fatsemi \rE_2)^\down \circ \rH^\up
\]
as required.

\item
Follows because $\UG$ is isomorphic to the full subcategory of $\UGJ$ with objects $(\rE,\rE)$ where $\rE$ is a symmetric relation, so we can apply (1).

\end{enumerate}
\end{proof}

\smallskip

% FROM HERE
% =========

\begin{lemma}[Graph isomorphisms induce $\UG$-isomorphisms]
\label{lem:ugraph_iso_induce_ug_iso}
\item
Each undirected graph isomorphism $f : (V,\rE_1) \to (V,\rE_2)$ induces the $\UG$-isomorphism:
\[
f ; \rE_2 = \rE_1 ; f : (V_1,\rE_1) \to (V,\rE_2).
\]
\end{lemma}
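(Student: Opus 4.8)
The claim has two parts: first that the relation $\rR := f;\rE_2 = \rE_1;f$ is a well-defined $\UG$-morphism $(V,\rE_1)\to(V,\rE_2)$, and second that it is a $\BiCliq$-isomorphism. The key structural observation is that $f$ is a bijective function, so both $f$ and $\breve f = f^{-1}$ are functional relations. I plan to exploit the characterisation of $\UG$-morphisms proved just above, namely that $\rR$ is a $\UG$-morphism iff $\rR^\up = \rE_2^\up \circ \breve{\rR}^\down \circ \rE_1^\up$, together with Lemma \ref{lem:bicliq_iso_from_rel} for establishing the isomorphism property, and finally Lemma \ref{lem:ug_reflect_iso}.3 to conclude the inverse is also a $\UG$-morphism.

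\textbf{Well-definedness as $\UG$-morphism.} First I would record the equality $f;\rE_2 = \rE_1;f$: this is just the defining property of a graph isomorphism, since $\rE_1;f = f;\rE_2$ means $\rE_1(v_1,v_2)\iff\rE_2(f(v_1),f(v_2))$. Using that $f$ is functional, $\rR^\up = (f;\rE_2)^\up = \rE_2^\up\circ f^\up$ by $(\up\circ)$, and $f^\up = f[-]$ is just the direct image along the bijection $f$. Similarly $\breve{\rR} = \breve{\rE_2};\breve{f} = \rE_2;\breve f$ (using symmetry of $\rE_2$), so $\breve\rR^\down = \breve f^\down \circ \rE_2^\down = (f^{-1})^\up\circ\rE_2^\down$ where I use that for a bijection $\breve f^\down = (f^{-1})^\up$ because preimage along $f$ equals image along $f^{-1}$. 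Then I would verify the single-equation characterisation by computing $\rE_2^\up\circ\breve\rR^\down\circ\rE_1^\up$ directly: substituting and using $(\up\circ)$, $(\up\down\up)$, plus the compatibility $f^\up\circ\rE_1^\up = \rE_2^\up\circ f^\up$ (the image form of $\rE_1;f=f;\rE_2$), this should collapse to $\rE_2^\up\circ f^\up = \rR^\up$.

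\textbf{Isomorphism property and conclusion.} For the isomorphism claim, the natural candidate inverse is $\breve f;\rE_1 = \rE_2;\breve f : (V,\rE_2)\to(V,\rE_1)$, the $\UG$-morphism induced by the inverse graph isomorphism $f^{-1}$. I would verify that $\rR$ is a $\BiCliq$-isomorphism with this inverse via Lemma \ref{lem:bicliq_iso_from_rel}, checking the four equations (a)--(d); each reduces via $(\up\circ)$, $(\up\down\up)$ and De Morgan duality to identities expressing that $f$ and $f^{-1}$ are mutually inverse bijections, using the reflexivity and transitivity structure of the closure operators $\cl_{\rE_1}$, $\cl_{\rE_2}$. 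Finally, having shown $\rR$ is both a $\UG$-morphism and a $\BiCliq$-isomorphism, Lemma \ref{lem:ug_reflect_iso}.3 immediately gives that its $\BiCliq$-inverse is again a $\UG$-morphism, so $\rR$ is a $\UG$-isomorphism.

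\textbf{Main obstacle.} The routine bookkeeping is harmless, but the step I expect to require genuine care is the precise handling of $\breve f^\down$ versus $(f^{-1})^\up$ for the bijection $f$, and correspondingly that $\breve\rR = \rE_2;\breve f$ rather than something involving $\rE_1$; getting the symmetry of $\rE_1,\rE_2$ inserted at the right places is where a sign-error-style mistake could creep in. Alternatively, and perhaps more cleanly, since $f$ is a bijective function one could observe that $\rR$ is a \emph{bipartite} $\Dep$-isomorphism in the sense of Definition \ref{def:bipartite_iso} — witnessed by $f$ on domains and $f^{-1}$ on codomains — and invoke Lemma \ref{lem:bipartite_isos_are_isos} to get the $\BiCliq$-isomorphism property for free, reducing the whole argument to verifying the single $\UG$-morphism equation plus one appeal to Lemma \ref{lem:ug_reflect_iso}.3.
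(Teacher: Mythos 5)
Your overall strategy is the paper's: the paper observes that $f;\rE_2=\rE_1;f$ exhibits $\rR$ as a $\Dep$-isomorphism witnessed by the bijections $(f,\breve f)$ (your ``alternatively'' route via Definition~\ref{def:bipartite_iso} and Lemma~\ref{lem:bipartite_isos_are_isos}), then reduces everything to checking the $\UG$-morphism equation and invokes Lemma~\ref{lem:ug_reflect_iso}. Your primary route through the four equations of Lemma~\ref{lem:bicliq_iso_from_rel} is correct but unnecessarily heavy; the shortcut you mention at the end is what you should lead with.

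There are, however, two concrete slips in your computational sketch, and as written the verification of the $\UG$-morphism condition would not close up. First, the rule $(\down\circ)$ reads $(\rR;\rS)^\down=\rR^\down\circ\rS^\down$ (the order is \emph{not} reversed, unlike $(\up\circ)$), so from $\breve\rR=\rE_2;\breve f$ you get $\breve\rR^\down=\rE_2^\down\circ\breve f^\down$, not $\breve f^\down\circ\rE_2^\down$. Second, for a bijection $f$ the correct identity is $\breve f^\down=(f^{-1})^\down=f^\up$ (the direct image), whereas your $(f^{-1})^\up$ is the image along $f^{-1}$, i.e.\ the preimage $f^\down$ --- the opposite map. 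This is exactly the identity the paper uses. With both corrected, your plan does go through cleanly:
\[
\rE_2^\up\circ\breve\rR^\down\circ\rE_1^\up
=\rE_2^\up\circ\rE_2^\down\circ f^\up\circ\rE_1^\up
=\inte_{\rE_2}\circ(\rE_1;f)^\up
=\inte_{\rE_2}\circ\rE_2^\up\circ f^\up
=\rE_2^\up\circ f^\up=\rR^\up,
\]
using $(\rE_1;f)^\up=(f;\rE_2)^\up=\rE_2^\up\circ f^\up$ and $(\up\down\up)$. So the gap is local and repairable, but you correctly diagnosed in your ``main obstacle'' paragraph precisely where the error would creep in --- and it did.
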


\begin{proof}
The equality $f ; \rE_2 = \rE_1 ; f$ provides a $\BiCliq$-isomorphism $\rR := f ; \rE_2 = \rE_1 ; f$ of type $\rE_1 \to \rE_2$. By Lemma \ref{lem:ug_reflect_iso} it suffices to show that $\rR$ defines a $\UG$-morphism $(V,\rE_1) \to (V,\rE_2)$ i.e.\ $f^\up \circ \rE_1^\down \;\stackrel{?}{=}\; \rE_2^\up \circ \breve{f}^\down$. We certainly know $f^\up \circ \rE_1^\up = \rE_2^\up \circ f^\up$, and applying De morgan Muality yields:
\[
\breve{f}^\down \circ (\rE_1\spbreve)^\down = (\rE_2\spbreve)^\down \circ \breve{f}^\down.
\]
The desired equality follows because each $\rE_i$ is symmetric and moreover $\breve{f}^\down = (f^{\bf-1})^\down = f^\up$ by bijectivity.
\end{proof}

\smallskip

\begin{lemma}[$\UG$-isomorphisms of reduced graphs]
\label{lem:reduced_ug_isos_graph_isos_correspondence}
Given reduced graphs $(V_i,\rE_i)$,
\begin{quote}
$\rR : (V_1,\rE_1) \to (V_2,\rE_2)$ is a $\UG$-isomorphism iff there exists a graph isomorphism $f : (V_1,\rE_1) \to (V_2,\rE_2)$ such that $f ; \rE_2 = \rR = \rE_1 ; f$.
\end{quote}
\end{lemma}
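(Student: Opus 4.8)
The backward direction is essentially already available: Lemma \ref{lem:ugraph_iso_induce_ug_iso} tells us that any graph isomorphism $f : (V_1,\rE_1) \to (V_2,\rE_2)$ induces a $\UG$-isomorphism $f ; \rE_2 = \rE_1 ; f$, so if such an $f$ exists with $\rR = f ; \rE_2 = \rE_1 ; f$, then $\rR$ is automatically a $\UG$-isomorphism. Hence the content of the statement is the forward direction: given an arbitrary $\UG$-isomorphism $\rR$, recover an underlying graph isomorphism $f$ satisfying the two factorisation equalities. The plan is to extract $f$ from the component structure of $\rR$ and exploit reducedness crucially --- reducedness is what forces the a priori relational $\rR$ to collapse to (the $\rE$-smearing of) a genuine bijective function.

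\textbf{Recovering the function.} First I would unwind what it means for $\rR : (V_1,\rE_1) \to (V_2,\rE_2)$ to be a $\UG$-morphism that is also a $\BiCliq$-isomorphism. Since $\rR : \rE_1 \to \rE_2$ is a $\Dep$-isomorphism, Lemma \ref{lem:open_well_defined} / Theorem \ref{thm:bicliq_jirr_equivalent} give that $\Open\rR : \Open\rE_1 \to \Open\rE_2$ is a $\JSL_f$-isomorphism, and by Lemma \ref{lem:lat_op_cl}.3 the join-irreducibles of $\Open\rE_i$ are among the neighbourhoods $\{\rE_i[v] : v \in V_i\}$. Because $(V_i,\rE_i)$ is reduced, the map $v \mapsto \rE_i[v]$ is injective and its image consists of exactly the ``non-redundant'' neighbourhoods; in fact reducedness should let me identify $V_i$ bijectively with a join-generating antichain of $\Open\rE_i$, so that $\Open\rR$ restricts to a bijection between the relevant neighbourhoods. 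I would then define $f : V_1 \to V_2$ by declaring $f(v)$ to be the unique vertex whose neighbourhood $\rE_2[f(v)]$ matches the image of $\rE_1[v]$ under $\Open\rR$ (using that $(red)$-style components $(\rR_-,\rR_+)$ track exactly this correspondence, cf.\ the component descriptions in Theorem \ref{thm:bicliq_jirr_equivalent} and Lemma \ref{lem:bip_restrict_to_reduced}). Reducedness guarantees this $f$ is well-defined and bijective, with $f^{-1}$ recovered symmetrically from $\rR^{\bf-1}$ (which is a $\UG$-morphism by Lemma \ref{lem:ug_reflect_iso}.3).

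\textbf{Verifying it is a graph isomorphism and gives the factorisation.} Next I would check $\rE_1(v_1,v_2) \iff \rE_2(f(v_1),f(v_2))$. This should follow from the $\UG$-morphism constraint $\rR^\up \circ \rE_1^\down = \rE_2^\up \circ \breve{\rR}^\down$ (equivalently $\rE_2^\down \circ \rR^\up = \breve{\rR}^\down \circ \rE_1^\up$), together with the symmetric constraint obtained from $\rR^{\bf-1}$, by feeding in singletons/principal open sets and using the adjoint relationship $(\up\dashv\down)$ from Lemma \ref{lem:up_down_basic}.1. Concretely, evaluating these functional identities on the neighbourhoods $\rE_1[v]$ and tracking the correspondence $\rE_1[v] \mapsto \rE_2[f(v)]$ should turn the edge-preservation of $\rR$ into edge-preservation of $f$ in both directions. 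Finally I would establish $f ; \rE_2 = \rR = \rE_1 ; f$: since $f$ is a graph isomorphism, Lemma \ref{lem:ugraph_iso_induce_ug_iso} already furnishes the $\UG$-isomorphism $f ; \rE_2 = \rE_1 ; f$, and it remains to see this coincides with $\rR$. Both are $\BiCliq$-morphisms $\rE_1 \to \rE_2$ with the same action on join-irreducibles / neighbourhoods, so by the faithfulness of $\Open$ (Lemma \ref{lem:open_well_defined}) --- or directly because a $\Dep$-morphism is determined by its values $\rR[g_s] = (\rG;\rR_+\spbreve)[g_s]$ on the generating neighbourhoods --- they must be equal.

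\textbf{Main obstacle.} The delicate point is the precise use of reducedness to pin down $f$ as a \emph{function} (not merely a relation) and to ensure the neighbourhood correspondence induced by $\Open\rR$ is genuinely a bijection $V_1 \to V_2$ rather than just a bijection between join-irreducibles of the open-set lattices. Reducedness is exactly the hypothesis that makes $v \mapsto \rE[v]$ injective and makes every neighbourhood non-redundant, so I expect to invoke it twice: once to define $f$ unambiguously and once (applied to $\rE_2$, via $\rR^{\bf-1}$) for surjectivity/bijectivity. Getting the bookkeeping of the component relations $(\rR_-,\rR_+)$ aligned with the map $v \mapsto f(v)$ cleanly --- rather than drowning in the functional-composition identities --- will be where the real care is needed; I would lean on Lemma \ref{lem:bip_restrict_to_reduced} and the explicit components in Theorem \ref{thm:bicliq_jirr_equivalent} to keep this manageable.
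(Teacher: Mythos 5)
Your backward direction and the overall skeleton of the forward direction agree with the paper: extract a bijection $f$ from reducedness (your route via $\Open\rR$ acting on the join-irreducibles $\rE_1[v]$ gives $\rE_2[f(v)]=\Open\rR(\rE_1[v])=\rR[v]$, i.e.\ $\rR=f;\rE_2$, which is exactly one of the two bijective witnesses the paper obtains from Lemma \ref{lem:bip_restrict_to_reduced}), and then use the $\UG$-condition to upgrade $f$ to a graph isomorphism. That setup is sound, and the concluding step is fine once edge-preservation is known.

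The gap is precisely the edge-preservation step, which carries all the content: since $\rE_2(f(v_1),f(v_2))\iff f(v_2)\in\rR[v_1]\iff\rR(v_1,f(v_2))$, proving $\rE_1(v_1,v_2)\iff\rE_2(f(v_1),f(v_2))$ is \emph{literally equivalent} to proving the second factorisation $\rR=\rE_1;f$, so your final paragraph is vacuous once this is done, and everything rests on ``should follow \dots by feeding in singletons \dots and using the adjoint relationship.'' Feeding singletons into $\rR^\up\circ\rE_1^\down=\rE_2^\up\circ\breve{\rR}^\down$ does not work: $\rE_1^\down(\{v\})=\{u:\rE_1[u]\subseteq\{v\}\}$ is the set of vertices whose entire neighbourhood is $\{v\}$, typically empty and in any case carrying no information about $f(v)$. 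The missing idea, which is how the paper proceeds, is that reducedness of \emph{both} graphs yields \emph{two} bijections witnessing the $\BiCliq$-isomorphism, $\rR=f_l;\rE_2=\rE_1;f_u$ (your $f$ is $f_l$), and the $\UG$-condition is exactly what forces $f_l=f_u$: one substitutes $\rR^\up=f_u^\up\circ\rE_1^\up$ and $\breve{\rR}^\down=\rE_2^\down\circ f_l^\up$ (resp.\ the symmetric pair) into the $\UG$-equation, so that only the closure/interior operators of $\rE_1,\rE_2$ separate $f_u^\up$ from $f_l^\up$, and then evaluates at singletons of the \emph{domain} --- where $f_l^\up,f_u^\up$ do preserve singletons --- invoking reducedness once more to turn an inclusion of neighbourhoods $\rE_i[\cdot]$ into an equality of vertices. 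Your proposal lists the right ingredients (the $\UG$-equation, its counterpart for $\rR^{\bf-1}$, reducedness used twice) but never combines them, and the one concrete tactic you name would fail.
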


\begin{proof}
Recall that the usual graph isomorphisms $f : (V_1,\rE_1) \to (V_2,\rE_2)$ are precisely those bijective functions $f : V_1 \to V_2$ such that $f ; \rE_2 = \rE_1 ; f$. Given such an $f$ we obtain the $\UG$-isomorphism $\rR := f ; \rE_2$ by Lemma \ref{lem:ugraph_iso_induce_ug_iso}. Conversely, suppose that $\rR : (V_1,\rE_1) \to (V_2,\rE_2)$ is a $\UG$-isomorphism between reduced graphs. Since $\UG$ inherits the compositional structure of $\BiCliq$ we know that $\rR : \rE_1 \to \rE_2$ is a $\BiCliq$-isomorphism between reduced relations. Then by Lemma \ref{lem:bip_restrict_to_reduced}:
\[
\xymatrix@=15pt{
V_1 \ar[rr]^{f_u}_-\cong && V_2
\\
V_1 \ar[rr]_{f_l}^-\cong \ar[urr]^-<<<<\rR \ar[u]^{\rE_1} && V_2 \ar[u]_{\rE_2}
}
\]
for some bijections $f_u$ and $f_l$. Now, since $\rR$ is a $\UG$-morphism we have $\rR^\up \circ \rE_1^\down = \rE_2^\up \circ \breve{\rR}^\down$. Moreover:
\[
\rR^\up 
= (\rE_1 ; f_u)^\up 
\;\stackrel{(;\up)}{=}\; f_u^\up \circ \rE_1^\up
\qquad
\breve{\rR}^\down 
\;=\; ((f_l ; \rE_2)\spbreve)^\down 
\;\stackrel{(;\down)}{=}\; \rE_2^\down \circ (\breve{f}_l)^\down
\;\stackrel{!}{=}\; \rE_2^\down \circ f_l^\up
\]
where the marked equality follows because $(\breve{f}_l)^\down = (f_l^{\bf-1})^\down = f_l^\up$ since $f_l$ is bijective. Substituting into the known equality yields:
\[
f_u^\up
\;\geq\; f_u^\up \circ \inte_{\rE_1} 
\;=\; \cl_{\rE_2} \circ f_l^\up
\;\geq\; f_l^\up
\]
using the pointwise inclusion-ordering. But since $f_l^\up$ and $f_u^\up$ preserve singleton sets this implies $f_u = f_l$.
\end{proof}

\smallskip
\begin{corollary}[Automorphism groups of reduced graphs]
\item
\begin{enumerate}
\item
Two reduced graphs are $\UG$-isomorphic iff they are graph isomorphic.
\item
The $\UG$-automorphism group of a reduced graph is isomorphic to its classical automorphism group.
\end{enumerate}
\end{corollary}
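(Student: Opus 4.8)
The plan is to derive both statements of this Corollary directly from Lemma \ref{lem:reduced_ug_isos_graph_isos_correspondence}, which is the substantive result just proved. The corollary is essentially a repackaging of that lemma into the more familiar language of isomorphism relations and automorphism groups, so the work is light and mostly bookkeeping.

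For the first statement, I would argue as follows. Suppose $(V_1,\rE_1)$ and $(V_2,\rE_2)$ are reduced graphs. If they are $\UG$-isomorphic, then there exists a $\UG$-isomorphism $\rR : (V_1,\rE_1) \to (V_2,\rE_2)$, and Lemma \ref{lem:reduced_ug_isos_graph_isos_correspondence} immediately produces a graph isomorphism $f : (V_1,\rE_1) \to (V_2,\rE_2)$ with $f ; \rE_2 = \rR = \rE_1 ; f$. Conversely, if they are graph isomorphic via some $f$, then Lemma \ref{lem:ugraph_iso_induce_ug_iso} (in the form used inside the proof of Lemma \ref{lem:reduced_ug_isos_graph_isos_correspondence}) shows that $\rR := f ; \rE_2$ is a $\UG$-isomorphism, so the two graphs are $\UG$-isomorphic. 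This is a clean biconditional with no calculation required beyond citing the two lemmas.

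For the second statement, I would specialise the first to the case $(V_1,\rE_1) = (V_2,\rE_2) = (V,\rE)$ and track the group structure. The assignment $f \mapsto f ; \rE$ sends each classical graph automorphism of $(V,\rE)$ to a $\UG$-automorphism, and by Lemma \ref{lem:reduced_ug_isos_graph_isos_correspondence} this assignment is a bijection onto the $\UG$-automorphisms (since every $\UG$-automorphism $\rR$ arises uniquely as $f ; \rE$ for a graph automorphism $f$; uniqueness of $f$ follows because $f ; \rE = f' ; \rE$ forces $f = f'$ by postcomposing with the relevant inverse, or directly from the reducedness argument in the lemma's proof, where $f_u = f_l$ was pinned down from $\rR$). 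It then remains to check this bijection respects composition, i.e.\ that $(g ; \rE) \fatsemi (f ; \rE)$ corresponds to $(g \circ f) ; \rE$ under the $\Dep$-composition $\fatsemi$. This uses the identity $f ; \rE = \rE ; f$ valid for graph automorphisms together with the fact that $\rR \fatsemi \rS = \rR_l ; \rS$ for appropriate witnesses, so that the two component relations multiply as ordinary functional composites.

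The main obstacle, such as it is, lies in the group-homomorphism verification in the second part: one must confirm that $\Dep$-composition of the two $\UG$-automorphisms $f;\rE$ and $g;\rE$ reduces to the functional composite $(g \circ f);\rE$, rather than producing some closure-corrected relation. Here I would invoke the commutation $f ; \rE = \rE ; f$ to rewrite the composite and then apply Lemma \ref{lem:bicliq_func_comp} to compute $\fatsemi$ via ordinary relational composition of the canonical witnesses, checking that no extra closure step intervenes because these morphisms are isomorphisms with functional witnesses. Everything else is a direct appeal to the preceding lemma, so the proof stays short.
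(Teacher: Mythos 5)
Your proposal is correct and follows essentially the same route as the paper: part (1) is read off directly from Lemma \ref{lem:reduced_ug_isos_graph_isos_correspondence}, and part (2) uses the bijection $f \mapsto f ; \rE$ and verifies compatibility with composition via the witnessing relations, exactly as the paper does. The only blemish is a harmless order slip --- $(g;\rE) \fatsemi (f;\rE) = (g;f);\rE$, whose underlying function is $f \circ g$ rather than $g \circ f$ --- and your worry about a closure correction is unnecessary, since $\Dep$-composition is defined as $\rR_l ; \rS$ for any witness $\rR_l$, so the check reduces to ordinary relational composition as you describe.
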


\begin{proof}
\item
\begin{enumerate}
\item
Immediate by Lemma \ref{lem:reduced_ug_isos_graph_isos_correspondence}.

\item
Fix any reduced graph $(V,\rE)$. The elements of the two automorphism groups biject via Lemma \ref{lem:reduced_ug_isos_graph_isos_correspondence}, via $f \;\mapsto\; f ; \rE$. The identity function $id_V$ and is sent to $id_V ; \rE = \rE$ i.e.\ the $\UG$ identity morphism. Concerning composition:
\[
\xymatrix@=15pt{
V \ar[rr]^f && V \ar[rr]^g && V
\\
V \ar[rr]_f \ar[u]^{\rE} && V \ar[rr]_g \ar[u]_{\rE} && V \ar[u]_{\rE}
}
\]
we have $f ; g \;\mapsto\; f;g;\rE = (f ; \rE) \fatsemi (g ; \rE)$ by the usual rules of $\BiCliq$-composition.
\end{enumerate}
\end{proof}

\smallskip

\begin{lemma}[Isomorphism correspondence between $\UGJ$ and $\UGM$]
\label{lem:ugj_ugm_iso_correspondence}
\item
$\rR : (\rG,\rE_1) \to (\rH,\rE_2)$ is a $\UGJ$-isomorphism iff $\rR\spcheck : (\breve{\rH},\rE_1) \to (\breve{\rG},\rE_2)$ is a $\UGM$-isomorphism.
\end{lemma}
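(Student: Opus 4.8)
The plan is to exhibit the self-duality $(-)\spcheck : \BiCliq^{op} \to \BiCliq$ (Theorem \ref{thm:bicliq_self_dual}) as restricting to a duality between $\UGJ$ and $\UGM$, and then to exploit that any duality preserves and reflects isomorphisms. Three ingredients are needed. First, the object-level correspondence $(\rG,\rE) \in \UGJ \iff (\breve{\rG},\rE) \in \UGM$ recorded above, taking care to reattach each edge relation to the correct dualised carrier (so that $\rR\spcheck = \breve{\rR}$ runs between the objects built on $\breve{\rH}$ and $\breve{\rG}$, with $\rE_2$ sitting on $\breve{\rH}$ and $\rE_1$ on $\breve{\rG}$, since $\rE_1 \subseteq \rG_s \times \rG_s$ and $\rE_2 \subseteq \rH_s \times \rH_s$). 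Second, the fact that $(-)\spcheck$ carries $\BiCliq$-isomorphisms to $\BiCliq$-isomorphisms, with $(\rR\spcheck)^{\bf-1} = (\rR^{\bf-1})\spcheck$ by Lemma \ref{lem:bicliq_iso_adj_inverse_commutes}. Third, a morphism correspondence for isomorphisms: a $\BiCliq$-isomorphism $\rR$ satisfies the $\UGJ$-morphism equation iff $\breve{\rR}$ satisfies the $\UGM$-morphism equation. Granting these, I would assemble the lemma as: $\rR$ is a $\UGJ$-isomorphism iff (by the reflection Lemma \ref{lem:ug_reflect_iso}) $\rR$ is a $\UGJ$-morphism and a $\BiCliq$-isomorphism, iff $\breve{\rR}$ is a $\UGM$-morphism and a $\BiCliq$-isomorphism, iff (again by reflection of isomorphisms, now for $\UGM$) $\breve{\rR}$ is a $\UGM$-isomorphism.

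The heart is the morphism correspondence. First I would unfold both defining equations using the composite formula $(\down\fatsemi)$ of Lemma \ref{lem:bicliq_func_comp} together with the identity $\breve{\rR \fatsemi \rS} = \breve{\rS} \fatsemi \breve{\rR}$ from Theorem \ref{thm:bicliq_self_dual}, rewriting the $\UGJ$-condition as $\rR^\up \circ \rE_1^\down = \rH^\up \circ \rE_2^\down \circ \breve{\rH}^\up \circ \breve{\rR}^\down$ and the target $\UGM$-condition as $\rE_1^\down \circ \breve{\rR}^\up = \rR^\down \circ \rH^\up \circ \rE_2^\down \circ \breve{\rH}^\up$. These two equations live in opposite directions, so no amount of De Morgan juggling alone will interchange them. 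The key manoeuvre is to pre-compose the $\UGJ$-equation on the left by $\rR^\down$: since $\rR$ is an isomorphism it is monic, so Lemma \ref{lem:bicliq_mono_epi_char} gives $\rR^\down \circ \rR^\up = \cl_\rR = \cl_\rG$, and because $\rE_1 : \rG \to \breve{\rG}$ is itself a $\BiCliq$-morphism the closure is absorbed, $\cl_\rG \circ \rE_1^\down = \rE_1^\down$ by Lemma \ref{lem:up_down_basic}.4, collapsing the left side to $\rE_1^\down$. Post-composing the resulting identity on the right by $\breve{\rR}^\up$ and using that $\breve{\rR}$ is also monic (so $\breve{\rR}^\down \circ \breve{\rR}^\up = \cl_{\breve{\rH}}$) together with the rule $(\up\down\up)$ then yields exactly the $\UGM$-equation. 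The reverse implication is the mirror image: one post-composes by $\breve{\rR}^\down$ and uses that the isomorphism, and its dual, is epic, so that $\rR^\up \circ \rR^\down = \inte_\rH$ and the interior version $\rE_1^\down \circ \inte_{\breve{\rG}} = \rE_1^\down$ applies, again finishing with $(\up\down\up)$.

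The main obstacle is precisely this direction reversal in the morphism correspondence. It is not a formal De Morgan duality, and I expect it to fail for general (non-invertible) morphisms; what makes it go through is that an isomorphism is simultaneously monic and epic, so both the closure identity $\cl_\rR = \cl_\rG$ and the interior identity $\inte_\rR = \inte_\rH$ are available to cancel the extra $\rR^\down$ and $\breve{\rR}^\up$ factors. This is exactly the style of computation already used in the reflection Lemma \ref{lem:ug_reflect_iso}, so the two proofs could be organised in parallel. The only other point demanding care is the bookkeeping of carriers: one must keep the four powerset operators $\rR^\up$, $\rR^\down$, $\breve{\rH}^\up$ and $\rE_i^\down$ correctly typed throughout, and match the subscripts of the edge relations to the dualised objects as noted above.
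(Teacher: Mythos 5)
Your proposal is correct and follows essentially the same route as the paper's proof: reduce to the morphism-level condition via the reflection Lemma \ref{lem:ug_reflect_iso}, then convert the $\UGJ$-equation into the $\UGM$-equation by pre/post-composing with $\rR^\down$ and $\breve{\rR}^\up$, absorbing the resulting closures via $\cl_\rR = \cl_\rG$, $\cl_{\breve{\rR}} = \cl_{\breve{\rH}}$ (monicity of the isomorphism and its dual), $\cl_\rG \circ \rE_1^\down = \rE_1^\down$, and $(\up\down\up)$ — with the converse handled by the mirror computation using epicness and interiors, exactly as in the paper. Your remark that the edge relations must be reattached with $\rE_2$ on $\breve{\rH}$ and $\rE_1$ on $\breve{\rG}$ is also the typing the paper's proof actually establishes.
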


\begin{proof}
Let $\rR : (\rG,\rE_1) \to (\rH,\rE_2)$ be a $\UGJ$-isomorphism. $\rR : \rG \to \rH$ is a $\BiCliq$-isomorphism because the compositional structure of $\UGJ$ is inherited from $\BiCliq$. Then $\rR\spcheck : \rH\spcheck \to \rG\spcheck$ is also a $\BiCliq$-isomorphism because the self-duality functor $(-)\spcheck : \BiCliq^{op} \to \BiCliq$ preserves isos (as do all functors). By Lemma \ref{lem:ug_reflect_iso} it remains to show  $\rR\spcheck$ defines a $\UGM$-morphism of type $(\breve{\rH},\rE_1) \to (\breve{\rG},\rE_2)$  i.e.\
\[
\rE_1^\down \circ \breve{\rR}^\up 
\quad \stackrel{?}{=} \quad (\rR \fatsemi \rE_2)^\down \circ \breve{\rH}^\up
\quad \stackrel{(\fatsemi\down)}{=} \quad \rR^\down \circ \rH^\up \circ \rE_2^\down \circ \breve{\rH}^\up.
\]
Since $\rR$ is a $\UGJ$-morphism by assumption,
\[
\begin{tabular}{lll}
&
$\rR^\up \circ \rE_1^\down = \rH^\up \circ (\rE_2 \fatsemi \breve{\rR})^\down$
\\ $\iff$ &
$\rR^\up \circ \rE_1^\down = \rH^\up \circ \rE_2^\down \circ \breve{\rH}^\up \circ \breve{\rR}^\down$
& by $(\fatsemi\down)$
\\ $\iff$ &
$\cl_\rR \circ \rE_1^\down \circ \breve{\rR}^\up = \rR^\down \circ \rH^\up \circ \rE_2^\down \circ \breve{\rH}^\up \circ \cl_{\breve{\rR}}$
& pre/post compose with $\breve{\rR}^\up$/$\rR^\down$
\\ $\iff$ &
$\cl_\rG \circ \rE_1^\down \circ \breve{\rR}^\up = \rR^\down \circ \rH^\up \circ \rE_2^\down \circ \breve{\rH}^\up \circ \cl_{\breve{\rH}}$
& $\rR$ and $\rR\spcheck$ monic, see Lemma \ref{lem:bicliq_mono_epi_char}
\\ $\iff$ &
$\rE_1^\down \circ \breve{\rR}^\up = \rR^\down \circ \rH^\up \circ \rE_2^\down \circ \breve{\rH}^\up$
& since $\rE_1 : \rG \to \breve{\rG}$, also $(\up\down\up)$.
\end{tabular}
\]
Conversely given any $\UGM$-isomorphism $\rR : (\rG,\rE_1) \to (\rH,\rE_2)$ it suffices to show $\rR\spcheck$ defines a $\UGJ$-isomorphism of type $(\breve{\rH},\rE_2) \to (\breve{\rG},\rE_1)$. Reusing previous reasoning, we need only show that the $\BiCliq$-isomorphism $\rR\spcheck$ is a $\UGM$-morphism i.e.\
\[
\breve{\rR}^\up \circ \rE_2^\down
\quad \stackrel{?}{=} \quad
\breve{\rG}^\up \circ (\rE_1 \fatsemi \rR)^\down
\quad = \quad \breve{\rG}^\up \circ \rE_1^\down \circ \rG^\up \circ \rR^\down.
\]
where now $\rR$ is a $\UGM$-morphism by assumption:
\[
\begin{tabular}{lll}
&
$\rE_2^\down \circ \rR^\up = (\rR\spcheck \fatsemi \rE_1)^\down \circ \rG^\up$
\\ $\iff$ &
$\rE_2^\down \circ \rR^\up = \breve{\rR}^\down \circ \breve{\rG}^\up \circ \rE_1^\down \circ \rG^\up$
& by $(\fatsemi\down)$
\\ $\iff$ &
$\breve{\rR}^\up \circ \rE_2^\down \circ \inte_\rR = \inte_{\breve{\rR}} \circ \breve{\rG}^\up \circ \rE_1^\down \circ \rG^\up \circ \rR^\down$
& pre/post compose by $\rR^\down$/$\breve{\rR}^\up$
\\ $\iff$ &
$\breve{\rR}^\up \circ \rE_2^\down \circ \inte_\rH = \inte_{\breve{\rG}} \circ \breve{\rG}^\up \circ \rE_1^\down \circ \rG^\up \circ \rR^\down$
& $\rR$ and $\rR\spcheck$ epic, see Lemma \ref{lem:bicliq_mono_epi_char}
\\ $\iff$ &
$\breve{\rR}^\up \circ \rE_2^\down = \breve{\rG}^\up \circ \rE_1^\down \circ \rG^\up \circ \rR^\down$
& since $\rE_2 : \breve{\rH} \to \rH$, also $(\up\down\up)$.
\end{tabular}
\]
\end{proof}

\smallskip

% note:
% unlike the situation inside BiCliq

\begin{lemma}[The inverse of a $\UG$-isomorphism is its converse]
\label{lem:inverse_ug_iso_is_converse}
\item
\begin{enumerate}
\item
$\rR : (V_1,\rE_1) \to (V_2,\rE_2)$ is a $\UG$-isomorphism iff $\breve{\rR} : (V_2,\rE_2) \to (V_1,\rE_1)$ is.
\item
If $\rR$ is a $\UG$-isomorphism then $\rR^{\bf-1} = \breve{\rR}$.
\end{enumerate}
\end{lemma}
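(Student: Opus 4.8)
The plan is to prove both statements essentially simultaneously, exploiting the self-duality $(-)\spcheck : \BiCliq^{op} \to \BiCliq$ together with the characterisation of $\UG$-morphisms already established. The key structural fact to use is that $\UG$ is isomorphic to the full subcategory of $\UGJ$ on objects $(\rE,\rE)$ with $\breve{\rE} = \rE$ (Lemma \ref{lem:ug_iso_to_diagonals}.1), and that $\UGJ$-isomorphisms correspond to $\UGM$-isomorphisms under $(-)\spcheck$ (Lemma \ref{lem:ugj_ugm_iso_correspondence}). The cleanest route is probably to work directly with the single-equation characterisation of $\UG$-morphisms: $\rR : (V_1,\rE_1) \to (V_2,\rE_2)$ is a $\UG$-morphism iff $\rR^\up = \rE_2^\up \circ \breve{\rR}^\down \circ \rE_1^\up$.

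For statement (1), the plan is as follows. First I would observe that for a symmetric $\rE$, the object $(V,\rE)$ sits in $\UG$ with self-loop structure $\rE : \rE \to \rE = \breve{\rE}$, so converses interact cleanly. Given a $\UG$-isomorphism $\rR : (V_1,\rE_1) \to (V_2,\rE_2)$, its underlying $\BiCliq$-morphism $\rR : \rE_1 \to \rE_2$ is a $\BiCliq$-isomorphism; applying the self-duality functor yields the $\BiCliq$-isomorphism $\breve{\rR} : \breve{\rE_2} \to \breve{\rE_1}$, which since $\breve{\rE_i} = \rE_i$ has type $\rE_2 \to \rE_1$. To show $\breve{\rR}$ is a $\UG$-morphism $(V_2,\rE_2) \to (V_1,\rE_1)$ I would verify $\breve{\rR}^\up = \rE_1^\up \circ \rR^\down \circ \rE_2^\up$ (noting $\breve{\breve{\rR}} = \rR$). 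This should follow by taking De Morgan duals, or equivalently taking converses, of the equation $\rR^\up = \rE_2^\up \circ \breve{\rR}^\down \circ \rE_1^\up$ witnessing that $\rR$ is a $\UG$-morphism, using that the $\rE_i$ are symmetric so $(\rE_i)^\up$ and $(\rE_i)^\down$ are related by De Morgan duality via $\neg$. The backward direction is then immediate by involutivity of converse.

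For statement (2), I would argue that once $\breve{\rR}$ is known to be a $\UG$-morphism of the correct type, it suffices to show $\rR \fatsemi \breve{\rR} = id_{\rE_1}$ and $\breve{\rR} \fatsemi \rR = id_{\rE_2}$, and then invoke uniqueness of inverses. Here I would appeal to Lemma \ref{lem:bicliq_iso_two_mor} or Lemma \ref{lem:bicliq_iso_from_rel} to characterise when $\breve{\rR}$ is the $\BiCliq$-inverse of $\rR$. Alternatively, and perhaps more slickly, I would note that $\rR$ already has a genuine two-sided inverse $\rS$ as a $\BiCliq$-isomorphism, and the content is to show $\rS = \breve{\rR}$. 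The natural way is to combine Lemma \ref{lem:bicliq_iso_from_rel} with the self-adjointness built into the $\UG$-structure: the constraints $\rR^\up \circ \rE_1^\down = \rE_2^\up \circ \breve{\rR}^\down$ and its De Morgan dual force the four equations (a)--(d) of Lemma \ref{lem:bicliq_iso_from_rel} with $\rS := \breve{\rR}$, yielding that $\breve{\rR}$ is the inverse.

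The main obstacle I anticipate is the bookkeeping in matching the $\UG$-morphism constraint against the four equations of Lemma \ref{lem:bicliq_iso_from_rel}, because those equations mix $\rR^\up, \rR^\down, \rS^\up, \rS^\down$ with the structure relations $\rE_1, \rE_2$ sitting in the role of domain/codomain objects $\rG, \rH$. Concretely, with $\rG = \rE_1$, $\rH = \rE_2$, and $\rS = \breve{\rR}$, equation (a) reads $\rR^\down \circ \rE_2^\up = \rE_1^\down \circ \breve{\rR}^\up$, which is exactly (one De Morgan reformulation of) the $\UG$-morphism condition on $\breve{\rR}$, and the remaining equations should follow symmetrically using that both $\rE_1$ and $\rE_2$ are self-adjoint. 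The delicate point is ensuring every application of $(\up\down\up)$, $(\down\up\down)$, and De Morgan duality (Lemma \ref{lem:up_down_basic}) lands on the right side, and that I consistently use $\breve{\rE_i} = \rE_i$ to identify the types. I expect this to be routine once the correspondence is set up, but it is the step most prone to sign/direction errors.
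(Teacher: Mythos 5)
Your high-level plan (route through the converse, then identify it with the inverse via Lemma \ref{lem:bicliq_iso_from_rel}) can be made to work, but the step you lean on in both parts is wrong as stated, and it is exactly the step where the isomorphism hypothesis must enter. You claim that the $\UG$-condition for $\breve{\rR}$ "should follow by taking De Morgan duals, or equivalently taking converses" of the condition for $\rR$. It does not. The De Morgan dual of $\rR^\up \circ \rE_1^\down = \rE_2^\up \circ \breve{\rR}^\down$ (conjugating by $\neg$ and using $\breve{\rE_i}=\rE_i$) is $\breve{\rR}^\down \circ \rE_1^\up = \rE_2^\down \circ \rR^\up$ --- i.e.\ the \emph{other} displayed form of the \emph{same} condition in Definition \ref{def:ulg_j_m}.3, your equations (b) and (c). The condition that $\breve{\rR}$ is a $\UG$-morphism $(V_2,\rE_2)\to(V_1,\rE_1)$ is $\breve{\rR}^\up \circ \rE_2^\down = \rE_1^\up \circ \rR^\down$, your (d) (with (a) as its De Morgan dual); this relates $\breve{\rR}^\up$ to $\rR^\down$ and is genuinely stronger. ("Taking converses" is not an operation on equations between functions $\Pow V_1 \to \Pow V_2$.) Indeed, for a non-isomorphism the implication fails: under $\GOpen$, $\UG$-morphisms are De Morgan algebra morphisms, and the adjoint of a De Morgan morphism is generally not one (e.g.\ $\two \to \two^2$, $1 \mapsto (1,1)$ has adjoint $(a,b) \mapsto a \land b$, which does not commute with negation). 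So (a) and (d) cannot come from De Morgan duality and self-adjointness of the $\rE_i$ alone.

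The repair is to use that $\rR$ is a $\BiCliq$-isomorphism. Either follow the paper: pass to the diagonal of $\UGJ$ and invoke Lemma \ref{lem:ugj_ugm_iso_correspondence}, whose proof obtains the missing equation by pre/post-composing with $\breve{\rR}^\up$ and $\rR^\down$ and using $\cl_\rR = \cl_{\rE_1}$, $\inte_\rR = \inte_{\rE_2}$ from Lemma \ref{lem:bicliq_mono_epi_char} (monic/epic); then part (2) is the short computation $(\rR \fatsemi \breve{\rR})^\up = \breve{\rR}^\up \circ \rE_2^\down \circ \rR^\up = \breve{\rR}^\up \circ \breve{\rR}^\down \circ \rE_1^\up = \inte_{\rE_1} \circ \rE_1^\up = \rE_1^\up$. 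Or, staying closer to your outline: let $\rS_0$ be the actual $\BiCliq$-inverse; equation (b) for $\rS_0$ gives $\rS_0^\down \circ \rE_1^\up = \rE_2^\down \circ \rR^\up = \breve{\rR}^\down \circ \rE_1^\up$, and since both $\rS_0$ and $\breve{\rR}$ are $\BiCliq$-morphisms $\rE_2 \to \rE_1$ (the latter by self-duality and $\breve{\rE_i}=\rE_i$), both $(-)^\down$'s absorb $\inte_{\rE_1}$, forcing $\rS_0^\down = \breve{\rR}^\down$ and hence $\rS_0 = \breve{\rR}$. Either way, without an argument of this kind your proposal does not close.
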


\begin{proof}
\item
\begin{enumerate}
\item
By Lemma \ref{lem:ug_iso_to_diagonals} the diagonals of $\UGJ$ and $\UGM$ are (i) the same full subcategory, and (ii) categorically isomorphic to $\UG$ via the identity-on-morphisms functor where $(V,\rE) \mapsto (\rE,\rE)$. Then a $\UG$-isomorphism $\rR : (V_1,\rE_1) \to (V_2,\rE_2)$ defines a $\UGJ$-isomorphism $\rR : (\rE_1,\rE_1) \to (\rE_2,\rE_2)$. Applying Lemma \ref{lem:ugj_ugm_iso_correspondence} we obtain the $\UGM$-isomorphism $\breve{\rR} : (\rE_2,\rE_2) \to (\rE_1,\rE_1)$ (since $\rE_i\spbreve = \rE_i$), yielding a $\UG$-isomorphism $\breve{\rR} : (V_2,\rE_2) \to (V_1,\rE_1)$.

\item
Let $\rR : (V_1,\rE_1) \to (V_2,\rE_2)$ be a $\UG$-isomorphism. By (1) we have the $\UG$-isomorphism $\breve{\rR} : (V_2,\rE_2) \to (V_1,\rE_1)$. Then:
\[
\begin{tabular}{lll}
$(\rR \fatsemi \breve{\rR})^\up$
&
$= \breve{\rR}^\up \circ \rE_2^\down \circ \rR^\up$
& by $(\fatsemi\up)$
\\&
$= \breve{\rR}^\up \circ \breve{\rR}^\down \circ \rE_1^\up$
& since $\rR$ a $\UG$-morphism
\\&
$= \inte_{\breve{\rR}} \circ \rE_1^\up$
\\&
$= \inte_{\rE_1} \circ \rE_1^\up$
& since $\breve{\rR} : \rE_2 \to \rE_1$ epic
\\&
$= \rE_1^\up$
& by $(\up\down\up)$,
\end{tabular}
\]
and consequently $\rR \fatsemi \breve{\rR} = id_{\rE_1}$. By a symmetric argument one can prove $\breve{\rR} \fatsemi \rR = id_{\rE_2}$ too.

\end{enumerate}
\end{proof}

\smallskip

\begin{lemma}[Isomorphic graphs induce $\UGJ$ and $\UGM$-isomorphisms]
\item
Fix any graph isomorphism $f : (V_1,\rE_1) \to (V_2,\rE_2)$.
\begin{enumerate}
\item
Each $(\rG,\rE_1) \in \UGJ$ has an associated $\UGJ$-isomorphism $\rG : (\rG,\rE_1) \to (f^{\bf-1} ; \rG, \rE_2)$.
\item
Each $(\rG,\rE_2) \in \UGM$ has an associated $\UGM$-isomorphism $\rG : (\rG ; f,\rE_1) \to (\rG , \rE_2)$.
\end{enumerate}
\end{lemma}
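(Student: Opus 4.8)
The plan is to prove part 1 by a direct relational computation and then deduce part 2 from it via the $\UGJ$/$\UGM$ correspondence. Throughout I would write $\rH := f^{\bf-1};\rG = \breve f ; \rG$ and record that, since $f$ is a graph isomorphism, $f : V_1 \to V_2$ is a bijection satisfying $\rE_1 ; f = f ; \rE_2$, whence $\rE_2 = \breve f ; \rE_1 ; f$. First I would confirm that the asserted codomain $(\rH,\rE_2)$ really is a $\UGJ$-object: as $(\rG,\rE_1)\in\UGJ$, Lemma \ref{lem:char_ug_jm}.1 gives $\rE_1 = \rG ; \rK$ for some $\rK$, so $\rE_2 = \breve f ; \rG ; \rK ; f = \rH ; (\rK ; f)$, and $\rE_2$ is symmetric, so $(\rH,\rE_2)\in\UGJ$ again by Lemma \ref{lem:char_ug_jm}.1.

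Next I would show that the relation $\rG \subseteq V_1 \times \rG_t$ is a $\BiCliq$-isomorphism of type $\rG \to \rH$. It is a $\BiCliq$-morphism with bijective witnesses $(f, id_{\rG_t})$, since $f ; \rH = f ; \breve f ; \rG = \rG$ and $\rG ; \Delta_{\rG_t} = \rG$; being witnessed by bijections, it is a bipartite $\Dep$-isomorphism (Definition \ref{def:bipartite_iso}), hence an isomorphism by Lemma \ref{lem:bipartite_isos_are_isos}.

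The core step is verifying the $\UGJ$-morphism equation $\rG^\up \circ \rE_1^\down = \rH^\up \circ (\rE_2 \fatsemi \breve\rG)^\down$. Using $(\down\fatsemi)$ from Lemma \ref{lem:bicliq_func_comp} the right-hand side becomes $\rH^\up \circ \rE_2^\down \circ \breve\rH^\up \circ \breve\rG^\down$. I would then substitute the bijection identities, all valid because $f$ is a bijective function: $\rH^\up = \rG^\up \circ f^\down$, $\breve\rH^\up = f^\up \circ \breve\rG^\up$ (as $\breve\rH = \breve\rG ; f$), and $\rE_2^\down = f^\up \circ \rE_1^\down \circ f^\down$ (from $\rE_2 = \breve f ; \rE_1 ; f$ and $(\down\circ)$ of Lemma \ref{lem:up_down_basic}.3, with $(\breve f)^\down = f^\up$). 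Since $f^\down \circ f^\up = id$, the two resulting copies of $f^\down \circ f^\up$ cancel, so the right-hand side collapses to $\rG^\up \circ \rE_1^\down \circ \breve\rG^\up \circ \breve\rG^\down = \rG^\up \circ \rE_1^\down \circ \inte_{\breve\rG}$. This matches the left-hand side exactly because $\rE_1 : \rG \to \breve\rG$ is a $\BiCliq$-morphism: by Lemma \ref{lem:bicliq_mor_char_max_witness} together with Lemma \ref{lem:up_down_basic}.4 this forces the absorption $\rE_1^\down \circ \inte_{\breve\rG} = \rE_1^\down$. Thus $\rG$ is a $\UGJ$-morphism, and being also a $\BiCliq$-isomorphism, Lemma \ref{lem:ug_reflect_iso}.1 shows its inverse is a $\UGJ$-morphism, so $\rG$ is a $\UGJ$-isomorphism, finishing part 1.

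For part 2 I would apply Lemma \ref{lem:ugj_ugm_iso_correspondence} to the $\UGJ$-isomorphism of part 1: its self-dual $\rG\spcheck = \breve\rG$ is a $\UGM$-isomorphism whose domain object is built from $\breve\rH = \breve\rG ; f$, and relabelling the ambient relation $\breve\rG$ as the generic $\rG$ of the statement recovers the claimed $\UGM$-isomorphism involving $\rG ; f$ (note it is $f$, not $\breve f$, that survives the dualization, which is what makes the types close up). The main obstacle is the relational-algebra bookkeeping in the morphism equation — keeping the four operators $(-)^\up,(-)^\down$ and their $f$-conjugates correctly typed through the cancellations, and recognizing that the leftover interior operator $\inte_{\breve\rG}$ is absorbed precisely by the object axiom $\rE_1^\down \circ \inte_{\breve\rG} = \rE_1^\down$; I would also take care that the relabelling in part 2 is type-consistent, since the correspondence secretly threads $\breve f$ through the construction.
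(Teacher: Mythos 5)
Your proposal is correct and follows essentially the same route as the paper: establish that the target pair is a $\UGJ$-object via Lemma \ref{lem:char_ug_jm}, exhibit $\rG$ as a $\BiCliq$-isomorphism with bijective witnesses, verify the $\UGJ$-morphism equation by the same relational computation (your direct conjugation $\rE_2^\down = f^\up \circ \rE_1^\down \circ f^\down$ followed by cancelling $f^\down \circ f^\up$ is equivalent to the paper's use of the intertwining identity $\rE_2^\down \circ f^\up = f^\up \circ \rE_1^\down$ from Lemma \ref{lem:ugraph_iso_induce_ug_iso}), conclude via Lemma \ref{lem:ug_reflect_iso}, and derive part 2 by dualising through Lemma \ref{lem:ugj_ugm_iso_correspondence}. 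The only cosmetic difference is that the paper applies part 1 to $(\breve\rG,\rE_2)$ and then dualises, whereas you dualise the output of part 1 directly; these are the same argument up to re-parametrisation.
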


\begin{proof}
\item
\begin{enumerate}
\item
To see $(f^{\bf-1} ; \rG,\rE_2) \in \UGJ$ observe that $\rE_2 = f^{\bf-1} ; \rE_1 ; f = f^{\bf-1} ; \rG ; \rE_+\spbreve ; f$ and apply Lemma \ref{lem:char_ug_jm}. Next, $\rG$ defines a $\BiCliq$-isomorphism $\rG \to f^{\bf-1} ; \rG$ via the following commuting diagram with bijective witnesses:
\[
\xymatrix@=15pt{
\rG_t \ar[rr]^{\Delta_{\rG_t}} && \rG_t
\\
V_1 \ar[u]^{\rG} \ar[rr]_{f} && V_2 \ar[u]_-{f^{\bf-1} ; \rG}
}
\]
To show that $\rG$ defines a $\UGJ$-isomorphism of the desired type, it suffices to establish that it is a $\UGJ$-morphism by Lemma \ref{lem:ug_reflect_iso}. Since $f$ is a graph isomorphism we deduce $\rE_2^\down \circ f^\up = f^\up \circ \rE_1^\down$ by Lemma \ref{lem:ugraph_iso_induce_ug_iso}, noting that $\breve{f}^\down = f^\up$. Then we calculate:
\[
\begin{tabular}{lll}
$(f^{\bf-1} ; \rG)^\up \circ (\rE_2 \fatsemi \breve{\rR})^\down$
&
$= \rG^\up \circ (f^{\bf-1})^\up \circ \rE_2^\down \circ ((f^{\bf-1} ; \rG)\spbreve)^\up \circ \breve{\rR}^\down$
& by $(;\up)$ and $(\fatsemi\down)$
\\&
$= \rG^\up \circ (f^{\bf-1})^\up \circ \rE_2^\down \circ  (\breve{\rG} ; f)^\up \circ \breve{\rG}^\down$
& $\breve{f}^{\bf-1} = f$ and $\rR = \rG$
\\&
$= \rG^\up \circ (f^{\bf-1})^\up \circ \rE_2^\down \circ f^\up \circ \breve{\rG}^\up \circ \breve{\rG}^\down$
\\&
$= \rG^\up \circ (f^{\bf-1})^\up \circ f^\up \circ \rE_1^\down \circ \breve{\rG}^\up \circ \breve{\rG}^\down$
& by earlier equality
\\&
$= \rG^\up \circ \rE_1^\down$
& $f ; f^{\bf-1} = \Delta_{V_1}$ and $\rE_1^\down \circ \inte_{\breve{\rG}} = \rE_1^\down$.
\end{tabular}
\]

\item
Given $(\rG,\rE_2) \in \UGM$ then $(\breve{\rG},\rE_2) \in \UGJ$ so by (1) we have the $\UGJ$-isomorphism $\breve{\rG} : (\breve{\rG},\rE_2) \to (f^{\bf-1} ; \breve{\rG},\rE_1)$. Then by Lemma \ref{lem:ugj_ugm_iso_correspondence} we obtain the desired $\UGM$-isomorphism $\rG : (\rG;f,\rE_1) \to (\rG,\rE_2)$ since $(f^{\bf-1};\breve{\rG})\spbreve = \rG ; f$.

\end{enumerate}
\end{proof}

\smallskip

\begin{lemma}[Lifting certain $\BiCliq$-epis and monos to $\UGJ$ and $\UGM$]
\label{lem:lift_bicliq_epi_mono_ugj_ugm}
\item
Let $(V,\rE)$ be an undirected graph.
\begin{enumerate}
\item
Given $(\rH,\rE) \in \UGJ$ then any $\BiCliq$-morphism $\rH : \rG \to \rH$ defines a $\UGJ$-morphism $(\rG,\rE) \to (\rH,\rE)$.
\item
Given $(\rG,\rE) \in \UGM$ then any $\BiCliq$-morphism $\rG : \rG \to \rH$ defines a $\UGM$-morphism $(\rG,\rE) \to (\rH,\rE)$.
\end{enumerate}
\end{lemma}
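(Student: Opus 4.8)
The plan is to treat the two parts as order-duals and, in each, to split the work into (i) checking that the transported relation $\rE$ still yields an object of the relevant category, and (ii) verifying the single defining equation of the relevant morphism class. Step (i) carries the real content; step (ii) is a one-line rewrite. First a bookkeeping remark: since a $\BiCliq$-morphism $\rH:\rG\to\rH$ is a relation $\rH\subseteq\rG_s\times\rH_t$ while also $\rH\subseteq\rH_s\times\rH_t$, the hypothesis forces $\rG_s=\rH_s$ (the ``epi'' shape of Example \ref{ex:dep_morphisms}.6); dually $\rG:\rG\to\rH$ forces $\rG_t=\rH_t$ (the ``mono'' shape). Hence in Part 1 the symmetric relation $\rE\subseteq\rH_s\times\rH_s=\rG_s\times\rG_s$ is available to both objects, in Part 2 we have $\rE\subseteq\rG_t\times\rG_t=\rH_t\times\rH_t$, and in each case $\rE$ is literally unchanged, so it stays symmetric.

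For Part 1(i) I would first extract from the morphism hypothesis the equality $\rH^\up\circ\cl_\rG=\rH^\up$ (Lemma \ref{lem:bicliq_mor_char_max_witness}.1). Post-composing with $\rH^\down$ gives $\cl_\rH\circ\cl_\rG=\rH^\down\circ\rH^\up\circ\cl_\rG=\cl_\rH$, whence $\cl_\rG\subseteq\cl_\rH$ pointwise, since $\cl_\rG(X)\subseteq\cl_\rH(\cl_\rG(X))=\cl_\rH(X)$ by extensivity of $\cl_\rH$. Because $\rE$ is symmetric, the two equalities $\rE^\up=\rE^\up\circ\cl_\rG$ and $\rE^\up=\inte_{\breve\rG}\circ\rE^\up$ defining $\rE:\rG\to\breve\rG$ are equivalent (the fourth member of the right-hand list in Lemma \ref{lem:up_down_basic}.4, applied with $\breve\rE=\rE$, is exactly the first member of the left-hand list; cf. Lemma \ref{lem:char_ug_jm}.1 and the note after Definition \ref{def:ulg_j_m}), so it suffices to verify $\rE^\up=\rE^\up\circ\cl_\rG$. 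This now follows from $\rE^\up=\rE^\up\circ\cl_\rH$ (valid since $(\rH,\rE)\in\UGJ$) by monotonicity: $\rE^\up\le\rE^\up\circ\cl_\rG\le\rE^\up\circ\cl_\rH=\rE^\up$. Thus $(\rG,\rE)\in\UGJ$.

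For Part 1(ii) the morphism $\rH$ must satisfy $\rH^\up\circ\rE^\down=\rH^\up\circ(\rE\fatsemi\breve\rH)^\down$ (Definition \ref{def:ulg_j_m}.1 with $\breve\rR=\breve\rH$). Expanding the composite $\rE:\rH\to\breve\rH$ followed by $\breve\rH:\breve\rH\to\breve\rG$ via $(\down\fatsemi)$ (Lemma \ref{lem:bicliq_func_comp}) gives $(\rE\fatsemi\breve\rH)^\down=\rE^\down\circ\inte_{\breve\rH}$, and since $(\rH,\rE)\in\UGJ$ supplies $\rE^\down=\rE^\down\circ\inte_{\breve\rH}$ (Lemma \ref{lem:up_down_basic}.4), the right-hand side collapses to $\rH^\up\circ\rE^\down$, as required. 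Part 2 is the order-dual, read through $\inte$ instead of $\cl$: the mono hypothesis gives $\rG^\up=\inte_\rH\circ\rG^\up$, hence $\inte_\rH\circ\inte_\rG=\inte_\rH\circ\rG^\up\circ\rG^\down=\inte_\rG$ and so $\inte_\rG\subseteq\inte_\rH$; combined with $\rE^\up=\inte_\rG\circ\rE^\up$ (from $(\rG,\rE)\in\UGM$, Lemma \ref{lem:char_ug_jm}.2) and co-extensivity of $\inte_\rH$ this yields $\rE^\up=\inte_\rH\circ\rE^\up$, i.e. $(\rH,\rE)\in\UGM$. The morphism equation $\rE^\down\circ\rG^\up=(\breve\rG\fatsemi\rE)^\down\circ\rG^\up$ then reduces, via $(\down\fatsemi)$, to $\rE^\down\circ\rG^\up=\cl_{\breve\rG}\circ\rE^\down\circ\rG^\up$, which is immediate from $\rE^\down=\cl_{\breve\rG}\circ\rE^\down$ (valid since $(\rG,\rE)\in\UGM$, Lemma \ref{lem:up_down_basic}.4).

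I expect the entire obstacle to lie in step (i): one must be sure that pushing the self-adjoint relation $\rE$ along an epi (resp. mono) keeps it a $\BiCliq$-morphism $\rG\to\breve\rG$ (resp. $\breve\rH\to\rH$). The two enabling facts are the closure/interior comparison $\cl_\rG\subseteq\cl_\rH$ (resp. $\inte_\rG\subseteq\inte_\rH$) extracted from the morphism hypothesis, and the reduction of the object-membership condition to a single inequality afforded by symmetry of $\rE$; once these are in place, both the object membership and the defining morphism equation are short computations with $(\up\fatsemi)$/$(\down\fatsemi)$ and the equivalences of Lemma \ref{lem:up_down_basic}.4.
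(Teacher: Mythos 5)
Your proof is correct. The overall shape (first check that $(\rG,\rE)$, resp.\ $(\rH,\rE)$, is still an object; then verify the single defining equation of a $\UGJ$-, resp.\ $\UGM$-, morphism) is the paper's, and your verification of the morphism equation is essentially identical to the paper's: both reduce $(\rE \fatsemi \breve{\rH})^\down$ via $(\down\fatsemi)$ to $\rE^\down \circ \inte_{\breve{\rH}}$ and absorb the interior using the fact that $\rE$ is already a morphism out of $\rH$ (dually with $\cl_{\breve{\rG}}$ in Part 2). Where you genuinely diverge is the object-membership step. The paper dispatches it in one line with the factorisation characterisation of Lemma \ref{lem:char_ug_jm}.1(b): since $\rE = \rH ; \rE_+\spbreve$ and $\rR = \rH = \rG ; \rR_+\spbreve$, one gets $\rE = \rG ; (\rR_+\spbreve ; \rE_+\spbreve)$, so $\rE$ factors through $\rG$ on the left and $(\rG,\rE)\in\UGJ$ immediately. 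You instead work with the operator characterisation: you extract $\cl_\rG \subseteq \cl_\rH$ (resp.\ $\inte_\rG \subseteq \inte_\rH$) from the epi (resp.\ mono) hypothesis, observe that symmetry of $\rE$ collapses the two conditions of Lemma \ref{lem:bicliq_mor_char_max_witness}.1 into one via Lemma \ref{lem:up_down_basic}.4, and then squeeze. Both arguments are valid; the paper's is shorter, while yours isolates the reusable facts that an epi-shaped morphism $\rH:\rG\to\rH$ forces $\cl_\rG\subseteq\cl_\rH$ and that for symmetric relations the source and target membership conditions coincide, which is arguably the more transparent explanation of \emph{why} the lifting works.
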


\begin{proof}
\item
\begin{enumerate}
\item
For clarity let $\rR : \rG \to \rH$ where $\rR = \rH$. Then given that $(\rH,\rE) \in \UGJ$ we deduce that $(\rG,\rE) \in \UGJ$ because $\rE = \rH ; \rE_+\spbreve = \rR ; \rE_+\spbreve = \rG ; \rR_+\spbreve ; \rE_+\spbreve$, so we can apply Lemma \ref{lem:char_ug_jm}. Finally we calculate:
\[
\begin{tabular}{lll}
$\rH^\up \circ (\rE \fatsemi \breve{\rR})^\down$
&
$= \rH^\up \circ \rE^\down \circ \breve{\rH}^\up \circ \breve{\rR}^\down$
& since $\rE : \rH \to \breve{\rH}$
\\&
$= \rH^\up \circ \rE^\down \circ \breve{\rH}^\up \circ \breve{\rH}^\down$
& since $\rR = \rH$
\\&
$= \rH^\up \circ \rE^\down$
& since $\rE^\down \circ \inte_{\breve{\rH}} = \rE^\down$
\\&
$= \rR^\up \circ \rE^\down$
& since $\rR = \rH$.
\end{tabular}
\]

\item
For clarity let $\rR : \rG \to \rH$ where $\rR = \rG$. Given that $(\rG,\rE) \in \UGM$ then we deduce $(\rH,\rE) \in \UGM$ because $\rE = \rG ; \rE_+\spbreve = \rR ; \rE_+\spbreve = \rG ; \rR_+\spbreve ; \rE_+\spbreve$. Finally we calculate:

\[
\begin{tabular}{lll}
$(\breve{\rR} \fatsemi \rE)^\down \circ \rG^\up$
&
$= \breve{\rR}^\down \circ \breve{\rG}^\up \circ \rE^\down \circ \rG^\up$
& since $\rE : \breve{\rG} \to \rG$
\\&
$= \breve{\rG}^\down \circ \breve{\rG}^\up \circ \rE^\down \circ \rG^\up$
& since $\rR = \rG$
\\&
$= \rE^\down \circ \rG^\up$
& since $\cl_{\breve{\rG}} \circ \rE^\down = \rE^\down$
\\&
$= \rE^\down \circ \rR^\up$
& since $\rR = \rG$.
\end{tabular}
\]

\end{enumerate}
\end{proof}

% TODO clean below

\subsection{The three categorical equivalences}

\begin{definition}[The equivalence functors]
\label{def:equiv_functors_ulg_all}
\item
\[
\begin{tabular}{lll}
  $\vcenter{\vbox{\xymatrix@=15pt{	
  \UGJ \ar@/^10pt/[rr]^{\GJOpen} & \cong & \SAJ_f \ar@/^10pt/[ll]^{\GJPirr}
  }}}$
  &
  $\vcenter{\vbox{\xymatrix@=15pt{	
  \UGM \ar@/^10pt/[rr]^{\GMOpen} & \cong & \SAM_f \ar@/^10pt/[ll]^{\GMPirr}
  }}}$
  &
  $\vcenter{\vbox{\xymatrix@=15pt{	
  \UG \ar@/^10pt/[rr]^{\GOpen} & \cong & \SAI_f \ar@/^10pt/[ll]^{\GPirr} &
  }}}$
\end{tabular}
\]
\begin{enumerate}
\item
Action on objects:
\[
\begin{tabular}{llllll}
  $\GJOpen (\rG,\rE)$ & $:= (\Open\rG, \partial_\rG^{\bf-1} \circ \Open\rE)$
  &&
  $\GJPirr (\aQ,\sigma)$ & $:= (\Pirr\aQ,\Pirr\sigma)$
  \\[1ex]
  $\GMOpen (\rG,\rE)$ & $:= (\Open\rG, \Open\rE \circ \partial_\rG)$
  &&
  $\GMPirr (\aQ,\sigma)$ & $:= (\Pirr\aQ,\Pirr\sigma)$
  \\[1ex]
  $\GOpen (V,\rE)$ & $:= (\Open\rE, \partial_\rE)$
  &&
  $\GPirr (\aQ,\sigma)$ & $:= (J(\aQ),\Pirr\sigma)$.
\end{tabular}
\]
For clarity,
\[
\begin{tabular}{l}
Regarding the functors on the left,
\\[1ex]
\begin{tabular}{lll}
  for $\GJOpen$
& $\rE$ is viewed as a (self-adjoint) $\BiCliq$-morphism of type $\rG \to \breve{\rG}$ when applying $\Open$.
\\[1ex]
for $\GMOpen$
& $\rE$ is viewed as a (self-adjoint) $\BiCliq$-morphism of type $\breve{\rG} \to \rG$ when applying $\Open$.
\\[1ex]
for $\GOpen$
& $\rE$ is a viewed as a (symmetric) binary relation when applying $\Open$.
\end{tabular}
\\
\\
Regarding the functors on the right,
\\[1ex]
\begin{tabular}{lll}
for $\GJPirr$
& $\sigma$ is viewed as a (self-adjoint) $\JSL_f$-morphism of type $\aQ \to \aQ^{\pOp}$ when applying $\Pirr$.
\\[1ex]
for $\GMPirr$
& $\sigma$ is viewed as a (self-adjoint) $\JSL_f$-morphism of type $\aQ^{\pOp} \to \aQ$ when applying $\Pirr$.
\\[1ex]
for $\GPirr$
& $\sigma$ is viewed as a (self-adjoint) $\JSL_f$-morphism of type $\aQ \to \aQ^{\pOp}$ when applying $\Pirr$.
\end{tabular}
\end{tabular}
\]

\item
Action on morphisms:
\begin{itemize}
\item[--]
$\GJOpen$, $\GMOpen$ and $\GOpen$ act as $\Open$ on the underlying $\BiCliq$-morphism. 
\item[--]
$\GJPirr$ and $\GMPirr$ act as $\Pirr$ on the underlying join-semilattice morphism. 
\item[--]
Finally, for any $\SAI_f$-morphism $f : (\aQ,\sigma_1) \to (\aR,\sigma_2)$,
\[
\GPirr f
:= \Pirr(\sigma_2 \circ f)
\;\stackrel{!}{=}\; \Pirr f ; \; \sigma_2 |_{M(\aR) \times J(\aR)} : (J(\aQ),\Pirr\sigma_1) \to (J(\aR),\Pirr\sigma_2)
\]
where the asserted equality is proved below. \endbox
\end{itemize}
\end{enumerate}
\end{definition}

\smallskip

\begin{example}[Complete graphs]
Consider $(V,\rE)$ where $\rE := \overline{\Delta_V}$. Applying $\GOpen$ yields the De Morgan algebra $(\Open\rE,\partial_\rE)$. Recall $\Open\rE = (O(\rE),\cup,\emptyset)$
where $O(\rE) := \{ \rE[X] : X \subseteq V \}$ = $\ang{\{ \overline{v} : v \in V \}}_{\JPow V}$, and:
\[
\partial_\rE
= \lambda Y. \rE[\overline{Y}] =
\lambda Y.
\begin{cases} 
\overline{v} & \text{if $Y = \overline{v}$}
\\
\emptyset & \text{if $Y = V$}
\\
V & \text{if $Y = \emptyset$}.
\end{cases}
\]
Then $O(\rE) = \{ \emptyset \}$ if $|V| \leq 1$ and is $\{ \emptyset, V\} \cup \{ \overline{v} : v \in V \}$ otherwise. Graphically:
\smallskip
\[
\begin{tabular}{ccc}
$\vcenter{\vbox{\xymatrix@=15pt{
\emptyset \ar@(ul,ur)
}}}$
&&
$\vcenter{\vbox{\xymatrix@=15pt{
&& V && &
\\
\overline{v_1} \ar@(ul,ur) \ar@{..}[urr]  & \overline{v_2} \ar@{..}[ur] \ar@(ul,ur) & \cdots &  \overline{v_{n-1}} \ar@{..}[ul] \ar@(ul,ur)  & \overline{v_n} \ar@(ul,ur) \ar@{..}[ull] &
\\
& & \emptyset \ar@{..}[ull] \ar@{..}[ul] \ar@{..}[ur] \ar@{..}[urr] \ar@{<->}`[rrru]`[uu][uu] && &
}}}$
\\ \\
$|V| \leq 1$
&& 
$|V| \geq 2$
\end{tabular}
\]
They are well-defined De Morgan algebras and non-distributive whenever $|V| \geq 3$. Applying $\GPirr$ yields the graph $(J(\jslM{V}),\Pirr\sigma)$ with vertices $J(\jslM{V}) = \{ \overline{v} : v \in V \} \subseteq M_V$ and symmetric relation $\Pirr\sigma \subseteq J(\jslM{V}) \times J(\jslM{V})$,
\[
\Pirr\sigma(\overline{v_1},\overline{v_2}) 
:\iff \overline{v_2} \nsubseteq \partial_\rE(\overline{v_1})
\iff \overline{v_2} \nsubseteq \overline{v_1}
\iff v_1 \neq v_2
\iff \rE(v_1, v_2).
\]
That is, these De Morgan algebras lead back to the complete graphs. \endbox
\end{example}

\smallskip

% ==============
% chains example
% ==============

\begin{example}[Chains as undirected graphs]
Chains are important examples of distributive lattices. Recall:
\[
\jslCh{n} := (C_n,max,0)
\qquad\text{where}\qquad
C_n := \{0,\dots,n\}
\]
has $n+1$ elements whereas its Hasse diagram has $n$ edges, and by definition its \emph{length} is $n$. We denote its underlying poset by $\Ch{n} := (C_n,\leq_{\jslCh{n}})$. The join-semilattice morphisms $\jslCh{m+1} \to \jslCh{n}$ naturally biject with the monotone morphisms $\Ch{m} \to \Ch{n}$ via the free construction $F_\lor : \Poset_f \to \JSL_f$, see Definition \ref{def:free_jsl_on_poset} in the Appendix. Every chain $\jslCh{n}$ extends to a finite De Morgan algebra in precisely one way:
\[
\sigma : C_n \to C_n
\qquad
\sigma(x) :=  n - x.
\]
The two equational axioms defining $\SAI_f$ are satisfied because $x \leq_{\jslCh{n}} y$ implies $n - y \leq_{\jslCh{n}} n - x$, and moreover $\sigma(\sigma(x)) = n - (n - x) = x$. It is unique because $\jslCh{n}$ has only one automorphism. Here is $\jslCh{n}$ for $0 \leq n < 5$,
\[
\begin{tabular}{ccccc}
$\vcenter{\vbox{
\xymatrix@=10pt{
0 \ar@(ul,ur)
}
}}$
& \qquad\qquad
$\vcenter{\vbox{
\xymatrix@=10pt{
1
\\
0 \ar@{..}[u] \ar@/_10pt/@{<->}[u]
}
}}$
& \qquad\qquad
$\vcenter{\vbox{
\xymatrix@=10pt{
2
\\
1 \ar@{..}[u] \ar@(ul,dl)
\\
0 \ar@{..}[u] \ar@/_10pt/@{<->}[uu]
}
}}$
& \qquad\qquad
$\vcenter{\vbox{
\xymatrix@=10pt{
3
\\
2 \ar@{..}[u]
\\
1 \ar@{..}[u] \ar@/^10pt/@{<->}[u]
\\
0 \ar@{..}[u] \ar@/_15pt/@{<->}[uuu]
}
}}$
& \qquad\qquad
$\vcenter{\vbox{
\xymatrix@=10pt{
4
\\
3 \ar@{..}[u]
\\
2 \ar@{..}[u] \ar@(ur,dr)
\\
1 \ar@{..}[u] \ar@/^10pt/@{<->}[uu]
\\
0 \ar@{..}[u] \ar@/_25pt/@{<->}[uuuu]
}
}}$
\end{tabular}
\]
Concerning their equivalent undirected graphs:
\[
\GPirr (\jslCh{n},\sigma) = (C_n \backslash \{0\},\rE)
\qquad\text{where}\qquad
\rE(x,y) 
:\iff y \nleq_{\jslCh{n}} \sigma(x)
\iff y > n - x
\iff x + y > n.
\]
We now depict $\GPirr\jslCh{n}$ for $0 \leq n < 7$,
\[
\begin{tabular}{c}
\\
\begin{tabular}{ccccc}
empty graph
& \qquad
$\vcenter{\vbox{
\xymatrix@=15pt{
1 \ar@{-}@(ul,ur)
}
}}$
& \qquad
$\vcenter{\vbox{
\xymatrix@=15pt{
1 \ar@{-}[rr] && 2 \ar@{-}@(ul,ur)
}
}}$
& \qquad
$\vcenter{\vbox{
\xymatrix@=15pt{
1 \ar@{-}[rr] && 3 \ar@{-}@(ul,ur) \ar@{-}[rr] && 2 \ar@{-}@(ul,ur)
}
}}$
\end{tabular}
\\ \\ \\
$\vcenter{\vbox{\xymatrix@=10pt{
&& & 2 \ar@{-}[dr]
\\
1 \ar@{-}[rr] && 4 \ar@{-}@(dl,dr) \ar@{-}[rr] \ar@{-}[ur]  &  & 3 \ar@{-}@(dl,dr)
}}}$
\qquad\qquad
$\vcenter{\vbox{\xymatrix@=10pt{
&& & 3 \ar@{-}[dr]
\\
1 \ar@{-}[rr] && 5 \ar@{-}@(ul,u) \ar@{-}[rr] \ar@{-}[ur]  &  & 4 \ar@{-}[ll] \ar@{-}@(ur,dr)
\\
&& & 2 \ar@{-}[ur] \ar@{-}[ul]
}}}$
\qquad\qquad
$\vcenter{\vbox{\xymatrix@=10pt{
&& & 4 \ar@{-}[ddl] \ar@{-}[ddr] \ar@{-}@(ul,ur)
\\
&& & 3 \ar@{-}[dr] \ar@{-}[u]
\\
1 \ar@{-}[rr] && 6 \ar@{-}@(ul,u) \ar@{-}[rr] \ar@{-}[ur]  &  & 5 \ar@{-}[ll] \ar@{-}@(ur,dr)
\\
&& & 2 \ar@{-}[ur] \ar@{-}[ul]
}}}$
\end{tabular}
\]
They are planar graphs, and so is the next graph in the sequence. However, $\GPirr\jslCh{n}$ is non-planar for all $n \geq 8$ because $|\{ \frac{n}{2}, \dots, n \}| \geq 5$ forms a clique, so we may apply Kuratowski's theorem.

\smallskip
For brevity let $\dmCh{n} := (\jslCh{n},\lambda x.n - x)$ for each $n \geq 0$. Whenever $m = \alpha \cdot n$ i.e.\ $m$ divides $n$, there is an associated injective de morgan algebra morphism:
\[
f_{m,n} : \dmCh{m} \to \dmCh{n}
\qquad
f_{m,n}(k) := k \cdot \frac{n}{m} \;.
\]
i.e. it defines a join-semilattice morphism and preserves the involution:
\[
0 \cdot \frac{n}{m} = 0
\qquad
k \cdot max(x,y) = max(k \cdot x,k \cdot y)
\qquad
(m - k) \cdot \frac{n}{m} = n - k \cdot \frac{n}{m}.
\]
Then the corresponding $\UG$-monomorphism $\GPirr f_{m,n} : (C_m \backslash \{0\},\rE_m) \to (C_n \backslash \{0\},\rE_n)$ is the relation:
\[
\GPirr f_{m,n} \subseteq (C_m \backslash \{0\}) \times (C_n \backslash \{0\})
\qquad\text{where}\qquad
\GPirr f_{m,n} (x,y) \iff 1 <_\Nat \frac{x}{m} + \frac{y}{n}
\]
which follows by unwinding the definitions.\endbox
\end{example}

% FROM HERE

\bigskip
To prove well-definedness of the functors we'll make use of the following Lemma. Recall that the diagonals of $\UGJ$ and $\UGM$ are equal and isomorphic to $\UG$ by Lemma \ref{lem:ug_iso_to_diagonals}. We now provide isomorphisms between the two images $\GJPirr[\SAI_f \hookto \SAJ_f]$ and $\GMPirr[\SAI_f \hookto  \SAM_f]$ and this diagonal. After proving functoriality we'll be able to rephrase this result as two natural isomorphisms.
\bigskip

\begin{lemma}[The diagonals are isomorphic to the images of $\SAI_f \subseteq \SAJ_f, \, \SAM_f$]
\label{lem:sai_induces_ugjm_iso}
\item
Take any finite de morgan algebra $(\aQ,\sigma) \in \SAI_f$.
\begin{enumerate}
\item
Viewing $\sigma$ as a $\JSL$-isomorphism $\aQ \to \aQ^{\pOp}$ we have the $\UGJ$-isomorphism $\Pirr\aQ : (\Pirr\sigma,\Pirr\sigma) \to (\Pirr\aQ,\Pirr\sigma)$ with inverse $\Pirr\sigma$.
\item
Viewing $\sigma$ as a $\JSL$-isomorphism $\aQ^{\pOp} \to \aQ$ we have the $\UGM$-isomorphism $\Pirr\sigma : (\Pirr\sigma,\Pirr\sigma) \to (\Pirr\aQ,\Pirr\sigma)$ with inverse $\Pirr\aQ$.
\end{enumerate}
\end{lemma}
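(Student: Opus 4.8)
The statement is a symmetric pair, and part (2) follows from part (1) by order-duality via Lemma \ref{lem:ugj_ugm_iso_correspondence} (the isomorphism correspondence between $\UGJ$ and $\UGM$), so the plan is to prove (1) carefully and deduce (2). For part (1), the first step is to produce a candidate $\UGJ$-isomorphism. Since $(\aQ,\sigma) \in \SAI_f$, Lemma \ref{lem:interpret_infinite_sai} tells us $\sigma$ is a self-adjoint $\JSL_f$-isomorphism $\aQ \to \aQ^{\pOp}$ with $\sigma = \sigma^{\bf-1} = \sigma_*$. Applying $\Pirr$ yields $\Pirr\sigma : \Pirr\aQ \to \Pirr\aQ^{\pOp} = (\Pirr\aQ)\spcheck$, which is a self-adjoint $\BiCliq$-morphism by Lemma \ref{lem:self_adjointness_bicliq_vs_jsl}.3; in particular $\Pirr\sigma$ is symmetric and hence $(\Pirr\sigma,\Pirr\sigma)$ is a legitimate $\UGJ$-object (one of the diagonal objects of Lemma \ref{lem:ug_iso_to_diagonals}). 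The proposed morphism is then $\rR := \Pirr\aQ = id_{\Pirr\aQ}$, regarded as a relation $\Pirr\sigma \to \Pirr\aQ$ in $\Dep$.

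The central task is to check that $\Pirr\aQ$ genuinely defines a $\Dep$-isomorphism of type $\Pirr\sigma \to \Pirr\aQ$ and that it lifts to a $\UGJ$-morphism $(\Pirr\sigma,\Pirr\sigma) \to (\Pirr\aQ,\Pirr\sigma)$. First I would verify the underlying $\Dep$-isomorphism: the idea is that $\Pirr\sigma$ and $\Pirr\aQ$ have the same source set $J(\aQ)$, and $\Pirr\aQ = \; \nleq_\aQ$ is the edge relation while $\Pirr\sigma$ is obtained by composing with $\sigma$; one checks $\Pirr\aQ$ is witnessed as a morphism $\Pirr\sigma \to \Pirr\aQ$ using that $\sigma$ is an isomorphism, with inverse $\Pirr\sigma$ in the reverse direction. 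Concretely, since $\Pirr\sigma \fatsemi \Pirr\aQ$ and $\Pirr\aQ \fatsemi \Pirr\sigma$ should both collapse to identities, I would invoke functoriality of $\Pirr$ together with $\sigma \circ \sigma^{\bf-1} = id$ and the fact that $\Pirr id = id_{\Pirr}$, i.e. $\Pirr\sigma \fatsemi \Pirr\aQ = \Pirr(\sigma) \fatsemi \Pirr(id_\aQ)$ reassembled appropriately. The cleanest route is Lemma \ref{lem:bicliq_iso_two_mor} or Lemma \ref{lem:bicliq_iso_from_rel}, feeding in the component relations of $\Pirr\sigma$ computed from Definition \ref{def:open_pirr}; this reduces the claim that $\Pirr\aQ$ and $\Pirr\sigma$ are mutually inverse to a direct relational computation analogous to those in Lemma \ref{lem:pirr_to_nleq_iso}.2.

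Having the $\Dep$-isomorphism in hand, by Lemma \ref{lem:ug_reflect_iso}.1 (reflection of $\BiCliq$-isomorphisms in $\UGJ$) it suffices to check that $\Pirr\aQ$ is merely a $\UGJ$-\emph{morphism} $(\Pirr\sigma,\Pirr\sigma) \to (\Pirr\aQ,\Pirr\sigma)$, i.e. the defining equation $\rR^\up \circ \rE_1^\down = \rH^\up \circ (\rE_2 \fatsemi \breve{\rR})^\down$ with $\rR = \rH = \Pirr\aQ$, $\rE_1 = \rE_2 = \Pirr\sigma$. Here I would expand $(\Pirr\sigma \fatsemi \breve{\Pirr\aQ})^\down$ using $(\down\fatsemi)$ from Lemma \ref{lem:bicliq_func_comp} and the self-adjointness $\breve{\Pirr\sigma} = \Pirr\sigma$, reducing both sides to expressions in $\Pirr\aQ^\up$, $\Pirr\aQ^\down$ and $\Pirr\sigma^\down$, then simplify via the $\Dep$-morphism identities $\rR^\up \circ \cl = \rR^\up = \inte \circ \rR^\up$ of Lemma \ref{lem:bicliq_mor_char_max_witness}.1 together with $(\up\down\up)$. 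The inverse is $\Pirr\sigma$, confirmed by the mutual-inverse computation above plus Lemma \ref{lem:inverse_ug_iso_is_converse} style reasoning in $\UGJ$.

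\textbf{The main obstacle.} The delicate point is bookkeeping the two \emph{different} roles of $\sigma$: as a $\JSL$-map $\aQ \to \aQ^{\pOp}$ it produces $\Pirr\sigma$ as a morphism $\Pirr\aQ \to (\Pirr\aQ)\spcheck$, but in the $\UGJ$-object $(\Pirr\sigma,\Pirr\sigma)$ the first $\Pirr\sigma$ is the \emph{underlying relation} (a $\Dep$-object, a symmetric relation on $J(\aQ)$) while the second is the \emph{self-adjoint structure map}. Keeping these straight, and confirming that the source/target sets match so that $id_{\Pirr\aQ}$ really has the declared type $\Pirr\sigma \to \Pirr\aQ$ as opposed to some retyped variant, is where I expect the real work to lie. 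I would resolve this by writing out, once and explicitly, the sets $(\Pirr\sigma)_s = (\Pirr\sigma)_t = (\Pirr\aQ)_s = J(\aQ)$ and $(\Pirr\aQ)_t = M(\aQ)$, and tracking each composite's domain and codomain through the $\Dep$-diagrams; the algebraic identities are then routine given the earlier lemmas.
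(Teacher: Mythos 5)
Your proposal is correct in outline and follows the same architecture as the paper: establish that both pairs are $\UGJ$-objects via self-adjointness of $\Pirr\sigma$, show $\Pirr\aQ : \Pirr\sigma \to \Pirr\aQ$ is a $\Dep$-isomorphism with inverse $\Pirr\sigma$, reduce the lifting problem via iso-reflection (Lemma \ref{lem:ug_reflect_iso}), and obtain (2) from (1) through Lemma \ref{lem:ugj_ugm_iso_correspondence} applied to $(\aQ^{\pOp},\sigma)$. Two remarks on where the details diverge. First, your suggestion to get mutual inverses ``by functoriality of $\Pirr$ together with $\sigma\circ\sigma^{\bf-1}=id$'' does not literally apply: the morphism $\Pirr\aQ : \Pirr\sigma \to \Pirr\aQ$ is not in the image of $\Pirr$, since its domain object $\Pirr\sigma$ (a symmetric relation on $J(\aQ) \times J(\aQ)$) is not of the form $\Pirr\aR$. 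Your fallback via component relations works, but the paper's route is slicker: it exhibits the commuting square with bijective witnesses $\Delta_{J(\aQ)}$ below and $\sigma|_{J(\aQ)\times M(\aQ)}$ above (using that the involution restricts to mutually inverse bijections $J(\aQ) \leftrightarrow M(\aQ)$ and that $\sigma(m) \nleq_\aQ \sigma(j) \iff j \nleq_\aQ m$), so the $\Dep$-isomorphism is a bipartite one and the inverses come for free without any $\fatsemi$ computation. Second, for the $\UGJ$-morphism condition you plan a direct expansion of the defining equation; this would succeed, but the paper instead invokes Lemma \ref{lem:lift_bicliq_epi_mono_ugj_ugm}.1, which covers exactly this situation (a $\BiCliq$-morphism whose underlying relation equals the codomain object, here $\rR = \rH = \Pirr\aQ$), so the equation has already been verified once and for all there. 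Neither point is a gap, but spotting the lifting lemma saves the most laborious part of your plan.
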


\begin{proof}
\item
\begin{enumerate}
\item
Given $(\aQ,\sigma) \in \SAI_f$ then since $\sigma_* = \sigma$ we deduce that $\Pirr\sigma$  defines a self-adjoint $\BiCliq$-morphism $\Pirr\aQ \to (\Pirr\aQ)\spbreve$ by Lemma \ref{lem:self_adjointness_bicliq_vs_jsl}.3. Thus $(\Pirr\aQ,\Pirr\sigma) \in \UGJ$ by definition, and also $(\Pirr\sigma,\Pirr\sigma) \in \UGJ$ because $\Pirr\sigma$ is symmetric. To see that $\Pirr\aQ$ defines a $\BiCliq$-morphism of type $\Pirr\sigma \to \Pirr\aQ$, first recall:
\[
\Pirr\sigma(j_1,j_2) :\iff \sigma(j_1) \nleq_{\aQ^{\pOp}} j_2 \iff j_2 \nleq_\aQ \sigma(j_1)
\qquad\text{and}\qquad
\Pirr\aQ(j,m) :\iff j \nleq_\aQ m.
\]
Since $\sigma : \aQ \to \aQ^{\pOp}$ is a join-semilattice isomorphism, it restricts to bijections $\sigma |_{J(\aQ) \times M(\aQ)}$ and $\sigma |_{M(\aQ) \times J(\aQ)}$, which are the inverse of one another because $\sigma$ is involutive. Then we have: 
\[
\Pirr\sigma ; \sigma |_{J(\aQ) \times M(\aQ)} (j,m)
\iff \Pirr\sigma(j,\sigma(m))
\iff \sigma(m) \nleq_\aQ \sigma(j)
\stackrel{!}{\iff} j \nleq_\aQ m
\iff \Pirr\aQ(j,m),
\]
where the marked equality follows because $\sigma$ defines an order-isomorphism $(Q,\leq_\aQ) \to (Q,\geq_\aQ)$. Then the following diagram of relations commutes:
\[
\xymatrix@=15pt{
J(\aQ) \ar[rr]^-{\sigma |_{J(\aQ) \times M(\aQ)}} && M(\aQ) \ar[rr]^-{\sigma |_{M(\aQ) \times J(\aQ)}} && J(\aQ)
\\
J(\aQ) \ar[rr]_-{\Delta_{J(\aQ)}} \ar[u]^{\Pirr\sigma} && J(\aQ) \ar[rr]_-{\Delta_{J(\aQ)}} \ar[u]^{\Pirr\aQ} && J(\aQ)  \ar[u]_{\Pirr\sigma}
}
\]
It follows that $\Pirr\aQ : \Pirr\sigma \to \Pirr\aQ$ is a $\BiCliq$-isomorphism with inverse $\Pirr\sigma$. Then by Lemma \ref{lem:lift_bicliq_epi_mono_ugj_ugm} and also Lemma \ref{lem:ug_reflect_iso} it defines a $\UGJ$-isomorphism $(\Pirr\sigma,\Pirr\sigma) \to (\Pirr\aQ,\Pirr\sigma)$ with the same inverse.

\item
Let $(\aQ,\sigma) \in \SAI_f$ and view $\sigma$ as a self-adjoint isomorphism $\aQ^{\pOp} \to \aQ$. Since $(\aQ^{\pOp},\sigma) \in \SAI_f$ we may apply (1), yielding the $\UGJ$-isomorphism:
\[
\Pirr\sigma : (\Pirr\aQ^{\pOp},\Pirr\sigma) \to (\Pirr\sigma,\Pirr\sigma)
\qquad\text{with inverse $\Pirr\aQ^{\pOp}$}.
\]
By Lemma \ref{lem:ugj_ugm_iso_correspondence} we obtain the $\UGM$-isomorphism:
\[
\Pirr\sigma : (\Pirr\sigma,\Pirr\sigma) \to (\Pirr\aQ,\Pirr\sigma)
\qquad\text{with inverse $\Pirr\aQ$}.
\]
also using the fact that $(\Pirr\sigma)\spbreve = \Pirr\sigma$ and $(\Pirr\aQ^{\pOp})\spbreve = \Pirr\aQ$.

\end{enumerate}
\end{proof}

\smallskip
We now prove well-definedness of the functors under consideration. That their action on objects is well-defined follows via Lemma \ref{lem:self_adjointness_bicliq_vs_jsl} i.e.\ the correspondence between self-adjointness in $\JSL_f$ and $\BiCliq$. Concerning their action on morphisms, well-definedness follows via mostly mindless computations. However, in the case of $\GPirr$ we make crucial use of the above Lemma. Notice that this is the only functor whose action on morphisms is not inherited from the underlying equivalence functors $\Pirr$ and $\Open$.
\smallskip

\begin{lemma}
The six functors from Definition \ref{def:equiv_functors_ulg_all} above are well-defined.
\end{lemma}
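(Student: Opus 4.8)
The plan is to verify well-definedness of all six functors by splitting into their action on objects and their action on morphisms, and handling the three pairs $(\GJOpen,\GJPirr)$, $(\GMOpen,\GMPirr)$ and $(\GOpen,\GPirr)$ in parallel where possible. For the action on objects, I would invoke Lemma \ref{lem:self_adjointness_bicliq_vs_jsl} to translate self-adjointness in one category into self-adjointness in the other. Concretely, for $\GJPirr(\aQ,\sigma) = (\Pirr\aQ,\Pirr\sigma)$ I would use Lemma \ref{lem:interpret_finite_saj_sam}.1 to see that $\sigma:\aQ\to\aQ^{\pOp}$ is a self-adjoint $\JSL_f$-morphism, then Lemma \ref{lem:self_adjointness_bicliq_vs_jsl}.3 to conclude $\Pirr\sigma$ is a self-adjoint $\BiCliq$-morphism $\Pirr\aQ \to (\Pirr\aQ)\spbreve$, which is exactly the requirement for $(\Pirr\aQ,\Pirr\sigma)\in\UGJ$ by the definition together with the Note after Definition \ref{def:ulg_j_m}. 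Dually for $\GMPirr$ via the $\SAM_f/\aQ^{\pOp}$ relationship of Lemma \ref{lem:sa_vars_basic_relationship}.1. For $\GJOpen(\rG,\rE) = (\Open\rG,\partial_\rG^{\bf-1}\circ\Open\rE)$ I would use Lemma \ref{lem:self_adjointness_bicliq_vs_jsl}.2, whose clauses (a) and (c) state precisely that $\rE:\rG\to\breve{\rG}$ self-adjoint is equivalent to $\partial_\rG^{\bf-1}\circ\Open\rE$ being a self-adjoint $\JSL_f$-morphism, hence $\GJOpen(\rG,\rE)\in\SAJ_f$ by Lemma \ref{lem:interpret_finite_saj_sam}.1; clause (d) handles $\GMOpen$ via $\Open\rE\circ\partial_\rG$. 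The $\UG$ and $\SAI$ cases specialise the $\UGJ$ and $\SAJ$ cases along the diagonal (Lemma \ref{lem:ug_iso_to_diagonals}), additionally invoking Lemma \ref{lem:interpret_infinite_sai}.2 to get the full involutivity.

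For the action on morphisms, the five functors $\GJOpen$, $\GMOpen$, $\GOpen$, $\GJPirr$, $\GMPirr$ inherit well-definedness, functoriality, and preservation of the underlying structure directly from $\Open$ (Lemma \ref{lem:open_well_defined}) and $\Pirr$ (Lemma \ref{lem:pirr_well_defined}); the only genuinely new obligation is to check that the underlying $\BiCliq$- or $\JSL_f$-morphism satisfies the extra defining constraint on $\UGJ/\UGM/\UG$- or $\SAJ/\SAM/\SAI$-morphisms. For the $\Open$-based functors this amounts to verifying that the image morphism commutes with the unary operation, which I would reduce to the $\BiCliq$-level constraint using that $\Open$ is full and faithful and the natural transformation $\partial$ (Theorem \ref{thm:partial_nat_iso}) relates the two involutions; for the $\Pirr$-based functors the constraint is that $\Pirr f$ intertwines the two self-adjoint morphisms, which follows from functoriality of $\Pirr$ applied to the $\JSL_f$-equation $f\circ\sigma_1 = \sigma_2\circ f$ (for $\SAI$, or the appropriate variant for $\SAJ/\SAM$).

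The one functor requiring real work is $\GPirr$, because its action on morphisms is \emph{not} inherited: $\GPirr f := \Pirr(\sigma_2\circ f)$. Here I would first establish the asserted equality $\Pirr(\sigma_2\circ f) = \Pirr f \,;\, \sigma_2|_{M(\aR)\times J(\aR)}$ by unwinding the definition of $\Pirr$ on a composite and using that $\sigma_2$ restricts to a bijection $M(\aR)\to J(\aR)$ (as in Lemma \ref{lem:sai_induces_ugjm_iso}). Then I must check that this relation is a genuine $\UG$-morphism $(J(\aQ),\Pirr\sigma_1)\to(J(\aR),\Pirr\sigma_2)$, i.e.\ satisfies the single-equation characterisation $\rR^\up = \Pirr\sigma_2^\up\circ\breve{\rR}^\down\circ\Pirr\sigma_1^\up$, and that $\GPirr$ preserves identities and composition. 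The cleanest route, which I expect to be the main obstacle, is to factor $\GPirr$ through the diagonal: by Lemma \ref{lem:sai_induces_ugjm_iso}.1 the $\SAI_f$-object $(\aR,\sigma_2)$ furnishes a canonical $\UGJ$-isomorphism between $(\Pirr\sigma_2,\Pirr\sigma_2)$ and $(\Pirr\aR,\Pirr\sigma_2)$, so I would realise $\GPirr f$ as $\GJPirr f$ post-composed with this comparison isomorphism, transporting the already-established well-definedness of $\GJPirr$ (on morphisms) and of $\GOpen/\GJOpen$ objects along the diagonal identification of Lemma \ref{lem:ug_iso_to_diagonals}. Functoriality of $\GPirr$ then reduces to the naturality/coherence of these comparison isomorphisms together with the interchange law for $\BiCliq$-composition; the care needed is precisely in tracking which of the two identifications ($\UGJ$-diagonal versus $\UGM$-diagonal) one uses and confirming they agree, which is exactly what Lemma \ref{lem:sai_induces_ugjm_iso} and Lemma \ref{lem:ugj_ugm_iso_correspondence} are set up to guarantee.
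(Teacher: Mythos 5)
Your handling of the action on objects (via Lemma \ref{lem:self_adjointness_bicliq_vs_jsl} together with Lemma \ref{lem:interpret_finite_saj_sam}), and your strategy for $\GPirr$ on morphisms --- conjugating $\GJPirr f$ by the comparison isomorphisms of Lemma \ref{lem:sai_induces_ugjm_iso} and descending to the diagonal of Lemma \ref{lem:ug_iso_to_diagonals} --- coincide with what the paper does. The gap is in the morphism-level verifications for the remaining five functors, which you dismiss as inherited. For $\GJPirr$ and $\GMPirr$ you claim the $\UGJ$/$\UGM$-constraint ``follows from functoriality of $\Pirr$ applied to the $\JSL_f$-equation $f\circ\sigma_1=\sigma_2\circ f$''. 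But that equation is only an identity of underlying functions: as $\JSL_f$-morphisms, $\sigma_1$ has codomain $\aQ^{\pOp}$ while $f$ has domain $\aQ$, so $f\circ\sigma_1$ does not typecheck (one would need $f^{\pOp}$, which is not a $\JSL_f$-morphism in general). There is therefore nothing to apply $\Pirr$ to. Moreover the constraint to be verified, e.g.\ $\rR^\up\circ(\Pirr\sigma_1)^\down=(\Pirr\aR)^\up\circ(\Pirr\sigma_2\fatsemi\breve{\rR})^\down$ with $\rR=\Pirr f$, involves $\breve{\rR}=\Pirr f_*$, i.e.\ the \emph{adjoint} of $f$, which does not appear in the function-level equation at all; bridging the two requires the adjoint relationship $f(q)\leq_\aR r\iff q\leq_\aQ f_*(r)$ and the self-adjointness of $\sigma_1,\sigma_2$ at several points.

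The same objection applies, a little less sharply, to your reduction for $\GJOpen$ and $\GMOpen$: the $\SAJ_f$-condition $\Open\rR(\sigma_1(Y))=\sigma_2(\Open\rR(Y))$ is again a function-level identity between maps of mismatched $\JSL_f$-type, and ``fullness of $\Open$ plus naturality of $\partial$'' does not by itself convert it into the stated $\Dep$-level constraint $\rR^\up\circ\rE_1^\down=\rH^\up\circ(\rE_2\fatsemi\breve{\rR})^\down$. These five checks are where the paper spends essentially all of its effort: each is an explicit computation unwinding both sides on arbitrary subsets (or on $\rG$-open sets of the form $\rG[X]$), using $(\up\dashv\down)$, $(\up\down\up)$, De Morgan duality, the witnessing identities $\rR=\rG;\rR_+\spbreve$, and the adjoint of $f$ or of $\Open\rR$. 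Your proposal names the obligations correctly, but the mechanism you propose for discharging them would not go through; you need to supply these calculations.
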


\begin{proof}
\item
\begin{enumerate}
\item
We first show that:
\[
\GJOpen : \UGJ \to \SAJ_f
\quad\text{and}\quad
\GMOpen : \UGM \to \SAM_f
\quad\text{and}\quad
\GOpen : \UG \to \SAI_f
\]
are well-defined. Consider their action on objects:
\[
\begin{tabular}{c}
$\GJOpen (\rG,\rE)
:= (\Open\rG, \partial_\rG^{\bf-1} \circ \Open\rE)
\quad
\GMOpen (\rG,\rE) 
:= (\Open\rG, \Open\rE \circ \partial_\rG)
\quad
\GOpen(V,\rE) := (\Open\rE, \partial_\rE)$
\end{tabular}
\]
The left and central actions are well-defined by Lemma \ref{lem:self_adjointness_bicliq_vs_jsl}.2 parts (c) and (d). Regarding the rightmost, first apply the identity-on-morphisms categorical isomorphism $(V,\rE) \mapsto (\rE,\rE)$ from Lemma \ref{lem:ug_iso_to_diagonals}.1, and subsequently $\GJOpen$. Then observe that $\partial_\rE^{\bf-1} \circ \Open id_\rE = \partial_\rE^{\bf-1}$ acts the same as $\partial_\rE$ because $\rE = \breve{\rE}$. Concerning the action on morphisms, we consider each functor in turn.

\smallskip
\item
Concerning $\GJOpen$, take any $\UGJ$-morphism $\rR : (\rG,\rE_1) \to (\rH,\rE_2)$ and consider the well-defined join-semilattice morphism $\GJOpen\rR = \Open\rR : \Open\rG \to \Open\rH$. To see that it is a $\SAJ$-morphism we must establish that:
\[
\Open\rR(\sigma_{\GJOpen(\rG,\rE_1)}(Y))
= \sigma_{\GJOpen(\rH,\rE_2)}(\Open\rR(Y))
\qquad
\text{for every $Y \in O(\rG_1) \subseteq \Pow \rG_t$},
\]
or more explicitly:
\[
\Open\rR(\partial_\rG^{\bf-1} \circ \Open\rE_1(Y))
=
\partial_\rH^{\bf-1} \circ \Open\rE_2(\Open\rR(Y)).
\]
Since the $\rG$-open sets are precisely those of the form $\rG[X]$, we may equivalently show that $\forall X \subseteq \rG_s$,
\[
\begin{tabular}{llcll}
&
$\Open\rR(\partial_\rG^{\bf-1}(\Open\rE_1(\rG^\up(X))$
& $\stackrel{?}{=}$ &
$\partial_\rH^{\bf-1} \circ \Open\rE_2(\Open\rR(\rG^\up(X))$
&
\\[0.5ex]
by defn &
$=\Open\rR(\partial_\rG^{\bf-1}((\rE_1)_+\spbreve[\rG[X]])$
&&
$=\partial_\rH^{\bf-1}(\Open\rE_2( \rR_+\spbreve[\rG[X]] ))$
& by defn
\\[0.5ex]
$\rE_1 = \rG ; (\rE_1)_+\spbreve$ &
$=\Open\rR(\partial_\rG^{\bf-1}(\rE_1^\up(X))$
&&
$=\partial_\rH^{\bf-1}(\Open\rE_2(\rR^\up(X)))$
& $\rR = \rG ; \rR_+\spbreve$
\\[0.5ex]
by defn &
$=\Open\rR(\rG^\up \circ \neg_{\rG_s} \circ \rE_1^\up(X))$
&&
$=\partial_\rH^{\bf-1}(\Open\rE_2(\rH^\up \circ \rH^\down \circ \rR^\up(X)))$
& $\rR^\up = \inte_\rH \circ \rR^\up$
\\[0.5ex]
by defn &
$=\rR_+\spbreve[\rG[\neg_{\rG_s} \circ \rE_1^\up(X)]]$
&&
$= \rH^\up \circ \neg_{\rH_s} \circ \rE_2^\up \circ \rH^\down \circ \rR^\up(X) $
& $\rE_2 = \rH;(\rE_2)_+\spbreve$
\\[0.5ex]
$\rR = \rG ; \rR_+\spbreve$ &
$=\rR^\up \circ \neg_{\rG_s} \circ \rE_1^\up(X)$
&&
$=\rH^\up \circ \rE_2^\down \circ \breve{\rH}^\up \circ \breve{\rR}^\down(\overline{X})$
& $(\neg\down/\up\neg)$
\\[0.5ex]
$(\neg\up\neg)$ &
$=\rR^\up \circ \rE_1^\down (\overline{X})$
&&
$=\rH^\up \circ (\rE_2 \fatsemi \breve{\rR})^\down(\overline{X})$
& $(\fatsemi\down)$
\end{tabular}
\]
Then since $X$ is an arbitrary subset this amounts to our assumption $\rR^\up \circ \rE_1^\down = \rH^\up \circ (\rE_2 \fatsemi \breve{\rR})^\down$. Hence $\GJOpen$'s action on both objects and morphisms is well-defined. It preserves the compositional structure because it acts in the same way as $\Open : \BiCliq \to \JSL_f$, and the compositional structure in both $\JSL_f$ and $\SAJ_f$ is functional.

\item
Next consider $\GMOpen$ i.e.\ take any $\UGM$-morphism $\rR : (\rG,\rE_1) \to (\rH,\rE_2)$ and consider the well-defined $\JSL$-morphism $\GMOpen\rR = \Open\rR : \Open\rG \to \Open\rH$. To see that it is a $\SAM$-morphism we must establish:
\[
\Open\rR(\sigma_{\GMOpen(\rG,\rE_1)}(Y))
= \sigma_{\GMOpen(\rH,\rE_2)}(\Open\rR(Y))
\qquad
\text{for every $Y \in O(\rG_1) \subseteq \Pow\rG_t$},
\]
or more explicitly:
\[
\Open\rR(\Open\rE_1 \circ \partial_\rG(Y))
=
\Open\rE_2 \circ \partial_\rH(\Open\rR(Y)).
\]
Since the $\rG$-open sets are precisely those of the form $\rG[X]$, we may equivalently show that $\forall X \subseteq \rG_s$,
\[
\begin{tabular}{lllll}
&
$\Open\rR(\Open\rE_1 \circ \partial_\rG(\rG[X]))$
& $\stackrel{?}{=}$ &
$\Open\rE_2 \circ \partial_\rH(\Open\rR(\rG[X]]))$
&
\\
by defn &
$= \Open\rR(\Open\rE_1( \breve{\rG}^\up \circ \neg_{\rG_t} \circ \rG^\up(X)))$
&&
$= \Open\rE_2 \circ \partial_\rH(\rR[X])$
& $\rR = \rG ; \rR_+\spbreve$
\\
$\rE_1 = \breve{\rG};(\rE_1)_+\spbreve$  & 
$= \Open\rR(\rE_1^\up \circ  \neg_{\rG_t} \circ \rG^\up(X))$
&&
$= \Open\rE_2 \circ \breve{\rH}^\up \circ \neg_{\rH_t} \circ \rR^\up(X)$
& by defn
\\
$(\neg\up\neg)$ & 
$= \Open\rR(\rE_1^\up \circ \breve{\rG}^\down(\overline{X}))$
&&
$= \Open\rE_2 \circ \breve{\rH}^\up \circ \breve{\rR}^\down(\overline{X})$
& $(\neg\up\neg)$
\\
$\inte_\rG \circ \rE_1^\up  = \inte_\rG$ &
$= \Open\rR(\rG^\up \circ \rG^\down \circ \rE_1^\up \circ \breve{\rG}^\down(\overline{X})$
&&
$= \rE_2^\up \circ \breve{\rR}^\down(\overline{X})$
& $\rE_2 = \breve{\rH} ; (\rE_2)_+\spbreve$
\\
$\rR = \rG ; \rR_+\spbreve$ &
$= \rR^\up \circ \rG^\down \circ \rE_1^\up \circ \breve{\rG}^\down(\overline{X})$
&&
\\
$(\fatsemi\up)$ &
$= (\rE_1 \fatsemi \rR)^\up \circ \breve{\rG}^\down(\overline{X})$
&&
\end{tabular}
\]
Since $X$ was arbitrary this amounts to our assumed condition. Then $\GMOpen$'s action on both objects and morphisms is well-defined. It preserves the compositional structure because it acts in the same way as $\Open$, and the compositional structure in both $\JSL_f$ and $\SAM_f$ is functional.

\smallskip
\item
Finally consider $\GOpen$ i.e.\ take any $\UG$-morphism $\rR : (V_1,\rE_1) \to (V_2,\rE_2)$. Then this relation defines a $\UGJ$-morphism $\rR : (\rE_1,\rE_1) \to (\rE_2,\rE_2)$, so by (2) we deduce that $\Open\rR$ defines a $\SAJ_f$-morphism of type:
\[
(\Open\rG,\partial_{\rE_1}) 
= (\Open\rG,\partial_{\rE_1}^{\bf-1} \circ \Open id_{\rE_1})
\to 
(\Open\rG,\partial_{\rE_2}^{\bf-1} \circ \Open id_{\rE_2})
= (\Open\rG,\partial_{\rE_2}) 
\]
recalling that $\partial_\rE$ and $\partial_\rE^{\bf-1}$ have the same action whenever $\breve{\rE} = \rE$. Then since each $\partial_{\rE_i}$ is an isomorphism this is actually a $\SAI_f$ morphism by Lemma \ref{lem:interpret_infinite_sai}. As before, functorality follows from that of $\Open$.

\smallskip
\item
It remains to show that the three functors:
\[
\GJPirr : \SAJ_f \to \UGJ
\quad\text{and}\quad
\GMPirr : \SAM_f \to \UGM
\quad\text{and}\quad
\GPirr : \SAI_f \to \UG
\]
are well-defined. Their action on objects:
\[
\begin{tabular}{c}
$\GJPirr (\aQ,\sigma)
:= (\Pirr\aQ,\Pirr\sigma)
\qquad
\GMPirr (\aQ,\sigma)
:= (\Pirr\aQ,\Pirr\sigma)
\qquad
\GPirr (\aQ,\sigma) := (J(\aQ),\Pirr\sigma)$
\end{tabular}
\]
is well-defined by Lemma \ref{lem:self_adjointness_bicliq_vs_jsl}.3.c, recalling that if $\sigma : \aQ \to \aQ^{\pOp}$ then $\Pirr\sigma : \Pirr\aQ \to \Pirr(\aQ^{\pOp}) = (\Pirr\aQ)\spbreve$. So now consider their action on morphisms.

\smallskip
\item
Take any $\SAJ_f$-morphism $f : (\aQ,\sigma_\aQ) \to (\aR,\sigma_\aR)$ and consider $\GJPirr f := \Pirr f$. We know $f(\sigma_\aQ(q)) = \sigma_\aR(f(q))$ for every $q \in Q$, and must establish the equality:
\[
(A) := \qquad
\rR^\up \circ (\Pirr\sigma_\aQ)^\down 
\quad \stackrel{?}{=} \quad
(\Pirr\aR)^\up \circ (\Pirr\sigma_\aR \fatsemi \breve{\rR})^\down
\qquad =: (B)
\]
where we define $\rR := \Pirr f$. We'll achieve this by showing that these two functions have the same action.
\begin{enumerate}[(A)]
\item
Given any subset $X \subseteq J(\aQ)$, we calculate:
\[
\begin{tabular}{lll}
$\rR^\up \circ (\Pirr\sigma_\aQ)^\down(X)$
\\ \quad
$= \rR^\up( \{ j \in J(\aQ) : \Pirr\sigma_\aQ[j] \subseteq X \}$
& by definition of $(-)^\down$
\\ \quad
$= \rR^\up(\{ j : \forall j' \in J(\aQ).[j' \nleq_\aQ \sigma_\aQ(j) \To j' \in X ] \})$
& by definition of $\Pirr\sigma_\aQ$
\\ \quad
$= \rR^\up(\{ j : \forall j' \in J(\aQ).[j' \in \overline{X} \To j' \leq_\aQ \sigma_\aQ(j)  ] \})$
\\ \quad
$= \rR^\up(\{ j : \Lor_\aQ \overline{X} \leq_\aQ \sigma_\aQ(j) \})$
\\ \quad
$= \{ m \in M(\aR) : \exists j \in J(\aQ).[ f(j) \nleq_\aR m \text{ and }  \Lor_\aQ \overline{X} \leq_\aQ \sigma_\aQ(j) ] \}$
& by definition of $\rR^\up$
\\ \quad
$= \{ m \in M(\aR) : \neg\forall j \in J(\aQ).[ \Lor_\aQ \overline{X} \leq_\aQ \sigma_\aQ(j) \To f(j) \leq_\aR m  ] \}$
\\ \quad
$= \{ m \in M(\aR) : \neg\forall j \in J(\aQ).[ j \leq_\aQ \sigma_\aQ(\Lor_\aQ \overline{X}) \To j \leq_\aQ f_*(m)  ] \}$
& take adjoints, $\sigma_\aQ$ self-adjoint 
\\ \quad
$= \{ m \in M(\aR) : \sigma_\aQ(\Lor_\aQ \overline{X}) \nleq_\aQ f_*(m) \}$
\\ \quad
$= \{ m \in M(\aR) : f(\sigma_\aQ(\Lor_\aQ \overline{X})) \nleq_\aR m \}$
& take adjoint
\\ \quad
$= \{ m \in M(\aR) : \sigma_\aR(f(\Lor_\aQ \overline{X})) \nleq_\aR m \}$
& $f$ a $\SAJ$-morphism.
\end{tabular}
\]

\item
Then let us consider the other action:
\[
\begin{tabular}{lll}
$(\Pirr\aR)^\up \circ (\Pirr\sigma_\aR \fatsemi \breve{\rR})^\down(X)$
\\ \quad
$= (\Pirr\aR)^\up \circ (\Pirr\sigma_\aR \fatsemi \Pirr f_*)^\down(X)$
& since $\breve{\rR} = (\Pirr f)\spbreve = \Pirr f_*$
\\ \quad
$= (\Pirr\aR)^\up \circ (\Pirr (f_* \circ \sigma_\aR) )^\down(X)$
& functorality of $\Pirr$
\\ \quad
$= (\Pirr\aR)^\up(\{ j_r \in J(\aR) : \Pirr(f_* \circ \sigma_\aR) [j_r] \subseteq X \})$
& by definition of $(-)^\down$
\\ \quad
$= (\Pirr\aR)^\up(\{ j_r : \forall j_q \in J(\aQ).[f_* \circ \sigma_\aR(j_r) \nleq_{\aQ^{\pOp}} j_q \To j_q \in X  ] \})$
& by definition of $\Pirr$
\\ \quad
$= (\Pirr\aR)^\up(\{ j_r : \forall j_q \in J(\aQ).[j_q \in \overline{X} \To j_q \leq_\aQ f_* \circ \sigma_\aR(j_r) ]  \})$
\\ \quad
$= (\Pirr\aR)^\up(\{ j_r : \Lor_\aQ \overline{X} \leq_\aQ f_* \circ \sigma_\aR(j_r) \})$
\\ \quad
$= (\Pirr\aR)^\up(\{ j_r : \Lor_\aQ \overline{X} \leq_\aQ (\sigma_\aR \circ f)_*(j_r) \})$
& functoriality of adjoints
\\ \quad
$= (\Pirr\aR)^\up(\{ j_r : \sigma_\aR(f(\Lor_\aQ \overline{X})) \leq_{\aR^{\pOp}} j_r \})$
& take adjoint
\\ \quad
$= \{ m \in M(\aR) : \exists j_r \in J(\aR).[ j_r \nleq_\aR m \text{ and } j_r \leq_\aR \sigma_\aR(f(\Lor_\aQ \overline{X})) ] \}$
& by definition of $(\Pirr\aR)^\up$
\\ \quad
$= \{ m \in M(\aR) : \neg\forall j_r \in J(\aR).[ j_r \leq_\aR \sigma_\aR(f(\Lor_\aQ \overline{X})) \To j_r \leq_\aR m  ] \}$
\\ \quad
$= \{ m \in M(\aR) : \sigma_\aR(f(\Lor_\aQ \overline{X})) \nleq_\aR m \}$.
\end{tabular}
\]
\end{enumerate}

Thus $\GJPirr$'s action on objects and morphisms is well-defined. Then it is a well-defined functor because it acts as $\Pirr$ on morphisms, and $\UGJ$ inherits the compositional structure of $\BiCliq$.

\bigskip
\item
Next, take any $\SAM_f$-morphism $f : (\aQ,\sigma_\aQ) \to (\aR,\sigma_\aR)$ and consider $\GMPirr f := \Pirr f$. We know that $f$ preserves the unary operations and must establish:
\[
(A) := \qquad
(\Pirr\sigma_\aR)^\down \circ \rR^\up
\quad\stackrel{?}{=}\quad
(\breve{\rR} \fatsemi \Pirr\sigma_\aQ)^\down \circ (\Pirr\aQ)^\up
\qquad =: (B)
\]
where we define $\rR := \Pirr f$. Then let us simplify their actions.
\begin{enumerate}[(A)]
\item
Given any subset $Y \subseteq M(\aQ)$, we calculate:
\[
\begin{tabular}{lll}
$(\Pirr\sigma_\aR)^\down \circ \rR^\up(Y)$
\\ \quad
$= \{ m \in M(\aR) : \Pirr\sigma_\aR[m] \subseteq \rR[Y] \}$
& by definition of $(-)^\down$
\\ \quad
$= \{ m \in M(\aR) : \forall m' \in M(\aR).\,( \Pirr\sigma_\aR(m,m') \, \To \, m' \in \rR[Y]  ) \}$
\\ \quad
$= \{ m \in M(\aR) : \forall m' \in M(\aR).\,( \sigma_\aR(m) \nleq_\aR m' \To m' \in \Pirr f[Y]  )\}$
& by definition of $\Pirr\sigma_\aR$, $\rR$
\\ \quad
$= \{ m \in M(\aR) : \forall m' \in M(\aR).\,(\forall y \in Y.(f(y) \leq_\aR m')  \To \sigma_\aR(m) \leq_\aR m' )\}$
& by definition of $\Pirr f$
\\ \quad
$= \{ m \in M(\aR) : \forall m' \in M(\aR).\,(\Lor_\aR f[Y] \leq_\aR m'  \To \sigma_\aR(m) \leq_\aR m' )\}$
\\ \quad
$= \{ m \in M(\aR) : \sigma_\aR(m) \leq_\aR \Lor_\aR f[Y] \}$.
\end{tabular}
\]

\item
Let us consider the other action.
% annotate ?
\[
\begin{tabular}{lll}
$(\breve{\rR} \fatsemi \Pirr\sigma_\aQ)^\down \circ (\Pirr\aQ)^\up(Y)$
\\ \quad
$= (\Pirr (\sigma_\aQ \circ f_*))^\down(\Pirr\aQ[Y])$
& functorality of $\Pirr$
\\ \quad
$= \{ m \in M(\aQ) : \Pirr(\sigma_\aQ \circ f_*)[m] \subseteq \Pirr\aQ[Y] \}$
& definition of $(-)^\down$
\\ \quad
$= \{ m \in M(\aQ) : \forall m' \in M(\aQ).\,( \Pirr(\sigma_\aQ \circ f_*)(m,m') \To m' \in \Pirr\aQ[Y]  ) \}$
\\ \quad
$= \{ m \in M(\aQ) : \forall m'.\,(\sigma_\aQ \circ f_*(m) \nleq_\aQ m' \To m' \in \Pirr\aQ[Y]) \}$
& definition of $\Pirr$
\\ \quad
$= \{ m \in M(\aQ) : \forall m'.\,(m \nin \Pirr\aQ[Y] \To \sigma_\aQ \circ f_*(m) \leq_\aQ m' ) \}$
\\ \quad
$= \{ m \in M(\aQ) : \forall m'.\,(\forall y \in Y.y \leq_\aQ m' \, \To  \sigma_\aQ \circ f_*(m) \leq_\aQ m') \}$
& definition of $\Pirr\aQ$
\\ \quad
$= \{ m \in M(\aQ) : \forall m'.\,(\Lor_\aQ Y \leq_\aQ m' \To  \sigma_\aQ \circ f_*(m) \leq_\aQ m') \}$
\\ \quad
$= \{ m \in M(\aQ) :  \sigma_\aQ \circ f_*(m) \leq_\aQ \Lor_\aQ Y \}$
\\ \quad
$= \{ m \in M(\aQ) :  (f \circ \sigma_\aQ)_*(m) \leq_\aQ \Lor_\aQ Y \}$
& functoriality of adjoints
\\ \quad
$= \{ m \in M(\aQ) :  \Lor_\aQ Y \leq_{\aQ^{\pOp}} (f \circ \sigma_\aQ)_*(m) \}$
& 
\\ \quad
$= \{ m \in M(\aQ) :  (f \circ \sigma_\aQ)(\Lor_\aQ Y ) \leq_\aR m \}$
& take adjoint
\\ \quad
$= \{ m \in M(\aQ) :  \sigma_\aR (f(\Lor_\aQ Y )) \leq_\aR m \}$
& $f$ a $\SAM$-morphism
\\ \quad
$= \{ m \in M(\aQ) :  \sigma_\aR (m) \leq_\aR f(\Lor_\aQ Y ) \}$
& $\sigma_\aR$ self-adjoint
\\ \quad
$= \{ m \in M(\aQ) :  \sigma_\aR (m) \leq_\aR \Lor_\aR f[Y] \}$.
& f preserves joins.
\end{tabular}
\]
\end{enumerate}

Thus $\GMPirr$'s action on objects and morphisms is well-defined. Then it is a well-defined functor because it acts as $\Pirr$ on morphisms, and $\UGM$ inherits the compositional structure of $\BiCliq$.

\smallskip
\item
Finally we consider the action of $\GPirr$ on $\SAI_f$-morphisms $f : (\aQ,\sigma_\aQ) \to (\aR,\sigma_\aR)$. The two different descriptions of its action are equivalent because:
\[
\begin{tabular}{lll}
$\Pirr (\sigma_\aR \circ f)(j_q,j_r)$
&
$\iff \sigma_\aR \circ f(j_q) \nleq_{\aR^{\pOp}} j_r$
& by definition
\\&
$\iff j_r \nleq_\aR \sigma_\aR \circ f(j_q)$
\\&
$\iff f(j_q) \nleq_\aR \sigma_\aR(j_r)$
& $\sigma_\aR : \aR \to \aR^{\pOp}$ self-adjoint
\\&
$\iff \Pirr f ; \sigma_\aR |_{M(\aR) \times J(\aR)}(j_q,j_r)$
\end{tabular}
\]
where the final step uses the fact that the $\JSL_f$-isomorphism $\sigma : \aR \to \aR^{\pOp}$ restricts to a bijection $\sigma |_{M(\aR) \times J(\aR)}$. Next, since $f$ is also a $\SAJ_f$-morphism and $\GJPirr$ is well-defined by (6), $\Pirr f : (\Pirr\aQ,\Pirr\sigma_\aQ) \to (\Pirr\aR,\sigma_\aR)$ is a well-defined $\UGJ$-morphism. Then using Lemma \ref{lem:sai_induces_ugjm_iso} we have the well-defined $\UGJ$-morphism:
\[
\xymatrix@=15pt{
(\Pirr\sigma_\aQ,\Pirr\sigma_\aQ) \ar[rr]^{\rR_{\sigma_\aQ}} && (\Pirr\aQ,\Pirr\sigma_\aQ) \ar[rr]^{\Pirr f} && (\Pirr\aR,\Pirr\sigma_\aR) \ar[rr]^{(\rR_{\sigma_\aR})^{\bf-1}} && (\Pirr\sigma_\aR,\Pirr\sigma_\aR) 
}
\]
formed by pre/post-composing with $\SAJ_f$-isomorphisms $\rR_{\sigma_\aQ} := \Pirr\aQ$ and $\rR_{\sigma_\aQ}^{\bf-1} := \Pirr\sigma_\aQ$.  This composite is actually $\GPirr f$ by the following calculation:
\[
\begin{tabular}{lll}
$(\rR_{\sigma_\aQ} \fatsemi \Pirr f \fatsemi \rR_{\sigma_\aR}^{\bf-1})^\up$
\\ \quad
$= (\rR_{\sigma_\aR}^{\bf-1})^\up \circ \Pirr\aR^\down \circ (\Pirr f)^\up \circ (\Pirr\aQ)^\down \circ (\rR_{\sigma_\aQ})^\up$
& by $(\up\fatsemi)$ twice
\\ \quad
$= (\Pirr\sigma_\aR)^\up \circ \Pirr\aR^\down \circ (\Pirr f)^\up \circ (\Pirr\aQ)^\down \circ (\Pirr\aQ)^\up$
& by definition
\\ \quad
$= (\Pirr\sigma_\aR)^\up \circ \Pirr\aR^\down \circ (\Pirr f)^\up$
& $\Pirr f : \Pirr\aQ \to (\Pirr\aQ)\spbreve$
\\ \quad
$= (\Pirr f \fatsemi \Pirr\sigma_\aR)^\up$
& by $(\up\fatsemi)$
\\ \quad
$= (\Pirr (\sigma_\aR \circ f))^\up$
& by functorality
\end{tabular}
\]
Then $\GPirr$ is a well-defined functor using the functorality of $\Pirr$, the uniform nature of the isomorphisms $\rR_\sigma$, and the fact that $\UG$ is isomorphic to the diagonal of $\UGJ$.

\end{enumerate}
\end{proof}

\smallskip
Having proved functorality we can now capture previous concepts as natural isomorphisms.
\smallskip

\begin{definition}[Natural isomorphisms involving the diagonals]
\label{def:nat_isos_involving_diagonals}
\item
\begin{enumerate}
\item
\emph{The functors $I_j$, $I_m$, $\Diagj$ and $\Diagm$}.

\smallskip
We have the functors:
\[
\begin{tabular}{lllll}
$I_j : \SAI_f \hookto \SAJ_f$
&&
$I_m : \SAI_f \hookto \SAM_f$
&&
identity on objects and morphisms,
\\[0.5ex]
$\Diagj : \UG \monoto \UGJ$
&&
$\Diagm : \UG \monoto \UGM$
&&
identity on morphisms.
\end{tabular}
\]
That is, $I_j$ and $I_m$ are the full-inclusion functors whereas the action of the full functors $\Diagj$ and $\Diagm$ on objects is $(V,\rE) \mapsto (\rE,\rE)$.

\item
\emph{The image of $(\aQ,\sigma) \in \SAI_f \subseteq \SAJ_f$ under $\GJPirr$ is naturally $\UGJ$-isomorphic to $(\Pirr\sigma,\Pirr\sigma)$}.

\smallskip
We have the natural isomorphism:
\[
\begin{tabular}{lllllll}
$rj : \Diagj \circ \GPirr \To \GJPirr \circ I_j$
&&
$rj_{(\aQ,\sigma)} := \Pirr\aQ : (\Pirr\sigma,\Pirr\sigma) \to (\Pirr\aQ,\Pirr\sigma)$
\\&&
$rj_{(\aQ,\sigma)}^{\bf-1} := \Pirr\sigma : (\Pirr\aQ,\Pirr\sigma) \to (\Pirr\sigma,\Pirr\sigma)$
\end{tabular}
\]
noting that $\sigma$ is viewed as a join-semilattice morphism $\aQ \to \aQ^{\pOp}$ when applying $\Pirr$. \endbox

% BEGIN TAKEOUT
\takeout{
\item
\emph{The image of $(\aQ,\sigma) \in \SAI_f \subseteq \SAM_f$ under $\GMPirr$ is naturally $\UGM$-isomorphic to $(\Pirr\sigma,\Pirr\sigma)$}.

\smallskip
We include the following natural isomorphism, although we will not actually use it.
\[
\begin{tabular}{lllllll}
$rm : \Diagm \circ \GPirr \To \GMPirr \circ I_m$
&&
$rm_{(\aQ,\sigma)} := \Pirr\aQ : (\Pirr\sigma_j,\Pirr\sigma_j) \to (\Pirr\aQ,\Pirr\sigma_m)$
\\&&
$rm_{(\aQ,\sigma)}^{\bf-1} := \Pirr\sigma_j : (\Pirr\aQ,\Pirr\sigma_m) \to (\Pirr\sigma_j,\Pirr\sigma_j)$
\end{tabular}
\]
where $\sigma_j : \aQ \to \aQ^{\pOp}$ and $\sigma_m : \aQ^{\pOp} \to \aQ$ have underlying function $\sigma$. \endbox
}
% END TAKEOUT

\end{enumerate}
\end{definition}

\smallskip

\begin{note}[Concerning a certain asymmetry in our approach]
\item
Just as we have the natural isomorphim $rj : \Diagj \circ \GPirr \To \GJPirr \circ I_j$ there is another   natural isomorphism $rm : \Diagm \circ \GPirr \To \GMPirr \circ I_m$ defined:
\[
rm_{(\aQ,\sigma)} := \Pirr\aQ : (\Pirr\sigma_j,\Pirr\sigma_j) \to (\Pirr\aQ,\Pirr\sigma_m)
\qquad
\text{with inverse $\Pirr\sigma_j$}.
\]
Here $\sigma_j : \aQ \to \aQ^{\pOp}$ and $\sigma_m : \aQ^{\pOp} \to \aQ$ are the two join-semilattice isomorphisms whose underlying function is $\sigma$. Both join-semilattice morphisms are necessary because we choose to view $\SAI_f$-algebras as morphisms $\aQ \to \aQ^{\pOp}$ when applying $\GPirr$, whereas the $\SAM_f$-algebras are necessarily viewed as morphisms $\aQ^{\pOp} \to \aQ$. We will not need to use $rm$ in what follows. \endbox
\end{note}

\begin{lemma}
\label{lem:diagonal_nat_isos_equalities}
\item
\begin{enumerate}
\item
The functors and natural isomorphism $rj : \Diagj \circ \GPirr \To \GJPirr \circ I_j$ from Definition \ref{def:nat_isos_involving_diagonals} are well-defined.
\item
We have the following equalities:
\[
\GJOpen \circ \Diagj = I_j \circ \GOpen
\qquad
\GMOpen \circ \Diagm = I_m \circ \GOpen.
\]
\end{enumerate}
\end{lemma}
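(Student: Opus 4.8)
The plan is to reduce every assertion to results already in hand, since this lemma is almost entirely bookkeeping over the functors of Definition~\ref{def:equiv_functors_ulg_all} and Definition~\ref{def:nat_isos_involving_diagonals}. For part~(1) I would first dispatch the four functors. That $I_j$ and $I_m$ are well-defined full-inclusion functors is immediate from $\SAI = \SAJ \cap \SAM$ (Lemma~\ref{lem:sa_vars_basic_relationship}.2): every finite De Morgan algebra is simultaneously an $\SAJ_f$- and an $\SAM_f$-algebra, and the three categories have literally the same morphisms. That $\Diagj$ and $\Diagm$ are well-defined identity-on-morphisms functors is exactly the content of Lemma~\ref{lem:ug_iso_to_diagonals}, which already exhibits $\UG$ as (isomorphic to) the full subcategory of $\UGJ$, respectively $\UGM$, on the diagonal objects $(\rE,\rE)$ with $\breve{\rE}=\rE$; here $\Diagj$ and $\Diagm$ are precisely the associated inclusions $(V,\rE)\mapsto(\rE,\rE)$.

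For the natural transformation $rj$, each component $rj_{(\aQ,\sigma)} = \Pirr\aQ$ is a $\UGJ$-isomorphism $(\Pirr\sigma,\Pirr\sigma)\to(\Pirr\aQ,\Pirr\sigma)$ with inverse $\Pirr\sigma$ by Lemma~\ref{lem:sai_induces_ugjm_iso}.1; I would first record that the (co)domains do match the two functors, namely $(\Diagj\circ\GPirr)(\aQ,\sigma)=(\Pirr\sigma,\Pirr\sigma)$ and $(\GJPirr\circ I_j)(\aQ,\sigma)=(\Pirr\aQ,\Pirr\sigma)$. The only remaining point is naturality, and this comes for free: during the verification that $\GPirr$ is a functor (in the proof that the six functors of Definition~\ref{def:equiv_functors_ulg_all} are well-defined) it was shown that $\GPirr f = rj_{(\aQ,\sigma_\aQ)} \fatsemi \Pirr f \fatsemi rj_{(\aR,\sigma_\aR)}^{\bf-1}$ for every $\SAI_f$-morphism $f$. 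Post-composing this identity with $rj_{(\aR,\sigma_\aR)}$ and cancelling $rj_{(\aR,\sigma_\aR)}^{\bf-1}\fatsemi rj_{(\aR,\sigma_\aR)} = \mathrm{id}$ yields $\GPirr f \fatsemi rj_{(\aR,\sigma_\aR)} = rj_{(\aQ,\sigma_\aQ)} \fatsemi \Pirr f$, which is exactly the naturality square once one notes $(\Diagj\circ\GPirr)f=\GPirr f$ and $(\GJPirr\circ I_j)f=\Pirr f$ (both composites being identity-on-morphisms, respectively acting as $\Pirr$).

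For part~(2) I would compute both sides on morphisms and on objects. On morphisms the equalities are trivial: each of $\GJOpen$, $\GMOpen$, $\GOpen$ acts as $\Open$ on the underlying $\BiCliq$-morphism, while $\Diagj$, $\Diagm$, $I_j$, $I_m$ are identity on morphisms, so all four composites act as $\Open$. On objects the computation rests on two collapses valid precisely because $\rE$ is symmetric. First, the edge datum of the diagonal object $\Diagj(V,\rE)=(\rE,\rE)$, read as a self-adjoint $\BiCliq$-morphism $\rE\to\breve{\rE}=\rE$, is the identity morphism $id_\rE$, so $\Open\rE = id_{\Open\rE}$. Second, $\partial_\rE^{\bf-1}=\partial_\rE$, since by Definition~\ref{def:open_dual_iso} both send $X\mapsto\rE[\overline{X}]$ when $\breve{\rE}=\rE$. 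Hence $(\GJOpen\circ\Diagj)(V,\rE) = (\Open\rE,\,\partial_\rE^{\bf-1}\circ id_{\Open\rE}) = (\Open\rE,\partial_\rE) = (I_j\circ\GOpen)(V,\rE)$, and symmetrically $(\GMOpen\circ\Diagm)(V,\rE) = (\Open\rE,\,id_{\Open\rE}\circ\partial_\rE) = (\Open\rE,\partial_\rE) = (I_m\circ\GOpen)(V,\rE)$.

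The substance of the whole argument is therefore imported: the one genuinely nontrivial ingredient, the conjugation identity $\GPirr f = rj_{(\aQ,\sigma_\aQ)}\fatsemi\Pirr f\fatsemi rj_{(\aR,\sigma_\aR)}^{\bf-1}$, was already discharged in the functoriality proof of $\GPirr$. I do not expect a real obstacle; the only thing demanding care is the bookkeeping of which component of a diagonal object $(\rE,\rE)$ plays the role of the carrier relation and which the role of the edge morphism in each of $\GJOpen$, $\GMOpen$, $\GOpen$, together with the repeated use that $\breve{\rE}=\rE$ turns the edge morphism into an identity and turns $\partial_\rE^{\bf-1}$ into $\partial_\rE$.
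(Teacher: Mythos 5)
Your proposal is correct. Part (1) for the four functors and for the well-definedness and typing of the components $rj_{(\aQ,\sigma)}$ matches the paper exactly (same appeals to $\SAI = \SAJ \cap \SAM$, Lemma~\ref{lem:ug_iso_to_diagonals} and Lemma~\ref{lem:sai_induces_ugjm_iso}), and your part (2) is precisely the ``direct verification'' the paper leaves implicit: the edge datum of a diagonal object $(\rE,\rE)$ is the identity $\BiCliq$-morphism $id_\rE$, and $\partial_\rE^{\bf-1}$ and $\partial_\rE$ have the same underlying action because $\breve{\rE}=\rE$.

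The one place where you genuinely diverge is the naturality of $rj$. The paper verifies the square by hand, computing $(\Pirr\aQ \fatsemi \Pirr f)^\up = (\Pirr f)^\up$ and $(\Pirr(\sigma_2\circ f)\fatsemi \Pirr\aR)^\up = (\Pirr f)^\up$ directly, which in effect repeats the calculation already carried out when establishing that $\GPirr$ is a functor. You instead cite the conjugation identity $\GPirr f = rj_{(\aQ,\sigma_1)} \fatsemi \Pirr f \fatsemi rj_{(\aR,\sigma_2)}^{\bf-1}$ proved there, post-compose with $rj_{(\aR,\sigma_2)}$ and cancel, which gives exactly the required square $\GPirr f \fatsemi rj_{(\aR,\sigma_2)} = rj_{(\aQ,\sigma_1)} \fatsemi \Pirr f$ after noting that $\Diagj$ and $I_j$ are identity on morphisms. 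This is a legitimate and slightly cleaner reduction: it buys you a proof with no new computation, at the cost of making the lemma explicitly dependent on that identity from the functoriality proof (which the paper's version is only implicitly). Both routes are sound; no gap.
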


\begin{proof}
\item
\begin{enumerate}
\item
The fully-faithful inclusion-functors $I_j$ and $I_m$ are well-defined because $\SAI_f = \SAJ_f \cap \SAM_f$. Recall that the full subcategories of $\UGJ$ and $\UGM$ consisting of objects $(\rE,\rE)$ are actually equal, and also categorically isomorphic to $\UG$ by Lemma \ref{lem:ug_iso_to_diagonals}. It follows that $\Diagj$ and $\Diagm$ are well-defined fully-faithful functors, since they act in the same way as this categorical isomorphism. 

Next, the components $rj_{(\aQ,\sigma)}$ are well-defined $\UGJ$-isomorphisms by Lemma \ref{lem:sai_induces_ugjm_iso} (which also specifies the inverses) where the typing is correct because:
\[
\begin{tabular}{lll}
$\Diagj \circ \GPirr(\aQ,\sigma) = \Diagj(J(\aQ),\Pirr\sigma) = (\Pirr\sigma,\Pirr\sigma)$
\\
$\GJPirr \circ I_j (\aQ,\sigma) = \GJPirr(\aQ,\sigma) = (\Pirr\aQ,\Pirr\sigma)$.
\end{tabular}
\]
Concerning naturality we must verify that for every $\SAJ_f$-morphism $f : (\aQ,\sigma_1) \to (\aR,\sigma_2)$ the following square commutes inside $\UGJ$:
\[
\xymatrix@=15pt{
(\Pirr\sigma_1,\Pirr\sigma_1) \ar[d]_{\Pirr(\sigma_2 \circ f)} \ar[rr]^{rj_{(\aQ,\sigma_1)}} && (\Pirr\aQ,\Pirr\sigma_1) \ar[d]^{\Pirr f}
\\
(\Pirr\sigma_2,\Pirr\sigma_2) \ar[rr]_{rj_{(\aR,\sigma_2)}} && (\Pirr\aR,\Pirr\sigma_2)
}
\]
Firstly $(\Pirr\aQ \fatsemi \Pirr f)^\up = (\Pirr f)^\up \circ (\Pirr\aQ)^\down \circ (\Pirr\aQ)^\up = (\Pirr f)^\up$ and secondly:
\[
\begin{tabular}{lll}
$(\Pirr(\sigma_2 \circ f) \fatsemi \Pirr\aR)^\up$
\\ \quad
$= (\Pirr\aR)^\up \circ (\Pirr\sigma_2)^\down \circ (\Pirr(\sigma_2 \circ f))^\up$
& by $(\fatsemi\up)$
\\ \quad
$= (\Pirr\aR)^\up \circ (\Pirr\sigma_2)^\down \circ (\Pirr f \fatsemi \Pirr\sigma_2)^\up$
& functorality of $\Pirr$
\\ \quad
$= (\Pirr\aR)^\up \circ (\Pirr\sigma_2)^\down \circ (\Pirr \sigma_2)^\up \circ (\Pirr\aR)^\down \circ (\Pirr f)^\up$
& by $(\fatsemi\up)$
\\ \quad
$= (\Pirr\aR)^\up \circ (\Pirr\aR)^\down \circ (\Pirr f)^\up$
& $(\Pirr\aR)^\down = \cl_{\Pirr\sigma_2} \circ (\Pirr\aR)^\down$
\\ \quad
$= (\Pirr f)^\up$
& $(\Pirr f)^\up = \inte_{\Pirr\aR} \circ (\Pirr f)^\up$.
\end{tabular}
\]

% BEGIN TAKEOUT
\takeout{
\smallskip
Finally we need to prove that $rm : \Diagm \circ \GPirr \To \GMPirr \circ I_m$ is a well-defined natural isomorphism. Firstly, for each finite de morgan algebra $(\aQ,\sigma)$ consider the following $\UGM$-composite,
\[
(\Pirr\sigma_j,\Pirr\sigma_j) \xto{\rR := \Pirr\aQ} (\Pirr\sigma_m,\Pirr\sigma_m) \xto{\rS := \Pirr\sigma_m} (\Pirr\aQ,\Pirr\sigma_m).
\]
To explain, $\rR$ is the $\UGM$-isomorphism arising from the $\UG$-isomorphism in Lemma \ref{lem:ug_iso_arising_from_the_two_typings}, whereas $\rS$ is the $\UGM$-isomorphism described in Lemma \ref{lem:sai_induces_ugjm_iso}. Then since:
\[
(\rR \fatsemi \rS)^\up
= (\Pirr\sigma_m)^\up \circ (\Pirr\sigma_m)^\down \circ (\Pirr\aQ)^\up
= \inte_{\Pirr\sigma_m} \circ (\Pirr\aQ)^\up
= (\Pirr\aQ)^\up
\]
it follows that the $\UGM$-isomorphism $\rR \fatsemi \rS$ has underlying relation $\Pirr\aQ$. All this shows that each component $rm_{(\aQ,\sigma)}$ is a well-defined $\UGM$-isomorphism. Its inverse arises at the level of $\BiCliq$ and is:
\[
\rS^{\bf-1} \fatsemi \rR^{\bf-1}
\;\stackrel{!}{=}\; \Pirr\aQ ; \sigma_{M(\aQ) \times J(\aQ)}
= \Pirr\sigma_j
\]
where the asserted equality follows by inspecting the two Lemmas whence $\rR$ and $\rS$ came. Concerning naturality, given any $\SAI_f$-morphism $f : (\aQ,\sigma_1) \to (\aR,\sigma_2)$ we must establish that the following diagram commutes inside $\UGM$.
\[
\xymatrix@=15pt{
(\Pirr(\sigma_1^j,\Pirr\sigma_1^j) \ar[d]_{\Pirr(\sigma_2^j \circ f)} \ar[rr]^{rm_{(\aQ,\sigma_1)}} && (\Pirr\aQ,\Pirr\sigma_1^m) \ar[d]^{\Pirr f}
\\
(\Pirr\sigma_2^j,\Pirr\sigma_2^j) \ar[rr]_{rm_{(\aR,\sigma_2)}} && (\Pirr\aR,\Pirr\sigma_2^m)
}
\]
This follows because its underlying $\BiCliq$-diagram is precisely the same as the one previously considered when proving that $rj$ is natural.
}
% END TAKEOUT

\item
One can directly verify that their action on objects and morphisms are the same, recalling that the underlying function of $\partial_\rE^{\bf-1}$ and $\partial_\rE$ are equal because $\rE$ is symmetric.

\end{enumerate}
\end{proof}

\smallskip
We now finally prove the three categorical equivalences.
\smallskip

% ====================
% EQUIVALENCE THEOREMS
% ====================
% SAJ_f ~ UGJ
% ===========

\begin{theorem}[Categorical equivalence between $\SAJ_f$ and $\UGJ$]
\label{thm:saj_f_equiv_ugj}
\item
The functors $\GJOpen$ and $\GJPirr$ define an equivalence of categories with respective natural isomorphisms:
\[
\begin{tabular}{lll}
$\jrep : \Id_{\SAJ_f} \To \GJOpen \circ \GJPirr$
& $\jrep_{(\aQ,\sigma)} : (\aQ,\sigma) \to (\Open\Pirr\aQ,\partial_{\Pirr\aQ}^{\bf-1} \circ \Open\Pirr\sigma )$
\\[1ex]&
$\jrep_{(\aQ,\sigma)}(q) := \rep_\aQ(q) = \{ m \in M(\aQ) : q \nleq_\aQ m \}$
\\[1ex]&
$\jrep_{(\aQ,\sigma)}^{\bf-1}(Y) := \rep_\aQ^{\bf-1}(Y) = \Land_\aQ M(\aQ) \backslash Y$.
\\[2ex]
$\jred : \Id_{\UGJ} \To \GJPirr \circ \GJOpen$
& $\jred_{(\rG,\rE)} : (\rG,\rE) \to  (\Pirr\Open\rG,\Pirr(\partial_\rG^{\bf-1} \circ \Open\rE))$
\\[1ex]&
$\jred_{(\rG,\rE)} := \red_\rG = \{ (v,Y) \in V \times M(\Open\rG) :  \rG[v] \nsubseteq Y \}$
\\[1ex]&
$\jred_{(\rG,\rE)}^{\bf-1} := \red_\rG^{\bf-1} = \; \breve{\in} \; \subseteq J(\Open\rG) \times \rG_t$.
\end{tabular}
\]
The associated components of the latter natural isomorphisms follow from Theorem \ref{thm:bicliq_jirr_equivalent}.
\end{theorem}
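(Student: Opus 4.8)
The plan is to leverage the already-established equivalence $\Open \dashv \Pirr$ between $\Dep$ and $\JSL_f$ (Theorem \ref{thm:bicliq_jirr_equivalent}) together with the well-definedness of the functors $\GJOpen$ and $\GJPirr$ and the interpretation results relating self-adjointness in $\JSL_f$ to symmetry in $\Dep$ (Lemma \ref{lem:self_adjointness_bicliq_vs_jsl} and Lemma \ref{lem:interpret_finite_saj_sam}). Since an equivalence of categories is witnessed by a pair of functors with natural isomorphisms $\Id \To \GJOpen \circ \GJPirr$ and $\Id \To \GJPirr \circ \GJOpen$, and the underlying functors $\GJOpen$, $\GJPirr$ act as $\Open$, $\Pirr$ on the underlying $\Dep$/$\JSL_f$-morphisms, the natural strategy is to \emph{lift} the existing natural isomorphisms $\rep$ and $\red$ from Theorem \ref{thm:bicliq_jirr_equivalent} to the enriched categories $\SAJ_f$ and $\UGJ$.

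First I would verify that each proposed component is a well-typed morphism in the enriched category. For $\jrep_{(\aQ,\sigma)}$, I would show that $\rep_\aQ : \aQ \to \Open\Pirr\aQ$ additionally commutes with the self-adjoint operations, i.e.\ that $\rep_\aQ \circ \sigma = (\partial_{\Pirr\aQ}^{\bf-1} \circ \Open\Pirr\sigma) \circ \rep_\aQ$ as $\JSL_f$-morphisms $\aQ \to \Open\Pirr\aQ$. Here the key inputs are that $\GJPirr(\aQ,\sigma) = (\Pirr\aQ,\Pirr\sigma)$ is a genuine $\UGJ$-object (so $\Pirr\sigma$ is self-adjoint by Lemma \ref{lem:self_adjointness_bicliq_vs_jsl}.3), and that $\GJOpen$ applied to it recovers $\partial_{\Pirr\aQ}^{\bf-1} \circ \Open\Pirr\sigma$ as the interpreted operation. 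The compatibility with $\sigma$ should reduce to the naturality square for $\rep$ (already proved in Theorem \ref{thm:bicliq_jirr_equivalent}) instantiated at the morphism $\Pirr\sigma$, combined with the identity $\GJOpen \circ \GJPirr(\aQ,\sigma) = (\Open\Pirr\aQ, \partial_{\Pirr\aQ}^{\bf-1}\circ\Open\Pirr\sigma)$ from the definition of $\GJOpen$. The analogous check for $\jred_{(\rG,\rE)} = \red_\rG$ requires showing $\red_\rG$ is a $\UGJ$-morphism $(\rG,\rE) \to \GJPirr\GJOpen(\rG,\rE)$, which again should follow from the $\Dep$-naturality of $\red$ at $\rE$ together with Lemma \ref{lem:self_adjointness_bicliq_vs_jsl}.2 relating $\Open\rE$ and its dual.

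Once each component is known to be an enriched morphism, the fact that it is an \emph{isomorphism} is immediate: $\rep_\aQ$ and $\red_\rG$ are already isomorphisms in $\JSL_f$ and $\Dep$ respectively, and both $\SAJ_f$ and $\UGJ$ inherit their compositional structure from $\JSL_f$ and $\Dep$, so that a morphism in the enriched category whose underlying morphism is invertible is itself invertible (its inverse being the enriched lift of the underlying inverse, which one must also check is an enriched morphism — but this follows by the same reasoning applied to $\rep_\aQ^{\bf-1}$ and $\red_\rG^{\bf-1}$, or more slickly by reflection of isomorphisms as in Lemma \ref{lem:ug_reflect_iso}). Naturality of $\jrep$ and $\jred$ as transformations between the enriched functors likewise reduces to the already-proved naturality of $\rep$ and $\red$, since the enriched functors act on morphisms exactly as $\Open$ and $\Pirr$ do, and the naturality squares live over the underlying naturality squares.

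The main obstacle I anticipate is the compatibility-with-$\sigma$ verification for $\jrep$, specifically pinning down the equality $\rep_\aQ \circ \sigma = (\partial_{\Pirr\aQ}^{\bf-1} \circ \Open\Pirr\sigma) \circ \rep_\aQ$. The subtlety is that the interpreted operation on $\GJOpen\GJPirr(\aQ,\sigma)$ involves the isomorphism $\partial_{\Pirr\aQ}^{\bf-1}$ correcting for the dual typing of the self-adjoint morphism, so one cannot simply cite $\rep$'s naturality verbatim; one must track how $\partial$ interacts with $\rep$, for which Lemma \ref{lem:open_dual_iso_adjoints}.2 and Theorem \ref{thm:partial_nat_iso} are the relevant tools. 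Concretely, I expect to rewrite $\partial_{\Pirr\aQ}^{\bf-1} \circ \Open\Pirr\sigma$ using $\Open\Pirr\sigma = (\Open\rR)$-type formulas and then apply the relation $(\Open\rR)_* = \partial^{\bf-1} \circ \Open\breve\rR \circ \partial$ to convert between the two typings, after which the naturality of $\rep$ at $\sigma$ closes the computation. This is the one place where a genuine (though short) calculation with the $\partial$-isomorphisms is unavoidable; everything else is bookkeeping inherited from the base equivalence.
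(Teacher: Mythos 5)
Your overall architecture is exactly the paper's: reduce the equivalence to checking that the components $\rep_\aQ$ and $\red_\rG$ of the base equivalence $\JSL_f\cong\Dep$ additionally preserve the extra structure (the unary operation on the $\SAJ_f$ side, the $\UGJ$-morphism condition on the graph side), and then inherit invertibility and naturality from Theorem \ref{thm:bicliq_jirr_equivalent} because $\GJOpen$ and $\GJPirr$ act on morphisms exactly as $\Open$ and $\Pirr$ do. Where you diverge is in how the one nontrivial verification is carried out. The paper proves $\rep_\aQ(\sigma(q)) = \partial_{\Pirr\aQ}^{\bf-1}\circ\Open\Pirr\sigma(\rep_\aQ(q))$ (and the analogous identity for $\red_\rG$) by a direct element-wise unwinding of $(\Pirr\sigma)_+$, $\partial_{\Pirr\aQ}^{\bf-1}$ and the De Morgan rules; you instead propose to instantiate the naturality square of $\rep$ at $\sigma:\aQ\to\aQ^{\pOp}$, which yields $\Open\Pirr\sigma\circ\rep_\aQ = \rep_{\aQ^{\pOp}}\circ\sigma$, and then cancel the typing mismatch using $\rep_\aQ = \partial_{\Pirr\aQ^{\pOp}}\circ\rep_{\aQ^{\pOp}}^{\pOp}$ from Lemma \ref{lem:open_dual_iso_adjoints}.2 together with the observation that $\partial_{(\Pirr\aQ)\spcheck}$ and $\partial_{\Pirr\aQ}^{\bf-1}$ have the same underlying action. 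That route does close (the same $\partial$-bookkeeping, via Theorem \ref{thm:partial_nat_iso}, handles the $\jred$ side), and it is arguably more conceptual: it makes visible that the only new content is the interaction of $\rep$/$\red$ with the duality isomorphisms $\partial$, whereas the paper's computation re-derives this implicitly. The trade-off is that your argument leans on having the typings of $\partial_{\Pirr\aQ^{\pOp}}$ versus $\partial_{\Pirr\aQ}^{\bf-1}$ exactly right, which is precisely the kind of slip the paper's hands-on calculation avoids; your identification of this as the genuine obstacle, and of Lemma \ref{lem:open_dual_iso_adjoints}.2 and Theorem \ref{thm:partial_nat_iso} as the tools to resolve it, is correct.
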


\begin{proof}
\item
\begin{enumerate}
\item
Regarding $\jrep$, we already know that $\rep : \Id_{\JSL_f} \To \Open\Pirr$ defines a natural isomorphism by Theorem \ref{thm:bicliq_jirr_equivalent}. Then since $\GJOpen\GJPirr f = \Open\Pirr f$ as functions, it suffices to show that for each $(\aQ,\sigma) \in \SAJ_f$,
\[
\rep_\aQ(\sigma(q))
\; \stackrel{?}{=} \; \partial_{\Pirr\aQ}^{\bf-1} \circ \Open\Pirr\sigma(\rep_\aQ(q))
\qquad
\text{for every $q \in Q$},
\]
i.e.\ the respective unary operations are preserved. The LHS equals $\{ m \in M(\aQ) : \sigma(q) \nleq_\aQ m \}$, so let us consider the other side:
\[
\begin{tabular}{lll}
$\partial_{\Pirr\aQ}^{\bf-1} \circ \Open\Pirr\sigma(\rep_\aQ(q))$
\\[0.5ex] \quad
$= \partial_{\Pirr\aQ}^{\bf-1}( (\Pirr\sigma)_+\spbreve[\{ m \in M(\aQ) : q \nleq_\aQ m \}] )$
& by definition of $\Open$
\\ \quad
$= \Pirr\aQ^\up \circ \neg_{J(\aQ)} \circ ((\Pirr\sigma)_+\spbreve)^\up(\{ m \in M(\aQ) : q \nleq_\aQ m \})$
& by definition of $\partial$
\\ \quad
$= \Pirr\aQ^\up \circ ((\Pirr\sigma)_+)^\down(\{ m \in M(\aQ) : q \leq_\aQ m \}) $
& by $(\neg\up\neg)$
\\ \quad
$= \Pirr\aQ^\up(\{ j \in J(\aQ) : (\Pirr\sigma)_+[j] \subseteq \{ m \in M(\aQ) : q \leq_\aQ m \} \})$
& by definition of $(-)^\down$
\\ \quad
$= \Pirr\aQ^\up(\{ j \in J(\aQ) : \forall m \in M(\aQ).[ \sigma(j) \leq_\aQ m \To q \leq_\aQ m ] \})$
& by definition of $(-)_+$
\\ \quad
$= \Pirr\aQ^\up(\{ j \in J(\aQ) : q \leq_\aQ \sigma(j) \})$
& 
\\ \quad
$= \{ m \in M(\aQ) : \exists j \in J(\aQ). [ j \nleq_\aQ m \text{ and } q \leq_\aQ \sigma(j) ] \}$
& by definition of $\Pirr\aQ$
\\ \quad
$= \{ m \in M(\aQ) : \neg\forall j \in J(\aQ).[ q \leq_\aQ \sigma(j) \To j \leq_\aQ m ] \}$
& 
\\ \quad
$= \{ m \in M(\aQ) : \sigma(j) \nleq_\aQ m \}$
\end{tabular}
\]
which completes the proof that $\jrep$ is natural. The description of its inverse is immediate, and also instantiates $\rep^{\bf-1}$ from Theorem \ref{thm:bicliq_jirr_equivalent}.

\item
Concerning $\jred$, we know that $\red : \Id_{\BiCliq} \To \Pirr\Open$ defines a natural isomorphism by Theorem \ref{thm:bicliq_jirr_equivalent}. Since $\GJPirr\GJOpen \rR = \Pirr\Open\rR$ as $\BiCliq$-morphisms it suffices to show that for each $(\rG,\rE)$ we have:
\[
\red_\rG^\up \circ \rE^\down
\; \stackrel{?}{=} \;
(\Pirr\Open\rG)^\up \circ (\Pirr(\partial_\rG^{\bf-1} \circ \Open\rE) \fatsemi \red_\rG\spbreve)^\down.
\]
Regarding the LHS, for every subset $X \subseteq \rG_s$ we have:
\[
\begin{tabular}{lll}
$\red_\rG^\up \circ \rE^\down(X)$
\\ \quad
$= \{ Y \in M(\Open\rG) : \exists g_s \in \rG_s.[ g_s \in \rE^\down(X) \text{ and } \rG[g_s] \nsubseteq Y ] \}$
& by definition of $\red$
\\ \quad
$= \{ Y \in M(\Open\rG) : \exists g_s \in \rG_s.[ g_s \in \rE^\down(X) \text{ and } g_s \nin \rG^\down(Y) ] \}$
& by definition of $(-)^\down$
\\ \quad
$= \{ Y \in M(\Open\rG) : \rE^\down(X) \nsubseteq \rG^\down(Y) \}$
\\ \quad
$= \{ Y \in M(\Open\rG) : \rG^\up \circ \rE^\down(X) \nsubseteq Y \}$.
& by $(\up\vdash\down)$
\end{tabular}
\]
As for the RHS, we first simplify a sub-term:
\[
\begin{tabular}{lll}
$\Pirr(\partial_\rG^{\bf-1} \circ \Open\rE) \fatsemi \red_\rG\spbreve(Y,g_s)$
\\ \quad
$\iff \Pirr(\partial_\rG^{\bf-1} \circ \Open\rE) ; (\red_\rG\spbreve)_+\spbreve (Y,g_s)$
& $\BiCliq$-composition
\\ \quad
$\iff \Pirr(\partial_\rG^{\bf-1} \circ \Open\rE) ; (\red_\rG)_-\spbreve (Y,g_s)$
& $(\rS\spcheck)_+ = \rS_-$ generally
\\ \quad
$\iff \exists Y'.( \partial_\rG^{\bf-1} \circ \Open\rE(Y) \nleq_{(\Open\rG)^{\pOp}} Y' \text{ and } Y' \subseteq \rG[g_s]  )$
& definition of $\Pirr$, $\red_\rG$
\\ \quad
$\iff \exists Y'.( Y' \nsubseteq \partial_\rG^{\bf-1} \circ \Open\rE(Y) \text{ and } Y' \subseteq \rG[g_s] )$
& $\Open\rG$ inclusion-ordered
\\ \quad
$\iff \neg\forall Y'.( Y' \subseteq \rG[g_s] \To Y' \subseteq \partial_\rG^{\bf-1} \circ \Open\rE(Y)   )$
\\ \quad
$\iff \rG[g_s] \nsubseteq \partial_\rG^{\bf-1} \circ \Open\rE(Y)$
\\ \quad
$\iff \Open\rE(Y) \nsubseteq \partial_\rG(\rG[g_s]) $
&  $\partial_\rG$ reverses the ordering
\\ \quad
$\iff \Open\rE(Y) \nsubseteq \inte_{\breve{\rG}}(\overline{g_s})$
& definition of $\partial_\rG$, $(\neg\up\neg)$
\\ \quad
$\iff g_s \in \Open\rE(Y)$
& by Lemma \ref{lem:cl_inte_of_pirr}.1
\end{tabular}
\]
and finally simplify its action:
\[
\begin{tabular}{lll}
$(\Pirr\Open\rG)^\up \circ (\Pirr(\partial_\rG^{\bf-1} \circ \Open\rE) \fatsemi \red_\rG\spbreve)^\down(X)$
\\[0.5ex]
$= (\Pirr\Open\rG)^\up(\{ X' \in J(\Open\rG) : \Open\rE(X') \subseteq X  \})$
& using above
\\
$= \{ Y \in M(\Open\rG) : \exists X'.[ X' \nsubseteq Y \text{ and } \Open\rE(X') \subseteq X ] \}$
& definition of $\Pirr$
\\
$= \{ Y  : \neg\forall X'.[ \Open\rE(X') \subseteq X \To X' \subseteq Y ] \}$
\\
$= \{ Y  : \neg\forall X'.[ X' \subseteq (\Open\rE)_*(X) \To X' \subseteq Y ] \}$
\\
$= \{ Y  : (\Open\rE)_*(X) \nsubseteq Y \}$
\\
$= \{ Y : \rG^\up \circ \rE^\down(X) \nsubseteq Y \}$
& using Lemma \ref{lem:composite_adjoints}.4
\end{tabular}
\]
as required. The description of its inverse follows because $\red_\rG^{\bf-1}$ is the inverse $\BiCliq$-isomorphism by Theorem \ref{thm:bicliq_jirr_equivalent}, and thus is also the inverse $\UGJ$-isomorphism by Lemma \ref{lem:ugj_ugm_iso_correspondence}.

\end{enumerate}
\end{proof}

\smallskip

% ===========
% SAM_f ~ UGM
% ===========

Next we prove the equivalence $\SAM_f \cong \UGM$, making use of $\SAJ_f \cong \UGJ$. As was the case with the latter equivalence, the respective natural isomorphisms lift directly from the fundamental equivalence $\JSL_f \cong \BiCliq$.

\smallskip

\begin{theorem}[Categorical equivalence between $\SAM_f$ and $\UGM$]
\item
The functors $\GMOpen$ and $\GMPirr$ define an equivalence of categories with witnessing natural isomorphisms:
\[
\begin{tabular}{lll}
$\mrep : \Id_{\SAM_f} \To \GMOpen \circ \GMPirr$
&
$\mrep_{(\aQ,\sigma)} : (\aQ,\sigma) \to (\Open\Pirr\aQ,\Open\Pirr\sigma \circ \partial_{\Pirr\aQ})$
\\[1ex]&
$\mrep_{(\aQ,\sigma)}(q) := \rep_\aQ(q) = \{ m \in M(\aQ) : q \nleq_\aQ m \}$
\\[1ex]&
$\mrep_{(\aQ,\sigma)}^{\bf-1}(Y) := \rep_\aQ^{\bf-1}(Y) = \Land_\aQ M(\aQ) \backslash Y$.
\\[2ex]
$\mred : \Id_{\UGM} \To \GMPirr \circ \GMOpen$
&
$\mred_{(\rG,\rE)} : (\rG,\rE) \to (\Pirr\Open\rG,\Pirr(\Open\rE \circ \partial_\rG ))$
\\[1ex]&
$\mred_{(\rG,\rE)} := \red_\rG := \{ (g_s,Y) \in \rG_s \times M(\Open\rG) : \rG[g_s] \subseteq Y \}$
\\[1ex]&
$\mred_{(\rG,\rE)}^{\bf-1} := \red_\rG^{\bf-1} = \; \breve{\in} \; \subseteq J(\Open\rG) \times V$
\end{tabular}
\]
The associated components of the latter natural isomorphisms follow from Theorem \ref{thm:bicliq_jirr_equivalent}.
\end{theorem}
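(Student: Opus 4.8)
The plan is to reuse the strategy of Theorem \ref{thm:saj_f_equiv_ugj} almost verbatim, exploiting the duality $(\aQ,\sigma)\in\SAM_f \iff (\aQ^{\pOp},\sigma)\in\SAJ_f$ and its $\Dep$-side counterpart $(\rG,\rE)\in\UGM \iff (\breve{\rG},\rE)\in\UGJ$. Since $\GMOpen$ acts as $\Open$ on the underlying $\BiCliq$-morphism and $\GMPirr$ acts as $\Pirr$ on the underlying $\JSL_f$-morphism, the \emph{underlying} natural isomorphisms $\rep:\Id_{\JSL_f}\To\Open\circ\Pirr$ and $\red:\Id_{\BiCliq}\To\Pirr\circ\Open$ from Theorem \ref{thm:bicliq_jirr_equivalent} already give natural isomorphisms of the forgetful images. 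So the only genuine content is to check that $\mrep_{(\aQ,\sigma)}$ and $\mred_{(\rG,\rE)}$ respect the \emph{unary operations}, i.e.\ that they are $\SAM_f$- resp.\ $\UGM$-morphisms, and that the stated (co)domains are correct.

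First I would record that the functors are well-defined (already proved) and that each $\mrep_{(\aQ,\sigma)}=\rep_\aQ$ is a $\JSL_f$-isomorphism with the stated inverse. The domain/codomain typing is correct because $\GMOpen\circ\GMPirr(\aQ,\sigma)=(\Open\Pirr\aQ,\,\Open\Pirr\sigma\circ\partial_{\Pirr\aQ})$, where for $\SAM$ one views $\sigma$ as a morphism $\aQ^{\pOp}\to\aQ$, so $\Pirr\sigma:\breve{\Pirr\aQ}\to\Pirr\aQ$ and $\Open\Pirr\sigma$ is post-composed with $\partial_{\Pirr\aQ}$ rather than pre-composed with $\partial^{\bf-1}$. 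To verify preservation of $\sigma$ I must show
\[
\rep_\aQ(\sigma(q))=\Open\Pirr\sigma\circ\partial_{\Pirr\aQ}(\rep_\aQ(q))
\qquad\text{for all }q\in Q,
\]
which is the order-dual of the computation in Theorem \ref{thm:saj_f_equiv_ugj}.1. Concretely I would unwind $\partial_{\Pirr\aQ}$ via Definition \ref{def:open_dual_iso}, apply De Morgan duality $(\neg\up\neg)$, use the definition of $\Open$ via the positive component $(\Pirr\sigma)_+$, and use that $\sigma$ is self-adjoint as a morphism $\aQ^{\pOp}\to\aQ$ (Lemma \ref{lem:interpret_finite_saj_sam}.2) to land on $\{\,m\in M(\aQ):\sigma(q)\nleq_\aQ m\,\}$. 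The cleanest route is to note $(\aQ^{\pOp},\sigma)\in\SAJ_f$ and transport the already-proven $\SAJ_f$ computation through the identities $\GMOpen(\rG,\rE)$ versus $\GJOpen(\breve{\rG},\rE)$.

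For $\mred$, the underlying map $\red_\rG:\Id_{\BiCliq}\To\Pirr\circ\Open$ is a natural isomorphism (Theorem \ref{thm:bicliq_jirr_equivalent}); the typing is correct since $\GMPirr\circ\GMOpen(\rG,\rE)=(\Pirr\Open\rG,\,\Pirr(\Open\rE\circ\partial_\rG))$. The substantive check is that $\red_\rG$ is a $\UGM$-morphism, i.e.\
\[
\rE^\down\circ\red_\rG^\up
=(\breve{\red_\rG}\fatsemi\Pirr(\Open\rE\circ\partial_\rG))^\down\circ(\Pirr\Open\rG)^\up,
\]
using the $\UGM$-morphism condition from Definition \ref{def:ulg_j_m}.2. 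I would evaluate both sides on an arbitrary subset, expand $\breve{\red_\rG}=\red_\rG\spcheck$ and its components via $(\rS\spcheck)_+=\rS_-$, use $(\fatsemi\down)$ and $(\up\dashv\down)$ from Lemma \ref{lem:up_down_basic}, the description of $\partial_\rG$, and the adjoint identity $\Open\rE=(\rE^\up\circ\rG^\down)|_{\cdots}$ with Lemma \ref{lem:composite_adjoints}.4, mirroring the RHS computation in Theorem \ref{thm:saj_f_equiv_ugj}.2. The inverse $\mred_{(\rG,\rE)}^{\bf-1}=\red_\rG^{\bf-1}$ is then automatically a $\UGM$-isomorphism by the isomorphism-reflection Lemma \ref{lem:ug_reflect_iso}.2 together with Lemma \ref{lem:ugj_ugm_iso_correspondence}.

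I expect the main obstacle to be the $\mred$ naturality-plus-unary-preservation computation: it is the long $(-)^\down/(-)^\up$ manipulation where the $\SAM$ versus $\SAJ$ asymmetry forces $\partial_\rG$ (not $\partial_\rG^{\bf-1}$) and forces $\rE$ to be read as a morphism $\breve{\rG}\to\rG$, so one must be careful that the self-adjointness is invoked with the correct typing (Lemma \ref{lem:self_adjointness_bicliq_vs_jsl}). Everything else — the $\JSL_f$-isomorphism property, the inverse formulas, and the associated components — either lifts directly from Theorem \ref{thm:bicliq_jirr_equivalent} or follows by the $\SAJ_f/\UGJ$-to-$\SAM_f/\UGM$ duality, so those steps I would dispatch by citation rather than recomputation.
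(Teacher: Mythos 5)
Your proposal is correct in substance but organises the argument differently from the paper. You keep the components fixed as $\rep_\aQ$ and $\red_\rG$ and propose to verify directly that they satisfy the extra $\SAM_f$- resp.\ $\UGM$-morphism conditions (preservation of $\sigma$, and the defining equation of $\UGM$-morphisms), inheriting naturality and invertibility from Theorem \ref{thm:bicliq_jirr_equivalent} and the isomorphism-reflection lemmas. The paper instead \emph{constructs} each component as a composite of isomorphisms it has already certified: $\mrep_{(\aQ,\sigma)} := \partial_{\Pirr\aQ^{\pOp}} \circ \jrep_{(\aQ^{\pOp},\sigma)}^{\pOp}$ and $\mred_{(\rG,\rE)} := (\jred_{(\breve{\rG},\rE)}^{\bf-1})\spbreve \fatsemi \GMPirr\,\partial_{\breve{\rG}}$, and then computes that these composites have underlying action $\rep_\aQ$ resp.\ $\red_\rG$. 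The paper's route never has to re-verify a morphism condition from scratch (everything is a composite of known $\SAM_f$/$\UGM$-morphisms, using Lemma \ref{lem:ugj_ugm_iso_correspondence} to convert $\UGJ$- into $\UGM$-isomorphisms), at the cost of an action computation; your route does one fresh $(-)^\up/(-)^\down$ computation per natural isomorphism, which is the order-dual of the one already carried out in Theorem \ref{thm:saj_f_equiv_ugj}. Both are sound, and your remark that the transport through $(\aQ,\sigma)\in\SAM_f \iff (\aQ^{\pOp},\sigma)\in\SAJ_f$ is ``the cleanest route'' is essentially what the paper does.

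One caution: the $\UGM$-morphism condition you wrote down for $\red_\rG$ is mistyped. For $\rR : (\rG,\rE_1)\to(\rH,\rE_2)$ the condition is $\rE_2^\down\circ\rR^\up = (\breve{\rR}\fatsemi\rE_1)^\down\circ\rG^\up$, so with $\rE_1 = \rE$ on the domain and $\rE_2 = \Pirr(\Open\rE\circ\partial_\rG)$ on the codomain the equation to verify is
\[
(\Pirr(\Open\rE\circ\partial_\rG))^\down\circ\red_\rG^\up
=(\breve{\red_\rG}\fatsemi\rE)^\down\circ\rG^\up,
\]
not the version with $\rE$ and $\Pirr(\Open\rE\circ\partial_\rG)$ interchanged (your version does not even typecheck, since $\rE^\down$ acts on $\Pow\,\rG_t$ while $\red_\rG^\up$ lands in $\Pow\,M(\Open\rG)$). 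With the equation set up correctly the computation you sketch goes through.
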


\begin{proof}
\item
\begin{enumerate}
\item
Fix any $(\aQ,\sigma) \in \SAM_f$, so we have a respective join-semilattice morphism $\sigma : \aQ^{\pOp} \to \aQ$. Then $(\aQ^{\pOp},\sigma) \in \SAJ_f$ so by Theorem \ref{thm:saj_f_equiv_ugj} we have the $\SAJ_f$-isomorphism:
$\jrep_{(\aQ^{\pOp},\sigma)} : (\aQ^{\pOp},\sigma) \to (\Open\Pirr\aQ^{\pOp},\partial_{\Pirr\aQ^{\pOp}}^{\bf-1} \circ \Open\Pirr\sigma)$. We can apply the opposite construction to join-semilattice isomorphisms, yielding the $\SAM_f$-isomorphism:
\[
\jrep_{(\aQ^{\pOp},\sigma)}^{\pOp} : (\aQ,\sigma) \to ((\Open\Pirr\aQ^{\pOp})^{\pOp},\partial_{\Pirr\aQ^{\pOp}}^{\bf-1} \circ \Open\Pirr\sigma).
\]
Now, we are going to compose the above isomorphism with the following $\SAM$-isomorphism:
\[
\partial_{\Pirr\aQ^{\pOp}} : ((\Open\Pirr\aQ^{\pOp})^{\pOp},\partial_{\Pirr\aQ^{\pOp}}^{\bf-1} \circ \Open\Pirr\sigma) \to (\Open\Pirr\aQ, \Open\Pirr\sigma \circ \partial_{\Pirr\aQ}) = \GMOpen\GMPirr(\aQ,\sigma).
\]
Its typing is well-defined because $\GMOpen\GMPirr\aQ \in \SAM_f$ and it is a join-semilattice isomorphism by construction -- see Definition \ref{def:bip_canon_quo_incl}. The unary operation is preserved because:
\[
\begin{tabular}{c}
$\partial_{\Pirr\aQ^{\pOp}}( \partial_{\Pirr\aQ^{\pOp}}^{\bf-1} \circ \Open\Pirr\sigma (Y))
=  \Open\Pirr\sigma(Y)$
\\[1ex]
$\Open\Pirr\sigma \circ \partial_{\Pirr\aQ}( \partial_{\Pirr\aQ^{\pOp}}(Y))
= \Open\Pirr\sigma \circ \partial_{\Pirr\aQ}( \partial_{\Pirr\aQ}^{\bf-1}(Y)) 
= \Open\Pirr\sigma(Y)$
\end{tabular}
\]
where in the second equality we use the fact that $\partial_{\Pirr\aQ^{\pOp}} = \partial_{(\Pirr\aQ)\spbreve}$ acts in the same way as $\partial_{\Pirr\aQ}^{\bf-1}$. Then we define the component $\SAM_f$-isomorphisms as follows:
\[
\mrep_{(\aQ,\sigma)} := \partial_{\Pirr\aQ^{\pOp}} \circ \jrep_{(\aQ^{\pOp},\sigma)}^{\pOp} :
(\aQ,\sigma) \to \GMOpen\GMPirr(\aQ,\sigma)
\]
which actually act in the same way as $\rep_\aQ$:
\[
\begin{tabular}{lll}
$\mrep_{(\aQ,\sigma)}(q)$
&
$= \partial_{\Pirr\aQ^{\pOp}} \circ \jrep_{(\aQ^{\pOp},\sigma)}^{\pOp}(q)$
\\&
$= \partial_{\Pirr\aQ^{\pOp}} \circ \{ j \in J(\aQ) : j \nleq_\aQ q \}$
\\&
$= \Pirr\aQ[\{ j \in J(\aQ) : j \leq_\aQ q \}]$
\\&
$= \{ m \in M(\aQ) : \exists j \in J(\aQ).[ j \leq_\aQ q \text{ and } j \nleq_\aQ m ] \}$
\\&
$= \{ m \in M(\aQ) : \neg\forall j \in J(\aQ).[ j \leq_\aQ q \To j \leq_\aQ m ] \}$
\\&
$= \{ m \in M(\aQ) : q \nleq_\aQ m \}$
\\&
$= \rep_\aQ(q)$.
\end{tabular}
\]
Then since $\rep : \Id_{\JSL_f} \To \Open\Pirr$ is natural and $\SAM_f$ is built on top of $\JSL_f$, it follows that $\mrep : \Id_{\SAM_f} \To \GMOpen\GMPirr$ is a natural isomorphism. The description of the component inverses is immediate.

\item
Fixing $(\rG,\rE) \in \UGM$ we have the self-adjoint $\BiCliq$-morphism $\rE : \breve{\rG} \to \rG$. Then we have $(\breve{\rG},\rE) \in \UGJ$ and thus the $\UGJ$-isomorphism:
\[
\jred_{(\breve{\rG},\rE)}^{\bf-1} : (\Pirr\Open\breve{\rG},\Pirr(\partial_{\breve{\rG}}^{\bf-1} \circ \Open\rE )) \to (\breve{\rG},\rE)
\]
by Theorem \ref{thm:saj_f_equiv_ugj}. Consequently by Lemma \ref{lem:ugj_ugm_iso_correspondence} we have the $\UGM$-isomorphism:
\[
(\jred_{(\breve{\rG},\rE)}^{\bf-1})\spbreve : (\rG,\rE) \to ((\Pirr\Open\breve{\rG})\spbreve,\Pirr(\partial_{\breve{\rG}}^{\bf-1} \circ \Open\rE ))
= (\Pirr(\Open\breve{\rG})^\pOp,\Pirr(\partial_{\breve{\rG}}^{\bf-1} \circ \Open\rE )).
\]
We are going to compose the above isomorphism with the $\UGM$-isomorphism:
\[
\GMPirr \theta
\quad\text{where}\quad
\theta := \partial_{\breve{\rG}}
: ((\Open\breve{\rG})^\pOp,\partial_{\breve{\rG}}^{\bf-1} \circ \Open\rE ) \to (\Open\rG,\Open\rE \circ \partial_\rG).
\]
To see that $\theta$ is a $\SAM_f$-isomorphism, observe that it is a join-semilattice isomorphism by construction. It preserves the unary operation because $\partial_{\breve{\rG}}(\partial_{\breve{\rG}}^{\bf-1} \circ \Open\rE(Y)) = \Open\rE(Y)$ and also:
\[
\begin{tabular}{lll}
$\Open\rE \circ \partial_\rG(\partial_{\breve{\rG}}(Y))$
&
$= \Open\rE \circ \breve{\rG}^\up \circ \neg_V \circ \rG^\up \circ \neg_{\rG_s}(Y)$
& by definition of $\partial_-$
\\&
$= \Open\rE \circ \breve{\rG}^\up \circ \breve{\rG}^\down (Y)$
& by de morgan duality
\\&
$= \Open\rE \circ \inte_{\breve{\rG}} (Y)$
\\&
$= \Open\rE(Y)$
& since $Y \in O(\breve{\rG})$.
\end{tabular}
\]
Then we define the component $\UGM$-isomorphisms as follows:
\[
\mred_{(\rG,\rE)} := (\jred_{(\breve{\rG},\rE)}^{\bf-1})\spbreve \fatsemi \GMPirr \theta : (\rG,\rE) \to \GMPirr\GMOpen(\rG,\rE).
\]
To compute this composite, first observe that $\jred_{(\breve{\rG},\rE)}^{\bf-1} = (\breve{\in})\spbreve = \; \in \; \subseteq V \times J(\Open\breve{\rG})$ and moreover:
\[
(\Pirr\partial_{\breve{\rG}})_+ \subseteq M(\Open\breve{\rG}) \times J(\Open\breve{\rG})
\qquad
(\Pirr\partial_{\breve{\rG}})_+(Y,X) \iff (\partial_{\breve{\rG}})_*(Y) \leq_{(\Open\breve{\rG})^{\pOp}} X \iff X \subseteq \breve{\rG}[\overline{Y}],
\]
using Definition \ref{def:open_pirr} and the fact that $(\partial_{\breve{\rG}})_* = \partial_\rG$ by Lemma \ref{lem:open_dual_iso_adjoints}. We now show that the underlying relation $\mred_{(\rG,\rE)} \subseteq V \times M(\Open\breve{\rG}$ is actually $red_\rG$.
\[
\begin{tabular}{lll}
$\mred_{(\rG,\rE)}(v,Y)$
&
$\iff (\jred_{(\breve{\rG},\rE)}^{\bf-1})\spbreve \fatsemi \GMPirr \theta(v,Y)$
\\&
$\iff \in ; (\Pirr\partial_{\breve{\rG}})_+\spbreve(v,Y)$
\\&
$\iff \exists X \in J(\Open\breve{\rG}).( v \in X \text{ and } X \subseteq \breve{\rG}[\overline{Y}]  )$
\\&
$\iff \neg \forall X \in J(\Open\breve{\rG}.( X \subseteq \breve{\rG}[\overline{Y}] \To v \nin X)$
\\&
$\iff \neg \forall X \in J(\Open\breve{\rG}.( X \subseteq \breve{\rG}[\overline{Y}] \To X \subseteq \inte_{\breve{\rG}}(\overline{v}) )$
& by Lemma \ref{lem:cl_inte_of_pirr}.1
\\&
$\iff \breve{\rG}[\overline{Y}] \nsubseteq \inte_{\breve{\rG}}(\overline{v})$
\\&
$\iff \overline{Y} \nsubseteq \breve{\rG}^\down(\overline{v})$
& by $(\up\dashv\down)$ and $(\down\up\down)$
\\&
$\iff \rG[v] \nsubseteq Y$
& contrapositive, $(\neg\down\neg)$
\\&
$\iff red_\rG(v,Y)$.
\end{tabular}
\]
Then since $\red : \Id_{\BiCliq} \To \Pirr\Open$ is natural and $\UGM$ is built on top of $\BiCliq$, we deduce that $\mred : \Id_{\UGM} \To \GMPirr\GMOpen$ is a natural isomorphism. The description of the component inverses follows because $\red^{\bf-1} = \; \breve{\in}$ are the inverses in $\BiCliq$, and hence also in $\UGM$ by Lemma \ref{lem:ug_reflect_iso}.

\end{enumerate}
\end{proof}

\smallskip
Finally we use the first equivalence $\SAJ_f \cong \UGJ$ to prove the categorical equivalence between finite de morgan algebras and our category of undirected graphs. The construction of the witnessing natural isomorphisms follows quickly, whereas the explicit description of their components requires some computation. Unlike the previous two equivalences, the natural isomorphisms do not lift directly from the fundamental equivalence $\JSL_f \cong \BiCliq$.
\smallskip

% ==============================
% FINITE de morgan ALGEBRAS ~ UG
% ==============================

\begin{theorem}[Categorical equivalence between $\SAI_f$ and $\UG$]
\label{thm:sai_equivalent_to_ug}
\item
The functors $\GOpen$ and $\GPirr$ define an equivalence of categories with respective natural isomorphisms:
\[
\begin{tabular}{lll}
$\grep : \Id_{\SAI_f} \To \GOpen \circ \GPirr$
& $\grep_{(\aQ,\sigma)} : (\aQ,\sigma) \to (\Open\Pirr\sigma,\partial_{\Pirr\sigma})$
\\[1ex]&
$\grep_{(\aQ,\sigma)}(q) := \{ j \in J(\aQ) : j \nleq_\aQ \sigma(q) \}$
\\[1ex]&
$\grep_{(\aQ,\sigma)}^{\bf-1}(Y) := \sigma(\Lor_\aQ J(\aQ) \backslash Y ) = \Land_\aQ M(\aQ) \backslash \sigma[Y]$.
\\[2ex]
$\gred : \Id_{\UG} \To \GPirr \circ \GOpen$
& $\gred_{(V,\rE)} : (V,\rE) \to  (J(\Open\rE),\Pirr\partial_\rE^{\bf-1})$
\\[1ex]&
$\gred_{(V,\rE)} := \;\in \; \subseteq V \times J(\Open\rE)$
\\[1ex]&
$\gred_{(V,\rE)}^{\bf-1} := \;\breve{\in} \; \subseteq J(\Open\rE) \times V$.
\end{tabular}
\]
Furthermore, the components of the latter natural isomorphisms have the following associated components.
\[
\begin{tabular}{lll}
$(\gred_{(V,\rE)})_- = (\gred_{(V,\rE)}^{\bf-1})_+ = \{ (v,X) \in V \times J(\Open\rE) : X \subseteq \rE[v] \}$,
\\[1ex]
$(\gred_{(V,\rE)})_+ = (\gred_{(V,\rE)}^{\bf-1})_- = \{  (X,v) \in J(\Open\rE) \times V : \rE[v] \subseteq X \}$.
\end{tabular}
\]
\end{theorem}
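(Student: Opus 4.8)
The plan is to obtain this third equivalence by transporting the already-established equivalence $\SAJ_f \cong \UGJ$ of Theorem \ref{thm:saj_f_equiv_ugj} along the full inclusion $I_j : \SAI_f \hookto \SAJ_f$ and the fully faithful diagonal embedding $\Diagj : \UG \monoto \UGJ$. The two structural facts that make this work are already in place: the strict equality $\GJOpen \circ \Diagj = I_j \circ \GOpen$ (Lemma \ref{lem:diagonal_nat_isos_equalities}.2) and the natural isomorphism $rj : \Diagj \circ \GPirr \To \GJPirr \circ I_j$ (Definition \ref{def:nat_isos_involving_diagonals}, Lemma \ref{lem:diagonal_nat_isos_equalities}.1). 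Since $\GJOpen$, $\GJPirr$ form an equivalence and $I_j$, $\Diagj$ are fully faithful and (essentially) identity-on-morphisms, $\GOpen$ and $\GPirr$ are forced to be an equivalence; the only real work is pinning down the closed forms of $\grep$, $\gred$ and the associated components.

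To construct $\grep$, I would whisker $rj$ by $\GJOpen$, obtaining $\GJOpen\Diagj\GPirr \To \GJOpen\GJPirr I_j$, and rewrite the source as $I_j\GOpen\GPirr$ via the equality above. Precomposing the inverse of this with $\jrep\, I_j : I_j \To \GJOpen\GJPirr I_j$ gives a natural isomorphism $I_j \To I_j\GOpen\GPirr$; because $I_j$ is full and faithful this descends to the desired $\grep : \Id_{\SAI_f} \To \GOpen\GPirr$, with component $\grep_{(\aQ,\sigma)} = (\Open\Pirr\aQ)^{-1}\circ\rep_\aQ = \Open\Pirr\sigma \circ \rep_\aQ$, where $\Open\Pirr\aQ$ is the $\JSL_f$-isomorphism $\GJOpen rj_{(\aQ,\sigma)}$ and $\Open\Pirr\sigma$ its inverse. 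Unfolding $\rep_\aQ(q) = \{m : q\nleq_\aQ m\}$, the action of $\Open$ on the $\BiCliq$-morphism $\Pirr\sigma$, and the self-adjointness of $\sigma$ then yields the stated $\grep_{(\aQ,\sigma)}(q) = \{j\in J(\aQ) : j\nleq_\aQ\sigma(q)\}$. The inverse is immediate: since $J(\aQ)\setminus\grep_{(\aQ,\sigma)}(q) = \{j\in J(\aQ) : j\leq_\aQ\sigma(q)\}$ has join $\sigma(q)$, applying $\sigma$ and $\sigma\sigma = \id$ gives $\grep^{\bf-1}_{(\aQ,\sigma)}(Y) = \sigma(\Lor_\aQ J(\aQ)\setminus Y)$, which equals $\Land_\aQ M(\aQ)\setminus\sigma[Y]$ because $\sigma$ is a bounded-lattice isomorphism $\aQ\to\aQ^{\pOp}$.

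For $\gred$ I would run the symmetric transport: whisker $rj$ by $\GOpen$ and precompose the result with $\jred\,\Diagj$, then descend along the fully faithful $\Diagj$ to get $\gred : \Id_{\UG} \To \GPirr\GOpen$; its component is the $\BiCliq$-composite $\red_\rE \fatsemi \Pirr\partial_\rE$. I would then verify directly that this composite equals the bare membership relation $\in\, \subseteq V\times J(\Open\rE)$, and, independently, that $\in$ is a $\UG$-morphism $(V,\rE)\to(J(\Open\rE),\Pirr\partial_\rE)$ by checking the single equation $\rR^\up = \rE_2^\up\circ\breve{\rR}^\down\circ\rE_1^\up$ from the characterisation of $\UG$-morphisms. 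By Lemma \ref{lem:inverse_ug_iso_is_converse} the inverse of a $\UG$-isomorphism is its converse, so $\gred^{\bf-1} = \breve{\in}$, which simultaneously gives the symmetries $(\gred)_- = (\gred^{\bf-1})_+$ and $(\gred)_+ = (\gred^{\bf-1})_-$; the explicit component relations $\{(v,X) : X\subseteq\rE[v]\}$ and $\{(X,v) : \rE[v]\subseteq X\}$ then come from unwinding Definition \ref{def:bicliq_mor_components} together with Lemma \ref{lem:lat_op_cl}.3 (the join-irreducible $\rE$-open sets sit among the $\rE[v]$) and the adjunction $(\up\dashv\down)$.

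The abstract transport is routine two-categorical bookkeeping, so the main obstacle will be the $\gred$ identification, exactly as the authors warn that these isomorphisms do not lift directly from $\JSL_f\cong\BiCliq$. The delicate points are: keeping straight the two typings of $\partial_\rE$ (into versus out of the order-dual) when applying $\Pirr$ and $\Open$; observing that $\partial_\rE = \partial_\rE^{\bf-1}$ for symmetric $\rE$, so that the target object $(J(\Open\rE),\Pirr\partial_\rE^{\bf-1})$ is correctly typed; and pushing the composite $\red_\rE\fatsemi\Pirr\partial_\rE$ through De Morgan duality and $(\down\up\down)$ to collapse it to $\in$. Once that identification is secured, verifying the associated components and naturality is a finite, if fiddly, computation.
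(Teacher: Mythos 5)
Your proposal follows the paper's own proof essentially verbatim: the same transport of $\SAJ_f \cong \UGJ$ along the fully faithful $I_j$ and $\Diagj$ using the equality $\GJOpen \circ \Diagj = I_j \circ \GOpen$ and the natural isomorphism $rj$, the same whiskered composites $\jrep_{I_j-}$ followed by $\GJOpen rj^{\bf-1}_-$ (resp.\ $\jred_{\Diagj -}$ followed by $rj^{\bf-1}_{\GOpen -}$), and the same identification of the components via Lemma \ref{lem:inverse_ug_iso_is_converse} and Definition \ref{def:bicliq_mor_components}. The approach and the delicate points you flag (the two typings of $\partial_\rE$, collapsing $\red_\rE \fatsemi \Pirr\partial_\rE^{\bf-1}$ to $\in$) are exactly those handled in the paper's proof, so this is correct and not a genuinely different route.
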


\begin{proof}
\item
\begin{enumerate}
\item
Recall the inclusion functor $I_j : \SAI_f \to \SAJ_f$, the natural isomorphism $rj^{\bf-1} : \Pirr_j \circ I_j \To \Diagj \circ \GPirr$ from Definition \ref{def:nat_isos_involving_diagonals}, and also the equality $\GJOpen \circ \Diagj = I_j \circ \GOpen$ from Lemma \ref{lem:diagonal_nat_isos_equalities}.2. Using the natural isomorphism $\jrep$ from Theorem \ref{thm:saj_f_equiv_ugj}, we obtain the composite natural isomorphism:
\[
I_j \xTo{\jrep_{I_j -}} \GJOpen \circ \GJPirr \circ I_j
\xTo{\GJOpen rj_-^{\bf-1}} \GJOpen \circ \Diagj \circ \GPirr
= I_j \circ \GOpen \circ \GPirr.
\]
Then since $I_j$ is a fully-faithful inclusion functor we further obtain the natural isomorphism:
\[
\begin{tabular}{c}
$\grep : \Id_{\SAI_f} \To \GOpen \circ \GPirr$
\\[1ex]
$\grep_{(\aQ,\sigma)} := \GJOpen rj_{(\aQ,\sigma)}^{\bf-1} \circ \jrep_{(\aQ,\sigma)} : (\aQ,\sigma) \to (\Open\Pirr\sigma,\partial_{\Pirr\sigma})$.
\end{tabular}
\]
It remains to explain the codomain of the above components, and also simplify their action. For each $(\aQ,\sigma) \in \SAI_f \subseteq \SAJ_f$, the first isomorphism is:
\[
\begin{tabular}{lll}
$\alpha := \jrep_{(\aQ,\sigma)} : (\aQ,\sigma) \to (\Open\Pirr\aQ,\partial_{\Pirr\aQ}^{\bf-1} \circ \Open\Pirr\aQ)$
\\[1ex]
$\alpha(q) := \rep_\aQ(q) = \{ m \in M(\aQ) : q \nleq_\aQ m \}$.
\end{tabular}
\]
Concerning the second isomorphism, let us recall the $\UGJ$-isomorphism $rj_{(\aQ,\sigma)}^{\bf-1} := \Pirr\sigma : (\Pirr\aQ,\Pirr\sigma) \to (\Pirr\sigma,\Pirr\sigma)$. Confusingly, the symmetric relation $\Pirr\sigma \subseteq J(\aQ) \times J(\aQ)$ arises in a number of different ways.
\begin{itemize}
\item
It is the underlying relation of the $\UGJ$-isomorphism $rj_{(\aQ,\sigma)}^{\bf-1}$.
\item
In the domain $(\Pirr\aQ,\Pirr\sigma)$ it is understood as a $\BiCliq$-morphism $\Pirr\sigma : \Pirr\aQ \to (\Pirr\aQ)\spbreve$.
\item
In the codomain $(\Pirr\sigma,\Pirr\sigma)$ it is the bipartite graph in the first component, whereas the second component is understood as the $\BiCliq$-morphism $\Pirr\sigma : \Pirr\sigma \to (\Pirr\sigma)\spbreve$. Recall that since $\Pirr\sigma$ is symmetric this is actually the $\BiCliq$-identity-morphism $id_{\Pirr\sigma}$.
\end{itemize}

Applying the equivalence functor $\GJOpen$ yields the second isomorphism, with typing:
\[
\begin{tabular}{lll}
$\GJOpen rj_{(\aQ,\sigma)}^{\bf-1} : (\Open\Pirr\aQ,\partial_{\Pirr\aQ}^{\bf-1} \circ \Pirr\sigma)$
& $\to$ &
$(\Open\Pirr\sigma,\partial_{\Pirr\sigma}^{\bf-1} \circ \Open id_{\Pirr\sigma})$
\\&&
$=(\Open\Pirr\sigma,\partial_{\Pirr\sigma}^{\bf-1})$
\\&&
$ = (\Open\Pirr\sigma,\partial_{\Pirr\sigma})$
\end{tabular}
\]
where the final equality follows because $\Pirr\sigma$ is symmetric. Concerning its action, since $\GJOpen$ acts as $\Open$ on the underlying $\BiCliq$-morphism we have:
\[
\beta := \GJOpen rj_{(\aQ,\sigma)}^{\bf-1} = \Open\rR
\qquad\text{where}\qquad
\rR := \Pirr\sigma : \Pirr\aQ \to \Pirr\sigma.
\]
Then by definition of $\Open : \BiCliq \to \JSL_f$, for each $Y \in O(\Pirr\aQ)$ we have $\beta(Y) = \rR_+\spbreve[Y]$ where the relation $\rR_+ \subseteq J(\aQ) \times M(\aQ)$ satisfies:
\[
\begin{tabular}{lll}
$\rR_+(j,m)$
&
$\iff (\Pirr\aQ)\spbreve[m] \subseteq \breve{\rR}[j]$
& by definition of $(-)_+$
\\&
$\iff (\Pirr\aQ)\spbreve[m] \subseteq \Pirr\sigma[j]$
& $\rR = \Pirr\sigma$ symmetric
\\&
$\iff \forall j' \in J(\aQ).( j' \nleq_\aQ m \To j' \nleq_\aQ \sigma(j))$
& definition of $\Pirr\aQ$ and $\Pirr\sigma$
\\&
$\iff \forall j' \in J(\aQ).( j' \leq_\aQ \sigma(j) \To j' \leq_\aQ m)$
\\&
$\iff \sigma(j) \leq_\aQ m$.
\end{tabular}
\]
Consequently,
\[
\begin{tabular}{lll}
$\beta(Y)$
&
$= \{ j \in J(\aQ) : \exists m \in Y. \sigma(j) \leq_\aQ m \}$
\\&
$= \{ j \in J(\aQ) : \exists m \in Y. \sigma(m) \leq_\aQ j \}$
& since $\sigma : \aQ \xto{\cong} \aQ^{\pOp}$
\\&
$= J(\aQ) \;\cap \up_\aQ \sigma[Y]$.
\end{tabular}
\]
Then we can finally compute the composite action for each $q_0 \in Q$ as follows,
\[
\begin{tabular}{lll}
$\grep_{(\aQ,\sigma)}(q_0)$
&
$= \beta \circ \alpha(q_0)$
\\&
$= \beta(\{ m \in M(\aQ) : q_0 \nleq_\aQ m \})$
\\&
$= J(\aQ) \;\cap \up_\aQ \{ \sigma(m) : m \in M(\aQ), \, q_0 \nleq_\aQ m \}$
\\&
$= J(\aQ) \;\cap \up_\aQ \{ \sigma(m) : m \in M(\aQ), \, \sigma(m) \nleq_\aQ \sigma(q_0) \}$
& since $\sigma : \aQ \xto{\cong} \aQ^{\pOp}$
\\&
$= J(\aQ) \;\cap \up_\aQ \{ j \in J(\aQ), \, j \nleq_\aQ \sigma(q_0) \}$
& since $\sigma |_{M(\aQ) \times J(\aQ)}$ bijective
\\&
$= \{ j \in J(\aQ), \, j \nleq_\aQ \sigma(q_0) \}$
& since already up-closed.
\end{tabular}
\]
The descriptions of its inverse $\grep_{(\aQ,\sigma)}^{\bf-1}$ follow immediately, recalling that $\sigma$ sends arbitrary joins to arbitrary meets and restricts to a bijection $\sigma |_{J(\aQ) \times M(\aQ)}$.

\item 
Using the natural isomorphism $\jred$ we obtain the composite natural isomorphism:
\[
\Diagj \xTo{\jred_{\Diagj -}} \GJPirr \circ \GJOpen \circ \Diagj
= \GJPirr \circ I_j \circ \GOpen
\xTo{rj_{\GOpen - }^{\bf-1}} \Diagj \circ \GPirr \circ \GOpen
\]
also using the equality $\GJOpen \circ \Diagj = I_j \circ \GOpen$ and the natural isomorphism $rj$, see Lemma \ref{lem:diagonal_nat_isos_equalities}. Recall that $\Diagj : \UG \monoto \UGJ$ is a fully-faithful identity-on-morphisms functor. Its action on objects is $(V,\rE) \mapsto (\rE,\rE)$, with inverse-action $(\rE,\rE) \mapsto (V_\rE,\rE)$ where $V_\rE$ is the source = target of the symmetric relation $\rE$. Consequently we obtain the natural isomorphism:
\[
\begin{tabular}{c}
$\gred : \Id_{\UG} \To \GPirr \circ \GOpen$
\\[0.5ex]
$\gred_{(V,\rE)} := \jred_{(\rE,\rE)} \fatsemi rj_{\GOpen(V,\rE)}^{\bf-1} : (V,\rE) \to (J(\Open\rE),\Pirr\partial_\rE^{\bf-1})$.
\end{tabular}
\]
noting that $V_{\Pirr\partial_\rE^{\bf-1}} = J(\Open\rE)$. Let us simplify its action inside $\UGJ$, so that the $\UG$-morphism $\gred_{(V,\rE)}$ will be the underlying $\BiCliq$-morphism. The component $\UGJ$-isomorphisms are:
\[
\begin{tabular}{lll}
$\rR := \jred_{(\rE,\rE)} = red_\rE : (\rE,\rE) \to (\Pirr\Open\rE,\Pirr(\partial_\rE^{\bf-1} \circ \Open id_\rE)) = (\Pirr\Open\rE,\Pirr\partial_\rE^{\bf-1})$,
\\[1.5ex]
$\rS := rj_{\GOpen(V,\rE)}^{\bf-1} = \Pirr\partial_\rE^{\bf-1} : (\Pirr\Open\rE,\Pirr\partial_\rE^{\bf-1}) \to (\Pirr\partial_\rE^{\bf-1},\Pirr\partial_\rE^{\bf-1})$.
\end{tabular}
\]
Recall the canonical relation $\rR = \red_\rE \subseteq V \times M(\Open\rE)$ and its associated components from Theorem \ref{thm:bicliq_jirr_equivalent}. Furthermore the canonical join-semilattice isomorphism $\partial_\rE^{\bf-1} : \Open\rE \to (\Open\rE)^{\pOp}$ has action $Y \mapsto \rE[\overline{Y}]$. We can now finally show that the underlying relation,
\[
\rR \fatsemi \rS 
\subseteq \rE_s \times (\Pirr\Open\rE)_t
= V \times J(\Open\rE)
\]
is in fact the element-of relation:
\[
\begin{tabular}{lll}
$\rR \fatsemi \rS(v,X)$
&
$\iff (red_\rE)_- ; \rS(v,X)$
\\&
$\iff \exists X' \in J(\Open\rE).( X' \subseteq \rE[v] \text{ and } \Pirr\partial_\rE^{\bf-1}(X',X))$
\\&
$\iff \exists X' \in J(\Open\rE).( X' \subseteq \rE[v] \text{ and } X' \nsubseteq \rE[\overline{X}] )$
\\&
$\iff \neg\forall X' \in J(\Open\rE).( X' \subseteq \rE[v] \To X' \subseteq \rE[\overline{X}] )$
\\&
$\iff \rE[v] \nsubseteq \rE[\overline{X}]$
& since $\rE[v], \, \rE[\overline{X}] \in O(\rE)$
\\&
$\iff v \nin \cl_\rE(\overline{X})$
& by $(\up\dashv\down)$
\\&
$\iff v \in \inte_\rE(X) = X$
& de morgan duality, $\breve{\rE} = \rE$, $X \in O(\rE)$.
\end{tabular}
\]
Concerning the inverses $\gred_{(V,\rE)}^{\bf-1}$, by Lemma \ref{lem:inverse_ug_iso_is_converse} they are precisely the converse-element-of relations $\breve{\in} \; \subseteq J(\Open\rE) \times V$.

\item
It remains to verify our description of the associated components. Since $\gred_{(V,\rE)}^{\bf-1} = (\gred_{(V,\rE)})\spcheck$ as $\BiCliq$-morphisms, the negative/positive component of $\gred_{(V,\rE)}^{\bf-1}$ is actually the positive/negative component of $\gred_{(V,\rE)}$ respectively. Finally,
\[
\begin{tabular}{lll}
$(\gred_{(V,\rE)})_-(v,X)$
&
$\iff \Pirr\partial_\rE^{\bf-1}[X] \subseteq {\in[v]}$
& by definition of $(-)_-$
\\&
$\iff \forall X' \in J(\Open\rE).( X' \nsubseteq \rE[\overline{X}] \To v \in X' )$
& by definition of $\Pirr$ and $\partial_\rE^{\bf-1}$
\\&
$\iff \forall X' \in J(\Open\rE).( X' \nsubseteq \rE[\overline{X}] \To X' \nsubseteq \inte_\rE(\overline{v}) )$
& by Lemma \ref{lem:cl_inte_of_pirr}.1
\\&
$\iff \forall X' \in J(\Open\rE).(  X' \subseteq \inte_\rE(\overline{v}) \To X' \subseteq \rE[\overline{X}] )$
\\&
$\iff \inte_\rE(\overline{v}) \subseteq \rE[\overline{X}]$
& $\inte_\rE(\overline{v})$, $\rE[\overline{X}]$ are $\rE$-open
\\&
$\iff \rE^\down(\overline{v}) \subseteq \cl_\rE(\overline{X})$
& by $(\up\dashv\down)$
\\&
$\iff \inte_\rE(X) \subseteq \rE[v]$
& contrapositive, de morgan duality
\\&
$\iff X \subseteq \rE[v]$
& $X$ is $\rE$-open,
\\
\\
$(\gred_{(V,\rE)})_+(v,X)$
&
$\iff \rE[v] \subseteq \breve{\in}[\{X\}]$
& by definition of $(-)_+$
\\&
$\iff \rE[v] \subseteq X$.
\end{tabular}
\]
\end{enumerate}
\end{proof}

\smallskip
Just as we described the fullness of $\Open$ explicitly in Lemma \ref{lem:open_explicit_fullness}, we now do the same for our new equivalence functors $\GJOpen$, $\GMOpen$ and $\GOpen$. Recall that each of these three functors acts in the same way as $\Open$.
\smallskip

\begin{lemma}[Explicit fullness of $\GJOpen$, $\GMOpen$ and $\GOpen$]
\label{lem:lifted_open_explicit_fullness}
\item
Fix any bipartite graph $\rG$.
\begin{enumerate}
\item
Each $(\Open\rG,\sigma) \in \SAJ_f$ arises as $\GJOpen(\rG,\rE)$ where $\rE(g_s,g'_s) :\iff \rG[g'_s] \nsubseteq \sigma(\rG[g_s])$.

\item
Each $(\Open\rG,\sigma) \in \SAM_f$ arises as $\GMOpen(\rG,\rE)$ where $\rE(g_t,g'_t) :\iff \sigma(\inte_\rG(\overline{g_t})) \nsubseteq \inte_\rG(\overline{g'_t})$.

\item
Each $(\Open\rG,\sigma) \in \SAI_f$ arises as $\GOpen(\rG_t,\rE)$ where $\rE(g_t,g'_t) :\iff \sigma(\inte_\rG(\overline{g_t})) \nsubseteq \inte_\rG(\overline{g'_t})$, as in (2).

\item
Consequently, each $\SAJ_f$, $\SAM_f$ or $\SAI_f$-morphism of type $f : (\Open\rG,\sigma_1) \to (\Open\rH,\sigma_2)$ arises by applying $\GJOpen$, $\GMOpen$ or $\GOpen$ respectively, using the explicit fullness Lemma \ref{lem:open_explicit_fullness} and the above three statements.

\end{enumerate}
\end{lemma}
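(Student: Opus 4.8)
The plan is to exploit that $\GJOpen$, $\GMOpen$ and $\GOpen$ all act as $\Open$ on the underlying $\BiCliq$-morphism, so that the explicit fullness of $\Open$ (Lemma \ref{lem:open_explicit_fullness}) reduces each object-level claim to identifying the correct self-adjoint $\BiCliq$-morphism and then simplifying its defining relation. For (1), given $(\Open\rG,\sigma)\in\SAJ_f$, I would first invoke Lemma \ref{lem:interpret_finite_saj_sam}.1 to regard $\sigma$ as a self-adjoint $\JSL_f$-morphism $\Open\rG\to(\Open\rG)^{\pOp}$. Then $\partial_\rG\circ\sigma:\Open\rG\to\Open\breve{\rG}$ is a $\JSL_f$-morphism, and Lemma \ref{lem:open_explicit_fullness} yields a relation $\rE\subseteq\rG_s\times\rG_s$ with $\rE(g_s,g'_s):\iff g'_s\in\partial_\rG\circ\sigma(\rG[g_s])$ and $\Open\rE=\partial_\rG\circ\sigma$. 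Post-composing with $\partial_\rG^{\bf-1}$ gives $\partial_\rG^{\bf-1}\circ\Open\rE=\sigma$, i.e.\ $\GJOpen(\rG,\rE)=(\Open\rG,\sigma)$; that $(\rG,\rE)\in\UGJ$ (equivalently $\rE$ self-adjoint of type $\rG\to\breve{\rG}$) follows from Lemma \ref{lem:self_adjointness_bicliq_vs_jsl}.2 since $\sigma$ is self-adjoint. Substituting the action $\partial_\rG(X)=\breve{\rG}[\overline{X}]$ and unwinding membership (using that complement and converse of a relation commute) rewrites the defining condition as $\rG[g'_s]\nsubseteq\sigma(\rG[g_s])$, as stated.

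Part (2) is the order-dual: view $\sigma$ as self-adjoint $(\Open\rG)^{\pOp}\to\Open\rG$ via Lemma \ref{lem:interpret_finite_saj_sam}.2, apply Lemma \ref{lem:open_explicit_fullness} to $\sigma\circ\partial_\rG^{\bf-1}:\Open\breve{\rG}\to\Open\rG$ to obtain $\rE\subseteq\rG_t\times\rG_t$ with $\Open\rE=\sigma\circ\partial_\rG^{\bf-1}$, whence $\Open\rE\circ\partial_\rG=\sigma$ and $\GMOpen(\rG,\rE)=(\Open\rG,\sigma)$. The crucial simplification is $\partial_\rG^{\bf-1}(\breve{\rG}[g_t])=\inte_\rG(\overline{g_t})$, extracted from Definition \ref{def:open_dual_iso} together with the computation in the proof of Lemma \ref{lem:lat_op_cl}.3; this gives $\rE(g_t,g'_t):\iff g'_t\in\sigma(\inte_\rG(\overline{g_t}))$, and since $\sigma(\inte_\rG(\overline{g_t}))$ is $\rG$-open, Lemma \ref{lem:cl_inte_of_pirr}.1 converts $g'_t\in\sigma(\inte_\rG(\overline{g_t}))$ into $\sigma(\inte_\rG(\overline{g_t}))\nsubseteq\inte_\rG(\overline{g'_t})$, matching the stated formula. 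Self-adjointness of $\rE$ (hence $(\rG,\rE)\in\UGM$) again follows from Lemma \ref{lem:self_adjointness_bicliq_vs_jsl}.2.

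For (3), since $\SAI_f=\SAJ_f\cap\SAM_f$ (Lemma \ref{lem:sa_vars_basic_relationship}.2), any $(\Open\rG,\sigma)\in\SAI_f$ lies in $\SAM_f$, so (2) supplies the symmetric relation $\rE$ on $\rG_t$ and makes $(\rG_t,\rE)$ an undirected graph; the work is to prove $\GOpen(\rG_t,\rE)=(\Open\rE,\partial_\rE)$ equals $(\Open\rG,\sigma)$ on the nose. I would first compute $\rE[g_t]=\sigma(\inte_\rG(\overline{g_t}))$, and use that $\sigma$ is a $\JSL$-isomorphism $\Open\rG\to(\Open\rG)^{\pOp}$ (Lemma \ref{lem:interpret_infinite_sai}), so it carries the meet-irreducibles of $\Open\rG$ — contained in $\{\inte_\rG(\overline{g_t})\}$ by Lemma \ref{lem:lat_op_cl}.3 — bijectively onto the join-irreducibles; consequently the sets $\rE[g_t]$ join-generate $\Open\rG$ and $O(\rE)=O(\rG)$, i.e.\ $\Open\rE=\Open\rG$. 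Then $\partial_\rE(Y)=\rE[\overline{Y}]=\bigcup_{g_t\notin Y}\sigma(\inte_\rG(\overline{g_t}))$, and writing $Y=\bigcap_{g_t\notin Y}\inte_\rG(\overline{g_t})$ (Lemma \ref{lem:cl_inte_of_pirr}.1) while using that the isomorphism $\sigma$ sends meets in $\Open\rG$ to joins yields $\partial_\rE(Y)=\sigma(Y)$. This identification of the two involutions is the step I expect to be the main obstacle, since it genuinely needs $\sigma$ to be an order-reversing bijection of the irreducibles rather than merely self-adjoint.

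Finally, (4) is the morphism statement and follows from fullness and faithfulness of the equivalences. Given an $\SAJ_f$-morphism $f:(\Open\rG,\sigma_1)\to(\Open\rH,\sigma_2)$, its underlying $\JSL_f$-morphism equals $\Open\rR$ for $\rR(g_s,h_t):\iff h_t\in f(\rG[g_s])$ by Lemma \ref{lem:open_explicit_fullness}, and by (1) the domain and codomain are $\GJOpen(\rG,\rE_1)$ and $\GJOpen(\rH,\rE_2)$. Since $\GJOpen$ is the $\Open$-component of the equivalence of Theorem \ref{thm:saj_f_equiv_ugj} it is full, so $f=\GJOpen\rR'$ for some $\UGJ$-morphism $\rR':(\rG,\rE_1)\to(\rH,\rE_2)$; faithfulness of $\Open$ forces $\rR'=\rR$, so $\rR$ is already a $\UGJ$-morphism with $\GJOpen\rR=f$. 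The identical argument with $\GMOpen$ and the $\SAM_f\cong\UGM$ equivalence, and with $\GOpen$ and Theorem \ref{thm:sai_equivalent_to_ug}, settles the $\SAM_f$ and $\SAI_f$ cases, invoking (2) and (3) respectively to name the objects.
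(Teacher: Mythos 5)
Your proposal is correct and follows essentially the same route as the paper: parts (1) and (2) apply Lemma \ref{lem:open_explicit_fullness} to $\partial_\rG\circ\sigma$ and $\sigma\circ\partial_\rG^{\bf-1}$ respectively and invoke Lemma \ref{lem:self_adjointness_bicliq_vs_jsl}.2 for self-adjointness, part (3) identifies $\rE[g_t]=\sigma(\inte_\rG(\overline{g_t}))$ and matches the two involutions (the paper checks $\partial_\rE=\sigma$ only on meet-irreducibles, while you compute it on a general open set, which is equivalent), and part (4) is the fullness argument the paper leaves implicit. No gaps.
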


\begin{proof}
\item
\begin{enumerate}
\item
Given $(\Open\rG,\sigma) \in \SAJ_f$ then consider the join-semilattice morphism:
\[
f := \quad \Open\rG \xto{\sigma} (\Open\rG)^{\pOp} \xto{\partial_\rG} \Open\breve{\rG}.
\]
By the explicit fullness Lemma \ref{lem:open_explicit_fullness}, we have $f = \Open\rE$ where the $\BiCliq$-morphism $\rE : \rG \to \breve{\rG}$ is defined:
\[
\rE(g_s,g'_s) 
:\iff g'_s \in f(\rG[g_s])
\iff g'_s \in \neg_{\rG_s} \circ \rG^\down \circ \sigma(\rG[g_s])
\iff \rG[g'_s] \nsubseteq \sigma(\rG[g_s]).
\]
To see that $(\rG,\rE) \in \UGJ$, observe $\partial_\rG^{\bf-1} \circ \Open\rE = \sigma$ is self-adjoint by assumption, and consequently $\rE : \rG \to \breve{\rG}$ is self-adjoint by Lemma \ref{lem:self_adjointness_bicliq_vs_jsl}.2.

\item
Given $(\Open\rG,\sigma) \in \SAM_f$ then consider the join-semilattice morphism:
\[
f := \quad \Open\breve{\rG} \xto{\partial_\rG^{\bf-1}} (\Open\rG)^{\pOp} \xto{\sigma} \Open\rG.
\]
By the explicit fullness Lemma \ref{lem:open_explicit_fullness} we have $f = \Open\rE$ where the $\BiCliq$-morphism $\rE : \breve{\rG} \to \rG$ is defined:
\[
\rE(g_t,g'_t) 
:\iff g'_t \in f(\breve{\rG}[g_t])
\iff g'_t \in \sigma \circ \rG^\up \circ \neg_{\rG_s} \circ \breve{\rG}[g_t] = \sigma(\inte_\rG(\overline{g_t}))
\iff \sigma(\inte_\rG(\overline{g_t})) \nsubseteq \inte_\rG(\overline{g'_t}).
\]
To see that $(\rG,\rE) \in \UGM$, observe that $\Open\rE \circ \partial_\rG = \sigma$ is self-adjoint by assumption, and consequently $\rE : \breve{\rG} \to \rG$ is self-adjoint by Lemma \ref{lem:self_adjointness_bicliq_vs_jsl}.2.

\item
Given $(\Open\rG,\sigma) \in \SAI_f \subseteq \SAM_f$ then by (2) we have the self-adjoint $\BiCliq$-morphism $\rE : \breve{\rG} \to \rG$ defined:
\[
\rE(g_t,g'_t) 
:\iff \sigma(\inte_\rG(\overline{g_t})) \nsubseteq \inte_\rG(\overline{g'_t})
\iff g'_t \in \sigma(\inte_\rG(\overline{g_t})).
\]
Then given the u-graph $(\rG_t,\rE)$ we'll show that $\GOpen(\rG_t,\rE) = (\Open\rE,\partial_\rE) \stackrel{!}{=} (\Open\rG,\sigma)$.
\begin{itemize}
\item
To establish $\Open(\rE \subseteq \rG_t \times \rG_t) = \Open\rG$, observe $\forall X \subseteq \rG_t. \rE[X] \in O(\rG)$  because $\rE : \breve{\rG} \to \rG$ is a $\BiCliq$-morphism. We have $\rE[g_t] = \sigma(\inte_\rG(\overline{g_t}))$, and every meet-irreducible in $\Open\rG$ takes the form $\inte_\rG(\overline{g_t})$. Since $\sigma : \Open\rG \to (\Open\rG)^{\pOp}$ is an isomorphism, every join-irreducible in $\Open\rG$ arises as some $\rE[g_t]$.

\item
To see that $\partial_\rE = \sigma$ as functions, by the previous item we have the typing $\partial_\rE : (\Open\rG)^{\pOp} \to \Open\rG$ recalling that $\breve{\rE} = \rE$. Then let us calculate:
\[
\begin{tabular}{lll}
$\partial_\rE(\inte_\rG(\overline{g_t}))$
&
$= \rE^\up \circ \neg_{\rG_t}(\inte_\rG(\overline{g_t}))$
\\&
$= \rE[\cl_{\breve{\rG}}(\{g_t\}))]$
& by De Morgan duality
\\&
$= \rE[g_t]$
& since $\rE : \breve{\rG} \to \rG$ a $\BiCliq$-morphism
\\&
$= \sigma(\inte_\rG(\overline{g_t}))$.
\end{tabular}
\]
Consequently $\partial_\rE$ and the isomorphism $\sigma : \Open\rG \to (\Open\rG)^{\pOp}$ have the same action on meet-irreducible elements of $\Open\rG$, hence they have the same action on all elements.

\end{itemize}

\item
Follows because $\GJOpen$, $\GMOpen$ and $\GOpen$ inherit their action from $\Open$.
\end{enumerate}
\end{proof}

\subsection{Various interesting results}

% TODO check below
% ================
Recall the notion of \emph{tight morphism} i.e.\ Definition \ref{def:tight_jsl_mor}.

\begin{theorem}[Characterisation of self-adjoint tight morphisms]
  \item
  For each $\JSL_f$-morphism $\sigma : \aQ \to \aQ^{\pOp}$ the following statements are equivalent.
  \begin{enumerate}[a.]
  \item
  $\sigma$ is self-adjoint and tight.
  
  \item
  $\sigma$ arises as a join (= pointwise-join) of special morphisms:
  \[
  \sigma
  = \Lor_{\JSL_f[\aQ,\aQ^{\pOp}]} \; \{ \up_{\aQ,\aQ^{\pOp}}^{q_0,q_1} \; : \rR(q_0,q_1) \}
  \]
  where $\rR \subseteq Q \times Q$ is a symmetric relation.
  
  \item
  $(\aQ,\sigma)$ is a $\SAJ$-algebra and `factorises through' a boolean $\SAI_f$-algebra i.e.\
  \[
  \sigma = \;
  \aQ \xto{\alpha} \JPow Z \underset{\cong}{\xto{\sigma_0}} (\JPow Z)^{\pOp} \xto{\alpha_*} \aQ^{\pOp}
  \]
  for some $\JSL_f$-morphism $\alpha : \aQ \to \JPow Z$ and $\SAI_f$-algebra $(\JPow Z, \sigma_0)$.
  
  \item
  $(\aQ,\sigma)$ is a $\SAJ$-algebra and `factorises through' a distributive $\SAI_f$-algebra i.e.\
  \[
  \sigma
  = \;
  \aQ \xto{\alpha} \Open \leq_\pP \; \underset{\cong}{\xto{\sigma_0}} (\Open \leq_\pP)^{\pOp} \xto{\alpha_*} \aQ^{\pOp}
  \]
  for some $\JSL_f$-morphism $\alpha : \aQ \to \Open \leq_\pP$, finite poset $(P,\leq_\pP)$ and $\SAI_f$-algebra $(\Open \leq_\pP, \sigma_0)$.
  
  % proof technique:
  % use inclusion i : Open \leq_pP >---> JPow P
  % and extend sigma to codomain
  
  % discuss factorisations (say, thru boolean, or distrib?) earlier...
  % should be:  
  % Pirr f(j_1,j_2) <=> \alpha(j_2) \nsubseteq \sigma . \alpha(j_1)
  % <=> \alpha(j_2) \cap \theta^\up(\alpha(j_1)) \neq \emptyset
  % can probably assume theta : R --> R for some symmetric relation
  \end{enumerate}
  
  \end{theorem}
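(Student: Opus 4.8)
The plan is to treat the two ingredients ``self-adjoint'' and ``tight'' separately, exploiting that by Lemma~\ref{lem:interpret_finite_saj_sam}.1 self-adjointness of $\sigma\colon\aQ\to\aQ^{\pOp}$ is exactly the assertion $(\aQ,\sigma)\in\SAJ_f$, while by Lemma~\ref{lem:tight_mor_char} tightness is exactly factorisation through a boolean (equivalently, distributive) join-semilattice, and is also exactly expressibility as a $\JSL_f[\aQ,\aQ^{\pOp}]$-join of special morphisms $\up_{\aQ,\aQ^{\pOp}}^{q_0,q_1}$. I would prove the cycle $(a)\Rightarrow(c)\Rightarrow(a)$ and $(a)\Rightarrow(d)\Rightarrow(a)$, and establish $(a)\iff(b)$ independently.

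For $(a)\iff(b)$ the key observation is that, since $(\aQ^{\pOp})^{\pOp}=\aQ$, taking adjoints is a join-preserving involution $(-)_*\colon\JSL_f[\aQ,\aQ^{\pOp}]\to\JSL_f[\aQ,\aQ^{\pOp}]$ by Lemma~\ref{lem:jsl_mor_iso}.1, and that $(\up_{\aQ,\aQ^{\pOp}}^{q_0,q_1})_*=\up_{\aQ,\aQ^{\pOp}}^{q_1,q_0}$ by Lemma~\ref{lem:special_jsl_morphisms}.1. Given $(a)$, tightness yields $\sigma=\Lor\{\up_{\aQ,\aQ^{\pOp}}^{q_0,q_1}:\rR(q_0,q_1)\}$ for some relation $\rR$; applying $(-)_*$ and using $\sigma=\sigma_*$ gives $\sigma=\Lor\{\up_{\aQ,\aQ^{\pOp}}^{q_0,q_1}:\breve{\rR}(q_0,q_1)\}$, whence $\sigma$ is equally the join over the symmetric relation $\rR\cup\breve{\rR}$, proving $(b)$. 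Conversely, a symmetric join representation is manifestly tight, and applying $(-)_*$ fixes it (by symmetry), so $\sigma=\sigma_*$ is self-adjoint; this is $(b)\Rightarrow(a)$.

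The directions $(c)\Rightarrow(a)$ and $(d)\Rightarrow(a)$ are immediate: the factorisation $\sigma=\alpha_*\circ\sigma_0\circ\alpha$ routes $\sigma$ through $(\JPow Z)^{\pOp}$, respectively $(\Open\leq_\pP)^{\pOp}$, each of which is again boolean, respectively distributive, so $\sigma$ is tight by Lemma~\ref{lem:tight_mor_char}, while $(\aQ,\sigma)\in\SAJ$ is self-adjointness by Lemma~\ref{lem:interpret_finite_saj_sam}.1. For the converse $(a)\Rightarrow(c)$ and $(a)\Rightarrow(d)$, tightness first supplies a factorisation $\sigma=\beta\circ\alpha$ with $\alpha\colon\aQ\to\aD$, $\beta\colon\aD\to\aQ^{\pOp}$ and $\aD$ boolean (for $(c)$) or distributive (for $(d)$). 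I would then symmetrise by passing to the biproduct $\aD\times\aD^{\pOp}$ carrying the swap map $\sigma_0(d_1,d_2):=(d_2,d_1)$, which is an order-reversing involution and hence a De~Morgan structure (Lemma~\ref{lem:interpret_infinite_sai}.2), with $\aD\times\aD^{\pOp}$ still boolean (resp.\ distributive), these classes being closed under finite products. Setting $\hat\alpha:=\langle\alpha,\beta_*\rangle\colon\aQ\to\aD\times\aD^{\pOp}$, one checks $\sigma=\hat\alpha_*\circ\sigma_0\circ\hat\alpha$: both sides are self-adjoint, and via the adjoint relationship the condition $q'\leq_\aQ(\hat\alpha_*\circ\sigma_0\circ\hat\alpha)(q)$ unfolds, using the product order and $\beta\circ\alpha=\sigma$, to the conjunction of $q\leq_\aQ\sigma(q')$ and $q'\leq_\aQ\sigma(q)$, which self-adjointness of $\sigma$ collapses to $q'\leq_\aQ\sigma(q)$. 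Finally I would re-present $\aD\times\aD^{\pOp}$ as $\JPow Z$ (resp.\ $\Open\leq_{\pP'}$) via Theorem~\ref{thm:char_jsl_mor_bool_distr}.1 (resp.\ Theorem~\ref{thm:char_jsl_mor_bool_distr}.3) and transport $\sigma_0$ and $\hat\alpha$ along the isomorphism, yielding exactly the form required by $(c)$, respectively $(d)$.

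The delicate step is the symmetrisation in $(a)\Rightarrow(c),(d)$: an arbitrary tight factorisation $\sigma=\beta\circ\alpha$ carries no symmetry, so one must manufacture a genuine self-adjoint involutive structure on an auxiliary algebra while keeping the middle object in the required class. The swap involution on $\aD\times\aD^{\pOp}$ together with the pairing $\langle\alpha,\beta_*\rangle$ achieves this, and the decisive point is that self-adjointness of $\sigma$ is precisely what forces the two asymmetric inequalities produced by the product order to coincide. Once this is in place, the remaining work---closure of the boolean and distributive classes under finite products, and the representation theorems of Theorem~\ref{thm:char_jsl_mor_bool_distr}---is routine.
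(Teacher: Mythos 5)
Your proposal is correct, and for the substantive direction it takes a genuinely different route from the paper. The equivalence $(a)\iff(b)$ is essentially the paper's argument (the paper uses the canonical maximal relation $\rR=\{(q_0,q_1):\up_{\aQ,\aQ^{\pOp}}^{q_0,q_1}\leq\sigma\}$ and shows it is symmetric directly, whereas you symmetrise an arbitrary representation to $\rR\cup\breve{\rR}$ --- both work, since $(-)_*$ is a join-semilattice isomorphism of $\JSL_f[\aQ,\aQ^{\pOp}]$ fixing $\sigma$). For $(a)\To(c)$ the paper builds an explicit factorisation through $\JPow\rR$ with the converse-relation involution $\neg_\rR\circ\theta^\up$ and verifies commutativity by bilinearity of composition against the special morphisms; you instead take \emph{any} tight factorisation $\sigma=\beta\circ\alpha$ through a boolean (resp.\ distributive) $\aD$, pass to $\aD\times\aD^{\pOp}$ with the swap involution, and pair $\hat\alpha=\langle\alpha,\beta_*\rangle$. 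Your key computation is sound: $q'\leq_\aQ\hat\alpha_*(\sigma_0(\hat\alpha(q)))$ unfolds via the two adjoint relationships to $\alpha(q')\leq_\aD\beta_*(q)$ and $\alpha(q)\leq_\aD\beta_*(q')$, i.e.\ to $q\leq_\aQ\sigma(q')\,\wedge\,q'\leq_\aQ\sigma(q)$, which self-adjointness collapses to $q'\leq_\aQ\sigma(q)$; the transport along an isomorphism $\aD\times\aD^{\pOp}\cong\JPow Z$ (resp.\ $\Open\leq_\pP$) is routine using $(\phi^{-1})_*=\phi^{\pOp}$. The payoff of your approach is uniformity: $(a)\To(c)$ and $(a)\To(d)$ are the same argument, so you never need the paper's most delicate step, the implication $(d)\To(c)$, which invokes the classification of distributive $\SAI_f$-algebras and a retraction of $\Open\leq_\pP$ into $\JPow P$. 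What you give up is the explicit, canonical witness: the paper's $Z=\rR$ is read off directly from $\sigma$, while your $Z$ depends on a chosen tight factorisation and is generally larger. You should make explicit the (elementary) facts that boolean and distributive join-semilattices are closed under $(-)^{\pOp}$ and finite products, but there is no gap.
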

  
  \begin{proof}
  \item
  \begin{itemize}
  \item[--]
  $(a \To b)$: $\sigma$ is tight so by Lemma \ref{lem:tight_mor_char},
  \[
  \sigma = \Lor_{\JSL_f[\aQ,\aQ^{\pOp}]} \; \{ \up_{\aQ,\aQ^{\pOp}}^{q_0,q_1} \; : \; \rR(q_0,q_1)  \}
  \qquad
  \text{where $\rR := \{ (q_0,q_1) : \; \up_{\aQ,\aQ^{\pOp}}^{q_0,q_1} \; \leq \sigma \}$.}
  \]
  Finally if $\up_{\aQ,\aQ^{\pOp}}^{q_0,q_1} \; \leq \sigma$ then applying adjoints:
  \[
  \up_{\aQ,\aQ^{\pOp}}^{q_1,q_0} 
  \; = \; (\up_{\aQ,\aQ^{\pOp}}^{q_0,q_1})_* \; \leq \sigma_* = \sigma
  \]
  by Lemma \ref{lem:special_jsl_morphisms}.1 and the self-adjointness of $\sigma$. Thus $\rR$ is a symmetric relation.
  
  \item[--]
  $(b \To a)$: It is tight because joins of these special morphisms are tight by Lemma \ref{lem:tight_mor_char}. It is self-adjoint because adjoints preserve joins and $(\up_{\aQ,\aQ^{\pOp}}^{q_0,q_1})_* \; = \; \up_{\aQ,\aQ^{\pOp}}^{q_1,q_0}$.
  
  % note: can choose R to be only maximal elements... its cardinality should be the 'size' of (Q,sigma)
  \item[--]
  $(a \To c)$: Since $(a \To b)$ we know $\sigma = \Lor \{ \up_{\aQ,\aQ^{\pOp}}^{q_0,q_1} : \rR(q_0,q_1) \}$ for some symmetric relation $\rR \subseteq Q \times Q$. Also, $\sigma : \aQ \to \aQ^{\pOp}$ is self-adjoint and hence defines a finite $\SAJ$-algebra $(\aQ,\sigma)$ (see Lemma \ref{lem:interpret_finite_saj_sam}). It suffices to establish that the following diagram commutes:
  \[
  \xymatrix@=15pt{
  \aQ \ar[d]_-{\alpha} \ar[rr]^-\sigma && \aQ^{\pOp}
  \\
  \JPow \rR \ar[rr]_-{\sigma_0}^-{\cong} && (\JPow \rR)^{\pOp} \ar[u]_-{\alpha_*}
  }
  \]
  where:
  \begin{enumerate}[1.]
  \item
  $\alpha := \Lor_{\JSL_f[\aQ, \JPow \rR]} \{ \up_{\aQ,\JPow \rR}^{q_0,\{(q_0,q_1)\}} \; : \rR(q_0,q_1)  \}$ is a join of special morphisms.
  \item
  $\sigma_0$ is the composite isomorphism:
  \[
  \JPow \rR \xto{\theta^\up} \JPow \rR \xto{\neg_\rR} (\JPow \rR)^{\pOp}
  \]
  where the involutive function $\rR \xto{\theta} \rR$ is defined $\theta(q_0,q_1) := (q_1,q_0)$, so that $\theta^\up (S) := \breve{S}$ can be viewed as constructing converse relations. Observe that it defines a finite $\SAI$-algebra $(\JPow \rR,\sigma_0)$ by Example \ref{ex:char_sa_jmi_fin_boolean}.

  \item[--]
  The adjoint of $\alpha$ has description:
  \[
  \alpha_* = \Lor_{\JSL_f[(\JPow \rR)^{\pOp}, \aQ^{\pOp} ]} \{ \up_{(\JPow \rR)^{\pOp},\aQ^{\pOp}}^{\{(q_0,q_1)\},q_0} \; : \rR(q_0,q_1)  \}
  \]
  by Lemma \ref{lem:special_jsl_morphisms}.1 and Lemma \ref{lem:jsl_mor_iso}.
  
\end{enumerate}
  
To see that the diagram commutes, first observe:
  \[
  \begin{tabular}{lll}
  $\sigma_0 \circ \alpha$
  &
  $= \sigma_0 \circ (\Lor_{\JSL_f[\aQ, \JPow \rR]} \{ \up_{\aQ,\JPow \rR}^{q_0,\{(q_0,q_1)\}} \; : \rR(q_0,q_1) \} )$
  \\[1ex]&
  $= \Lor_{\JSL_f[\aQ, (\JPow \rR)^{\pOp}]} \{ \up_{\aQ,(\JPow \rR)^{\pOp}}^{q_0,\sigma_0(\{(q_0,q_1)\})} \; : \rR(q_0,q_1)  \}$
  & by Lemma \ref{lem:compose_spec_gen_mor}.1
  \\[1ex]&
  $= \Lor_{\JSL_f[\aQ, (\JPow \rR)^{\pOp}]} \{ \up_{\aQ,(\JPow \rR)^{\pOp}}^{q_0,\overline{(q_1,q_0)}} \; : \rR(q_0,q_1)  \}$
  & by definition.
  \end{tabular}
  \]
  Then to see:
  \[
  \sigma = \qquad
  \Lor_{\JSL_f[\aQ, (\JPow \rR)^{\pOp}]} \{ \up_{\aQ,(\JPow \rR)^{\pOp}}^{q_0,\overline{(q_1,q_0)}} \; : \rR(q_0,q_1)  \}
  \circ
  \Lor_{\JSL_f[(\JPow \rR)^{\pOp}, \aQ^{\pOp} ]} \{ \up_{(\JPow \rR)^{\pOp},\aQ^{\pOp}}^{\{(q_0,q_1)\},q_0} \; : \rR(q_0,q_1)  \}
  \]
  observe that the joins distribute over composition by bilinearity, the relevant internal compositions being:
  \[
  \begin{tabular}{lll}
  $\up_{\aQ,(\JPow \rR)^{\pOp}}^{q_0,\overline{(q_1,q_0)}} \circ \up_{(\JPow \rR)^{\pOp},\aQ^{\pOp}}^{\{(q'_0,q'_1)\},q'_0}$
  &
  $= \; \up_{\aQ,\aQ^{\pOp}}^{q_0,\up_{(\JPow \rR)^{\pOp},\aQ^{\pOp}}^{\{(q'_0,q'_1)\},q'_0}(\overline{(q_1,q_0)})}$
  & by Lemma \ref{lem:compose_spec_gen_mor}.1
  \\&
  $=
  \begin{cases}
  \up_{\aQ,\aQ^{\pOp}}^{q_0,q'_0}
  & \text{if $\overline{(q_1,q_0)} \nleq_{(\JPow \rR)^{\pOp}} \{(q'_0,q'_1) \} $}
  \\
  \up_{\aQ,\aQ^{\pOp}}^{q_0,\top_\aQ}
  & \text{otherwise}
  \end{cases}$
  \\[3ex]&
  $=
  \begin{cases}
  \up_{\aQ,\aQ^{\pOp}}^{q_0,q'_0}
  & \text{if $(q'_0,q'_1) = (q_1,q_0)$}
  \\[1ex]
  \bot_{\JSL_f[\aQ,\aQ^{\pOp}]}
  & \text{otherwise}.
  \end{cases}$
  \end{tabular}
  \]
  By the symmetry of $\rR$ we deduce that their join over all $\rR(q_0,q_1)$ is indeed $\sigma$.
  
  \item[--]
  $(c \To a)$: If $(\aQ,\sigma)$ is a finite $\SAJ$-algebra then $\sigma : \aQ \to \aQ^{\pOp}$ is self-adjoint by Lemma \ref{lem:interpret_finite_saj_sam}. Furthermore $\sigma$ is tight because $\aQ$ is boolean.
  
  \item[--]
  $(c \To d)$: Immediate because boolean join-semilattices are distributive. In particular we can choose the discrete poset $\pP := (Z,\Delta_Z)$.

  \item
  $(d \To c)$: Fix the respective finite poset $(P,\leq_\pP)$ and consider the following diagram inside $\JSL_f$.
  \[
  \xymatrix@=15pt{
  \aQ \ar[d]_\alpha \ar[rr]^{\sigma} && \aQ^{\pOp} \ar@{<-}[d]^{\alpha_*}
  \\
  \Open \leq_\pP \ar[d]_\iota \ar[rr]_-{\sigma_0}^-\cong && (\Open\leq_\pP)^{\pOp} \ar@{<-}[d]^{\iota_*}
  \\
  \JPow P   \ar[rr]^-{\sigma_1 \; := \; \neg_P \, \circ \, \theta^\up } && (\JPow P)^{\pOp} 
  \\
  & \JPow P \ar[ur]_-{\neg_P}^-\cong \ar@{<-}[ul]^-{\theta^\up}_-\cong
  }
  \]
  The top square commutes by assumption. By Theorem \ref{thm:char_distrib_saj_sam_sai}.4  $\sigma_0$ has action $\lambda Y \in O(\leq_\pP). \neg_P \circ \theta^\up(Y)$ for some involutive function $\theta : P \to P$. Then the triangle makes sense and commutes by construction. Certainly $\sigma_1$ is a join-semilattice isomorphism. To see that it is involutive, first recall that for any bijection $f : P \to P$ we have $f^\up \circ \neg_P = \neg_P \circ f^\up$.\footnote{Although easily directly verified, this also follows because $f^\up : \JPow P \to \JPow P$ is a join-semilattice isomorphism, thus a bounded lattice isomorphism, and thus a boolean algebra isomorphism.} Then:
  \[
  \sigma_1 \circ \sigma_1
  = \neg_P \circ \theta \circ \neg_P \circ \theta
  = \neg_P \circ \neg_P \circ \theta \circ \theta
  = id_{P}
  \]
  since both $\neg_P$ and $\theta$ are involutive. Thus by Lemma \ref{lem:char_ug_jm} we deduce that $(\JPow P,\sigma_1)$ is a well-defined $\SAI$-algebra. It remains to show that the central square commutes. That is, for every $Y \in O(\leq_\pP)$ we must show that:
  \[
  \sigma_0(Y) 
  \; \stackrel{?}{=} \; 
  \iota_* \circ \neg_P \circ \theta^\up \circ \iota(Y)
  \]
  or equivalently that $\iota_*(\sigma_0(Y)) = \sigma_0(Y)$ as we now show:
  \[
  \begin{tabular}{lll}
  $\iota_*(\sigma_0(Y))$
  &
  $= \bigcup \{ X \in O(\leq_\pP) : \sigma_0(X) \subseteq \sigma_0(Y) \}$
  & by definition of adjoints
  \\&
  $= \bigcup \{ X \in O(\leq_\pP) : X \subseteq Y \}$
  & since $\sigma_0$ an isomorphism
  \\&
  $= Y$
  & since $Y \in O(\leq_\pP)$.
  \end{tabular}
  \]
  
  % FINISHED?
  
  \end{itemize}
\end{proof}

\section{Appendix}
\label{appendix:appendix}

Consider the following standard categories:
\[
\begin{tabular}{lll}
{\bf category} & {\bf objects} & {\bf morphisms}
\\ \hline
$\Set_f$ & finite sets & functions
\\
$\Poset_f$ & finite posets & + preserves order
\\
$\JSL_f$ & finite join-semilattices with bottom & + preserve all joins
\\
$\DL_f$ & finite distributive lattices & + preserve all meets
\\
$\BA_f$ & finite boolean algebras & + preserve negation
\\ \hline
\\[-1.6ex]
$\DL_f^\lor$ & finite distributive lattices & functions preserving all joins
\\
$\BA_f^\lor$ & finite boolean algebras & functions preserving all joins
\end{tabular}
\]
where composition is functional composition in each case. Each such category is equivalent to a possibly non-full subcategory of $\JSL_f$. We now describe many dualities, including Birkhoff's between posets and distributive lattices.
\[
\begin{tabular}{ccc}
\bf{duality} & \bf{functors} & \bf{natural isomorphisms}
\\ \hline
\\[-1.8ex]
\xymatrix@=10pt{
\Set_f^{op} \ar@/^10pt/[rr]^{\Pred}  \ar@{<-}@/_10pt/[rr]_{\At^{op}} && \BA_f
}
&
\begin{tabular}{c}
$\Pred X := \BPow X$ \\
$\Pred f^{op} := \lambda S \subseteq Y. f^{-1}(S)$ \\
$\At \aA := At(\aA)$ \\
$At f^{op} := \lambda b \in At(\aB). \Land_\aA f^{-1}(\up_\aB b)$
\end{tabular}
&
\begin{tabular}{c}
$\alpha : \Id_{\Set_f} \To \At \circ \Pred^{op}$
\\
$\alpha_X : X \to At(\BPow X)$ \quad $\alpha_X (x) := \; \{x\}$
\\[0.5ex]
$\beta : \Id_{\BA_f} \To \Pred \circ \At^{op}$
\\
$\beta_\aA : \aA \to \BPow At(\aA)$
\quad
$\beta_\aA(a) := At(\aA) \; \cap \down_\aA  a$
\end{tabular}
\\ \hline
\\[-1.8ex]
\xymatrix@=10pt{
\Poset_f^{op} \ar@/^10pt/[rr]^{\Up}  \ar@{<-}@/_10pt/[rr]_{\Ji^{op}} && \DL_f
}
&
\begin{tabular}{c}
$\Up \pP := (Up(\pP),\cup,\emptyset,\cap,P)$
\\
$\Up f^{op} := \lambda X.f^{-1}(X)$
\\
$\Ji \aD := (J(\aD),\leq_{\aD^{\pOp}})$
\\
$\Ji f^{op} := \lambda j \in J(\aE).\Land_\aD f^{-1}(\up_\aE j)$
\end{tabular}
&
\begin{tabular}{c}
$\alpha : \Id_{\Poset_f} \To \Ji \circ \Up^{op}$
\\
$\alpha_\pP : \pP \to (Pr_\up(\pP),\supseteq)$
\quad
$\alpha_\pP (p) := \; \up_\pP p$
\\
$\beta : \Id_{\DL_f} \To \Up \circ \Ji^{op}$
\\
$\beta_\aD : \aD \to   (Dn(J(\aD),\leq_\aD),\cup,\emptyset,\cap,D)$
\\
$\beta_\aD(d) := J(\aD) \; \cap \down_\aD d$
\end{tabular}
\\ \hline
\\[-1.8ex]
\xymatrix@=10pt{
\JSL_f^{op} \ar@/^10pt/[rr]^{\OD_j}  \ar@{<-}@/_10pt/[rr]_{\OD_j^{op}} && \JSL_f
}
&
\begin{tabular}{c}
$\OD_j \aQ := \aQ^{\pOp}$
\\
$\OD_j f := \lambda r \in R.\Lor_\aQ f^{-1}(\down_\aR r)$
\end{tabular}
&
\begin{tabular}{c}
$\alpha : \Id_{\JSL_f} \To \OD_j \circ \OD_j^{op}$
\\
$\alpha_\aQ = id_\aQ$
\end{tabular}
\\ \hline
\\[-1.8ex]
\xymatrix@=10pt{
(\DL_f^\lor)^{op} \ar@{-<}`u[]`[rr]^-{\OD_d}[rr]
&& \DL_f^\lor \ar`d[]`[ll]^-{\OD_d^{op}}[ll]
}
& $\OD_d$ restricts $\OD_j$
&
\begin{tabular}{c}
$\alpha : \Id_{\DL_f^\lor} \To \OD_d \circ \OD_d^{op}$
\\
$\alpha_\aD = id_\aD$
\end{tabular}
\\ \hline
\\[-1.8ex]
\xymatrix@=10pt{
(\BA_f^\lor)^{op} \ar@{-<}`u[]`[rr]^-{\OD_b}[rr]
&& \BA_f^\lor \ar`d[]`[ll]^-{\OD_b^{op}}[ll]
}
& $\OD_b$ restricts $\OD_d$
& 
\begin{tabular}{c}
$\alpha : \Id_{\BA_f^\lor} \To \OD_b \circ \OD_b^{op}$
\\
$\alpha_\aA = id_\aA$
\end{tabular}
\end{tabular}
\]
The third entry is the self-duality of finite join-semilattices proved in Theorem \ref{thm:jsl_self_dual}. The fourth and fifth entries follow because distributive lattices and boolean algebras are stable under order-dualisation, see Definition \ref{def:std_order_theory}.12. The first entry is the well-known duality between finite boolean algebras and finite sets, which restricts Birkhoff's famous duality between \emph{finite distributive lattices with bounded lattice morphisms} and \emph{finite posets with monotone functions}. Lemma \ref{lem:birkhoff_funct_welldef} below proves that $\Up$ and $\Ji$ are well-defined functors, Theorem \ref{thm:birkhoff_duality} proves Birkhoff duality and Theorem \ref{thm:bool_fin_duality} restricts it to boolean algebras and finite sets.

% \begin{theorem}[Birkhoff representation and join-prime characterisation]
% \label{thm:birkhoff_rep_join_prime}
% \item
% \begin{enumerate}
% \item
% For each finite distributive lattice $\aD$ we have the canonical $\DL_f$-isomorphism:
% \[
% \beta_\aD : \aD \stackrel{\cong}{\to} (Dn(J(\aD),\leq_\aD),\cup,\emptyset,\cap,J(\aD))
% \qquad
% \beta_\aD(d) := J(\aD) \; \cap \down_\aD d
% \]
% \item
% Moreover a finite lattice $\latL$ is distributive iff every join-irreducible element is join-prime i.e.\
% \[
% \forall j \in J(\latL).\forall x_1,x_2 \in L.
% (j \leq_\latL x_1 \lor_\latL x_2 \iff \exists i. j \leq_\latL x_i)
% \]
% \end{enumerate}
% \end{theorem}

% \begin{proof}

% [To be finished]

% Use Lemma \ref{lem:std_order_theory}.
% \end{proof}

Concerning these five dualities, exactly seven categories are mentioned at the beginning of this subsection. Modulo categorical equivalence these seven categories are closed under taking the formal dual category. They are also related to one another via the free constructions:
\[
\xymatrix@=15pt{
\Set_f \ar@<8pt>[rr]^{\SetToPos} \ar@{<-}@<-8pt>[rr]_{\PosToSet} 
& \bot & \Poset_f \ar@<8pt>[rr]^{\PosToJSL} \ar@{<-}@<-8pt>[rr]_{\JSLToPos}
& \bot & \JSL_f \ar@<8pt>[rr]^{\JSLToDL} \ar@{<-}@<-8pt>[rr]_{\DLToJSL}
& \bot & \DL_f \ar@<8pt>[rr]^{\DLToBA} \ar@{<-}@<-8pt>[rr]_{\BAToDL}
& \bot & \BA_f
}
\]

\[
\begin{tabular}{ccccc}
\bf{free construction} & \bf{functors} &  \bf{natural transformations} 
\\ \hline
\\[-1.8ex]
\xymatrix@=10pt{
\Set_f \ar@/^10pt/[rr]^{\SetToPos}  \ar@{<-}@/_10pt/[rr]_{\PosToSet} & \bot & \Poset_f
}
&
\begin{tabular}{c}
$\SetToPos X := (X,\Delta_X)$ and $\SetToPos f := f$
\\
$\PosToSet \pP := P$ and $\PosToSet f := f$
\end{tabular}
&
\begin{tabular}{c}
$\eta : \Id_{\Set_f} \To \PosToSet \SetToPos$ where $\eta_X = id_X$
\\
$\epsilon : \SetToPos \PosToSet \To \Id_{\Poset_f}$
\\
$\epsilon_\pP : (P,\Delta_P) \to \pP$ where $\eta_\pP(p) := p$
\end{tabular}
\\ \hline
&& \emph{proved in Lemma \ref{lem:free_poset_on_set}}
\\ \hline
\\[-1.8ex]
\xymatrix@=10pt{
\Poset_f \ar@/^10pt/[rr]^{\PosToJSL}  \ar@{<-}@/_10pt/[rr]_{\JSLToPos} & \bot & \JSL_f
}
&
\begin{tabular}{c}
$\PosToJSL \pP := (Dn(\pP),\cup,\emptyset)$
\\
$\PosToJSL f := \lambda X. \down_\pQ f[X] $
\\
$\JSLToPos \aQ := (Q,\leq_\aQ)$ and $\JSLToPos f := f$
\end{tabular}
&
\begin{tabular}{c}
$\eta : \Id_{\Poset_f} \To \JSLToPos \PosToJSL$ 
\\
$\eta_\pP : \pP \to (Dn(\pP),\subseteq)$ where $\eta_\pP(p) := \; \down_\pP p$
\\
$\epsilon : \PosToJSL \JSLToPos \To  \Id_{\JSL_f}$
\\
$\epsilon_\aQ : (Dn(\aQ),\cup,\emptyset) \to \aQ$ where $\epsilon_\aQ(S) := \; \Lor_\aQ S$
\end{tabular}
\\ \hline
&& \emph{proved in  Theorem \ref{thm:free_jsl_on_poset}}
\\ \hline
\\[-1.8ex]
\xymatrix@=10pt{
\JSL_f \ar@/^10pt/[rr]^{\JSLToDL}  \ar@{<-}@/_10pt/[rr]_{\DLToJSL} & \bot & \DL_f
}
&
\begin{tabular}{c}
$\JSLToDL \aQ := (Dn(\aQ),\cup,\emptyset,\cap,Q)$
\\
$\JSLToDL f := \lambda X.(f_*)^{-1}(X)$
\\
$\DLToJSL \aD := (\aD,\lor_\aD,\bot_\aD)$ and $\DLToJSL f := f$
\end{tabular}
&
\begin{tabular}{c}
$\eta : \Id_{\JSL_f} \To \DLToJSL \JSLToDL$
\\
$\eta_\aQ : \aQ \to (Dn(\aQ),\cup,\emptyset)$ where $\eta_\aQ(q) := \; \overline{\up_\aQ q}$
\\
$\epsilon : \JSLToDL \DLToJSL \To \Id_{\DL_f}$
\\
$\epsilon_\aD : (Dn(\aD),\cup,\emptyset,\cap,D) \to \aD$
\\
where $\epsilon_\aD(S) := \; \Land_\aD \overline{S} \cap M(\aD)$
\end{tabular}
\\ \hline
&& \emph{proved in Theorem \ref{thm_free_dl_on_jsl}}
\\ \hline
\\[-1.8ex]
\xymatrix@=10pt{
\DL_f \ar@/^10pt/[rr]^{\DLToBA}  \ar@{<-}@/_10pt/[rr]_{\BAToDL} & \bot & \BA_f
}
&
\begin{tabular}{c}
$\DLToBA \aD := \BPow J(\aD)$
\\
$\DLToBA f := \lambda X.(U_{dm} f)_*^{-1}(X))$
\\
$\BAToDL \aA := (A,\lor_\aA,\bot_\aA,\land_\aA,\top_\aA)$ 
\\
$\BAToDL f := f$
\end{tabular}
&
\begin{tabular}{c}
$\eta : \Id_{\DL_f} \To \BAToDL \DLToBA$ \quad $\eta_\aD : \aD \to \DPow J(\aD)$
\\
where $\eta_\aD (d) := J(\aD) \; \cap \down_\aD d$
\\
$\epsilon : \DLToBA \BAToDL \To \Id_{\BA_f}$
\\
$\epsilon_\aA : \BPow J(\aD) \to \aA$ where $\epsilon_\aA (S) := \Lor_\aS S$
\end{tabular}
\\ \hline
&& \emph{proved in Theorem \ref{thm:free_ba_on_dl}}
\\ \hline
\end{tabular}
\]

\begin{note}
Recall that for any finite set $X$:
\[
\DPow X := (\Pow X,\cup,\emptyset,\cap,X) \in \DL_f
\qquad
\BPow X := (\Pow X,\cup,\emptyset,\cap,X,\neg_X) \in \BA_f
\]
Concerning the action of $\DLToBA$ on morphisms, first observe that there are two natural forgetful functors from $\DL_f$ to $\JSL_f$. The functor $\DLToJSL$ forgets the binary meet and top, whereas:
\[
U_{dm} : \DL_f \to \JSL_f
\qquad
U_{dm} \aD := (D,\land_\aD,\top_\aD)
\qquad
U_{dm} f := f
\]
forgets the binary join and bottom. This is important when one considers the adjoint i.e.\ given a $\DL_f$-morphism $f : \aD \to \aE$ then:
\[
\begin{tabular}{ll}
$\DLToBA f (X) $
& $= (U_{dm} f)_*^{-1}(X)$
\\&
$= \{ j \in J(\aE) : (U_{dm} f)_*(j) \in X \}$
\\&
$= \{ j \in J(\aE) : \Lor_{\aD^{\pOp}} f^{-1}(\down_{\aE^{\pOp}} j ) \in X \}$
\\&
$= \{ j \in J(\aE) : \Land_\aD f^{-1}(\up_\aE j ) \in X \}$
\end{tabular}
\]
this being a more explicit description. \endbox
\end{note}

\subsection{Birkhoff duality and its restriction to boolean algebras}

\begin{definition}[Equivalence functors between $\Poset_f^{op}$ and $\DL_f$]
\[
\begin{tabular}{lll}
$\Up : \Poset_f^{op} \to \DL_f$
&
$\Up \pP := (Up(\pP),\cup,\emptyset,\cap,P)$
&
$\dfrac{f : \pP \to \pQ}{\Up f^{op} := \lambda X.f^{-1}(X): \Up\pQ \to \Up\pP}$
\\[2ex]
$\Ji : \DL_f^{op} \to \Poset_f$
&
$\Ji \aD := (J(\aD),\leq_{\aD^{\pOp}})$
&
$\dfrac{f : \aD \to \aE}{\Ji f^{op} := \lambda j.\Land_\aD f^{-1}(\up_\aE j) : \Ji\aE \to \Ji\aD}$
\end{tabular}
\]
\end{definition}

\begin{lemma}
\label{lem:birkhoff_funct_welldef}
$\Up : \Poset_f^{op} \to \DL_f$ and $\Ji : \DL_f^{op} \to \Poset_f$ are well-defined functors.
\end{lemma}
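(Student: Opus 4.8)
The plan is to treat the two functors separately, disposing of $\Up$ first since its verification is entirely routine, and then concentrating on the one genuinely non-trivial point: that $\Ji$ sends each morphism to a well-typed (monotone) map. For $\Up$ I would first check the object assignment. The up-closed subsets of a finite poset $\pP$ are closed under arbitrary unions and intersections and contain both $\emptyset$ and $P$, so $(Up(\pP),\cup,\emptyset,\cap,P)$ is a bounded sublattice of the powerset lattice $(\Pow P,\cup,\emptyset,\cap,P)$; as a sublattice of a distributive lattice it is distributive, it is finite, and hence lies in $\DL_f$. For monotone $f : \pP \to \pQ$ the assignment $\Up f^{op} := \lambda X. f^{-1}(X)$ is well-typed because $f^{-1}$ preserves up-closedness (if $p \in f^{-1}(X)$ and $p \leq_\pP p'$ then $f(p)\leq_\pQ f(p')$ and $f(p)\in X$ force $f(p')\in X$), and it is a bounded distributive lattice morphism because preimage commutes with $\cup$ and $\cap$ and sends $\emptyset, Q$ to $\emptyset, P$. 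Functoriality is immediate from $id^{-1}=id$ and $(g\circ f)^{-1} = f^{-1}\circ g^{-1}$, the latter matching the contravariant composite.

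The object part of $\Ji$ is trivial: $\Ji\aD = (J(\aD),\leq_{\aD^{\pOp}})$ is the set of join-irreducibles equipped with the restriction of the opposite order, a finite poset. The content lies in the morphism part, and this is the main obstacle. Fix a $\DL_f$-morphism $f : \aD \to \aE$ and $j \in J(\aE)$; I would show that $F := f^{-1}(\up_\aE j) = \{ a \in D : j \leq_\aE f(a) \}$ is a proper prime filter of $\aD$, so that its least element $\Ji f^{op}(j) = \Land_\aD F$ is join-irreducible. That $F$ is a filter uses preservation of $\land$ and $\top$ by $f$: it is up-closed by monotonicity, meet-closed since $j\leq f(a)$ and $j\leq f(a')$ give $j\leq f(a)\land f(a') = f(a\land a')$, and proper since $\bot_\aD\in F$ would force $j\leq f(\bot_\aD)=\bot_\aE$, contradicting $j\in J(\aE)$. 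That $F$ is prime uses preservation of $\lor$ together with the join-primeness of $j$ in the distributive lattice $\aE$ (Lemma \ref{lem:std_order_theory}.10): $j\leq f(a\lor a')=f(a)\lor f(a')$ gives $j\leq f(a)$ or $j\leq f(a')$. In a finite lattice every filter equals $\up_\aD$ of its meet, so with $d:=\Land_\aD F$ we have $F=\up_\aD d$; primeness of $F$ then says exactly that $d$ is join-prime, and $d\neq\bot_\aD$ since $\bot_\aD\notin F$. Since join-primeness forces join-irreducibility (if $d=x_1\lor x_2$ then $d\leq x_1$ or $d\leq x_2$, so $d=x_1$ or $d=x_2$), Lemma \ref{lem:std_order_theory}.5 yields $\Ji f^{op}(j)=d\in J(\aD)$. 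This is where distributivity is indispensable.

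Monotonicity of $\Ji f^{op}$ with respect to the opposite orders is then routine: if $j_1\leq_{\aE^{\pOp}} j_2$, i.e.\ $j_2\leq_\aE j_1$, then $\up_\aE j_1\subseteq\up_\aE j_2$, hence $f^{-1}(\up_\aE j_1)\subseteq f^{-1}(\up_\aE j_2)$ and so $\Ji f^{op}(j_2)\leq_\aD \Ji f^{op}(j_1)$, which is precisely $\Ji f^{op}(j_1)\leq_{\aD^{\pOp}}\Ji f^{op}(j_2)$. For functoriality, $\Ji (id_\aD)^{op}(j)=\Land_\aD\up_\aD j = j$ gives preservation of identities, and for $f:\aD\to\aE$, $g:\aE\to\aF$ and $k\in J(\aF)$ I would compute both sides to $\Land_\aD f^{-1}(g^{-1}(\up_\aF k))$, using that $g^{-1}(\up_\aF k)$ is a filter and hence equals $\up_\aE(\Land_\aE g^{-1}(\up_\aF k))$; applying $f^{-1}$ and $\Land_\aD$ then yields $\Ji(g\circ f)^{op}=\Ji f^{op}\circ\Ji g^{op}$. (Alternatively one may identify $\Ji f^{op}$ with the restriction to join-irreducibles of the adjoint of the meet-semilattice reduct of $f$ and invoke $(g\circ f)_*=f_*\circ g_*$ from Theorem \ref{thm:jsl_self_dual}, which shortens the composition check.)
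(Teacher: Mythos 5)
Your proposal is correct and follows essentially the same route as the paper: the $\Up$ part is the same routine verification, and for $\Ji$ your direct ``$f^{-1}(\up_\aE j)$ is a proper prime filter, hence a principal upset with join-prime (so join-irreducible) generator'' argument is just an inlined version of the paper's reduction to the characterisation of bounded distributive lattice morphisms $\aD\to\two$ in Lemma \ref{lem:std_order_theory}.12, using the same two ingredients (preservation of $\bot,\top,\land,\lor$ by $f$ and join-primeness of $j$ via Lemma \ref{lem:std_order_theory}.10). Your monotonicity and composition checks, including the use of $g^{-1}(\up_\aF k)=\up_\aE\bigl(\Land_\aE g^{-1}(\up_\aF k)\bigr)$, coincide with the paper's.
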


\begin{proof}
$\Up\pP$ is a set-theoretic distributive lattice and the restricted preimage function $\Up f^{op}$ preserves all unions and intersections. It preserves the compositional structure because the preimage functor does i.e.\ $(g \circ f)^{-1} = f^{-1} \circ g^{-1}$. 

$\Ji\aD$ is clearly a well-defined poset so take any $\DL_f$-morphism $f : \aD \to \aE$. Then for $\Ji f^{op}$ to be a well-defined function we need to show that $\Land_\aD f^{-1}(\up_\aE j) \in J(\aD)$ whenever $j \in J(\aE)$. By Lemma \ref{lem:std_order_theory}.12 it suffices to show that $f^{-1}(\up_\aE j) \subseteq D$ arises as $\theta^{-1}(\{1\})$ for some $\DL_f$-morphism $\aD \to \two$. Then since $d \in f^{-1}(\up_\aE j)$ iff $j \leq_\aE f(d)$ we consider $\theta := \lambda d \in D.(j \leq_\aE f(d)) \;?\; 1 : 0$ as follows.
\begin{enumerate}
\item
$\theta(\bot_\aD) = 0$ because $f(\bot_\aD) = \bot_\aE$ and $j$ is join-irreducible by assumption.
\item
$\theta(\top_\aD) = 1$ because $f(\top_\aD) = \top_\aE$.
\item
$\theta(d_1 \land_\aD d_2) = 1$ iff $j \leq_\aE f(d_1) \land_\aD f(d_2)$ iff $\theta(d_1) = 1$ and $\theta(d_2) = 1$.
\item
$\theta(d_1 \lor_\aD d_2) = 1$ iff $j \leq_\aE f(d_1) \lor_\aD f(d_2)$, iff  $\theta(d_1) = 1$ or $\theta(d_2) = 1$ by Lemma \ref{lem:std_order_theory}.10.
\end{enumerate}

Next, $\Ji f^{op}$ is monotonic because:
\[
j_1 \leq_{\Ji\aE} j_2
\implies j_2 \leq_\aE j_1
\implies \up_\aE j_1 \subseteq \; \up_\aE j_2
\implies f^{-1}(\up_\aE j_1) \subseteq f^{-1}(\up_\aE j_2)
\implies \Land_\aD f^{-1}(\up_\aE j_2) \leq_{\aD} \Land_\aD f^{-1}(\up_\aE j_1)
\]
and thus $\Ji f^{op}(j_1) \leq_{\Ji\aD} \Ji f^{op}(j_2)$ recalling that $\Ji\aD$ restricts $\aD^{\pOp}$. Regarding the compositional structure:
\[
\Ji \; id_\aD^{op} = \lambda j.\Land_\aD id_\aD^{-1}(\up_\aD j) = \lambda j.\Land_\aD \up_\aD j = \lambda j.j = id_{\Ji\aD}
\]
\[
\begin{tabular}{lll}
$\Ji (g \circ f)^{op}$
&
$= \lambda j.\Land_\aD (g \circ f)^{-1}(\up_\aF j)$
\\&
$= \lambda j.\Land_\aD f^{-1} \circ g^{-1}(\up_\aF j)$
\\&
$= \lambda j.\Land_\aD f^{-1}(\up_\aE \Land_\aE g^{-1}(\up_\aF j))$
& see below
\\&
$= \lambda j.\Land_\aD f^{-1}(\up_\aE \Ji g^{op}(j))$
\\&
$= \lambda j.\Ji f^{op} \circ \Ji g^{op} (j)$
\\&
$= \Ji f^{op} \circ \Ji g^{op}$
\end{tabular}
\]
The marked equality holds because $g^{-1}(\up_\aF j)$ is an upset one-generated by its $\aE$-meet, see the argument further above.
\end{proof}

% BIRKHOFF DUALITY

\begin{theorem}[Birkhoff Duality]
\label{thm:birkhoff_duality}
\item
$\Up$ and $\Ji^{op}$ define an equivalence between $\Poset_f^{op}$ and $\DL_f$ with  natural isomorphisms:
\[
\begin{tabular}{lll}
$\alpha : \Id_{\Poset_f} \To \Ji \circ \Up^{op}$
&
$\alpha_\pP : \pP \to (Pr_\up(\pP),\supseteq)$
&
$\alpha_\pP (p) := \; \up_\pP p$
\\
$\beta : \Id_{\DL_f} \To \Up \circ \Ji^{op}$
&
$\beta_\aD : \aD \to (Dn(J(\aD),\leq_\aD),\cup,\emptyset,\cap,D)$
&
$\beta_\aD(d) := \{ j \in J(\aD) : j \leq_\aD d \}$
\end{tabular}
\]
\end{theorem}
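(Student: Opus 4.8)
The plan is to exploit the fact that $\Up : \Poset_f^{\op} \to \DL_f$ and $\Ji : \DL_f^{\op} \to \Poset_f$ are already known to be well-defined functors by Lemma \ref{lem:birkhoff_funct_welldef}, so that establishing the duality reduces to checking that the two families $(\alpha_\pP)$ and $(\beta_\aD)$ are natural isomorphisms. I would split this into two independent tasks: first verifying that each component is an isomorphism in the relevant category (the order-theoretic heart of the argument), and then a routine diagram chase for naturality. Since $\Up$ and $\Ji$ are contravariant, the composites $\Ji \circ \Up^{\op}$ and $\Up \circ \Ji^{\op}$ are covariant endofunctors on $\Poset_f$ and $\DL_f$ respectively, so $\alpha$ and $\beta$ are genuinely unit/counit-style natural transformations.

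For $\alpha_\pP$, the key preliminary is to identify $\Ji\Up\pP = (J(\Up\pP), \leq_{(\Up\pP)^{\pOp}})$ concretely. I would show $J(\Up\pP) = \{ \up_\pP p : p \in P \}$: in $(Up(\pP),\cup,\emptyset)$ the joins are unions, so by Lemma \ref{lem:std_order_theory}.6 it suffices to observe that an upset is join-irreducible precisely when it has a unique minimal element, i.e.\ is principal, and that $p \mapsto \up_\pP p$ is injective by antisymmetry. The order is then immediate: $p_1 \leq_\pP p_2$ iff $\up_\pP p_2 \subseteq \up_\pP p_1$ iff $\up_\pP p_1 \leq_{(\Up\pP)^{\pOp}} \up_\pP p_2$, so $\alpha_\pP(p) := \up_\pP p$ is a bijective monotone and order-reflecting map, hence a $\Poset_f$-isomorphism onto $(Pr_\up(\pP),\supseteq)$.

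For $\beta_\aD : \aD \to \Up(\Ji\aD)$, note that the upsets of $\Ji\aD$ are exactly the downsets of $(J(\aD),\leq_\aD)$, so $\beta_\aD(d) := J(\aD) \cap \down_\aD d$ is well-typed. I would check preservation of the bounded-lattice structure: $\beta_\aD(\bot_\aD) = \emptyset$ and $\beta_\aD(\top_\aD) = J(\aD)$ are immediate, and meet preservation $\beta_\aD(d_1 \land_\aD d_2) = \beta_\aD(d_1) \cap \beta_\aD(d_2)$ is trivial from the definition of meet. The one essential use of distributivity is join preservation: $\beta_\aD(d_1 \lor_\aD d_2) = \beta_\aD(d_1) \cup \beta_\aD(d_2)$ holds precisely because, by Lemma \ref{lem:std_order_theory}.10, every join-irreducible of a finite distributive lattice is join-prime, so $j \leq_\aD d_1 \lor_\aD d_2$ iff $j \leq_\aD d_1$ or $j \leq_\aD d_2$. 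Bijectivity then follows from Lemma \ref{lem:std_order_theory}.6: injectivity because each $d$ equals $\Lor_\aD \beta_\aD(d)$, and surjectivity because for a downset $S$ of join-irreducibles, setting $d := \Lor_\aD S$ and applying join-primeness gives $\beta_\aD(d) = S$ (using that $S$ is downward closed among join-irreducibles).

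Finally, naturality of $\alpha$ (resp.\ $\beta$) amounts to verifying, for a monotone $f : \pP \to \pQ$ (resp.\ a $\DL_f$-morphism $f : \aD \to \aE$), that $\Ji\Up^{\op}f \circ \alpha_\pP = \alpha_\pQ \circ f$ (resp.\ the dual square), by unwinding the definitions $\Up f^{\op} = \lambda X. f^{-1}(X)$ and $\Ji f^{\op} = \lambda j.\Land_\aD f^{-1}(\up_\aE j)$ and computing on generators. I expect this to be purely mechanical, the only mild subtlety being bookkeeping of the contravariance; the genuinely load-bearing step is the join-preservation of $\beta_\aD$ together with the identification of $J(\Up\pP)$ with the principal upsets, since these are exactly where distributivity and the irreducible-generation theory enter. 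I would conclude that $\alpha$ and $\beta$ are natural isomorphisms, whence $\Up$ and $\Ji^{\op}$ form the claimed dual equivalence.
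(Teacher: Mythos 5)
Your proposal is correct and follows essentially the same route as the paper: the same decomposition into checking that each $\alpha_\pP$ and $\beta_\aD$ is an isomorphism (via Lemma \ref{lem:std_order_theory}.6 for join-generation/bijectivity and Lemma \ref{lem:std_order_theory}.10 for join-primeness of $J(\aD)$) followed by a naturality computation. The only divergence is cosmetic: you spell out why $J(\Up\pP)$ consists of the principal upsets where the paper takes this as known, while the paper works the naturality of $\beta$ out explicitly via the adjoint relationship $j \leq_\aE f(d) \iff \Land_\aD f^{-1}(\up_\aE j) \leq_\aD d$, which is exactly the computation your "unwind and compute on generators" step would produce.
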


\begin{proof}
Observe that $\Ji \circ \Up^{op} \pP = \Ji(Up(\pP),\cup,\emptyset,\cap,P)$ is the collection of $\pP$-principal-upsets $Pr_\up(\pP)$ ordered by reverse inclusion. Then $\alpha_\pP$ is the well-known poset isomorphism sending $p$ to its principal upset $\up_\pP p$, recalling that $p_1 \leq_\pP p_2$ iff  $\up_\pP p_2 \subseteq \;\up_\pP p_1$. Regarding naturality, we must verify that the square:
\[
\xymatrix@=15pt{
\pP \ar[d]_{\alpha_\pP} \ar[rr]^f && \pQ \ar[d]^{\alpha_\pQ}
\\
(Pr_\up(\pP),\supseteq) \ar[rr]_{\Ji \circ \Up^{op} f} && (Pr_\up(\pQ),\supseteq)
}
\]
commutes for all monotone maps $f : \pP \to \pQ$. To this end, let $g := \Up f^{op} : \Up \pQ \to \Up \pP$ recalling its action $g(Y) = f^{-1}(Y)$. Then we calculate:
\[
\begin{tabular}{lll}
$\Ji \circ \Up^{op}f (\up_\pP p)$
&
$= \Ji g^{op} (\up_\pP p)$
\\&
$= \Land_{\Up\aD} g^{-1}(\up_{\Up(\pP)} \up_\pP p)$
\\&
$= \bigcap g^{-1}(\{ X \in Up(\pP) : p \in X \})$
\\&
$= \bigcap \{ Y \in Up(\pQ) : p \in g(Y) \}$
\\&
$= \bigcap \{ Y \in Up(\pQ) : p \in f^{-1}(Y) \}$
\\&
$= \bigcap \{ Y \in Up(\pQ) : f(p) \in Y \}$
\\&
$= \; \up_\pQ f(p)$
\end{tabular}
\]
for any $p \in P$, which proves naturality.

\smallskip
Next, $\beta_\aD$ is well-typed because:
\[
\Up \circ \Ji^{op} \aD 
= \Up(J(\aD),\leq_{\aD^{\pOp}})
= (Up(J(\aD),\leq_{\aD^{\pOp}}),\cup,\emptyset,\cap,J(\aD))
= (Dn(J(\aD),\leq_{\aD}),\cup,\emptyset,\cap,J(\aD))
\]
Thus its action is well-defined. $\beta$ is injective because each element of a finite join-semilattice (or distributive lattice) is the join of those join-irreducibles beneath it, and thus is uniquely determined by them. For $\beta$ to be surjective we must show that distinct down-closed sets of join-irreducibles yield distinct elements. This follows by applying Lemma \ref{lem:std_order_theory}.10. That is, if $\Lor_\aD X = \Lor_\aD Y$ where $X$, $Y \in Dn(J(\aD))$ then for each $j \in X$ we have $j \leq_\aD \Lor_\aD Y$ and hence $\exists j' \in Y. j \leq_\aD j'$, and thus $j \in Y$ by downwards-closure. Then $X \subseteq Y$ and by the symmetric argument $X = Y$, so that $\beta$ is bijective. It is a bounded distributive lattice morphism i.e.\ preservation of bottom, top and binary meet follow easily, whereas preservation of binary join follows by Lemma \ref{lem:std_order_theory}.10. Concerning naturality, we must show the following square commutes:
\[
\xymatrix@=15pt{
\aD \ar[d]_{\beta_\aD} \ar[rr]^f && \aE \ar[d]^{\beta_\aE}
\\
(Dn(J(\aD),\leq_{\aD}),\cup,\emptyset,\cap,J(\aD)) \ar[rr]_{\Up \circ \Ji^{op} f} && (Dn(J(\aE),\leq_{\aE}),\cup,\emptyset,\cap,J(\aE))
}
\]
for every bounded distributive lattice morphism $f : \aD \to \aE$. If we let $X = \beta_\aD(d) = \down_\aD d \cap J(\aD)$, then:
\[
\begin{tabular}{lll}
$\Up \circ \Ji^{op} f(X)$
&
$= \Up (\Ji f^{op})^{op}(X)$
\\&
$= \{ j \in J(\aE) : \Ji f^{op}(j) \in X \}$
\\&
$= \{ j \in J(\aE) : \Land_\aD f^{-1}(\up_\aE j) \in \; \down_\aD d \cap J(\aD) \}$
\\&
$= \{ j \in J(\aE): \Land_\aD f^{-1}(\up_\aE j) \leq_\aD d \}$
\end{tabular}
\]
whereas $\beta_\aE \circ f(d) = \{ j \in J(\aE) : j \leq_\aE f(d) \}$. Thus it suffices to show that:
\[
j \leq_\aE f(d) \iff
\Land_\aD f^{-1}(\up_\aE j) \leq_\aD d
\]
for all $j \in J(\aE)$ and $d \in D$. This is actually an instance of an adjoint relationship inside $\JSL_f$. That is, given $f : \aD \to \aE$ then we have the underlying join-semilattice morphism $U_{dm} f : (\aD,\land_\aD,\top_\aD) \to (\aE,\land_\aE,\top_\aE)$ i.e.\ restrict to the underlying meet structure. Then observing that:
\[
\Land_\aD f^{-1}(\up_\aE j)
= \Lor_{\aD^{\pOp}} f^{-1}(\down_{\aE^{\pOp}} j)
= (U_{dm} f)_*(j)
\]
we may instantiate Lemma \ref{lem:adj_obs}.1 to obtain:
\[
j \leq_\aE f(d)
\iff
U_{dm} f(d) \leq_{\aE^{\pOp}} j
\stackrel{!}{\iff}
d \leq_{\aD^{\pOp}} (U_{dm}f)_*(j)
\iff
\Land_\aD f^{-1}(\up_\aE j) \leq_\aD d
\]
which completes the proof.
\end{proof}

\begin{definition}[Equivalence functors between $\Set_f^{op}$ and $\BA_f$]
\[
\begin{tabular}{lll}
$\Pred : \Set_f^{op} \to \BA_f$
&
$\Pred X := \BPow X$
&
$\dfrac{f : X \to Y}{\Pred f^{op} := \lambda X.f^{-1}(X): \BPow X \to \BPow Y}$
\\[2ex]
$\At : \BA_f^{op} \to \Set_f$
&
$\At \aB := At(\aB)$
&
$\dfrac{f : \aB \to \aC}{\At f^{op} := \lambda a.\Land_\aC f^{-1}(\up_\aB a) : At(\aC) \to At(\aB)}$
\end{tabular}
\]
\end{definition}

\begin{theorem}[Duality between finite boolean algebras and finite sets]
\label{thm:bool_fin_duality}
\item
$\Pred$ and $\At^{op}$ define an equivalence between $\Set_f^{op}$ and $\BA_f$ with  natural isomorphisms:
\[
\begin{tabular}{lll}
$\alpha : \Id_{\Set_f} \To \At \circ \Pred^{op}$
&
$\alpha_X : X \to At(\BPow X)$
&
$\alpha_X (x) := \; \{x\}$
\\
$\beta : \Id_{\BA_f} \To \Pred \circ \At^{op}$
&
$\beta_\aB : \aB \to \BPow At(\aB)$
&
$\beta_\aB(b) := \{ a \in At(\aB) : a \leq_\aB b \}$
\end{tabular}
\]
\end{theorem}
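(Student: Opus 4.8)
The plan is to obtain this duality as a \emph{restriction} of Birkhoff duality (Theorem \ref{thm:birkhoff_duality}), rather than re-verifying the functor laws and naturality squares from scratch. The two key observations are that $\Set_f$ is (isomorphic, via $\SetToPos$, to) the full subcategory of $\Poset_f$ on discretely ordered posets, and that $\BA_f$ is the full subcategory of $\DL_f$ on boolean lattices. The latter is full because, by Lemma \ref{lem:std_order_theory}.9 and Definition \ref{def:std_order_theory}.10, a bounded lattice morphism between boolean lattices automatically preserves the unique complement; the former is full because every function between discretely ordered sets is vacuously monotone. Under these two identifications I claim that $\Up$ restricts to $\Pred$ and $\Ji$ restricts to $\At$.

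First I would check that $\Up \circ \SetToPos^{op}$ lands in $\BA_f$ and equals $\Pred$. Since every subset of a discrete poset is up-closed, $\Up(\SetToPos X) = \Up(X,\Delta_X) = \DPow X$, which is complemented and hence boolean, so its unique boolean-algebra structure is exactly $\BPow X = \Pred X$; on morphisms $\Up(\SetToPos f)^{op} = \lambda S. f^{-1}(S) = \Pred f^{op}$, these preimage maps preserving all the boolean operations. Dually, I would check that $\Ji$ restricted to $\BA_f^{op}$ lands in discrete posets and, post-composed with $\PosToSet$, equals $\At$. The crucial input here is Lemma \ref{lem:std_order_theory}.8: in a finite boolean lattice $\aB$ the join-irreducibles are precisely the atoms, which form an antichain. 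Hence $\Ji\aB = (J(\aB),\leq_{\aB^{\pOp}})$ is a discrete poset with underlying set $At(\aB)$, i.e. $\PosToSet(\Ji\aB) = \At\aB$; and $\Ji f^{op} = \lambda a.\Land_\aB f^{-1}(\up_\aC a)$ agrees as a function with $\At f^{op}$, its well-definedness (landing in atoms $=$ join-irreducibles) being already guaranteed by Lemma \ref{lem:birkhoff_funct_welldef}.

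With both functors shown to restrict, the equivalence of Theorem \ref{thm:birkhoff_duality} restricts to an equivalence between $\Set_f^{op}$ and $\BA_f$, and the witnessing natural isomorphisms $\alpha,\beta$ are simply the Birkhoff components transported along the full embeddings. It then remains to confirm that the transported components have the stated action: Birkhoff's $\alpha_\pP(p) = \up_\pP p$ becomes $\alpha_X(x) = \up_{(X,\Delta_X)} x = \{x\}$ on a discrete poset, and Birkhoff's $\beta_\aD(d) = \{ j \in J(\aD) : j \leq_\aD d \}$ becomes $\beta_\aB(b) = \{ a \in At(\aB) : a \leq_\aB b \}$ once $J(\aB)$ is replaced by $At(\aB)$, while naturality is inherited for free. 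I expect the main obstacle to be bookkeeping rather than mathematics: precisely matching the morphism actions of $\Up/\Ji$ against those of $\Pred/\At$ through the non-identity identifications $\SetToPos$ and the discrete order on $\Ji\aB$, and confirming that the negation inserted when passing from $\DPow X$ to $\BPow X$ is respected by every morphism — both of which reduce to the cited facts that preimages commute with complementation and that boolean-lattice morphisms preserve the unique complement.
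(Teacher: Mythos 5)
Your proposal is correct and takes essentially the same route as the paper: both obtain the theorem by restricting Birkhoff duality along the fully faithful embeddings $\Set_f \monoto \Poset_f$ (discrete posets) and $\BA_f \monoto \DL_f$ (fullness via the uniqueness of complements, Lemma \ref{lem:std_order_theory}.9), checking that $\Up$ and $\Ji$ restrict to $\Pred$ and $\At$. Your extra remark that Lemma \ref{lem:std_order_theory}.8 (atoms $=$ join-irreducibles in a boolean lattice, forming an antichain) is what forces $\Ji\aB$ to be discrete is a useful explicit justification of a step the paper only asserts.
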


\begin{proof}
This follows by restricting Theorem \ref{thm:birkhoff_duality} i.e.\ Birkhoff duality. That is, we have the commuting diagram:
\[
\xymatrix@=15pt{
\Poset_f^{op} \ar[rr]^{\Up} && \DL_f \ar[rr]^{\Ji^{op}} && \Poset_f^{op}
\\
\Set_f^{op} \ar@{>->}[u]^{I^{op}} \ar[rr]_{\Pred} && \BA_f \ar@{>->}[u]^{\BAToDL} \ar[rr]_{\At^{op}} && \Set_f^{op} \ar@{>->}[u]_{I^{op}}
}
\]
where:
\begin{enumerate}
\item
$I : \Set_f \monoto \Poset_f$ is the fully faithful functor defined $IX = (X,=_X)$ and $If = f$.
\item
$\BAToDL : \BA_f \monoto \DL_f$ is the fully faithful forgetful functor.
\end{enumerate}
Certainly $I$ is fully faithful because the monotone maps from a discrete poset $(X,=_X)$ to a discrete poset $(Y,=_Y)$ are precisely the functions $f : X \to Y$, and clearly $\BAToDL$ is faithful. To see that $\BAToDL$ is \emph{full} recall that bounded distributive lattice morphisms between boolean algebras are boolean algebra morphisms, since by Lemma \ref{lem:std_order_theory}.9 complements in distributive lattices are unique whenever they exist.

That the diagram above commutes is easily verified i.e.\ observe that the definitions of $\Pred$ and $\Up$ align, as do $\At$ and $\Ji$. Then $\alpha$ and $\beta$ are natural isomorphisms because they restrict the corresponding natural isomorphisms witnessing Birkhoff duality.
\end{proof}

% -----

\subsection{Free constructions between sets, posets, join-semilattices, distributive lattices and boolean algebras}

% ------

\[
\xymatrix@=15pt{
\Set_f \ar@<8pt>[rr]^{\SetToPos} \ar@{<-}@<-8pt>[rr]_{\PosToSet} 
& \bot & \Poset_f \ar@<8pt>[rr]^{\PosToJSL} \ar@{<-}@<-8pt>[rr]_{\JSLToPos}
& \bot & \JSL_f \ar@<8pt>[rr]^{\JSLToDL} \ar@{<-}@<-8pt>[rr]_{\DLToJSL}
& \bot & \DL_f \ar@<8pt>[rr]^{\DLToBA} \ar@{<-}@<-8pt>[rr]_{\BAToDL}
& \bot & \BA_f
}
\]

\begin{definition}[Free poset on a set]
Let $\PosToSet : \Poset_f \to \Set_f$ be the forgetful functor which forgets the ordering i.e.\ $\PosToSet \pP := P$ and $\PosToSet f := f$. Further define:
\[
\SetToPos : \Set_f \to \Poset_f
\qquad
\SetToPos X := (X,\Delta_X)
\qquad
\dfrac{f : X \to Y}{\SetToPos f := \lambda x.f(x) : (X,\Delta_X) \to (Y,\Delta_X)}
\]
\end{definition}

\begin{lemma}[Free poset on a set]
\label{lem:free_poset_on_set}
\item
$\SetToPos : \Set_f \to \Poset_f$ is left adjoint to the forgetful functor $\PosToSet : \Poset_f \to \Set_f$ via natural transformations:
\[
\begin{tabular}{lll}
$\eta : \Id_{\Set_f} \To \PosToSet \circ \SetToPos$
&
$\eta_X : X \to X$
&
$\eta_X(p) := x$
\\
$\epsilon : \SetToPos \circ \PosToSet \To \Id_{\Poset_f}$
&
$\epsilon_\pP : (P,\Delta_P) \to P$
&
$\epsilon_\pP(p) := p$
\end{tabular}
\]
\end{lemma}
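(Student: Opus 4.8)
The plan is to establish the adjunction $\SetToPos \dashv \PosToSet$ by directly verifying the triangle identities for the proposed unit $\eta$ and counit $\epsilon$; since every component in sight is an identity on underlying elements, the whole verification collapses to a handful of trivial checks. First I would confirm that $\SetToPos$ is a well-defined functor. For any function $f : X \to Y$ the map $\SetToPos f : (X,\Delta_X) \to (Y,\Delta_Y)$ is monotone because the only instances of $x_1 \leq_{\SetToPos X} x_2$ are the reflexive ones $x_1 = x_2$, whence $f(x_1) = f(x_2)$ and the order is preserved vacuously; and $\SetToPos$ preserves identities and composites because it acts as the identity on underlying functions. The forgetful functor $\PosToSet$ is manifestly well-defined.

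Next I would check that the two families are well-typed. The unit components $\eta_X = id_X : X \to \PosToSet\SetToPos X = X$ are plainly functions, so nothing is required there. The only substantive observation is that each counit component $\epsilon_\pP : \SetToPos\PosToSet\pP = (P,\Delta_P) \to \pP$, whose underlying function is $id_P$, is a genuine \emph{monotone} map: this holds precisely because $\Delta_P \subseteq \;\leq_\pP$ by reflexivity of the order relation. For naturality, given $f : X \to Y$ in $\Set_f$ the square for $\eta$ commutes because $\PosToSet\SetToPos f = f$ and every arrow is an identity on underlying sets; dually, given a monotone $f : \pP \to \pQ$ the counit square commutes because $\epsilon_\pP, \epsilon_\pQ$ are identity-on-elements and $\SetToPos\PosToSet f = f$.

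Finally, the two triangle identities $\epsilon_{\SetToPos X} \circ \SetToPos\eta_X = id_{\SetToPos X}$ and $\PosToSet\epsilon_\pP \circ \eta_{\PosToSet\pP} = id_{\PosToSet\pP}$ each reduce to the statement that a composite of identity functions is the identity, so both hold immediately. I expect \textbf{no real obstacle} in this argument: conceptually the entire content is the single remark that monotone maps out of a discrete poset $(X,\Delta_X)$ are unconstrained, so that $\Poset_f(\SetToPos X,\pP)$ and $\Set_f(X,\PosToSet\pP)$ coincide as sets of underlying functions and the asserted bijection is literally the identity. One could present the proof in that hom-set form instead; either way the only point deserving a sentence is the reflexivity argument securing monotonicity of $\epsilon_\pP$.
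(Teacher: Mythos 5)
Your proof is correct and follows essentially the same route as the paper's: both verify that the identity-on-elements components are well-typed (the only real content being that $\epsilon_\pP$ is monotone since $\Delta_P \subseteq\; \leq_\pP$ by reflexivity), and then observe that naturality and the triangle identities hold trivially because every map involved is the identity on underlying elements. The paper's proof merely states these checks more tersely, so no further comparison is needed.
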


\begin{proof}
Each $\eta_X$ is a well-defined function and each $\epsilon_\pP$ is a well-defined monotone function. Although they are both bijective, $\epsilon_\pP$ is not a $\Poset_f$-isomorphism whenever $|P| \geq 2$. Naturality is obvious by inspecting the required commutative squares:
\[
\xymatrix@=15pt{
X \ar[d]_{\eta_X} \ar[rr]^f && Y \ar[d]^{\eta_Y}
\\
X \ar[rr]_f && Y
}
\qquad
\xymatrix@=15pt{
(P,\Delta_P) \ar[d]_{\epsilon_\pP} \ar[rr]^g && (Q,\Delta_Q) \ar[d]^{\epsilon_\pQ}
\\
\pP \ar[rr]_g && \pQ
}
\]
for all functions $f : X \to Y$ and monotone functions $g : \pP \to \pQ$. Finally, the counit-unit equations are also immediate:
\[
\epsilon_{\SetToPos X} \circ \SetToPos \eta_X = \lambda x.x = id_{\SetToPos X}
\qquad
\PosToSet \epsilon_\pP \circ \eta_{\PosToSet \pP} = \lambda p.p = id_{\PosToSet\pP}
\]
\end{proof}

\begin{definition}[Free join-semilattice on a poset]
\label{def:free_jsl_on_poset}
Let $\JSLToPos : \JSL_f \to \Poset_f$ be the forgetful functor which takes the underlying ordering i.e.\ $\JSLToPos\aQ = (Q,\leq_\aQ)$ and $\JSLToPos f := f$. Furthermore define:
\[
\PosToJSL : \Poset_f \to \JSL_f
\qquad
\PosToJSL \,\pP := (Dn(\pP),\cup,\emptyset)
\qquad
\dfrac{f : \pP \to \pQ}{\PosToJSL f := \lambda X.\down_{\pQ} f[X] : \PosToJSL \pP \to \PosToJSL \pQ}
\]
\end{definition}

\begin{theorem}[Free join-semilattice on a poset]
\label{thm:free_jsl_on_poset}
\item
$\PosToJSL : \Poset_f \to \JSL_f$ is left adjoint to the forgetful functor $\JSLToPos : \JSL_f \to \Poset_f$ via natural transformations:
\[
\begin{tabular}{lll}
$\eta : \Id_{\Poset_f} \To \JSLToPos \circ \PosToJSL$
&
$\eta_\pP : \pP \to (Dn(\aQ),\subseteq)$
&
$\eta_\pP(p) := \; \down_\pP p$
\\
$\epsilon : \PosToJSL \circ \JSLToPos \To \Id_{\JSL_f}$
&
$\epsilon_\aQ : (Dn(\aQ),\cup,\emptyset) \to \aQ$
&
$\epsilon_\aQ(S) := \Lor_\aQ S$
\end{tabular}
\]
\end{theorem}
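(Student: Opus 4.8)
The plan is to verify the adjunction $\PosToJSL \dashv \JSLToPos$ directly, in the same style as Lemma \ref{lem:free_poset_on_set}: first confirm that $\eta$ and $\epsilon$ have well-typed, natural components, and then check the two triangle (counit--unit) identities. As a preliminary I would note that $\PosToJSL \pP = (Dn(\pP),\cup,\emptyset)$ is a genuine finite join-semilattice, since the down-closed subsets of $\pP$ are closed under arbitrary (in particular empty and binary) unions, with $\emptyset$ as bottom; likewise each $\PosToJSL f = \lambda X.\down_\pQ f[X]$ preserves $\emptyset$ and binary unions, so that both functors are well defined.

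Next I would check the unit. Each $\eta_\pP(p) := \down_\pP p$ is a principal downset, hence an element of $Dn(\pP)$, and $p_1 \leq_\pP p_2$ implies $\down_\pP p_1 \subseteq \down_\pP p_2$, so $\eta_\pP$ is a $\Poset_f$-morphism into $(Dn(\pP),\subseteq) = \JSLToPos\PosToJSL\pP$. For naturality of $\eta$ at a monotone $f:\pP\to\pQ$ I would compute $(\JSLToPos\PosToJSL f)(\down_\pP p) = \down_\pQ f[\down_\pP p]$ and observe this equals $\down_\pQ f(p) = \eta_\pQ(f(p))$: the inclusion $\subseteq$ uses monotonicity of $f$, and $\supseteq$ holds because $f(p) \in f[\down_\pP p]$.

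Then the counit. The map $\epsilon_\aQ(S) := \Lor_\aQ S$ is a $\JSL_f$-morphism, since $\epsilon_\aQ(\emptyset) = \bot_\aQ$ and $\epsilon_\aQ(S_1 \cup S_2) = \epsilon_\aQ(S_1) \lor_\aQ \epsilon_\aQ(S_2)$ by generalised associativity (exactly as in the proof of Lemma \ref{lem:std_order_theory}.6). Naturality of $\epsilon$ at a $\JSL_f$-morphism $g:\aQ\to\aR$ amounts to $g(\Lor_\aQ S) = \Lor_\aR \down_\aR g[S]$, which follows from join-preservation of $g$ together with $\Lor_\aR \down_\aR T = \Lor_\aR T$ (every element of a downset lies below a generator).

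For the triangle identities, the second is immediate: for $q \in Q$ one has $\JSLToPos\epsilon_\aQ(\eta_{\JSLToPos\aQ}(q)) = \Lor_\aQ \down_\aQ q = q$, since $q$ is the greatest element of its own principal downset. The first identity is the delicate one, because it lives one level up in $Dn\big((Dn(\pP),\subseteq)\big)$ and one must track carefully which poset each $\down$ refers to. Unwinding, for $X \in Dn(\pP)$ the morphism $\PosToJSL \eta_\pP$ sends $X$ to the $\subseteq$-downset $\{\,Y \in Dn(\pP) : \exists p \in X.\ Y \subseteq \down_\pP p\,\}$, and applying $\epsilon_{\PosToJSL\pP}$ (i.e.\ taking the union) yields $\bigcup\{\,Y : \exists p\in X.\ Y \subseteq \down_\pP p\,\} = \bigcup_{p\in X}\down_\pP p$, which is exactly $X$ because $X$ is down-closed. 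I expect this bookkeeping of nested downsets to be the main obstacle; a clean alternative, if preferred, is to bypass the triangle identities and instead establish the hom-set bijection $\JSL_f(\PosToJSL\pP,\aQ) \cong \Poset_f(\pP,\JSLToPos\aQ)$ directly, sending a monotone $f$ to $\hat f := \lambda X.\Lor_\aQ f[X]$, with uniqueness forced by the fact that the principal downsets $\down_\pP p$ join-generate $(Dn(\pP),\cup,\emptyset)$.
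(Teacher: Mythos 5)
Your proposal is correct and follows essentially the same route as the paper: well-definedness of $\PosToJSL$, monotonicity and naturality of $\eta$, the morphism property and naturality of $\epsilon$ (via join-preservation plus the fact that down-closing a set does not change its join), and then the two triangle identities, with the first handled by exactly the same nested-downset computation $\bigcup\{Y : \exists p \in X.\ Y \subseteq \down_\pP p\} = X$. The only difference is your closing remark about instead exhibiting the hom-set bijection directly, which is a valid alternative but not what the paper does.
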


\begin{proof}
We first verify that $\PosToJSL$ is a well-defined functor. Its action on objects is well-defined because the downsets $\pP$ contain $\emptyset$ and are union-closed. Concerning its action on morphisms:
\[
\PosToJSL f(\bot_{\PosToJSL\pP})
= \;\down_{\pQ} f[\bot_{\PosToJSL\pP}] = \; \down_{\pQ} f[\emptyset] = \; \down_{\pQ} \emptyset = \emptyset
\]
\[
\PosToJSL f(A_1 \lor_{\PosToJSL\pP} A_2)
= \; \down_{\pQ} f[A_1 \cup A_2]
= \; \down_{\pQ} f[A_1] \; \cup \; \down_\pQ f[A_2]
= \PosToJSL f(A_1) \lor_{\PosToJSL\pQ} \PosToJSL f(A_2)
\]
Each $\eta_\pP$ is monotone because $p \leq_\pP q$ implies $\down_\pP p \subseteq \; \down_\pP q$. Concerning naturality we must verify that:
\[
\xymatrix@=15pt{
\pP \ar[rr]^f \ar[d]_{\eta_\pP} && \pQ \ar[d]^{\eta_\pQ}
\\
(Dn\pP,\subseteq) \ar[rr]_{\JSLToPos\PosToJSL f} && (Dn\pQ,\subseteq)
}
\]
i.e.\ $\down_\pQ f[\down_\pP p] = \; \down_\pQ f(p)$ which follows by the monotonicity of $f$. Each $\epsilon_\aQ$ is a join-semilattice morphism:
\[
\epsilon_\aQ(\emptyset) = \Lor_\aQ \emptyset = \bot_\aQ
\qquad
\epsilon_\aQ(S_1 \cup S_2) = \Lor_\aQ S_1 \cup S_2 = \Lor_\aQ S_1 \lor_\aQ \Lor_\aQ S_2 = \epsilon_\aQ(S_1) \lor_\aQ \epsilon_\aQ(S_2)
\]
and for naturality we must verify that:
\[
\xymatrix@=15pt{
(Dn(Q,\leq_\aQ),\cup,\emptyset) \ar[rr]^{\PosToJSL \JSLToPos f} \ar[d]_{\epsilon_\aQ} &&  (Dn(R,\leq_\aR),\cup,\emptyset) \ar[d]^{\epsilon_\aR}
\\
\aQ \ar[rr]_f && \aR
}
\]
i.e.\ $f(\Lor_\aQ S) = \Lor_\aR \down_\aR f[S]$ which follows because (i) $f$ preserves arbitrary joins, (ii) adding smaller elements has no effect. Then it only remains to verify the counit-unit equations:
\[
\begin{tabular}{lll}
$\epsilon_{\PosToJSL \pP} \circ \PosToJSL \eta_{\pP} (A)$
&
$= \epsilon_{\PosToJSL\pP}( \down_{\PosToJSL \JSLToPos \PosToJSL\pP} \{ \down_\pP p : p \in A \}) $
\\&
$= \Lor_{\PosToJSL\pP} \down_{\PosToJSL \JSLToPos \PosToJSL\pP} \{ \down_\pP p : p \in A \}$
\\&
$= \bigcup \{ S \in Dn(\pP) : \exists p \in A. S \subseteq \; \down_\pP p \}$
\\&
$= \bigcup \{ \down_\pP p : p \in A \}$
\\&
$= A$
& since $A$ downclosed
\end{tabular}
\]
\[
\JSLToPos \epsilon_\aQ \circ \eta_{\JSLToPos\aQ} (q)
= \epsilon_\aQ (\down_\aQ q)
= \Lor_\aQ \down_\aQ q
= q
\]
\end{proof}

We are now going to describe the free distributive lattice on a finite join-semilattice. Let us first define the relevant functor $\JSLToDL$.

\begin{definition}[Free distributive lattice on a join-semilattice]
Let $\DLToJSL : \DL_f \to \JSL_f$ be the forgetful functor which takes the underlying join-semilattice structure i.e.\ $\DLToJSL \aD := (D,\lor_\aD,\bot_\aD)$ and $\DLToJSL f := f$. Further define:
\[
\JSLToDL : \JSL_f \to \DL_f
\qquad
\JSLToDL \aQ := (Dn(\aQ),\cup,\emptyset,\cap,Q)
\qquad
\dfrac{f : \aQ \to \aR}{\JSLToDL f := \lambda X. (f_*)^{-1}(X) : \JSLToDL\aQ \to \JSLToDL\aR }
\]
\end{definition}

\begin{lemma}
$\JSLToDL$ equals the composite functor:
\[
\JSL_f \xto{\OD_j^{op}} \JSL_f^{op} \xto{\JSLToPos^{op}} \Poset_f^{op} \xto{\Up} \DL_f
\]
and is thus a well-defined functor.
\end{lemma}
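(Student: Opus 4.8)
The plan is to verify directly that the composite $F := \Up \circ \JSLToPos^{op} \circ \OD_j^{op}$ agrees with $\JSLToDL$ on both objects and morphisms; the well-definedness assertion then follows immediately, since each of the three factors is already a well-defined functor. Concretely, $\OD_j : \JSL_f^{op} \to \JSL_f$ is well-defined by Theorem \ref{thm:jsl_self_dual}, hence so is its opposite $\OD_j^{op} : \JSL_f \to \JSL_f^{op}$; the functor $\JSLToPos : \JSL_f \to \Poset_f$ takes the underlying order and is evidently well-defined, hence so is $\JSLToPos^{op}$; and $\Up : \Poset_f^{op} \to \DL_f$ is well-defined by Lemma \ref{lem:birkhoff_funct_welldef}. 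Thus $F : \JSL_f \to \DL_f$ is a well-defined composite.

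First I would check agreement on objects. Tracing $\aQ$ through the composite gives $\OD_j^{op}\aQ = \aQ^{\pOp}$, then $\JSLToPos^{op}(\aQ^{\pOp}) = (Q,\leq_{\aQ^{\pOp}}) = (Q,\geq_\aQ)$, and finally $\Up(Q,\geq_\aQ) = (Up(Q,\geq_\aQ),\cup,\emptyset,\cap,Q)$. The only nontrivial point is the set-level identity $Up(Q,\geq_\aQ) = Dn(Q,\leq_\aQ) = Dn(\aQ)$, i.e.\ that the up-closed subsets of the opposite order are exactly the down-closed subsets of the original order. With this identification the bounded distributive lattice produced is literally $(Dn(\aQ),\cup,\emptyset,\cap,Q) = \JSLToDL\aQ$.

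Next I would check agreement on morphisms, which is where the bookkeeping of the two opposite constructions matters. Given $f : \aQ \to \aR$ in $\JSL_f$, the functor $\OD_j^{op}$ sends it to the $\JSL_f^{op}$-morphism $\aQ^{\pOp} \to \aR^{\pOp}$ whose underlying $\JSL_f$-morphism is $f_* : \aR^{\pOp} \to \aQ^{\pOp}$, this being exactly the action of $\OD_j$ recorded in Theorem \ref{thm:jsl_self_dual}. Passing through $\JSLToPos^{op}$ retains the same underlying map, now viewed as the monotone map $f_* : (R,\geq_\aR) \to (Q,\geq_\aQ)$; the resulting $\Poset_f^{op}$-morphism is therefore $(f_*)^{op} : (Q,\geq_\aQ) \to (R,\geq_\aR)$. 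Applying $\Up$ to this, the defining formula $\Up g^{op} = \lambda X.\, g^{-1}(X)$ with $g := f_*$ yields the map $\lambda X.\, f_*^{-1}(X)$ of type $\Up(Q,\geq_\aQ) \to \Up(R,\geq_\aR)$, i.e.\ $\JSLToDL\aQ \to \JSLToDL\aR$. This is precisely $\JSLToDL f = \lambda X.\,(f_*)^{-1}(X)$. That $f_*^{-1}$ preserves down-closed sets (so the restriction makes sense) is already subsumed in the well-definedness of $\Up$.

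The main obstacle here is not computational but purely a matter of keeping the variances straight: one must correctly read $\OD_j^{op}$ and $\JSLToPos^{op}$ as the opposite functors, acting identically on underlying objects and arrows while reversing the ambient category, so that the arrow eventually fed to $\Up$ is $f_*$ rather than $f$ and points from $(Q,\geq_\aQ)$ to $(R,\geq_\aR)$ in $\Poset_f^{op}$, producing an arrow $\JSLToDL\aQ \to \JSLToDL\aR$ of the correct direction. Once the direction of $f_*$ and the identity $Up(Q,\geq_\aQ) = Dn(\aQ)$ are in hand, the equality $F = \JSLToDL$ on objects and morphisms is immediate, and well-definedness of $\JSLToDL$ follows at once.
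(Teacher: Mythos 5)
Your proof is correct and follows essentially the same route as the paper's: trace objects through the composite using $Up(Q,\geq_\aQ)=Dn(\aQ)$, then trace morphisms to land on $(f_*)^{-1}$. The paper's version is just terser; your extra care with the variance of $\OD_j^{op}$ and $\JSLToPos^{op}$ is sound and adds nothing incorrect.
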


\begin{proof}
We have:
\[
\begin{tabular}{lll}
$\Up \circ \JSLToPos^{op} \circ \OD_j^{op} \aQ$
&
$= \Up \circ \JSLToPos^{op}(\aQ^{\pOp})$
\\&
$= \Up (Q,\geq_\aQ)$
\\&
$= (Up(Q,\geq_\aQ),\cup,\emptyset,\cap,Q)$
\\&
$= (Dn(Q,\leq_\aQ),\cup,\emptyset,\cap,Q)$
\\&
$= \JSLToDL \aQ$
\end{tabular}
\]
and furthermore $\Up \circ \JSLToPos^{op} \circ \OD_j^{op} f = \Up f_* = (f_*)^{-1}$ with domain $\JSLToDL \aQ$ and codomain $\JSLToDL \aR$.
\end{proof}

\begin{theorem}[Free distributive lattice on a join-semilattice]
\label{thm_free_dl_on_jsl}
\item
$\JSLToDL : \JSL_f \to \DL_f$ is left adjoint to the forgetful functor $\DLToJSL : \DL_f \to \JSL_f$ with associated natural transformations:
\[
\begin{tabular}{lll}
$\eta : \Id_{\JSL_f} \To \DLToJSL \circ \JSLToDL$
&
$\eta_\aQ : \aQ \to (Dn(\aQ),\cup,\emptyset)$
&
$\eta_\aQ(q) := \overline{\up_\aQ q}$
\\
$\epsilon : \JSLToDL \circ \DLToJSL \To \Id_{\DL_f}$
&
$\epsilon_\aD : (Dn(\aD),\cup,\emptyset,\cap,D) \to \aD$
&
$\epsilon_\aD(S) := \; \Land_\aD \overline{S} \cap M(\aD)$
\end{tabular}
\]
\end{theorem}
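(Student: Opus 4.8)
The plan is to establish the adjunction in the unit--counit formulation, exactly as in the preceding free-construction theorems: verify that each $\eta_\aQ$ and each $\epsilon_\aD$ is a morphism of the correct type, check naturality of both transformations, and then verify the two triangle identities. Well-definedness of the functor $\JSLToDL$ itself is already in hand, since it was identified with the composite $\Up \circ \JSLToPos^{op} \circ \OD_j^{op}$; this also offers an abstract shortcut (see the final paragraph), but I will give the direct argument in order to pin down the specific $\eta$ and $\epsilon$ that are stated.

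First I would check the components are well-typed and structure-preserving. For $\eta_\aQ$ this is routine: $\overline{\up_\aQ q}$ is down-closed (being the complement of an up-set), so $\eta_\aQ$ lands in $Dn(\aQ)$; it sends $\bot_\aQ$ to $\overline{\up_\aQ \bot_\aQ} = \overline{Q} = \emptyset$, and, using $\up_\aQ(q_1 \lor_\aQ q_2) = \up_\aQ q_1 \cap \up_\aQ q_2$ together with De Morgan, it sends binary joins to unions, hence is a $\JSL_f$-morphism into $\DLToJSL\JSLToDL\aQ$. For $\epsilon_\aD$, parsing the formula as $\Land_\aD(\overline{S} \cap M(\aD))$, preservation of $\top$ and $\bot$ is immediate ($\epsilon_\aD(D) = \Land_\aD \emptyset = \top_\aD$ and $\epsilon_\aD(\emptyset) = \Land_\aD M(\aD) = \bot_\aD$ by the meet-generation of $M(\aD)$, Lemma \ref{lem:std_order_theory}.6), and preservation of binary meets is a purely set-theoretic manipulation ($\overline{S_1 \cap S_2} = \overline{S_1} \cup \overline{S_2}$, and $\Land$ converts the index union into a binary meet). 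The genuine content is preservation of binary joins, and this is where distributivity of $\aD$ enters: writing $A = \overline{S_1} \cap M(\aD)$ and $B = \overline{S_2} \cap M(\aD)$, I must show $\Land_\aD(A \cap B) = \Land_\aD A \lor_\aD \Land_\aD B$, where $\geq$ is monotonicity while $\leq$ is exactly the order-dual of the join-prime property of join-irreducibles in a distributive lattice, obtained by passing to $\aD^{op}$ (also distributive) and invoking Lemma \ref{lem:std_order_theory}.10. I expect this to be the main mathematical obstacle; everything else is either formal or set-theoretic.

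Naturality of both transformations then falls out cleanly from the adjoint relationship $f(q)\leq_\aR r \iff q \leq_\aQ f_*(r)$ (Lemma \ref{lem:adj_obs}.1), since the functorial action $\JSLToDL f = (f_*)^{-1}$ is a preimage: for $\eta$ one computes $(f_*)^{-1}(\overline{\up_\aQ q}) = \{r : f(q) \nleq_\aR r\} = \overline{\up_\aR f(q)}$, and the $\epsilon$-square reduces similarly to $f$ preserving meets of meet-irreducibles. For the triangle identities, the second, $\DLToJSL\epsilon_\aD \circ \eta_{\DLToJSL\aD} = id$, is the quick one: it unwinds to $\Land_\aD\{m \in M(\aD) : d \leq_\aD m\} = d$, again Lemma \ref{lem:std_order_theory}.6. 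The first, $\epsilon_{\JSLToDL\aQ}\circ\JSLToDL\eta_\aQ = id$, is the bookkeeping-heavy one: I would first compute the adjoint $(\eta_\aQ)_*(X) = \Land_\aQ \overline{X}$, then identify the meet-irreducibles of $\JSLToDL\aQ = (Dn(\aQ),\cup,\emptyset,\cap,Q)$ as exactly the sets $\overline{\up_\aQ q}$, and finally reduce the whole composite to the decomposition $S = \bigcap_{q\notin S}\overline{\up_\aQ q}$, valid for every $S \in Dn(\aQ)$.

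Finally, as a cross-check and alternative route, one can avoid re-proving existence by composing known adjunctions: $\OD_j^{op}$ is (half of) the self-duality of Theorem \ref{thm:jsl_self_dual}, $\JSLToPos^{op} \dashv \PosToJSL^{op}$ is the opposite of the free-semilattice adjunction of Theorem \ref{thm:free_jsl_on_poset}, and $\Up \dashv \Ji^{op}$ is Birkhoff duality (Theorem \ref{thm:birkhoff_duality}); composing left adjoints shows that $\JSLToDL = \Up\circ\JSLToPos^{op}\circ\OD_j^{op}$ is left adjoint to $\OD_j\circ\PosToJSL^{op}\circ\Ji^{op}$, which one checks is naturally isomorphic to $\DLToJSL$. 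This packages the distributivity argument inside Birkhoff duality, but it still requires matching the composite unit and counit against the stated $\eta$ and $\epsilon$, so the direct verification above remains the more economical presentation.
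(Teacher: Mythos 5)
Your proposal is correct and follows essentially the same route as the paper's proof: well-definedness of $\eta_\aQ$ and $\epsilon_\aD$ (with distributivity carrying the join-preservation step for $\epsilon_\aD$), naturality via the adjoint relationship, and the triangle identities via computing $(\eta_\aQ)_*(X) = \Land_\aQ \overline{X}$, identifying $M(\JSLToDL\aQ)$ as the sets $\overline{\up_\aQ q}$, and decomposing each downset as an intersection of such sets. The only cosmetic difference is that you phrase the key distributivity step through meet-primeness of meet-irreducibles in $\aD^{\pOp}$, whereas the paper rewrites $\Land_\aD A \lor_\aD \Land_\aD B$ as a meet of pairwise joins directly; these are equivalent.
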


\begin{proof}
Each $\eta_\aQ$ is a well-defined join-semilattice morphism because:
\[
\eta_\aQ(\bot_\aQ) = \overline{Q} = \emptyset = \bot_{\DLToJSL\JSLToDL\aQ} 
\]
\[
\begin{tabular}{ll}
$\eta_\aQ(q_1 \lor_\aQ q_2)$
&
$= \overline{\up_\aQ (q_1 \lor_\aQ q_2)}$
\\&
$= \overline{\{ q \in Q : q_1 \leq_\aQ q \text{ and } q_2 \leq_\aQ q \}}$
\\&
$= \{ q \in Q: q_1 \nleq_\aQ q \text{ or } q_2 \nleq_\aQ q \}$
\\&
$= \overline{\up_\aQ q_1} \cup \overline{\up_\aQ q_2}$
\\&
$= \eta_\aQ(q_1) \lor_{\JSLToDL\aQ} \eta_\aQ(q_2)$
\end{tabular}
\]
For naturality we must show that:
\[
\xymatrix@=15pt{
\aQ \ar[rr]^f \ar[d]_{\eta_\aQ} && \aR \ar[d]^{\eta_\aR}
\\
(Dn(\aQ),\cup,\emptyset) \ar[rr]_{\DLToJSL \JSLToDL f} && (Dn(\aR),\cup,\emptyset)
}
\]
commutes for all join-semilattice morphisms $f : \aQ \to \aR$. Observing that $\eta_\aR \circ f(q) = \overline{\up_\aR f(q)}$, we calculate:
\[
\begin{tabular}{lll}
$\DLToJSL \JSLToDL f \circ \eta_\aQ (q)$
&
$= \JSLToDL f(\overline{\up_\aQ q})$
\\&
$= (f_*)^{-1}(\overline{\up_\aQ q})$
\\&
$= \overline{f_*^{-1}(\up_\aQ q)}$
\\&
$= \overline{\{ r \in R : q \leq_\aQ f_*(r)\}}$
\\&
$= \overline{\{ r \in R : f(q) \leq_\aR r \}}$
& by adjoint relationship
\\&
$= \overline{\up_\aR f(q)}$
\end{tabular}
\]
as required. 

\smallskip
Next we show that $\epsilon_\aD : (Dn(\aD),\cup,\emptyset,\cap,D) \to \aD$ is a well-defined bounded distributive lattice morphism:
\begin{enumerate}
\item
$\epsilon_\aD(\bot_{\JSLToDL \DLToJSL\aD}) = \epsilon_\aD(\emptyset) = \Land_\aD \overline{\emptyset} \cap M(\aD) = \Land_\aD M(\aD) = \bot_\aD$.

\item
$\epsilon_\aD(\top_{\JSLToDL \DLToJSL\aD}) = \epsilon_\aD(D) = \Land_\aD \emptyset = \top_\aD$.

\item
Regarding meet-preservation:
\[
\begin{tabular}{lll}
$\epsilon_\aD(X_1 \land_{\JSLToDL \DLToJSL\aD} X_2)$
& $= \epsilon_\aD(X_1 \cap X_2) $
\\& $= \Land_\aD \overline{X_1 \cap X_2} \cap M(\aD)$
\\&
$= \Land_\aD (\overline{X_1} \cup \overline{X_2}) \cap M(\aD)$
\\&
$= \Land_\aD (\overline{X_1} \cap M(\aD)) \cup (\overline{X_2} \cap M(\aD))$
\\&
$= \epsilon_\aD(X_1) \land_\aD \epsilon_\aD(X_2)$
\end{tabular}
\]

\item
Regarding join-preservation:
\[
\begin{tabular}{lll}
$\epsilon_\aD(X_1 \cup X_2)$
&
$= \Land_\aD \overline{X_1} \cap \overline{X_2} \cap M(\aD)$
& (A)
\\&
$= \Lor_\aD \{ d \in D : \forall m \in \overline{X_1} \cap \overline{X_2} \cap M(\aD). d \leq_\aD m \}$
& (A')
\\
\\
$\epsilon_\aD(X_1) \lor_\aD \epsilon_\aD(X_2)$
&
$= (\Land_\aD \overline{X_1} \cap M(\aD)) \lor_\aD (\Land_\aD \overline{X_2} \cap  M(\aD))$ & (B)
\\&
$= \Land_\aD \{ m_1 \lor_\aD m_2 :  m_i \in \overline{X_i} \cap M(\aD), \, i = 1,2 \}$ & (B')
\end{tabular}
\]
using distributivity in the final equality. Then $\mathrm{(B)} \leq \mathrm{(A)}$ because $\overline{X_1}\cap\overline{X_2} \cap M(\aD) \subseteq \overline{X_i} \cap M(\aD)$ for $i = 1,2$. To understand why $(\mathrm{A'}) \leq (\mathrm{B'})$, first observe that each $\overline{X_i}$ is up-closed inside $\aD$, as is their intersection. Thus given any elements $m_i \in \overline{X_i} \cap M(\aD)$ (where $i = 1,2$) we have $m_1 \lor_\aD m_2 \in \overline{X_1} \cap \overline{X_2}$. Furthermore any meet-irreducible above $m_1 \lor_\aD m_2$ lies in $\overline{X_1} \cap \overline{X_2} \cap M(\aD)$. Thus any $d \in D$ which lies below every meet-irreducible in $\overline{X_1}\cap\overline{X_2}$ also lies below $m_1 \lor_\aD m_2$, since the latter is the meet of those meet-irreducibles above it.

\end{enumerate}

\smallskip
Concerning the counit-unit equations, we first need to show that:
\[
\xymatrix@=15pt{
(Dn(\aQ),\cup,\emptyset,\cap,Q) \ar@{=}[d] && (Dn(Dn\aQ,\subseteq),\cup,\emptyset) \ar@{=}[d] && (Dn(\aQ),\cup,\emptyset,\cap,Q) \ar@{=}[d]
\\
\JSLToDL \aQ \ar@/_10pt/[rrrr]_-{id_{\JSLToDL\aQ}} \ar[rr]^-{\JSLToDL \eta_\aQ} && \JSLToDL \circ \DLToJSL \circ \JSLToDL \aQ \ar[rr]^-{\epsilon_{\JSLToDL\aQ}} && \JSLToDL\aQ
}
\]

\begin{enumerate}
\item
The first map has action $\JSLToDL \eta_\aQ(X) = (f_*)^{-1}(X) = \{ Y \in Dn(\aQ) : \Land_\aQ \overline{Y} \in X \}$, using the following calculation:
\[
\begin{tabular}{lll}
$(\eta_\aQ)_*(Y)$
&
$= \Lor_\aQ \{q \in Q : \eta_\aQ(q) \subseteq Y \}$
\\&
$= \Lor_\aQ \{ q \in Q : \overline{\up_\aQ q} \subseteq Y \}$
\\&
$= \Lor_\aQ \{ q \in Q : \overline{Y} \subseteq \; \up_\aQ q \}$
\\&
$= \Lor_\aQ \{ q \in Q : q \leq_\aQ \Land_\aQ \overline{Y} \}$
\\&
$= \Land_\aQ \overline{Y}$
\end{tabular}
\]

\item
Regarding the second map, we first observe that:
\[
M(\JSLToDL \aQ) = M(Dn(\aQ),\cup,\emptyset,\cap,Q) = \{ \overline{\up_\aQ q} : q \in Q \}
\]
which holds because:
\begin{enumerate}
\item
If $\overline{\up_\aQ q} = X_1 \cap X_2$ then $\up_\aQ q = \overline{X_1} \cup \overline{X_2}$. Since each $\overline{X_i}$ is $\aQ$-upclosed $\exists i.\up_\aQ q \subseteq \overline{X_i}$, hence $\overline{\up_\aQ q} \subseteq X_i \subseteq \overline{\up_\aQ q}$.
\item
Every downset is an intersection of these sets, since every upset arises as a union of principal upsets.
\end{enumerate}

Then the second map has action:
\[
\begin{tabular}{lll}
$\epsilon_{\JSLToDL \aQ}(S)$
&
$= \Land_{\JSLToDL\aQ} \overline{S} \cap M(\JSLToDL\aQ)$
\\&
$= \bigcap \{ \overline{\up_\aQ q} \in \overline{S} : q \in Q \}$
\end{tabular}
\]

\item
Composing we obtain:
\[
\begin{tabular}{lll}
$\epsilon_{\JSLToDL \aQ} \circ \JSLToDL \eta_\aQ(X)$
&
$= \epsilon_{\JSLToDL \aQ}(\{ Y \in Dn(\aQ) : \Land_\aQ \overline{Y} \in X \})$
\\&
$= \bigcap \{ \overline{\up_\aQ q} : q \in Q, \; \Land_\aQ \overline{\overline{\up_\aQ q}} \nin X \} $
\\&
$= \bigcap \{ \overline{\up_\aQ q} : q \in Q, \; \Land_\aQ \up_\aQ q \nin X \} $
\\&
$= \bigcap \{ \overline{\up_\aQ q} : q \nin X \} $
\\&
$= \bigcap \{ \overline{\up_\aQ q} : q \in \overline{X} \} $
\\&
$= \bigcap \{ \overline{\up_\aQ q} : \; \up_\aQ q \in \overline{X} \} $
& since $\overline{X}$ up-closed
\\&
$= \bigcap \{ \overline{\up_\aQ q} : X \subseteq \overline{\up_\aQ q} \} $
\\&
$= X$
\end{tabular}
\]
Regarding the final step, we already observed that every down-closed set arises as an intersection of sets $\overline{\up_\aQ q}$.
\end{enumerate}

\smallskip
Finally we show the other counit-unit equation holds:
\[
\xymatrix@=15pt{
(D,\lor_\aD,\bot_\aD) \ar@{=}[d] && (Dn(\aD),\cup,\emptyset) \ar@{=}[d] && (D,\lor_\aD,\bot_\aD) \ar@{=}[d]
\\
\DLToJSL \aD \ar[rr]^-{\eta_{\DLToJSL \aD}} \ar@/_10pt/[rrrr]_-{id_{\DLToJSL \aD}} && \DLToJSL \circ \JSLToDL \circ \DLToJSL \aD \ar[rr]^-{\DLToJSL\epsilon_\aD} && \DLToJSL\aD
}
\]
which follows because:
\[
\DLToJSL \epsilon_\aD \circ \eta_{\DLToJSL\aD} (d)
= \epsilon_\aD (\overline{\up_\aD d})
= \Land_\aD \overline{\overline{\up_\aD d}} \cap M(\aD)
= \Land_\aD \up_\aD d \cap M(\aD)
= \Land_\aD \{ m \in M(\aD) : d \leq_\aD m \}
= d
\]
since every element is the meet of those meet-irreducibles above it.
\end{proof}

\begin{definition}[Free boolean algebra on a distributive lattice]
Let $\BAToDL : \BA_f \to \DL_f$ be the forgetful functor where $\BAToDL\aB := (B,\lor_\aB,\bot_\aB,\land_\aB,\top_\aB)$ and $\BAToDL f := f$. Further define:
\[
\DLToBA : \DL_f \to \BA_f
\qquad
\DLToBA \aD := \BPow J(\aD)
\qquad
\dfrac{f : \aD \to \aE}{\DLToBA f := \lambda X.(U_{dm} f)_*^{-1}(X) : \DLToBA J(\aD) \to \DLToBA J(\aE)}
\]
where $U_{dm} f$ takes the underlying join-semilattice morphism between the meet structures i.e.\ 
\[
U_{dm} f : (D,\land_\aD,\top_\aD) \to (E,\land_\aE,\top_\aE)
\] 
so that $\DLToBA f (X) = \{ j \in J(\aE) : \Land_\aD f^{-1}(\up_\aE j) \in X \}$. 
\end{definition}

\begin{lemma}
$\DLToBA$ equals the composite functor:
\[
\DL_f \xto{\Ji^{op}} \Poset_f^{op} \xto{\PosToSet^{op}} \Set_f^{op} \xto{\Pred} \BA_f
\]
and is thus a well-defined functor.
\end{lemma}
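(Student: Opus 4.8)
The plan is to follow exactly the pattern of the preceding lemma, which identified $\JSLToDL$ with the composite $\Up \circ \JSLToPos^{op} \circ \OD_j^{op}$. Each of the three factors $\Ji$, $\PosToSet$ and $\Pred$ has already been shown to be a well-defined functor: $\Ji$ by Lemma \ref{lem:birkhoff_funct_welldef}, $\PosToSet$ is the (trivially functorial) carrier functor, and $\Pred$ is the duality functor of Theorem \ref{thm:bool_fin_duality}. Hence it suffices to verify that the composite $\Pred \circ \PosToSet^{op} \circ \Ji^{op}$ agrees with $\DLToBA$ on objects and on morphisms; functoriality (and well-definedness) of $\DLToBA$ are then inherited from the composite, with no separate verification required.

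On objects I would simply unwind the definitions. For $\aD \in \DL_f$ we have $\Ji^{op}\aD = (J(\aD),\leq_{\aD^{\pOp}})$, whose carrier $\PosToSet^{op}$ returns as the set $J(\aD)$, and then:
\[
\Pred \circ \PosToSet^{op} \circ \Ji^{op}(\aD) = \Pred(J(\aD)) = \BPow J(\aD) = \DLToBA\aD,
\]
so the two functors coincide on objects.

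The one substantive point is the morphism computation. Given a $\DL_f$-morphism $f : \aD \to \aE$, the functor $\Ji^{op}$ produces the monotone map $\Ji f^{op} = \lambda j.\Land_\aD f^{-1}(\up_\aE j) : J(\aE) \to J(\aD)$ (this is precisely where Lemma \ref{lem:birkhoff_funct_welldef} is invoked, to know each value is genuinely join-irreducible); applying $\PosToSet^{op}$ forgets the order but keeps the same underlying function; and applying $\Pred$ returns its preimage map $\lambda X.(\Ji f^{op})^{-1}(X) : \BPow J(\aD) \to \BPow J(\aE)$. To match this against $\DLToBA f = \lambda X.(U_{dm}f)_*^{-1}(X)$ I would invoke the identity $\Land_\aD f^{-1}(\up_\aE j) = (U_{dm}f)_*(j)$, which is already recorded in the Note preceding this subsection and re-derived inside the proof of Theorem \ref{thm:birkhoff_duality}, resting on the observation $\Land_\aD f^{-1}(\up_\aE j) = \Lor_{\aD^{\pOp}} f^{-1}(\down_{\aE^{\pOp}} j)$. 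This shows $\Ji f^{op} = (U_{dm}f)_*$ as functions, whence $(\Ji f^{op})^{-1} = (U_{dm}f)_*^{-1}$ and the two morphism-assignments coincide.

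The main obstacle is purely bookkeeping: keeping the variances straight across the three applications of $(-)^{op}$, and confirming that the equality $\Ji f^{op} = (U_{dm}f)_*$ holds on the nose rather than merely up to the poset/set or order/carrier distinction. Everything else — well-definedness of each factor, and hence of the composite — is already established earlier in the excerpt, so no genuinely new argument is needed.
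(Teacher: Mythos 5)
Your proposal is correct and follows essentially the same route as the paper: unwinding the composite on objects to get $\BPow J(\aD)$, and on morphisms using the identity $\Land_\aD f^{-1}(\up_\aE j) = \Lor_{\aD^{\pOp}} f^{-1}(\down_{\aE^{\pOp}} j) = (U_{dm}f)_*(j)$ to match $\Pred(\Ji f^{op})$ with $(U_{dm}f)_*^{-1}$. The paper's proof is exactly this computation, with well-definedness inherited from the already-established factors.
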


\begin{proof}
We have:
\[
\Pred \circ \PosToSet^{op} \circ \Ji^{op} \aD
= \Pred \PosToSet (J(\aD),\leq_{\aD^{\pOp}})
= \Pred J(\aD)
= \BPow J(\aD)
= \DLToBA \aD
\]
Furthermore given any $\DL_f$-morphism $f : \aD \to \aE$ we have:
\[
\begin{tabular}{lll}
$\Pred \circ \PosToSet^{op} \circ \Ji^{op} f$
&
$= \Pred \lambda j \in J(\aE). \Land_\aD f^{-1}(\up_\aE j)$
\\&
$= \Pred \lambda j \in J(\aE). \Lor_{\aD^{\pOp}} f^{-1}(\down_{\aE^{\pOp}} j)$
\\&
$= \Pred \lambda j \in J(\aE). (U_{dm} f)_*(j)$
\\&
$= \lambda X \subseteq J(\aD).(U_{dm} f)_*)^{-1} (X)$
\\&
$= \DLToBA f$
\end{tabular}
\]
\end{proof}

\begin{theorem}[Free boolean algebra on a distributive lattice]
\label{thm:free_ba_on_dl}
\item
$\DLToBA : \DL_f \to \BA_f$ is left adjoint to the forgetful functor $\BAToDL : \BA_f \to \DL_f$ with associated natural transformations:
\[
\begin{tabular}{lll}
$\eta : Id_{\DL_f} \To \BAToDL \circ \DLToBA$
&
$\eta_\aD : \aD \to \DPow J(\aD)$
&
$\eta_\aD(d) := \; J(\aD) \;\cap \down_\aD d$
\\
$\epsilon : \DLToBA \circ \BAToDL \To Id_{\BA_f}$
&
$\epsilon_\aB : \BPow At(\aB) \to \aB$
&
$\epsilon_\aB(S) := \; \Lor_\aB S$
\end{tabular}
\]
\end{theorem}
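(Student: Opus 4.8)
The plan is to follow the template of the earlier free-construction theorems, in particular Theorem \ref{thm_free_dl_on_jsl}: verify directly that $\eta$ and $\epsilon$ are natural transformations of the stated types and that the two counit--unit (triangle) equations hold. An abstract derivation is also available, since $\DLToBA = \Pred \circ \PosToSet^{op} \circ \Ji^{op}$ is a composite of left adjoints (the finite-boolean Stone equivalence, the opposite of the free-poset-on-a-set adjunction, and Birkhoff duality), whose composite right adjoint $\Up \circ \SetToPos^{op} \circ \At^{op}$ is naturally isomorphic to $\BAToDL$; but this only yields $\BAToDL$ up to isomorphism and does not directly produce the explicit $\eta$ and $\epsilon$, so I would do the hands-on verification. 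First I would pin down the types. Since $\DLToBA\aD = \BPow J(\aD)$, we get $\BAToDL\circ\DLToBA\,\aD = \DPow J(\aD)$, so $\eta_\aD : \aD \to \DPow J(\aD)$ is correctly typed. Dually, by Lemma \ref{lem:std_order_theory}.8 the join-irreducibles of the boolean lattice underlying $\aB$ are exactly its atoms, so $J(\BAToDL\aB) = At(\aB)$ and $\DLToBA\circ\BAToDL\,\aB = \BPow At(\aB)$, whence $\epsilon_\aB : \BPow At(\aB) \to \aB$ is correctly typed.

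Next I would check well-definedness of the two families. The map $\eta_\aD(d) = J(\aD)\cap\down_\aD d$ is the Birkhoff representation $\beta_\aD$ of Theorem \ref{thm:birkhoff_duality} postcomposed with the inclusion $Dn(J(\aD)) \hookrightarrow \DPow J(\aD)$; as this inclusion preserves $\emptyset$, the top $J(\aD)$, $\cup$ and $\cap$, each $\eta_\aD$ is a bounded distributive lattice morphism. For $\epsilon_\aB(S)=\Lor_\aB S$, preservation of $\emptyset\mapsto\bot_\aB$ and of binary unions is immediate; the identities $\epsilon_\aB(At(\aB)) = \top_\aB$ and $\Lor_\aB(S_1\cap S_2) = (\Lor_\aB S_1)\land_\aB(\Lor_\aB S_2)$ both rest on $\top_\aB$ being the join of all atoms (Lemma \ref{lem:std_order_theory}.6 and .8) together with the disjointness of distinct atoms (so that distributivity collapses the cross-terms $a\land_\aB a' = \bot_\aB$). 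Since a boolean lattice morphism automatically preserves negation by uniqueness of complements (Lemma \ref{lem:std_order_theory}.9), each $\epsilon_\aB$ is a $\BA_f$-morphism.

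Then I would verify naturality. For $\eta$ this is exactly the naturality square of $\beta$ in Theorem \ref{thm:birkhoff_duality}, which reduces to the adjoint relationship $j \leq_\aE f(d) \iff \Land_\aD f^{-1}(\up_\aE j) \leq_\aD d$, itself an instance of Lemma \ref{lem:adj_obs}.1 applied to $U_{dm}f$. For $\epsilon$, I would unwind the explicit action $\DLToBA\BAToDL g(S) = \{c \in At(\aC): \Land_\aB g^{-1}(\up_\aC c) \in S\}$ and reduce the claim $g(\Lor_\aB S) = \Lor_\aC\{c \in At(\aC): \Land_\aB g^{-1}(\up_\aC c)\in S\}$ to showing, for each atom $c\in At(\aC)$, that $c \leq_\aC g(\Lor_\aB S) \iff b_c \in S$, where $b_c := \Land_\aB g^{-1}(\up_\aC c) = (U_{dm}g)_*(c)$ is an atom of $\aB$. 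This uses that $g$ preserves joins, that atoms are join-prime (Lemma \ref{lem:std_order_theory}.10), the adjoint relationship above, and that an atom lies below an atom only if they coincide.

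Finally the counit--unit equations. The $\BAToDL$-side equation $\BAToDL\epsilon_\aB\circ\eta_{\BAToDL\aB} = id$ is immediate: it sends $b$ to $\Lor_\aB\{a\in At(\aB): a\leq_\aB b\} = b$, since every boolean element is the join of the atoms beneath it. The $\DLToBA$-side equation $\epsilon_{\DLToBA\aD}\circ\DLToBA\eta_\aD = id$ unwinds similarly using $At(\BPow J(\aD)) = \{\{j\}: j\in J(\aD)\}$ and that each singleton is its own join. The hard part will be the naturality of $\epsilon$: it is the one step that genuinely combines the adjoint description of $\DLToBA$ on morphisms with the arithmetic of atoms (join-primeness and mutual disjointness), and making the nested quantifiers collapse to the clean equivalence $c\leq_\aC g(\Lor_\aB S)\iff b_c\in S$ requires the most care. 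Everything else either quotes Birkhoff duality essentially verbatim or becomes routine once the ``$j \leq f(d)\iff$ meet-of-preimage $\leq d$'' adjunction is in hand.
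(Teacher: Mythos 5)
Your proposal is correct, and its overall skeleton (typing, well-definedness, naturality of $\eta$ and $\epsilon$, then the two triangle identities) matches the paper's proof; in particular your treatment of $\eta$ via the Birkhoff representation $\beta_\aD$ and the adjoint relationship $j \leq_\aE f(d) \iff \Land_\aD f^{-1}(\up_\aE j) \leq_\aD d$ is exactly what the paper does, and your computations of the two counit--unit equations agree with the paper's. The one place you diverge is the counit: you verify by hand that $\epsilon_\aB$ is a bounded lattice morphism (via join-primeness and pairwise disjointness of atoms, plus uniqueness of complements for negation) and then do a separate hands-on naturality check, flagging that check as the hardest step. The paper instead observes that $\DLToBA \circ \BAToDL = \Pred \circ \At^{op}$ and that $\epsilon_\aB$ is precisely the inverse of the component $\beta_\aB$ of the natural isomorphism in Theorem \ref{thm:bool_fin_duality}, so both well-definedness and naturality of $\epsilon$ come for free as the inverse of a natural isomorphism. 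Your atom arithmetic is sound (the reduction to $c \leq_\aC g(\Lor_\aB S) \iff (U_{dm}g)_*(c) \in S$ does go through, using that $(U_{dm}g)_*(c)$ is an atom by Lemma \ref{lem:birkhoff_funct_welldef} specialised to boolean algebras), so nothing is wrong; you are simply re-proving a special case of the finite Stone duality that the paper already has on the shelf. Recognising that the "hard part" is an instance of an already-established natural isomorphism would shorten your argument considerably.
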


\begin{proof}
To see that each $\eta_\aD$ is a well-defined bounded distributive lattice morphism (which needn't be an isomorphism), observe that it is a codomain extension of the canonical representation of $\aD$ from Theorem \ref{thm:birkhoff_duality} i.e.\ Birkhoff duality. In order to prove naturality:
\[
\xymatrix@=15pt{
\aD \ar[rr]^f \ar[d]_{\eta_\aD} && \aE \ar[d]^{\eta_\aE}
\\
\DPow J(\aD) \ar[rr]_{\BAToDL \DLToBA f} && \DPow J(\aE)
}
\]
we calculate as follows:
\[
\begin{tabular}{lll}
$\BAToDL \DLToBA f \circ \eta_\aD (d)$
&
$= \DLToBA f(J(\aD)\; \cap \down_\aD d)$
\\&
$= \{ j \in J(\aE) : (U_{dm} f)_*(j) \in \;\down_\aD d \}$
\\&
$= \{ j \in J(\aE) : (U_{dm} f)_*(j) \leq_\aD d \}$
\\&
$= \{ j \in J(\aE) : d \leq_\aE f(d) \}$
& see proof of Theorem \ref{thm:birkhoff_duality}
\\&
$= J(\aE) \; \cap \down_\aE f(d)$
\\&
$= \eta_\aE \circ f(d)$
\end{tabular}
\]

Each $\epsilon_\aB$ is well-defined boolean algebra morphism because it is the inverse of a canonical isomorphism from Theorem \ref{thm:bool_fin_duality} i.e.\ the duality between finite sets and finite boolean algebras. Thus naturality also follows.

Finally we verify the counit-unit equations. Firstly, for any $X \subseteq J(\aD)$ we have:
\[
\begin{tabular}{lll}
$\epsilon_{\DLToBA \aD} \circ \DLToBA \eta_\aD (X)$
&
$= \epsilon_{\DLToBA \aD} \circ (U_{dm} \eta_\aD)_*^{-1}(X)$
\\&
$= \epsilon_{\DLToBA \aD}(\{ j \in J(\DPow J(\aD)) : \Land_\aD \eta_\aD^{-1}(\up_{\DPow J(\aD)} j) \in X \})$
\\&
$= \epsilon_{\DLToBA \aD}(\{ \{j\} : j \in J(\aD), \Land_\aD \{ d \in D : \eta_\aD(d) \ni j\} \in X  \})$
\\&
$= \Lor_{\DLToBA\aD} \{ \{j\} : j \in J(\aD), \Land_\aD \{ d \in D : j \in  (J(\aD) \; \cap \down_\aD d)  \} \in X  \}$
\\&
$= \bigcup \{ \{j\} : j \in J(\aD), j \in X  \}$
\\&
$= X$
\end{tabular}
\]
and finally:
\[
\BAToDL \epsilon_\aB \circ \eta_{\BAToDL \aB} (b)
= \epsilon_\aB (J(\BAToDL \aB)\;\cap \down_{\BAToDL\aB} b)
= \Lor_\aB At(\aB)\;\cap \down_\aB b
= b
\]
\end{proof}

\bibliographystyle{alpha}
\bibliography{bib-2019}

\end{document}